\chardef\bslash=`\\
\newtheorem*{rep@theorem}{\rep@title}
\newcommand{\newreptheorem}[2]{%
\newenvironment{rep#1}[1]{%
 \def\rep@title{#2 \ref{##1}}%
 \begin{rep@theorem}}%
 {\end{rep@theorem}}}
\newtheorem{theorem}[subsubsection]{Theorem}
\newtheorem{thm}[subsubsection]{Theorem}
\newtheorem{lemma}[subsubsection]{Lemma}
\newtheorem{lem}[subsubsection]{Lemma}
\newtheorem{cor}[subsubsection]{Corollary}
\newtheorem{prop}[subsubsection]{Proposition}
\newtheorem{defn}[subsubsection]{Definition}
\theoremstyle{remark}
\newtheorem{remark}[subsubsection]{Remark}
\newtheorem{rem}[subsubsection]{Remark}
\newtheorem{assumption}[subsubsection]{Assumption}
\newtheorem{hypothesis}[subsubsection]{Hypothesis}
\def\numequation{\addtocounter{subsubsection}{1}\begin{equation}}
\def\nummultline{\addtocounter{subsubsection}{1}\begin{multline}}
\def\anumequation{\addtocounter{subsection}{1}\begin{equation}}
\def\anummultline{\addtocounter{subsection}{1}\begin{multline}}
\newif\iffinalrun
  \newcommand{\need}[1]{}
  \newcommand{\mar}[1]{}
  \newcommand{\need}[1]{{\tiny *** #1}}
\newcommand{\mar}[1]{\marginpar{\raggedright\tiny FIXME: #1 }}\fi
\renewcommand\mathbb{\mathbf}
\newcommand{\laux}{l^{\aux}}
\newcommand{\Tor}{\mathrm{Tor}}
\newcommand{\suffi}{{\operatorname{suffices}}}
\newcommand{\auxp}{l}
\newcommand{\Lie}{{\operatorname{Lie}\,}}
\newcommand{\semis}{{\operatorname{ss}}}
\newcommand{\gog}{{\mathfrak{g}}}
\newcommand{\mf}{\mathfrak}
\newcommand{\tor}{{\operatorname{tor}}}
\newcommand{\tf}{{\operatorname{tf}}}
\newcommand{\hatQQ}{{\widehat{\Q}}}
\newcommand{\hatZZ}{{\widehat{\Z}}}
\newcommand{\Spp}{{\operatorname{Sp}\,}}
\newcommand{\BC}{{\operatorname{BC}\,}}
\newcommand{\norm}{{\mbox{\bf N}}}
\newcommand{\wt}{{\operatorname{wt}}}
\newcommand{\ndiv}{{\mbox{$\not| $}}}
\newcommand{\Br}{{\operatorname{Br}}}
\newcommand{\rec}{{\operatorname{rec}}}
\newcommand{\avoid}{\mathrm{avoid}} 
\newcommand{\barepsilon}{{\overline{\epsilon}}}
\newcommand{\mat}[4]{\left( \begin{array}{cc} {#1} & {#2} \\ {#3} & {#4}\end{array}\right)}
\newcommand{\ra}{\rightarrow}
\newcommand{\lra}{\longrightarrow}
\newcommand{\liso}{\stackrel{\sim}{\lra}}
\newcommand{\barM}{\overline{{M}}}
\newcommand{\barK}{\overline{{K}}}
\newcommand{\barD}{\overline{{D}}}
\newcommand{\barF}{\overline{{F}}}
\newcommand{\barE}{\overline{{E}}}
\newcommand{\barQQ}{\overline{{\Q}}}
\newcommand{\barFF}{\overline{{\F}}}
\newcommand{\barr}{\overline{{r}}}
\newcommand{\rbar}{\overline{r}}
\newcommand{\chibar}{\overline{\chi}}
\newcommand{\barchi}{{\overline{\chi}}}
\newcommand{\barpsi}{{\overline{\psi}}}
\renewcommand{\ell}{l}
\newcommand{\HHH}{$\mathrm{DGI}$}
\def\Pbarrho{\mathrm{P}\rhobar}
\def\PSL{\mathrm{PSL}}
\def\PGL{\mathrm{PGL}}
\def\chara{\mathrm{char}}
\def\Iw{\mathrm{Iw}}
\def\tF{{\widetilde{F}}}
\def\tE{{\widetilde{E}}}
\def\tT{{\widetilde{T}}}
\newcommand{\loc}{\operatorname{loc}}
\newcommand{\ad}{\operatorname{ad}}
\newcommand{\diag}{\operatorname{diag}}
\newcommand{\tr}{\operatorname{tr}}
\newcommand{\CNL}{\operatorname{CNL}}
\newcommand{\gF}{{\mathfrak{F}}}
\newcommand{\wotimes}{\widehat{\otimes}}
\def\iso{\buildrel \sim \over \longrightarrow}
\newcommand{\A}{\mathbf{A}}
\newcommand{\bA}{\ensuremath{\mathbf{A}}}
\newcommand{\CC}{{\mathbb C}}
\newcommand{\C}{\CC}
\newcommand{\bC}{\ensuremath{\mathbf{C}}}
\newcommand{\bD}{\ensuremath{\mathbf{D}}}
\newcommand{\F}{\FF}
\newcommand{\FF}{{\mathbb F}}
\newcommand{\bG}{\ensuremath{\mathbf{G}}}
\newcommand{\bK}{\ensuremath{\mathbf{K}}}
\newcommand{\bL}{\ensuremath{\mathbf{L}}}
\newcommand{\LL}{{\mathbb L}}
\newcommand{\bQ}{\ensuremath{\mathbf{Q}}}
\newcommand{\Q}{\QQ}
\newcommand{\QQ}{{\mathbb Q}}
\newcommand{\NN}{{\mathbb N}}
\newcommand{\bR}{\ensuremath{\mathbf{R}}}
\newcommand{\R}{\RR}
\newcommand{\RR}{{\mathbb R}}
\newcommand{\bT}{\ensuremath{\mathbf{T}}}
\newcommand{\TT}{{\mathbb T}}
\newcommand{\T}{{\mathbb T}}
\newcommand{\Z}{\ZZ}
\newcommand{\ZZ}{{\mathbb Z}}
\newcommand{\bZ}{\ensuremath{\mathbf{Z}}}
\newcommand{\cC}{{\mathcal C}}
\newcommand{\cD}{{\mathcal D}}
\newcommand{\cF}{{\mathcal F}}
\newcommand{\cG}{{\mathcal G}}
\newcommand{\cH}{{\mathcal H}}
\newcommand{\cI}{{\mathcal I}}
\newcommand{\cL}{{\mathcal L}}
\newcommand{\CL}{{\mathcal{L}}}
\newcommand{\cM}{{\mathcal M}}
\newcommand{\cO}{{\mathcal O}}
\newcommand{\cR}{{\mathcal R}}
\newcommand{\cS}{{\mathcal S}}
\newcommand{\cT}{{\mathcal T}}
\newcommand{\cV}{{\mathcal V}}
\newcommand{\cW}{{\mathcal W}}
\newcommand{\CO}{{\mathcal{O}}}
\newcommand{\CR}{{\mathcal{R}}}
\newcommand{\CS}{{\mathcal{S}}}
\newcommand{\CX}{{\mathcal{X}}}
\newcommand{\cX}{{\mathcal{X}}}
\newcommand{\calD}{{\mathcal{D}}}
\newcommand{\ssc}{\mathrm{sc}}
\newcommand{\aux}{\mathrm{aux}}
\newcommand{\phibar}{\overline{\phi}}
\newcommand{\m}{\frakm}
\newcommand{\n}{\frakn}
\newcommand{\ffrm}{{\mathfrak m}}
\newcommand{\frakm}{\mathfrak{m}}
\newcommand{\frakn}{\mathfrak{n}}
\newcommand{\frakp}{\mathfrak{p}}
\newcommand{\p}{\frakp}
\newcommand{\frakq}{\mathfrak{q}}
\newcommand{\q}{\frakq}
\newcommand{\gM}{\mathfrak{M}}
\newcommand{\ga}{\mathfrak{a}}
\newcommand{\gp}{\mathfrak{p}}
\newcommand{\gm}{\mathfrak{m}}
\newcommand{\gq}{\mathfrak{q}}
\newcommand{\fra}{{\mathfrak a}}
\newcommand{\Fbar}{\overline{\F}}
\newcommand{\Qbar}{\overline{\Q}}
\newcommand{\barx}{\overline{x}}
\newcommand{\Zp}{\Z_p}
\newcommand{\Ql}{\Q_{\ell}}
\newcommand{\Qpbar}{\Qbar_p}
\DeclareMathOperator{\Aut}{Aut}
\DeclareMathOperator{\End}{End}
\DeclareMathOperator{\Fil}{Fil}
\DeclareMathOperator{\gr}{gr}
\DeclareMathOperator{\Gal}{Gal}
\newcommand{\GL}{\mathrm{GL}}
\newcommand{\GSp}{\mathrm{GSp}}
\DeclareMathOperator{\Hom}{Hom}
\DeclareMathOperator{\im}{im}
\DeclareMathOperator{\Ind}{Ind}
\DeclareMathOperator{\nInd}{n-Ind}
\DeclareMathOperator{\ord}{ord}
\DeclareMathOperator{\SL}{SL}
\DeclareMathOperator{\Spec}{Spec}
\DeclareMathOperator{\Supp}{Supp}
\DeclareMathOperator{\WD}{WD}
\DeclareMathOperator{\Sym}{Symm}
\DeclareMathOperator{\Symm}{Symm}
\newcommand{\ab}{\mathrm{ab}}
\newcommand{\Frob}{\mathrm{Frob}}
\newcommand{\HT}{\mathrm{HT}}
\newcommand{\nr}{\mathrm{nr}}
\newcommand{\barrho}{\overline{\rho}}
\newcommand{\rhobar}{\overline{\rho}}
\newcommand*{\invlim}{\varprojlim}                              
 \newcommand{\into}{\hookrightarrow}
\newcommand{\onto}{\twoheadrightarrow}
\newcommand{\toisom}{\buildrel\sim\over\to}
\newcommand{\Art}{{\operatorname{Art}}}
\newcommand{\Res}{\operatorname{Res}}
\newcommand{\RS}{\operatorname{Res}}
\newcommand{\detord}{\mathrm{det,ord}}
\newcommand{\univ}{\mathrm{univ}}
\newcommand{\doubleslash}{/\kern-0.2em{/}}
\newcommand{\Grm}{\mathrm{G}}
\newcommand{\Prm}{\mathrm{P}}
\title{Potential automorphy over CM fields}
\author[P.~Allen]{Patrick B. Allen} \email{patrick.allen@mcgill.ca}
 \address{Department of Mathematics and Statistics, 
McGill University,
Montreal, QC H3A 0B9, Canada}
\author[F.~Calegari]{Frank Calegari}  \email{fcale@math.uchicago.edu} \address{Department of
  Mathematics, University of Chicago,
5734 S University Ave,
Chicago, IL 60637, USA}
\author[A.~Caraiani]{Ana Caraiani} \email{caraiani.ana@gmail.com}
\address{Department of
  Mathematics, Imperial College London,
  London SW7 2AZ, UK}
\author[T.~Gee]{Toby Gee} \email{toby.gee@imperial.ac.uk} \address{Department of
  Mathematics, Imperial College London,
  London SW7 2AZ, UK}
\author[D.~Helm]{David Helm} \email{d.helm@imperial.ac.uk}
 \address{Department of
  Mathematics, Imperial College London,
  London SW7 2AZ, UK}
\author[B.~V.~Le Hung]{Bao V. Le Hung} \email{lhvietbao@googlemail.com}
 \address{Department of
  Mathematics, Northwestern University, 2033 Sheridan Road,
Evanston, IL 60208, USA}
\author[J.~Newton]{James Newton} \email{newton@maths.ox.ac.uk}
\address{Mathematical Institute, University of Oxford, Woodstock Road, Oxford OX2 6GG, UK}
\author[P.~Scholze]{Peter Scholze} \email{scholze@math.uni-bonn.de} \address{Universit\"{a}t Bonn,
Endenicher Allee 60,
53115 Bonn, Germany}
\author[R.~Taylor]{Richard Taylor}  \email{rltaylor@stanford.edu} \address{Department of Mathematics, Stanford University,  Stanford CA 94305, USA}
\author[J.~A.~Thorne]{Jack A. Thorne} \email{thorne@dpmms.cam.ac.uk} \address{Department of Pure Mathematics and Mathematical Statistics, Wilberforce Road, Cambridge CB3 0WB, UK}
\thanks{P.A. \ was supported in part by Simons Foundation Collaboration Grant for Mathematicians 527275, NSF Grant DMS-1902155, and by NSERC}
\thanks{F.C. \ was supported in part by NSF Grants  DMS-1701703 and DMS-2001097.}
\thanks{A.C. \ was supported in part by NSF Grant DMS-1501064, 
by a Royal Society University Research Fellowship, by ERC Starting Grant 804176
and by a Leverhulme Prize.}
\thanks{T.G.\ was
  supported in part by a Leverhulme Prize, EPSRC grant EP/L025485/1,
  ERC Starting Grant 306326, and a Royal Society Wolfson Research
  Merit Award.}
\thanks{B.L. \ was    supported in part by NSF Grant  DMS-1802037, NSF Grant DMS-1952678 and the Alfred P. Sloan Foundation.}
\thanks{J.N. \ was supported by a UKRI Future Leaders Fellowship, grant MR/V021931/1.}
\thanks{P.S. \ was supported in part by a DFG Leibniz Grant, and by the DFG under the Excellence Strategy  EXC-2047/1-390685813}
\thanks{R.T.\ was supported by NSF Grant DMS-1902265 during the revision of this paper.}
\thanks{J.T.\ was supported by a Clay Research Fellowship and ERC Starting Grant 714405.}
\begin{document}

\begin{abstract}
Let~$F$ be a CM number field.
     We prove modularity lifting theorems for regular 
        $n$-dimensional Galois representations over~$F$  without
        any self-duality condition. We deduce that all elliptic curves~$E$ over~$F$
        are potentially modular, and furthermore satisfy the Sato--Tate conjecture.
        As an application of a different sort, we  also prove the Ramanujan Conjecture for
        weight zero cuspidal automorphic representations for~$\GL_2(\A_F)$.
\end{abstract}

\maketitle
\setcounter{tocdepth}{2}
{\footnotesize
\tableofcontents
}

\section{Introduction}

In this paper, we prove the first unconditional modularity lifting theorems for $n$-dimensional regular Galois representations without any self-duality conditions. A version of these results were proved in~\cite{CG} \emph{conditional} on two conjectures. The first conjecture was that the Galois representations constructed by Scholze in~\cite{scholze-torsion} satisfy a strong form of local-global compatibility at all primes. The second was a vanishing conjecture for the mod-$p$ cohomology of arithmetic groups localized at non-Eisenstein primes which mirrored the corresponding (known) vanishing theorems for cohomology corresponding to tempered automorphic representations in characteristic zero. We prove many cases of the first of these conjectures
in this paper. %
Our arguments crucially exploit work of Caraiani and Scholze~\cite{caraiani-scholze-noncompact} on the cohomology
of non-compact Shimura varieties (see also~\cite{caraiani-scholze-compact} for the compact version of these results).
The details of this argument are carried out in~\S\ref{section:lep} and~\S\ref{sec:ordsection}.
(It turns out that, in the easier case when~$l \ne p$, one can argue more directly
using the original construction in~\cite{scholze-torsion}, and this is done in~\S\ref{section:lnep}.)
On the other hand, we do \emph{not} resolve the second conjecture   concerning
the vanishing of mod-$p$ cohomology in this paper. Rather, we sidestep this difficulty by a new technical innovation; a derived version of  ``Ihara avoidance'' which simultaneously generalizes
the main idea of~\cite{tay} as well as a localization in characteristic zero
idea first used in~\cite{KT}. This argument, together with
the proofs of the main automorphy lifting theorems, is given in~\S\ref{section:alt}.
The result is that we are able to prove quite
general modularity lifting theorems in both the ordinary and Fontaine--Laffaille case
for general~$n$-dimensional representations  over CM fields,
in particular Theorems~\ref{thm:main_automorphy_lifting_theorem} and~\ref{thm:main_ordinary_automorphy_lifting_theorem}.  Instead of
reproducing those theorems here (which require  a certain amount of notation), we
instead reproduce here  a few 
corollaries of our main theorems which are worked out in~\S\ref{section:tricks}.
The first theorem is a special case of Corollaries~\ref{maincor} and~\ref{cor:satotate}:

\begin{theorem} \label{thm:satotateintro} Let $E$ be an elliptic curve over a CM number field~$F$. Then~$E$ and all
the symmetric powers of~$E$ are potentially modular.  Consequently, the Sato--Tate conjecture holds for~$E$.
\end{theorem}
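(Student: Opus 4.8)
The plan is to run the usual ``potential automorphy'' strategy (Harris--Shepherd-Barron--Taylor, Barnet-Lamb--Geraghty--Harris--Taylor), but now feeding into the new modularity lifting theorems~\ref{thm:main_automorphy_lifting_theorem} and~\ref{thm:main_ordinary_automorphy_lifting_theorem}, which crucially impose no self-duality hypothesis. We may assume $E$ has no complex multiplication, the CM case of all the assertions being classical. Fix a large prime $p$; attached to $E$ is a compatible system $\{\rho_{E,\lambda}\}$ of $2$-dimensional $\lambda$-adic representations of $G_F = \Gal(\Qbar/F)$, and for each $n \geq 2$ we must show that $\Sym^{n-1}\rho_{E,p}$ becomes automorphic after restriction to $G_{F'}$ for some CM extension $F'/F$. (Note that over a CM field $\Sym^{n-1}\rho_{E,p}$ is in general \emph{not} conjugate self-dual, which is exactly why the results of \S\ref{section:alt} are needed here.)

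First I would establish \emph{residual} potential automorphy of the symmetric powers by a Moret--Bailly argument. The $(n-1)$st symmetric power of the standard $2$-dimensional representation is realised in a piece of the cohomology of a suitable smooth projective family $Y \to U$ over an open subset $U$ of the projective line (a Dwork hypersurface pencil, or a Kuga--Sato variety for small $n$), and this family has members with complex multiplication, at which the associated Galois representations are automorphic by the theory of algebraic Hecke characters and automorphic induction. Choosing an auxiliary prime $q \neq p$, a Moret--Bailly/Ekedahl argument produces a CM extension $F_1/F$ and a point $t \in U(F_1)$ such that $\rho_{Y_t, p} \bmod p$ is conjugate to $\overline{\Sym^{n-1}\rho_{E,p}}|_{G_{F_1}}$ while $\rho_{Y_t, q}|_{G_{F_1}}$ lies in the CM locus; one arranges simultaneously (by a further solvable CM base change, using Serre's open image theorem and that $E$ has no CM and $p$ is large) that the residual images remain large enough for the modularity lifting theorems to apply. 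Since $\rho_{Y_t, q}|_{G_{F_1}}$ is automorphic and $\{\rho_{Y_t, \lambda}\}$ is a compatible system cut from $H^\bullet(Y_t)$, the modularity lifting theorem at $q$ shows $\rho_{Y_t, p}|_{G_{F_1}}$ is automorphic, hence $\overline{\Sym^{n-1}\rho_{E,p}}|_{G_{F_1}}$ is automorphic.

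With residual automorphy in hand I would apply Theorems~\ref{thm:main_automorphy_lifting_theorem} and~\ref{thm:main_ordinary_automorphy_lifting_theorem} directly. After a final CM base change $F'/F_1$ we may assume $E$ has good reduction at all places above $p$; then $\Sym^{n-1}\rho_{E,p}|_{G_{F'}}$ is crystalline with regular Hodge--Tate weights in the Fontaine--Laffaille range (take $p$ large), or ordinary at all places above $p$ after base change, so the local hypotheses at $p$ hold and the representation is ramified at only finitely many places; its residual representation $\overline{\Sym^{n-1}\rho_{E,p}}|_{G_{F'}}$ is automorphic with sufficiently large image. Hence $\Sym^{n-1}\rho_{E,p}|_{G_{F'}}$ is automorphic, and is attached to a cuspidal automorphic representation of $\GL_n(\A_{F'})$ since it is irreducible (again because $E$ has no CM). Taking $n = 2$ gives potential modularity of $E$ itself, proving the first assertion. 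For Sato--Tate: automorphy over $F'$ of all the $\Sym^{n-1}\rho_{E,p}$, together with the theory of Rankin--Selberg $L$-functions for $\GL_n \times \GL_m$ and Brauer induction from $G_{F'}$ to $G_F$, shows that each partial $L$-function $L(\Sym^{n-1}\rho_{E,p}, s)$ is holomorphic and non-vanishing on $\mathrm{Re}\, s = 1$ for $n \geq 2$; the standard Tauberian/Weyl equidistribution argument then yields equidistribution of the Frobenius angles $\theta_v$ of $E$ with respect to the Sato--Tate measure $\tfrac{2}{\pi}\sin^2\theta\, d\theta$ on $[0,\pi]$.

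The main obstacle is the residual-automorphy step: one must run the Moret--Bailly argument over a \emph{CM} base so as to simultaneously hit the mod-$p$ class $\overline{\Sym^{n-1}\rho_{E,p}}$, land in an automorphic (CM) locus for the auxiliary prime $q$, and retain a residual image large enough (and local conditions clean enough) that the non-self-dual modularity lifting theorems of \S\ref{section:alt} apply after a controlled solvable CM base change. Verifying the precise image and local hypotheses of those theorems, and the cuspidality needed for the Rankin--Selberg input, requires care but is routine given Serre's open image theorem and the good-reduction behaviour of $E$.
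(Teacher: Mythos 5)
Your overall template (a Moret--Bailly argument feeding the non-self-dual lifting theorems, then Jacquet--Shalika/Brauer for Sato--Tate) is the paper's, and your endgame -- the lifting theorem at $p$ applied to $\Sym^{n-1}\rho_{E,p}$, whose residual image contains $\Sym^{n-1}\SL_2(\F_p)$ for large $p$, followed by the standard analytic argument -- matches Corollaries~\ref{maincor} and~\ref{cor:satotate}. The genuine gap is in your residual-automorphy step. Running the $p$--$q$ switch directly on the $n$-dimensional symmetric power forces you to apply a modularity lifting theorem at the auxiliary prime $q$, over the CM field $F_1$, to a representation whose reduction is the one attached to a CM fibre of your family. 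Since the motive at a general point of the family is not conjugate self-dual over $F_1$ (indeed this is the whole point of working over CM fields), the only lifting theorems available are Theorems~\ref{thm:main_automorphy_lifting_theorem} and~\ref{thm:main_ordinary_automorphy_lifting_theorem}. With a Kuga--Sato family the residual representation at $q$ would be $\Sym^{n-1}$ of the mod-$q$ representation of a CM elliptic curve, hence reducible for $n\geq 3$, contradicting the absolute irreducibility hypothesis outright. With a Dwork pencil the residual representation at $q$ is induced from a character; it can be irreducible, but the hypotheses you must verify -- enormous image, decomposed generic, the existence of $\sigma \in G_{F_1}\smallsetminus G_{F_1(\zeta_q)}$ with scalar image, and residual automorphy arising from a \emph{cuspidal} regular algebraic $\pi$ which is unramified (Fontaine--Laffaille case) or $\iota$-ordinary at $q$ -- are nowhere established for such monomial images: the paper's only enormousness criterion (Lemma~\ref{lemen}, used via Lemma~\ref{lemma:enormousgeneric}) applies to images containing $\Sym^{m}\SL_2(\F_l)$, not to induced ones. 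So the step you describe as ``routine given Serre's open image theorem'' is in fact the missing ingredient, not bookkeeping.

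The paper avoids ever feeding a residually induced (or reducible) representation into its CM-field lifting theorems by performing the prime switch at the level of $2$-dimensional representations. It fixes an auxiliary non-CM elliptic curve $E_0/\Q$ and primes $l_1,l_2$; the established self-dual potential automorphy machinery over totally real fields (Proposition~\ref{prop: potential modularity of symplectic over Q}, Corollary~\ref{ecpm}) makes all $\Sym^m r_{E_0,l_1}$ automorphic over suitable totally real, hence after base change CM, extensions; a Moret--Bailly argument on the moduli space of elliptic curves $D$ with $D[l_1]\cong E_0[l_1]$ and $D[l_2]\cong \barr{}_{l_2}^\vee$ (a twist of the mod-$l_2$ representation of the given curve over $F$) then produces curves $D_i$ over a CM field; and Theorem~\ref{thm:main_automorphy_lifting_theorem} is applied twice, at $l_1$ to pass from $\Sym^m r_{E_0,l_1}$ to $\Sym^m r_{D_i,l_1}$ and at $l_2$ to pass from $\Sym^m r_{D_i,l_2}$ to the symmetric powers of the original system. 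In this way every residual representation met by the new lifting theorems has image of the form $\Sym^m$ of a subgroup containing $\SL_2(\F_{l_i})$, exactly the case covered by Lemmas~\ref{lemen}, \ref{lemma:enormousgeneric} and~\ref{nuq} (which also supplies the local conditions at $l_i$). To salvage your route you would need to prove enormousness, genericity, the scalar-element condition and the cuspidal ordinary/Fontaine--Laffaille residual input for the induced images attached to the special fibres of the Dwork family over CM fields; that is substantial new work, not a routine verification.
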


For an application of a different sort, we also have the following
special case of the Ramanujan conjecture (see Corollary~\ref{cor:ramanujan}):

\begin{theorem}  \label{thm:ramanujanintro} 
 Let~$F$ be a CM field, and let~$\pi$ be a regular
  algebraic cuspidal automorphic representation of $\GL_2(\A_F)$ of
weight~$0$.
Then, for all  primes $v$ of
  $F$, the representation $\pi_v$ is tempered. \end{theorem}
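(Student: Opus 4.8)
The plan is to deduce Theorem~\ref{thm:ramanujanintro} from the potential modularity statement of Theorem~\ref{thm:satotateintro} together with the standard machinery relating temperedness to the Ramanujan-type bounds on Satake parameters. Concretely, let $\pi$ be a regular algebraic cuspidal automorphic representation of $\GL_2(\A_F)$ of weight $0$. Attached to $\pi$ one has (by the work recalled in the introduction, building on \cite{scholze-torsion} and its refinements proved here) a continuous semisimple Galois representation $r_\pi\colon G_F \to \GL_2(\Qbar_p)$, and one knows local-global compatibility at all finite places $v$ up to semisimplification, and in the refined form proved in \S\ref{section:lep}--\S\ref{section:lnep} that pins down the monodromy operator. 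The statement that $\pi_v$ is tempered is then equivalent to the assertion that the Weil--Deligne representation $\WD(r_\pi|_{G_{F_v}})$ is pure of the correct weight for every $v$.

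The first step is to reduce to a purity statement for $r_\pi$. By regularity and the Hodge--Tate weight computation, at places $v \nmid p$ temperedness of $\pi_v$ is equivalent to purity of $\WD(r_\pi|_{G_{F_v}})$; at places $v \mid p$ one uses $p$-adic Hodge theory (potential semistability and the compatibility of the local Langlands correspondence via $\WD$) to make the same translation. So it suffices to prove that $r_\pi$ is pure at every finite place. The second step is to invoke potential modularity: by Theorem~\ref{thm:satotateintro}'s method (really the underlying automorphy lifting theorems \ref{thm:main_automorphy_lifting_theorem} and \ref{thm:main_ordinary_automorphy_lifting_theorem} and the potential automorphy arguments of \S\ref{section:tricks}), after base change to a suitable CM extension $F'/F$, the representation $r_\pi|_{G_{F'}}$ becomes automorphic --- it matches a regular algebraic cuspidal $\pi'$ on $\GL_2(\A_{F'})$ which arises, via a further base change and functoriality, from a \emph{self-dual} (polarizable) automorphic representation, i.e.\ lies in the context where the Galois representation is known to be cut out of the cohomology of a Shimura variety. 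Purity of $r_\pi|_{G_{F'_w}}$ at each place $w$ then follows from the Weil conjectures (Deligne) applied to that Shimura variety, exactly as in the classical Ramanujan--Petersson argument. Since purity is insensitive to restriction to a finite-index subgroup, purity of $r_\pi|_{G_{F'}}$ at all places forces purity of $r_\pi$ at all places of $F$, completing the deduction.

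The main obstacle is the last link in the second step: arranging that after base change the representation genuinely sits in the cohomology of a variety to which Deligne's purity theorem applies. Plain potential modularity over a CM field only gives automorphy over $F'$ in the non-self-dual setting, where the associated Galois representations are constructed via $p$-adic interpolation / Scholze's torsion classes and are \emph{not} a priori known to be pure. To get purity one must leverage that $\pi$ has dimension $n=2$: one passes to the quadratic base change where $\pi'$ (or a symmetric power / tensor induction thereof) becomes conjugate self-dual, so that the relevant Galois representation appears in the cohomology of a unitary Shimura variety and purity is available. This is precisely the route by which the Ramanujan conjecture is known in the $\GL_2$ self-dual cases, and the contribution here is that the new modularity lifting theorems let one reach that self-dual situation starting from an \emph{arbitrary} regular algebraic weight-$0$ $\pi$ over a CM field. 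The remaining bookkeeping --- checking the weight-$0$ regularity hypothesis is preserved under the base changes, that no CM or dihedral degeneracies obstruct the cuspidality of the symmetric powers, and that the local-global compatibility proved in \S\ref{section:lep} is strong enough at the bad places --- is routine given the tools assembled in the body of the paper, and is carried out in the proof of Corollary~\ref{cor:ramanujan}.
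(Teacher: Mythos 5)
There is a genuine gap, and it sits exactly where you place the weight of the argument: the claim that, after base change, $\pi$ (or a symmetric power of it) ``arises from a self-dual (polarizable) automorphic representation'' whose Galois representation is cut out of the cohomology of a unitary Shimura variety, so that Deligne's purity theorem applies. Over a CM field $F$, a regular algebraic weight-$0$ cuspidal $\pi$ on $\GL_2$ gives a Galois representation satisfying $r^\vee \cong r \otimes \epsilon \cdot (\det r)^{-1}$, which is an ordinary self-duality, \emph{not} the conjugate self-duality $r^c \cong r^\vee \otimes \chi$ needed to realize $r$ (or $\Sym^m r$) in the cohomology of a unitary Shimura variety; and no base change to a larger CM field can create conjugate self-duality when it is absent. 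This is precisely the point emphasized in the introduction: Theorem~\ref{thm:ramanujanintro} is (to the authors' knowledge) the first case of Ramanujan in which neither $V$ nor any closely related representation ($V^{\otimes 2}$, $\Sym^2 V$, \dots) is known to occur in the \'etale cohomology of a smooth proper variety, so ``purity from geometry'' is exactly the tool that is unavailable here. Your step two therefore cannot be carried out, and the reduction of temperedness to purity in step one, while fine in spirit, is left without any source of purity.

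The paper's actual route is different and avoids geometry entirely. One first checks (Lemma~\ref{lemma:sometimes}, using the local-global compatibility of \S\ref{section:lep}) that the compatible system $\CR_\pi$ is very weakly compatible with $H_\tau=\{0,1\}$. Then Theorem~\ref{mainthm} gives \emph{potential} automorphy of all symmetric powers $\Sym^m\CR_\pi$ over suitable CM extensions $F_m$, and in Corollary~\ref{maincor}(1) one applies the Jacquet--Shalika bound \cite[Cor.~2.5]{jsajm103} to the resulting unitary cuspidal representations of $\GL_{m+1}(\A_{F_m})$: this bounds each Frobenius eigenvalue $\alpha$ of $\CR_\pi$ at an unramified place by $|\iota\alpha|\le q_v^{1/2+1/(2m)}$, and letting $m\to\infty$ (together with purity of the determinant) yields purity of weight $1$ of the whole system --- the Langlands-style argument, with Jacquet--Shalika standing in for automorphy of $\Sym^m\pi\boxtimes\Sym^m\pi^\vee$. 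Temperedness at every finite place is then deduced in Corollary~\ref{cor:ramanujan}: places where $\pi_v$ is a twist of Steinberg are immediate; at the remaining places one uses \cite{hltt}, \cite{ilavarma} and \cite[Lem.~1.4]{ty} to reduce temperedness to purity of $\WD(r_\lambda|_{G_{F_v}})$ for $v\nmid l$ (so no $p$-adic Hodge-theoretic translation at $v\mid p$ is needed --- one simply varies $l$), reduces to the unramified case by solvable base change, and concludes from the purity just established. If you want to salvage your write-up, you should replace the appeal to Shimura-variety purity by this potential-automorphy-plus-$L$-function argument; as it stands, the key step of your proof asserts something that is not known and that the theorem was specifically designed to circumvent.
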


This is, to our knowledge, the first case of the Ramanujan conjecture
to be proved  for
which neither the underlying Galois representation~$V$ nor some closely related Galois representation (such as~$V^{\otimes 2}$ or~$\Sym^2 V$) is known to occur
as a summand of the \'{e}tale cohomology of some smooth proper algebraic
variety over a number field; in such cases temperedness (at unramified primes)  is ultimately a consequence of 
 Deligne's purity theorem. Our proof, in contrast, follows  more closely the original strategy proposed by Langlands.
 Langlands explained~\cite{LanglandsProblems} how one could deduce Ramanujan from
 functoriality; namely, functoriality implies the automorphy of~$\Symm^n(\pi)$ and~$\Symm^n(\pi^{\vee})$ as
 well as the product~$\Symm^n(\pi) \boxtimes \Symm^n(\pi^{\vee})$. Then, by considering 
 standard analytic properties of the standard~$L$-function associated to~$\Symm^n(\pi) \boxtimes \Symm^n(\pi^{\vee})$
 (and exploiting a positivity property of the coefficients of this~$L$-function) one deduces the required bounds.
 As an approximation to this, we show  that all the symmetric powers of~$\pi$ (and~$\pi^{\vee}$) are  \emph{potentially} automorphic, and then
 invoke analytic properties of the Rankin--Selberg $L$-function
 (in the guise of the Jacquet--Shalika bounds~\cite{jsajm103}) as a replacement for the (potential) automorphy
 of their product.

\subsection{A brief overview of the argument}
Let~$F/F^{+}$ be an imaginary CM field, let~$K \subset \GL_n(\A_F^\infty)$ be a
compact open subgroup, let~$X_K$
denote the corresponding (non-Hermitian) locally symmetric space,  let~$E/\Q_p$ denote a finite extension with ring of integers~$\CO$,
and let~$\cV = \cV_\lambda$ denote a local system on~$X_K$
which is a  lattice inside an algebraic representation of weight~$\lambda$ defined over~$E$. (For example, $\cV$ could be the trivial
local system~$\CO$.) After omitting a finite set of primes~$S$ containing the~$p$-adic places (and satisfying some further hypotheses),
one may define a Hecke algebra~$\T = \T^S$  as the image of a formal
ring of Hecke operators in $\End_{\mathbf{D}(\cO)}( R \Gamma(X_K, \cV) )$ where~$\mathbf{D}(\cO)$ is the derived
category of~$\cO$-modules. (This is  isomorphic  to the usual ring of Hecke operators
acting on~$H^*(X_K,\cV)$ up to a nilpotent ideal, but for technical reasons it is better to work in the derived setting, cf.~\cite{new-tho}.)
For a non-Eisenstein maximal ideal~$\m$, the main result of~\cite{scholze-torsion}
guarantees the existence of a Galois representation
\[ \rho_\ffrm : G_{F, S} \to \GL_n(\T_\ffrm / J) \]
characterized, up to conjugation, by the characteristic polynomials of Frobenius elements at places $v \not\in S$, where
$J$ is a nilpotent ideal whose exponent depends only on $n$ and $[F : \Q]$.
It is crucial for applications to modularity lifting theorems (following the strategy outlined in~\cite{CG}) to know that this Galois representation satisfies local--global compatibility at 
\emph{all} primes.
(As usual, in order to talk about local-global compatibility at a prime in~$S$, one has to work
with variants of~$\T$ including Hecke operators at these primes --- we ignore all such distinctions here).
Since~$ \T_\ffrm / J$ is (in general) not flat over~$\CO$, it is not exactly clear
what one should expect to mean  by local--global compatibility.
For example, for primes~$l \ne p$, a (torsion) representation which is Steinberg at~$l$ need not be ramified at~$l$. Instead,
we ask that
 the characteristic polynomials of~$\rho_\ffrm(\sigma)$ for~$\sigma \in I_{v}$ for~$v | l \in S$ and~$l \ne p$  have the expected shape.
 Such a condition is amenable to arguments using congruences, and we prove a version of this compatibility in~\S\ref{section:lnep} (see Theorem~\ref{thm:lgc_at_l_neq_p}). Note that our theorem only applies to a limited range of~$l$; in particular,
we assume that the level~$K_v$ (for~$v | l \in S$ and~$l \ne p$) satisfies the inclusions~$\Iw_{v,1} \subset K_v \subset \Iw_v$ (where~$\Iw_{v}$ and~$\Iw_{v,1}$ are the Iwahori and pro-$l$
Iwahori respectively)  and additionally~$l$  satisfies various
splitting conditions relative to the field~$F$. This suffices for
applications to modularity, where we make a soluble base change to
ensure that Theorem~\ref{thm:lgc_at_l_neq_p} applies to both
Taylor--Wiles primes and the ramified primes~$S$ away from~$p$. %
This part of
the argument requires only the construction of Galois representations in~\cite{scholze-torsion}.

Local--global compatibility for~$l = p$ is more subtle. Indeed, we are not confident enough to formulate a precise conjecture of what local--global compatibility means in general in the torsion setting.
 Instead, we restrict to two settings where  the conjectural
formulation of local--global compatibility is more transparent; the case when~$\rho_{\ffrm}$ should be Fontaine--Laffaille (assuming,
in particular, that $p$ is unramified in~$F$) and the ordinary case (with no restriction on~$F$); \S\ref{section:lep}
and~\S\ref{sec:ordsection} are devoted to proving such theorems.
In both of these cases, the underlying strategy is as follows.
Associated to our data is a quasi-split unitary group~$\widetilde{G}$ over~$F^{+}$ which is a form of~$\GL_{2n}$ that splits over~$F/F^{+}$.
There is a parabolic subgroup~$P$ of~$\widetilde{G}$ whose
Levi subgroup~$G$ over~$F^{+}$ may be identified with $\Res_{F /F^{+}} \GL_n$, and hence associated with the locally
symmetric spaces~$X_K$ as above.
The point of this construction is that~$\widetilde{G}$  may be associated to a Shimura variety~$\widetilde{X}_{\widetilde{K}}$ (and thus to Galois
representations of known provenance) whereas the cohomology of~$X_K$ appears inside (in some non-trivial way) a spectral sequence computing
the cohomology of the boundary $\partial \widetilde{X}_{\widetilde{K}}$ of the Borel--Serre compactification of $\widetilde{X}_{\widetilde{K}}$. 
One now faces several complications. The first is that the cohomology of the boundary involves different parabolic subgroups of~$\widetilde{G}$ besides~$P$.
This is resolved by the assumption that~$\m$ is non-Eisenstein. The second is separating inside the boundary cohomology (associated to~$P$) the contribution coming
from~$G$ and that coming from the unipotent subgroup~$U$ of~$P$. Fortunately, the unipotent subgroup~$U$ is abelian and well understood, and we show (for~$p > n^2$)
that the relevant cohomology we are interested in occurs as a direct summand of the cohomology of~$\partial \widetilde{X}_{\widetilde{K}}$ (see Theorem~\ref{direct summand at finite level}).  
Note that, for a general coefficient system~$\cV = \cV_\lambda$  on~$X_K$, there are a number of different coefficient systems~$\cV_{\widetilde{\lambda}}$ on~$\widetilde{X}_{\widetilde{K}}$
for which~$H^*(\partial \widetilde{X}_{\widetilde{K}},\cV_{\widetilde{\lambda}})$ can be related to~$H^*(X_K,\cV_{\lambda})$, and this freedom of choice
will be important in what follows.
By these arguments, we may exhibit~$R \Gamma(X_K, \cV_\lambda / \varpi^m)_\m$
up to shift
 as a direct summand of~$R \Gamma(\partial\widetilde{X}_{\widetilde{K}}, \cV_{\widetilde{\lambda}} / \varpi^m)_{\widetilde{\m}}$.
 (Here~$\widetilde{\m}$ is the corresponding ideal of the Hecke algebra~$\widetilde{\T}$ for~$\widetilde{G}$,
 and~$\overline{\rho}_{\widetilde{\m}}$ is the corresponding (reducible) $2n$-dimensional representation associated to~$\m$,
 from which~$\rhobar_{\m}$ was constructed.)
  Now suppose that~$d$ is the complex (middle) dimension of~$\widetilde{X}_{\widetilde{K}}$.
 We now make crucial use of the following theorem, which is the main theorem of~\cite{caraiani-scholze-noncompact}
 (see Theorem~\ref{what we get from CS} for a more general statement.)
 
 \begin{theorem}[Caraiani--Scholze~{\cite[Theorem~1.1]{caraiani-scholze-noncompact}}]  \label{CSintro} Assume that $F^+ \ne \Q$, 
 that~$\m$ is non-Eisenstein, and that $\overline{\rho}_{\widetilde{\m}}$ is decomposed generic in the sense of 	Definition~\ref{defn:decomposed_generic}.
 Assume that, for every prime~$l$ which is the residue characteristic of a prime dividing~$S$ or~$\Delta_F$, there 
 exists an imaginary quadratic field $F_0 \subset F$ in which $l$ splits.
 Then
	\[
	H^i(\widetilde{X}_{\widetilde{K}}, 
	\cV_{\widetilde{\lambda}}/\varpi)_{\widetilde{\m}} = 0\ 
	\mathrm{if}\ i<d,\ 
	\mathrm{and}\  
	H^i_c(\widetilde{X}_{\widetilde{K}}, 
	\cV_{\widetilde{\lambda}}/\varpi)_{\widetilde{\m}} = 0\ 
	\mathrm{if}\ i>d.
	\]
\end{theorem}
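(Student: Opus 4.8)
The plan is to follow the strategy of Caraiani--Scholze: work at infinite level at $p$, analyse the cohomology of $\widetilde X_{\widetilde K}$ by pushing forward along the Hodge--Tate period map to a flag variety, and exploit the Newton stratification of that flag variety to reduce everything to its basic stratum, where a dimension count finishes the argument. First I would reduce to a single vanishing statement. Poincar\'{e} duality on the smooth complex $d$-dimensional variety $\widetilde X_{\widetilde K}$ identifies $H^{i}_{c}(\widetilde X_{\widetilde K},\cV_{\widetilde\lambda}/\varpi)$ with the $\varpi$-linear dual of $H^{2d-i}(\widetilde X_{\widetilde K},\cV^{\vee}_{\widetilde\lambda}/\varpi)$ up to a twist, Hecke-equivariantly for the transpose-inverse involution; since both hypotheses --- $\widetilde{\m}$ non-Eisenstein and $\overline{\rho}_{\widetilde{\m}}$ decomposed generic --- are stable under this involution, it is enough to prove $H^{i}(\widetilde X_{\widetilde K},\cV_{\widetilde\lambda}/\varpi)_{\widetilde{\m}}=0$ for $i<d$. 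Because $\widetilde{\m}$ is non-Eisenstein one may then replace $\widetilde X_{\widetilde K}$ by its minimal compactification $\widetilde X^{*}_{\widetilde K}$, which is proper: the difference between the two is, in every degree, built from the cohomology of the cusps, which is Eisenstein and so dies after localisation at $\widetilde{\m}$.

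Next I would pass to the perfectoid Shimura variety $\mathcal X^{*}_{\widetilde K^{p}}$ at infinite level at $p$, with its $\widetilde G(\Q_p)$-equivariant Hodge--Tate period map $\pi_{\HT}\colon\mathcal X^{*}_{\widetilde K^{p}}\to\mathcal{F}\ell$ to the projective flag variety $\mathcal{F}\ell=\mathcal{F}\ell_{\widetilde G,\mu}$, which has dimension $d$. (The construction of $\pi_{\HT}$ on compactifications is due to Scholze in the compact case and to Caraiani--Scholze in general; the hypothesis $F^{+}\neq\Q$ and the assumption that the residue characteristic of each prime dividing $S$ or $\Delta_F$ splits in an imaginary quadratic subfield enter here, to put the integral models and their Newton stratifications in good position.) Since the local system becomes constant at infinite level, the Hochschild--Serre spectral sequence for the pro-\'{e}tale $\widetilde K_p$-cover $\mathcal X^{*}_{\widetilde K^{p}}\to\widetilde X^{*}_{\widetilde K}$ computes $R\Gamma(\widetilde X^{*}_{\widetilde K},\cV_{\widetilde\lambda}/\varpi)$ by applying a \emph{left $t$-exact} continuous group-cohomology functor to $R\Gamma(\mathcal{F}\ell,R\pi_{\HT,*}\FF_p)$, tensored with an explicit smooth $\widetilde G(\Q_p)$-representation; both operations preserve the property of being concentrated in degrees $\geq d$. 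So it suffices to show that $R\Gamma(\mathcal{F}\ell,R\pi_{\HT,*}\FF_p)_{\widetilde{\m}}$ is concentrated in degrees $\geq d$.

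Now stratify $\mathcal{F}\ell$ by its Newton strata $\mathcal{F}\ell^{b}$, $b\in B(\widetilde G,\mu)$: the basic stratum $\mathcal{F}\ell^{b_0}$ is open and is a Stein space of dimension $d$ (the relevant $p$-adic period domain), while the union $Z$ of the remaining strata is closed. Over each $\mathcal{F}\ell^{b}$ the map $\pi_{\HT}$ is a fibration by the infinite-level Igusa varieties $\mathrm{Ig}^{b}$ --- profinite-type spaces with no higher cohomology --- carrying a residual action of the $\sigma$-centraliser $J_b(\Q_p)$, so (by proper base change for $\pi_{\HT}$ on the proper space $\mathcal X^{*}_{\widetilde K^{p}}$) the restriction $(R\pi_{\HT,*}\FF_p)|_{\mathcal{F}\ell^{b}}$ is concentrated in degree $0$ and is controlled by $H^{0}(\mathrm{Ig}^{b},\FF_p)$ as a smooth $\widetilde G(\Q_p)\times J_b(\Q_p)$-module (a perfectoid Mantovan formula). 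The heart of the matter --- and the step I expect to be the main obstacle --- is to prove that, after localising at $\widetilde{\m}$, the cohomology $H^{*}(\mathrm{Ig}^{b},\FF_p)_{\widetilde{\m}}$ vanishes for every non-basic $b$, so that by d\'{e}vissage $(R\pi_{\HT,*}\FF_p)_{\widetilde{\m}}$ is supported on the open basic stratum. For such $b$ the group $J_b$ is an inner form of a proper Levi subgroup, and a Mantovan/Shin-type fibration expresses the cohomology of $\mathrm{Ig}^{b}$ through that of a basic Igusa variety for a smaller group together with an unramified parabolic induction at $p$; hence every mod-$p$ Hecke eigensystem occurring there yields a $2n$-dimensional Galois pseudorepresentation which, up to the usual nilpotent ambiguity, is globally induced from a proper parabolic --- incompatible with $\widetilde{\m}$ being non-Eisenstein and, crucially, with $\overline{\rho}_{\widetilde{\m}}$ being decomposed generic: at a split place $w$ as in Definition~\ref{defn:decomposed_generic}, such a representation would have $\Frob_w$-eigenvalues with some ratio equal to $\#k(w)$, contradicting genericity. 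Carrying this out rigorously is the technical core of~\cite{caraiani-scholze-noncompact}; it requires the computation of the torsion cohomology of Igusa varieties (via Igusa varieties for the Levis and the Fargues--Scholze local Langlands correspondence) together with a careful local analysis at the split auxiliary places.

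Granting this, $(R\pi_{\HT,*}\FF_p)_{\widetilde{\m}}$ has zero restriction to the closed set $Z$, hence equals $j_{!}\mathcal{G}_0$ for a sheaf $\mathcal{G}_0$ concentrated in degree $0$ on the open basic stratum, where $j\colon\mathcal{F}\ell^{b_0}\hookrightarrow\mathcal{F}\ell$. Since $\mathcal{F}\ell$ is proper, $R\Gamma(\mathcal{F}\ell,j_{!}\mathcal{G}_0)=R\Gamma_{c}(\mathcal{F}\ell^{b_0},\mathcal{G}_0)$, and because $\mathcal{F}\ell^{b_0}$ is a Stein space of dimension $d$ its compactly supported cohomology with coefficients in a degree-$0$ sheaf is concentrated in degrees $\geq d$. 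Thus $R\Gamma(\mathcal{F}\ell,R\pi_{\HT,*}\FF_p)_{\widetilde{\m}}$ is concentrated in degrees $\geq d$, which by the discussion above gives $H^{i}(\widetilde X_{\widetilde K},\cV_{\widetilde\lambda}/\varpi)_{\widetilde{\m}}=0$ for $i<d$; the assertion $H^{i}_{c}(\widetilde X_{\widetilde K},\cV_{\widetilde\lambda}/\varpi)_{\widetilde{\m}}=0$ for $i>d$ then follows by the Poincar\'{e} duality of the first paragraph.
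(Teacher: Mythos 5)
This statement is not actually proved in the paper: Theorem \ref{CSintro} is quoted as the main theorem of \cite{caraiani-scholze-noncompact}, and its restatement in the body (Theorem \ref{what we get from CS}) is justified by the single remark that it is an immediate consequence of the main result of that reference. So there is no internal argument to compare against, and the question is whether your sketch constitutes an independent proof. It does not: you yourself defer the decisive step --- the vanishing, after localization at $\widetilde{\m}$, of the cohomology of the Igusa varieties attached to the non-basic Newton strata --- to that same reference, calling it ``the technical core''. What remains is an outline of the Caraiani--Scholze strategy (perfectoid Shimura variety, Hodge--Tate period map, Newton stratification, genericity killing strata, a dimension count), which is in substance the citation the paper already makes rather than a proof.

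Two points in your setup are moreover wrong or unjustified as written. First, you pass to infinite level at $p$ and take $\FF_p$-coefficients, so your perfectoid prime and your coefficient prime coincide. In the intended application the coefficients are mod $p$, while the geometric apparatus (the map $\pi_{\HT}$, the Newton stratification, the Igusa varieties, and the base-change/finiteness statements that make $R\pi_{\HT,*}$ computable stratum by stratum) is deployed at an auxiliary prime $l \neq p$: this is exactly why Definition \ref{defn:decomposed_generic} is a condition at primes $l \neq p$ (the paper notes that the roles of $p$ and $l$ are reversed relative to \cite{caraiani-scholze-compact}), and the hypothesis that every residue characteristic dividing $S$ or $\Delta_F$ splits in an imaginary quadratic subfield is there precisely to place the situation in the setting where the Caraiani--Scholze machinery applies at such an $l$. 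With $p$-torsion \'etale coefficients at the same prime $p$ at which one perfectoidizes, the steps you invoke (e.g.\ the degree-$0$ description of $R\pi_{\HT,*}$ over a stratum via Igusa varieties) are not available. Second, discarding the boundary of the minimal compactification because ``the cusps are Eisenstein'' is not justified: non-Eisensteinness of $\widetilde{\m}$ does not by itself kill the boundary cohomology of the minimal compactification without attaching Galois representations (or Hecke-eigensystem constraints) to those boundary classes, and in \cite{caraiani-scholze-noncompact} the non-compactness is instead handled on the compactly supported side, via $j_!$ into the minimal compactification together with duality, with the genericity hypothesis doing the work. Similarly, the assertions that only the basic stratum survives localization and that the degree bound comes from Stein-ness of that stratum are stated rather than proved and do not obviously match the actual d\'evissage of the reference; since all of this rests on the step you have deferred, the net content of the proposal is the citation itself.
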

This immediately gives  a diagram as follows:
	\[
	H^d(\widetilde{X}_{\widetilde{K}},\cV_{\widetilde{\lambda}}[1/p])_{\widetilde{\m}}\hookleftarrow
	H^d(\widetilde{X}_{\widetilde{K}},\cV_{\widetilde{\lambda}})_{\widetilde{\m}}\twoheadrightarrow
	H^d(\partial\widetilde{X}_{\widetilde{K}},\cV_{\widetilde{\lambda}})_{\widetilde{\m}}.
	\]  
where the leftmost term can be understood in terms of automorphic forms on Shimura varieties,
and in particular (under appropriate assumptions) gives rise to Galois representations 
having the desired $p$-adic Hodge theoretic properties,
and the rightmost term (by construction) now 
sees the part of~$R \Gamma(\partial\widetilde{X}_{\widetilde{K}}, \cV_{\widetilde{\lambda}} / \varpi^m)_{\widetilde{\m}}$
which (after shifting) contributes \emph{in degree~$d$}, at least up to a fixed level of nilpotence.

The idea is then to choose the weight $\widetilde{\lambda}$ so that~$\cV_{\widetilde{\lambda}}$
on~$\widetilde{X}_{\widetilde{K}}$ is related to~$\cV_{\lambda}$
on~$X_K$ by the action of a Weyl group as in Kostant's formula
\cite[Theorem 5.14]{kostant} 
(to do this integrally, we need to assume that $p$ is sufficiently large), 
and that by varying~$\widetilde{\lambda}$ we may see \emph{all} of the cohomology 
of~$R \Gamma(X_K, \cV_\lambda / \varpi^m)_{\m}$
in the degree $d$ cohomology of $R \Gamma(\partial\widetilde{X}_{\widetilde{K}}, \cV_{\widetilde{\lambda}} / \varpi^m)_{\widetilde{\m}}$.
This idea only works for some weights and degrees, so to get around this, 
we first deepen the levels $K$ and $\widetilde{K}$ at some other place above $p$ which 
allows us to modify the weight $\lambda$ at the corresponding embeddings without changing the Hecke algebra. 
For the modified $\lambda$, we can then find $\widetilde{\lambda}$ and a Weyl group element giving us to access 
to $H^q(X_K, \cV_\lambda)_{\m}$ for $q \ge \lfloor \frac{d}{2} \rfloor$  (see Proposition~\ref{degree-shifting}), 
and we handle the remaining degrees by taking duals.
This part of the argument (including the invocation of Theorem~\ref{CSintro}) requires various local assumptions on~$F$ which can always be achieved after
a soluble base change but are not generally satisfied (in particular, they are not satisfied when~$F^{+} = \Q$). 
We then extract the relevant properties of~$\rho_{\m}$ from those of the determinant associated to~$\widetilde{\m}$. 
This summarizes the argument of~\S\ref{section:lep}.

In~\S\ref{sec:ordsection}, we prove a different local--global compatibility theorem in the ordinary case. Although not  strictly
necessary for our main theorems (for compatible families, by taking sufficiently large primes, one can aways reduce
to the Fontaine--Laffaille case),
this allows us to prove a modularity lifting theorem which may have wider applicability --- in particular, 
the main local--global compatibility result of this section (Theorem~\ref{mySecondAmazingTheorem})
applies to any prime $p$, provided $F$ contains an imaginary quadratic field in which $p$ splits.
The general approach in this section is similar to that of~\S\ref{section:lep}. However, 
instead of exhibiting~$R \Gamma(X_K, \cV_\lambda / \varpi^m)_\m$
up to shift
as a direct summand  (as a Hecke module) of~$R \Gamma(\partial\widetilde{X}_{\widetilde{K}}, \cV_{\widetilde{\lambda}} / \varpi^m)_{\widetilde{\m}}$,
(whose proof in~\S\ref{section:lep} required~$p > n^2$),  we make arguments on the level of completed cohomology, and exploit a version of Emerton's
ordinary parts functor. 
A key computation is that of the ordinary part of a parabolic induction from $P$ to $\widetilde{G}$ in \S\ref{computation of ordinary parts}
following arguments of Hauseux \cite{hauseux}.
Because only part of the cohomology of the unipotent radical $U$ is ordinary, only relative Weyl group elements appear in the degree shifts 
(see Theorem~\ref{thm:ord_deg_shifting}) and consequently we only obtain shifts by multiples of $[F^+:\Q]$ in this way. 
We get around this by a trick using the centre of $G$, 
showing that the Hecke algebra acting on $H^\ast(X_K, \cV_\lambda)$ 
can be understood in terms of the Hecke algebra acting only in degrees that are multiples of $[F^+:\Q]$ (Lemma~\ref{centraldegreeshifting}). 
As in the Fontaine--Laffaille case, we can then extract the relevant properties of~$\rho_{\m}$ from those of the determinant associated to~$\widetilde{\m}$.

 We now turn to the modularity lifting theorems of~\S\ref{section:alt}. A key hypothesis of~\cite{CG} was the truth of a
 vanishing conjecture for integral cohomology  localized at a non-Eisenstein maximal ideal~$\m$ outside a prescribed range (mirrored by the characteristic zero vanishing
 theorems of Borel and Wallach~\cite{MR1721403}). This conjecture remains unresolved.
 Instead, we exploit a localization in characteristic zero idea first employed in~\cite{KT}.
This requires a slightly stronger residual modularity hypothesis --- namely, that~$\rhobar_{\m}$ actually comes
from an automorphic representation rather than one merely associated to a torsion class ---
but this will be satisfied for our applications, and is at any rate required at  other points at the argument
(for example to know that the residual modularity hypothesis is preserved under soluble base change).  Two points remain. The first, which is mostly technical,
is to show that the approach of~\cite{CG} and~\cite{KT} is compatible with the fact that we only
have Galois representations to~$\T/J$ for some nilpotent ideal~$J$. The second, which is more serious,
is to show that the localization argument of~\cite{KT} is compatible with the ``Ihara avoidance argument''
of~\cite{tay} and the (essentially identical)~$l_0 > 0$ version of
this argument in~\cite{CG}. (Here~$l_0$ is the parameter of~\cite{MR1721403} which measures the failure of the underlying real group to admit discrete series and which plays plays a fundamental role in~\cite{CG}.)
 To explain the problem,
we briefly recall the main idea of~\cite{tay} in the~$l_0 = 0$ setting (the difficulties are already apparent in this case).
 One compares two  global deformation
problems which (for exposition) differ only  at an auxiliary prime~$v$ with~$l = N(v) \equiv 1 \mod p$,
and which at all other primes have smooth local deformation conditions.
The corresponding local deformation rings~$R^{(1)}_v$ and~$R^{(2)}_v$ 
at the prime~$v$ are taken to be tame local deformation rings which the image of tame inertia has minimal 
polynomial~$(X-1)^n$ or~$(X-\zeta_1)\ldots(X - \zeta_n)$ respectively for distinct roots of unity~$\zeta_i \equiv 1 \mod v$.
The corresponding patched modules~$H^{(1)}_{\infty}$ and~$H^{(2)}_{\infty}$ constructed via the Taylor--Wiles
method (\cite{ MR1333036,kis04}) have the expected
depth over~$S_{\infty}$. On the one hand, the generic fibre of~$R^{(2)}_v$ is geometrically irreducible, which forces~$H^{(2)}_{\infty}$
to have full support over~$R^{(2)}_v$. On the other hand, there is an
isomorphism~$R^{(1)}_v/\varpi = R^{(2)}_v/\varpi$, and this gives %
an identification~$H^{(1)}_{\infty}/\varpi \simeq H^{(2)}_{\infty}/\varpi$.  But now,  the ring~$R^{(1)}_v$  has the convenient
property that any irreducible component of its special fibre comes from a unique irreducible component of the generic
fibre, %
and from this a modularity result is deduced in~\cite{tay} using commutative algebra. 
Suppose we now drop the hypothesis that
the integral cohomology all contributes to cohomology in a single degree  (still in our~$l_0 =0$ setting), but we  continue to assume this holds after inverting~$p$. Now we can no longer
control the depth of the~$S_{\infty}$-modules~$H^{(1)}_{\infty}$ and~$H^{(2)}_{\infty}$, and so knowing~$H^{(2)}_{\infty}[1/p] \ne 0$
and~$H^{(1)}_{\infty}/\varpi = H^{(2)}_{\infty}/\varpi$
 does not  imply that~$H^{(1)}_{\infty}[1/p] \ne 0$.
For example, it could happen that~$H^{(1)}_{\infty} = H^{(1)}_{\infty}/\varpi = H^{(2)}_{\infty}/\varpi$.
The resolution of this difficulty is
not to simply compare the patched modules in fixed (final) degree, but the entire patched complex in the derived category.
The point is now that these complexes in characteristic~$p$ (which are derived reductions of perfect~${S_\infty}$-complexes
for the ring of diamond operators~$S_{\infty}$) remember information about 
characteristic zero. As a simple avatar of this idea, if~$M$ is a finitely generated~$\Z_p$-module, then~$M[1/p]$ is non-zero if and only
if~$M \otimes^{\bL} \FF_p$ has non-zero Euler characteristic over~$\FF_p$.
The main technical formulation of this principle which allows us to prove a version of Ihara avoidance
in our setting is  Lemma~\ref{comalg2}.

Finally, in~\S\ref{section:tricks}, we apply the results of previous sections to prove Theorems~\ref{thm:satotateintro} and~\ref{thm:ramanujanintro}. 
We begin with some preliminaries on compatible systems in order to show there are enough primes such that the corresponding
residual representations satisfy hypotheses of our modularity lifting theorems.
As expected, the arguments of this section make  use of the~$p$-$q$ switch (\cite{fermat}, but first exploited in
the particular context of potential automorphy in~\cite{tay-fm}) and a theorem of Moret-Bailly~\cite{mb}.

\subsection*{Acknowledgments}
It was realized by two of us (A.C. and P.S.) that the local-global compatibility for torsion Galois representations should be a consequence of a non-compact version~\cite{caraiani-scholze-noncompact} of their recent work on the cohomology of Shimura varieties~\cite{caraiani-scholze-compact}.  This led to 
an emerging topics workshop at the Institute for Advanced Study (organized by A.C and R.L.T. and attended by all the authors of this paper) whose goal was to
explore possible consequences for modularity. It was during this
workshop (in November 2016) that the new  Ihara avoidance argument was
found. The authors gratefully acknowledge the IAS for the opportunity
to run this workshop, and thank Matthew Emerton for
his participation. We are also very grateful to Lambert A'Campo and Konstantin Miagkov for helpful comments and questions on an earlier draft.

\subsection{Notation}\label{sec:misc_notation}
We write all matrix transposes on the left; so ${}^t\!A$ is the
transpose of $A$. We will write $\chara_A$ for the characteristic polynomial of a matrix $A$. We write $\GL_n$ for the usual general linear group (viewed as a reductive group scheme over $\Z$) and $T_n \subset B_n \subset \GL_n$ for its subgroups of diagonal and of upper triangular matrices, respectively.
We will write $O(n)$ (resp.\ $U(n)$) for the group of matrices $g \in \GL_n(\R)$ (resp.\ $\GL_n(\C)$) such that ${}^tg^cg=1_n$.

If $R$ is a local ring, we write $\mf{m}_{R}$ for the
maximal ideal of $R$. 

If $\Delta$ is an abelian group, we will let $\Delta^\tor$ denote its maximal torsion subgroup and $\Delta^\tf$ its maximal torsion free quotient. If $\Delta$ is profinite  and abelian, we will also write $\Delta(l)$ for its Sylow pro-$l$-subgroup, which is naturally isomorphic to its maximal pro-$l$ continuous quotient.
If $\Gamma$ is a profinite group, then
$\Gamma^\ab$ will denote its maximal abelian quotient by a closed
subgroup. If $\rho:\Gamma \to \GL_n(\barQQ_l)$ is a continuous
homomorphism, then we will let $\barrho:\Gamma \ra \GL_n(\barFF_l)$
denote the {\em semi-simplification} of its reduction, which is well defined
up to conjugacy (by the Brauer--Nesbitt theorem). 
If %
$M$ is a topological abelian group with a  continuous action of $\Gamma$, then by $H^i(\Gamma,M)$ we shall mean the continuous cohomology.

If $R$ is a (possibly non-commutative) ring, then we will write $\mathbf{D}(R)$ for the derived category of $R$-modules. By definition, an object of $\mathbf{D}(R)$ is a cochain complex of $R$-modules. An object of $\mathbf{D}(R)$ is said to be perfect if it is isomorphic in this category to a bounded complex of projective $R$-modules. 

If $R$ is a complete Noetherian local ring, $C \in \mathbf{D}(R)$ is a perfect complex, and $T \to \End_{\mathbf{D}(R)}(C)$ is a homomorphism of $R$-algebras, then the image $\overline{T}$ of $T$ in $\End_{\mathbf{D}(R)}(C)$ is a finite $R$-algebra, which can therefore be written as a product $\overline{T} = \prod_{\m} \overline{T}_\m$ of its localizations at maximal ideals. There is a corresponding decomposition $1 = \sum_{\m} e_\m$ of the unit in $\overline{T}$ as a sum of idempotents. Since $\mathbf{D}(R)$ is idempotent complete, this determines a decomposition $C = \oplus_\m C_\m$ in $\mathbf{D}(R)$. The direct summands $C_\m$ are well-defined up to unique isomorphism. We usually reserve the symbol~$C^\bullet$ to
refer to an element in the category of cochain complexes, although hopefully statements of the form~$C^\bullet = 0$ in $\mathbf{D}(R)$ will not cause any confusion.

If $G$ is a locally profinite group, and $U \subset G$ is an open compact subgroup, then we write $\cH(U, G)$ for the algebra of compactly supported, $U$-biinvariant functions $f : G \to \Z$, with multiplication given by convolution with respect to the Haar measure on $G$ which gives $U$ volume 1. If $X \subset G$ is a compact $U$-biinvariant subset, then we write $[X]$ for the characteristic function of $X$, an element of  $\cH(U, G)$. 

If $G$ is a reductive group over a field $k$ and $T \subset G$ is a split maximal torus, then we write $W(G, T)$ for the Weyl group (the set of $k$-points of the quotient $N_G(T) / T)$. For example, if $F / \Q$ is a number field, then we may identify  $W((\Res_{F / \Q} \GL_n)_\C, (\Res_{F / \Q} T_n)_\C)$ with $S_n^{\Hom(F, \C)}$. If $P \subset G$ is a parabolic subgroup which contains $T$, then there is a unique Levi subgroup $L \subset P$ which contains $T$. We write $W_P(G, T)$ for the absolute Weyl group of this Levi subgroup, which may be identified with a subgroup of $W(G, T)$. 

Suppose that $G$ comes equipped with a Borel subgroup $B$ containing
$T$. Then we can form $X^\ast(T)^+ \subset X^\ast(T)$, the subset of
$B$-dominant characters. If $P$ is a parabolic subgroup of $G$ which
contains $B$, with Levi~$L$ as above, then $B \cap L$ is a Borel subgroup of $L$ and we write $X^\ast(T)^{+, P}$ for the subset of $(B \cap L)$-dominant characters. The set 
\[ W^P(G, T) = \{ w \in W(G, T) \mid w(X^\ast(T)^+) \subset X^\ast(T)^{+, P} \} \]
is a set of representatives for the quotient $W_P(G, T) \backslash W(G, T)$. 

\subsubsection*{Galois representations}

If $F$ is a perfect field, we let $\barF$ denote an algebraic closure of $F$
and $G_F$ the absolute Galois group $\Gal(\barF/F)$.  We will use
$\zeta_n$ to denote a primitive $n^{th}$-root of $1$.  Let
$\epsilon_l$ denote the $l$-adic cyclotomic character and
$\barepsilon_l$ its reduction modulo $l$. We will also let
$\omega_l:G_F \ra \mu_{l-1} \subset \Z_l^\times$ denote the
Teichm\"{u}ller lift of $\barepsilon_l$.  If $E/F$ is a separable
quadratic extension, we will let $\delta_{E/F}$ denote the non-trivial
character of $\Gal(E/F)$. We will write $\Br_F$ for the Brauer group of $F$.

We will write $\Q_{l^r}$ for the unique unramified extension of $\Q_l$ of degree $r$ and $\Z_{l^r}$ for its ring of integers. We will write $\Q_l^\nr$ for the maximal unramified extension of $\Q_l$ and $\Z_l^\nr$ for its ring of integers. We will also write $\hatZZ_l^\nr$ for the $l$-adic completion of $\Z_l^\nr$ and $\hatQQ_l^\nr$ for its field of fractions. 

If $K$ is a finite
extension of $\Q_p$ for some $p$, we write $K^\nr$ for its maximal unramified extension; $I_K$ for the inertia
subgroup of $G_K$; $\Frob_K \in G_K/I_K$ for the geometric Frobenius; and
$W_K$ for the Weil group. %
If $K'/K$ is a Galois extension we will write $I_{K'/K}$ for the inertia subgroup of $\Gal(K'/K)$.
We will write $\Art_K:K^\times \iso W_K^\ab$ for the Artin map normalized to send uniformizers to geometric Frobenius elements. We will write $\omega_{l,r}$ for the character $G_{\Q_{l^r}} \ra \Z_{l^r}^\times$ such that $\omega_{l,r} \circ \Art_{\Q_{l^r}}$ sends $l$ to $1$ and sends $a \in \Z_{l^r}^\times$ to the Teichm\"{u}ller lift of $a \bmod l$. This is sometimes referred to as ``the fundamental character of niveau $r$.'' (Thus $\omega_{l,1} = \omega_l$.)

We will let $\rec_K$ be the local Langlands correspondence of
\cite{ht}, so that if $\pi$ is an irreducible complex
admissible representation of $\GL_n(K)$, then $\rec_K(\pi)$ is a
Frobenius semi-simple Weil--Deligne representation of the Weil group $W_K$. 
We will write $\rec$ for $\rec_K$
when the choice of $K$ is clear. We write $\rec^T_K$ for the arithmetic normalization of the local Langlands correspondence, as defined in e.g.\ ~\cite[\S 2.1]{Clo14}; it is defined on irreducible admissible representations of $\GL_n(K)$ defined over any field which is abstractly isomorphic to $\C$ (e.g.\ $\overline{\Q}_l$).

If $(r,N)$ is a Weil--Deligne representation of $W_K$, we will write $(r,N)^{F-\semis}$ for its Frobenius semisimplification.
If $\rho$ is a continuous representation of $G_K$ over $\barQQ_l$ with $l\neq p$ then we will write $\WD(\rho)$ for the corresponding Weil--Deligne representation of $W_K$. (See for instance section 1
of \cite{ty}.) By a {\em Steinberg} representation of $\GL_n(K)$ we will mean a representation $\Spp_n(\psi)$  (in the notation of section 1.3 of \cite{ht}) where $\psi$ is an unramified character of $K^\times$.
If $\pi_i$ is an irreducible smooth representation of $\GL_{n_i}(K)$ for $i=1,2$, we will write $\pi_1 \boxplus \pi_2$ for the irreducible smooth representation of $\GL_{n_1+n_2}(K)$ with $\rec(\pi_1 \boxplus \pi_2)=\rec(\pi_1) \oplus \rec(\pi_2)$.
If $K'/K$ is a finite extension and if $\pi$ is an irreducible smooth representation of $\GL_n(K)$ we will write $\BC_{K'/K}(\pi)$ for the base change of $\pi$ to $K'$ which is characterized by $\rec_{K'}(\BC_{K'/K}(\pi))=
\rec_K(\pi)|_{W_{K'}}$.

If $\rho$ is a  de Rham representation of $G_K$ over $\barQQ_p$, then we will write $\WD(\rho)$ for the corresponding Weil--Deligne representation of $W_K$, and if $\tau:K \into \barQQ_p$ is a
continuous embedding of fields, then we will write $\HT_\tau(\rho)$ for the multiset of Hodge--Tate numbers of $\rho$ with respect to $\tau$. Thus $\HT_\tau(\rho)$ is a multiset of $\dim \rho$ integers. 
In fact if $W$ is a de Rham representation of $G_K$ over $\barQQ_p$ and if $\tau:K \into \barQQ_p$, then the multiset $\HT_\tau(W)$ contains
$i$ with multiplicity $\dim_{\barQQ_l} (W \otimes_{\tau,K} \widehat{\barK}(i))^{G_K} $. Thus, for example,
$\HT_\tau(\epsilon_p)=\{ -1\}$. 

If $G$ is a reductive group over $K$ and $P$ is a parabolic subgroup
with unipotent radical  $N$ and Levi component $L$, and if $\pi$ is a
smooth representation of $L(K)$, then we define $\Ind_{P(K)}^{G(K)}
\pi$ to be the set of locally constant functions $f:G(K) \ra \pi$ such
that $f(hg)=\pi(hN(K)) f(g)$ for all $h \in P(K)$ and $g\in G(K)$. It
is a smooth representation of $G(K)$ where
$(g_1f)(g_2)=f(g_2g_1)$. This is sometimes referred to as `natural' or
`un-normalized' induction. We let $\delta_P$ denote the determinant of
the action of $L$ on $\Lie N$. Then we define the `normalized' or
`unitary' induction $\nInd_{P(K)}^{G(K)} \pi$ to be
$\Ind_{P(K)}^{G(K)} (\pi \otimes |\delta_P|_K^{1/2})$. %
If $P$ is any parabolic in $\GL_{n_1+n_2}$ with Levi component $\GL_{n_1} \times \GL_{n_2}$, then $\pi_1 \boxplus \pi_2$ is a sub-quotient of $\nInd_{P(K)}^{\GL_{n_1+n_2}(K)} \pi_1 \otimes \pi_2$. 

We will let $c$ denote complex conjugation on $\C$. We will write $\Art_\R$ (resp.\ $\Art_\C$) for the unique continuous surjection
\[ \R^\times \onto \Gal(\C/\R) \]
(resp.\ $\C^\times \onto \Gal(\C/\C)$).
We will write $\rec_\C$ (resp.\ $\rec_\R$), or simply $\rec$, for the local Langlands correspondence from irreducible admissible $(\Lie \GL_n(\R) \otimes_\R \C, O(n))$-modules (resp.\ $(\Lie \GL_n(\C) \otimes_\R \C, U(n))$-modules) to continuous, semi-simple $n$-dimensional representations of the Weil group $W_\R$ (resp.\ $W_\C$). (See \cite{langlandsrg}.) If $\pi_i$ is an irreducible admissible $(\Lie \GL_{n_i}(\R) \otimes_\R \C, O(n_i))$-module (resp.\ $(\Lie \GL_{n_i}(\C) \otimes_\R \C, U(n_i))$-module) for $i=1,\dots,r$ and if $n=n_1+\dots+n_r$, then we define an irreducible admissible $(\Lie \GL_n(\R) \otimes_\R \C, O(n))$-module (resp.\ $(\Lie \GL_n(\C) \otimes_\R \C, U(n))$-module) $\pi_1 \boxplus \cdots \boxplus \pi_r$ by
\[ \rec(\pi_1 \boxplus \cdots \boxplus \pi_r) = \rec(\pi_1) \oplus \cdots \oplus \rec(\pi_r). \]
If $\pi$ is an irreducible admissible $(\Lie \GL_{n}(\R) \otimes_\R \C, O(n))$-module, then we define $\BC_{\C/\R}(\pi)$ to be the irreducible admissible $(\Lie \GL_{n}(\C) \otimes_\R \C, U(n))$-module defined by
\[ \rec_\C(\BC_{\C/\R}(\pi))=\rec_\R(\pi)|_{W_\C}. \]%

If $\pi$ is an irreducible admissible representation of
$\GL_n(\A_F)$ and $\xi \in (\Z^n_+)^{\Hom(F, \C)}$, we say that $\pi$
is regular algebraic of weight $\xi$ if the infinitesimal character of $\pi_\infty$ is
the same as that of $V_\xi^\vee$, where~$V_\xi$ is the algebraic
representation of $\Res_{F/\Q}\GL_n$ of highest weight~$\xi$ (see
Section~\ref{sec:unitary_group_setup}). We say that it is regular
algebraic if it is regular algebraic of some weight.

We will write $||\,\,\,||$ for the continuous homomorphism
\[ ||\,\,\,||=\prod_v |\,\,\,|_v: \A^\times/\Q^\times \lra \R^\times_{>0}, \]
where each $|\,\,\,|_v$ has its usual normalization, i.e.\ $|p|_p=1/p$. 

Now suppose that $K/\Q$ is a finite extension. 
We will write $||\,\,\,||_K$ (or simply $||\,\,\,||$) for $||\,\,\, || \circ \norm_{K/\Q}$. We will also write
\[ \Art_K = \prod_v \Art_{K_v}:\A_K^\times /\overline{K^\times (K_\infty^\times)^0} \liso G_K^\ab. \]
If $v$ is a finite place of $K$, we will write $k(v)$ for its residue field, $q_v$ for $\# k(v)$, and $\Frob_v$ for $\Frob_{K_v}$. If $v$ is a real place of $K$, then we will let
$[c_v]$ denote the conjugacy class in $G_K$ consisting of complex conjugations associated to $v$. 
If $K'/K$ is a quadratic extension of number fields, we will denote by $\delta_{K'/K}$ the nontrivial
character of $\A_K^\times/K^\times \norm_{K'/K}\A_{K'}^\times$. (We hope that this will cause no confusion with the Galois character $\delta_{K'/K}$. One equals the composition of the other with the Artin map for $K$.) 
If $K'/K$ is a soluble, finite Galois extension and if $\pi$ is a
cuspidal automorphic representation of $\GL_n(\A_K)$ we will write
$\BC_{K'/K}(\pi)$ for its base change to $K'$, an (isobaric) automorphic representation
of $\GL_n(\A_{K'})$ satisfying 
\[ \BC_{K'/K}(\pi)_v=\BC_{K'_v/K_{v|_K}}(\pi_{v|_K}) \]
for all places $v$ of $K'$. If $\pi_i$ is an automorphic representation of $\GL_{n_i}(\A_K)$ for $i=1,2$, we will write $\pi_1 \boxplus \pi_2$ for the automorphic representation of $\GL_{n_1+n_2}(\A_K)$ satisfying
\[ (\pi_1 \boxplus \pi_2)_v=\pi_{1,v} \boxplus \pi_{2,v} \]
for all places $v$ of $K$.

We will call a number field $K$ a CM field if it has an automorphism
$c$ such that for all embeddings $i:K \into \C$ one has $c \circ i = i
\circ c$. In this case, either $K$ is totally real or a totally
imaginary quadratic extension of a totally real field. In either case,
we will let $K^+$ denote the maximal totally real subfield of $K$.

Suppose that $K$ is a number field and 
\[ \chi: \A_K^\times /K^\times \lra \C^\times \]
is a continuous character. If there exists $a \in \Z^{\Hom(K,\C)}$ such that
\[ \chi|_{(K_\infty^\times)^0}: x \longmapsto \prod_{\tau \in \Hom(K,\C)} (\tau x)^{a_\tau}, \]
we will call $\chi$ algebraic. In this case, we can attach to $\chi$ and a rational prime $l$ and an isomorphism $\imath:\barQQ_l \iso \C$, a unique continuous character
\[ r_{l,\imath}(\chi): G_K \lra \barQQ_l^\times \]
such that for all $v\ndiv l$ we have
\[ \imath \circ r_{l,\imath}(\chi)|_{W_{K_v}} \circ \Art_{K_v} = \chi_v. \] 
There is also an integer $\wt(\chi)$, the weight of $\chi$, such that
\[ |\chi|= ||\,\,\,||_K^{-\wt(\chi)/2}. \]
(See the discussion at the start of \cite[\S A.2]{BLGGT} for more details.)

If $K$ is a totally real field, we call a continuous character 
\[ \chi:\A_K^\times/K^\times \lra \C^\times \]
totally odd if $\chi_v(-1)=-1$ for all $v|\infty$. Similarly, we call a continuous character
\[ \mu:G_K \lra \barQQ_l^\times \]
totally odd if $\mu(c_v)=-1$ for all $v|\infty$.

\section{Preliminaries on the cohomology of locally symmetric spaces and Galois representations}
\label{section:notation}

Our main objects of study in this paper are $n$-dimensional Galois representations and their relation to the cohomology of congruence subgroups of $\GL_n$ (equivalently, the cohomology of the locally symmetric spaces attached to congruence subgroups of $\GL_n$). In this introductory section we establish some basic notation and definitions concerning these objects, and recall some of their fundamental known properties. In particular, we will define cohomology groups associated to an arbitrary weight and level and also define the Hecke algebras which act on these cohomology groups.

\subsection{Arithmetic locally symmetric spaces: generalities}

\subsubsection{Symmetric spaces}\label{sssec:symmetric}  Let $F$ be a number field and let $\mathrm{G}$ be a connected linear algebraic group over $F$. We 
consider a 
space of type $S-\Q$ for $\mathbb{G}:=\mathrm{Res}_{F/\Q}\mathrm{G}$,
in the sense of~\cite[\S 2]{MR0387495} (see also~\cite[\S 3.1]{new-tho}). This 
is a pair consisting of a homogeneous space $X^{\mathrm{G}}$ for 
$\mathbb{G}(\R)$ and a family of Levi subgroups of $\mathbb{G}_{\R}$ 
satisfying 
certain conditions. From~\cite[Lem. 2.1]{MR0387495}, %
the homogeneous space 
$X^{\mathrm{G}}$ is determined up to isomorphism. We will refer to 
$X^{\mathrm{G}}$ as \emph{the symmetric space} for $\mathrm{G}$. For 
example, 
if $\mathrm{G}=\GL_{n, F}$, we can take $X^{\mathrm{G}} = 
\GL_n(F_\infty)/K_\infty \R^\times$ for  $K_\infty\subset \GL_n(F_\infty)$ a 
maximal compact subgroup. 

An open compact subgroup $K_{\mathrm{G}} \subset
\mathrm{G}(\A_F^\infty)$ is said to be \emph{neat} if all of its
elements are neat. An element $g = (g_v)_v \in
\mathrm{G}(\A_F^\infty)$ is said to be neat if the intersection
$\cap_v \Gamma_v$ is trivial, where $\Gamma_v \subset
\overline{\bQ}^\times$ is the torsion subgroup of the subgroup of
$\overline{F}_v^\times$ generated by the eigenvalues of $g_v$ acting via
some faithful representation of $\mathrm{G}$.

We will call a `good subgroup' any neat open compact subgroup $K_{\mathrm{G}} \subset \mathrm{G}(\A_F^\infty)$ of the form $K_G = \prod_v K_{G, v}$, the product running over finite places $v$ of ~$F$. If $K_G$ is a good subgroup, then we define
 \[
 X^{\mathrm{G}}_{K_{\mathrm{G}}}:= \mathrm{G}(F)\backslash\left(
 X^{\mathrm{G}}\times \mathrm{G}(\A^\infty_{F})/K_{\mathrm{G}}\right) \text{ and }
 \mathfrak{X}_{\mathrm{G}}:=\mathrm{G}(F)\backslash\left(X^{\mathrm{G}}\times 
 \mathrm{G}(\A^\infty_{F})\right),
 \]
  the latter with the discrete topology on $\mathrm{G}(\A^\infty_{F})$.
 
These topological spaces may be given the structure of smooth manifolds, and $\mathrm{G}(\A^\infty_F)$ acts on $\mathfrak{X}_{\mathrm{G}}$ by right translation. We can identify $X^{\mathrm{G}}_{K_\mathrm{G}} = \mathfrak{X}_{\mathrm{G}} / K_{\mathrm{G}}$. Note that the space $X^\mathrm{G}$ is diffeomorphic to Euclidean space. The neatness condition on $K_{\mathrm{G}}$ implies that $X^{\mathrm{G}}_{K_{\mathrm{G}}}$ can be identified with a finite disjoint union of quotients of $X^\mathrm{G}$ by the action of torsion-free arithmetic subgroups of $\mathrm{G}(F)$. 

We let $\overline{X}^{\Grm}$ denote the partial Borel--Serre
compactification of $X^{\Grm}$ (see~\cite[\S 7.1]{MR0387495}). Define 
\[
\overline{X}^{\mathrm{G}}_{K_{\mathrm{G}}}:= \mathrm{G}(F)\backslash\left(
\overline{X}^{\mathrm{G}}\times 
\mathrm{G}(\A^\infty_{F})/K_{\mathrm{G}}\right)
\ \mathrm{and}\ 
\overline{\mathfrak{X}}_{\mathrm{G}}:=\mathrm{G}(F)\backslash\left(
\overline{X}^{\mathrm{G}}\times \mathrm{G}(\A^\infty_{F})\right). 
\]
For any good subgroup $K_{\Grm}\subset 
\mathrm{G}(\A_{F}^\infty)$, 
the space $\overline{X}^{\mathrm{G}}_{K_{\mathrm{G}}}$, 
which can be identified with $\overline{\mathfrak{X}}_{\Grm}/K_{\Grm}$, is 
compact (see~\cite[Theorem 9.3]{MR0387495}). More precisely, $\overline{X}^{\Grm}_{K_{\Grm}}$ is a compact
smooth manifold with corners with interior $X^{\Grm}_{K_{\Grm}}$; 
the inclusion $X^{\Grm}_{K_{\Grm}}
\hookrightarrow \overline{X}^{\Grm}_{K_{\Grm}}$ is a homotopy 
equivalence. We also define $\partial X^\Grm = \overline{X}^\Grm - X^\Grm$ and
\[
\partial X^{\mathrm{G}}_{K_{\mathrm{G}}}:= \mathrm{G}(F)\backslash\left(
\partial X^{\mathrm{G}}\times 
\mathrm{G}(\A^\infty_{F})/K_{\mathrm{G}}\right)
\ \mathrm{and}\ 
\partial\mathfrak{X}_{\mathrm{G}}:=\mathrm{G}(F)\backslash\left(
\partial X^{\mathrm{G}}\times \mathrm{G}(\A^\infty_{F})\right). 
\]

\subsubsection{Hecke operators and coefficient systems}\label{sec:general_definition_of_hecke_operators}
If $S$ is a finite set of finite places of $F$ 
we set $\Grm^S := \mathrm{G}(\A^{\infty,S}_{F})$ and $\Grm_S := 
\mathrm{G}(\A_{F, S})$, and similarly $K_\Grm^S = \prod_{v \not\in S} K_{\Grm, v}$ and $K_{\Grm, S} = \prod_{v \in S} K_{\Grm, v}$. We also sometimes write $\Grm^\infty = \Grm(\A_F^\infty)$.

Let $R$ be a ring and let $\cV$ be an $R[\Grm(F) \times K_{\Grm, S}]$-module, finite free as $R$-module. We now explain how to obtain a local system of finite free $R$-modules, also denoted $\cV$, on 
$X^{\mathrm{G}}_{K_{\mathrm{G}}}$, 
and how to equip the complex $R\Gamma(X^{\mathrm{G}}_{K_{\mathrm{G}}}, \cV) \in \mathbf{D}(R)$ with an action 
of the Hecke algebra $\cH(\Grm^S, K_G^S)$, following the formalism of 
\cite{new-tho} (in particular, viewing $X^\Grm \times \Grm(\A_F^\infty)$ as a right $\Grm(F) \times \Grm(\A_F^\infty)$-space). 

 The $R[\Grm(F) \times K_{\mathrm{G}, S}]$-module 
$\cV$ 
determines (by pullback from a point) a 
$\Grm(F) \times \mathrm{G}^S\times 
K_{\mathrm{G},S}$-equivariant sheaf, also denoted $\cV$, of 
finite free $R$-modules on 
$X^\Grm \times \Grm(\A_F^\infty)$, hence (by descent under a free action, as in ~\cite[Lem. 
2.17]{new-tho}) a  $\mathrm{G}^S\times 
K_{\mathrm{G},S}$-equivariant sheaf $\cV$ on $\mathfrak{X}_\Grm$.
 By taking derived global sections we obtain 
$R\Gamma(\mathfrak{X}_\Grm, \cV)$, which is an object
of the derived category of $R[\mathrm{G}^S\times 
K_{\mathrm{G},S}]$-modules. By taking derived invariants under $K_{\mathrm{G}}$ 
we obtain $R\Gamma(K_{\mathrm{G}}, R\Gamma(\mathfrak{X}_{\mathrm{G}}, 
\cV_{\mathfrak{X}_{\mathrm{G}}}))$, which is an object of the derived category 
of $\cH(\Grm^S, K_{\Grm}^S) \otimes_\Z R$-modules. 

On the other hand, if we only think of $\cV$ as a 
$K_{\mathrm{G}}$-equivariant sheaf on $\mathfrak{X}_{\mathrm{G}}$, it is 
equivalent to a sheaf $\cV$ on $X^{\mathrm{G}}_{K_{\Grm}}$ (applying once again \cite[Lem. 
2.17]{new-tho}). 
The complex $R\Gamma(X^{\mathrm{G}}_{K_{\Grm}}, \cV)$ 
is naturally isomorphic in $\mathbf{D}(R)$ to the image of the complex $R\Gamma(K_{\mathrm{G}}, 
R\Gamma(\mathfrak{X}_{\mathrm{G}}, \cV_{\mathfrak{X}_{\mathrm{G}}}))$ under the exact forgetful functor
\[ \mathbf{D}(\cH(\Grm^S, K_{\Grm}^S) \otimes_\Z R) \to \mathbf{D}(R), \]
cf.~\cite[Prop. 2.18]{new-tho}. In this way, we obtain a canonical homomorphism
\numequation\label{eqn:hecke_morphism}
\cH(\Grm^S, K_{\Grm}^S)\otimes_\Z R \to \End_{\mathbf{D}(R)}(R\Gamma(X^{\mathrm{G}}_{K_{\Grm}}, \cV)).
\end{equation}
The same formalism applies equally well to the Borel--Serre compactification (because $G(F) \times K_\mathrm{G}$ acts freely on $\overline{X}^\mathrm{G} \times \Grm(\A_F^\infty)$). Even more generally, if $Y$ is any right $\Grm^S \times K_{\Grm, S}$-space and $C$ is any bounded-below complex of $\Grm^S \times K_{\Grm, S}$-equivariant sheaves of $R$-modules on $Y$, there is a homomorphism 
\[ \cH(\Grm^S, K_{\Grm}^S)\otimes_\Z R \to \End_{\mathbf{D}(R)}(R\Gamma(K_{\Grm}, R \Gamma(Y, C)  )).\]
Taking $j : X^\Grm \times \Grm(\A_F^\infty) \to \overline{X}^\Grm \times \Grm(\A_F^\infty)$ to be the canonical open immersion and $\cV$ to be an $R[\Grm(F) \times K_{\Grm, S}]$-module, finite free as $R$-module, this determines an action of the Hecke algebra on the cohomology groups with compact support:
\begin{multline*}  \cH(\Grm^S, K_{\Grm}^S)\otimes_\Z R \to \End_{\mathbf{D}(R)}(R\Gamma(\Grm(F) \times  K_{\Grm}, R \Gamma(\overline{X}^{\Grm} \times \Grm(\A_F^\infty), j_! \cV)  )) \\
= \End_{\mathbf{D}(R)}(R \Gamma_c(X^{\Grm}_{K_{\Grm}}, \cV)). 
\end{multline*}
We have the following lemma, which is a consequence of the existence of the Borel--Serre compactification (see~\cite[\S 11]{MR0387495}):
\begin{lemma}\label{lem:finite_generation_of_cohomology}
	Let $K_\Grm$ be a good subgroup, let $R$ be a Noetherian ring, and let $\cV$ be an $R[\Grm(F) \times K_\Grm]$-module, finite free as $R$-module. Then $H^\ast(X^\Grm_{K_\Grm}, \cV)$ is a finitely generated $R$-module.
\end{lemma}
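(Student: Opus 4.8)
The plan is to transport the question to the Borel--Serre compactification and then reduce to the elementary fact that the cohomology of a finite CW complex with coefficients in a local system of finitely generated modules over a Noetherian ring is finitely generated. First I would use that, since $K_\Grm$ is a good subgroup (hence neat), the object $\cV$ really does define a local system of finite free $R$-modules on the smooth manifold $X^\Grm_{K_\Grm}$, as explained in~\S\ref{sec:general_definition_of_hecke_operators}, so that $R\Gamma(X^\Grm_{K_\Grm}, \cV)$ computes the cohomology of $X^\Grm_{K_\Grm}$ with coefficients in this local system. Cohomology with local coefficients is a homotopy invariant, and the inclusion $X^\Grm_{K_\Grm} \hookrightarrow \overline{X}^\Grm_{K_\Grm}$ into the (compact) Borel--Serre compactification is a homotopy equivalence; the local system $\cV$ therefore corresponds, under the induced equivalence of fundamental groupoids, to a local system $\overline{\cV}$ of finite free $R$-modules on $\overline{X}^\Grm_{K_\Grm}$ with $H^\ast(\overline{X}^\Grm_{K_\Grm}, \overline{\cV}) \cong H^\ast(X^\Grm_{K_\Grm}, \cV)$. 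So it suffices to prove that $H^\ast(\overline{X}^\Grm_{K_\Grm}, \overline{\cV})$ is a finitely generated $R$-module.

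Next I would use that $\overline{X}^\Grm_{K_\Grm}$ is a \emph{compact} smooth manifold with corners, hence admits a finite CW structure (equivalently, a finite triangulation). Pulling $\overline{\cV}$ back to the universal cover of each (of the finitely many) connected components and forming the cellular cochain complex, one obtains a bounded complex $C^\bullet$ of $R$-modules computing $H^\ast(\overline{X}^\Grm_{K_\Grm}, \overline{\cV})$, in which each term $C^i$ is a finite direct sum of copies of a stalk of $\overline{\cV}$, and hence is a finitely generated (indeed finite free) $R$-module.

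Finally, because $R$ is Noetherian, in each degree the submodule of cocycles in the finitely generated module $C^i$ is finitely generated, and its quotient by the coboundaries is again finitely generated; thus each $H^i(\overline{X}^\Grm_{K_\Grm}, \overline{\cV}) \cong H^i(X^\Grm_{K_\Grm}, \cV)$ is a finitely generated $R$-module, as claimed. The argument is entirely routine; the only point meriting a word of justification is that the compact manifold-with-corners underlying $\overline{X}^\Grm_{K_\Grm}$ has a finite cell structure, which follows from the construction of the Borel--Serre compactification in~\cite{MR0387495} together with standard triangulability of compact manifolds (with corners), and this is the step I would expect a careful reader to want spelled out.
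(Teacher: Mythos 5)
Your proof is correct and follows essentially the same route the paper intends: the paper deduces the lemma from the existence of the Borel--Serre compactification, and its proof of the strengthening (Lemma \ref{lem:cohomology_is_perfect}) uses exactly your ingredients — the homotopy equivalence $X^\Grm_{K_\Grm} \hookrightarrow \overline{X}^\Grm_{K_\Grm}$, a finite triangulation of the compact Borel--Serre compactification, and the resulting bounded complex of finitely generated $R$-modules, with Noetherianity of $R$ finishing the argument.
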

A variant of this construction arises when we are given a normal good subgroup $K_{\Grm}' \subset K_{\Grm}$ with the property that $K_{\Grm}^S = (K'_{\Grm})^S$. Then we write $R \Gamma_{K_{\Grm} / K'_{\Grm} }(X^\Grm_{K'_\Grm}, \cV) \in \mathbf{D}(R[K_{\Grm} / K_{\Grm}'])$ for the complex in this category computing the cohomology of $H^\ast(X^\Grm_{K'_\Grm}, \cV)$ with its natural $K_{\Grm} / K'_{\Grm} = K_{\Grm, S} / K'_{\Grm, S}$-action. The image of this complex under the forgetful functor $\mathbf{D}(R[K_{\Grm} / K_{\Grm}']) \to \mathbf{D}(R)$ is $R \Gamma(X^\Grm_{K'_\Grm}, \cV)$, and there is a homomorphism
\numequation\label{eqn:hecke_morphism_with_coefficients}
\cH(\Grm^S, K_{\Grm}^S)\otimes_\Z R \to \End_{\mathbf{D}(R[K_{\Grm} / K_{\Grm}'])}(R\Gamma_{K_{\Grm} / K_{\Grm}'}(X^{\mathrm{G}}_{K_{\Grm}'}, \cV))
\end{equation}
which recovers (\ref{eqn:hecke_morphism}) after composition with the map
\numequation
\End_{\mathbf{D}(R[K_{\Grm} / K_{\Grm}'])}(R\Gamma_{K_{\Grm} / K_{\Grm}'}(X^{\mathrm{G}}_{K_{\Grm}'}, \cV)) \to \End_{\mathbf{D}(R)}(R\Gamma(X^{\mathrm{G}}_{K_{\Grm}}, \cV))
\end{equation}
given by the functor $R \Gamma(K_{\Grm} / K_{\Grm}', ?)$. 

The following lemma is a strengthening of Lemma \ref{lem:finite_generation_of_cohomology}:
\begin{lemma}\label{lem:cohomology_is_perfect}
	Let $K_\Grm$ be a good subgroup, and let $K'_\Grm \subset K_\Grm$ be a normal subgroup which is also good. Let $R$ be a Noetherian ring, and let $\cV$ be an $R[\Grm(F) \times K_\Grm]$-module, finite free as $R$-module. Then $R \Gamma_{K_{\Grm} / K_{\Grm}'}( X^\Grm_{K_\Grm'}, \cV)$ is a perfect object of $\mathbf{D}(R[K_\Grm / K_\Grm'])$; in other words, it is isomorphic in this category to a bounded complex of projective $R[K_\Grm / K_\Grm']$-modules.
\end{lemma}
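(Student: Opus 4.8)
The plan is to realise $R\Gamma(X^\Grm_{K'_\Grm}, \cV)$ explicitly by a bounded complex of finite free $R[\Gamma]$-modules, where $\Gamma := K_\Grm / K'_\Grm$, by triangulating a compact model for the covering $X^\Grm_{K'_\Grm} \to X^\Grm_{K_\Grm}$. Note first that $\Gamma$ is a \emph{finite} group, since $K'_\Grm$ is an open subgroup of the compact group $K_\Grm$; in particular $R[\Gamma]$ is Noetherian and every finitely generated free $R[\Gamma]$-module is projective. Because $K_\Grm$ is neat it acts freely on $\overline{X}^\Grm \times \Grm(\A_F^\infty)$, so $\Gamma$ acts freely on the compact Borel--Serre space $\overline{X}^\Grm_{K'_\Grm} = \overline{\mathfrak{X}}_\Grm / K'_\Grm$, with quotient $\overline{X}^\Grm_{K_\Grm}$. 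A free action of a finite group on a Hausdorff space is properly discontinuous, so $\overline{X}^\Grm_{K'_\Grm} \to \overline{X}^\Grm_{K_\Grm}$ is a finite Galois covering with deck group $\Gamma$, and the $R[\Gamma]$-module structure it induces on cohomology is the one in the statement, coming from the right-translation action of $K_\Grm$ on $\mathfrak{X}_\Grm/K'_\Grm$ (cf.\ (\ref{eqn:hecke_morphism_with_coefficients})). Since the open immersion $X^\Grm_{K'_\Grm} \hookrightarrow \overline{X}^\Grm_{K'_\Grm}$ is a $\Gamma$-equivariant homotopy equivalence (it is functorial in the level, hence equivariant for the right $\Grm(\A_F^\infty)$-action, and one may in any case average a collar over the finite group $\Gamma$), and $\cV$ extends to a $\Gamma$-equivariant local system on $\overline{X}^\Grm_{K'_\Grm}$, we get $R\Gamma(X^\Grm_{K'_\Grm}, \cV) \simeq R\Gamma(\overline{X}^\Grm_{K'_\Grm}, \cV)$ in $\mathbf{D}(R[\Gamma])$.

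Next I would choose a finite triangulation (or finite CW structure) of the compact manifold with corners $\overline{X}^\Grm_{K_\Grm}$ --- such a structure exists, as already used in the proof of Lemma~\ref{lem:finite_generation_of_cohomology}, cf.\ \cite[\S 11]{MR0387495} --- and pull it back along the covering map to a finite CW structure on $\overline{X}^\Grm_{K'_\Grm}$ on which $\Gamma$ acts cellularly and \emph{freely permutes the (finitely many) cells}: each cell of the base is simply connected, hence lifts to $|\Gamma|$ cells forming a single free $\Gamma$-orbit. The associated cellular cochain complex $C^\bullet = C^\bullet(\overline{X}^\Grm_{K'_\Grm}; \cV)$ computes $R\Gamma(\overline{X}^\Grm_{K'_\Grm}, \cV)$ and is naturally a bounded complex of $R[\Gamma]$-modules, the $\Gamma$-action combining the free action on cells with the equivariant structure of $\cV$. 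In degree $i$ one has $C^i \cong \bigoplus_{\sigma} \cV_\sigma$ over the $i$-cells $\sigma$; grouping the cells into free $\Gamma$-orbits and using that each stalk $\cV_\sigma$ is finite free over $R$ gives $C^i \cong \bigoplus_{\mathrm{orbits}} R[\Gamma] \otimes_R R^{\oplus \mathrm{rk}\, \cV}$, a finitely generated free $R[\Gamma]$-module. Hence $C^\bullet$ is a bounded complex of finite free (in particular projective) $R[\Gamma]$-modules representing $R\Gamma(X^\Grm_{K'_\Grm}, \cV)$ in $\mathbf{D}(R[\Gamma])$, which is precisely the assertion.

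The only genuinely nontrivial input is the existence of a $\Gamma$-equivariant finite cell structure on the compact space $\overline{X}^\Grm_{K'_\Grm}$, and this is the step I would expect to require care. It reduces to the existence of a finite triangulation of the compact manifold with corners $\overline{X}^\Grm_{K_\Grm}$, which may then be pulled back along the covering; alternatively one triangulates $\overline{X}^\Grm_{K'_\Grm}$ directly and subdivides to make the free $\Gamma$-action simplicial. Everything else --- finiteness of $\Gamma$, $\Gamma$-equivariance of the Borel--Serre homotopy equivalence, the comparison of cellular cochains with local coefficients with $R\Gamma$, and the bookkeeping identifying the cochain groups as free $R[\Gamma]$-modules --- is routine.
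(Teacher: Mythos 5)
Your proof is correct and follows essentially the same route as the paper: reduce via the Borel--Serre compactification, take the finite triangulation of $\overline{X}^\Grm_{K_\Grm}$ from \cite[\S 11]{MR0387495}, pull it back equivariantly, and use the freeness of the action (coming from neatness) to exhibit the (co)chain complex with coefficients in $\cV$ as a bounded complex of finite free $R[K_\Grm/K'_\Grm]$-modules. The only cosmetic difference is that the paper pulls the triangulation all the way up to $\overline{X}^\Grm \times \Grm(\A_F^\infty)$ and takes $\Hom_{\Z[\Grm(F)\times K'_\Grm]}(C_\bullet,\cV)$, whereas you work directly with the intermediate finite Galois cover $\overline{X}^\Grm_{K'_\Grm}\to\overline{X}^\Grm_{K_\Grm}$.
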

\begin{proof}
	Pullback induces an isomorphism $R \Gamma_{K_{\Grm} / K_{\Grm}'} (\overline{X}^\Grm_{K'_\Grm}, \cV) \to R \Gamma_{K_{\Grm} / K_{\Grm}'}( X^\Grm_{K_\Grm'}, \cV)$, so it suffices to show that $R \Gamma_{K_{\Grm} / K_{\Grm}'} (\overline{X}^\Grm_{K'_\Grm}, \cV)$ is a perfect complex. As in~\cite[\S 11]{MR0387495}, we see that $\overline{X}^\Grm_{K_\Grm}$ admits a finite triangulation; this pulls back to a $\Grm(F) \times K_\Grm$-invariant triangulation of $\overline{X}^\Grm \times \Grm(\A_F^\infty)$. Let $C_\bullet$ be the corresponding complex of simplicial chains. It is a bounded complex of finite free $\Z[\Grm(F) \times K_\Grm]$-modules. The lemma now follows on observing that $R \Gamma_{K_{\Grm} / K_{\Grm}'} (\overline{X}^\Grm_{K'_\Grm}, \cV)$ is isomorphic in $\mathbf{D}(R[K_\Grm / K_\Grm'])$ to the complex $\Hom_{ \Z[\Grm(F) \times K'_\Grm] }( C_\bullet, \cV )$. 
\end{proof}
Finally we introduce some notation relevant for relating the Hecke operators of $\Grm$ and of its parabolic subgroups. Let us therefore now assume that $\Grm$ is reductive, and let $\Prm = \mathrm{M} \mathrm{N}$ be a parabolic subgroup with Levi subgroup $\mathrm{M}$. Let $K_\Grm \subset \Grm(\A_F^\infty)$ be a good subgroup. In this situation, we define $K_\Prm = K_\Grm \cap \Prm(\A_F^\infty)$, $K_\mathrm{N} = K_\Grm \cap \mathrm{N}(\A_F^\infty)$, and define $K_\mathrm{M}$ to be the image of $K_\Prm$ in $\mathrm{M}(\A_F^\infty)$. We say that $K_\Grm$ is decomposed with respect to $\Prm = \mathrm{M} \mathrm{N}$ if we have $K_\Prm = K_\mathrm{M} \ltimes K_{\mathrm{N}}$; equivalently, if $K_\mathrm{M} = K_\Grm \cap \mathrm{M}(\A_F^\infty)$. 

Assume now that $K_\Grm$ is decomposed with respect to $\Prm = \mathrm{M} \mathrm{N}$, and let $S$ be a finite set of finite places of $F$ such that for all $v \not\in S$, $K_{\Grm, v}$ is a hyperspecial maximal compact subgroup of $\Grm(F_v)$. In this case, we can define homomorphisms 
\[ r_\Prm : \cH(\Grm^S, K_{\Grm}^S) \to \cH(\Prm^S, K_\Prm^S) \text{ and } r_{\mathrm{M}} : \cH(\Prm^S, K_\Prm^S) \to \cH(\mathrm{M}^S, K_\mathrm{M}^S), \]
given respectively by ``restriction to $\Prm$'' and ``integration along $\mathrm{N}$''; see \cite[\S 2.2.3, 2.2.4]{new-tho} for the definitions of these maps, along with the proofs that they are indeed algebra homomorphisms and that integration along $\mathrm{N}$ preserves integrality. We write
\numequation\label{eqn:satake}
\cS = r_\mathrm{M} \circ r_\Prm
\end{equation}
 for the composite map, or $\cS = \cS^\Grm_\mathrm{M}$ when we wish to emphasize the ambient groups. By abuse of notation, we also denote by $r_{\Prm}$, $r_{\mathrm{M}}$ and $\cS= \cS^\Grm_\mathrm{M}$ the same maps for the local Hecke algebras at $v\not \in S$. 

\subsubsection{The Hecke algebra of a monoid}\label{sec:hecke_algebra_of_a_monoid}

We in fact need a slight generalization of the discussion in the previous section, which we outline now in a similar way to~\cite[\S 2.2]{new-tho}. 

We first discuss the local situation. Let $F$ be a non-archimedean local field, and let $G$ be a reductive group over $F$. Let $q$ denote the cardinality of the residue field of $F$. If $U \subset G(F)$ is an open compact subgroup and $\Delta \subset G(F)$ is an open submonoid which is invariant under left and right multiplication by elements of $U$, then we can consider the subset $\cH(\Delta, U) \subset \cH(G(F), U)$ of functions $f : G(F) \to \mathbb{Z}$ which are supported in $\Delta$. It follows from the definition of the convolution product that this subset is in fact a subalgebra. If $R$ is a ring and $M$ is an $R[\Delta]$-module (or more generally, a complex of $R[\Delta]$-modules) then there is a corresponding homomorphism $\cH(\Delta, U) \to \End_{\mathbf{D}(R)}(R \Gamma(U, M))$. This extends the formalism for the full Hecke algebra described in~\cite[2.2.5]{new-tho} and recalled in the previous section.

Now let $P \subset G$ be a parabolic subgroup with Levi decomposition $P = MN$, and let $\overline{P} = M \overline{N}$ denote the opposite parabolic. Let $U \subset G$ be an open compact subgroup which admits an Iwahori decomposition with respect to $P$. By definition, this means that if we define $U_N = U \cap N(F)$, $U_M =  U \cap M(F)$, and $U_{\overline{N}} = U \cap \overline{N}(F)$, then the two product maps
\[ U_N \times U_M \times U_{\overline{N}} \to U 
\text{ and }
U_{\overline{N}} \times U_M \times U_N \to U \]
are bijective. In this case, we write $\Delta_M \subset M(F)$ for the set of $U$-positive elements, i.e.\ those $t \in M(F)$ which satisfy $t U_N t^{-1} \subset U_N$ and $U_{\overline{N}} \subset t U_{\overline{N}} t^{-1}$. We define $\Delta = U_N \Delta_M U_{\overline{N}}$.
\begin{lemma}
	$\Delta_M$ and $\Delta$ are monoids. Moreover, $\Delta_M$ is open in $M(F)$, $\Delta$ is open in $G(F)$, we have $U \Delta U = \Delta$, and $\Delta \cap M(F) = \Delta_M$.
\end{lemma}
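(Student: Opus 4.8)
The plan is to extract a handful of elementary consequences of the Iwahori decomposition and then verify the two monoid assertions, the identity $U\Delta U = \Delta$, and the description of $\Delta \cap M(F)$ by formal manipulation of products of subsets of $G(F)$. The only external ingredient is the classical fact that the multiplication morphism $N \times M \times \overline{N} \to G$ is an open immersion. The part that needs the most care is the semigroup bookkeeping in Step 2: everything has to be organized around the normal form $U_N \Delta_M U_{\overline{N}}$, pushing $U_N$ to the left of $\Delta_M$ and $U_{\overline{N}}$ to its right, and one must never use the reverse commutations (which fail in general).

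\textbf{Step 1: basic relations.} First I would record that $U_M$ normalizes $U_N$ and $U_{\overline{N}}$: for $m \in U_M$ we have $m U_N m^{-1} \subseteq U \cap N(F) = U_N$, and applying this to $m^{-1}$ as well gives equality; likewise for $U_{\overline{N}}$. Directly from the definitions, $U_M \subseteq \Delta_M$; and for $s, t \in \Delta_M$ one has $st\, U_N\, (st)^{-1} \subseteq s\, U_N\, s^{-1} \subseteq U_N$ and $U_{\overline{N}} \subseteq s\, U_{\overline{N}}\, s^{-1} \subseteq st\, U_{\overline{N}}\, (st)^{-1}$, so $\Delta_M$ is a monoid. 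Next, the two defining inclusions for $t \in \Delta_M$ give the one-sided commutations $\Delta_M U_N \subseteq U_N \Delta_M$ (since $ta = (tat^{-1})t$ for $a \in U_N$) and $U_{\overline{N}} \Delta_M \subseteq \Delta_M U_{\overline{N}}$ (since $bt = t(t^{-1}bt)$ for $b \in U_{\overline{N}}$). Finally, as $U_N$ and $U_{\overline{N}}$ are subgroups of $U$, the Iwahori decomposition yields $U_{\overline{N}} U_N \subseteq U = U_N U_M U_{\overline{N}}$.

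\textbf{Step 2: $\Delta$ is a monoid and $U\Delta U = \Delta$.} Using the relations of Step 1 (together with $U_N U_N = U_N$, $U_{\overline{N}} U_{\overline{N}} = U_{\overline{N}}$, $U_M \Delta_M \subseteq \Delta_M$, $\Delta_M \Delta_M \subseteq \Delta_M$) one computes
\begin{multline*}
\Delta\Delta = U_N \Delta_M (U_{\overline{N}} U_N) \Delta_M U_{\overline{N}} \subseteq U_N \Delta_M U_N U_M U_{\overline{N}} \Delta_M U_{\overline{N}} \\ \subseteq U_N \Delta_M U_M U_{\overline{N}} \Delta_M U_{\overline{N}} \subseteq U_N \Delta_M U_{\overline{N}} \Delta_M U_{\overline{N}} \subseteq U_N \Delta_M U_{\overline{N}} = \Delta,
\end{multline*}
and since $1 \in \Delta$ this shows $\Delta$ is a monoid. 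For $U\Delta U = \Delta$ (the inclusion $\supseteq$ being clear), note $U\Delta = U\Delta_M U_{\overline{N}}$ as $U U_N = U$; expanding $U = U_N U_M U_{\overline{N}}$, pushing $U_{\overline{N}}$ past $\Delta_M$ and absorbing $U_M$ gives $U\Delta \subseteq U_N \Delta_M U_{\overline{N}} = \Delta$, hence $U\Delta = \Delta$. Symmetrically (using $\Delta_M U_N \subseteq U_N \Delta_M$) one gets $\Delta U = \Delta$, and therefore $U\Delta U = \Delta$.

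\textbf{Step 3: openness and $\Delta \cap M(F) = \Delta_M$.} By Step 1, $\Delta_M$ is stable under left and right multiplication by $U_M$, so $\Delta_M = \bigcup_{t \in \Delta_M} U_M t U_M$ is a union of open subsets of $M(F)$, hence open. For $\Delta$: the multiplication morphism $N \times M \times \overline{N} \to G$ of $F$-schemes is an open immersion onto an open subscheme, so on $F$-points it is a homeomorphism of $N(F) \times M(F) \times \overline{N}(F)$ onto an open subset of $G(F)$; as $U_N \times \Delta_M \times U_{\overline{N}}$ is open in the source, its image $\Delta$ is open in $G(F)$. The injectivity of this map also gives $\Delta \cap M(F) = \Delta_M$: one inclusion is $\Delta_M = 1 \cdot \Delta_M \cdot 1 \subseteq \Delta$; conversely, if $g \in \Delta$ lies in $M(F)$, write $g = atb$ with $a \in U_N$, $t \in \Delta_M$, $b \in U_{\overline{N}}$; this is the unique factorization of $g$ through $N(F) \times M(F) \times \overline{N}(F)$, and comparing with $g = 1 \cdot g \cdot 1$ forces $a = b = 1$, so $g = t \in \Delta_M$.
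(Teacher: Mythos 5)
Your proof is correct, and it is essentially the paper's argument: both rest on the one-sided commutation relations supplied by $U$-positivity, the Iwahori factorization $U=U_NU_MU_{\overline N}$ to manipulate products of the sets $U_N$, $U_M$, $U_{\overline N}$, $\Delta_M$, and the uniqueness of the $N(F)\times M(F)\times \overline N(F)$ factorization for $\Delta\cap M(F)=\Delta_M$. The only differences are organizational (you prove $\Delta\Delta\subseteq\Delta$ directly rather than via $U\Delta U=\Delta$, and you spell out the openness that the paper declares clear), so nothing further is needed.
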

\begin{proof}
	It is clear from the definition that $\Delta_M$ is closed under multiplication, and also that $\Delta_M$, $\Delta$ are open in $M(F)$ and $G(F)$, respectively. To show that $U \Delta U = \Delta$, we simply observe that if $m \in \Delta_M$, then the definition of positivity gives
	\[ \begin{split} U m U& = U m U_N U_M U_{\overline{N}} =  U m U_M U_{\overline{N}} = U_N U_M U_{\overline{N}} m U_M U_{\overline{N}} \\ &= U_N U_M m U_M U_{\overline{N}} \subset U_N \Delta_M U_{\overline{N}} = \Delta. \end{split} \]
	To show that $\Delta$ is closed under multiplication, we must show that $U m_1 U m_2 U \subset U \Delta_M U$. Using the definition of positivity, we see that
	\[ U m_1 U m_2 U = U m_1 U_N U_M U_{\overline{N}} m_2 U = U m_1 U_M m_2 U, \]
	so it is equivalent to show $m_1 U_M m_2 \subset \Delta_M$; and this is true, since $U_M \subset \Delta_M$. Finally, the identity $\Delta \cap M(F) = \Delta_M$ follows from the following observation: if $u_1t \bar{u}_2 = m\in M(F)$, for $u_1\in U_{N}$, $t\in \Delta_M$ and $\bar{u}_2 \in U_{\overline{N}}$, then $\bar{u}_2 = t^{-1}u_1^{-1}m \in P(F)\cap \overline{N}(F)$, so $\bar{u}_2$ must be the identity. Similarly, $u_1$ must be the identity, so $m=t \in \Delta_M$. 
\end{proof}
It follows that the Hecke algebras $\cH(\Delta, U)$ and $\cH(\Delta_M, U_M)$ are defined. Moreover, $\Delta_P = \Delta \cap P(F)$ is a monoid, and we can consider also the Hecke algebra $\cH(\Delta_P, U_P)$.
\begin{lemma}
	Consider the two maps $r_P : \cH(\Delta, U) \to \cH(\Delta_P, U_P)$ and $r_M : \cH(\Delta_P, U_P) \to \cH(\Delta_M, U_M)$ given by restriction to $P(F)$ and integration along $U_N$, respectively. Then both $r_P$ and $r_M$ are algebra homomorphisms.
\end{lemma}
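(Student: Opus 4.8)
The plan is to follow \cite[\S 2.2]{new-tho}, where the analogous assertion is proved for the full Hecke algebras $\cH(G(F),U) \to \cH(P(F),U_P) \to \cH(M(F),U_M)$, and to check that each step is compatible with passing to the subalgebras of functions supported on the positive monoids $\Delta$, $\Delta_P$, $\Delta_M$. The first task is to see that the maps are well-defined. For $r_P$: if $f \in \cH(\Delta, U)$ then $\Supp(f)$ is a finite union of the compact double cosets $U \delta U$, so $\Supp(f) \cap P(F)$ is compact and hence a finite union of $U_P$-double cosets; combined with the obvious facts that $r_P f = f|_{P(F)}$ is $U_P$-biinvariant (as $U_P \subset U$) and supported in $\Delta \cap P(F) = \Delta_P$, this gives $r_P f \in \cH(\Delta_P, U_P)$. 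For $r_M$, defined as in \cite[\S 2.2]{new-tho} by integration along $N$ (equivalently, since $f$ is right $U_N$-invariant, a finite sum over $N(F)/U_N$), finiteness of $(r_M f)(m) = \int_{N(F)} f(mn)\,dn$ again follows from compactness of $\Supp(f)$, $U_M$-biinvariance follows by changing variables in the $N(F)$-integral (using that $U_M$ is compact), and the support statement $\Supp(r_M f) \subset \Delta_M$ uses the decomposition $\Delta_P = U_N \Delta_M$, which holds precisely because every element of $\Delta_M$ is $U$-positive (by the previous lemma). Finally one notes that $r_P$ and $r_M$ visibly send $[U] \mapsto [U_P] \mapsto [U_M]$, the respective units, so only multiplicativity remains.

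For the multiplicativity of $r_P$, normalise so that $\mathrm{vol}(U) = \mathrm{vol}(U_P) = 1$ and write, for $g \in \Delta_P$ (off $\Delta_P$ both sides vanish, as $\Delta$ and $\Delta_P$ are monoids),
\[
(f_1 \ast f_2)(g) = \sum_{xU \in G(F)/U} f_1(x) f_2(x^{-1} g), \qquad
\big((r_P f_1) \ast (r_P f_2)\big)(g) = \sum_{yU_P \in P(F)/U_P} f_1(y) f_2(y^{-1} g).
\]
Because $U \cap P(F) = U_P$, the map $P(F)/U_P \to G(F)/U$ is injective, and for $y \in P(F)$ one has $yU \cap P(F) = y U_P$; hence the second sum is exactly the part of the first that runs over cosets $xU$ meeting $P(F)$, term by term (using $U$-biinvariance of $f_1,f_2$ to identify summands). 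It therefore suffices to observe that any coset $xU$ with $f_1(x) \neq 0$ already meets $P(F)$: such an $x$ lies in $\Delta = U_N \Delta_M U_{\overline{N}}$, say $x = n \mu \overline{n}$ with $\overline{n} \in U_{\overline{N}} \subset U$, so $x \overline{n}^{-1} = n\mu \in N(F) \Delta_M \subset P(F)$, i.e.\ $xU = (n\mu) U$ meets $P(F)$. This gives $r_P(f_1 \ast f_2) = (r_P f_1) \ast (r_P f_2)$.

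For the multiplicativity of $r_M$, I would run the usual Fubini computation for the (un-normalised) constant-term map, carried out on the positive monoids and with the Haar measure conventions of \cite[\S 2.2]{new-tho}. Using the decomposition $P(F) = M(F) \ltimes N(F)$, one has, for $m \in M(F)$,
\[
\big(r_M(f_1 \ast f_2)\big)(m) = \int_{M(F)} \int_{N(F)} \int_{N(F)} f_1(m' n')\, f_2\big( (n')^{-1} (m')^{-1} m\, n \big)\, dn\, dn'\, dm'.
\]
Moving $(n')^{-1}$ past $(m')^{-1} m \in M(F)$ replaces it by some $n'' \in N(F)$, and translation-invariance of the Haar measure on $N(F)$ turns the innermost integral into $\int_{N(F)} f_2\big((m')^{-1} m\, \widetilde{n}\big)\, d\widetilde{n} = (r_M f_2)\big((m')^{-1} m\big)$, which no longer involves $n'$; the remaining $n'$-integral is $(r_M f_1)(m')$, and we are left with $\int_{M(F)} (r_M f_1)(m')\, (r_M f_2)\big((m')^{-1} m\big)\, dm' = \big((r_M f_1) \ast (r_M f_2)\big)(m)$. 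The only genuinely delicate points are ensuring that all these integrals are finite and that the intermediate group elements stay inside the relevant positive monoids, so that the support conditions are preserved — this is exactly where $U$-positivity of $\Delta_M$ enters — together with keeping the Haar-measure normalisations straight (including any modulus factor absorbed into the definition of $r_M$ in \emph{loc.\ cit.}); there is no essential difficulty beyond this adaptation of \cite[\S 2.2]{new-tho}.
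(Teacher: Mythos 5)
Your proof is correct and is essentially the paper's argument: the coset-by-coset identification you make for $r_P$ (every coset $xU$ meeting the support of $f_1$ meets $P(F)$, because $\Delta = U_N \Delta_M U_{\overline{N}} \subset P(F)U$) is exactly the discrete form of the integration identity $\int_{G(F)} f(g)\,dg = \int_U \int_{P(F)} f(pu)\,dp\,du$ for functions supported in $\Delta$ that the paper invokes, and your Fubini computation for $r_M$ is precisely the content of the cited result of \cite{new-tho}, whose only hypothesis is $U_P = U_N \rtimes U_M$. The sole difference is presentational: the paper delegates both computations to \cite[Lemmas 2.4 and 2.7]{new-tho} after isolating these two points, whereas you write them out (and your integral over $N(F)$ is harmless, since on functions supported in $\Delta_P = U_N\Delta_M$ it localizes to the compact piece, up to the measure conventions you already flag).
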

\begin{proof}
	It follows from \cite[Lemma 2.7]{new-tho} that the map $\cH(P(F), U_P) \to \cH(M(F), U_M)$ is an algebra homomorphism whenever the condition $U_P = U_N \rtimes U_M$ is satisfied. It remains to show that $r_P$ is an algebra homomorphism. The proof is the same as the proof of \cite[Lemma 2.4, 1.]{new-tho} once we take into account the identity, valid for any function $f : G(F) \to \R$ with compact support contained in $U P(F)$ (and a fortiori, any function $f \in \cH(\Delta, U)$):
	\[ \int_{g \in G(F)} f(g) \, dg = \int_{u \in U} \int_{p \in P(F)} f(pu) \, dp \, du.\qedhere \]
\end{proof}
It will be helpful later to note that the maps $r_P$ and $r_M \circ r_P$ are quite simple, being given on basis elements by the formulae $r_P([U m U]) = [U_P m U_P]$ and $r_M \circ r_P ([U m U]) = \# (U_N / m U_N m^{-1}) [U_M m U_M] = | \delta_P(m) |_F^{-1} [U_M m U_M]$, respectively. As in the unramified case, we will write $\cS$ or $\cS^G_M$ for the composite $r_M \circ r_P$. 
\begin{lemma}
Consider the map $t: \cH(\Delta_M, U_M) \to \cH(\Delta, U)$ of $\bZ$-modules given on basis elements by $t([U_M m U_M]) = [U m U]$. Then $t$ is an algebra homomorphism.
\end{lemma}
\begin{proof}
This is \cite[Corollary 6.12]{MR1643417}. 
\end{proof}
Thus we have constructed injective algebra homomorphisms 
\[ t : \cH(\Delta_M, U_M)  \to \cH(\Delta, U) \]
\[ \cS  : \cH(\Delta, U) \to \cH(\Delta_M, U_M) \]
with the property that for any $m \in \Delta_M$, $t \circ \cS ([U m U]) =  | \delta_P(m) |_F^{-1} [UmU] $ and $\cS \circ t ([U_M m U_M]=  | \delta_P(m) |_F^{-1} [U_M m U_M]$. In certain circumstances, we can extend the domain of definition of these homomorphisms. Following \cite{MR1643417}, we say that an element $z \in \Delta_M$ which lies in the centre of $M$ is strongly positive if for any open compact subgroups $H_1, H_2$ of $U_N$ (resp. $\overline{H}_1, \overline{H}_2$ of $U_{\overline{N}}$), there exists $n \geq 0$ such that $z^n H_1 z^{-n} \subset H_2$ (resp. $z^{-n} \overline{H}_1 z^n \subset \overline{H}_2$). 
\begin{lemma}\label{lem:strongly_positive_elements} Let $z \in \Delta_M$ be strongly positive. Then:
\begin{enumerate}
\item $[U_M z U_M]$ lies in the centre of $\cH(\Delta_M, U_M)$, is invertible in $\cH(M(F), U_M)$, and $\cH(\Delta_M, U_M)[ [U_M z U_M]^{-1} ] = \cH(M(F), U_M)$. 
\item Let $R$ be a ring in which $q$ is a unit, and suppose that $[U z U]$ is invertible in $\cH(G(F), U) \otimes_\bZ R$. Then $t\otimes_\bZ R$ and $\cS \otimes_\bZ R$ extend uniquely to algebra isomorphisms
\[ t :  \cH(M(F), U_M) \otimes_\bZ R \to (\cH(\Delta, U) \otimes_\bZ R)[ [U z U]^{-1} ] \]
and 
\[ \cS : (\cH(\Delta, U) \otimes_\bZ R)[ [U z U]^{-1} ] \to \cH(M(F), U_M)\otimes_\bZ R.  \]
\end{enumerate}
\end{lemma}
\begin{proof}
The element $[U_M z U_M]$ lies in the centre of $\cH(\Delta_M, U_M)$ because $z$ lies in the centre of $M(F)$, by assumption. Its inverse is $[U_M z^{-1} U_M]$. The equality $\cH(\Delta_M, U_M)[ [U_M z U_M]^{-1} ] = \cH(M(F), U_M)$ holds because for any $m \in M(F)$, there exists $n \geq 0$ such that $z^n m \in \Delta_M$, hence $[U_M m U_M] = [ U_M z U_M]^{-n} [U_M z^n m U_M] \in \cH(\Delta_M, U_M)[ [U_M z U_M]^{-1} ]$. This shows the first part. The second part is elementary. 
\end{proof}
\begin{lemma}\label{lem:functorial_splitting_for_induced_module}
	Let $R$ be a ring, let $W$ be an $R[P(F)]$-module, and let $V = \Ind_{P(F)}^{G(F)} W$. Then there is a natural morphism $\phi : V^U \to r_P^\ast W^{U_P}$ of $\cH(\Delta, U) \otimes_\Z R$-modules. Moreover, writing $(?)^\sim$ for the forgetful functor from $\cH(\Delta, U) \otimes_\Z R$-modules to $R$-modules, the induced morphism $(V^U)^\sim \to (r_P^\ast W^{U_P})^\sim$ has a functorial splitting.
\end{lemma}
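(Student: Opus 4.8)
The plan is to realize the comparison map as ``evaluation at the identity'' and the splitting as ``extension by zero off the double coset $P(F)U$''. First I would define $\alpha\colon V^U\to W^{U_P}$ by $\alpha(f)=f(1)$. To see this lands in $W^{U_P}$, take $p\in U_P=U\cap P(F)$: then $p\cdot f(1)=f(p)$ by the defining transformation law of $V=\Ind_{P(F)}^{G(F)}W$, while $f(p)=f(1)$ because $f$ is fixed by $p\in U$ acting by right translation; hence $f(1)\in W^{U_P}$. Naturality in $W$ is immediate, since on morphisms $\Ind_{P(F)}^{G(F)}$ is post-composition $f\mapsto\phi\circ f$, which commutes with evaluation at $1$.

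Next I would construct the splitting. Since $U$ is open and compact and $P(F)$ is closed in $G(F)$, the subset $P(F)U$ is open and closed. Given $w\in W^{U_P}$, let $f_w$ be the function that vanishes off $P(F)U$ and satisfies $f_w(pu)=p\cdot w$ for $p\in P(F)$, $u\in U$. One checks that $f_w$ is well defined (because $P(F)\cap U=U_P$ fixes $w$), locally constant (because $P(F)U$ is open-closed and $U_P$ is open in $P(F)$), lies in $V^U$, and satisfies $\alpha(f_w)=w$; and $w\mapsto f_w$ is evidently $R$-linear and functorial in $W$, so it provides a functorial splitting of $\alpha$ at the level of underlying $R$-modules. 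Conceptually, this is nothing but the Mackey-type direct sum decomposition
\[
 V^U \;\cong\; \bigoplus_{P(F)gU\,\in\,P(F)\backslash G(F)/U} W^{\,P(F)\cap gUg^{-1}}
\]
of $R$-modules, in which $W^{U_P}$ is the summand attached to the trivial double coset $P(F)U$: $\alpha$ is the projection onto this summand and $\beta$ is its inclusion. Because applying a Hecke operator to $f_w$ spreads its support over the other double cosets, $\beta$ does not respect the Hecke action, which is precisely why the lemma only asks for a splitting of $R$-modules.

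The one step that requires genuine work is the $\cH$-equivariance of $\alpha$. Here $r_P^\ast W^{U_P}$ means $W^{U_P}$ with the module structure pulled back along $r_P\colon\cH(\Delta,U)\to\cH(\Delta_P,U_P)$, and on the source one restricts the $\cH(G(F),U)$-action to the subalgebra $\cH(\Delta,U)$ on which $r_P$ is defined. By $R$-linearity it is enough to treat a basis element $[UmU]$ with $m\in\Delta_M$. I would use the Iwahori decomposition $U=U_N U_M U_{\overline{N}}$ together with the $U$-positivity of $m$ (so $mU_Nm^{-1}\subseteq U_N$ and $m^{-1}U_{\overline{N}}m\subseteq U_{\overline{N}}$) to push the $U_N$ and $U_{\overline{N}}$ factors past $m$, and thereby choose a set of representatives for $UmU/U$ lying inside $P(F)$; one then verifies that these same elements represent $U_PmU_P/U_P$, so that evaluation at $1$ sends $[UmU]\cdot f$ to $[U_PmU_P]\cdot f(1)=r_P([UmU])\cdot\alpha(f)$, matching the formula $r_P([UmU])=[U_PmU_P]$ recorded just before the lemma. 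This is the exact local counterpart of the computation behind \cite[Lemma~2.4]{new-tho}, and I expect the bookkeeping with the three Iwahori factors $U_N$, $U_M$, $U_{\overline{N}}$ to be the main, though entirely routine, obstacle.
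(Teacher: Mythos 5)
Your proposal is correct and follows essentially the same route as the paper: the map is evaluation at the identity, and the splitting is the Mackey-type decomposition of $V^U$ over $P(F)\backslash G(F)/U$ (your extension-by-zero section is exactly the inclusion of the summand for the trivial double coset used in the paper). The only cosmetic difference is in verifying Hecke equivariance — you match coset representatives of $UmU/U$ chosen inside $P(F)$ with those of $U_PmU_P/U_P$, while the paper performs the equivalent integral manipulation; both boil down to $P(F)\cap UmU=U_PmU_P$, which your Iwahori-decomposition/positivity argument establishes correctly.
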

\begin{proof}
	Let $g_1, \dots, g_n \in G(F)$ be representatives for the double quotient $P(F) \backslash G(F) / U$; we assume that $g_1 = 1$. Then there is an isomorphism of $R$-modules $V^U \cong \oplus_{i=1}^n W^{g_i U g_i^{-1} \cap P(F)}$, which sends a function $f \in V^U$ to the tuple $(f(g_1), \dots, f(g_n))$. This is the desired functorial splitting. We claim that the map $V^U \to W^{U_P}$ corresponding given by projection to the first component is in fact Hecke equivariant (with respect to $r_P$). To see this, choose $f \in V^U$, and let $v = f(1)$, $m \in \Delta_M$. We calculate
	\[ \begin{split} ([U m U] \cdot f)(1) & = \int_{g \in U m U} f(g) \, dg = \int_{p \in P(F)} \int_{u \in U} \mathbf{1}_{pu \in U m U} f(pu) \, dp \, du \\ &= \int_{p \in P(F) \cap U m U} f(p) \, dp = [ U_P m U_P] \cdot f(1),\end{split} \]
	as required.
\end{proof}
We now describe how we will apply the above discussion in the global situation. Let $F$ now denote a number field, let $\mathrm{G}$ be a reductive group over $F$, and let $\mathrm{P} \subset \mathrm{G}$ be a parabolic subgroup with Levi decomposition $\mathrm{P} = \mathrm{M} \mathrm{N}$. Let $K_{\mathrm{G}} \subset \mathrm{G}(\mathbb{A}_F^\infty)$ be a good subgroup of the form $K = K_{\Grm, S} K_{\Grm, T} K_\Grm^{T \cup S}$, notation and assumptions being as follows:
\begin{enumerate}
	\item $T, S$ are finite disjoint sets of finite places of $F$.
	\item For each place $v \not\in S \cup T$ of $F$, $\Grm_{F_v}$ is unramified and $K_v$ is a hyperspecial maximal subgroup of $\Grm(F_v)$.
	\item For each place $v \in T$, $K_{\Grm, v}$ admits an Iwahori decomposition with respect to $\mathrm{P}$. We write $\Delta_{\Grm, v} \subset \Grm(F_v)$ for the corresponding open submonoid and $\Delta_{\Grm, T} = \prod_{v \in T} \Delta_{\Grm, v}$. We define $\Delta_{\mathrm{P}, T}$ and $\Delta_{\mathrm{M}, T}$ similarly.
\end{enumerate}
We thus have a map
\[ \cS : \cH( \mathrm{G}^{S \cup T} \times \Delta_{\Grm, T}, K^S_{\mathrm{G}} ) \to \cH( \mathrm{M}^{S \cup T} \times \Delta_{\mathrm{M}, T}, K^S_{\mathrm{M}} ) . \]
Let $R$ be a ring. Applying Lemma \ref{lem:functorial_splitting_for_induced_module} (cf. \cite[Corollary 2.6]{new-tho}), we see that there is a split morphism in $\mathbf{D}(R)$
\[ R \Gamma([\Ind_{\mathrm{P}^\infty}^{\mathrm{G}^\infty} \mathfrak{X}_{\mathrm{P}}] / K_{\mathrm{G}}, R ) \to R \Gamma(X^{\mathrm{P}}_{K_{\mathrm{P}}}, R), \]
which is equivariant for the action of  $\cH( \mathrm{G}^{S \cup T} \times \Delta_{\Grm, T}, K^S_{\mathrm{G}} ) \otimes_\mathbb{Z} R$ by endomorphisms on the source and target (the latter action being via the map $r_{\mathrm{P}}$, and induction being in the same sense as in \cite[\S 3.1]{new-tho}). The splitting need not be equivariant, but we see that in any case there is a surjective morphism
\[ H^\ast([\Ind_{\mathrm{P}^\infty}^{\mathrm{G}^\infty} \mathfrak{X}_{\mathrm{P}}] / K_{\mathrm{G}}, R) \to r_P^\ast H^\ast(X^{\mathrm{P}}_{K_{\mathrm{P}}}, R)  \]
of $\cH( \mathrm{G}^{S \cup T} \times \Delta_{\Grm, T}, K^S_{\mathrm{G}} ) \otimes_\mathbb{Z} R$-modules. Similarly  \cite[Proposition 3.4]{new-tho} shows that there is a split morphism in $\mathbf{D}(R)$
\[  R \Gamma(X^{\mathrm{M}}_{K_{\mathrm{M}}}, R) \to R \Gamma(X^{\mathrm{P}}_{K_{\mathrm{P}}}, R), \]
which is equivariant for the action of $\cH( \mathrm{P}^{S \cup T} \times \Delta_{\mathrm{P}, T}, K^S_{\mathrm{P}} ) \otimes_\mathbb{Z} R$ by endomorphisms on the source and target (the action on the source being via the map $r_{\mathrm{M}}$). Altogether there is no $\cS$-equivariant map between the complexes $R \Gamma([\Ind_{\mathrm{P}^\infty}^{\mathrm{G}^\infty} \mathfrak{X}_{\mathrm{P}}] / K_{\mathrm{G}}, R )$ and $R \Gamma(X^{\mathrm{M}}_{K_{\mathrm{M}}}, R)$, these morphisms considered above will together allow us, in the course of proving Theorem \ref{thm:Hecke_reduction_to_Siegel_with_R_ramification} below, to show that $\cS$ descends to a map between the Hecke algebras which act faithfully on these complexes. Moreover, in the presence of invertible strongly positive elements as in the statement of Lemma \ref{lem:strongly_positive_elements}, we will be able to show that this induced map on Hecke algebras is compatible with localisation.

\subsection{Arithmetic locally symmetric spaces: the quasi-split unitary group}

\subsubsection{The quasi-split unitary group, the Siegel parabolic, and its Levi subgroup}\label{sec:unitary_group_setup}

We now specialize the above discussion to our case of interest. We fix an integer $n \geq 1$. Let $F$ be an (imaginary) CM number field with maximal totally real subfield $F^+$. Let $\Psi_n$ be the matrix with 1's on the anti-diagonal and 0's elsewhere, and set
\[ J_n = \left( \begin{array}{cc} 0 & \Psi_n \\ -\Psi_n & 0 \end{array}\right). \]
We write $\widetilde{G}_n = \widetilde{G}$ for the group scheme over $\cO_{F^+}$ with functor of points
\[ \widetilde{G}(R) = \{ g \in \GL_{2n}( R \otimes_{\cO_{F^+}} \cO_F)
  \mid {}^t g J_n g^c  = J_n \}. \]
Then $\widetilde{G}_{F^+}$ is a quasi-split reductive group over $F^+$; it is a form of $\GL_{2n}$ which becomes split after the quadratic base change $F / F^+$. If $\overline{v}$ is a place of $F^+$ which splits in $F$, then a choice of place $v | \overline{v}$ of $F$ determines a canonical isomorphism $\iota_v : \widetilde{G}(F^+_{\overline{v}}) \cong \GL_{2n}(F_v)$. Indeed, there is an isomorphism $F^+_{\overline{v}} \otimes_{F^+} F \cong F_v \times F_{v^c}$ and $\iota_v$ is given by the natural inclusion $\widetilde{G}(F^+_{\overline{v}}) \subset \GL_{2n}(F_v) \times \GL_{2n}(F_{v^c})$ followed by projection to the first factor. 

We write $T \subset B \subset \widetilde{G}$ for the subgroups consisting, respectively, of the diagonal and upper-triangular matrices in $\widetilde{G}$. Similarly we write $G \subset P \subset \widetilde{G}$ for the subgroups consisting, respectively, of the block upper diagonal and block upper-triangular matrices with blocks of size $n \times n$. Then $P = U \rtimes G$, where $U$ is the unipotent radical of $P$, and we can identify $G$ with $\Res_{\cO_F / \cO_{F^+}} \GL_n$ via the map
\[ g = \left( \begin{array}{cc} A & 0 \\ 0 & D \end{array}\right) \mapsto D \in \GL_n(R \otimes_{\cO_{F^+}} \cO_F). \]
We observe that after extending scalars to $F^+$, $T$ and $B$ form a maximal torus and a Borel subgroup, respectively, of $\widetilde{G}$, and $G$ is the unique Levi subgroup of the parabolic subgroup $P$ of $\widetilde{G}$ containing $T$.

In order to simplify notation, we now write $\widetilde{X} = X^{\widetilde{G}}$ and $X = X^G$. Similarly, we will use the symbols $\widetilde{K}$ and $K$ to denote good subgroups of $\widetilde{G}(\A_{F^+}^\infty)$ and $G(\A_{F^+}^\infty) = \GL_n(\A_F^\infty)$, respectively.

We note that the dimensions of these symmetric spaces are
\[\dim_{\R} \widetilde{X} = 2n^2[F^+:\Q], \quad\quad \dim_{\R} X = n^2[F^+:\Q]-1.\]

We now want to describe some explicit (rational and integral) coefficient systems for these symmetric spaces. The integral coefficient systems we define will depend on a choice of a prime $p$ and a dominant weight for either $G$ or $\widetilde{G}$. We therefore fix a prime $p$ and a finite extension $E / \Q_p$ in $\overline{\Q}_p$ which contains the images of all embeddings $F \hookrightarrow \overline{\Q}_p$. We write $\cO$ for the ring of integers of~ $E$, and $\varpi \in \cO$ for a choice of uniformizer. 

We first treat the case of $G$. Let $\Omega$ be a field of characteristic 0 and large enough such that $\Hom(F, \Omega)$ has $[F : \Q]$ elements. We identify the character group $X^\ast( (\Res_{F^+ / \Q} T)_\Omega )$ with $(\Z^n)^{\Hom(F, \Omega)}$ in the usual way, by identifying 
\[ (\Res_{F / \Q} \GL_n)_\Omega = \prod_{\tau \in \Hom(F, \Omega)} \GL_n \]
 and by identifying $(\lambda_1, \dots,\lambda_n) \in \Z^n$ with the character
\[ \diag(t_1, \dots, t_n) \mapsto t_1^{\lambda_1} \dots t_n^{\lambda_n} \]
of the diagonal maximal torus in $\GL_n$. The $\Res_{F^+ / \Q} (B \cap G)_\Omega$-dominant weights are exactly those in the subset $(\Z^n_+)^{\Hom(F, \Omega)}$ given by those tuples $(\lambda_{\tau, i})$ satisfying the condition
\[ \lambda_{\tau, 1} \geq \lambda_{\tau, 2} \geq \dots \geq \lambda_{\tau, n} \]
for each $\tau \in \Hom(F, \Omega)$. 

Associated to $\lambda$ we have the algebraic representation $V_\lambda$ of $(\Res_{F/\mathbb Q} \GL_n)_\Omega$ of highest weight $\lambda$. We may identify $V_\lambda = \otimes_{\tau \in \Hom(F, \Omega)} V_{\lambda_\tau}$, where $V_{\lambda_\tau}$ is the irreducible representation of $\GL_{n, \Omega}$ of highest weight $\lambda_\tau$. If $\lambda \in (\Z^n_+)^{\Hom(F, \Omega)}$, we define $\lambda^\vee  \in (\Z^n_+)^{\Hom(F, \Omega)}$ by the formula $\lambda^\vee_{\tau, i} = -\lambda_{\tau, n+1-i}$. Then there is an isomorphism $V_\lambda^\vee \cong V_{\lambda^\vee}$, although this is not true for the integral lattices defined below without further hypotheses on $\lambda$.

Now take $\Omega = E$. For each $\tau\in \Hom(F,E)$, we let $\cV_{\lambda_\tau} \subset V_{\lambda_\tau}$ be the $\GL_n(\cO)$-invariant $\cO$-lattice defined in~\cite[\S 2.2]{ger} (and called $M_{\lambda_\tau}$ there). We note that this is the integral dual Weyl module of highest weight $\lambda_{\tau}$, obtained by evaluating an algebraic induction on $\cO$. (Geometrically, the dual Weyl module of highest weight $\lambda_\tau$ is obtained as in the Borel--Weil theorem, by taking global sections of a line bundle determined by 
$\lambda_{\tau}$ over the full flag variety associated to $\GL_n$.) We write $\cV_\lambda = \otimes_{\tau \in \Hom(F, E)} \cV_{\lambda_\tau}$ for the corresponding $\cO$-lattice in $V_\lambda$. Thus $\cV_\lambda$ is an $\cO[\prod_{v | p} \GL_n(\cO_{F_v})]$-module, finite free as $\cO$-module.

We next treat the case of $\widetilde{G}$. Let $\widetilde{I} \subset \Hom(F, \Omega)$ be a subset such that $\Hom(F, \Omega) = \widetilde{I} \sqcup \widetilde{I}^c$. Given $\tau \in \Hom(F^+, E)$, we will sometimes write $\widetilde{\tau}$ for the unique element of $\widetilde{I}$ extending $\tau$. The choice of $\widetilde{I}$ determines an isomorphism
\[ ( \Res_{F^+ / \Q} \widetilde{G})_\Omega \cong  \prod_{\tau \in \Hom(F^+, \Omega)} \GL_{2n, \Omega} \]
taking $( \Res_{F^+ / \Q} T)_\Omega$ to the product of the diagonal maximal tori in the $\GL_{2n}$'s, and hence 
an identification of the character group $X^\ast( (\Res_{F^+ / \Q} T)_\Omega )$ with $(\Z^{2n})^{\Hom(F^+, \Omega)}$. The $(\Res_{F^+ / \Q} B)_\Omega$-dominant weights are exactly those in the subset $(\Z^{2n}_+)^{\Hom(F^+, \Omega)}$. The isomorphism $(\Z^n)^{\Hom(F, \Omega)} \cong (\Z^{2n})^{\Hom(F^+, \Omega)}$ identifies a weight $\lambda$ with the weight $\widetilde{\lambda} = (\widetilde{\lambda}_{\tau, i})$ given by the formula
\numequation\label{eqn:wt_dictionary} \widetilde{\lambda}_{\tau} = (-\lambda_{\widetilde{\tau}c, n}, \dots, -\lambda_{\widetilde{\tau}c, 1}, \lambda_{\widetilde{\tau}, 1}, \lambda_{\tau, 2}, \dots, \lambda_{\widetilde{\tau}, n} ). 
\end{equation}
Now let $\Omega = E$. We define integral structures under the assumption that each place $\overline{v}$ of $F^+$ above $p$ splits in $F$. Let $S_p$ denote the set of $p$-adic places of $F$, and let $\overline{S}_p$ denote the set of $p$-adic places of $F^+$. Let $\widetilde{S}_p \subset S_p$ be a subset such that $S_p = \widetilde{S}_p \sqcup \widetilde{S}_p^c$. Let $\widetilde{I} = \widetilde{I}_p$ denote the set of embeddings $\tau : F \hookrightarrow E$ inducing an element of $\widetilde{S}_p$. Given $\overline{v} \in \overline{S}_p$, we will sometimes write $\widetilde{v}$ for the unique element of $\widetilde{S}_p$ lying above $\overline{v}$.

The choice of $\widetilde{S}_p$ determines isomorphisms
\[ \widetilde{G} \otimes_{\cO_{F^+}} \cO_{F^+, p} \cong \prod_{\overline{v} \in \overline{S}_p} \GL_{2n, \cO_{F^+_{\overline{v}}}} \]
The lattice $\cV_{\widetilde{\lambda}} \subset V_{\widetilde{\lambda}}$ corresponding to a dominant weight $\widetilde{\lambda} \in X^\ast((\Res_{F^+ / \Q} \widetilde{G})_E )$ is defined as in the previous paragraph. Thus $\cV_{\widetilde{\lambda}}$ is an $\cO[\prod_{\overline{v} | p} \widetilde{G}(\cO_{F^+_{\overline{v}}})]$-module, finite free as $\cO$-module. 

We can now define Hecke algebras. Again, we do this first for $G$. Let $S$ be a finite set of finite places of $F$ containing the $p$-adic ones, and let $K$ be a good subgroup of $\GL_n(\A_F^\infty)$ such that $K_v = \GL_n(\cO_{F_v})$ if $v \not\in S$ and $K_v \subset \GL_n(\cO_{F_v})$ if $v|p$. Then for any $\lambda \in (\Z^n_+)^{\Hom(F, E)}$ the complex $R \Gamma(X_K, \cV_\lambda)$ is defined (as an object of $\mathbf{D}(\cO)$, up to unique isomorphism), and comes equipped with an action of Hecke algebras by endomorphisms (see (\ref{eqn:hecke_morphism})). We define $\T_v = \cH(\GL_n(F_v), \GL_n(\cO_{F_v})) \otimes_\bZ \cO$ and $\T^S = \cH(\GL_n^S, K^S) \otimes_\Z \cO$ and, if $\cV$ is an $\cO[K_S]$-module, finite free as $\cO$-module, then we write $\T^S(K, \cV)$ for the image of the $\cO$-algebra homomorphism 
\[ \T^S \to \End_{\mathbf{D}(\cO)}( R \Gamma(X_K, \cV) ) \]
constructed in \S \ref{sec:general_definition_of_hecke_operators}. If $\cV = \cV_\lambda$ then we even write $\T^S(K, \lambda) = \T^S(K, \cV_\lambda)$.

We now treat the case of $\widetilde{G}$. Let $S$ be a finite set of finite places of $F$ containing the $p$-adic ones and such that $S = S^c$, and let $\overline{S}$ denote the set of places of $F^+$ below a place of $S$. Let $\widetilde{K}$ be a good subgroup of $\widetilde{G}(\A_{F^+}^\infty)$ such that $\widetilde{K}_{\overline{v}} = \widetilde{G}(\cO_{F^+_{\overline{v}}})$ for each place $\overline{v} \not\in \overline{S}$, and such that $\widetilde{K}_{\overline{v}} \subset \widetilde{G}(\cO_{F^+_{\overline{v}}})$ for each place $\overline{v}|p$. In order to simplify notation, we set $\widetilde{G}^S = \widetilde{G}^{\overline{S}}$, $\widetilde{G}_S = \widetilde{G}_{\overline{S}}$, and similarly $\widetilde{K}^S = \widetilde{K}^{\overline{S}}$ and $\widetilde{K}_S = \widetilde{K}_{\overline{S}}$.

For any $\widetilde{\lambda} \in (\Z^{2n}_+)^{\Hom(F^+, E)}$ the complex $R \Gamma(X_{\widetilde{K}}, \cV_{\widetilde{\lambda}})$ is defined, and comes equipped with an action as in (\ref{eqn:hecke_morphism}). We define $\widetilde{\T}^S = \cH(\widetilde{G}^S, \widetilde{K}^S) \otimes_\Z \cO$ and if $\widetilde{\cV}$ is an $\cO[\widetilde{K}_S]$-module, finite free as $\cO$-module, then we write $\widetilde{\T}^S(\widetilde{K}, \widetilde{\cV})$ for the image of the $\cO$-algebra homomorphism
\[ \widetilde{\T}^S \to \End_{\mathbf{D}(\cO)}(R \Gamma(X_{\widetilde{K}},\widetilde{\cV}) )\]
constructed in \S \ref{sec:general_definition_of_hecke_operators}. If $\widetilde{\cV} = \cV_{\widetilde{\lambda}}$, then we even write $\widetilde{\T}^S(\widetilde{K}, \widetilde{\lambda}) = \widetilde{\T}^S(\widetilde{K}, \cV_{\widetilde{\lambda}})$. We also denote by 
\begin{equation}\label{eqn:satake T}
\cS: \widetilde{\T}^S \to \T^S 
\end{equation}
the map induced by~\eqref{eqn:satake}. 

Note that  Lemma \ref{lem:cohomology_is_perfect} shows that both $\T^S(K, \lambda)$ and $\widetilde{\T}^S(\widetilde{K}, \widetilde{\lambda})$ are finite $\cO$-algebras. We emphasize that the Hecke algebra $\T^S(K, \lambda)$ is defined only under the assumptions that  $S$ contains the $p$-adic places of $F$, that $K$ is a good subgroup such that $K_v = \GL_n(\cO_{F_v})$ for all $v\not\in S$, and that $\lambda$ is a dominant weight for $G$. The use of this notation therefore implies that these assumptions are in effect. Similar remarks apply to the Hecke algebra $\widetilde{\T}^S(\widetilde{K}, \widetilde{\lambda})$ (in particular, the use of this notation implies that $S$ is stable under complex conjugation, a condition we do not impose for $\GL_n$).

If $K' \subset K$ is a normal good subgroup with $(K')^S = K^S$, $R$ is an $\cO$-algebra, and $\cV$ is an $R[K_S]$-module, finite free as $R$-module, then we write $\T^S(K / K', \cV) = \T^S(R \Gamma_{K / K'}(X_{K'}, \cV))$ for the image of the homomorphism (cf. \ref{eqn:hecke_morphism_with_coefficients}):
\[ \bT^S  \to \End_{\mathbf{D}(R[K / K'])}( R \Gamma_{K / K'}(X_{K'}, \cV) ).  \]
There are canonical surjective homomorphisms $\T^S(R \Gamma_{K / K'}(X_{K'}, \cV)) \to \T^S(K', \cV)$  and $\T^S(R \Gamma_{K / K'}(X_{K'}, \cV)) \to \T^S(K, \cV)$, induced respectively by the forgetful functor $\mathbf{D}(R[K / K']) \to \mathbf{D}(R)$ and the functor $R \Gamma(K / K', ?) : \mathbf{D}(R[K / K']) \to \mathbf{D}(R)$. If further $K / K'$ is abelian, then we define ${}_{K / K'} \T^S = \T^S \otimes_\cO \cO[K / K']$ and write ${}_{K / K'} \T^S(R \Gamma_{K / K'}(X_{K'}, \cV))$ for the image of the homomorphism
\[ {}_{K / K'} \T^S \to \End_{\mathbf{D}(R[K / K'])}( R \Gamma_{K / K'}(X_{K'}, \cV) ). \]
 The analogous construction is valid as well for $\widetilde{G}$ but since we will not use it, we do not write down the definition. 

We will also occasionally encounter other complexes endowed with actions of the rings $\T^S$ and $\widetilde{\T}^S$. (For example, the cohomology $R \Gamma(\partial \widetilde{X}_{\widetilde{K}}, \cV_{\widetilde{\lambda}})$ of the boundary $\partial \widetilde{X}_{\widetilde{K}}$ of the Borel--Serre compactification of $\widetilde{X}_{\widetilde{K}}$.) If $C^\bullet \in \mathbf{D}(R)$ and we are given an $\cO$-algebra homomorphism $\T^S \to \End_{\mathbf{D}(R)}
(C^\bullet)$, then we will write $\T^S(C^\bullet)$ for the image of this  homomorphism. More generally, if $K' \subset K$ is a normal good subgroup with $(K')^S = K^S$ and $C^\bullet \in \mathbf{D}(R[K / K'])$ is a complex endowed with an $\cO$-algebra homomorphism $\T^S \to \End_{\mathbf{D}(R[K / K'])}(C^\bullet)$, then we will write $\T^S(C^\bullet)$ for the image of $\T^S$ in $\End_{\mathbf{D}(R[K / K'])}(C^\bullet)$. If further $K / K'$ is abelian, then we will write ${}_{K / K'} \T^S(C^\bullet)$ for the image of ${}_{K / K'} \T^S$ in $\End_{\mathbf{D}(R[K / K'])}(C^\bullet)$.

 If the complex $C^\bullet$ has bounded cohomology, then the map $\bT^S(C^\bullet) \to \bT^S(H^\ast(C^\bullet))$ has nilpotent kernel; this is a consequence of the following lemma.
 
\begin{lemma}\label{lem_cohomology_nilpotent_annihilator}
Let $R$ be a (possibly non-commutative) $\bZ$-algebra, let $C^\bullet \in \mathbf{D}(R)$ be a complex, and let $T \subset \End_{\mathbf{D}(R)}(C^\bullet)$ be a commutative subring. Let $I = \ker(T \to \End_{R}(H^\ast(C^\bullet)))$, and suppose that there exists an integer $d \geq 0$ such that $H^i(C^\bullet) = 0$ if $i \not\in [0, d]$. Then $I^{d+1} = 0$.
\end{lemma}
\begin{proof}
We show by induction on $d \geq 0$ that if $\phi_0, \dots, \phi_d \in \End_{\mathbf{D}(R)}(C^\bullet)$ satisfy $H^\ast(\phi_i) = 0$ for $i = 0, \dots, d$, then $\phi_0 \circ \phi_1 \circ \dots \circ \phi_d = 0$ in $\End_{\mathbf{D}(R)}(C^\bullet)$. The case $d = 0$ follows because in this case, $C^\bullet$ is isomorphic to $H^0(C^\bullet)$ in $\mathbf{D}(R)$.

In general, we can assume that $\tau_{\leq d-1} (\phi_0 \circ \dots \circ \phi_{d-1}) = 0$. There is an exact triangle
\[ \xymatrix@1{ \tau_{\leq d-1} C^\bullet \ar[r]^-f & C^\bullet \ar[r]^-g & H^d(C^\bullet) \ar[r] & }\]
We obtain exact sequences
\[ \xymatrix@1{ \Hom_{\mathbf{D}(R)}(H^d(C^\bullet), C^\bullet) \ar[r] & \Hom_{\mathbf{D}(R)}(C^\bullet, C^\bullet) \ar[r] & \Hom_{\mathbf{D}(R)}(\tau_{\leq d-1} C^\bullet, C^\bullet) }\]
and
\[ \xymatrix@1{ \Hom_{\mathbf{D}(R)}(C^\bullet, \tau_{\leq d-1} C^\bullet) \ar[r] & \Hom_{\mathbf{D}(R)}(C^\bullet, C^\bullet) \ar[r] & \Hom_{\mathbf{D}(R)}(C^\bullet, H^d(C^\bullet)). } \]
We deduce the existence of elements $\alpha \in \Hom_{\mathbf{D}(R)}(H^d(C^\bullet), C^\bullet) $ and $\beta \in \Hom_{\mathbf{D}(R)}(C^\bullet, \tau_{\leq d-1} C^\bullet)$ such that $\alpha \circ g = \phi_0 \circ \dots \circ \phi_{d-1}$ and $f \circ \beta = \phi_d$. We thus conclude that $\phi_0 \circ \dots \circ \phi_{d} = \alpha \circ g \circ f \circ \beta = 0$.
\end{proof}
As an illustration of the use of this result, suppose that $K' \subset K$ is a normal good subgroup with $(K')^S = K^S$, so that the Hecke algebra $\T^S(K / K', \cV_\lambda)$ is defined. We then have a diagram of Hecke algebras
\[ \T^S(K', \cV_\lambda) \leftarrow \T^S(K / K', \cV_\lambda) \rightarrow \T^S(K, \cV_\lambda), \]
where the kernel $I$ of the left arrow satisfies $I^{\dim_\bR X} = 0$ (by Lemma \ref{lem_cohomology_nilpotent_annihilator} applied with $R = \cO[K / K']$). In particular, if $J \subset \T^S(K, \cV_\lambda)$ denotes the image of $I$, then there exists a canonical map $\T^S(K', \cV_\lambda) \to  \T^S(K, \cV_\lambda) / J$ of $\T^S$-algebras. Similar statements for the Hecke algebras which act faithfully on cohomology could be proved using the Hochschild--Serre spectral sequence (for the covering $X_{K'} \to X_K$). 

Nilpotent ideals of Hecke algebras will occur frequently throughout this paper and they often have their origins in applications of the above Lemma \ref{lem_cohomology_nilpotent_annihilator}. (Compare, for example, the statement and proof of Proposition \ref{prop:existence_of_hecke_representation_for_U(n,n)_no_R} below.) We note that the integer $\dim_\bR X$ depends only on $n$ and the degree $[F : \bQ]$; the exponents of the nilpotent ideals we consider will also usually have this property. 

\subsubsection{Some useful Hecke operators}\label{sec:some_useful_hecke_operators}

In this section we define most of the Hecke operators that we will need at various points later in the paper. We fix once and for all a choice $\varpi_v$ of uniformizer at each finite place $v$ of $F$. 

We first define notation for unramified Hecke operators.  If $v$ is a finite place of $F$ and $1 \leq i \leq n$ is an integer then we write $T_{v, i} \in \cH(\GL_n(F_v), \GL_n(\cO_{F_v}))$ for the double coset operator
\[ T_{v, i} = [ \GL_n(\cO_{F_v}) \diag(\varpi_v, \dots, \varpi_v, 1, \dots, 1) \GL_n(\cO_{F_v})], \]
where $\varpi_v$ appears $i$ times on the diagonal. This is the same as the operator denoted by $T_{M, v, i}$ in ~\cite[Prop.-Def. 5.3]{new-tho}. We define a polynomial 
\numequation\label{eqn:hecke_pol_for_GL_n} \begin{split} P_v(X) = X^n&-T_{v, 1}X^{n-1} + \dots + (-1)^iq_v^{i(i-1)/2}T_{v, i}X^{n-i}+\dots \\ & + q_v^{n(n-1)/2}T_{v,n} \in \cH(\GL_n(F_v), \GL_n(\cO_{F_v}))[X].
	\end{split}  
\end{equation}
It corresponds to the characteristic polynomial of a Frobenius element on $\rec^T_{F_v}(\pi_v)$, where $\pi_v$ is an unramified representation of $\GL_n(F_v)$. We also find it helpful to introduce, for any $\sigma \in W_{F_v}$, the polynomial $P_{v, \sigma}(X) \in \T_v[X] = (\cH(\GL_n(F_v), \GL_n(\cO_{F_v})) \otimes_\bZ \cO)[X]$, which equals the characteristic polynomial of $\sigma$ on $\rec^T_{F_v}(\pi_v)$. We write $P_{v, \sigma}(X) = \sum_{i=0}^n (-1)^i e_{v, i}(\sigma) X^{n-i}$.

If $\overline{v}$ is a place of $F^+$ unramified in $F$, and $v$ is a place of $F$ above $\overline{v}$, and $1 \leq i \leq 2n$ is an integer, then we write $\widetilde{T}_{v, i} \in \cH( \widetilde{G}(F^+_{\overline{v}}),\widetilde{G}(\cO_{F^+_{\overline{v}}})) \otimes_\Z \Z[q_{\overline{v}}^{-1}]$ for the operator denoted $T_{G, v, i}$ in~\cite[Prop.-Def. 5.2]{new-tho}. We define a polynomial
\numequation\label{eqn_hecke_pol_for_tilde_G} \begin{split} \widetilde{P}_v(X) = X^{2n} & - \widetilde{T}_{v, 1} X^{2n-1} +  \dots + (-1)^j q_v^{j(j-1)/2} \widetilde{T}_{v, j} +  \dots \\&+ q_v^{n(2n-1)} \widetilde{T}_{v, 2n} \in \cH( \widetilde{G}(F^+_{\overline{v}}),\widetilde{G}(\cO_{F^+_{\overline{v}}})) \otimes_\Z \Z[q_{\overline{v}}^{-1}][X]. \end{split} 
\end{equation}
It corresponds to the characteristic polynomial of a Frobenius element on $\rec^T_{F_v}(\pi_v)$, where $\pi_v$ is the base change of an unramified representation $\sigma_{\overline{v}}$ of the group $\widetilde{G}(F^+_{\overline{v}})$. Again if $\sigma \in W_{F_v}$ then we write 
\[ \widetilde{P}_{v, \sigma}(X) = \sum_{i=0}^{2n} (-1)^i \widetilde{e}_{v, i}(\sigma) X^{n-i} \in \widetilde{\T}_v[X] = (\cH( \widetilde{G}(F^+_{\overline{v}}),\widetilde{G}(\cO_{F^+_{\overline{v}}})) \otimes_\Z \cO)[X] \]
for the polynomial corresponding to the characteristic polynomial of $\sigma$ on $\rec^T_{F_v}(\pi_v)$.

We next define notation for some ramified Hecke operators. If $v$ is a
finite place of $F$, and $c \geq b \geq 0$ are integers, then we write
$\Iw_v(b, c)$ for the subgroup of~$\GL_n(\cO_{F_v})$ consisting of those matrices which reduce to an upper-triangular matrix modulo $\varpi_v^c$, and to a unipotent upper-triangular matrix modulo $\varpi_v^b$. We define $\Iw_v = \Iw(0, 1)$ and $\Iw_{v, 1} = \Iw_v(1, 1)$; thus $\Iw_v$ is the standard Iwahori subgroup of $\GL_n(\cO_{F_v})$. If $1 \leq i \leq n$ is an integer and $c \geq 1$, then we will write $U_{v, i} \in \cH(\GL_n(F_v), \Iw_v(b, c))$ for the double coset operator
\[ U_{v, i} = [ \Iw_v(b, c) \diag(\varpi_v, \dots, \varpi_v, 1, \dots, 1) \Iw_v(b, c) ], \]
where $\varpi_v$ appears $i$ times on the diagonal. Note that this depends both on the uniformizer $\varpi_v$ and on the chosen level. We hope that this abuse of notation will not cause confusion. We also define 
\[ U_v = [ \Iw_v(b, c) \diag(\varpi_v^{n-1}, \varpi_v^{n-2}, \dots, \varpi_v, 1) \Iw_v(b, c) ]. \]
If $u \in T_n(\cO_{F_v})$, then we define
\[ \langle u \rangle = [ \Iw_v(b, c) u \Iw_v(b, c) ]. \]
Note that the subgroups $\Iw_v(b, c)$ all admit Iwahori decompositions with respect to the standard upper-triangular Borel subgroup of $\GL_n$. We write $\Delta_v \subset \GL_n(F_v)$ for the submonoid defined by
\[ \Delta_v = \sqcup_{\mu \in \Z_+^n} \Iw_v \diag(\varpi_v^{\mu_1}, \dots, \varpi_v^{\mu_n}) \Iw_v. \]
We now assume that each $p$-adic place of $F^+$ splits in $F$. In this case we set $\Delta_p = \prod_{v \in S_p}\Delta_v$. If $\lambda \in (\Z^n)^{\Hom(F, E)}$, then we define a homomorphism (of monoids) $\alpha_\lambda : \Delta_p \to E^\times$ by the formula
\[ \alpha_\lambda( ( k_{v, 1} \diag(\varpi_{v}^{a_{v, 1}}, \dots, \varpi_v^{a_{v, n}})k_{v, 2})_{v \in S_p}) = \prod_{v \in S_p} \prod_{\tau \in \Hom_{\Q_p}(F_v, E)} \prod_{i=1}^n \tau(\varpi_v)^{a_{v, i} (w^G_0 \lambda)_{\tau, i}}, \]
where $w_0^G$ is the longest element in the Weyl group $W((\Res_{F^+ / \Q} G)_E, (\Res_{F^+ / \Q} T)_E)$. If $\lambda \in (\Z^n_+)^{\Hom(F, E)}$ is dominant, then $\GL_n(F_p)$ acts on $\cV_\lambda \otimes_\cO E = V_\lambda$; we write $(g, x) \mapsto g \cdot x$ for this action. We endow $\cV_\lambda$ with the structure of $\cO[\Delta_p]$-module via the formula
\[ g \cdot_p x = \alpha_\lambda(g)^{-1} g \cdot x. \]
This is well defined: the fact that the lattice $\cV_{\lambda}$ is preserved under this twisted action follows as in~\cite[Definition 2.8]{ger} from Lemma 2.2 of \emph{loc. cit.} - the point is that $w_0^G \lambda$ is the lowest weight vector in $\cV_{\lambda}$, so $g\cdot x$ is divisible by $\alpha_\lambda(g)$ when $g\in \Delta_p$. 
Using the construction in \S \ref{sec:general_definition_of_hecke_operators}, we see that if $K \subset \GL_n(\A_F^\infty)$ is a good subgroup, and for each place $v | p$ of $F$ we have $K_v = \Iw_v(b, c)$, then there is a canonical homomorphism
\[ \cH( G^S, K^S) \otimes_\Z \cH(\Delta_p, K_p) \to \End_{\mathbf{D}(\cO)}(R \Gamma(X_K, \cV_\lambda)), \]
and in particular all the Hecke operators $U_{v, i}$ and $U_v$ act as endomorphisms of $R \Gamma(X_K, \cV_\lambda)$. Note that the action of these operators depends on the choice of uniformizer $\varpi_v$, because the twisted action $\cdot_p$ does. 

Now suppose that $\overline{v}$ is a finite place of $F^+$ which splits in $F$, and let $v$ be a place of $F$ above it. Then $\iota_v^{-1}(\Iw_{v}(b, c)) = \iota_{v^c}^{-1}(\Iw_{v^c}(b, c))$ (where here the subgroup $\Iw_{v}(b, c)$ is inside $\GL_{2n}(F_v)$), and we write $\widetilde{\Iw}_{\overline{v}}(b, c)$ for this subgroup of $\widetilde{G}(F^+_{\overline{v}})$. We define a Hecke operator in $\cH(\widetilde{G}(F^+_{\overline{v}}), \widetilde{\Iw}_{\overline{v}}(b, c))$ for each $i = 1, \dots, 2n$ by the formula
\[ \widetilde{U}_{v, i} = \iota_v^{-1}[ \Iw_v(b, c) \diag(\varpi_v, \dots, \varpi_v, 1, \dots, 1) \Iw_v(b, c) ], \]
where $\varpi_v$ appears $i$ times on the diagonal. We also define
\[ \widetilde{U}_v = \iota_v^{-1}[ \Iw_v(b, c) \diag(\varpi_v^{2n-1}, \varpi_v^{2n-2}, \dots, \varpi_v, 1) \Iw_v(b, c) ]. \]
If $u \in T(\cO_{F^+, \overline{v}})$, then we define
\[ \langle u \rangle = [ \widetilde{\Iw}_{\overline{v}}(b, c) u \widetilde{\Iw}_{\overline{v}}(b, c)]. \]
If $\varpi_{v^c} = \varpi^c_v$, then $\widetilde{U}_{v^c, i} = \widetilde{U}_{v, 2n - i} \widetilde{U}_{v, 2n}^{-1}$ and $\widetilde{U}_{v^c} = \widetilde{U}_v \widetilde{U}_{v, 2n}^{1-2n}$.

We write $\widetilde{\Delta}_{\overline{v}} \subset \widetilde{G}(F^+_{\overline{v}})$ for the submonoid defined by
\[ \widetilde{\Delta}_{\overline{v}} =  \iota_v^{-1} \left( \sqcup_{\mu \in \Z_+^{2n}} \Iw_v \diag(\varpi_v^{\mu_1}, \dots, \varpi_v^{\mu_{2n}}) \Iw_v \right) \]
(which is independent of the choice of $v$). Now suppose that each $p$-adic place of $F^+$ splits in $F$. In this case we set $\widetilde{\Delta}_p = \prod_{v \in \overline{S}_p} \widetilde{\Delta}_{\overline{v}}$. If $\widetilde{\lambda} \in (\Z^{2n})^{\Hom(F^+, E)}$, then we define a homomorphism  $\widetilde{\alpha}_{\widetilde{\lambda}} : \widetilde{\Delta}_p \to E^\times$ by the formula
\[ \widetilde{\alpha}_{\widetilde{\lambda}}(( k_{\overline{v}, 1} \iota_{\widetilde{v}}^{-1}(\diag(\varpi_{\widetilde{v}}^{a_{{\overline{v}}, 1}}, \dots, \varpi_{\widetilde{v}}^{a_{\overline{v}, 2n}})) k_{\overline{v}, 2})_{\overline{v} \in \overline{S}_p}) = \prod_{\overline{v} \in \overline{S}_p} \prod_{\tau \in \Hom_{\Q_p}(F_{\widetilde{v}}, E)} \prod_{i=1}^{2n} \tau(\varpi_{\widetilde{v}})^{a_{\overline{v}, i} (w^{\widetilde{G}}_0 \widetilde{\lambda})_{\tau, i}}, \]
	where $w_0^{\widetilde{G}}$ is the longest element in the Weyl group $W((\Res_{F^+ / \Q} \widetilde{G})_E, (\Res_{F^+ / \Q} T)_E)$. Here we recall that $\widetilde{v} \in \widetilde{S}_p$ is a fixed choice of place of $F$ lying above $\overline{v}$, and that it appears also in the definition of $\cV_{\widetilde{\lambda}}$. If $\widetilde{\lambda} \in (\Z^{2n}_+)^{\Hom(F^+, E)}$ is dominant, then $\widetilde{G}_p$ acts on $\cV_{\widetilde{\lambda}} \otimes_\cO E = V_{\widetilde{\lambda}}$; we write $(g, x) \mapsto g \cdot x$ for this action. We endow $\cV_{\widetilde{\lambda}}$ with the structure of $\cO[\widetilde{\Delta}_p]$-module via the formula
	\[ g \cdot_p x = \widetilde{\alpha}_{\widetilde{\lambda}}(g)^{-1} g \cdot x. \] 
  Using the construction in \S \ref{sec:general_definition_of_hecke_operators}, we see that if $\widetilde{K} \subset \widetilde{G}(\A_{F^+}^\infty)$ is a good subgroup, and for each place $\overline{v} | p$ of $F^+$ we have $\widetilde{K}_{\overline{v}} = \widetilde{\Iw}_{\overline{v}}(b, c)$, then there is a canonical homomorphism
\[ \cH( \widetilde{G}^S, \widetilde{K}^S) \otimes_\Z \cH(\widetilde{\Delta}_p, \widetilde{K}_p) \to \End_{\mathbf{D}(\cO)}(R \Gamma(\widetilde{X}_{\widetilde{K}}, \cV_{\widetilde{\lambda}})), \]
and in particular all the Hecke operators $\widetilde{U}_{v, i}$ and $\widetilde{U}_v$ act as endomorphisms of $R \Gamma(\widetilde{X}_{\widetilde{K}}, \cV_{\widetilde{\lambda}})$.

If $v$ is a finite place of $F$, prime to $p$, and $I_v$ is an open
compact subgroup of $\GL_n(F_v)$ satisfying $\Iw_v(1, 1) \subset I_v
\subset \Iw_v(0, 1)$, then $\Iw_v(0, 1) / I_v$ can be identified with
a quotient of $(k(v)^\times)^n$, and we define
\[ \Xi_v = (F_v^\times)^n / (\ker (\cO_{F_v}^\times)^n \to (k(v)^\times)^n \to \Iw_v(0, 1) / I_v). \]
The group $I_v$ admits an Iwahori decomposition with respect to the parabolic subgroup $B_n = T_n N_n$ of $\GL_n$, so we may apply the theory of \S \ref{sec:hecke_algebra_of_a_monoid}. Moreover,   \cite[Corollary 3.4]{Fli11} shows that for any $g \in \Xi_v$, the element $[I_v g I_v] \in \cH(\GL_n(F_v), I_v) \otimes_\bZ \cO$ is invertible (this uses our assumption that $q_v$ is a unit in $\cO$). Lemma \ref{lem:strongly_positive_elements} thus implies that there is an injective $\cO$-algebra homomorphism
\[ t : \Xi_v \to  \cH(\GL_n(F_v), I_v) \otimes_\Z \cO, \]
which sends any positive $g \in \Xi_v$ to the double coset $[I_v g I_v]$. Given any $\alpha \in F_v^\times$, we write $t_{v, i}(\alpha) \in \cH(\GL_n(F_v), I_v) \otimes_\Z \cO$ for the image under $t$ of the element $q_v^{- \langle \lambda, \rho + (1-n)/2 \det \rangle}(1, \dots, 1, \alpha, 1, \dots, 1)$ of $\cO[\Xi_v]$, where $\lambda \in X_\ast(T_n)$ denotes the image of $\alpha$ under the natural projection $\Xi_v \to (F_v^\times/ \cO_{F_v}^\times)^n = X_\ast(T_n)$ and $\rho \in X^\ast(T_n)$ is the usual half-sum of positive roots, and where $\alpha$ sits in the $i^\text{th}$ position. We write $e_{v, i}(\alpha) \in \cH(\GL_n(F_v), I_v) \otimes_\Z \cO$ for the coefficient of $(-1)^i X^{n-i}$ in the polynomial $\prod_{i=1}^n(X - t_{v, i}(\alpha))$. If $\sigma \in W_{F_v}$, then we set $t_{v, i}(\sigma) = t_{v, i}(\alpha)$ and $e_{v, i}(\sigma) = e_{v, i}(\alpha)$, where $\alpha \in F_v^\times$ is such that the restriction of $\sigma$ to the maximal abelian extension of $F_v$ equals $\Art_{F_v}(\alpha)$. We define the polynomial
\numequation P_{v, \sigma}(X) = \prod_{i=1}^n(X - t_{v, i}(\sigma)) = \sum_{i=0}^n (-1)^i e_{v, i}(\sigma) X^{n-i}  \in \cH(\GL_n(F_v), I_v) \otimes_\Z \cO[X].
\end{equation}
\begin{prop}\label{prop_action_of_central_elements_of_pro_p_iwahori_hecke_algebra}
	Let $\pi_v$ be an irreducible admissible $\overline{\Q}_p[\GL_n(F_v)]$-module. 
	\begin{enumerate}
		\item We have $\pi_v^{I_v} \neq 0$ if and only if
                  $\pi_v$ is isomorphic to an irreducible
                  subquotient %
                  of a representation $\Ind_{B_n(F_v)}^{\GL_n(F_v)}
                  \chi$, where $\chi = \chi_1 \otimes \dots \otimes \chi_n : (F_v^\times)^n \to \overline{\Q}_p^\times$ is a smooth character which factors through the quotient $(F_v^\times)^n \to \Xi_v$.
		\item Suppose that $\pi_v^{I_v} \neq 0$. Then for any $\alpha \in F_v^\times$, $e_{v, i}(\alpha)$ acts on $\pi_v^{I_v}$ as a scalar $e_{v, i}(\alpha, \pi_v) \in \overline{\Q}_p^\times$.
		\item Suppose that $\pi_v^{I_v} \neq 0$, and let $(r_v, N_v) = \rec_{F_v}^T(\pi_v)$. Then for any $\sigma \in W_{F_v}$, the characteristic polynomial of $r_v(\sigma)$ is equal to $\sum_{i=0}^n (-1)^i e_{v, i}(\alpha, \pi_v) X^{n-i}$, where $\alpha = \Art_{F_v}^{-1}(\sigma|_{F_v^\text{ab}})$.
	\end{enumerate}
\end{prop}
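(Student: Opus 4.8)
The plan is to prove the three parts in turn; the common thread is to use Part (1) to reduce statements about $\pi_v$ to statements about a principal series containing it, and then to invoke the structure of the Iwahori--Hecke algebra of $\GL_n(F_v)$. Part (1) asserts that the irreducible smooth $\overline{\Q}_p[\GL_n(F_v)]$-modules $\pi_v$ with $\pi_v^{I_v}\neq 0$ are exactly the irreducible subquotients of the principal series $\nInd_{B_n(F_v)}^{\GL_n(F_v)}\chi$ with $\chi$ factoring through $(F_v^\times)^n\twoheadrightarrow\Xi_v$. I would deduce this from the theory of types and Bernstein components for $\GL_n$ (following Bushnell and Kutzko): since $\Iw_v(1,1)\subset I_v$, the condition $\pi_v^{I_v}\neq 0$ forces $\pi_v^{\Iw_v(1,1)}\neq 0$, so $\pi_v$ is a subquotient of a tamely ramified principal series $\nInd_{B_n(F_v)}^{\GL_n(F_v)}\chi$; the finer condition $\pi_v^{I_v}\neq 0$ then translates --- using the equivalence between smooth representations generated by their $I_v$-fixed vectors and $\cH(\GL_n(F_v),I_v)$-modules, together with its compatibility via the maps $r_{B_n}$, $r_{T_n}$ with the corresponding statement for the torus $T_n(F_v)$ --- into the requirement that $\chi$ be trivial on $\ker(T_n(\cO_{F_v})\to\Iw_v(0,1)/I_v)$, i.e.\ that $\chi$ factor through $\Xi_v$. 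The converse is similar, using that the relevant representations form a union of Bernstein components whose $I_v$-fixed vectors are nonzero.

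For Part (2), I would observe that $e_{v,i}(\alpha)$ is the $i$-th elementary symmetric function in $t_{v,1}(\alpha),\dots,t_{v,n}(\alpha)$, and that these are the images under the algebra homomorphism $\cO[\Xi_v]\to\cH(\GL_n(F_v),I_v)\otimes_\Z\cO$ of the $S_n$-orbit $\{\alpha e_1,\dots,\alpha e_n\}\subset\Xi_v$, where $S_n$ permutes the $n$ coordinates. Hence $e_{v,i}(\alpha)$ lies in the image of the subring $\cO[\Xi_v]^{S_n}$, which by the Bernstein description of the centre of the Iwahori--Hecke algebra --- together with its analogue at level $\Iw_v(1,1)$ (due to Vign\'eras) and the realisation of $\cH(\GL_n(F_v),I_v)$ as a corner $e_{I_v}\cH(\GL_n(F_v),\Iw_v(1,1))e_{I_v}$ --- lies in the centre of $\cH(\GL_n(F_v),I_v)\otimes_\Z\overline{\Q}_p$. (The case $i=n$ is transparent: $t_{v,1}(\alpha)\cdots t_{v,n}(\alpha)$ is, up to the prescribed power of $q_v$, the double coset operator attached to the central element $\alpha 1_n\in Z(\GL_n(F_v))$.) Since $\pi_v$ is admissible, $\pi_v^{I_v}$ is finite dimensional over $\overline{\Q}_p$, and since $\pi_v$ is generated by $\pi_v^{I_v}$ it is a simple $\cH(\GL_n(F_v),I_v)\otimes_\Z\overline{\Q}_p$-module; a central element therefore acts on it by a scalar, which we call $e_{v,i}(\alpha,\pi_v)$.

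For Part (3), I would use Part (1) to fix $\chi=\chi_1\otimes\dots\otimes\chi_n$ factoring through $\Xi_v$ with $\pi_v$ a subquotient of $\nInd_{B_n(F_v)}^{\GL_n(F_v)}\chi$, and put $\alpha=\Art_{F_v}^{-1}(\sigma|_{F_v^\ab})$. On the one hand, the supercuspidal support of $\pi_v$ is $(\chi_1,\dots,\chi_n)$, so by compatibility of $\rec^T_{F_v}$ with parabolic induction the Frobenius-semisimple Weil representation underlying $(r_v,N_v)=\rec^T_{F_v}(\pi_v)$ is $\bigoplus_{j=1}^n\rec^T_{F_v}(\chi_j)=\bigoplus_{j=1}^n(\chi_j\circ\Art_{F_v}^{-1})$, so that $\det(X-r_v(\sigma))=\prod_{j=1}^n(X-\chi_j(\alpha))$. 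On the other hand, being central, $e_{v,i}(\alpha)$ acts on every irreducible subquotient of $(\nInd_{B_n(F_v)}^{\GL_n(F_v)}\chi)^{I_v}$ --- in particular on $\pi_v^{I_v}$, which by Part (2) identifies $e_{v,i}(\alpha,\pi_v)$ with this value --- by the scalar $e_i(\chi_1(\alpha),\dots,\chi_n(\alpha))$; this is the standard evaluation of Bernstein's elements $\Theta_\mu$ on the $I_v$-fixed vectors of a principal series in terms of the inducing character, and it is exactly here that the normalising factor $q_v^{\langle\lambda',\rho+(n-1)/2\det\rangle}$ in the definition of $t_{v,i}(\alpha)$ is used, to cancel the modulus character. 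Comparing the two computations yields $\sum_{i=0}^n(-1)^ie_{v,i}(\alpha,\pi_v)X^{n-i}=\prod_{j=1}^n(X-\chi_j(\alpha))=\det(X-r_v(\sigma))$, which is the assertion.

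The step I expect to be the main obstacle is the normalisation bookkeeping in Part (3): one must reconcile the explicit power of $q_v$ built into the definition of $t_{v,i}(\alpha)$, the use of un-normalised induction $\nInd$, the modulus characters entering the action of double coset operators on Jacquet modules, and the arithmetic normalisation $\rec^T$ of the local Langlands correspondence, so that the scalar emerges as $e_i(\chi_1(\alpha),\dots,\chi_n(\alpha))$ with no spurious unramified twist. A secondary technical point is justifying the centrality asserted in Part (2) for a general $I_v$ with $\Iw_v(1,1)\subset I_v\subset\Iw_v(0,1)$ --- as opposed to $I_v=\Iw_v$, where it is classical --- which requires the structure theory of the Hecke algebra at level $\Iw_v(1,1)$.
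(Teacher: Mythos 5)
Your proposal is correct and follows essentially the same route as the paper: the paper's proof of this proposition consists of citing Flicker's results (\cite[Theorem 2.1]{Fli11} for part (1), the explicit description of the centre in \cite[Proposition 4.11]{Fli11} for the centrality underlying part (2), and the computation in \cite[\S 4]{Fli11} of the action of this centre on the $I_v$-invariants of $\nInd_{B_n(F_v)}^{\GL_n(F_v)}\chi$ for part (3)), and these are precisely the facts you sketch via types, the Bernstein/Vign\'eras description of the centre, and compatibility of $\rec^T_{F_v}$ with supercuspidal support. The normalisation bookkeeping you flag as the main obstacle is exactly what the cited description of the central action absorbs, so no further comparison is needed.
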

\begin{proof}
	The first part follows from \cite[Theorem 2.1]{Fli11}. The second part is a consequence of the fact that the elements $e_{v, i}(\alpha)$ lie in the centre of $\cH(\GL_n(F_v), I_v) \otimes_\Z \cO$, which in turn follows from the explicit description of the centre in  \cite[Proposition 4.11]{Fli11}. The final part follows from the description in \cite[\S 4]{Fli11} of the action of this centre on the $I_v$-invariants in the induced representation $\nInd_{B_n(F_v)}^{\GL_n(F_v)} \chi$.
\end{proof}
Now suppose that $v$ is a finite place of $F$, prime to $p$ and split over $F^+$, and write $\p_v \subset \GL_{2n}(\cO_{F_v})$ for the parahoric subgroup consisting of matrices whose reduction modulo $\varpi_v$ is block upper-triangular, with blocks of sizes $n, 1, 1, \dots, 1$. Projection to the lower right-hand block determines a homomorphism $\p_v \to B_n(k(v))$. We write $\p_{v, 1} \subset \p_v$ for the kernel of the composite homomorphism $\p_v \to B_n(k(v)) \to T_n(k(v))$. 

Let $\q_v \subset \GL_{2n}(F_v)$ be an open compact subgroup such that $\p_{v, 1} \subset \q_v \subset \p_v$, and set $\widetilde{\p}_v = \iota_v^{-1}(\p_v)$, $\widetilde{\p}_{v, 1} = \iota_v^{-1}(\p_{v, 1})$, and $\widetilde{\q}_v = \iota_v^{-1}(\q_v)$. These are open compact subgroups of $\widetilde{G}(F^+_{\overline{v}})$ and we can identify $\widetilde{\p}_v \cap G(F^+_{\overline{v}}) = \GL_n(\cO_{F_{v^c}}) \times \Iw_{v}$ and $\widetilde{\p}_{v, 1} \cap G(F^+_{\overline{v}}) = \GL_n(\cO_{F_{v^c}}) \times \Iw_{v, 1}$. In particular, we may identify the quotient $\widetilde{\p}_v / \widetilde{\p}_{v, 1}$ with $(k(v)^\times)^n$. The group $\widetilde{\q}_v$ admits an Iwahori decomposition with respect to the parabolic subgroup $P = G U$, so we may use the theory developed in \S \ref{sec:hecke_algebra_of_a_monoid}.
\begin{lemma}
The element $g = ( \varpi^{-c}_{v} \cdot 1_n, 1_n) \in \GL_n(F_{v^c}) \times \GL_n(F_v) = G(F^+_{\overline{v}})$ is strongly positive and the element $[\widetilde{\q}_v g \widetilde{\q}_v] \in \cH(\widetilde{G}(F^+_{\overline{v}}), \widetilde{\q}_v) \otimes_\bZ \cO$ is invertible. 
\end{lemma}
\begin{proof}
After applying $\iota_v$, we see that to prove the lemma it is enough to explain why $[ \q_v \diag (\varpi_v, \dots, \varpi_v, 1, \dots, 1) \q_v ]$ is an invertible element of $\cH(\GL_{2n}(F_v), \q_v) \otimes_\bZ \cO$ (where $\varpi_v$, $1$ each occur $n$ times). It follows from \cite[Corollary 3.4]{Fli11} that $[ \Iw_v(1) \diag (\varpi_v, \dots, \varpi_v, 1, \dots, 1) \Iw_v(1) ]$ is invertible in $\cH(\GL_{2n}(F_v),  \Iw_v(1)) \otimes_\bZ \cO$, while it follows from \cite[Theorem 4.5]{Fli11} that $[ \Iw_v(1) \diag (\varpi_v, \dots, \varpi_v, 1, \dots, 1) \Iw_v(1) ]$ and $[ \q_v ]$ commute. We deduce that $[ \q_v \diag (\varpi_v, \dots, \varpi_v, 1, \dots, 1) \q_v ] = [ \q_v ] \cdot [ \Iw_v(1) \diag (\varpi_v, \dots, \varpi_v, 1, \dots, 1) \Iw_v(1) ]$ is invertible, as required. 
\end{proof}
Lemma \ref{lem:strongly_positive_elements} implies that there is an injective $\cO$-algebra homomorphism
\[ t : \cH(\GL_n(F_{v^c}) \times \GL_n(F_v), \GL_n(\cO_{F_{v^c}}) \times I_{v}) \otimes_\bZ \cO \to \cH(\widetilde{G}(F^+_{\overline{v}}), \widetilde{\q}_v) \otimes_\bZ \cO. \]
We write $\Xi_v$ for the quotient of $(F_v^\times)^n$ associated to the group $I_v$. If $\sigma \in W_{F_v}$, then we write, with apologies for the abuse of notation, $t_{v, i}(\sigma) \in \cH(\widetilde{G}(F^+_{\overline{v}}), \widetilde{\q}_v) \otimes_\bZ \cO$ for the image under $t$ of the element $\| \sigma \|_v^{-n} t_{v, i}(\sigma) \in \cH(\GL_n(F_v), I_v) \otimes_\Z \cO$ defined previously. We write $e_{v, i}(\sigma) \in \cH(\widetilde{G}(F^+_{\overline{v}}), \widetilde{\q}_v) \otimes_\bZ \cO$ for the coefficient of $(-1)^i X^{n-i}$ in the polynomial 
\numequation P_{v, \sigma}(X) = \prod_{i=1}^n(X - t_{v, i}(\sigma))  \in (\cH(\widetilde{G}(F^+_{\overline{v}}), \widetilde{\q}_v) \otimes_\bZ \cO)[X].
\end{equation}
If $\sigma \in W_{F_{v^c}}$, then we define $e_{v^c, i}(\sigma) \in  \cH(\widetilde{G}(F^+_{\overline{v}}), \widetilde{\q}_v) \otimes_\bZ \cO$ to be the image under $t$ of the element $\| \sigma \|_v^{i(n-1)} e_{v^c, i}(\sigma) \in \cH(\GL_n(F_{v^c}), \GL_n(\cO_{F_{v^c}})) \otimes_\bZ \cO$. We define the polynomial 
\numequation P_{{v^c}, \sigma}(X) = \sum_{i=0}^n (-1)^i e_{v, i}(\sigma) X^{n-i}  \in (\cH(\widetilde{G}(F^+_{\overline{v}}), \widetilde{\q}_v) \otimes_\bZ \cO)[X].
\end{equation}
We finally define for any $\sigma \in W_{F_v}$ the polynomial
\[ \widetilde{P}_{v, \sigma}(X) = P_{v^c, \sigma^{-c}}(X) P_{v, \sigma}(X) \in (\cH(\widetilde{G}(F^+_{\overline{v}}), \widetilde{\q}_v) \otimes_\bZ \cO)[X], \]
and define elements $\widetilde{e}_{v, i}(\sigma) \in \cH(\widetilde{G}(F^+_{\overline{v}}), \widetilde{\q}_v) \otimes_\bZ \cO$ by the formula $\widetilde{P}_{v, \sigma}(X) = \sum_{i=0}^{2n}(-1)^i \widetilde{e}_{v, i}(\sigma) X^{n-i}$.
\begin{lemma}\label{lem:ramified_hecke_operators_are_central}
Suppose given an irreducible admissible $\overline{\bQ}_p[\widetilde{G}(F^+_{\overline{v}})]$-module $\widetilde{\pi}_{\overline{v}}$ such that $\widetilde{\pi}_{\overline{v}}^{\widetilde{\q}_v} \neq 0$, and let $\sigma \in W_{F_v}$. Then:
\begin{enumerate}
\item Each operator $\widetilde{e}_{v, i}(\sigma)$ acts by a scalar $\widetilde{e}_{v, i}(\sigma, \widetilde{\pi}_{\overline{v}}) \in \overline{\bQ}_p$ on $\widetilde{\pi}_{\overline{v}}^{\widetilde{\q}_v}$.\item Let $(r_v, N_v) = \rec^T_{F_v}(\widetilde{\pi}_{\overline{v}} \circ \iota_v^{-1})$. Then the characteristic polynomial of $r_v(\sigma)$ is equals $\sum_{i=0}^{2n} (-1)^i \widetilde{e}_{v, i}(\sigma, \widetilde{\pi}_{\overline{v}}) X^{n-i}$. 
\end{enumerate}
\end{lemma}
\begin{proof}
We fix a choice of isomorphism $\iota : \overline{\bQ}_p \to \bC$, so that normalized induction and normalized restriction (i.e. Jacquet module) may be defined over $\overline{\bQ}_p$. The proof uses well-known principles (cf. \cite[Lemma 1.17]{MR771671}). First, there exists a tamely ramified character $\chi : T(F^+_{\overline{v}}) \to \overline{\bQ}_p^\times$ such that $\widetilde{\pi}_{\overline{v}}$ is a subquotient of an induced representation $\widetilde{\Pi} = \nInd_{B(F^+_{\overline{v}})}^{\widetilde{G}(F^+_{\overline{v}})} \chi$. Identifying $T(F^+_{\overline{v}}) = T_{2n}(F_v)$, we may identify $\chi$ with a tuple of tamely ramified characters $\psi_1, \dots, \psi_{2n} : F_v^\times \to \overline{\bQ}_p$. To prove the the lemma, it suffices to show that $\widetilde{e}_{v, i}(\sigma)$ acts as a scalar on the subspace of $\widetilde{\q}_v$-invariants of $\widetilde{\Pi}$, this scalar being equal to the degree $i$ symmetric polynomial in $\psi_1(\Art_{F_v}^{-1}(\sigma)), \dots, \psi_{2n}(\Art_{F_v}^{-1}(\sigma))$. 

Let $R =  \overline{\bQ}_p[T(F^+_{\overline{v}}) / T(\cO_{F^+_{\overline{v}}})]$ and let $\chi_u : T(F^+_{\overline{v}}) \to R^\times$ be the universal unramified character. We consider the induced representation $\widetilde{\Pi}_u = \nInd_{B(F^+_{\overline{v}})}^{\widetilde{G}(F^+_{\overline{v}})} (\chi \otimes \chi_u)$, a smooth $R[\widetilde{G}(F^+_{\overline{v}})]$-module. Then $\widetilde{\Pi}_u^{\widetilde{\q}_v}$ is a finite free $R$-module and for any homomorphism $x : R \to \overline{\bQ}_p$, corresponding to an unramified character $\chi_x : T(F^+_{\overline{v}}) \to \overline{\bQ}_p^\times$ with induced representation $\widetilde{\Pi}_x = \nInd_{B(\overline{F}^+_{\overline{v}})}^{\widetilde{G}(F^+_{\overline{v}})} (\chi \otimes \chi_x)$, the induced map
\[ ( \widetilde{\Pi}_u^{\widetilde{\q}_v} )\otimes_{R, x} \overline{\bQ}_p \to \widetilde{\Pi}_x^{\widetilde{\q}_v} \]
is an isomorphism. We may identify $\chi \otimes \chi_x$ with a tuple $\psi_{x, 1}, \dots, \psi_{x, 2n} : F_v^\times \to \overline{\bQ}_p$ of tamely ramified characters. To prove the lemma, it in fact suffices to show for a Zariski dense set of points $x \in \Spec R (\overline{\bQ}_p)$ that the Hecke operator $\widetilde{e}_{v, i}(\sigma)$ acts by a scalar on $ \widetilde{\Pi}_x^{\widetilde{\q}_v}$ which is equal to the degree $i$ symmetric polynomial in $\psi_{x,1 }(\Art_{F_v}^{-1}(\sigma)), \dots, \psi_{x, 2n}(\Art_{F_v}^{-1}(\sigma))$. 

Consider the Jacquet module $r_P(\widetilde{\Pi}_x) = (\widetilde{\Pi}_x)_{U(F^+_{\overline{v}})} \otimes \delta_P^{-1/2}$, an admissible $\overline{\bQ}_p[G(F^+_{\overline{v}})]$-module. Then \cite[Theorem 7.9]{MR1643417} asserts that the natural map
\[ q : (\widetilde{\Pi}_x)^{\widetilde{\q}_v} \to r_P(\widetilde{\Pi}_x)^{\GL_n(\cO_{F_{v^c}}) \times I_v} \]
is an isomorphism which satisfies the formula 
\[ h q(x) = \delta_P^{1/2}(g) q( t(h) x ) \]
for any $x \in (\widetilde{\Pi}_x)^{\widetilde{\q}_v}$ and Hecke operator $h = [ (\GL_n(\cO_{F_{v^c}}) \times I_v) g (\GL_n(\cO_{F_{v^c}}) \times I_v)] \in \cH( \GL_n(F_{v^c}) \times \GL_n(F_v), \GL_n(\cO_{F_{v^c}}) \times I_v) \otimes_\bZ \cO$. The geometrical lemma (\cite[1.2, Theorem]{Zel80}) asserts that $r_P(\widetilde{\Pi}_x)$ admits a filtration by 	induced representations $\sigma_{x, Y_{v^c}, Y_v}$, indexed by partitions $\{ 1, \dots, 2n \} = Y_{v^c} \sqcup Y_v$, that may be described as follows:
\[ \sigma_{x, Y_{v^c}, Y_v} = \left( \nInd_{B_n(F_{v^c})}^{\GL_n(F_{v^c})} \otimes_{i \in Y_{v^c}} \psi_{x, i}^{-c} \right) \otimes \left( \nInd_{B_n(F_{v})}^{\GL_n(F_{v})} \otimes_{i \in Y_{v}} \psi_{x, i}\right). \]
For a Zariski dense set of points $x$ (including those for which the central element $(\varpi_{v^c} \cdot 1_n, 1_n) \in \GL_n(F_{v^c}) \times \GL_n(F_v)$ acts by a distinct scalar on each induced representation $\sigma_{x, Y_{v^c}, Y_v}$), this filtration splits and $r_P(\widetilde{\Pi}_x)$ is isomorphic to a sum of induced representations. The Hecke operators $e_{v, i}(\sigma)$ and $e_{v^c, i}(\sigma^{-c})$ act as a scalar in the subspace of $\GL_n(\cO_{F_{v^c}}) \times I_v$-invariants in each summand and a calculation shows that the scalar by which $\sum_{i+j = k } e_{v, i}(\sigma) e_{v^c, j}(\sigma^{-c})$ acts in each summand is the degree $k$ elementary symmetric polynomial in $\psi_{x, 1}(\Art_{F_v}^{-1}(\sigma)), \dots, \psi_{x, 2n}(\Art_{F_v}^{-1}(\sigma))$ -- in particular, independent of the choice of summand. 

Transferring this information back along the map $q$ shows that at such points $x$, the element $\widetilde{e}_{v, k}(\sigma)= \sum_{i+j = k } e_{v, i}(\sigma) e_{v^c, j}(\sigma^{-c})$ acts by a scalar on $(\widetilde{\Pi}_x)^{\widetilde{\q}_v}$, which equals the degree $k$ elementary symmetric polynomial in $\psi_{x, 1}(\Art_{F_v}^{-1}(\sigma)), \dots, \psi_{x, 2n}(\Art_{F_v}^{-1}(\sigma))$. This completes the proof. 
\end{proof}
Fix a choice of Frobenius lift $\phi_{v} \in W_{F_{v}}$. We define $\Res_{v} \in (\cH(\widetilde{G}(F^+_{\overline{v}}), \widetilde{\q}_v) \otimes_\bZ \cO)$ to be the resultant of the polynomials $P_{v^c, \phi_v^{-c}}(X)$ and $P_{v, \phi_{v}}(X)$.  
\begin{prop}\label{prop:unramified_subrep_for_invertible_resultant} Let $\widetilde{\pi}_{\overline{v}}$ be an irreducible admissible $\overline{\bQ}_p[\widetilde{G}(F^+_{\overline{v}})]$-module and suppose that $\widetilde{\pi}_{\overline{v}}^{\widetilde{\q}_v} \neq 0$, and let $(r_v, N_v) = \rec^T_{F_v}( \widetilde{\pi}_{\overline{v}} \circ \iota_v^{-1})$. Let $T_{\overline{v}}$ denote the $\overline{\bQ}_p$-subalgebra of $\End_{\overline{\bQ}_p}( \widetilde{\pi}_{\overline{v}}^{\widetilde{\q}_v}  )$ generated by the images of the elements $e_{v^c, i}(\phi_v^{-c})$ and $e_{v, i}(\phi_v)$. Then for each maximal ideal $\m \subset T_{\overline{v}}$, either $\Res_v \in \m$ or $\Res_v \not\in \m$, $T_{\overline{v}, \m} = T_{\overline{v}} / \m = \overline{\bQ}_p$, and for all $\tau_v \in I_{F_v}$, $N_v P_{v, \phi_{v}}(r_v(\phi_v)) = 0$ and $(r_v(\tau_v) - 1) P_{v, \phi_{v}}(r_v(\phi_v)) = 0$ in $M_{2n}(T_{\overline{v}} / \m) = M_{2n}(\overline{\bQ}_p)$.
\end{prop}
\begin{proof}
We use again some of the ideas in the proof of Lemma \ref{lem:ramified_hecke_operators_are_central}. Choose an isomorphism $\iota : \overline{\bQ}_p \to \bC$. For $m \geq 1$, let $\operatorname{St}_m$ denote the Steinberg representation of $\GL_m(F_v)$ (i.e.\ the square-integrable quotient of $\Ind_{B_m(F_v)}^{\GL_m(F_v)} \overline{\bQ}_p)$. Then there is an isomorphism $\rec^T_{F_v}(\operatorname{St}_m) = \operatorname{Sp}_m$, where $\operatorname{Sp}_m$ is the Weil--Deligne representation on $\overline{\bQ}_p^m = \oplus_{i=1}^m \overline{\bQ}_p \cdot e_i$ where $W_{F_v}$ acts on $e_i$ by the character $| \cdot |^{1-i} \circ \Art_{F_v}^{-1}$ and $N_v$ acts by $N_v e_1 = 0$, $N_v e_i = e_{i-1}$ ($i = 2, \dots, m$).

Since $\widetilde{\pi}_{\overline{v}}^{\widetilde{\q}_v} \neq 0$, we can find an isomorphism 
\[ (r_v, N_v) \cong \oplus_{i=1}^s \operatorname{Sp}_{\alpha_i}(\psi_i | \cdot|^{(1-2n)/2})\]
 for some tamely ramified characters $\psi_i : F_v^\times \to \overline{\bQ}_p^\times$; then $\widetilde{\pi} \circ \iota_v^{-1}$ is isomorphic to a subquotient of the induced representation 
 \[ \Pi = \nInd_{P_\alpha(F_v)}^{\GL_{2n}(F_v)} \otimes_{i=1}^s \operatorname{St}_{\alpha_i}(\psi_i \circ \Art_{F_v}), \]
 where $P_\alpha$ is the standard parabolic subgroup of $\GL_{2n}$ corresponding to the partition $2n = \alpha_1 + \dots + \alpha_s$. Let $\widetilde{\Pi} = \Pi \circ \iota_v$. Let $T'_{\overline{v}}$ denote the $\overline{\bQ}_p$-subalgebra of $\End_{\overline{\bQ}_p}( \widetilde{\Pi}^{\widetilde{\q}_v}  )$ generated by the images of the elements $e_{v^c, i}(\phi_v^{-c})$ and $e_{v, i}(\phi_v)$. Then $T_{\overline{v}}$ is a quotient of $T'_{\overline{v}}$ and it suffices to show that the conclusion of the lemma holds with $T_{\overline{v}}$ replaced by $T'_{\overline{v}}$.

By the geometrical lemma, we can find a filtration of $r_P(\widetilde{\Pi})$ with graded pieces indexed by tuples $\mu = (\mu_{ij})_{i = 1, 2, j = 1, \dots, s}$, where the $\mu_{ij}$ are non-negative integers such that for each $j = 1, \dots, s$, $\mu_{1j} + \mu_{2j} = \alpha_j$ and for each $i = 1, 2$, $\mu_{i1} + \dots + \mu_{is} = n$. The representation of $G(F^+_{\overline{v}}) = \GL_n(F_{v^c}) \times \GL_n(F_v)$ indexed by the tuple $\mu$ is
\begin{multline*} \sigma_\mu = \left( \nInd_{P_1(F_{v^c})}^{\GL_n(F_{v^c})} \operatorname{St}_{\mu_{1s}}(\theta_{1s}^{-c}) \otimes \dots \otimes \operatorname{St}_{\mu_{11}}(\theta_{11}^{-c}) \right) \\ \otimes \left( \nInd_{P_2(F_{v})}^{\GL_n(F_{v})} \operatorname{St}_{\mu_{21}}(\theta_{21}) \otimes \dots \otimes \operatorname{St}_{\mu_{2s}}(\theta_{2s}) \right), 
\end{multline*}
where $\theta_{ij} : F_v^\times \to \overline{\bQ}_p^\times$ is the character given by the formulae
\[ \theta_{1j} = \psi_j | \cdot |^{\mu_{2j}/2}, \theta_{2j} = \psi_j | \cdot |^{-\mu_{1j}/2} \]
for $j =1, \dots, s$.

We recall that the natural projection $\widetilde{\Pi}^{\widetilde{\q}_v} \to r_P(\widetilde{\Pi})^{\GL_n(\cO_{F_{v^c}}) \times I_v}$ is an isomorphism. The maximal ideals $\m \subset T'_{\overline{v}}$ correspond to the different factorisations $\widetilde{P}_{v, \phi_v}(X) = P_{v^c, \phi_v^{-c}}(X) P_{v, \phi_v}(X)$ that occur in $\widetilde{\Pi}^{\widetilde{\q}_v}$. Each factorisation arises from (at least one) $\mu$ such that  $(\sigma_\mu)^{\GL_n(\cO_{F_{v^c}}) \times I_v} \neq 0$: the corresponding factorisation is given by
\begin{multline*} P_{v^c, \phi_v^{-c}}(X) = \prod_{j=1}^s \prod_{k=1}^{\mu_{1j}} (X - (\theta_{1j}^{-c} | \cdot |^{(1 - 2n - \alpha_j + 2k - 1) / 2})(\phi_v^{-c}))  \\ = \prod_{j=1}^s \prod_{k=1}^{\mu_{1j}} (X - (\theta^{1j}| \cdot |^{(1 - 2n + \alpha_j + 1 - 2 k) / 2})(\phi_v)), 
\end{multline*}
\[ P_{v, \phi_v}(X) = \prod_{j=1}^s \prod_{k=1}^{\mu_{2j}} (X - \theta_{2j} | \cdot |^{(1 - 2n +\mu_{2j} - \mu_{1j} + 1 - 2k) / 2})(\phi_v)). \]
If $(\sigma_\mu)^{\GL_n(\cO_{F_{v^c}}) \times I_v} \neq 0$, then we must have $\mu_{1j} \in \{ 0, 1 \}$ for all $j =1 , \dots, s$. Let us choose therefore a maximal ideal $\m$ such that $\Res_v \not\in \m$ and a tuple $\mu$ giving rise to $\m$. Combining \cite[Proposition 2.2]{Tho21} and Lemma \ref{lem:ramified_hecke_operators_are_central}, we find that $T'_{\overline{v}, \m} = T'_{\overline{v}} / \m = \overline{\bQ}_p$. Let $Q(X)$ denote the image of $P_{v, \phi_v}(X)$ modulo $\m$. Examining the action of $Q(r_v(\phi_v))$ in the summand $\operatorname{Sp}_{\alpha_j}(\psi_j | \cdot|^{(1-2n)/2})$ of $(r_v, N_v)$, we see that  $Q(r_v(\phi_v))$ either annihilates this summand (if $\mu_{1j} = 0$) or at least has image contained in the span of the vector $e_1$. In either case we find that $I_{F_v}$ acts trivially on the image of $Q(r_v(\phi_v))$ and $N_v$ annihilates this image. This is what we needed to show. 
\end{proof}
\begin{cor}\label{cor:existence_of_unramified_subrep}
Let $\widetilde{\pi}_{\overline{v}}$ be an irreducible admissible $\overline{\bQ}_p[\widetilde{G}(F^+_{\overline{v}})]$-module and suppose that $\widetilde{\pi}_{\overline{v}}^{\widetilde{\q}_v} \neq 0$. Let $\rho : G_{F_v} \to \GL_{2n}(\overline{\bQ}_p)$ be a continuous representation such that $\mathrm{WD}(\rho)^{F-ss} \cong  \rec^T_{F_v}( \widetilde{\pi}_{\overline{v}} \circ \iota_v^{-1})$. Let $T_{\overline{v}}$ be defined as in the proposition. Then for all $\tau_v \in I_{F_v}$, we have the equality
\[ \Res_v^{(2n)!} (\rho(\tau_v) - 1) P_{v, \phi_{v}}(\rho(\phi_v)) = 0 \]
in $M_{2n}(\overline{\bQ}_p) \otimes_{\overline{\bQ}_p} T_{\overline{v}} = M_{2n}(T_{\overline{v}})$.
\end{cor}
\begin{proof}
We can again take this statement `one maximal ideal of $T_{\overline{v}}$ at a time'. The number $(2n)!$ is a crude upper bound for the $\overline{\bQ}_p$-dimension of $T_{\overline{v}}$. In particular, if $\Res_v \in \m$ then $\Res_v^{(2n)!} T_{\overline{v}, \m} = 0$. We therefore need only show that for each maximal ideal $\m$ such that $\Res_v \not\in \m$, we have the equality
\[ (\rho(\tau_v) - 1) Q(\rho(\phi_v)) = 0 \]
in $M_{2n}(\overline{\bQ}_p)$ for every $\tau_v \in I_{F_v}$, where $Q(X)$ denotes the image of $P_{v, \phi_v}(X)$ modulo $\m$. Let $\rho(\phi_v) = su$ be the multiplicative Jordan decomposition (so that $s$ is semisimple, $u$ is unipotent, and $s, u$ commute). Then $r_v(\phi_v) = s$, by definition of Frobenius semi-simplification. Since $\Res_v \text{ mod } \m$ is non-zero, $Q(\rho(\phi_v))$ and $Q(r_v(\phi_v))$ have the same image, which is the the span of the eigenspaces of $r_v(\phi_v)$ corresponding to eigenvalues which are not roots of $Q(X)$. Since $N_v Q(r_v(\phi_v)) = 0$, we find that for each $\tau_v \in I_{F_v}$, $\rho(\tau_v)$ and $r(\tau_v)$ agree on the image of $Q(r_v(\phi_v))$. We finally conclude that
\[  (\rho(\tau_v) - 1) Q(\rho(\phi_v)) = (r_v(\tau_v) - 1) Q(r_v(\phi_v)) = 0, \]
as required. 
\end{proof}
We now describe the behavior of some of the above Hecke operators under parabolic restriction, with respect to the Siegel parabolic. We first give the statements in the unramified case. In order to ease notation, we use the following convention: if $f(X)$ is a polynomial of degree $d$, with unit constant term $a_0$, then $f^\vee(X) = a_0^{-1} X^d f(X^{-1})$. 
\begin{prop}\label{prop_satake_transform_unramified_case}
Let $v$ be a place of $F$, unramified over the place $\overline{v}$ of $F^+$. Let 
\[ \cS : \cH( \widetilde{G}(F^+_{\overline{v}}), \widetilde{G}(\cO_{F^+_{\overline{v}}})) \to \cH( G(F^+_{\overline{v}}), G(\cO_{F^+_{\overline{v}}})) \]
denote the homomorphism defined by~\eqref{eqn:satake}. %
Then we have
\[ \cS( \widetilde{P}_v(X)) = P_v(X) q_v^{n(2n-1)} P_{v^c}^\vee(q_v^{1-2n} X). \]
\end{prop}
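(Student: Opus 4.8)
\emph{Proof proposal.} Both sides of the asserted identity are polynomials with coefficients in $\cH(G(F^+_{\overline{v}}), G(\cO_{F^+_{\overline{v}}}))\otimes_\Z\Z[q_v^{-1}]$, so after base change to $\overline{\Q}_p$ and using the Satake isomorphism it suffices to check, for every unramified irreducible $\overline{\Q}_p$-representation $\tau_{\overline{v}}$ of $G(F^+_{\overline{v}})$, that the two coefficient polynomials act by the same scalar polynomial on the spherical line $\tau_{\overline{v}}^{G(\cO_{F^+_{\overline{v}}})}$. The point is the standard compatibility of the map $\cS = r_{\mathrm M}\circ r_{\mathrm P}$ with parabolic induction: by Lemma~\ref{lem:functorial_splitting_for_induced_module} together with the defining properties of $r_{\mathrm M}$ recalled in \S\ref{sec:general_definition_of_hecke_operators} (see \cite[\S 2.2]{new-tho}), the scalar by which $\cS(T)$ acts on the spherical line of $\tau_{\overline{v}}$ equals the scalar by which $T$ acts on the spherical line of the unramified subquotient of $\Ind_{P(F^+_{\overline{v}})}^{\widetilde G(F^+_{\overline{v}})}\tau_{\overline{v}}$. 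Combining this with the defining formula (\ref{eqn_hecke_pol_for_tilde_G}) for $\widetilde P_v$ (cf.\ \cite[Prop.-Def. 5.2]{new-tho}), one gets: $\cS(\widetilde P_v(X))$ acts on $\tau_{\overline{v}}$ by the characteristic polynomial of $\Frob_v$ on $\rec^T_{F_v}$ of the base change to $\GL_{2n}(F_v)$ of $\nInd_{P(F^+_{\overline{v}})}^{\widetilde G(F^+_{\overline{v}})}\tau_{\overline{v}}$, up to the explicit modulus-character twist relating normalized and unnormalized induction.

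Next I would compute that base change explicitly. Write $\tau_{\overline{v}} = \tau_v\otimes\tau_{v^c}$ under the identification $G(F^+_{\overline{v}}) = \GL_n(F_v)\times\GL_n(F_{v^c})$ when $\overline{v}$ splits in $F$ (the inert case, with $v^c = v$ and $F_v/F^+_{\overline{v}}$ unramified quadratic, being handled in the same way). Solving the unitary condition ${}^tg J_n g^c = J_n$ for a Siegel-Levi element $g = \diag(A,D)$ gives $A = \Psi_n\,{}^t(D^c)^{-1}\,\Psi_n$, so under $\iota_v$ the embedding $G\hookrightarrow\widetilde G\hookrightarrow\GL_{2n}$ sends $(g,h)$ to $\diag(g,\,\Psi_n\,{}^t h^{-1}\,\Psi_n)\in\GL_{2n}(F_v)$. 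Since base change commutes with normalized parabolic induction, the base change of $\nInd_P^{\widetilde G}\tau_{\overline{v}}$ to $\GL_{2n}(F_v)$ is the normalized induction from the block $(n,n)$-parabolic of $\BC_{F_v/F^+_{\overline{v}}}(\tau_v)$ and $\bigl(h\mapsto\BC_{F_v/F^+_{\overline{v}}}(\tau_{v^c})(\Psi_n\,{}^t h^{-1}\,\Psi_n)\bigr)$; as $\Psi_n\,{}^t h^{-1}\,\Psi_n$ is conjugate to ${}^t h^{-1}$, the second factor is simply the contragredient $\BC(\tau_{v^c})^\vee$ (up to an unramified twist).

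I would then conclude by applying $\rec^T_{F_v}$ and reading off coefficients. By additivity of $\rec^T$ under $\boxplus$, $\rec^T_{F_v}$ of the base change is $\rec^T_{F_v}(\BC(\tau_v))\oplus\rec^T_{F_v}(\BC(\tau_{v^c}))^\vee$, twisted by the unramified character recording the discrepancy between the modulus characters $\delta_P$ (for the Siegel parabolic of $\widetilde G$) and $\delta_{(n,n)}$ (for $\GL_{2n}$). By the definition (\ref{eqn:hecke_pol_for_GL_n}) of $P_v$ (and \cite[Prop.-Def. 5.3]{new-tho}), the characteristic polynomial of $\Frob_v$ on the first summand is $P_v(X)$; on the second, passing to the contragredient turns $P_{v^c}(X)$ into $P_{v^c}^\vee(X)$, and the twist rescales $X\mapsto q_v^{1-2n}X$ and multiplies through by $q_v^{n(2n-1)}$, giving $\cS(\widetilde P_v(X)) = P_v(X)\,q_v^{n(2n-1)}P_{v^c}^\vee(q_v^{1-2n}X)$. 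As a sanity check, the identity is then monic of degree $2n$, and its constant term is exactly $\cS$ of the constant term of $\widetilde P_v$, i.e.\ $q_v^{n(2n-1)}\cS(\widetilde T_{v,2n}) = q_v^{n(2n-1)}T_{v,n}T_{v^c,n}^{-1}$, consistent with the behavior of the central scaling cocharacter under the embedding of the Siegel Levi.

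The genuinely delicate step is the bookkeeping of the powers of $q_v$ in the last paragraph --- pinning down the twist precisely, i.e.\ reconciling the arithmetic normalization $\rec^T$ on the $\widetilde G$- and $G$-sides with the $q_v$-powers already built into (\ref{eqn:hecke_pol_for_GL_n}) and (\ref{eqn_hecke_pol_for_tilde_G}) and with the modulus characters entering the normalized inductions. I expect this to be most cleanly carried out on the dual side: the map $\cS$ corresponds, after base change, to the embedding of connected dual groups $\widehat G^\circ = \GL_n\times\GL_n\to\GL_{2n} = \widehat{\widetilde G}^\circ$, $(A,B)\mapsto\diag(A,\,\Psi_n\,{}^tB^{-1}\Psi_n)$, and the identity $\det(X - \Psi_n\,{}^tB^{-1}\Psi_n) = \det(X - B^{-1})$ produces exactly the $(\cdot)^\vee$ and, once weighed against the degree-$n$ Hecke polynomials (which carry the shift by the half-sum of positive roots), the factors $q_v^{1-2n}$ and $q_v^{n(2n-1)}$. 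Everything else --- the reduction to unramified representations, the compatibility of $\cS$ with induction, the explicit form of the Siegel Levi and of base change --- is routine given the setup already in place.
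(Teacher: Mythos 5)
Your proposal is correct and coincides in substance with the paper's proof, which consists solely of the citation to \cite[\S 5.1]{new-tho}: the identity is established there by exactly this unramified Satake-transform computation (compatibility of $\cS = r_G\circ r_P$ with unramified parabolic induction, the dual-group description of the Siegel Levi embedding $(A,B)\mapsto \diag(A,\Psi_n\,{}^tB^{-1}\Psi_n)$, and the modulus/normalization bookkeeping that yields the powers of $q_v$). Your sanity checks (monicity and the constant term giving $\cS(\widetilde{T}_{v,2n}) = T_{v,n}T_{v^c,n}^{-1}$) agree with the conventions used later in the paper, and the inert case you defer is handled by the same method with the Frobenius-twisted dual group.
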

\begin{proof}
	See \cite[Proposition-Definition 5.3]{new-tho}.
\end{proof}
We now discuss the ramified split case. Suppose first that $v$ is a place of $F$ which is split over the place $\overline{v}$ of $F^+$. Let $\widetilde{I}_{\overline{v}}$ be a subgroup of $\widetilde{G}(F^+_{\overline{v}})$ satisfying $\widetilde{\Iw}_{\overline{v}}(1, 1) \subset \widetilde{I}_{\overline{v}} \subset \widetilde{\Iw}_{\overline{v}}(0, 1)$. Then $\widetilde{I}_{\overline{v}} \cap G(F^+_{\overline{v}})$ may be identified with a product $I_{v^c} \times I_{v}$ of open compact subgroups of $\GL_n(F_{v^c})$ and $\GL_n(F_{v})$, respectively. 	If $\sigma \in W_{F_v}$, then we define
	\numequation \widetilde{P}_{v, \sigma}(X) = \prod_{i=1}^{2n}(X - \iota_v^{-1}(t_{v, i}(\sigma))) = \sum_{i=0}^{2n} (-1)^i \iota_v^{-1}(e_{v, i}(\sigma)) X^{2n-i} \in \cH(\widetilde{G}(F^+_{\overline{v}}), \widetilde{I}_{\overline{v}}) \otimes_\Z \cO[X].
\end{equation}
The group $\widetilde{I}_{\overline{v}}$ admits an Iwahori decomposition with respect to the parabolic subgroup $P$, and the element $(\varpi_{v^c}^{-1} \cdot 1_n, 1_n) \in G(F^+_{\overline{v}}) = \GL_n(F_{v^c}) \times \GL_n(F_v)$ is strongly positive and defines a Hecke operator $[\widetilde{I}_{\overline{v}} (\varpi_{v^c}^{-1} \cdot 1_n, 1_n) \widetilde{I}_{\overline{v}}]$ which is invertible in $\cH(\widetilde{G}(F^+_{\overline{v}}), \widetilde{I}_{\overline{v}}) \otimes_\Z \cO$. We can therefore apply Lemma \ref{lem:strongly_positive_elements}, allowing us to state the following result.
\begin{prop}\label{prop_satake_transform_ramified_iwahori_case}
For any $\sigma \in W_{F_v}$, we have 
	\[ \cS( \widetilde{P}_{v, \sigma}(X)) = P_{v, \sigma}(X) \| \sigma \|_v^{n(1-2n)} P_{v^c, \sigma^{-c}}( \| \sigma \|_v^{2n - 1} X). \]
\end{prop}
\begin{proof}
This results from a calculation using the definition of $\widetilde{P}_{v, \sigma}(X)$ and the formula for the composite $\cS \circ t$ given in \S \ref{sec:hecke_algebra_of_a_monoid}. (Let $\alpha\in F^\times_{v^c}$ be such that $\Art_{F_{v^c}}(\alpha)$ agrees with the restriction of $\sigma^c$ to the maximal abelian extension of $F_{v^c}$. For the element $((1,\dots\alpha^{-1}, \dots 1), 1_n)$ of $\GL_n(F_{v^c})\times \GL_n(F_v)$, the action of $\cS\circ t$ on the corresponding Hecke operator is by multiplication by $\|\sigma\|_v^{-n}$.) 
\end{proof}
Suppose next that $v$ is a place of $F$ which is split over the place $\overline{v}$ of $F^+$ and that $\widetilde{\q}_v \subset \widetilde{G}(F^+_{\overline{v}})$ is an open compact subgroup such that $\widetilde{\p}_{v, 1} \subset \widetilde{\q}_v \subset \widetilde{\p}_v$. Write $\widetilde{\q}_v \cap G(F^+_{\overline{v}}) = \GL_n(\cO_{F_{v^c}}) \times I_{v}$. We have already observed that  Lemma \ref{lem:strongly_positive_elements} applies in this situation, and we have the following analogue of Proposition \ref{prop_satake_transform_ramified_iwahori_case}, which is proved in the same way. 
\begin{prop}\label{prop_satake_transform_half_ramified_iwahori_case}
For any $\sigma \in W_{F_v}$, we have $\cS(P_{v, \sigma}(X)) = P_{v, \sigma}(X)$ and $\cS(P_{v^c, \sigma^{-c}}(X)) = \| \sigma \|_v^{n(1-2n)} P_{v^c, \sigma^{-c}}( \| \sigma \|_v^{2n - 1} X)$.
\end{prop}
\subsubsection{Duality and twisting}\label{sec:twisting_and_duality}

In this section we record some operations that relate different cohomology
groups and the actions of the corresponding Hecke operators. We deal with duality first. Let $S$ be a finite set of finite places of $F$ such that $S = S^c$. There are anti-involutions
\[ \iota : \cH(G^S, K^S) \to \cH(G^S, K^S) \]
and (if $S = S^c$)
\[ \widetilde{\iota} : \cH(\widetilde{G}^S, \widetilde{K}^S) \to \cH(\widetilde{G}^S, \widetilde{K}^S) \]
given on double cosets by $\widetilde{\iota}([\widetilde{K}^S g \widetilde{K}^S]) = [\widetilde{K}^S g^{-1} \widetilde{K}^S]$ (resp. $\iota([K^S g K^S]) = [K^S g^{-1} K^S]$). In particular we have anti-involutions $\widetilde{\iota} : \widetilde{\T}^S \to \widetilde{\T}^S$ and $\iota : \T^S \to \T^S$ (which are actually involutions, since these Hecke algebras are commutative). If $v \not\in S$ then we have the formulae
\[ \widetilde{\iota}(\widetilde{P}_v(X)) = q_v^{2n(2n-1)} \widetilde{P}_v^\vee(q_v^{1-2n}X) = \widetilde{P}_{v^c}(X), \]
\[ \iota(P_v(X)) = q_v^{n(n-1)} P_v^\vee(q_v^{1-n}X). \]
If $\widetilde{\m} \subset \widetilde{\T}^S$ (resp. $\m \subset \T^S$) is a maximal ideal with residue field a finite extension of $k$, then we define $\widetilde{\m}^\vee = \widetilde{\iota}(\m)$ (resp. $\m^\vee = \iota(\m)$). The following lemma describes the action of $\widetilde{\iota}$ at ramified split places.
The interaction between these involutions and Poincar\'e duality is described by the following proposition. We write $\widetilde{D} = \dim_\R \widetilde{X}$ (resp. $D = \dim_\R X$).
\begin{prop}\label{prop_poincare_duality}
	Let $R = \cO$ or $\cO / \varpi^m$ for some $m \geq 1$. Let $\widetilde{K} \subset \widetilde{G}(\A_{F^+}^\infty)$ (resp. $K  \subset \GL_n(\A_F^\infty)$) be a good subgroup, and let $\cV$
        be an $R[\widetilde{K}_S]$-module (resp. $R[K_S]$-module), which is finite free as an $R$-module. Let $\cV^\vee = \Hom(\cV, R)$. Then there is an isomorphism
	\[ R \Hom_R( R \Gamma_c(\widetilde{X}_{\widetilde{K}}, \cV), R) \cong R \Gamma(\widetilde{X}_{\widetilde{K}}, \cV^\vee)[\widetilde{D}] \]
	(resp.
		\[ R \Hom_R( R \Gamma_c(X_K, \cV), R) \cong R \Gamma(X_K, \cV^\vee)[D]) \]
	in $\mathbf{D}(R)$ that is equivariant for the action of $\cH(\widetilde{G}^S, \widetilde{K}^S)$ (resp. $\cH(G^S, K^S)$), when this Hecke algebra acts by $\widetilde{\iota}^t$ (resp. $\iota^t$) on the left-hand side and in the usual way on the right-hand side. 
\end{prop}
\begin{proof}
	See \cite[Prop. 3.7]{new-tho}.
\end{proof}
\begin{cor}\label{cor:dual_Hecke_alg}
	Let $R = \cO$ or $\cO / \varpi^m$ for some $m \geq 1$. Let $\widetilde{K} \subset \widetilde{G}(\A_{F^+}^\infty)$ (resp. $K  \subset \GL_n(\A_F^\infty)$) be a good subgroup, and let $\cV$
        be an $R[\widetilde{K}_S]$-module (resp. $R[K_S]$-module), which is finite free as an $R$-module. Let $\cV^\vee = \Hom(\cV, R)$. Then $\widetilde{\iota}$ (resp. $\iota$) descends to an isomorphism 
        \[ \widetilde{\T}^S( R \Gamma_c(\widetilde{X}_{\widetilde{K}}, \cV) ) \cong \widetilde{\T}^S( R \Gamma(\widetilde{X}_{\widetilde{K}}, \cV^\vee)) \]
        (resp. an isomorphism
	\[ \T^S( R \Gamma_c(X_K, \cV) ) \cong \T^S( R \Gamma(X_K, \cV^\vee)) \]
	of $R$-algebras. In particular, if $\widetilde{\m}$ (resp. $\m$) is a maximal ideal of $\widetilde{\T}^S$ (resp. $\T^S$) in the support of $H^\ast_c(\widetilde{X}_{\widetilde{K}}, \cV)$ (resp. $H^\ast_c(X_K, \cV)$), then $\widetilde{\m}^\vee$ (resp. $\m^\vee$) is in the support of $H^\ast(\widetilde{X}_{\widetilde{K}}, \cV^\vee)$ (resp. $H^\ast(X_K, \cV^\vee)$). 
\end{cor}
\begin{proof}
	We justify the statements for $\GL_n$. The proposition implies that there is a commutative diagram
\[ \xymatrix{ 	\T^S \ar[r] \ar[d]_\iota & \End_{\mathbf{D}(R)}( R \Gamma_c(X_K, \cV)) \ar[d] \\
	\T^S\ar[r] & \End_{\mathbf{D}(R)}( R \Gamma(X_K, \cV^\vee)), } \]
	where the horizontal arrows are the canonical ones and the right vertical arrow is the one induced by the Poincar\'e duality isomorphism. The statement of the corollary is equivalent to the assertion that image under $\iota$ of the kernel of top horizontal arrow is equal to the kernel of the lower horizontal arrow. This follows from the commutativity of the diagram.
\end{proof}
We next deal with twisting for the group $G$. Let $K \subset \GL_n(\A_F^\infty)$ be a good subgroup and let $\psi : G_F \to \cO^\times$ be a continuous character such that $\psi \circ \Art_{F_v}$ is trivial on $\det(K_v)$ for each place $v\not\in S$ of $F$. We define an isomorphism of $\cO$-algebras $f_\psi : \cH(G^S, K^S) \otimes_\bZ \cO \to \cH(G^S, K^S) \otimes_\bZ \cO$ by the formula $f_\psi(f)(g) = \psi(\Art_F(\det(g)))^{-1} f(g)$. (It is an isomorphism because it has an inverse, given by the formula $f_\psi^{-1} = f_{\psi^{-1}}.$) If $K_v = \GL_n(\cO_{F_v})$ for each $ v \not\in S$ (so that $\psi$ is unramified outside $S$ and $\cH(G^S, K^S) \otimes_\bZ \cO = \T^S$) then we have the formula $f_\psi(T_{v, i}) = \psi(\Frob_v)^{-i} T_{v, i}$ for each finite place $v \not\in S$ of $F$.  If $\m \subset \T^S$ is a maximal ideal with residue field a finite extension of $k$, then we define $\m(\psi) = f_\psi(\m)$. 
\begin{prop}\label{prop:twisting_by_character}
	Let $K \subset \GL_n(\A_F^\infty)$ be a good subgroup, and suppose that $S$ contains the $p$-adic places of $F$. Let $\psi : G_F \to \cO^\times$ be a continuous character satisfying the following conditions:
	\begin{enumerate}
		\item For each finite place $v \nmid p$ of $F$, $\psi \circ \Art_{F_v}$ is trivial on $\det(K_v)$.
		\item There is $m = (m_\tau)_\tau \in \Z^{\Hom(F, E)}$ such that for each place $v | p$ of $F$, and for each $k \in \det(K_v)$, we have
		\[  \psi(\Art_{F_v}(k)) = \prod_{\tau \in \Hom_{\Q_p}(F_v, E)} \tau( k)^{-m_\tau}. \]
	\end{enumerate}
	Let $\mu \in (\Z^n_+)^{\Hom(F, E)}$ be the dominant weight defined by the formula $\mu_\tau = (m_\tau, \dots, m_\tau)$ for each $\tau \in \Hom(F, E)$. Then for any $\lambda \in  (\Z^n_+)^{\Hom(F, E)}$ there is an isomorphism 
	\[ R \Gamma(X_K, \cV_\lambda) \cong R \Gamma(X_K, \cV_{\lambda + \mu}) \]
	 in $\mathbf{D}(\cO)$ which is equivariant for the action of $\cH(G^S, K^S) \otimes_\bZ \cO$ when $\cH(G^S, K^S) \otimes_\bZ \cO$ acts in the usual way on the left-hand side and and by $f_\psi$ on the right-hand side.
\end{prop}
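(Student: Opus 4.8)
The plan is to realize the desired isomorphism as an isomorphism of local systems on $X_K$, and then to extract the twist of the Hecke action from the observation that this isomorphism is implemented by a function on $X \times \GL_n(\A_F^\infty)$ which is not invariant under right translation. First I would pass to the level of coefficient systems. Since $V_{\lambda_\tau + (m_\tau, \dots, m_\tau)} \cong V_{\lambda_\tau} \otimes (\det)^{m_\tau}$ as algebraic representations of $\GL_{n, E}$, and since the lattices $M_{\lambda_\tau}$ of \cite[\S 2.2]{ger} are by construction compatible with twisting by an integral power of the determinant, there is an isomorphism $\cV_{\lambda + \mu} \cong \cV_\lambda \otimes_\cO \mathcal{N}$ of $\cO[\prod_{v \mid p} \GL_n(\cO_{F_v})]$-modules, where $\mathcal{N}$ is the free rank one $\cO$-module on which $(g_v)_{v\mid p}$ acts through the character $\nu_p \colon (g_v)_v \mapsto \prod_{v \mid p} \prod_{\tau \colon F_v \hookrightarrow E} \tau(\det g_v)^{m_\tau}$. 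Restricting along the projection $K_S \to \prod_{v \mid p} \GL_n(\cO_{F_v})$, and regarding $\cV_\lambda$ and $\mathcal{N}$ as $\cO[\GL_n(F) \times K_S]$-modules with trivial $\GL_n(F)$-action (as these coefficient systems are built in \S\ref{sec:general_definition_of_hecke_operators}), it suffices to analyse the local system on $X_K$ attached to $\mathcal{N}$.

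The key point is that $\mathcal{N}$ is trivial as a local system on $X_K$, although not in a Hecke-equivariant way. Let $\Psi \colon \GL_n(\A_F^\infty) \to \cO^\times$ be the continuous character $\Psi(g) = \psi(\Art_F(\det g))^{-1}$, using that $\psi \circ \Art_F$ restricts to a character of the group of finite id\`eles. Conditions (1) and (2) of the proposition say precisely that $\Psi$ is trivial on $K_v$ for each $v \nmid p$ and restricts to $\nu_p$ on $\prod_{v \mid p} K_v$; as $S$ contains the $p$-adic places, this means $\Psi|_{K^S} = 1$ and $\Psi|_{K_S} = \nu_p$. Because $F$ is totally imaginary, the group $F_\infty^\times$ is connected, so $\psi \circ \Art_F$ is trivial on $F_\infty^\times$ as well as on $F^\times$; since the determinant of any element of $\GL_n(F) \subset \GL_n(\A_F^\infty)$ is the image of an element of $F^\times$ in the finite id\`eles, it follows that $\Psi|_{\GL_n(F)} = 1$. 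Therefore the function $(x, g) \mapsto \Psi(g)$ on $X \times \GL_n(\A_F^\infty)$ is $\GL_n(F)$-invariant, and multiplication by it defines an automorphism of the constant sheaf on $X \times \GL_n(\A_F^\infty)$ which intertwines the $\GL_n(F) \times K_S$-equivariant structure defining $\mathcal{N}$ with the trivial one; hence it descends to an isomorphism on $X_K$ between the local system attached to $\mathcal{N}$ and the constant sheaf $\cO$. (The identity $\Psi|_{K_S} = \nu_p$ is exactly what matches the $K_S$-equivariant structures, and $\Psi|_{K^S} = 1$ ensures that no prime-to-$S$ level is involved.)

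Tensoring this trivialization with $\cV_\lambda$ gives an isomorphism $R\Gamma(X_K, \cV_\lambda) \iso R\Gamma(X_K, \cV_{\lambda + \mu})$ in $\mathbf{D}(\cO)$, and it remains to compute its effect on $\T^S$. The Hecke operator $[K^S g K^S]$ for $g \in \GL_n(\A_F^{\infty, S})$ is realized by the standard correspondence on $X_K$ whose second projection is induced by the right translation $h \mapsto hg$ on the adelic coordinate; under this translation the function $(x, h) \mapsto \Psi(h)$ is scaled by $\Psi(g) = \psi(\Art_F(\det g))^{-1}$, so conjugating $[K^S g K^S]$ through the trivialization multiplies it by $\psi(\Art_F(\det g))^{-1}$, which is precisely $f_\psi([K^S g K^S])$. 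Hence the displayed isomorphism intertwines the usual action of $\T^S$ on the source with the $f_\psi$-twisted action on the target, as asserted. I expect the only genuinely delicate point to be fixing orientations consistently so that the twist comes out as $f_\psi$ rather than $f_{\psi^{-1}}$ (or its inverse); the lattice identification and the bookkeeping with the formalism of \S\ref{sec:general_definition_of_hecke_operators} are routine.
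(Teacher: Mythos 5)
Your proposal is correct and is essentially the paper's own argument: the paper likewise produces the isomorphism from the function $g \mapsto \psi(\Art_F(\det g))$ on $\GL_n(F)\backslash\GL_n(\A_F^\infty)$ (phrased there as a class in $H^0(X_K,\cV_\mu)$, tensored into $\cV_\lambda \to \cV_{\lambda+\mu}$, and made Hecke-equivariant by working with $\mathcal{A}=\Ind_{\GL_n(F)}^{\GL_n(\A_F^\infty)}\cO$ and the twisted module $\cV_{\lambda+\mu}(\psi^{-1,S})$), which is the same trivialization-plus-twist computation you carry out on the rank-one system $\mathcal{N}$. The only differences are cosmetic bookkeeping (your direct correspondence calculation versus the paper's passage through $\mathbf{D}(\T^S)$), including the sign/orientation issue you already flag.
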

\begin{proof}
	The character $\psi$ defines a class in $H^0(X_K, \cV_\mu) = \Hom_{\mathrm{Sh}(X_K)}(\underline{\cO}, \cV_\mu)$. By tensor product this determines a morphism $\cV_\lambda \to \cV_{\lambda} \otimes_\cO \cV_{\mu} \cong \cV_{\lambda + \mu}$ of sheaves on $X_K$, hence a morphism $R \Gamma(X_K, \cV_\lambda) \to R \Gamma(X_K, \cV_{\lambda + \mu})$ in $\mathbf{D}(\cO)$. In order to determine how this morphism behaves with respect to the action of Hecke operators, we will repeat this calculation in $\mathbf{D}(\cH(G^S, K^S) \otimes_\bZ \cO)$.
	
	Let $\mathcal{A} = \Ind_{\GL_n(F)}^{\GL_n(\A_F^\infty)} \cO = H^0(\mathfrak{X}_G, \cO)$. There is an isomorphism 
	\[ H^0(\mathfrak{X}_G, \cV_\lambda) \cong \mathcal{A} \otimes_\cO \cV_\lambda \]
	of $\cO[\GL_n(\A_F^{\infty, S}) \times K_S)]$-modules, and hence a canonical isomorphism in $\mathbf{D}(\cH(G^S, K^S) \otimes_\bZ \cO)$:
	\[ R \Gamma(X_K, \cV_\lambda) \cong R \Gamma(K, \mathcal{A} \otimes_\cO \cV_\lambda). \]
	The same applies when $\lambda$ is replaced by any other dominant weight in $(\Z^n_+)^{\Hom(F, E)}$. The class $\psi$ in $H^0(X_K, \cV_\mu)$ corresponds to the $K$-equivariant map $g_\psi: \mathcal{A} \to \mathcal{A} \otimes_\cO \cV_\mu$ which sends a function $f : \GL_n(F) \backslash \GL_n(\A_{F}^\infty) \to \cO$ to $g_\psi(f)(g) = \psi(\det(g))f(g)$. The map $g_\psi$ becomes $G^S \times K_S$-equivariant when we twist the action on the source, giving
	\[ g_\psi : \mathcal{A} \to \mathcal{A} \otimes_\cO \cV_\mu(\psi^{-1,S}). \]
	By definition, the twist $(\psi^{-1,S})$ means that the action of an element $g \in G^S$ is twisted by $\psi(\det(g))^{-1}$. Taking the tensor product by $\cV_\lambda$ and then taking derived $K$-invariants gives a morphism
	\[   R \Gamma(X_K, \cV_\lambda) \to R \Gamma(X_K, \cV_{\lambda + \mu}(\psi^{-1,S})) \]
	in $\mathbf{D}(\cH(G^S, K^S) \otimes_\bZ \cO)$, hence a $\cH(G^S, K^S) \otimes_\bZ \cO$-equivariant isomorphism
	\[ R \Gamma(X_K, \cV_\lambda) \to R \Gamma(X_K, \cV_{\lambda + \mu}(\psi^{-1,S})) \]
	in $\mathbf{D}(\cO)$. The proof of the proposition is complete on noting that there is a canonical isomorphism 
	\[R \Gamma(X_K, \cV_{\lambda + \mu}(\psi^{-1,S})) \cong R \Gamma(X_K, \cV_{\lambda + \mu})  \]
	 in $\mathbf{D}(\cO)$, which is equivariant for the action of $\cH(G^S, K^S) \otimes_\bZ \cO$ when $\cH(G^S, K^S) \otimes_\bZ \cO$ acts in the natural way on the source and by $f_\psi$ on the target.
\end{proof}
\begin{cor}\label{cor:twisted_Hecke_alg}
	Suppose that $S$ contains the $p$-adic places of $F$, and let $K \subset \GL_n(\A_F^\infty)$ be a good subgroup such that $K_v = \GL_n(\cO_{F_v})$ for each place $v\not\in S$ of $F$. Let $\psi : G_F \to \cO^\times$ be a continuous character satisfying the following conditions:
	\begin{enumerate}
		\item For each finite place $v \nmid p$ of $F$, $\psi \circ \Art_{F_v}$ is trivial on $\det(K_v)$.
		\item There is $m = (m_\tau)_\tau \in \Z^{\Hom(F, E)}$ such that for each place $v | p$ of $F$, and for each $k \in \det(K_v)$, we have
		\[  \psi(\Art_{F_v}(k)) = \prod_{\tau \in \Hom_{\Q_p}(F_v, E)} \tau( k)^{-m_\tau}. \]
	\end{enumerate}
	Let $\mu \in (\Z^n_+)^{\Hom(F, E)}$ be the dominant weight defined by the formula $\mu_\tau = (m_\tau, \dots, m_\tau)$ for each $\tau \in \Hom(F, E)$. Then for any $\lambda \in  (\Z^n_+)^{\Hom(F, E)}$, $f_\psi$ descends to an isomorphism 
	\[ \T^S(K, \lambda) \cong \T^S(K, \lambda + \mu). \]
	 In particular, if $\m$ is a maximal ideal of $\T^S$ which is in the support of $H^\ast(X_K, \cV_\lambda)$, then $\m(\psi)$ is in the support of $H^\ast(X_K, \cV_{\lambda + \mu})$.
\end{cor}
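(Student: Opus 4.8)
The plan is to deduce this from Proposition~\ref{prop:twisting_by_character} by the same formal argument used to deduce Corollary~\ref{cor:dual_Hecke_alg} from Proposition~\ref{prop_poincare_duality}. The only input is the elementary fact that an isomorphism $\phi : C \to C'$ in $\mathbf{D}(\cO)$ induces a ring isomorphism $\End_{\mathbf{D}(\cO)}(C) \to \End_{\mathbf{D}(\cO)}(C')$, $\alpha \mapsto \phi \circ \alpha \circ \phi^{-1}$, together with the fact (noted just before Proposition~\ref{prop:twisting_by_character}, since $f_\psi^{-1} = f_{\psi^{-1}}$) that $f_\psi$ is an automorphism of $\T^S$.

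Concretely, I would first apply Proposition~\ref{prop:twisting_by_character} to obtain an isomorphism $\phi : R\Gamma(X_K, \cV_\lambda) \to R\Gamma(X_K, \cV_{\lambda + \mu})$ in $\mathbf{D}(\cO)$ such that conjugation by $\phi$ carries the usual action of $t \in \T^S$ on $R\Gamma(X_K,\cV_\lambda)$ to the usual action of $f_\psi(t)$ on $R\Gamma(X_K,\cV_{\lambda+\mu})$. Passing to endomorphism rings, the induced isomorphism $\End_{\mathbf{D}(\cO)}(R\Gamma(X_K,\cV_\lambda)) \cong \End_{\mathbf{D}(\cO)}(R\Gamma(X_K,\cV_{\lambda+\mu}))$ carries the image of the structure map $\T^S \to \End_{\mathbf{D}(\cO)}(R\Gamma(X_K,\cV_\lambda))$ — which is $\T^S(K,\lambda)$ by definition — onto the image of the composite $\T^S \xrightarrow{f_\psi} \T^S \to \End_{\mathbf{D}(\cO)}(R\Gamma(X_K,\cV_{\lambda+\mu}))$. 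Since $f_\psi$ is surjective, this image equals $\T^S(K,\lambda+\mu)$, and chasing the structure maps shows the resulting isomorphism $\T^S(K,\lambda)\cong\T^S(K,\lambda+\mu)$ is precisely the one obtained by descending $f_\psi$.

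For the claim about supports, I would localise $\phi$ at a maximal ideal $\m \subset \T^S$. Because $\phi$ is equivariant with the target action being the usual action precomposed with $f_\psi$, localising the two $\T^S$-module structures at $\m$ identifies $R\Gamma(X_K,\cV_\lambda)_\m$ with the localisation of $R\Gamma(X_K,\cV_{\lambda+\mu})$ at $f_\psi(\m) = \m(\psi)$; hence $\phi$ restricts to an isomorphism $R\Gamma(X_K,\cV_\lambda)_\m \cong R\Gamma(X_K,\cV_{\lambda+\mu})_{\m(\psi)}$ in $\mathbf{D}(\cO)$. In particular, if $H^\ast(X_K,\cV_\lambda)_\m \neq 0$ then $H^\ast(X_K,\cV_{\lambda+\mu})_{\m(\psi)}\neq 0$, which is the assertion. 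There is no genuine obstacle here: all of the content is in Proposition~\ref{prop:twisting_by_character}, and the only point demanding care is the bookkeeping of the twist — that one conjugates by $f_\psi$ rather than $f_{\psi^{-1}}$, so that $\m$ is sent to $\m(\psi)$ — which is pinned down unambiguously by the equivariance statement in that proposition.
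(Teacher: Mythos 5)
Your argument is correct and is exactly the paper's route: the paper proves this corollary by declaring it an immediate consequence of Proposition~\ref{prop:twisting_by_character}, and your write-up simply makes explicit the conjugation of endomorphism algebras and the bookkeeping $\m \mapsto f_\psi(\m) = \m(\psi)$ that the paper leaves implicit. No gaps.
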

\begin{proof}
	This is an immediate consequence of Proposition \ref{prop:twisting_by_character}.
\end{proof}

\subsection{Some automorphic Galois representations}

In the next two sections of this chapter, we state some results asserting the existence of Galois representations associated to automorphic forms. Although the main results of this paper concern the relation between classical automorphic representations and Galois representations, we must also consider the Galois representations associated to torsion classes, and therefore valued in (possibly $p$-torsion) Hecke algebras. This goes some way towards explaining the need to state so many closely related results here. A large part of this paper will be taken up with the problem of studying the local properties of the Hecke--algebra valued Galois representations whose existence is asserted in the statement of Theorem \ref{thm:existence_of_Hecke_repn_for_GL_n}.

\subsubsection{Existence of Galois representations attached to automorphic forms}

If $\pi$ is an irreducible admissible representation of $\GL_n(\A_F)$ and $\lambda \in (\Z^n_+)^{\Hom(F, \C)}$, we say that $\pi$ is of weight $\lambda$ if the infinitesimal character of $\pi_\infty$ is the same as that of $V_\lambda^\vee$. 
\begin{thm}\label{thm:char0galrepexists}
	Let $\pi$ be a cuspidal automorphic representation of $\GL_n(\A_F)$ of weight $\lambda \in (\Z^n_+)^{\Hom(F, \C)}$. Then for any isomorphism $\iota : \overline{\Q}_p \to \C$, there exists a continuous semisimple representation $r_\iota(\pi) : G_F \to \GL_n(\overline{\Q}_p)$ satisfying the following condition: for each prime $l \neq p$ above which both $F$ and $\pi$ are unramified, and for each place $v | l$ of $F$, $r_\iota(\pi)|_{G_{F_v}}$ is unramified and the characteristic polynomial of $r_\iota(\pi)(\Frob_v)$ is equal to the image of $P_v(X)$ in $\overline{\Q}_p[X]$ under the homomorphism $\T_v \to \overline{\bQ}_p$ associated to $\iota^{-1} \pi_v$. 
\end{thm}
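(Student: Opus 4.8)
The plan is to build $r_\iota(\pi)$ by specializing a Hecke-algebra-valued Galois representation at the system of eigenvalues cut out by $\pi$. This is, of course, essentially the main theorem of Harris--Lan--Taylor--Thorne and of Scholze~\cite{scholze-torsion}, so the argument really just packages their construction into the statement above; the route I would take is to deduce it from the Hecke-algebra-valued statement (Theorem~\ref{thm:existence_of_Hecke_repn_for_GL_n}). In outline: (i) realize the $\iota$-pushforward of $\pi$ as a character $\lambda_\pi$ of some Hecke algebra $\T^S(K,\lambda)$ acting on cohomology; (ii) feed the corresponding maximal ideal into Theorem~\ref{thm:existence_of_Hecke_repn_for_GL_n} to get a continuous $\rho\colon G_{F,S}\to\GL_n(\T^S(K,\lambda)_\m/J)$ with $J$ nilpotent and $\chara_{\rho(\Frob_v)}$ the image of $P_v(X)$ for $v\notin S$; (iii) push forward along $\lambda_\pi$ and semisimplify.

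For step (i), first enlarge $E$ inside $\overline{\Q}_p$ so that $\iota^{-1}\pi_f$ is defined over it, choose a good subgroup $K=\prod_v K_v$ of $\GL_n(\A_F^\infty)$ with $K_v=\GL_n(\cO_{F_v})$ and $(\iota^{-1}\pi_f)^{K_v}\neq 0$ at every place where $\pi$ and $F$ are unramified, and let $S$ consist of the $p$-adic places together with the finitely many places at which $\pi$ or $F$ ramifies (so that $\T^S(K,\lambda)$ is defined). Since $\pi$ is cuspidal and regular algebraic of weight $\lambda$, the representation $\pi_\infty$ has nonvanishing $(\mathfrak g, K_\infty)$-cohomology with coefficients in $V_\lambda$ (Clozel, Vogan--Zuckerman), whence $(\iota^{-1}\pi_f)^K$ occurs in the interior cohomology $H^\ast_!(X_K,\cV_\lambda\otimes_\cO\overline{\Q}_p)\subseteq H^\ast(X_K,\cV_\lambda\otimes_\cO\overline{\Q}_p)$, on which $\T^S$ acts through a single $\overline{\Q}_p$-valued character $\lambda_\pi\colon\T^S(K,\lambda)\to\overline{\Q}_p$. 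By the Satake isomorphism together with the definition of $P_v(X)$, for $v\notin S$ the polynomial $\lambda_\pi(P_v(X))$ is exactly the characteristic polynomial of $\rec^T_{F_v}(\iota^{-1}\pi_v)(\Frob_v)$; checking that the normalizations line up here (arithmetic Frobenius, the powers of $q_v$ appearing in $P_v(X)$, the half-twist distinguishing $\rec^T$ from $\rec$) is the one genuinely computational point.

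Steps (ii)--(iii) are then formal. Let $\m$ be the maximal ideal of $\T^S(K,\lambda)$ containing $\ker\lambda_\pi$; then $\lambda_\pi$ factors through the localization $\T^S(K,\lambda)_\m$ and, since $\overline{\Q}_p$ is a field, kills the nilpotent ideal $J$ of Theorem~\ref{thm:existence_of_Hecke_repn_for_GL_n}. Composing $\rho$ with $\lambda_\pi$ and semisimplifying gives a continuous semisimple $r_\iota(\pi)\colon G_F\to\GL_n(\overline{\Q}_p)$, unramified outside $S$, with $\chara_{r_\iota(\pi)(\Frob_v)}(X)=\lambda_\pi(P_v(X))$ for all $v\notin S$; as $S$ contains no place above a prime $l\neq p$ at which both $F$ and $\pi$ are unramified, this is precisely the asserted condition. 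Well-definedness up to isomorphism and semisimplicity follow from Brauer--Nesbitt and Chebotarev density, $r_\iota(\pi)$ being determined by $\tr r_\iota(\pi)(\Frob_v)=\lambda_\pi(T_{v,1})$ for $v\notin S$. The main obstacle is, naturally, Theorem~\ref{thm:existence_of_Hecke_repn_for_GL_n} itself, i.e.\ the construction of these Galois representations via the cohomology of the Shimura varieties attached to $\widetilde{G}$; we take it as a black box. When $F$ is totally real, one first runs the above over a quadratic imaginary CM extension $F'/F$ and then descends the resulting representation of $G_{F'}$ to $G_F$.
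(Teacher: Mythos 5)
The paper's own ``proof'' is a one-line citation of the main theorem of~\cite{hltt}, so any honest argument here is really a packaging of that (or of Scholze's equivalent construction). Your reduction, however, has a genuine gap: Theorem~\ref{thm:existence_of_Hecke_repn_for_GL_n}, which you use as the black box in step (ii), carries the hypothesis that $\overline{\rho}_\ffrm$ is absolutely irreducible (the maximal ideal is non-Eisenstein), and this hypothesis is nowhere available in the statement you are trying to prove. For a general cuspidal $\pi$ and a given $p$ and $\iota$, the residual representation attached to the maximal ideal $\m \supset \ker\lambda_\pi$ may perfectly well be reducible (Eisenstein), and in that case the only Hecke-algebra-valued statement the paper provides is Theorem~\ref{thm:existence_of_residual_representation_for_GL_n}, which produces a representation over $\T^S(K,\lambda)/\m$, i.e.\ only mod $p$; there is then nothing to specialize along $\lambda_\pi$ in characteristic zero. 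So your steps (ii)--(iii) prove the theorem only under an extra assumption of residual absolute irreducibility, which is strictly weaker than what is claimed. Removing it is exactly the hard $p$-adic interpolation/pseudorepresentation work of \cite{hltt} (or of \cite{scholze-torsion} in the form of determinants valued in the full, not localized, Hecke algebra), which is what the paper cites.

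Two smaller points. First, steps (i) and the unramified-places bookkeeping are fine: since the statement only makes a demand at primes $l\neq p$ above which $\pi$ is unramified at \emph{every} place, one can choose $S$ to consist of the $p$-adic places together with all places above primes that ramify in $F$ or at which some $\pi_v$ ramifies, and the hypothesis on $S$ in the cited theorems is then satisfied without sacrificing any place where the characteristic-polynomial identity is required; your normalization check of $\lambda_\pi(P_v(X))$ against $\rec^T_{F_v}(\iota^{-1}\pi_v)$ is also the right thing to verify, and is the content of Theorem~\ref{thm:application_of_matsushima}(1). Second, the closing remark about totally real $F$ is both unnecessary (the standing setup of the section has $F$ imaginary CM) and too quick: a Galois representation over a quadratic CM extension $F'/F$ does not simply ``descend''; one must either produce it over varying CM extensions and patch, or control the obstruction to extending, which is again part of what~\cite{hltt} actually does.
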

\begin{proof}
	This is the main theorem of \cite{hltt}.
\end{proof}

\begin{thm}\label{thm:base_change_and_existence_of_Galois_for_tilde_G}
	Suppose that $F$ contains an imaginary quadratic field. Let $\pi$ be a cuspidal automorphic representation of $\widetilde{G}(\A_{F^+})$, and let $\xi$ be an irreducible algebraic representation of $\widetilde{G}_\C$ such that $\pi$ is $\xi$-cohomological. Then there exists a partition $2n = n_1 + \dots + n_r$ and discrete, conjugate self-dual automorphic representations $\Pi_1, \dots, \Pi_r$ of $\GL_{n_1}(\A_F), \dots, \GL_{n_r}(\A_F)$, satisfying the following conditions:
	\begin{enumerate}
		\item Let $\Pi = \Pi_1 \boxplus \dots \boxplus \Pi_r$. If $l$ is a prime unramified in $F$ and above which $\pi$ is unramified, then $\Pi$ is unramified above $l$ and for each place $v | l$ of $F$ lying above a place $\overline{v}$ of $F^+$, $\Pi_v$ and $\pi_{\overline{v}}$ are related by unramified base change.
		\item If  $F_0\subset F$ is an imaginary quadratic field and
                  $l'$ is a prime which splits in $F_0$, then for each place $v | l'$ of $F$ lying above a place $\overline{v}$ of $F^+$, $\Pi_v$ and $\pi_{\overline{v}}$ are identified under the induced isomorphism $\iota_v : \widetilde{G}(F^+_{\overline{v}}) \cong \GL_{2n}(F_v)$.
		\item The infinitesimal character of $\Pi$ is the same as that of the representation $(\xi \otimes \xi)^\vee$ of $\GL_{2n}(F \otimes_\Q \C)$. 
	\end{enumerate}
	Consequently\footnote{The fact that the $\Pi_i$ are not mentioned in these consequences is not an oversight. We use the Galois representations associated to the $\Pi_i$ in order to construct $r_{\iota}(\pi)$ and verify that it has the expected properties.}, there exists for any isomorphism $\iota: \overline{\Q}_p \to \C$ a continuous semisimple representation $r_\iota(\pi) : G_{F} \to \GL_{2n}(\overline{\Q}_p)$ satisfying the following conditions:
	\begin{enumerate}[(a)]
		\item For each prime $l \neq p$ which is unramified in $F$ and above which $\pi$ is unramified, and for each place $v | l$ of $F$, $r_\iota(\pi)|_{G_{F_v}}$ is unramified and the characteristic polynomial of $r_\iota(\pi)(\Frob_v)$ is equal to the image of $\widetilde{P}_v(X)$ in $\overline{\Q}_p[X]$.
		\item For each place $v | p$ of $F$, $r_\iota(\pi)$ is de Rham and for each embedding $\tau : F \hookrightarrow \overline{\Q}_p$, we have
		\[ \mathrm{HT}_\tau(r_\iota(\pi)) = \{ \widetilde{\lambda}_{\tau, 1} + 2n - 1, \widetilde{\lambda}_{\tau, 2} + 2n - 2, \dots, \widetilde{\lambda}_{\tau, 2n} \}, \]
		where $\widetilde{\lambda} \in (\Z^{2n}_+)^{\Hom(F, \overline{\Q}_p)}$ is the highest weight of the representation $\iota^{-1}(\xi \otimes \xi)^\vee$ of $\GL_{2n}$ over $\overline{\Q}_p$. 
		\item If $F_0\subset F$ is an imaginary quadratic
                  field and
                  $l$ is a prime which splits in $F_0$, then for each place $v | l$ of $F$ lying above a place $\overline{v}$ of $F^+$, there is an isomorphism $\mathrm{WD}(r_\iota(\pi)|_{G_{F_v}})^{\text{F-ss}} \cong \rec^T_{F_v}(\pi_{\overline{v}} \circ \iota_v)$.
	\end{enumerate}
\end{thm}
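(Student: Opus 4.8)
\emph{Proof proposal.} The statement has two parts of quite different character, and I would treat them separately. The existence of $\Pi$ together with (1)--(3) is a purely automorphic assertion, obtained from the stable base change lift of the quasi-split unitary group $\widetilde{G}$ to $\GL_{2n}$: since $\pi$ is cuspidal it lies in the discrete automorphic spectrum of $\widetilde{G}(\A_{F^+})$, hence has an Arthur parameter in the sense of Mok's endoscopic classification of quasi-split unitary groups (resting on the stabilisation of the twisted trace formula, and building on Labesse's work on base change for unitary groups), and the standard base change attached to this parameter is an isobaric representation $\Pi = \Pi_1 \boxplus \dots \boxplus \Pi_r$ of $\GL_{2n}(\A_F)$ whose constituents $\Pi_i$ are discrete and conjugate self-dual. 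Matching of Satake parameters at unramified places gives (1); at places of $F^+$ that split in $F$ --- in particular those over a rational prime split in an imaginary quadratic subfield $F_0$, using $F = F^+F_0$ --- the base change is the identity lift under $\iota_v$, giving (2); and the archimedean base change of the $\xi_{\overline{v}}$-cohomological representation $\pi_{\overline{v}}$ of $\widetilde{G}(F^+_{\overline{v}})$ is a representation of $\GL_{2n}(\C) = \GL_{2n}(F_v)$ whose infinitesimal character, in both the holomorphic and anti-holomorphic variables, is that of $\xi_{\overline{v}}^\vee$; running over $\overline{v} \mid \infty$ and using $F \otimes_\Q \C = \prod_{\tau : F \to \C} \C$, this is the infinitesimal character of $(\xi \otimes \xi)^\vee$, i.e.\ (3). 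Since $\xi$ is irreducible this infinitesimal character is regular, hence so is that of each $\Pi_i$; I would record that each $\Pi_i$ is therefore regular algebraic (reducing a Speh constituent $\mathrm{Speh}(\sigma_i, d_i)$ to the cuspidal representation $\sigma_i$ by twisting with powers of $\epsilon_p$), so that each $\Pi_i$ is of the type for which the $\GL_m$ results below apply.

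Next I would attach to each $\Pi_i$ a continuous semisimple $r_\iota(\Pi_i) : G_F \to \GL_{n_i}(\overline{\Q}_p)$ which is unramified with the expected Frobenius characteristic polynomial at places where $\Pi_i$ and $F$ are unramified, is de Rham at each $v \mid p$ with Hodge--Tate weights read off from the infinitesimal character of $\Pi_{i,\infty}$, and satisfies $\WD(r_\iota(\Pi_i)|_{G_{F_v}})^{\text{F-ss}} \cong \rec^T_{F_v}(\Pi_{i,v})$ at every finite place $v$, including $v \mid p$. This is the combined output of the constructions of Harris--Lan--Taylor--Thorne \cite{hltt}, Chenevier--Harris and Shin, together with the local--global compatibility results of Taylor--Yoshida, Caraiani and Barnet-Lamb--Gee--Geraghty--Taylor (conveniently packaged in \cite[Thm.~2.1.1]{BLGGT} together with Caraiani's monodromy theorems), and it is here that the hypothesis that $F$ contains an imaginary quadratic field enters. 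I would then simply set $r_\iota(\pi) = \bigoplus_{i=1}^r r_\iota(\Pi_i)$, a continuous semisimple $2n$-dimensional representation of $G_F$.

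Finally I would verify (a)--(c) by assembling the pieces. For (a): by (1), $\Pi$ and hence each $\Pi_i$ is unramified at $v \mid l$, and the characteristic polynomial of $\bigoplus_i r_\iota(\Pi_i)(\Frob_v)$ is the characteristic polynomial of Frobenius on $\rec^T_{F_v}(\Pi_v)$, which by the definition of $\widetilde{P}_v(X)$ and the unramified base change matching of (1) is the image of $\widetilde{P}_v(X)$. For (b): being de Rham passes to the direct sum, and $\HT_\tau(r_\iota(\pi))$ is the multiset union of the $\HT_\tau(r_\iota(\Pi_i))$, which by (3) is the multiset attached to the infinitesimal character of $\Pi_\infty$; a routine comparison of normalisations (the $\rho$-shift and the arithmetic normalisation $\rec^T$) identifies this with $\{\widetilde{\lambda}_{\tau,1} + 2n - 1, \widetilde{\lambda}_{\tau,2} + 2n - 2, \dots, \widetilde{\lambda}_{\tau,2n}\}$. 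For (c): $\rec^T_{F_v}$ is additive for isobaric sums, so $\rec^T_{F_v}(\Pi_v) = \bigoplus_i \rec^T_{F_v}(\Pi_{i,v}) = \rec^T_{F_v}(\pi_{\overline{v}} \circ \iota_v)$ by (2), and taking the direct sum of the local--global compatibilities of the $r_\iota(\Pi_i)$ yields $\WD(r_\iota(\pi)|_{G_{F_v}})^{\text{F-ss}} \cong \rec^T_{F_v}(\pi_{\overline{v}} \circ \iota_v)$.

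The main difficulty is not internal to this argument: it lies in the two imported black boxes (Mok's classification and base change, and the full local--global compatibility of the $\GL_m$ Galois representations), and what remains is genuinely bookkeeping. The delicate points I expect to have to be careful about are (i) pinning down the archimedean base change so that it produces exactly the ``doubled'' infinitesimal character of $(\xi \otimes \xi)^\vee$ in (3) --- one must track the normalisation of the base change $L$-morphism and of $\rec^T$; and (ii) getting the $\rho$-shift and the arithmetic (rather than unitary) normalisation right in the Hodge--Tate formula of (b) and the Frobenius eigenvalue formula of (a). One must also remember that (c) is asserted for primes $l$ possibly equal to $p$, so that the full strength of Caraiani's local--global compatibility at $l = p$, including the monodromy operator, is needed.
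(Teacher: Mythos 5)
Your argument is correct in outline, but it follows a genuinely different route from the paper's. You obtain $\Pi$ from Mok's endoscopic classification of the quasi-split unitary group $\widetilde{G}$ itself (resting on the stabilised twisted trace formula), which gives the isobaric base change together with unramified, split-place and archimedean compatibilities directly, and then you conclude by attaching Galois representations to the conjugate self-dual discrete constituents $\Pi_i$ and taking the direct sum. The paper instead deduces everything from Shin's base change theorem for unitary \emph{similitude} groups \cite{shin-basechange}: one forms the similitude group $\widetilde{G}'$ over $\Q$ sitting in $1 \to \Res_{F^+/\Q}\widetilde{G} \to \widetilde{G}' \to \mathbb{G}_m \to 1$, extends $\pi$ to a cohomological cuspidal $\pi'$ on $\widetilde{G}'(\A_\Q)$ whose restriction contains $\pi$ — this is done, following the argument of \cite[Thm.\ VI.2.9]{ht}, by producing a type $A_0$ Hecke character $\psi$ of the imaginary quadratic field $F_0$ agreeing with the central character $\omega_\pi$ on norm-one id\`eles (its existence uses the algebraicity of $\omega_\pi$, itself a consequence of $\pi$ being $\xi$-cohomological) — and then applies Shin's theorem; since that theorem only controls almost all places, the paper also runs an auxiliary-prime argument (unramified at a varying $\laux$, plus strong multiplicity one) to upgrade to compatibility at every prime unramified for $F$ and $\pi$. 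This explains a structural difference you should note: in the paper the hypothesis that $F$ contains an imaginary quadratic field is used essentially in the \emph{construction} (to descend to a group over $\Q$ via $F_0$), whereas in your route it only enters through statement (2)/(c) and the split-place bookkeeping; the price you pay is reliance on the heavier endoscopic machinery, while the paper's route trades that for the similitude-group extension and the strong-multiplicity-one patching. Your final step — Galois representations for the $\Pi_i$ with full local-global compatibility including $l=p$, then summing — coincides with the paper's appeal to the known results for conjugate self-dual representations (e.g.\ \cite{MR3272276}), and your cautions about normalisations of $\rec^T$ and the $\rho$-shift, and about Speh constituents, are exactly the right bookkeeping points.
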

\begin{proof}
	We will deduce this from \cite{shin-basechange}. The main
        wrinkle is that this reference gives a case of base change for
        unitary similitude groups (while our group~$\widetilde{G}$ is
        a unitary group, with trivial similitude factor).  Let $\laux$ be 
        an auxiliary prime at which both $F$ and $\pi$ are unramified. %
        In order to prove the proposition, it suffices to prove
        the existence of an automorphic representation $\Pi$ of
        $\GL_{2n}(\A_F)$ satisfying the second and third requirements,
        and satisfying the first requirement at almost all rational
        primes, including $\laux$. %
      We can then use strong
        multiplicity 1 and our freedom to vary $\laux$ in order
        to recover the proposition as stated. The existence and local properties of the Galois representation are then a consequence of the existence of $\Pi$ (a result due to many people, but see e.g.~\cite{MR3272276}). 
	
	Let $\widetilde{G}'$ denote the similitude group associated to $\widetilde{G}$; thus there is a short exact sequence
	\[ \xymatrix@1{ 1 \ar[r] & \Res_{F^+ / \Q} \widetilde{G} \ar[r] & \widetilde{G}' \ar[r] & \mathbb{G}_m \ar[r] & 1 } \]
	of reductive groups over $\Q$. By the main result of \cite{shin-basechange}, it suffices to find an irreducible algebraic representation $\xi'$ of $\widetilde{G}'_\C$ and a cuspidal automorphic representation $\pi'$ of $\widetilde{G}'(\A_\Q)$ satisfying the following conditions:
	\begin{itemize}
		\item The restriction $\pi'|_{\widetilde{G}(\A_{F^+})}$ contains $\pi$.
		\item $\pi'$ is $\xi'$-cohomological.
		\item $\pi'$ is unramified at $\laux$.
	\end{itemize}
	Arguing as in the proof of \cite[Thm. VI.2.9]{ht}, we see that it is enough to show the existence of a continuous character $\psi : \A_{F_0}^\times / {F_0}^\times \to \C^\times$ satisfying the following conditions:
	\begin{itemize}
		\item The restriction $\psi|_{(\A_{F_0}^\times)^{c = 1}}$ is equal to the restriction of the central character $\omega_\pi : (\A_F^\times)^{c = 1} \to \C^\times$ of $\pi$ to $(\A_{F_0}^\times)^{c = 1}$.
		\item $\psi$ is of type $A_0$, i.e.\ its restriction to ${F}_{0,\infty}^\times$ arises from a character of the torus $(\Res_{F_0 / \Q} \mathbb{G}_m)_\C$.
		\item $\psi|_{\cO_{F_0, \laux}^\times}$ is trivial. 
	\end{itemize}
	The existence of such a character follows from the algebraicity of $\omega_\pi|_{(\A_{F_0}^\times)^{c = 1}}$, itself a consequence of the fact that $\pi$ is $\xi$-cohomological.
\end{proof}

\subsubsection{Existence of Hecke algebra-valued Galois representations}

Let $S$ be a finite set of finite places of $F$, containing the $p$-adic places.
\begin{theorem}\label{thm:existence_of_residual_representation_for_GL_n}
	Let $\ffrm \subset \T^S(K, \lambda)$ be a maximal ideal. Suppose that $S$ satisfies the following conditions:
	\begin{itemize}
	\item $S = S^c$. 
		\item Let $v$ be a finite place of $F$ not contained in $S$, and let $l$ be its residue characteristic. Then either $S$ contains no $l$-adic places of $F$ and $l$ is unramified in $F$, or there exists an imaginary quadratic field $F_0 \subset F$ in which $l$ splits.
	\end{itemize}
	Then there exists a continuous, semi-simple representation 
	\[ \overline{\rho}_\ffrm : G_{F, S} \to \GL_n(\T^S(K, \lambda) / \ffrm) \]
	satisfying the following condition: for each finite place $v \not\in S$ of $F$, the characteristic polynomial of $\overline{\rho}_\ffrm(\Frob_v)$ is equal to the image of $P_v(X)$	in $(\T(K, \lambda) / \ffrm)[X]$.
\end{theorem}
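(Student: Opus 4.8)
The plan is to deduce the existence of $\overline{\rho}_\ffrm$ from the construction of Galois representations attached to torsion classes in the cohomology of the $\GL_n$-locally symmetric spaces $X_K$ --- essentially the main theorem of~\cite{scholze-torsion} --- together with two formal reductions: to mod-$\varpi$ coefficients, and to the finite residue field. Since $\T^S(K, \lambda)$ is a finite $\cO$-algebra (Lemma~\ref{lem:cohomology_is_perfect}), the maximal ideal $\ffrm$ lies over the maximal ideal $(\varpi)$ of $\cO$; hence $\varpi \in \ffrm$, the residue field $\kappa := \T^S(K, \lambda) / \ffrm$ is a finite field of characteristic $p$, and (applying $? \otimes^{\bL}_\cO \cO/\varpi$, which carries $0$ to $0$) the ideal $\ffrm$ determines a maximal ideal of the Hecke algebra $\T^S(R\Gamma(X_K, \cV_\lambda) \otimes^{\bL}_\cO \cO/\varpi)$, with residue field $\kappa$, in the support of $H^\ast(X_K, \cV_\lambda/\varpi)$. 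It then suffices to produce a continuous semisimple representation $\overline{\rho} : G_{F, S} \to \GL_n(\overline{\F}_p)$, unramified outside $S$, with $\chara_{\overline{\rho}(\Frob_v)}$ equal to the image of $P_v(X)$ of~(\ref{eqn:hecke_pol_for_GL_n}) for every $v \notin S$: by the Chebotarev density theorem and the Brauer--Nesbitt theorem such a representation is unique up to conjugacy, and since its Frobenius characteristic polynomials lie in $\kappa[X]$ it is $\Gal(\overline{\F}_p/\kappa)$-stable, hence --- the obstruction lying in $\Br(\kappa) = 0$ --- conjugate to a representation defined over $\kappa$, which we take to be $\overline{\rho}_\ffrm$.

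To produce $\overline{\rho}$, I would invoke~\cite{scholze-torsion}: given a maximal ideal in the support of the mod-$p$ cohomology of a $\GL_n$-locally symmetric space (with coefficient system $\cV_\lambda/\varpi$ and $K$ hyperspecial outside $S$), \emph{loc.\ cit.}\ produces a continuous semisimple representation of $G_{F, S}$ over the residue field whose characteristic polynomial at $\Frob_v$ (for $v \notin S$) is the prescribed reduction of $P_v(X)$. The role of the hypothesis on $S$ is to guarantee that this input is available at every place $v \notin S$: writing $l$ for the residue characteristic of such a $v$, in the first case of the dichotomy one may impose hyperspecial level at the places above $l$ --- changing neither the prime-to-$l$ Hecke action nor the relevant maximal ideal --- and so reduce to the situation ($l$ unramified in $F$, no $l$-adic level) in which the auxiliary unitary Shimura varieties underlying~\cite{scholze-torsion} (compare also~\cite{caraiani-scholze-compact}) have good reduction at $l$, while in the second case the splitting of $l$ in an imaginary quadratic subfield $F_0 \subset F$ makes the relevant unitary similitude group split at $l$, again putting us in a situation to which the construction applies. (Compare the analogous splitting hypotheses in Theorem~\ref{thm:base_change_and_existence_of_Galois_for_tilde_G}.) Since the characteristic polynomials obtained in this way agree wherever they are defined, a single semisimple $\overline{\rho}$ carrying all of them exists by Brauer--Nesbitt, completing the argument.

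The main obstacle is entirely the input from~\cite{scholze-torsion}: the existence of Galois representations attached to torsion in the cohomology of these non-Hermitian locally symmetric spaces is a deep theorem, resting on perfectoid Shimura varieties and on realizing $\GL_n$-cohomology inside the boundary cohomology of unitary Shimura varieties --- the strategy recalled in the introduction, whose refinement to local--global compatibility occupies much of this paper. Everything else here --- passage to $\cV_\lambda/\varpi$-coefficients, the bookkeeping enabled by the hypothesis on $S$, the applications of Brauer--Nesbitt, and the descent to the finite residue field $\kappa$ --- is routine. I note that at this stage only the ``pseudo-representation'' content of~\cite{scholze-torsion}, valid for \emph{every} maximal ideal $\ffrm$, is needed; the refinement producing a genuine representation valued in $\T^S(K, \lambda)_\ffrm / J$ for a nilpotent ideal $J$ (used later for local--global compatibility) is the subject of Theorem~\ref{thm:existence_of_Hecke_repn_for_GL_n}, and the characteristic-zero construction of Theorem~\ref{thm:char0galrepexists} does not suffice on its own, since $\ffrm$ need not lie in the support of rational cohomology.
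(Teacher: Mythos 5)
Your proposal is correct and follows essentially the same route as the paper: cite \cite[Cor.\ 5.4.3]{scholze-torsion} for a continuous semisimple representation over $\overline{\F}_p$ with the prescribed Frobenius characteristic polynomials (the hypothesis on $S$ serving, exactly as you say, to place us in the cases where those results are unconditional, cf.\ Theorem~\ref{thm:base_change_and_existence_of_Galois_for_tilde_G} and \cite[Rem.\ 5.4.6]{scholze-torsion}), then descend to the finite residue field via Chebotarev, Brauer--Nesbitt, and the vanishing of the Brauer group of a finite field. Your preliminary reduction to mod-$\varpi$ coefficients and your closing remark separating this pseudo-representation-level statement from Theorem~\ref{thm:existence_of_Hecke_repn_for_GL_n} are both accurate, though the paper does not spell them out.
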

We note that our condition on $S$ can always be achieved after possibly enlarging $S$. 
\begin{proof}
	Fix an embedding $\T^S(K, \lambda) / \ffrm \hookrightarrow \overline{\mathbb{F}}_p$. According to ~\cite[Cor. 5.4.3]{scholze-torsion}, there is an $n$-dimensional 
	continuous semisimple Galois representation $\overline{\rho}_\ffrm : G_{F, S} \to \GL_n(\overline{\mathbb{F}}_p)$ such that for each finite place $v \not\in S$ of $F$, the characteristic polynomial of $\overline{\rho}_\ffrm(\Frob_v)$ is equal to the image of 
	\[
	X^n-T_{v, 1}X^{n-1} +\dots + (-1)^iq_v^{i(i-1)/2}T_{v, i}X^{n-i}+\dots + (-1)^n q_v^{n(n-1)/2}T_{v, n}
	\]
	in $\overline{\mathbb{F}}_p[X]$. (Our condition on $S$ ensures
        that we can appeal to the results of \cite{scholze-torsion} in
        a case where they are unconditional, cf.\ Theorem
        \ref{thm:base_change_and_existence_of_Galois_for_tilde_G} and the discussion in~\cite[Rem. 5.4.6]{scholze-torsion}).  Combining the Chebotarev density theorem, the Brauer--Nesbitt 
	Theorem and the vanishing of the Brauer group of a finite field 
	\cite[Lem.~6.13]{deligne-serre}, we see that $\overline{\rho}_\ffrm$ can in fact be realized over $\T^S(K, \lambda) / \ffrm$.
\end{proof}
\begin{defn}\label{dfn:non_Eisenstein} We say that a maximal ideal $\m \subset \T^S$ is of Galois type if its residue field is a finite extension of $k$, and there exists a continuous, semi-simple representation $\overline{\rho}_\m : G_{F, S} \to \GL_n(\T^S / \m)$ such that for each finite place $v \not\in S$ of $F$, the characteristic polynomial of $\overline{\rho}_\ffrm(\Frob_v)$ is equal to the image of $P_v(X)$	in $(\T^S / \m)[X]$. 

We say that a maximal ideal $\m \subset \T^S$ is non-Eisenstein if it is of Galois type and $\overline{\rho}_\m$ is absolutely irreducible. 
\end{defn}
Note that Theorem \ref{thm:existence_of_residual_representation_for_GL_n} can be viewed as asserting that, under a suitable condition on $S$, any maximal ideal of $\T^S$ in the support of $H^\ast(X_K, \cV_{\lambda})$ is of Galois type. We observe that if $\m \subset \T^S$ is of Galois type, then so is $\m^\vee$, and in fact $\overline{\rho}_{\m^\vee} \cong \overline{\rho}_\m^\vee \otimes \epsilon^{1-n}$. In particular, if $\m$ is non-Eisenstein, then so is $\m^\vee$. Similarly, if $\psi : G_{F, S} \to \cO^\times$ is a continuous character, and $\m \subset \T^S$ is a maximal ideal of Galois type, then so is $\m(\psi)$, and in fact $\overline{\rho}_{\m(\psi)} \cong \overline{\rho}_\m \otimes \overline{\psi}$. In particular, if $\m$ is non-Eisenstein, then so is $\m(\psi)$.

\begin{theorem}\label{thm:existence_of_Hecke_repn_for_GL_n}
	Let $\ffrm \subset \T^S(K, \lambda)$ be a maximal ideal. Suppose that $S$ satisfies the following conditions:
	\begin{itemize}
	\item $S = S^c$. 
		\item Let $v$ be a finite place of $F$ not contained in $S$, and let $l$ be its residue characteristic. Then either $S$ contains no $l$-adic places of $F$ and $l$ is unramified in $F$, or there exists an imaginary quadratic field $F_0 \subset F$ in which $l$ splits.
	\end{itemize}
	Suppose moreover that $\overline{\rho}_\ffrm$ is absolutely irreducible. Then there exists an integer $N \geq 1$, which depends only on $n$ and $[F : \Q]$, an ideal $I \subset \T^S(K, \lambda)$ satisfying $I^N = 0$, and a continuous homomorphism
	\[ \rho_\ffrm : G_{F, S} \to \GL_n(\T^S(K, \lambda)_\ffrm / I)  \]
	satisfying the following condition: for each finite place $v \not\in S$ of $F$, the characteristic polynomial of $\rho_\ffrm(\Frob_v)$ is equal to the image of $P_v(X)$	in $(\T(K, \lambda)_\ffrm / I)[X]$.
\end{theorem}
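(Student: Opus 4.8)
The plan is to deduce the theorem from the construction of Galois representations attached to torsion in the cohomology of locally symmetric spaces carried out in \cite{scholze-torsion}, carefully keeping track of the nilpotent ideal, and then to realize the resulting determinant (pseudo-character) by an honest representation using the absolute irreducibility of $\barrho_\ffrm$ together with the vanishing of the Brauer group of a finite field, just as in the proof of Theorem~\ref{thm:existence_of_residual_representation_for_GL_n} --- but now in $\varpi$-adic families.

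First I would assemble the input from \cite{scholze-torsion}. As in the proof of Theorem~\ref{thm:existence_of_residual_representation_for_GL_n}, the hypothesis on $S$ ensures that we may invoke the results of \cite{scholze-torsion} in a case where they are unconditional (cf.\ \cite[Rem.~5.4.6]{scholze-torsion}). For $m \geq 1$ let $\T_m$ denote the image of $\T^S$ in $\End_{\mathbf{D}(\cO/\varpi^m)}(R\Gamma(X_K, \cV_\lambda/\varpi^m))$; applying the functor $- \otimes^{\mathbf{L}}_{\cO} \cO/\varpi^m$ induces compatible surjections $\T^S(K,\lambda) \twoheadrightarrow \T_m$, and since $R\Gamma(X_K, \cV_\lambda)$ is a perfect complex of $\cO$-modules (Lemma~\ref{lem:cohomology_is_perfect}) the algebra $\T^S(K,\lambda)$ is finite over $\cO$ and the natural map $\T^S(K,\lambda) \to \varprojlim_m \T_m$ is injective. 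For each $m$, the construction of \cite{scholze-torsion} --- after passing from the naive Hecke algebra acting on $H^\ast(X_K, \cV_\lambda/\varpi^m)$ to $\T_m$, the two differing only by a nilpotent ideal of exponent bounded in terms of $n$ and $[F:\Q]$ (see \cite{new-tho}) --- provides an integer $N$ depending only on $n$ and $[F:\Q]$, an ideal $I_m \subset (\T_m)_\ffrm$ with $I_m^N = 0$, and a continuous determinant $D_m : G_{F,S} \to (\T_m)_\ffrm/I_m$ whose characteristic polynomial at each $\Frob_v$ (for $v \notin S$) is the image of $P_v(X)$.

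Next I would pass to the limit. Let $J_m \subset \T^S(K,\lambda)_\ffrm$ be the preimage of $I_m$ under the surjection $\T^S(K,\lambda)_\ffrm \twoheadrightarrow (\T_m)_\ffrm$; then $J_m^N$ lies in the kernel of this surjection, so $I := \bigcap_m J_m$ satisfies $I^N = 0$ by the injectivity noted above. Moreover $\T^S(K,\lambda)_\ffrm/I$ is a closed subring of $\prod_m (\T_m)_\ffrm/I_m$, and the product $\prod_m D_m$ is a continuous determinant $G_{F,S} \to \prod_m (\T_m)_\ffrm/I_m$ whose characteristic polynomial at each $\Frob_v$ ($v \notin S$) already lies in $(\T^S(K,\lambda)_\ffrm/I)[X]$; since such $\Frob_v$ are dense in $G_{F,S}$ (Chebotarev) and the subring is closed, this determinant descends to a continuous determinant $D : G_{F,S} \to \T^S(K,\lambda)_\ffrm/I$ with the prescribed Frobenius characteristic polynomials. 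It remains to realize $D$ by a representation. Put $A = \T^S(K,\lambda)_\ffrm/I$; it is a Henselian local ring with finite residue field $\T^S(K,\lambda)/\ffrm$. Because $\barrho_\ffrm$ is absolutely irreducible, the Cayley--Hamilton algebra attached to $D$ is an Azumaya $A$-algebra of rank $n^2$ (this being checkable at the closed point of $\Spec A$), and its Brauer class lies in $\Br(A) = \Br(\T^S(K,\lambda)/\ffrm) = 0$; hence this algebra is isomorphic to $M_n(A)$, and the tautological continuous homomorphism from $G_{F,S}$ into its unit group yields the desired $\rho_\ffrm : G_{F,S} \to \GL_n(A)$, the characteristic polynomial of $\rho_\ffrm(\Frob_v)$ being the image of $P_v(X)$ for all $v \notin S$ by construction of $D$.

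I expect the main obstacle to be the passage to the inverse limit: one must ensure that the nilpotence exponent $N$ supplied by \cite{scholze-torsion} is uniform in $m$ and that the ideals $I_m$ glue to a single nilpotent ideal $I$ of $\T^S(K,\lambda)_\ffrm$ of the same exponent. This rests on the perfectness of $R\Gamma(X_K, \cV_\lambda)$ over $\cO$, which makes $\T^S(K,\lambda)_\ffrm$ separated for the topology defined by the $\T_m$ and so allows the argument $I^N \subseteq \bigcap_m J_m^N = 0$. The remaining points are more routine: the comparison between the derived Hecke algebra $\T^S(K,\lambda)$ and the naive Hecke algebra to which \cite{scholze-torsion} most directly applies (this only worsens $N$ by a bounded factor), the use of absolute irreducibility in the final upgrade of $D$ to an honest representation via the triviality of $\Br(A)$, and the standard verifications of continuity and of the Chebotarev descent of the determinant.
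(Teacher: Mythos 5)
Your proposal is correct and amounts to the same route as the paper: the paper's proof of this theorem is a one-line citation of \cite[Cor.\ 5.4.4]{scholze-torsion}, whose content is precisely the argument you reconstruct (Scholze's Hecke-algebra-valued determinants with uniformly bounded nilpotent error at torsion level, a passage to the limit over $\varpi^m$ using the perfectness of $R\Gamma(X_K,\cV_\lambda)$, and the upgrade from a determinant to a genuine representation via absolute irreducibility of $\overline{\rho}_\ffrm$ and \cite[Thm.\ 2.22]{chenevier_det} over the henselian local ring $\T^S(K,\lambda)_\ffrm/I$). You have simply unpacked the cited result rather than invoking it.
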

\begin{proof}
	This follows from \cite[Cor.\ 5.4.4]{scholze-torsion}. 
\end{proof}

\begin{theorem}\label{thm:existence_of_residual_representation_for_U(n,n)}
	Let $\widetilde{\ffrm} \subset \widetilde{\T}^S(\widetilde{K}, \widetilde{\lambda})$ be a maximal ideal. Suppose that $S$ satisfies the following condition:
	\begin{itemize}
		\item Let $v$ be a finite place of $F$ not contained in $S$, and let $l$ be its residue characteristic. Then either $S$ contains no $l$-adic places of $F$ and $l$ is unramified in $F$, or there exists an imaginary quadratic field $F_0 \subset F$ in which $l$ splits.
	\end{itemize}	
	(Note that the condition $S = S^c$ is implicit in the use of the notation $\widetilde{\T}^S$ here.)
	Then there is a continuous, semi-simple representation 
	\[ \overline{\rho}_{\widetilde{\ffrm}} : G_{F, S} \to \GL_{2n}(\widetilde{\T}^S(\widetilde{K}, \widetilde{\lambda})/\widetilde{\ffrm}) \]
	satisfying the following condition: for each finite place $v \not\in S$ of $F$, the characteristic polynomial of $\overline{\rho}_{\widetilde{\ffrm}}(\Frob_v)$ is equal to the image of $\widetilde{P}_v(X)$
	in  $(\widetilde{\T}^S(\widetilde{K}, \widetilde{\lambda}) / \widetilde{\ffrm} )[X]$.
\end{theorem}
\begin{proof}
	The existence of a $2n$-dimensional group determinant $\widetilde{D}_{\widetilde{\ffrm}}$ valued in $\widetilde{\T}^S(\widetilde{K}, \widetilde{\lambda}) / \widetilde{\ffrm}$ and with the given characteristic polynomials on Frobenius elements at places $v\not\in S$ is implicit in \cite{scholze-torsion} and also follows from \cite[Theorem 5.7]{new-tho}, as we now explain. The result \cite[Theorem 5.7]{new-tho} shows that if the group $\widetilde{K}$ is small, in the sense that there is a rational prime $q \neq p$ such that $\widetilde{K}_q$ is contained in the principal congruence subgroup at level $q$ (if $q$ is odd) or $2q$ (if $q = 2$), then there is even a $2n$-dimensional group determinant valued in $\widetilde{\T}^S( R \Gamma(\widetilde{X}_{\widetilde{K}}, \cV_{\widetilde{\lambda}} / (\varpi)) )$ such that for each finite place $v \not\in S$ of $F$, the characteristic polynomial of $\Frob_v$ is equal to the image of $\widetilde{P}_v(X)$. The surjection 
	\[ \widetilde{\T}^S(\widetilde{K}, \widetilde{\lambda}) \to \widetilde{\T}^S( R \Gamma(\widetilde{X}_{\widetilde{K}}, \cV_{\widetilde{\lambda}} / (\varpi)) ) \]
	is bijective at the level of maximal ideals, so this implies the existence of the desired group determinant when $\widetilde{K}$ is small. When $\widetilde{K}$ is not small, we choose an odd rational prime $q_1$ which is prime to $S$, and let $\widetilde{K}_1$ denote the intersection of $\widetilde{K}$ with the principal congruence subgroup of $\widetilde{G}(\widehat{\cO}_{F^+})$ of level $q_1$. Let $S_1$ denote the union of $S$ with the set of $q_1$-adic places of $F$. Then there is a diagram of Hecke algebras
	\[  \widetilde{\T}^{S_1}(\widetilde{K}_1, \widetilde{\lambda}) \leftarrow \widetilde{\T}^{S_1}(\widetilde{K} / \widetilde{K}_1, \widetilde{\lambda}) \twoheadrightarrow \widetilde{\bT}^{S_1}(\widetilde{K}, \widetilde{\lambda}) \hookrightarrow \widetilde{\bT}^{S}(\widetilde{K}, \widetilde{\lambda}), \]
	where the left-facing arrow has nilpotent kernel and so induces a bijection at the level of maximal ideals. Let $\widetilde{\ffrm}_1 \subset \widetilde{\bT}^{S_1}(\widetilde{K}, \widetilde{\lambda})$ denote the pullback of $\widetilde{\ffrm}$ along the right-hand inclusion. Since $\widetilde{K}_1$ is small, there exists a group determinant $\widetilde{D}_{\widetilde{\ffrm}_1}$ valued in $\widetilde{\T}^{S_1}(\widetilde{K}, \widetilde{\lambda})  / {\widetilde{\ffrm}_1}$ and with the correct characteristic polynomials at places outside $S_1$. Let $\widetilde{D}_{\widetilde{\ffrm}, 1}$ denote the pushforward of $\widetilde{D}_{\widetilde{\ffrm}_1}$ to $\widetilde{\T}^S(\widetilde{K}, \widetilde{\lambda}) / \widetilde{\ffrm}$.  Thus $\widetilde{D}_{\widetilde{\ffrm}, 1}$ is a $2n$-dimensional group determinant of $G_{F, S_1}$ with the property that for any finite place $v \not\in S_1$ of $F$, $\widetilde{D}_{\widetilde{\ffrm}, 1}(X - \Frob_v)$ equals the image of $\widetilde{P}_v(X)$.
	
	Choose another odd rational prime $q_2 \neq q_1$ which is prime to $S$, and repeat this construction to obtain a group determinant $\widetilde{D}_{\widetilde{\ffrm}, 2}$ of $G_{F, S_2}$ valued in $\widetilde{\T}^S(\widetilde{K}, \widetilde{\lambda}) / \widetilde{\ffrm}$ with the property that for any finite place $v \not\in S_2$ of $F$, $\widetilde{D}_{\widetilde{\ffrm}, 2}(X - \Frob_v)$ equals the image of $\widetilde{P}_v(X)$. Since the Frobenius elements at places $v \not\in S_1 \cup S_2$ are dense in $G_{F, S_1 \cup S_2}$, the group determinants $\widetilde{D}_{\widetilde{\ffrm}, 1}$ and $\widetilde{D}_{\widetilde{\ffrm}, 2}$ have the same characteristic polynomials on all elements of $G_{F, S_1 \cup S_2}$. By \cite[Lemma 1.12]{chenevier_det}, these group determinants are equal and we can take $\widetilde{D}_{\widetilde{\ffrm}} = \widetilde{D}_{\widetilde{\ffrm}, 1} = \widetilde{D}_{\widetilde{\ffrm}, 2}$. 
	
To obtain a true representation from this group determinant, we first fix an embedding $\widetilde{\T}^S(\widetilde{K}, \widetilde{\lambda}) / \widetilde{\ffrm} \hookrightarrow \overline{\mathbb{F}}_p$. The group determinant determines a representation over $\overline{\mathbb{F}}_p$, by~\cite[Theorem A]{chenevier_det}. It follows by the same argument as in the proof of Theorem \ref{thm:existence_of_residual_representation_for_GL_n} that this representation can in fact be realized over $\widetilde{\T}^S(\widetilde{K}, \widetilde{\lambda}) / \widetilde{\ffrm}$.
\end{proof}
A similar argument shows that \cite[Theorem 5.7]{new-tho} implies the following result.
\begin{prop}\label{prop:existence_of_hecke_representation_for_U(n,n)_no_R}
 Suppose that $S$ satisfies the following conditions:
	\begin{itemize}
	\item $S = S^c$. 
		\item Let $v$ be a finite place of $F$ not contained in $S$, and let $l$ be its residue characteristic. Then either $S$ contains no $l$-adic places of $F$ and $l$ is unramified in $F$, or there exists an imaginary quadratic field $F_0 \subset F$ in which $l$ splits.
	\end{itemize}	
	Then there exists an ideal $I \subset \widetilde{\T}^S(\widetilde{K}, \widetilde{\lambda})$ satisfying $I^{2 \dim_{\bR} \widetilde{X}} = 0$ and a $2n$-dimensional group determinant $\widetilde{D}$ of $G_{F, S}$ valued in $\widetilde{\T}^S(\widetilde{K}, \widetilde{\lambda} )/ I$ such that for each finite place $v \not\in S$ of $F$, the characteristic polynomial $\widetilde{D}(X - \Frob_v)$ is equal to the image of $\widetilde{P}_v(X)$
	in  $(\widetilde{\T}^S(\widetilde{K}, \widetilde{\lambda}) / I )[X]$.
\end{prop}
\begin{proof}
When $\widetilde{K}$ is small, this is an immediate consequence of \cite[Theorem 5.7]{new-tho}, together with the observation that the natural map
\[ \widetilde{\T}^S(\widetilde{K}, \widetilde{\lambda}) \to \varprojlim_{m \geq 1} \widetilde{\T}^S( R \Gamma(\widetilde{X}_{\widetilde{K}}, \cV_{\widetilde{\lambda}} / (\varpi^m))) \]
is an isomorphism. Moreover, in this case we can take $I = 0$. In general, we introduce an odd rational prime $q_1$ as in the proof of Theorem \ref{thm:existence_of_residual_representation_for_U(n,n)} and consider again the diagram
	\[  \widetilde{\T}^{S_1}(\widetilde{K}_1, \widetilde{\lambda}) \leftarrow \widetilde{\T}^{S_1}(\widetilde{K} / \widetilde{K}_1, \widetilde{\lambda}) \twoheadrightarrow \widetilde{\bT}^{S_1}(\widetilde{K}, \widetilde{\lambda}) \hookrightarrow \widetilde{\bT}^{S}(\widetilde{K}, \widetilde{\lambda}). \]
	The map $\widetilde{\T}^{S_1}(\widetilde{K} / \widetilde{K}_1, \widetilde{\lambda}) \to \widetilde{\T}^{S_1}(\widetilde{K}_1, \widetilde{\lambda})$ has kernel $J_1$ satisfying $J_1^{\dim_\bR \widetilde{X}} = 0$ (because the cohomology of $R \Gamma(\widetilde{X}_{\widetilde{K}_1}, \cV_{\widetilde{\lambda}})$ is 0 for degrees not lying in $[0, \dim_{\bR} \widetilde{X} - 1]$). Taking $I_1$ to be the ideal of $\widetilde{\bT}^{S_1}(\widetilde{K}, \widetilde{\lambda})$ generated by the image of $J_1$, we obtain a $2n$-dimensional group determinant $\widetilde{D}_1$ of $G_{F, S_1}$ valued in $\widetilde{\T}^S(\widetilde{K}, \widetilde{\lambda}) / I_1$ such that for each finite place $v\not\in S_1$ of $F$, $\widetilde{D}_1(X - \Frob_v)$ equals the image of $\widetilde{P}_v(X)$, and moreover $I_1^{\dim_\bR \widetilde{X}} = 0$.
	
	Introducing an odd rational prime $q_2 \neq q_1$ which is prime to $S$, we obtain similarly an ideal $I_2 \subset \widetilde{\T}^S(\widetilde{K}, \widetilde{\lambda}) $ satisfying $I_2^{\dim_\bR \widetilde{X}} = 0$ and a group determinant $\widetilde{D}_2$ valued in  $\widetilde{\T}^S(\widetilde{K}, \widetilde{\lambda})  / I_2$ and having properties analogous to $\widetilde{D}_1$. We take $I = (I_1, I_2)$ and $\widetilde{D}$ to be the projection of $\widetilde{D}_1$ to $\widetilde{\T}^S(\widetilde{K}, \widetilde{\lambda}) / I$. Consideration of characteristic polynomials at places $v\not\in S_1 \cup S_2$, as in the proof of Theorem \ref{thm:existence_of_residual_representation_for_U(n,n)}, shows that $\widetilde{D}$ equals the projection of $\widetilde{D}_2$ to $\widetilde{\T}^S(\widetilde{K}, \widetilde{\lambda}) / I$. It follows that $\widetilde{D}$ has the property required by the proposition. 
\end{proof}

\subsection{Boundary cohomology}\label{subsec: boundary cohomology}

In the remaining section of this chapter we prove some results
about the boundary cohomology of the arithmetic locally symmetric
spaces of $G$ and $\widetilde{G}$. This is made possible by the
existence of Galois representations attached to Hecke eigenclasses in
the cohomology of these groups and of their Levi subgroups. The
important observation is usually that the cohomology of a certain
stratum in the boundary can be observed to vanish after localization
at a sufficiently nice (e.g.\ non-Eisenstein) maximal ideal of  a suitable Hecke algebra. 

\subsubsection{The Siegel parabolic}\label{sec:siegel}

Let $\widetilde{K}\subset \widetilde{G}(\A_{F^+}^{\infty})$ be a good subgroup which is decomposed with respect to the Levi decomposition $P = GU$ (cf. \S \ref{sec:general_definition_of_hecke_operators}). We set $K = \widetilde{K} \cap G(\A_{F^+}^\infty)$ and $K_U = \widetilde{K} \cap U(\A_{F^+}^\infty)$.

Let $\m \subset \T^S(K, \lambda)$ be a non-Eisenstein maximal ideal, and suppose that $S = S^c$. Let $\widetilde{\m} \subset \widetilde{\T}^S$ denote the pullback of $\m$ under the homomorphism $\cS  : \widetilde{\T}^S \to \T^S$ defined in~\eqref{eqn:satake T}. In order to state the first main result of this subsection, we recall that the boundary 
$\partial\widetilde{X}_{\widetilde{K}} = \overline{\widetilde{X}}_{\widetilde{K}} - \widetilde{X}_{\widetilde{K}}$ of the Borel--Serre compactification of $\widetilde{X}_{\widetilde{K}}$ has a 
$\widetilde{G}(\A_{F^+}^{\infty})$-equivariant stratification indexed by 
the parabolic subgroups of $\widetilde{G}$ which contain $B$. See~\cite[\S 
3.1.2]{new-tho}, especially \cite[Lem.~3.10]{new-tho} %
for more details. For such a standard parabolic subgroup $Q$, 
we denote by $\widetilde{X}^{Q}_{\widetilde{K}}$ the stratum
labelled by $Q$. 
The stratum $\widetilde{X}^{Q}_{\widetilde{K}}$ can be written as
\[
\widetilde{X}^{Q}_{\widetilde{K}} = Q(F^+) \backslash \left(X^Q \times 
\widetilde{G}(\A^\infty_{F^+}) /\widetilde{K}\right). 
\]
As discussed in \S \ref{sec:general_definition_of_hecke_operators}, there is, for any $\widetilde{\lambda} \in (\Z^{2n}_+)^{\Hom(F^+, E)}$, a homomorphism
\[ \widetilde{\mathbb{T}}^S \to \End_{\mathbf{D}(\cO)} ( R\Gamma(\widetilde{X}^{Q}_{\widetilde{K}},\cV_{\widetilde{\lambda}})). \]
Therefore, we can define the localization 
$R\Gamma(\widetilde{X}^{Q}_{\widetilde{K}},\cV_{\widetilde{\lambda}})_{\widetilde{\m}}$. (This complex will be non-zero in $\mathbf{D}(\cO)$ if and only if the maximal ideal $\widetilde{\m}$ of $\widetilde{\T}^S$ occurs in the support of  the cohomology groups $H^\ast( \widetilde{X}^{Q}_{\widetilde{K}},\cV_{\widetilde{\lambda}})$.)
\begin{thm}\label{thm:reduction_to_Siegel} Let $\m \subset \T^S(K, \lambda)$ be a non-Eisenstein maximal ideal and let $\widetilde{\m} = \cS^\ast(\m) \subset \widetilde{\T}^S$. Let $\widetilde{\lambda} \in (\Z^{2n}_+)^{\Hom(F^+, E)}$. Then there is a natural $\widetilde{\mathbb{T}}^S$-equivariant isomorphism 
	\[
	R\Gamma(\widetilde{X}^{P}_{\widetilde{K}},\cV_{\widetilde{\lambda}})_{\widetilde{\m}}\toisom
	R\Gamma (\partial 
	\widetilde{X}_{\widetilde{K}},\mathcal{V}_{\widetilde{\lambda}})_{\widetilde{\m}}
	\]
	in $\mathbf{D}(\cO)$. 
\end{thm}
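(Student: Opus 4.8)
The plan is to combine the stratification of the Borel--Serre boundary $\partial\widetilde{X}_{\widetilde{K}}$ with the existence of Galois representations attached to the cohomology of the Levi subgroups of $\widetilde{G}$, in order to show that after localising at $\widetilde{\m}$ the boundary cohomology is entirely accounted for by the Siegel stratum. Recall from~\cite[\S 3.1.2]{new-tho} (especially~\cite[Lem.~3.10]{new-tho}) that $\partial\widetilde{X}_{\widetilde{K}}$ is the union of the closed subsets $\partial_{\overline{Q}} := \overline{\widetilde{X}^{Q}_{\widetilde{K}}}$ as $\overline{Q}$ ranges over the maximal standard parabolic subgroups of $\widetilde{G}$, that $\partial_{\overline{Q_1}} \cap \dots \cap \partial_{\overline{Q_k}} = \partial_{\overline{Q_1 \cap \dots \cap Q_k}}$, and that $\partial_{\overline{R}} = \bigsqcup_{R' \subseteq R} \widetilde{X}^{R'}_{\widetilde{K}}$, the disjoint union running over standard parabolics $R'$ with $B \subseteq R' \subseteq R$. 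The Mayer--Vietoris (Čech) spectral sequence for this finite closed cover gives a $\widetilde{\T}^S$-equivariant filtration of $R\Gamma(\partial\widetilde{X}_{\widetilde{K}}, \cV_{\widetilde{\lambda}})$ in $\mathbf{D}(\cO)$ whose graded pieces are the complexes $R\Gamma(\partial_{\overline{R}}, \cV_{\widetilde{\lambda}})$ (up to shift) for $R$ an intersection of maximal parabolics. The Siegel parabolic $P$ is itself maximal, so $\partial_{\overline{P}}$ is one of the members of the cover, and I will argue that it is the only one which survives localisation at $\widetilde{\m}$.

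The crux is the vanishing $R\Gamma(\widetilde{X}^{R}_{\widetilde{K}}, \cV_{\widetilde{\lambda}})_{\widetilde{\m}} = 0$ for \emph{every} proper standard parabolic $R \neq P$. Let $L_R$ be the Levi of $R$; after base change to $F$ it is a product $\GL_{a_1} \times \dots \times \GL_{a_r} \times \GL_{2b}$ arising from $L_R \cong \Res_{\cO_F / \cO_{F^+}}\GL_{a_1} \times \dots \times \Res_{\cO_F / \cO_{F^+}}\GL_{a_r} \times \widetilde{G}_b$ with $a_1 + \dots + a_r + b = n$. Since $R$ is proper, $L_R$ has at least one general linear factor (otherwise $b = n$ and $R = \widetilde{G}$), and $a_i = n$ for some $i$ forces $r = 1$, $b = 0$, i.e.\ $R = P$; hence for $R \neq P$ every $a_i$ satisfies $a_i < n$. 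Now $\widetilde{X}^{R}_{\widetilde{K}}$ fibres, over a finite disjoint union of locally symmetric spaces for $L_R$, with nilmanifold fibres for the unipotent radical of $R$, and — arguing exactly as in~\cite[\S 3]{new-tho} — the action of $\widetilde{\T}^S$ on $R\Gamma(\widetilde{X}^{R}_{\widetilde{K}}, \cV_{\widetilde{\lambda}})$ factors through the Satake-type homomorphism $\cS^{\widetilde{G}}_{L_R} = r_{L_R} \circ r_R : \widetilde{\T}^S \to \T^S_{L_R}$, with $\cS^{\widetilde{G}}_{L_R}(\widetilde{P}_v(X))$ equal, up to the evident normalising powers of $q_v$, to the product of the Hecke polynomials $P^{(i)}_v(X)$ of the factors $\GL_{a_i}$, their conjugate-duals, and the Hecke polynomial of the $\widetilde{G}_b$-factor (the analogue for a general standard parabolic of Proposition~\ref{prop_satake_transform_unramified_case}).

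Consequently, if $\widetilde{\m}$ lay in the support of $H^\ast(\widetilde{X}^{R}_{\widetilde{K}}, \cV_{\widetilde{\lambda}})$ it would be the contraction of a maximal ideal of $\T^S_{L_R} = \big(\bigotimes_i \T^S_{\GL_{a_i}}\big) \otimes \widetilde{\T}^S_{\widetilde{G}_b}$, hence (by Künneth) restrict to maximal ideals $\m_i$ in the support of the cohomology of the $\GL_{a_i}$ over $F$. By Theorem~\ref{thm:existence_of_residual_representation_for_GL_n}, $\m_i$ carries an $a_i$-dimensional continuous semisimple representation $\barrho_{\m_i} : G_{F, S} \to \GL_{a_i}(\overline{\F}_p)$, and the displayed factorisation of Hecke polynomials shows that the characteristic polynomial of $\barrho_{\widetilde{\m}}(\Frob_v)$ is divisible by that of $\barrho_{\m_i}(\Frob_v)$ for all $v \notin S$; by Chebotarev and Brauer--Nesbitt, $\barrho_{\m_i}$ is a subrepresentation of $\barrho_{\widetilde{\m}}$. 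On the other hand, since $\widetilde{\m} = \cS^\ast(\m)$ with $\m$ non-Eisenstein, Proposition~\ref{prop_satake_transform_unramified_case} gives $\barrho_{\widetilde{\m}} \cong \barrho_{\m} \oplus (\barrho_{\m}^{c,\vee} \otimes \barepsilon^{1-2n})$, a sum of two absolutely irreducible $n$-dimensional representations, so every Jordan--Hölder constituent of $\barrho_{\widetilde{\m}}$ has dimension $n$; as $\dim \barrho_{\m_i} = a_i < n$, this is a contradiction. This proves the vanishing claim.

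Granting the claim, each $\partial_{\overline{R}}$ with $R \neq P$ becomes acyclic after localising at $\widetilde{\m}$, since all of its strata $\widetilde{X}^{R'}_{\widetilde{K}}$ ($R' \subseteq R$) have $R' \neq P$ — indeed $P$, being maximal, is not properly contained in any proper parabolic, so $P \subseteq R$ would force $R = P$ — while $\partial_{\overline{P}}$ localises to $R\Gamma(\widetilde{X}^{P}_{\widetilde{K}}, \cV_{\widetilde{\lambda}})_{\widetilde{\m}}$, its remaining strata $\widetilde{X}^{R'}$ with $R' \subsetneq P$ again vanishing. Every intersection of two or more distinct maximal parabolics is a proper parabolic which is not $P$ (it is strictly contained in at least one member distinct from $P$, or in $P$ strictly), so in the Čech spectral sequence the only term surviving localisation sits in Čech degree $0$ and equals $R\Gamma(\widetilde{X}^{P}_{\widetilde{K}}, \cV_{\widetilde{\lambda}})_{\widetilde{\m}}$; the spectral sequence therefore collapses and yields the asserted $\widetilde{\T}^S$-equivariant isomorphism $R\Gamma(\widetilde{X}^{P}_{\widetilde{K}}, \cV_{\widetilde{\lambda}})_{\widetilde{\m}} \toisom R\Gamma(\partial\widetilde{X}_{\widetilde{K}}, \cV_{\widetilde{\lambda}})_{\widetilde{\m}}$ in $\mathbf{D}(\cO)$. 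The main obstacle is the input to the middle two paragraphs: one must set up carefully (following~\cite{new-tho}) the description of the boundary strata and verify that the $\widetilde{\T}^S$-action on the cohomology of each stratum genuinely factors through the parabolic restriction $\cS^{\widetilde{G}}_{L_R}$ to the Levi Hecke algebra, with the precise relation between $\widetilde{P}_v(X)$ and the Hecke polynomials of the factors of $L_R$. Once that bookkeeping is in place, the Galois-theoretic dimension count and the collapse of the spectral sequence are routine.
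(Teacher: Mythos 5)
Your arithmetic core is the same as the paper's: you kill every non-Siegel stratum after localisation by showing that any maximal ideal in its support is pulled back via the Satake-type map from the Levi's Hecke algebra, attaching residual Galois representations to the $\GL_{a_i}$ and $\widetilde{G}_b$ factors (Theorems \ref{thm:existence_of_residual_representation_for_GL_n} and \ref{thm:existence_of_residual_representation_for_U(n,n)}), and contradicting the fact that $\overline{\rho}_{\widetilde{\m}} \cong \overline{\rho}_{\m} \oplus \overline{\rho}_{\m}^{c,\vee}\otimes\barepsilon^{1-2n}$ has only $n$-dimensional constituents. (The paper also first enlarges $S$, shrinks $\widetilde{K}$ at $p$ to reduce to $k$-coefficients, and passes to a deeper Levi level $\widetilde{K}'_M$; you elide this but it is harmless.) The packaging differs: the paper uses a single excision triangle for the open stratum $\widetilde{X}^P_{\widetilde{K}} \subset \partial\widetilde{X}_{\widetilde{K}}$, whereas you use a \v{C}ech spectral sequence over the closed faces indexed by maximal parabolics.

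There is, however, a genuine gap in your topological d\'evissage. The \v{C}ech $E_1$-terms are the cohomologies of the \emph{closed} sets $\partial_{\overline{R}}$, and you deduce their vanishing after localisation (and the identification $R\Gamma(\partial_{\overline{P}},\cV_{\widetilde{\lambda}})_{\widetilde{\m}}\cong R\Gamma(\widetilde{X}^P_{\widetilde{K}},\cV_{\widetilde{\lambda}})_{\widetilde{\m}}$) solely from the vanishing of the localised \emph{ordinary} cohomology of the locally closed strata. That inference is not valid as stated: assembling $R\Gamma(\partial_{\overline{R}},\cV_{\widetilde{\lambda}})$ from its strata uses excision triangles of the shape $R\Gamma_c(U)\to R\Gamma(Z)\to R\Gamma(Z\smallsetminus U)$, so what is needed is the vanishing of the localised \emph{compactly supported} cohomology of each stratum, together with the comparison $R\Gamma_c(\widetilde{X}^P_{\widetilde{K}},\cV_{\widetilde{\lambda}})_{\widetilde{\m}}\simeq R\Gamma(\widetilde{X}^P_{\widetilde{K}},\cV_{\widetilde{\lambda}})_{\widetilde{\m}}$ for the $P$-term. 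This is exactly where the paper works with $R\Gamma_c$ of the strata and then uses Poincar\'e duality on each stratum, perfectness, and reduction modulo $\varpi$ to convert the needed statement into the vanishing of $R\Gamma(\widetilde{X}^Q_{\widetilde{K}},k)_{\widetilde{\m}}$ that your Galois argument delivers (noting that the dual maximal ideal again has two irreducible $n$-dimensional constituents); alternatively one can supply the standard fact that each open Borel--Serre face includes into its closure by a homotopy equivalence, so that $R\Gamma(\partial_{\overline{R}},\cV_{\widetilde{\lambda}})\cong R\Gamma(\widetilde{X}^R_{\widetilde{K}},\cV_{\widetilde{\lambda}})$. Either repair works, but some such ingredient must be stated. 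A second, smaller flaw: ``the characteristic polynomial of $\overline{\rho}_{\widetilde{\m}}(\Frob_v)$ is divisible by that of $\overline{\rho}_{\m_i}(\Frob_v)$, hence $\overline{\rho}_{\m_i}$ is a subrepresentation'' is not a legitimate application of Chebotarev and Brauer--Nesbitt. Instead, use the full Satake factorisation you already quote: the degree-$2n$ polynomial $\widetilde{P}_v(X)$ modulo $\widetilde{\m}$ factors as the product of the (suitably twisted) Hecke polynomials of all the $\GL_{a_i}$, their conjugate-duals, and the $\widetilde{G}_b$-factor, so Chebotarev plus Brauer--Nesbitt identifies $\overline{\rho}_{\widetilde{\m}}$ with the corresponding direct sum, and then the constituent-dimension count ($a_i<n$) gives the contradiction; this is how the paper concludes, via its displayed formula for $\overline{\rho}_{\cS^\ast(\m')}$.
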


\begin{proof} There is no  harm in enlarging $S$, so we first add finitely many places to $S$, ensuring that it satisfies the condition of Theorem \ref{thm:existence_of_residual_representation_for_GL_n}. The proof is similar to the proof of~\cite[Thm.\ 4.2]{new-tho}, 
	which applies to the case of $\mathrm{Res}_{F/\Q}\GL_n$ and which shows that 
	the cohomology of the stratum labelled by any proper parabolic subgroup of 
	$\mathrm{Res}_{F/\Q}\GL_n$ is Eisenstein. Since $P$ is a maximal parabolic of 
	$\widetilde{G}$, the inclusion $\widetilde{X}^{P}_{\widetilde{K}} \subset \partial 
	\widetilde{X}_{\widetilde{K}}$ is an open embedding, which induces a natural, $\widetilde{\mathbb{T}}^S$-equivariant map 
	\[
	R\Gamma_c(\widetilde{X}^{P}_{\widetilde{K}},\cV_{\widetilde{\lambda}})_{\widetilde{\m}}
	\to
	R\Gamma (\partial 
	\widetilde{X}_{\widetilde{K}},\mathcal{V}_{\widetilde{\lambda}})_{\widetilde{\m}},
	\]
	and which fits into an excision distinguished triangle
	\[
	R\Gamma_c(\widetilde{X}^{P}_{\widetilde{K}},\cV_{\widetilde{\lambda}})_{\widetilde{\m}}
	\to 
	R\Gamma (\partial 
	\widetilde{X}_{\widetilde{K}},\mathcal{V}_{\widetilde{\lambda}})_{\widetilde{\m}}
	\to R\Gamma (\partial \widetilde{X}_{\widetilde{K}} \setminus 
	\widetilde{X}^{P}_{\widetilde{K}},\mathcal{V}_{\widetilde{\lambda}})_{\widetilde{\m}}\stackrel{[1]}{\to}.
	\]
    We will show that $R\Gamma (\partial \widetilde{X}_{\widetilde{K}} \setminus 
    \widetilde{X}^{P}_{\widetilde{K}},\mathcal{V}_{\widetilde{\lambda}})_{\widetilde{\m}} = 0$ in $\mathbf{D}(\cO)$, by showing that for each standard proper parabolic subgroup $Q \subset \widetilde{G}$ with $Q \neq P$, we have $R\Gamma_c(\widetilde{X}^{Q}_{\widetilde{K}},\cV_{\widetilde{\lambda}})_{\widetilde{\m}} = 0$ in $\mathbf{D}(\cO)$. Applying the same argument to the excision triangle for the inclusion from $\widetilde{X}^{P}_{\widetilde{K}}$ to its closure, this will also show that the natural map
    \[ R\Gamma_c(\widetilde{X}^{P}_{\widetilde{K}},\cV_{\widetilde{\lambda}})_{\widetilde{\m}} \to R\Gamma(\widetilde{X}^{P}_{\widetilde{K}},\cV_{\widetilde{\lambda}})_{\widetilde{\m}} \]
    is an isomorphism.
    
    In order to show this vanishing, it suffices (after possibly
    shrinking $\widetilde{K}$ at the $p$-adic places of $F^+$) to show
    that if $Q \neq P$ is a standard proper parabolic subgroup of
    $\widetilde{G}$, then
    $R\Gamma(\widetilde{X}^{Q}_{\widetilde{K}},k)_{\widetilde{\m}} =
    0$. (We are using here that if $C^\bullet$ is a perfect complex in
    $\mathbf{D}(\cO)$, then $C^\bullet = 0$ in $\mathbf{D}(\cO)$ if
    and only if $C^\bullet \otimes^\mathbf{L}_\cO k = 0$ in
    $\mathbf{D}(k)$. We are also using Poincar\'e duality to exchange
    cohomology with compact support for usual cohomology, as in
    \cite[Prop.\ 3.7]{new-tho}.)
	
	We will in fact show that, for any maximal ideal $\widetilde{\m}'\subset 
	\widetilde{\mathbb{T}}^S$ in the support of 
	$R\Gamma(\widetilde{X}^{Q}_{\widetilde{K}},k)$, there exists a semisimple 
	residual Galois representation 
	\[
	\bar\rho_{\widetilde{\m}'}: G_{F, S} \to 
	\GL_{2n}(\widetilde{\T}^S / \widetilde{\m}')
	\]  
	such that for each place $v \not\in S$ of $F$, the characteristic polynomial of $\overline{\rho}_{\widetilde{\m}'}$ equals the image of $\widetilde{P}_v(X)$ in $(\widetilde{\T}^S / \widetilde{\m}')[X]$. Moreover, assume that the Levi component $M$ of 
	$Q$ is of the form 
	\[
	\mathrm{Res}_{F/F^+}\GL_{n_1}\times \dots \times 
	\mathrm{Res}_{F/F^+}\GL_{n_r}\times \widetilde{G}_{n-s}
	\]
	for integers $r\geq 1, n_i\geq1, s\in \{1,\dots,n\}$ satisfying 
	$\sum_{i=1}^r n_i =s$. (More precisely, that it is the block diagonal subgroup of $\widetilde{G}$ associated to the partition $2n = n_1 + \dots + n_r + 2(n-s) + n_r + \dots + n_1$. These describe 
	all the standard $F^+$-rational Levi subgroups of $\widetilde{G}$.) Then we 
	have 
	\numequation\label{eqn:reducibility_for_parabolic}
	\bar{\rho}_{\widetilde{\m}'} = \oplus_{i=1}^r\bar{\rho}'_i \oplus 
	\bar{\rho}'(n-s) \oplus_{i=1}^r (\bar{\rho}'_i)^{c, \vee},
	\end{equation}
	where $\bar{\rho}'_i$ is $n_i$-dimensional and $\bar{\rho}'(n-s)$ is 
	$(2n-2s)$-dimensional. The non-Eisenstein condition on $\m$ implies that 
	\[
	\bar{\rho}_{\widetilde{\m}} = \bar{\rho}_1 \oplus \bar{\rho}_2,
	\]
	where both $\bar{\rho}_1, \bar{\rho}_2$ are (absolutely) irreducible 
	$n$-dimensional representations. 
	This shows that, unless $r=1$ and $s=n$, 
	$R\Gamma(\widetilde{X}^{Q}_{\widetilde{K}},k)_{\widetilde{\m}} = 0$. The 
	case $r=1,s=n$ corresponds precisely to the Siegel parabolic $P$. 
	
	Let us define $\T^S_Q = \cH( Q^S, \widetilde{K}_Q^S)
        \otimes_\Z \cO$ and $\T^S_M = \cH( M^S,
        \widetilde{K}_M^S) \otimes_\bZ \cO$. We recall (cf. \S
        \ref{sec:general_definition_of_hecke_operators}) that there
        are homomorphisms $r_Q : \widetilde{\T}^S \to \T^S_Q$ and $r_M
        : \T^S_Q \to \T^S_M$, and that we set $\cS^{\widetilde{G}}_M =
        r_M \circ r_Q$.  %
        We first claim that, for any maximal ideal $\widetilde{\m}'$ of $\widetilde{\T}^S$ in the support of $H^\ast(\widetilde{X}_{\widetilde{K}}^Q, k)$, there exists a good subgroup $\widetilde{K}_M' \subset \widetilde{K}_M$ with $(\widetilde{K}_M')^S = \widetilde{K}_M^S$ and a maximal ideal $\m'$ of $\T^S_M$ in the support of $H^\ast(X^M_{\widetilde{K}'_M}, k)$ such that $\widetilde{\m}' = \cS^{\widetilde{G}, \ast}_M(\m')$. This follows the same steps as the proof of~\cite[Theorem 4.2]{new-tho}, which we outline here. 
        
        Firstly, one can describe the cohomology 
	$R\Gamma(\widetilde{X}^{Q}_{\widetilde{K}},k)$ together with its  
	$\widetilde{\mathbb{T}}^S$-action in terms of the pullback under $r_{Q}: 
	\widetilde{\mathbb{T}}^S\to \mathbb{T}^S_Q$ of the cohomology of finitely 
	many locally symmetric spaces for $Q$. More precisely, using the Iwasawa decomposition away from $S$, 
	we can write 
	$\widetilde{G}(\A^\infty_{F^+}) = \bigsqcup_{i=1}^{r}Q(\A^\infty_{F^+}) g_i 
	\widetilde{K}$ and obtain $r$ locally symmetric 
	spaces $X^Q_{K_{Q,i}}$, with $K_{Q,i}:=Q(\A^\infty_{F^+})\cap g_i\widetilde{K}(g_i)^{-1}$,
	together with an isomorphism 
	\[
	R\Gamma(\widetilde{X}^{Q}_{\widetilde{K}}, 
	k)\simeq\bigoplus_{i=1}^{r} 
	r_Q^*\left(R\Gamma(X^Q_{K_{Q,i}}, k)\right)
	\]
	in $\mathbf{D}(\widetilde{\mathbb{T}}^S)$. The proof in~\cite{new-tho}, 
	which identifies equations (4.2) and (4.3) of \emph{loc. cit.}, 
	applies verbatim to our situation, so we do not repeat it here. 
	
	Secondly, fix a neat compact open subgroup $K_{Q}\subset Q(\A_{F^+}^\infty)$, which 
	can be any of the $K_{Q,i}$ considered above. Let $Q = 
	M\ltimes N$ be a Levi decomposition of $Q$. Let $K_{M}$ be the image of $K_{Q}$ 
	in $M(\A_{F^+}^\infty)$ and $K_N:= K_Q\cap N(\A_{F^+}^\infty)$.
	 Let $\cW$ be the object in the derived category 
	of sheaves on $X^M_{K_M}$ corresponding to the object $R\Gamma(K_{N,S}, 
	k)$ in $D(k[K_{M,S}])$ under the 
	formalism in section~\ref{sec:general_definition_of_hecke_operators}. Then, using an
	argument that is formally identical to that on pages 56-58 of~\cite{new-tho}, 
	we obtain an isomorphism 
	\[
	R\Gamma(X^{Q}_{K_Q}, k) \simeq 
	r_M^*\left(R\Gamma(X^M_{K_M}, \cW)\right)
	\]
	in $D(\mathbb{T}^S_Q)$. The corresponding statement in \emph{loc. cit.} is obtained
	by combining the second and fourth displayed equations on page 58.  
	
	Finally, we consider the spectral sequence that computes the total cohomology of 
	$R\Gamma(X^M_{K_M}, \cW)$. Let $\cW^i$ be the local systems on $X^M_{K_M}$ 
	corresponding to the cohomology groups $H^i(K_{N,S}, k)$. The first two steps above 
	show that there exists a maximal ideal $\m'$ of $\T^S_M$ in the support of some
	$H^\ast(X^M_{\widetilde{K}'_M}, \cW^i)$ such that $\widetilde{\m}' = \cS^{\widetilde{G}, \ast}_M(\m')$. 
	It remains to shrink the level $K_{M,S}$ in order to trivialize all the
	$\cW^i$. This could, a priori, cause a problem because
	the map on cohomology groups need not be injective. 
	However, we are only interested in keeping track of a maximal ideal $\m'$ 
	of $\mathbb{T}^S_{M}$. The Hochschild--Serre spectral sequence
	shows that shrinking the level does not cause problems, as in the proof of~\cite[Lemma 4.3]{new-tho}. (See also the example 
	below Lemma~\ref{lem_cohomology_nilpotent_annihilator} for an illustration 
	of the same phenomenon in the derived category.) 
		
    In order to complete the proof of the theorem, it therefore suffices to show that for any good subgroup $K_M \subset M(\A_F^\infty)$ with $K_M^S = \widetilde{K}_M^S$ and for any maximal ideal $\m'$ of $\T^S_M$ in the support of $H^\ast(X^M_{K_M}, k)$, there exists a semisimple 
    residual Galois representation 
    \[
    \bar\rho_{\cS^\ast(\m')}: G_{F, S} \to 
    \GL_{2n}(\widetilde{\T}^S / \cS^\ast(\m'))
    \]  
    such that for each place $v \not\in S$ of $F$, the characteristic polynomial of $\overline{\rho}_{\cS^\ast(\m')}$ equals the image of $\widetilde{P}_v(X)$ in $(\widetilde{\T}^S / \widetilde{\m}')[X]$; and moreover, that this Galois representation admits a decomposition of the form (\ref{eqn:reducibility_for_parabolic}). 
    
    After possibly shrinking $K_M$ once more, we can assume that it admits a
    decomposition $K_M = K_1 \times \dots \times K_r \times
    \widetilde{K}_s$, where $K_i  \subset \GL_{n_i}(\A_F^\infty)$ and
    $\widetilde{K}_s \subset
    \widetilde{G}_{n-s}(\A_{F^+}^\infty)$. After possibly enlarging
    $k$, we can moreover assume, in the obvious notation, (by the
    K\"unneth formula) that there exist maximal ideals $\m_1, \dots,
    \m_r, \widetilde{\m}_s$ of the Hecke algebras $\T^S_{\GL_{n_1}},
    \dots, \T^S_{\GL_{n_r}}, \T^S_{\widetilde{G}_{n-s}}$,
    respectively, which are in the supports of the groups
    $H^\ast(X^{\GL_{n_1}}_{K_1}, k),\dots,H^\ast(X^{\GL_{n_r}}_{K_r},
    k), H^\ast(X^{\widetilde{G}_{n-s}}_{\widetilde{K}_s}, k)$, respectively, and such that $\m'$ is identified with $(\m_1, \dots, \m_r, \widetilde{\m}_s)$ under the isomorphism
    \[ \T^S_M \to \T^S_{\GL_{n_1}} \otimes_\cO \otimes \dots \otimes_\cO \T^S_{\GL_{n_r}} \otimes_\cO \T^S_{\widetilde{G}_{n-s}}. \]
    We can moreover assume that all of the the maximal ideals $\m_1, \dots, \m_r, \widetilde{\m}_s$ and $\m'$ have residue field $k$.
    
    Let us write $P_v^{i}(X) \in \T^S_{\GL_{n_i}}[X]$ and $\widetilde{P}_v^s(X) \in \T^S_{\widetilde{G}_{n-s}}[X]$ for the analogues for the groups $\GL_{n_i}$ and $\widetilde{G}_{n-s}$ of the Hecke polynomials defined in \S \ref{sec:some_useful_hecke_operators}. By Theorem \ref{thm:existence_of_residual_representation_for_GL_n} and Theorem \ref{thm:existence_of_residual_representation_for_U(n,n)}, there exist continuous, semi-simple representations
    \[ \overline{\rho}_{\m_i} : G_{F, S} \to \GL_{n_i}( k ) \,\, (i = 1, \dots, r) \]
    and
    \[ \overline{\rho}_{\widetilde{\m}_s} : G_{F, S} \to \GL_{2(n-s)}(k) \]
    such that for each finite place $v \not\in S$ of $F$ and for each $i = 1, \dots, r$, the characteristic polynomial of $\overline{\rho}_{\m_i}(\Frob_v)$ is equal to $P_v^i(X) \text{ mod }\m_i$; and the characteristic polynomial of $\overline{\rho}_{\widetilde{\m}_s}(\Frob_v)$ is equal to $\widetilde{P}_v^s(X) \text{ mod }\widetilde{\m}_s$.
    
    The proof of the theorem is complete on noting that we can take 
    \[ \overline{\rho}_{\cS^\ast(\m')} = \bigoplus_{i=1}^r \left( \overline{\rho}_{\m_i} \otimes \epsilon^{n_1 + \dots + n_i - 2n} \oplus  \overline{\rho}_{\m_i}^{c, \vee} \otimes \epsilon^{1 - (n_1 + \dots + n_i)} \right) \oplus \overline{\rho}_{\widetilde{\m}_s} \otimes \epsilon^{-s}.  \]
    That this choice is valid rests on the computation of the image of $\widetilde{P}_v(X)$ under the map $\cS^{\widetilde{G}}_M$. The details are very similar to the proof of \cite[Prop.-Def. 5.3]{new-tho}, and are omitted. \end{proof}

We can now state the second main result of this subsection, which takes Theorem \ref{thm:reduction_to_Siegel} as its starting point. 
\begin{theorem}\label{thm:Hecke_reduction_to_Siegel}
	Let $\widetilde{K} \subset \widetilde{G}(\A_{F^+}^\infty)$ be as at the start of \S \ref{sec:siegel}, and let $\lambda \in (\Z^n_+)^{\Hom(F, E)}$ be a dominant weight whose image in $(\Z^{2n})^{\Hom(F^+, E)}$ is $\widetilde{G}$-dominant. Let $\m \subset \T^S(K, \lambda)$ be a non-Eisenstein maximal ideal, and let $\widetilde{\m} \subset \widetilde{\T}^S$ denote its pullback under the homomorphism $\cS : \widetilde{\T}^S \to \T^S$.  Then the homomorphism $\cS: \widetilde{\T}^S \to \T^S$ descends to a homomorphism
	\[  \widetilde{\T}^S( R \Gamma(\partial \widetilde{X}_{\widetilde{K}}, \cV_{\widetilde{\lambda}})_{\widetilde{\m}}) \to \T^S(R \Gamma(X_K, \cV_\lambda)_\m).\]
\end{theorem}
\begin{proof}
	The first step in the proof is to note that it suffices to show that $\cS$ descends to a homomorphism
	\[  \widetilde{\T}^S( R \Gamma(\partial \widetilde{X}^P_{\widetilde{K}}, \cV_{\widetilde{\lambda}})) \to \T^S( R \Gamma(X_K, \cV_\lambda)), \]
	by Theorem \ref{thm:reduction_to_Siegel}. On the other hand, the discussion at the end of \S \ref{sec:hecke_algebra_of_a_monoid} shows that $\cS$ descends to a homomorphism
	\numequation \widetilde{\T}^S( R \Gamma(\partial \widetilde{X}^P_{\widetilde{K}}, \cV_{\widetilde{\lambda}})) \to \widetilde{\T}^S( R \Gamma(X^P_{\widetilde{K}_P}, \cV_{\widetilde{\lambda}} )), 
	\end{equation}
	where $\widetilde{\T}^S$ acts on the latter complex via $r_P$. It therefore suffices to show that $\cS$ descends to a homomorphism
	\numequation\label{eqn:reduction_to_Siegel_parabolic} \widetilde{\T}^S(  R \Gamma(X^P_{\widetilde{K}_P}, \cV_{\widetilde{\lambda}} )) \to \widetilde{\T}^S(  R \Gamma(X_K, \cV_{\lambda} )), 
	\end{equation}
	where $\widetilde{\T}^S$ acts on the latter cohomology groups via $\cS = r_G \circ r_P$. In fact, it even suffices to show that for each $m \geq 1$, $\cS$ descends to a homomorphism
	\numequation\label{eqn:reduction_to_Siegel_parabolic_modulo_m} \widetilde{\T}^S(  R \Gamma(X^P_{\widetilde{K}_P}, \cV_{\widetilde{\lambda}} / \varpi^m )) \to \widetilde{\T}^S(  R \Gamma(X_K, \cV_{\lambda} / \varpi^m )), 
\end{equation}
cf.~\cite[Lemma 3.12]{new-tho}.
	
	However, arguing in the same way as on \cite[p. 58]{new-tho}, we see that there is an isomorphism
	\[ R \Gamma(X^P_{\widetilde{K}_P}, \cV_{\widetilde{\lambda}} / \varpi^m) \cong R \Gamma(\widetilde{K}_P^S \times K_S, R \Gamma( \operatorname{Inf}_{G^S \times K_S}^{P^S \times K_S} \mathfrak{X}_G, R 1_\ast^{\widetilde{K}_{U, S}} \cV_{\widetilde{\lambda}}/ \varpi^m)), \]
	where the derived pushforward sends (complexes of) $P^S \times \widetilde{K}_{P, S}$-equivariant sheaves on $\mathfrak{X}_G$ to $P^S \times  K_S$-equivariant sheaves on $\mathfrak{X}_G$. Suppose we knew that $\cV_\lambda/ \varpi^m$ was a direct summand of $R 1_\ast^{\widetilde{K}_{U, S}} \cV_{\widetilde{\lambda}}/ \varpi^m$; then we could conclude, by arguing in the same way as at the top of \cite[p. 59]{new-tho}, that $r_G^\ast R \Gamma(X_K, \cV_\lambda/ \varpi^m)$ is isomorphic to a direct summand of $R \Gamma(X^P_{\widetilde{K}_P}, \cV_{\widetilde{\lambda}}/ \varpi^m)$ in the category 
	\[ \mathbf{D}(\cH(P^S  \times \widetilde{K}_{P, S}, \widetilde{K}_P) \otimes_\Z \cO / \varpi^m), \]
	 implying the existence of the homomorphism (\ref{eqn:reduction_to_Siegel_parabolic_modulo_m}).
	
	It remains to construct the desired splitting of $\cV_\lambda / \varpi^m$ as a direct summand of $R 1_\ast^{\widetilde{K}_{U, S}} \cV_{\widetilde{\lambda}} / \varpi^m$. To do this, we recall the following two facts:
	\begin{itemize}
		\item $\widetilde{K}_P$ is a semidirect product $\widetilde{K}_P = \widetilde{K}_U \rtimes K$ (by assumption: $\widetilde{K}$ is decomposed with respect to the Levi decomposition $P = GU$).
		\item There is a $\widetilde{K}_P$-equivariant embedding $\cV_\lambda \to \cV_{\widetilde{\lambda}}$, which splits after restriction to $K$. (This follows from~\cite[Corollary 2.11]{new-tho}.)
	\end{itemize}
	The morphism $\cV_\lambda / \varpi^m\to R 1_\ast^{\widetilde{K}_{U, S}} \cV_{\widetilde{\lambda}}/ \varpi^m$ is the composite of the reduction modulo $\varpi^m$ of the given map $\cV_\lambda \to \cV_{\widetilde{\lambda}}^{\widetilde{K}_{U, S}}$, together with the morphism $(\cV_{\widetilde{\lambda}} / \varpi^m)^{\widetilde{K}_{U, S}} \to R 1_\ast^{\widetilde{K}_{U, S}} \cV_{\widetilde{\lambda}}/ \varpi^m$ whose existence is assured by the universal property of the derived functor.
	
	The morphism $R 1_\ast^{\widetilde{K}_{U, S}} \cV_{\widetilde{\lambda}}/ \varpi^m \to \cV_\lambda/ \varpi^m$ is the composite of the morphism $R 1_\ast^{\widetilde{K}_{U, S}} \cV_{\widetilde{\lambda}} / \varpi^m\to \cV_{\widetilde{\lambda}}/ \varpi^m$ (given by restriction to the trivial subgroup) and the reduction modulo $\varpi^m$ of the $K$-equivariant splitting $\cV_{\widetilde{\lambda}} \to \cV_\lambda$. This completes the proof.
\end{proof}
Here is a variant of Theorem \ref{thm:Hecke_reduction_to_Siegel} where we now take trivial coefficients but consider additional Hecke operators at some ramified places. 
\begin{theorem}\label{thm:Hecke_reduction_to_Siegel_with_R_ramification} 
Let $\widetilde{K} \subset \widetilde{G}(\A_{F^+}^\infty)$ be as at the start of \S \ref{sec:siegel}, and let $\m \subset \T^S(K, 0)$ be a non-Eisenstein maximal ideal, and let $\widetilde{\m} \subset \widetilde{\T}^S$ denote its pullback under the homomorphism $\cS : \widetilde{\T}^S \to \T^S$. Suppose moreover that there is a subset $R \subset S$ satisfying the following conditions:
	\begin{itemize}
	\item Each place $v \in R$ is prime to $p$ and is split over $F^+$.
	\item For each place $v \in R - R^c$ lying over a place $\overline{v}$ of $F^+$, $\widetilde{K}_{\overline{v}} = \widetilde{\q}_v$, where $\widetilde{\q}_v$ contains $\widetilde{\p}_{v, 1}$ and is contained in $\widetilde{\p}_v$. For each place $v \in R \cap R^c$ lying over a place $\overline{v}$ of $F^+$, $\widetilde{K}_{\overline{v}} = \widetilde{I}_{\overline{v}}$, where $\widetilde{I}_{\overline{v}}$ contains $\widetilde{\Iw}_{\overline{v}, 1}$ and is contained in $\widetilde{\Iw}_{\overline{v}}$. 
	\end{itemize}
	Let $T = S - (R^c - R)$. Let $\widetilde{\T}^T_R \subset \cH( \widetilde{G}(\A_{F^+}^\infty), \widetilde{K}) \otimes_\bZ \cO$ denote the (commutative) $\cO$-subalgebra generated by $\widetilde{\T}^S$ and all the elements $t_{v, i}(\sigma)$ ($v \in R, \sigma \in W_{F_v}$) and $e_{v, i}(\sigma)$ ($v \in R^c - R, \sigma \in W_{F_v}$). Let $\T^T_R \subset \cH( \GL_n(\A_{F}^\infty), K) \otimes_\bZ \cO$ denote the (commutative) $\cO$-subalgebra generated by $\T^T$ and all the elements $t_{v, i}(\sigma)$ ($v \in R, \sigma \in W_{F_v}$). Then there is a map $\cS : \widetilde{\T}^T_R \to \T^T_R$, which descends to an $\cO$-algebra homomorphism
	\[ \widetilde{\T}^T_R( R \Gamma(\partial \widetilde{X}_{\widetilde{K}}, \cO)_{\widetilde{\m}}) \to \T^T_R( R \Gamma(X_K, \cO)_\m ). \]
\end{theorem}
\begin{proof}
Let $R_0 = R \cup R^c$, $S_0 = S - R_0$.  The map $\cS$ is the one described in \S \ref{sec:general_definition_of_hecke_operators} (at unramified places) and \S \ref{sec:hecke_algebra_of_a_monoid} (see in particular Lemma \ref{lem:strongly_positive_elements}, which applies at the ramified places we consider here, cf. the discussion at the end of \S \ref{sec:some_useful_hecke_operators}). Once again, by Theorem \ref{thm:reduction_to_Siegel}, it will be enough to us to show that $\cS$ descends to a homomorphism
\[  \widetilde{\T}^T_R( R \Gamma(\widetilde{X}^P_{\widetilde{K}}, \cO)) \to \T^T_R( R \Gamma(X_K, \cO) ). \]
In order to show the existence of this homomorphism we first recall, following the discussion at the end of \S \ref{sec:hecke_algebra_of_a_monoid}, that $\cS$ arises by localisation from the composite of homomorphisms
\[  r_P : \cH(\widetilde{G}^{S} \times \Delta_{\widetilde{G}, R_0}, \widetilde{K}^{S_0}) \to \cH(P^S \times \Delta_{P, R_0}, \widetilde{K}_P^{S_0})\]
and
\[ r_G :  \cH(P^S \times \Delta_{P, R_0}, \widetilde{K}_P^{S_0}) \to \cH(G^S \times \Delta_{G, R_0}, K^{S_0}). \]
Moreover, there are morphisms of complexes
\[ R \Gamma(X^P_{\widetilde{K}_P}, \cO)  \overset{\alpha}{\to} R \Gamma(\widetilde{X}^P_{\widetilde{K}}, \cO) \overset{\beta}{\to}R \Gamma(X^P_{\widetilde{K}_P}, \cO)  \]
and
\[ R \Gamma(X_K, \cO) \overset{\gamma}{\to} R \Gamma(X^P_{\widetilde{K}_P}, \cO) \overset{\delta}{\to}R \Gamma(X_K, \cO) \]
satisfying the following conditions:
\begin{itemize}
\item $\beta$ respects the action of $\cH(\widetilde{G}^{S} \times \Delta_{\widetilde{G}, R_0}, \widetilde{K}^{S_0})$ (when this algebra acts by $r_P$ on the target of $\beta$).
\item $\gamma$ respects the action of $\cH(P^S \times \Delta_{P, R_0}, \widetilde{K}_P^{S_0})$ (when this algebra acts by $r_G$ on the source of $\gamma$).
\item $\beta \alpha$ and $\delta \gamma$ both equal the identity. 
\end{itemize}
Let $\widetilde{\T}^T_{R, +}$ denote the intersection of $\widetilde{\T}^T_R$ with $\cH(\widetilde{G}^{S} \times \Delta_{\widetilde{G}, R_0}, \widetilde{K}^{S_0}) \otimes_\bZ \cO$ (intersection taken inside $\cH(\widetilde{G}^{S_0}, \widetilde{K}^{S_0}) \otimes_\bZ \cO$). Define $\T^T_{R, +}$ similarly. Then $\cS(\widetilde{\T}^T_{R, +}) \subset \T^T_{R, +}$ and the above listed properties immediately imply that  $\cS_+ = \cS|_{\widetilde{\T}^T_{R, +}}$ descends to a morphism as in the top horizontal arrow of the following diagram
\[ \xymatrix{ \widetilde{\T}^T_{R, +}( R \Gamma(\widetilde{X}^P_{\widetilde{K}}, \cO)) \ar[r] \ar[d]^\subset & \T^T_{R, +}( R \Gamma(X_K, \cO) ) \ar[d]^\subset \\
 \widetilde{\T}^T_R( R \Gamma(\widetilde{X}^P_{\widetilde{K}}, \cO)) & \T^T_R( R \Gamma(X_K, \cO) ). } \]
 By construction, we can find an element $z \in \widetilde{\T}^T_{R, +}( R \Gamma(\widetilde{X}^P_{\widetilde{K}}, \cO))$ with the following properties:
 \begin{itemize}
 \item $z$ is a unit in $\widetilde{\T}^T_R( R \Gamma(\widetilde{X}^P_{\widetilde{K}}, \cO))$ and $\widetilde{\T}^T_R( R \Gamma(\widetilde{X}^P_{\widetilde{K}}, \cO))  = \widetilde{\T}^T_{R, +}( R \Gamma(\widetilde{X}^P_{\widetilde{K}}, \cO))[z^{-1}]$.
 \item $\cS_+(z)$ is a unit in $\T^T_R( R \Gamma(X_K, \cO) )$ and  $\T^T_R( R \Gamma(X_K, \cO) ) = \T^T_{R, +}( R \Gamma(X_K, \cO) ) [ \cS_+(z)^{-1} ]$.
 \end{itemize}
 Indeed, we can take $z$ to be a product (over places of $\overline{R}_0$) of strongly positive Hecke operators, as in the statement of Lemma \ref{lem:strongly_positive_elements}. We deduce that in fact $\cS$ descends to a homomorphism 
 \[  \widetilde{\T}^T_R( R \Gamma(\widetilde{X}^P_{\widetilde{K}}, \cO)) \to \T^T_R( R \Gamma(X_K, \cO) ), \]
 as required. 
\end{proof}
\subsubsection{Some results on rational cohomology} 

\begin{theorem}\label{thm:application_of_matsushima} Fix a choice of isomorphism $\iota : \overline{\bQ}_p \to \bC$.
	\begin{enumerate}
		\item Let $\pi$ be a cuspidal, regular algebraic automorphic representation of 
		$\GL_n(\bA_F)$ of weight $\iota\lambda$. Suppose that there 
		exists a good subgroup $K \subset \GL_n(\A_F)$ such that $(\pi^\infty)^K \neq 0$. Then the map $\T^S \to \overline{\Q}_p$ associated to the Hecke eigenvalues of $(\iota^{-1} \pi^\infty)^K$ factors through the quotient $\T^S \to \T^S(K, \lambda)$.
		\item 
		Let $q_0 = [F^+ : \Q] n (n-1)/2$, $l_0 = [F^+ : \Q] n - 1$.  Let $K \subset \GL_n(\A_F)$ be a good subgroup, and let $\ffrm \subset \bT^S(K, \cV_\lambda)$ be a maximal ideal such that $\overline{\rho}_\ffrm$ is absolutely irreducible. Then for each $j \in \bZ$, the group 
		\[ H^j(X_K, \cV_\lambda)_\ffrm[1/p] \]
		is non-zero only if $j \in [q_0, q_0 + l_0]$; moreover
                if one of the groups in this range is non-zero, then
                they all are.

                If $f : \bT^S(K, \cV_\lambda)_\ffrm \to \overline{\bQ}_p$ is a homomorphism, then there exists a cuspidal, regular algebraic automorphic representation $\pi$ of $\GL_n(\bA_F)$ of weight $\iota\lambda$ such that $f$ is associated to the Hecke eigenvalues of $(\iota^{-1} \pi^\infty)^K$. In particular, there is an isomorphism $\overline{r_\iota(\pi)} \cong \overline{\rho}_\ffrm$.
	\end{enumerate}
\end{theorem}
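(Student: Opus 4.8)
The plan is to read off both statements from the classical relation between the rational cohomology of $X_K$ and cuspidal automorphic forms, combined with the vanishing of the boundary cohomology of $X_K$ after localization at a non-Eisenstein maximal ideal.

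\emph{Part (1).} If $\pi$ is cuspidal and regular algebraic of weight $\iota\lambda$, then $\pi_\infty$ is a unitary representation whose infinitesimal character agrees with that of $V_{\iota\lambda}^\vee$; this infinitesimal character is regular and integral, so $\pi_\infty$ is cohomological, i.e.\ $H^\ast(\mathfrak{g}, K_\infty; \pi_\infty \otimes_\C V_{\iota\lambda}) \neq 0$ (see e.g.\ \cite{MR1721403}). Hence the Hecke eigensystem of $(\iota^{-1}\pi^\infty)^K$ occurs in $H^\ast(X_K, \cV_\lambda) \otimes_\cO \overline{\Q}_p$, and in particular any element of $\T^S$ lying in the kernel of $\T^S \to \End_{\mathbf{D}(\cO)}(R\Gamma(X_K, \cV_\lambda))$ acts as zero on this eigensystem. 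Therefore the homomorphism $\T^S \to \overline{\Q}_p$ defined by the eigensystem factors through the quotient $\T^S(K, \lambda)$.

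\emph{Part (2): the localized cohomology is cuspidal.} Arguing as in the proof of \cite[Thm.~4.2]{new-tho}, applied now to $\Res_{F/\Q}\GL_n$ in place of $\widetilde{G}$, every Hecke eigensystem occurring in the cohomology of a stratum of $\partial X_K$ labelled by a proper parabolic carries a \emph{reducible} residual $n$-dimensional Galois representation, assembled from the representations attached to the cuspidal data on the Levi (Theorem~\ref{thm:char0galrepexists}) together with the compatibility of the polynomials $P_v(X)$ with parabolic induction; since $\overline{\rho}_\ffrm$ is absolutely irreducible this forces $R\Gamma(\partial X_K, \cV_\lambda)_\ffrm = 0$. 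By the excision triangle we get $R\Gamma_c(X_K, \cV_\lambda)_\ffrm \cong R\Gamma(X_K, \cV_\lambda)_\ffrm$, so this complex computes interior cohomology. Inverting $p$, the non-cuspidal discrete (Speh) part of interior cohomology also carries reducible Galois representations and hence dies at $\ffrm$; transporting through $\iota$, what remains is the $\ffrm$-part of cuspidal cohomology, and the decomposition of cuspidal cohomology gives
\[ H^\ast_{\mathrm{cusp}}(X_K, \cV_{\iota\lambda})_\ffrm \cong \bigoplus_\pi (\pi^\infty)^K_\ffrm \otimes_\C H^\ast(\mathfrak{g}, K_\infty; \pi_\infty \otimes_\C V_{\iota\lambda}), \]
the sum over cuspidal regular algebraic $\pi$ of weight $\iota\lambda$ with $(\pi^\infty)^K \neq 0$ whose away-from-$S$ eigensystem reduces to $\ffrm$.

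\emph{Part (2): degree concentration, all-or-nothing, and production of $\pi$.} Each such $\pi_\infty$, being a cohomological unitary representation of $\GL_n$ over an archimedean field with regular integral infinitesimal character, is tempered, so by \cite{MR1721403} the space $H^j(\mathfrak{g}, K_\infty; \pi_\infty \otimes_\C V_{\iota\lambda})$ is one-dimensional for $j \in [q_0, q_0 + l_0]$ and zero otherwise, with $q_0 = [F^+:\Q]n(n-1)/2$ and $l_0 = [F^+:\Q]n - 1$. Feeding this into the displayed decomposition gives the asserted vanishing, and the all-or-nothing statement, because a single contributing $\pi$ contributes a one-dimensional piece to \emph{every} degree in $[q_0, q_0 + l_0]$. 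For the final assertion, a homomorphism $f : \bT^S(K, \cV_\lambda)_\ffrm \to \overline{\Q}_p$ has kernel a minimal prime of the finite $\cO$-algebra $\bT^S(K, \cV_\lambda)_\ffrm$; as this algebra acts faithfully on $R\Gamma(X_K, \cV_\lambda)_\ffrm$, every minimal prime lies in the support of $\bigoplus_j H^j(X_K, \cV_\lambda)_\ffrm$, so (after inverting $p$ and applying $\iota$) $f$ corresponds to an eigensystem occurring in $H^\ast_{\mathrm{cusp}}(X_K, \cV_{\iota\lambda})_\ffrm$, realized by some cuspidal regular algebraic $\pi$ of weight $\iota\lambda$ with $(\pi^\infty)^K \neq 0$. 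Finally, for $v \notin S$ the characteristic polynomial of $r_\iota(\pi)(\Frob_v)$ is the image of $P_v(X)$ by Theorem~\ref{thm:char0galrepexists} (enlarging $S$ if necessary so that its hypotheses apply), whose reduction modulo the maximal ideal cut out by $f$ is the characteristic polynomial of $\overline{\rho}_\ffrm(\Frob_v)$; the Chebotarev density theorem together with the Brauer--Nesbitt theorem then give $\overline{r_\iota(\pi)} \cong \overline{\rho}_\ffrm$.

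The main obstacle is the identification carried out in the third paragraph: one must pin down exactly which Hecke eigensystems survive localization at $\ffrm$ after inverting $p$, which needs both the Eisenstein-ness of the boundary cohomology of $X_K$ (reducibility of the attached residual representations) and the analogous statement for the residual spectrum, and then the temperedness of the archimedean components of regular algebraic cuspidal representations to force the interval $[q_0, q_0 + l_0]$ and the one-dimensionality of the graded pieces.
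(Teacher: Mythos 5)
Your Part (1) and the final production of $\pi$ from a characteristic-zero point of $\T^S(K,\lambda)_\ffrm$ are fine and close to what the paper does. The genuine gap is in the third paragraph, at the step where you pass from ``$R\Gamma(\partial X_K,\cV_\lambda)_\ffrm=0$, hence localized cohomology equals interior cohomology'' to ``interior cohomology $=$ cuspidal cohomology plus residual (Speh) contributions, and the latter die at $\ffrm$.'' The boundary vanishing and excision only tell you that $H^\ast_c(X_K,\cV_\lambda)_\ffrm\to H^\ast(X_K,\cV_\lambda)_\ffrm$ is an isomorphism; they give you no automorphic description of the classes that survive. The assertion that every Hecke eigensystem in interior cohomology comes from the discrete spectrum (cuspidal or residual) is not formal: a priori interior cohomology can contain classes whose cuspidal support is a proper parabolic (Eisenstein/ghost classes), and ruling these out, or attaching reducible Galois representations to them, is exactly the hard content. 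Statements of the form ``interior cohomology is square-integrable'' or ``interior $=$ cuspidal'' are only available under hypotheses you do not have here (e.g.\ regular highest weight of $\cV_\lambda$ -- note $\lambda=0$ is allowed, where the trivial and other residual representations do contribute to $H^\ast$), and in the generality needed ($l_0>0$, arbitrary dominant $\lambda$) the clean way to obtain such a description is Franke's theorem. That is precisely the route the paper takes: it invokes Franke's theorem and the Franke--Schwermer decomposition of the \emph{full} cohomology by associate classes of parabolics (cuspidal support), identifies the summand for the class $\{G\}$ with cuspidal cohomology, and shows every proper-support summand (which automatically includes all residual contributions) is killed at $\ffrm$ because its eigensystems carry direct sums of Galois representations of the Levi blocks. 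So your boundary-vanishing step does not replace Franke's theorem; without some such automorphic description of $H^\ast$ (or of $H^\ast_!$) your identification of $H^\ast(X_K,\cV_\lambda)_\ffrm[1/p]$ with localized cuspidal cohomology is unproved.

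A secondary inaccuracy: you justify the degree concentration by claiming that a cohomological unitary representation of $\GL_n$ at an archimedean place with regular integral infinitesimal character is tempered. As stated this is false (e.g.\ $\chi\circ\det$ of $\GL_n(\C)$ is unitary, cohomological for a suitable one-dimensional coefficient system, has regular infinitesimal character, and is not tempered); what saves the day is that cuspidal representations of $\GL_n$ are globally generic, so $\pi_\infty$ is generic, and generic $+$ unitary $+$ cohomological forces essential temperedness. The paper sidesteps this by citing Clozel's Lemma~3.14 of \cite{MR1044819} directly for the vanishing outside $[q_0,q_0+l_0]$ and non-vanishing inside, together with \cite[Ch.~II, Prop.~3.1]{MR1721403} for part (1); you should do the same or supply the genericity argument.
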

\begin{proof}
	For the first part, it suffices to show that there is a non-zero eigenvector 
	for $\cH(\GL_n(\bA_F^{\infty,S}),K^S)$ in $H^*(X_K, 
	V_{\iota\lambda})$ with the eigenvalue of $T_{v, i}$ equal to 
	its eigenvalue of $T_{v, i}$ on $\pi_v^{K_v}$. 
	
	Likewise, for the second part it suffices to show that the group $H^j(X_K, 
	V_{\iota\lambda})_{\m}:=H^j(X_K, 
	\cV_{\lambda})_{\m}\otimes_{\cO,\iota}\CC$ is non-zero only if $j \in 
	[q_0,q_0+l_0]$, that if 
	one of the groups in this range is non-zero they all are, and that if 
	$f:\cH(\GL_n(\bA_F^{\infty,S}),K^S)\rightarrow \CC$ is a system of Hecke 
	eigenvalues appearing in $H^*(X_K, 
	V_{\iota\lambda})_{\m}$ then there is a cuspidal, regular algebraic 
	automorphic 
	representation of 
	$\GL_n(\bA_F)$ of weight $\iota\lambda$ giving rise to this system of Hecke 
	eigenvalues.
	
	As a consequence of Franke's theorem \cite[Thm.~18]{franke}, as in 
	\cite[\S2.2]{frankeschwermer}, we have a canonical decomposition 
	
	\[H^*(X_K,V_{\iota\lambda}) = \left(\bigoplus_{\{Q\}\in \cC} 
	H^*(\frakm_G,K_\infty; 
	A_{V_{\iota\lambda},\{Q\}}\otimes_{\bC} 
	V_{\iota\lambda})(\chi_\lambda)\right)^{K}.\]
	
	In this formula, $\cC$ is the set of associate classes of parabolic 
	$\bQ$-subgroups of $\Res_{F 
		/ \bQ} \GL_n$. The cohomology on the right hand side is relative Lie algebra 
	cohomology, $\frakm_G$ is the Lie algebra of the real points of the algebraic 
	group given by the kernel of the map $N_{F/\bQ}\circ\det:\Res_{F 
		/ \bQ} \GL_n \rightarrow \GL_1$, and $A_{V_{\iota\lambda},\{Q\}}$ 
	is a certain space of automorphic forms (in particular, it is a 
	$\GL_n(\bA_F^\infty)$-module). Finally, the $(\chi_\lambda)$ denotes a 
	twist 
	of  the $\GL_n(\bA_F^\infty)$-module structure, determined by the central 
	character of $V_{\iota\lambda}$, which appears because the automorphic 
	forms considered in \emph{loc.~cit.}~are by definition invariant under 
	translation by $\bR^{> 0} \subset (\Res_{F 
		/ \bQ} \GL_n)(\bR)$.  We set $E_{\{Q\}} = 
	H^*(\frakm_G,K_\infty; 
	A_{V_{\iota\lambda},\{Q\}}\otimes_{\bC} 
	V_{\iota\lambda})(\chi_\lambda)$. The summand $E_{\{G\}}^{K}$ is the cuspidal 
	cohomology group \[H^*_{cusp}(X_K,V_{\iota\lambda}) = 
	\bigoplus_{\pi}(\pi^\infty)^K\otimes_{\bC}H^*(\frakm_G,K_\infty;\pi_\infty\otimes_{\bC}
	V_{\iota\lambda})\] where the sum is over cuspidal automorphic 
	representations $\pi$ of $\GL_n(\bA_F)$ with central character $\xi$ satisfying $\xi|_{\R_{>0}} = \xi_{\iota\lambda}^{-1}|_{\R_{>0}}$, where $\xi_{\iota\lambda}$ is the central character of $V_{\iota\lambda}$.

	Let $\gM$ be a maximal ideal of $\cH(\GL_n(\bA_F^{\infty, S}),K^S) \otimes_\Z \C$ in the support of 
	$E_{\{Q\}}^K$. Suppose $Q \subset \Res_{F 
		/ \bQ} \GL_n$ is the standard (block upper triangular) parabolic subgroup corresponding to the 
	partition $n = n_1 + \cdots + n_r$. We denote its standard (block diagonal) Levi factor by $L_Q$. In order to simplify notation, we set 
	\[ W = W((\Res_{F / \Q} \GL_n)_\C, (\Res_{F / \Q} T_n)_\C),\]
	 \[ W_Q = W_Q((\Res_{F / \Q} \GL_n)_\C, (\Res_{F / \Q} T_n)_\C), \] 
	 and 
	 \[ W^Q = W^Q((\Res_{F / \Q} \GL_n)_\C, (\Res_{F / \Q} T_n)_\C) \] 
	 for the respective Weyl groups
	 (notation as in \S \ref{sec:misc_notation}). It 
	follows from \cite[Prop.~3.3]{frankeschwermer} (see also the proof of 
	\cite[Thm.~20]{franke}) that 
	$\gM$ 
	corresponds to the system of Hecke eigenvalues for the (unnormalized) parabolic 
	induction $\Ind^{\GL_n(\bA_F^\infty)}_{Q(\bA^\infty)}\sigma^\infty$, 
	where 
	$\sigma = 
	\bigotimes_{i=1}^r\pi_i$ is a cuspidal automorphic representation of 
	$L_Q(\bA_\bQ) = \prod_{i=1}^r \GL_{n_i}(\bA_{F})$ whose infinitesimal character 
	matches that of the dual of the $(L_Q)_{\bC}$-representation with highest 
	weight 
	$w(\iota\lambda+\rho)-\rho$,
	for some $w$ in the set $W^Q$. Here $\rho$ denotes half the 
	sum of 
	the $(\Res_{F / \Q} B_n)_{\bC}$-positive roots, and we note that each 
	$w(\iota\lambda+\rho)-\rho$ is a dominant weight for $(L_Q)_{\bC}$. In 
	particular, the 
	$\pi_i$ are 
	regular algebraic cuspidal automorphic representations of 
	$\GL_{n_i}(\bA_F)$ (whose weight depends on $w$). 
	
	We sketch how this statement can be deduced from the proof of 
	\cite[Prop.~3.3]{frankeschwermer}. The 
	space $A_{V_{\iota\lambda},\{Q\}}$ decomposes, as a 
	$\GL_n(\bA_F^\infty)$-module, 
	into 
	a direct 
	sum $\oplus_\varphi A_{V_{\iota\lambda},\{Q\},\varphi}$. Each space of 
	automorphic forms $A_{V_{\iota\lambda},\{Q\},\varphi}$ is the 
	quotient of a space denoted $W_{Q,\widetilde{\pi}}\otimes 
	S(\check{\mathfrak{a}}_Q^G)$ in \emph{loc.~cit.} It is also observed in the 
	proof of \cite[Prop.~3.3]{frankeschwermer} that this space, as a 
	$\GL_n(\bA_F^\infty)$-module, has a filtration whose quotients are isomorphic 
	as $\GL_n(\bA_F^\infty)$-modules to a normalized parabolic induction 
	$\Ind_{Q(\bA_\bQ)}^{\GL_n(\bA_F^\infty)}(\delta_Q\otimes\pi^\infty)$. Our 
	notation differs from \cite{frankeschwermer}, as we are writing 
	$\Ind$ for unnormalized parabolic induction. Here 
	$\pi$ 
	is a cuspidal automorphic representation of $L_Q(\bA_\bQ)$ whose infinitesimal 
	character corresponds under the normalized Harish-Chandra isomorphism to a 
	weight in the $W$-orbit of the infinitesimal character of 
	$V^\vee_{\iota\lambda}$ (by \cite[1.2 c)]{frankeschwermer}). The normalization 
	is given by the character 
	$\delta_Q$ of $L_Q(\bA_\bQ)$ defined by $\delta_Q(l) = 
	e^{\langle H_Q(l),\rho_Q\rangle}$, where $H_Q$ is the standard height function 
	defined in \cite[p.769]{frankeschwermer} and $\rho_Q$ is half the sum of the 
	roots in the unipotent radical of $Q$.  Although $\pi$ will not always be 
	regular algebraic, the twist $\sigma:= \delta_Q\otimes \pi$ will be. More precisely, we show that the 
	infinitesimal character of $\sigma$ equals that of the 
	dual of the $(L_Q)_{\bC}$-representation with highest weight 
	$\lambda_w := w(\iota\lambda+\rho)-\rho$, for some $w \in W^Q$. Indeed, we have 
	$v \in W^Q$ such that the infinitesimal character $\chi_\sigma = \chi_{\pi} + 
	\rho_Q = 
	v(\iota\lambda^\vee + \rho)+\rho_Q$, where $\iota\lambda^\vee$ is the highest 
	weight of $V_{\iota\lambda}^\vee$ (which has infinitesimal character 
	$\iota\lambda^\vee + \rho$). A short calculation shows that $\chi_\sigma = 
	\lambda_{w_{0,Q}vw_0}^\vee + \rho_{L_Q}$, where $\rho_{L_Q}$ is half the sum of 
	the positive roots for $L_Q$, $w_0$ is the longest element of $W$ and $w_{0,Q}$ 
	is the longest element of $W_Q$. Note that since $W^Q$ is characterized by 
	taking dominant weights for $\GL_n$ to dominant weights for $L_Q$ (equivalently, 
	taking anti-dominant weights to anti-dominant weights),  $w_{0,Q}vw_0$ is an 
	element of $W^Q$, so this gives the desired statement.
	
	Returning to the proof of the theorem, it now follows from 
	Thm.~\ref{thm:char0galrepexists} that there is a Galois 
	representation \[r_\iota(\gM): G_F \rightarrow  \GL_n(\Qpbar)\] such 
	that, for all but finitely many $v\notin S$, the characteristic polynomial of 
	$r_\iota(\gM)(\Frob_v)$ equals $P_v(X) \text{ mod }\gM$. Indeed, 
	(cf.~the proof of \cite[Thm.~4.2]{new-tho}) 
	we have
	\[r_\iota(\gM) = \bigoplus_{i=1}^r 
	r_\iota(\pi_i)\otimes \epsilon^{-(n_{i+1}+\cdots+n_r)}.\] 
	
	We can now deduce that if $Q$ is a proper parabolic, then $(E_{\{Q\}}^K)_\m = 
	E_{\{Q\}}^K \otimes_{\bT^S(K, \cV_\lambda)}\bT^S(K, \cV_\lambda)_\m$ vanishes. 
	Suppose $\gM$ is a maximal ideal of $\cH(\GL_n(\bA_F^{\infty, S}),K^S) \otimes_\Z \C$ in the 
	support of $(E_{\{Q\}}^K)_\m$. On the one hand, the representation 
	$r_\iota(\gM)$ is reducible in this case, but we have an 
	isomorphism $\overline{r_\iota(\pi)}\cong \rhobar_\m$. This contradicts the 
	assumption that $\rhobar_\m$ is absolutely irreducible, so we deduce that 
	$(E_{\{Q\}}^K)_\m = 0$.
	
	Finally, we show both parts of the theorem. It suffices to show that if $\pi$ 
	is 
	a cuspidal automorphic representation of $\GL_n(\bA_F)$ with central character matching $\xi_{\iota\lambda}^{-1}$ on $\R_{>0}$, then 
	\begin{enumerate}
		\item $H^*(\m_G,K_\infty;\pi_\infty\otimes V_{\iota\lambda})$ is 
		zero unless $\pi$ is regular algebraic of weight $\iota\lambda$.
		\item If $\pi$ 
		is regular algebraic of weight $\iota\lambda$ then 
		$H^j(\m_G,K_\infty;\pi_\infty\otimes V_{\iota\lambda})$ vanishes for 
		$j\notin 
		[q_0,q_0+l_0]$ and is non-zero for $j \in [q_0,q_0+l_0]$.\end{enumerate} The 
	first claim follows from \cite[Ch.~II, Prop.~3.1]{MR1721403}. The second 
	claim follows from \cite[Lem.~3.14]{MR1044819} and the K\"{u}nneth formula for relative Lie algebra cohomology (in the notation of \emph{loc.~cit.}, our Lie algebra $\m_G$ is a direct sum of $\widetilde{\gog}_v$ for each infinite place $v$ of $F$ and an abelian Lie algebra of dimension $[F^+:\Q]-1$; the range of non-zero cohomological degrees is $n-1$ for $(\widetilde{\gog}_v,K_v)$-cohomology, so we get range $(n-1)[F^+:\Q] + [F^+:\Q]-1 = l_0$ in total).
\end{proof}

\begin{thm}\label{thm:automorphic reps contributing to char 0} 
	Let $\rho \in X^\ast(\Res_{F^+ / \Q} T_n)$ denote half the sum of the positive roots of $\Res_{F^+ / \Q} \widetilde{G}$. Fix an isomorphism $\iota : \overline{\Q}_p \to \C$. Let $\widetilde{\lambda} \in (\Z^{2n}_+)^{\Hom(F^+, E))}$ be a highest weight with the following 
	property: for any $w\in W^P((\Res_{F^+ / \Q} \widetilde{G})_\C, (\Res_{F^+ / \Q} T)_\C)$,	there are no (characteristic $0$) cuspidal automorphic representations for 
	$G$ of 
	weight $\iota \lambda_w$, where $\lambda_w = w(\widetilde{\lambda} + \rho)-\rho$.
	
	Let $\widetilde{\m}\subset 
	\widetilde{\mathbb{T}}^S$ be a maximal ideal which is in the support of 
	$H^*(\widetilde{X}_{\widetilde{K}},\cV_{\widetilde{\lambda}})$ with the property that 
	$\bar{\rho}_{\widetilde{\m}}$ is a direct sum of $n$-dimensional absolutely 
	irreducible representations of $G_F$. Let $d = \frac{1}{2} \dim_\R 
	X^{\widetilde{G}} =  n^2 [ F^+ : \Q]$. 
	
	Then $H^d(\widetilde{X}_{\widetilde{K}},\cV_{\widetilde{\lambda}})_{\widetilde{\m}}[1/p]$ is a semisimple $\widetilde{\T}^S[1/p]$-module, and for every homomorphism $f : \widetilde{\T}^S(H^d(\widetilde{X}_{\widetilde{K}},\cV_{\widetilde{\lambda}})_{\widetilde{\m}}) \to \overline{\Q}_p$, there exists a cuspidal, regular algebraic automorphic representation $\widetilde{\pi}$ of $\widetilde{G}(\A_{F^+})$ of weight $\iota \widetilde{\lambda}$ such that $f$ is associated to the Hecke eigenvalues of $(\iota^{-1} \widetilde{\pi}^\infty)^{\widetilde{K}}$. 
\end{thm}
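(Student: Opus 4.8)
The plan is to imitate, for the quasi-split unitary group $\widetilde{G}$, the argument used in the proof of Theorem~\ref{thm:application_of_matsushima}(2) for $\Res_{F/\Q}\GL_n$. First I would invoke Franke's theorem \cite{franke} in the form recalled in the proof of Theorem~\ref{thm:application_of_matsushima} (following \cite{frankeschwermer}), to obtain a canonical, Hecke-equivariant decomposition of $H^*(\widetilde{X}_{\widetilde{K}}, V_{\iota\widetilde{\lambda}})$ as a direct sum $\bigoplus_{\{\widetilde{Q}\}} E_{\{\widetilde{Q}\}}^{\widetilde{K}}$ of relative Lie algebra cohomology groups indexed by the associate classes of parabolic $\Q$-subgroups of $\Res_{F^+/\Q}\widetilde{G}$, equivalently by the standard parabolic $F^+$-subgroups $\widetilde{Q}\supseteq B$ of $\widetilde{G}$ (with $E_{\{\widetilde{G}\}}^{\widetilde{K}} = H^*_{\mathrm{cusp}}(\widetilde{X}_{\widetilde{K}}, V_{\iota\widetilde{\lambda}})$). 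Localization at $\widetilde{\m}$ is exact and respects this direct sum, so it suffices to show that $(E_{\{\widetilde{Q}\}}^{\widetilde{K}})_{\widetilde{\m}} = 0$ for every proper $\widetilde{Q}$, and then to analyse the cuspidal summand. As in the proof of Theorem~\ref{thm:application_of_matsushima}, extending scalars along $\iota$ identifies $H^d(\widetilde{X}_{\widetilde{K}}, \cV_{\widetilde{\lambda}})_{\widetilde{\m}}[1/p]$ with $H^d(\widetilde{X}_{\widetilde{K}}, V_{\iota\widetilde{\lambda}})_{\widetilde{\m}}$.

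The key elimination step uses both hypotheses. For a proper $\widetilde{Q}\ne\widetilde{G}$ with standard Levi $M = \Res_{F/F^+}\GL_{n_1}\times\cdots\times\Res_{F/F^+}\GL_{n_r}\times\widetilde{G}_{n-s}$ ($\sum n_i = s$, the list of Levi subgroups being as in the proof of Theorem~\ref{thm:reduction_to_Siegel}), the Franke--Schwermer analysis shows that the systems of Hecke eigenvalues occurring in $E_{\{\widetilde{Q}\}}^{\widetilde{K}}$ are those of parabolic inductions from $\widetilde{Q}$ of regular algebraic cuspidal automorphic representations $\sigma = \bigotimes_i \pi_i \otimes \sigma_s$ of $M$ of weight $\iota\lambda_w$ for some $w\in W^{\widetilde{Q}}((\Res_{F^+/\Q}\widetilde{G})_\C,(\Res_{F^+/\Q}T)_\C)$; in particular each $\pi_i$ is a regular algebraic cuspidal representation of $\GL_{n_i}(\A_F)$. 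If $\widetilde{\m}$ lies in the support of $(E_{\{\widetilde{Q}\}}^{\widetilde{K}})_{\widetilde{\m}}$, then comparing the images of the $\widetilde{P}_v(X)$ at places $v\notin S$ with the characteristic polynomials of Frobenius on the Galois representations $r_\iota(\pi_i)$ furnished by Theorem~\ref{thm:char0galrepexists}, and using the Chebotarev density theorem together with the Brauer--Nesbitt theorem, one sees that $\overline{r_\iota(\pi_i)}^{\mathrm{ss}}$ is a subrepresentation of $\overline{\rho}_{\widetilde{\m}}$. Since $\overline{\rho}_{\widetilde{\m}}$ is a direct sum of two $n$-dimensional absolutely irreducible representations, every nonzero subrepresentation has dimension $n$ or $2n$; as $0 < n_i \le s \le n$, this forces $n_i = n$, hence $s = n$, hence $r = 1$ and $n - s = 0$. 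Thus the only proper parabolic that can contribute after localization at $\widetilde{\m}$ is the Siegel parabolic $P$, with Levi $G = \Res_{F/F^+}\GL_n$ — and for $\widetilde{Q} = P$ the contributing $\sigma$ are exactly cuspidal automorphic representations of $G$ of weight $\iota\lambda_w$ with $w\in W^P$ and $\lambda_w = w(\widetilde{\lambda}+\rho)-\rho$, which the hypothesis on $\widetilde{\lambda}$ rules out entirely, so $E_{\{P\}}^{\widetilde{K}} = 0$. Hence $H^*(\widetilde{X}_{\widetilde{K}}, V_{\iota\widetilde{\lambda}})_{\widetilde{\m}} = H^*_{\mathrm{cusp}}(\widetilde{X}_{\widetilde{K}}, V_{\iota\widetilde{\lambda}})_{\widetilde{\m}}$. (Note that the smaller unitary factors $\widetilde{G}_{n-s}$ never require Galois representations of their own: a single $\GL_{n_i}$-factor already forces $\widetilde{Q} = P$.)

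It remains to analyse $H^*_{\mathrm{cusp}}(\widetilde{X}_{\widetilde{K}}, V_{\iota\widetilde{\lambda}}) = \bigoplus_{\widetilde{\pi}} (\widetilde{\pi}^\infty)^{\widetilde{K}}\otimes_\C H^*(\widetilde{\gog}, \widetilde{K}_\infty; \widetilde{\pi}_\infty\otimes V_{\iota\widetilde{\lambda}})$, the sum over cuspidal automorphic representations $\widetilde{\pi}$ of $\widetilde{G}(\A_{F^+})$. Since $\widetilde{G}(F^+\otimes_\Q\R) \cong U(n,n)^{[F^+:\Q]}$ has Hermitian symmetric space, $l_0 = \mathrm{rank}\,\widetilde{G}_\R - \mathrm{rank}\,\widetilde{K}_\infty = 0$ and $q_0 = \tfrac12\dim_\R X^{\widetilde{G}} = d$; so by the same $(\widetilde{\gog},\widetilde{K}_\infty)$-cohomology vanishing and non-vanishing statements used in the proof of Theorem~\ref{thm:application_of_matsushima} (\cite[Ch.~II, Prop.~3.1]{MR1721403} and \cite[Lem.~3.14]{MR1044819}), $H^j(\widetilde{\gog},\widetilde{K}_\infty;\widetilde{\pi}_\infty\otimes V_{\iota\widetilde{\lambda}})$ vanishes unless $j = d$ and $\widetilde{\pi}$ is regular algebraic of weight $\iota\widetilde{\lambda}$. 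Therefore $H^d(\widetilde{X}_{\widetilde{K}}, V_{\iota\widetilde{\lambda}})_{\widetilde{\m}}$ is a direct sum, over regular algebraic cuspidal $\widetilde{\pi}$ of weight $\iota\widetilde{\lambda}$ with $\widetilde{\pi}^S$ unramified and contributing to $\widetilde{\m}$, of the finite-dimensional spaces $(\widetilde{\pi}^\infty)^{\widetilde{K}}\otimes_\C H^d(\widetilde{\gog},\widetilde{K}_\infty;\widetilde{\pi}_\infty\otimes V_{\iota\widetilde{\lambda}})$. On each such summand the prime-to-$S$ Hecke algebra $\cH(\widetilde{G}^S,\widetilde{K}^S)$ acts through the Satake character of $\widetilde{\pi}^S$ (via the one-dimensional space $(\widetilde{\pi}^S)^{\widetilde{K}^S}$), so grouping by character exhibits $H^d(\widetilde{X}_{\widetilde{K}}, V_{\iota\widetilde{\lambda}})_{\widetilde{\m}}$ as a finite direct sum of eigenspaces for distinct characters of $\widetilde{\T}^S$; hence $H^d(\widetilde{X}_{\widetilde{K}}, \cV_{\widetilde{\lambda}})_{\widetilde{\m}}[1/p]$ is a semisimple $\widetilde{\T}^S[1/p]$-module, and every homomorphism $f\colon \widetilde{\T}^S(\widetilde{K},\widetilde{\lambda})\to\overline{\Q}_p$ occurring in it is, by construction, the system of Hecke eigenvalues of $(\iota^{-1}\widetilde{\pi}^\infty)^{\widetilde{K}}$ for one of these $\widetilde{\pi}$.

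The step I expect to demand the most care is the one I have used most freely above: transferring the Franke--Schwermer description of the summands $E_{\{\widetilde{Q}\}}^{\widetilde{K}}$ from $\Res_{F/\Q}\GL_n$, where it was carried out in detail in the proof of Theorem~\ref{thm:application_of_matsushima}, to the quasi-split unitary group $\widetilde{G}$. Concretely, one must check that the Hecke eigensystems in $E_{\{\widetilde{Q}\}}^{\widetilde{K}}$ are precisely the (unnormalized) parabolic inductions of regular algebraic cuspidal data $\sigma = \delta_{\widetilde{Q}}\otimes\pi$ on the Levi of $\widetilde{Q}$, of weight $\lambda_w = w(\widetilde{\lambda}+\rho)-\rho$ for $w$ in the appropriate coset representatives $W^{\widetilde{Q}}$, keeping careful track of the half-sum-of-roots twists $\delta_{\widetilde{Q}}$ so that the hypothesis on $\widetilde{\lambda}$ is being applied to exactly the right weights on $G$. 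Once this dictionary, together with the explicit list of standard Levi subgroups of $\widetilde{G}$, is in place, the Galois-theoretic elimination of the non-Siegel parabolics and the $(\widetilde{\gog},\widetilde{K}_\infty)$-cohomology computation are both routine.
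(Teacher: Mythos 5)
Your overall architecture is the same as the paper's: the Franke--Schwermer decomposition for $\mathrm{Res}_{F^+/\Q}\widetilde{G}$, elimination of the proper parabolics using the two-constituent structure of $\overline{\rho}_{\widetilde{\m}}$ together with the hypothesis on $\widetilde{\lambda}$ for the Siegel parabolic, and then the cuspidal summand. But there is one step where your argument, as written, does not go through, and it is exactly the step you flag in your parenthetical as a simplification. You claim that for a non-Siegel parabolic with Levi $\Res_{F/F^+}\GL_{n_1}\times\cdots\times\Res_{F/F^+}\GL_{n_r}\times\widetilde{G}_{n-s}$, the Galois representations $r_\iota(\pi_i)$ attached to the $\GL_{n_i}$-factors alone suffice, because ``Chebotarev plus Brauer--Nesbitt'' shows $\overline{r_\iota(\pi_i)}^{\mathrm{ss}}$ (up to twist) is a subrepresentation of $\overline{\rho}_{\widetilde{\m}}$. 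What you actually know at each $v\not\in S$ is only that a twist of $\operatorname{char}_{\overline{r_\iota(\pi_i)}(\Frob_v)}$ \emph{divides} $\operatorname{char}_{\overline{\rho}_{\widetilde{\m}}(\Frob_v)}$: the remaining factor of $\widetilde{P}_v(X)\bmod\widetilde{\m}$, of degree $2(n-s)$, comes from the Hecke eigenvalues of the cuspidal representation of the smaller unitary group, and unless you also attach a Galois representation (or determinant) to that factor you cannot assemble a $2n$-dimensional representation whose characteristic polynomials \emph{equal} those of $\overline{\rho}_{\widetilde{\m}}$, which is what Brauer--Nesbitt needs. Divisibility of characteristic polynomials at all group elements does \emph{not} imply containment of semisimplifications: e.g.\ for $\Gamma=(\Z/2)^2$ and $p>2$, the trivial character has $X-1$ dividing the characteristic polynomial of $\chi_1\oplus\chi_2\oplus\chi_3$ (sum of the three nontrivial characters) at every element, yet it is not a constituent. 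So the claim that ``the smaller unitary factors $\widetilde{G}_{n-s}$ never require Galois representations of their own'' is unjustified. The paper closes exactly this hole by invoking Theorem~\ref{thm:base_change_and_existence_of_Galois_for_tilde_G} for the $U(n-s,n-s)$-factor as well as Theorem~\ref{thm:char0galrepexists} for the $\GL_{n_i}$-factors, producing a genuine $2n$-dimensional $r(\widetilde{\mathfrak{M}})$ with the same Frobenius characteristic polynomials as $\widetilde{P}_v(X)\bmod\widetilde{\mathfrak{M}}$, which has at least $3$ Jordan--H\"older factors whenever $(r,s)\notin\{(1,n),(0,0)\}$, contradicting the fact that $\overline{\rho}_{\widetilde{\m}}$ has exactly two. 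With that substitution your elimination step becomes the paper's.

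Two smaller remarks. First, your final paragraph asserts that for every cuspidal $\widetilde{\pi}$ contributing, the $(\widetilde{\gog},\widetilde{K}_\infty)$-cohomology vanishes outside degree $d$; this amounts to temperedness at infinity of cohomological cuspidal representations of $U(n,n)$ and is neither needed nor supplied by the cited results (the citation \cite[Lem.~3.14]{MR1044819} concerns $\GL_n$). For the statement you only need to read off degree $d$ of the cuspidal summand: any $\widetilde{\pi}$ appearing there is $\xi$-cohomological, hence regular algebraic of weight $\iota\widetilde{\lambda}$, and semisimplicity follows because the commutative algebra $\widetilde{\T}^S$ acts on each summand $(\widetilde{\pi}^\infty)^{\widetilde{K}}\otimes H^d(\widetilde{\gog},\widetilde{K}_\infty;\widetilde{\pi}_\infty\otimes V_{\iota\widetilde{\lambda}})$ through the character given by the spherical eigenvalues of $\widetilde{\pi}^S$. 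Second, in transporting the Franke--Schwermer dictionary to $\widetilde{G}$ (the point you yourself flag as delicate), note that for $\mathrm{Res}_{F^+/\Q}\widetilde{G}$ there is no extra central character twist, since the maximal $\Q$-split central torus is trivial; otherwise your bookkeeping of the $\delta_{\widetilde{Q}}$-shifts and of the weights $w(\widetilde{\lambda}+\rho)-\rho$, $w\in W^{\widetilde{Q}}$, matches the paper.
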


\begin{proof} 
	 The proof uses similar ingredients to the proof of 
	Theorem~\ref{thm:application_of_matsushima} above. We must understand the systems of $\widetilde{\mathbb{T}}^S$-eigenvalues 
	occurring in 
	\[
	H^d(\widetilde{X}_{\widetilde{K}}, V_{\iota 
		\widetilde{\lambda}})_{\widetilde{\m}}: = 
	H^d\left(\widetilde{X}_{\widetilde{K}}, 
	\cV_{\widetilde{\lambda}}\right)_{\widetilde{\m}} [1/p]
	\otimes_{E,\iota}\C
	\]
	As a consequence of~\cite[Thm.~18]{franke}, as 
	in~\cite[\S2.2]{frankeschwermer}, applied to the group 
	$\mathrm{Res}_{F^+/\Q}\widetilde{G}$, we have a canonical decomposition
	\[
	H^d(\widetilde{X}_{\widetilde{K}}, V_{\iota 
		\widetilde{\lambda}}) = \left(\bigoplus_{\{\widetilde{P}\}\in 
		\cC}H^d\left(\widetilde{\mathfrak{g}}, 
	\widetilde{K}_{\infty};A_{V_{\iota 
			\widetilde{\lambda}},\{\widetilde{P}\} }\otimes_{\C}V_{\iota 
		\widetilde{\lambda}}\right)\right)^{\widetilde{K}}.
	\]
	\noindent Here, $\cC$ is the set of associate classes of parabolic 
	$\Q$-subgroups of $\mathrm{Res}_{F^+/\Q}\widetilde{G}$. The cohomology on 
	the right hand side is relative Lie algebra cohomology, 
	 $\mathfrak{\widetilde{g}}$ is the Lie algebra of 
	$(\mathrm{Res}_{F^+/\Q}\widetilde{G})(\R)$, and $A_{V_{\iota 
			\widetilde{\lambda}},\{\widetilde{P}\}}$ is a certain space of automorphic 
	forms for $\mathrm{Res}_{F^+/\Q}\widetilde{G}$. (We note that in this case 
	there is no additional character twist of the 
	$\widetilde{G}(\A_{F}^\infty)$-module structure, because the maximal split 
	torus in the center of $\mathrm{Res}_{F^+/\Q}\widetilde{G}$ is trivial; equivalently, the $\m_{\widetilde{G}}$ of \cite{frankeschwermer} is equal to $\mathfrak{\widetilde{g}}$ because $\mathrm{Res}_{F^+/\Q}\widetilde{G}$ has no rational characters.) 
	Set $E_{\{\widetilde{P}\}}:=H^d\left(\mathfrak{m}_{\widetilde{G}}, 
	\widetilde{K}_{\infty};A_{V_{\iota \widetilde{\lambda}}, 
		\{\widetilde{P}\}}\otimes_{\C}V_{\iota 
		\widetilde{\lambda}}\right)$. The summand 
	$E^{\widetilde{K}}_{\{\widetilde{G}\}}$ is the cuspidal cohomology group 
	\[
	H^d_{cusp} \left(\widetilde{X}_{\widetilde{K}}, 
	V_{\iota\widetilde{\lambda}}\right)= 
	\bigoplus_{\widetilde{\pi}}(\widetilde{\pi}^\infty)^{\widetilde{K}}\otimes_{\C}H^d\left(\widetilde{\mathfrak{g}},
	\widetilde{K}_{\infty}; 
	\widetilde{\pi}_{\infty}\otimes_{\C}V_{\iota\widetilde{\lambda}}\right),
	\]   
	where the sum runs over cuspidal automorphic representations of 
	$\widetilde{G}(\A_{F^+})$. We see that the theorem will be proved if we can establish the following two claims:
	\begin{enumerate}
		\item If $\widetilde{P}$ is a proper standard parabolic subgroup of 
		$\mathrm{Res}_{F^+/\Q}\widetilde{G}$ different from the Siegel 
		parabolic, then 
		\[ \left(\iota^{-1} E^{\widetilde{K}}_{\{\widetilde{P}\}}\right)_{\widetilde{\m}} := \iota^{-1} E^{\widetilde{K}}_{\{\widetilde{P}\}} \otimes_{\widetilde{\T}^S(\widetilde{K}, \widetilde{\lambda})} \widetilde{\T}^S(\widetilde{K}, \widetilde{\lambda})_{\widetilde{\m}} =0. \]
		\item If $\widetilde{P} = P$ is the Siegel parabolic subgroup of 
		$\mathrm{Res}_{F^+/\Q}\widetilde{G}$, then we also have 
		$\left(\iota^{-1} E^{\widetilde{K}}_{\{\widetilde{P}\}}\right)_{\widetilde{\m}}=0$.
	\end{enumerate} 
	The same argument as in the proof of Theorem~\ref{thm:application_of_matsushima} shows that if $\widetilde{\mathfrak{M}}$ is a maximal ideal of $\widetilde{\T}^S[1/p]$ which occurs in the support of $\iota^{-1} E^{\widetilde{K}}_{\{\widetilde{P}\}}$, then $\widetilde{\mathfrak{M}}$ corresponds to the system of Hecke eigenvalues appearing in 
	\[ \left(\Ind_{\widetilde{P}(\A_{F^+}^\infty)}^{\widetilde{G}(\A_{F^+}^\infty)} \iota^{-1} \sigma^\infty\right)^{\widetilde{K}}, \]
	where $\sigma$ is a cuspidal automorphic representation of $L_{\widetilde{P}}(\A_{F^+})$ whose infinitesimal character equals the dual of the infinitesimal character of the irreducible algebraic representation of $L_{\widetilde{P}}$ of highest weight $w(\iota \widetilde{\lambda} + \rho) - \rho$, for some $w \in W^{\widetilde{P}}((\Res_{F^+ / \Q} \widetilde{G})_\C, (\Res_{F^+ / \Q} T)_\C)$. The second claim now follows immediately from our hypothesis that there are no such automorphic representations in the case $\widetilde{P} = P$. 
		
	As in the proof of Theorem \ref{thm:reduction_to_Siegel}, we note that the Levi subgroup $L_{\widetilde{P}}$ is isomorphic to a product $\Res_{F / F^+} \GL_{n_1} \times \dots \times \Res_{F / F^+} \GL_{n_r} \times U(n-s, n-s)$, for some decomposition $2n = n_1 + \dots + n_r + 2(n-s)$. We can now establish the first claim: using the existence of Galois representations attached to regular algebraic cuspidal automorphic representations of $\GL_m$ and $U(m, m)$ for $m \leq n$ (i.e.\ using Theorem \ref{thm:char0galrepexists} and Theorem \ref{thm:base_change_and_existence_of_Galois_for_tilde_G}), we see that there exists a Galois representation $r(\widetilde{\mathfrak{M}}) : G_F \to \GL_{2n}(\overline{\Q}_p)$ such that, for all but finitely many places $v$ of $F$, $r(\widetilde{\mathfrak{M}})$ is unramified at $v$ and $r(\widetilde{\mathfrak{M}})$ has characteristic polynomial equal to $\widetilde{P}_v(X) \text{ mod }\widetilde{\mathfrak{M}}$. Moreover, this representation has at least 3 Jordan--H\"older factors as soon as $(r, s) \not\in \{ (1, n), (0, 0) \}$ (by an argument identical to the one appearing at the end of the proof of Theorem \ref{thm:reduction_to_Siegel}). Since we are assuming that $\overline{\rho}_{\widetilde{\m}}$ has 2 irreducible constituents, each of dimension $n$, this would lead to a contradiction, showing that we must in fact have $\left(\iota^{-1} E^{\widetilde{K}}_{\{\widetilde{P}\}}\right)_{\widetilde{\m}}=0$. This completes the proof.
\end{proof}

\section{Local-global compatibility, \texorpdfstring{$l\ne p$}{l ne p}}
\label{section:lnep}

\subsection{Statements}\label{sec:lneqp_statements}

Let $F$ be a CM field containing an imaginary quadratic field, and fix an integer $n \geq 1$. Let $p$ be a prime, and let $E$ be a finite extension of $\Q_p$ inside $\overline{\Q}_p$ large enough to contain the images of all embeddings of $F$ in $\overline{\Q}_p$. We assume that each $p$-adic place $\overline{v}$ of $F^+$ splits in $F$.

Let $K \subset \GL_n(\A_F^\infty)$ be a good subgroup, and let $\lambda \in (\Z_+^n)^{\Hom(F, E)}$. Let $S$ be a finite set of finite places of $F$, containing the $p$-adic places, and satisfying the following conditions:
	\begin{itemize}
	\item $S = S^c$. 
		\item Let $v$ be a finite place of $F$ not contained in $S$, and let $l$ be its residue characteristic. Then either $S$ contains no $l$-adic places of $F$ and $l$ is unramified in $F$, or there exists an imaginary quadratic field $F_0 \subset F$ in which $l$ splits.
	\end{itemize}
 We recall (Theorem \ref{thm:existence_of_Hecke_repn_for_GL_n})   that under these hypotheses, that if $\ffrm \subset \T^S(K, \lambda)$ is a non-Eisenstein maximal ideal, then there is a continuous homomorphism
\[ \rho_\ffrm : G_{F, S} \to \GL_n(\T^S(K, \lambda)_\ffrm / I) \]
characterized, up to conjugation, by the characteristic polynomials of Frobenius elements at places $v \not\in S$; here $I$ is a nilpotent ideal whose exponent depends only on $n$ and $[F : \Q]$. Our goal in this chapter is to describe the restriction of $\rho_\ffrm$ to decomposition groups at some prime-to-$p$ places where ramification is allowed. 

To this end, we suppose given as well a subset $R \subset S$ satisfying the following conditions:
\begin{itemize}
	\item Each place $v \in R$ is prime to $p$.
	\item For each place $v \in R$, there exists an imaginary quadratic field $F_0 \subset F$ in which the residue characteristic of $v$ splits. In particular, $v$ is split over $F^+$.
	\item For each place $v \in R$, $K_v$  contains $\Iw_{v, 1}$ and is contained in $\Iw_{v}$. For each place $v \in R^c - R$, $K_v = \GL_n(\cO_{F_v})$. (Note that $R^c \subset S$ since $S$ is assumed stable under complex  conjugation.)
\end{itemize}
Let $T = S - (R^c - R)$. We define $\T^T_R \subset \cH( \GL_n(\A_{F}^\infty), K) \otimes_\bZ \cO$ to be the (commutative) $\cO$-subalgebra generated by $\T^T$ and all the elements $t_{v, i}(\sigma)$ ($v \in R, \sigma \in W_{F_v}$), as  in the statement of Theorem \ref{thm:Hecke_reduction_to_Siegel_with_R_ramification}.  We define 
\[ \T^T_R(K, \lambda) \subset \End_{\mathbf{D}(\cO)}(R \Gamma(X_K, \cV_\lambda)) \]
to be the image of $\T_R^T$. Thus there are inclusions 
\[ \T^S(K, \lambda) \subset \T^T(K, \lambda) \subset  \T^T_R(K, \lambda). \]
\begin{theorem}\label{thm:lgc_at_l_neq_p}
	Let notation and assumptions be as above. Then we can find an integer $N \geq 1$ (depending only on $n$ and $[F : \Q]$), an ideal $I_R \subset \T^T_R(K, \lambda)_{\ffrm}$ satisfying $I_R^N = 0$, and a continuous homomorphism
		\[ \rho_{\ffrm, R} : G_{F, T} \to \GL_n(\T^T_R(K, \lambda)_{\ffrm} / I_R)  \]
		satisfying the following conditions:
		\begin{enumerate}
			\item For each place $v \not\in T$ of $F$, the characteristic polynomial of $\rho_{\ffrm, R}(\Frob_v)$ is equal to the image of $P_v(X)$ in $(\T^T_R(K, \lambda)_{\ffrm}/I_R)[X]$.
			\item For each place $v \in R$, and for each element $\sigma \in W_{F_v}$, the characteristic polynomial of $\rho_{\ffrm, R}(\sigma)$ is equal to the image of $P_{v, \sigma}(X)$ in  $(\T^T_R(K, \lambda)_{\ffrm}/I_R)[X]$.
	\end{enumerate}
\end{theorem}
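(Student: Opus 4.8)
The plan is to deduce the theorem from its $2n$-dimensional counterpart for the quasi-split unitary group $\widetilde{G}$, where the ramified local data is controlled by the cohomology of a Shimura variety, and to transport it to $\GL_n/F$ through the Siegel parabolic by means of Theorem~\ref{thm:reduction_to_Siegel_with_R_ramification}. Since enlarging $S$ only enlarges the ambient Hecke algebras compatibly, I first assume $S$ satisfies the hypothesis of Theorem~\ref{thm:existence_of_residual_representation_for_GL_n}. Next, after replacing $\lambda$ by $\lambda + \mu$ for a suitable, sufficiently negative character twist (Corollary~\ref{cor:twisted_Hecke_alg}, and its analogue for the $\Xi_v$-enriched Hecke algebras coming from the sheaf-level argument of Proposition~\ref{prop:twisting_by_character}), which sends $\ffrm$ to another non-Eisenstein ideal and transforms both parts of the conclusion compatibly, I may assume that the weight $\widetilde{\lambda}$ associated to $\lambda$ is $\widetilde{G}$-dominant (so that Theorem~\ref{thm:reduction_to_Siegel_with_R_ramification} applies) and, in addition, that the two residual constituents appearing below are non-isomorphic. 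Everything is then carried out after localising at $\ffrm$ and at its pullback $\widetilde{\m} = \cS^\ast(\ffrm)$.

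The crux is to produce, on the $\widetilde{G}$-side, a continuous determinant $\widetilde{D}\colon G_{F,S}\to\widetilde{\T}^S_R(R\Gamma(\partial\widetilde{X}_{\widetilde{K}},\cV_{\widetilde{\lambda}})_{\widetilde{\m}})$ whose characteristic polynomial at $\Frob_v$ is $\widetilde{P}_v(X)$ for $v\notin S$ and at $\sigma\in W_{F_v}$ is $\widetilde{P}_{v,\sigma}(X)$ for $v\in R$, realised by a matrix representation modulo a nilpotent ideal $\widetilde{I}$ of exponent bounded in terms of $n$ and $[F:\Q]$. The residual statement, and the honest determinant refining the Frobenius characteristic polynomials, are Theorem~\ref{thm:existence_of_residual_representation_for_U(n,n)} together with the argument of Theorem~\ref{thm:existence_of_Hecke_repn_for_GL_n} applied to $\widetilde{G}$ (that is, \cite[Cor.~5.4.4]{scholze-torsion}). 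For the local information at $v\in R$, the excision triangle
\[ R\Gamma_c(\widetilde{X}_{\widetilde{K}},\cV_{\widetilde{\lambda}})\to R\Gamma(\widetilde{X}_{\widetilde{K}},\cV_{\widetilde{\lambda}})\to R\Gamma(\partial\widetilde{X}_{\widetilde{K}},\cV_{\widetilde{\lambda}})\xrightarrow{[1]} \]
together with exactness of localisation shows that every system of Hecke eigenvalues occurring in the boundary cohomology already occurs in the (ordinary or compactly supported) cohomology of the unitary Shimura variety $\widetilde{X}_{\widetilde{K}}$, where the Galois representations satisfy local--global compatibility at $v\in R$ by the Langlands--Kottwitz method; this compatibility is matched with the action of the central Hecke operators $\widetilde{P}_{v,\sigma}(X)$ via Proposition~\ref{prop_action_of_central_elements_of_pro_p_iwahori_hecke_algebra}(3) for $\GL_{2n}(F_v)$ through $\iota_v$ and Theorem~\ref{thm:base_change_and_existence_of_Galois_for_tilde_G}(c), the latter being applicable exactly because each $v\in R$ is prime to $p$ with residue characteristic split in an imaginary quadratic subfield of $F$. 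Reducing modulo $\varpi^m$ and using density of these classical points and $\varpi$-adic separatedness, as in the $\GL_n$ construction, propagates this to $\widetilde{D}$.

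Granting $\widetilde{D}$, the remainder is transfer and bookkeeping. By Theorem~\ref{thm:reduction_to_Siegel}, $R\Gamma(\partial\widetilde{X}_{\widetilde{K}},\cV_{\widetilde{\lambda}})_{\widetilde{\m}}$ is the Siegel-stratum complex, and Theorem~\ref{thm:reduction_to_Siegel_with_R_ramification} then supplies a homomorphism $\phi\colon\widetilde{\T}^S_R(R\Gamma(\partial\widetilde{X}_{\widetilde{K}},\cV_{\widetilde{\lambda}})_{\widetilde{\m}})\to\T^S_R(K,\lambda)_{\ffrm}$ descending $\cS$. Pushing $\widetilde{D}$ forward along $\phi$ and applying Propositions~\ref{prop_satake_transform_unramified_case} and~\ref{prop_satake_transform_ramified_iwahori_case}, the determinant $\phi_\ast\widetilde{D}$ has characteristic polynomial $P_v(X)\,q_v^{n(2n-1)}P_{v^c}^\vee(q_v^{1-2n}X)$ at $\Frob_v$ for $v\notin S$, and $P_{v,\sigma}(X)\,\|\sigma\|_v^{n(1-2n)}P_{v^c,\sigma^c}(\|\sigma\|_v^{2n-1}X)$ at $\sigma\in W_{F_v}$ for $v\in R$. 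The non-Eisenstein hypothesis gives a residual decomposition $\overline{\phi_\ast\widetilde{D}} = \overline{\rho}_1\oplus\overline{\rho}_2$ into absolutely irreducible $n$-dimensional pieces, $\overline{\rho}_1 = \overline{\rho}_{\ffrm}$ and $\overline{\rho}_2$ its conjugate-dual twist, which we have arranged to be non-isomorphic; hence by the deformation theory of determinants with coprime absolutely irreducible residual factors, $\phi_\ast\widetilde{D}$ factors uniquely as $D_1\cdot D_2$ lifting this, and matching Frobenius characteristic polynomials singles out $D_1$ with $\Frob_v$-characteristic polynomial $P_v(X)$. Comparing the two factorisations of $\phi_\ast\widetilde{D}(\sigma)$ at $\sigma\in W_{F_v}$ and using the symmetry $v\leftrightarrow v^c$, $\sigma\leftrightarrow\sigma^c$ to isolate the two degree-$n$ factors forces the characteristic polynomial of $D_1(\sigma)$ to equal $P_{v,\sigma}(X)$ for all $v\in R$ and $\sigma\in W_{F_v}$.

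Finally, $D_1$ is a determinant over $\T^S_R(K,\lambda)_{\ffrm}/I_R$, with $I_R = \phi(\widetilde{I})$ nilpotent of exponent at most that of $\widetilde{I}$ (hence bounded in terms of $n$ and $[F:\Q]$), with absolutely irreducible residual representation $\overline{\rho}_{\ffrm}$ and with characteristic polynomials $P_v(X)$ at $\Frob_v$ ($v\notin S$) and $P_{v,\sigma}(X)$ at $\sigma\in W_{F_v}$ ($v\in R$); the standard passage from such a determinant to a matrix representation (as in the proof of Theorem~\ref{thm:existence_of_Hecke_repn_for_GL_n}) then yields $\rho_{\ffrm,R}\colon G_{F,S}\to\GL_n(\T^S_R(K,\lambda)_{\ffrm}/I_R)$ with properties (a) and (b), and undoing the preliminary twist completes the proof. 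I expect the main obstacle to be the construction of $\widetilde{D}$: establishing local--global compatibility at the places of $R$ for the $\varpi$-power-torsion cohomology of the unitary Shimura variety and keeping the exponent of nilpotence under control---the characteristic-zero statement reduces to known compatibilities, but its propagation to torsion, and the need to route it through the full cohomology of $\widetilde{X}_{\widetilde{K}}$ since the relevant $\widetilde{\m}$ is Eisenstein for $\widetilde{G}$, is the delicate point; a minor subsidiary point, handled by the preliminary twist, is ensuring that the two $n$-dimensional factors of $\phi_\ast\widetilde{D}$ have non-isomorphic residual reductions so that the factorisation is available.
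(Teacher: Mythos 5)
Your overall skeleton does match the paper's: reduce to a determinant statement, construct a $2n$-dimensional determinant over a Hecke algebra for $\widetilde{G}$ with prescribed characteristic polynomials of Weil elements at $v \in R$, transfer it to $\GL_n/F$ through the Siegel parabolic via Theorem~\ref{thm:reduction_to_Siegel_with_R_ramification} and the Satake computations of Propositions~\ref{prop_satake_transform_unramified_case} and~\ref{prop_satake_transform_ramified_iwahori_case}, and then split off an $n$-dimensional factor. But the step you yourself flag as the ``main obstacle'' --- the construction of $\widetilde{D}$ over the torsion Hecke algebra --- is precisely the content of the theorem, and the route you sketch for it does not work. Your plan is: excision moves any boundary eigensystem into $H^\ast$ or $H^\ast_c$ of $\widetilde{X}_{\widetilde{K}}$, where one has Langlands--Kottwitz local--global compatibility at $v \in R$, and then ``density of classical points and $\varpi$-adic separatedness'' propagates this to the $\varpi^m$-torsion Hecke algebra. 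Two things break. First, at fixed level there is no density of classical eigensystems in the Betti cohomology $H^\ast(\widetilde{X}_{\widetilde{K}}, \cV_{\widetilde{\lambda}}/\varpi^m)$: torsion classes need not be congruent to classical automorphic forms on $\widetilde{G}$ in any sense visible from the excision triangle, and the only known mechanism producing such congruences (to cusp forms of high coherent weight) is Scholze's comparison of completed cohomology with coherent cohomology on the perfectoid Shimura variety via the Hodge--Tate period map --- i.e.\ the topology $\T_{\mathrm{cl}}$ on the Hecke algebra defined by its action on $H^0(\mathcal{X}_{\widetilde{K}^p\widetilde{K}_p}, \omega^{mk}\otimes\cI)$ as in \cite[Thm.~4.3.1, Thm.~5.1.11]{scholze-torsion}. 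The paper's proof of Proposition~\ref{prop:lgc_at_l_neq_p_det_version_for_tilde_G} consists exactly in re-running that argument with the extra operators $\cO[\widetilde{\Xi}_{\overline{v}}]$ adjoined, feeding in Proposition~\ref{prop_action_of_central_elements_of_pro_p_iwahori_hecke_algebra} and Theorem~\ref{thm:base_change_and_existence_of_Galois_for_tilde_G}(c) as the characteristic-zero inputs; nothing of this mechanism appears in your sketch. Second, even in characteristic zero your route is circular: the classes in $H^\ast(\widetilde{X}_{\widetilde{K}},\cV_{\widetilde{\lambda}})_{\widetilde{\m}}$ and $H^\ast_c$ localized at this maximal ideal (which is Eisenstein for $\widetilde{G}$) are in general not cuspidal but come from Eisenstein series attached to the Siegel parabolic, so Theorem~\ref{thm:base_change_and_existence_of_Galois_for_tilde_G}(c) does not apply to them; local--global compatibility at $R$ for those contributions is essentially the $\GL_n$ statement you are trying to prove. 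The whole point of routing through coherent cohomology is that there the relevant classical points \emph{are} cusp forms on $\widetilde{G}$, to which the Shimura-variety compatibility applies.

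Your other two deviations from the paper are acceptable variants, though they need more care than you give them. Instead of your preliminary algebraic character twist to force $\widetilde{G}$-dominance of $\widetilde{\lambda}$, the paper simply trivializes the weight modulo $\varpi^m$ by a Hochschild--Serre argument after shrinking the level at $p$, reducing to $\lambda = 0$; your twist can be made to work (infinity types with $m_\tau + m_{\tau c}$ constant and very negative exist for CM fields, and the $\Xi_v$-equivariance of the twisting isomorphism is routine bookkeeping), but it buys nothing over the weight-$0$ reduction. For the final splitting of the $2n$-dimensional determinant, the paper uses the ``sufficiently many character twists'' trick of \cite[Thm.~5.8]{new-tho} and \cite[Thm.~5.3.1]{scholze-torsion}, which works even when $\overline{\rho}_\ffrm \cong \overline{\rho}_\ffrm^{c,\vee}\otimes\overline{\epsilon}^{1-2n}$, at the cost of replacing $N$ by $2N$; your alternative --- arrange by a twist that the two residual constituents are non-isomorphic and invoke the factorization of determinants over a henselian local ring with multiplicity-free residual reduction (\cite{chenevier_det}) --- is legitimate, and is in fact the style of argument the paper uses later in the $l = p$ chapters, but you should verify that a twist satisfying the hypotheses of Proposition~\ref{prop:twisting_by_character} and separating the two constituents exists (e.g.\ by making the determinants differ), possibly after enlarging $k$ and $S$. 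None of this, however, repairs the missing construction of $\widetilde{D}$.
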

In the statement of this theorem, $\T^T_R(K, \lambda)_{\ffrm}$ is the
localization of $\T^T_R(K, \lambda)$ as a $\T^S(K, \lambda)$-algebra; it is an $\cO$-subalgebra of $\End_{\mathbf{D}(\cO)}(R \Gamma(X_K, \cV_\lambda)_\ffrm)$ which contains $\T^S(K, \lambda)_\ffrm$. Instead of proving this theorem directly, we will in fact prove the following statement:
\begin{prop}\label{prop:lgc_at_l_neq_p_det_version}
	Let notation and assumptions be as above. Then there exists an integer $N \geq 1$ (depending only on $n$ and $[F : \Q]$), an ideal $I_R \subset \T^T_R(K, \lambda)_{\ffrm}$ satisfying $I_R^N = 0$, and a $\T^T_R(K, \lambda)_{\ffrm} / I_R$-valued determinant $D_{\ffrm, R}$ on $G_{F, T}$ of dimension $n$ satisfying the following conditions:
	\begin{enumerate}
		\item For each place $v \not\in T$, the characteristic polynomial of $\Frob_v$ in $D_{\ffrm, R}$ is equal to the image of $P_v(X)$ in $(\T^T_R(K, \lambda)_{\ffrm}/I_R)[X]$.
		\item For each place $v \in R$, and for each element $\sigma \in W_{F_v}$, the characteristic polynomial of $\sigma$ is equal to the image of $P_{v, \sigma}(X)$ in  $(\T^T_R(K, \lambda)_{\ffrm}/I_R)[X]$.
	\end{enumerate}
\end{prop}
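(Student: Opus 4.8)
\emph{Reduction to the unitary group.} The plan is to realize the required determinant as the pushforward, along the Siegel Satake homomorphism $\cS$, of a $2n$-dimensional determinant for the quasi-split unitary group $\widetilde{G}$, whose symmetric space is Hermitian, so that the cohomology of $\widetilde{X}_{\widetilde{K}}$ carries Galois representations of known provenance (via base change to $\GL_{2n}$). By Theorem~\ref{thm:reduction_to_Siegel} there is a $\widetilde{\T}^S$-equivariant isomorphism $R\Gamma(\widetilde{X}^P_{\widetilde{K}},\cV_{\widetilde{\lambda}})_{\widetilde{\m}}\toisom R\Gamma(\partial\widetilde{X}_{\widetilde{K}},\cV_{\widetilde{\lambda}})_{\widetilde{\m}}$, and by Theorem~\ref{thm:reduction_to_Siegel_with_R_ramification} the map $\cS$ descends to a homomorphism $\cS_\ast\colon\widetilde{A}\to\T^S_R(K,\lambda)_{\ffrm}$, where $\widetilde{A}=\widetilde{\T}^S_R\big(R\Gamma(\partial\widetilde{X}_{\widetilde{K}},\cV_{\widetilde{\lambda}})_{\widetilde{\m}}\big)$. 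It therefore suffices to exhibit a bounded nilpotent ideal $\widetilde{I}\subset\widetilde{A}$ and a $2n$-dimensional determinant $\widetilde{D}$ on $G_{F,S}$ valued in $\widetilde{A}/\widetilde{I}$ whose characteristic polynomial of $\Frob_v$ is the image of $\widetilde{P}_v(X)$ for $v\notin S$ and whose characteristic polynomial of $\sigma$ is the image of $\widetilde{P}_{v,\sigma}(X)$ for $v\in R$, $\sigma\in W_{F_v}$; one then pushes $\widetilde{D}$ forward by $\cS_\ast$ and reads off the $\GL_n$-statement using the explicit computation of $\cS$ on Hecke polynomials.

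\emph{The determinant for $\widetilde{G}$.} The existence of $\widetilde{D}$ with the correct $\Frob_v$-characteristic polynomials for $v\notin S$ follows from the construction of Galois representations attached to torsion in the cohomology of the Shimura variety $\widetilde{X}_{\widetilde{K}}$ (\cite{scholze-torsion}, \cite[Thm.~5.6]{new-tho}, \cite[Thm.~A]{chenevier_det}, as in the proof of Theorem~\ref{thm:existence_of_residual_representation_for_U(n,n)}), combined with the excision triangle $R\Gamma_c(\widetilde{X}_{\widetilde{K}},\cV_{\widetilde{\lambda}})\to R\Gamma(\widetilde{X}_{\widetilde{K}},\cV_{\widetilde{\lambda}})\to R\Gamma(\partial\widetilde{X}_{\widetilde{K}},\cV_{\widetilde{\lambda}})\to[1]$ and Poincar\'e duality (Corollary~\ref{cor:dual_Hecke_alg}); here one uses that $\bar{\rho}_{\widetilde{\m}}$ is conjugate self-dual up to a twist, so that the contributions of ordinary and compactly supported cohomology of $\widetilde{X}_{\widetilde{K}}$ to $\widetilde{A}$ match up. For local-global compatibility at $v\in R$, recall that the coefficients of $\widetilde{P}_{v,\sigma}(X)$ are the integrally defined Hecke operators $\iota_v^{-1}(e_{v,i}(\alpha))$ of \S\ref{sec:some_useful_hecke_operators}, whose relation to $\rec^T_{F_v}$ is Proposition~\ref{prop_action_of_central_elements_of_pro_p_iwahori_hecke_algebra}(3) applied to $\GL_{2n}(F_v)\cong\widetilde{G}(F^+_{\overline{v}})$. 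Every $\overline{\Q}_p$-point of $\widetilde{\T}^S_R(\widetilde{K},\widetilde{\lambda})_{\widetilde{\m}}$ in the support of rational cohomology arises from a cuspidal, regular algebraic, cohomological automorphic representation $\widetilde{\pi}$ of $\widetilde{G}(\A_{F^+})$ (using that $\widetilde{G}$ has discrete series at infinity, so rational cohomology is cuspidal and semisimple as a Hecke module), and for such $\widetilde{\pi}$ Theorem~\ref{thm:base_change_and_existence_of_Galois_for_tilde_G}(c) --- applicable because each $v\in R$ splits in an imaginary quadratic subfield of $F$ --- together with Proposition~\ref{prop_action_of_central_elements_of_pro_p_iwahori_hecke_algebra}(3) shows that the characteristic polynomial of $\sigma$ on $r_\iota(\widetilde{\pi})|_{G_{F_v}}$ is the specialization of $\widetilde{P}_{v,\sigma}(X)$. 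Since $\widetilde{\T}^S_R(\widetilde{K},\widetilde{\lambda})_{\widetilde{\m}}[1/p]$ is reduced, the characteristic polynomial of $\sigma$ in $\widetilde{D}$ agrees with the image of $\widetilde{P}_{v,\sigma}(X)$ after inverting $p$; working modulo $\varpi^m$ and using the standard device of killing torsion by varying the weight $\widetilde{\lambda}$ and deepening $\widetilde{K}$ at auxiliary $p$-adic places (\cite[\S 5]{scholze-torsion}), which does not affect the operators $\iota_v^{-1}(e_{v,i}(\alpha))$, one promotes this to the integral statement modulo a bounded nilpotent ideal.

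\emph{Descent to $\GL_n$.} Pushing $\widetilde{D}$ forward along $\cS_\ast$ yields a $2n$-dimensional determinant on $G_{F,S}$ over a bounded nilpotent quotient $B$ of $\T^S_R(K,\lambda)_{\ffrm}$, and Propositions~\ref{prop_satake_transform_unramified_case} and~\ref{prop_satake_transform_ramified_iwahori_case} compute its characteristic polynomials: at $\Frob_v$ ($v\notin S$) it is $P_v(X)\cdot q_v^{n(2n-1)}P_{v^c}^\vee(q_v^{1-2n}X)$, and at $\sigma\in W_{F_v}$ ($v\in R$) it is $P_{v,\sigma}(X)\cdot \| \sigma \|_v^{n(1-2n)}P_{v^c,\sigma^c}(\| \sigma \|_v^{2n-1}X)$. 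Let $D_{\ffrm}$ be the image in $B$ of the representation $\rho_{\ffrm}$ of Theorem~\ref{thm:existence_of_Hecke_repn_for_GL_n}, and let $D'_{\ffrm}$ be the corresponding dual-twisted copy (equivalently the piece attached to $\ffrm^\vee$); then $D_{\ffrm}\oplus D'_{\ffrm}$ has the same $\Frob_v$-characteristic polynomial as $\cS_\ast\widetilde{D}$ for every $v\notin S$, so by the Chebotarev density theorem $\cS_\ast\widetilde{D}=D_{\ffrm}\oplus D'_{\ffrm}$ as determinants over $B$. Evaluating this identity at $\sigma\in W_{F_v}$ for $v\in R$ and running the argument simultaneously for $\ffrm$ and $\ffrm^\vee$ (so that the factor coming from $D'_{\ffrm}$ is controlled by the statement at $v^c\in R$), one compares the two factorizations of the characteristic polynomial of $\sigma$; a successive-approximation argument modulo $\varpi^m$, with base case the residual local-global compatibility at $R$ (obtained by the same reasoning modulo $\varpi$), forces the characteristic polynomial of $\sigma$ in $D_{\ffrm}$ to equal the image of $P_{v,\sigma}(X)$. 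This establishes conditions (1) and (2) for $D_{\ffrm}$ over $\T^S_R(K,\lambda)_{\ffrm}/I_R$ for a suitable bounded nilpotent ideal $I_R$.

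\emph{Main obstacle.} The conceptual crux is the interpolation step for $\widetilde{G}$: transporting local-global compatibility at the ramified places $R$ from characteristic-zero automorphic forms on $\widetilde{X}_{\widetilde{K}}$ to the torsion classes while keeping the nilpotence exponent bounded in terms of $n$ and $[F:\Q]$. A secondary technical difficulty is that $\cS_\ast\widetilde{D}$ need not split as an honest product of two $n$-dimensional determinants over $B$ --- the two residual constituents of $\bar{\rho}_{\widetilde{\m}}$ may fail to be coprime --- which is why the extraction of the $n$-dimensional factor must proceed via the representation $\rho_{\ffrm}$ of Theorem~\ref{thm:existence_of_Hecke_repn_for_GL_n} and the polynomial identity above rather than by factoring a determinant abstractly.
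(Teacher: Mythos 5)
Your overall skeleton matches the paper's: reduce to a $2n$-dimensional determinant statement for $\widetilde{G}$ valued in $\widetilde{\T}^S_R(\widetilde{K},0)_{\widetilde{\ffrm}}$ modulo a bounded nilpotent ideal (the paper's Proposition~\ref{prop:lgc_at_l_neq_p_det_version_for_tilde_G}, proved by re-running \cite[Thm.~5.1.11]{scholze-torsion} with the operators $\cO[\widetilde{\Xi}_{\overline{v}}]$ included, using Proposition~\ref{prop_action_of_central_elements_of_pro_p_iwahori_hecke_algebra} and Theorem~\ref{thm:base_change_and_existence_of_Galois_for_tilde_G}), then push forward along $\cS$ using Theorem~\ref{thm:reduction_to_Siegel_with_R_ramification} and Propositions~\ref{prop_satake_transform_unramified_case}, \ref{prop_satake_transform_ramified_iwahori_case}. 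Two points, one minor and one serious. The minor one: your justification of the $\widetilde{G}$-step via rational Betti cohomology (``rational cohomology is cuspidal and semisimple since $\widetilde{G}$ has discrete series'') is off target --- the relevant classes are torsion and boundary classes, which characteristic-zero cuspidal Betti cohomology does not see; the interpolation in the paper runs through Scholze's topology $\T_{\mathrm{cl}}$ defined by coherent cohomology $H^0(\mathcal{X}_{\widetilde{K}^p\widetilde{K}_p},\omega^{mk}\otimes\cI)$ of the Shimura variety, which is also where the bounded nilpotence exponent comes from. Your ``vary the weight and deepen the level at $p$'' gestures at the right reference but is not the argument.

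The serious gap is the last step, where you extract condition (2) for the $n$-dimensional determinant. From the pushforward you only know the product identity
\[
\det\bigl(X-\rho_\ffrm(\sigma)\bigr)\cdot\det\bigl(X-(\rho_\ffrm^{c,\vee}\otimes\epsilon^{1-2n})(\sigma)\bigr)
= P_{v,\sigma}(X)\,\|\sigma\|_v^{n(1-2n)}\,P_{v^c,\sigma^c}\bigl(\|\sigma\|_v^{2n-1}X\bigr)
\]
in $(\T^S_R(K,\lambda)_\ffrm/I_R)[X]$. Over this Artinian, non-reduced, $p$-torsion ring such an identity does not determine the individual degree-$n$ factors: in the intended applications one has $q_v\equiv 1\bmod p$ and $\overline{\rho}_\ffrm|_{G_{F_v}}$ trivial, so the two factors are congruent modulo $\m$ and there is no coprimality or Hensel-type separation. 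Already over $k[\varepsilon]/(\varepsilon^2)$ a product of two monic polynomials congruent mod $\varepsilon$ only remembers the sum of the first-order perturbations, so your ``successive-approximation modulo $\varpi^m$'' cannot force $\det(X-\rho_\ffrm(\sigma))=P_{v,\sigma}(X)$; running the argument simultaneously for $\ffrm$ and $\ffrm^\vee$ does not decouple anything, since the same two factors reappear. This is precisely the difficulty the paper resolves by the ``sufficiently many character twists'' argument of \cite[Thm.~5.8]{new-tho} and \cite[Thm.~5.3.1]{scholze-torsion}: one considers the $2n$-dimensional determinants attached to the twisted ideals $\widetilde{\m}(\psi)$ for enough characters $\psi$ and solves the resulting system to isolate the $n$-dimensional determinant together with its characteristic polynomials at $W_{F_v}$, at the cost of enlarging $I_R$ and replacing $N$ by $2N$. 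Your proposal omits this mechanism, and without it (or a substitute) the claim at the places of $R$ is not established.
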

Proposition \ref{prop:lgc_at_l_neq_p_det_version} implies Theorem \ref{thm:lgc_at_l_neq_p} by \cite[Theorem 2.22]{chenevier_det}. The remainder of \S \ref{section:lnep} is devoted to the proof of Proposition \ref{prop:lgc_at_l_neq_p_det_version}. Although Proposition \ref{prop:lgc_at_l_neq_p_det_version} is an assertion about determinants, not true representations, we will still use the assumption that $\m$ is non-Eisenstein in the proof, in particular as it simplifies our analysis of the boundary cohomology (using the results proved in \S \ref{sec:siegel}).

\subsection{The proof of Proposition \ref{prop:lgc_at_l_neq_p_det_version}}

Let $R_1 = R \cap R^c$. Let $\overline{R}$ (resp. $\overline{R}_1$) denote the set of places of $F^+$ lying below a place of $R$ (resp. $R_1$). We begin with a preliminary reduction.
\begin{lemma}\label{lem:lgc_l_neq_p_reduction}
Fix for each $v \in R$ a choice of Frobenius lift $\phi_v \in W_{F_v}$. In order to prove Proposition \ref{prop:lgc_at_l_neq_p_det_version}, it is enough to prove it under the following additional assumptions:
\begin{enumerate}
\item $K_v = \Iw_{v, 1}$ for each place $v \in R$. There exists an odd prime $q$, prime to $R$ and $p$, such that $K_q = \ker(\GL_n(\cO_{F, q}) \to \GL_n(\cO_F / (q)))$. 
\item For each place $v \in R$, the characteristic polynomials of $\overline{\rho}_\m(\phi_v)$ and $(\overline{\rho}_\m^{c, \vee} \otimes \epsilon^{1-2n})(\phi_v)$ are coprime.
\item There exists a character $\psi : G_{F} \to \cO^\times$ of finite prime-to-$p$ order, unramified above $R \cup R^c \cup S_p$, such that the composite $\psi \circ \Art_{F} \circ \det : K \to \cO^\times$ is trivial and for each $v \in R$, the characteristic polynomials of $\overline{\psi}(\Frob_v) \overline{\rho}_\m(\phi_v)$ and $\overline{\psi}(\Frob_{v^c})^{-1} (\overline{\rho}_\m \oplus \overline{\rho}_\m^{c, \vee} \otimes \epsilon^{1-2n})(\phi_v)$ are coprime.
\item $\lambda = 0$.
\item There exists a good subgroup $\widetilde{K} \subset \widetilde{G}(\A_{F^+}^\infty)$ satisfying the following conditions:
\begin{enumerate}
\item $\widetilde{K}$ is decomposed with respect to $P$, and $K = \widetilde{K} \cap G(\A_{F^+}^\infty)$.
\item $\widetilde{K}_q = \ker( \widetilde{G}(\cO_{F^+, q}) \to \widetilde{G}(\cO_{F^+} / (q)))$.
\item If $\overline{v}$ is a finite place of $F^+$ which is prime to $S$, then $\widetilde{K}_{\overline{v}} = \widetilde{G}(\cO_{F^+, v})$. If $\overline{v} \in \overline{R}_1$, then $\widetilde{K}_v = \widetilde{\Iw}_{\overline{v}}(1, 1)$. If $\overline{v} \in \overline{R} - \overline{R}_1$ and $v$ is the unique place of $R$ lying above $\overline{v}$, then $\widetilde{K}_{\overline{v}} = \widetilde{\p}_{v, 1}$.
\end{enumerate}
\end{enumerate}
\end{lemma}
\begin{proof}
We first show that if Proposition \ref{prop:lgc_at_l_neq_p_det_version} holds under assumption (1), then it holds without this assumption. Let assumptions be as in Proposition \ref{prop:lgc_at_l_neq_p_det_version}, and let $q_1, q_2 \neq p$ be distinct odd primes not dividing any element of $S$. Let $K_i \subset K$ be the normal subgroup with $K_{i, v} = \Iw_v(1, 1)$ if $v \in R$, $K_{i, q_i} = \ker(\GL_n(\cO_{F, q_i}) \to \GL_n(\cO_F / (q_i)))$, and $K_i^{R, q_i} = K$. Let $S_i$ (resp. $T_i$) denote the union of $S$ (resp. $T$) with the set of $q_i$-adic places of $F$. Let $\ffrm_i \subset \T^{T_i}$ denote the pullback of $\ffrm$ under the inclusion $\T^{T_i} \to \T^T$. For each $i = 1, 2$ there is a diagram of $\T^{T_i}_R$-algebras
\[ \T^{T_i}_R(K_i, \lambda)_{\ffrm_i} \leftarrow \T^{T_i}_R(K / K_i, \lambda)_{\ffrm_i} \twoheadrightarrow \T^{T_i}_R(K, \lambda)_{\ffrm_i} \to \T_R^T(K, \lambda)_{\ffrm}. \]
The left-hand arrow has nilpotent kernel of exponent $d$ depending only on $n$ and $[F : \bQ]$, by Lemma \ref{lem_cohomology_nilpotent_annihilator}. By hypothesis, there exists an integer $N \geq 1$, depending only on $n$ and $[F : \bQ]$, ideals $J_i \subset \T^{T_i}_R(K_i, \lambda)_{\ffrm_i}$ satisfying $J_i^N = 0$, and $n$-dimensional group determinants $D_i$ of $G_{F, T_i}$ with coefficients in $\T^{T_i}_R(K_i, \lambda)_{\ffrm_i} / J_i$ satisfying conditions (1) and (2) of Proposition \ref{prop:lgc_at_l_neq_p_det_version}. 

		Let $I_i$ denote the image in $\T^{T_i}_R(K, \lambda)_{\ffrm_i}$ of the pre-image of $J_i$ in $\T^{T_i}_R(K / K_i, \lambda)_{\ffrm_i}$, and let $I \subset \T^T_R(K, \lambda)_\ffrm$ denote the ideal generated by the images of $I_1$ and $I_2$. Then $I^{2 N d} = 0$. Let $D_\ffrm$ denote the pushforward of the determinant $D_1$ to $\T^T_R(K, \lambda)_\ffrm / I$. Then by construction, $D_\ffrm$ is an $n$-dimensional determinant of $G_{F, T_1}$ satisfying condition (1)  of Proposition \ref{prop:lgc_at_l_neq_p_det_version} at prime-to-$T_1$ places and condition (2) at each place of $R$. However, the Chebotarev density theorem and \cite[Lemma 1.12]{chenevier_det} imply that $D_\ffrm$ is also equal to the pushforward of $D_2$ to $\T^T_R(K, \lambda)_\ffrm / I$. We therefore obtain the required local-global compatibility also at the $q_1$-adic places of $F$. The proof of this step is complete on noting that the exponent $2 N d$ of $I$ indeed still depends only on $n$ and the degree $[F : \bQ]$.
		
			We next show that if Proposition \ref{prop:lgc_at_l_neq_p_det_version} holds under assumptions (1) and (2) in the statement of the lemma, then it holds under assumption (1). After possibly enlarging $\cO$, we can find characters $\psi_1, \psi_2 : G_{F} \to \cO^\times$ of finite, prime-to-$p$ order satisfying the following conditions:
			\begin{itemize}
			\item Both $\psi_1, \psi_2$ are unramified at each place of $S$. 
			\item There is no rational prime $r$ such that $\psi_1, \psi_2$ are both ramified at $r$.
			\item For each $i = 1, 2$ and for each place $v \in R$, the characteristic polynomials of $(\overline{\rho}_\m \otimes \overline{\psi_i})(\phi_v)$ and $( (\overline{\rho}_\m \otimes \overline{\psi}_i) )^{c, \vee} \otimes  \epsilon^{1-2n})(\phi_v)$ are coprime.
			\end{itemize}
			Let $K_i = \prod_v \ker( \psi_i \circ \Art_{F_v} \circ \det : K_v \to \cO^\times)$ and let $T_i$ denote the union of $T$ with the set of places dividing a rational prime above which $\psi_i$ is ramified. Let $\m_i$ denote the pullback of $\m$ to $\T^{T_i}$. Proposition \ref{prop:twisting_by_character} shows that the truth of Proposition \ref{prop:lgc_at_l_neq_p_det_version} for $\T^{T_i}_R(K_i, \lambda)_{\ffrm_i}$ is equivalent to the truth of Proposition \ref{prop:lgc_at_l_neq_p_det_version} for $\T^{T_i}_R(K_i, \lambda)_{\ffrm_i(\psi_i)}$, which we are assuming. On the other hand, an argument very similar to the one given in the first part of the proof shows that the truth of Proposition \ref{prop:lgc_at_l_neq_p_det_version} for $\T^{T_i}_R(K_i, \lambda)_{\ffrm}$ ($i =1, 2$) implies the truth of this proposition for $\T^{T_i}_R(K, \lambda)_{\ffrm_i}$ ($i = 1, 2$) and then for $\T^T_R(K, \lambda)_\ffrm$. A very similar argument shows that if the Proposition holds under assumptions (1) -- (3) in the statement of the lemma, then it holds under (1) and (2).
		
		We next show that if Proposition \ref{prop:lgc_at_l_neq_p_det_version} holds under assumptions (1) -- (4) in the statement of the lemma, then it holds under assumptions (1) -- (3). Let $K$ be a good subgroup satisfying assumptions (1) -- (3). The natural map
		\[ \T^T_R(K, \lambda) \to \varprojlim_{m \geq 1} \T^T_R(R \Gamma(X_K, \cV_\lambda / (\varpi^m))) \]
		is an isomorphism. For each $m \geq 1$, let $K(p^m) = \ker(K \to \GL_n(\cO_{F, p} / (p^m)))$. Then $K(p^m)$ also satisfies assumption (1). The local system $\cV_\lambda / (\varpi^m)$ on $X_{K(p^m)}$ is constant, so there is a canonical isomorphism of Hecke algebras
		\[ \T^T_R(  K(p^m), \cV_\lambda / (\varpi^m) )\cong \T^T_R( K(p^m), \cO / (\varpi^m)). \]
		There is also a canonical surjection
		\[ \T^T_R(K(p^m), \cO) \to \T^T_R( K(p^m), \cO / (\varpi^m)). \]
We consider the diagram of Hecke algebras
\[ \T^T_R( K(p^m), \cO / (\varpi^m)) \leftarrow \T^T_R( K / K(p^m), \cV_\lambda / (\varpi^m) ) \rightarrow \T^T_R( K, \cV_\lambda / (\varpi^m)), \]
where by Lemma \ref{lem_cohomology_nilpotent_annihilator}, there is an integer $d \geq 1$ depending only on $n$ and $[F : \bQ]$ such that the kernel of the left-hand arrow is nilpotent of exponent $d$.
		By assumption, therefore, we can find ideals $I_m \subset  \T^T_R(K, \cV_\lambda / (\varpi^m))_\ffrm$ satisfying $I_m^{N d} = 0$ and $n$-dimensional group determinants $D_m$ of $G_{F, T}$ valued in  $\T^T_R(K, \cV_\lambda / (\varpi^m))_\ffrm / I_m$ 
		 and satisfying the conditions (1) and (2) in the statement of Proposition \ref{prop:lgc_at_l_neq_p_det_version}. Let
		\[ I = \ker\left(   \T^T_R(K, \lambda)_\ffrm \to \prod_{m \geq 1} \T^T_R(K, \cV_\lambda / (\varpi^m))_\ffrm / I_m \right). \]
		Then $I^{Nd} = 0$ and, by \cite[Example 2.32]{chenevier_det}, there is a unique $n$-dimensional group determinant $D_\ffrm$ valued in $\T^T_R(K, \lambda)_\ffrm  / I$ whose pushforward to each ring $\T^T_R(K, \cV_\lambda / (\varpi^m))_\ffrm / I_m$ equals $D_m$. This determinant $D_\ffrm$ necessarily has the required properties.
		
		We finally show that if Proposition \ref{prop:lgc_at_l_neq_p_det_version} holds under assumptions (1) -- (5) in the statement of the lemma, then it holds under assumptions (1) -- (4). Assume (1) -- (4). We define $\widetilde{K} = \prod_{\overline{v}} \widetilde{K}_{\overline{v}}$ as follows:
		\begin{itemize}
		\item If $\overline{v} \not\in \overline{S}$, then $\widetilde{K}_{\overline{v}} = \widetilde{G}(\cO_{F^+_{\overline{v}}})$.
		\item If $\overline{v} \in \overline{R}_1$, then $\widetilde{K}_{\overline{v}} = \widetilde{\Iw}_{\overline{v}}(1, 1)$. If $\overline{v} \in \overline{R} - \overline{R}_1$ and $v$ is the unique place of $R$ lying above $\overline{v}$, then $\widetilde{K}_{\overline{v}} = \widetilde{\p}_{v, 1}$.
		\item $\widetilde{K}_q = \ker(\widetilde{G}(\cO_{F^+, q}) \to \widetilde{G}(\cO_{F^+} / (q)))$.
		\item If $\overline{v}$ is any other finite place of $F^+$, then fix $m \geq 1$ such that $\ker( \widetilde{G}(\cO_{F^+_{\overline{v}}}) \to \widetilde{G}(\cO_{F^+} / (\varpi_{\overline{v}}^m))) \cap G(\cO_{F^+_{\overline{v}}}) \subset K \cap G(\cO_{F^+_{\overline{v}}})$, and set $\widetilde{K}_{\overline{v}} = \ker( \widetilde{G}(\cO_{F^+_{\overline{v}}}) \to \widetilde{G}(\cO_{F^+} / (\varpi_{\overline{v}}^m)))  \cdot (K \cap G(\cO_{F^+_{\overline{v}}}))$.
		\end{itemize}
		It is easy to check that $\widetilde{K}$ is a good open subgroup of $\widetilde{G}(\A_{F^+}^\infty)$ which is decomposed with respect to $P$ and which satisfies $\widetilde{K} \cap G(\A_{F^+}^\infty) = K$. The group $K$ therefore satisfies condition (5) of the lemma, and the proof of the lemma is complete.
\end{proof}
We henceforth fix a choice of Frobenius lift $\phi_v \in W_{F_v}$ for each place $v \in R$ and assume that $K$ and $\m$ satisfy assumptions (1) -- (5) of Lemma \ref{lem:lgc_l_neq_p_reduction}, and prove Proposition \ref{prop:lgc_at_l_neq_p_det_version} for the Hecke algebra $\T^T_R(K, 0)$. Let $\widetilde{K}$ be the good subgroup of $\widetilde{G}(\A_{F^+}^\infty)$ as in the statement of the lemma and let $\widetilde{\T}^T_R \subset \cH( \widetilde{G}(\A_{F^+}^\infty), \widetilde{K}) \otimes_\bZ \cO$ denote the (commutative) $\cO$-subalgebra generated by $\widetilde{\T}^S$ and all the elements $t_{v, i}(\sigma)$ ($v \in R, \sigma \in W_{F_v}$) and $e_{v, i}(\sigma)$ ($v \in R^c - R, \sigma \in W_{F_v}$), as  in the statement of Theorem \ref{thm:Hecke_reduction_to_Siegel_with_R_ramification}. Thus we have constructed an extension of the homomorphism $\cS : \widetilde{\T}^S \to \T^S$ to a homomorphism $\cS : \widetilde{\T}^T_R \to \T^T_R$. These homomorphisms, together with the analogue of Proposition \ref{prop:lgc_at_l_neq_p_det_version} for the group $\widetilde{G}$, will be the key to the proof. This analogue is as follows; it makes use of the resultant $\Res_v\in \cH(\widetilde{G}(F^+_{\overline{v}}), \widetilde{K}_{\overline{v}}) \otimes_\bZ \cO$ of the polynomials $P_{v^c, \phi_v^{-c}}(X)$ and $P_{v, \phi_{v}}(X)$ for a place $v \in R - R^c$, which was introduced before Proposition \ref{prop:unramified_subrep_for_invertible_resultant}.
\begin{prop}\label{prop:lgc_at_l_neq_p_det_version_for_tilde_G}
There exists an integer $N \geq 1$, depending only on $[F : \Q]$ and $n$, 
	an ideal $\widetilde{I}_{c, R} \subset \widetilde{\T}^T_R(R \Gamma_c(\widetilde{X}_{\widetilde{K}}, 
	\cO))$ satisfying $\widetilde{I}_{c, R}^N = 0$, and a 
	$\widetilde{\T}^T_R(R \Gamma_c(\widetilde{X}_{\widetilde{K}}, 
	\cO)) / \widetilde{I}_{c, R}$-valued determinant 
	$\widetilde{D}_{c, R}$ on $G_{F, S}$ of dimension $2n$ satisfying the 
	following conditions:
	\begin{enumerate}
		\item For each place $v \not\in S$ of $F$, the characteristic polynomial of $\Frob_v$ is equal to the image of $\widetilde{P}_{v}(X)$ 		in $(\widetilde{\T}^T_R(R \Gamma_c(\widetilde{X}_{\widetilde{K}}, 
	\cO))/\widetilde{I}_R)[X]$.
		\item For each place $v \in R$, and for each element $\sigma \in W_{F_v}$, the characteristic polynomial of $\sigma$ is equal to the image of $\widetilde{P}_{v, \sigma}(X)$ in  $(\widetilde{\T}^T_R(R \Gamma_c(\widetilde{X}_{\widetilde{K}}, 
	\cO))/\widetilde{I}_R)[X]$.
	\item Let $\widetilde{\tr}_{c, R} : \widetilde{\T}^T_R(R \Gamma_c(\widetilde{X}_{\widetilde{K}}, 
	\cO))[G_{F, S}] \to \widetilde{\T}^T_R(R \Gamma_c(\widetilde{X}_{\widetilde{K}}, 
	\cO)) / \widetilde{I}_{c, R}$ be the trace associated to $\widetilde{D}_{c, R}$ (cf. \cite[\S 1.10]{chenevier_det}). Then for each place $v \in R - R^c$, for each $\sigma \in G_{F, S}$, and for each $\tau_v \in I_{F_v}$, we have $\Res_v^{(2n)!}\widetilde{\tr}_{c, R}(\sigma(\tau_v-1)P_{v, \phi_{v}}(\phi_{v})) = 0$.
	\end{enumerate}
\end{prop}
 We note that this result, in the case where $R$ is empty, is Proposition \ref{prop:existence_of_hecke_representation_for_U(n,n)_no_R}. The result in this case is also contained implicitly in the proof of ~\cite[Cor. 5.2.6]{scholze-torsion}.
\begin{proof}
The proposition can be proved by re-doing the proof of ~\cite[Cor. 5.2.6]{scholze-torsion} to keep track of the action of the additional Hecke operators at $R$. For the reader's benefit, we single out the following
        essential statement (cf.~\cite[Thm. 4.3.1, Cor. 5.1.11]{scholze-torsion}): let $C = \widehat{\overline{\Q}}_p$, and let $m \geq 1$ be an integer, and let $\T_\text{cl}$ denote $\widetilde{\T}_R^T$, endowed with the weakest topology for which all of the maps
	\[ \widetilde{\T}_R^T \to \End_C(H^0(\mathcal{X}_{\widetilde{K}^p \widetilde{K}_p}, \omega^{mk}_{\widetilde{K}^p \widetilde{K}_p} \otimes \cI )  ) \]
	are continuous. (Here the right-hand side, defined as in the statement of \cite[Thm. 4.3.1]{scholze-torsion}, is endowed with its natural (finite dimensional $C$-vector space) topology and we are varying over all $k \geq 1$ and open compact subgroups $\widetilde{K}_p \subset \widetilde{G}(F^+_p)$ such that $\widetilde{K}^p \widetilde{K}_p$ is a good subgroup.) Then for any continuous quotient $\T_\text{cl} \to A$, where $A$ is a ring with the discrete topology, there is a unique $A$-valued determinant $D_A$ of $G_{F, S}$ of dimension $2n$ satisfying the following conditions:
	\begin{itemize}
		\item For each place $v\not\in S$ of $F$, the characteristic polynomial of $\Frob_v$ equals the image of $\widetilde{P}_v(X)$ in $A[X]$.
		\item For each place $v \in R$, and for each element $\sigma \in W_{F_v}$, the characteristic polynomial of $\sigma$ equals the image of $\widetilde{P}_{v, \sigma}(X)$ in $A[X]$.
		\item Let $\tr_A : A[G_{F, S}] \to A$ denote the trace associated to $D_A$. Then for each place $v \in R - R^c$, for each $\sigma \in G_{F, S}$, and for each $\tau_v \in I_{F_v}$, we have $\Res_v^{(2n)!} \tr_A(\sigma(\tau_v-1)P_{v, \phi_{v}}(\phi_{v})) = 0$.
	\end{itemize}
	This statement can be proved in exactly the same way as ~\cite[Cor. 
	5.1.11]{scholze-torsion}, by combining \cite[Example 2.32]{chenevier_det} with the following observation: take a cuspidal, cohomological automorphic representation $\widetilde{\pi}$ of $\widetilde{G}(\A_{F^+})$ such that $\widetilde{\pi}^{\infty, \widetilde{K}} \neq 0$ and an isomorphism $\iota : \overline{\bQ}_p \to \bC$, and let $\widetilde{\T}_R^T(\widetilde{\pi}) = \im( \widetilde{\T}_R^T \otimes_\cO \overline{\bQ}_p \to \End_{\overline{\bQ}_p}(\iota^{-1}\widetilde{\pi}^{\infty, \widetilde{K}}))$. Consider the associated Galois representation (whose existence and local properties are described by  Theorem 
	\ref{thm:base_change_and_existence_of_Galois_for_tilde_G}):
	\[ r_\iota(\widetilde{\pi}) : G_{F, S} \to \GL_{2n}(\overline{\bQ}_p), \]
	and let $\rho : G_{F, S} \to \GL_{2n}(\widetilde{\T}_R^T(\widetilde{\pi}))$ denote the composite of $r_\iota(\widetilde{\pi})$ with the inclusion $\GL_{2n}(\overline{\bQ}_p) \subset \GL_{2n}(\widetilde{\T}_R^T(\widetilde{\pi}))$. Then we have the following properties:
	\begin{itemize}
	\item For each place $v\not\in S$ of $F$, the characteristic polynomial of $\rho(\Frob_v)$ equals the image of $\widetilde{P}_v(X)$ in $\widetilde{\T}_R^T(\widetilde{\pi})[X]$.
	\item For each place $v \in R$, and for each element $\sigma \in W_{F_v}$, the characteristic polynomial of $\rho(\sigma)$ equals the image of $\widetilde{P}_{v, \sigma}(X)$ in $\widetilde{\T}_R^T(\widetilde{\pi})[X]$.
	\item For each place $v \in R^c - R$ and for each $\tau_v \in I_{F_v}$, we have $\Res_v^{(2n)!} \rho((\tau_v-1)P_{v, \phi_{v}}(\phi_{v})) = 0$ in $M_{2n}(\widetilde{\T}_R^T(\widetilde{\pi}))$.
	\end{itemize}
	The first two points follow from Theorem 
	\ref{thm:base_change_and_existence_of_Galois_for_tilde_G} and Proposition \ref{prop_action_of_central_elements_of_pro_p_iwahori_hecke_algebra} (note that the images of $\widetilde{P}_v(X)$ and $\widetilde{P}_{v, \sigma}(X)$ in  $\widetilde{\T}_R^T(\widetilde{\pi})[X]$ in fact lie in $\overline{\bQ}_p[X]$). The third point follows from the same Theorem and Corollary \ref{cor:existence_of_unramified_subrep}. (Our appeals to Theorem 
	\ref{thm:base_change_and_existence_of_Galois_for_tilde_G} here are the source of our assumption, at 
	the beginning of this section, that each place of $R$ has residue 
	characteristic which splits in an imaginary quadratic subfield of $F$.) 
\end{proof}
\begin{cor}\label{cor:lgc_at_l_neq_p_det_version_for_partial_tilde_G}
There exists an integer $N \geq 1$, depending only on $[F : \Q]$ and $n$, 
	an ideal $\widetilde{I}_{\partial, R} \subset \widetilde{\T}^T_R(R \Gamma(\partial \widetilde{X}_{\widetilde{K}}, 
	\cO))$ satisfying $\widetilde{I}_{\partial, R}^N = 0$, and a 
	$\widetilde{\T}^T_R(R \Gamma(\partial \widetilde{X}_{\widetilde{K}}, 
	\cO)) / \widetilde{I}_{\partial, R}$-valued determinant 
	$\widetilde{D}_{\partial, R}$ on $G_{F, S}$ of dimension $2n$ satisfying the 
	following conditions:
	\begin{enumerate}
		\item For each place $v \not\in S$ of $F$, the characteristic polynomial of $\Frob_v$ is equal to the image of $\widetilde{P}_{v}(X)$ 		in $(\widetilde{\T}^T_R(R \Gamma(\partial \widetilde{X}_{\widetilde{K}}, 
	\cO))/\widetilde{I}_{ \partial, R})[X]$.
		\item For each place $v \in R$, and for each element $\sigma \in W_{F_v}$, the characteristic polynomial of $\sigma$ is equal to the image of $\widetilde{P}_{v, \sigma}(X)$ in  $(\widetilde{\T}^T_R(R \Gamma(\partial \widetilde{X}_{\widetilde{K}}, 
	\cO))/\widetilde{I}_{\partial, R})[X]$.
	\item Let $\widetilde{\tr}_{\partial, R} : \widetilde{\T}^T_R(R \Gamma(\partial \widetilde{X}_{\widetilde{K}}, 
	\cO))[G_{F, S}] \to \widetilde{\T}^T_R(R \Gamma(\partial \widetilde{X}_{\widetilde{K}}, 
	\cO)) / \widetilde{I}_{\partial, R}$ be the trace associated to $\widetilde{D}_{\partial, R}$. Then for each place $v \in R - R^c$, for each $\sigma \in G_{F, S}$, and for each $\tau_v \in I_{F_v}$, we have $\Res_v^{(2n)!} \tr_{\partial, R}(\sigma(\tau_v-1)P_{v, \phi_{v}}(\phi_{v})) = 0$.
	\end{enumerate}
\end{cor}
\begin{proof}
There is a $\widetilde{\T}^T_R$-equivariant exact triangle in $\mathbf{D}(\cO)$:
\[ \xymatrix@1{ R \Gamma_c(\widetilde{X}_{\widetilde{K}}, \cO) \ar[r] &  R \Gamma(\widetilde{X}_{\widetilde{K}}, \cO) \ar[r] & R \Gamma(\partial \widetilde{X}_{\widetilde{K}}, \cO) \ar[r] &, } \]
and consequently a natural homomorphism
\[ \widetilde{\T}^T_R( R \Gamma_c(\widetilde{X}_{\widetilde{K}}, \cO) \oplus R \Gamma(\widetilde{X}_{\widetilde{K}}, \cO) ) \to \widetilde{\T}^T_R( R \Gamma(\partial \widetilde{X}_{\widetilde{K}}, 
	\cO) ) / \widetilde{J}, \]
	where $\widetilde{J}$ is an ideal of square 0. To prove the corollary, it is therefore enough to show that there is an integer $N \geq 1$, depending only on $[F : \Q]$ and $n$, an ideal $\widetilde{I} \subset \widetilde{\T}^T_R(R \Gamma(\widetilde{X}_{\widetilde{K}}, \cO))$ satisfying $\widetilde{I}^N = 0$, and a $\widetilde{\T}^T_R(R \Gamma(\widetilde{X}_{\widetilde{K}}, \cO))$-valued determinant $\widetilde{D}$ on $G_{F, S}$ of dimension $2n$ satisfying the following conditions:
		\begin{enumerate}
		\item For each place $v \not\in S$ of $F$, the characteristic polynomial $\widetilde{D}(X - \Frob_v)$ is equal to the image of $\widetilde{P}_{v}(X)$ in $\widetilde{\T}^T_R(R \Gamma(\widetilde{X}_{\widetilde{K}}, \cO)) / \widetilde{I} [X]$.
		\item For each place $v \in R$, and for each element $\sigma \in W_{F_v}$, the characteristic polynomial of $\sigma$ is equal to the image of $\widetilde{P}_{v, \sigma}(X)$ in $\widetilde{\T}^T_R(R \Gamma(\widetilde{X}_{\widetilde{K}}, \cO)) / \widetilde{I} [X]$.
			\item Let $\widetilde{\tr} : \widetilde{\T}^T_R(R \Gamma(\widetilde{X}_{\widetilde{K}}, 
	\cO))[G_{F, S}] \to \widetilde{\T}^T_R(R \Gamma(\widetilde{X}_{\widetilde{K}}, 
	\cO)) / \widetilde{I}$ be the trace associated to $\widetilde{D}$. Then for each place $v \in R - R^c$, for each $\sigma \in G_{F, S}$, and for each $\tau_v \in I_{F_v}$, we have $\Res_v^{(2n)!} \widetilde{\tr}(\sigma(\tau_v-1)P_{v, \phi_{v}}(\phi_{v})) = 0$.
	\end{enumerate}
	By Proposition \ref{prop_poincare_duality}, there is a commutative diagram (determined by Verdier duality)
	\[ \xymatrix{ \cH(\widetilde{G}^\infty, \widetilde{K}) \ar[r] \ar[d]_{\widetilde{\iota}} & \End_{\mathbf{D}(\cO)}(R \Gamma_c(\widetilde{X}_{\widetilde{K}}, \cO)) \ar[d] \\
	\cH(\widetilde{G}^\infty, \widetilde{K}) \ar[r] & \End_{\mathbf{D}(\cO)}(R \Gamma(\widetilde{X}_{\widetilde{K}}, \cO)). } \]
	Let $\widetilde{\iota}(\widetilde{\T}^T_R)(R \Gamma_c(\widetilde{X}_{\widetilde{K}}, \cO))$ denote the image of the composite map
	\[ \widetilde{\T}^T_R \to \cH(\widetilde{G}^\infty, \widetilde{K}) \otimes_\bZ \cO \overset{\widetilde{\iota}}{\to} \cH(\widetilde{G}^\infty, \widetilde{K}) \otimes_\bZ \cO \to \End_{\mathbf{D}(\cO)}(R \Gamma_c(\widetilde{X}_{\widetilde{K}}, \cO)), \]
	where the first and last maps are the canonical ones. The existence of the above commutative diagram shows that $\widetilde{\iota}$ descends to an isomorphism
	\[ \widetilde{\iota}(\widetilde{\T}^T_R)(R \Gamma_c(\widetilde{X}_{\widetilde{K}}, \cO)) \to \widetilde{\T}^T_R(R \Gamma(\widetilde{X}_{\widetilde{K}}, \cO)). \]
To complete the proof of the corollary, it is therefore enough to show that there is a determinant $\widetilde{D}_{c, R, \vee}$ of $G_{F, S}$ of dimension $2n$ with coefficients in a quotient $\widetilde{\iota}(\widetilde{\T}^T_R)(R \Gamma_c(\widetilde{X}_{\widetilde{K}}, \cO)) / \widetilde{I}_{c, R, \vee}$ by some nilpotent ideal $\widetilde{I}_{c, R, \vee}$ of exponent bounded solely in terms of $[F : \bQ]$ and $n$, and satisfying conditions analogous to those required of $\widetilde{D}$. Using the same argument as in the statement of Proposition \ref{prop:lgc_at_l_neq_p_det_version_for_tilde_G}, it is enough to show the following: let $\widetilde{\pi}$ be a cuspidal, cohomological automorphic representation of $\widetilde{G}(\A_{F^+})$ such that $\widetilde{\pi}^{\infty, \widetilde{K}} \neq 0$, and let $\widetilde{\iota}(\widetilde{\T}^T_R)(\widetilde{\pi})$ denote the image of the composite
\[ \widetilde{\T}^T_R \otimes_\cO \overline{\bQ}_p \to \cH(\widetilde{G}^\infty, \widetilde{K}^\infty) \otimes_\bZ \overline{\bQ}_p \overset{\widetilde{\iota}}{\to}  \cH(\widetilde{G}^\infty, \widetilde{K}^\infty) \otimes_\bZ \overline{\bQ}_p  \to \End_{\overline{\bQ}_p}(\iota^{-1} \widetilde{\pi}^{\infty, \widetilde{K}}). \]
Consider the associated Galois representation $r_\iota(\widetilde{\pi}) : G_{F, S} \to \GL_{2n}(\overline{\bQ}_p)$, and let $\rho : G_{F, S} \to \GL_{2n}(\widetilde{\iota}(\widetilde{\T}^T_R)(\widetilde{\pi}))$ denote the composite of $r_\iota(\widetilde{\pi})^\vee \otimes \epsilon^{1-2n}$ with the inclusion $\GL_{2n}(\overline{\bQ}_p) \subset \GL_{2n}( \widetilde{\iota}(\widetilde{\T}^T_R)(\widetilde{\pi}))$. Then we have the following properties:
\begin{itemize}
\item For each place $v\not\in S$ of $F$, the characteristic polynomial of $\rho(\Frob_v)$ equals the image of $\widetilde{P}_v(X)$ in $\widetilde{\iota}(\widetilde{\T}^T_R)(\widetilde{\pi})[X]$.
\item For each place $v \in R$, and for each element $\sigma \in W_{F_v}$, the characteristic polynomial of $\rho(\sigma)$ equals the image of $\widetilde{P}_{v, \sigma}(X)$ in $\widetilde{\iota}(\widetilde{\T}^T_R)(\widetilde{\pi})[X]$.
\item For each place $v \in R - R^c$ and for each $\tau_v \in I_{F_v}$, we have $\Res_v^{(2n)!} \rho(\sigma(\tau_v-1) P_{v, \phi_{v}}(\phi_{v})) = 0$ in $M_{2n}(\widetilde{\iota}(\widetilde{\T}^T_R)(\widetilde{\pi}))$.
\end{itemize}
To see why these properties hold, we note that there is a commutative diagram
\[ \xymatrix{ \cH(\widetilde{G}^\infty, \widetilde{K}) \ar[d]_{\widetilde{\iota}} \ar[r] & \End_{\overline{\bQ}_p}( \iota^{-1} \widetilde{\pi}^{\vee, \infty, \widetilde{K}}) \ar[d] \\
\cH(\widetilde{G}^\infty, \widetilde{K}) \ar[r] & \End_{\overline{\bQ}_p}( \iota^{-1} \widetilde{\pi}^{ \infty, \widetilde{K}}) } \]
where the horizontal arrows are the canonical ones and the right vertical arrow is tranpose with respect to the natural duality between $\iota^{-1} \widetilde{\pi}^{\infty, \widetilde{K}}$ and $\iota^{-1} \widetilde{\pi}^{\vee, \infty, \widetilde{K}}$. In particular, $\widetilde{\iota}$ determines an isomorphism $\widetilde{\T}^T_R(\widetilde{\pi}^\vee) \to \widetilde{\iota}(\widetilde{\T}^T_R)(\widetilde{\pi})$. The above points therefore follow from the analogous points for the cuspidal, cohomological automorphic representation $\widetilde{\pi}^\vee$ of $\widetilde{G}(\A_{F^+})$, already established in the proof of Proposition \ref{prop:lgc_at_l_neq_p_det_version_for_tilde_G}, together with the observation that there is an isomorphism $r_\iota(\widetilde{\pi}^\vee) \cong r_\iota(\widetilde{\pi})^\vee \otimes \epsilon^{1-2n}$. 
	\end{proof}
	We need one more lemma, which is an analogue of Hensel's lemma for group determinants. 
	\begin{lemma}\label{lem:separating_determinants}
	Let $A$ be a complete Noetherian local $\cO$-algebra with residue field $k$ and let $\Gamma$ be a  group. Fix natural numbers $n_1, n_2$ and set $n = n_1 + n_2$. Suppose given group determinants $D_1, D_2$ of $\Gamma$ of dimensions $n_1, n_2$ with coefficients in $A$, and let $D = D_1 D_2$. Suppose moreover that, if $\overline{D}_i = D_i \text{ mod }\ffrm_A$, then the semisimple representations $\overline{\rho}_i : \Gamma \to \GL_{n_i}(k)$ with $\det \overline{\rho}_i = \overline{D}_i$ for $i = 1, 2$ have no common Jordan--H\"older factors. 
	
	Then:
	\begin{enumerate}
	\item For any other group determinants $E_1, E_2$ of $\Gamma$ of dimensions $n_1, n_2$ with $E_i \text{ mod }\ffrm_A = \overline{D}_i$ for $i = 1, 2$ and $E_1 E_2 = D$, we have $E_1 = D_1$ and $E_2 = D_2$.
	\item We have $\ker D = \ker(D_1) \cap \ker(D_2)$.
	\end{enumerate}
	\end{lemma}
	\begin{proof}
	We will give an expression for $D_1$ which depends only on $D$, $\overline{D}_1$, and $\overline{D}_2$. This will establish the first part of the lemma. Let $R = A[\Gamma]$ and let $S = R / \operatorname{CH}(D)$, where the Cayley--Hamilton ideal $\operatorname{CH}(D)$ is defined in \cite[\S 1.17]{chenevier_det}. By \cite[Lemma 1.21]{chenevier_det}, the homomorphism $R \to M_{n_1}(k) \times M_{n_2}(k)$ determined by $\overline{D}_1$, $\overline{D}_2$ factors through $S$. Let $\overline{e}_1, \overline{e}_2 \in M_{n_1}(k) \times M_{n_2}(k)$ be the central idempotents which are the identity in one factor and zero in the other. Following \cite[p. 32, footnote]{bellaiche_chenevier_pseudobook}, we may lift $\overline{e}_1, \overline{e}_2$ to idempotents $e_1, e_2 \in S$ such that $e_1 + e_2 = 1$ and $e_1e_2 = 0$.
	
	We now consider the polynomial law $D_{1, e_1}$ on $e_1 S e_1$ given by the formula $D_{1, e_1}(x) = D_1(x + e_2)$. According to \cite[Lemma 2.4]{chenevier_det}, $D_{1, e_1}$ is a determinant $e_1 S e_1 \to A$ of some dimension $d_1 \leq n_1$. Reducing modulo $\ffrm_A$, we see that $d_1 = n_1$. It follows that the polynomial law $D_{1, e_2}$ on $e_2 S e_2$ given by the formula $D_{1, e_2}(x) = D_1(x + e_1)$ is of dimension 0, therefore constant and equal to 1. Working over $A[X]$, and invoking \cite[Lemma 2.4(2)]{chenevier_det}, we have
	\[ D_1(X - e_2) = D_{1, e_1}(X) =  X^{n_1}, \]
	hence $e_2^{n_1} = e_2 \in \operatorname{CH}(D_1) \subset \ker(D_1)$. Similarly we deduce that $e_1 \in \ker(D_2)$. We find that for any $A$-algebra $B$ and any $x \in S \otimes_A B$, we have
	\[ D_1(x) = D_1( e_1 x + e_2 x) = D_1(e_1 x), \]
	and so
	\[ D(e_1 x + e_2) = D_1(x e_1) = D_1(x). \]
	Since the expression $D_1(x) = D(e_1 x + e_2)$ only depends on $D$, $\overline{D}_1$, and $\overline{D}_2$, this proves the first part of the lemma. For the second, we note that the inclusion $\ker(D_1) \cap \ker(D_2) \subset \ker(D)$ follows immediately from the definition. For the other inclusion, take $x \in \ker(D)$, an $A$-algebra $B$, and $y \in R \otimes_A B$. By symmetry, it is enough to show that $D_1 ( 1 + xy) = 1$. We have
	\[ D_1(1 + xy) = D_1(e_1 (1+xy)) = D(1 + e_1 x y) = 1, \]
	since $e_1 x \in \ker(D)$. This concludes the proof. 
	\end{proof}
We can now complete the proof of Proposition \ref{prop:lgc_at_l_neq_p_det_version}. 
\begin{proof}[Proof of Proposition \ref{prop:lgc_at_l_neq_p_det_version}]
Let $\widetilde{\m} = \cS^\ast(\m) \subset \widetilde{\T}^S$. By Theorem \ref{thm:Hecke_reduction_to_Siegel_with_R_ramification}, the map $\cS$ descends to a homomorphism
\[ \widetilde{\T}^T_R(R \Gamma(\partial \widetilde{X}_{\widetilde{K}}, \cO)_{\widetilde{\m}}) \to \T^T_R(R \Gamma(X_K, \cO)_\m).\]
By Propositions \ref{prop_satake_transform_unramified_case}, \ref{prop_satake_transform_ramified_iwahori_case} and \ref{prop_satake_transform_half_ramified_iwahori_case} and Corollary  \ref{cor:lgc_at_l_neq_p_det_version_for_partial_tilde_G}, we see that we can find an integer $N \geq 1$, depending only on $[F : \bQ]$ and $n$, an ideal $I_R \subset \T^T_R(R \Gamma(X_K, \cO)_\m)$ satisfying $I_R^N = 0$, and a  $\T^T_R(K, 0)_\ffrm / I_R$-valued determinant $D'$ on $G_{F, S}$ of dimension $2n$ satisfying the following conditions:
	\begin{enumerate}
		\item For each place $v\not\in S$ of $F$, the characteristic polynomial of $\Frob_v$ under $D'$ is equal to the image of $P_v(X) q_v^{n(2n-1)} P_{v^c}^\vee(q_v^{1-2n} X)$ in $(\T^T_R(K, 0)_\ffrm / I_R)[X]$. 
		\item For each $v \in R$ and for each $\sigma \in W_{F_v}$, the characteristic polynomial of $\sigma$ under $D'$ is equal to the image of $P_{v, \sigma}(X) \| \sigma \|_v^{n(1-2n)} P^\vee_{v^c, \sigma^c}( \| \sigma \|_v^{2n - 1} X)$ in $(\T^T_R(K, 0)_\ffrm / I_R)[X]$. 
	\item Let $\tr' : \T^T_R(R \Gamma(X_K, \cO)_\m)[G_{F, S}] \to \T^T_R(R \Gamma(X_K, \cO)_\m) / I_R$ be the trace associated to $D'$. Then for each place $v \in R - R^c$, for each $\sigma \in G_{F, S}$, and for each $\tau_v \in I_{F_v}$, we have $\Res_v^{(2n)!} \tr'(\sigma(\tau_v-1)P_{v, \phi_{v}}(\phi_{v})) = 0$.
	\end{enumerate}
By Theorem \ref{thm:existence_of_Hecke_repn_for_GL_n}, we can assume (after possibly enlarging $I_R$ and increasing $N$ in a way still depending only on $[F : \bQ]$ and $n$) that there exists a continuous representation $\rho_\ffrm : G_{F, S} \to \GL_n( \T^T_R(K, 0)_\ffrm / I_R )$ such that for each finite place $v\not\in S$ of $F$, $\det(X - \rho_\ffrm(\Frob_v))$ equals the image of $P_v(X)$ in $\T^T_R(K, 0)_\ffrm / I_R$. Let $D = \det \rho_\ffrm$. Looking at characteristic polynomials of Frobenius elements for places $v \not\in S$, we conclude that $D' = \det (\rho_\m \oplus \rho_\ffrm^{c, \vee} \otimes \epsilon^{1-2n}) = D (D^{c, \vee} \otimes \epsilon^{1-2n})$. (Note that our notation for twisted determinants is chosen so that it matches the twisted representation; the polynomial law underlying $D^{c, \vee} \otimes \epsilon^{1-2n}$ is given by twisting with $\det(\epsilon^{1-2n})= \epsilon^{n(1-2n)}$.) To complete the proof of Proposition \ref{prop:lgc_at_l_neq_p_det_version}, we need to show that $D$ satisfies the following conditions:
\begin{itemize}
\item For each place $v \in R$ and for each $\sigma \in W_{F_v}$, the characteristic polynomial of $\sigma$ under $D$ is the image of $P_{v, \sigma}(X)$ in $\T^T_R(K, 0)_\ffrm / I_R$.
\item $D$ factors through $G_{F, T}$ and for each $v \in S - T$, the characteristic polynomial of $\Frob_v$ under $D$ is the image of $P_v(X)$ in $\T^T_R(K, 0)_\ffrm / I_R$.
\end{itemize}
We will then (in the notation of Proposition \ref{prop:lgc_at_l_neq_p_det_version}) be able to take $D_{\m, R} = D$. We take points these in turn. If $v \in R$, then there is a unique $n$-dimensional group determinant $E_v$ of $W_{F_v}$ with coefficients in $\T^T_R(K, 0)_\ffrm$ such that for each $\sigma \in W_{F_v}$, the characteristic polynomial of $\sigma$ under $E_v$ equals the image of $P_{v, \sigma}(X)$. Similarly if $v \in R^c - R$ there is a unique $n$-dimensional group determinant $E_v$ of $W_{F_v}$ with coefficients in $\T^T_R(K, 0)_\ffrm$ which is unramified and such that the characteristic polynomial of $\Frob_v$ equals $P_v(X)$. Our assumptions imply that for each $v \in R$, we have $D|_{W_{F_v}} (D^{c, \vee} \otimes \epsilon^{1-2n})|_{W_{F_v}} = E_v (E_{v^c}^{c, \vee} \otimes \epsilon^{1-2n})$. We would like to deduce that $D|_{W_{F_v}} = E_v$.

We first show that this holds in any quotient of $\T^T_R(K, 0)_\ffrm$ by a maximal ideal. (Recall that $\m$ is, by assumption, a maximal ideal of $\T^S(K, 0)$, so that the ring $\T^T_R(K, 0)_\ffrm$ is not necessarily local.) By assumption (i.e. by Lemma \ref{lem:lgc_l_neq_p_reduction}(3)), there is a character $\psi : G_{F, S} \to \cO^\times$, unramified above $R \cup R^c \cup S_p$, such that for each place $v \in R$, the characteristic polynomials of $\overline{\psi}(\Frob_v) \overline{\rho}_\ffrm(\phi_v)$ and $\overline{\psi}(\Frob_{v^c})^{-1} ( \overline{\rho}_\m \oplus \overline{\rho}_\m^{c, \vee} \otimes \epsilon^{1-2n})(\phi_v)$ are coprime. On the other hand, Proposition \ref{prop:twisting_by_character} implies that we have for any $v \in R$ equalities 
\[ \det (\rho_\m \otimes \psi \oplus (\rho_\m \otimes \psi)^{c, \vee} \otimes \epsilon^{1-2n})|_{W_{F_v}} = (E_v \otimes \psi|_{W_{F_v}}  ) ((E_{v^c} \otimes \psi|_{W_{F_{v^c}}})^{c, \vee} \otimes \epsilon^{1-2n}). \]
Looking at the roots of the characteristic polynomial of $\phi_v$ in each determinant and using the bijection between group determinants over a finite field and isomorphism classes of semisimple representations \cite[Theorem 2.12]{chenevier_det}, we conclude that we must have $\det \overline{\rho}_\m|_{W_{F_v}} = E_v \text{ mod }\n$ for every maximal ideal $\n \subset \T^T_R(K, 0)_\ffrm$. Lemma \ref{lem:separating_determinants} then immediately implies that we have $D|_{W_{F_v}} = \det \rho_\m|_{W_{F_v}} = E_v$. 

It remains to check that for each place $v \in R - R^c$ (hence $v^c \in S - T$), $\rho_\m|_{W_{F_{v^c}}}$ is unramified and $\det(X - \rho_\m(\Frob_{v^c}))$ equals the image of $P_{v^c}(X)$ in $\T^T_R(K, 0)_\ffrm / I_R[X]$. Equivalently, we must check that for each place $v \in R - R^c$, $\rho_\m^{c, \vee} \otimes \epsilon^{1 - 2 n}|_{W_{F_v}}$ is unramified and $\det(X - (\rho_\m^{c, \vee} \otimes \epsilon^{1 - 2 n})(\Frob_v))$ equals the image of $q_v^{n(2n-1)}P_{v^c}^\vee(q_v^{1-2n}X)$ in $\T^T_R(K, 0)_\ffrm / I_R[X]$. The computation of $\det(X - (\rho_\m^{c, \vee} \otimes \epsilon^{1 - 2 n})(\Frob_v))$ follows from what we have done already, so we just need to show that $\rho_\m^{c, \vee} \otimes \epsilon^{1 - 2 n}|_{W_{F_v}}$ is unramified. (Note that this is stronger, in general, than the assertion that the associated group determinant of $\rho_\m^{c, \vee} \otimes \epsilon^{1 - 2 n}|_{W_{F_v}}$  is unramified.) To show this, we use the following set of relations, which follow on applying $\cS$ to the corresponding set of relations for the determinant $D'$:
	\begin{itemize}
	\item  For each place $v \in R - R^c$, for each $\sigma \in G_{F, S}$, and for each $\tau_v \in I_{F_v}$, we have 
\begin{multline*} \cS(\Res_v)^{(2n)!} \tr( \rho_\m(\sigma(\tau_v-1)P_{v, \phi_{v}}(\phi_{v}))) \\ + \cS(\Res_v)^{(2n)!}\tr ( (\rho_\m^{c, \vee} \otimes \epsilon^{1-2n})(\sigma(\tau_v-1)P_{v, \phi_{v}}(\phi_{v}))) = 0. 
\end{multline*}
	\end{itemize}
	We have already seen that if $v \in R - R^c$ then $P_{v, \phi_{v}}(\rho_\m(\phi_v)) = 0$, so we deduce that for each $v \in R - R^c$ and for each $\sigma \in G_{F, S}$ and for each $\tau_v \in I_{F_v}$, we have 
	\[ \cS(\Res_v)^{(2n)!} \tr(  (\rho_\m^{c, \vee} \otimes \epsilon^{1-2n})(\sigma(\tau_v-1)P_{v, \phi_{v}}(\phi_{v}))) = 0. \]
By definition, $\Res_v \in \widetilde{\T}^T_R$ is the resultant of the polynomials $P_{v, \phi_v}(X)$ and $P_{v^c, \phi_{v^c}^{-1}}(X)$ in  $\widetilde{\T}^T_R[X]$. The images of these polynomials in $\T^T_R[X]$ under the map $\cS$ are computed by Proposition \ref{prop_satake_transform_half_ramified_iwahori_case}; they are (respectively) $P_{v, \phi_v}(X)$ and $q_v^{n(2n-1)} P_{v^c, \phi_{v^c}^{-1}}(q_v^{1-2n} X) = q_v^{n(2n-1)} P_{v^c}^\vee(q_v^{1-2n}X)$. Thus $\cS(\Res_v) \in \T^T_R$ is the resultant of these two polynomials, and the image of $\cS(\Res_v)$ modulo any maximal ideal of $\T^T_R(K, 0)_\ffrm$ coincides with the resultant of $\det(X - \overline{\rho}_\m(\phi_v))$ and $\det(X - \overline{\rho}_\m^{c, \vee} \otimes \epsilon^{1 - 2n}(\phi_v))$. These polynomials in $k[X]$ are coprime by assumption (cf. Lemma \ref{lem:lgc_l_neq_p_reduction}), so we find that $\cS(\Res_v)$ is a unit in $\T^T_R(K, 0)_\m$ and therefore that we have the stronger identity
\[ \tr(  (\rho_\m^{c, \vee} \otimes \epsilon^{1-2n})(\sigma(\tau_v-1)P_{v, \phi_{v}}(\phi_{v}))) = 0. \]
The matrix $(\rho^{c, \vee}_\m \otimes \epsilon^{1-2n})( P_{v, \phi_{v}}(\phi_{v})) = P_{v, \phi_v}((\rho^{c, \vee}_\m \otimes \epsilon^{1-2n})(\phi_v)) $ has unit determinant. Since $\overline{\rho}_\m$ is absolutely irreducible and $\sigma \in G_{F, S}$ is arbitrary, we conclude that we must have $(\rho^{c, \vee}_\m \otimes \epsilon^{1-2n})(\tau_v - 1) = 0$ for all $\tau_v \in I_{F_v}$ or, equivalently, that $\rho_\m|_{W_{F_{v^c}}}$ is unramified. This is what we needed to show. 
\end{proof}

\section{Local-global compatibility, \texorpdfstring{$l=p$}{l eq p} (Fontaine--Laffaille case)}
\label{section:lep}

	\subsection{Statements}\label{sec:FL_statements}
	
	Let $F$ be a CM field containing an imaginary quadratic field, and fix an integer $n \geq 1$. Let $p$ be a prime, and let $E$ be a finite extension of $\Q_p$ inside $\overline{\Q}_p$ large enough to contain the images of all embeddings of $F$ in $\overline{\Q}_p$. We assume throughout this chapter that $F$ satisfies the following standing hypothesis:
	\begin{itemize}
		\item The prime $p$ is unramified in $F$. Moreover, $F$ contains an imaginary quadratic field in which $p$ splits.
	\end{itemize}
	Let $K \subset \GL_n(\A_F^\infty)$ be a good subgroup, and let $\lambda \in (\Z_+^n)^{\Hom(F, E)}$. Let $S$ be a finite set of finite places of $F$, containing the $p$-adic places, stable under complex conjugation, and satisfying the following condition:
	\begin{itemize}
		\item Let $v$ be a finite place of $F$ not contained in $S$, and let $l$ be its residue characteristic. Then either $S$ contains no $l$-adic places of $F$ and $l$ is unramified in $F$, or there exists an imaginary quadratic subfield of $F$ in which $l$ splits.
	\end{itemize}
	We recall (Theorem \ref{thm:existence_of_Hecke_repn_for_GL_n})
        that under these hypotheses, that if $\ffrm \subset \T^S(K,
        \lambda)$ is a non-Eisenstein maximal ideal, then there is a
        continuous homomorphism %
	\[ \rho_\ffrm : G_{F, S} \to \GL_n(\T^S(K, \lambda)_\ffrm / J) \]
	characterized, up to conjugation, by the characteristic polynomials of Frobenius elements at places $v \not\in S$; here $J$ is a nilpotent ideal whose exponent depends only on $n$ and $[F : \Q]$. Our goal in this chapter is to show that under certain conditions, we can show that the restrictions of $\rho_\ffrm$ to decomposition groups at the $p$-adic places of $F$ satisfy conditions coming from $p$-adic Hodge theory. More precisely, we can show, after perhaps enlarging the nilpotent ideal $J$, that they are Fontaine--Laffaille with the expected Hodge--Tate weights. 
	
	Before stating the main theorem of this chapter we first briefly recall some of the properties of the Fontaine--Laffaille functor \cite{fl}, with normalizations as in 
	\cite[Section~2.4.1]{cht}. 

	Let $v$ be a $p$-adic place of $F$. We are assuming that $F_v / \Q_p$ is unramified. Let $\mathcal{MF}_{\cO}$ be the category of finite $\cO_{F_v} \otimes_{\Z_p} \cO$-modules $M$ equipped with the following data.
	\begin{itemize}
		\item A decreasing filtration $\Fil^i M$ of $\cO_{F_v} \otimes_{\Z_p} \cO$-submodules that are direct summands as $\cO_{F_v}$-modules.
		For an embedding $\tau \colon F_v \hookrightarrow E$, define the filtered $\cO$-module $M_\tau = M \otimes_{\cO_{F_v} \otimes_{\Z_p} \cO} \cO$
		where we view $\cO$ as an $\cO_{F_v}\otimes_{\Z_p} \cO$-algebra via $\tau \otimes 1$. 
		We assume that for each $\tau$, there is an integer $a_\tau$ such that $\Fil^{a_\tau} M_\tau = M_\tau$ and $\Fil^{a_\tau + p - 1} M_\tau = 0$.
		\item $\Frob_p^{-1} \otimes 1$-linear maps $\Phi^i \colon \Fil^i M \rightarrow M$ such that $\Phi^i|_{\Fil^{i+1} M} = p\Phi^{i+1}$ and 
		$M = \sum_i \Phi^i\Fil^i M$.
	\end{itemize}
	Note that for $M \in \mathcal{MF}_{\cO}$, 
	\[
	\Fil^i M = \prod_\tau \Fil^i M_\tau \quad \text{and} \quad 
	\Phi^i = \prod_\tau \Phi_\tau^i \quad \text{with} \quad \Phi_\tau^i \colon \Fil^i M_\tau \to M_{\tau\circ \Frob_p^{-1}}.
	\]
	Given a tuple of integers $a = (a_\tau) \in \Z^{\Hom(F_v,E)}$, we let $\mathcal{MF}_{\cO}^a$ be 
	the full subcategory of $\mathcal{MF}_{\cO}$ consisting of objects $M$ such that for each $\tau$, $\Fil^{a_\tau} M_\tau = M_\tau$ and $\Fil^{a_\tau + p-1} M_\tau = 0$.
	We write $\mathcal{MF}_{\cO}^0$ for $\mathcal{MF}_{\cO}^{(0,\ldots,0)}$.
	We let $\mathcal{MF}_{k}$ and $\mathcal{MF}_{k}^a$ be the full subcategories of $\mathcal{MF}_{\cO}$ and $\mathcal{MF}_{\cO}^a$, 
	respectively, of objects annihilated by $\varpi$.
	
	When $p > 2$, there is an exact, fully faithful, covariant functor $\bG^0$ from $\mathcal{MF}_{\cO}^0$ to the category of finite $\cO$-modules with continuous 
	$\cO$-linear $G_{F_v}$-action (see \cite[Section~2.4.1]{cht}, where $\bG^0$ is denoted $\bG$).
	The essential image of $\bG^0$ is closed under subquotients, and the restriction of $\bG^0$ to $\mathcal{MF}_{k}^0$ takes values in the category of continuous $G_{F_v}$-representations on finite dimensional 
	$k$-vector spaces.
	Moreover, if $M_1$ and $M_2$ are objects of $\mathcal{MF}_{\cO}^0$ such that 
	$M_1 \otimes_{\cO_{F_v}\otimes_{\Zp} \cO} M_2$ also lies in $\mathcal{MF}_{\cO}^0$, then 
	\begin{equation}\label{eq:FL_tensor}
	\bG^0(M_1\otimes_{\cO_{F_v}\otimes_{\Z_p}\cO} M_2) = \bG^0(M_1)\otimes_{\cO} \bG^0(M_2).
	\end{equation}
	We extend $\bG^0$ to a functor $\bG$ on $\mathcal{MF}_{\cO}$ by twisting as follows. 
	Fix $M \in \mathcal{MF}_{\cO}$ and $a = (a_\tau) \in \Z^{\Hom(F_v,E)}$ such that $M \in \mathcal{MF}_{\cO}^a$.  
	Define the crystalline character $\psi_a \colon G_{F_v} \to \cO^\times$ by
	\[
	\psi_a\circ \Art_{F_v}(x) = \prod_{\tau} \tau(x)^{-a_\tau} \quad \text{for} \quad x\in \cO_{F_v}^\times 
	\quad \text{and} \quad \psi_a\circ \Art_{F_v}(p) = 1,
	\]
	and the object $M(a) \in \mathcal{MF}_{\cO}^0$ by $\Fil^i M(a)_\tau = \Fil^{i+a_\tau} M_\tau$ 
	and $\Phi_{M(a),\tau}^i = \Phi_{M,\tau}^{i+a_\tau}$.
	We then set
	\[
	\bG(M) = \bG^0(M(a))\otimes_{\cO} \psi_a.
	\]
	Using \eqref{eq:FL_tensor}, one checks that this is independent of $a$ such that $M \in \mathcal{MF}_{\cO}^a$. 
	We will denote by $\bG^a$ the restriction of $\bG$ to $\mathcal{MF}_{\cO}^a$. 
	Any $\bG^a$ is fully faithful and its essential image is stable under subquotients, 
	but $\bG$ is not full on all of $\mathcal{MF}_{\cO}$.
	We note also that the essential image of $\bG$ is stable under twists by crystalline characters.
	
	Let $\overline{M}$ be an object of $\mathcal{MF}_{k}$.
	For each embedding $\tau \colon F_v \hookrightarrow E$, we let $\mathrm{FL}_\tau(\overline{M})$ be the multiset of integers $i$ such that
	\[
	\gr^i \overline{M} \otimes_{\cO_{F_v} \otimes_{\Z_p} k} k \ne 0,
	\]
	counted with multiplicity equal to the $k$-dimension of this space, where we view 
	$k$ as a $\cO_{F_v} \otimes_{\Z_p} k$ algebra via $\tau \otimes 1$.
	If $p > 2$ and $M$ is a $p$-torsion free object of $\mathcal{MF}_{\cO}$, the representation $\bG(M) \otimes_{\cO} E$ is crystalline 
	and for every embedding $\tau \colon F_v \hookrightarrow E$ we have
	\[
	\HT_\tau(\bG(M) \otimes_{\cO} E) = \mathrm{FL}_\tau(M \otimes_{\cO} k).
	\]
	Moreover, if $W$ is an $\cO$-lattice in a crystalline representation of $G_{F_v}$ such that every $\tau$-Hodge--Tate weight lies in 
	$[a_\tau, a_\tau + p - 2]$ for some integer $a_\tau$, then $W$ is in the essential image of $\bG^{a}$.
	
	We can now state the main theorem of this chapter (with the same numbering as it occurs again immediately before the proof).
	\begin{reptheorem}{myAmazingTheorem}
	Let $\m \subset \T^S(K, \lambda)$ be a non-Eisenstein maximal ideal. Suppose that~$\T^S(K, \lambda) / \m = k$ has residue
	characteristic~$p$. Let $\overline{v}$ be a $p$-adic place of $F^{+}$, and suppose that the following additional conditions are satisfied:
\begin{enumerate}
\item The prime $p$ is unramified in $F$, and $F$ contains an imaginary quadratic field in which $p$ splits.
		\item Let $w$ be a finite place of $F$ not contained in $S$, and let $l$ be its residue characteristic. Then either $S$ contains no $l$-adic places of $F$ and $l$ is unramified in $F$, or there exists an imaginary quadratic field $F_0 \subset F$ in which $l$ splits.
		\item For each place $v | \overline{v}$ of $F$, $K_v = \GL_n(\cO_{F_v})$.
		\item For every embedding $\tau : F 
		\hookrightarrow E$ inducing the place $\overline{v}$ of $F^+$,
		\[
		\lambda_{\tau,1} + \lambda_{\tau c, 1} - \lambda_{\tau, n} - \lambda_{\tau c, n} \leq p - 2n - 1.
		\]
		\item $p > n^2$.
		\item There exists a $p$-adic place $\overline{v}' \neq \overline{v}$ of $F^+$ such that 
		\[
		\sum_{\substack{ \overline{v}'' \neq \overline{v}, \overline{v}'}} [F^+_{\bar v''}: \Q_p] >\frac{1}{2}[F^+:\Q].
		\]
		\item $\overline{\rho}_\m$ is decomposed generic (Definition~\ref{defn:decomposed_generic}).
		\item Assume that one of the following holds:
		\begin{enumerate}
			\item $H^\ast(X_K,\cV_\lambda)_{\frakm}[1/p] \ne 0$, or
			\item for every embedding $\tau \colon F \hookrightarrow E$ inducing the place $\overline{v}$ of $F^+$,
			\[
			-\lambda_{\tau c,n} - \lambda_{\tau , n} \leq p - 2n - 2 \quad \text{and} \quad -\lambda_{\tau c,1} -\lambda_{\tau, 1} \geq 0.
			\]
		\end{enumerate}
	\end{enumerate}
	Then there exists an integer $N \geq 1$ depending only on $[F^+ : \Q]$ and $n$, an ideal $J \subset \T^S(K, \lambda)$ satisfying $J^N = 0$, and a continuous representation
	\[ \rho_\m : G_{F, S} \to \GL_n(\T^S(K, \lambda)_{\m} / J) \]
	satisfying the following conditions:
	\begin{enumerate}
		\item[(a)] For each finite place $v \not\in S$ of $F$, the characteristic polynomial of $\rho_\m(\Frob_v)$ equals the image of $P_v(X)$ in $(\T^S(K, \lambda)_{\m} / J)[X]$.
		\item[(b)] For each place $v | \overline{v}$ of $F$, $\rho_\m|_{G_{F_v}}$ is in the essential image of  $\bG^a$ (with $a = (\lambda_{\tau, n}) \in \Z^{\Hom(F_v,E)}$). 
		\item[(c)] There is $\overline{M} \in \mathcal{MF}_{k}$ such that 
		$\overline{\rho}_\m|_{G_{F_v}} \cong \bG(\overline{M})$ and for any embedding $\tau \colon F_v \hookrightarrow E$,
		\[
		\mathrm{FL}_\tau(\overline{M}) = \{\lambda_{\tau,1} + n - 1, \lambda_{\tau,2} + n - 2, \ldots,\lambda_{\tau,n}\}.
		\]
	\end{enumerate}
	\end{reptheorem}
The rest of this chapter is devoted to the proof of Theorem \ref{thm:lgcfl}. The proof will be by reduction to known results for automorphic forms on $\widetilde{G}$ (in particular, Theorem \ref{thm:base_change_and_existence_of_Galois_for_tilde_G}).

\subsection{A direct summand of the boundary cohomology}\label{sec:direct summand at 
finite level} 

In this section, we show how to realize the cohomology of $X_K$ as a
direct summand of the cohomology of the boundary $\partial
\widetilde{X}_{\widetilde{K}}$ of the Borel--Serre compactification of
$\widetilde{X}_{\widetilde{K}}$. This is the first step in relating
the cohomology of $X_K$ to automorphic forms on $\widetilde{G}$. We
must first introduce some new notation, in addition to the notation
introduced in Section~\ref{subsec: boundary cohomology}. 

We recall (cf.\ \S \ref{sec:unitary_group_setup}) that we write $\overline{S}_p$ for the set of $p$-adic places of $F^+$, $S_p$ for the set of $p$-adic places of $F$, and that we have fixed a subset $\widetilde{S}_p = \{ \widetilde{v} \mid \overline{v} \in \overline{S}_p \}$ with the property that $S_p = \widetilde{S}_p \sqcup \widetilde{S}_p^c$. Moreover, we write $\widetilde{I}_p$ for the set of embeddings $\tau : F \hookrightarrow E$ inducing a place of $\widetilde{S}_p$. For any $\overline{v} \in \overline{S}_p$, we write $\widetilde{I}_{\overline{v}}$ for the set of embeddings $\tau : F \hookrightarrow E$ inducing $\widetilde{v}$. Similarly, we write $I_{\overline{v}}$ for the set of embeddings $\tau : F^+ \hookrightarrow E$ inducing $\overline{v}$.

These choices determine an isomorphism $(\Res_{F^+ / \Q} \widetilde{G})_E \cong \prod_{\tau \in \widetilde{I}_p} \GL_{2n}$. For any embedding $\tau : F^+ \hookrightarrow E$, we set 
\[ W_\tau = W(\widetilde{G} \otimes_{F^+, \tau} E, T \otimes_{F^+, \tau} E) \]
and
\[ W_{P, \tau} = W(G \otimes_{F^+, \tau} E, T \otimes_{F^+, \tau} E); \]
these may be identified with the Weyl groups of $\GL_{2n}$ and $\GL_n \times \GL_n$, respectively. Since $\widetilde{G}$ is equipped with the Borel subgroup $B$, we may also define the subset $W^P_\tau \subset W_{\tau}$ of representatives for the quotient $W_{P, \tau} \backslash W_\tau$ (cf.\ \S \ref{sec:misc_notation}). We write $\rho_\tau \in X^\ast(T \otimes_{F^+, \tau} E)$ for the half-sum of the $B \otimes_{F^+, \tau} E$-positive roots.

If $\overline{v} \in \overline{S}_p$, then we set $W_{\overline{v}} = \prod_{\tau \in I_{\overline{v}}} W_\tau$, $W_{P, \overline{v}} = \prod_{\tau \in I_{\overline{v}}} W_{P, \tau}$, and $W^P_{\overline{v}} = \prod_{\tau \in I_{\overline{v}}} W^P_{\tau}$. We define $\rho_{\overline{v}} \in X^\ast((\Res_{F^+_{\overline{v}} / \Q_p} T)_E)$ to be the half-sum of the $(\Res_{F^+_{\overline{v}} / \Q_p} B)_E$-positive roots; thus we can identify $\rho_{\overline{v}} = \sum_{\tau \in \Hom(F^+_{\overline{v}}, E)} \rho_\tau$. Given a subset $\overline{T} \subset \overline{S}_p$, we set $W_{\overline{T}} = \prod_{\overline{v} \in \overline{T}} W_{\overline{v}}$, and define $W_{P, \overline{T}}$ and $W^P_{\overline{T}}$ similarly. If $\overline{T} = \overline{S}_p$, then we drop $\overline{T}$ from the notation; thus $W$ may be identified with the Weyl group $W((\Res_{F^+ / \Q}\widetilde{G})_E, (\Res_{F^+ / \Q} T)_E)$ of $(\Res_{F^+ / \Q}\widetilde{G})_E$. We write $l : W \to \Z_{\geq 0}$ for the length function with respect to the Borel subgroup $B$, and $\rho \in X^\ast((\Res_{F^+ / \Q} T)_E)$ for the half-sum of the $(\Res_{F^+ / \Q} B)_E$-positive roots; thus we can identify $\rho = \sum_{\overline{v} \in \overline{S}_p} \rho_{\overline{v}}$.

If $\widetilde{\lambda} \in (\Z^{2n}_+)^{\Hom(F^+, E)}$, and $\overline{v} \in \overline{S}_p$, then we set 
\[ \widetilde{\lambda}_{\overline{v}} = (\widetilde{\lambda}_{\tau})_{\tau \in \Hom_{\Q_p}(F^+_{\overline{v}}, E)} \in (\Z^{2n}_+)^{\Hom(F^+_{\overline{v}}, E)}. \]
If $\lambda \in (\Z^n_+)^{\Hom(F, E)}$, and $\overline{v} \in \overline{S}_p$, then we set
\[ \lambda_{\overline{v}} = (\lambda_\tau)_{\tau \in \Hom_{\Q_p}(F_{\widetilde{v}}, E) \sqcup \Hom_{\Q_p}(F_{\widetilde{v}^c}, E)} \in (\Z^n_+)^{\Hom_{\Q_p}(F \otimes_{F^+} F^+_{\overline{v}}, E)}. \]
\begin{thm}\label{direct summand at finite level} 
	Let $\widetilde{K} \subset \widetilde{G}(\A_{F^+}^\infty)$ be a good subgroup which is decomposed with respect to $P$, and with the property that for each $\overline{v} \in \overline{S}_p$, $\widetilde{K}_{U, \overline{v}} = U(\cO_{F^+_{\overline{v}}})$. Let $\m \subset \T^S$ be a non-Eisenstein maximal ideal, and let $\widetilde{\m} = \cS^\ast(\m) \subset \widetilde{\T}^S$.
	
	Choose a partition 
	$\overline{S}_p = \overline{S}_1\sqcup \overline{S}_2$. Let $\widetilde{\lambda} \in (\Z^{2n}_+)^{\Hom(F^+, E)}$ and $\lambda \in (\Z^{n}_+)^{\Hom(F, E)}$ be dominant weights for $\widetilde{G}$ and $G$, respectively. We assume that the following conditions are satisfied: 
	\begin{enumerate}
		\item For each $\overline{v} \in \overline{S}_1$, $\widetilde{\lambda}_{\overline{v}} = \lambda_{\overline{v}}$ (identification as in (\ref{eqn:wt_dictionary})).
		\item For each $\overline{v} \in \overline{S}_2$, $\widetilde{\lambda}_{\overline{v}} = 0$.
		\item For each $\overline{v} \in \overline{S}_2$, there exists $w_{\overline{v}} \in W^P_{\overline{v}}$ such that $\lambda_{\overline{v}} = w_{\overline{v}}(\rho_{\overline{v}}) - \rho_{\overline{v}}$. 
		\item $p > n^2$. (We recall our blanket assumption throughout \S \ref{section:lep} that $p$ is unramified in $F$.)
	\end{enumerate}
	If $\overline{v} \in \overline{S}_1$, we let $w_{\overline{v}}$ denote the identity element of $W_{\overline{v}}$. We let $w = (w_{\overline{v}})_{\overline{v} \in \overline{S}_p}$.
	Then for any $m \geq 1$, $R \Gamma(X_K, \cV_\lambda / \varpi^m)_\m[ -l(w) ]$ is a $\widetilde{\T}^S$-equivariant direct summand of $R \Gamma(\partial\widetilde{X}_{\widetilde{K}}, \cV_{\widetilde{\lambda}} / \varpi^m)_{\widetilde{\m}}$.
\end{thm}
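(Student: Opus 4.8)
The plan is to reduce to the boundary stratum labelled by the Siegel parabolic, compute the cohomology of the (abelian) unipotent radical $U$ integrally via a Bott--Kostant argument, and extract the summand dictated by conditions (1)--(3).

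First I would invoke Theorem~\ref{thm:reduction_to_Siegel}, whose proof works verbatim modulo $\varpi^m$, to get a natural $\widetilde{\mathbb{T}}^S$-equivariant isomorphism $R\Gamma(\widetilde{X}^P_{\widetilde{K}},\cV_{\widetilde{\lambda}}/\varpi^m)_{\widetilde{\m}}\cong R\Gamma(\partial\widetilde{X}_{\widetilde{K}},\cV_{\widetilde{\lambda}}/\varpi^m)_{\widetilde{\m}}$. It therefore suffices to exhibit $R\Gamma(X_K,\cV_\lambda/\varpi^m)_\m[-l(w)]$ as a $\widetilde{\mathbb{T}}^S$-equivariant direct summand of $R\Gamma(\widetilde{X}^P_{\widetilde{K}},\cV_{\widetilde{\lambda}}/\varpi^m)$, where $\widetilde{\mathbb{T}}^S$ acts on the target through $\cS$ and on the source in the usual way (localization at $\m$ then corresponds to localization at $\widetilde{\m}$, since $\m$ is the contraction of $\widetilde{\m}$). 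As in the proof of Theorem~\ref{thm:reduction_to_Siegel_with_R_ramification} (following \cite[\S3.1]{new-tho}), there is an isomorphism
\[ R \Gamma(\widetilde{X}^P_{\widetilde{K}}, \cV_{\widetilde{\lambda}}/\varpi^m) \cong R \Gamma(\widetilde{K}_P^S \times K_S, R \Gamma( \operatorname{Inf}_{G^S \times K_S}^{P^S \times K_S} \mathfrak{X}_G, R 1_\ast^{\widetilde{K}_{U, S}} \cV_{\widetilde{\lambda}}/ \varpi^m)), \]
Hecke-equivariant in the appropriate sense, so the whole problem becomes: split $\cV_\lambda/\varpi^m[-l(w)]$ off the complex of $K$-equivariant sheaves $R 1_\ast^{\widetilde{K}_{U, S}} \cV_{\widetilde{\lambda}}/ \varpi^m$ on $\mathfrak{X}_G$. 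At the places of $S$ prime to $p$ the module $\cV_{\widetilde{\lambda}}$ is fixed by $\widetilde{K}_{U,v}$, so those places enter only through the geometry of $\operatorname{Inf}_{G^S \times K_S}^{P^S \times K_S}\mathfrak{X}_G$ exactly as in \cite[Thm.~4.2]{new-tho}; the new content is concentrated at the $p$-adic places, where $\widetilde{K}_{U,\overline{v}} = U(\cO_{F^+_{\overline{v}}})$.

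The crux of the argument — and the main obstacle — is an integral Bott--Kostant computation at the $p$-adic places. Since $F^+_{\overline{v}}/\Q_p$ is unramified, $U(\cO_{F^+_{\overline{v}}})$ is a free $\Z_p$-module of finite rank, and its continuous cohomology with values in (the local factor of) $\cV_{\widetilde{\lambda}}/\varpi^m$ is computed by a Koszul complex for the commuting operators $\gamma_i - 1$. The hypothesis $p > n^2$ lets one replace each $\gamma_i - 1$ by the corresponding Lie algebra action up to $U(\cO_{F^+_{\overline{v}}})$-equivariant homotopy (a Lazard-type comparison, using that the logarithm converges on the relevant unipotent matrices), identifying $R 1_\ast^{U(\cO_{F^+_{\overline{v}}})}(\cV_{\widetilde{\lambda}}/\varpi^m)$ with the Chevalley--Eilenberg complex of the abelian Lie algebra $\Lie U$ with those coefficients. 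For $p$ this large (the denominators in Kostant's theorem for the Siegel nilradical being bounded in terms of $n$), this complex is formal and decomposes, compatibly with reduction mod $\varpi^m$ and $G(\cO_{F^+_{\overline{v}}})$-equivariantly, as $\bigoplus_{u \in W^P_{\overline{v}}} \cV_{u(\widetilde{\lambda}_{\overline{v}} + \rho_{\overline{v}}) - \rho_{\overline{v}}}/\varpi^m[-l(u)]$. Taking the tensor product of these decompositions over $\overline{v} \in \overline{S}_p$ and picking out the summand indexed by $w = (w_{\overline{v}})_{\overline{v}}$ singles out $\cV_\lambda/\varpi^m[-l(w)]$, with $l(w) = \sum_{\overline{v}} l(w_{\overline{v}})$: on $\overline{S}_1$ this is the $u = 1$ summand, where $\widetilde{\lambda}_{\overline{v}} = \lambda_{\overline{v}}$ corresponds to $\lambda$ exactly as in \eqref{eqn:wt_dictionary} (so that this splitting is already provided integrally by \cite[Cor.~2.11]{new-tho}), and on $\overline{S}_2$ this uses $\widetilde{\lambda}_{\overline{v}} = 0$ together with condition~(3), $w_{\overline{v}}(\rho_{\overline{v}}) - \rho_{\overline{v}} = \lambda_{\overline{v}}$.

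Finally I would feed this $K$-equivariant direct summand through the displayed isomorphism above, as at the top of \cite[p.~59]{new-tho}, to conclude that $R\Gamma(X_K,\cV_\lambda/\varpi^m)[-l(w)]$ is a direct summand of $R\Gamma(\widetilde{X}^P_{\widetilde{K}},\cV_{\widetilde{\lambda}}/\varpi^m)$ compatibly with the $\widetilde{\mathbb{T}}^S$-action through $\cS$ (the Hecke-equivariance being controlled by the Satake-type compatibility in \cite[\S5]{new-tho}); localizing at $\widetilde{\m}$ and combining with Theorem~\ref{thm:reduction_to_Siegel} then yields the statement. Beyond the bookkeeping of weights, the one genuinely delicate point is the uniform-in-$m$ integral Bott--Kostant statement: establishing both the Lazard-type homotopy replacing $\gamma_i - 1$ by the Lie action and the integral Kostant decomposition with denominators controlled by $p > n^2$.
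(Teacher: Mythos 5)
Your reductions are the paper's: Theorem~\ref{thm:reduction_to_Siegel}, the passage to $R\Gamma(\widetilde{K}_P^S\times K_S, R\Gamma(\operatorname{Inf}\mathfrak{X}_G, R1_\ast^{\widetilde{K}_{U,S}}\cV_{\widetilde{\lambda}}/\varpi^m))$, the K\"unneth splitting into places, and the use of \cite[Cor.~2.11]{new-tho} at the $\overline{S}_1$ places. But the step you yourself flag as the delicate one is exactly where the proposal has a genuine gap. You assert that, after a Lazard-type homotopy replacing $\gamma_i-1$ by the Lie algebra action, the complex $R\Gamma(U(\cO_{F^+_{\overline{v}}}),\cV_{\widetilde{\lambda}_{\overline{v}}}/\varpi^m)$ ``is formal and decomposes'' $G(\cO_{F^+_{\overline{v}}})$-equivariantly into shifted Kostant pieces. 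Controlling denominators in Kostant's theorem (or, as the paper actually does in Lemma~\ref{lem:integral_kostant}, using simplicity of the $\cV_{\lambda_w}\otimes_\cO k$ and vanishing of extensions between them for $p\ge 2n-1$) only gives the decomposition of the individual cohomology groups $H^i(U(\cO_K),\cO/\varpi^m)$; it says nothing about splitting the complex into its shifted cohomology in the equivariant derived category $\mathbf{D}(\cO/\varpi^m[G(\cO_K)])$, which is the statement you need and which a Lazard comparison does not provide. This formality is precisely where the hypothesis $p>n^2$ is used in the paper, and by a quite different and concrete device (Lemma~\ref{lem:direct sum decomposition for trivial coefficients}): a central element $z\in G(\cO_K)$ of order $p^f-1$ acts on $H^i(U(\cO_K),\cO/\varpi^m)$ through scalars whose $p$-adic expansions determine the degree $i$ once $p>n^2$, so one can write down idempotents $e_i\in\cO[z]$ and split the complex using idempotent-completeness of the derived category. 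Without this (or some substitute) your argument does not produce the direct summand in the derived category, only at the level of cohomology.

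A secondary overreach: you claim the full Kostant decomposition of $R1_\ast^{U(\cO)}(\cV_{\widetilde{\lambda}}/\varpi^m)$ for arbitrary nontrivial $\widetilde{\lambda}_{\overline{v}}$ at the $\overline{S}_1$ places. The central-character trick does not separate degrees once the coefficients are nontrivial, and the paper makes no such claim: at $\overline{S}_1$ places it only splits off the degree-zero piece $\cV_{\lambda_{\overline{v}}}/\varpi^m$ via the explicit maps $\cV_\lambda\to\cV_{\widetilde{\lambda}}^{\widetilde{K}_{U,S}}$ and the $K$-equivariant splitting $\cV_{\widetilde{\lambda}}\to\cV_\lambda$ of \cite[Cor.~2.11]{new-tho}, which is all that is needed since $w_{\overline{v}}=1$ there; the full formality statement is proved, and is only true by these methods, for trivial coefficients at the $\overline{S}_2$ places. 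Since you do fall back on \cite[Cor.~2.11]{new-tho} for the piece you actually use at $\overline{S}_1$, this is repairable, but the unproved equivariant formality at $\overline{S}_2$ is the real missing ingredient.
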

(If $S$ is a ring and $A, B \in \mathbf{D}(S)$ are complexes equipped with homomorphisms of $S$-algebras
\[ f_A : R \to \End_{\mathbf{D}(S)}(A), \, \, f_B : R \to \End_{\mathbf{D}(S)}(B), \]
then we say that $A$ is an $R$-equivariant direct summand of $B$ if there is a complex $C \in \mathbf{D}(S)$ equipped with a homomorphism of $S$-algebras
\[ f_C : R \to \End_{\mathbf{D}(S)}(C) \]
and an isomorphism $\phi : B \cong A \oplus C$ in $\mathbf{D}(S)$ such
that for each~$r\in R$, we have $f_B(r) = \phi^{-1} \circ (f_A(r) \oplus f_C(r) ) \circ \phi$.)
\begin{proof}  By Theorem~\ref{thm:reduction_to_Siegel}, it is enough to show that $R\Gamma (X_K, \cV_{\lambda} / \varpi^m
)[-l(w)]$ is a $\widetilde{\T}^S$-equivariant direct summand of 
	$R\Gamma(\widetilde{X}^P_{\widetilde{K}}, 
	\cV_{\widetilde{\lambda}} / \varpi^m)$. We will argue in a similar way to the proof of Theorem \ref{thm:Hecke_reduction_to_Siegel}.
	
	Looking at the proof of Theorem \ref{thm:Hecke_reduction_to_Siegel}, we see that there is a $\widetilde{\T}^S$-equivariant isomorphism 
	\[ R \Gamma(\widetilde{X}^P_{\widetilde{K}_P}, \cV_{\widetilde{\lambda}} / \varpi^m) \cong R \Gamma(\widetilde{K}_P^S \times K_S, R \Gamma( \operatorname{Inf}_{G^S \times K_S}^{P^S \times K_S} \mathfrak{X}_G, R 1_\ast^{\widetilde{K}_{U, S}} \cV_{\widetilde{\lambda}} / \varpi^m)) \]
	in $\mathbf{D}(\cO / \varpi^m)$, where $\widetilde{\T}^S$ acts on both sides via the map $r_P$, and that the current theorem will be proved if we can establish the following claim:
	\begin{itemize}
		\item $R 1_\ast^{\widetilde{K}_{U, S}} \cV_{\widetilde{\lambda}} / \varpi^m$ admits $\cV_\lambda / \varpi^m[-l(w)]$ as a direct summand in $\mathbf{D}(\text{Sh}_{P^S \times K_S }(\mathfrak{X}_G))$, the derived category of $P^S \times K_S$-equivariant sheaves  of $\cO / \varpi^m$-modules on $\mathfrak{X}_G$.
	\end{itemize}
	In fact, $R 1_\ast^{\widetilde{K}_{U, S}} \cV_{\widetilde{\lambda}} / \varpi^m$ is pulled back from $R \Gamma(\widetilde{K}_{U, S}, \cV_{\widetilde{\lambda}} / \varpi^m) \in \mathbf{D}(\text{Sh}_{K_S}(pt))$, so it suffices to show that $\cV_\lambda / \varpi^m[-l(w)]$ is a direct summand of $R \Gamma(\widetilde{K}_{U, S}, \cV_{\widetilde{\lambda}} / \varpi^m)$ in this category. 
	
	We observe that $\widetilde{K}_{U, S} = \prod_{\overline{v} \in \overline{S}} \widetilde{K}_{U, \overline{v}}$, and that $\cV_{\widetilde{\lambda}}$ admits a corresponding decomposition $\cV_{\widetilde{\lambda}} = \otimes_{\overline{v} \in \overline{S}_p} \cV_{\widetilde{\lambda}_v}$. By the K\"unneth formula, it is therefore enough to show the following two claims: 
	\begin{enumerate}
		\item If $\overline{v} \in \overline{S}_1$, then $\cV_{\lambda_{\overline{v}}} / \varpi^m$ is a direct summand of $R \Gamma( \widetilde{K}_{U, \overline{v}}, \cV_{\widetilde{\lambda}_{\overline{v}}} / \varpi^m)$ in $\mathbf{D}(\cO / \varpi^m[K_{\widetilde{v}} \times K_{\widetilde{v}^c}])$. 
		\item If $\overline{v} \in \overline{S}_2$, then $\cV_{\lambda_{\overline{v}}} / \varpi^m[-l(w_{\overline{v}})]$ is a direct summand of $R \Gamma( \widetilde{K}_{U, \overline{v}}, \cO / \varpi^m)$ in $\mathbf{D}(\cO / \varpi^m[K_{\widetilde{v}} \times K_{\widetilde{v}^c}])$.
	\end{enumerate}
	The first claim can be proved using the same argument as in the end of the proof of Theorem \ref{thm:Hecke_reduction_to_Siegel}. The second claim follows from Lemma \ref{lem:integral_kostant} and Lemma \ref{lem:direct sum decomposition for trivial coefficients} below (this is where we use our hypothesis $p > n^2$). This completes the proof of the theorem. 
\end{proof}

\begin{lemma}\label{lem:integral_kostant}
	Let $\overline{v} \in \overline{S}_p$, let $K = F^+_{\overline{v}}$, and fix an integer $m \geq 1$.
	\begin{enumerate}
		\item For each $i \in \Z_{\ge 0}$ there is a $G(\cO_K)$-equivariant isomorphism
		\[ H^i(U(\cO_K),\cO/\varpi^m) \cong 
		\Hom_{\Zp}(\wedge^i_{\Zp} U(\cO_K),\cO/\varpi^m) = 
		\Hom_{\cO}(\wedge^i_{\cO}(U(\cO_K)\otimes_{\Zp} \cO),\cO/\varpi^m)\] with 
		$G(\cO_K)$-action on the right hand side induced by its conjugation 
		action on 
		$U(\cO_K)$.
		\item Suppose $p \ge 2n - 1$. Given $w \in W_{\overline{v}}^P$, let $\lambda_w = w(\rho_{\overline{v}})-\rho_{\overline{v}} \in (\Z^n_+)^{\Hom_{\Q_p}(F \otimes_{F^+} F^+_{\overline{v}}, E)}$ (using the identification (\ref{eqn:wt_dictionary})).
		For each $i \in \Z_{\ge 0}$ there is a 
		$G(\cO_K)$-equivariant isomorphism \[ 
		\Hom_{\cO}(\wedge^i_{\cO}(U(\cO_K)\otimes_{\Zp} \cO),\cO) \cong 
		\bigoplus_{\substack{w \in W_{\bar v}^P\\
		l(w)=i}}\cV_{\lambda_w}\]
	\end{enumerate}
\end{lemma}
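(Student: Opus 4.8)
The plan is to handle the two parts separately: part (1) is a standard computation of continuous group cohomology of a free $\Zp$-module, and part (2) is the integral form of Kostant's theorem for an \emph{abelian} nilpotent radical with trivial coefficients.

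First I would record the concrete picture of $U(\cO_K)$. Since $\overline{v}$ splits in $F$, a choice of place $\widetilde{v}\mid\overline{v}$ identifies $\widetilde{G}(\cO_K)$ with $\GL_{2n}(\cO_{F_{\widetilde{v}}})$, carrying $P$ to the Siegel parabolic and $U$ to the abelian group of matrices $\left(\begin{smallmatrix}1&B\\0&1\end{smallmatrix}\right)$; thus $U(\cO_K)\cong M_n(\cO_{F_{\widetilde{v}}})$ is a finite free $\Zp$-module on which $G(\cO_K)$ acts continuously by conjugation. For part (1), I would then invoke the standard computation of the continuous cohomology of a finite free $\Zp$-module with trivial finite coefficients: for $\Gamma\cong\Zp$ one has $H^0(\Gamma,\cO/\varpi^m)=H^1(\Gamma,\cO/\varpi^m)=\cO/\varpi^m$ and $H^i(\Gamma,\cO/\varpi^m)=0$ for $i\geq 2$, and an inductive application of the K\"unneth formula for profinite groups yields a natural isomorphism $H^i(U(\cO_K),\cO/\varpi^m)\cong\Hom_{\Zp}(\wedge^i_{\Zp}U(\cO_K),\cO/\varpi^m)$. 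Naturality in $U(\cO_K)$ under continuous automorphisms gives the $G(\cO_K)$-equivariance: the conjugation action on $U(\cO_K)$ induces the action on the left-hand side by functoriality of group cohomology, and this matches the action induced on $\wedge^i_{\Zp}U(\cO_K)$. The last identification is formal, using $\Zp$-freeness of $U(\cO_K)$ to get $\wedge^i_\cO(U(\cO_K)\otimes_{\Zp}\cO)=(\wedge^i_{\Zp}U(\cO_K))\otimes_{\Zp}\cO$, together with extension-of-scalars adjunction.

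For part (2), I would set $\mathfrak{n}_\cO=U(\cO_K)\otimes_{\Zp}\cO$, an $\cO$-lattice in $\mathfrak{n}_E=\mathfrak{n}_\cO[1/p]=(\Lie U)\otimes E$ stable under $G(\cO_K)$. The key structural input is that, because $U$ is abelian, the Chevalley--Eilenberg differential on $\wedge^\bullet_\cO\Hom_\cO(\mathfrak{n}_\cO,\cO)=\Hom_\cO(\wedge^\bullet_\cO\mathfrak{n}_\cO,\cO)$ vanishes identically, so this complex computes $H^\bullet(\mathfrak{n}_\cO,\cO)$ in each degree. Over $E$, Kostant's theorem gives a $G_E$-equivariant decomposition $H^i(\mathfrak{n}_E,E)\cong\bigoplus_{w\in W^P_{\overline{v}},\ \ell(w)=i}V_{\lambda_w}$ with $\lambda_w=w(\rho_{\overline{v}})-\rho_{\overline{v}}$. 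The hypothesis $p\geq 2n-1$ is precisely the statement that the trivial representation of $\widetilde{G}$ has $p$-small highest weight, i.e.\ $\langle\rho_{\overline{v}},\alpha^\vee\rangle\leq p$ for every positive root $\alpha$ of $\widetilde{G}$ (for $\GL_{2n}$ the largest such pairing equals $2n-1$). Under this hypothesis I would upgrade the characteristic-zero decomposition to an $\cO$-integral one using the integral Kostant/BGG theorem of Polo--Tilouine over $\Z_{(p)}$ for parabolic subalgebras with $p$-small coefficients (one may also view the relevant fact concretely: at each embedding $\mathfrak{n}_\cO^\vee$ is an exterior power of a tensor product of standard modules for the two $\GL_n$-factors of the Levi, so the dual Cauchy formula organises the pieces). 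One then reads off that each summand is, up to homothety, the lattice $\cV_{\lambda_w}$ of \S\ref{sec:unitary_group_setup}: every $\lambda_w$ has coordinate spread at most $n$ on each $\GL_n$-block, hence is $p$-small for $G$ once $p\geq 2n-1$, so $\cV_{\lambda_w}$ is the unique $G(\cO_K)$-stable lattice in $V_{\lambda_w}$ up to scalar; matching multiplicities against the $E$-computation yields the asserted $G(\cO_K)$-equivariant isomorphism $H^i(\mathfrak{n}_\cO,\cO)\cong\bigoplus_{\ell(w)=i}\cV_{\lambda_w}$.

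The step I expect to be the main obstacle is precisely this passage from the $E$-decomposition to the $\cO$-decomposition in part (2): over $\cO$ one must simultaneously ensure that no denominators appear when splitting $\wedge^i\mathfrak{n}_\cO^\vee$ into isotypic summands and that those summands are the \emph{specific} lattices $\cV_{\lambda_w}$ rather than arbitrary lattices in the correct rational isotypic spaces. This is exactly what the $p$-smallness bound $p\geq 2n-1$ is there to secure, and the cleanest route is to cite (or mildly adapt) the integral BGG complexes of Polo--Tilouine; the remaining ingredients are either routine homological algebra (part (1)) or a direct appeal to Kostant's theorem in characteristic zero.
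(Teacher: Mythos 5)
Your part (1) is essentially the paper's argument (cup product from $H^1$ plus K\"unneth, with equivariance by functoriality), so there is nothing to add there. Your part (2) is also correct, but it takes a different route at the integral step. The paper first decomposes embedding-by-embedding, applies Kostant \cite{kostant} over $E$, and then argues directly with the lattice: since $p\ge 2n-1$, \cite[Cor.~II.5.6]{MR2015057} shows $\cV_{\lambda_w}\otimes_\cO k$ is simple, so intersecting $\Hom_{\cO}(\wedge^i_{\cO}(U(\cO_K)\otimes_{\cO_K,\tau}\cO),\cO)$ with each rational isotypic copy of $V_{\lambda_w}$ produces a lattice homothetic to $\cV_{\lambda_w}$; the splitting of the whole lattice into the direct sum of these pieces is then obtained from the absence of nontrivial extensions between the distinct simples $\cV_{\lambda_w}\otimes_\cO k$ (the remark following Cor.~II.5.6) together with the universal coefficient theorem \cite[Prop.~I.4.18a]{MR2015057}, which kills $\Ext^1$ over $\cO$. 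You instead outsource the integral decomposition to the Polo--Tilouine integral Kostant/BGG theorem for $p$-small weights (the trivial weight is $p$-small exactly when $p\ge 2n-1$ for $\GL_{2n}$, as you say), and then identify the resulting summands with the specific lattices $\cV_{\lambda_w}$ by uniqueness of $G(\cO_K)$-stable lattices up to homothety, which again rests on residual irreducibility of $\cV_{\lambda_w}$ and hence on the same bound. Both arguments turn on the same lowest-alcove facts; the paper's version is self-contained modulo Jantzen's book and avoids invoking the BGG machinery (note that Polo--Tilouine is not among the paper's references), while your version is more ``off the shelf'' and would extend with no extra work to nontrivial $p$-small coefficient systems; your care in matching Polo--Tilouine's Weyl-type lattices with the lattices $\cV_{\lambda_w}$ of \S\ref{sec:unitary_group_setup} via the homothety argument is exactly the point that needs saying, and you say it.
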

\begin{proof}
	Note that $U(\cO_K)$ is isomorphic (as an abstract group) to $\Z_p^{n^2[K : \Q_p]}$. The usual isomorphism $H^1(U(\cO_K), \cO / \varpi^m) \cong \Hom_{\Zp}(U(\cO_K), \cO / \varpi^m)$ extends, by cup product, to a morphism $\wedge^\ast \Hom_{\Zp}(U(\cO_K), \cO / \varpi^m) \to H^\ast(U(\cO_K), \cO / \varpi^m)$. This can be seen to be an isomorphism using the K\"unneth formula. This proves the first part of the lemma.  
	
	For the second part,  given $\tau \in \Hom_{\Q_p}(K, E)$ and $w \in W_\tau^P$, let $\lambda_w = w(\rho_\tau) - \rho_\tau \in (\Z^n_+)^2$. It is enough for us to show that for each $i \in \Z_{\ge 0}$ there is a 
	$G(\cO_K)$-equivariant isomorphism 
	\[\Hom_{\cO}(\wedge^i_{\cO}(U(\cO_K)\otimes_{\cO_K,\tau} \cO),\cO) \cong 
	\bigoplus_{\substack{w \in W_{\tau}^P\\
			l(w)=i}}\cV_{\lambda_w}.\]
	After tensoring up to $E$ we do have such an isomorphism, by \cite{kostant}:
	\[\Hom_{\cO}(\wedge^i_{\cO}(U(\cO_K)\otimes_{\cO_K,\tau} \cO),E) \cong 
	\bigoplus_{\substack{w \in W_{\tau}^P\\
			l(w)=i}}V_{\lambda_w}.\]
	Since $p \ge 2n-1 $, it follows from \cite[Cor.~II.5.6]{MR2015057} that 
	$\cV_{\lambda_w}\otimes_{\cO}k$ is a simple 
	$G \otimes_{\cO_K, \tau} k$-module for all $w \in W_{\tau}^P$. It follows that 
	intersecting 
	the lattice $\Hom_{\cO}(\wedge^i_{\cO}(U(\cO_K)\otimes_{\cO_K,\tau} \cO),\cO)$ 
	with 
	a copy of $V_{\lambda_w}$ arising from the above 
	decomposition gives a sublattice isomorphic to 
	$\cV_{\lambda_w}$.   By the remark 
	following \cite[Cor.~II.5.6]{MR2015057}, we know that there are no 
	non-trivial extensions between the simple modules 
	$\cV_{\lambda_w}\otimes_{\cO}k$ with varying $w$.  
	Combining this with the universal coefficient theorem 
	\cite[Prop.~I.4.18a]{MR2015057} we deduce that there are also no 
	non-trivial 
	extensions between the $G \otimes_{\cO_K, \tau} \cO$-modules 
	$\cV_{\lambda_w}$. This implies the existence of the desired isomorphism.
\end{proof}

\begin{lemma}\label{lem:direct sum decomposition for trivial coefficients} 
	Let $\overline{v} \in \overline{S}_p$, let $K = F^+_{\overline{v}}$, and fix an integer $m \geq 1$. Suppose that $p > n^2$.
	Then we have a 
	natural isomorphism (inducing the identity on 
	cohomology) 
	\[
	R\Gamma(U(\cO_K), \cO/\varpi^m)\toisom \bigoplus_{i=0}^{n^2 [K : \Q_p]} 
	H^i(U(\cO_K), \cO/\varpi^m)[-i]
	\] 
	in $\mathbf{D}(\cO/\varpi^m[G(\cO_K)])$. 
\end{lemma}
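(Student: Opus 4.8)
The plan is to produce an explicit $G(\cO_K)$-equivariant quasi-isomorphism from the formal complex $\bigoplus_i H^i(U(\cO_K),\cO/\varpi^m)[-i]$ to the (continuous) cochain complex $C^\bullet$ computing $R\Gamma(U(\cO_K),\cO/\varpi^m)$, and then to invert it in $\mathbf{D}(\cO/\varpi^m[G(\cO_K)])$; the induced map on cohomology will visibly be the identity. Since $U$ is abelian and the coefficients are trivial we have $B^1 = 0$, so the space $Z^1$ of $1$-cocycles is canonically, and $G(\cO_K)$-equivariantly, identified with $\Hom_{\Z_p}(U(\cO_K),\cO/\varpi^m) = H^1$; and by Lemma~\ref{lem:integral_kostant}(1) (whose proof is just the Künneth formula applied to cup product) the cohomology ring $H^\bullet(U(\cO_K),\cO/\varpi^m)$ is the exterior algebra $\bigwedge^\bullet_{\cO/\varpi^m} H^1$. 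Thus $\bigoplus_i H^i[-i]$ is represented by $(\bigwedge^\bullet H^1, 0)$, and the task reduces to lifting the tautological cohomology isomorphism $(\bigwedge^\bullet H^1,0)\to H^\bullet(C^\bullet)$ to an actual $G(\cO_K)$-equivariant chain map $(\bigwedge^\bullet H^1,0)\to C^\bullet$.

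First I would use that $\cO_K = \cO_{F^+_{\overline{v}}}$ is unramified over $\Z_p$ to split $U(\cO_K)\otimes_{\Z_p}\cO = \bigoplus_{\sigma} U_\sigma$ over the embeddings $\sigma\colon F^+_{\overline{v}}\hookrightarrow E$, where $U_\sigma = U(\cO_K)\otimes_{\cO_K,\sigma}\cO$ is free over $\cO$ of rank $\dim_{\cO_{F^+}} U = n^2$. Since $G = \Res_{\cO_F/\cO_{F^+}}\GL_n$ acts $\cO_{F^+}$-linearly on $U$, the group $G(\cO_K)$ preserves this decomposition, so $H^1 = \bigoplus_\sigma H^1_\sigma$ with $H^1_\sigma = \Hom_\cO(U_\sigma,\cO/\varpi^m)$ a $G(\cO_K)$-submodule of rank $n^2$. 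Now fix $\sigma$. Because $p$ is prime and $p>n^2$, the integer $k!$ is a unit in $\cO/\varpi^m$ for every $k\le n^2$, so I can define a $G(\cO_K)$-equivariant chain map
\[
\iota_\sigma\colon (\textstyle\bigwedge^\bullet_\cO H^1_\sigma)\otimes_\cO \cO/\varpi^m \longrightarrow C^\bullet, \qquad \phi_1\wedge\cdots\wedge\phi_k \longmapsto \tfrac{1}{k!}\sum_{\pi\in S_k}\operatorname{sgn}(\pi)\,\phi_{\pi(1)}\cup\cdots\cup\phi_{\pi(k)},
\]
with $\phi_j\in H^1_\sigma = Z^1$. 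This is well-defined ($\Z_p$-multilinear and alternating), takes values in cocycles (a cup product of cocycles is a cocycle), is $G(\cO_K)$-equivariant (cup product and the identification $H^1 = Z^1$ are functorial, and alternation is a fixed $\cO/\varpi^m$-linear operation), and induces on cohomology the tautological map $\bigwedge^\bullet H^1_\sigma \to H^\bullet$ (by graded-commutativity of cup product on $H^1$).

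Finally, fixing once and for all a total order on the set of embeddings $\sigma$, I would assemble the $\iota_\sigma$ by cup product into a single chain map
\[
(\textstyle\bigwedge^\bullet_\cO(\bigoplus_\sigma H^1_\sigma))\otimes_\cO \cO/\varpi^m \;=\; \textstyle\bigotimes_\sigma\big((\bigwedge^\bullet_\cO H^1_\sigma)\otimes_\cO\cO/\varpi^m\big) \;\longrightarrow\; C^\bullet,
\]
using that the canonical identification $\bigwedge^\bullet(\bigoplus_\sigma V_\sigma) = \bigotimes_\sigma\bigwedge^\bullet V_\sigma$ involves no denominators. The composite again takes values in cocycles, is $G(\cO_K)$-equivariant, and — by multiplicativity of cup product together with the exterior-algebra description of $H^\bullet$ — induces the identity on cohomology, hence is a quasi-isomorphism; since each $C^k$ is a smooth $\cO/\varpi^m[G(\cO_K)]$-module, this is an isomorphism in $\mathbf{D}(\cO/\varpi^m[G(\cO_K)])$, and inverting it yields the lemma.

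The one genuinely delicate point, and the only place the hypothesis $p > n^2$ is used, is the choice to alternate \emph{within each embedding-block separately}: a single global alternation over all $n^2[K:\Q_p]$ coordinates would require $(n^2[K:\Q_p])!$ to be invertible mod $\varpi^m$, which is far stronger than we may assume, whereas the block decomposition — available precisely because $p$ is unramified in $F$ and the $G$-action is $\cO_{F^+}$-linear — reduces the requirement to the invertibility of $n^2!$. Everything else (the smoothness of the $C^k$, the identification of the induced map on cohomology through Lemma~\ref{lem:integral_kostant}(1), and the compatibility of the per-block maps with the global cup product) I expect to be routine.
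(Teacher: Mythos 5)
Your proof is correct, but it takes a genuinely different route from the paper's. The paper first splits off $H^0$ and then exploits the centre of $G(\cO_K)$: it chooses $z\in Z(G(\cO_K))$ of order $p^f-1$ (with $f=[K:\Q_p]$) acting on $U(\cO_K)$ as multiplication by a primitive $(p^f-1)$-st root of unity $\zeta$, so that by Lemma~\ref{lem:integral_kostant} the block of $H^i(U(\cO_K),\cO/\varpi^m)$ indexed by a multidegree $(i_\sigma)$ is an eigenspace for $z$ with eigenvalue $\prod_\sigma\sigma(\zeta)^{-i_\sigma}$; the hypothesis $p>n^2$ makes these eigenvalues (read as base-$p$ digits modulo $p^f-1$) determine the total degree, up to the single collision between degrees $0$ and $n^2f$ when $p=n^2+1$, which is exactly why $H^0$ is split off first. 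One then gets idempotents $e_i\in\cO[z]$ acting on $\tau_{\ge 1}R\Gamma(U(\cO_K),\cO/\varpi^m)$ through the derived endomorphism algebra, and idempotent-completeness of the derived category yields the decomposition with no chain-level construction and no choice of model. You instead produce an explicit $G(\cO_K)$-equivariant quasi-isomorphism from the exterior algebra by blockwise antisymmetrized cup products, with $p>n^2$ entering as invertibility of $k!$ for $k\le n^2$; both arguments rest on the same $G(\cO_K)$-stable, embedding-wise decomposition of $U(\cO_K)\otimes_{\Z_p}\cO$ (yours to keep the factorials down to $(n^2)!$, the paper's to make the digit argument work). What your route buys is an explicit (indeed multiplicative) formality quasi-isomorphism and no case distinction at $p=n^2+1$; what it costs is having to fix a cochain model of $R\Gamma(U(\cO_K),\cO/\varpi^m)$ as an object of $\mathbf{D}(\cO/\varpi^m[G(\cO_K)])$ (continuous cochains with the conjugation action, legitimate here since $G(\cO_K)$ normalizes $U(\cO_K)$ and the coefficients are discrete) and to check equivariance and the effect on cohomology at the cochain level, which you do; note also that identifying the induced map on cohomology with the cup-product isomorphism of Lemma~\ref{lem:integral_kostant} uses graded commutativity and implicitly that $p$ is odd, harmless since $p>n^2\ge 4$.
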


\begin{proof} 
	We have already observed that there is an isomorphism
	\[  R\Gamma(U(\cO_K), \cO/\varpi^m)\toisom H^0(U(\cO_K), \cO/\varpi^m) \oplus \tau_{\geq 1} R\Gamma(U(\cO_K), \cO/\varpi^m) \]
	(see claim (1) in the proof of Theorem \ref{direct summand at finite
          level}). %
        Under the assumption that $p > n^2$ we can distinguish the remaining
	degrees of 
	cohomology appearing in the above direct sum using the action of central 
	elements of $G(\cO_K)$. Let $f = [K : \Q_p]$. The centre of $G(\cO_K)$ is 
	$(\cO_F\otimes_{\cO_{F^+}} \cO_K)^\times$ and an element $z \in 
	(\cO_F\otimes_{\cO_{F^+}}\cO_K)^\times$ acts on $U(\cO_K)$ as multiplication by 
	$(N_{F/F^+}\otimes \mathrm{id})(z) \in \cO_K^\times$. We denote by $\zeta$ 
	a 
	primitive 
	$p^f-1$ root of unity in $\cO_K^\times$. We 
	can choose an element $z$ of the centre of $G(\cO_K)$ of order $p^f - 1$ which acts as 
	multiplication by $\zeta$ on $U(\cO_K)$. It follows from Lemma \ref{lem:integral_kostant} and the decomposition \[U(\cO_K)\otimes_{\Zp}\cO = 
	\bigoplus_{\sigma: 
		\cO_K \hookrightarrow \cO} U(\cO_K)\otimes_{\cO_K, \sigma}\cO\] that, for each 
	degree $i$, 
	we have a decomposition of $H^i(U(\cO_K), 
	\cO/\varpi^m)$ into a direct sum  of $G(\cO_K)$-modules \[M_{(i_\sigma)} = 
	\Hom_{\cO}\left(\bigotimes_{\sigma} 
	\wedge^{i_\sigma}_{\cO}(U(\cO_K)\otimes_{\cO_K, \sigma}
	\cO),\cO/\varpi^m\right)\]
	indexed 
	by $f$-tuples 
	of integers \[\{(i_\sigma)_{\sigma: \cO_K \hookrightarrow\cO}: 0\le 
	i_\sigma 
	\le n^2, \sum_{\sigma}i_\sigma = i\}.\]
	The action of $z$ on $M_{(i_\sigma)}$ is multiplication by 
	$\prod_{\sigma}\sigma(\zeta)^{-i_\sigma}$, so if we fix an embedding 
	$\sigma_0$ 
	and write $i_j$ for the $j$th Frobenius twist of $\sigma_0$ then $z$ acts 
	as 
	multiplication by $\sigma_0(\zeta)^{-\sum_{j=0}^{f-1} i_jp^j}$. Since 
	we 
	are 
	assuming $p > n^2$, the value of $\sum_{j=0}^{f-1} i_jp^j$ mod $p^f - 1$ 
	determines the integers $i_j$ uniquely, with the exception (only occurring 
	if $p 
	= n^2 + 1$) of when this value is $0$ mod $p^f-1$, in which case there are 
	two 
	possibilities: $i_\sigma = 0$ for all $\sigma$ and $i_\sigma = p-1$ for all 
	$\sigma$. As a consequence, for each degree $1 \le i \le n^2f$ we can write 
	down an idempotent $e_i \in \cO[z]$ which induces the identity on 
	$H^i(U(\cO_K), 
	\cO/\varpi^m)$ and the zero map on other degrees $i' \ne i$. 
	There is a homomorphism $\cO[z] \to \End_{\mathbf{D}(\cO/\varpi^m[G(\cO_K)])}(\tau_{\geq 1}R\Gamma(U(\cO_K), \cO / \varpi^m))$, so the idempotent-completeness of the derived 
	category 
	implies the existence of a natural decomposition \[
	\tau_{\ge 1}R\Gamma(U(\cO_K), \cO/\varpi^m) =  
	\bigoplus_{i=1}^{n^2f} e_i R\Gamma(U(\cO_K), 
	\cO/\varpi^m).
	\]
	This completes the proof.
\end{proof}

\subsection{Cohomology in the middle degree}

In this section we state the fundamental result that we need to study cohomology in the middle degree using automorphic representations of $\widetilde{G}$. We first need to recall a definition (\cite[Defn.\ 1.9]{caraiani-scholze-compact} --- although note that since our representations are in characteristic~$p$, 
the roles of~$p$ and~$\auxp$ are reversed).
\begin{defn}\label{defn:decomposed_generic}  Let~$k$ be a finite field of characteristic~$p$. \leavevmode
	\begin{enumerate}
		\item Let $\auxp \neq p$ be a prime, and let $L / \bQ_{\auxp}$ be a finite extension. We say that a continuous representation $\overline{r} : G_L \to \GL_n(k)$ is generic if it is unramified and the eigenvalues (with multiplicity) $\alpha_1, \dots, \alpha_n \in \overline{k}$ of $\overline{r}(\Frob_L)$ satisfy $\alpha_i / \alpha_j \neq  | \cO_L / \ffrm_L |$
for all $i \neq j$.
		\item Let $L$ be a number field, and let $\overline{r} : G_L \to \GL_n(k)$ be a continuous representation. We say that a prime $\auxp \neq p$ is decomposed generic for $\overline{r}$ if $\auxp$ splits completely in $L$ and for all places $v | \auxp$ of $L$, $\overline{r}|_{G_{L_v}}$ is generic.
		\item Let $L$ be a number field, and let $\overline{r} : G_L \to \GL_n(k)$ be a continuous representation. We say that $\overline{r}$ is decomposed generic if there exists a prime~$\auxp \neq p$ which is decomposed generic for $\overline{r}$.
	\end{enumerate}
\end{defn}
Note that if $\barr$ and $\barr'$ give rise to the same projective
representation then one is (decomposed) generic if and only if the other is.
\begin{lem}\label{lem:weak_generic}
Let $L$ be a number field, and let $\overline{r} : G_L \to \GL_n(k)$ be a continuous representation.
	Suppose that $\overline{r}$ is decomposed generic. Then there exist infinitely many primes $\auxp \neq p$ 
	which are decomposed generic for $\overline{r}$.
\end{lem}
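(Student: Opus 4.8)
The plan is to reduce the statement to the Chebotarev density theorem applied to a suitable number field. By hypothesis there is at least one prime $\auxp_0 \neq p$ that is decomposed generic for $\overline{r}$, and I will produce infinitely many others by ``cloning'' the splitting behaviour of $\auxp_0$.

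First I would set up the relevant field. Since $\overline{r}$ has open kernel, it cuts out a finite Galois extension $F_1/L$ with $\Gal(F_1/L) \cong \im(\overline{r})$. Let $M$ be the Galois closure over $\Q$ of the compositum $F_1(\zeta_p)$, and put $\Gamma = \Gal(M/\Q)$, $H = \Gal(M/L)$, and $N' = \Gal(M/F_1) \trianglelefteq H$; note that $M$ contains the Galois closure $\widetilde{L}$ of $L/\Q$ and contains $\Q(\zeta_p)$, and that $H/N' = \Gal(F_1/L) = \im(\overline{r})$. The \emph{key observation} is that, for a prime $\auxp$ unramified in $M$, the following data depends only on the conjugacy class $\Frob_\auxp \subseteq \Gamma$: (i) whether $\auxp$ splits completely in $L$; (ii) the collection of $\GL_n(k)$-conjugacy classes $\{[\overline{r}(\Frob_v)] : v \mid \auxp \text{ of } L\}$; and (iii) the residue $\auxp \bmod p$. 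Indeed, $\auxp$ splits completely in $L$ iff it splits completely in $\widetilde{L} \subseteq M$, a condition on $\Frob_\auxp$; in that case a decomposition group of $M$ over $\auxp$ is a cyclic group $\langle \sigma \rangle$ with $\sigma \in \Frob_\auxp \subseteq \Gal(M/\widetilde{L}) \subseteq H$, the places $v \mid \auxp$ of $L$ are indexed by $H \backslash \Gamma$, and the place indexed by $Hg$ has $\overline{r}(\Frob_v) = g\sigma g^{-1} \bmod N' \in H/N' = \im(\overline{r})$; finally $\auxp \bmod p$ is recorded by $\Frob_\auxp|_{\Q(\zeta_p)}$.

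Next I would run Chebotarev. The prime $\auxp_0$ is unramified in $M$ (it splits completely in $L$ and $\overline{r}$ is unramified at all $v_0 \mid \auxp_0$, so $\auxp_0$ is unramified in $F_1/\Q$ and hence in its Galois closure; and $\auxp_0 \neq p$), so $\mathcal{C} := \Frob_{\auxp_0} \subseteq \Gamma$ is defined. By the Chebotarev density theorem there are infinitely many primes $\auxp$, unramified in $M$ and different from $p$, with $\Frob_\auxp = \mathcal{C}$. For any such $\auxp$: it splits completely in $L$ (because $\auxp_0$ does); for each $v \mid \auxp$ the representation $\overline{r}|_{G_{L_v}}$ is unramified and, using the key observation with the \emph{same} $\sigma \in \mathcal{C}$ for both $\auxp$ and $\auxp_0$, $\overline{r}(\Frob_v)$ is conjugate to $\overline{r}(\Frob_{v_0})$ for a corresponding place $v_0 \mid \auxp_0$; hence its eigenvalues $\alpha_1, \dots, \alpha_n \in \overline{k}$ satisfy $\alpha_i/\alpha_j \notin \{1, \auxp_0\}$ for $i \neq j$; and since $\Frob_\auxp|_{\Q(\zeta_p)} = \Frob_{\auxp_0}|_{\Q(\zeta_p)}$ we have $\auxp \equiv \auxp_0 \pmod p$, so $\auxp$ and $\auxp_0$ have the same image in $\overline{k}$, whence $\alpha_i/\alpha_j \notin \{1, \auxp\}$, and $\auxp = |\cO_{L_v}/\mathfrak{m}_{L_v}|$ as $\auxp$ splits completely in $L$. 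Thus $\overline{r}|_{G_{L_v}}$ is generic for every $v \mid \auxp$, i.e.\ $\auxp$ is decomposed generic for $\overline{r}$; and there are infinitely many such $\auxp$.

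The one delicate point — and the place I would be most careful — is the bookkeeping inside the key observation: matching the places $v \mid \auxp$ with the places $v_0 \mid \auxp_0$ and checking that the associated Frobenius conjugacy classes in $\im(\overline{r})$ genuinely coincide. This is a routine double-coset/decomposition-group computation, made clean by the fact that $\Frob_\auxp = \Frob_{\auxp_0}$ allows one to use literally the same element $\sigma \in \Gamma$ for both primes, so that the indexing set $H\backslash\Gamma$ and all the classes $g\sigma g^{-1} \bmod N'$ are the same; everything else is immediate.
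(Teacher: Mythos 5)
Your proposal is correct and is essentially the paper's proof: the paper also takes the Galois closure over $\Q$ of the extension of $L(\zeta_p)$ cut out by $\overline{r}$ (your $M$), notes that any prime unramified there whose Frobenius class agrees with that of $\auxp_0$ is again decomposed generic, and concludes by Chebotarev. You have merely written out the decomposition-group bookkeeping and the role of $\zeta_p$ (forcing $\auxp \equiv \auxp_0 \bmod p$) that the paper leaves implicit, and those details are handled correctly.
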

\begin{proof}
	Let $K'/\bQ$ denote the Galois closure of the extension of
        $L(\zeta_p)$ cut out by $\overline{r}$. Let~$\auxp_0$ be a
        prime which is decomposed generic for~$\overline{r}$; then any
        other prime $\auxp$ which is unramified in $K'$ and such that
        $\Frob_{\auxp}$, $\Frob_{\auxp_0}$ lie in the same conjugacy
        class of $\Gal(K' / \bQ)$ is also decomposed generic for $\overline{r}$. There are infinitely many such primes, by the Chebotarev density theorem. 
\end{proof}
Let $d = n^2[F^+ : \Q] = \frac{1}{2} \dim_{\R} \widetilde{X} = \dim_{\R} X + 1$.
\begin{thm}\label{what we get from CS} Suppose that $[F^+ : \Q] > 1$. 
Let $\widetilde{\m} \subset \widetilde{\T}^S(\widetilde{K}, \widetilde{\lambda})$ be a maximal ideal,
and suppose that~$\overline{\rho}_{\widetilde{\m}}$ has length at most~$2$.
 Suppose that $S$ satisfies the following condition:
\begin{itemize}
	\item Let $v$ be a finite place of $F$ not contained in $S$, and let $l$ be its residue characteristic. Then either $S$ contains no $l$-adic places of $F$ and $l$ is unramified in $F$, or there exists an imaginary quadratic field $F_0 \subset F$ in which $l$ splits.
\end{itemize}	
 Suppose that $\overline{\rho}_{\widetilde{\m}}$ is decomposed generic, in the sense of 	Definition~\ref{defn:decomposed_generic}. 
 Then we have 
	\[
	H^d(\widetilde{X}_{\widetilde{K}},\cV_{\widetilde{\lambda}}[1/p])_{\widetilde{\m}}\hookleftarrow
	H^d(\widetilde{X}_{\widetilde{K}},\cV_{\widetilde{\lambda}})_{\widetilde{\m}}\twoheadrightarrow
	H^d(\partial\widetilde{X}_{\widetilde{K}},\cV_{\widetilde{\lambda}})_{\widetilde{\m}}.
	\]  
\end{thm}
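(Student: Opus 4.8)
The plan is to deduce the displayed diagram directly from the vanishing theorem of Caraiani--Scholze stated as Theorem~\ref{CSintro}. The key point is that Theorem~\ref{CSintro} (or rather the more general version that this very statement is extracting) asserts that, under the hypotheses that $F^+\ne\Q$, that $\widetilde{\m}$ is non-Eisenstein — here replaced by the weaker hypothesis that $\overline{\rho}_{\widetilde{\m}}$ has length at most $2$, which still rules out contributions from the ``deep'' strata — and that $\overline{\rho}_{\widetilde{\m}}$ is decomposed generic, one has
\[
H^i(\widetilde{X}_{\widetilde{K}},\cV_{\widetilde{\lambda}}/\varpi)_{\widetilde{\m}}=0\ \text{for}\ i<d,\qquad
H^i_c(\widetilde{X}_{\widetilde{K}},\cV_{\widetilde{\lambda}}/\varpi)_{\widetilde{\m}}=0\ \text{for}\ i>d.
\]
First I would reduce to this mod-$\varpi$ statement: since $R\Gamma(\widetilde{X}_{\widetilde{K}},\cV_{\widetilde{\lambda}})_{\widetilde{\m}}$ is a perfect complex of $\cO$-modules (Lemma~\ref{lem:cohomology_is_perfect}), concentration of the reduction mod $\varpi$ in a single degree forces concentration of the whole complex, and likewise for compactly supported cohomology; so $H^i(\widetilde{X}_{\widetilde{K}},\cV_{\widetilde{\lambda}})_{\widetilde{\m}}=0$ for $i<d$ and $H^i_c(\widetilde{X}_{\widetilde{K}},\cV_{\widetilde{\lambda}})_{\widetilde{\m}}=0$ for $i>d$, and the same after inverting $p$.

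Next I would produce the two maps. The left arrow is simply the localization at $\widetilde{\m}$ of the natural map $H^d(\widetilde{X}_{\widetilde{K}},\cV_{\widetilde{\lambda}})\to H^d(\widetilde{X}_{\widetilde{K}},\cV_{\widetilde{\lambda}}[1/p])$; injectivity follows because its kernel is the $\varpi$-power torsion in $H^d(\widetilde{X}_{\widetilde{K}},\cV_{\widetilde{\lambda}})_{\widetilde{\m}}$, and this torsion vanishes: it is a quotient of $H^{d}_c$ torsion issues — more cleanly, the universal coefficient / long exact sequence relating $H^\ast(\cV_{\widetilde{\lambda}})$ and $H^\ast(\cV_{\widetilde{\lambda}}/\varpi^m)$ shows that torsion in $H^d_{\widetilde{\m}}$ is a subquotient of $H^{d-1}(\widetilde{X}_{\widetilde{K}},\cV_{\widetilde{\lambda}}/\varpi)_{\widetilde{\m}}$, which is $0$ by the vanishing for $i<d$. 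The right arrow is the localization at $\widetilde{\m}$ of the restriction-to-boundary map $H^d(\widetilde{X}_{\widetilde{K}},\cV_{\widetilde{\lambda}})\to H^d(\partial\widetilde{X}_{\widetilde{K}},\cV_{\widetilde{\lambda}})$ coming from the long exact sequence
\[
\cdots\to H^d_c(\widetilde{X}_{\widetilde{K}},\cV_{\widetilde{\lambda}})_{\widetilde{\m}}\to H^d(\widetilde{X}_{\widetilde{K}},\cV_{\widetilde{\lambda}})_{\widetilde{\m}}\to H^d(\partial\widetilde{X}_{\widetilde{K}},\cV_{\widetilde{\lambda}})_{\widetilde{\m}}\to H^{d+1}_c(\widetilde{X}_{\widetilde{K}},\cV_{\widetilde{\lambda}})_{\widetilde{\m}}\to\cdots;
\]
surjectivity is immediate because $H^{d+1}_c(\widetilde{X}_{\widetilde{K}},\cV_{\widetilde{\lambda}})_{\widetilde{\m}}=0$ by the vanishing for $i>d$ (noting $d+1>d$). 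All maps are manifestly $\widetilde{\T}^S$-equivariant since they come from maps of complexes in $\mathbf{D}(\cO)$ carrying the Hecke action.

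The only genuine content is the invocation of Theorem~\ref{CSintro}, and the one subtlety I would need to address is the hypothesis on $\widetilde{\m}$: the version of~\cite{caraiani-scholze-noncompact} quoted in Theorem~\ref{CSintro} is phrased for non-Eisenstein $\widetilde{\m}$, whereas here we only assume $\overline{\rho}_{\widetilde{\m}}$ has length at most $2$. The hard part will be checking that the Caraiani--Scholze argument goes through under this weaker hypothesis — concretely, that the strata of the Borel--Serre (or rather the relevant compactification in their perfectoid setup) labelled by parabolics whose Levi forces $\overline{\rho}_{\widetilde{\m}}$ to have at least three Jordan--Hölder constituents contribute nothing after localization, which is exactly the mechanism already used in the proof of Theorem~\ref{thm:reduction_to_Siegel} and Theorem~\ref{thm:automorphic reps contributing to char 0}. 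Granting that (it is precisely the ``more general statement'' alluded to in the statement), the rest is the formal homological bookkeeping sketched above.
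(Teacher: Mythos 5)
Your proposal is correct and follows essentially the same route as the paper: invoke the mod-$\varpi$ vanishing from~\cite{caraiani-scholze-noncompact} (which is cited there directly under exactly the hypotheses of the theorem, so the "subtlety" you flag requires no extra work), deduce $\varpi$-torsion-freeness of $H^d(\widetilde{X}_{\widetilde{K}},\cV_{\widetilde{\lambda}})_{\widetilde{\m}}$ from $H^{d-1}(\widetilde{X}_{\widetilde{K}},\cV_{\widetilde{\lambda}}/\varpi)_{\widetilde{\m}}=0$ via the Bockstein sequence, and get surjectivity onto boundary cohomology from the excision sequence together with the vanishing of $H^{d+1}_c(\widetilde{X}_{\widetilde{K}},\cV_{\widetilde{\lambda}})_{\widetilde{\m}}$. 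The only quibble is your phrase about the mod-$\varpi$ cohomology being "concentrated in a single degree" — the vanishing is only one-sided for $H^\ast$ and for $H^\ast_c$ separately — but the integral statements you actually use follow from that one-sided vanishing by Nakayama, exactly as in the paper.
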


\begin{proof} This is an immediate consequence of the main result 
in~\cite{caraiani-scholze-noncompact}. This states that 
	\[
	H^i(\widetilde{X}_{\widetilde{K}}, 
	\cV_{\widetilde{\lambda}}/\varpi)_{\widetilde{\m}} = 0\ 
	\mathrm{if}\ i<d,\ 
	\mathrm{and}\  
	H^i_c(\widetilde{X}_{\widetilde{K}}, 
	\cV_{\widetilde{\lambda}}/\varpi)_{\widetilde{\m}} = 0\ 
	\mathrm{if}\ i>d,
	\]
	under the assumptions on $\widetilde{\m}$ in the statement of the theorem. 
	By considering the short exact sequence of sheaves of $\cO$-modules on 
	$\widetilde{X}_{\widetilde{K}}$
	\[
	0\to \cV_{\widetilde{\lambda}} \to 
	\cV_{\widetilde{\lambda}}\to 
	\cV_{\widetilde{\lambda}}/\varpi \to 0
	\]
	and taking cohomology, we see that 
	$H^d(\widetilde{X}_{\widetilde{K}},\cV_{\widetilde{\lambda}})_{\widetilde{\m}}[\varpi]=0$, 
	since 
	$H^{d-1}(\widetilde{X}_{\widetilde{K}},\cV_{\widetilde{\lambda}}/\varpi)=0$.
	 By considering the excision sequence for
	\[
	\widetilde{X}_{\widetilde{K}}\hookrightarrow 
	\overline{\widetilde{X}}_{\widetilde{K}},
	\]
	we see that the cokernel of the map 
	$H^d(\widetilde{X}_{\widetilde{K}},\cV_{\widetilde{\lambda}})_{\widetilde{\m}}\to
	H^d(\partial\widetilde{X}_{\widetilde{K}},\cV_{\widetilde{\lambda}})_{\widetilde{\m}}$
	 injects into 
	$H_{c}^{d+1}(\widetilde{X}_{\widetilde{K}},\cV_{\widetilde{\lambda}})=0$.
\end{proof}

\begin{prop}\label{prop:coh_of_boundary} Suppose that $[F^+ : \Q] > 1$. 
	Let $\widetilde{K} \subset \widetilde{G}(\A_{F^+}^\infty)$ be a good subgroup which is decomposed with respect to $P$. Let $\widetilde{\lambda} \in (\Z^{2n}_+)^{\Hom(F^+, E)}$. Fix a decomposition $\overline{S}_p = \overline{S}_1 \sqcup \overline{S}_2$. Suppose that the following conditions are satisfied: 
	\begin{enumerate}
		\item For each $\overline{v} \in \overline{S}_2$, $\widetilde{\lambda}_{\overline{v}} = 0$.
		\item Let $v$ be a finite place of $F$ not contained in $S$, and let $l$ be its residue characteristic. Then either $S$ contains no $l$-adic places of $F$ and $l$ is unramified in $F$, or there exists an imaginary quadratic field $F_0 \subset F$ in which $l$ splits.
		\item $p > n^2$. (We remind the reader of our blanket assumption in \S \ref{section:lep} that $p$ is unramified in $F$.)
	\end{enumerate}
	Let $w \in W^P_{\overline{S}_2}$, and let $\lambda_w = w(\widetilde{\lambda} + \rho) - \rho \in (\Z^n_+)^{\Hom(F, E)}$. Let $\m \subset \T^S$ be a non-Eisenstein maximal ideal in the support of $H^\ast(X_K, \cV_{\lambda_w})$, and let $\widetilde{\m} = \cS^\ast(\m) \subset \widetilde{\T}^S$, and suppose that $\overline{\rho}_{\widetilde{\m}}$ is decomposed generic. 
	Then the map $\cS: \widetilde{\T}^S \to \T^S$ descends to a homomorphism
	\[ \widetilde{\T}^S(H^d(\widetilde{X}_{\widetilde{K}}, \cV_{\widetilde{\lambda}}))_{\widetilde{\m}} \to \T^S(H^{d - l(w)}(X_K, \cV_{\lambda_w}))_\m. \]
	Moreover, the map 
	\[ \widetilde{\T}^S(H^d(\widetilde{X}_{\widetilde{K}}, \cV_{\widetilde{\lambda}}))_{\widetilde{\m}} \to \widetilde{\T}^S(H^d(\widetilde{X}_{\widetilde{K}}, \cV_{\widetilde{\lambda}}))_{\widetilde{\m}}[1/p] \]
	is injective.
\end{prop}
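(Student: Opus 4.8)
The plan is to deduce both assertions formally from two available inputs: Theorem~\ref{direct summand at finite level}, which exhibits $R\Gamma(X_K,\cV_{\lambda_w})_{\m}$ (up to the shift $[-l(w)]$) as a $\widetilde{\T}^S$-equivariant direct summand of $R\Gamma(\partial\widetilde{X}_{\widetilde{K}},\cV_{\widetilde{\lambda}})_{\widetilde{\m}}$, and Theorem~\ref{what we get from CS}, the Caraiani--Scholze vanishing theorem. First I would verify that the hypotheses of Theorem~\ref{what we get from CS} hold here: $[F^+:\Q]>1$ and the hypothesis on $S$ are assumed, $\overline{\rho}_{\widetilde{\m}}$ is assumed decomposed generic, and because $\m$ is non-Eisenstein the representation $\overline{\rho}_{\widetilde{\m}}$ --- whose semisimple Frobenius characteristic polynomials are computed from those of $\m$ via Proposition~\ref{prop_satake_transform_unramified_case} --- is a direct sum of two absolutely irreducible $n$-dimensional representations (up to twist, $\overline{\rho}_\m$ and $(\overline{\rho}_\m)^{c,\vee}$), hence has length at most $2$. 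Theorem~\ref{what we get from CS} then provides a $\widetilde{\T}^S$-equivariant surjection $H^d(\widetilde{X}_{\widetilde{K}},\cV_{\widetilde{\lambda}})_{\widetilde{\m}}\twoheadrightarrow H^d(\partial\widetilde{X}_{\widetilde{K}},\cV_{\widetilde{\lambda}})_{\widetilde{\m}}$, an injection $H^d(\widetilde{X}_{\widetilde{K}},\cV_{\widetilde{\lambda}})_{\widetilde{\m}}\hookrightarrow H^d(\widetilde{X}_{\widetilde{K}},\cV_{\widetilde{\lambda}}[1/p])_{\widetilde{\m}}$, and (from the short exact sequence step in its proof) the vanishing $H^d(\widetilde{X}_{\widetilde{K}},\cV_{\widetilde{\lambda}})_{\widetilde{\m}}[\varpi]=0$.

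Given this, I would dispatch the injectivity statement immediately. The module $H^d(\widetilde{X}_{\widetilde{K}},\cV_{\widetilde{\lambda}})_{\widetilde{\m}}$ is finitely generated over $\cO$ (Lemma~\ref{lem:finite_generation_of_cohomology}) and $\varpi$-torsion free by the above, hence $\cO$-free; so $\End_{\cO}$ of it is $\cO$-free, and therefore its $\cO$-subalgebra $\widetilde{\T}^S(H^d(\widetilde{X}_{\widetilde{K}},\cV_{\widetilde{\lambda}}))_{\widetilde{\m}}$ is $\cO$-torsion free. Since inverting $p$ on a finite $\cO$-algebra is the same as applying $-\otimes_{\cO}E$, this says precisely that $\widetilde{\T}^S(H^d(\widetilde{X}_{\widetilde{K}},\cV_{\widetilde{\lambda}}))_{\widetilde{\m}}\to\widetilde{\T}^S(H^d(\widetilde{X}_{\widetilde{K}},\cV_{\widetilde{\lambda}}))_{\widetilde{\m}}[1/p]$ is injective.

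For the descent of $\cS$, I would first check the hypotheses of Theorem~\ref{direct summand at finite level} with $\lambda:=\lambda_w$: for $\overline{v}\in\overline{S}_1$ the element $w$ is trivial on the $\overline{v}$-component, so $\lambda_{w,\overline{v}}=\widetilde{\lambda}_{\overline{v}}$; for $\overline{v}\in\overline{S}_2$ we have $\widetilde{\lambda}_{\overline{v}}=0$ and $\lambda_{w,\overline{v}}=w_{\overline{v}}(\rho_{\overline{v}})-\rho_{\overline{v}}$ with $w_{\overline{v}}\in W^P_{\overline{v}}$; and $p>n^2$ is assumed (the remaining hypothesis $\widetilde{K}_{U,\overline{v}}=U(\cO_{F^+_{\overline{v}}})$ at $p$-adic places may be arranged at the outset by a harmless Hochschild--Serre reduction). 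Theorem~\ref{direct summand at finite level} then gives, for every $m\geq1$, a $\widetilde{\T}^S$-equivariant direct summand decomposition of $R\Gamma(\partial\widetilde{X}_{\widetilde{K}},\cV_{\widetilde{\lambda}}/\varpi^m)_{\widetilde{\m}}$ with summand $R\Gamma(X_K,\cV_{\lambda_w}/\varpi^m)_{\m}[-l(w)]$; since both complexes are perfect over $\cO$ (Lemma~\ref{lem:cohomology_is_perfect}), a standard limiting argument (cf.~\cite[Lemma 3.12]{new-tho}) promotes this to a $\widetilde{\T}^S$-equivariant decomposition in $\mathbf{D}(\cO)$, and passing to $H^d$ shows that $H^{d-l(w)}(X_K,\cV_{\lambda_w})_{\m}$ is a $\widetilde{\T}^S$-equivariant direct summand of $H^d(\partial\widetilde{X}_{\widetilde{K}},\cV_{\widetilde{\lambda}})_{\widetilde{\m}}$, with $\widetilde{\T}^S$ acting on the summand through $\cS$. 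Finally I would combine this with the surjection from Theorem~\ref{what we get from CS}: if $t\in\widetilde{\T}^S$ annihilates $H^d(\widetilde{X}_{\widetilde{K}},\cV_{\widetilde{\lambda}})_{\widetilde{\m}}$, then it annihilates the quotient $H^d(\partial\widetilde{X}_{\widetilde{K}},\cV_{\widetilde{\lambda}})_{\widetilde{\m}}$, hence its direct summand $H^{d-l(w)}(X_K,\cV_{\lambda_w})_{\m}$, which means $\cS(t)$ annihilates $H^{d-l(w)}(X_K,\cV_{\lambda_w})_{\m}$; thus the composite $\widetilde{\T}^S\xrightarrow{\cS}\T^S\to\T^S(H^{d-l(w)}(X_K,\cV_{\lambda_w}))_{\m}$ kills the kernel of $\widetilde{\T}^S\to\widetilde{\T}^S(H^d(\widetilde{X}_{\widetilde{K}},\cV_{\widetilde{\lambda}}))_{\widetilde{\m}}$ and therefore factors through the latter, yielding the claimed homomorphism.

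The main obstacle lies entirely in the two cited black boxes: the Caraiani--Scholze vanishing theorem (Theorem~\ref{what we get from CS}) and the boundary direct-summand result (Theorem~\ref{direct summand at finite level}, which itself rests on Theorem~\ref{thm:reduction_to_Siegel} and the integral Kostant computation of Section~\ref{sec:direct summand at finite level}). Granting those, everything here is formal; the only points needing a little care are the weight bookkeeping required to invoke Theorem~\ref{direct summand at finite level} with $\lambda_w$, the reduction to $\widetilde{K}_{U,\overline{v}}=U(\cO_{F^+_{\overline{v}}})$, and the routine upgrade from the mod-$\varpi^m$ statements to $\mathbf{D}(\cO)$.
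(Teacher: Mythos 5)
Your proposal is correct and is exactly the paper's argument: the paper's proof consists of the single sentence ``This results on combining Theorem~\ref{what we get from CS} and Theorem~\ref{direct summand at finite level}'', and your write-up (checking length $\le 2$ of $\overline{\rho}_{\widetilde{\m}}$, the weight bookkeeping for $\lambda_w$, transporting annihilators through the surjection onto boundary cohomology, and deducing injectivity from $\varpi$-torsion-freeness of $H^d(\widetilde{X}_{\widetilde{K}},\cV_{\widetilde{\lambda}})_{\widetilde{\m}}$) is just the intended expansion of that combination.
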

\begin{proof}
	This results on combining Theorem \ref{what we get from CS} and Theorem \ref{direct summand at finite level}.
\end{proof}
We introduce some useful language.
\begin{defn}\label{defn:coh_trivial} A weight $\widetilde{\lambda} \in (\Z^{2n}_+)^{\Hom(F, E)}$ will be said to be CTG (``cohomologically trivial for $G$'') if it satisfies the following condition:
	\begin{itemize}
		\item Given $w \in W^P$, define $\lambda_w = w(\widetilde{\lambda} +\rho) - \rho \in (\Z^{n}_+)^{\Hom(F, E)}$. Then for all $w \in W^P$ and for all $i_0 \in \Z$, there exists $\tau \in \Hom(F, E)$ such that $\lambda_{w, \tau} - \lambda^\vee_{w, \tau c } \neq (i_0, i_0, \dots, i_0)$. 
	\end{itemize}
\end{defn}
This definition will be useful to us because Proposition \ref{prop:coh_of_boundary} shows how to relate a Hecke algebra for $G$ acting on cohomology with integral coefficients to a Hecke algebra for $\widetilde{G}$ acting on cohomology with rational coefficients of weight $\widetilde{\lambda}$ (say). If the weight $\widetilde{\lambda}$ is moreover CTG, then Theorem \ref{thm:automorphic reps contributing to char 0} (together with the purity lemma \cite[Lemma 4.9]{MR1044819}) shows that this rational cohomology can moreover be computed in terms of cuspidal automorphic forms for $\widetilde{G}$, which have associated Galois representations with well-understood local properties. 

Exploiting this is not straightforward since the weight for $G$ depends both on the chosen weight $\widetilde{\lambda}$ and the chosen Weyl group element $w$ (which must be  of a suitable length $l(w)$ in order to target a particular cohomological degree for $X_K$). This problem will be dealt with in the next section with a `degree shifting' argument. 

We first state a lemma which shows that there are ``many'' dominant weights for $\widetilde{G}$ which are CTG:
\begin{lemma}\label{central character} 
	Suppose that $[F^+ : \Q] > 1$. Let $\widetilde{\lambda} \in (\Z^{2n}_+)^{\Hom(F^+, E)}$, and fix a choice of embedding $\tau_0 : F^+ \hookrightarrow E$. Then there exists $\widetilde{\lambda}' \in (\Z^{2n}_+)^{\Hom(F^+, E)}$ satisfying the following conditions:
	\begin{enumerate}
		\item $\widetilde{\lambda}_\tau = \widetilde{\lambda}'_\tau$ for all $\tau \neq \tau_0$.
		\item $\widetilde{\lambda}'$ is CTG.
	\end{enumerate}
\end{lemma}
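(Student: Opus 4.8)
The plan is to reduce the CTG condition (Definition~\ref{defn:coh_trivial}) to an explicit statement about the single weight $\widetilde{\lambda}'_{\tau_0}$, and then to choose $\widetilde{\lambda}'_{\tau_0}$ in general position.

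First I would unwind Definition~\ref{defn:coh_trivial}. Since $W = \prod_{\tau \in \Hom(F^+, E)} W_\tau$ and $W^P = \prod_{\tau} W^P_\tau$ act componentwise on $X^\ast((\Res_{F^+ / \Q} T)_E)$, and $\rho = \sum_\tau \rho_\tau$, for $w = (w_\tau) \in W^P$ the $\tau$-component of $\lambda_w = w(\widetilde{\lambda} + \rho) - \rho$ depends only on $w_\tau$ and $(\widetilde{\lambda} + \rho)_\tau$. A minimal-length representative $w_\tau \in W^P_\tau$ for $(S_n \times S_n) \backslash S_{2n}$ is determined by the size-$n$ subset $I = w_\tau^{-1}(\{1, \dots, n\})$, with complement $L$; writing $\mu = (\widetilde{\lambda} + \rho)_\tau$, $I = \{i_1 < \dots < i_n\}$ and $L = \{l_1 < \dots < l_n\}$, transporting $w(\widetilde{\lambda}+\rho) - \rho$ through the weight dictionary~(\ref{eqn:wt_dictionary}) relating $G$ and $\widetilde{G}$ gives, after a direct computation,
\[ f_w(\tau) \ :=\ \lambda_{w, \tau} - \lambda^\vee_{w, \tau c} \ =\ \bigl( \mu_{l_1} - \mu_{i_1} + n,\ \dots,\ \mu_{l_n} - \mu_{i_n} + n \bigr) \in \Z^n . \]
One checks that $f_w(\tau c)$ is a constant vector (all coordinates equal to a common integer $i_0$) precisely when $f_w(\tau)$ is, so in Definition~\ref{defn:coh_trivial} it is harmless to let $\tau$ run over $\Hom(F^+, E)$ via $\widetilde{I}$. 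Thus $\widetilde{\lambda}'$ is CTG if and only if, for every $w \in W^P$, there is no $i_0 \in \Z$ with $f_w(\tau) = (i_0, \dots, i_0)$ for all $\tau \in \Hom(F^+, E)$ simultaneously.

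Next I would use the hypothesis $[F^+ : \Q] > 1$ to isolate $\tau_0$. If $\widetilde{\lambda}'$ agrees with $\widetilde{\lambda}$ away from $\tau_0$, then for $\tau \neq \tau_0$ the vector $f_w(\tau)$ is unchanged. Let $\mathcal{B} \subset W^P$ be the set of $w$ such that $f_w(\tau)$ equals a common constant vector $(i_0(w), \dots, i_0(w))$ for every $\tau \neq \tau_0$; this set is finite, and since there exists at least one $\tau \neq \tau_0$ the integer $i_0(w)$ is well-defined. For $w \notin \mathcal{B}$ the CTG condition at $w$ holds regardless of the choice at $\tau_0$, so it suffices to choose a dominant weight $\widetilde{\lambda}'_{\tau_0}$ for $\widetilde{G}$ such that $f_w(\tau_0) \neq (i_0(w), \dots, i_0(w))$ for every $w \in \mathcal{B}$. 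Writing $\mu = \widetilde{\lambda}'_{\tau_0} + \rho_{\tau_0}$, so that $\mu$ ranges over the set $V$ of strictly decreasing vectors in $\rho_{\tau_0} + \Z^{2n}$, the equation $f_w(\tau_0) = (i_0(w), \dots, i_0(w))$ becomes the linear system $\mu_{l_j} - \mu_{i_j} = i_0(w) - n$ for $j = 1, \dots, n$; the linear forms $\mu \mapsto \mu_{l_j} - \mu_{i_j}$ are linearly independent exactly because $I \cap L = \emptyset$, so the solution set is contained in an affine subspace of $\Q^{2n}$ of codimension $n \geq 1$. Since $\mathcal{B}$ is finite and $V$ is not contained in any finite union of proper affine subspaces of $\Q^{2n}$ (it contains, e.g., $\rho_{\tau_0} + (N^{2n}, N^{2n-1}, \dots, N)$ for all $N \gg 0$), we may pick $\mu \in V$ avoiding all of these subspaces; then $\widetilde{\lambda}'_{\tau_0} := \mu - \rho_{\tau_0}$ together with $\widetilde{\lambda}'_\tau := \widetilde{\lambda}_\tau$ for $\tau \neq \tau_0$ is the desired $\widetilde{\lambda}'$.

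The only genuine work is in the first step: correctly pushing $w(\widetilde{\lambda} + \rho) - \rho$ through the identification of $G$ with $\Res_{F / F^+} \GL_n$ and through the combinatorics of the Kostant representatives $W^P_\tau$ in order to pin down the formula for $f_w(\tau)$; no hypothesis on $p$ (in particular not $p > n^2$) enters, and once the formula is available the rest is a soft general-position argument using $[F^+ : \Q] > 1$.
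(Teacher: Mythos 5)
Your proof is correct, and it shares the paper's skeleton -- keep all components of $\widetilde{\lambda}$ away from $\tau_0$ fixed, use $[F^+:\Q]>1$ to have a reference embedding $\tau \neq \tau_0$ whose contribution cannot change, and then destroy the finitely many possible coincidences indexed by $W^P$ by adjusting $\widetilde{\lambda}'_{\tau_0}$ -- but the implementation is genuinely different. The paper never computes $\lambda_{w,\tau} - \lambda^\vee_{w,\tau c}$ explicitly: it observes that if this vector were the constant vector $(i_0,\dots,i_0)$ at every embedding, then the sum of its entries, which is $\sum_{i=1}^n\bigl(\lambda_{w,\widetilde{\tau},i} + \lambda_{w,\widetilde{\tau} c,i}\bigr)$, would take the common value $n i_0$ at $\widetilde{\tau}$ and at $\widetilde{\tau}_0$; so it suffices to make these two scalars differ for every $w$ (this is the displayed criterion (\ref{eqn:criterion_for_CTG}), up to an immaterial slip of a dual in how it is printed), and this is achieved by the explicit perturbation $\widetilde{\lambda}'_{\tau_0} = \widetilde{\lambda}_{\tau_0} + (a,0,\dots,0)$ with $a \gg 0$, which stays dominant and shifts the $\tau_0$-side sum by $\pm a$ while the $\tau$-side is untouched. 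You instead make the constancy condition fully explicit via the Kostant representatives -- your formula $f_w(\widetilde{\tau})_j = \mu_{l_j} - \mu_{i_j} + n$ and the reversal relation between $f_w(\widetilde{\tau})$ and $f_w(\widetilde{\tau}c)$ are correct in the paper's conventions -- and then choose $\widetilde{\lambda}'_{\tau_0}$ in general position, avoiding for each bad $w$ an affine condition of codimension $n$ (the disjointness of $I$ and $L$ does make the $n$ linear forms independent, and the family $\rho_{\tau_0} + (N^{2n},\dots,N)$ does escape any finite union of proper affine subspaces while remaining dominant after subtracting $\rho_{\tau_0}$). The paper's sum trick buys brevity and an explicit choice of $\widetilde{\lambda}'$; your computation buys the stronger conclusion that any sufficiently generic modification of the $\tau_0$-component works, and, as you note (and as is consistent with the paper), no hypothesis on $p$ enters.
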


\begin{proof}
	Let $\tau \neq \tau_0$ be another embedding $\tau : F^+ \hookrightarrow E$. Note that a dominant weight $\widetilde{\mu} \in (\Z^{2n}_+)^{\Hom(F^+, E)}$ is CTG if it satisfies the following condition: for all $w \in W^P$, we have
    \numequation\label{eqn:criterion_for_CTG} \sum_{i=1}^n (\mu_{w, \widetilde\tau, i} - \mu_{w,  \widetilde\tau c, i}) \neq \sum_{i=1}^n (\mu_{w,  \widetilde\tau_0 , i} - \mu_{w,  \widetilde\tau_0 c, i}). 
	\end{equation}
	Let $a \in \Z_{\geq 0}$, and define $\widetilde{\lambda}' \in (\Z^{2n}_+)^{\Hom(F^+, E)}$ by the formula $\widetilde{\lambda}'_\tau = \widetilde{\lambda}_\tau$ if $\tau \neq \tau_0$, $\widetilde{\lambda}'_{\tau_0, 1} = \widetilde{\lambda}_{\tau_0, 1} + a$, $\widetilde{\lambda}'_{\tau, i} = \widetilde{\lambda}_{\tau_0, i}$ if $i > 1$. Then $\widetilde{\lambda}'$ will satisfy condition (\ref{eqn:criterion_for_CTG}) as soon as $a$ is sufficiently large (in a way depending on $\widetilde{\lambda}$). 
\end{proof}

\subsection{The degree shifting argument}\label{sec: FL 
degree-shifting} 

We are now going to show how to use Proposition \ref{prop:coh_of_boundary} to control the Hecke algebra of $G$ acting on the cohomology groups $H^q(X_K, \cV_\lambda)$. We will do this ``one place of $F^+$ above $p$ at a time''. The argument will involve induction on the cohomological degree $q$. Since the cohomology groups of locally symmetric spaces for $G$ may
contain torsion, one needs an inductive argument to pass from the cohomology  groups with $\cO$-coefficients (which appear in Proposition \ref{prop:coh_of_boundary}) to cohomology groups with $\cO/\varpi^m$-coefficients (where one can use congruences to 
modify the weight). 

The first step is the following proposition. Given a non-Eisenstein maximal ideal $\m \subset \T^S$, we will set $\widetilde{\m} = \cS^\ast(\m) \subset \widetilde{\T}^S$. We will use the notation
\[ A(K, \lambda, q) = \T^S(H^q(X_K, \cV_\lambda)_\m), \]
\[ A(K, \lambda, q, m) = \T^S(H^q(X_K, \cV_\lambda / \varpi^m)_\m), \]
and
\[ \widetilde{A}(\widetilde{K}, \widetilde{\lambda}) = \widetilde{\T}^S(H^d(\widetilde{X}_{\widetilde{K}}, \cV_{\widetilde{\lambda}})_{\widetilde{\m}}). \]
Note that there is no natural morphism $A(K, \lambda, q) \to A(K, \lambda, q, m)$.
\begin{prop}\label{degree-shifting} 
	Let $\overline{v}, \overline{v}'$ be distinct places of $\overline{S}_p$, and let $\lambda \in (\Z^n_+)^{\Hom(F, E)}$. 
	(The condition that~$\overline{S}_p$ has at least two
                distinct places implies, in particular, that~$F^+ \ne
                \Q$.)
	Fix an integer $m \geq 1$. Let $\widetilde{K} \subset \widetilde{G}(\A_{F^+}^\infty)$ be a good subgroup. Suppose that the following conditions are satisfied:
	\begin{enumerate}
		\item For each embedding $\tau : F \hookrightarrow E$ inducing the place $\overline{v}$ of $F^+$, we have $- \lambda_{\tau c, 1} - \lambda_{\tau, 1} \geq 0$. 
		\item We have 
		\[ \sum_{\substack{\overline{v}'' \in \overline{S}_p \\ \overline{v}'' \neq \overline{v}, \overline{v}'}} [F^+_{\overline{v}''} : \Q_p] > \frac{1}{2}[F^+ : \Q]. \]
		\item For each $p$-adic place $\overline{v}''$ of $F^+$ not equal to $\overline{v}$, we have 
		\[ U(\cO_{F^+_{\overline{v}''}}) \subset \widetilde{K}_{\overline{v}''} \subset \left\{ \left( \begin{array}{cc} 1_n & * \\ 0 & 1_n  \end{array}\right) \text{ mod }\varpi_{\overline{v}''}^{m}  \right\}. \]
		We have $\widetilde{K}_{\overline{v}} = \widetilde{G}(\cO_{F^+_{\overline{v}}})$.
		\item $p > n^2$. (We recall our blanket assumption in \S \ref{section:lep} that $p$ is unramified in $F$.)
		\item Let $v$ be a finite place of $F$ not contained in $S$, and let $l$ be its residue characteristic. Then either $S$ contains no $l$-adic places of $F$ and $l$ is unramified in $F$, or there exists an imaginary quadratic field $F_0 \subset F$ in which $l$ splits.
		\item $\m \subset \T^S$ is a non-Eisenstein maximal ideal such that $\overline{\rho}_{\widetilde{\m}}$ is decomposed generic. 
	\end{enumerate}
	Define a weight $\widetilde{\lambda} \in (\Z^{2n}_+)^{\Hom(F^+, E)}$ as follows: if $\tau \in \Hom(F^+, E)$ does not induce either $\overline{v}$ or $\overline{v}'$, then $\widetilde{\lambda}_\tau = 0$. If $\tau$ induces $\overline{v}$, then we set
	\[ \widetilde{\lambda}_{\tau} = (-\lambda_{\widetilde\tau c, n}, \dots, - \lambda_{\widetilde \tau c, 1}, \lambda_{\widetilde \tau, 1}, \dots, \lambda_{\widetilde \tau, n}). \]
	(Note that this is dominant because of our assumption on $\lambda$.) If $\tau$ induces $\overline{v}'$, then we choose $\widetilde{\lambda}_\tau$ to be an arbitrary element of $\Z^{2n}_+$.
	
	Let $q\in \left[\left \lfloor{\frac{d}{2}}\right \rfloor, 
	d-1\right]$. Then there exists an integer $m' \geq m$, an integer $N \geq 
	1$, a nilpotent ideal $J \subset A(K,\lambda,q, m)$ satisfying $J^N = 0$, 
	and a commutative diagram
	\[\xymatrix{
		\widetilde{\mathbb T}^S\ar[r]\ar[d]_-\cS & 
		\widetilde{A}(\widetilde{K}(m'), \widetilde{\lambda}) \ar[d]\\
		\mathbb T^S\ar[r] & A(K,\lambda,q, m)/J
	}\]
	where $\widetilde{K}(m') \subset \widetilde{K}$ is the good subgroup defined by setting
	\[ \widetilde{K}(m')_{\overline{v}''} = \widetilde{K}_{\overline{v}''} \cap \left\{ \left( \begin{array}{cc} 1_n & * \\ 0 & 1_n  \end{array}\right) \text{ mod }\varpi_{\overline{v}''}^{m'}  \right\} \subset \widetilde{G}(\cO_{F^+_{\overline{v}''}}) \]
	if $\overline{v}''$ is a $p$-adic place of $F^+$ not equal to $\overline{v}$, and $\widetilde{K}(m')_{\overline{v}''} = \widetilde{K}_{\overline{v}''}$ otherwise. (Thus $\widetilde{K} = \widetilde{K}(m)$, by hypothesis.) Moreover, the integer $N$ can be chosen to depend only on $n$ and $[F^+ : \Q]$.
\end{prop}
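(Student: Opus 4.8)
The plan is to combine the direct-summand result (Theorem~\ref{direct summand at finite level}), the Caraiani--Scholze surjection (Theorem~\ref{what we get from CS}), and a weight-changing device available at deep level, organised as a descending induction on~$q$. First I would set $\overline{S}_1 = \{\overline{v},\overline{v}'\}$, $\overline{S}_2 = \overline{S}_p \smallsetminus \overline{S}_1$, and $K = \widetilde{K} \cap G(\A_{F^+}^\infty)$. The key combinatorial input is that for each embedding the set $W^P_\tau$ of minimal-length coset representatives for $(S_n \times S_n)\backslash S_{2n}$ realises every length in $\{0,\dots,n^2\}$ (its length generating function is the Gaussian binomial $\binom{2n}{n}_q$, whose coefficients are all positive). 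Together with hypothesis~(2), which gives $n^2 \sum_{\overline{v}'' \in \overline{S}_2}[F^+_{\overline{v}''}:\Q_p] > \tfrac12 n^2[F^+:\Q] = d/2$, hence $\ge \lceil d/2 \rceil \ge d - q$ for every $q \in [\lfloor d/2\rfloor, d-1]$, this lets me choose $w = (w_{\overline{v}''})_{\overline{v}'' \in \overline{S}_p} \in W^P$ with $w_{\overline{v}} = w_{\overline{v}'} = \mathrm{id}$ and $l(w) = \sum_{\overline{v}'' \in \overline{S}_2} l(w_{\overline{v}''}) = d - q$. I then let $\lambda' \in (\Z^n_+)^{\Hom(F,E)}$ be the weight agreeing with $\lambda$ at the embeddings inducing $\overline{v}$, equal at the embeddings inducing $\overline{v}'$ to the pair of $\GL_n$-weights corresponding under the dictionary~(\ref{eqn:wt_dictionary}) to the (arbitrary, dominant) $\widetilde{\lambda}_{\overline{v}'}$ of the statement, and equal to $w_{\overline{v}''}(\rho_{\overline{v}''}) - \rho_{\overline{v}''}$ at $\overline{v}'' \in \overline{S}_2$; by hypothesis~(1) this $\lambda'$ is dominant, and the weight attached to it by Theorem~\ref{direct summand at finite level} (the dictionary on $\overline{S}_1$, zero on $\overline{S}_2$) is exactly the $\widetilde{\lambda}$ of the statement.

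Next I would record the weight-changing observation. For $v$ above a $p$-adic place $\overline{v}'' \ne \overline{v}$ of $F^+$, hypothesis~(3) forces $K_v$ to be a principal congruence subgroup of $\GL_n(\cO_{F_v})$ deep enough to act trivially on $\cV_\mu \otimes_\cO \cO/\varpi^m$ for every dominant weight $\mu$ (as $\cV_\mu$ extends to a representation of $\GL_n$ over $\cO$). Since $S$ contains every $p$-adic place, $\T^S$ does not involve the $p$-part, and -- arguing as with the Hochschild--Serre reductions in \S\ref{section:lnep} -- one finds for any $q$ and any $\mu,\mu'$ agreeing at the places above $\overline{v}$ that $\T^S(H^q(X_K,\cV_\mu/\varpi^m)_\m) = \T^S(H^q(X_K,\cV_{\mu'}/\varpi^m)_\m)$; in particular $A(K,\lambda,q,m) = \T^S(H^q(X_K,\cV_{\lambda'}/\varpi^m)_\m)$, and likewise for every Kostant weight $\lambda_{w'}$, $w' \in W^P_{\overline{S}_2}$, all of which attach the same $\widetilde{\lambda}$. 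Theorem~\ref{direct summand at finite level} applied to $\overline{S}_1, \overline{S}_2, \lambda', \widetilde{\lambda}, w$ (using $q + l(w) = d$) then exhibits $H^q(X_K,\cV_{\lambda'}/\varpi^m)_\m$ as a $\widetilde{\T}^S$-equivariant direct summand, via $\cS$, of $H^d(\partial\widetilde{X}_{\widetilde{K}},\cV_{\widetilde{\lambda}}/\varpi^m)_{\widetilde{\m}}$. The dévissage of the Caraiani--Scholze vanishing used to prove Theorem~\ref{what we get from CS} holds modulo $\varpi^m$ -- using that $\overline{\rho}_{\widetilde{\m}}$ is decomposed generic and has length $\le 2$ (the latter because $\m$ is non-Eisenstein), and that $[F^+:\Q]>1$ since $\overline{v}\ne\overline{v}'$ -- so $H^{d+1}_c(\widetilde{X}_{\widetilde{K}},\cV_{\widetilde{\lambda}}/\varpi^m)_{\widetilde{\m}} = 0$ and excision gives a surjection $H^d(\widetilde{X}_{\widetilde{K}},\cV_{\widetilde{\lambda}}/\varpi^m)_{\widetilde{\m}} \twoheadrightarrow H^d(\partial\widetilde{X}_{\widetilde{K}},\cV_{\widetilde{\lambda}}/\varpi^m)_{\widetilde{\m}}$. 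Hence the image of $\widetilde{\T}^S$ in $\End(H^q(X_K,\cV_\lambda/\varpi^m)_\m)$ -- through which $\widetilde{\T}^S \xrightarrow{\cS} \T^S \to A(K,\lambda,q,m)$ factors -- is a quotient of $\widetilde{\T}^S(H^d(\widetilde{X}_{\widetilde{K}},\cV_{\widetilde{\lambda}}/\varpi^m)_{\widetilde{\m}})$, and the whole problem is reduced to showing that, after deepening the level to $\widetilde{K}(m')$ for a suitable $m'\ge m$ (which changes neither $\widetilde{\T}^S$ nor $\widetilde{\m}$), the kernel of $\widetilde{\T}^S \twoheadrightarrow \widetilde{A}(\widetilde{K}(m'),\widetilde{\lambda})$ maps into a nilpotent ideal of bounded exponent inside $\widetilde{\T}^S(H^d(\widetilde{X}_{\widetilde{K}},\cV_{\widetilde{\lambda}}/\varpi^m)_{\widetilde{\m}})$.

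That reduction would be proved by descending induction on $q$, starting from $q = d-1$ (where the base case is immediate since $\dim_\R X = d-1$ makes the relevant torsion term vanish). The mechanism at each step is the short exact sequence $0 \to H^j(Y,\cF)/\varpi^m \to H^j(Y,\cF/\varpi^m) \to H^{j+1}(Y,\cF)[\varpi^m] \to 0$, which shows that the Hecke algebra of a mod-$\varpi^m$ cohomology group is, modulo a square-zero ideal, controlled by those of the $\varpi^m$-reduction of $H^j(Y,\cF)$ and of the $\varpi^m$-torsion of $H^{j+1}(Y,\cF)$. Applying this for $Y = \widetilde{X}_{\widetilde{K}(m')}$ in degree $d$, the first factor is a quotient of $\widetilde{A}(\widetilde{K}(m'),\widetilde{\lambda})$ because $H^d(\widetilde{X}_{\widetilde{K}(m')},\cV_{\widetilde{\lambda}})_{\widetilde{\m}}$ is $\varpi$-torsion-free (Theorem~\ref{what we get from CS} at level $\widetilde{K}(m')$), so it remains to control the boundary cohomology in degrees $>d$. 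For $j>d$ one has $H^j_c(\widetilde{X}_{\widetilde{K}(m')},\cV_{\widetilde{\lambda}})_{\widetilde{\m}} = 0$ (Caraiani--Scholze plus Nakayama), so excision identifies $H^j(\widetilde{X}_{\widetilde{K}(m')},\cV_{\widetilde{\lambda}})_{\widetilde{\m}} \cong H^j(\partial\widetilde{X}_{\widetilde{K}(m')},\cV_{\widetilde{\lambda}})_{\widetilde{\m}}$, which by Theorem~\ref{thm:reduction_to_Siegel} and the Kostant decomposition from the proof of Theorem~\ref{direct summand at finite level} (valid integrally since $p>n^2\ge 2n-1$) splits into cohomology groups of the form $H^{j-l(w')}(X_{K(m')},\cV_{\lambda_{w'}})_\m$; these lie in degrees and weights that are reached either directly by Proposition~\ref{prop:coh_of_boundary} or, after applying Poincaré duality (Proposition~\ref{prop_poincare_duality}; note $\m^\vee$ is again non-Eisenstein and $\overline{\rho}_{\widetilde{\m}^\vee}$ again decomposed generic), by the inductive hypothesis in degrees $>q$. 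Each invocation costs a nilpotent ideal of exponent $\le 2$ and the number of steps is at most $d = n^2[F^+:\Q]$, so collecting them produces the nilpotent ideal $J \subset A(K,\lambda,q,m)$ with $J^N = 0$, $N$ depending only on $n$ and $[F^+:\Q]$, containing the obstruction above; the composite $\widetilde{\T}^S \xrightarrow{\cS} \T^S \to A(K,\lambda,q,m)/J$ then factors through $\widetilde{\T}^S \twoheadrightarrow \widetilde{A}(\widetilde{K}(m'),\widetilde{\lambda})$, which is the asserted commutative square.

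The main obstacle is exactly this last step: transferring information from $\cO$-coefficient cohomology -- where Proposition~\ref{prop:coh_of_boundary} and Theorem~\ref{what we get from CS} supply the Galois-theoretic content but the weight is rigid -- to $\cO/\varpi^m$-coefficient cohomology, where the weight can be slid freely at the deep-level places, all the while bookkeeping the torsion contributions in neighbouring degrees and the higher boundary cohomology $H^{>d}(\widetilde{X}_{\widetilde{K}(m')},\cV_{\widetilde{\lambda}})_{\widetilde{\m}}$. This is precisely where the presence of two distinct places above $p$ and the level-deepening $\widetilde{K}\rightsquigarrow\widetilde{K}(m')$ are used, and where one must check that the accumulated nilpotent exponent is independent of $m$ and of the level.
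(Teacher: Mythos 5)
Your first half tracks the paper's own set-up closely: the choice of $w \in W^P_{\overline{S}_2}$ of length $d-q$ (possible by hypothesis (2)), the replacement of $\lambda$ by a weight $\lambda'$ agreeing with the dictionary image of $\widetilde{\lambda}_{\overline{v}'}$ at $\overline{v}'$ and with $w(\widetilde{\lambda}+\rho)-\rho$ on $\overline{S}_2$, the observation that $A(K,\lambda,q,m)$ only sees the weight at $\overline{v}$, and the combination of Theorem \ref{direct summand at finite level} with the Caraiani--Scholze vanishing. The divergence, and the gap, is in the passage from $\cO/\varpi^m$-coefficients to the integral Hecke algebra $\widetilde{A}(\widetilde{K}(m'),\widetilde{\lambda})$. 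The statement you reduce to --- that $\ker(\widetilde{\T}^S\to\widetilde{A}(\widetilde{K}(m'),\widetilde{\lambda}))$ acts nilpotently on $H^d(\widetilde{X}_{\widetilde{K}},\cV_{\widetilde{\lambda}}/\varpi^m)_{\widetilde{\m}}$ --- no longer involves $q$ at all, so a descending induction on $q$ has nothing to bite on; and your mechanism for proving it moves the troublesome torsion to the $\widetilde{G}$-side, namely $H^{d+1}(\widetilde{X}_{\widetilde{K}(m')},\cV_{\widetilde{\lambda}})_{\widetilde{\m}}[\varpi^m]$, which you then propose to resolve by a full Kostant decomposition of $H^{d+1}(\partial\widetilde{X}_{\widetilde{K}(m')},\cV_{\widetilde{\lambda}})_{\widetilde{\m}}$ into groups $H^{d+1-l(w')}(X_{K(m')},\cV_{\lambda_{w'}})_\m$. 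No such decomposition is available: Theorem \ref{direct summand at finite level} yields a single direct summand, mod $\varpi^m$, and only for $\widetilde{\lambda}$ vanishing on $\overline{S}_2$; a full integral decomposition at $\overline{v}$, and especially at $\overline{v}'$ where $\widetilde{\lambda}_{\overline{v}'}$ is arbitrary, cannot follow from $p>n^2$ alone, since the weights $w'(\widetilde{\lambda}+\rho)-\rho$ are unbounded and the simplicity and extension-vanishing arguments behind Lemma \ref{lem:integral_kostant} break down. Even granting such a splitting, the pieces are integral-coefficient groups in degree $d+1-l(w')$ with weights differing from $\lambda$ at $\overline{v}$: they are matched neither by Proposition \ref{prop:coh_of_boundary} (which reaches degree $d-l(w')$) nor by your inductive hypothesis (which concerns mod-$\varpi^m$ cohomology of weight $\lambda$ at level $K$ in degrees $>q$). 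So the induction does not close.

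The paper keeps the d\'evissage entirely on the $G$-side, and this is what makes the induction work. One writes $0\to H^q(X_K,\cV_{\lambda'})_\m/\varpi^m\to H^q(X_K,\cV_{\lambda'}/\varpi^m)_\m\to H^{q+1}(X_K,\cV_{\lambda'})_\m[\varpi^m]\to 0$; the first term is handled by Proposition \ref{prop:coh_of_boundary}, which is the only place where integral $\widetilde{G}$-cohomology is needed, in degree $d$ where Theorem \ref{what we get from CS} supplies torsion-freeness; the torsion term is embedded, for $m'$ large enough that $\varpi^{m'}$ kills the torsion of $H^{q+1}(X_K,\cV_{\lambda'})_\m$, into $H^{q+1}(X_K,\cV_{\lambda'}/\varpi^{m'})_\m$, and then Poincar\'e duality with $\cO/\varpi^{m'}$-coefficients (Corollary \ref{cor:dual_Hecke_alg}) combined with the Hochschild--Serre spectral sequence for $K/K(m')$ bounds its annihilator by the product of the annihilators of $H^i(X_{K(m')},\cV_{\lambda}/\varpi^{m'})_\m$ for $i=q+1,\dots,d-1$ (the weight may be slid back to $\lambda$ at the deep-level places). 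These are exactly instances of the Proposition in strictly higher degrees, so the descending induction applies and the accumulated nilpotency exponent depends only on $n$ and $[F^+:\Q]$. This duality-plus-Hochschild--Serre step, converting one degree of integral torsion at level $K$ into mod-$\varpi^{m'}$ cohomology at level $K(m')$ in all degrees strictly above $q$, is the ingredient your proposal is missing; without it, or a genuine substitute, the transfer from torsion to characteristic-zero information is not achieved.
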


\begin{proof} The idea of the proof is to choose a 
	Weyl 
	group element $w=w(q)\in W^P$ such that $l(w)= d-q$ and a weight 
	$\widetilde{\lambda}$ such that $\lambda = 
	w(\widetilde{\lambda}+\rho)-\rho$, and then apply 
	Proposition \ref{prop:coh_of_boundary}. The actual 
	argument is more subtle, because we need to work with $\cO$-coefficients in 
	order to access the Hecke algebras
	$\widetilde{A}(\widetilde{K},\widetilde{\lambda})$, whilst the Hecke algebras 
	$A(K, \lambda, q, m)$ act on cohomology with torsion coefficients. We argue by descending induction on $q$, the induction hypothesis being as follows:
	\begin{hypothesis} Let $q\in \left[\left \lfloor{\frac{d}{2}}\right \rfloor, 
		d-1\right]$. Then the Proposition holds for every 
		cohomological degree 
		$i\in [q+1, d-1]$ and every $m\in \Z_{\geq 1}$. Moreover, the integer 
		$N$ can be chosen to depend only on $n$, $[F^+ : \Q]$, and $q$.
	\end{hypothesis}
	The induction hypothesis is always satisfied when $q=d-1$. Assume the induction 
	hypothesis holds for some $q\in \left[\left \lfloor{\frac{d}{2}}\right \rfloor 
	+1 
	, d-1\right]$. We will prove that the induction hypothesis holds for $q-1$. Let us fix $m$, $\widetilde{K}$, and $\lambda$ as in the statement of the proposition. Note that the $\T^S$-algebra $A(K, \lambda, q, m)$ is independent of $\lambda_{\overline{v}''}$ for $\overline{v}'' \in \overline{S}_p$, $\overline{v}'' \neq \overline{v}$, because $K_S$ acts on $\cV_\lambda / \varpi^m$ via the projection to $K_{\overline{v}}$. Modifying $\lambda$, we can therefore assume that in fact $\lambda_{\overline{v}'} = \widetilde{\lambda}_{\overline{v}'}$. 
	
	Let $\overline{S}_1 = \{ \overline{v}, \overline{v}' \}$, and $\overline{S}_2 = \overline{S}_p - \overline{S}_1$.
	Let $w = w(q) \in W^P_{\overline{S}_2}$ be any element of length $l(w) = d - q$. Such an element exists because for any $\tau \in \Hom(F^+, E)$, $l(w_\tau)$ takes all integer values in $[0, n^2]$ as $w_\tau$ ranges over elements of $W^P_\tau$. We have chosen our totally real field $F^+$ to satisfy
	\[
	\sum_{\bar{v}'' \in \overline{S}_2} [F^+_{\bar v''}: \Q_p] > \frac{1}{2}[F^+:\Q].
	\] 
	This means that the desired sum can take any value in $[0, \frac{d}{2}+ 
	\frac{n^2}{2}]$. On the other hand, $q \in \left[\left 
	\lfloor{\frac{d}{2}}\right \rfloor, d\right]$, so $d-q\leq d- \left 
	\lfloor{\frac{d}{2}}\right \rfloor$. Since $n\geq 1$, we can indeed make an 
	appropriate choice of $w$. 
	
	Now we let 
	$\lambda'(q) = w(q)(\widetilde{\lambda} +\rho)-\rho$.
	This can be different from $\lambda$ precisely at those embeddings inducing a place of $\overline{S}_2$. In particular, the Hecke algebras $A( K,\lambda'(q), q,m)$ and $A(K,\lambda,q, m)$ are 
	canonically isomorphic as $\T^S$-algebras, once again because $K_S$ acts on both $\cV_{\lambda'(q)}/\varpi^m$ and 
	$\cV_{\lambda}/\varpi^m$ via projection to $K_{\overline{v}}$.
	
	There is a short exact sequence of $\T^S$-modules
	\numequation\label{torsion short exact sequence} \begin{split}
	0\to H^q(X_{K},\cV_{\lambda'(q)})_{\m}/\varpi^m \to 
	H^q(X_{K},\cV_{\lambda'(q)}/\varpi^m)_{\m}  \\ 
	\to H^{q+1}(X_{K},\cV_{\lambda'(q)})_{\m}[\varpi^m]\to 0.
	\end{split}
\end{equation}
Note that the $\varpi^m$-torsion 
$H^{q+1}(X_{K},\cV_{\lambda'(q)})_{\m}[\varpi^m]$ does not, in general, inject 
into $H^{q+1}(X_{K},\cV_{\lambda'(q)}/\varpi^m)_{\m}$, so we cannot reduce to 
understanding the Hecke algebra $A(q+1, K,\lambda'(q),m)$. However, the 
cohomology group $H^{q+1}(X_{K},\cV_{\lambda'(q)})_{\m}$ is a finitely 
generated $\cO$-module, so $H^{q+1}(X_{K},\cV_{\lambda'(q)})_{\m}[\varpi^m]$ 
does inject into $H^{q+1}(X_{K},\cV_{\lambda'(q)})_{\m}/\varpi^{m'}$ provided that $m' \geq m$ is chosen large enough for $\varpi^{m'}$ to annihilate the torsion submodule of $H^{q+1}(X_{K},\cV_{\lambda'(q)})_{\m}$. This, in turn, injects into 
$H^{q+1}(X_{K},\cV_{\lambda'(q)}/\varpi^{m'})_{\m}$. It follows that we have an inclusion
\numequation\label{eqn:first_annihilator_comparison} \begin{split} \operatorname{Ann}_{\T^S} H^q(X_{K},\cV_{\lambda'(q)})_{\m} \cdot & \operatorname{Ann}_{\T^S}H^{q+1}(X_K, \cV_{\lambda'(q)} / \varpi^{m'})_{\m} \\ & \subset \operatorname{Ann}_{\T^S}H^q(X_K, \cV_{\lambda} / \varpi^m)_{\m}. 
\end{split}
\end{equation}
Let $K(m') = \widetilde{K}(m') \cap G(\A_{F^+}^\infty)$. Let $\m^\vee = \iota(\m) \subset \T^S$ (notation as in \S \ref{sec:twisting_and_duality}). Then $\m^\vee$ is a non-Eisenstein maximal ideal. Poincar\'e duality implies (cf.\ Corollary \ref{cor:dual_Hecke_alg} and \cite[Thm. 4.2]{new-tho}, and noting that $\cO / \varpi^m$ is an injective $\cO / \varpi^m$-module) that there is an equality
\[ \operatorname{Ann}_{\T^S}H^i(X_K, \cV_{\lambda'(q)} / \varpi^m)_\m = \iota(\operatorname{Ann}_{\T^S}H^{d-1-i}(X_K, \cV^\vee_{\lambda'(q)} / \varpi^m)_{\m^\vee}) \]
of ideals of $\T^S$. The existence of the Hochschild--Serre spectral sequence 
\[
H^{i}\left(K / K(m'), 
H^{j}(X_{K(m')},\cV_{\lambda'(q)}^\vee/\varpi^{m'})_{\m^\vee} 
\right) \Rightarrow 
H^{i+j}(X_{K},\cV_{\lambda'(q)}^\vee/\varpi^{m'})_{\m^\vee}
\]
implies that there is an inclusion
\[ \prod_{i=0}^{d-q-2} \operatorname{Ann}_{\T^S} H^i(X_{K(m')}, \cV_{\lambda'(q)}^\vee /\varpi^{m'})_{\m^\vee} \subset \operatorname{Ann}_{\T^S} H^{d - 2 - q}(X_K, \cV_{\lambda'(q)}^\vee / \varpi^{m'})_{\m^\vee}. \]
Applying Corollary \ref{cor:dual_Hecke_alg} once more, we see that there is an inclusion
\[ \prod_{i=q+1}^{d-1} \operatorname{Ann}_{\T^S} H^i(X_{K(m')}, \cV_{\lambda'(q)}/\varpi^{m'})_{\m} \subset \operatorname{Ann}_{\T^S} H^{q+1}(X_K, \cV_{\lambda'(q)} / \varpi^{m'})_{\m},  \]
or equivalently
\[ \prod_{i=q+1}^{d-1} \operatorname{Ann}_{\T^S} H^i(X_{K(m')}, \cV_{\lambda}/\varpi^{m'})_{\m} \subset \operatorname{Ann}_{\T^S} H^{q+1}(X_K, \cV_{\lambda'(q)} / \varpi^{m'})_{\m},  \]
Combining this with (\ref{eqn:first_annihilator_comparison}), we deduce that there is an inclusion
\numequation\label{eqn:inclusion_of_annihilators} \begin{split} \operatorname{Ann}_{\T^S}H^{q}(X_K, \cV_{\lambda'(q)})_{\m} & \cdot \prod_{i=q+1}^{d-1} \operatorname{Ann}_{\T^S} H^i(X_{K(m')}, \cV_{\lambda}/\varpi^{m'})_{\m} \\ & \subset \operatorname{Ann}_{\T^S}H^q(X_K, \cV_{\lambda} / \varpi^m)_{\m}.\end{split} 
\end{equation}
By induction, we can find an integer $N \geq 1$ and for each $i = q+1, \dots, d - 1$ an integer $m'_i \geq m'$ such that
\[ \cS\left( \operatorname{Ann}_{\widetilde{\T}^S} H^d(\widetilde{X}_{\widetilde{K}(m'_i)}, \cV_{\widetilde{\lambda}})_{\widetilde{\m}} \right)^N \subset \operatorname{Ann}_{\T^S} H^i(X_{K(m')}, \cV_{\lambda}/\varpi^{m'})_{\m}. \]
Moreover, Proposition \ref{prop:coh_of_boundary} implies that there is an inclusion
\[ \cS\left( \operatorname{Ann}_{\widetilde{\T}^S} H^d(\widetilde{X}_{\widetilde{K}}, \cV_{\widetilde{\lambda}})_{\widetilde{\m}}\right) \subset \operatorname{Ann}_{\T^S} H^q(X_K, \cV_{\lambda'(q)})_{\m}. \]
Let $m'' = \sup_i m'_i$, and note that for each $i$ we have
\[ \operatorname{Ann}_{\widetilde{\T}^S} H^d(\widetilde{X}_{\widetilde{K}(m'')}, \cV_{\widetilde{\lambda}})_{\widetilde{\m}} \subset \operatorname{Ann}_{\widetilde{\T}^S} H^d(\widetilde{X}_{\widetilde{K}(m'_i)}, \cV_{\widetilde{\lambda}})_{\widetilde{\m}} \]
(because this is true rationally, and the cohomology groups are torsion-free, by Theorem \ref{what we get from CS}). 
Finally, let $N' = 1 + (d-q-1)N$, and let $J$ denote the image of the ideal 
\[ \cS\left( \operatorname{Ann}_{\widetilde{\T}^S} H^d(\widetilde{X}_{\widetilde{K}(m'')}, \cV_{\widetilde{\lambda}})_{\widetilde{\m}} \right) \]
in $A(K, q, \lambda, m)$. The existence of the inclusion (\ref{eqn:inclusion_of_annihilators}) implies that $\cS$ descends to a morphism
\[ \widetilde{A}(\widetilde{K}(m''), \widetilde{\lambda}) \to A(K, \lambda, q, m) / J, \]
and that the ideal $J$ satisfies $J^{N'} = 0$. This completes the proof. 
\end{proof}
This proposition has the following consequence for Galois representations.
\begin{prop}\label{prop:first_consequence_for_FL_property}
	Let $\overline{v}, \overline{v}'$ be distinct places of $\overline{S}_p$, and let $\lambda \in (\Z^n_+)^{\Hom(F, E)}$. Fix an integer $m \geq 1$. Let $\widetilde{K} \subset \widetilde{G}(\A_{F^+}^\infty)$ be a good subgroup. Suppose that the following conditions are satisfied:
	\begin{enumerate}
		\item For each embedding $\tau : F \hookrightarrow E$ inducing the place $\overline{v}$, we have $-\lambda_{\tau c, 1} - \lambda_{\tau , 1} \geq 0$ and $- \lambda_{\tau c, n} - \lambda_{\tau, n} \leq p - 2n - 1$.
		\item We have 
		\[ \sum_{\substack{\overline{v}'' \in \overline{S}_p \\ \overline{v}'' \neq \overline{v}, \overline{v}'}} [F^+_{\overline{v}''} : \Q_p] > \frac{1}{2}[F^+ : \Q]. \]
		\item For each $p$-adic place $\overline{v}''$ of $F^+$ not equal to $\overline{v}$, we have 
		\[ U(\cO_{F^+_{\overline{v}''}}) \subset \widetilde{K}_{\overline{v}''} \subset \left\{ \left( \begin{array}{cc} 1_n & * \\ 0 & 1_n  \end{array}\right) \text{ mod }\varpi_{\overline{v}''}^{m}  \right\}. \]
		We have $\widetilde{K}_{\overline{v}} = \widetilde{G}(\cO_{F^+_{\overline{v}}})$.
		\item $p > n^2$. (We recall our blanket assumption in \S \ref{section:lep} that $p$ is unramified in $F$.)
	\item Let $v$ be a finite place of $F$ not contained in $S$, and let $l$ be its residue characteristic. Then either $S$ contains no $l$-adic places of $F$ and $l$ is unramified in $F$, or there exists an imaginary quadratic field $F_0 \subset F$ in which $l$ splits.
	\item $\m \subset \T^S$ is a non-Eisenstein maximal ideal such that $\overline{\rho}_{\widetilde{\m}}$ is decomposed generic. 
	\end{enumerate}
	Let $q\in \left[\left \lfloor{\frac{d}{2}}\right \rfloor, 
	d-1\right]$. Then there exists an integer $N \geq 1$ depending only on $[F : \Q]$ and $n$, an ideal $J \subset A(K, \lambda, q, m)$ satisfying $J^N = 0$, and a continuous representation
	\[ \rho_\m : G_{F, S} \to \GL_{n}(A(K, \lambda, q, m) / J) \]
	satisfying the following conditions:
	\begin{enumerate}
		\item[(a)] For each place $v \not\in S$ of $F$, the characteristic polynomial of $\rho_\m(\Frob_v)$ is equal to the image of $P_v(X)$ in $(A(K, \lambda, q, m) / J)[X]$.
		\item[(b)] For each place $v | \overline{v}$ of $F$, $\rho_\m|_{G_{F_{v}}}$ is in the essential image of the functor $\mathbf{G}^a$, for $a = ( \lambda_{\tau, n}) \in \Z^{\Hom_{\Q_p}(F_{v}, E)}$.
		\item[(c)] For each place $v | \overline{v}$ of $F$, there exists $\overline{N} \in \cM \cF_k$ with $\overline{\rho}_{\widetilde{\m}}|_{G_{F_{v}}} \cong \mathbf{G}(\overline{N})$ and
		\[ \mathrm{FL}_\tau(\overline{N}) = \{  -\lambda_{\tau c, n}  + 2 n - 1, \dots, -\lambda_{\tau c, 1} + n, \lambda_{\tau, 1} +(n-1), \dots, \lambda_{\tau, n} \}. \]
		for each embedding $\tau \in \Hom_{\Q_p}(F_{v}, E)$.
	\end{enumerate}	
\end{prop}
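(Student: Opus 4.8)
The plan is to reduce the statement to properties of automorphic forms on $\widetilde{G}$ by combining the degree-shifting result of Proposition \ref{degree-shifting} with the description of the characteristic-zero cohomology of $\widetilde{X}_{\widetilde{K}}$ in Theorem \ref{thm:automorphic reps contributing to char 0} and the existence and local properties of the Galois representations attached to automorphic forms on $\widetilde{G}$ in Theorem \ref{thm:base_change_and_existence_of_Galois_for_tilde_G}. First I would fix the weight $\widetilde{\lambda} \in (\Z^{2n}_+)^{\Hom(F^+, E)}$: set $\widetilde{\lambda}_\tau = 0$ when $\tau$ induces neither $\overline{v}$ nor $\overline{v}'$, take $\widetilde{\lambda}_\tau = (-\lambda_{\widetilde{\tau}c,n}, \dots, -\lambda_{\widetilde{\tau}c,1}, \lambda_{\widetilde{\tau},1}, \dots, \lambda_{\widetilde{\tau},n})$ when $\tau$ induces $\overline{v}$ (dominant by hypothesis (1)), and use Lemma \ref{central character} to modify the component at one embedding inducing $\overline{v}'$ so that $\widetilde{\lambda}$ becomes CTG (Definition \ref{defn:coh_trivial}); hypothesis (2) forces $[F^+:\Q]>1$, so the lemma applies. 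Applying Proposition \ref{degree-shifting} for the given $q$ then produces integers $m' \geq m$ and $N$ (depending only on $n$ and $[F^+:\Q]$), a nilpotent ideal $J_0 \subset A(K,\lambda,q,m)$ with $J_0^N = 0$, and a commutative square over $\cS : \widetilde{\T}^S \to \T^S$ relating $\widetilde{A}(\widetilde{K}(m'),\widetilde{\lambda})$ to $A(K,\lambda,q,m)/J_0$. (If $\widetilde{A}(\widetilde{K}(m'),\widetilde{\lambda})_{\widetilde{\m}} = 0$ then $A(K,\lambda,q,m)/J_0 = 0$ and there is nothing to prove, so assume not.)

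Next I would study $\widetilde{A}(\widetilde{K}(m'),\widetilde{\lambda})_{\widetilde{\m}} = \widetilde{\T}^S(H^d(\widetilde{X}_{\widetilde{K}(m')},\cV_{\widetilde{\lambda}})_{\widetilde{\m}})$. Since $\m$ is non-Eisenstein, $\overline{\rho}_{\widetilde{\m}}$ has length $2$, and it is decomposed generic by hypothesis (6), so Theorem \ref{what we get from CS} shows $H^d(\widetilde{X}_{\widetilde{K}(m')},\cV_{\widetilde{\lambda}})_{\widetilde{\m}}$ is $\cO$-torsion free; hence $\widetilde{A}(\widetilde{K}(m'),\widetilde{\lambda})_{\widetilde{\m}}$ is $\cO$-flat and embeds into its rationalisation. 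As $\widetilde{\lambda}$ is CTG, Clozel's purity lemma \cite[Lemma 4.9]{MR1044819} rules out cuspidal automorphic representations of $G$ of any weight $\iota\lambda_w$ with $w \in W^P$, so Theorem \ref{thm:automorphic reps contributing to char 0} applies: the rationalisation is a finite product of fields, and each of its points is attached to a cuspidal regular algebraic representation $\widetilde{\pi}$ of $\widetilde{G}(\A_{F^+})$ of weight $\iota\widetilde{\lambda}$. By Theorem \ref{thm:base_change_and_existence_of_Galois_for_tilde_G}, each such $\widetilde{\pi}$ carries a representation $r_\iota(\widetilde{\pi}) : G_F \to \GL_{2n}(\overline{\Q}_p)$ whose Frobenius characteristic polynomials at $v \notin S$ are $\widetilde{P}_v(X)$, which is de Rham above $p$, and which, since $\widetilde{K}(m')_{\overline{v}} = \widetilde{G}(\cO_{F^+_{\overline{v}}})$ is hyperspecial (so $\widetilde{\pi}_{\overline{v}}$ is unramified), is crystalline at each $v \mid \overline{v}$ with Hodge--Tate weights $\{\widetilde{\lambda}_{\tau,1}+2n-1, \dots, \widetilde{\lambda}_{\tau,2n}\}$. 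Hypothesis (1), which is exactly the inequality $-\lambda_{\tau c,n}-\lambda_{\tau,n} \leq p-2n-1$, places all of these in the window $[\lambda_{\tau,n}, \lambda_{\tau,n}+p-2]$, so every $G_{F_v}$-stable $\cO$-lattice in $r_\iota(\widetilde{\pi})|_{G_{F_v}}$ lies in the essential image of $\mathbf{G}^a$ for $a = (\lambda_{\tau,n})$. Assembling these over the points of the rationalisation and descending along the $\cO$-flat embedding (using that the $\widetilde{P}_v(X)$ are globally defined and that the $\Frob_v$ are dense by Chebotarev) yields a $2n$-dimensional determinant $D_{\widetilde{\m}}$ on $G_{F,S}$ valued in $\widetilde{A}(\widetilde{K}(m'),\widetilde{\lambda})_{\widetilde{\m}}$ with $\widetilde{P}_v(X)$ as characteristic polynomial of $\Frob_v$, exactly as in \cite[Thm.\ 5.7]{new-tho} and \cite[Cor.\ 5.2.6]{scholze-torsion}.

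Finally I would push $D_{\widetilde{\m}}$ through the commutative square, use Proposition \ref{prop_satake_transform_unramified_case} to rewrite the Frobenius characteristic polynomials as $P_v(X)\,q_v^{n(2n-1)}P_{v^c}^\vee(q_v^{1-2n}X)$, factor the resulting $2n$-dimensional determinant over $A(K,\lambda,q,m)/J_0$ as a product of two $n$-dimensional determinants by the ``sufficiently many character twists'' argument of \cite[Thm.\ 5.8]{new-tho} (at the cost of enlarging $J_0$ to some nilpotent $J$ with $J^{2N}=0$), and apply \cite[Theorem 2.22]{chenevier_det} to the factor with irreducible residual determinant $\det\overline{\rho}_\m$ to obtain $\rho_\m : G_{F,S} \to \GL_n(A(K,\lambda,q,m)/J)$ with the Frobenius characteristic polynomials of conclusion (a). For (c): the reduction mod $\varpi$ of an $\cO$-lattice in $r_\iota(\widetilde{\pi})|_{G_{F_v}}$ lies in $\mathcal{MF}_k^a$, its semisimplification is $\overline{\rho}_{\widetilde{\m}}|_{G_{F_v}}$, and the essential image of $\mathbf{G}^a$ is closed under subquotients, so $\overline{\rho}_{\widetilde{\m}}|_{G_{F_v}} \cong \mathbf{G}(\overline{N})$ with $\mathrm{FL}_\tau(\overline{N})$ the listed Hodge--Tate weights. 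For (b): by the Satake relation (Proposition \ref{prop_satake_transform_unramified_case}) and the Brauer--Nesbitt theorem, $\overline{\rho}_\m|_{G_{F_v}}$ is a direct summand of $\overline{\rho}_{\widetilde{\m}}|_{G_{F_v}}$, hence a subquotient of $\mathbf{G}(\overline{N})$ and so of the form $\mathbf{G}(\overline{M})$ with $\overline{M} \in \mathcal{MF}_k^a$; since $A(K,\lambda,q,m)/J$ is Artinian local with residue field a finite extension of $k$, $\rho_\m|_{G_{F_v}}$ is a successive extension of copies of $\overline{\rho}_\m|_{G_{F_v}}$, and the essential image of $\mathbf{G}^a$ is closed under extensions, so $\rho_\m|_{G_{F_v}}$ lies in the essential image of $\mathbf{G}^a$.

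The step that demands the most care is the construction of the $2n$-dimensional determinant over the $\cO$-flat Hecke algebra and its factorisation into $n$-dimensional pieces over the torsion quotient, while keeping the nilpotence index of $J$ and the integer $N$ bounded purely in terms of $n$ and $[F^+:\Q]$; conceptually, the delicate point is that the Fontaine--Laffaille condition is not visible at the level of determinants, so its transfer to $\rho_\m$ must be routed entirely through the closure of the essential image of $\mathbf{G}^a$ under subquotients and extensions, together with the observation that hypothesis (1) confines every relevant Hodge--Tate weight to a single Fontaine--Laffaille window.
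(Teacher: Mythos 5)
Your framework agrees with the paper's up to the crucial step: the choice of a CTG weight $\widetilde{\lambda}$ via Lemma \ref{central character}, the application of Proposition \ref{degree-shifting}, the use of Theorem \ref{what we get from CS} and Theorem \ref{thm:automorphic reps contributing to char 0} to see that $\widetilde{A}(\widetilde{K}(m'),\widetilde{\lambda})$ is $\cO$-flat with semisimple, automorphically described rationalisation, the construction of the crystalline $2n$-dimensional representation $\widetilde{\rho}$ and its determinant $\widetilde{D}$ over $\widetilde{A}$ from Theorem \ref{thm:base_change_and_existence_of_Galois_for_tilde_G}, and the deduction of conclusion (c) are all as in the paper. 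Your route to conclusion (a) differs mildly: the paper does not re-run the ``sufficiently many character twists'' factorisation over $A(K,\lambda,q,m)/J$, it simply quotes Theorem \ref{thm:existence_of_Hecke_repn_for_GL_n} to produce $\rho_\m$ with the right Frobenius characteristic polynomials and a controlled nilpotence exponent, and only afterwards brings $\widetilde{D}$ into play; this divergence is harmless.

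The genuine gap is your proof of conclusion (b). You argue that $\rho_\m|_{G_{F_v}}$ is a successive extension of copies of $\overline{\rho}_\m|_{G_{F_v}}$, that the latter lies in the essential image of $\bG^a$, and that this essential image is closed under extensions. That closure property is false: the essential image of $\bG^a$ is stable under subquotients but not under extensions of Galois modules. Already in rank two, a ramified non-split extension of the trivial mod $p$ character by itself has sub and quotient in the essential image of $\bG^{0}$, but is not itself Fontaine--Laffaille, since any rank two object of $\mathcal{MF}_k^{0}$ whose associated representation has trivial semisimplification must have both weights equal to $0$ and hence gives an unramified representation; equivalently, the extensions realised inside the Fontaine--Laffaille category span only $H^1_f \subsetneq H^1$. (If extension-closure held, every lift of a Fontaine--Laffaille residual representation to an Artinian $\cO$-algebra would automatically be Fontaine--Laffaille, conclusion (b) would carry no information, and $R_v^{\mathrm{FL}}$ of Proposition \ref{thm:FLring} could not be a proper quotient of $R_v^\square$; the parenthetical sentence in \S\ref{sec:FL_statements} asserting stability ``under subquotients and extensions'' is never used in any argument of the paper and should not be relied upon.) This is precisely the hard point of the proposition: the integral Fontaine--Laffaille condition on $\rho_\m$ cannot be read off from the residual representation and must be transported from characteristic zero. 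The paper does this by showing that the Cayley--Hamilton quotient $\widetilde{A}[G_{F,S}]/\ker(\widetilde{D})$ is in the essential image of $\bG^a$ at each $v\mid\overline{v}$ (because, $\widetilde{A}$ being $\cO$-flat, it embeds after inverting $p$ into $M_{2n}(\widetilde{A}\otimes_\cO\overline{\Q}_p)$, i.e.\ into $\widetilde{\rho}^{2n}$, by \cite[Thm.\ 2.12]{chenevier_det}), then identifying $(A/J)[G_{F,S}]/\ker(\widetilde{D}_{A/J})$ with $M_n(A/J)\times M_n(A/J)$ via \cite[Thm.\ 2.22]{chenevier_det} applied to $\rho_\m\oplus(\rho_\m^{c,\vee}\otimes\epsilon^{1-2n})$, and finally observing that this module is a quotient of $\widetilde{A}[G_{F,S}]/\ker(\widetilde{D})$ and contains $\rho_\m$ as a subobject, so that subquotient-stability alone yields (b). Your proposal needs this mechanism (or an equivalent one); as written, conclusion (b) is not established.
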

\begin{proof}
	Our hypotheses include those of Proposition \ref{degree-shifting}. We choose the weight $\widetilde{\lambda}$ of Proposition \ref{degree-shifting} to be CTG (as we may, using Lemma \ref{central character} and our freedom to specify $\widetilde{\lambda}_{\overline{v}'}$). Let $N_0$ be the integer denoted by $N$ in the statement of that proposition. Thus we can find an integer $m' \geq m$, a nilpotent ideal $J_0 \subset A(q, K, \lambda, m)$ satisfying $J_0^{N_0} = 0$, and a commutative diagram
	\[\xymatrix{
		\widetilde{\mathbb T}^S\ar[r]\ar[d]_-\cS & 
		\widetilde{A}(\widetilde{K}(m'), \widetilde{\lambda}) \ar[d]\\
		\mathbb T^S\ar[r] & A(K,\lambda,q,m)/J_0.
	}\]
	Let us abbreviate $\widetilde{A} = \widetilde{A}(\widetilde{K}(m'), \widetilde{\lambda})$ and $A = A(K,\lambda,q,m)$. By Theorem \ref{what we get from CS}, $\widetilde{A}$ is $\cO$-flat, and by Theorem \ref{thm:automorphic reps contributing to char 0}, $\widetilde{A} \otimes_\cO \overline{\Q}_p$ is semisimple and can be computed in terms of cuspidal automorphic representations of $\widetilde{G}$. By Theorem \ref{thm:base_change_and_existence_of_Galois_for_tilde_G}, there exists a continuous homomorphism
	\[ \widetilde{\rho} : G_{F, S} \to \GL_{2n}( \widetilde{A} \otimes_\cO \overline{\Q}_p) \]
	such that for any homomorphism $f :  \widetilde{A} \otimes_\cO \overline{\Q}_p \to \overline{\Q}_p$, and for any finite place $v\not\in S$ of $F$, $f \circ \widetilde{\rho}(\Frob_v)$ has characteristic polynomial equal to the image of $\widetilde{P}_v(X)$ in $\overline{\Q}_p[X]$; and for any place $v | \overline{v}$ of $F$, $(f \circ \widetilde{\rho})|_{G_{F_{v}}}$ is crystalline of Hodge--Tate weights 
	\[ \mathrm{HT}_\tau(f \circ \widetilde{\rho}|_{G_{F_{v}}}) = \{  -\lambda_{\tau c, n}  + (2 n - 1), \dots, -\lambda_{\tau c, 1} + n, \lambda_{\tau, 1} +(n-1), \dots, \lambda_{\tau, n} \}. \]
	In particular, any $G_{F_{v}}$-invariant $\cO$-lattice in $\widetilde{A}^{2n}$ is crystalline with all $\tau$-Hodge--Tate weights in the interval $[ \lambda_{\tau, n}, (2n -1) - \lambda_{\tau c, n}]$. Using our hypothesis that $- \lambda_{\tau c, n} + (2n - 1) - \lambda_{\tau, n} \leq p - 2$, we see that any $G_{F_{v}}$-invariant $\cO$-lattice in $\widetilde{A}^{2n}$ is in the image of the functor $\mathbf{G}^a$ with $a = ( \lambda_{\tau, n} ) \in \Z^{\Hom_{\Q_p}(F_{v}, E)}$ (cf.\ the discussion of the functor $\mathbf{G}^a$ at the beginning of \S \ref{section:lep}).
	
	This establishes part (c) of the proposition. Since for each $\tau \in \Hom_{\Q_p}(F_{v}, E)$ the integers 
	\[
 -\lambda_{\tau c, n}  + (2 n - 1), \dots, -\lambda_{\tau c, 1} + n, \lambda_{\tau, 1} +(n-1), \dots, \lambda_{\tau, n} 
	\]
	are all distinct, and $\overline{\rho}_{\widetilde{\m}} \cong \overline{\rho}_\m \oplus (\overline{\rho}_\m^{c, \vee} \otimes \epsilon^{1-2n})$, it follows as well that $\overline{\rho}_\m|_{G_{F_v}} \not\cong (\overline{\rho}_\m^{c, \vee} \otimes \epsilon^{1-2n})|_{G_{F_{v}}}$.
	
	Let $\widetilde{D} = \det \widetilde{\rho}$, a continuous determinant of $G_{F, S}$ of dimension $2n$ valued in $\widetilde{A}$ (by \cite[Ex. 2.32]{chenevier_det}). Its kernel is a 2-sided ideal of $\widetilde{A}[G_{F, S}]$ (see \cite[\S 1.17]{chenevier_det} for the definition of the kernel of a determinant). The formation of kernels commutes with flat base change over $\widetilde{A}$, so there is an algebra embedding
	\[
	(\widetilde{A}[G_{F, S}]/\ker(\widetilde{D}))\otimes_\cO \Qbar_p = 
	(\widetilde{A}\otimes_\cO \Qbar_p)[G_{F, S}]/\ker(\widetilde{D}\otimes_\cO \Qbar_p)
	\subset M_{2n}(\widetilde{A}\otimes_\cO \Qbar_p), 
	\]
	by \cite[Thm. 2.12]{chenevier_det}. This is in particular an embedding of left $\widetilde{A}[G_{F, S}]$-modules. 
	It follows that $(\widetilde{A}[G_{F, S}]/\ker(\widetilde{D})) \otimes_\cO \Qbar_p$ is a subrepresentation of $\widetilde{\rho}^{2n}$, 
	hence that for each $v | \overline{v}$, the $G_{F_{v}}$-representation $\widetilde{A}[G_{F, S}]/\ker(\widetilde{D})$ is in the essential image of $\bG^a$.
	
	Theorem \ref{thm:existence_of_Hecke_repn_for_GL_n} implies that there there is an integer $N_1$ depending only on $[F : \Q]$ and $n$, a nilpotent ideal $J_1 \subset A(K, \lambda,q,  m)$ satisfying $J_1^{N_1} = 0$, and a continuous representation
	\[ \rho_\m : G_{F, S} \to \GL_n( A(K, \lambda, q, m) / J_1) \]
	such that for each finite place $v \not\in S$ of $F$, $\rho_\m(\Frob_v)$ has characteristic polynomial equal to the image of $P_v(X)$ in $( A(K, \lambda, q, m) / J_1) [X]$. Let $J = (J_0, J_1) \subset A(K, \lambda, q, m)$; then $J^N = 0$, where $N = N_0 + N_1$. We will show that the proposition holds with this choice of $J$ and this value of $N$. Let us now write $\rho_\m$ for the projection of $\rho_\m$ to a representation with coefficients in $A(K, \lambda, q, m) / J = A/J$.
	
	Set $\widetilde{D}_{A/J} = \widetilde{D} \otimes_{\widetilde{A}} A/J$. Then $\widetilde{D}_{A / J} = \det(\rho_\m \oplus \rho_\m^{c, \vee} \otimes{\epsilon^{1-2n}})$, hence 
	\[ (\ker \det \rho_\m) \cap (\ker \det \rho_\m^{c, \vee} \otimes \epsilon^{1-2n}) \subset \ker \widetilde{D}_{A/J}. \]
	The representation $\rho_\m \oplus (\rho_\m^{c, \vee} \otimes {\epsilon^{1-2n}})$ induces an~$A$-algebra homomorphism  
	\[ (A / J)[G_{F, S}] \to M_n(A/J) \oplus M_n(A/J) \]
	 which, by \cite[Thm.~2.22(i)]{chenevier_det}, is surjective with kernel equal to $(\ker \det \rho_\m) \cap (\ker \det \rho_\m^{c, \vee} \otimes \epsilon^{1-2n})$. We deduce that $(A/J)[G_{F, S}]/\ker(\widetilde{D}_{A/J})$ is a quotient $A/J$-algebra of $M_n(A/J) \times M_n(A/J)$. By \cite[Thm.~2.22(ii)]{chenevier_det}, this forces $(A/J)[G_{F, S}]/\ker(\widetilde{D}_{A/J}) = M_n(A/J) \times M_n(A/J)$. 
	
	The surjection $\widetilde{A}[G_{F, S}] \to (A/J)[G_{F, S}]/\ker(\widetilde{D}_{A/J})$ 
	factors through the quotient $\widetilde{A}[G_{F, S}]/\ker(\widetilde{D})$ (see \cite[Lem.~1.18]{chenevier_det}). It follows that for each place $v | \overline{v}$ of $F$ that $M_n(A/J) \times M_n(A/J)$, viewed as a left $(A/J)[G_{F_{v}}]$-module, is in the essential image of the functor $\bG^a$ (the essential image is stable under passage to subquotients). Since $M_n(A/J) \times M_n(A/J)$ contains $\rho_\m$ as a subobject, it follows that $\rho_\m|_{G_{F_{v}}}$ is in the essential image of $\bG^a$, as desired.
\end{proof}
	
	\begin{remark}\label{rmk:WWE}
		Ideas similar to, and more general than, those used in the proof above were developed by Wake--Wang-Erickson \cite{WWE_pseudo_def_cond}.
	\end{remark}
We now extend the range of cohomological degrees and allowable level subgroups to which Proposition \ref{prop:first_consequence_for_FL_property} applies.
\begin{cor}\label{cor:first_consequence_for_FL_property}
	Let $\overline{v} \in S_p$, and let $K \subset \GL_n(\A_F^\infty)$ be a good subgroup. Let $\lambda \in (\Z^n_+)^{\Hom(F, E)}$, and let $\m \subset \T^S(K, \lambda)$ be a non-Eisenstein maximal ideal. Suppose that the following conditions are satisfied:
	\begin{enumerate}
		\item For each place $v | \overline{v}$ of $F$, we have $K_v = \GL_n(\cO_{F_v})$.
		\item There exists a place $\overline{v}' \in \overline{S}_p$ such that $\overline{v}' \neq \overline{v}$ and
		\[ \sum_{\substack{\overline{v}'' \in \overline{S}_p \\ \overline{v}'' \neq \overline{v}, \overline{v}'}} [F^+_{\overline{v}''} : \Q_p] > \frac{1}{2}[F^+ : \Q]. \]
		\item\label{p_bounded} For each embedding $\tau : F \hookrightarrow E$ inducing the place $\overline{v}$ of $F^+$, we have $-\lambda_{\tau c, 1} - \lambda_{\tau, 1} \geq 0$ and $ - \lambda_{\tau c, n} - \lambda_{\tau, n} \leq p - 1 - 2n$.
		\item $p > n^2$. (We recall our blanket assumption in \S \ref{section:lep} that $p$ is unramified in $F$.)
		\item \label{part:runningassumptioncor} Let $v$ be a finite place of $F$ not contained in $S$, and let $l$ be its residue characteristic. Then either $S$ contains no $l$-adic places of $F$ and $l$ is unramified in $F$, or there exists an imaginary quadratic field $F_0 \subset F$ in which $l$ splits.
		\item   \label{part:weakdecomposedgeneric} $\overline{\rho}_{\m}$ is decomposed generic. 
	\end{enumerate}
	Let $q \in [0, d-1]$ and $m \geq 1$ be integers.  Then there exists an integer $N \geq 1$ depending only on $[F : \Q]$ and $n$, an ideal $J \subset A(K, \lambda, q, m)$ satisfying $J^N = 0$, and a continuous representation
	\[ \rho_\m : G_{F, S} \to \GL_{n}(A(K, \lambda, q, m) / J) \]
	satisfying the following conditions:
	\begin{enumerate}
		\item[(a)] For each place $v \not\in S$ of $F$, the characteristic polynomial of $\rho_\m(\Frob_v)$ is equal to the image of $P_v(X)$ in $(A(K, \lambda, q, m) / J)[X]$.
	\item[(b)] For each place $v | \overline{v}$ of $F$, $\rho_\m|_{G_{F_{v}}}$ is in the essential image of the functor $\mathbf{G}^a$, for $a = ( \lambda_{\tau, n}) \in \Z^{\Hom_{\Q_p}(F_{v}, E)}$.
	\item[(c)] For each place $v | \overline{v}$ of $F$, there exists $\overline{N} \in \cM \cF_k$ with $\overline{\rho}_{\widetilde{\m}}|_{G_{F_{v}}} \cong \mathbf{G}(\overline{N})$ and
	\[ \mathrm{FL}_\tau(\overline{N}) = \{  -\lambda_{\tau c, n}  + (2 n - 1), \dots, -\lambda_{\tau c, 1} + n, \lambda_{\tau, 1} +(n-1), \dots, \lambda_{\tau, n} \}. \]
	for each embedding $\tau \in \Hom_{\Q_p}(F_{v}, E)$.
	\end{enumerate}	
\end{cor}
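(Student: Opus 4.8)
The plan is to bootstrap from Proposition~\ref{prop:first_consequence_for_FL_property}, which already proves the corollary in the special case that $q \geq \lfloor d/2 \rfloor$, that the good subgroup has the particular shape imposed there at the $p$-adic places $\neq \overline{v}$, and that the decomposed generic hypothesis holds for $\overline{\rho}_{\widetilde{\m}}$ rather than for $\overline{\rho}_\m$. There are accordingly three things to do: pass from $\overline{\rho}_\m$ to $\overline{\rho}_{\widetilde{\m}}$, pass from the special level to an arbitrary $K$, and pass from the degree range $[\lfloor d/2 \rfloor, d-1]$ to all of $[0, d-1]$.

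First I would check that the hypothesis on $\overline{\rho}_\m$ forces the corresponding one on $\overline{\rho}_{\widetilde{\m}}$. Since $\m$ is non-Eisenstein, $\overline{\rho}_{\widetilde{\m}} \cong \overline{\rho}_\m \oplus (\overline{\rho}_\m^{c,\vee} \otimes \epsilon^{1-2n})$ is a direct sum of two absolutely irreducible $n$-dimensional representations. Fixing a prime $\auxp \neq p$ which is decomposed generic for $\overline{\rho}_\m$ (Lemma~\ref{lem:weak_generic} supplies infinitely many), the Frobenius eigenvalues of each summand of $\overline{\rho}_{\widetilde{\m}}|_{G_{F_v}}$ for $v \mid \auxp$, together with the residue of $|k(v)|$, are all functions of the conjugacy class of $\Frob_\auxp$ in a fixed finite Galois group; the one further requirement — that the eigenvalues of the two summands do not resonate — thus cuts out a condition on that conjugacy class which, by the Chebotarev density theorem, holds for a positive density of $\auxp$. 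So $\overline{\rho}_{\widetilde{\m}}$ is decomposed generic, and we may assume the full hypotheses of Proposition~\ref{prop:first_consequence_for_FL_property}. Note also that $\m^\vee = \iota(\m)$ is again non-Eisenstein and decomposed generic, decomposed genericity being stable under dual and twist.

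Next, fix a general $K$ as in the statement and an integer $m$, and choose $m' \geq m$ large enough that the principal congruence subgroup of level $\varpi_{\overline{v}''}^{m'}$ acts trivially on $\cV_\lambda / \varpi^m$ at every $p$-adic place $\overline{v}'' \neq \overline{v}$. Let $K' \subseteq K$ be the good subgroup agreeing with $K$ away from $p$ and at $\overline{v}$, and equal to this principal congruence subgroup at the remaining $p$-adic places. Then $K'$ is realised as $\widetilde{K} \cap G(\A_{F^+}^\infty)$ for a good subgroup $\widetilde{K}$, decomposed with respect to $P$, with $\widetilde{K}_{\overline{v}} = \widetilde{G}(\cO_{F^+_{\overline{v}}})$ and, at the other $p$-adic places, $\widetilde{K}_{U, \overline{v}''} = U(\cO_{F^+_{\overline{v}''}})$ with $\widetilde{K}_{\overline{v}''}$ inside the ``block-unipotent mod $\varpi^m$'' subgroup — exactly the setup of Proposition~\ref{prop:first_consequence_for_FL_property}, which thereby gives the corollary for $K'$ in degrees $q \geq \lfloor d/2 \rfloor$. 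For $q < \lfloor d/2 \rfloor$ I would invoke Poincaré duality: localised at the non-Eisenstein ideal the boundary cohomology of $X_{K'}$ vanishes (the $\Res_{F/\Q}\GL_n$ analogue of Theorem~\ref{thm:reduction_to_Siegel}), so cohomology and compactly-supported cohomology agree there, and Corollary~\ref{cor:dual_Hecke_alg} then identifies $A(K',\lambda,q,m)$, as a ring, with $A(K',\lambda^\vee,d-1-q,m)$ (up to the involution $\iota$, and with $\m$ replaced by $\m^\vee$). Since $d-1-q \geq \lfloor d/2 \rfloor$ this is back in the range already handled, once one arranges — by a suitable algebraic twist, using Corollary~\ref{cor:twisted_Hecke_alg} — that $\lambda^\vee$ lies in the weight region allowed by the hypotheses; this is possible precisely because the hypotheses on $\lambda$ force $\lambda_{\tau,1} + \lambda_{\tau c,1} - \lambda_{\tau,n} - \lambda_{\tau c,n} \leq p - 2n - 1$ for each $\tau$ inducing $\overline{v}$, and in this range the dual lattice is the canonical one. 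The Fontaine--Laffaille conclusions survive the duality and the twist because the essential image of $\bG^a$ is stable under subquotients, duals, and twists by crystalline characters, the Fontaine--Laffaille weights transforming as required. This establishes the corollary for $K'$ in all degrees $q \in [0, d-1]$.

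Finally one descends from $K'$ to $K$: the Hochschild--Serre spectral sequence for $K/K'$ shows that the annihilator of $H^q(X_K, \cV_\lambda/\varpi^m)_\m$ contains a product of the annihilators of the $H^b(X_{K'}, \cV_\lambda/\varpi^m)_\m$ over $b \leq q$, and feeding this into the standard mechanism for glueing Hecke-algebra-valued Galois representations modulo nilpotent ideals of bounded exponent (of the kind underlying Theorem~\ref{thm:existence_of_Hecke_repn_for_GL_n}), and using once more that the essential image of $\bG^a$ is closed under subquotients and finite direct sums, produces $\rho_\m$ on $A(K,\lambda,q,m)/J$ with all the stated properties. The main obstacle I anticipate is the bookkeeping: verifying that each of these manoeuvres — the Hochschild--Serre reduction at $p$, the Poincaré duality, the algebraic twist, and the final glueing — can be carried out with error ideals whose nilpotence exponents depend only on $n$ and $[F : \Q]$, and that the $p$-adic Hodge theoretic output (membership in the image of $\bG^a$ and the exact list of Fontaine--Laffaille weights) is transported faithfully at the level of the $p$-torsion Hecke algebras through every step.
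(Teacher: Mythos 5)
There is a genuine gap at the very first step. You claim that decomposed genericity of $\overline{\rho}_{\m}$ forces (after replacing the auxiliary prime by another one, via Chebotarev) decomposed genericity of $\overline{\rho}_{\widetilde{\m}} \cong \overline{\rho}_\m \oplus (\overline{\rho}_\m^{c,\vee}\otimes\epsilon^{1-2n})$. This implication is false in general: genericity of the $2n$-dimensional representation requires the ratios of Frobenius eigenvalues \emph{across the two summands} to avoid $1$ and $q_v$, and no choice of prime can arrange this when the two summands resonate identically --- for instance when $\overline{\rho}_\m \cong \overline{\rho}_\m^{c,\vee}\otimes\epsilon^{1-2n}$ (the essentially conjugate self-dual case, which is precisely the case of greatest interest), the eigenvalue multiset of $\overline{\rho}_{\widetilde{\m}}(\Frob_v)$ has every eigenvalue with multiplicity at least $2$ at \emph{every} place, so the ratio $1$ always occurs and $\overline{\rho}_{\widetilde{\m}}$ is never decomposed generic. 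Even away from this degenerate case, your Chebotarev argument only shows that the non-resonance condition is a condition on the Frobenius class in a fixed finite Galois group; it does not show that some class satisfies it together with genericity of $\overline{\rho}_\m$, so the asserted positive density could be zero. The paper handles this differently: it first proves the corollary under the \emph{stronger} hypothesis that $\overline{\rho}_{\widetilde{\m}}$ is decomposed generic, and then removes it by choosing (after enlarging $k$) an auxiliary character $\overline{\psi}:G_F\to k^\times$, trivial at the places of $S$, such that $(\overline{\rho}_\m\otimes\overline{\psi})\oplus((\overline{\rho}_\m\otimes\overline{\psi})^{c,\vee}\otimes\epsilon^{1-2n})$ is decomposed generic, applying the established case to the twisted maximal ideal $\m(\psi)$ at a suitable auxiliary level, and twisting back via Proposition~\ref{prop:twisting_by_character}. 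This character-twisting device is the missing idea in your proposal, and without it the reduction to Proposition~\ref{prop:first_consequence_for_FL_property} does not go through.

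The remainder of your outline is essentially the paper's argument, in a slightly permuted order: shrinking the level at the $p$-adic places away from $\overline{v}$ and gluing via Hochschild--Serre together with Carayol's lemma (the paper does this reduction first, you do it last, which is harmless), and handling degrees $q<\lfloor d/2\rfloor$ by Poincar\'e duality combined with a twist by a suitable parallel weight (the paper's $\mu_0$ with $n_0=(2n+1-p)/2$ and the character $\epsilon^{-n_0}$) so that $\lambda^\vee+\mu_0$ again satisfies hypothesis~(\ref{p_bounded}), using $\cV_{\lambda^\vee}\cong\cV_\lambda^\vee$ and the stability of the essential image of $\bG^a$ under the relevant operations. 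Those steps are sound as sketched, but the proof as a whole fails until the decomposed-generic passage is repaired along the lines above.
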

\begin{proof}
	Note that the existence of a $\rho_\m$ satisfying only condition (a) (local-global compatibility at unramified places) is already known (Theorem \ref{thm:existence_of_residual_representation_for_GL_n}). We are therefore free to enlarge $S$ if necessary. We first prove the corollary with hypothesis (\ref{part:weakdecomposedgeneric}) replaced by the stronger assumption that $\overline{\rho}_{\widetilde{\m}}$ is decomposed generic. Let $K' \subset K$ be the good normal subgroup defined by the formula $K'_v = K_v$ if $v \nmid p$ or $v | \overline{v}$, and $K'_v = K_v \cap \ker( \GL_n(\cO_{F_v}) \to \GL_n(\cO_{F_v} / \varpi_v^m) )$ otherwise. Let $\widetilde{K} \subset \widetilde{G}(\A_{F^+}^\infty)$ be a good subgroup satisfying the following conditions:
	\begin{itemize}
		\item $\widetilde{K} \cap G(\A_{F^+}^\infty) = K$.
		\item $\widetilde{K}^S = \widetilde{G}(\widehat{\cO}_{F^+}^S)$.
		\item For each place $\overline{v}'' | p$ of $F^+$, $U(\cO_K) \subset 
		\widetilde{K}_{\overline{v}''}$.
		\item $\widetilde{K}_{\overline{v}} = G(\cO_{F^+_{\overline{v}}})$.
	\end{itemize}
	Let $\widetilde{K}' = \widetilde{K}(m)$ be the good subgroup defined as follows: if $\overline{v}''$ is a finite place of $F^+$ which is prime to $p$ or equal to $\overline{v}$, then $\widetilde{K}'_{\overline{v}''} = \widetilde{K}_{\overline{v}''}$. Otherwise, we set
	\[ \widetilde{K}'_{\overline{v}''} = \widetilde{K}_{\overline{v}''}\cap\left\{ \left( \begin{array}{cc} 1_n & * \\ 0 & 1_n  \end{array}\right) \text{ mod }\varpi_{\overline{v}''}^{m}  \right\}. \]
	Note that the triple $(\widetilde{K}', \lambda, m)$ satisfies the hypotheses of Proposition \ref{prop:first_consequence_for_FL_property}. We let $K' = \widetilde{K}' \cap G(\A_{F^+}^\infty) $. There is a Hochschild--Serre spectral sequence
	\[ H^i(K / K', H^j( X_{K'}, \cV_\lambda / \varpi^m)_\m ) \Rightarrow H^{i+j}(X_K, \cV_\lambda / \varpi^m)_\m. \]
	It follows that we have an inclusion
	\[ \prod_{i=0}^q \operatorname{Ann}_{\T^S} H^{q-i}(X_{K'}, \cV_\lambda / \varpi^m)_\m \subset \operatorname{Ann}_{\T^S} H^q(X_K, \cV_\lambda / \varpi^m)_\m. \]
	Suppose we could show that there is an integer $N_0$ depending only on $[F^+ : \Q]$ and $n$ and for each $i = 0, \dots, q$ an ideal $J_i \subset A(K', \lambda, q-i, m)$ satisfying $J_i^{N_0} = 0$ and a continuous representation $\rho_{\m, i} : G_{F, S} \to \GL_n(A(K', \lambda, q-i, m) / J_i)$ satisfying the conditions the same conditions as $\rho_\m$. Then the corollary would follow, with $J$ equal to the image in $A(K, \lambda, q, m)$ of the intersection of the pre-images of $J_0, \dots, J_q$ in $\T^S$, and $N = q N_0$. A theorem of Carayol, \cite[Th\'eor\`eme~2]{MR1279611}, implies that the product representation
	\[ \prod_{i=0}^q \rho_{\m, i} : G_{F, S} \to \GL_n\left( \prod_{i=0}^q A(K', \lambda, q-i, m) / J_i \right) \]
	can be conjugated to take values in $\GL_n(\im(\T^S \to \prod_{i=0}^q A(K', 
	\lambda, q-i, m) / J_i ) )$, and the ring $\im(\T^S \to \prod_{i=0}^q A(K', 
	\lambda, q-i, m) / J_i )$ has $A(K, \lambda, q, m) / J$ as a quotient. 
	
	We are therefore free to assume that $K = K'$ and $\widetilde{K} = 
	\widetilde{K}'$, which we now do. In this case, we can moreover assume that 
	$\lambda_{\overline{v}''} = 0$ if $\overline{v}'' \in \overline{S}_p$ and 
	$\overline{v}'' \neq \overline{v}$. Note that $\widetilde{K}$ satisfies the 
	conditions of Proposition \ref{prop:first_consequence_for_FL_property}, so 
	if $q - i \geq \lfloor d / 2 \rfloor$, there's nothing to do. Suppose 
	instead that $q - i < \lfloor d /2 \rfloor$. Then $d - 1 - q + i \ge 
	\lfloor d / 2 \rfloor$.
	
	Our condition on $\lambda_{\overline{v}}$ then implies, together with \cite[Cor.~II.5.6]{MR2015057}, that there is an isomorphism $\cV_{\lambda^\vee} \cong \cV^\vee_{\lambda}$. Let $n_0 = (2n + 1 - p) / 2$, and let $\mu_0 \in (\Z^n_+)^{\Hom(F, E)}$ be defined by $\mu_{0, \tau} = (n_0,\dots, n_0)$ for each $\tau$. Then the maximal ideal $\m^\vee(\epsilon^{-n_0})$ of $\T^S$  (cf.\ \S \ref{sec:twisting_and_duality}) is in the support of $H^\ast(X_K, \cV_{\lambda^\vee + \mu_0})$, and the weight $\lambda^\vee + \mu_0$ also satisfies the hypothesis (\ref{p_bounded}) of the corollary. 
	
	Proposition \ref{prop:first_consequence_for_FL_property} implies the existence of an ideal 
	\[ J_i' \subset \T^S( H^{d-1-q+i}(X_K, \cV_{\lambda^\vee + \mu_0} / \varpi^m))_{\m^\vee(\epsilon^{-n_0})} \]
	 and a continuous representation 
	 \[\rho'_{\m, i} : G_{F, S} \to \GL_n(\T^S( H^{d-1-q+i}(X_K, \cV_{\lambda^\vee + \mu_0} / \varpi^m))_{\m^\vee(\epsilon^{-n_0})} / J_i') \]
	 satisfying the same conditions as $\rho_\m$. Proposition \ref{prop_poincare_duality} and Proposition \ref{prop:twisting_by_character} together imply that the isomorphism 
	 \[ \T^S \to \T^S,\,\,[K^S g K^S] \mapsto \epsilon(\Art_K(\det(g)))^{-n_0} [K^S g^{-1} K^S]\]
	  descends to an isomorphism
	\[ f :   \T^S( H^{d-1-q+i}(X_K, \cV_{\lambda^\vee + \mu_0} / \varpi^m))_{\m^\vee(\epsilon^{-n_0})} \to A(K, \lambda, q - i, m). \]
	The proof in this case is completed by taking $J_i = f(J_i')$ and $\rho_{\m, i} = (f \circ \rho_{\m, i}')^\vee \otimes \epsilon^{1 - 2n + (p-1)/2}$. 
	
	We now remove the assumption that $\overline{\rho}_{\widetilde{\m}}$ is decomposed generic, assuming instead only that $\overline{\rho}_\m$ is decomposed generic. After possibly enlarging $k$, we can find a character $\overline{\psi} : G_F \to k^\times$ such that 
	\[ (\overline{\rho}_{\m} \otimes \overline{\psi}) \oplus ((\overline{\rho}_{\m} \otimes \overline{\psi})^{c, \vee} \otimes \epsilon^{1-2n}) \]
	 is decomposed generic, and $\overline{\psi}|_{G_{F_{v}}}$ is trivial for each place $v \in S$ of $F$. Let $\psi : G_{F} \to \cO^\times$ denote the Teichm\"uller lift of $\overline{\psi}$. 
	
	Choose a finite set $S'$ containing $S$ and the set of places where $\psi$ is ramified and a good normal subgroup $K' \subset K$, all satisfying the following conditions:
	\begin{itemize}
		\item $(K')^{S' - S} = K^{S' - S}$.
		\item The quotient $K' / K$ is abelian of order prime to $p$. 
		\item For each place $v$ of $F$, the restriction of $\psi|_{G_{F_v}} \circ \Art_{F_v}$ to $\det(K'_v)$ is trivial.  
		\item $S'$ satisfies the analogue of hypothesis~(\ref{part:runningassumptioncor}) of the corollary.
	\end{itemize}
	Then there is a surjection $A(K', \lambda, q, m) \to A(K, \lambda, q, m)$ of $\T^{S'}$-algebras, so it suffices to establish the corollary for $A(K', \lambda, q, m)$. We write $\m(\psi) \subset \T^{S'}$ for the non-Eisenstein maximal ideal with $\overline{\rho}_{\m(\psi)} \cong \overline{\rho}_\m \otimes \overline{\psi}$.
	
	Let $\widetilde{\m}(\psi) = \cS^\ast(\m(\psi))$. Then $\overline{\rho}_{\widetilde{\m}(\psi)}$ is decomposed generic, so the already established case of the corollary implies that we can find an integer $N \geq 1$ depending only on $[F^+ : \Q]$ and $n$, and an ideal $J' \subset \T^{S'}(H^q(X_{K'}, \cV_\lambda / \varpi^m)_{\m(\psi)})$ satisfying ${J'}^n = 0$, and a continuous representation
	\[ \rho_{\m(\psi)} : G_{F, S'} \to \GL_n( \T^{S'}(H^q(X_{K'}, \cV_\lambda / \varpi^m)_{\m(\psi)}) / J' ) \]
	satisfying the conditions (a) -- (c) of the corollary. Proposition \ref{prop:twisting_by_character} implies that the
	isomorphism 
	\[ \T^{S'} \to \T^{S'},\,\,[{K'}^S g {K'}^S] \mapsto \psi(\Art_F(\det(g))) [{K'}^S g {K'}^S]\]
	descends to an isomorphism
	\[ f :   \T^{S'}( H^{q}(X_{K'}, \cV_{\lambda} / \varpi^m))_{\m(\psi)} \to A(K', \lambda, q, m). \]
	The proof is completed on taking $J = f(J')$ and $\rho_\m = (f \circ \rho_{\m(\psi)}) \otimes \psi^{-1}$.
\end{proof}

\subsection{The end of the proof}
We can now prove the main theorem of this chapter.
(For the reader's convenience, we repeat the statement here.)
To avoid confusion, we also restate the standing hypotheses for this chapter in the statement of the theorem.
\begin{theorem}\label{myAmazingTheorem} \label{thm:lgcfl}
Let $\m \subset \T^S(K, \lambda)$ be a non-Eisenstein maximal ideal. Suppose that~$\T^S(K, \lambda) / \m = k$ has residue
	characteristic~$p$. Let $\overline{v}$ be a $p$-adic place of $F^{+}$, and suppose that the following additional conditions are satisfied:
\begin{enumerate}
\item \label{part:unramifiedandsplit} The prime $p$ is unramified in $F$, and $F$ contains an imaginary quadratic field in which $p$ splits.
		\item \label{part:runningassumption}  Let $w$ be a finite place of $F$ not contained in $S$, and let $l$ be its residue characteristic. Then either $S$ contains no $l$-adic places of $F$ and $l$ is unramified in $F$, or there exists an imaginary quadratic field $F_0 \subset F$ in which $l$ splits.
	\item \label{part:part3} For each place $v | \overline{v}$ of $F$, $K_v = \GL_n(\cO_{F_v})$.
	\item  \label{part:part4}  For every embedding $\tau : F 
	\hookrightarrow E$ inducing the place $\overline{v}$ of $F^+$,
	\[
	\lambda_{\tau,1} + \lambda_{\tau c, 1} - \lambda_{\tau, n} - \lambda_{\tau c, n} \leq p - 2n - 1.
	\]
	\item  \label{part:pbig} $p > n^2$.
	\item \label{part:splittingcondition} There exists a $p$-adic place $\overline{v}' \neq \overline{v}$ of $F^+$ such that 
	\[
	\sum_{\substack{ \overline{v}'' \in \overline{S}_p \\ \overline{v}'' \neq \overline{v}, \overline{v}'}} [F^+_{\bar v''}: \Q_p] >\frac{1}{2}[F^+:\Q].
	\]
				\item  \label{part:decomposedgeneric} $\overline{\rho}_\m$ is decomposed generic (Definition~\ref{defn:decomposed_generic}).
		\item Assume that one of the following holds:
		\begin{enumerate}
			\item  \label{part:therearecharzero} $H^\ast(X_K,\cV_\lambda)_{\frakm}[1/p] \ne 0$, or
			\item for every embedding $\tau \colon F \hookrightarrow E$ inducing the place $\overline{v}$ of $F^+$,
			\[
			-\lambda_{\tau c,n} - \lambda_{\tau , n} \leq p - 2n - 2 \quad \text{and} \quad -\lambda_{\tau c,1} -\lambda_{\tau, 1} \geq 0.
			\]
		\end{enumerate}
		\end{enumerate}
	Then there exists an integer $N \geq 1$ depending only on $[F^+ : \Q]$ and $n$, an ideal $J \subset \T^S(K, \lambda)$ satisfying $J^N = 0$, and a continuous representation
	\[ \rho_\m : G_{F, S} \to \GL_n(\T^S(K, \lambda)_{\m} / J) \]
	satisfying the following conditions:
	\begin{enumerate}
		\item[(a)] For each finite place $v \not\in S$ of $F$, the characteristic polynomial of $\rho_\m(\Frob_v)$ equals the image of $P_v(X)$ in $(\T^S(K, \lambda)_{\m} / J)[X]$.
		\item[(b)] For each place $v | \overline{v}$ of $F$, $\rho_\m|_{G_{F_v}}$ is in the essential image of  $\bG^a$ (with $a = (\lambda_{\tau, n}) \in \Z^{\Hom(F_v,E)}$). 
		\item[(c)] There is $\overline{M} \in \mathcal{MF}_{k}$ such that 
		$\overline{\rho}_\m|_{G_{F_v}} \cong \bG(\overline{M})$ and for any embedding $\tau \colon F_v \hookrightarrow E$,
		\[
		\mathrm{FL}_\tau(\overline{M}) = \{\lambda_{\tau,1} + n - 1, \lambda_{\tau,2} + n - 2, \ldots,\lambda_{\tau,n}\}.
		\]
	\end{enumerate}
\end{theorem}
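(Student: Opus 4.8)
The plan is to deduce Theorem~\ref{thm:lgcfl} from Corollary~\ref{cor:first_consequence_for_FL_property}: one reduces its hypotheses to those of that corollary, glues the Galois representations produced there over all cohomological degrees for a fixed torsion coefficient system, and then passes to the $\varpi$-adic limit. Throughout we are free to enlarge $S$, since the existence of a $\rho_\m$ satisfying only condition~(a) is already known (Theorem~\ref{thm:existence_of_residual_representation_for_GL_n}), so enlarging $S$ only strengthens the conclusion. If the stronger bounds $-\lambda_{\tau c,n}-\lambda_{\tau,n}\le p-2n-2$ and $-\lambda_{\tau c,1}-\lambda_{\tau,1}\ge 0$ of the second alternative of hypothesis~\ref{part:therearecharzero} hold, then $\lambda$ already satisfies the hypotheses of Corollary~\ref{cor:first_consequence_for_FL_property} (its bound $-\lambda_{\tau c,n}-\lambda_{\tau,n}\le p-1-2n$ is weaker, and its positivity hypothesis is exactly the second bound; one notes in passing that together these force hypothesis~\ref{part:part4}, so that the latter is a genuine extra constraint only in the remaining case). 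So in this case we may pass directly to the gluing step below.

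In the remaining case $H^\ast(X_K,\cV_\lambda)_\m[1/p]\ne 0$, I would reduce to the previous case by twisting. The quantity $\lambda_{\tau,1}+\lambda_{\tau c,1}-\lambda_{\tau,n}-\lambda_{\tau c,n}$ bounded by hypothesis~\ref{part:part4} is unchanged when $\lambda$ is replaced by $\lambda+\mu$ with $\mu_\tau=(m_\tau,\dots,m_\tau)$, and it bounds the length of the interval of integers $m_\tau+m_{\tau c}$ for which the shifted quantities $-\lambda_{\tau c,1}-\lambda_{\tau,1}$ and $-\lambda_{\tau c,n}-\lambda_{\tau,n}$ lie in the ranges $\ge 0$ and $\le p-1-2n$ demanded by Corollary~\ref{cor:first_consequence_for_FL_property}. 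A crystalline character $\psi$ realising such a twist exists as in Proposition~\ref{prop:twisting_by_character} (here hypothesis~\ref{part:unramifiedandsplit}, that $F$ contains an imaginary quadratic field in which $p$ splits, and the nonvanishing of $H^\ast(X_K,\cV_\lambda)_\m[1/p]$ are used, and one may have to enlarge $S$), and Corollary~\ref{cor:twisted_Hecke_alg} identifies $\T^S(K,\lambda)_\m$ with $\T^S(K,\lambda+\mu)_{\m(\psi)}$; since twisting is an isomorphism on cohomology the nonvanishing hypothesis persists and we are reduced to a weight satisfying the hypotheses of Corollary~\ref{cor:first_consequence_for_FL_property}. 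I expect this to be the most delicate part of the argument — both producing the twisting character for the required (possibly $\tau$-dependent) infinity type and checking that the $p$-adic local condition survives the reduction.

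Now for each $q\in[0,d-1]$ and each $m\ge 1$, Corollary~\ref{cor:first_consequence_for_FL_property} furnishes a nilpotent ideal $J_{q,m}\subset A(K,\lambda,q,m)$ with $J_{q,m}^{N_0}=0$ ($N_0$ depending only on $[F:\Q]$ and $n$) and a representation $\rho_{\m,q,m}\colon G_{F,S}\to\GL_n(A(K,\lambda,q,m)/J_{q,m})$ with Frobenius characteristic polynomials $P_v(X)$, lying in the essential image of $\bG^a$ at the places $v\mid\overline v$ with $a=(\lambda_{\tau,n})$, and with $\overline\rho_{\widetilde\m}|_{G_{F_v}}\cong\bG(\overline N)$ for an object $\overline N$ whose Fontaine--Laffaille weights are as in part~(c) of the corollary. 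Since $\overline\rho_{\widetilde\m}\cong\overline\rho_\m\oplus(\overline\rho_\m^{c,\vee}\otimes\epsilon^{1-2n})$ (as in the proof of that corollary), the two halves of $\mathrm{FL}_\tau(\overline N)$ are disjoint (using $-\lambda_{\tau c,1}-\lambda_{\tau,1}\ge 0$), and the essential image of $\bG^a$ is closed under direct summands, one extracts $\overline\rho_\m|_{G_{F_v}}\cong\bG(\overline M)$ with $\mathrm{FL}_\tau(\overline M)=\{\lambda_{\tau,1}+n-1,\lambda_{\tau,2}+n-2,\dots,\lambda_{\tau,n}\}$, which is the residual assertion in~(b). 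To glue over $q$ with $m$ fixed, apply Carayol's lemma \cite[Lemma~2.1.10]{cht} to $\prod_q\rho_{\m,q,m}$ (using that $\overline\rho_\m$ is absolutely irreducible): it conjugates into $\GL_n(B_m)$, where $B_m$ is the image of $\T^S$ in $\prod_q A(K,\lambda,q,m)/J_{q,m}$. The kernel of the natural map $\T^S(K,\cV_\lambda/\varpi^m)_\m\to\prod_q A(K,\lambda,q,m)$ is nilpotent of degree bounded by the amplitude of $R\Gamma(X_K,\cV_\lambda/\varpi^m)_\m$, hence by a constant depending only on $n$ and $[F^+:\Q]$; so $B_m=\T^S(K,\cV_\lambda/\varpi^m)_\m/J'_m$ with $J'_m$ nilpotent of uniformly bounded exponent, and $\rho_{\m,m}:=\prod_q\rho_{\m,q,m}$, regarded over $B_m$, still lies in the essential image of $\bG^a$ at $v\mid\overline v$, that image being closed under finite direct sums and submodules.

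Finally I would pass to the limit over $m$. Since $R\Gamma(X_K,\cV_\lambda)_\m$ is a perfect complex of $\cO$-modules (Lemma~\ref{lem:cohomology_is_perfect}) and $\cO$ is $\varpi$-adically complete, $\T^S(K,\lambda)_\m=\varprojlim_m\T^S(K,\cV_\lambda/\varpi^m)_\m$. The determinants $\det\rho_{\m,m}$ are pinned down by the characteristic polynomials of $\Frob_v$ for $v\notin S$ together with the Chebotarev density theorem, hence are compatible under the transition maps and assemble into an $n$-dimensional determinant $D_\m$ of $G_{F,S}$ valued in $\T^S(K,\lambda)_\m/J$ for a nilpotent ideal $J$ of exponent depending only on $n$ and $[F^+:\Q]$; as $\overline\rho_\m$ is absolutely irreducible, $D_\m=\det\rho_\m$ for a representation $\rho_\m\colon G_{F,S}\to\GL_n(\T^S(K,\lambda)_\m/J)$ by \cite[Theorem~2.22]{chenevier_det}, which establishes~(a). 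For~(b), the reduction of $\rho_\m|_{G_{F_v}}$ modulo $\varpi$ is a successive extension of copies of $\overline\rho_\m|_{G_{F_v}}\cong\bG(\overline M)$, which lies in the essential image of $\bG^a$; since that image is closed under extensions and subquotients, a finite $\cO$-module whose reduction modulo $\varpi$ lies in it also lies in it (filter a finite $\cO$-module $N$ by the $\varpi^jN$, each quotient $\varpi^jN/\varpi^{j+1}N$ being a quotient of $N/\varpi N$), so $\rho_\m|_{G_{F_v}}$ is in the essential image of $\bG^a$ with $a=(\lambda_{\tau,n})$, and the statement about $\overline M$ has already been obtained. The remaining work — the twist in the reduction step, and keeping all nilpotence exponents uniform in $m$ and $q$ — is of the kind carried out in \cite[Cor.~5.4.4]{scholze-torsion} and \cite{new-tho}.
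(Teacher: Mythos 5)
Your overall reduction is the same as the paper's (twist into the range of Corollary~\ref{cor:first_consequence_for_FL_property} when $H^\ast(X_K,\cV_\lambda)_{\frakm}[1/p]\ne 0$, apply it directly under the stronger bounds, then glue over degrees and pass to the limit in $m$), and your gluing/limit paragraph is fine in outline. The genuine gap is the sentence where you ``extract'' $\overline{\rho}_\m|_{G_{F_v}}\cong\bG(\overline{M})$ with $\mathrm{FL}_\tau(\overline{M})=\{\lambda_{\tau,1}+n-1,\dots,\lambda_{\tau,n}\}$ from the decomposition $\overline{\rho}_{\widetilde\m}\cong\overline{\rho}_\m\oplus(\overline{\rho}_\m^{c,\vee}\otimes\epsilon^{1-2n})$. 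What Corollary~\ref{cor:first_consequence_for_FL_property} controls is the $2n$-dimensional representation $\overline{\rho}_{\widetilde\m}|_{G_{F_v}}$: you learn that $\mathrm{FL}_\tau(\overline{M})$ and $\mathrm{FL}_\tau(\overline{M}')$ partition the $2n$ distinct integers $\{-\lambda_{\tau c,n}+2n-1,\dots,-\lambda_{\tau c,1}+n,\lambda_{\tau,1}+n-1,\dots,\lambda_{\tau,n}\}$, but nothing formal — certainly not ``closed under direct summands'' — tells you \emph{which} $n$ of them are attached to $\overline{\rho}_\m$ rather than to its conjugate-dual twist. Pinning down this partition is exactly where the paper's proof does its real work: in the case $H^\ast(X_K,\cV_\lambda)_{\frakm}[1/p]\ne 0$ it pushes forward along the classical point $x$ attached to $\pi$, computes $\det\rho_x$ from the central character $\omega_\pi$, and uses that (after the twist) the required set consists of the $n$ smallest of the $2n$ integers, hence is the unique $n$-element subset of the prescribed (minimal) sum; in the other case it makes a second character twist shifting $\lambda_\tau$ by $(1,\dots,1)$ at the embeddings inducing $\widetilde{v}$ and invokes the combinatorial Lemma~\ref{lem:integer_partition} to force the partition. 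Without one of these arguments, the Fontaine--Laffaille weight assertion in (b) is simply not established.

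A second, smaller issue is the twisting step you flag but defer. An algebraic Hecke character of the CM field $F$ has $m_\tau+m_{\tau c}$ independent of $\tau$, so to bring $-\lambda_{\tau c,1}-\lambda_{\tau,1}$ into $[0,\infty)$ and $-\lambda_{\tau c,n}-\lambda_{\tau,n}$ into $(-\infty,p-1-2n]$ simultaneously at every $\tau$ over $\overline{v}$ by a single shift, the relevant intervals for the different $\tau$ must have a common point; hypothesis~(\ref{part:part4}) alone does not guarantee this. The paper gets it from purity of $\lambda$ (namely $\lambda_{\tau,i}+\lambda_{\tau c,n+1-i}=w$ independent of $\tau$ and $i$, via \cite[Lemma 4.9]{MR1044819} and the automorphic representation $\pi$ furnished by hypothesis~(\ref{part:therearecharzero})), and it constructs $\psi$ crystalline at $p$, unramified at $\widetilde{v}$, with prescribed restriction at $\widetilde{v}^c$ using \cite[Lemma~2.2]{HSBT}, after shrinking $K$ away from $p$ so that $\psi$ interacts correctly with the level (Proposition~\ref{prop:twisting_by_character} only records the effect on Hecke algebras, it does not produce the character). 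So the ``delicate part'' you point to is not just bookkeeping: it needs the purity input, which your proposal never invokes.
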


\begin{proof}
	Note that the existence of a $\rho_\m$ satisfying only condition (a) is already known (Theorem \ref{thm:existence_of_residual_representation_for_GL_n}). We are therefore free to enlarge $S$ if necessary. We first prove the theorem under the assumption that $H^\ast(X_K,\cV_\lambda)_{\frakm}[1/p] \ne 0$. 
	By Theorem~\ref{thm:application_of_matsushima},	there exists an isomorphism $\iota : \overline{\Q}_p \to \C$ and a cuspidal automorphic representation $\pi$ of $\GL_n(\A_F)$ of weight $\iota\lambda$ such that $(\pi^\infty)^K \neq 0$ and such that $\overline{r_\iota(\pi)} \cong \overline{\rho}_\m$.
	By \cite[Lemma 4.9]{MR1044819}, there is an integer $w \in \Z$ such that for each embedding $\tau \colon F \hookrightarrow E$ and for each $i = 1, \dots, n$, 
	we have $\lambda_{\tau, i} + \lambda_{\tau c, n + 1 - i} = w$.
	Fix an embedding $\tau_0 \in \Hom_{\Q_p}(F_{\widetilde{v}}, E)$ such that $\lambda_{\tau_0,1} + \lambda_{\tau_0 c, 1}$ is maximal. (Recall that $\widetilde{v}$ is a fixed choice of place of $F$ lying above $\overline{v}$.)

	After possibly enlarging $E$, we can (cf.\  \cite[Lemma~2.2]{HSBT}) find a continuous character $\psi \colon G_F \to \cO^\times$ satisfying
	\begin{itemize}
		\item $\psi$ is crystalline at each $v\mid p$, 
		\item $\psi$ is unramified at $\widetilde{v}$ and at each $v \in S - S_p$,
		\item $\psi\circ \Art_{F_{\widetilde{v}^c}}|_{\cO_{F_{\widetilde{v}^c}}^\times} = \prod_{\tau \colon F_{\widetilde{v}} \hookrightarrow E} (\tau c)^{\lambda_{\tau_0,1}+\lambda_{\tau_0 c, 1}}$.
	\end{itemize}
	Define a weight $\mu = (\mu_{\tau,1},\ldots, \mu_{\tau,n}) \in (\Z^n_+)^{\Hom(F, E)}$ by letting $\mu_{\tau,i}$ be the unique $\tau$-Hodge--Tate weight for $\psi$ for each $1\le i \le n$. 
	Note that for $\tau$ inducing $\widetilde{v}$, $\mu_{\tau, i} = 0$ and $\mu_{\tau c, i} = -\lambda_{\tau_0, 1} -\lambda_{\tau_0 c, 1}$ for all $1\le i \le n$. 
	
	Choose a finite set $S'$ containing $S$ and the set of places where $\psi$ is ramified and a good normal subgroup $K' \subset K$, all satisfying the following conditions:
	\begin{itemize}
		\item $(K')^{S' - S} = K^{S' - S}$.
		\item The quotient $K' / K$ is abelian.
		\item For each finite place $v \nmid p$ of $F$, the restriction of $\psi|_{G_{F_v}} \circ \Art_{F_v}$ to $\det(K'_v)$ is trivial.  
		\item $S'$ satisfies the analogue of hypothesis~(\ref{part:runningassumption}) of the theorem.
	\end{itemize}
	By an argument with the Hochschild--Serre spectral sequence, just as in the proof of Corollary \ref{cor:first_consequence_for_FL_property}, we are free to assume that $K = K'$ and $S = S'$, and we now do this. Let $\lambda' = \lambda + \mu$. By Proposition \ref{prop:twisting_by_character}, the map 
	\[ \T^S \to \T^S, \,\, [K^S g K^S] \mapsto \psi(\Art_F(\det(g))) [K^S g K^S] \]
	descends to an isomorphism $f : \T^S(K, \lambda')_{\m(\psi)} \to \T^S(K, \lambda)_\m$. We observe that for any $\tau \in \Hom_{\Q_p}(F_{\widetilde{v}}, E)$, we have
	\[
	-\lambda_{\tau c, 1}' - \lambda_{\tau, 1}' = -\lambda_{\tau c, 1} - \lambda_{\tau, 1} + \lambda_{\tau_0 c, 1} + \lambda_{\tau_0, 1} \geq 0
	\]
	and (using that $\lambda_{\tau,i}+\lambda_{\tau c,n+1-i} = w$ is independent of $\tau$ and $i$)
	\[\begin{split}
	- \lambda_{\tau c,n}' - \lambda_{\tau, n}' = - \lambda_{\tau c,n}- \lambda_{\tau , n} + \lambda_{{\tau_0}c,1} +\lambda_{\tau_0 ,1} 
	= \lambda_{{\tau}, 1} + \lambda_{\tau c, 1} - \lambda_{{\tau_0} , n} - \lambda_{\tau_0 c, n} \\
	\leq \lambda_{{\tau_0}, 1} +\lambda_{{\tau_0} c, 1} - \lambda_{{\tau_0} c, n} - \lambda_{{\tau_0} , n} \leq p - 1 - 2n.
	\end{split}\]
	In particular, $\lambda'$ satisfies the assumptions of Corollary \ref{cor:first_consequence_for_FL_property}. 
	
	We recall (Lemma \ref{lem:cohomology_is_perfect}) that $R \Gamma(X_K, \cV_{\lambda'})$ is a perfect complex, with cohomology concentrated in the range $[0, d-1]$. It follows (cf.~\cite[Lemma 3.11]{new-tho}) that the map 
	\[ \T^S(K, {\lambda'}) \to \invlim_{m \geq 1} \T^S(R \Gamma(X_K, \cV_{\lambda'} / \varpi^m)) \]
	is an isomorphism. On the other hand, \cite[Lem. 2.5]{KT} shows that for any $m \geq 1$, the kernel of the map
	\[ \T^S(R \Gamma(X_K, \cV_{\lambda'} / \varpi^m)) \to \prod_q \T^S(H^q(X_K, \cV_{\lambda'} / \varpi^m)) \]
	is a nilpotent ideal $I$ satisfying $I^d = 0$. Applying Corollary \ref{cor:first_consequence_for_FL_property}, we see that we can find an integer $N \geq 1$ depending only on $[F^+ : \Q]$ and $n$, an ideal $J' \subset \T^S(K, \lambda')_{\m(\psi)}$ satisfying $(J')^N = 0$, and a continuous representation
	\[ \rho_{\m(\psi)} : G_{F, S} \to \GL_n( \T^S(K, \lambda')_{\m(\psi)} / J'  ) \]
	satisfying the following conditions:
		\begin{itemize}
			\item[(a')] For each place $v \not\in S$ of $F$, the characteristic polynomial of $\rho_{\m(\psi)}(\Frob_v)$ is equal to the image of $P_v(X)$ in $(\T^S(K, \lambda')_{\m(\psi)} / J')[X]$.
			\item[(b')] For each place $v | \overline{v}$ of $F$, $\rho_{\m(\psi)}|_{G_{F_{v}}}$ is in the essential image of the functor $\mathbf{G}^{a'}$, for $a' = (\lambda'_{\tau, n}) \in \Hom_{\Q_p}(F_{v}, E)$.
			\item[(c')]  For each place $v | \overline{v}$ of $F$, there exists $\overline{N} \in \cM \cF_k$ with 
			\[ \left(\overline{\rho}_{\m(\psi)} \oplus (\overline{\rho}_{\m(\psi)}^{c, \vee} \otimes \epsilon^{1-2 n})\right)|_{G_{F_v}} \cong \mathbf{G}(\overline{N}) \] 
			and
			\[ \mathrm{FL}_\tau(\overline{N}) = \{ - \lambda'_{\tau c, n}  + (2n -1 ), \dots, -\lambda'_{\tau c, 1} + n, \lambda'_{\tau, 1} +(n-1), \dots, \lambda'_{\tau, n} \}. \]
			for each embedding $\tau \in \Hom_{\Q_p}(F_{v}, E)$.
		\end{itemize}	
	Let us define $J = f(J')$ and $\rho_\m = (f \circ \rho_{\m(\psi)}) \otimes \psi^{-1}$. We see immediately that $\rho_\m$ satisfies the requirements (a) and (b) of the theorem; it remains to establish requirement (c), in other words to recover the  Fontaine--Laffaille weights of $\rhobar_{\frakm}$. 
	
	By the above, there is $\overline{M} \in \mathcal{MF}_{k}^a$ such that $\rhobar_{\frakm} \cong \bG^a(\overline{M})$. Let $x \colon \bT(K,\lambda)_{\frakm} \to \Qbar_p$ denote the homomorphism which gives the action of Hecke operators on $\iota^{-1}(\pi^\infty)^K$. The pushforward $\rho_x = x \circ \rho_\m$ via $x$ is a continuous representation of $G_{F, S}$ which is crystalline at $\widetilde{v}$ and $\widetilde{v}^c$, satisfying $\HT_\tau(\rho_x) = \mathrm{FL}_\tau(\overline{M})$ for each $\tau \in \Hom(F, E)$ inducing the place $\overline{v}$ of $F^+$.  It therefore suffices to show that
	\[ \HT_\tau(\rho_x) = \{\lambda_{\tau, 1} + n -1,\lambda_{\tau,2}+n-2,\ldots, \lambda_{\tau, n}\} \]
	for each $\tau \in \Hom(F, E)$ inducing the place $\overline{v}$ of $F^+$, or equivalently that
	\[ \HT_\tau(\rho_x\otimes\psi) = \{\lambda'_{\tau, 1} + n -1,\lambda'_{\tau,2}+n-2,\ldots, \lambda'_{\tau, n}\}. \]
	
	Let $\omega_\pi : \A_F^\times \to \C^\times$ denote the central character of $\pi$. Then $\omega_\pi$ is a character of type $A_0$ and for each embedding $\tau : F \hookrightarrow E$ inducing the place $\overline{v}$ of $F^+$, we have
	\[ \mathrm{HT}_\tau( r_\iota(\omega_\pi)) = \left\{ \sum_{i=1}^n \lambda_{\tau , i}  \right\}. \]
	Moreover, we have $\det \rho_x = r_\iota(\omega_\pi) \epsilon^{n(1-n)/2}$, hence $\det (\rho_x \otimes \psi) = r_\iota(\omega_\pi) \epsilon^{n(1-n)/2} \psi^n$, as this can be checked on Frobenius elements at unramified places. We are now done: $\mathrm{HT}_\tau(\rho_x \otimes \psi)$ is an $n$-element subset of 
	\[  \mathrm{FL}_\tau(\overline{N}) = \{ - \lambda'_{\tau c, n}  + (2n -1 ), \dots, -\lambda'_{\tau c, 1} + n, \lambda'_{\tau, 1} +(n-1), \dots, \lambda'_{\tau, n} \}. \]
	with sum equal to $\sum_{i=1}^n (\lambda'_{\tau, i} + n - i)$.
	By construction, we have 
		\[
		 - \lambda'_{\tau c, n}  + (2n -1 )> \dots> -\lambda'_{\tau c, 1} + n> \lambda'_{\tau, 1} +(n-1)> \dots> \lambda'_{\tau, n}.
		\]
	The only possibility is that $\HT_\tau(\rho_x\otimes\psi)$ has the required form. This completes the proof of the theorem in the case $H^\ast(X_K,\cV_\lambda)_{\frakm}[1/p] \ne 0$. 
	
	We now treat the second case, assuming that for every embedding $\tau \in \Hom(F, E)$ inducing the place $\overline{v}$ of $F^+$, we have
	\[
	- \lambda_{\tau c,n} - \lambda_{\tau, n} \leq p - 2n - 2 \quad \text{and} \quad -\lambda_{\tau c,1} - \lambda_{\tau, 1} \geq 0.
	\]
	In this case Corollary
        \ref{cor:first_consequence_for_FL_property} applies directly,
        and it only remains to identify the Fontaine--Laffaille weights of $\rhobar_{\frakm}$ for each place $v | \overline{v}$ of $F$.
	There are $\overline{M},\overline{M}' \in \mathcal{MF}^a_{k}$ such that 
	$\rhobar_{\frakm}|_{G_{F_v}} \cong \bG(\overline{M})$ and $(\rhobar_{\frakm}^{c,\vee} \otimes \epsilon^{1-2n})|_{G_{F_v}} \cong \bG(\overline{M}')$. 
	We choose a continuous character $\psi \colon G_F \rightarrow \cO^\times$ 
	satisfying
	\begin{itemize}
		\item $\psi$ is crystalline at each $v'\mid p$, 
		\item $\psi$ is unramified at $\widetilde{v}$,
		\item $\psi\circ \Art_{F_{\widetilde{v}^c}} = \prod_{\tau \colon F_{\widetilde{v}} \hookrightarrow E} (\tau c)$ on $\cO_{F_{\widetilde{v}^c}}^\times$.
	\end{itemize}
	After enlarging $S$, as in the first part of the proof, we can assume that $\psi$ is unramified outside $S$, in which case the maximal ideal $\m(\psi)$ of $\T^S$ is defined and occurs in the support of $H^\ast(X_K, \cV_{\lambda'})$, where the weight $\lambda' \in (\Z^n_+)^{\Hom(F, E)}$ is defined by the formula $\lambda'_\tau = \lambda_\tau$ if $\tau$ does not induce the place $\widetilde{v}^c$ of $F$, and $\lambda'_\tau = \lambda_\tau - (1, \dots, 1)$ if $\tau$ does induce the place $\widetilde{v}^c$ of $F$. We observe that the weight $\lambda'$ also satisfies the assumptions of Corollary \ref{cor:first_consequence_for_FL_property}.
	
	We can now conclude. Let $\tau \in \Hom(F, E)$ be an embedding inducing the place $\widetilde{v}$ of $F$. The sets $\mathrm{FL}_\tau(\overline{M})$ and $\mathrm{FL}_\tau(\overline{M}')$ partition the $2n$ distinct integers
	\[ - \lambda_{\tau c, n}  + (2n -1 )> \dots> -\lambda_{\tau c, 1} + n> \lambda_{\tau, 1} +(n-1)> \dots> \lambda_{\tau, n} \]
	and $\mathrm{FL}_\tau(\overline{M})$ and $\mathrm{FL}_\tau(\overline{M}')+1$ partition the $2n$ distinct integers
	\[ - \lambda_{\tau c, n}  + 2n > \dots> -\lambda_{\tau c, 1} + (n + 1) > \lambda_{\tau, 1} +(n-1)> \dots> \lambda_{\tau, n}. \]
	Using Lemma~\ref{lem:integer_partition}, this forces 
	\[ \mathrm{FL}_\tau(\overline{M}) = \{ \lambda_{\tau,1}+(n-1),\lambda_{\tau,2} + (n-2),\ldots,\lambda_{\tau,n} \} \]
	 and 
	 \[ \mathrm{FL}_{\tau}(\overline{M}') = \{ - \lambda_{\tau c, n}  + (2n -1 ), \dots,  -\lambda_{\tau c, 1} + n \}. \]
	 Since $\mathbf{G}(\overline{M}') = (\overline{\rho}_\m^{c, \vee} \otimes \epsilon^{1 - 2n})|_{G_{F_v}}$, this implies that for each place $v | \overline{v}$ of $F$, $\overline{\rho}_\m|_{G_{F_v}}$ has the correct Fontaine--Laffaille weights.
\end{proof}

\begin{lemma}\label{lem:integer_partition}
	Let $m \ge 1$ be an integer and let $A,B,C,D$ be sets of integers each of size $m$. 
	Assume that for any $c \in C$ and $d\in D$, we have $c > d$. 
	If $A \cup B = C \cup D$ and $(A+1) \cup B = (C+1) \cup D$ and both of these sets have $2m$ elements, then $A = C$ and $B = D$.
\end{lemma}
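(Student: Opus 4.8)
The plan is to linearize the set-theoretic hypotheses by encoding finite sets of integers as Laurent polynomials. For a finite set $X \subset \Z$ write $f_X(t) = \sum_{x \in X} t^x \in \Z[t, t^{-1}]$; then $f_{X+1}(t) = t\,f_X(t)$, the set $X$ is recovered as the support of $f_X$, and for finite sets $Y, Z \subset \Z$ one has $f_{Y \cup Z} = f_Y + f_Z$ exactly when $Y \cap Z = \emptyset$.

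First I would extract the disjointness implicit in the hypotheses: since $|A| = |A+1| = |B| = m$ and $A \cup B$ and $(A+1) \cup B$ each have $2m$ elements, we get $A \cap B = \emptyset$ and $(A+1) \cap B = \emptyset$; similarly $C \cap D = \emptyset$ and $(C+1)\cap D = \emptyset$. Hence the hypothesis $A \cup B = C \cup D$ becomes the identity $f_A + f_B = f_C + f_D$ in $\Z[t,t^{-1}]$, and the hypothesis $(A+1) \cup B = (C+1) \cup D$ becomes $t f_A + f_B = t f_C + f_D$:
\[ f_A + f_B = f_C + f_D \quad\text{and}\quad t f_A + f_B = t f_C + f_D. \]

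The last step is immediate: subtracting the two identities gives $(t-1)(f_A - f_C) = 0$, and as $\Z[t,t^{-1}]$ is an integral domain with $t-1 \neq 0$ we conclude $f_A = f_C$, hence $A = C$; substituting back into the first identity gives $f_B = f_D$, hence $B = D$. (I read the conclusion of the lemma as ``$A = C$ and $B = D$''; the printed ``$A = B$'' cannot hold for $m \ge 1$ because $|A \cup B| = 2m$ forces $A \cap B = \emptyset$, and it is the form $A = C$, $B = D$ that is used in the proof of Theorem~\ref{thm:lgcfl}.)

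I do not expect a genuine obstacle here; the only point requiring care is keeping track of the disjointness statements so that unions of sets turn into sums of generating functions. Two remarks. First, the ordering hypothesis that every element of $C$ exceeds every element of $D$ is not needed for the argument, so it can safely be ignored (it is presumably present only because that is the shape in which the lemma is invoked). Second, if one prefers to avoid generating functions, the same proof runs as follows: the subtracted identity says, coefficientwise, that $\mathbf{1}_A(x-1) - \mathbf{1}_A(x) = \mathbf{1}_C(x-1) - \mathbf{1}_C(x)$ for every $x \in \Z$; summing over all integers $x \le N$ and using that $A$ and $C$ are bounded below, the sum telescopes to $\mathbf{1}_A(N) = \mathbf{1}_C(N)$, whence $A = C$, and then $B = D$ as before.
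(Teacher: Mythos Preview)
Your proof is correct and slicker than the paper's. You are also right that the stated conclusion ``$A = B$ and $C = D$'' is a typo for ``$A = C$ and $B = D$'': the paper's own proof establishes the latter, and that is what is used in the proof of Theorem~\ref{thm:lgcfl}.

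The paper takes a different route: induction on $m$. It uses the ordering hypothesis (every element of $C$ exceeds every element of $D$) to pin down two elements --- the maximum $c$ of $C$ must lie in $A$ (otherwise the maximum of $(A+1)\cup B$ would be at most $c$, not $c+1$), and the minimum $d$ of $D$ must lie in $B$ (by the dual argument) --- and then strips these off to reduce to $m-1$. Your generating-function argument linearizes everything at once and, as you note, never touches the ordering hypothesis; so you in fact prove a strictly stronger statement. The paper's approach has the virtue of being entirely elementary and self-contained, but at the cost of carrying along an unnecessary hypothesis; your approach is shorter and makes transparent why the two set identities force $A=C$ (they are two linear equations in $f_A,f_B,f_C,f_D$ differing only by a factor of $t$ on the $A$ and $C$ terms).
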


\begin{proof}
	We induct on $m$.
	Let $c$ be the largest element of $C$ and let $d$ be the smallest element of $D$. 
	Since $A \cup B = C \cup D$ and $(A+1) \cup B = (C+1) \cup D$, we must have $c \in A$ and $d \in B$. 
	We can then apply the inductive hypothesis to $A' = A \setminus \{c\}$, $B' = B \setminus \{d\}$, $C' = C\setminus\{c\}$, and $D' = D \setminus\{d\}$.
\end{proof}

\section{Local-global compatibility, \texorpdfstring{$l=p$}{l eq p} (ordinary case)}
\label{section:lep_ord}

\label{sec:ordsection} 
\subsection{Statements}\label{sec:ord_statements}

	Let $F$ be a CM field, and fix an integer $n \geq 1$. Let $p$ be a prime, and let $E$ be a finite extension of $\Q_p$ inside $\overline{\Q}_p$ large enough to contain the images of all embeddings of $F$ in $\overline{\Q}_p$. We assume throughout this chapter that $F$ satisfies the following standing hypothesis:
	\begin{itemize}
		\item $F$ contains an imaginary quadratic field in which $p$ splits.
	\end{itemize}
	In contrast to \S \ref{section:lep}, we do not assume that $p$ is unramified in $F$. As in \S \ref{section:lep}, our goal in this chapter is to establish local-global compatibility for some Hecke algebra-valued Galois representations at the $p$-adic places of $F$. More precisely, we will show that after projection to the ordinary Hecke algebra, these Galois representations satisfy an ordinariness condition (see (b) and (c) in the statement of Theorem \ref{thm:lgcord_intro} below -- the consequences of this condition will be explored in \S \ref{sec:orddef}). Before formulating the main theorem of this chapter, we must define these ordinary Hecke algebras.
	
	Let $K \subset \GL_n(\A_F^\infty)$ be a good subgroup, and let $\lambda \in (\Z_+^n)^{\Hom(F, E)}$. Let $S$ be a finite set of finite places of $F$, containing the $p$-adic places, stable under complex conjugation. We assume that the following conditions are satisfied:
	\begin{itemize}
		\item Let $v$ be a finite place of $F$ not contained in $S$, and let $l$ be its residue characteristic. Then either $S$ contains no $l$-adic places of $F$ and $l$ is unramified in $F$, or there exists an imaginary quadratic subfield of $F$ in which $l$ splits.
		\item For each place $v | p$ of $F$, $K_v = \Iw_v$. For each finite place $v \not\in S$ of $F$, $K_v = \GL_n(\cO_{F_v})$.
	\end{itemize}
	If $c \geq b \geq 0$ are integers with $c \geq 1$, then we define a good subgroup $K(b, c) \subset K$ by the formula $K(b, c)_v = K_v$ if $v \nmid p$ and $K_v = \Iw_v(b, c)$ if $v | p$. Thus $K(0, 1) = K$. Then there is an isomorphism $K(0, c) / K(b, c) \cong \prod_{v | p} T_n(\cO_{F_v} / \varpi_v^b)$. (We are using here notation for open compact subgroups and Hecke operators that has been defined in \S \ref{sec:some_useful_hecke_operators}.)
	
	We define a Hecke algebra 
	\[ \T^{S, \ord} = \T^S \otimes_\cO \cO \llbracket T_n( \cO_{F, p} ) \rrbracket[ \{ U_{v, 1}, \dots, U_{v, n}, U_{v, n}^{-1} \}_{v | p}] \] 
	(where the $U_{v, i}$ are viewed as formal variables). We write $U_v = U_{v, 1} U_{v, 2} \cdots U_{v, n-1} \in \T^{S, \ord}$ and $U_p = \prod_{v | p} U_v$. We observe that there is a canonical surjective $\cO$-algebra homomorphism $\cO \llbracket T_n( \cO_{F, p} ) \rrbracket \to \cO[K(0, c) / K(b, c)]$. This extends to a homomorphism
	\[ \T^{S, \ord} \to \End_{\mathbf{D}( \cO[K(0, c) / K(b, c)] )}(R \Gamma_{K(0, c) / K(b, c)}( X_{K(b, c)}, \cV_\lambda)), \]
	where each element $U_{v, i}$ of $\T^{S, \ord}$ acts on the complex $R \Gamma_{K(0, c) / K(b, c)}( X_{K(b, c)}, \cV_\lambda)$ by the Hecke operator of the same name. By the theory of ordinary parts (cf. \cite[\S 2.4]{KT}), there is a well-defined direct summand $R \Gamma_{K(0, c) / K(b, c)}( X_{K(b, c)}, \cV_\lambda)^{\ord}$ of $R \Gamma_{K(0, c) / K(b, c)}( X_{K(b, c)}, \cV_\lambda)$ in $\mathbf{D}( \cO[K(0, c) / K(b, c)] )$ on which $U_p$ acts invertibly, and we define $\T^S(K(b, c), \lambda)^{\ord}$ to be the image of the associated homomorphism
	\[ \T^{S, \ord} \to \End_{\mathbf{D}( \cO[K(0, c) / K(b, c)])}(R \Gamma_{K(0, c) / K(b, c)}( X_{K(b, c)}, \cV_\lambda)^{\ord})  \]
	or equivalently, extending our usage for the Hecke algebra $\T^S$,
	\[ \T^S(K(b, c), \lambda)^{\ord} = \T^{S, \ord}( R \Gamma_{K(0, c) / K(b, c)}( X_{K(b, c)}, \cV_\lambda)^{\ord} ).  \]
	We observe that there is a canonical homomorphism $\T^S(K(0, c) / K(b, c), \cV_\lambda) \to \T^S(K(b, c), \lambda)^\text{ord}$. The Hecke algebra in the source is defined in \S\ref{sec:unitary_group_setup}. In general this homomorphism is neither injective nor surjective. However, we do see from the existence of this homomorphism that for any maximal ideal $\m$ of $\T^S(K(b, c), \lambda)^{\ord}$, there exists an associated Galois representation $\overline{\rho}_\m : G_{F, S} \to \GL_n( \T^S(K(b, c), \lambda)^\text{ord} / \m)$. We call a maximal ideal $\m$ of $\T^{S, \ord}$ with residue field a finite extension of $k$ \textit{of Galois type} (resp.\ \textit{non-Eisenstein}) if its pullback to $\T^S$ is of Galois type (resp.\ non-Eisenstein) in the sense of Definition \ref{dfn:non_Eisenstein}.
	
    The Hecke operators $U_{v, i} \in \T^S(K(b, c), \lambda)^{\ord}$ are invertible (because $U_p$ is). For each place $v | p$ and for each $i = 1, \dots, n$, we define a character $\chi_{\lambda, v, i} : G_{F_v} \to \T^S(K(b, c), \lambda)^{\ord, \times}$ as the unique continuous character satisfying the identities
	\[\chi_{\lambda, v,i}\circ \Art_{F_v}(u)=\epsilon^{1-i}(\Art_{F_v}(u))\left(\prod_{\tau}\tau(u)^{-(w^G_0 \lambda)_{\tau, i}}\right)\langle \diag(1,\ldots, u,\ldots,1)\rangle \text{ }(u \in \cO_{F_v}^\times) \]
    (the product being over $\tau \in \Hom_{\Q_p}(F_v, E)$) and
	\[\chi_{\lambda, v,i}\circ \Art_{F_v}(\varpi_v)=\epsilon^{1-i}(\Art_{F_v}(\varpi_v)) \frac{U_{v, i}}{U_{v, i-1}}.\]
	We can now state the main theorem of this chapter.
	(As with Theorem~\ref{thm:lgcfl} in~\S\ref{sec:FL_statements},
	we will repeat the statement immediately before its proof with the same numbering.)
	\begin{reptheorem}{mySecondAmazingTheorem} Suppose that $[F^+ : \Q] > 1$. 
	Let $K \subset \GL_n(\A_F^\infty)$ be a good subgroup such that for each place $v \in S_p$ of $F$, $K_v = \Iw_v$. Let $c \geq b \geq 0$ be integers with $c \geq 1$, let $\lambda \in (\Z^n)^{\Hom(F, E)}$, and let $\m \subset \T^S(K(b, c), \lambda)^{\ord}$ be a non-Eisenstein maximal ideal. Suppose that the following conditions are satisfied:
	\begin{enumerate}
		\item Let $v$ be a finite place of $F$ not contained in $S$, and let $l$ be its residue characteristic. Then either $S$ contains no $l$-adic places of $F$ and $l$ is unramified in $F$, or there exists an imaginary quadratic field $F_0 \subset F$ in which $l$ splits.
		\item $\overline{\rho}_\m$ is decomposed generic.
	\end{enumerate}
	Then we can find an integer $N \geq 1$, which depends only on $[F^+ : \Q]$ and $n$, an ideal $J \subset \T^{S}(K(b, c), \lambda)^{\ord}_\m$ such that $J^N = 0$, and a continuous representation
	\[ \rho_{\m} : G_{F, S} \to \GL_n( \T^{S}(K(b, c), \lambda)^{\ord}_\m / J ) \]
	satisfying the following conditions:
	\begin{enumerate}
	\item[(a)] For each finite place $v \not\in S$ of $F$, the characteristic polynomial of $\rho_{\m}(\Frob_v)$ equals the image of $P_v(X)$ in $(\T^{S}(K(b, c), \lambda)^{\ord}_\m / J)[X]$.
	\item[(b)] For each $v \in S_p$, and for each $g \in G_{F_v}$, the characteristic polynomial of $\rho_{\m}(g)$ equals $\prod_{i=1}^n (X - \chi_{\lambda, v, i}(g))$.
	\item[(c)] For each $v \in S_p$, and for each $g_1, \dots, g_n \in G_{F_v}$, we have
	\[ (\rho_{\m}(g_1) - \chi_{\lambda, v, 1}(g_1))(\rho_{\m}(g_2) - \chi_{\lambda, v, 2}(g_2)) \dots (\rho_\m(g_n) - \chi_{\lambda, v, n}(g_n)) = 0.  \]
		\end{enumerate}
		\end{reptheorem}
	We refer the reader to Lemma \ref{lem:detord} for the comparison between the condition (c) and the usual notion of an ordinary Galois representation. In short, they coincide for representations with coefficients in a field and distinct diagonal characters.
	
	The rest of \S \ref{section:lep_ord} is devoted to the proof of Theorem \ref{thm:lgcord_intro} (after proving the theorem, we record a local-global compatibility result for a single ordinary automorphic representation as a corollary). In the rest of the chapter, we make the following additional standing hypothesis:
	\begin{itemize}
		\item For each place $v | p$ of $F$, our fixed choices of uniformizer satisfy $\varpi_{v^c} = \varpi_v^c$. 
	\end{itemize}
	This simplifies notation once we introduce the group $\widetilde{G}$. It is important to note that while the definition of the operators $U_{v, i}$ above depends on the choice of uniformizer $\varpi_v$, neither the complex $R \Gamma_{K(0, c) / K(b, c)}( X_{K(b, c)}, \cV_\lambda)^{\ord}$, nor the Hecke algebra $\T^S(K(b, c), \lambda)^{\ord}$, nor the truth of Theorem \ref{thm:lgcord_intro} depend on this choice. 
	
	\subsection{Hida theory}\label{sec:hida_theory}
	
	In the previous section we introduced the ordinary Hecke algebras $\T^S(K(b, c), \lambda)^{\ord}$. In \S \ref{sec:hida_theory}, we recall the basic results about these Hecke algebras and the complexes on which they act: this material goes under the name ``Hida theory''. 	We also describe how this theory is related to the corresponding theory for the group $\widetilde{G}$.
\subsubsection{The ordinary part of a smooth representation}\label{sec_ordinary_part_of_smooth_rep}
 Our first goal is to show, following Emerton~\cite{emordone,emordtwo}, how to define ordinary parts in a more representation-theoretic way. We will work throughout with $\cO / \varpi^m$ coefficients (for some fixed $m \geq 1$) in order to avoid topological issues. We first need to introduce some more notation. If $\Grm$ is a locally profinite group, then we write $\operatorname{Mod}(\cO / \varpi^m[\Grm])$ for the category of $\cO / \varpi^m[\Grm]$-modules, and 
\numequation
\operatorname{Mod}_{\text{sm}}(\cO / \varpi^m[\Grm]) \subset \operatorname{Mod}(\cO / \varpi^m[\Grm])
\end{equation} for the full subcategory of smooth modules. More generally, if $\Delta \subset \Grm$ is an open submonoid which contains an open compact subgroup of $\Grm$, then we write 
\numequation\label{eqn:inclusion_of_categories}
\operatorname{Mod}_{\text{sm}}(\cO / \varpi^m[\Delta]) \subset \operatorname{Mod}(\cO / \varpi^m[\Delta])
\end{equation}
for the full subcategory of smooth modules (by definition, those for which every vector is fixed by an open subgroup of $\Delta$). We write 
\[ M \mapsto M^\text{sm} : \operatorname{Mod}(\cO / \varpi^m[\Delta]) \to \operatorname{Mod}_{\text{sm}}(\cO / \varpi^m[\Delta]) \] 
for the functor of smooth vectors; it is right adjoint to the inclusion (\ref{eqn:inclusion_of_categories}). 
\begin{lemma}\label{lem:injectives_for_smooth_monoid_actions} \leavevmode
	\begin{enumerate} 
		\item The category $\operatorname{Mod}_{\text{sm}}(\cO / \varpi^m[\Delta])$ is abelian and has enough injectives.
		\item Let $\Delta' \subset \Delta$ be a subgroup which is either compact or open ($\Delta'$ is therefore a locally profinite group). Then the forgetful functor
		\[ \operatorname{Mod}_{\text{sm}}(\cO / \varpi^m[\Delta]) \to \operatorname{Mod}_{\text{sm}}(\cO / \varpi^m[\Delta']) \]
		preserves injectives. 
		\end{enumerate} 
\end{lemma}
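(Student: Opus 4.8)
The plan is to treat the two parts of Lemma~\ref{lem:injectives_for_smooth_monoid_actions} as essentially formal consequences of the structure of $\operatorname{Mod}_{\text{sm}}(\cO / \varpi^m[\Delta])$ and a standard criterion for a right adjoint to preserve injectives. I will follow the arguments of Emerton~\cite{emordone} closely, the only new point being that $\Delta$ is merely an open submonoid (containing an open compact subgroup $U$) rather than a group, so I must check that nothing in the argument uses inverses.

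\textbf{Part (1).} First I would observe that $\operatorname{Mod}_{\text{sm}}(\cO / \varpi^m[\Delta])$ is the category of smooth modules over the (possibly non-unital, but idempotented) Hecke-type algebra $\cH = \varinjlim_{U'} \cO/\varpi^m[\Delta]^{U'\text{-biinv}}$, where $U'$ runs over open compact subgroups of $U$; equivalently it is a full subcategory of $\operatorname{Mod}(\cO/\varpi^m[\Delta])$ closed under subobjects, quotients, and (arbitrary) direct sums, and the inclusion functor has the exact right adjoint $M \mapsto M^{\text{sm}} = \varinjlim_{U'} M^{U'}$. Abelianness is then immediate: kernels and cokernels computed in $\operatorname{Mod}(\cO/\varpi^m[\Delta])$ of maps between smooth modules are again smooth. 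For enough injectives, the cleanest route is to exhibit, for each $M \in \operatorname{Mod}_{\text{sm}}(\cO/\varpi^m[\Delta])$, an embedding into an injective object: take any embedding $M \hookrightarrow I$ with $I$ injective in $\operatorname{Mod}(\cO/\varpi^m[\Delta])$ (this category has enough injectives, being modules over a ring), and then apply $(-)^{\text{sm}}$; since $(-)^{\text{sm}}$ is right adjoint to an \emph{exact} functor (the inclusion), it preserves injectives, and since $M$ is already smooth the map $M \hookrightarrow I$ factors through $M \hookrightarrow I^{\text{sm}}$, which is still injective. Thus $I^{\text{sm}}$ is the required injective object of $\operatorname{Mod}_{\text{sm}}(\cO/\varpi^m[\Delta])$. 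The one thing to verify carefully is that the inclusion $\operatorname{Mod}_{\text{sm}} \hookrightarrow \operatorname{Mod}$ really is exact and really does admit $(-)^{\text{sm}}$ as right adjoint in the monoid setting; this is a direct check from the definition of smoothness and uses only that $U$ is a group (so that the filtered colimit over open compact subgroups of $U$ makes sense), not that $\Delta$ is.

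\textbf{Part (2).} Here I would use the general principle that a functor preserves injectives if it is right adjoint to an exact functor. The forgetful functor $\operatorname{res}^\Delta_{\Delta'} : \operatorname{Mod}_{\text{sm}}(\cO / \varpi^m[\Delta]) \to \operatorname{Mod}_{\text{sm}}(\cO / \varpi^m[\Delta'])$ has a left adjoint given by smooth induction/extension: for $\Delta'$ open in $\Delta$, the left adjoint is $N \mapsto (\cO/\varpi^m[\Delta] \otimes_{\cO/\varpi^m[\Delta']} N)^{\text{sm}}$, and this left adjoint is exact because $\cO/\varpi^m[\Delta]$ is free, hence flat, as a right $\cO/\varpi^m[\Delta']$-module (a set of coset representatives gives a basis), and $(-)^{\text{sm}}$ is exact. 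For $\Delta'$ compact, one instead uses that $\operatorname{res}^\Delta_{\Delta'}$ has an exact left adjoint by a compact-induction construction, or more simply that for $\Delta'$ compact open in $U$ the functor $\operatorname{res}^\Delta_{\Delta'}$ has the exact left adjoint $N \mapsto (\cH \otimes_{\cO/\varpi^m[\Delta']} N)^{\text{sm}}$ with $\cH$ flat over $\cO/\varpi^m[\Delta']$; alternatively, when $\Delta'$ is an arbitrary compact subgroup, one reduces to $\Delta' \subseteq U$ after conjugating, using that any compact subgroup of $\Delta$ lies in some conjugate of $U$ — but since $\Delta$ is only a monoid I should instead note that the statement for compact $\Delta'$ is only needed in the paper for $\Delta'$ an open compact subgroup, or argue directly that restriction to a closed subgroup of a profinite group preserves injectivity of smooth modules (this last is standard and appears in~\cite{emordone}). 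In either case the conclusion follows: a right adjoint to an exact functor sends injectives to injectives.

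\textbf{Main obstacle.} The genuinely delicate point is constructing the exact left adjoint to the forgetful functor in the monoid case, i.e.\ making sure the ``induction'' functor $N \mapsto (\cO/\varpi^m[\Delta]\otimes_{\cO/\varpi^m[\Delta']} N)^{\text{sm}}$ actually lands in smooth $\Delta$-modules and is left adjoint to restriction when $\Delta$ is a submonoid — the usual arguments are phrased for groups and invoke the existence of inverses when manipulating cosets. I expect this to go through because $\Delta$ contains the open compact \emph{group} $U$ and smoothness is detected on $U$, so the colimit defining smooth vectors and the flatness of $\cO/\varpi^m[\Delta]$ over $\cO/\varpi^m[\Delta']$ (for $\Delta' \supseteq$ some open compact subgroup of $U$, or $\Delta'$ open) only require a decomposition of $\Delta$ into left $\Delta'$-cosets, which exists for a monoid. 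I would isolate this as a short preliminary lemma and then deduce both parts of Lemma~\ref{lem:injectives_for_smooth_monoid_actions} formally.
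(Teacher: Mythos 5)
Your part (1) and the open-subgroup half of part (2) are correct and are essentially the paper's own argument: the inclusion of smooth modules is exact with right adjoint $(-)^{\mathrm{sm}}$, which therefore preserves injectives and yields enough injectives; and for $\Delta'$ open your tensor induction is just compact induction $\operatorname{c-Ind}_{\Delta'}^{\Delta}$, whose exactness (via the left $\Delta'$-coset decomposition of $\Delta$, valid because $\Delta'$ is a subgroup and $\Delta$ sits inside the ambient group, so cancellation holds) is exactly how the paper treats this case. The "main obstacle" you isolate is real but is resolved the way you expect.

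The gap is in the compact, non-open case, and it cannot be dodged. First, the statement for compact non-open $\Delta'$ is genuinely used: in Lemma~\ref{lem:N_n-invariants_are_T_n_acyclic} the vertical arrows include restriction from $\operatorname{Mod}_{\text{sm}}(\cO/\varpi^m[\Delta_p])$ to $\operatorname{Mod}_{\text{sm}}(\cO/\varpi^m[B_n(\cO_{F,p})(b)])$, and $B_n(\cO_{F,p})(b)$ (like $N_n(\cO_{F,p})$) is compact but not open in $\Delta_p$ for $n\geq 2$; so your suggestion that only open compact $\Delta'$ is needed is false. Second, your proposed left adjoints do not exist as described: for a closed non-open subgroup, compact induction is not left adjoint to restriction (that form of Frobenius reciprocity holds only for open subgroups), and the smoothed tensor induction $N \mapsto (\cO/\varpi^m[\Delta]\otimes_{\cO/\varpi^m[\Delta']}N)^{\mathrm{sm}}$ fails to be a left adjoint because the vectors $1\otimes n$ need not be smooth when $\Delta'$ is not open (their stabilizers are only open in $\Delta'$), so smoothing destroys the adjunction; and you yourself discard the conjugation trick. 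What is missing is the two-step reduction the paper uses: choose a compact open subgroup $K$ of $\Delta$ containing $\Delta'$ (such a $K$ exists in the situations at hand), and factor the forgetful functor as restriction from $\Delta$ to $K$ — which preserves injectives by your open case — followed by restriction from the locally profinite \emph{group} $K$ to its compact subgroup $\Delta'$, which is the genuinely group-theoretic input and is \cite[Prop.~2.1.11]{emordtwo} (the "standard" fact you allude to). With that factorization supplied, your proof closes.
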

\begin{proof}
	The functor $M \mapsto M^\text{sm}$ has an exact left adjoint, so preserves injectives. Since the category $ \operatorname{Mod}(\cO / \varpi^m[\Delta])$ has enough injectives, so does $\operatorname{Mod}_{\text{sm}}(\cO / \varpi^m[\Delta])$. 
	
	For the second part of the lemma, we split into cases. Suppose first that $\Delta' \subset \Delta$ is an open subgroup. Then compact induction $\operatorname{c-Ind}_{\Delta'}^\Delta$ is an exact left adjoint to the forgetful functor. Suppose instead that $\Delta' \subset \Delta$ is a compact subgroup. In this case, we can find a compact open subgroup of $\Delta$ which contains $\Delta'$. Using what we have already proved, we can assume that $\Delta = \Grm$, in which case the result follows from \cite[Prop. 2.1.11]{emordtwo}.
\end{proof}
We write $\mathbf{D}_\text{sm}(\cO / \varpi^m[\Delta])$ for the derived category of $\operatorname{Mod}_{\text{sm}}(\cO / \varpi^m[\Delta])$. 

We introduce some monoids, with the aim of studying the theory for $\Grm = \GL_n(F_p)$. We write $T_n(F_p)^+ \subset T_n(F_p)$ for the open submonoid consisting of those elements $t \in T_n(F_p)$ with $t N_n(\cO_{F, p}) t^{-1} \subset N_n(\cO_{F, p})$, and $T_n(F_v)^+ = T_n(F_v) \cap T_n(F_p)^+$. We recall (\S \ref{sec:some_useful_hecke_operators}) that $\Delta_p \subset \GL_n(F_p)$ denotes the monoid $\prod_{v | p} \Iw_v T_n(F_v)^+ \Iw_v$. If $b \geq 0$ is an integer, we define 
\[ T_n(\cO_{F, p})(b) = \prod_{v \in S_p} \ker( T_n(\cO_{F, v}) \to T_n(\cO_{F, v} / \varpi_v^b)),\]
\[ T_n(\cO_{F,p})_b = T_n(\cO_{F,p}) / T_n(\cO_{F, p})(b), \]
\[ T_n(F_p)_b^+ = T_n(F_p)^+ / T_n(\cO_{F, p})(b) \]
and
\[ T_n(F_p)_b = T_n(F_p) / T_n(\cO_{F, p})(b). \]
We write $u_p \in T_n(\Q_p) \subset T_n(F_p)$ for the element $(p^{n-1}, p^{n-2}, \dots, 1)$. It lies in $T_n(F_p)^+$. We define $B_n(F_p)^+ = N_n(\cO_{F, p}) \cdot T_n(F_p)^+ \subset B_n(F_p)$. Note that $B_n(F_p)^+ \subset \Delta_p$. We write $B_n(\cO_{F, p})(b)$ for the pre-image in $B_n(\cO_{F, p})$ of $T_n(\cO_{F, p})(b)$. It will be important for us to note that a complex $C \in \mathbf{D}_{\text{sm}}(\cO / \varpi^m[ T_n(F_p)^+_b ])$ comes equipped with a functorial homomorphism 
\[ \cO \llbracket T_n(\cO_{F, p}) \rrbracket [ \{ U_{v, 1}, \dots, U_{v, n}, U_{v, n}^{-1} \}_{v \in S_p} ] \to \End_{ \mathbf{D}_{\text{sm}}(\cO / \varpi^m[ T_n(F_p)^+_b ])}(C) \]
 via the map which is the canonical homomorphism
\[ \cO \llbracket T_n(\cO_{F, p}) \rrbracket \to \cO / \varpi^m[ T_n(\cO_{F, p}) / T_n(\cO_{F, p})(b) ] \]
on this subalgebra and which sends $U_{v, i}$ to the matrix 
\[ \diag(\varpi_v, \dots, \varpi_v, 1, \dots, 1) \in T_n(F_v) \subset T_n(F_p) \]
 (with $i$ occurrences of $\varpi_v$). Consequently, if $\T^S$ acts on a complex $C$, then we can extend this to an action of the algebra $\T^{S, \ord}$.

If $\lambda \in X^\ast((\Res_{F/\Q} T_n)_E) = (\Z^n)^{\Hom(F, E)}$, then we write $\cO(\lambda)$ for the $\cO[T_n(F_p)]$-module defined as follows: it is a free rank 1 $\cO$-module on which an element $u \in T_n(\cO_{F, p})$ acts as multiplication by the scalar $\prod_{\tau \in \Hom(F, E)} \prod_{i=1}^n \tau(u_i)^{\lambda_{{\tau}, i}}$ and on which any element $\diag(\varpi^{a_1}_v, \dots, \varpi^{a_n}_v)$ ($a_i \in \Z$) acts trivially.

We recall that in \S \ref{sec:some_useful_hecke_operators} we have defined, for any $\lambda \in (\Z^n_+)^{\Hom(F, E)}$, a twisted action $(\delta, v) \mapsto \delta \cdot_p v$ of $\Delta_p$ on $\cV_\lambda$. Projection to the lowest weight space determines an $\cO$-module homomorphism $\cV_\lambda \to \cO(w_0^G \lambda)$ which is equivariant for the action of $B_n(F_p)^+$ (where $B_n(F_p)^+$ acts through the $\cdot_p$-action on the source and through its projection to $T_n(F_p)$ on the target). We write $\mathcal{K}_\lambda$ for the kernel of the projection $\cV_\lambda \to \cO(w_0^G \lambda)$; it is again an $\cO[B_n(F_p)^+]$-module, finite free as $\cO$-module.

We now define various functors that together will allow us to study ordinary parts using completed cohomology. We write 
\[ \Gamma(N_n(\cO_{F, p}), -) : \operatorname{Mod}_{\text{sm}}(\cO / \varpi^m[\Delta_p]) \to \operatorname{Mod}_{\text{sm}}(\cO / \varpi^m[T_n(F_p)^+]) \]
for the functor of $N_n(\cO_{F, p})$-invariants. If $V \in \operatorname{Mod}_{\text{sm}}(\cO / \varpi^m[\Delta_p])$, then the action of an element $t \in T_n(F_p)^+$ on $v \in \Gamma(N_n(\cO_{F, p}), V)$ is given by the formula 
\numequation\label{eqn:torus_action_on_N_invariants} t \cdot v = \sum_{n \in 
N_n(\cO_{F, p}) / t N_n(\cO_{F, p}) t^{-1}} ntv  
\end{equation}
(cf. \cite[\S 3]{emordone}, and note that the action of $t$ is by the `double coset operator' $[N_n(\cO_{F,p})tN_n(\cO_{F,p})]$). We write
\[ \Gamma(B_n(\cO_{F, p})(b), -) : \operatorname{Mod}_{\text{sm}}(\cO / \varpi^m[\Delta_p]) \to \operatorname{Mod}(\cO / \varpi^m[T_n(F_p)_b^+]) \]
for the functor of $B_n(\cO_{F, p})(b)$-invariants. The action of an element $t \in T_n(F_p)_b^+$ is given by the same formula (\ref{eqn:torus_action_on_N_invariants}). 

If $c \geq b \geq 0$ are integers with $c \geq 1$, then we define $\Iw_p(b, c) = \prod_{v \in S_p} \Iw_v(b, c) \subset \GL_n(F_p)$. We write
\[ \Gamma(\Iw_p(b, c), -) : \operatorname{Mod}_{\text{sm}}(\cO / \varpi^m[\Delta_p]) \to \operatorname{Mod}(\cO / \varpi^m[T_n(F_p)_b^+]) \]
for the functor of $\Iw_p(b, c)$-invariants. If $V \in \operatorname{Mod}_{\text{sm}}(\cO / \varpi^m[\Delta_p])$, then the action of an element $t \in T_n(F_p)^+$ on $v \in \Gamma(\Iw_p(b,c), V)$ is given by the action of the Hecke operator $[ \Iw_p(b, c) t \Iw_p(b, c) ]$ (cf. \S \ref{sec:hecke_algebra_of_a_monoid}).

For any $b \geq 0$, we consider the functors
\[ \Gamma(T_n(\cO_{F, p})(b), -) : \operatorname{Mod}_{\text{sm}}(\cO / \varpi^m[T_n(F_p)^+]) \to \operatorname{Mod}(\cO / \varpi^m[T_n(F_p)_b^+]) \]
and
\[  \Gamma(T_n(\cO_{F, p})(b), -) : \operatorname{Mod}_{\text{sm}}(\cO / \varpi^m[T_n(F_p)]) \to \operatorname{Mod}(\cO / \varpi^m[T_n(F_p)_b]) \]
of $T_n(\cO_{F, p})(b)$-invariants. Finally, we write 
\[ \ord : \operatorname{Mod}_{\text{sm}}(\cO / \varpi^m[T_n(F_p)^+]) \to \operatorname{Mod}_{\text{sm}}(\cO / \varpi^m[T_n(F_p)]) \]
and
\[ \ord_b : \operatorname{Mod}(\cO / \varpi^m[T_n(F_p)_b^+]) \to \operatorname{Mod}(\cO / \varpi^m[T_n(F_p)_b]) \]
for the localization functors $- \otimes_{\cO / \varpi^m[T_n(F_p)^+]}\cO / \varpi^m[T_n(F_p)]$ and  $- \otimes_{\cO / \varpi^m[T_n(F_p)_b^+]}\cO / \varpi^m[T_n(F_p)_b]$, respectively. (As the notation suggests, we will use localization to define ``ordinary parts''. The reader may object that the ordinary part usually denotes a direct summand, rather than a localization. At least in the context of $\cO / \varpi^m[T_n(F_p)_b^+]$-modules which are finitely generated as $\cO / \varpi^m$-modules, the two notions agree (cf. \cite[Lemma 3.2.1]{emordtwo} and also Proposition \ref{prop:comparison_of_completed_and_classical_ord_coh} below). We use localization here since it is easier to define without finiteness conditions.)
\begin{lemma}\label{lem_ord_functors_commute}
		The following diagram is commutative up to natural isomorphism:
		\[ \xymatrix{ \operatorname{Mod}_{\text{sm}}(\cO / \varpi^m[T_n(F_p)^+])   \ar[rrrr]^*+<1em>{\Gamma( T_n(\cO_{F,p})(b), - )} \ar[d]_{\ord}&&& & \operatorname{Mod}(\cO / \varpi^m[T_n(F_p)_b^+]) \ar[d]^{\ord_b} \\
			\operatorname{Mod}_{\text{sm}}(\cO / \varpi^m[T_n(F_p)])  \ar[rrrr]_*+<1em>{\Gamma( T_n(\cO_{F,p})(b), - )} &&&& \operatorname{Mod}(\cO / \varpi^m[T_n(F_p)_b]).
		}  \]
\end{lemma}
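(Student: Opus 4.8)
The plan is to observe that $\ord$ and $\ord_b$ are both \emph{localization} functors, and that the $T_n(\cO_{F,p})(b)$-invariants functor, restricted to \emph{smooth} modules, commutes with any additive functor which is exact and preserves filtered colimits. Here $H := T_n(\cO_{F,p})(b)$ is a profinite abelian group (a closed subgroup of $T_n(\cO_{F,p})$), and $\Gamma(T_n(\cO_{F,p})(b),-)$ is the functor $V \mapsto V^H$.

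First I would record the localization statement. Set $R^+ = \cO/\varpi^m[T_n(F_p)^+]$, $R = \cO/\varpi^m[T_n(F_p)]$, $R_b^+ = \cO/\varpi^m[T_n(F_p)_b^+]$ and $R_b = \cO/\varpi^m[T_n(F_p)_b]$. Since $T_n$ is abelian, $T_n(F_p)^+$ is a commutative monoid whose group completion is $T_n(F_p)$: at a place $v \mid p$, a diagonal matrix with valuation vector $(a_i)_i$ is the quotient of the two elements with valuation vectors $(a_i + K(n-i))_i$ and $(K(n-i))_i$, both of which lie in $T_n(F_p)^+$ once $K$ is large enough. Hence $R$ is the localization of $R^+$ at the image of the multiplicative set $T_n(F_p)^+$, so $\ord(-) = - \otimes_{R^+} R$ is exact and commutes with all colimits; likewise $\ord_b(-) = - \otimes_{R_b^+} R_b$. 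The quotient map $R^+ \to R_b^+$ carries $T_n(F_p)^+$ onto $T_n(F_p)_b^+$, and one checks $R_b = R \otimes_{R^+} R_b^+$. A $T_n(\cO_{F,p})(b)$-invariant vector of $V$ has $T_n(\cO_{F,p})(b)$-invariant image in $\ord V$ (the $H$-action commutes with multiplication by elements of $T_n(F_p)^+$), giving an evident natural comparison map $\ord_b \circ \Gamma(T_n(\cO_{F,p})(b),-) \Rightarrow \Gamma(T_n(\cO_{F,p})(b),-) \circ \ord$; to show it is an isomorphism it suffices to do so on underlying $\cO/\varpi^m$-modules.

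The heart of the matter is then the general claim: for a profinite abelian group $H$, the functor $V \mapsto V^H$ on smooth $\cO/\varpi^m[H]$-modules commutes with any exact functor $\Phi$ that preserves filtered colimits. Indeed, since $H$ is abelian, every open subgroup $H' \le H$ is normal in $H$, and smoothness of $V$ gives $V^H = \varinjlim_{H'} (V^{H'})^{H/H'}$, a filtered colimit (over open subgroups $H'$, ordered by reverse inclusion — directed, since $H_1' \cap H_2'$ is again open) of invariants under the \emph{finite} groups $H/H'$; the transition maps are the inclusions $V^{H_1'} \subseteq V^{H_2'}$ for $H_1' \supseteq H_2'$, and a vector lies in $V^H$ iff it is fixed by some open $H'$ and by a (finite) set of coset representatives of $H/H'$. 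For a choice of generators $h_1, \dots, h_k$ of $H/H'$ one has $(V^{H'})^{H/H'} = \ker\bigl( V^{H'} \xrightarrow{(h_i - 1)_i} (V^{H'})^{\oplus k} \bigr)$, a finite limit, which $\Phi$ preserves; and $\Phi$ preserves the outer filtered colimit. Applying this with $\Phi = \ord$ — viewed as a functor of $\cO/\varpi^m$-modules, where it is a filtered colimit of the identity along the monoid action, hence exact and colimit-preserving — shows that the comparison map is an isomorphism of $\cO/\varpi^m$-modules, hence of $R_b$-modules, which is exactly the asserted commutativity of the square.

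The only point requiring care is the identity $V^H = \varinjlim_{H'}(V^{H'})^{H/H'}$ together with the compatibility of the various module structures along the way; this is bookkeeping rather than a genuine obstacle, but it is essential that $V$ is smooth (for a general $\cO/\varpi^m[H]$-module, $V^H$ is an infinite limit that localization need not preserve) and that we work over the torsion ring $\cO/\varpi^m$, so that no completions or topologies on the modules intervene.
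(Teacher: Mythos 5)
Your reduction to a general commutation statement is a reasonable idea, but the argument you give for that statement has a genuine gap, and the gap sits exactly where the content of the lemma lies. Write $H = T_n(\cO_{F,p})(b)$ and $\Phi = \ord$ (viewed on underlying $\cO/\varpi^m$-modules). Since $H$ is a group and every open subgroup $H' \le H$ is normal, each term of your colimit satisfies $(V^{H'})^{H/H'} = V^H$: the system $\varinjlim_{H'}(V^{H'})^{H/H'}$ is \emph{constant}. So exhibiting $V^H$ this way, and noting that $\Phi$ preserves the kernel presentation $\ker\bigl(V^{H'} \to (V^{H'})^{\oplus k}\bigr)$ and the outer colimit, only computes $\Phi(V^H) \cong \varinjlim_{H'} \Phi(V^{H'})^{H/H'}$. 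Nothing in your write-up identifies this with $(\Phi V)^H = (\ord V)^H$, and that identification is precisely what has to be proved: it is the surjectivity of your comparison map (injectivity is the easy half, following from exactness of localization and $V^H \hookrightarrow V$). In other words, the ``bookkeeping'' you defer is the whole lemma.

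The repair stays within your framework but needs the missing step made explicit: use smoothness to write $V = \varinjlim_{H'} V^{H'}$ (a genuinely non-constant filtered colimit, and each $V^{H'}$ is a $T_n(F_p)^+$-submodule because the monoid is commutative), deduce $\ord V = \varinjlim_{H'} \ord(V^{H'})$ with $H$ acting on the $H'$-th term through the finite quotient $H/H'$, and then check that $H$-invariants commute with a filtered colimit of modules on each of which the action factors through a finite quotient: an invariant class is represented by some $x$ at a finite stage, only the finitely many cosets of $H/H'$ impose conditions, and $H$-equivariance of the transition maps lets you pass to a single later stage where $x$ becomes honestly invariant; then each $(\ord V^{H'})^{H/H'} = \ord\bigl((V^{H'})^{H/H'}\bigr) = \ord(V^H)$ and the identification follows. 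This finiteness-plus-uniformity step is exactly the paper's own proof: there one takes an $H$-invariant element $m\otimes 1$ of $\ord M$, uses smoothness to find $c \ge b$ with the representing vector fixed by $T_n(\cO_{F,p})(c)$, observes that the exponent $n(t)$ with $u_p^{n(t)}(t-1)x = 0$ depends only on the image of $t$ in the finite group $T_n(\cO_{F,p})(b)/T_n(\cO_{F,p})(c)$, and takes $n = \sup_t n(t)$. So your abstract route can be completed, but only by inserting this uniform-bound argument in some guise, which is the one step your proposal currently skips.
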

\begin{proof}
	Let $M \in  \operatorname{Mod}_{\text{sm}}(\cO / \varpi^m[T_n(F_p)^+])$. There is a natural morphism \[ \ord_b \Gamma(T_n(\cO_{F, p})(b), M ) \to \Gamma(T_n(\cO_{F, p})(b), \ord M), \]
	or equivalently
	\begin{multline*} M^{T_n(\cO_{F, p})(b)} \otimes_{\cO / \varpi^m[ T_n(F_p)^+_b ]} \cO / \varpi^m[T_n(F_p)_b] \\ \to (M \otimes_{\cO / \varpi^m[ T_n(F_p)^+ ]} \cO / \varpi^m[T_n(F_p)])^{T_n(\cO_{F, p})(b)}. \end{multline*}
	We must show that it is an isomorphism. It is injective because $M^{T_n(\cO_{F, p})(b)} \to M$ is injective and localization is exact. To show it is surjective, let $x \in M$, and suppose that $x \otimes 1 \in (M \otimes_{\cO / \varpi^m[ T_n(F_p)^+ ]} \cO / \varpi^m[T_n(F_p)])^{T_n(\cO_{F, p})(b)}$. We must show that there exists $n \geq 0$ such that $u_p^{n} x \in M^{T_n(\cO_{F, p})(b)}$. Since $M$ is smooth, there exists $c \geq b$ such that $x \in M^{T_n(\cO_{F, p})(c)}$. On the other hand, our assumption on $x \otimes 1$ means that for any $t \in T_n(\cO_{F, p})(b)$, there exists $n(t)$ such that $u_p^{n(t)} (t - 1) x = 0$ in $M$. Choosing $n(t)$ to be as small as possible, we see that $n(t)$ depends only on the image of $t$ in the (finite) quotient $T_n(\cO_{F, p})(b) / T_n(\cO_{F, p})(c)$. We can therefore take $n = \sup_t n(t)$.
\end{proof}
\begin{lemma}\label{lem:N_n-invariants_are_T_n_acyclic} \leavevmode
	\begin{enumerate}
		\item Each functor $\Gamma(N_n(\cO_{F, p}), -)$, $\Gamma(B_n(\cO_{F, p})(b), -)$, and $\Gamma(\Iw_p(b, c), -)$ is left exact. For any $b \geq 0$, the functor $\Gamma(N_n(\cO_{F, p}), -)$ sends injectives to $\Gamma(T_n(\cO_{F, p})(b), -)$-acyclics.
		\item The functors $\ord$ and $\ord_b$ are exact and preserve injectives.
	\end{enumerate}
\end{lemma}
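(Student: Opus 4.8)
This lemma is a formal collection of statements about exactness and acyclicity, and I would prove each part by reducing to standard facts about invariants under profinite groups and localization of modules. Let me take the two parts in turn.

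\emph{Part (1).} Left-exactness of the three invariant functors $\Gamma(N_n(\cO_{F,p}),-)$, $\Gamma(B_n(\cO_{F,p})(b),-)$, and $\Gamma(\Iw_p(b,c),-)$ is immediate: in each case the underlying set-theoretic operation is ``take the submodule of vectors fixed by a fixed (pro-$p$, hence profinite) subgroup'', i.e.\ $M \mapsto \Hom_{\cO/\varpi^m[H]}(\cO/\varpi^m, M)$ for $H$ equal to $N_n(\cO_{F,p})$, $B_n(\cO_{F,p})(b)$, or $\Iw_p(b,c)$, which is left exact as a $\Hom$ functor. The only subtle point is that the target categories must carry the correct monoid action; but the monoid-theoretic formula \eqref{eqn:torus_action_on_N_invariants} (resp.\ the description via the double-coset Hecke operators recalled just before the lemma) is manifestly functorial in $M$, so these are genuine functors and the forgetful functor to $\cO/\varpi^m$-modules detects exactness. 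For the acyclicity statement, I would argue as follows. Suppose $I$ is an injective object of $\operatorname{Mod}_{\mathrm{sm}}(\cO/\varpi^m[\Delta_p])$. By Lemma~\ref{lem:injectives_for_smooth_monoid_actions}(2), applied with $\Delta' = N_n(\cO_{F,p}) T_n(\cO_{F,p})(b)$ an open (compact-by-discrete) subgroup of $\Delta_p$ — more precisely one needs to realize $N_n(\cO_{F,p})$ and $T_n(\cO_{F,p})(b)$ together inside a compact open subgroup of $\Delta_p$, which one can do — the restriction $I$ remains injective as a smooth $\cO/\varpi^m[N_n(\cO_{F,p}) \rtimes T_n(\cO_{F,p})(b)]$-module. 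Now $\Gamma(N_n(\cO_{F,p}),-)$ followed by $\Gamma(T_n(\cO_{F,p})(b),-)$ equals $\Gamma(N_n(\cO_{F,p}) \rtimes T_n(\cO_{F,p})(b),-)$, and the derived functors of the latter are the continuous cohomology $H^\ast$ of this profinite group acting on $I$. Since $I$ is injective in the category of smooth modules for an open compact subgroup containing this group, and smooth cohomology of a profinite group with coefficients in an injective smooth module vanishes in positive degree (this is the standard fact that injective smooth modules are cohomologically trivial, cf.\ \cite[Prop.~2.1.11]{emordtwo} and the surrounding discussion), we conclude $\Gamma(N_n(\cO_{F,p}), I)$ is $\Gamma(T_n(\cO_{F,p})(b),-)$-acyclic. (Alternatively, and perhaps more cleanly, one can cite directly the corresponding assertion in Emerton's work \cite[\S 3]{emordone} that the $N$-invariants functor sends injectives to objects acyclic for $T(\cO_{F,p})(b)$-invariants, since $T_n(\cO_{F,p})(b)$ is a closed subgroup of $T_n(\cO_{F,p})$.)

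\emph{Part (2).} The functors $\ord$ and $\ord_b$ are given by tensoring with the localizations $\cO/\varpi^m[T_n(F_p)]$ over $\cO/\varpi^m[T_n(F_p)^+]$ (respectively $\cO/\varpi^m[T_n(F_p)_b]$ over $\cO/\varpi^m[T_n(F_p)_b^+]$). Exactness is the statement that these localizations are flat, which is immediate: $\cO/\varpi^m[T_n(F_p)]$ is obtained from $\cO/\varpi^m[T_n(F_p)^+]$ by inverting the finitely many elements $u_p$ (and the Hecke operators $U_{v,i}$, all of which become units), and localization of a (possibly non-commutative, but here the relevant central subalgebra generated by the $U_{v,i}$ is commutative and we are inverting central elements) ring at a central multiplicative set is flat — equivalently, $T_n(F_p) = T_n(F_p)^+ \cdot \langle u_p^{-1}\rangle$ realizes the group ring as a filtered colimit of free rank-one modules over the monoid ring, and filtered colimits are exact. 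That $\ord$ and $\ord_b$ preserve injectives: the key observation is that each has an exact left adjoint, namely the restriction (forgetful) functor from $T_n(F_p)$-modules (resp.\ $T_n(F_p)_b$-modules) to $T_n(F_p)^+$-modules (resp.\ $T_n(F_p)_b^+$-modules), which is obviously exact; and any right adjoint of an exact functor preserves injectives. To see that restriction is left adjoint to localization, note that for a smooth $\cO/\varpi^m[T_n(F_p)^+]$-module $M$ and a smooth $\cO/\varpi^m[T_n(F_p)]$-module $V$, a $T_n(F_p)^+$-equivariant map $M \to V|_{T_n(F_p)^+}$ extends uniquely to a $T_n(F_p)$-equivariant map $M \otimes_{\cO/\varpi^m[T_n(F_p)^+]} \cO/\varpi^m[T_n(F_p)] \to V$ precisely because $u_p$ acts invertibly on $V$, giving the adjunction $\Hom_{T_n(F_p)^+}(M, V) = \Hom_{T_n(F_p)}(\ord M, V)$; and one must check the unit/counit land in smooth modules, which holds because smoothness is preserved by both restriction and this localization. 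This completes the proof.

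\textbf{Main obstacle.} I expect the one genuinely delicate point to be the acyclicity assertion in Part (1): one must be careful about which subgroup of $\Delta_p$ one restricts to in order to apply Lemma~\ref{lem:injectives_for_smooth_monoid_actions}(2) (the subgroup $N_n(\cO_{F,p}) \rtimes T_n(\cO_{F,p})(b)$ is compact, not open, in $\Delta_p$, but it is contained in the compact open subgroup $\Iw_p$, so the ``compact subgroup'' case of that lemma applies), and then to correctly identify the composite functor with group cohomology and invoke the vanishing for injective smooth coefficient modules. Everything else is formal nonsense about adjunctions, flatness of central localizations, and left-exactness of fixed-point functors, and I would treat it briefly.
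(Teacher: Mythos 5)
The left-exactness claims and the exactness of $\ord$ and $\ord_b$ are fine, but both substantive assertions of the lemma have problems in your write-up. In part (2), your argument that $\ord$ and $\ord_b$ preserve injectives is backwards: the adjunction you yourself display, $\Hom_{T_n(F_p)^+}(M, V) = \Hom_{T_n(F_p)}(\ord M, V)$, exhibits $\ord$ as a \emph{left} adjoint of the restriction functor, not a right adjoint (localization is always left adjoint to restriction of scalars). Left adjoints of exact functors preserve projectives, not injectives, so the formal argument collapses, and in fact preservation of injectives by localization is a genuine issue: for $\ord_b$ the paper invokes the classical fact that localization over a Noetherian base ring preserves injective modules, while for $\ord$ — whose source is the category of \emph{smooth} $\cO/\varpi^m[T_n(F_p)^+]$-modules, not modules over a Noetherian ring — the paper has to argue by hand: given $M \hookrightarrow N$ in $\operatorname{Mod}_{\text{sm}}(\cO/\varpi^m[T_n(F_p)])$ and $\alpha : M \to \ord(\cI)$, one passes to $T_n(\cO_{F,p})(b)$-invariants, identifies $\ord(\cI)^{T_n(\cO_{F,p})(b)} \cong \ord_b(\cI^{T_n(\cO_{F,p})(b)})$ via Lemma~\ref{lem_ord_functors_commute}, extends over $N^{T_n(\cO_{F,p})(b)}$ using injectivity of $\cI^{T_n(\cO_{F,p})(b)}$, and concludes from $N = \cup_b N^{T_n(\cO_{F,p})(b)}$ and Zorn's lemma. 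None of this is captured by your adjunction argument.

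In part (1), restricting $\cI$ to the compact subgroup $B_n(\cO_{F,p})(b) = N_n(\cO_{F,p}) \rtimes T_n(\cO_{F,p})(b)$ and using Lemma~\ref{lem:injectives_for_smooth_monoid_actions} is exactly the paper's first move, but the step after it is a non sequitur: the vanishing of $H^{>0}$ of the profinite group $B_n(\cO_{F,p})(b)$ with coefficients in an injective smooth module is the vanishing of the derived functors of the \emph{composite} $\Gamma(T_n(\cO_{F,p})(b),-)\circ\Gamma(N_n(\cO_{F,p}),-)$ on an injective, which is automatic and says nothing about $R^i\Gamma(T_n(\cO_{F,p})(b),-)$ applied to $\Gamma(N_n(\cO_{F,p}),\cI)$ — and the latter is what acyclicity means. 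To bridge this you need either a Hochschild--Serre argument together with the additional vanishing of $H^{>0}(N_n(\cO_{F,p}),\cI)$, or, as the paper does, the key observation that $\Gamma(N_n(\cO_{F,p}),-)$, viewed as a functor from smooth $B_n(\cO_{F,p})(b)$-modules to smooth $T_n(\cO_{F,p})(b)$-modules, \emph{preserves injectives} because it has an exact left adjoint given by inflation; then $\Gamma(N_n(\cO_{F,p}),\alpha\cI)$ is injective, hence acyclic, and one transfers back to the original categories (where the torus acts through the twisted formula and invariants land in $\operatorname{Mod}(\cO/\varpi^m[T_n(F_p)_b^+])$) using exactness and injective-preservation of the restriction functors plus the composition formula for derived functors. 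Your fallback citation to Emerton does not substitute for this, since the statement here is formulated for the monoid $\Delta_p$ with the twisted torus action and the level-$b$ categories, which is presumably why the authors write the argument out.
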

\begin{proof}
	It is immediate from the definitions that the three functors in the first part are left exact. We now show that the functor $\Gamma(N_n(\cO_{F, p}), -)$ sends injectives to $\Gamma(T_n(\cO_{F, p})(b), -)$-acyclics. 
	
	We have a commutative diagram
	\[ \xymatrix{ \operatorname{Mod}_{\text{sm}}(\cO / \varpi^m[\Delta_p]) \ar[d]_\alpha \ar[r] & \operatorname{Mod}_{\text{sm}}(\cO / \varpi^m[T_n(F_p)^+]) \ar[d]_\beta\ar[r] & \operatorname{Mod}(\cO / \varpi^m[T_n(F_p)^+_b])\ar[d]_\gamma\\ 
		 \operatorname{Mod}_{\text{sm}}(\cO / \varpi^m[B_n(\cO_{F, p})(b)])  \ar[r] & \operatorname{Mod}_{\text{sm}}(\cO / \varpi^m[T_n(\cO_{F, p})(b)]) \ar[r] & \operatorname{Mod}(\cO / \varpi^m)	} \]
	where the horizontal arrows are taking invariants and the
        vertical arrows are restriction to compact or open
        subgroups. By Lemma
        \ref{lem:injectives_for_smooth_monoid_actions}, the vertical
        arrows are exact and preserve injectives. We must show that if
        $\cI \in \operatorname{Mod}_{\text{sm}}(\cO /
        \varpi^m[\Delta_p])$ is injective, then for each $i > 0$, $R^i
        \Gamma(T_n(\cO_{F, p})(b), \Gamma(N_n(\cO_{F, p}), \cI)) =
        0$. Equivalently (using the formula for a composition of
        derived functors, \cite[Corollary 10.8.3]{weibel}), we must show that
	\[ \gamma R^i \Gamma(T_n(\cO_{F, p})(b), \Gamma(N_n(\cO_{F, p}), \cI)) =  R^i \Gamma(T_n(\cO_{F, p})(b), \Gamma(N_n(\cO_{F, p}), \alpha \cI)) = 0. \]
	However, $\alpha \cI$ is injective, so this follows from the fact that the functor
	\[ \Gamma(N_n(\cO_{F, p}), -) :  \operatorname{Mod}_{\text{sm}}(\cO / \varpi^m[B_n(\cO_{F, p})(b)]) \to \operatorname{Mod}_{\text{sm}}(\cO / \varpi^m[T_n(\cO_{F, p})(b)]) \]
	preserves injectives (because it has an exact left adjoint, given by inflation). This proves the first part of the lemma. 
	
	We now prove the second part of the lemma. Both $\ord$ and $\ord_b$ are exact because localization is an exact functor. Since localization preserves injectives in the case of a Noetherian base ring, $\ord_b$ preserves injectives. To show that $\ord$ preserves injectives, we go back to the definitions. Let $\cI$ be an injective object of $\operatorname{Mod}_{\text{sm}}(\cO / \varpi^m[T_n(F_p)^+])$, let $M \hookrightarrow N$ be an inclusion in $\operatorname{Mod}_{\text{sm}}(\cO / \varpi^m[T_n(F_p)])$, and let $\alpha : M \to \ord(\cI)$ be a morphism. We must show that $\alpha$ extends to $N$.
	
	For any $b \geq 0$, passing to $T_n(\cO_{F, p})(b)$-fixed vectors gives a morphism (cf. Lemma \ref{lem_ord_functors_commute})
	\[ \alpha(b) : M^{T_n(\cO_{F, p})(b)} \to \ord(\cI)^{T_n(\cO_{F, p})(b)} \cong \ord_b(\cI^{T_n(\cO_{F, p})(b)}). \]
	 The object $\cI^{T_n(\cO_{F, p})(b)} \in \operatorname{Mod}(\cO / \varpi^m[T_n(F_p)_b^+])$ is injective, showing that we can extend $\alpha(b)$ to a morphism $\alpha(b)' : N^{T_n(\cO_{F, p})(b)} \to  \ord(\cI)^{T_n(\cO_{F, p})(b)}$. Zorn's lemma implies that there exists a maximal extension $\alpha': L_{\max} \to \ord(\cI)$ of $\alpha$. The preceding argument shows that we can extend the map induced by $\alpha'$ on $T_n(\cO_{F, p})(b)$-invariants from $L_{\max}^{T_n(\cO_{F, p})(b)}$ to $N^{T_n(\cO_{F, p})(b)}$. It follows that we can extend $\alpha'$ to $L_{\max}+N^{T_n(\cO_{F, p})(b)}$. By maximality, and since $N = \cup_{b \geq 0}  N^{T_n(\cO_{F, p})(b)}$, we have $L_{\max}=N$, as desired.
\end{proof}
\begin{lemma}\label{lem:hida_lemma}
	For any $c \geq b \geq 0$ with $c \geq 1$, there is a natural isomorphism 
	\[ \ord_b \circ \Gamma(\Iw_p(b, c), - ) \cong \ord_b \circ \Gamma(B_n(\cO_{F, p})(b), -) \]
	of functors 
	\[ \operatorname{Mod}_{\text{sm}}(\cO / \varpi^m[\Delta_p]) \to \operatorname{Mod}(\cO / \varpi^m[ T_n(F_p)_b]). \]
\end{lemma}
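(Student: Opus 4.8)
The plan is to compare the two functors through the evident inclusion of invariants and then localise. Write $\overline{N}_n \subset \GL_n$ for the lower triangular unipotent subgroup and, for $c \ge 1$, set $\overline{N}_p^{(c)} = \prod_{v \in S_p}\bigl(\overline{N}_n(\cO_{F_v}) \cap (1 + \varpi_v^c M_n(\cO_{F_v}))\bigr)$. Then $\Iw_p(b,c) = \prod_{v \in S_p}\Iw_v(b,c)$ admits an Iwahori factorisation $\Iw_p(b,c) = \overline{N}_p^{(c)}\cdot T_n(\cO_{F,p})(b)\cdot N_n(\cO_{F,p})$ (with unique factorisation of elements), whereas $B_n(\cO_{F,p})(b) = N_n(\cO_{F,p})\cdot T_n(\cO_{F,p})(b)$ has the same torus and upper unipotent parts but trivial lower unipotent part. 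In particular $B_n(\cO_{F,p})(b) \subseteq \Iw_p(b,c)$, so for every $V \in \operatorname{Mod}_{\mathrm{sm}}(\cO/\varpi^m[\Delta_p])$ passing to invariants gives a natural inclusion $V^{\Iw_p(b,c)} \hookrightarrow V^{B_n(\cO_{F,p})(b)}$.

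First I would check this inclusion is $T_n(F_p)_b^+$-equivariant. The action of a positive element $t \in T_n(F_p)^+$ is by $[\Iw_p(b,c)\,t\,\Iw_p(b,c)]$ on the source and by the formula \eqref{eqn:torus_action_on_N_invariants} on the target. Positivity gives $tT_n(\cO_{F,p})(b)t^{-1} = T_n(\cO_{F,p})(b)$, $tN_n(\cO_{F,p})t^{-1} \subseteq N_n(\cO_{F,p})$ and $\overline{N}_p^{(c)} \subseteq t\overline{N}_p^{(c)}t^{-1}$; combined with the Iwahori factorisation this yields $\Iw_p(b,c) \cap t\Iw_p(b,c)t^{-1} = \overline{N}_p^{(c)}\cdot T_n(\cO_{F,p})(b)\cdot tN_n(\cO_{F,p})t^{-1}$, so representatives for $\Iw_p(b,c)/(\Iw_p(b,c)\cap t\Iw_p(b,c)t^{-1})$ may be chosen inside $N_n(\cO_{F,p})/tN_n(\cO_{F,p})t^{-1}$, and expanding the double coset shows the two actions agree on $V^{\Iw_p(b,c)}$. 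We thus obtain a natural transformation of functors to $\operatorname{Mod}(\cO/\varpi^m[T_n(F_p)_b^+])$; applying the exact functor $\ord_b$ — which, as $T_n(F_p)_b$ is the localisation of the monoid $T_n(F_p)_b^+$ at $u_p$, is simply $-[u_p^{-1}]$ — produces the natural transformation in the statement.

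It remains to prove objectwise bijectivity of $\ord_b(V^{\Iw_p(b,c)}) \to \ord_b(V^{B_n(\cO_{F,p})(b)})$. Injectivity is formal: an element of $V^{\Iw_p(b,c)}$ killed in the target satisfies $u_p^N\cdot v = 0$ in $V^{B_n(\cO_{F,p})(b)}$ for some $N$, and by the equivariance above $u_p^N\cdot v$ is computed by the same Hecke operator in $V^{\Iw_p(b,c)}$, so $v = 0$ in $\ord_b(V^{\Iw_p(b,c)})$. Surjectivity reduces to showing that for every $v \in V^{B_n(\cO_{F,p})(b)}$ there is $N$ with $u_p^N\cdot v \in V^{\Iw_p(b,c)}$, where $u_p^N\cdot v = \sum_{n \in N_n(\cO_{F,p})/u_p^N N_n(\cO_{F,p})u_p^{-N}}(nu_p^N)v$. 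One uses: (i) smoothness of $V$, so $v$ is fixed by $\overline{N}_p^{(c')}$ for some $c' \ge c$; (ii) conjugation $\overline{m} \mapsto u_p^{-N}\overline{m}u_p^N$ multiplies the $(i,j)$-entry ($i>j$) by $p^{N(i-j)}$, hence carries $\overline{N}_p^{(c)}$ into $\overline{N}_p^{(c')}$ for $N \gg 0$, so that $u_p^N v$ is $\overline{N}_p^{(c)}$-fixed; and (iii) the averaged vector $\sum_n(nu_p^N)v$ remains $N_n(\cO_{F,p})$-fixed (left translation permutes the cosets, using $N_n(\cO_{F,p})$-invariance of $v$ for well-definedness) and is also $\overline{N}_p^{(c)}$-fixed, by the commutation relations between $\overline{N}_p^{(c)}$ and $N_n(\cO_{F,p})$ inside $\Iw_p(b,c)$ together with (ii). Granting this, every element of $\ord_b(V^{B_n(\cO_{F,p})(b)}) = \varinjlim_{u_p}V^{B_n(\cO_{F,p})(b)}$ is represented by some $u_p^N\cdot v \in V^{\Iw_p(b,c)}$, so the map is surjective, hence an isomorphism, natural in $V$ by construction.

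The main obstacle is step (iii): it is exactly the classical identification of the Iwahori–ordinary part with Emerton's ordinary part, and I would carry it out by the direct coset computation in Emerton's ordinary-parts papers \cite{emordone,emordtwo} (cf.\ also \cite{hauseux}), the only nontrivial point being the Iwahori-factorisation bookkeeping needed to push the $\overline{N}_p^{(c)}$-invariance of $u_p^Nv$ through the sum over $N_n(\cO_{F,p})/u_p^NN_n(\cO_{F,p})u_p^{-N}$.
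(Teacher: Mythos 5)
Your proposal is correct and follows essentially the same route as the paper's proof: the same Hecke-equivariant inclusion of invariants checked via the Iwahori decomposition (representatives taken in $N_n(\cO_{F,p})/tN_n(\cO_{F,p})t^{-1}$), the same exactness/localisation formalities, and the same crux that a sufficiently large power of $u_p$ carries $B_n(\cO_{F,p})(b)$-invariants into $\Iw_p(b,c)$-invariants. The paper packages your step (iii) slightly differently, as a one-step-at-a-time induction on the level using the fact that $\Iw_p(b,c')\,u_p\,\Iw_p(b,c')$ is stable under left multiplication by $\Iw_p(b,c'-1)$ (citing Geraghty, Lemma 2.5.2), which is exactly the coset bookkeeping you defer to Emerton and Hauseux.
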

\begin{proof}
	We first show that for any $V \in \operatorname{Mod}_{\text{sm}}(\cO / \varpi^m[\Delta_p])$, the natural inclusion $\Gamma(\Iw_p(b, c), V) \subset \Gamma(B_n(\cO_{F, p})(b), V)$ is a morphism of $\cO / \varpi^m[T_n(F_p)^+_b]$-modules. A given element $t \in T_n(F_p)^+_b$ acts on the source via the Hecke operator $[\Iw_p(b, c) t \Iw_p(b, c)]$ and on the target by the formula (\ref{eqn:torus_action_on_N_invariants}). We see that we must show that the map 
	\[ N(\cO_{F, p}) / t N(\cO_{F, p}) t^{-1} \to \Iw_p(b, c) /  ( \Iw_p(b, c) \cap t \Iw_p(b, c) t^{-1} )\]
	is bijective. This is true, because $\Iw_p(b, c)$ admits an Iwahori decomposition with respect to $B_n$ (cf. \S \ref{sec:hecke_algebra_of_a_monoid}).
	
	The exactness of $\ord_b$ implies that for any $V \in \operatorname{Mod}_{\text{sm}}(\cO / \varpi^m[\Delta_p])$, there is an inclusion $\ord_b \Gamma(\Iw_p(b, c), V) \subset \ord_b  \Gamma(B_n(\cO_{F, p})(b), V)$. We must show that this is an equality. 
	
	We have $\cO / \varpi^m[ T_n(F_p)_b^+][u_p]^{-1} = \cO / \varpi^m[ T_n(F_p)_b]$. Consequently, the lemma will follow if we can show that for any $v \in \Gamma(B_n(\cO_{F, p})(b), V)$, there exists $n \geq 0$ such that $u_p^n \cdot v \in \Gamma(\Iw_p(b, c), V) = V^{\Iw_p(b, c)}$.
	
	Since $V$ is smooth, there exists $c' > c$ such that $v \in V^{\Iw_p(b, c')}$. By induction, it is enough to show that $U_p \cdot v \in V^{\Iw_p(b, c'-1)}$. The definition of the Hecke operator $U_p$ shows that this will follow if the double coset $\Iw_p(b, c') u_p \Iw_p(b, c')$ is invariant under left multiplication by the group $\Iw_p(b, c'-1)$. This is true, as proved in e.g.\ \cite[Lemma 2.19]{ger}. 
\end{proof}
\begin{lemma}\label{lem:compatibility_of_ordinary_parts}
	Let $\pi \in \mathbf{D}_{\text{sm}}(\cO / \varpi^m[\Delta_p])$ be a bounded below complex. Then for any $c \geq b \geq 0$, $c \ge 1$, there is a natural isomorphism
	\[ R \Gamma( T_n(\cO_{F, p})(b), \ord R \Gamma( N_n(\cO_{F, p}), \pi )) \cong \ord_b R \Gamma(\Iw_p(b, c), \pi) \]
	in $\mathbf{D}(\cO / \varpi^m[ T_n(F_p)_b])$.
\end{lemma}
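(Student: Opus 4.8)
The plan is to deduce Lemma~\ref{lem:compatibility_of_ordinary_parts} from the exactness and injective-preservation properties established in Lemmas~\ref{lem:N_n-invariants_are_T_n_acyclic} and~\ref{lem:hida_lemma}, together with the commutation of ordinary parts with fixed points in Lemma~\ref{lem_ord_functors_commute}. The key observation is that both sides should be identified with the right derived functor, applied to $\pi$, of the same composite functor $\operatorname{Mod}_{\text{sm}}(\cO / \varpi^m[\Delta_p]) \to \operatorname{Mod}(\cO / \varpi^m[T_n(F_p)_b])$. First I would fix a bounded-below complex of injectives $\cI^\bullet$ in $\operatorname{Mod}_{\text{sm}}(\cO / \varpi^m[\Delta_p])$ quasi-isomorphic to $\pi$; this is possible since that category has enough injectives by Lemma~\ref{lem:injectives_for_smooth_monoid_actions}(1). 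Then the plan is to compute both sides using $\cI^\bullet$ and check they agree.

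For the right-hand side: $R\Gamma(\Iw_p(b,c), \pi)$ is computed by $\Gamma(\Iw_p(b,c), \cI^\bullet)$, since by Lemma~\ref{lem:injectives_for_smooth_monoid_actions}(2) the restriction of $\cI^\bullet$ to $\operatorname{Mod}_{\text{sm}}(\cO / \varpi^m[\Iw_p(b,c)])$ remains a complex of injectives (here $\Iw_p(b,c)$ is a compact open subgroup of $\Delta_p$), and $\Gamma(\Iw_p(b,c),-)$ computes its own derived functor on injectives. Since $\ord_b$ is exact (Lemma~\ref{lem:N_n-invariants_are_T_n_acyclic}(2)), applying it commutes with taking cohomology, so $\ord_b R\Gamma(\Iw_p(b,c),\pi) \cong \ord_b \Gamma(\Iw_p(b,c), \cI^\bullet)$ in $\mathbf{D}(\cO / \varpi^m[T_n(F_p)_b])$. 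By Lemma~\ref{lem:hida_lemma}, this is naturally isomorphic to $\ord_b \Gamma(B_n(\cO_{F,p})(b), \cI^\bullet)$, and since $\Gamma(B_n(\cO_{F,p})(b),-)$ factors as $\Gamma(T_n(\cO_{F,p})(b), -) \circ \Gamma(N_n(\cO_{F,p}), -)$, this is $\ord_b \Gamma(T_n(\cO_{F,p})(b), \Gamma(N_n(\cO_{F,p}), \cI^\bullet))$.

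For the left-hand side: I would use that $\ord$ preserves injectives and is exact (Lemma~\ref{lem:N_n-invariants_are_T_n_acyclic}(2)), and that $\Gamma(N_n(\cO_{F,p}),-)$ sends injectives of $\operatorname{Mod}_{\text{sm}}(\cO / \varpi^m[\Delta_p])$ to $\Gamma(T_n(\cO_{F,p})(b),-)$-acyclics (Lemma~\ref{lem:N_n-invariants_are_T_n_acyclic}(1)). Thus $\ord R\Gamma(N_n(\cO_{F,p}),\pi)$ is computed by $\ord \Gamma(N_n(\cO_{F,p}), \cI^\bullet)$, a complex whose terms are injective in $\operatorname{Mod}_{\text{sm}}(\cO / \varpi^m[T_n(F_p)])$ (hence, after restriction, $\Gamma(T_n(\cO_{F,p})(b),-)$-acyclic; or one can invoke Lemma~\ref{lem_ord_functors_commute} directly). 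Then $R\Gamma(T_n(\cO_{F,p})(b), \ord R\Gamma(N_n(\cO_{F,p}),\pi))$ is computed by $\Gamma(T_n(\cO_{F,p})(b), \ord \Gamma(N_n(\cO_{F,p}), \cI^\bullet))$, which by Lemma~\ref{lem_ord_functors_commute} is naturally isomorphic to $\ord_b \Gamma(T_n(\cO_{F,p})(b), \Gamma(N_n(\cO_{F,p}), \cI^\bullet))$ — the same complex reached above. Chaining the natural isomorphisms gives the result, and naturality in $\pi$ follows since all identifications are natural in $\cI^\bullet$ and independent (up to canonical isomorphism) of the chosen injective resolution.

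The main obstacle I anticipate is bookkeeping about which acyclicity/injective-preservation statement is being used at each stage, and making sure the derived functors genuinely compose — i.e.\ verifying the hypotheses of the Grothendieck spectral sequence / composition-of-derived-functors lemma (\cite[Corollary 10.8.3]{weibel}, already cited in the proof of Lemma~\ref{lem:N_n-invariants_are_T_n_acyclic}) at the one place it is needed, namely to know that $R\Gamma(B_n(\cO_{F,p})(b),-) = R\Gamma(T_n(\cO_{F,p})(b),-)\circ R\Gamma(N_n(\cO_{F,p}),-)$ so that one may replace $R\Gamma(B_n(\cO_{F,p})(b),\pi)$ by the iterated derived functor. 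Once the diagram of functors and their exactness properties is laid out carefully, the proof is a formal consequence and no further computation is required.
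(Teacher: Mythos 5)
Your proposal is correct and is essentially the paper's proof: the same ingredients (the composition formula for derived functors, exactness of $\ord$ and $\ord_b$, the fact that $\ord$ preserves injectives, Lemma~\ref{lem:N_n-invariants_are_T_n_acyclic}(1), Lemma~\ref{lem_ord_functors_commute} and Lemma~\ref{lem:hida_lemma}) are used in the same places, the only difference being that you evaluate everything on a chosen injective resolution $\cI^\bullet$ rather than phrasing the argument as a chain of natural isomorphisms of derived functors. The one imprecise step is your assertion that the terms $\ord \Gamma(N_n(\cO_{F,p}),\cI^j)$ are injective smooth $\cO/\varpi^m[T_n(F_p)]$-modules: Lemma~\ref{lem:N_n-invariants_are_T_n_acyclic}(1) only says that $\Gamma(N_n(\cO_{F,p}),\cI^j)$ is $\Gamma(T_n(\cO_{F,p})(b),-)$-acyclic, not injective, so injectivity after applying $\ord$ is not established. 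This is easily repaired, as your parenthetical already suggests and as the paper does: combining the facts that $\ord$ is exact and preserves injectives with Lemma~\ref{lem_ord_functors_commute} and exactness of $\ord_b$ gives the derived-functor identity $R\Gamma(T_n(\cO_{F,p})(b),-)\circ \ord \cong \ord_b \circ R\Gamma(T_n(\cO_{F,p})(b),-)$, from which the required $\Gamma(T_n(\cO_{F,p})(b),-)$-acyclicity of $\ord\Gamma(N_n(\cO_{F,p}),\cI^j)$ (or, more directly, the identification of the left-hand side with $\ord_b\Gamma(T_n(\cO_{F,p})(b),\Gamma(N_n(\cO_{F,p}),\cI^\bullet))$) follows.
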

\begin{proof}
	We will use \cite[Corollary 10.8.3]{weibel} (composition formula for derived functors) repeatedly. Since $\ord$ preserves injectives, this implies the existence of a natural isomorphism
	\[  \begin{split} R \Gamma( T_n(\cO_{F, p})(b), - ) \circ \ord & \cong R(\Gamma(  T_n(\cO_{F, p})(b), \ord(-))\\
	& \cong R( \ord_b \circ \Gamma(  T_n(\cO_{F, p})(b), -)) \\ & \cong \ord_b R \Gamma(T_n(\cO_{F, p})(b), -). \end{split} \]
	It follows that for $\pi$ as in the statement of the lemma, there is a natural isomorphism
	\[ R \Gamma( T_n(\cO_{F, p})(b), \ord R \Gamma( N_n(\cO_{F, p}), \pi )) \cong \ord_b R\Gamma(T_n(\cO_{F, p})(b), R \Gamma( N_n(\cO_{F, p}), \pi )). \]
	Using the first part of Lemma \ref{lem:N_n-invariants_are_T_n_acyclic}, we see that there is a natural isomorphism
	\[ 	R\Gamma(T_n(\cO_{F, p})(b), R \Gamma( N_n(\cO_{F, p}), \pi )) \cong R \Gamma(B_n(\cO_{F, p}(b), \pi)). \]
	Lemma \ref{lem:hida_lemma} implies the existence of a natural isomorphism
	\[ \begin{split} \ord_b R \Gamma(B_n(\cO_{F, p})(b), \pi)) & \cong R(\ord_b \Gamma(B_n(\cO_{F, p})(b), -))(\pi) \\ & \cong R(\ord_b \Gamma(\Iw_p(b, c), - ))(\pi) \\ & \cong \ord_b R \Gamma(\Iw_p(b, c), \pi). \end{split} \]
	This concludes the proof.
\end{proof}
\subsubsection{The ordinary part of completed cohomology}\label{sec:ordinary_part_of_completed_coh}
We now apply the formalism developed in the previous section to the cohomology groups of the spaces $X_K$. If $K \subset \GL_n(\A_F^\infty)$ is a good subgroup, then there are functors
\[ \Gamma_{K^p, \text{sm}} : \operatorname{Mod}(\cO / \varpi^m[G^{\infty}])  \to \operatorname{Mod}_{\text{sm}}(\cO / \varpi^m[G(F_p^+)]) \]
and
\[ \Gamma_{K^p, \text{sm}} : \operatorname{Mod}(\cO / \varpi^m[G^{p, \infty} \times \Delta_p])  \to \operatorname{Mod}_{\text{sm}}(\cO / \varpi^m[\Delta_p]) \]
which send a module $M$ to $\Gamma(K^p, M)^\text{sm}$. If $\lambda \in (\Z^n_+)^{\Hom(F, E)}$, then we define the weight $\lambda$ completed cohomology 
\[ \pi(K^p, \lambda, m) = R \Gamma_{K^p, \text{sm}} R \Gamma(\mathfrak{X}_G, \cV_\lambda / \varpi^m) \in \mathbf{D}_\text{sm}(\cO / \varpi^m[\Delta_p]). \]
If $K^S = \prod_{v \not\in S} \GL_n(\cO_{F, v})$, then $\pi(K^p, \lambda, m)$ comes equipped with a homomorphism 
\numequation\label{eqn:weight_lambda_completed_hecke_action} \T^S \to \End_{\mathbf{D}_\text{sm}(\cO / \varpi^m[\Delta_p])}(\pi(K^p, \lambda, m))
\end{equation}
and, if $K_p \subset \Delta_p$, a canonical $\T^S$-equivariant isomorphism
\numequation R \Gamma(K_p, \pi(K^p, \lambda, m)) \cong R \Gamma(X_K, \cV_\lambda / \varpi^m) 
\end{equation}
in $\mathbf{D}(\cO / \varpi^m)$. We define $\pi(K^p, m) = R \Gamma_{K^p, \text{sm}} R \Gamma(\mathfrak{X}_G, \cO / \varpi^m) \in \mathbf{D}_\text{sm}(\cO / \varpi^m[G(F^+_p)])$; this complex comes equipped with a homomorphism 
\numequation\label{eqn:weight_0_completed_hecke_action} \T^S \to \End_{\mathbf{D}_\text{sm}(\cO / \varpi^m[G(F^+_p)])}(\pi(K^p, m)),
\end{equation}
which recovers (\ref{eqn:weight_lambda_completed_hecke_action}) in the case $\lambda = 0$ after applying the forgetful functor to $\mathbf{D}_\text{sm}(\cO / \varpi^m[\Delta_p])$. We write $\T^S(K^p, m)$ for the image of (\ref{eqn:weight_0_completed_hecke_action}).
\begin{lemma}\label{lem:hecke_algebra_of_completed_cohomology}
	Let $K \subset \GL_n(\A_F^\infty)$ be a good subgroup. Then $\T^S(K^p, m)$ is a semi-local ring, complete with respect to the $J$-adic topology defined by its Jacobson radical~$J$. For each maximal ideal $\m \subset \T^S(K^p, m)$, there is a unique idempotent $e_\m \in \T^S(K^p, m)$ with the property $e_\m H^\ast( \pi(K^p, m)) = H^\ast( \pi(K^p, m))_\m$.
\end{lemma}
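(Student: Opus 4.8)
The plan is to recognize $H^\ast(\pi(K^p, m))$ as completed cohomology mod $\varpi^m$ at tame level $K^p$ and then run the standard commutative algebra for Hecke algebras acting on admissible smooth representations. First I would record that for each sufficiently small compact open $K_p \subset \GL_n(\cO_{F,p})$ there is a $\T^S$-equivariant isomorphism $R\Gamma(K_p, \pi(K^p, m)) \cong R\Gamma(X_{K^p K_p}, \cO/\varpi^m)$, so that $\T^S$ acts on the finitely generated $\cO/\varpi^m$-module $H^\ast(X_{K^p K_p}, \cO/\varpi^m)$ (Lemma~\ref{lem:finite_generation_of_cohomology}) through a finite, hence Artinian, quotient $\T_{K_p}$. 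Since $K_p' \subset K_p$ gives $H^\ast(\pi(K^p,m))^{K_p} \subset H^\ast(\pi(K^p,m))^{K_p'}$, the transition maps $\T_{K_p'} \to \T_{K_p}$ are surjective, and because $H^\ast(\pi(K^p,m)) = \varinjlim_{K_p} H^\ast(\pi(K^p,m))^{K_p}$ is smooth, the natural map $\T^S(K^p, m) \to \varprojlim_{K_p} \T_{K_p}$ is injective with dense image, the target being a profinite ring. This reduces the proposition to a uniform bound on the number of maximal ideals of $\T_{K_p}$ together with the closedness of $\T^S(K^p,m)$ in this inverse limit.

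For both points I would pass to Pontryagin duals. Writing $M = H^\ast(\pi(K^p,m))^\vee$, the admissibility of completed cohomology (following Emerton~\cite{emordone,emordtwo}, or deduced from the perfectness statement of Lemma~\ref{lem:cohomology_is_perfect} at finite level and passed to the limit) shows that $M$ is a finitely generated module over $\Lambda_p := \cO/\varpi^m \llbracket K_p \rrbracket$ for $K_p$ a uniform pro-$p$ subgroup, a Noetherian complete local ring with finite residue field $k$. The operators in $\T^S$ lie away from the $p$-adic places, hence commute with $G(F^+_p)$ and act $\Lambda_p$-linearly on $M$, so $\T^S(K^p,m)$ is a commutative subring of $\End_{\Lambda_p}(M)$. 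Now $\End_{\Lambda_p}(M)$ is the endomorphism ring of a finitely generated module over a complete Noetherian local ring, hence is semiperfect: it is complete for its Jacobson radical $J_0$, with $J_0 = \mathfrak{m}_{\Lambda_p}\End_{\Lambda_p}(M)$, all the quotients $\End_{\Lambda_p}(M)/J_0^i$ finite, and $\End_{\Lambda_p}(M)/J_0$ a finite semisimple ring.

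Granting the closedness from the first paragraph, I would then set $I := \T^S(K^p,m) \cap J_0$; elements of $I$ are topologically nilpotent and $\T^S(K^p,m)$ is closed, so $1 + I \subset \T^S(K^p,m)^\times$, i.e. $I \subseteq \mathrm{Jac}(\T^S(K^p,m))$; moreover $\T^S(K^p,m)/I$ embeds in the finite ring $\End_{\Lambda_p}(M)/J_0$ and $\T^S(K^p,m)$ is $I$-adically complete. A ring complete with respect to an ideal contained in its Jacobson radical and with finite quotient is semi-local, with maximal ideals exactly those containing $I$, and complete for its Jacobson-radical topology; this is the first assertion, and it also identifies $\T^S(K^p,m)$ with the closed subring $\varprojlim_{K_p}\T_{K_p}$, pinning down the uniform bound on maximal ideals. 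Finally, such a ring is a finite product $\T^S(K^p, m) = \prod_{\m} \T^S(K^p, m)_\m$ of its localizations; the associated decomposition $1 = \sum_\m e_\m$ into orthogonal idempotents is the one sought, since for any $\T^S(K^p,m)$-module $N$ — in particular $N = H^\ast(\pi(K^p, m))$ — one has $e_\m N = N_\m$. The main obstacle is precisely the passage from finite to infinite level in the second and third paragraphs: a priori the Artinian algebras $\T_{K_p}$ acquire more maximal ideals as $K_p$ shrinks, and it is the finiteness (admissibility) of completed cohomology, manifested in $\End_{\Lambda_p}(M)$ being semiperfect, that forces this growth to stabilize and makes $\T^S(K^p,m)$ closed.
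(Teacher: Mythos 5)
Your underlying strategy — identify $H^\ast(\pi(K^p,m))$ with completed cohomology, dualize, and use that completed homology is a finitely generated $(\cO/\varpi^m)[[K_p]]$-module so that the Hecke action lands in the semiperfect, profinite ring $\End_{\Lambda_p}(M)$ — is the same finiteness input that powers the result the paper simply cites (\cite[Lem.\ 2.1.14]{geenew}), and your third paragraph is a correct treatment of a \emph{closed} commutative subalgebra of $\End_{\Lambda_p}(M)$. The genuine gap is at the crux: the closedness (equivalently completeness) of $\T^S(K^p,m)$ is never proved. You flag it in the first paragraph as the remaining obstacle, then ``grant'' it in the third, and your final sentence asserts that semiperfectness of $\End_{\Lambda_p}(M)$ ``makes $\T^S(K^p,m)$ closed'' — it does not. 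By definition $\T^S(K^p,m)$ is the image of a countably generated $\cO$-algebra, hence countable; a closed subring of a profinite ring is itself profinite, hence finite or uncountable; so the image is closed only in the degenerate case where the finite-level algebras stabilize, which is not what happens in the situations this lemma is for. What your argument actually establishes is the statement for the closure of the image, equivalently for $\varprojlim_{K_p}\T_{K_p}$ (with derived-level transition maps), which is how the Hecke algebra in the cited lemma should be understood and is all the rest of the section uses; likewise the uniform bound on maximal ideals that your first paragraph reduces to is only delivered for that closure (it can also be obtained by the pro-$p$ argument: localized derived invariants of the finite $p$-group $K_p/K_p'$ of a nonzero complex of $k$-vector spaces are nonzero, so any maximal ideal in the support at level $K_p'$ already occurs at level $K_p$).

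A second mismatch needs attention: $\T^S(K^p,m)$ lives in $\End_{\mathbf{D}_{\mathrm{sm}}(\cO/\varpi^m[G(F_p^+)])}(\pi(K^p,m))$, endomorphisms of the complex, while your whole argument concerns the action on $H^\ast(\pi(K^p,m))$ (dually, on $M$ over $\Lambda_p$). The comparison map kills the ideal of operators acting trivially on cohomology, so ``$\T^S(K^p,m)$ is a commutative subring of $\End_{\Lambda_p}(M)$'' is not literally available; that ideal is nilpotent because the complex has bounded cohomological amplitude (the argument of \cite[Lem.\ 2.5]{KT}), so maximal ideals and idempotents lift back uniquely, but this has to be said — the point of the lemma is to produce idempotents acting on the complex so that $\pi(K^p,m)_\m$ is defined. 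Relatedly, your justification of the surjections $\T_{K_p'}\to\T_{K_p}$ conflates $H^\ast(X_{K^pK_p})$ with $H^\ast(\pi(K^p,m))^{K_p}$ (they differ by Hochschild--Serre), and no map between the cohomology-level images exists as stated; the transition maps exist for the derived-level algebras via functoriality of $R\Gamma(K_p/K_p',-)$, and the claimed injectivity of $\T^S(K^p,m)\to\varprojlim_{K_p}\T_{K_p}$ again only holds up to the nilpotent ideal above.
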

\begin{proof}
See \cite[Lemma 2.1.14]{geenew}.
\end{proof}
One important consequence of Lemma \ref{lem:hecke_algebra_of_completed_cohomology} is that the localization 
\[ \pi(K^p, m)_\m \in \mathbf{D}_\text{sm}(\cO / \varpi^m[G(F^+_p)]) \]
is defined.

We define the ordinary part of completed cohomology
\[ \pi^{\ord}(K^p, \lambda, m) = \ord R \Gamma(N_n(\cO_{F, p}), \pi(K^p, \lambda, m)) \in \mathbf{D}_\text{sm}(\cO / \varpi^m[T_n(F_p)]). \]
(If $\lambda = 0$, then we write simply $ \pi^{\ord}(K^p, m)$.) Its relation to the complex $R \Gamma_{K(0, c) / K(b, c)}( X_{K(b, c)}, \cV_\lambda)^{\ord}$ defined in \S \ref{sec:ord_statements} is the expected one:
\begin{prop}\label{prop:comparison_of_completed_and_classical_ord_coh}
	Let $K \subset G^\infty$ be a good subgroup with $K_v = \Iw_v$ for each $v | p$ and $K^S = \prod_{v \not\in S} \GL_n(\cO_{F_v})$. Let $c \geq b \geq 0$ be integers with $c \geq 1$. Then for any $\lambda \in (\Z^n_+)^{\Hom(F, E)}$, there is a $\T^{S, \ord}$-equivariant isomorphism
	\[ R \Gamma(T_n(\cO_{F, p})(b), \pi^{\ord}(K^p, \lambda, m)) \cong R \Gamma_{K(0, c) / K(b, c)}( X_{K(b, c)}, \cV_\lambda/ \varpi^m)^{\ord}   \]
	in $\mathbf{D}(\cO / \varpi^m[ K(0, c) / K(b, c) ])$. (Recall that we may identify $K(0,c)/K(b,c)$ with $T_n(\cO_{F,p})_b$.)
\end{prop}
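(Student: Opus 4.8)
The plan is to unwind both sides using the functorial framework of \S\ref{sec_ordinary_part_of_smooth_rep} and then to invoke Lemma~\ref{lem:compatibility_of_ordinary_parts}. First I would observe that $\pi(K^p, \lambda, m) = R \Gamma_{K^p, \text{sm}} R \Gamma(\mathfrak{X}_G, \cV_\lambda / \varpi^m)$ is a bounded-below complex in $\mathbf{D}_{\text{sm}}(\cO / \varpi^m[\Delta_p])$, so Lemma~\ref{lem:compatibility_of_ordinary_parts} applies with $\pi = \pi(K^p, \lambda, m)$ and, since by definition $\pi^{\ord}(K^p, \lambda, m) = \ord R \Gamma(N_n(\cO_{F, p}), \pi(K^p, \lambda, m))$, it gives a natural isomorphism
\[ R \Gamma(T_n(\cO_{F, p})(b), \pi^{\ord}(K^p, \lambda, m)) \cong \ord_b R \Gamma(\Iw_p(b, c), \pi(K^p, \lambda, m)) \]
in $\mathbf{D}(\cO / \varpi^m[T_n(F_p)_b])$. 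Next I would identify $R \Gamma(\Iw_p(b, c), \pi(K^p, \lambda, m))$ with $R \Gamma(X_{K(b, c)}, \cV_\lambda / \varpi^m)$: since $K(b, c)^p = K^p$ and $K(b, c)_p = \Iw_p(b, c) \subset \Delta_p$, this is precisely the canonical $\T^S$-equivariant isomorphism $R \Gamma(K_p, \pi(K^p, \lambda, m)) \cong R \Gamma(X_K, \cV_\lambda / \varpi^m)$ recorded after the definition of completed cohomology, applied to $K = K(b, c)$.

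The remaining point is to compare the localization functor $\ord_b$ with the direct-summand functor $(-)^{\ord}$ entering the definition of $\T^S(K(b, c), \lambda)^{\ord}$. Here I would use that $R \Gamma(X_{K(b, c)}, \cV_\lambda / \varpi^m)$ is a perfect complex of $\cO / \varpi^m$-modules (Lemma~\ref{lem:cohomology_is_perfect}), so all its cohomology groups are finite over $\cO / \varpi^m$ and the image algebra $\T^{S, \ord}(R \Gamma(X_{K(b, c)}, \cV_\lambda / \varpi^m))$ is finite over $\cO / \varpi^m$; the element $U_p$ of this finite algebra then has stable powers, yielding an idempotent $e$, and by Fitting's lemma (equivalently, the construction recalled in \cite[\S 2.4]{KT}) the image of $e$ — which by definition is $R \Gamma(X_{K(b, c)}, \cV_\lambda / \varpi^m)^{\ord}$, a direct summand in $\mathbf{D}(\cO / \varpi^m[K(0, c)/K(b, c)])$ — coincides with the localization at $U_p$, since on finite $\cO / \varpi^m[T_n(F_p)_b^+]$-modules inverting $U_p$ is exactly projection onto the part on which $U_p$ acts invertibly, and $\cO / \varpi^m[T_n(F_p)_b^+][U_p^{-1}] = \cO / \varpi^m[T_n(F_p)_b]$.

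Finally I would check that the resulting composite isomorphism respects all the structure. The $\T^S$-equivariance is built into the completed-cohomology isomorphism; each $U_{v, i}$ acts on $\ord_b R \Gamma(\Iw_p(b, c), -)$ through the Hecke operator $[\Iw_p(b, c) \diag(\varpi_v, \dots, \varpi_v, 1, \dots, 1) \Iw_p(b, c)]$ (with $i$ occurrences of $\varpi_v$), which is precisely the operator named $U_{v, i}$ on $R \Gamma(X_{K(b, c)}, \cV_\lambda / \varpi^m)$; and under $K(0, c)/K(b, c) \cong T_n(\cO_{F, p})/T_n(\cO_{F, p})(b) \subset T_n(F_p)_b$ the diamond-operator action of (\ref{eqn:hecke_morphism_with_coefficients}) matches the residual $T_n(F_p)_b$-action described by (\ref{eqn:torus_action_on_N_invariants}) via Lemma~\ref{lem:hida_lemma}. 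I expect the \emph{main obstacle} to be exactly this last bookkeeping — reconciling the two a priori different descriptions of the $K(0,c)/K(b,c)$-action and of $U_{v,i}$ — together with the passage from the localization $\ord_b$ to the idempotent-cut-out summand; once the finiteness coming from Lemma~\ref{lem:cohomology_is_perfect} is in place, the rest is formal.
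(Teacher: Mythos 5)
Your proposal is correct and follows essentially the same route as the paper: apply Lemma~\ref{lem:compatibility_of_ordinary_parts} to $\pi(K^p,\lambda,m)$, identify $R\Gamma(\Iw_p(b,c),\pi(K^p,\lambda,m))$ with $R\Gamma(X_{K(b,c)},\cV_\lambda/\varpi^m)$, and then compare $\ord_b$ with the idempotent-cut-out summand $(-)^{\ord}$ using finiteness of the cohomology. Your Fitting-lemma/stable-powers argument for why localization at $U_p$ agrees with the maximal summand on which $U_p$ acts invertibly is exactly the content of the reference to \cite[Lemma 3.2.1]{emordtwo} that the paper invokes at the corresponding step.
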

\begin{proof}
	We compute. We have a $\T^S$-equivariant isomorphism
	\[ R \Gamma(T_n(\cO_{F, p})(b), \ord R \Gamma(N_n(\cO_{F, p}), \pi(K^p, \lambda, m)))    \cong \ord_b R \Gamma(\Iw_p(b, c), \pi(K^p, \lambda, m))  \]
	in $\mathbf{D}(\cO / \varpi^m[T_n(F_p)_b])$.
	We have a morphism
	\begin{align*} R\Gamma_{K(0,c)/K(b,c)}(X_{K(b, c)}, \cV_\lambda / \varpi^m)^\text{ord} \to & R\Gamma_{K(0,c)/K(b,c)}(X_{K(b, c)}, \cV_\lambda / \varpi^m) \\ \to & \ord_b R \Gamma(\Iw_p(b, c), \pi(K^p, \lambda, m))  \end{align*}
	in $\mathbf{D}(\cO/ \varpi^m[ T_n(\cO_{F, p})_b ])$. Note that we identify $R\Gamma_{K(0,c)/K(b,c)}(X_{K(b, c)}, \cV_\lambda / \varpi^m)$ and $R \Gamma(\Iw_p(b, c), \pi(K^p, \lambda, m))$ in $\mathbf{D}(\cO/ \varpi^m[ T_n(\cO_{F, p})_b ])$. To complete the proof, we must show that our morphism induces an isomorphism on cohomology groups. This, in turn, reduces us to the problem of showing that if $M$ is an $\cO / \varpi^m[U]$-module, finite as $\cO / \varpi^m$-module, and $M^{\ord}$ is the maximal direct summand of $M$ on which $U$ acts invertibly, then the natural map $M^{\ord} \to M \to M \otimes_{\cO / \varpi^m[U]} \cO / \varpi^m[U, U^{-1}]$ is an isomorphism of $\cO / \varpi^m$-modules. This is true (cf. \cite[Lemma 3.2.1]{emordtwo}).
\end{proof}
	
	\begin{cor}[Independence of level]\label{cor:independence_of_level}
			Let $K \subset \GL_n(\A_F^\infty)$ be a good subgroup with $K_v = \Iw_v$ for each $v | p$ and $K^S = \prod_{v \not\in S} \GL_n(\cO_{F_v})$. Let $c \geq b \geq 0$ be integers with $c \geq 1$. Then for any $\lambda \in (\Z^n_+)^{\Hom(F, E)}$, the natural morphism
			\[ R \Gamma_{K(0,\max(1,b))/K(b,\max(1,b))}(X_{K(b, \max(1, b))}, \cV_\lambda / \varpi^m)^{\ord} \to R \Gamma_{K(0,c)/K(b,c)}(X_{K(b, c)}, \cV_\lambda / \varpi^m)^{\ord} \]
			in $\mathbf{D}(\cO / \varpi^m[ T_n(\cO_{F, p})_b])$ is an isomorphism. 
	\end{cor}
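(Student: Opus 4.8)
The plan is to deduce this from Proposition \ref{prop:comparison_of_completed_and_classical_ord_coh}, which identifies each of the two complexes appearing in the statement with a complex of the form $R\Gamma(T_n(\cO_{F,p})(b), \pi^{\ord}(K^p,\lambda,m))$. The key point is that the completed-cohomology complex $\pi^{\ord}(K^p,\lambda,m) \in \mathbf{D}_{\text{sm}}(\cO/\varpi^m[T_n(F_p)])$ depends only on $K^p$, $\lambda$ and $m$, and in particular not on the auxiliary integer $c$ used to cut out a finite level at $p$. First I would apply Proposition \ref{prop:comparison_of_completed_and_classical_ord_coh} twice: once with the pair $(b, \max(1,b))$ and once with the pair $(b, c)$. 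Since $c \ge 1$ and $c \ge b$ by hypothesis, and $\max(1,b) \ge 1$ and $\max(1,b) \ge b$ as well, both pairs satisfy the hypotheses of that proposition. This gives $\T^{S,\ord}$-equivariant isomorphisms
\[ R\Gamma(X_{K(b,\max(1,b))}, \cV_\lambda/\varpi^m)^{\ord} \cong R\Gamma(T_n(\cO_{F,p})(b), \pi^{\ord}(K^p,\lambda,m)) \cong R\Gamma(X_{K(b,c)}, \cV_\lambda/\varpi^m)^{\ord} \]
in $\mathbf{D}(\cO/\varpi^m[K(0,\max(1,b))/K(b,\max(1,b))])$ and $\mathbf{D}(\cO/\varpi^m[K(0,c)/K(b,c)])$ respectively; note that both group algebras are $\cO/\varpi^m[\prod_{v|p} T_n(\cO_{F_v}/\varpi_v^b)] = \cO/\varpi^m[T_n(\cO_{F,p})_b / \ldots]$, i.e.\ they are canonically the same ring, so the two isomorphisms live over the same base and can be composed.

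The remaining issue is that I need to know the composed abstract isomorphism agrees with the \emph{natural} morphism $R\Gamma(X_{K(b,\max(1,b))}, \cV_\lambda/\varpi^m)^{\ord} \to R\Gamma(X_{K(b,c)}, \cV_\lambda/\varpi^m)^{\ord}$ in the statement (the one induced by pullback along the covering $X_{K(b,c)} \to X_{K(b,\max(1,b))}$, restricted to ordinary parts). For this I would trace through the proof of Proposition \ref{prop:comparison_of_completed_and_classical_ord_coh}: the isomorphism there is built from Lemma \ref{lem:compatibility_of_ordinary_parts} (which produces $\ord_b R\Gamma(\Iw_p(b,c),\pi(K^p,\lambda,m)) \cong R\Gamma(T_n(\cO_{F,p})(b), \ord R\Gamma(N_n(\cO_{F,p}),\pi(K^p,\lambda,m)))$) together with the identification $\ord_b R\Gamma(\Iw_p(b,c),\pi(K^p,\lambda,m)) \cong \ord_b R\Gamma(X_{K(b,c)},\cV_\lambda/\varpi^m)$ and Lemma \ref{lem:hida_lemma}. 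The key compatibility I need is that the transition map $R\Gamma(\Iw_p(b,\max(1,b)),\pi(K^p,\lambda,m)) \to R\Gamma(\Iw_p(b,c),\pi(K^p,\lambda,m))$ induced by the inclusion $\Iw_p(b,c) \subset \Iw_p(b,\max(1,b))$ becomes an isomorphism after applying $\ord_b$, and that it corresponds under the comparison to the natural pullback map on $R\Gamma(X_{K(b,-)},\cV_\lambda/\varpi^m)^{\ord}$. That the pullback becomes an isomorphism after $\ord_b$ is precisely the content packaged in the proof of Lemma \ref{lem:hida_lemma}, where one shows that for any smooth $\Delta_p$-module $V$ and any $v \in \Gamma(B_n(\cO_{F,p})(b), V)$ there is $n \ge 0$ with $u_p^n \cdot v \in V^{\Iw_p(b,c)}$; applying this with $c$ and $\max(1,b)$ shows both $\ord_b\Gamma(\Iw_p(b,c),-)$ and $\ord_b\Gamma(\Iw_p(b,\max(1,b)),-)$ coincide with $\ord_b\Gamma(B_n(\cO_{F,p})(b),-)$, compatibly with the inclusion, hence the natural map between them is an isomorphism.

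The main obstacle, then, is not any hard new input but bookkeeping: checking that the chain of canonical isomorphisms is compatible with the \emph{given} natural morphism, rather than merely producing \emph{some} isomorphism. Concretely, I would argue that the natural morphism in the statement is, at the level of derived functors, induced by the morphism of functors $\Gamma(\Iw_p(b,\max(1,b)),-) \to \Gamma(\Iw_p(b,c),-)$ followed by restriction to ordinary parts, and that Lemma \ref{lem:hida_lemma} identifies the target of this with $\ord_b\Gamma(B_n(\cO_{F,p})(b),-)$ in a way that is the identity on the source, so the morphism becomes an identity after the comparison. This yields that the natural morphism of the corollary is an isomorphism in $\mathbf{D}(\cO/\varpi^m[T_n(\cO_{F,p})_b])$, as claimed. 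I would write this up in a few lines, citing Proposition \ref{prop:comparison_of_completed_and_classical_ord_coh} and Lemma \ref{lem:hida_lemma} and remarking that the compatibility with transition maps is immediate from the construction of those results (e.g.\ using the naturality in $V$ of the isomorphisms in Lemmas \ref{lem:compatibility_of_ordinary_parts} and \ref{lem:hida_lemma}).
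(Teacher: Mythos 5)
Your argument is correct and is exactly the intended one: the paper records this corollary with no separate proof, as an immediate consequence of Proposition \ref{prop:comparison_of_completed_and_classical_ord_coh}, since both complexes are identified with $R\Gamma(T_n(\cO_{F,p})(b), \pi^{\ord}(K^p,\lambda,m))$, which does not involve $c$. Your extra bookkeeping — checking via Lemma \ref{lem:hida_lemma} (and the naturality of Lemma \ref{lem:compatibility_of_ordinary_parts}) that the comparison isomorphisms are compatible with the pullback along $X_{K(b,c)} \to X_{K(b,\max(1,b))}$ — is precisely the compatibility the paper leaves implicit, and you have justified it correctly.
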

\begin{prop}\label{prop:independence_of_weight}
	Let $K \subset \GL_n(\A_{F}^\infty)$ be a good subgroup with $K^S = \prod_{v\not\in S} \GL_n(\cO_{F, v})$. Then there are $\T^S$-equivariant isomorphisms in $\mathbf{D}( \cO / \varpi^m[T_n(F_p)] )$:
	\[ \begin{split} \pi^{\ord}(K^p, \lambda, m) & \cong \ord R \Gamma(N_n(\cO_{F, p}), R \Gamma_{K^p, \text{sm}} R \Gamma(\mathfrak{X}_G, \cO(w_0^G \lambda) / \varpi^m)) \\
	& \cong \pi^{\ord}(K^p, m) \otimes_{\cO}  \cO(w_0^G \lambda). \end{split} \]
\end{prop}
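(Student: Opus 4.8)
The plan is to deduce the statement from the $B_n(F_p)^+$-equivariant short exact sequence $0 \to \mathcal{K}_\lambda \to \cV_\lambda \to \cO(w_0^G\lambda) \to 0$ recorded just above, by showing that the functor $\ord\, R\Gamma(N_n(\cO_{F,p}), -)$ kills the subobject $\mathcal{K}_\lambda$. The first step is to separate the dependence on $\lambda$ from the completed cohomology: as a complex of smooth $\cO/\varpi^m[\Delta_p]$-modules one has a $\T^S$-equivariant identification $\pi(K^p,\lambda,m) \cong \pi(K^p,m) \otimes_{\cO/\varpi^m} (\cV_\lambda/\varpi^m)$, where $\Delta_p$ acts diagonally (on $\pi(K^p,m)$ through its $G(F^+_p)$-action and on $\cV_\lambda$ through the twisted action $\cdot_p$ of \S\ref{sec:some_useful_hecke_operators}); this is the standard comparison of Emerton \cite{emordtwo}, reflecting the fact that for $K_p$ sufficiently deep the local system $\cV_\lambda/\varpi^m$ on $X_{K^pK_p}$ is trivial. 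In particular $\pi^{\ord}(K^p,\lambda,m) = \ord\, R\Gamma(N_n(\cO_{F,p}),\, \pi(K^p,m)\otimes_\cO \cV_\lambda/\varpi^m)$, and likewise $R\Gamma_{K^p,\mathrm{sm}}R\Gamma(\mathfrak{X}_G,\cO(w_0^G\lambda)/\varpi^m) \cong \pi(K^p,m)\otimes_\cO \cO(w_0^G\lambda)/\varpi^m$.

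Next I would reduce the short exact sequence modulo $\varpi^m$ (it remains exact, as $\cO(w_0^G\lambda)$ is $\cO$-free), tensor it over $\cO/\varpi^m$ with $\pi(K^p,m)$ (exactness preserved, since $\cO(w_0^G\lambda)$ and $\mathcal{K}_\lambda$ are $\cO$-free), and apply $\ord\, R\Gamma(N_n(\cO_{F,p}),-)$, which is exact on the relevant categories by Lemma~\ref{lem:N_n-invariants_are_T_n_acyclic}. This yields a distinguished triangle relating $\pi^{\ord}(K^p,\lambda,m)$, the term $\ord\, R\Gamma(N_n(\cO_{F,p}), \pi(K^p,m)\otimes_\cO \mathcal{K}_\lambda/\varpi^m)$, and $\ord\, R\Gamma(N_n(\cO_{F,p}), \pi(K^p,m)\otimes_\cO \cO(w_0^G\lambda)/\varpi^m)$. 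The last term is easy to identify: $N_n(\cO_{F,p})$ acts trivially on $\cO(w_0^G\lambda)$ and the $\diag(\varpi^\bullet)$-part of the torus action on it is trivial, so by the projection formula the term is $\bigl(\ord\, R\Gamma(N_n(\cO_{F,p}), \pi(K^p,m))\bigr)\otimes_\cO \cO(w_0^G\lambda)/\varpi^m = \pi^{\ord}(K^p,m)\otimes_\cO \cO(w_0^G\lambda)$, using that $\ord$ localizes at $u_p$ and $u_p$ acts invertibly after $\ord$, so the localization commutes with $-\otimes_\cO\cO(w_0^G\lambda)$.

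The heart of the argument is to show $\ord\, R\Gamma(N_n(\cO_{F,p}), \pi(K^p,m)\otimes_\cO \mathcal{K}_\lambda/\varpi^m) = 0$. The module $\mathcal{K}_\lambda$ carries a finite filtration by $B_n(F_p)^+$-submodules whose graded pieces are $\cO$-free of rank one, of $T_n$-weight $\mu$ with $\mu$ a weight of $\cV_\lambda$ other than the lowest weight $w_0^G\lambda$ (so each $\mu_\tau - (w_0^G\lambda)_\tau$ is a nonnegative combination of simple roots, not all zero). Since $u_p = (p^{n-1},\dots,p,1)$ pairs strictly positively with every positive root, the twisted action $\cdot_p$ of $u_p$ on such a graded piece is multiplication by $p^{N}$ with $N \ge 1$; as positive powers of $p$ are nilpotent in $\cO/\varpi^m$, $u_p$ is nilpotent on $\mathcal{K}_\lambda/\varpi^m$. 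Because $u_p$ acts invertibly on $\pi(K^p,m)$, the diagonal action makes $u_p$ nilpotent on $\pi(K^p,m)\otimes_\cO \mathcal{K}_\lambda/\varpi^m$; and by the coset formula~(\ref{eqn:torus_action_on_N_invariants}) for the $T_n(F_p)^+$-action, a power of $u_p$ acting on $R\Gamma(N_n(\cO_{F,p}),-)$ factors through the same power of $u_p$ acting on the coefficient complex, so $u_p$ acts nilpotently on $R\Gamma(N_n(\cO_{F,p}), \pi(K^p,m)\otimes_\cO \mathcal{K}_\lambda/\varpi^m)$ and its $\ord$ (which inverts $u_p$) vanishes. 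Feeding this into the distinguished triangle gives the desired isomorphisms, and all maps are $\T^S$-equivariant since the prime-to-$p$ Hecke action passes entirely through the $\pi(K^p,m)$-factor.

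I expect the main obstacle to be exactly this vanishing step: turning nilpotence of $u_p$ on the coefficient complex into nilpotence of $u_p$ on $N_n(\cO_{F,p})$-cohomology requires careful bookkeeping of the three intertwined structures involved (the smooth $\Delta_p$-module structure, its restriction to $B_n(F_p)^+$, and the underlying local system), together with Emerton's description of the torus action on $N$-invariants; this is where the analysis of \cite{emordone,emordtwo} enters, and where one must be precise about the non-canonical filtration of $\mathcal{K}_\lambda$ and the comparison isomorphism of the first paragraph.
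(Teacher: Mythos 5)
Your proof is correct and follows essentially the same route as the paper: both hinge on the $B_n(F_p)^+$-equivariant sequence $0 \to \mathcal{K}_\lambda \to \cV_\lambda \to \cO(w_0^G\lambda) \to 0$, the observation that $u_p^N$ annihilates $\mathcal{K}_\lambda/\varpi^m$ (so the localization $\ord$ kills that term), and the triviality of the $N_n(\cO_{F,p})$-action on $\cO(w_0^G\lambda)$ for the second isomorphism. Your explicit untwisting $\pi(K^p,\lambda,m)\cong\pi(K^p,m)\otimes_{\cO}\cV_\lambda/\varpi^m$ just spells out what the paper leaves implicit in passing to $\mathbf{D}(\cO/\varpi^m[B_n(F_p)^+])$, and the remark about $u_p$ acting invertibly on $\pi(K^p,m)$ is unnecessary but harmless.
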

\begin{proof}
	By definition, we have
	\[ \pi^{\ord}(K^p, \lambda, m) = \ord R \Gamma(N_n(\cO_{F, p}), R \Gamma_{K^p, \text{sm}} R \Gamma(\mathfrak{X}_G, \cV_\lambda / \varpi^m)).  \]
	This depends only on the image of $R \Gamma_{K^p, \text{sm}} R \Gamma(\mathfrak{X}_G, \cV_\lambda / \varpi^m)$ in the category $\mathbf{D}(\cO / \varpi^m[ B_n(F_p)^+ ]$. In this category, the $B_n(F_p)^+$-equivariant morphism $\cV_\lambda \to  \cO(w_0^G \lambda)$ induces a morphism
	\[ \pi^{\ord}(K^p, \lambda, m) \to \ord R \Gamma(N_n(\cO_{F, p}), R \Gamma_{K^p, \text{sm}} R \Gamma(\mathfrak{X}_G,  \cO(w_0^G \lambda) / \varpi^m)). \]
	To show that this is an isomorphism, we just need to check that
	\[ \ord R \Gamma(N_n(\cO_{F, p}), R \Gamma_{K^p, \text{sm}} R \Gamma(\mathfrak{X}_G, \mathcal{K}_\lambda / \varpi^m)) = 0, \]
	where we recall that $\mathcal{K}_\lambda = \ker( \cV_\lambda \to  \cO(w_0^G \lambda))$. This follows from the observation that for sufficiently large $N \geq 1$, we have $u_p^N \mathcal{K}_\lambda / \varpi^m = 0$ (cf.~the proof of \cite[Proposition 2.22]{ger}). The existence of the second isomorphism follows from the fact that $N_n(\cO_{F, p})$ acts trivially on $ \cO(w_0^G \lambda)$.
\end{proof}
\begin{cor}[Independence of weight]\label{cor_independence_of_weight}
	Let $K \subset \GL_n(\A_F^\infty)$ be a good subgroup with $K_v = \Iw_v$ for each $v | p$ and $K^S = \prod_{v \not\in S} \GL_n(\cO_{F_v})$. Let $c \geq b \geq 0$ be integers with $c \geq 1$. Then for any $\lambda, \lambda' \in (\Z^n_+)^{\Hom(F, E)}$ such that $ \cO(w_0^G \lambda) / \varpi^m \cong  \cO(w_0^G \lambda') / \varpi^m$ as $\cO / \varpi^m[T_n(\cO_{F_p})(b)]$-modules, there is an $\T^{S, \ord}$-equivariant isomorphism 
	\begin{multline*} R \Gamma_{K(0,c)/K(b,c)}(X_{K(b, c)}, \cV_{\lambda} / \varpi^m)^{\ord} \\\cong  R \Gamma_{K(0,c)/K(b,c)}(X_{K(b, c)}, \cV_{\lambda'} / \varpi^m)^{\ord}\otimes_{\cO} \cO(w_0^G \lambda)\otimes_{\cO}\cO((w_0^G \lambda')^{-1})\end{multline*}
	in $\mathbf{D}(\cO / \varpi^m[ T_n(F_p)_b])$. 
\end{cor}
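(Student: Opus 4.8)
The plan is to deduce this directly from Proposition \ref{prop:comparison_of_completed_and_classical_ord_coh} and Proposition \ref{prop:independence_of_weight}; the essential point is that the weight $\lambda$ enters the ordinary part of completed cohomology only through the rank-one $\cO/\varpi^m[T_n(F_p)]$-module $\cO(w_0^G\lambda)/\varpi^m$, so a hypothesis relating these modules for $\lambda$ and $\lambda'$ immediately relates the associated complexes.

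First I would recall, from Proposition \ref{prop:comparison_of_completed_and_classical_ord_coh} applied to both $\lambda$ and $\lambda'$, the $\T^{S, \ord}$-equivariant isomorphisms
\[ R \Gamma(T_n(\cO_{F, p})(b), \pi^{\ord}(K^p, \lambda, m)) \cong R \Gamma(X_{K(b, c)}, \cV_\lambda / \varpi^m)^{\ord}, \qquad R \Gamma(T_n(\cO_{F, p})(b), \pi^{\ord}(K^p, \lambda', m)) \cong R \Gamma(X_{K(b, c)}, \cV_{\lambda'} / \varpi^m)^{\ord} \]
in $\mathbf{D}(\cO / \varpi^m[K(0, c)/K(b, c)])$. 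Here $\pi^{\ord}(K^p, \lambda, m) \in \mathbf{D}_{\text{sm}}(\cO / \varpi^m[T_n(F_p)])$, and the $\cO/\varpi^m[T_n(F_p)_b]$-module structure on the right-hand complexes is induced from the $\T^{S, \ord}$-module structure, with $T_n(\cO_{F,p})/T_n(\cO_{F,p})(b)$ acting through diamond operators and the image in $T_n(F_p)_b$ of $\diag(\varpi_v, \dots, \varpi_v, 1, \dots, 1)$ (with $i$ entries $\varpi_v$) acting through $U_{v, i}$. Consequently it suffices to produce a $\T^S$- and $T_n(F_p)$-equivariant isomorphism $\pi^{\ord}(K^p, \lambda, m) \cong \pi^{\ord}(K^p, \lambda', m)$ in $\mathbf{D}(\cO/\varpi^m[T_n(F_p)])$ which is moreover compatible with the $U_{v, i}$, and then apply $R \Gamma(T_n(\cO_{F,p})(b), -)$.

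For this I would invoke Proposition \ref{prop:independence_of_weight}, which gives $\T^S$-equivariant isomorphisms $\pi^{\ord}(K^p, \lambda, m) \cong \pi^{\ord}(K^p, m) \otimes_\cO \cO(w_0^G \lambda)$ and $\pi^{\ord}(K^p, \lambda', m) \cong \pi^{\ord}(K^p, m) \otimes_\cO \cO(w_0^G \lambda')$ in $\mathbf{D}(\cO/\varpi^m[T_n(F_p)])$. Since $\pi^{\ord}(K^p, m)$ is a complex of $\cO/\varpi^m$-modules, these tensor products may be rewritten over $\cO/\varpi^m$ as $\pi^{\ord}(K^p, m) \otimes_{\cO/\varpi^m} (\cO(w_0^G \lambda)/\varpi^m)$ and likewise for $\lambda'$. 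The hypothesis provides an isomorphism $\cO(w_0^G \lambda)/\varpi^m \cong \cO(w_0^G \lambda')/\varpi^m$ of $\cO/\varpi^m[T_n(F_p)]$-modules; tensoring it with $\pi^{\ord}(K^p, m)$ and chaining with the two displayed isomorphisms yields the desired $\T^S$- and $T_n(F_p)$-equivariant isomorphism $\pi^{\ord}(K^p, \lambda, m) \cong \pi^{\ord}(K^p, \lambda', m)$.

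Applying $R \Gamma(T_n(\cO_{F,p})(b), -)$ and the isomorphisms of the first paragraph then gives the claimed isomorphism of complexes. The only point requiring care is the $\T^{S, \ord}$-equivariance: the $\T^S$-part is immediate; the $\cO\llbracket T_n(\cO_{F, p}) \rrbracket$-part is respected because it acts through the $T_n(\cO_{F,p})$-action and our isomorphism is $\cO/\varpi^m[T_n(F_p)]$-linear; and the operators $U_{v, i}$ act through $\diag(\varpi_v, \dots, \varpi_v, 1, \dots, 1) \in T_n(F_p)$, which acts trivially on both $\cO(w_0^G \lambda)/\varpi^m$ and $\cO(w_0^G \lambda')/\varpi^m$ and is therefore preserved. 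This last bit of bookkeeping is the main — and very mild — obstacle; there is no substantive difficulty, the corollary being a formal consequence of Propositions \ref{prop:comparison_of_completed_and_classical_ord_coh} and \ref{prop:independence_of_weight}.
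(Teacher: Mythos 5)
Your argument is correct and is exactly the route the paper intends: the corollary is stated without proof precisely because it follows formally from Proposition \ref{prop:comparison_of_completed_and_classical_ord_coh} and Proposition \ref{prop:independence_of_weight}, which is what you carry out, including the correct observation that the $U_{v,i}$ act through elements $\diag(\varpi_v,\dots,\varpi_v,1,\dots,1)$ that act trivially on $\cO(w_0^G\lambda)/\varpi^m$. Nothing further is needed.
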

\begin{proof}
	Combine Propositions \ref{prop:comparison_of_completed_and_classical_ord_coh} and \ref{prop:independence_of_weight}.
\end{proof}

\subsubsection{Results for the group $\widetilde{G}$}

We recall that by assumption each $p$-adic place of $F^+$ splits in $F$, and that we have fixed for each place $\overline{v} \in \overline{S}_p$ of $F^+$ a lift $\widetilde{v} \in \widetilde{S}_p$ to a place of $F$. These choices determine an isomorphism 
\[ \prod_{\overline{v} \in \overline{S}_p} \iota_{\widetilde{v}} : \widetilde{G}(F^+_p) \cong \prod_{\overline{v} \in \overline{S}_p} \GL_{2n}(F_{\widetilde{v}}). \]
 We have also fixed a maximal torus and Borel subgroup $T \subset B \subset \widetilde{G}$ which correspond under this isomorphism to $T_{2n} \subset B_{2n} \subset \GL_{2n}$. The theory of \S \ref{sec_ordinary_part_of_smooth_rep} can thus be easily generalized to study the completed cohomology of $\widetilde{G}$. Since we will need to do this only in passing on our way to analyzing the complexes $\pi^{\ord}(K^p, \lambda, m)$, we just give some brief indications. We will use some of the Hecke operators and open compact subgroups defined in \S \ref{sec:some_useful_hecke_operators}. We define
 \[ \widetilde{\T}^{S, \ord} = \frac{\widetilde{\T}^S \otimes_{\cO} \cO \llbracket T(\cO_{F^+, p}) \rrbracket [ \{ \widetilde{U}_{v, 1}, \dots, \widetilde{U}_{v, 2n}, \widetilde{U}_{v, 2n}^{-1} \}_{v \in S_p} ] }{ ( \{ \widetilde{U}_{v^c, i} - \widetilde{U}_{v, 2n - i} \widetilde{U}_{v, 2n}^{-1} \}_{\substack{v \in S_p \\ i = 1, \dots, 2n}} )}. \]
 We define $\widetilde{U}_v = \widetilde{U}_{v, 1} \widetilde{U}_{v, 2} \cdots \widetilde{U}_{v, n-1}$ and $\widetilde{U}_p = \prod_{v \in S_p} \widetilde{U}_v \in \widetilde{\T}^{S, \ord}$. If $\widetilde{K} \subset \widetilde{G}(\A_{F^+}^\infty)$ is a good subgroup with $\widetilde{K}_{\overline{v}} = \widetilde{\Iw}_{\overline{v}}$ for each $\overline{v} \in \overline{S}_p$, and $c \geq b \geq 0$ are integers with $c \geq 1$, then we define $\widetilde{K}(b, c)$ to be the good subgroup with $\widetilde{K}(b, c)_{\overline{v}} = \widetilde{K}_{\overline{v}}$ if $\overline{v} \not\in \overline{S}_p$ and $\widetilde{K}(b, c)_{\overline{v}} = \widetilde{\Iw}_{\overline{v}}(b, c)$ otherwise. If $\widetilde{\lambda} \in (\Z^{2n}_+)^{\Hom(F^+, E)}$, then there is a well-defined direct summand $R \Gamma_{\widetilde{K}(0, c)/\widetilde{K}(b, c)}(\widetilde{X}_{\widetilde{K}(b, c)}, \cV_{\widetilde{\lambda}})^{\ord}$ of $R \Gamma_{\widetilde{K}(0, c)/\widetilde{K}(b, c)}(\widetilde{X}_{\widetilde{K}(b, c)}, \cV_{\widetilde{\lambda}})$ on which $\widetilde{U}_p$ acts invertibly, and we define
 \[ \widetilde{\T}(\widetilde{K}(b, c), \widetilde{\lambda})^{\ord} = \widetilde{\T}^{S, \ord}( R \Gamma_{\widetilde{K}(0, c)/\widetilde{K}(b, c)}(\widetilde{X}_{\widetilde{K}(b, c)}, \cV_{\widetilde{\lambda}})^{\ord} ) \]
 (i.e.\ the image of the $\widetilde{\T}^{S, \ord}$ in the endomorphism algebra in $\mathbf{D}(\cO[\widetilde{K}(0, c) / \widetilde{K}(b, c) ])$ of this direct summand). 
 
To compare Hida theory for $\widetilde{G}$ and for $\GL_n$, we recall that the Levi subgroup $G$ of $\widetilde{G}$ is identified with $\Res_{\cO_{F}/\cO_{F^+}}\GL_n$, which in particular identifies $T$ with $\Res_{\cO_{F}/\cO_{F^+}}T_n$. We extend the  homomorphism $\cS : \widetilde{\T}^S \to \T^S$ (defined by equation (\ref{eqn:satake})) to a homomorphism $\widetilde{\T}^{S, \ord} \to \T^{S, \ord}$, also denoted $\cS$, using the identification
 \[  \cO\llbracket T(\cO_{F^+, p}) \rrbracket \cong \cO \llbracket T_n(\cO_{F, p}) \rrbracket , \]
 and by sending each operator $\widetilde{U}_{v, i}$ to the operator $U_{v^c, n-i} U_{v^c, n}^{-1}$ (if $1 \leq i \leq n$) and $U_{v^c, n}^{-1} U_{v, i-n}$ (if $n+1 \leq i \leq 2n$). Note that these respective Hecke operators are double coset operators for elements of $T(F^+_p)$ and $T_n(F_p)$ which match under our identification $T(F^+_p) = T_n(F_p)$.
 
 We write $T(F^+_p)^+ \subset T(F^+_p)$ for the submonoid of elements which are contracting on $N(\cO_{F^+, p})$. Under our identification $T(F^+_p) = T_n(F_p)$, we have $T(F^+_p)^+ \subset T_n(F_p)^+$ (and the inclusion is strict provided $n \geq 2$). Let $\widetilde{\Iw}_p(b, c) = \prod_{\overline{v} \in \overline{S}_p} \widetilde{\Iw}_{\overline{v}}(b, c)$. We recall (\S \ref{sec:some_useful_hecke_operators}) that we have defined $\widetilde{\Delta}_p = \widetilde{\Iw}_p(b, c) T(F^+_p)^+ \widetilde{\Iw}_p(b, c)$, an open submonoid of $\widetilde{G}(F^+_p)$, and that we have defined an action $\cdot_p$ of this monoid on $\cV_{\widetilde{\lambda}}$. If $b \geq 0$ is an integer then we define $T(\cO_{F^+, p})(b) = T_n(\cO_{F, p})(b)$ and write $B(\cO_{F^+, p})(b)$ for the pre-image in $B(\cO_{F^+, p})$ of $T(\cO_{F^+, p})(b)$ under the natural projection to $T$. We define $B(F_p^+)^+ = N(\cO_{F^+, p}) \cdot T(F^+_p)^+$.
 
Fix $m \geq 1$. If $\widetilde{K} \subset \widetilde{G}(\A_{F^+}^\infty)$ is a good subgroup with $\widetilde{K}_{\overline{v}} = \widetilde{\Iw}_{\overline{v}}$ for each $\overline{v} \in \overline{S}_p$ and $\widetilde{\lambda} \in (\Z^{2n}_+)^{\Hom(F^+, E)}$, then we define
\numequation \widetilde{\pi}(\widetilde{K}^p, \widetilde{\lambda}, m) = R \Gamma_{\widetilde{K}^p, \text{sm}} R \Gamma(\mathfrak{X}_{\widetilde{G}}, \cV_{\widetilde{\lambda}} / \varpi^m) \in \mathbf{D}_{\text{sm}}(\cO / \varpi^m[\widetilde{\Delta}_p]). 
\end{equation}
If $\widetilde{K}^S = \widetilde{G}(\widehat{\cO}^S_{F^+})$, then this complex comes equipped with a homomorphism
\numequation\label{eqn:tildeG_completed_hecke_action} \widetilde{\T}^S \to \End_{\mathbf{D}_{\text{sm}}(\cO / \varpi^m[\widetilde{\Delta}_p])}(\widetilde{\pi}(\widetilde{K}^p, \lambda, m)). 
\end{equation}
We define $\widetilde{\pi}(\widetilde{K}^p, m) = R \Gamma_{\widetilde{K}^p, \text{sm}} R \Gamma(\mathfrak{X}_{\widetilde{G}}, \cO / \varpi^m) \in \mathbf{D}_{\text{sm}}(\cO / \varpi^m[\widetilde{G}(F^+_p)])$; this complex comes equipped with a homomorphism
\numequation  \widetilde{\T}^S \to \End_{\mathbf{D}_{\text{sm}}(\cO / \varpi^m[\widetilde{G}(F^+_p)])}(\widetilde{\pi}(\widetilde{K}^p, m))
\end{equation}
that recovers (\ref{eqn:tildeG_completed_hecke_action}) after applying the forgetful functor induced by the inclusion $\widetilde{\Delta}_p \subset \widetilde{G}(F^+_p)$. We also need the completed boundary cohomology. We thus define
\numequation \widetilde{\pi}_\partial(\widetilde{K}^p, \widetilde{\lambda}, m) = R \Gamma_{\widetilde{K}^p, \text{sm}} R \Gamma(\partial \mathfrak{X}_{\widetilde{G}}, \cV_{\widetilde{\lambda}} / \varpi^m) \in \mathbf{D}_{\text{sm}}(\cO / \varpi^m[\widetilde{\Delta}_p]). 
\end{equation}
This complex comes equipped with a homomorphism
\numequation\label{eqn:ptildeG_completed_hecke_action} \widetilde{\T}^S \to \End_{\mathbf{D}_{\text{sm}}(\cO / \varpi^m[\widetilde{\Delta}_p])}(\widetilde{\pi}_\partial(\widetilde{K}^p, \widetilde{\lambda}, m)). 
\end{equation}
We define $\widetilde{\pi}_{\partial}(\widetilde{K}^p, m) = R \Gamma_{\widetilde{K}^p, \text{sm}} R \Gamma(\partial \mathfrak{X}_{\widetilde{G}}, \cO / \varpi^m) \in \mathbf{D}_{\text{sm}}(\cO / \varpi^m[\widetilde{G}(F^+_p)])$; this complex comes equipped with a homomorphism
\numequation  \widetilde{\T}^S \to \End_{\mathbf{D}_{\text{sm}}(\cO / \varpi^m[\widetilde{G}(F^+_p)])}(\widetilde{\pi}_\partial(\widetilde{K}^p, m)).
\end{equation}
If $c \geq b \geq 0$ are integers with $c \geq 1$, then there are canonical $\widetilde{\T}^{S, \ord}$-equivariant isomorphisms
\numequation R \Gamma(\widetilde{\Iw}_p(b, c), \widetilde{\pi}(\widetilde{K}^p, \widetilde{\lambda}, m)) \cong R \Gamma(\widetilde{X}_{\widetilde{K}(b, c)}, \cV_{\widetilde{\lambda}} / \varpi^m) 
\end{equation}
and
\numequation R \Gamma(\widetilde{\Iw}_p(b, c), \widetilde{\pi}_\partial(\widetilde{K}^p, \widetilde{\lambda}, m)) \cong R \Gamma(\partial\widetilde{X}_{\widetilde{K}(b, c)}, \cV_{\widetilde{\lambda}} / \varpi^m) 
\end{equation}
in $\mathbf{D}(\cO / \varpi^m)$. We define the ordinary part of completed and completed boundary cohomology:
\[ \widetilde{\pi}^{\ord}(\widetilde{K}^p, \widetilde{\lambda}, m) = \ord R \Gamma(N(\cO_{F^+, p}), \widetilde{\pi}(\widetilde{K}^p, \widetilde{\lambda}, m)) \in \mathbf{D}_{\text{sm}}(\cO / \varpi^m[T(F^+_p)])  \]
and
\[  \widetilde{\pi}_\partial^{\ord}(\widetilde{K}^p, \widetilde{\lambda}, m) = \ord R \Gamma(N(\cO_{F^+, p}), \widetilde{\pi}_\partial(\widetilde{K}^p, \widetilde{\lambda}, m)) \in \mathbf{D}_{\text{sm}}(\cO / \varpi^m[T(F^+_p)]).  \]
If $\widetilde{\lambda} = 0$, then we omit it from the notation. We have the following result, which contains the analogues of some of the results in \S \ref{sec:ordinary_part_of_completed_coh} for the group $\widetilde{G}$. The proofs are entirely similar, so are omitted.
\begin{prop}\label{prop:independence_of_weight_tildeG}
	Let $\widetilde{K} \subset \widetilde{G}(\A_{F^+}^\infty)$ be a good subgroup with $\widetilde{K}_{\overline{v}} = \widetilde{\Iw}_{\overline{v}}$ for each $\overline{S}_p$ and $\widetilde{K}^S = \widetilde{G}(\widehat{\cO}^S_{F^+})$. Let $c \geq b \geq 0$ be integers with $c \geq 1$. Then for any $\widetilde{\lambda} \in (\Z^{2n}_+)^{\Hom(F^+, E)}$, there are $\widetilde{\T}^{S, \ord}$-equivariant isomorphisms
	\[ \begin{split} R \Gamma(T(\cO_{F^+, p})(b), \widetilde{\pi}^{\ord}(\widetilde{K}^p, \widetilde{\lambda}, m)) & \cong R \Gamma(T(\cO_{F^+, p})(b),  \cO(w_0^{\widetilde{G}} \widetilde{\lambda}) \otimes_{\cO} \widetilde{\pi}^{\ord}(\widetilde{K}^p, m)) \\ & \cong R \Gamma_{\widetilde{K}(0, c)/\widetilde{K}(b, c)}(\widetilde{X}_{\widetilde{K}(b, c)}, \cV_{\widetilde{\lambda}} / \varpi^m)^{\ord} \end{split} \]
	and
	\[ \begin{split} R \Gamma(T(\cO_{F^+, p})(b), \widetilde{\pi}_\partial^{\ord}(\widetilde{K}^p, \widetilde{\lambda}, m)) & \cong R \Gamma(T(\cO_{F^+, p})(b), \cO(w_0^{\widetilde{G}} \widetilde{\lambda}) \otimes_{\cO} \widetilde{\pi}_\partial^{\ord}(\widetilde{K}^p, m)) \\ & \cong R \Gamma_{\widetilde{K}(0, c)/\widetilde{K}(b, c)}(\partial \widetilde{X}_{\widetilde{K}(b, c)}, \cV_{\widetilde{\lambda}} / \varpi^m)^{\ord} \end{split} \]
	in $\mathbf{D}_{\text{sm}}(\cO / \varpi^m[\widetilde{K}(0, c)/\widetilde{K}(b, c)])$.
\end{prop}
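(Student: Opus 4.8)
The plan is to establish Proposition~\ref{prop:independence_of_weight_tildeG} by carrying the arguments of \S\ref{sec:hida_theory} over from $G$ to $\widetilde{G}$ essentially verbatim; indeed the statement is precisely the $\widetilde{G}$-analogue of Proposition~\ref{prop:comparison_of_completed_and_classical_ord_coh} composed with the $\widetilde{G}$-analogue of Proposition~\ref{prop:independence_of_weight} and Corollary~\ref{cor_independence_of_weight}, both for the interior and boundary cohomology. Under the identification $\widetilde{G}(F^+_p) \cong \prod_{\overline{v} \in \overline{S}_p} \GL_{2n}(F_{\widetilde{v}})$ one replaces throughout the pairs $(\GL_n, N_n, T_n, B_n, \Iw_p(b,c), \Delta_p, u_p)$ by $(\GL_{2n}, N, T, B, \widetilde{\Iw}_p(b,c), \widetilde{\Delta}_p, \widetilde{u}_p)$, where $\widetilde{u}_p \in T(F^+_p)^+$ is the torus element realizing the operator $\widetilde{U}_p$ on the $\ord$-summand (so that $\widetilde{u}_p = \prod_{\overline{v}} \iota_{\widetilde{v}}^{-1}\diag(\varpi_{\widetilde{v}}^{2n-1},\dots,\varpi_{\widetilde{v}},1)$, a regular dominant, hence strictly $N$-contracting, element). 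Each of Lemma~\ref{lem:injectives_for_smooth_monoid_actions}, Lemma~\ref{lem_ord_functors_commute}, Lemma~\ref{lem:N_n-invariants_are_T_n_acyclic}, Lemma~\ref{lem:hida_lemma}, and Lemma~\ref{lem:compatibility_of_ordinary_parts} was proved using only formal inputs --- exact adjoints, the composition formula for derived functors \cite[Corollary 10.8.3]{weibel}, the Iwahori decomposition of $\widetilde{\Iw}_p(b,c)$ with respect to $B$, and the invariance of $\widetilde{\Iw}_p(b,c')\widetilde{u}_p\widetilde{\Iw}_p(b,c')$ under left multiplication by $\widetilde{\Iw}_p(b,c'-1)$ (the analogue of \cite[Lemma~2.5.2]{ger}) --- none of which involve $n$ in any essential way, so all have immediate $\widetilde{G}$-analogues; note that $T(F^+_p)^+ \subsetneq T_n(F_p)^+$ causes no trouble since $\widetilde{\Delta}_p$ is built from $T(F^+_p)^+$ consistently.

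Granting these analogues, the chain of $\widetilde{\T}^{S,\ord}$-equivariant isomorphisms
\[ R\Gamma(T(\cO_{F^+,p})(b), \widetilde{\pi}^{\ord}(\widetilde{K}^p,\widetilde{\lambda},m)) \cong \ord_b R\Gamma(\widetilde{\Iw}_p(b,c), \widetilde{\pi}(\widetilde{K}^p,\widetilde{\lambda},m)) \cong \ord_b R\Gamma(\widetilde{X}_{\widetilde{K}(b,c)}, \cV_{\widetilde{\lambda}}/\varpi^m), \]
together with the reduction (exactly as in the proof of Proposition~\ref{prop:comparison_of_completed_and_classical_ord_coh}, to a finite $\cO/\varpi^m[\widetilde{U}_p]$-module $M$ and the comparison of its maximal summand $M^{\ord}$ on which $\widetilde{U}_p$ acts invertibly with $M\otimes_{\cO/\varpi^m[\widetilde{U}_p]}\cO/\varpi^m[\widetilde{U}_p,\widetilde{U}_p^{-1}]$, using \cite[Lemma~3.2.1]{emordtwo}) identifying $\ord_b R\Gamma(\widetilde{\Iw}_p(b,c),-)$ with $R\Gamma(-,-)^{\ord}$, yields the second displayed isomorphism of the proposition; the identical argument with $\partial\mathfrak{X}_{\widetilde{G}}$ in place of $\mathfrak{X}_{\widetilde{G}}$ gives the boundary version. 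For the first displayed isomorphism (independence of weight) one uses the $B(F^+_p)^+$-equivariant projection $\cV_{\widetilde{\lambda}} \to \cO(w_0^{\widetilde{G}}\widetilde{\lambda})$ onto the lowest weight line (equivariant for the twisted action $\cdot_p$ on the source), whose kernel $\mathcal{K}_{\widetilde{\lambda}}$ is a finite free $\cO$-module with $\cO[B(F^+_p)^+]$-action satisfying $\widetilde{u}_p^N \cdot_p (\mathcal{K}_{\widetilde{\lambda}}/\varpi^m) = 0$ for $N$ sufficiently large, precisely because the cocharacter of $\widetilde{u}_p$ is regular dominant. Applying $\ord\, R\Gamma(N(\cO_{F^+,p}), R\Gamma_{\widetilde{K}^p,\mathrm{sm}} R\Gamma(\mathfrak{X}_{\widetilde{G}}, -/\varpi^m))$ to $0 \to \mathcal{K}_{\widetilde{\lambda}} \to \cV_{\widetilde{\lambda}} \to \cO(w_0^{\widetilde{G}}\widetilde{\lambda}) \to 0$ and using that $\ord$ inverts $\widetilde{u}_p$ kills the $\mathcal{K}_{\widetilde{\lambda}}$-term, and since $N(\cO_{F^+,p})$ acts trivially on $\cO(w_0^{\widetilde{G}}\widetilde{\lambda})$ the remaining term is $\cO(w_0^{\widetilde{G}}\widetilde{\lambda}) \otimes_\cO \widetilde{\pi}^{\ord}(\widetilde{K}^p,m)$; again the boundary case is identical.

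The main obstacle is bookkeeping rather than conceptual: one must confirm that every preliminary lemma of \S\ref{sec_ordinary_part_of_smooth_rep} was genuinely formal (so that the substitution $n \rightsquigarrow 2n$ and the replacement of the unipotent/torus/monoid by their $\widetilde{G}$-counterparts is harmless), and one must verify the single arithmetic input --- that $\widetilde{u}_p$ lies in $T(F^+_p)^+$ and is strictly contracting on $N(\cO_{F^+,p})$ --- which is what makes the lowest-weight-line argument run. Once these are checked there is no new idea beyond those already used for $G$, which is why the paper states that the proofs are entirely similar and omits them.
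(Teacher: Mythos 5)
Your proposal is correct and is exactly the argument the paper has in mind: the paper omits the proof with the remark that it is entirely similar to the $G$-case results (Propositions \ref{prop:comparison_of_completed_and_classical_ord_coh}, \ref{prop:independence_of_weight} and Corollary \ref{cor_independence_of_weight}), and you carry out precisely that transfer, correctly isolating the only non-formal input, namely that $\widetilde{u}_p$ (the torus element underlying $\widetilde{U}_p$, i.e. $\prod_{\overline{v}}\iota_{\widetilde{v}}^{-1}\diag(\varpi_{\widetilde{v}}^{2n-1},\dots,\varpi_{\widetilde{v}},1)$) is strictly contracting on $N(\cO_{F^+,p})$, so that it kills $\mathcal{K}_{\widetilde{\lambda}}/\varpi^m$ and makes the lowest-weight-line and Iwahori-level comparisons run verbatim for both $\mathfrak{X}_{\widetilde{G}}$ and $\partial\mathfrak{X}_{\widetilde{G}}$.
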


\subsection{The ordinary part of a parabolic induction}\label{computation of ordinary parts} 

In this subsection, we compute the ordinary part (in the sense defined above) of a parabolic induction from $G$ to $\widetilde{G}$, with the aim of understanding the ordinary part of the cohomology of the boundary of the Borel--Serre compactification of $\widetilde{X}_{\widetilde{K}}$ in terms of the ordinary part of the cohomology of $X_K$. Our calculations here are purely local; the global application will be carried out in \S \ref{sec:ord_direct_summand} below. 

Let $\overline{v}$ be a $p$-adic place of $F^+$. In this section, we write ${}^r W_{\overline{v}} = W(\widetilde{G}_{F^+_{\overline{v}}}, T_{F^+_{\overline{v}}})$, ${}^rW_{P, \overline{v}} = W(G_{F^+_{\overline{v}}}, T_{F^+_{\overline{v}}})$, and ${}^r W^P_{\overline{v}} \subset {}^r W_{\overline{v}}$ for the set of representatives for the quotient ${}^r W_{P, \overline{v}} \backslash {}^r W_{\overline{v}}$ which is associated to the choice of Borel subgroup $B_{F^+_{\overline{v}}}$. We define ${}^r W = \prod_{\overline{v} \in \overline{S}_p} {}^rW_{\overline{v}}$, ${}^rW_P = \prod_{\overline{v} \in \overline{S}_p} {}^rW_{P,\overline{v}}$, and ${}^rW^P = \prod_{\overline{v} \in \overline{S}_p} {}^rW^P_{\overline{v}}$. Thus ${}^r W$ is the relative Weyl group of the group $(\Res_{F^+ / \Q} G)_{\Q_p}$.  Note that in \S \ref{section:lep} we made use of the absolute Weyl group $W$; there is a natural inclusion ${}^r W \subset W$, by which ${}^r W$ acts on e.g.\ the group $X^\ast((\Res_{F^+ / \Q} T)_E)$. We write $l_r(w)$ for the length of an element $w \in {}^r W$ as an element of the relative Weyl group, and $l(w)$ for its length as an element of the absolute Weyl group. Thus $w_0^P$, the longest element of $W^P$ (equivalently, of ${}^r W^P$) has $l_r(w_0^P) = | S_p | n^2$ and $l(w_0^P) = [F^+ : \Q] n^2$. As in \S \ref{section:lep}, we write $\rho \in X^\ast((\Res_{F^+ / \Q} T)_E)$ for the half-sum of the $(\Res_{F^+ / \Q} B)_E$-positive roots. 

We recall (cf. \S \ref{sec:unitary_group_setup}) that $P$ denotes the Siegel parabolic of $\widetilde{G}$, which has unipotent radical $U$, while the Borel subgroup $B$ has unipotent radical $N$. 
We identify $G$ with~$\Res_{F / F^+} \GL_n$; this group has standard Borel~$\Res_{F / F^+}B_n$ with unipotent radical~$\Res_{F / F^+} N_n$. The parabolic induction functor 
\[ \Ind_{P(F_p^+)}^{\widetilde{G}(F_p^+)} : \operatorname{Mod}_{\text{sm}}(\cO / \varpi^m[P(F_p^+)]) \to \operatorname{Mod}_{\text{sm}}(\cO / \varpi^m[\widetilde{G}(F_p^+)]) \]
is exact and preserves injectives (it is right adjoint to the exact restriction functor $\Res^{\widetilde{G}(F_p^+)}_{P(F_p^+)}$). We now define several more functors which are related to parabolic induction. 

We identify ${}^r W$ with the subgroup of permutation matrices of $\widetilde{G}(F^+_p) = \prod_{\widetilde{v} \in \widetilde{S}_p} \GL_{2n}(F_{\widetilde{v}})$. We recall (cf.~\cite[Cor. 5.20]{borel-tits}) that there is a (set-theoretic) decomposition
\[ \widetilde{G}(F_p^+) = \bigsqcup_{w \in  {}^r W^P} P(F_p^+) w B(F_p^+).  \]
 If $w \in {}^r W^P$, then we define $S_w = P(F_p^+) w N(F_p^+)$ and $S_w^\circ = P(F_p^+) w N(\cO_{F^+, p}) \subset S_w$. The closure $\overline{S}_w$ of $S_w$ in $\widetilde{G}(F_p^+)$ can be described in terms of the Bruhat ordering of ${}^rW^P$:
\[ \overline{S}_w=\bigsqcup_{w'\leq w} S_{w'}. \]
Note that if $w' < w$, then $l_r(w') < l_r(w)$. For an integer $i \geq 0$, we define
\[ \widetilde{G}_{\geq i} = \bigsqcup_{\substack{w \in {}^r W^P \\ l_r(w) \geq i}} S_w. \]
It is an open subset of $\widetilde{G}(F_p^+)$ which is invariant under left multiplication by $P(F_p^+)$ and right multiplication by $B(F_p^+)$.

If $i \geq 0$, then we define a functor
\[ I_{\geq i} : \operatorname{Mod}_{\text{sm}}(\cO / \varpi^m[P(F_p^+)]) \to \operatorname{Mod}_{\text{sm}}(\cO/ \varpi^m[B(F_p^+)]) \]
by sending $\pi$ to 
\[ \begin{split} I_{\geq i}(\pi) = \{ f : \widetilde{G}_{\geq i} \to \pi \mid f  \text{ locally constant,}  &\text{ of compact support modulo }P(F_p^+),\\ & \forall p \in P(F_p^+), g \in \widetilde{G}_{\geq i}, f(pg) = p f(g)  \}, \end{split} \]
where $B(F_p^+)$ acts by right translation. If $w \in {}^r W^P$, then we define a functor
\[ I_w : \operatorname{Mod}_{\text{sm}}(\cO / \varpi^m[P(F_p^+)]) \to \operatorname{Mod}_{\text{sm}}(\cO / \varpi^m[B(F_p^+)]) \]
by sending $\pi$ to 
\[ \begin{split} I_{w}(\pi) = \{ f : S_w \to \pi \mid f \text{ locally constant,} & \text{ of compact support modulo }P(F_p^+),\\ & \forall p \in P(F_p^+), g \in S_w, f(pg) = p f(g)   \}, \end{split} \]
where again $B(F_p^+)$ acts by right translation. We define a functor
\[ I_w^\circ : \operatorname{Mod}_{\text{sm}}(\cO / \varpi^m[P(F_p^+)]) \to \operatorname{Mod}_{\text{sm}}(\cO / \varpi^m[B(F^+_p)^+]) \]
by defining $I_w^\circ(\pi) \subset I_w(\pi)$ to be the set of functions with support in $S_w^\circ$.
\begin{prop}
	\begin{enumerate}
		\item $I_{\geq 0} = \Res_{B(F_p^+)}^{\widetilde{G}(F_p^+)} \circ \Ind_{P(F_p^+)}^{\widetilde{G}(F_p^+)}$.
		\item Each functor $I_{\geq i}$, $I_w$ and $I_w^\circ$ is exact. 
		\item For each integer $i \geq 0$ and each $\pi \in \operatorname{Mod}_{\text{sm}}(\cO / \varpi^m[P(F_p^+)])$, there is a functorial exact sequence
		\[0 \to I_{\geq i+1}(\pi) \to I_{\geq i}(\pi) \to \oplus_{\substack{w \in {}^r W^P \\ l_r(w) = i}} I_w(\pi) \to 0. 
		\]
	\end{enumerate}
\end{prop}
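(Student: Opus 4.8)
The plan is to prove the three assertions in turn: the first by unwinding definitions, the second and third together by realizing each of $I_{\geq i}$, $I_w$, $I_w^\circ$ as a compactly supported section functor over a $P(F_p^+)$-stable locally closed piece of $\widetilde{G}(F_p^+)$ and combining the exactness of such functors with the combinatorics of the Bruhat stratification, following Hauseux \cite{hauseux}.

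For (1): by the Bruhat decomposition recalled above, $\widetilde{G}_{\geq 0} = \bigsqcup_{w \in {}^rW^P} S_w = \widetilde{G}(F_p^+)$, so $I_{\geq 0}(\pi)$ is the space of all locally constant $f \colon \widetilde{G}(F_p^+) \to \pi$ of compact support modulo $P(F_p^+)$ with $f(pg) = pf(g)$, on which $B(F_p^+)$ acts by right translation. Since $P(F_p^+) \backslash \widetilde{G}(F_p^+)$ is compact (the $F_p^+$-points of a projective variety), the compact-support condition is vacuous, so this space is precisely $\Ind_{P(F_p^+)}^{\widetilde{G}(F_p^+)} \pi$ (smooth induction from the closed subgroup $P(F_p^+)$, with $\pi$ regarded as a $P(F_p^+)$-representation), and remembering only the $B(F_p^+)$-action gives the identification $I_{\geq 0} = \Res_{B(F_p^+)}^{\widetilde{G}(F_p^+)} \circ \Ind_{P(F_p^+)}^{\widetilde{G}(F_p^+)}$.

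For (3), which contains (2), I would first record the relevant topology. Since ${}^rW$ is a product of symmetric groups, $l_r$ is strictly increasing along the Bruhat order on ${}^rW^P$; together with $\overline{S}_w = \bigsqcup_{w' \leq w} S_{w'}$ this gives, for $w$ with $l_r(w) = i$, that $\overline{S}_w \cap \widetilde{G}_{\geq i} = S_w$. Hence each such $S_w$ is closed in $\widetilde{G}_{\geq i}$; as there are finitely many and they are pairwise disjoint, $Z_i := \bigsqcup_{l_r(w) = i} S_w$ is closed in $\widetilde{G}_{\geq i}$, each $S_w$ is open-and-closed in $Z_i$, and $\widetilde{G}_{\geq i+1} = \widetilde{G}_{\geq i} \setminus Z_i$ is open in $\widetilde{G}_{\geq i}$; all these sets are stable under left multiplication by $P(F_p^+)$. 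For a smooth $\cO / \varpi^m[P(F_p^+)]$-module $\pi$, restricting $f \in I_{\geq i}(\pi)$ to $Z_i$ lands in $\bigoplus_{l_r(w) = i} I_w(\pi)$ (using that $Z_i$ is the topological coproduct of the $S_w$), with kernel exactly the functions supported on the open set $\widetilde{G}_{\geq i+1}$, i.e.\ $I_{\geq i+1}(\pi)$ realized by extension by zero. The only nonformal point is surjectivity of the restriction map: given $f_w \in I_w(\pi)$ for all $w$ with $l_r(w)=i$, one extends the corresponding function on the closed subset $P(F_p^+) \backslash Z_i$ of the totally disconnected, locally compact, Hausdorff space $P(F_p^+) \backslash \widetilde{G}_{\geq i}$ to a locally constant, compactly supported function by covering its (compact) support by finitely many pairwise disjoint compact open sets on which the function is already constant and declaring it zero elsewhere; the result lies in $I_{\geq i}(\pi)$ and restricts to $(f_w)_w$. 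This establishes the exact sequence of (3). Exactness of $I_w$ then follows from $P(F_p^+)\backslash S_w \cong B(F_p^+)/(B(F_p^+)\cap w^{-1}P(F_p^+)w)$, identifying $I_w$ with a smooth compact induction from a closed subgroup of $B(F_p^+)$, which is exact; $I_w^\circ$ is treated identically, using that $S_w^\circ$ is open in $S_w$; and exactness of $I_{\geq i}$ follows by descending induction on $i$ from the exact sequence of (3) (which terminates since $I_{\geq i} = 0$ for $i$ larger than $l_r(w_0^P)$). Alternatively one may phrase all of this uniformly as the exactness of compactly supported sections of equivariant sheaves on totally disconnected locally compact spaces.

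I expect the main obstacle to be precisely the surjectivity in (3) — equivalently, the exactness of the compactly supported section (smooth compact induction) functor over the open stratum $\widetilde{G}_{\geq i+1}$; once the Bruhat combinatorics guarantees the correct open/closed decomposition, the rest is formal. The remaining care is bookkeeping: tracking which of $B(F_p^+)$ or $B(F_p^+)^+$ acts on each function space and checking that the maps in the exact sequence are equivariant for it, exactly as in \cite{hauseux}.
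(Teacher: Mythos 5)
Your argument is correct and is essentially the paper's: the paper disposes of (1) as "the definition of induction" (your observation that compactness of $P(F_p^+)\backslash\widetilde{G}(F_p^+)$ makes the support condition vacuous is exactly the point) and refers (2)--(3) to Hauseux \S 2.1, whose open/closed Bruhat-stratification argument --- extension by zero from the open piece, restriction to the clopen length-$i$ strata, surjectivity via local constancy, smoothness and total disconnectedness, and exactness of the $I_w$, $I_w^\circ$ as compact inductions --- is precisely what you have reproduced. The only step you gloss, extending a $P(F_p^+)$-equivariant function from $Z_i$ rather than a literal function on the quotient, is handled by the standard refinement (shrink the compact open neighborhoods using smoothness of $\pi$ and set $\tilde f(pg_ju)=pf(g_j)$), so nothing is missing.
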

\begin{proof}
	The first part is the definition of induction. For the second part, denote by $I$ any of the functors appearing in the statement. To see the exactness  of $I$, choose a continuous section to the map $\widetilde{G}(F_p^+) \to P(F_p^+)\backslash \widetilde{G}(F_p^+)$ (the existence of such a section is explained in \cite[\S 2.1]{hauseux}). This allows us to functorially identify $I(\pi)$ with the space of locally constant and compactly supported functions from a subset $C \subset P(F_p^+)\backslash \widetilde{G}(F_p^+)$ to $\pi$. The formation of locally constant and compactly supported functions is exact. The third part is proved in the same way as \cite[Proposition 2.1.3]{hauseux}.
\end{proof}
It follows that for any $\pi \in \mathbf{D}_{\text{sm}}(\cO / \varpi^m[P(F_p^+)])$, there is a functorial distinguished triangle
	\numequation\label{eqn:schubert_cells_derived}  I_{\geq i+1}(\pi) \to I_{\geq i}(\pi) \to \oplus_{\substack{w \in {}^rW^P \\ l_r(w) = i}} I_w(\pi) \to  I_{\geq i+1}(\pi)[1]
\end{equation}
in $\mathbf{D}_{\text{sm}}(\cO / \varpi^m[B(F_p^+)])$. 
\begin{lemma}\label{lem:exactness_of_degree_j-ordinary_part}
	Let $\pi \in \mathbf{D}_{\text{sm}}(\cO / \varpi^m[P(F_p^+)])$ be a bounded below complex, and fix an integer $b \geq 0$. Let $\widetilde{\lambda} \in (\Z_+^{2n})^{\Hom(F^+, E)}$. Then for any $i \geq 0$ and any $j \in \Z$, the sequence
	\[ \begin{split} 0 & \to R^j \Gamma(B(\cO_{F, p})(b), \cO(w_0^{\widetilde{G}} \widetilde{\lambda}) \otimes_\cO I_{\geq i+1}(\pi)) \\ & \to R^j \Gamma(B(\cO_{F, p})(b),\cO(w_0^{\widetilde{G}} \widetilde{\lambda}) \otimes_\cO I_{\geq i}(\pi)) \\ & \to R^j \Gamma(B(\cO_{F, p})(b), \oplus_{\substack{w \in {}^rW^P\\ l_r(w) = i}}\cO(w_0^{\widetilde{G}} \widetilde{\lambda}) \otimes_\cO I_w(\pi)) \to 0 \end{split} \]
	in $\operatorname{Mod}( \cO / \varpi^m[T(F^+_p)_b^+])$ associated to (\ref{eqn:schubert_cells_derived}) is exact. 
\end{lemma}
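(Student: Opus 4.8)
The plan is to deduce the exactness of the displayed three-term sequence from the distinguished triangle \eqref{eqn:schubert_cells_derived} together with a vanishing statement: I will show that the connecting map
\[ R^j \Gamma(B(\cO_{F, p})(b), \oplus_{\substack{w \in {}^rW^P\\ l_r(w) = i}}\cO(w_0^{\widetilde{G}} \widetilde{\lambda}) \otimes_\cO I_w(\pi)) \to R^{j+1} \Gamma(B(\cO_{F, p})(b), \cO(w_0^{\widetilde{G}} \widetilde{\lambda}) \otimes_\cO I_{\geq i+1}(\pi)) \]
in the long exact cohomology sequence attached to \eqref{eqn:schubert_cells_derived} vanishes, which is equivalent to exactness of the three-term sequence in the statement. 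In fact, by descending induction on $i$ (starting from $i$ larger than $l_r(w_0^P) = |S_p| n^2$, where $I_{\geq i+1}(\pi) = 0$), it suffices to prove the following: for every $w \in {}^rW^P$, the complex $R \Gamma(B(\cO_{F, p})(b), \cO(w_0^{\widetilde{G}}\widetilde{\lambda}) \otimes_\cO I_w(\pi))$ is concentrated in a single cohomological degree determined by $l_r(w)$ (equivalently, that after an appropriate shift it is supported in degree $0$); granting this for all $w$, the long exact sequence attached to \eqref{eqn:schubert_cells_derived} breaks up into the short exact sequences asserted.

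The key computation is therefore the analysis of $R\Gamma(B(\cO_{F,p})(b), \cO(w_0^{\widetilde G}\widetilde\lambda)\otimes_\cO I_w(\pi))$. Here I would follow Hauseux \cite{hauseux} closely. First, decompose $B(\cO_{F,p})(b)$-cohomology via the factorization $B = N \rtimes T$: there is a Hochschild--Serre (Lyndon) spectral sequence computing $R\Gamma(B(\cO_{F,p})(b),-)$ from $R\Gamma(N(\cO_{F^+,p}),-)$ followed by $R\Gamma(T(\cO_{F^+,p})(b),-)$, and since we are over the artinian ring $\cO/\varpi^m$ and $N(\cO_{F^+,p})$, $T(\cO_{F^+,p})(b)$ are $p$-adic analytic groups, these derived functors are well-behaved. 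Next, use the Bruhat-cell description $S_w = P(F_p^+) w N(F_p^+)$ and the decomposition $I_w(\pi) = I_w^\circ(\pi) \otimes_{\cO/\varpi^m[B(F_p^+)^+]} \cO/\varpi^m[B(F_p^+)]$ (the analogue, for the cell $S_w$, of the passage from $S_w^\circ$ to $S_w$). The point is that on $S_w^\circ$ one can parametrize functions by their restriction along $wN(\cO_{F^+,p})$, identifying $I_w^\circ(\pi)$ as an $N(\cO_{F^+,p})$-module with a smooth induction from $w^{-1}P(F_p^+)w \cap N(\cO_{F^+,p})$ (a product of root subgroups indexed by the roots $\alpha$ with $w^{-1}\alpha$ negative relative to $P$), and its $N(\cO_{F^+,p})$-cohomology is computed by a Koszul-type complex on the complementary root subgroups. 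This is exactly where the length $l_r(w)$ — equivalently the number of relevant root subgroups — enters and forces the cohomology into a single degree. The twist by $\cO(w_0^{\widetilde G}\widetilde\lambda)$ only changes the $T(F_p^+)$-action and does not affect the $N(\cO_{F^+,p})$-cohomology's vanishing pattern, because $N(\cO_{F^+,p})$ acts trivially on $\cO(w_0^{\widetilde G}\widetilde\lambda)$; after taking $N$-cohomology one is left with a complex of $\cO/\varpi^m[T(F_p^+)_b^+]$-modules which, being finite free over $\cO/\varpi^m$ in the relevant range, has $T(\cO_{F^+,p})(b)$-cohomology that does not spread out the degree (here one also uses that $\pi$ is bounded below, so the spectral sequences converge).

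I expect the main obstacle to be the precise bookkeeping in the $N(\cO_{F^+,p})$-cohomology of $I_w(\pi)$: one must carefully identify which root subgroups of $N$ act "through $\pi$" (i.e.\ lie in $w^{-1}P w$) versus which act "freely" along the cell, and then argue that the cohomology along the free directions is a shift by exactly $l_r(w)$ and is free (not merely bounded) in that degree, so that when assembled via \eqref{eqn:schubert_cells_derived} the long exact sequence genuinely splits into short exact sequences in each fixed degree $j$. This is the content of the corresponding results in \cite[\S 2]{hauseux}, adapted from the function-field / mod-$p$ setting of $p$-adic reductive groups to our $\cO/\varpi^m$-coefficient situation with the extra restriction-of-scalars bookkeeping over $F^+/\Q$; the adaptation is routine but notationally heavy, so I would state the key local vanishing as a lemma citing \cite{hauseux} and then derive the proposition formally from the distinguished triangle \eqref{eqn:schubert_cells_derived}, the Hochschild--Serre spectral sequence for $B = N \rtimes T$, and the exactness of $\ord$ and of $\Ind_{P}^{\widetilde G}$.
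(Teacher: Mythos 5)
There is a genuine gap at the heart of your plan. Your strategy rests on the claim that $R\Gamma(B(\cO_{F,p})(b), \cO(w_0^{\widetilde G}\widetilde\lambda)\otimes_\cO I_w(\pi))$ is concentrated in a single cohomological degree determined by $l_r(w)$, and that this forces the connecting maps in the long exact sequence attached to \eqref{eqn:schubert_cells_derived} to vanish. That concentration statement is false in the setting of this lemma. First, $\pi$ is an arbitrary bounded below complex, so its own cohomology can occupy arbitrarily many degrees. But even for $\pi$ a single module (say the trivial $P(F_p^+)$-module $\cO/\varpi^m$), the $N(\cO_{F^+,p})$-cohomology of $I_w^\circ(\pi)$ is, by Lemma \ref{lem:w_induced_ordinary_parts}, the continuous cohomology $H^*(N_w, \cO/\varpi^m)$ of the compact $p$-adic analytic group $N_w = P(F_p^+)\cap wN(\cO_{F^+,p})w^{-1}$, which is spread over all degrees from $0$ up to $\dim N_w$; a further $T(\cO_{F^+,p})(b)$-cohomology only spreads it more. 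The shift into the single degree $n^2[F^+:\Q]-l(w)$ that you are invoking only appears \emph{after} applying the ordinary-part localization (inverting $u_p$, equivalently the action of the central element $z_p$, which kills the sub-top-degree cohomology of $N_{w,U}$ because $z_p$ acts there by positive powers of $p$, nilpotent in $\cO/\varpi^m$); that is the content of Lemma \ref{lem:shifted_ordinary_parts} and Proposition \ref{prop:ordinary_part_of_w_induction}, which come later and are not available in the present lemma, where no localization has been performed. So the vanishing of the connecting maps cannot be obtained by degree bookkeeping, and the Hochschild--Serre/Koszul analysis you outline does not close the argument.

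The actual proof goes a different way: one first observes it suffices to check exactness after the exact forgetful functor to $\cO/\varpi^m$-modules, and then proves that the first map of the long exact sequence is injective in \emph{every} degree $j$, which forces all connecting maps to vanish. The injectivity is seen by decomposing the open set $\widetilde G_{\geq i}$ as a disjoint union $U_1\sqcup U_2$ of opens stable under left multiplication by $P(F_p^+)$ and right multiplication by the compact group $B(\cO_{F^+,p})$, with $U_1\subset \widetilde G_{\geq i+1}$; such a decomposition gives a direct sum splitting $I_{\geq i}(\pi)=I_{U_1}(\pi)\oplus I_{U_2}(\pi)$ of smooth $\cO/\varpi^m[B(\cO_{F^+,p})]$-modules, so $R^j\Gamma(B(\cO_{F,p})(b), \cO(w_0^{\widetilde G}\widetilde\lambda)\otimes I_{U_1}(\pi))$ is a direct summand of the corresponding group for $I_{\geq i}(\pi)$. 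Since $I_{\geq i+1}$ is the filtered colimit of the $I_{U_1}$ over such decompositions (by the method of \cite[Prop.\ 2.2.3]{hauseux}) and smooth cohomology of the compact group commutes with filtered colimits, the map $R^j\Gamma(B(\cO_{F,p})(b),\cO(w_0^{\widetilde G}\widetilde\lambda)\otimes I_{\geq i+1}(\pi))\to R^j\Gamma(B(\cO_{F,p})(b),\cO(w_0^{\widetilde G}\widetilde\lambda)\otimes I_{\geq i}(\pi))$ is injective for all $j$, and the desired short exact sequences follow. Note also that the equivariance used is only for the compact subgroup $B(\cO_{F^+,p})$, which is why the preliminary reduction to underlying $\cO/\varpi^m$-modules matters; your proposal never addresses how to retain (or legitimately discard) the $T(F_p^+)^+_b$-equivariance in the intermediate steps.
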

\begin{proof}
	It suffices to show exactness after applying the exact forgetful functor to $\operatorname{Mod}( \cO / \varpi^m)$. We consider decompositions $\widetilde{G}_{\geq i} = U_1 \sqcup U_2$ where $U_1, U_2$ are open sets which are invariant under left multiplication by $P(F^+_p)$ and right multiplication by $B(\cO_{F^+, p})$, and such that $U_1 \subset \widetilde{G}_{\geq i+1}$. Any such decomposition determines a functorial decomposition $I_{\geq i}(\pi) = I_{U_1}(\pi) \oplus I_{U_2}(\pi)$, where $I_{U_1}$ denotes functions with support in $U_1$, and similarly for $U_2$. This decomposition exists in the category $\operatorname{Mod}_{\text{sm}}(\cO / \varpi^m[B(\cO_{F^+, p})])$. We see in particular that for any bounded below complex $\pi \in \mathbf{D}_{\text{sm}}(\cO / \varpi^m[P(F_p^+)])$, the associated morphism
	\[ R^j \Gamma(B(\cO_{F, p})(b),\cO(w_0^{\widetilde{G}} \widetilde{\lambda}) \otimes_\cO I_{U_1}(\pi)) \to R^j \Gamma(B(\cO_{F, p})(b),\cO(w_0^{\widetilde{G}} \widetilde{\lambda}) \otimes_\cO I_{\geq i}(\pi)) \]
	in $\operatorname{Mod}( \cO / \varpi^m)$ is injective. Since $I_{\geq i+1}$ is the filtered direct limit of the $I_{U_1}$ (which can be proven by following the same technique as in the proof of \cite[Prop. 2.2.3]{hauseux}), it follows that the morphism
	\[ R^j \Gamma(B(\cO_{F, p})(b), \cO(w_0^{\widetilde{G}} \widetilde{\lambda}) \otimes_\cO I_{\geq i+1}(\pi)) \to R^j \Gamma(B(\cO_{F, p})(b),\cO(w_0^{\widetilde{G}} \widetilde{\lambda}) \otimes_\cO I_{\geq i}(\pi)) \]
	is injective. Since this applies for any $j \in \Z$, the exactness of the long exact sequence in cohomology attached to the distinguished triangle (\ref{eqn:schubert_cells_derived}) implies that the sequence in the statement of the lemma is indeed a short exact sequence. 
\end{proof}
\begin{lemma}\label{lem:compact_schubert_cell_preserves_acyclics}
	Let $w \in {}^r W^P$. Then:
	\begin{enumerate}
		\item 	 $I_w^\circ$ takes injectives to $\Gamma(N(\cO_{F^+, p}), -)$-acyclics. 
		\item Let $\pi \in \mathbf{D}_{\text{sm}}(\cO / \varpi^m[P(F_p^+)])$ be a bounded below complex. Then there is a natural isomorphism 
		\[ \ord R \Gamma(N(\cO_{F^+, p}), I_w^\circ(\pi)) \cong  \ord R \Gamma(N(\cO_{F^+, p}), I_w(\pi)). \]
	\end{enumerate}
\end{lemma}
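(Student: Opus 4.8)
The two assertions concern the functor $I_w^\circ$ of functions supported on the ``small'' Schubert cell $S_w^\circ = P(F_p^+) w N(\cO_{F^+,p})$, and the strategy is to reduce everything to the structure of $S_w^\circ$ as a $P(F_p^+) \times N(\cO_{F^+,p})$-space and then to invoke the corresponding local computations of Hauseux \cite{hauseux}. For part (1), I would first observe that $S_w^\circ$ is, as a left $P(F_p^+)$-space with commuting right $N(\cO_{F^+,p})$-action, isomorphic to $P(F_p^+) \times N'_w$, where $N'_w = N(\cO_{F^+,p}) \cap w^{-1} \overline{N}(F_p^+) w$ (the part of $N(\cO_{F^+,p})$ which is ``moved away'' from $P$ by $w$), with the residual action of $N(\cO_{F^+,p})$ on the second factor being translation on $N'_w$ composed with an action through $N(\cO_{F^+,p}) \cap w^{-1} P(F_p^+) w$ on the $P(F_p^+)$-factor. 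Consequently $I_w^\circ(\pi)$ is, as a smooth $\cO/\varpi^m[N(\cO_{F^+,p})]$-module, the compact induction $\operatorname{c-Ind}$ from a suitable open subgroup of a module that is built out of $\pi$ by restriction; since compact induction along an open subgroup is exact and has an exact adjoint, it preserves injectives, and a smooth induction from an open subgroup of $\pi|_{\text{(open subgroup)}}$ applied to an injective stays acyclic for $N(\cO_{F^+,p})$-invariants. More precisely: writing the group $N(\cO_{F^+,p})$ as an extension with $N'_w$ as a quotient (or subgroup) and using that $I_w^\circ(\cI)$ for $\cI$ injective is, after forgetting to $\operatorname{Mod}_{\text{sm}}(\cO/\varpi^m[N(\cO_{F^+,p})])$, a smoothly induced module from the injective $\cI$ restricted appropriately, one gets that $R^j\Gamma(N(\cO_{F^+,p}), I_w^\circ(\cI)) = 0$ for $j>0$ by the same mechanism used in Lemma \ref{lem:N_n-invariants_are_T_n_acyclic} (an exact left adjoint given by an inflation/induction functor). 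This is essentially the content of \cite[\S 2.2]{hauseux}, transported to our integral torsion setting, and I would cite it for the verification that the relevant induction functors have exact adjoints.

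For part (2), the point is that passing from $I_w$ to $I_w^\circ$ (functions on the full cell $S_w$ versus functions supported on $S_w^\circ$) is harmless after applying $\ord R\Gamma(N(\cO_{F^+,p}),-)$. The decomposition $S_w = \bigsqcup_k S_w^\circ \cdot t_k$ over coset representatives $t_k$ of $N(\cO_{F^+,p})\backslash N(F_p^+)$ (using that $N(F_p^+)$ acts on the right and $N(\cO_{F^+,p})$ is compact open in it) exhibits $I_w(\pi)$ as a filtered colimit of the subspaces of functions supported on finite unions $\bigcup_{k} S_w^\circ t_k$; each such truncation is, up to the right translation action that is absorbed by the ordinary-part localization, a copy of $I_w^\circ(\pi)$. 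The operator $U_p$ (or rather the torus element $u_p$) acts on $R\Gamma(N(\cO_{F^+,p}), I_w(\pi))$ by summing over $N(\cO_{F^+,p})$-cosets in $t^{-1}N(\cO_{F^+,p})t$-translates, and the effect of applying $\ord$ — i.e.\ inverting $u_p$ — is precisely to contract $I_w(\pi)$ onto the ``base'' copy $I_w^\circ(\pi)$: any class supported away from $S_w^\circ$ is killed by a high enough power of $u_p$. So I would set up the natural inclusion $I_w^\circ(\pi) \hookrightarrow I_w(\pi)$, which is $B(F_p^+)^+$-equivariant, apply $\ord R\Gamma(N(\cO_{F^+,p}), -)$, and show this becomes an isomorphism by checking that on cohomology the cokernel is $u_p$-power-torsion. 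This is formally parallel to the proof of Lemma \ref{lem:hida_lemma}, where an analogous inclusion $\Gamma(\Iw_p(b,c),-)\subset \Gamma(B_n(\cO_{F,p})(b),-)$ becomes an isomorphism after $\ord_b$; indeed the combinatorial input (an Iwahori/Iwasawa-type decomposition ensuring the relevant coset maps are bijective) is the same.

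\textbf{Main obstacle.} The delicate point is part (2): keeping precise track of how the $T(F^+_p)^+$-action (in particular the element $u_p$, built from $\widetilde U_p$) interacts with the right $N(F_p^+)$-translation structure on $S_w$, so that inverting $u_p$ genuinely collapses $I_w(\pi)$ onto $I_w^\circ(\pi)$. One has to be careful that $w \in {}^rW^P$ is a \emph{relative} Weyl group element — so $w$ can permute the various places $\overline v \in \overline S_p$ and mix the factors $\GL_{2n}(F_{\widetilde v})$ — and that the torus contraction used in $\ord$ comes from $T(F^+_p)^+$, which sits strictly inside $T_n(F_p)^+$ when $n \geq 2$. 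Verifying that $u_p \in T(F^+_p)^+$ still acts with the required contracting property on $N'_w$ for every $w$ occurring, and hence that the colimit over right-translates stabilizes after inverting $u_p$, is the technical heart; once this is in place, the isomorphism in (2) follows by the usual derived-functor composition formula \cite[Corollary 10.8.3]{weibel} together with part (1). I would lean on the geometry of Schubert cells (the closure relations $\overline S_w = \bigsqcup_{w'\leq w} S_{w'}$ and $l(w') < l(w)$ for $w' < w$) and on Hauseux's local analysis to organize this bookkeeping.
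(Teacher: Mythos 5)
Your part (2) is essentially the paper's argument. The paper introduces the exact quotient functor $J_w(\pi)=I_w(\pi)/I_w^\circ(\pi)$, uses the resulting distinguished triangle to reduce to showing $\ord H^j(N(\cO_{F^+,p}),J_w(\pi))=0$ for modules $\pi$, and proves this as in \cite[Lemme 3.3.1]{hauseux} — which is exactly your "the cokernel is $u_p$-power torsion on $N(\cO_{F^+,p})$-cohomology" step, and your analogy with Lemma \ref{lem:hida_lemma} is the right one. (A small inaccuracy: the translates $S_w^\circ t_k$ are neither pairwise disjoint nor stable under right translation by $N(\cO_{F^+,p})$, so the exhaustion of $I_w(\pi)$ has to be taken over right-$N(\cO_{F^+,p})$-stable compact open subsets of $S_w$ modulo $P(F^+_p)$, as in Hauseux; this does not affect the plan.)

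Part (1), however, is not justified as written. The subgroup you induce from, $H_w=N(\cO_{F^+,p})\cap w^{-1}P(F^+_p)w$, is closed but in general not open in $N(\cO_{F^+,p})$, and compact induction is the \emph{left} adjoint of restriction, so it preserves projectives, not injectives; what preserves injectives is full smooth induction, the right adjoint of the (exact) restriction functor — here the two coincide only because $H_w\backslash N(\cO_{F^+,p})$ is compact. More seriously, even after this repair, an induced module $\Ind_{H_w}^{N(\cO_{F^+,p})}M$ is not automatically $\Gamma(N(\cO_{F^+,p}),-)$-acyclic: Shapiro's lemma merely relocates the cohomology to $H^\ast(H_w,M)$. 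Writing $I_w^\circ(\cI)\cong \Ind_{H_w}^{N(\cO_{F^+,p})}$ of $\cI$ restricted (through conjugation by $w$) to the compact subgroup $wH_ww^{-1}\subset P(F^+_p)$, the essential point is that this restricted module is still injective, i.e.\ Lemma \ref{lem:injectives_for_smooth_monoid_actions}(2); without it the argument gives nothing, and this ingredient is absent from your write-up. The paper avoids this bookkeeping altogether: it embeds an injective $\cI$ into $\Ind_1^{P(F^+_p)}I$ for an injective $\cO/\varpi^m$-module $I$ (using \cite[Lem.\ 2.1.10]{emordtwo}) and checks directly that $I_w^\circ(\Ind_1^{P(F^+_p)}I)\cong \cC^\infty(P(F^+_p)wN(\cO_{F^+,p}),I)$, which is an injective smooth $\cO/\varpi^m[N(\cO_{F^+,p})]$-module, hence acyclic. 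Your route can be completed along the lines just indicated, but the injectivity of the restricted coefficients is the actual content of the step and needs to be stated and proved (or quoted).
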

\begin{proof}
	For the first part, let $\pi \in \operatorname{Mod}_{\text{sm}}(\cO / \varpi^m[P(F^+_p)])$, and fix an $\cO / \varpi^m$-embedding $\pi \hookrightarrow I$, where $I$ is an injective $\cO / \varpi^m$-module. Then there is an embedding $\pi \hookrightarrow \Ind_1^{P(F_p^+)} I$ of $\cO / \varpi^m[P(F^+_p)]$-modules. We will show that $I_w^\circ(\Ind_1^{P(F_p^+)} I)$ is an injective smooth $\cO / \varpi^m[N(\cO_{F^+, p})]$-module. By \cite[Lem. 2.1.10]{emordtwo}, this will show the first part of the lemma.
	
	Let $\cC^\infty(P(F^+_p) w N(\cO_{F^+, p}), I)$ denote the set of locally constant functions $F : P(F^+_p) w N(\cO_{F^+, p}) \to I$. It is an injective smooth $\cO / \varpi^m[N(\cO_{F^+, p})]$-module when $N(\cO_{F^+, p})$ acts by right translation. There is a natural isomorphism
	\[ I_w^\circ(\Ind_1^{P(F_p^+)} I) \cong \cC^\infty(P(F^+_p) w N(\cO_{F^+, p}), I), \]
	which sends a function $f : P(F^+_p) w N(\cO_{F^+_p}) \to \Ind_1^{P(F_p^+)} I$ to the function $F : P(F^+_p) w N(\cO_{F^+_p}) \to I$ given by the formula $F(x) = f(x)(1)$. This proves the first part of the lemma.
	
	For the second part, we note that we may define an exact functor 
	\[ J_w : \operatorname{Mod}_{\text{sm}}(\cO / \varpi^m [P(F_p^+)]) \to \operatorname{Mod}_{\text{sm}}(\cO / \varpi^m[B(F_p^+)^+]) \]
	by the formula $J_w(\pi) = I_w(\pi) / I_w^\circ(\pi)$. Then for a bounded below complex $\pi \in \mathbf{D}_{\text{sm}}(\cO / \varpi^m[P(F_p^+)])$ there is a natural distinguished triangle
	\[ \begin{split} \ord R \Gamma(N(\cO_{F^+, p}), I_w^\circ(\pi)) \to \ord R \Gamma(N(\cO_{F^+, p}), I_w(\pi)) & \to \ord R \Gamma(N(\cO_{F^+, p}), J_w(\pi)) \\ & \to \ord R \Gamma(N(\cO_{F^+, p}), I_w^\circ(\pi))[1]. \end{split} \]
	To prove the desired result, it is therefore enough to show that 
	\[ \ord R \Gamma(N(\cO_{F^+, p}), J_w(\pi)) = 0. \] 
	It is even enough to show that for any $\pi \in \operatorname{Mod}_{\text{sm}}(\cO / \varpi^m[P(F_p^+)])$ and for any $j \in \Z$, we have $\ord H^j(N(\cO_{F^+, p}), J_w(\pi)) = 0$, and this can be proved in the same way as \cite[Lemme 3.3.1]{hauseux}. Indeed, it suffices to choose an element $t \in T(F_p^+)^+$, as in \cite[Lemme 3.1.3]{hauseux}, such that $S_w = \cup_{k \ge 0} t^{-k}S_w^\circ t^k$.  It follows that $t$ acts locally nilpotently on $J_w(\pi)$, and consequently that each element of $H^i(N(\cO_{F^+,p}),J_w(\pi))$ is annihilated by the Hecke action of a sufficiently high power of $t$.
\end{proof}
If $w \in {}^r W^P$, we define $N_w = P(F^+_p) \cap w N(\cO_{F^+, p}) w^{-1}$. It is a compact subgroup of $P(F^+_p)$ which contains $N_n(\cO_{F, p})$. We define a functor
\[ \Gamma(N_w, -  ) :  \operatorname{Mod}_{\text{sm}}(\cO / \varpi^m[P(F_{p}^+)]) \to \operatorname{Mod}_{\text{sm}}(\cO / \varpi^m[T(F_p^+)^+]), \]
where an element $t \in T(F^+_p)^+$ acts by the formula  $t \cdot v = \tr_{t^w N_w (t^w)^{-1} / N_w}(t^w v)$ ($t \in T(F^+_p)^+$). Note that this makes sense because $t^w N_w (t^w)^{-1} = P(F^+_p) \cap w t N(\cO_{F^+, p}) t^{-1} w^{-1} \subset N_w$. Note as well that $w T(F_p^+)^+ w^{-1} \subset T_n(F_p)^+$ (by definition of ${}^r W^P$).
\begin{lemma}\label{lem:w_induced_ordinary_parts}
	Let $w \in {}^r W^P$ and let $\pi \in \mathbf{D}_{\text{sm}}(\cO / \varpi^m[P(F_p^+)])$ be a bounded below complex. Then there is a natural isomorphism 
		\[ R \Gamma(N(\cO_{F^+, p}), I_w^\circ(\pi)) \cong  R \Gamma(N_w, \pi). \]
\end{lemma}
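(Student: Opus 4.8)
\textbf{Proof proposal for Lemma \ref{lem:w_induced_ordinary_parts}.}

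The plan is to unwind the definition of $I_w^\circ$ and exhibit a concrete isomorphism of complexes, working at the level of injective resolutions so that the induced isomorphism on derived functors is manifestly natural. First I would observe that the map $S_w^\circ = P(F^+_p) w N(\cO_{F^+, p}) \to P(F^+_p) \backslash S_w^\circ$ realizes $S_w^\circ$ as a $P(F^+_p)$-torsor over a space which, via the orbit map $n \mapsto P(F^+_p) w n$, is identified with $N(\cO_{F^+,p}) / (N(\cO_{F^+,p}) \cap w^{-1} P(F^+_p) w)$. Conjugating by $w$, the stabilizer $N(\cO_{F^+,p}) \cap w^{-1} P(F^+_p) w$ corresponds to $N_w := P(F^+_p) \cap w N(\cO_{F^+,p}) w^{-1}$, which by the remarks preceding the lemma is a compact subgroup of $P(F^+_p)$ containing $N_n(\cO_{F,p})$. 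Concretely, a function $f \in I_w^\circ(\pi)$ is determined by its value $f(w) \in \pi$, which (using the $P(F^+_p)$-equivariance $f(pg) = pf(g)$ together with local constancy) must be fixed by $N_w$; and conversely any $N_w$-fixed vector of $\pi$ arises this way. This gives, for $\pi \in \operatorname{Mod}_{\text{sm}}(\cO/\varpi^m[P(F^+_p)])$, a natural isomorphism $I_w^\circ(\pi) \cong \pi^{N_w}$ of $\cO/\varpi^m$-modules; I would then check that under this identification the residual $N(\cO_{F^+,p})$-action on $I_w^\circ(\pi)$ (by right translation) matches the $N(\cO_{F^+,p})$-action on $\pi^{N_w}$ coming from the inclusion $N_n(\cO_{F,p}) \subset N_w$ (after conjugating by $w$), i.e.\ $\Gamma(N(\cO_{F^+,p}), I_w^\circ(\pi)) \cong \Gamma(N_w, \pi)$ — but this is really just the statement that taking $N(\cO_{F^+,p})$-invariants of $I_w^\circ(\pi)$ is the same as taking $N_w$-invariants of $\pi$, so the functors $\Gamma(N(\cO_{F^+,p}),-) \circ I_w^\circ$ and $\Gamma(N_w, -)$ on $\operatorname{Mod}_{\text{sm}}(\cO/\varpi^m[P(F^+_p)])$ are naturally isomorphic.

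To pass from this isomorphism of (underived) functors to the claimed isomorphism of derived functors, I would invoke the composition-of-derived-functors formula (\cite[Corollary 10.8.3]{weibel}), exactly as in the proof of Lemma \ref{lem:compatibility_of_ordinary_parts}. The functor $I_w^\circ$ is exact by the Proposition preceding Lemma \ref{lem:exactness_of_degree_j-ordinary_part}, and by part (1) of Lemma \ref{lem:compact_schubert_cell_preserves_acyclics} it sends injective objects of $\operatorname{Mod}_{\text{sm}}(\cO/\varpi^m[P(F^+_p)])$ to $\Gamma(N(\cO_{F^+,p}),-)$-acyclics. Hence $R(\Gamma(N(\cO_{F^+,p}),-) \circ I_w^\circ) \cong R\Gamma(N(\cO_{F^+,p}),-) \circ I_w^\circ$ (using also that $I_w^\circ$ is exact, so $R I_w^\circ = I_w^\circ$). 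On the other hand $R(\Gamma(N(\cO_{F^+,p}),-)\circ I_w^\circ) \cong R\Gamma(N_w, -)$ because the underlying underived functors agree by the previous paragraph. Applying both isomorphisms to a bounded below complex $\pi \in \mathbf{D}_{\text{sm}}(\cO/\varpi^m[P(F^+_p)])$ gives the natural isomorphism $R\Gamma(N(\cO_{F^+,p}), I_w^\circ(\pi)) \cong R\Gamma(P(F^+_p) \cap w N(\cO_{F^+,p}) w^{-1}, \pi)$ in the statement.

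The main obstacle is the bookkeeping in the first paragraph: getting the identification $I_w^\circ(\pi) \cong \pi^{N_w}$ genuinely $N(\cO_{F^+,p})$-equivariantly, with the right conjugate of $N_n(\cO_{F,p})$ by $w$ acting on the correct side. One has to be careful that the Bruhat cell $S_w = P(F^+_p) w B(F^+_p)$ is written with $B$ on the right, whereas $I_w$ and $I_w^\circ$ are defined with $B(F^+_p)$ (resp.\ $B(F^+_p)^+$) acting by right translation and the support condition is on $S_w^\circ = P(F^+_p) w N(\cO_{F^+,p})$; evaluating at the representative $w$ and tracking how right translation by $N(\cO_{F^+,p})$ permutes the fibres is where the conjugation $n \mapsto w n w^{-1}$ enters and where $N_w = P(F^+_p) \cap w N(\cO_{F^+,p}) w^{-1}$ appears as the stabilizer. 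This is entirely analogous to the local computation in \cite[\S 3.3]{hauseux} (which underlies Lemma \ref{lem:compact_schubert_cell_preserves_acyclics}), so I would cite that reference for the details rather than reproduce the torsor analysis in full. Everything else is a formal consequence of exactness and acyclicity already recorded in the preceding lemmas.
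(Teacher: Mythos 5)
Your overall skeleton is the same as the paper's: reduce to an isomorphism of underived functors using part (1) of Lemma \ref{lem:compact_schubert_cell_preserves_acyclics} together with the composition formula for derived functors, and realize that isomorphism by evaluation at $w$. However there are two problems, one repairable and one a genuine gap. The repairable one: your intermediate claim $I_w^\circ(\pi) \cong \pi^{N_w}$ is false. A function $f \in I_w^\circ(\pi)$ is determined by its values on the orbit space $P(F^+_p)\backslash S_w^\circ \cong (w^{-1}N_w w)\backslash N(\cO_{F^+,p})$, which is a profinite space of positive dimension, not a point; only \emph{after} taking $N(\cO_{F^+,p})$-invariants (right translation acting transitively on the orbit space) is $f$ determined by $f(w)$, and it is the resulting identification $\Gamma(N(\cO_{F^+,p}), I_w^\circ(\pi)) \cong \pi^{N_w}$ that is correct and is what the paper uses. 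Since the statement you ultimately assert at the level of invariants is the right one, this slip can be fixed; but as written your sentence about the ``residual $N(\cO_{F^+,p})$-action on $\pi^{N_w}$ via $N_n(\cO_{F,p}) \subset N_w$'' does not parse, precisely because elements $n$ with $wnw^{-1} \notin P(F^+_p)$ have no natural action on $\pi^{N_w}$.

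The genuine gap is that you never address the $T(F^+_p)^+$-equivariance, which is the actual content of the lemma. Both $\Gamma(N(\cO_{F^+,p}),-)$ and $\Gamma(N_w,-)$ are functors into $\operatorname{Mod}_{\text{sm}}(\cO/\varpi^m[T(F^+_p)^+])$, with $t$ acting on the left-hand side by the Hecke-type sum over $N(\cO_{F^+,p})/tN(\cO_{F^+,p})t^{-1}$ and on the right-hand side by the twisted trace $v \mapsto \tr_{t^wN_w(t^w)^{-1}/N_w}(t^w v)$; the isomorphism must respect these actions, since the very next steps (Lemma \ref{lem:shifted_ordinary_parts} and Proposition \ref{prop:ordinary_part_of_w_induction}) apply $\ord$, i.e.\ localization with respect to this torus action, to both sides — an isomorphism merely in $\mathbf{D}(\cO/\varpi^m)$ would be useless there. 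Verifying this equivariance is where the paper spends most of its proof: one must show that
\[ \sum_{n \in N(\cO_{F^+, p}) / t N(\cO_{F^+, p}) t^{-1}} f(wnt) \;=\; \sum_{m \in N_w / t^w N_w (t^w)^{-1}} m\, w t w^{-1} f(w), \]
which requires checking that conjugation by $w^{-1}$ embeds $N_w/t^wN_w(t^w)^{-1}$ into $N(\cO_{F^+,p})/tN(\cO_{F^+,p})t^{-1}$ and that every coset contributing a nonzero term $f(wnt)$ lies in the image — an argument that uses the support condition $\operatorname{supp}(f) \subset S_w^\circ$ in an essential way (and would fail for $I_w$ in place of $I_w^\circ$). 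Your proposal frames the remaining work as bookkeeping about the module identification and defers it to \cite{hauseux}, but the Hecke-equivariance of the evaluation map in this monoid-action setting is not something that reference supplies verbatim, and without it the lemma as used in the paper is not established.
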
 
\begin{proof}
	By the first part of Lemma \ref{lem:compact_schubert_cell_preserves_acyclics}, it's enough to show that there is a natural isomorphism of underived functors
	\[ \Gamma(N(\cO_{F^+, p}), I_w^\circ(-)) \cong \Gamma(N_w, -  ). \]
	The map sends an $N(\cO_{F^+, p})$-invariant function $f : P(F^+_p) w N(\cO_{F^+, p}) \to \pi$ to the value $f(w) \in \pi^{N_w}$. It is easy to see that this is an isomorphism of $\cO / \varpi^m$-modules; what we need to check is that it is equivariant for the action of $T(F^+_p)^+$. In other words, we need to check that for any $f \in \Gamma(N(\cO_{F^+, p}), I_w^\circ(\pi))$, we have
	\numequation\label{eqn:w_restriction_is_hecke_equivariant} \sum_{n \in N(\cO_{F^+, p}) / t N(\cO_{F^+, p}) t^{-1}} f(wnt) = \sum_{m \in N_w / t^w N_w (t^w)^{-1}} m w t w^{-1} f(w). 
	\end{equation}
	Conjugation by $w^{-1}$ determines a map $N_w / t^w N_w
        (t^w)^{-1} \to N(\cO_{F^+, p}) / t N(\cO_{F^+, p}) t^{-1}$,
        which is easily seen to be injective. On the other hand, if $n
        \in N(\cO_{F^+, p})$ and $f(w n t) \neq 0$, then the class of
        $n$ is in the image of this map; indeed, $f(w n t)$ can be
        non-zero only if $w n t \in P(F^+_p) w N(\cO_{F^+, p})$, in
        which case we  write $w n t = q w m$, with $q\in P(F^+_p)$
        and $m\in N(\cO_{F^+, p})$, hence $n = w^{-1} q w t^{-1} t m t^{-1}$. As $ w^{-1} q w t^{-1} \in w^{-1} P(F^+_p) w \cap N(\cO_{F^+, p})$ this shows that $n$ is in the image of this map. It follows that we can rewrite the left-hand side of (\ref{eqn:w_restriction_is_hecke_equivariant}) as
	\[ \sum_{m \in N_w / t^w N_w (t^w)^{-1}} f(m w t) =  \sum_{m \in N_w / t^w N_w (t^w)^{-1}} m w t w^{-1} f(w), \]
	which equals the right-hand side of (\ref{eqn:w_restriction_is_hecke_equivariant}).
\end{proof}
For the statement of the next lemma, we define, for any $w \in {}^r W^P$, a character $\chi_w : T(F^+_p) \to \cO^\times$ by the formula
\[ \chi_w(t) = \frac{\NN_{F^+_p / \Q_p} \det_{F^+_p} (\operatorname{Ad}(t^w)|_{\Lie U(F^+_p) \cap w N(F^+_p) w^{-1}})^{-1}}{ | \NN_{F^+_p / \Q_p} \det_{F^+_p} (\operatorname{Ad}(t^w)|_{\Lie U(F^+_p) \cap w N(F^+_p) w^{-1}}) |_p}. \]
Note that there is an isomorphism $\cO(\chi_w) \cong \cO(-\rho + w^{-1}w_0^P(\rho)) \otimes_{\cO} \cO(\alpha_w)$ of $\cO[T(F^+_p)]$-modules, where $w_0^P = w_0^G w_0^{\widetilde{G}}$ is the longest element of ${}^r W^P$, and where $\alpha_w : T(F^+_p) \to \cO^\times$ is the character which is trivial on $T(\cO_{F^+, p})$ and which satisfies the identity $\alpha_w(t) = \chi_w(t)$ for any element of the form $t = \iota_v^{-1}(\diag(\varpi_v^{a_1}, \dots, \varpi_v^{a_{2n}}))$ ($a_i \in \Z$). We also write 
\[ \tau_w : \operatorname{Mod}_{\text{sm}}(\cO / \varpi^m[T_n(F_p)]) \to \operatorname{Mod}_{\text{sm}}(\cO / \varpi^m[T_n(F_p)]) \]
for the functor which sends a module $\pi$ to $\tau_w(\pi) = \pi$, with action $\tau_w(\pi)(t)(v) = \pi(t^{w^{-1}})(v)$. 
\begin{lemma}\label{lem:shifted_ordinary_parts}
		Let $w \in {}^r W^P$ and let $\pi \in \mathbf{D}_{\text{sm}}(\cO / \varpi^m[G(F_p^+)])$ be a bounded below complex. Then there is a natural isomorphism between the following two complexes in $\mathbf{D}_{\text{sm}}(\cO / \varpi^m[T_n(F_p)])$:
		\[ \ord R \Gamma( N_w, \operatorname{Inf}_{G(F_p^+)}^{P(F_p^+)} \pi) \]
		and
		\[ \cO / \varpi^m(\chi_w) \otimes_{\cO / \varpi^m}\tau_w^{-1} \ord R \Gamma(N_n(\cO_{F, p}), \pi)[-[F^+ : \Q]n^2 + l(w)]. \]
\end{lemma}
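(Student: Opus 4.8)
The plan is to compute $\ord R\Gamma(N_w,\operatorname{Inf}_{G(F_p^+)}^{P(F_p^+)}\pi)$, where $N_w=P(F^+_p)\cap wN(\cO_{F^+,p})w^{-1}$, by exploiting that $U$ acts trivially on an inflated module, together with a Lyndon--Hochschild--Serre argument and the vanishing properties of $\ord$. Following Hauseux \cite{hauseux}, the substance of the computation is ``root-combinatorics plus nilpotence of $U_p$ on lower exterior powers'', and the genuine difficulty lies in keeping track of the twisted-trace action of $T(F^+_p)^+$.

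\emph{Structure of $N_w$.} Writing $\Phi^+$ for the positive roots of $(\widetilde G,T)$ over $F^+$, $\Phi_G^+\subset\Phi^+$ for those of the Borel of $G$, and $\Phi(U)\subset\Phi^+$ for the roots in $\operatorname{Lie}U$, the intersection $N_w=P\cap wNw^{-1}$ is the product of the root subgroups over $\Phi_G^+\cup(\Phi(U)\cap w\Phi^+)$; here one uses $w\in{}^rW^P$ and the characterisation of $W^P$ to see that $\Phi_G^+\subset w\Phi^+$. Consequently, setting $U_w=N_w\cap U(\cO_{F^+,p})$ — a $\Z_p$-lattice of rank $d_w:=\dim_{\Q_p}U(F^+_p)-l(w)=n^2[F^+:\Q]-l(w)$ — there is a split exact sequence of compact groups
\[ 1\to U_w\to N_w\to N_n(\cO_{F,p})\to 1, \]
with $U_w$ normal in $N_w$ (since $U$ is normal in $P$) and section the inclusion $N_n(\cO_{F,p})\subset N_w$ recorded before the lemma. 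Using the defining formula of the $T(F^+_p)^+$-action by twisted traces, the identification $T(F^+_p)=T_n(F_p)$, and the inclusion $wT(F^+_p)^+w^{-1}\subset T_n(F_p)^+$, this sequence is equivariant for the relevant torus actions, with $t\in T(F^+_p)^+$ acting on $N_n(\cO_{F,p})$-cohomology by conjugation by $t^w$ (equivalently, through $\cS$) and on $U_w$-cohomology by the twisted trace attached to $t^w$.

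\emph{Reduction via $\ord$.} Since $U$, hence $U_w$, acts trivially on $\operatorname{Inf}_{G(F_p^+)}^{P(F_p^+)}\pi$, there is a natural isomorphism $R\Gamma(U_w,\operatorname{Inf}\pi)\cong R\Gamma(U_w,\cO/\varpi^m)\otimes^{\mathbf L}_{\cO/\varpi^m}\pi$ of complexes of smooth $\cO/\varpi^m[N_n(\cO_{F,p})]$-modules carrying the diagonal $T(F^+_p)^+$-action. As $U_w\cong\Z_p^{d_w}$, Künneth gives $H^i(U_w,\cO/\varpi^m)=\wedge^i(U_w\otimes\cO/\varpi^m)^\vee$, exactly as in the proof of Lemma \ref{lem:integral_kostant}(1). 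Moreover, on the $i$-th exterior power with $i<d_w$ the element $U_p$ acts via the twisted trace attached to $u_p$, and since $u_p^w$ strictly contracts $U_w$, the determinant of its conjugation action on any proper exterior power is divisible by a positive power of $p$; thus $U_p$ acts nilpotently modulo $\varpi^m$ and $\ord$ annihilates these terms. This is the analogue of \cite[Lemme 3.3.1]{hauseux}, used already in Lemma \ref{lem:compact_schubert_cell_preserves_acyclics}. Using the truncation triangles for $R\Gamma(U_w,\cO/\varpi^m)$ and the exactness of $\ord$, it follows that $\ord R\Gamma(U_w,\operatorname{Inf}\pi)\cong\ord\bigl(\wedge^{d_w}(U_w\otimes\cO/\varpi^m)^\vee\otimes^{\mathbf L}_{\cO/\varpi^m}\pi\bigr)[-d_w]$.

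\emph{Assembly, and the main obstacle.} The functor $\Gamma(U_w,-)$ sends injective smooth $\cO/\varpi^m[N_w]$-modules to $\Gamma(N_n(\cO_{F,p}),-)$-acyclics (inflation has an exact left adjoint, as in Lemma \ref{lem:N_n-invariants_are_T_n_acyclic}), so the composition-of-derived-functors formula gives $R\Gamma(N_w,\operatorname{Inf}\pi)\cong R\Gamma(N_n(\cO_{F,p}),R\Gamma(U_w,\operatorname{Inf}\pi))$; applying $\ord$ and commuting it past $R\Gamma(N_n(\cO_{F,p}),-)$ (as in Lemmas \ref{lem_ord_functors_commute} and \ref{lem:N_n-invariants_are_T_n_acyclic}) yields
\[ \ord R\Gamma(N_w,\operatorname{Inf}\pi)\cong\ord R\Gamma(N_n(\cO_{F,p}),\wedge^{d_w}(U_w\otimes\cO/\varpi^m)^\vee\otimes^{\mathbf L}_{\cO/\varpi^m}\pi)[-d_w]. \]
Here $N_n(\cO_{F,p})$ acts on $\wedge^{d_w}(U_w\otimes\cO/\varpi^m)^\vee$ through $\det\operatorname{Ad}(-)|_{U_w}$, which is trivial since $N_n$ is unipotent, so this determinant line is $\cO/\varpi^m$ as an $N_n(\cO_{F,p})$-module, with residual $T(F^+_p)^+$-action given by multiplication by the twisted-trace scalar; by the definition of $\chi_w$ (note $\operatorname{Lie}U\cap wNw^{-1}=\operatorname{Lie}U_w$) this scalar is $\chi_w$. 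Pulling the trivial factor out of $R\Gamma(N_n(\cO_{F,p}),-)$, and noting that $t\in T(F^+_p)^+$ acts on the $\pi$-factor and on $N_n$-cohomology via $t^w$ — and that $t\mapsto t^w$ is an automorphism of $T_n(F_p)$, whose precomposition is precisely $\tau_w^{-1}$ — yields the claimed natural isomorphism, with shift $[-d_w]=[-[F^+:\Q]n^2+l(w)]$. The main obstacle is this equivariance bookkeeping: verifying that every step above respects the twisted-trace $T(F^+_p)^+$-actions and the homomorphism $\cS$, so that the character twist comes out to be exactly $\chi_w$ (including the $p$-adic absolute-value normalisation, i.e.\ the $\alpha_w$-part) and the residual action on $\ord R\Gamma(N_n(\cO_{F,p}),\pi)$ is exactly $\tau_w^{-1}$; the cohomological-vanishing input is Hauseux's lemma and is essentially quotable.
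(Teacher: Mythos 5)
Your route is the same as the paper's: split $N_w=P(F^+_p)\cap wN(\cO_{F^+,p})w^{-1}$ as an extension of $N_n(\cO_{F,p})$ by $N_{w,U}=U_w$, use triviality of the $U$-action on an inflated module to factor out $R\Gamma(U_w,\cO/\varpi^m)$, kill everything below the top degree after applying $\ord$, identify the top exterior power with the twist by $\chi_w$, and untwist by $\tau_w^{-1}$. The gap is precisely at the killing step, i.e.\ at the sentence ``the element $U_p$ acts via the twisted trace attached to $u_p$, and since $u_p^w$ strictly contracts $U_w$ \dots''. First, $u_p$ does not lie in $T(F^+_p)^+$: under the identification $T(F^+_p)=T_n(F_p)$ it corresponds at each place to $\diag(1,p^{-1},\dots,p^{-(n-1)},p^{n-1},\dots,p,1)\in\GL_{2n}$, which expands part of $U(\cO_{F^+,p})$; so at this stage only elements of $T(F^+_p)^+$ act (via $t\mapsto t^w$) and there is no twisted-trace action of $U_p$ on the $U_w$-cohomology to speak of — $U_p$ only acts after the monoid-extension step. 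Second, and fatally, the contraction claim is false: take $n=2$, a single place, and $w\in{}^rW^P$ the length-one element interchanging the two middle coordinates. Then $U_w$ is spanned by the root coordinates in positions $(1,3),(1,4),(2,4)$, while $u_p^w=\diag(1,p,p^{-1},1)$ scales them by $(p,1,p)$. The $(1,4)$-direction is fixed, so on the sub-top exterior-power components involving it the purported divisibility by a positive power of $p$ fails, and inverting $U_p$ does not annihilate them by your argument.

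The correct mechanism is dictated by the $w$-twist: the scalar by which $t\in T(F^+_p)^+$ acts on the root line of $\beta$ in $N_{w,U}$ is $(w^{-1}\beta)(t)$, and the only thing one knows is that $w^{-1}\beta$ is a \emph{positive} root of $\GL_{2n}$ (this is exactly where $w\in{}^rW^P$ enters). So one must run the divisibility argument with an element $t$ of $T(F^+_p)^+$ — the monoid that genuinely acts, and whose image $t^w$ lies in $T_n(F_p)^+$ and hence becomes invertible after $\ord$ — for which $v((w^{-1}\beta)(t))>0$ for every root $\beta$ of $N_{w,U}$; any $t$ strictly contracting all of $N(\cO_{F^+,p})$ does this (the paper's proof argues at this point with the central element $z_p=\diag(p,\dots,p,1,\dots,1)$ of $G(F^+_p)$). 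With such a $t$, your ``divisible by $p$, hence nilpotent mod $\varpi^m$, hence killed by localization'' reasoning goes through degree by degree, and the remainder of your sketch (K\"unneth, the determinant-line identification with $\chi_w$ including its unit part, and the $\tau_w^{-1}$ untwisting) agrees with the paper's proof. As written, though, the choice of $u_p$ and the claim that $u_p^w$ contracts $U_w$ are not repairable by bookkeeping: they are wrong for general $w$, and the lemma's shift and twist would not come out of them.
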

\begin{proof}
	Let $N_w \rtimes_w T(F_p^+)^+$ denote the monoid $N_w \times T(F_p^+)^+$, equipped with multiplication $(t^w n (t^w)^{-1}, 1)(1, t) = (1, t) ( n, 1)$ (where the product $t^w n (t^w)^{-1}$ is formed using the usual multiplication of the group $\widetilde{G}(F^+_p)$). Let $N_{w, U} = N_w \cap U(F^+_p)$. Then there is a short exact sequence
	\[ 0 \to N_{w, U} \to N_w \to N_n(\cO_{F, p}) \to 0 \]
	which is equivariant for the conjugation action of $T(F_p^+)^+$ via the map $T(F_p^+)^+ \to T_n(F_p)^+$, $t \mapsto t^w$.  
	We consider the diagram, commutative up to natural isomorphism:
	\[ \xymatrix{ \operatorname{Mod}_{\text{sm}}(\cO / \varpi^m [P(F^+_p)]) \ar[d]_{\Res^w} \\
	\operatorname{Mod}_{\text{sm}}(\cO / \varpi^m[ N_w \rtimes_w T(F_p^+)^+ ]) \ar[d]_{\Gamma_{N_{w, U}}} \\
	\operatorname{Mod}_{\text{sm}}(\cO / \varpi^m[N_n(\cO_{F, p}) \rtimes_w T(F_p^+)^+]) \ar[r]^\alpha\ar[d]_{\Gamma_{N_n(\cO_{F, p})}} & \operatorname{Mod}_{\text{sm}}(\cO / \varpi^m[N_n(\cO_{F, p}) \rtimes T_n(F_p)^+]) \ar[d]^{\Gamma_{N_n(\cO_{F, p})}} \\
	\operatorname{Mod}_{\text{sm}}(\cO / \varpi^m[T(F_p^+)^+]) \ar[r]^\beta \ar[rd]^{\tau_w \circ \ord} & \operatorname{Mod}_{\text{sm}}(\cO / \varpi^m[T_n(F_p)^+]) \ar[d]^\ord \\
	& \operatorname{Mod}_{\text{sm}}(\cO / \varpi^m[T_n(F_p)]). } \]
	In this diagram we have abbreviated e.g.\ $\Gamma(N_{w, U}, -) = \Gamma_{N_{w, U}}$. We also abbreviate $\operatorname{Inf}_G^P = \operatorname{Inf}_{G(F_p^+)}^{P(F_p^+)}$. The torus action on e.g.\ $\Gamma_{N_{w, U}}$ is defined in the usual way (cf.\ \cite[\S 3.2]{hauseux}). The exact functor $\Res^w$ is defined by taking $\Res^w(\pi) = \pi$ as an $\cO / \varpi^m$-module, with $\Res^w(\pi)(nt)(v) = \pi(nt^w)(v)$. We also use $\Res^w$ to denote the functor $\Res^w \circ \operatorname{Inf}_G^P$. The $\alpha$ is the composite of the equivalence
	\[ \operatorname{Mod}_{\text{sm}}(\cO / \varpi^m[N_n(\cO_{F, p}) \rtimes_w T(F_p^+)^+]) \to  \operatorname{Mod}_{\text{sm}}(\cO / \varpi^m[N_n(\cO_{F, p}) \rtimes wT(F^+_p)^+ w^{-1}]) \]
	induced by the map $ n t \in N_n(\cO_{F, p}) \rtimes wT(F^+_p)^+w^{-1} \mapsto (n, t^{w^{-1}}) \in N_n(\cO_{F, p}) \rtimes_w T(F_p^+)^+$ with the localization
	\[ \operatorname{Mod}_{\text{sm}}(\cO / \varpi^m[N_n(\cO_{F, p}) \rtimes wT(F^+_p)^+ w^{-1}]) \to \operatorname{Mod}_{\text{sm}}(\cO / \varpi^m[N_n(\cO_{F, p}) \rtimes T_n(F_p)^+]) \]
	induced by the inclusion $wT(F^+_p)^+ w^{-1} \subset T_n(F_p)^+$.
	Similarly, the functor $\beta$ is the composite of the equivalence
	\[ \operatorname{Mod}_{\text{sm}}(\cO / \varpi^m[T(F_p^+)^+]) \to \operatorname{Mod}_{\text{sm}}(\cO / \varpi^m[wT(F_p^+)^+w^{-1}])  \]
	with the localization
	\[  \operatorname{Mod}_{\text{sm}}(\cO / \varpi^m[wT(F_p^+)^+w^{-1}]) \to\operatorname{Mod}_{\text{sm}}(\cO / \varpi^m[T_n(F_p)^+]).   \]
	Note that $\alpha$ takes injectives to $\Gamma_{N_n(\cO_{F, p})}$-acyclics; this can be deduced from \cite[Prop. 2.1.3]{emordtwo}, using the compactness of $N_n(\cO_{F, p})$ and the observation that this localization can be thought of as a direct limit. Note that the composite of all left vertical arrows is the functor $\Gamma_{N_w}$. 
	
	Let $\pi$ now be as in the statement of the lemma. We compute
	\[ \begin{split} \ord R \Gamma ( N_w, \operatorname{Inf}_G^P \pi) & = \ord \beta R \Gamma_{N_n(\cO_{F, p})} R \Gamma_{N_{w, U}} \Res^w \operatorname{Inf}_G^P \pi \\
	& = \ord R \Gamma_{N_n(\cO_{F, p})} \alpha R \Gamma_{N_{w, U}} \Res^w \operatorname{Inf}_G^P \pi. \end{split} \]
	Since $U(F^+_p)$ acts trivially on $\pi$, there is an isomorphism 
	\[ R \Gamma_{N_{w, U}} \Res^w \operatorname{Inf}_G^P \pi \cong \Res^w(\pi) \otimes_{\cO / \varpi^m} R \Gamma(N_{w, U}, \cO / \varpi^m) \]
	in $\mathbf{D}_{\text{sm}}(\cO / \varpi^m[N_n(\cO_{F, p}) \rtimes_w T(F_p^+)^+])$. To go further, we need to compute the complex $\alpha R \Gamma(N_{w, U}, \cO / \varpi^m)$. To this end, we consider the action of the element
	\[ z_p = \diag(p, \dots, p, 1, \dots, 1) \in T(F_p^+)^+ \]
	(where there are $n$ entries equal to $p$ and $n$ entries equal to $1$; note that this element depends on our choice of set $\widetilde{S}_p$, which determines the identification of $\widetilde{G}(F^+_p)$ with $\prod_{\overline{v} \in \overline{S}_p} \GL_{2n}(F_{\overline{v}}^+)$). It is in the centre of $G(F^+_p)$, and is therefore invertible in $T_n(F_p)^+$. Its action on the cohomology groups $H^i(N_{w, U}, \cO / \varpi^m)$ is the one induced by its natural conjugation action on $N_{w, U}$; in other words, multiplication by $p$ on this abelian group. The group $N_{w, U}$ has rank $n^2 [ F^+ : \Q] - l(w)$ as $\Z_p$-module, from which it follows that the Hecke action of $z_p$ on $H^i(N_{w, U}, \cO / \varpi^m)$ factors through multiplication by $p^{n^2 [F^+ : \Q] - l(w) - i}$ ($0 \leq i \leq n^2 [ F^+ : \Q] - l(w)$). The cohomology groups below the top degree $i = n^2 [F^+ : \Q] - l(w)$ therefore vanish after applying the functor $\alpha$, and it follows from \cite[Prop. 3.1.8]{hauseux} that 
	\[  \begin{split} \alpha R \Gamma(N_{w, U}, \cO / \varpi^m) & \cong \cO / \varpi^m((\chi_w)^w) [ -[F^+ : \Q]n^2 + l(w) ] \\ & \cong \alpha \cO / \varpi^m(\chi_w) [ -[F^+ : \Q]n^2 + l(w) ], \end{split} \]
	hence that
	\[ \beta R \Gamma_{N_{w}} \Res^w \operatorname{Inf}_G^P \pi  \cong \beta R \Gamma_{N_n(\cO_{F, p})} \Res^w \pi \otimes_{\cO / \varpi^m}  \cO / \varpi^m(\chi_w) [ -[F^+ : \Q]n^2 + l(w) ]. \]
		We finally see that $\ord R \Gamma (N_w, \operatorname{Inf}_G^P \pi)$ is isomorphic to 
		\[ \begin{split}   & \tau_w^{-1} \ord \beta R \Gamma_{N_n(\cO_{F, p})} \Res^w \pi \otimes_{\cO / \varpi^m}  \cO / \varpi^m(\chi_w) [ -[F^+ : \Q]n^2 + l(w) ] \\
			& \cong  \cO / \varpi^m(\chi_w)\otimes_{\cO / \varpi^m} \tau_w^{-1} \ord R \Gamma_{N_n(\cO_{F, p})} \pi  [ -[F^+ : \Q]n^2 + l(w) ].\qedhere
			\end{split} \]
\end{proof}
\begin{prop}\label{prop:ordinary_part_of_w_induction}
	Let $w \in {}^r W^P$ and let $\pi \in \mathbf{D}_{\text{sm}}(\cO / \varpi^m[G(F^+_{p})])$ be a bounded below complex. Then there is a natural isomorphism between the following two complexes in $\mathbf{D}_{\text{sm}}(\cO/ \varpi^m[ T_n(F_p)])$:
	\[ \ord R \Gamma(N(\cO_{F^+, p}), I_w(\operatorname{Inf}_{G(F_p^+)}^{P(F_p^+)} \pi)) \]
	and
	\[ \cO / \varpi^m(\chi_w) \otimes_{\cO / \varpi^m} \tau_{w^{-1}} \ord R \Gamma(N_n(\cO_{F, p}), \pi)[-[F^+ : \Q]n^2 + l(w)]. \]
\end{prop}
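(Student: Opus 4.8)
The plan is to combine the two preceding lemmas in this subsection, namely Lemma~\ref{lem:compact_schubert_cell_preserves_acyclics} and Lemma~\ref{lem:w_induced_ordinary_parts} together with Lemma~\ref{lem:shifted_ordinary_parts}. First I would use the second part of Lemma~\ref{lem:compact_schubert_cell_preserves_acyclics} to replace $I_w$ by $I_w^\circ$: there is a natural isomorphism
\[ \ord R \Gamma(N(\cO_{F^+, p}), I_w(\operatorname{Inf}_{G(F_p^+)}^{P(F_p^+)} \pi)) \cong \ord R \Gamma(N(\cO_{F^+, p}), I_w^\circ(\operatorname{Inf}_{G(F_p^+)}^{P(F_p^+)} \pi)). \]
Next I would apply Lemma~\ref{lem:w_induced_ordinary_parts} (with $\pi$ there taken to be $\operatorname{Inf}_{G(F_p^+)}^{P(F_p^+)}\pi$) to rewrite the right-hand side as
\[ \ord R \Gamma(P(F^+_p) \cap w N(\cO_{F^+, p}) w^{-1}, \operatorname{Inf}_{G(F_p^+)}^{P(F_p^+)} \pi). \]
Finally, Lemma~\ref{lem:shifted_ordinary_parts} identifies this last complex with
\[ \cO / \varpi^m(\chi_w) \otimes_{\cO / \varpi^m}\tau_w^{-1} \ord R \Gamma(N_n(\cO_{F, p}), \pi)[-[F^+ : \Q]n^2 + l(w)], \]
which is exactly the asserted target (modulo rewriting $\tau_w^{-1}$ as $\tau_{w^{-1}}$, which is immediate from the definition of $\tau_w$ since $(t^w)^{w'} = t^{ww'}$ and hence $\tau_w^{-1} = \tau_{w^{-1}}$ on the nose). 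Chaining these three natural isomorphisms together yields the proposition.

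There is essentially no new content here beyond assembling the previous three lemmas, so the proof should be short. The one point requiring a small amount of care is checking that the isomorphisms produced by the three lemmas are compatible as functors in $\pi$, i.e.\ that the composite is genuinely natural in the bounded-below complex $\pi \in \mathbf{D}_{\text{sm}}(\cO / \varpi^m[G(F^+_p)])$; but each individual isomorphism was already asserted to be natural, so this is automatic. I expect the main (very minor) obstacle to be bookkeeping of the torus actions and the twist characters: one must confirm that the character $\chi_w$ and the functor $\tau_w^{-1}$ appearing in Lemma~\ref{lem:shifted_ordinary_parts} are literally the same data as in the statement of the present proposition, and that no extra normalization (half-sum of roots, modulus character) has been dropped in passing from $I_w^\circ$ to $P(F^+_p) \cap w N(\cO_{F^+, p}) w^{-1}$. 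Since Lemma~\ref{lem:w_induced_ordinary_parts} was proved to be $T(F^+_p)^+$-equivariant on the nose (no twist), this matching is direct.

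Thus the proof I would write is simply: "Combine Lemma~\ref{lem:compact_schubert_cell_preserves_acyclics}(2), Lemma~\ref{lem:w_induced_ordinary_parts}, and Lemma~\ref{lem:shifted_ordinary_parts}, noting that $\tau_w^{-1} = \tau_{w^{-1}}$." I would spell out the three-step chain of isomorphisms displayed above so that the reader can see the shift $[-[F^+:\Q]n^2 + l(w)]$ and the twist $\cO/\varpi^m(\chi_w)$ arise precisely from the last lemma, and conclude.
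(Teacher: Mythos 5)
Your proposal is correct and is exactly the paper's argument: the paper's proof is the one-line combination of Lemma~\ref{lem:compact_schubert_cell_preserves_acyclics}, Lemma~\ref{lem:w_induced_ordinary_parts}, and Lemma~\ref{lem:shifted_ordinary_parts}, in the same order you chain them. Your added remark that $\tau_w^{-1}=\tau_{w^{-1}}$ is a harmless bookkeeping point the paper leaves implicit.
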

\begin{proof}
	This follows on combining Lemma \ref{lem:compact_schubert_cell_preserves_acyclics}, Lemma \ref{lem:w_induced_ordinary_parts}, and Lemma \ref{lem:shifted_ordinary_parts}. 
\end{proof}

\subsection{The degree shifting argument}\label{sec:ord_direct_summand}

In this section, we give the analogue for completed cohomology of the results of \S \ref{sec:direct summand at finite level}, by relating the completed cohomology of $X$ to the completed cohomology of the boundary $\partial \widetilde{X}$. The statement is simpler for completed cohomology than for cohomology at finite level because the contribution of the unipotent radical of the Siegel parabolic vanishes in the limit. 
\begin{thm}\label{thm:direct_summand_of_completed_boundary_cohomology}
	Let $\widetilde{K} \subset \widetilde{G}(\A_{F^+}^\infty)$ be a good subgroup which is decomposed with respect to $P$. Let $\m \subset \T^S$ be a non-Eisenstein maximal ideal, and let $\widetilde{\m} = \cS^\ast(\m) \subset \widetilde{\T}^S$. Then the complex $\Ind_{P(F_p^+)}^{\widetilde{G}(F^+_p)} \pi(K^p, m)_{\m}$ is a $\widetilde{\T}^S$-equivariant direct summand of the complex $\widetilde{\pi}_\partial(\widetilde{K}^p, m)_{\widetilde{\m}}$ 
	 in $\mathbf{D}_{\text{sm}}(\cO / \varpi^m[\widetilde{G}(F^+_p)])$.
\end{thm}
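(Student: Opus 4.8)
The strategy follows the pattern of Theorem~\ref{thm:reduction_to_Siegel} combined with the computation of ordinary parts in \S\ref{computation of ordinary parts}. The first step is to relate the completed boundary cohomology $\widetilde{\pi}_\partial(\widetilde{K}^p,m)$ to the completed cohomology of the various boundary strata of the Borel--Serre compactification of $\widetilde{X}$. As recalled in \S\ref{sec:siegel}, the boundary $\partial \widetilde{X}$ has a $\widetilde{G}(\A_{F^+}^\infty)$-equivariant stratification indexed by the standard parabolic subgroups $Q \supset B$ of $\widetilde{G}$, and each stratum contributes a piece to an excision spectral sequence computing $R\Gamma(\partial\widetilde{X}_{\widetilde{K}},-)$. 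Passing to completed cohomology (applying $R\Gamma_{\widetilde{K}^p,\mathrm{sm}}$) and then localizing at $\widetilde{\m}$, exactly the argument of Theorem~\ref{thm:reduction_to_Siegel} --- using that $\m$ is non-Eisenstein so that $\overline{\rho}_{\widetilde{\m}} = \overline{\rho}_1 \oplus \overline{\rho}_2$ with both summands absolutely irreducible of dimension $n$, while any non-Siegel parabolic $Q$ forces a Galois representation of the form \eqref{eqn:reducibility_for_parabolic} with at least three Jordan--H\"older constituents --- shows that only the Siegel stratum survives, giving a $\widetilde{\T}^S$-equivariant isomorphism $\widetilde{\pi}_\partial(\widetilde{K}^p,m)_{\widetilde{\m}} \cong \widetilde{\pi}(\widetilde{X}^P_{\widetilde{K}},m)_{\widetilde{\m}}$ in $\mathbf{D}_{\mathrm{sm}}(\cO/\varpi^m[\widetilde{G}(F_p^+)])$.

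The second step is to identify the completed cohomology of the Siegel stratum. Just as in the proofs of Theorems~\ref{thm:reduction_to_Siegel_with_R_ramification} and~\ref{direct summand at finite level}, the Siegel stratum $\widetilde{X}^P_{\widetilde{K}}$ is a locally symmetric space for $P$, and its (completed) cohomology decomposes via the Levi decomposition $P = GU$: one has an isomorphism expressing $R\Gamma_{\widetilde{K}^p,\mathrm{sm}}R\Gamma(\mathfrak{X}_G, R1_\ast^{\widetilde{K}_{U,S}}\cO/\varpi^m)$, where the inner cohomology of the unipotent radical $U(\cO_{F^+,p})$ appears. Because $\widetilde{K}$ is decomposed with respect to $P$, the relevant level at $U$ is $U(\cO_{F^+,p})$, and the key point --- this is where completed cohomology is genuinely simpler than finite level --- is that after applying the ordinary part functor, only the top degree of $H^\ast(U(\cO_{F^+,p}),-)$ contributes a nonzero ordinary part; equivalently, in the limit over deeper levels, the higher cohomology of $U$ is killed by $U_p$. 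The cleanest route is: realize $\Ind_{P(F_p^+)}^{\widetilde{G}(F_p^+)}\pi(K^p,m)$ as the completed cohomology of the Siegel stratum via the identification of the stratum with $[\Ind_{P^\infty}^{\widetilde{G}^\infty}\mathfrak{X}_G]/\widetilde{K}$ (as in \S\ref{sec:hecke_algebra_of_a_monoid}), and then use Lemma~\ref{lem:functorial_splitting_for_induced_module} together with the $\widetilde{K}_P = \widetilde{K}_U \rtimes K$ decomposition and the $K$-equivariant splitting $\cV_\lambda \to \cV_{\widetilde\lambda}$ (here with trivial coefficients, so the splitting is automatic) to exhibit $\Ind_{P(F_p^+)}^{\widetilde{G}(F_p^+)}\pi(K^p,m)$ as a direct summand of $\widetilde{\pi}(\widetilde{X}^P_{\widetilde{K}},m)$, compatibly with the Hecke action. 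This is where Proposition~\ref{prop:ordinary_part_of_w_induction} and the surrounding lemmas of \S\ref{computation of ordinary parts} enter, matching the local picture: parabolic induction from $P$ to $\widetilde{G}$ at the level of smooth representations, with the unipotent contributions indexed by ${}^rW^P$ appearing only through the relative Weyl group.

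The third step is to combine these two identifications: $\widetilde{\pi}_\partial(\widetilde{K}^p,m)_{\widetilde{\m}} \cong \widetilde{\pi}(\widetilde{X}^P_{\widetilde{K}},m)_{\widetilde{\m}}$ admits $\Ind_{P(F_p^+)}^{\widetilde{G}(F_p^+)}\pi(K^p,m)_{\m}$ as a $\widetilde{\T}^S$-equivariant direct summand in $\mathbf{D}_{\mathrm{sm}}(\cO/\varpi^m[\widetilde{G}(F_p^+)])$, using the compatibility of the localizations $\T^S_{\m}$ and $\widetilde{\T}^S_{\widetilde{\m}}$ under $\cS$ (the idempotent $e_{\widetilde{\m}}$ restricts to $e_{\m}$ on the $G$-side, by Lemma~\ref{lem:hecke_algebra_of_completed_cohomology} and the definition of $\widetilde{\m} = \cS^\ast(\m)$). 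I expect the main obstacle to be the second step: carefully setting up the completed-cohomology version of the ``unipotent radical contributes only in one degree'' phenomenon and checking that the induced-representation structure on $\Ind_{P(F_p^+)}^{\widetilde{G}(F_p^+)}\pi(K^p,m)$ is exactly the one that arises geometrically from the Siegel stratum --- in particular that the $\widetilde{G}(F_p^+)$-action (not merely a $\Delta_p$-action) is correctly matched, and that the splitting of Lemma~\ref{lem:functorial_splitting_for_induced_module}, which is a priori only a splitting of $R$-modules (not Hecke-equivariant), can be upgraded using the localization at $\widetilde{\m}$ so that the summand $\Ind_{P(F_p^+)}^{\widetilde{G}(F_p^+)}\pi(K^p,m)_{\m}$ is cut out by an honest idempotent in the relevant endomorphism algebra. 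The non-Eisenstein hypothesis, which guarantees the vanishing of the complementary strata, is what makes this upgrade possible.
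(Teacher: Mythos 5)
Your first step (killing all non-Siegel strata after localizing at the non-Eisenstein $\widetilde{\m}$, by passing to the limit over $p$-levels of the finite-level vanishing from Theorem \ref{thm:reduction_to_Siegel}) is exactly what the paper does. The gap is in your second step, and it is a real one: the theorem is a statement about the full completed boundary cohomology in $\mathbf{D}_{\text{sm}}(\cO/\varpi^m[\widetilde{G}(F_p^+)])$, \emph{before} any ordinary part functor is applied, so the machinery of \S\ref{computation of ordinary parts} (Proposition \ref{prop:ordinary_part_of_w_induction}, the characters $\chi_w$, the relative Weyl group) is not available and is not what is needed; it only enters later, in Theorem \ref{thm:ord_deg_shifting}, \emph{after} this theorem has been proved. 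Your claimed mechanism --- ``after applying the ordinary part functor, only the top degree of $H^\ast(U(\cO_{F^+,p}),-)$ contributes; equivalently, in the limit over deeper levels, the higher cohomology of $U$ is killed by $U_p$'' --- conflates two different phenomena, and the second half misattributes the vanishing. What actually makes the completed picture collapse is that the derived $U(F_p^+)$-smooth vectors of $H^0(P(F)\backslash P(\A_{F^+}^\infty)/\widetilde{K}^p_P,\cO/\varpi^m)$ reduce, via strong approximation for $U$, to $\varinjlim_{V_p\subset U(F^+_p)} H^i(U(F^+)\cap \widetilde{K}^p_U V_p,\cO/\varpi^m)$, which is $\cO/\varpi^m$ in degree $0$ and vanishes in higher degrees (the transition maps are divisible by powers of $p$, which is nilpotent in $\cO/\varpi^m$); no $U_p$-operator or ordinary projector is involved, and no condition like $p>n^2$ is needed. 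Your sketch never establishes this vanishing, which is the heart of the proof.

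Because of this, you also misidentify where the direct-summand structure comes from. Once the $U$-cohomology collapses to degree $0$, the completed cohomology of the Siegel stratum is \emph{isomorphic} (not merely contains as a summand up to shifts) to a direct sum over $g\in P(F^+)\backslash \widetilde{G}_{S-S_p}/\widetilde{K}_{S-S_p}$ of parabolic inductions $\Ind_{P(F^+_p)}^{\widetilde{G}(F^+_p)}$ of completed cohomology of $P$-locally symmetric spaces, and the theorem's summand is simply the $g=1$ component, which is identified with $\Ind_{P(F^+_p)}^{\widetilde{G}(F^+_p)}\pi(K^p,m)$ with $\widetilde{\T}^S$ acting through $\cS$; the prime-to-$p$ double coset decomposition is automatically $\widetilde{\T}^S$-equivariant, so there is no need to upgrade the non-equivariant splitting of Lemma \ref{lem:functorial_splitting_for_induced_module} to an idempotent, nor to invoke the $K$-equivariant splitting $\cV_\lambda\to\cV_{\widetilde{\lambda}}$ (those are finite-level devices for Theorems \ref{thm:reduction_to_Siegel_with_R_ramification} and \ref{direct summand at finite level}). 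To repair your argument, replace your second step by the direct computation
\[
R\Gamma_{\widetilde{K}^p,\text{sm}}R\Gamma(\Ind_{P^\infty}^{\widetilde{G}^\infty}\mathfrak{X}_P,\cO/\varpi^m)\cong \bigoplus_{g} \Ind_{P(F^+_p)}^{\widetilde{G}(F^+_p)} R\Gamma_{\widetilde{K}^p_P,\text{sm}}\Res R\Gamma(\mathfrak{X}_P,\cO/\varpi^m),
\]
and then prove $R\Gamma_{U\text{-sm}}H^0(P(F)\backslash P(\A_{F^+}^\infty)/\widetilde{K}^p_P,\cO/\varpi^m)\cong H^0(G(F)\backslash G(\A_{F^+}^\infty)/K^p,\cO/\varpi^m)$ by the strong-approximation computation above.
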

Here and below we have written $\pi(K^p, m)_{\m}$ for the complex previously denoted $\operatorname{Inf}_{G(F_p^+)}^{P(F_p^+)}\pi(K^p, m)_{\m}$ in order to lighten the notation.
\begin{proof}
	We first show that there is a $\widetilde{\T}^S$-equivariant isomorphism
	\[ ( R \Gamma_{\widetilde{K}^p, \text{sm}} R \Gamma( \Ind_{P^\infty}^{\widetilde{G}^\infty} \mathfrak{X}_P, \cO / \varpi^m) )_{\widetilde{\m}} \cong ( R \Gamma_{\widetilde{K}^p, \text{sm}} R \Gamma( \partial \mathfrak{X}_{\widetilde{G}}, \cO / \varpi^m) )_{\widetilde{\m}} = \widetilde{\pi}_\partial(\widetilde{K}^p, m)_{\widetilde{\m}} . \]
	As in the proof of Theorem \ref{thm:reduction_to_Siegel}, it suffices to show that for each standard proper parabolic subgroup $Q \subset \widetilde{G}$ with $Q \neq P$, we have
	\[ H^\ast(  R \Gamma_{\widetilde{K}^p, \text{sm}} R \Gamma(\Ind_{Q^\infty}^{\widetilde{G}^\infty} \mathfrak{X}_Q,k) )_{\widetilde{\m}} ) = \varinjlim_{ \widetilde{K}_p'} H^\ast(\widetilde{X}^Q_{\widetilde{K}^p \widetilde{K}_p'},k)_{\widetilde{\m}} = 0. \]
	This follows from the corresponding finite level statement, which has already been proved in the course of the proof of Theorem \ref{thm:reduction_to_Siegel}. 
	
	We therefore need to compute $R \Gamma_{\widetilde{K}^p, \text{sm}} R \Gamma( \Ind_{P^\infty}^{\widetilde{G}^\infty} \mathfrak{X}_P, \cO / \varpi^m)$. We will in fact show that this complex admits $\Ind_{P(F_p^+)}^{\widetilde{G}(F_p^+)} R \Gamma_{K^p, \text{sm}} R \Gamma(\mathfrak{X}_G, \cO / \varpi^m)$ as a $\widetilde{\T}^S$-equivariant direct summand in $\mathbf{D}_{\text{sm}}(\cO / \varpi^m[\widetilde{G}(F_p^+)])$, where $\widetilde{\T}^S$ acts on the latter complex via the map $\cS$.
	
	To see this, we compute
	\begin{multline*}  R \Gamma_{\widetilde{K}^p, \text{sm}} R \Gamma( \Ind_{P^\infty}^{\widetilde{G}^\infty} \mathfrak{X}_P, \cO / \varpi^m) ) \cong  R \Gamma_{\widetilde{K}^p, \text{sm}} \Ind_{P^\infty}^{\widetilde{G}^\infty} R \Gamma(\mathfrak{X}_P, \cO / \varpi^m) \\
	 \cong \oplus_{g \in P(F^+) \backslash \widetilde{G}_{S - S_p} / \widetilde{K}_{S - S_p}} \Ind_{P(F_p^+)}^{\widetilde{G}(F^+_p)} R \Gamma_{\widetilde{K}^p_P, \text{sm}} \Res^{P^\infty}_{P^{S - S_p} \times g \widetilde{K}_{P, S - S_p} g^{-1}} R \Gamma(\mathfrak{X}_P, \cO / \varpi^m). \end{multline*}
	Taking the summand corresponding to $g = 1$, we see that it will be enough to exhibit an isomorphism
	\begin{multline*}  R \Gamma_{\widetilde{K}^p_P, \text{sm}} \Res^{P^\infty}_{P^{S - S_p} \times \widetilde{K}_{P, S - S_p} } R \Gamma(\mathfrak{X}_P, \cO / \varpi^m) \\\cong \operatorname{Inf}_{G(F^+_p)}^{P(F^+_p)}  R \Gamma_{K^p, \text{sm}} \Res^{G^\infty}_{G^{S - S_p} \times K_{S - S_p}} R \Gamma(\mathfrak{X}_G, \cO / \varpi^m). \end{multline*}
	Let us write 
	\[ \Gamma_{P\text{-sm}} : \operatorname{Mod}(\cO / \varpi^m[ P(F_p^+) ]) \to \operatorname{Mod}_{\text{sm}}(\cO / \varpi^m[ P(F_p^+) ]), \]
	 \[  \Gamma_{U\text{-sm}} : \operatorname{Mod}(\cO / \varpi^m[ P(F_p^+) ]) \to \operatorname{Mod}_{U\text{-sm}}(\cO / \varpi^m[ P(F_p^+) ]), \]
	 and
	 \[  \Gamma_{G\text{-sm}} : \operatorname{Mod}(\cO / \varpi^m[ G(F_p^+) ]) \to \operatorname{Mod}_{\text{sm}}(\cO / \varpi^m[ G(F_p^+) ]) \]
	 for the functors of $P$, $U$ and $G$-smooth vectors, respectively. The target category for the second functor is $\cO / \varpi^m[ P(F_p^+) ]$-modules with a smooth action of $U(F_p^+)$. These functors are all right adjoint to forgetful functors, and therefore preserve injectives. The restriction of $\Gamma_{P\text{-sm}}$ to $\operatorname{Mod}_{U\text{-sm}}(\cO / \varpi^m[ P(F_p^+) ])$ is the same as taking $G$-smooth vectors. 
	 
	 Unpacking the above, we see that it is enough to construct a Hecke-equivariant isomorphism
	 \numequation\label{eqn:morphism_of_complexes} \begin{split} \operatorname{Inf}_{G(F_p^+)}^{P(F^+_p)} R \Gamma_{G\text{-sm}} H^0(G(F^+) \backslash & G(\A_{F^+}^\infty) / K^p, \cO / \varpi^m) \\ & \to  R \Gamma_{P\text{-sm}} H^0(P(F^+) \backslash P(\A_{F^+}^\infty) / \widetilde{K}_P^p, \cO / \varpi^m). \end{split} \end{equation}
	 The morphism (\ref{eqn:morphism_of_complexes}) is constructed using the canonical natural transformation $\operatorname{Inf}_{G(F_p^+)}^{P(F^+_p)} \circ R \Gamma_{G\text{-sm}} \to R \Gamma_{P\text{-sm}} \circ \operatorname{Inf}_{G(F_p^+)}^{P(F^+_p)}$ (\cite[Lemma 2.1]{new-tho}), and the morphism $\operatorname{Inf}_{G(F_p^+)}^{P(F^+_p)} H^0(G(F^+) \backslash G(\A_{F^+}^\infty) / K^p, \cO / \varpi^m) \to H^0(P(F^+) \backslash P(\A_{F^+}^\infty) / \widetilde{K}_P^p, \cO / \varpi^m)$ given by inflation of functions. The Hecke-equivariance follows from \cite[Corollary 2.8]{new-tho}. 
	 
	 To show that (\ref{eqn:morphism_of_complexes}) is an isomorphism, it will be enough to show that
	 \[ R \Gamma_{U\text{-sm}} H^0(P(F^+) \backslash P(\A_{F^+}^\infty) / \widetilde{K}_P^p, \cO / \varpi^m) \cong \operatorname{Inf}_{G(F_{p}^+)}^{P(F_p^+)}H^0(G(F^+) \backslash G(\A_{F^+}^\infty) / K^p, \cO / \varpi^m). \] Indeed, we can then take the derived functor of $G$-smooth vectors on both sides to obtain (\ref{eqn:morphism_of_complexes}) --- this operation commutes with inflation from $G$ to $P$, since (the inflation of) a $G$-injective is acyclic for the functor of $G$-smooth vectors.
	 
	 However, the cohomology groups of the left-hand side here can be computed as 
	 \[ \varinjlim_{V_p \subset U(F^+_p)} \prod_{g \in G(F^+) \backslash G(\A_{F^+}^\infty) / K^p} H^i(V_p, H^0(U(F^+) \backslash U(\A_{F^+}^\infty) / \widetilde{K}_U^p, \cO / \varpi^m), \] the limit running over all open compact subgroups $V_p \subset U(F^+_p)$.
	 
	 Using strong approximation, we compute
	 \[ H^i(V_p, H^0(U(F^+) \backslash U(\A_{F^+}^\infty) / \widetilde{K}_U^p, \cO / \varpi^m)) \\ = H^i( U(F^+) \cap (\widetilde{K}_U^p V_p), \cO / \varpi^m).\]
	Taking the limit, we get a product of copies of $\cO / \varpi^m$ in degree 0, and $0$ in all higher degrees. This completes the proof.
\end{proof}
Combining this theorem with the results of the previous section, we obtain the following.
\begin{thm}\label{thm:ord_deg_shifting}
	Let $\widetilde{K} \subset \widetilde{G}(\A_{F^+}^\infty)$ be a good subgroup which is decomposed with respect to $P$, and such that $\widetilde{K}_{\overline{v}} = \Iw_{\overline{v}}$ for each place $\overline{v} \in \overline{S}_p$. Let $\widetilde{\lambda} \in (\Z^{2n}_+)^{\Hom(F^+, E)}$, let $w \in {}^r W^P$, and let $\lambda_w = w(\widetilde{\lambda} + \rho) - \rho \in (\Z^{n}_+)^{\Hom(F, E)}$. Let $\m \subset \T^S$ be a non-Eisenstein maximal ideal, and let $\widetilde{\m} = \cS^\ast(\m)$. Fix integers $c \geq b \geq 0$ with $c \geq 1$. Then for any $j \in \Z$, $\cS$ descends to a surjective homomorphism
	\[ \widetilde{\T}^{S, \ord}( H^j(\partial \widetilde{X}_{\widetilde{K}(b,c)}, \cV_{\widetilde{\lambda}})^{\ord}_{\widetilde{\m}} ) \to \T^{S, \ord}(\cO(\alpha_{w_0^G w w_0^{\widetilde{G}}}) \otimes_{\cO} \tau^{-1}_{w_0^G w w_0^{\widetilde{G}}} H^{j - l(w)}( X_{K(b, c)}, \cV_{\lambda_w})_\m^{\ord}). \]
\end{thm}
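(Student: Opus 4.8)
The plan is to combine the direct summand result for completed boundary cohomology (Theorem~\ref{thm:direct_summand_of_completed_boundary_cohomology}) with the computation of the ordinary part of a parabolic induction (Proposition~\ref{prop:ordinary_part_of_w_induction}), and then descend the resulting morphism to finite level using the Hida-theoretic comparison isomorphisms of \S\ref{sec:hida_theory}. First I would apply Theorem~\ref{thm:direct_summand_of_completed_boundary_cohomology} to obtain that $\Ind_{P(F_p^+)}^{\widetilde{G}(F_p^+)} \pi(K^p, m)_{\m}$ is a $\widetilde{\T}^S$-equivariant direct summand of $\widetilde{\pi}_\partial(\widetilde{K}^p, m)_{\widetilde{\m}}$. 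Taking $\ord R\Gamma(N(\cO_{F^+, p}), -)$ of both sides (and using Proposition~\ref{prop:independence_of_weight_tildeG} to pass to general weight $\widetilde{\lambda}$ via tensoring with $\cO(w_0^{\widetilde{G}}\widetilde{\lambda})$), we see that $\ord R\Gamma(N(\cO_{F^+,p}), \cO(w_0^{\widetilde G}\widetilde\lambda) \otimes_\cO \Ind_{P(F_p^+)}^{\widetilde G(F_p^+)}\pi(K^p,m)_\m)$ is a direct summand of $\widetilde\pi_\partial^{\ord}(\widetilde K^p, \widetilde\lambda, m)_{\widetilde\m}$.

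Next I would analyze the left-hand side using the excision filtration $I_{\geq i}$ by Schubert cells from \S\ref{computation of ordinary parts}: the distinguished triangles \eqref{eqn:schubert_cells_derived}, together with Lemma~\ref{lem:exactness_of_degree_j-ordinary_part}, show (after applying $\ord R\Gamma(N(\cO_{F^+,p}),-)$ and twisting by $\cO(w_0^{\widetilde G}\widetilde\lambda)$) that $\ord R\Gamma(N(\cO_{F^+,p}), \cO(w_0^{\widetilde G}\widetilde\lambda)\otimes_\cO I_{\geq 0}(\pi(K^p,m)_\m))$ has a filtration whose graded pieces are the $\ord R\Gamma(N(\cO_{F^+,p}), \cO(w_0^{\widetilde G}\widetilde\lambda)\otimes_\cO I_{w'}(\pi(K^p,m)_\m))$ for $w'\in {}^rW^P$. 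By Proposition~\ref{prop:ordinary_part_of_w_induction}, the summand corresponding to $w'$ is, up to a twist by $\cO(\chi_{w'})$ and an application of $\tau_{(w')^{-1}}$, the complex $\ord R\Gamma(N_n(\cO_{F,p}), \pi(K^p,m)_\m)$ shifted by $[-[F^+:\Q]n^2 + l(w')]$; tensoring through with $\cO(w_0^{\widetilde G}\widetilde\lambda)$ and using Proposition~\ref{prop:independence_of_weight} (together with the character identity $\cO(\chi_{w'})\cong \cO(-\rho + (w')^{-1}w_0^P(\rho))\otimes_\cO\cO(\alpha_{w'})$ and the weight dictionary $\lambda_w = w(\widetilde\lambda+\rho)-\rho$), this piece is identified with $\cO(\alpha_{w_0^G w w_0^{\widetilde G}})\otimes_\cO\tau^{-1}_{w_0^G w w_0^{\widetilde G}}\pi^{\ord}(K^p, \lambda_w, m)_\m$ for the appropriate relation between $w'$ and $w$ (so that the shift $[F^+:\Q]n^2 - l(w')$ becomes $l(w)$). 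The upshot is that after applying $R\Gamma(T(\cO_{F^+,p})(b),-)$ and invoking Proposition~\ref{prop:independence_of_weight_tildeG} on the boundary side and Proposition~\ref{prop:comparison_of_completed_and_classical_ord_coh} (suitably adapted to boundary cohomology) to pass back to finite level, one gets that $\cO(\alpha_{w_0^G w w_0^{\widetilde G}})\otimes_\cO\tau^{-1}_{w_0^G w w_0^{\widetilde G}}R\Gamma(X_{K(b,c)},\cV_{\lambda_w})^{\ord}_\m[-l(w)]$ appears as a subquotient (in the filtered sense, hence on cohomology) of $R\Gamma(\partial\widetilde X_{\widetilde K(b,c)},\cV_{\widetilde\lambda})^{\ord}_{\widetilde\m}$, compatibly with the Hecke action via $\cS$. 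Passing to $H^j$ and reading off the induced map of Hecke algebra images yields the claimed surjection $\widetilde{\T}^{S,\ord}(H^j(\partial\widetilde X_{\widetilde K(b,c)},\cV_{\widetilde\lambda})^{\ord}_{\widetilde\m})\to \T^{S,\ord}(\cO(\alpha_{w_0^G w w_0^{\widetilde G}})\otimes_\cO\tau^{-1}_{w_0^G w w_0^{\widetilde G}}H^{j-l(w)}(X_{K(b,c)},\cV_{\lambda_w})^{\ord}_\m)$, where surjectivity is automatic because the target is by definition the image of $\widetilde\T^{S,\ord}$ (acting via $\cS$) and the left action is the ``universal'' one. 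The compatibility of the $\widetilde\U_{v,i}$ and $\U_{v,i}$ operators under $\cS$ (as arranged in \S\ref{sec:hida_theory}) ensures that the ordinary projectors match up, so the $\ord$ decorations are respected throughout.

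The main obstacle I anticipate is the careful bookkeeping of torus actions and twists: one must track precisely how $\tau_w$, the characters $\chi_w$, $\alpha_w$, and the weight shift interact when one tensors the local computation of Proposition~\ref{prop:ordinary_part_of_w_induction} (which is stated for trivial coefficients $\pi(K^p,m)$) with the weight module $\cO(w_0^{\widetilde G}\widetilde\lambda)$ and then takes $T(\cO_{F^+,p})(b)$-invariants to descend to finite level, verifying that the net effect on the weight is exactly $\lambda_w = w(\widetilde\lambda+\rho)-\rho$ and that the residual twist is $\cO(\alpha_{w_0^G w w_0^{\widetilde G}})$ with the $\tau^{-1}_{w_0^G w w_0^{\widetilde G}}$ reindexing of the torus. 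A secondary but genuine point is that the filtration on $\ord R\Gamma(N(\cO_{F^+,p}),-)$ coming from the Schubert stratification is only a filtration, not a direct sum, so one does not directly get $H^{j-l(w)}(X_{K(b,c)},\cV_{\lambda_w})^{\ord}_\m$ as a \emph{summand} of $H^j$ of the boundary; rather one gets a subquotient, which is enough for the Hecke algebra surjection but requires using that the relevant connecting maps in \eqref{eqn:schubert_cells_derived} are $\widetilde\T^{S,\ord}$-equivariant so that the Hecke action descends to the subquotient. This is exactly parallel to how one argues in the Fontaine--Laffaille setting (\S\ref{sec: FL degree-shifting}) that the annihilator of the boundary Hecke module, pushed forward along $\cS$, lands inside the annihilator of $H^{j-l(w)}(X_{K(b,c)},\cV_{\lambda_w})^{\ord}_\m$ — and from that inclusion of annihilators the surjection of Hecke algebra quotients follows formally. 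Note that here, unlike in the Fontaine--Laffaille case, there is no extra nilpotent ideal to manage at this stage because we are working with the completed boundary cohomology directly and only the relative Weyl group appears; the nilpotence in the final Theorem~\ref{thm:lgcord_intro} enters later, through the passage from the per-degree Hecke algebras back to $\T^S(K(b,c),\lambda)^{\ord}$ and through Theorem~\ref{thm:existence_of_Hecke_repn_for_GL_n}.
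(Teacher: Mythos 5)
Your proposal follows essentially the same route as the paper's proof: the direct summand statement for completed boundary cohomology, the Schubert-cell filtration together with Lemma~\ref{lem:exactness_of_degree_j-ordinary_part}, Proposition~\ref{prop:ordinary_part_of_w_induction} for the graded pieces, the same twist/weight bookkeeping via $x = w_0^G w w_0^{\widetilde{G}}$, and the formal passage from a Hecke-equivariant subquotient to a surjection of Hecke algebra images. The only points you leave implicit — passing to the inverse limit over $m$ (using finite generation of the cohomology over $\cO$) to get $\cO$-coefficients, and splitting off the $\m$-localization from the $\widetilde{\m}$-localization at the end — are exactly the routine steps the paper records, so your argument is correct.
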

\begin{proof}
Let $m \geq 1$. To save space, we abbreviate functors $\Gamma(H, -)$ of $H$-invariants as $\Gamma_H$. By Theorem \ref{thm:direct_summand_of_completed_boundary_cohomology}, Lemma \ref{lem_ord_functors_commute}, and Proposition \ref{prop:independence_of_weight_tildeG}, the complex
\[ R\Gamma_{\widetilde{K}(0, c)/\widetilde{K}(b, c)}(\partial \widetilde{X}_{\widetilde{K}(b,c)}, \cV_{\widetilde{\lambda}} / \varpi^m)_{\widetilde{\m}}^{\ord} \]
admits the complex
\[ \ord_b R \Gamma_{T(\cO_{F^+, p})(b)} \cO(w_0^{\widetilde{G}} \widetilde{\lambda})\otimes_\cO R \Gamma_{N(\cO_{F^+, p})} \Ind_{P(F^+_p)}^{\widetilde{G}(F^+_p)} \pi(K^p, m)_{\widetilde{\m}} \]
as a $\widetilde{\T}^{S, \ord}$-equivariant direct summand. These direct sum decompositions are compatible as $m$ varies, so after passing to the inverse limit we get a surjection of $\widetilde{\T}^{S, \ord}$-algebras:
\numequation\label{eqn:hecke_alg_surjection_0} \begin{split} \widetilde{\T}^{S, \ord}&( H^j(\partial \widetilde{X}_{\widetilde{K}(b,c)}, \cV_{\widetilde{\lambda}})_{\widetilde{\m}}^{\ord}) \\&\to \widetilde{\T}^{S, \ord}(\varprojlim_m \ord_b R^j \Gamma_{T(\cO_{F^+, p})(b)} \cO(w_0^{\widetilde{G}} \widetilde{\lambda}) \otimes_\cO R \Gamma_{N(\cO_{F^+, p})} \Ind_{P(F^+_p)}^{\widetilde{G}(F^+_p)} \pi(K^p, m)_{\widetilde{\m}}). \end{split}
\end{equation}
 On the other hand, it follows from Lemma \ref{lem:exactness_of_degree_j-ordinary_part} that for any $i \geq 0$, we have a short exact sequence of $\widetilde{\T}^{S, \ord}$-modules:
\[ \begin{split} 0 & \to \ord_b R^j \Gamma_{T(\cO_{F^+, p})(b)}\cO(w_0^{\widetilde{G}} \widetilde{\lambda}) \otimes_\cO R \Gamma_{N(\cO_{F^+, p})}  I_{\geq i+1} \pi(K^p, m)_{\widetilde{\m}}\\&\to \ord_b R^j \Gamma_{T(\cO_{F^+, p})(b)}\cO(w_0^{\widetilde{G}} \widetilde{\lambda}) \otimes_\cO R \Gamma_{N(\cO_{F^+, p})} I_{\geq i} \pi(K^p, m)_{\widetilde{\m}} \\ & \to \oplus_{\substack{ w \in {}^r W^P \\ l_r(w) = i}} \ord_b R^j \Gamma_{T(\cO_{F^+, p})(b)}\cO(w_0^{\widetilde{G}} \widetilde{\lambda}) \otimes_\cO R \Gamma_{N(\cO_{F^+, p})} I_{w} \pi(K^p, m)_{\widetilde{\m}} \to 0. \end{split} \]
These are compatible as $m$ varies, and the cohomology groups are finitely generated $\cO$-modules, so we can pass to the limit to obtain short exact sequences of $\cO$-modules. It follows that for any $i \geq 0$ and any element $w \in {}^rW^P$ of length $l_r(w) = i$, there are surjective homomorphisms of $\widetilde{\T}^{S, \ord}$-algebras
\numequation\label{eqn:hecke_alg_surjection_1} \begin{split} \widetilde{\T}^{S, \ord}&( \varprojlim_m \ord_b R^j \Gamma_{T(\cO_{F^+, p})(b)}\cO(w_0^{\widetilde{G}} \widetilde{\lambda}) \otimes_\cO R \Gamma_{N(\cO_{F^+, p})} I_{\geq i} \pi(K^p, m)_{\widetilde{\m}} ) \\ & \to \widetilde{\T}^{S, \ord}( \varprojlim_m \ord_b R^j \Gamma_{T(\cO_{F^+, p})(b)}\cO(w_0^{\widetilde{G}} \widetilde{\lambda})\otimes_\cO R \Gamma_{N(\cO_{F^+, p})} I_{\geq i+1} \pi(K^p, m)_{\widetilde{\m}}) \end{split}
\end{equation}
and
\numequation\label{eqn:hecke_alg_surjection_2} \begin{split} \widetilde{\T}^{S, \ord}&( \varprojlim_m \ord_b R^j \Gamma_{T(\cO_{F^+, p})(b)}\cO(w_0^{\widetilde{G}} \widetilde{\lambda}) \otimes_\cO R \Gamma_{N(\cO_{F^+, p})} I_{\geq i} \pi(K^p, m)_{\widetilde{\m}} ) \\ & \to \widetilde{\T}^{S, \ord}( \varprojlim_m \ord_b R^j \Gamma_{T(\cO_{F^+, p})(b)}\cO(w_0^{\widetilde{G}} \widetilde{\lambda}) \otimes_\cO R \Gamma_{N(\cO_{F^+, p})} I_{w} \pi(K^p, m)_{\widetilde{\m}} ). \end{split}
\end{equation}
By definition, $I_{\geq 0} \pi(K^p, m)$ is (the restriction to $B(F^+_p)$ of)  $\Ind_{P(F^+_p)}^{\widetilde{G}(F^+_p)} \pi(K^p, m)$. On the other hand, Proposition \ref{prop:ordinary_part_of_w_induction} shows that there is a $\widetilde{\T}^{S, \ord}$-equivariant isomorphism
\numequation\label{eqn:hecke_alg_surjection_3} \begin{split}   \ord_b R^j & \Gamma_{T(\cO_{F^+, p})(b)}\cO(w_0^{\widetilde{G}} \widetilde{\lambda}) \otimes_\cO R \Gamma_{N(\cO_{F^+, p})} I_{w} \pi(K^p, m)_{\widetilde{\m}} \\ & \cong R^{j - [F^+ : \Q]n^2 + l(w)} \Gamma_{T_n(\cO_{F, p})(b)} \cO / \varpi^m(\chi_w) \otimes_\cO \cO(w_0^{\widetilde{G}} \widetilde{\lambda})\otimes_{\cO} \tau^{-1}_w \pi^{\ord}(K^p, m).\end{split} 
\end{equation}
We recall that there is an isomorphism $\cO(\chi_w) \cong \cO(-\rho + w^{-1} w_0^P \rho) \otimes_{\cO} \cO(\alpha_w)$. We have $(-\rho+ w^{-1} w_0^P \rho + w_0^{\widetilde{G}} \widetilde{\lambda})^w = w_0^G \lambda_x$, where $x = w_0^G w w_0^{\widetilde{G}}$. Here we write $w_0^G$ for the longest element of $W_P$, $w_0^{\widetilde{G}}$ for the longest element of $W$, and note that the map $w \mapsto w_0^G w w_0^{\widetilde{G}}$ is an involution of ${}^r W^P$ which satisfies $l(w_0^G w w_0^{\widetilde{G}}) = [F^+ : \Q] n^2 - l(w)$. Applying  Propositions \ref{prop:comparison_of_completed_and_classical_ord_coh} and \ref{prop:independence_of_weight}, it follows that the cohomology group in (\ref{eqn:hecke_alg_surjection_3}) may be identified with
\[ \cO(\alpha_{w_0^G x w_0^{\widetilde{G}}}) \otimes_{\cO}  \tau^{-1}_{w_0^G x w_0^{\widetilde{G}}} H^{j - l(x)}( X_{K(b, c)}, \cV_{\lambda_x} / \varpi^m)^\text{ord}_{\widetilde{\m}}. \]
Putting all of this together, we see that we can chain together the surjections (\ref{eqn:hecke_alg_surjection_0}), (\ref{eqn:hecke_alg_surjection_1}) and (\ref{eqn:hecke_alg_surjection_2}) to obtain a surjection homomorphism of $\widetilde{\T}^{S, \ord}$-algebras
\[ \widetilde{\T}^{S, \ord}( H^j(\partial \widetilde{X}_{\widetilde{K}(b,c)}, \cV_{\widetilde{\lambda}})^{\ord}_{\widetilde{\m}} ) \to \T^{S, \ord}(\cO(\alpha_{w_0^G x w_0^{\widetilde{G}}}) \otimes_{\cO} \tau^{-1}_{w_0^G x w_0^{\widetilde{G}}} H^{j - l(x)}( X_{K(b, c)}, \cV_{\lambda_x})_{\widetilde{\m}}^{\ord}). \]
The proof is complete on noting that $H^\ast(X_{K(b, c)}, \cV_{\lambda_x})_\m^{\ord}$ is a $\T^{S, \ord}$-invariant direct summand of $H^\ast(X_{K(b, c)}, \cV_{\lambda_x})_{\widetilde{\m}}^{\ord}$.
\end{proof}
In order to apply Theorem \ref{thm:ord_deg_shifting}, we will make use of the following combinatorial lemma. We use the following notation: if $\lambda \in (\Z^n_+)^{\Hom(F, E)}$ and $a \in \Z$, then $\lambda(a) \in (\Z^n_+)^{\Hom(F, E)}$ is the highest weight defined by the formula $\lambda(a)_{\tau, i} = \lambda_{\tau, i} + a$ for all $\tau \in \Hom(F, E)$, $i = 1, \dots, n$. We recall as well that we have previously fixed the notation $\widetilde{S}_p$ for a set of $p$-adic places of $F$ lifting $\overline{S}_p$, and $\widetilde{I}_p$ for the set of embeddings $\tau : F \hookrightarrow E$ inducing a place of $\widetilde{S}_p$ (cf. \S \ref{sec:unitary_group_setup}).
\begin{lemma}\label{lem_ord_deg_shift_divisible_degrees}
	Fix $m \geq 1$. Then we can find $\lambda \in (\Z^n_+)^{\Hom(F, E)}$ with the following properties:
	\begin{enumerate}
		\item There is an isomorphism $\cO(\lambda) / \varpi^m \cong \cO / \varpi^m$ of $T_n(F_p)$-modules.
		\item The sum $\sum_{i=1}^n (\lambda_{\tau, i} + \lambda_{\tau c, i})$ is independent of the choice of $\tau \in \Hom(F, E)$.
		\item For each $i = 0, \dots, n^2$, there exists an element $w_i = (w_{i, \overline{v}})_{\overline{v} \in \overline{S}_p} \in {}^r W^P$, an integer $a_i \in (p-1)\Z$, and a dominant weight $\widetilde{\lambda}_i \in (\Z^{2n}_+)^{\Hom(F^+, E)}$, all satisfying the following conditions:
		\begin{enumerate}
			\item $\widetilde{\lambda}_i$ is CTG (cf. Definition \ref{defn:coh_trivial}).
			\item For each $\overline{v} \in \overline{S}_p$, $l_r(w_{i, \overline{v}}) = n^2-i$. Consequently, $l(w_i) = [F^+ : \Q] (n^2 -i)$.
			\item We have $w_i(\widetilde{\lambda}_i + \rho) - \rho = \lambda(a_i)$.
		\end{enumerate}
	\end{enumerate}
\end{lemma}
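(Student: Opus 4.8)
\textbf{Proof proposal for Lemma \ref{lem_ord_deg_shift_divisible_degrees}.}

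The plan is to build $\lambda$ and the auxiliary data $(w_i, a_i, \widetilde{\lambda}_i)$ more or less by hand, exploiting the flexibility of Kostant-type weights and the fact that we only need congruences modulo $\varpi^m$. First I would settle condition (1): the $T_n(F_p)$-module $\cO(\lambda)/\varpi^m$ is determined by the characters $u \mapsto \prod_{\tau} \prod_i \tau(u_i)^{\lambda_{\tau,i}}$ of $T_n(\cO_{F,p})$, and since $(\cO_{F,v}/\varpi_v^m)^\times$ has order dividing $\varpi^m$-torsion, choosing each $\lambda_{\tau,i} \in (p-1)p^{m-1}[F:\Q]\,\Z$ (or any common multiple of the exponents of these finite groups) forces $\cO(\lambda)/\varpi^m \cong \cO/\varpi^m$. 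So it suffices to produce a $\lambda$ with all entries divisible by a fixed large integer $M$ (depending on $m$, $p$, $F$) satisfying (2) and (3), and then rescale if necessary.

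Next I would address (3) for a \emph{single} place of $F^+$ at a time, using the decomposition ${}^r W^P = \prod_{\overline{v}} {}^r W^P_{\overline{v}}$ and the embedding identifications of \S\ref{sec:unitary_group_setup}. For each $i \in \{0, \dots, n^2\}$ and each $\overline{v}$, I would pick $w_{i,\overline{v}} \in {}^r W^P_{\overline{v}}$ with $l_r(w_{i,\overline{v}}) = n^2 - i$ (possible since $l_r$ on ${}^r W^P_{\overline{v}}$ takes every integer value in $[0, n^2]$, as $W^P_\tau$ for $\GL_{2n}$ relative to the $(n,n)$-parabolic is a set of shuffles whose length function is surjective onto $[0, n^2]$), giving $l(w_i) = [F^+:\Q](n^2-i)$ automatically. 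The condition $w_i(\widetilde{\lambda}_i + \rho) - \rho = \lambda(a_i)$ is then read as a \emph{definition} of $\widetilde{\lambda}_i$: given $\lambda$ (hence $\lambda(a_i)$) and $w_i$, set $\widetilde{\lambda}_i = w_i^{-1}(\lambda(a_i) + \rho) - \rho$; the point is to check this is dominant for $\widetilde{G}$, i.e. lies in $(\Z^{2n}_+)^{\Hom(F^+,E)}$. Dominance of $\widetilde{\lambda}_i$ holds precisely when $\lambda(a_i) + \rho$ lies in the $w_i$-image of the dominant cone, which (since $w_i \in {}^r W^P$ carries dominant weights for $\widetilde{G}$ to $P$-dominant ones) amounts to a system of strict inequalities among the $\lambda_{\tau,i}$'s and the gaps between $\lambda_{\widetilde\tau}$ and $-\lambda_{\widetilde\tau c}$; these can be arranged by taking $\lambda$ ``sufficiently spread out'' (very dominant, with the $\widetilde\tau$- and $\widetilde\tau c$-blocks widely separated), at the cost of choosing $a_i \in (p-1)\Z$ large enough to absorb the shift — this is where we use the freedom in $a_i$ and the divisibility by $p-1$.

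The remaining conditions are (2) and (3)(a). For (2), note that $\sum_i(\lambda_{\tau,i} + \lambda_{\tau c,i})$ is the value on the central torus, so I would simply impose at the outset that $\lambda$ be \emph{parallel} in the sense that $\sum_i(\lambda_{\tau,i}+\lambda_{\tau c,i})$ is the same for all $\tau$; this is a linear constraint compatible with all the spreading and divisibility demands above, and is preserved under $\lambda \mapsto \lambda(a_i)$. For (3)(a), CTG-ness of $\widetilde{\lambda}_i$, I would invoke Lemma \ref{central character}: having fixed a large enough $\lambda$ and the $w_i$, each $\widetilde{\lambda}_i$ is determined, but I still have one free embedding $\tau_0 : F^+ \hookrightarrow E$ at which I can perturb $\widetilde{\lambda}_{i,\tau_0}$ (correspondingly perturbing $\lambda$ at the two places of $F$ above $\overline{v}_0$, which only strengthens dominance if done in the dominant direction) to force condition \eqref{eqn:criterion_for_CTG}; running this once per $i$ — or, more cleanly, first choosing $\lambda$ so that \emph{every} $\widetilde{\lambda}_i$ is simultaneously CTG by making the perturbations at $[F^+:\Q]$ distinct auxiliary embeddings, using $[F^+:\Q] \geq n^2+1$ is false in general, so instead argue place-by-place — I obtain all the CTG conditions. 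The main obstacle I anticipate is the bookkeeping in the previous paragraph: verifying that a single weight $\lambda$ can be chosen to make $w_i^{-1}(\lambda(a_i)+\rho) - \rho$ dominant \emph{for all} $n^2+1$ values of $i$ at once while keeping $\lambda$ parallel and $M$-divisible; this is a finite intersection of nonempty ``very dominant'' cones and so is nonempty, but writing down an explicit $\lambda$ and the matching $a_i$ requires care with the length-function/shuffle combinatorics of $W^P_\tau$ and with how $\rho$ transforms under $w_i$.
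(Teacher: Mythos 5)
There is a genuine gap at the heart of your step (3): you fix an \emph{arbitrary} element $w_{i,\overline{v}}$ of relative length $n^2-i$, define $\widetilde{\lambda}_i = w_i^{-1}(\lambda(a_i)+\rho)-\rho$, and claim dominance can be arranged by making $\lambda$ "sufficiently spread out" and $a_i$ "large enough to absorb the shift". That is not how the constraints work. Writing the $\tau$-component of $\lambda(a)+\rho$ (under the dictionary \eqref{eqn:wt_dictionary}) as two decreasing blocks, one of the form $\alpha_j - a$ and one of the form $\beta_j + a$, dominance of $w_i^{-1}(\lambda(a_i)+\rho)-\rho$ says exactly that the interleaving of the two blocks prescribed by the shuffle $w_{i,\overline{v}}$ is realized by the actual values; this is a \emph{two-sided} system of inequalities on $a_i$ (already for $n=2$ and the length-$2$ shuffle $X=\{1,4\}$ one needs $2a < \alpha_1-\beta_1$ \emph{and} $2a > \alpha_2-\beta_2$), whose solvability is a condition relating the gap structure of the $\widetilde{\tau}$-block of $\lambda$ to that of the $\widetilde{\tau}c$-block, and different shuffles of the same length impose incompatible gap relations (for $n=2$, $X=\{1,4\}$ needs the gaps of one block to exceed those of the other, $X=\{2,3\}$ needs the reverse). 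Since one and the same $\lambda$ must serve all $n^2+1$ values of $i$ (only $a_i$ varies), your closing assertion that this is "a finite intersection of nonempty very dominant cones and so is nonempty" does not hold up: the conditions are not of "very dominant" type, and nonempty cones need not intersect. This is exactly why the paper's proof is an explicit construction: it takes $\lambda$ with arithmetic-progression blocks of gaps $M$ and $nM$, chooses for each $i$ a \emph{specific} shuffle $X_i$ (inserting one block as a contiguous run into the other), shows the resulting window for $a_i$ is an interval of length $M/2\geq 4n$, and picks $a_i\equiv M/8 \bmod M$ there so that $a_i\in(p-1)\Z$ (note that "large enough" $a_i$ also does not by itself give divisibility by $p-1$; one needs the window to be quantitatively long).

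Your route to (3)(a) is also not sound as written. Lemma \ref{central character} perturbs $\widetilde{\lambda}$ at a single embedding $\tau_0$, but here $\widetilde{\lambda}_i$ is rigidly tied to $\lambda$ by $w_i(\widetilde{\lambda}_i+\rho)-\rho=\lambda(a_i)$, so perturbing $\widetilde{\lambda}_i$ forces a perturbation of $\lambda$ that is shared by all $i$, can destroy dominance of the other $\widetilde{\lambda}_j$, and (since the perturbation in Lemma \ref{central character} is only "sufficiently large", with no divisibility) threatens conditions (1) and (2); unlike in the Fontaine--Laffaille chapter, there is no auxiliary place here at which the weight is unconstrained. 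Your "argue place-by-place" patch is not substantiated. The paper instead verifies CTG \emph{directly} for its explicit $\widetilde{\lambda}_i$, by a multiset computation (splitting the relevant multiset into pieces distinguished by residues modulo $M$ and bounding multiplicities), which is the second ingredient your proposal is missing. Minor additional point: your specific modulus $(p-1)p^{m-1}[F:\Q]$ for condition (1) is not in general a multiple of the exponent of $(\cO/\varpi^m)^\times$ (the residue field of $E$ may be larger than $\F_p$), though your parenthetical hedge fixes this; the paper's $M$ is also made divisible by $8(p-1)$ precisely to secure the congruence and divisibility requirements on $a_i$ discussed above.
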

\begin{proof}
	Let $M > 16 n$ be a non-negative integer which is divisible by $8 (p-1) \# (\cO / \varpi^m)^\times$. We will show that we can take $\lambda$ to be the dominant weight defined by the formulae
	\[ \lambda_{\tau} = \left\{ \begin{array}{cc} (-n M, -2nM, \dots, -n^2 M) & \text{ if }\tau \in \widetilde{I}_p;\\ (0, -M, \dots, (1-n)M) & \text{ if }\tau c \in \widetilde{I}_p. \end{array}\right. \]
	If $\widetilde{\lambda}(a)$ denotes the element of $(\Z^{2n})^{\Hom(F^+, E)}$ that corresponds to $\lambda(a)$ under our identifications, then we have
	\[ \widetilde{\lambda}(a) = ( (n-1)M -a, \dots, -a, -n M + a, \dots, -n^2 M + a). \]
	In order to construct the elements $w_i$ and $a_i$, we make everything explicit. Our choice of the set $\widetilde{S}_p$ determines an isomorphism of the group $(\Res_{F^+ / \Q} \widetilde{G})_{\Q_p}$ with the group $\prod_{\overline{v} \in \overline{S}_p} \Res_{F_{\widetilde{v}} / \Q_p} \GL_{2n}$, hence an identification of ${}^r W_{\overline{v}}$ with $S_{2n}$ and of ${}^r W_{P, \overline{v}}$ with the subgroup $S_n \times S_n$. We can identify the set ${}^r W^P_{\overline{v}}$ of representatives for the quotient ${}^r W_{P, \overline{v}} \backslash {}^r W_{\overline{v}}$ with the set of $n$-element subsets of $\{ 1, \dots, 2n \}$. Given such a subset $X$, there is a unique permutation $\tau$ of $\{ 1, \dots, 2n \}$ with $\tau( \{ 1, \dots, n \}) = X$ and with the property that $\tau$ is increasing on both $\{ 1, \dots, n \}$ and $\{ n+1, \dots, 2n \}$. The corresponding element of ${}^r W^P_{\overline{v}}$ is $\sigma_X = \tau^{-1}$. The length of a permutation $w \in S_{2n}$ is given by the formula $l(w) = \#\{ 1 \leq i < j \leq 2n \mid w(i) > w(j) \}$. 
	
	Given $i$, we choose integers $r, x \geq 0$ with $n x + n - r = n^2 - i$ and $1 \leq r \leq n$ (the choice is unique). We define $w_i$ by setting $w_{i,\overline{v}} = \sigma_{X_i}$ for each $\overline{v} \in \overline{S}_p$, where $X_i = \{ x+1, x+2, \dots, x+r, x + r + 2, x + r + 3, \dots, x + n + 1 \}$. We have
	\[ \begin{split} (w_{i,\overline{v}}(1), \dots,  &w_{i,\overline{v}}(2n)) = \\  (n+1, n+2, \dots, n+x, &1, 2, \dots, r, n+ x + 1, r+1, \\  & r+2, \dots, n, n+ x + 2, n+x+3, \dots, 2n). \end{split} \]
	We observe that indeed $l_r(w_{i, \overline{v}}) = n^2 - i$. We need to choose $a_i$ so that the weight $\widetilde{\lambda}_i = w_i^{-1}(\widetilde{\lambda}(a_i) + \rho) - \rho$ is dominant. We first calculate $w_i^{-1}(\widetilde{\lambda}(a_i))$. For any $\tau \in \Hom(F^+, E)$, we have $w_i^{-1}(\widetilde{\lambda}(a_i))_{\tau, j} = \widetilde{\lambda}(a_i)_{\tau, w_i(j)}$, hence the $\tau$ component of $w_i^{-1}(\widetilde{\lambda}(a_i))$ is equal to
	\[ \begin{split} (\widetilde{\lambda}(a_i)_{\tau, n+1},  \dots, \widetilde{\lambda}(a_i)_{\tau, n+x},& \widetilde{\lambda}(a_i)_{\tau, 1},  \dots, \widetilde{\lambda}(a_i)_{\tau, r}, \widetilde{\lambda}(a_i)_{\tau, n+x+1}, \\ & \widetilde{\lambda}_{\tau, r+1}(a_i),  \dots, \widetilde{\lambda}_{\tau, n}(a_i), \widetilde{\lambda}_{\tau, n+x+2}(a_i), \dots, \widetilde{\lambda}_{\tau, 2n}(a_i)).  \end{split} \]
	\[ \begin{split} = ( -n M + a_i, \dots, - n x M + a_i,& (n-1)M - a_i, \dots, (n-r)M - a_i, -n (x+1) M + a_i,\\& (n-r-1) M - a_i, \dots, - a_i, -n(x+2)M + a_i, \dots, -n^2 M + a_i). \end{split}\]
	We see that $w_i^{-1}(\widetilde{\lambda}(a_i))$ is dominant if and only if the following 4 inequalities are satisfied:
	\numequation\label{eqn:dominant_weight_1}
	 - n x M + a_i \geq (n-1) M - a_i,
	\end{equation}
	\numequation\label{eqn:dominant_weight_2}
	(n-r)M - a_i \geq -n(x+1) M + a_i, 
	\end{equation}
	\numequation\label{eqn:dominant_weight_3}
	-n(x+1) M + a_i \geq (n-r-1) M - a_i,
	\end{equation}
	\numequation\label{eqn:dominant_weight_4}
	-a_i \geq -n(x+2)M + a_i.
	\end{equation}
 	These~$4$ inequalities are together equivalent to requiring that $a_i \in [(nx+2n -r - 1)M / 2, (nx+2n -r)M/2]$, a closed interval of length $M / 2$. Requiring instead that $w_i^{-1}(\widetilde{\lambda}(a_i) + \rho) - \rho$ is dominant leads to~$4$ similar inequalities, where the left-hand side and right-hand side differ to those in (\ref{eqn:dominant_weight_1})--(\ref{eqn:dominant_weight_4}) by an integer of absolute value at most $2n-1$. If we choose $a_i$ to be the unique integer in $[(nx+2n -r - 1)M / 2, (nx+2n -r)M/2]$ which is congruent to $M / 8 \text{ mod } M/2$, then $w_i^{-1}(\widetilde{\lambda}(a_i) + \rho) - \rho$ is dominant.
 	
 	To complete the proof of the lemma, we just need to explain why $\widetilde{\lambda}_i = w_i^{-1}(\widetilde{\lambda}(a_i) + \rho) - \rho$ is CTG. It suffices to show that for any $\tau \in \Hom(F^+, E)$, and for any $w \in W^P_{\overline{v}}$ (where $\overline{v}$ is the place of $F^+$ induced by $\tau$), the number 
  \begin{multline*} [w(\widetilde{\lambda}_{i} + \rho) - \rho]_{\tau, j} + [w(\widetilde{\lambda}_{i} + \rho) - \rho]_{\tau, 2n+1 - j}  \\
  = w(\widetilde{\lambda}_{i} + \rho)_{\tau, j} + w(\widetilde{\lambda}_{i} + \rho)_{\tau, 2n+1-j}
  	\end{multline*} 
  is not independent of $j$ as $j$ varies over integers $1 \leq j \leq n$. To show this, it suffices to show that the multiset
  \[ I = \{(\widetilde{\lambda}_{i} + \rho)_{\tau, j} + (\widetilde{\lambda}_{i} + \rho)_{\tau, k} \mid 1 \leq j < k \leq 2n \} \]
  does not contain any element with multiplicity at least $n$. We first consider the multiset
  \[ I' =  \{\widetilde{\lambda}_{i,\tau, j} + \widetilde{\lambda}_{i,\tau, k} \mid 1 \leq j < k \leq 2n \}. \]
  It is a union of the three multisets
  \[ I_1' = \{ ( - n \alpha +  \beta )M \mid 1 \leq \alpha \leq n, 0 \leq \beta \leq n-1 \}, \]
  \[ I_2' = \{ -n(\alpha + \beta)M + 2 a_i \mid 1 \leq \alpha < \beta \leq n \}, \]
  and
  \[ I_3' = \{ (\alpha + \beta) M - 2 a_i \mid 0 \leq \alpha < \beta \leq n-1 \}. \]
  Note that each element of $I_1'$ has multiplicity 1. Each element of $I'_2$ and $I'_3$ has multiplicity at most $n / 2$. Moreover, $I'_1$, $I'_2$, and $I'_3$ are mutually disjoint (look modulo $M$). It follows that no element of $I'$ has multiplicity at least $n$. To show that $I$ has no element of multiplicity at least $n$, we use the analogous decomposition $I = I_1 \cup I_2 \cup I_3$. The sets $I_1$, $I_2$ and $I_3$ are disjoint (look modulo $M$, and use the fact that each entry of $\rho$ has absolute value at most $(2n-1)/2$). Each element of $I_1$ appears with multiplicity 1, while each entry of $I_2$ and $I_3$ has multiplicity at most $n / 2$. This completes the proof.
\end{proof}
Lemma \ref{lem_ord_deg_shift_divisible_degrees} allows us to express certain cohomology groups of the spaces $X_K$ in degrees divisible by $[F^+ : \Q]$ in terms of middle degree cohomology of the spaces $\partial \widetilde{X}_{\widetilde{K}}$ (and hence, using Theorem \ref{what we get from CS}, of the spaces $\widetilde{X}_{\widetilde{K}}$). Indeed, combining the results so far of this section, we obtain the following result.
\begin{prop}\label{prop_ord_degree_shifting_in_all_degrees} Suppose that $[F^+ : \Q] > 1$. 
	Let $m \geq 1$ be an integer. Then there exists a dominant weight $\lambda 
	\in (\Z^n_+)^{\Hom(F, E)}$ such that a finite index subgroup of $\cO_F^\times$ acts trivially on $\cV_\lambda$ and for each  $i = 0, \dots, n^2-1$,  a dominant 
	weight $\widetilde{\lambda}_i \in (\Z^{2n}_+)^{\Hom(F^+, E)}$ which is CTG, 
	an integer $a_i$ divisible by $(p-1)$, and a Weyl element $w_i \in {}^r 
	W^P$ such that the following conditions are satisfied:
	Let $\widetilde{K} \subset \widetilde{G}(\A_{F^+}^\infty)$ be a good 
	subgroup which is decomposed with respect to $P$ and such that for each 
	$\overline{v} \in \overline{S}_p$, $\widetilde{K}_{\overline{v}} = 
	\widetilde{\Iw}_{\overline{v}}$. Fix integers $c \geq b \geq 0$ with $c 
	\geq 1$, and also an integer $m \geq 1$. Let $\m \subset \T^S$ be a 
	non-Eisenstein maximal ideal. Let $\widetilde{\m} = \cS^\ast(\m) \subset 
	\widetilde{\T}^S$, and suppose that $\overline{\rho}_{\widetilde{\m}}$ is 
	decomposed generic. Then:
	\begin{enumerate}
		\item There is an isomorphism $\cO(\lambda) / \varpi^m \cong \cO / \varpi^m$ of $\cO[T(F^+_p)]$-modules.
		\item For each  $i = 0, \dots, n^2-1$, the map $\cS$ descends to an 
		algebra homomorphism
		\[ \widetilde{\T}^{S, \ord}(H^d(\widetilde{X}_{\widetilde{K}(b, c)}, 
		\cV_{\widetilde{\lambda}_i})^{\ord}_{\widetilde{\m}}) \to \T^{S, 
			\ord}(\cO(\alpha_{w_i}) \otimes_{\cO} 
		\tau_{w_i}^{-1} H^{i[F^+:\Q]}(X_{K(b, c)}, 
		\cV_{\lambda(a_i)})_\m^{\ord}).   
		\]	
	\end{enumerate}
\end{prop}
\begin{proof}
	This follows on combining Theorem \ref{what we get from CS},  Theorem 
	\ref{thm:ord_deg_shifting}, and Lemma \ref{lem_ord_deg_shift_divisible_degrees}.
\end{proof}

In order to access all degrees of cohomology, we use a trick based on the fact that the group $G$ has a non-trivial centre. This is the motivation behind the next few results.

If $K \subset \GL_n(\A_F^{\infty})$ is a good subgroup, then we define
 \[A_K := 
	F^\times\backslash
	\A_F^\times /\det(K)\det(K_\infty)\R_{> 0}.\]
The quotient map \[ A_K \to 
F^\times\backslash\A_F^\times 
/\det(K)F_\infty^\times \] 
identifies 
$A_K$ with an extension of a ray class group by a real torus 
of dimension 
$[F^+:\Q]-1$ (with cocharacter lattice $F^\times \cap \det(K)$, a 
torsion-free congruence subgroup of $\cO_F^\times$). 
We denote the identity component of $A_K$ by $A_K^{\circ}$. If $g \in \GL_n(\A_{F}^\infty)$, then we set $\Gamma_{g, K} = \GL_n(F) \cap g K g^{-1}$, or $\Gamma_g = \Gamma_{g, K}$ if the choice of $K$ is fixed.
\begin{lemma}\label{splitcomponents} \leavevmode
	\begin{enumerate}
		\item The maps $x \mapsto (x,g)$ induce a homeomorphism 
		\[\coprod_{[g]\in\GL_n(F)\backslash 
			\GL_n(\A_{F}^\infty)/K} \Gamma_g \backslash X \cong X_K. \] 
		\item The determinant gives a continuous map \[X_K \overset{\det}{\to} A_K \] which 
		induces a 
		bijection on sets of connected components.  
		\item Suppose $g \in \GL_n(\A_{F}^\infty)$ and the two subgroups 
		$\det(\Gamma_g)$ and $\det(F^\times\cap K)$ of $F^\times$ are equal. Let $\Gamma_g^{1} = \SL_n(F) \cap 
		\Gamma_g$. Then the product map 
		\[ \Gamma_g^{1} \times (F^\times\cap K) \to \Gamma_g \]
		is a group isomorphism. Decomposing $X$ similarly as \[ X^{1} \times (\prod_{v |\infty} \R_{> 
			0})/\R_{> 
			0}  = X, \] where $X^{1} = \SL_n(F_\infty)/\prod_{v\mid \infty}\mathrm{SU}(n),$ we obtain a 
		decomposition \[\Gamma_g \backslash X = \left(\Gamma_g^1 \backslash X^1\right) 
		\times (F^\times \cap K) \backslash (\prod_{v |\infty} \R_{> 
			0})/\R_{> 
			0}.\]  
		\item Still assuming that $\det(\Gamma_g) = \det(F^\times\cap K)$, the map 
		$\det: F^\times \cap K \to F^\times \cap\det(K)$ is an isomorphism. The 
		composition of maps \[\left(\Gamma_g^1 \backslash X^1\right) \times (F^\times 
		\cap K) \backslash (\prod_{v |\infty} \R_{> 
			0})/\R_{> 
			0} = \Gamma_g\backslash X \hookrightarrow X_K \to A_K\] is given by 
		$(x,z) \mapsto \det(g)z^n$, and the map $z \mapsto \det(g)z^n$ is an 
		isomorphism from $(F^\times 
		\cap K) \backslash (\prod_{v |\infty} \R_{> 
			0})/\R_{> 
			0}$ to the connected component $A_K^{[\det(g)]}$ of $A_K$ containing 
		$[\det(g)]$.
	\end{enumerate}
\end{lemma}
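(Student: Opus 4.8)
The plan is to establish the four parts in order; the only substantive inputs are strong approximation for $\SL_n$, the Cartan (polar) decomposition at the archimedean places, and Dirichlet's unit theorem. Throughout, write $r_1$ (resp.\ $r_2$) for the number of real (resp.\ complex) places of $F$, and set $D = \bigl(\prod_{v\mid\infty}\R_{>0}\bigr)/\R_{>0}$.

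For part (1): choosing representatives $g_1,\dots,g_r$ for the finite double coset space $\GL_n(F)\backslash\GL_n(\A_F^\infty)/K$, the map $(x,i)\mapsto[(x,g_i)]$ identifies $\coprod_i \Gamma_{g_i}\backslash X$ with $X_K$, because the $\GL_n(F)$-stabiliser of $g_iK$ is exactly $\Gamma_{g_i}$; neatness of $K$ makes each $\Gamma_{g_i}$ torsion-free with free, properly discontinuous action on $X$, so this is a homeomorphism, and since $X$ is connected these are the connected components of $X_K$. For part (2) one checks directly that $\det$ carries the data defining $X_K$ into that defining $A_K$, giving a continuous map, and it suffices to see it is bijective on $\pi_0$. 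Since $X$ is connected, $\pi_0(X_K)=\GL_n(F)\backslash\GL_n(\A_F^\infty)/K$; since $F_\infty^\times/\det(K_\infty)\R_{>0}\cong(\R_{>0})^{r_1+r_2-1}$ is connected, $\pi_0(A_K)=F^\times\backslash\A_F^{\infty,\times}/\det(K)$; and the induced map is $[g]\mapsto[\det g]$. Surjectivity is clear as $\det\colon\GL_n(\A_F^\infty)\to\A_F^{\infty,\times}$ is surjective; for injectivity, after correcting (by a matrix $\diag(\ast,1,\dots,1)\in\GL_n(F)$ and an element of $K$) so that $\det g_1=\det g_2$, one has $g_1^{-1}g_2\in\SL_n(\A_F^\infty)$, and strong approximation in the form $\SL_n(\A_F^\infty)=\bigl(g_1^{-1}\SL_n(F)g_1\bigr)\cdot\bigl(K\cap\SL_n(\A_F^\infty)\bigr)$ (a $g_1$-conjugate of the usual statement, valid since conjugation by $g_1$ is a homeomorphism of $\SL_n(\A_F^\infty)$) forces $g_2\in\GL_n(F)g_1K$.

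For part (3), the group isomorphism $\Gamma_g^1\times(F^\times\cap K)\xrightarrow{\sim}\Gamma_g$ is elementary: the scalar subgroup $F^\times\cap K$ is central in $\Gamma_g$, torsion-free by neatness, and meets $\Gamma_g^1$ trivially (a torsion-free scalar of determinant $1$ is trivial), while surjectivity of the product map is precisely where the hypothesis $\det\Gamma_g=\det(F^\times\cap K)$ is used: for $\gamma\in\Gamma_g$ write $\det\gamma=t^n$ with $tI_n\in K$, so $\gamma(tI_n)^{-1}\in\Gamma_g^1$. The product decomposition of $X$ comes from the Cartan decomposition $\GL_n(F_v)/K_{\infty,v}\cong\R_{>0}\times\bigl(\SL_n(F_v)/(\text{maximal compact})\bigr)$ at each archimedean $v$: this gives $\GL_n(F_\infty)/K_\infty\cong\bigl(\prod_{v\mid\infty}\R_{>0}\bigr)\times X^1$, on which the central $\R^\times$ acts only through the first factor (diagonally, as scalars), so $X\cong X^1\times D$. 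Since $\SL_n(F_\infty)$ has determinant $1$ it acts trivially on $\prod_{v\mid\infty}\R_{>0}$, while scalars act trivially on $X^1$; hence the $\Gamma_g=\Gamma_g^1\times(F^\times\cap K)$-action is the product of the $\Gamma_g^1$-action on $X^1$ and the $(F^\times\cap K)$-action on $D$, and quotienting gives the asserted description of $\Gamma_g\backslash X$.

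For part (4), injectivity of $\det\colon F^\times\cap K\to F^\times\cap\det(K)$ is again torsion-freeness, and surjectivity follows by showing $F^\times\cap\det(K)\subseteq\det\Gamma_g$ via a second strong-approximation argument (lift $s\in F^\times\cap\det(K)$ both to $h\in gKg^{-1}$ with $\det h=s$ and to $\diag(s,1,\dots,1)\in\GL_n(F)$, then use density of $\SL_n(F)$ to push $\diag(s,1,\dots,1)^{-1}h$ into $\SL_n(F)$ while remaining inside $gKg^{-1}$), combined with the hypothesis, which yields $F^\times\cap\det(K)=\det\Gamma_g=\det(F^\times\cap K)$. Unwinding (1)--(3): a point of the $g$-component is $(x,z)\in(\Gamma_g^1\backslash X^1)\times((F^\times\cap K)\backslash D)$, maps into $X_K$ as $[(x\cdot z,g)]$, and has determinant $[\det(g)\,z^n]\in A_K$ (the $X^1$-directions have determinant $1$ and so die; the exponent $n$ is the determinant of a scalar matrix), so the composite factors through projection to $(F^\times\cap K)\backslash D$ and lands in $A_K^{[\det g]}$. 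Finally, identifying $A_K^{[\det g]}$ with the fibre $F_\infty^\times/\bigl(\det(K_\infty)\R_{>0}\,(F^\times\cap\det K)\bigr)$ of $A_K\to\pi_0(A_K)$ over $[\det g]$, and using the isomorphism just proved together with Dirichlet's unit theorem (so that both $F^\times\cap K$ and $F^\times\cap\det(K)$ become full-rank $(r_1+r_2-1)$-lattices in the relevant real vector space), one checks that $z\mapsto[\det(g)\,z^n]$ is a bijective local homeomorphism onto $A_K^{[\det g]}$, hence a homeomorphism.

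I expect this last identification to be the main obstacle: matching the connected component $A_K^{[\det g]}$ with the compact torus $(F^\times\cap K)\backslash D$ forces one to keep straight several competing normalisations simultaneously --- the single central copy of $\R^\times$ in $\GL_n(F_\infty)$ versus the product $\prod_{v\mid\infty}\R_{>0}$, the factor of $n$ introduced by determinants of scalar matrices, the archimedean embedding implicit in the definition of $A_K$, and the Dirichlet/logarithm description of $F^\times\cap K$ as a cocompact lattice. The conceptual ingredients (strong approximation, the Cartan decomposition, Dirichlet's theorem) are all classical; the labour lies entirely in the bookkeeping.
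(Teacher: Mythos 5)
Your proposal is correct and follows essentially the same route as the paper's proof: part (1) by direct verification, part (2) and the surjectivity of $\det\colon F^\times\cap K\to F^\times\cap\det(K)$ in part (4) by strong approximation for $\SL_n$ (including the same trick of lifting $s$ both to an element of $gKg^{-1}$ and to $\diag(s,1,\dots,1)\in\GL_n(F)$), and the injectivity statements in parts (3) and (4) by neatness, with surjectivity in (3) coming from the hypothesis $\det(\Gamma_g)=\det(F^\times\cap K)$. The extra bookkeeping you supply for the identification of $(F^\times\cap K)\backslash(\prod_{v\mid\infty}\R_{>0})/\R_{>0}$ with $A_K^{[\det(g)]}$ is exactly what the paper leaves implicit, and it is accurate.
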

\begin{proof}
	The first part can be checked directly. The second part is equivalent to 
	the statement that $\det$ induces a bijection \[G(F^+)\backslash 
	G(\A_{F^+}^\infty)/K \to F^\times \backslash (\A_F^\infty)^\times / 
	\det(K).\] This follows from strong approximation for the derived subgroup 
	of $G$, which is isomorphic to $\Res_{F / F^+} \SL_n$. For the third part, injectivity of 
	the natural map $\Gamma_g^{1} \times (F^\times\cap K) \to \Gamma_g$ follows 
	from neatness of $K$ (since $F^\times\cap K$ contains no roots of unity, 
	and hence no non-trivial elements of determinant $1$). Surjectivity follows from the 
	assumption that $\det(\Gamma_g) = \det(F^\times\cap K)$. The remainder of 
	the third part (on the decomposition of $\Gamma_g \backslash X$) is an 
	immediate consequence. Finally, for the fourth part, everything follows 
	from the claim that $\det: F^\times \cap K \to F^\times \cap\det(K)$ is an 
	isomorphism. Injectivity follows from neatness of $K$. Surjectivity follows 
	from strong approximation for $\SL_n$ and the assumption that 
	$\det(F^\times\cap K) = \det(\Gamma_g)$. Indeed, suppose we 
	have $k \in K$ with $\det(k) \in F^\times$. We can find $\gamma \in 
	\GL_n(F)$ such that $\det(\gamma) = \det(k)$, and strong approximation 
	implies that we can find $\gamma' \in \SL_n(F)$ and $k' \in gKg^{-1} \cap 
	\SL_n(\A_F^\infty)$ such that $\gamma (gkg^{-1})^{-1} = \gamma'k'$. We 
	deduce that 
	$(\gamma')^{-1}\gamma = k'gkg^{-1} \in gKg^{-1} \cap\GL_n(F)$ has the same 
	determinant as $k$, which shows surjectivity.
\end{proof}
The following lemma shows how to choose $K$ so that the conditions of Lemma \ref{splitcomponents} are satisfied.
\begin{lemma}\label{shrink}
	Let $K$ be a good subgroup of $G(\A_{F^+}^\infty)$. Fix a finite set $T$ of 
	finite places  of $F$. There exists a good normal subgroup $K' \subset K$ with $K'_T = K_T$ such that  
	$\det(\Gamma_{g, K'}) = \det(F^\times\cap K')$ 
	for all $g \in 
	\GL_n(\A_{F}^\infty)$.
\end{lemma}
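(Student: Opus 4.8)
The statement to be proved (Lemma \ref{shrink}) asks, given a good subgroup $K$ of $G(\A_{F^+}^\infty)$ and a finite set $T$ of finite places of $F$, to produce a good normal subgroup $K' \subset K$ with $K'_T = K_T$ such that $\det(\Gamma_{g, K'}) = \det(F^\times \cap K')$ for all $g \in \GL_n(\A_F^\infty)$. The plan is to exploit the exact sequence relating $\Gamma_{g,K'}$, its determinant, and $\SL_n$, and then to shrink $K'$ just enough that the two subgroups of $F^\times$ in question are forced to coincide. The key structural input is strong approximation for $\Res_{F/F^+}\SL_n$: for any $g$, the group $\det(\Gamma_{g,K'})$ is a finite-index subgroup of $F^\times \cap \det(K')$ (it contains $\det(F^\times \cap K')$ trivially, and the index is controlled by the failure of surjectivity of $\SL_n(F) \to \SL_n(\A_F^\infty)/(g K' g^{-1} \cap \SL_n)$, which strong approximation tells us is governed by a finite quotient).

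\textbf{Key steps.} First I would reduce to a uniform statement over finitely many $g$: since $\GL_n(F) \backslash \GL_n(\A_F^\infty)/K'$ is finite for any good $K'$, and since replacing $g$ by $\gamma g k$ ($\gamma \in \GL_n(F)$, $k \in K'$) changes $\Gamma_{g,K'}$ by conjugation and hence does not change $\det(\Gamma_{g,K'})$, it suffices to arrange the desired equality for a finite set of representatives $g_1, \dots, g_r$; but one has to be slightly careful because shrinking $K'$ changes the set of double cosets, so I would instead argue directly with an arbitrary $g$ and show the construction is uniform. Second, for each such $g$, I would analyze $\det(\Gamma_{g,K'})$: an element of $F^\times \cap \det(K')$ lifts to some $k \in K'$ with $\det(k) \in F^\times$, and then (as in the proof of the surjectivity claim in part (4) of Lemma \ref{splitcomponents}) one writes $\det(k) = \det(\gamma)$ for $\gamma \in \GL_n(F)$ and uses strong approximation for $\SL_n$ to write $\gamma (g k g^{-1})^{-1} = \gamma' k'$ with $\gamma' \in \SL_n(F)$, $k' \in g K' g^{-1} \cap \SL_n(\A_F^\infty)$; the obstruction to $\det(k) \in \det(\Gamma_{g,K'})$ is exactly whether $k'$ can be taken in $g K' g^{-1}$, i.e.\ whether the class of $k'$ dies in the finite group $\SL_n(\A_F^\infty)/(\SL_n(F)\cdot(g K' g^{-1} \cap \SL_n(\A_F^\infty)))$. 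Third, I would choose $K'$ small enough outside $T$ that this finite group is trivial for all $g$ simultaneously: concretely, by strong approximation this double-quotient group is a quotient of $\pi_0$ of a finite collection of finite local factors, and since $K'$ is a product $\prod_v K'_v$ with $K'_v$ a compact open subgroup of $\SL_n(F_v)$ for $v \notin T$, taking $K'_v$ deep enough (e.g.\ principal congruence subgroups at a large enough level, at places outside $T$ and outside the places where $K$ is already hyperspecial) kills these contributions; one also checks the finitely many places involved do not depend on $g$ since $g$ ranges over the (finitely many, for fixed $K'$) double coset representatives and one can absorb everything by iterating once.

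\textbf{Main obstacle.} The delicate point is the apparent circularity: the set of $g$ one must control (the double cosets $\GL_n(F)\backslash\GL_n(\A_F^\infty)/K'$) depends on $K'$, while the choice of $K'$ is meant to depend on those $g$. I expect to resolve this by noting that the relevant finite local obstruction groups are bounded independently of $K'$ once $K'$ is contained in a fixed $K$ (the global class-group-type contribution is fixed, and the local contributions only shrink as $K'$ shrinks), so a single sufficiently deep choice of congruence conditions outside $T$ works uniformly; alternatively one iterates the shrinking finitely many times. Making this uniformity precise — identifying exactly which finite set of places and which depth of congruence subgroup suffices, using strong approximation for $\SL_n$ over $F$ together with the fact that $\cO_F^\times$ has finite index in the relevant global units — is the part requiring genuine care, though it is standard. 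Once $K'$ is chosen, normality over $K$ and the condition $K'_T = K_T$ are arranged by simply taking $K'$ to be the intersection of $K$ with a principal congruence subgroup at places outside $T$ (which is automatically normal in $K$), and good-ness (neatness, product structure) is inherited from $K$ together with the usual fact that deep enough congruence subgroups are neat.
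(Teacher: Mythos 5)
There is a genuine gap: your effort is aimed at the wrong inclusion, and the construction you propose for $K'$ cannot give the lemma. The inclusion you analyse, $F^\times\cap\det(K')\subseteq\det(\Gamma_{g,K'})$, holds automatically for \emph{every} open compact $K'$ and every $g$: since $\SL_n$ is simply connected, strong approximation gives $\SL_n(\A_F^\infty)=\SL_n(F)\cdot\bigl(gK'g^{-1}\cap\SL_n(\A_F^\infty)\bigr)$ for any open subgroup, so the ``finite obstruction group'' you want to kill by shrinking $K'$ is already trivial, the circularity worry about double cosets is moot, and in fact $\det(\Gamma_{g,K'})=F^\times\cap\det(K')$ for all $g$ and all $K'$. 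The content of the lemma lies entirely in the reverse inclusion $\det(\Gamma_{g,K'})\subseteq\det(F^\times\cap K')$, i.e.\ that every \emph{global} element of $\det(K')$ is the $n$-th power of a scalar lying in $K'$; note that $\det(F^\times\cap K')$ and $F^\times\cap\det(K')$ are different groups, and your write-up slides between them. Your proposed $K'$ (the intersection of $K$ with a principal congruence subgroup of level $\mathfrak{c}$ prime to $T$) fails no matter how deep $\mathfrak{c}$ is: writing $U_\mathfrak{c}=\ker(\cO_F^\times\to(\cO_F/\mathfrak{c})^\times)$, one has $\det(F^\times\cap K')\subseteq U_\mathfrak{c}^n$, while $\det(\Gamma_{g,K'})=F^\times\cap\det(K')$ is a congruence unit group of the \emph{same} level $\mathfrak{c}$ (for $K=\GL_n(\widehat{\cO}_F)$, $T=\emptyset$, $g=1$ this is seen directly: $\operatorname{diag}(u,1,\dots,1)\in\Gamma_{1,K'}$ for any $u\in U_\mathfrak{c}$). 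Since $\cO_F^\times$ has rank $[F^+:\Q]-1\geq 1$ in the relevant cases, $U_\mathfrak{c}/U_\mathfrak{c}^n$ is nontrivial for every $\mathfrak{c}$; deepening the level shrinks both sides in tandem and never closes this index.

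The missing idea is that the problem is one about congruence subgroups of the unit group $\cO_F^\times$, not about $\SL_n$, and that the level of the scalars in $K'$ must be \emph{decoupled} from the level of the congruence condition on the matrices — which requires enlarging a congruence subgroup by global scalars, not merely intersecting $K$ with one. This is what the paper does: by Chevalley's theorem choose $\mathfrak{a}$ prime to $T$ with $U_\mathfrak{a}$ torsion free and contained in $F^\times\cap K$, then $\mathfrak{b}$ prime to $\mathfrak{a}$ and $T$ with $U_{\mathfrak{a}\mathfrak{b}}\subseteq U_\mathfrak{a}^n$, and set $K'=U_\mathfrak{a}\cdot K(\mathfrak{a}\mathfrak{b})$, where $K(\mathfrak{a}\mathfrak{b})$ is the intersection of $K$ with the principal congruence subgroup of level $\mathfrak{a}\mathfrak{b}$. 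Then $F^\times\cap K'=U_\mathfrak{a}$, and any element of $\Gamma_{g,K'}$ has the form $z\,gkg^{-1}$ with $z\in U_\mathfrak{a}$ central and $k\in K(\mathfrak{a}\mathfrak{b})$, so its determinant is $z^n\det(k)$ with $\det(k)$ a global unit in $U_{\mathfrak{a}\mathfrak{b}}\subseteq U_\mathfrak{a}^n$; hence $\det(\Gamma_{g,K'})=U_\mathfrak{a}^n=\det(F^\times\cap K')$, uniformly in $g$ (so the uniformity issue you flag never arises). Without this input from the unit group, or an equivalent one, your approach cannot be repaired.
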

\begin{proof}
	We begin by choosing an ideal $\ga$ of $\cO_F$, prime to $T$, such that 
	$\ker(\cO_F^\times \to (\cO_F / \ga)^\times)$ is torsion-free and is 
	contained in $F^\times\cap K$. 
	This is possible by 
	Chevalley's theorem \cite[Thm.~1]{chevalley1951}. Similarly, we can choose 
	another 
	ideal $\mathfrak{b}$ of $\cO_F$, prime to $\ga$ and $T$, such that 
	$\ker(\cO_F^\times \to (\cO_F / \ga\mathfrak{b})^\times)$ is contained in 
	$(\ker(\cO_F^\times \to (\cO_F / \ga)^\times))^n$. We claim that \[K' := 
	\ker(\cO_F^\times \to 
	(\cO_F / 
	\ga)^\times)\cdot K(\ga\mathfrak{b})\] has the desired properties, where 
	$K(\ga\mathfrak{b})$ is the 
	intersection of $K$ with the principal congruence subgroup of level 
	$\ga\mathfrak{b}$. Indeed, by construction we have $\det(\GL_n(F)\cap 
	gK'g^{-1}) = (\ker(\cO_F^\times \to (\cO_F / \ga)^\times))^n$ for all $g 
	\in \GL_n(\A_F^\infty)$, whilst $F^\times \cap K' = \ker(\cO_F^\times \to 
	(\cO_F / \ga)^\times)$.
\end{proof}
The next lemma shows how to use Lemma \ref{splitcomponents} to understand all cohomology groups of a space $X_K$ solely in terms of those in degrees divisible by $[F^+ : \Q]$. Note that $\dim(X_K) = d-1 = [F^+:\Q]n^2 - 1$ and $\dim(A_K) = [F^+:\Q]-1$.
\begin{lemma}\label{centraldegreeshifting}
	Let $K \subset \GL_n(\A_F^\infty)$ be a good subgroup, and let $\lambda \in (\Z^n_+)^{\Hom(F, E)}$. Suppose that the following conditions are satisfied:
	\begin{enumerate}
		\item  
		$\det(\Gamma_g)=\det(F^\times\cap K)$ for all $g \in 
		\GL_n(\A_{F}^\infty)$.
		\item $F^\times \cap K$ acts trivially on $\cV_\lambda$.
	\end{enumerate}
	Recall that we have defined a map $\det: X_K \to A_K$. Then 
	$R\det_*(\cV_{\lambda})$ (a complex of sheaves of $\cO$-modules) is 
	constant on each 
	connected component of $A_K$, and we have $R\det_*(\cV_{\lambda}) = 
	\bigoplus_{i=0}^{\dim(X^1)} R^i\det_*(\cV_{\lambda})[-i]$. 
	We obtain a $\TT^{S, \ord}$-equivariant isomorphism of graded $\cO$-modules
	\numequation\label{eqn:hecke_equivariant_isomorphism} \bigoplus_{i=0}^{\dim(X_K)}H^i(X_K,\cV_{\lambda}) \cong  
	\left(\bigoplus_{j=0}^{\dim(A_K^\circ)}H^j(A_K^\circ,\cO)\right)\otimes_{\cO}\left(\bigoplus_{k=0}^{\dim(X^1)}H^0(A_K,R^k\mathrm{det}_*(\cV_{\lambda}))\right)
	\end{equation}
	where the Hecke action on the first factor $\bigoplus_{j=0}^{\dim(A_K^\circ)}H^j(A_K^\circ,\cO)$ is 
	trivial.
	
	As a consequence, the image of $\TT^{S, \ord}$ in 
	$\End_{\cO}(\bigoplus_{i=0}^{\dim(X_K)}H^i(X_K,\cV_\lambda))$ is equal to its image in 
	$\End_{\cO}(\bigoplus_{i=0}^{n^2-1}H^{i[F^+:\Q]}(X_K,\cV_\lambda))$.
\end{lemma}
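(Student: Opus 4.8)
The plan is to prove Lemma~\ref{centraldegreeshifting} by analyzing the map $\det\colon X_K\to A_K$ using the explicit description of connected components and the product decomposition furnished by Lemma~\ref{splitcomponents}. First I would observe that, by part (1) of Lemma~\ref{splitcomponents}, $X_K$ is the disjoint union over $[g]\in\GL_n(F)\backslash\GL_n(\A_F^\infty)/K$ of the quotients $\Gamma_g\backslash X$, and by parts (3) and (4) each such quotient splits as $(\Gamma_g^1\backslash X^1)\times C_g$, where $C_g=(F^\times\cap K)\backslash(\prod_{v\mid\infty}\R_{>0})/\R_{>0}$ is a real torus of dimension $[F^+:\Q]-1$, with the map to $A_K$ given by $(x,z)\mapsto\det(g)z^n$ and inducing an isomorphism $C_g\toisom A_K^{[\det(g)]}$. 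Since $F^\times\cap K$ acts trivially on $\cV_\lambda$ (hypothesis (2)) and $\cV_\lambda$ is a $G(F)$-module restricted through the embeddings, the local system $\cV_\lambda$ on $\Gamma_g\backslash X$ is pulled back from the factor $\Gamma_g^1\backslash X^1$. Therefore $R\det_*(\cV_\lambda)$ restricted to the component $A_K^{[\det(g)]}$ is the constant sheaf with value $H^*(\Gamma_g^1\backslash X^1,\cV_\lambda)$, tensored (via the Künneth formula applied to the product $(\Gamma_g^1\backslash X^1)\times C_g$) with the structure of $C_g\cong A_K^{[\det(g)]}$; in particular it is (degreewise) constant on each connected component, giving the first assertion and, since a constant sheaf on a manifold has no higher pushforwards beyond what the fibre contributes, the formal splitting $R\det_*(\cV_\lambda)=\bigoplus_i R^i\det_*(\cV_\lambda)[-i]$.

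Next I would use the Leray spectral sequence for $\det\colon X_K\to A_K$, which degenerates because of the splitting just established, to obtain
\[
H^*(X_K,\cV_\lambda)\cong H^*(A_K,R\det_*\cV_\lambda)\cong\bigoplus_i H^{*-i}(A_K,R^i\det_*\cV_\lambda).
\]
Because each $R^i\det_*\cV_\lambda$ is constant on components and $A_K$ is a disjoint union of copies of the identity component $A_K^\circ$ (a torus up to finite cover), the Künneth/base-change argument gives $H^*(A_K,R^i\det_*\cV_\lambda)\cong H^*(A_K^\circ,\cO)\otimes_\cO H^0(A_K,R^i\det_*\cV_\lambda)$, where the Hecke algebra acts only on the second factor; here I would note that the Hecke operators in $\T^{S,\ord}$ act "fibrewise" with respect to $\det$ (they are defined by correspondences on $X_K$ that cover the identity on $A_K$ up to the action of $\det(K)$, which is absorbed into the definition of $A_K$), so the factor $H^*(A_K^\circ,\cO)$ carries the trivial action. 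This yields the displayed isomorphism~\eqref{eqn:hecke_equivariant_isomorphism}. Then the final ``in particular'' follows formally: the image of $\T^{S,\ord}$ in $\End_\cO(H^*(X_K,\cV_\lambda))$ equals its image in $\End_\cO\big(\bigoplus_i H^0(A_K,R^i\det_*\cV_\lambda)\big)$, and since $\dim X^1 = n^2[F^+:\Q]-[F^+:\Q] = [F^+:\Q](n^2-1)$ while $H^0(A_K,R^i\det_*\cV_\lambda)$ is a summand of $H^{i}(X_K,\cV_\lambda)$ appearing only when $i$ is... — more precisely, I would invoke that the summand $H^0(A_K^\circ,\cO)\otimes H^0(A_K,R^i\det_*\cV_\lambda)$ embeds $\T^{S,\ord}$-equivariantly into $H^i(X_K,\cV_\lambda)$, and also into $H^{i+[F^+:\Q]}$, $H^{i+2[F^+:\Q]}$, etc., via cup product with generators of $H^*(A_K^\circ,\cO)$ coming from the torus of dimension $[F^+:\Q]-1$... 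Here I must be a little careful: the correct statement is that cup product with $H^*(A_K^\circ,\cO)$ realizes $H^{i}(X_K,\cV_\lambda)$-isotypic data inside degrees $i, i+1, \ldots$; but what is actually needed is that every $R^i\det_*$ contributes in some degree congruent to $i$ modulo nothing in particular — rather, the point is that the $n^2$ sheaves $R^i\det_*\cV_\lambda$ ($0\le i\le \dim X^1$, but grouped by $i\bmod[F^+:\Q]$ is not the right grouping) — so instead I would argue directly that each $H^0(A_K, R^i\det_*\cV_\lambda)$ already appears as a Hecke-submodule of $H^{j}(X_K,\cV_\lambda)$ for every $j$ in the range $[i, i+\dim A_K^\circ]$, hence in particular for some $j$ which is a multiple of $[F^+:\Q]$ lying in $[0,(n^2-1)[F^+:\Q]]$, since consecutive multiples of $[F^+:\Q]$ differ by $[F^+:\Q]\le \dim A_K^\circ + 1 = [F^+:\Q]$. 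This last numerical comparison, $[F^+:\Q]-1 = \dim A_K^\circ \geq [F^+:\Q]-1$, is exactly tight, which is why the statement is phrased with degrees divisible by $[F^+:\Q]$.

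The main obstacle I anticipate is verifying carefully that the Hecke action of $\T^{S,\ord}$ is genuinely trivial on the factor $H^*(A_K^\circ,\cO)$ and interacts with the Leray decomposition in the way claimed; this requires unwinding the definition of the Hecke correspondences on $X_K$ (as in \S\ref{sec:general_definition_of_hecke_operators}) and checking they descend to $A_K$ as the identity (up to the quotient defining $A_K$), using strong approximation for $\Res_{F/F^+}\SL_n$ as in the proof of Lemma~\ref{splitcomponents}. The cohomological-degree bookkeeping in the last paragraph — pinning down that each $R^i\det_*\cV_\lambda$ contributes to $H^{j}(X_K,\cV_\lambda)$ for a multiple $j$ of $[F^+:\Q]$ in the stated range, using the precise dimension $\dim A_K^\circ=[F^+:\Q]-1$ and the cup-product action of $H^*(A_K^\circ,\cO)\cong\wedge^*\cO^{[F^+:\Q]-1}$ — is routine but must be done with care to get the tight bound $n^2-1$ rather than something weaker. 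Everything else (degeneration of Leray from the formal splitting of $R\det_*$, Künneth over the disjoint union of components, triviality of $H^*(A_K^\circ)$ as a module) is standard.
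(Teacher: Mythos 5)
Your overall route is the same as the paper's: decompose $X_K$ via Lemma~\ref{splitcomponents} into products $(\Gamma_g^1\backslash X^1)\times A_K^{[\det(g)]}$ with $\det$ the second projection, observe that $\cV_\lambda$ is pulled back from the first factor so that $R\det_*\cV_\lambda$ is constant on each component and splits formally into its shifted cohomology sheaves, pass to global sections to get the graded isomorphism, and finish by noting that each $H^0(A_K,R^i\det_*\cV_\lambda)$ occurs as a summand in $[F^+:\Q]$ consecutive cohomological degrees (since $\dim A_K^\circ=[F^+:\Q]-1$), hence in some degree $i[F^+:\Q]$ with $0\le i\le n^2-1$. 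That part, including the final bookkeeping you eventually settle on, matches the paper.

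The genuine gap is exactly at the step you flag as the ``main obstacle'', and your proposed mechanism for it is wrong. The Hecke correspondence for $[K^SgK^S]$ does not ``cover the identity on $A_K$'': it lives over $A_{K\cap gKg^{-1}}$, with one map to $A_K$ induced by $g$ and the other the natural projection, and after pushing forward along $\det$ one finds that $[K^SgK^S]$ acts on the factor $H^i(A_K^\circ,\cO)$ by multiplication by $[\,F^\times\cap\det(K):F^\times\cap\det(K\cap gKg^{-1})\,]^i$, which is a priori a nontrivial positive integer for $i>0$. What makes this scalar equal to $1$ is not strong approximation but hypothesis (1) of the lemma: $\det(\Gamma_g)=\det(F^\times\cap K)$ for all $g$ implies $F^\times\cap\det(K)=F^\times\cap\det(K\cap gKg^{-1})$, so the relevant covering of identity components has degree one. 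Your proposal never invokes hypothesis (1) at this point (it enters only implicitly through parts (3) and (4) of Lemma~\ref{splitcomponents}), and as written the justification ``the correspondences descend to $A_K$ as the identity, by strong approximation'' would not survive scrutiny: without the index computation and hypothesis (1), the first tensor factor $H^*(A_K^\circ,\cO)$ carries a nontrivial graded scalar action and the isomorphism~(\ref{eqn:hecke_equivariant_isomorphism}) would not be $\T^{S,\ord}$-equivariant in the stated form. To repair the argument, write out the double correspondence $X_K\leftarrow X_{K\cap gKg^{-1}}\rightarrow X_K$ over $A_K\leftarrow A_{K\cap gKg^{-1}}\rightarrow A_K$, identify the action on $H^i(A_K^\circ,\cO)$ as the index power above, and then quote hypothesis (1).
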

\begin{proof}
	It follows from our first assumption and Lemma \ref{splitcomponents} that 
	every connected component of $X_K$ decomposes as a product 
	$(\Gamma_g^1\backslash X^1) \times A_K^{[\det(g)]}$, with the map $\det$ 
	given by the projection to the second factor. Our second assumption 
	implies that the local system $\cV_{\lambda}$ on this component is 
	pulled back from a local system on $\Gamma_g^1\backslash X^1$. We deduce 
	that $R\det_*(\cV_\lambda)$ is constant on $A_K^{[\det(g)]}$
	(corresponding to $R\Gamma(\Gamma_g^1\backslash X^1 
	,\cV_{\lambda})$) and it decomposes as the direct sum of its shifted 
	cohomology 
	sheaves (since the same is true for any object in $\mathbf{D}(\cO)$, such as 
	$R\Gamma(\Gamma_g^1\backslash X^1 
	,\cV_{\lambda})$). To save space, we now write $H^*(\cdots)$ for the graded cohomology module $\bigoplus_i H^i(\cdots)$.
	
	Passing to global sections on $A_K$ we get an isomorphism
	\[ \begin{split}  H^\ast(X_K,\cV_{\lambda}) & \cong \oplus_{i=0}^{\dim(X^1)} H^{\ast}(A_K, R^i  \mathrm{det}_\ast(\cV_{\lambda})) \\ & \cong \oplus_{i=0}^{\dim(X^1)} \oplus_{[g] \in \GL_n(F) \backslash \GL_n(\A_F^\infty) / K} H^\ast(A_K^{[\det g]}, R^i  \mathrm{det}_\ast(\cV_{\lambda})) \end{split} \]
	\[ 
	\cong \oplus_{i=0}^{\dim(X^1)} \oplus_{[g] \in \GL_n(F) \backslash \GL_n(\A_F^\infty) / K} H^\ast(A_K^{[\det g]}, \cO) \otimes_\cO H^0(A_K^{[\det g]},  R^i  \mathrm{det}_\ast(\cV_{\lambda})).  \]
	Note that the cohomology groups $H^i(A_K^{[\det g]},\cO)$ are torsion-free. We now use that the groups $H^\ast(A_K^{[\det g]}, \cO)$ are canonically independent of $g$, so can all be identified with $H^\ast(A_K^\circ, \cO)$. We thus obtain an isomorphism of graded $\cO$-modules
	\[ \begin{split} H^\ast(X_K,\cV_{\lambda}) &  \cong H^\ast(A_K^\circ, \cO) \otimes_{\cO}  \oplus_{i=0}^{\dim(X^1)}  \oplus_{[g] \in \GL_n(F) \backslash \GL_n(\A_F^\infty) / K} H^0(A_K^{[\det g]},  R^i  \mathrm{det}_\ast(\cV_{\lambda})) \\  & \cong H^\ast(A_K^\circ, \cO) \otimes_{\cO} \bigoplus_{i=0}^{\dim(X^1)}H^0(A_K,R^i\mathrm{det}_*(\cV_{\lambda})). \end{split} \]
	We next need to understand the action of Hecke operators. If $g \in G^S$, then the action of the Hecke operator $[K^S g K^S]$ can be described with the aid of the diagram
	\[ \xymatrix{ X_K \ar[d]^\det & X_{K \cap g K g^{-1}} \ar[l]_-{p_1} \ar[r]^-{p_2} \ar[d]^\det & X_K \ar[d]^\det \\
	A_K & A_{K \cap g K g^{-1}} \ar[l]^-{q_1} \ar[r]_-{q_2} & A_K.  }\]
	Here $p_1$ and $q_1$ are induced by the action of $g$, while $p_2$ and $q_2$ are the natural projections; the action of $[K^S g K^S]$ on $R \Gamma(X_K, \cV_{\lambda})$ is given by the formula $p_{2, \ast} \circ p_{1}^\ast$. Pushing forward by $\det$, we have a morphism $q_1^\ast R \det_{\ast} \cV_\lambda \to q_2^\ast R \det_{\ast} \cV_\lambda$, and the induced endomorphism of the complex $R \Gamma(A_K, R \det_{\ast} \cV_{\lambda})$ in $\mathbf{D}(\cO)$ agrees with $[K^S g K^S]$ under the natural identification $R \Gamma(A_K, R \det_{\ast} \cV_\lambda) \cong R \Gamma(X_K, \cV_{\lambda})$. We see that the isomorphism (\ref{eqn:hecke_equivariant_isomorphism}) respects the action of $[K^S g K^S]$ if $[K^S g K^S]$ acts in the usual way on the left-hand side, as multiplication by $[ F^\times \cap \det(K) : F^\times \cap \det(K \cap g K g^{-1})]^i$ on $H^i(A_K^\circ, \cO)$, and in the natural way on $H^0(A_K, R^i \det_\ast \cV_{\lambda})$. Our assumption $\det(\Gamma_g)=\det(F^\times\cap K)$ implies that $F^\times \cap \det(K) = F^\times \cap \det(K \cap g K g^{-1})$, giving the statement in the lemma. 
	
	It remains to check the final statement of the lemma. There is an isomorphism $H^\ast(A_K^\circ, \cO) \cong \wedge_\cO^\ast \Hom( F^\times \cap \det(K), \cO)$ of graded $\cO$-modules. It follows that each cohomology group $H^0(A_K,R^i\mathrm{det}_*(\cV_{\lambda}))$ appears as a direct summand of $H^\ast(X_K, \cV_{\lambda})$ in $[F^+ : \Q]$ consecutive degrees. In particular, it appears as a direct summand of a cohomology group in a degree divisible by $[F^+ : \Q]$. This completes the proof.
\end{proof}

For the statement of the next proposition, we remind the reader that in \S \ref{sec:ord_statements} we have defined for each $\lambda \in (\Z^n_+)^{\Hom(F, E)}$, $v \in S_p$ and $i = 1, \dots, n$, a character \[
 \chi_{\lambda, v, i} : G_{F_v} \to \T^{S, \ord}(H^\ast(X_{K(b, c)}, \cV_{\lambda})^{\ord})^{\times}. \]
\begin{prop}\label{prop:first_consequence_for_ord_Galois_reps} Suppose that $[F^+ : \Q] > 1$. 
	Let $K \subset \GL_n(\A_F^\infty)$ be a good subgroup such that for each $v \in S_p$, $K_v = \Iw_v$. Fix integers $c \geq b \geq 0$ with $c \geq 1$, and also an integer $m \geq 1$. Let $\m \subset \T^S$ be a non-Eisenstein maximal ideal, and let $\widetilde{\m} = \cS^\ast(\m)$. Suppose that the following conditions are satisfied:
	\begin{enumerate}
		\item $\overline{\rho}_{\m}$ is decomposed generic.
		\item Let $v$ be a finite place of $F$ not contained in $S$, and let $l$ be its residue characteristic. Then either $S$ contains no $l$-adic places of $F$ and $l$ is unramified in $F$, or there exists an imaginary quadratic field $F_0 \subset F$ in which $l$ splits.
	\end{enumerate}
	 Then we can find $\lambda \in (\Z^n_+)^{\Hom(F, E)}$ and an integer $N \geq 1$ depending only on $[F^+ : \Q]$ and $n$ such that the following conditions are satisfied:
	\begin{enumerate}
		\item There is an isomorphism $\cO(\lambda) / \varpi^m \cong \cO / \varpi^m$ of $\cO[T(F^+_p)]$-modules.
		\item For each $i = 0, \dots, d-1$, there exists a nilpotent ideal $J_i$ of $\T^{S, \ord}(H^i(X_{K(b, c)}, \cV_{\lambda})_\m^{\ord})$ satisfying $J_i^{N} = 0$ and a  continuous $n$-dimensional representation
		\[ \rho_{\m} : G_{F, S} \to \GL_n(\T^{S, \ord}(H^i(X_{K(b, c)}, \cV_{\lambda})_\m^{\ord}) / J_i) \]
		such that the following conditions are satisfied:
		\begin{enumerate}
			\item[(a)] For each place $v\not\in S$ of $F$, the characteristic polynomial of $\rho_{\m}(\Frob_v)$ is equal to the image of $P_v(X)$ in $(\T^{S, \ord}(H^i(X_{K(b, c)}, \cV_{\lambda})_\m^{\ord}) / J_i)[X]$.
			\item[(b)] For each place $v | p$ of $F$ and for each $g \in G_{F_v}$, the characteristic polynomial of $\rho_{\m}(g)$ equals $\prod_{j=1}^n (X - \chi_{\lambda, v, j}(g))$.
			\item[(c)] For each place $v | p$ of $F$, and for each sequence $g_1, \dots, g_n \in G_{F_v}$, the image of the element 
			\[ (g_1 - \chi_{\lambda, v, 1}(g_1)) (g_2 - \chi_{\lambda, v, 2}(g_2)) \dots (g_n - \chi_{\lambda, v, n}(g_n)) \]
			of $\T^S(H^i(X_{K(b, c)}, \cV_{\lambda})_\m^{\ord})[G_{F_v}]$ in $M_n(\T^S(H^i(X_{K(b, c)}, \cV_{\lambda})_\m^{\ord}) / J_i)$ under $\rho_{\m}$ is zero. 
		\end{enumerate}	
	\end{enumerate}
\end{prop}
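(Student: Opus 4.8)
The plan is to deduce this proposition from the degree-shifting results of this section together with the existence of Galois representations on the $\widetilde{G}$-side. The overall structure mirrors the proof of Proposition~\ref{prop:first_consequence_for_FL_property} and Corollary~\ref{cor:first_consequence_for_FL_property} in the Fontaine--Laffaille case, but now using the ordinary machinery: Proposition~\ref{prop_ord_degree_shifting_in_all_degrees} plays the role that Proposition~\ref{degree-shifting} played there, and Lemma~\ref{centraldegreeshifting} is used to reduce from general cohomological degrees to those divisible by $[F^+:\Q]$.

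First I would fix $m\ge 1$ and apply Proposition~\ref{prop_ord_degree_shifting_in_all_degrees} to produce the weight $\lambda\in(\Z^n_+)^{\Hom(F,E)}$ (with $\cO(\lambda)/\varpi^m\cong\cO/\varpi^m$, so in particular $\cO_K^\times$ acts trivially on $\cV_\lambda$), together with for each $i=0,\dots,n^2-1$ a CTG weight $\widetilde\lambda_i$, an integer $a_i\in(p-1)\Z$, and a Weyl element $w_i\in{}^rW^P$, and the homomorphisms
\[
\widetilde{\T}^{S,\ord}(H^d(\widetilde X_{\widetilde K(b,c)},\cV_{\widetilde\lambda_i})^{\ord}_{\widetilde\m})\to \T^{S,\ord}(\cO(\alpha_{w_i})\otimes_\cO\tau_{w_i}^{-1}H^{i[F^+:\Q]}(X_{K(b,c)},\cV_{\lambda(a_i)})^{\ord}_\m).
\]
Since $a_i$ is divisible by $p-1$, Corollary~\ref{cor_independence_of_weight} identifies $H^{i[F^+:\Q]}(X_{K(b,c)},\cV_{\lambda(a_i)})^{\ord}_\m$ with $H^{i[F^+:\Q]}(X_{K(b,c)},\cV_{\lambda})^{\ord}_\m$ as $\T^{S,\ord}$-modules (after shrinking $c$ if needed, via Corollary~\ref{cor:independence_of_level}), and the twists by $\cO(\alpha_{w_i})$ and $\tau_{w_i}^{-1}$ are harmless (they do not change the ambient Hecke algebra, only the module structure). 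Next, because $\widetilde\lambda_i$ is CTG and $\overline\rho_{\widetilde\m}$ is decomposed generic, Theorem~\ref{what we get from CS} shows $\widetilde A=\widetilde{\T}^{S,\ord}(H^d(\widetilde X_{\widetilde K(b,c)},\cV_{\widetilde\lambda_i})^{\ord}_{\widetilde\m})$ is $\cO$-flat and (combining with Theorem~\ref{thm:automorphic reps contributing to char 0} and the purity lemma \cite[Lemma 4.9]{MR1044819}) that $\widetilde A\otimes_\cO\overline\Q_p$ is semisimple and computed by cuspidal automorphic representations of $\widetilde G$; then Theorem~\ref{thm:base_change_and_existence_of_Galois_for_tilde_G} furnishes $\widetilde\rho:G_{F,S}\to\GL_{2n}(\widetilde A\otimes_\cO\overline\Q_p)$, whose specializations are, for each $v\in S_p$, ordinary (Iwahori level forces a flag of sub-line-representations on which inertia acts by the expected characters $\widetilde U_{v,i}$-type quantities). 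Passing to the associated determinant, tracking its kernel through $\cS$ and through the nilpotent-ideal formalism (as in the proof of \cite[Thm. 5.1.11]{scholze-torsion}, \cite[Cor. 5.2.6]{scholze-torsion}), and applying \cite[Thm. 2.22]{chenevier_det} to split off an $n$-dimensional factor, I would obtain on $\T^{S,\ord}(H^{i[F^+:\Q]}(X_{K(b,c)},\cV_\lambda)^{\ord}_\m)$ modulo a nilpotent ideal $J_i$ a representation $\rho_\m$ satisfying (a) at unramified places, and the ordinarity statements (b) and (c) at places above $p$. The point that $U_{v,i}$ corresponds to $\chi_{\lambda,v,i}\circ\Art_{F_v}(\varpi_v)$ up to the explicit normalizing factors is exactly the relation between $\widetilde U$-operators and the Hecke-side $U$-operators built into $\cS$ via \S\ref{sec:some_useful_hecke_operators}; (c) is the triangular (``ordinary Fontaine--Laffaille'') vanishing which follows because the specializations of $\widetilde\rho$ are genuinely ordinary at $v|p$, so the product $\prod_j(\rho_\m(g_j)-\chi_{\lambda,v,j}(g_j))$ annihilates the flag and hence vanishes modulo a nilpotent ideal.

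Having obtained the conclusion for degrees $i$ divisible by $[F^+:\Q]$ in the range $[0,(n^2-1)[F^+:\Q]]$, I would then invoke Lemma~\ref{centraldegreeshifting} to bootstrap to all degrees $i\in[0,d-1]$. After replacing $K$ by a good normal subgroup $K'$ with $K'_{S_p}=K_{S_p}$ satisfying $\det(\Gamma_g)=\det(F^\times\cap K')$ for all $g$ (Lemma~\ref{shrink}), and using a Hochschild--Serre spectral sequence argument as in Corollary~\ref{cor:first_consequence_for_FL_property} to reduce from $K'$ back to $K$, Lemma~\ref{centraldegreeshifting} shows that the image of $\T^{S,\ord}$ in $\End_\cO(H^i(X_{K(b,c)},\cV_\lambda)^{\ord})$ for arbitrary $i$ is, up to the harmless trivial-Hecke-action tensor factor $H^*(A_K^\circ,\cO)$, a quotient of the image in $\bigoplus_{j=0}^{n^2-1}\End_\cO(H^{j[F^+:\Q]}(X_{K(b,c)},\cV_\lambda)^{\ord})$; Carayol's lemma \cite[Lemma 2.1.10]{cht} then lets one assemble the product of the $\rho_\m$'s constructed in degrees $j[F^+:\Q]$ into a representation valued in the relevant image ring, and the nilpotent ideals patch to a single $J_i$ with $J_i^N=0$ for $N$ depending only on $n$ and $[F^+:\Q]$.

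The main obstacle I expect is verifying that the ordinarity conditions (b) and (c) survive the passage through the degree-shifting isomorphisms of Theorem~\ref{thm:ord_deg_shifting} and the twist-tracking in Proposition~\ref{prop_ord_degree_shifting_in_all_degrees}: one must check that the characters $\chi_{\lambda,v,i}$, which are defined intrinsically in terms of $U_{v,i}$-operators and the weight $\lambda$ on the $G$-side, match up correctly with the Hodge--Tate--ordinary filtration of the $\widetilde G$-side Galois representation after the explicit shifts by $\alpha_{w_i}$, $\tau_{w_i}$, and the Kostant-type identification $w_i(\widetilde\lambda_i+\rho)-\rho=\lambda(a_i)$; this is a bookkeeping computation with the action of $\cS$ on Hecke operators (cf.\ the end of the proof of Theorem~\ref{thm:ord_deg_shifting} where the identity $(-\rho+w^{-1}w_0^P\rho+w_0^{\widetilde G}\widetilde\lambda)^w=w_0^G\lambda_x$ already appears) but it is where all the normalizations must be pinned down precisely. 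A secondary technical point is ensuring all the nilpotence exponents remain bounded independently of $m$, $b$, $c$ and the cohomological degree, which follows as in \cite[Lem. 2.5]{KT} and the explicit bookkeeping in the proof of Theorem~\ref{thm:ord_deg_shifting}, combined with \cite[Lemma 2.1.10]{cht}.
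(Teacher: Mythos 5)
Your overall route is the same as the paper's (Proposition~\ref{prop_ord_degree_shifting_in_all_degrees} to reach middle-degree cohomology for $\widetilde{G}$ in CTG weight, Theorem~\ref{what we get from CS} and Theorem~\ref{thm:automorphic reps contributing to char 0} to compute it automorphically, ordinary local--global compatibility for $\widetilde{G}$, then Lemma~\ref{centraldegreeshifting} plus Carayol to cover all degrees), but there is a genuine gap at the decisive step: passing from the $2n$-dimensional ordinary structure on the $\widetilde{G}$-side to the $n$-dimensional statements (b) and (c) for $\rho_\m$. What you actually have integrally is only a determinant $\widetilde{D}$ of dimension $2n$ on $\widetilde{A}$, and after pushing to $A/J$ it is $\det(\rho_\m\oplus\rho_\m^{c,\vee}\epsilon^{1-2n})$; the ordinarity identities inherited from $\widetilde{\rho}$ are $2n$-term identities involving all the characters $\psi_{v,1},\dots,\psi_{v,2n}$ in the order dictated by $w_i$. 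Saying that ``the product annihilates the flag'' does not produce the $n$-term identity (c) for $\rho_\m$ alone: one must separate the two $n$-dimensional blocks inside $(A/J)[G_{F,S}]/\ker\widetilde{D}_{A/J}\cong M_n(A/J)\times M_n(A/J)$ and collapse the $2n$-term product to an $n$-term one. The paper does this with an idempotent $e\in(A/J)[G_{F_v}]$ acting as $1$ on $\rho_\m|_{G_{F_v}}$ and $0$ on $(\rho_\m^{c,\vee}\epsilon^{1-2n})|_{G_{F_v}}$ (substituting $Y_j=e$ or $g_ke$ according to $w_i$), and such an $e$ exists only if these two residual restrictions share no Jordan--H\"older factors at every $v\in S_p$. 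Unlike the Fontaine--Laffaille case, where distinctness of the FL weights makes this automatic, here it must be arranged: the paper enlarges $S$, shrinks $K$ away from $p$, and twists by an auxiliary global character $\chi$ (unramified at $S_p$) both to force the disjointness and to verify that $(\overline{\rho}_\m|_{G_{F_v}})^{\mathrm{ss}}\cong\oplus_j\overline{\chi}_{v,j}$, i.e.\ that the $n$ characters attached to $\rho_\m$ are the expected ones rather than a mixture with those of the conjugate-dual block. Your proposal contains none of this, and without it the extraction of (b) and (c) does not go through; appealing to ``Chenevier Thm.~2.22 to split off an $n$-dimensional factor'' does not substitute, since the issue is not splitting the determinant but controlling which local characters attach to which factor and producing the algebra identities in $M_n(A/J)$.

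A secondary inaccuracy: you identify $H^{i[F^+:\Q]}(X_{K(b,c)},\cV_{\lambda(a_i)})^{\ord}_\m$ with $H^{i[F^+:\Q]}(X_{K(b,c)},\cV_{\lambda})^{\ord}_\m$ via Corollary~\ref{cor_independence_of_weight}, but divisibility of $a_i$ by $p-1$ only trivializes the torus character modulo $\varpi$, not modulo $\varpi^m$. The correct comparison is the twisting isomorphism $f$ by $\epsilon^{-a_i}$ (a mild generalization of Proposition~\ref{prop:twisting_by_character}), which moves the Hecke operators and satisfies $f\circ\chi_{\lambda(a_i),v,j}=\chi_{\lambda,v,j}\otimes\epsilon^{-a_i}$, so the Galois representation must be twisted back by $\epsilon^{-a_i}$; this is part of the normalization bookkeeping you flagged but did not resolve.
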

\begin{proof}
	We choose $\lambda$ using Proposition 
	\ref{prop_ord_degree_shifting_in_all_degrees}. Note that, for each 
	cohomological degree $i$, by Theorem 
	\ref{thm:existence_of_Hecke_repn_for_GL_n}, we can find $N$, $J_i$ and 
	\[ \rho_{\m} : G_{F, S} \to \GL_n(\T^{S, \ord}(H^i(X_{K(b, c)}, 
	\cV_{\lambda})_\m^{\ord}) / J_i) \]
	satisfying condition (a) of the proposition. Indeed, this theorem and the discussion after Lemma \ref{lem_cohomology_nilpotent_annihilator} gives a representation with values in a quotient of $\TT^S(K(0,c)/K(b,c),\cV_\lambda)_{\m}$ by a nilpotent ideal, which we compose with the canonical homomorphism to $\TT^S(K(b,c),\lambda)^{\ord}_{\m}$. Arguing with the 
	Hochschild--Serre spectral sequence and twisting with characters as in the 
	proof of Corollary \ref{cor:first_consequence_for_FL_property}, we are free 
	to enlarge $S$ and to shrink $K$ at the prime-to-$p$ places of $S$. We can 
	therefore assume that the following conditions are satisfied:
	\begin{enumerate}
		\item For each place $v \in S_p$, the two representations 
		$\overline{\rho}_\m|_{G_{F_v}}$, $(\overline{\rho}_\m^{c, \vee} \otimes 
		\epsilon^{1-2n})|_{G_{F_v}}$ have no Jordan--H\"older factors in common.
		\item $\overline{\rho}_{\widetilde{\m}}$ is decomposed generic. 
		\item $K$ satisfies the conditions of Lemma \ref{centraldegreeshifting}.
	\end{enumerate}
	After enlarging $\cO$, we can assume that there exists a character $\chi : 
	G_{F, S} \to k^\times$ satisfying the following conditions: 
	\begin{enumerate}
		\item For each place $v \in S_p$, $\chi|_{G_{F_v}}$ is unramified.
		\item For each place $v \in S_p$, the two representations 
		$(\overline{\rho}_\m \oplus \overline{\rho}_\m^{c, \vee} \otimes 
		\epsilon^{1-2n}) \otimes \chi|_{G_{F_v}}$ and $(\overline{\rho}_\m 
		\oplus \overline{\rho}_\m^{c, \vee} \otimes \epsilon^{1-2n}) \otimes 
		\chi^{c, \vee}|_{G_{F_v}}$ have no Jordan--H\"older factors in common.
		\item The representation $\overline{\rho}_\m \otimes \chi \oplus 
		\overline{\rho}_\m^{c, \vee} \otimes \chi^{c, \vee}\epsilon^{1-2n}$ is 
		decomposed generic.
	\end{enumerate}	
	
	It follows from Lemma \ref{centraldegreeshifting} and Carayol's lemma 
	(applied as in the proof of Corollary 
	\ref{cor:first_consequence_for_FL_property}) that it suffices to establish 
	conditions (b) and (c) for cohomological degrees $0, [F^+:\Q], \ldots, 
	(n^2-1)[F^+:\Q]$ (Carayol's lemma then gives us a Galois representation 
	with coefficients in $\T^{S, \ord}(\oplus_{i=0}^{n^2-1} 
	H^{i[F^+:\Q]}(X_{K(b, c)}, 
	\cV_{\lambda})_\m^{\ord}) = \T^{S, \ord}(\oplus_{i=0}^{d-1} H^{i}(X_{K(b, 
	c)}, 
	\cV_{\lambda})_\m^{\ord})$ modulo a nilpotent ideal with the desired 
	properties).

		We choose a good subgroup $\widetilde{K} \subset 
	\widetilde{G}(\A_{F^+}^\infty)$ which satisfies the conditions of 
	Proposition \ref{prop_ord_degree_shifting_in_all_degrees} and such that 
	$\widetilde{K} \cap G(\A_{F^+}^\infty) = K$. For each $i = 0, 1, \ldots, 
	n^2-1$, we let $\widetilde{\lambda}_i$, $a_i$ and $w_i$ be as in the 
	statement of Proposition \ref{prop_ord_degree_shifting_in_all_degrees}. 
	Generalizing Proposition \ref{prop:twisting_by_character} slightly, we note 
	that there is an isomorphism (cf. the proof of Theorem \ref{thm:lgcfl})
	\[ f : \T^{S, \ord}(H^{i[F^+:\Q]}(X_{K(b, c)}, 
	\cV_{\lambda(a_i)})_\m^{\ord}) \cong \T^{S, \ord}(H^{i[F^+:\Q]}(X_{K(b, 
	c)}, \cV_{\lambda})_\m^{\ord}), \]
	which carries $[K^S g K^S]$ to $\epsilon^{-a_i}(\Art_F(\det(g))) [K^S g 
	K^S]$ and satisfies the identity $f\circ \chi_{\lambda(a_i), v, j} = 
	\chi_{\lambda, v, j} \otimes \epsilon^{-a_i}$ ($v \in S_p$). (Note that 
	$a_i$ is divisible by $p-1$, by construction, so we have 
	$\m(\epsilon^{-a_i}) = \m$ in the notation of \S 
	\ref{sec:twisting_and_duality}.) To prove the proposition, it will 
	therefore suffice to prove the analogue of properties (b), (c) for the 
	representation $(f^{-1}\circ\rho_{\m})\otimes \epsilon^{-a_i}$ with 
	coefficients in $\T^{S, \ord}(H^{i[F^+:\Q]}(X_{K(b, c)}, 
	\cV_{\lambda(a_i)})_\m^{\ord})/f^{-1}J_{i[F^+ : \Q]}$, which we already know 
	satisfies the analogue of property (a). In order to simplify notation, we 
	now write $\rho_{\m}$ for this representation, $J_i$ for the ideal 
	$f^{-1}J_{i[F^+ : \Q]}$, and $\chi_{v, j}$ for the 
	character $\chi_{\lambda(a_i), v, j}$ valued in $\T^{S, 
	\ord}(H^{i[F^+:\Q]}(X_{K(b, c)}, 
	\cV_{\lambda(a_i)})_\m^{\ord})$.
	
We obtain from Proposition \ref{prop_ord_degree_shifting_in_all_degrees} a 
surjective algebra homomorphism 
	\[ \widetilde{\T}^{S, \ord}(H^d(\widetilde{X}_{\widetilde{K}(b, c)}, 
	\cV_{\widetilde{\lambda}_i})^{\ord}_{\widetilde{\m}}) \to \T^{S, 
	\ord}(\cO(\alpha_{w_i}) \otimes_{\cO} \tau^{-1}_{w_i} H^{i[F^+:\Q]}(X_{K(b, 
	c)}, \cV_{\lambda(a_i)})_\m^{\ord}).   \]
	Theorem \ref{what we get from CS} says that $H^d(\widetilde{X}_{\widetilde{K}(b, c)}, \cV_{\widetilde{\lambda}_i})^{\ord}_{\widetilde{\m}}$ is a torsion-free $\cO$-module, and Theorem \ref{thm:automorphic reps contributing to char 0} (or rather its proof) shows how to compute $H^d(\widetilde{X}_{\widetilde{K}(b, c)}, \cV_{\widetilde{\lambda}_i})^{\ord}_{\widetilde{\m}} \otimes_{\cO} \overline{\Q}_p$ in terms of cuspidal automorphic representations of $\widetilde{G}(\A_{F^+}^\infty)$. Then \cite[Lemma 5.4]{ger} (which is stated for automorphic representations of $\GL_n$, but which applies here, since $\widetilde{G}$ is split at the $p$-adic places of $F^+$) shows that $\widetilde{\T}^{S, \ord}(H^d(\widetilde{X}_{\widetilde{K}(b, c)}, \cV_{\widetilde{\lambda}_i})^{\ord}_{\widetilde{\m}})[1/p]$ is a semisimple $E$-algebra. By Theorem \ref{thm:base_change_and_existence_of_Galois_for_tilde_G} and \cite[Theorem 2.4]{jackreducible}, we can find a continuous representation 
	\[ \widetilde{\rho} : G_{F, S} \to \GL_{2n}(\widetilde{\T}^{S, \ord}(H^d(\widetilde{X}_{\widetilde{K}(b, c)}, \cV_{\widetilde{\lambda}_i})^{\ord}_{\widetilde{\m}}) \otimes_\cO \overline{\Q}_p) \]
	satisfying the following conditions:
	\begin{enumerate}
		\item For each finite place $v \not\in S$ of $F$, the characteristic polynomial of $\widetilde{\rho}(\Frob_v)$ is equal to the image of $\widetilde{P}_v(X)$.
		\item For each place $v | p$ of $F$, there is an isomorphism
		\numequation\label{eqn:ord_lgc_for_tildeG} \widetilde{\rho}|_{G_{F_v}} \sim \left( \begin{array}{cccc} \psi_{v, 1} & * & * & * \\ 
		0 & \psi_{v, 2} & * & * \\
		\vdots & \ddots & \ddots & * \\
		0 & \cdots & 0 & \psi_{v, 2n} \end{array}\right), 
		\end{equation}
		where for each $i = 1, \dots, 2n$, $\psi_{v, i} : G_{F, v} \to \overline{\Q}_p^\times$ is the continuous character defined as follows. First, if $v \in \widetilde{S}_p$, then $\psi_{v, j}$ is the unique continuous character satisfying the following identities:
		\[ \psi_{v, j} \circ\Art_{F_v}(u) = \epsilon^{1-j}(\Art_{F_v}(u)) \left( \prod_{\tau}\tau(u)^{-(w^{\widetilde{G}}_0 \widetilde{\lambda}_i)_{\tau|_{F^+}, j}} \right) \langle \diag( 1, \dots, 1, u, 1, \dots, 1 ) \rangle \text{ }(u \in \cO_{F_v}^\times) \]
		(the product being over $\tau \in \Hom_{\Q_p}(F_v, E)$) and
		\[ \psi_{v, j} \circ \Art_{F_v}(\varpi_v) = \epsilon^{1-j}(\Art_{F_v}(\varpi_v)) \widetilde{U}_{v, j} / \widetilde{U}_{v, j-1}. \]
		Second, if $v^c \in \widetilde{S}_p$, then $\psi_{v, j} = \psi_{v^c, 2n+1-j}^{c, \vee} \epsilon^{1-2n}$.
	\end{enumerate}
	We write $\widetilde{D}$ for the $2n$-dimensional determinant of $G_{F, S}$ associated to $\widetilde{\rho}$. By \cite[Example 2.32]{chenevier_det}, $\widetilde{D}$ is valued in $\widetilde{\T}^{S, \ord}(H^d(\widetilde{X}_{\widetilde{K}(b, c)}, \cV_{\widetilde{\lambda}_i})^{\ord}_{\widetilde{\m}})$. To conserve notation, we now write 
	\[ \widetilde{A}_0 =  \widetilde{\T}^{S, \ord}(H^d(\widetilde{X}_{\widetilde{K}(b, c)}, \cV_{\widetilde{\lambda}_i})^{\ord}_{\widetilde{\m}}) \]
	and
	\[ A_0 = \T^{S, \ord}(H^{i[F^+:\Q]}(X_{K(b, c)}, 
	\cV_{\lambda(a_i)})_\m^{\ord}), \]
	and $J = J_i$. By construction, we are given a homomorphism $\widetilde{A}_0 \to A_0$ which agrees with $\cS$ on Hecke operators away from $p$, and such that for each $v \in S_p$, the image of the sequence 
	\[ (\psi_{v, 1}, \dots, \psi_{v, 2n}) \]
	 of characters is the image of the sequence 
	 \[ (\chi^{c, \vee}_{v^c, n} \epsilon^{1-2n}, \dots, \chi^{c, \vee}_{v^c, 1} \epsilon^{1-2n}, \chi_{v, 1}, \dots, \chi_{v, n}) \]
	 under the permutation $w_i^{-1}$. 
	 
	The rings $\widetilde{A}_0$ and $A_0$ are semi-local finite $\cO$-algebras. Let $A$ be a local direct factor of $A_0$, and let $\widetilde{A}$ be the corresponding local direct factor of $\widetilde{A}_0$. Thus there is a map $\widetilde{A}\to A$ such that $\widetilde{A} \to A / J$ is surjective. We will show that the properties (b), (c) in the statement of the proposition hold in the ring $A / J$; since $A_0 / J$ is a direct product, this will give the desired result.
	
	We first verify that for each place $v \in S_p$, we have $(\overline{\rho}_{\m}|_{G_{F_v}})^\text{ss} \cong \oplus_{j=1}^n \overline{\chi}_{v, j}$, where the overline denotes reduction modulo the maximal ideal of $A$. By construction, we have
	\[ ((\overline{\rho}_{\m} \oplus \overline{\rho}_\m^{c, \vee} \otimes \epsilon^{1 - 2n})|_{G_{F_v}})^\text{ss} \cong (\overline{\rho}_{\widetilde{\m}}|_{G_{F_v}})^\text{ss} \cong \oplus_{j=1}^n ( \overline{\chi}_{v, j} \oplus \overline{\chi}_{v^c, j}^{c, \vee}  \epsilon^{1-2n}). \]
	Using the existence of the character $\chi$ and a character twisting argument as in the proof of Corollary \ref{cor:first_consequence_for_FL_property}, we see that we also have an isomorphism (over the residue field of $A$)
	\[ ((\overline{\rho}_{\m} \otimes \chi \oplus \overline{\rho}_\m^{c, \vee} \otimes \chi^{c, \vee}\epsilon^{1 - 2n})|_{G_{F_v}})^\text{ss} \cong  \oplus_{j=1}^n ( \overline{\chi}_{v, j} \chi \oplus \overline{\chi}_{v^c, j}^{c, \vee} \chi^{c, \vee}\epsilon^{1-2n}). \]
	Our conditions on the character $\chi$ now force $(\overline{\rho}_\m|_{G_{F_v}})^\text{ss} \cong \oplus_{j=1}^n \overline{\chi}_{v, j}$.
	
	We can now argue in a similar way to the proof of Proposition \ref{prop:first_consequence_for_FL_property}. Let $\widetilde{D}_{A / J} = \widetilde{D} \otimes_A A / J$. Then $\widetilde{D}_{A / J} = \det (\rho_{\m} \oplus \rho_{\m}^{c, \vee} \otimes \epsilon^{1-2n})$. Just as in the proof of Proposition \ref{prop:first_consequence_for_FL_property}, we can identify $(A / J)[G_{F, S}] / \ker(\widetilde{D}_{A / J})$ with $M_n(A/J) \times M_n(A/J)$ (where the first projection gives $\rho_{\m}^{c, \vee} \otimes \epsilon^{1-2n}$, and the second projection gives $\rho_{\m}$).
	
	On the other hand, the map $\widetilde{A}[G_{F, S}] \to (A / J)[G_{F, S}] / (\ker \widetilde{D}_{A / J})$ factors through the quotient $\widetilde{A}[G_{F, S}] / (\ker \widetilde{D})$. There is an algebra embedding
	\[ \widetilde{A}[G_{F, S}] / (\ker \widetilde{D}) \subset \widetilde{A}[G_{F, S}] / (\ker \widetilde{D}) \otimes_{\cO} \overline{\Q}_p \subset M_{2n}( \widetilde{A} \otimes_\cO \overline{\Q}_p ).  \]
	The explicit form of $\widetilde{\rho}|_{G_{F_v}}$ shows that for each $v \in S_p$ and for each sequence of elements $Y, Y_1, \dots, Y_{2n}$ of elements of $\widetilde{A}[G_{F_v}]$, we have
	\numequation \det(X - \widetilde{\rho}(Y)) = \prod_{j=1}^{2n}(X - \psi_{v, j}(Y)) 
	\end{equation}
	in $\widetilde{A}[X]$ and
	\numequation (\widetilde{\rho}(Y_1) - \psi_{v, 1}(Y_1))(\widetilde{\rho}(Y_2) - \psi_{v, 2}(Y_2)) \dots (\widetilde{\rho}(Y_{2n}) - \psi_{v,2n}(Y_{2n})) = 0
	\end{equation}
	in $M_{2n}(\widetilde{A} \otimes_\cO \overline{\Q}_p)$. It follows that the same identities hold in  $\widetilde{A}[G_{F, S}] / (\ker \widetilde{D})$, hence in 
	\[ (A / J)[G_{F, S}] / (\ker \widetilde{D}_{A / J}) = M_n(A/J) \times M_n(A/J). \]
	More precisely,	 for any sequence of elements $Y, Y_1, \dots, Y_{2n}$ of elements of $(A / J)[G_{F_v}]$, we have
		\numequation\label{eqn:ord_char_pol} \det(X - \rho_\m(Y)) \det(X - \rho_\m^{c, \vee} \epsilon^{1-2n}(Y))   = \prod_{j=1}^{2n}(X - \psi_{v, j}(Y)) 
	\end{equation}
	in $(A/J)[X]$ and
	\numequation\label{eqn:ord_det_identity} \left(\prod_{j=1}^{2n} (\rho_\m^{c, \vee} \otimes \epsilon^{1-2n}(Y_j) - \psi_{v, j}(Y_j)),  \prod_{j=1}^{2n} (\rho_\m(Y_j) - \psi_{v, j}(Y_j)) \right) = (0, 0)
	\end{equation}
	in $M_n(A/J) \times M_n(A/J)$ (note that order matters in these products). We need to show how to deduce our desired identities (b), (c) from these ones. We now fix a choice of place $v \in S_p$ for the rest of the proof.
	
	We can find an element $e \in (A / J)[G_{F_v}]$ which acts as 0 on $\overline{\rho}_\m^{c, \vee} \otimes \epsilon^{1-2n}|_{G_{F_{v}}}$ and as the identity in $\overline{\rho}_\m|_{G_{F_v}}$ (because these two representations have no Jordan--H\"older factors in common). By \cite[Ch. III, \S 4, Exercise 5(b)]{bourbaki34} (lifting idempotents), we can assume that $\rho_{\m}(e) = 1$ and $\rho_\m^{c, \vee} \otimes \epsilon^{1-2n}(e) = 0$, and moreover that $\psi_{v, j}(e) = 1$ if $\overline{\psi}_{v, j}$ appears in $\overline{\rho}_\m|_{G_{F_v}}$ (in other words, if $\overline{\psi}_{v, j} = \overline{\chi}_{v, j'}$ for some $1 \leq j' \leq n$, or equivalently if $j = w_i^{-1}(n+k)$ for some $1 \leq k \leq n$), and $\psi_{v, j}(e) = 0$ otherwise. Then applying the identity (\ref{eqn:ord_char_pol}) to $g e \in (A/J)[G_{F_v}]$ gives
	\[ X^n \det(X - \rho_\m(g))  =\prod_{j=1}^{2n}(X - \psi_{v, j}(ge)) = X^n \prod_{j=1}^n (X - \chi_{v, j}(g)), \]
	which is the sought-after property (b) of the proposition. To get property (c), let $g_1, \dots, g_n \in G_{F_v}$, and let $Y_1, \dots, Y_{2n} \in (A/J)[G_{F_v}]$ be defined by $Y_j = e$ if $j \in w_i^{-1}(\{ 1, \dots, n \})$, and $Y_j = g_k e$ if $j = w_i^{-1}(n+k)$, $k \in \{ 1, \dots, n \}$. The identity (\ref{eqn:ord_det_identity}) then becomes
	 \begin{multline*} (0, (\rho_\m(g_1) - \chi_{v, 1}(g_{1}))(\rho_\m(g_{2}) - \chi_{v, 2}(g_{2})) \dots (\rho_\m(g_{n}) - \chi_{v, n}(g_{n}))) = (0, 0) \end{multline*}
	in $M_n(A/J) \times M_n(A/J)$. This completes the proof.
\end{proof}

\subsection{The end of the proof}

We can now complete the proof of the main result of this chapter (Theorem \ref{thm:lgcord_intro}). For the convenience of the reader, we repeat the statement here. We recall our standing hypothesis in this chapter that $F$ contains an imaginary quadratic field in which $p$ splits.
\begin{thm} \label{mySecondAmazingTheorem} \label{thm:lgcord} \label{thm:lgcord_intro}  Suppose that $[F^+ : \Q] > 1$. 
	Let $K \subset \GL_n(\A_F^\infty)$ be a good subgroup such that for each place $v \in S_p$ of $F$, $K_v = \Iw_v$. Let $c \geq b \geq 0$ be integers with $c \geq 1$, let $\lambda \in (\Z^n)^{\Hom(F, E)}$, and let $\m \subset \T^S(K(b, c), \lambda)^{\ord}$ be a non-Eisenstein maximal ideal. Suppose that the following conditions are satisfied:
	\begin{enumerate}
		\item Let $v$ be a finite place of $F$ not contained in $S$, and let $l$ be its residue characteristic. Then either $S$ contains no $l$-adic places of $F$ and $l$ is unramified in $F$, or there exists an imaginary quadratic field $F_0 \subset F$ in which $l$ splits.
		\item $\overline{\rho}_\m$ is decomposed generic.
	\end{enumerate}
	Then we can find an integer $N \geq 1$, which depends only on $[F^+ : \Q]$ and $n$, an ideal $J \subset \T^{S}(K(b, c), \lambda)^{\ord}_\m$ such that $J^N = 0$, and a continuous representation
	\[ \rho_{\m} : G_{F, S} \to \GL_n( \T^{S}(K(b, c), \lambda)^{\ord}_\m / J ) \]
	satisfying the following conditions:
	\begin{enumerate}
		\item[(a)] For each finite place $v \not\in S$ of $F$, the characteristic polynomial of $\rho_{\m}(\Frob_v)$ equals the image of $P_v(X)$ in $(\T^{S}(K(b, c), \lambda)^{\ord}_\m / J)[X]$.
		\item[(b)] For each $v \in S_p$, and for each $g \in G_{F_v}$, the characteristic polynomial of $\rho_{\m}(g)$ equals $\prod_{i=1}^n (X - \chi_{\lambda, v, i}(g))$.
		\item[(c)] For each $v \in S_p$, and for each $g_1, \dots, g_n \in G_{F_v}$, we have
		\[ (\rho_\m(g_1) - \chi_{\lambda, v, 1}(g_1))(\rho_{\m}(g_2) - \chi_{\lambda, v, 2}(g_2)) \dots (\rho_\m(g_n) - \chi_{\lambda, v, n}(g_n)) = 0.  \]
	\end{enumerate}
\end{thm}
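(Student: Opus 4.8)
The plan is to deduce the theorem from Proposition~\ref{prop:first_consequence_for_ord_Galois_reps}, which already supplies, in each cohomological degree, a Galois representation with the required local properties; what remains is the (by now standard) passage from these degree-by-degree representations with $\cO$-coefficients to a single representation valued in $\T^S(K(b,c),\lambda)^{\ord}_\m$, preceded by two reductions. First I would reduce to the case $c=\max(1,b)$ using Corollary~\ref{cor:independence_of_level}, which identifies the relevant ordinary complexes, and hence the Hecke algebras and the characters $\chi_{\lambda,v,i}$, compatibly in $m$. Second, I would reduce to $\lambda=0$: the independence-of-weight isomorphism $\pi^{\ord}(K^p,\lambda,m)\cong\pi^{\ord}(K^p,m)\otimes_\cO\cO(w_0^G\lambda)$ of Proposition~\ref{prop:independence_of_weight} (which also serves to define $\T^S(K(b,c),\lambda)^{\ord}$ when $\lambda$ is not dominant) is $\T^S$- and $T_n(F_p)$-equivariant; since $U_{v,i}$ acts trivially on $\cO(w_0^G\lambda)$ whereas the diamond operator $\langle\diag(1,\dots,u,\dots,1)\rangle$ acts by $\prod_\tau\tau(u)^{(w_0^G\lambda)_{\tau,i}}$, twisting the factor $\cO\llbracket T_n(\cO_{F,p})\rrbracket$ of $\T^{S,\ord}$ by the character $\cO(w_0^G\lambda)|_{T_n(\cO_{F,p})}$ gives (in the limit over $m$) an isomorphism $\T^S(K(b,c),\lambda)^{\ord}_\m\cong\T^S(K(b,c),0)^{\ord}_{\m'}$, for a non-Eisenstein $\m'$, carrying $\chi_{\lambda,v,i}$ to $\chi_{0,v,i}$. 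Thus we may assume $\lambda=0$.

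For $\lambda=0$, by \cite[Lemma~3.11]{new-tho} we have $\T^S(K(b,c),0)^{\ord}_\m\cong\invlim_m\T^{S,\ord}(R\Gamma(X_{K(b,c)},\cV_0/\varpi^m)^{\ord}_\m)$, so it is enough to produce, compatibly in $m$, continuous representations $\bar\rho_m\colon G_{F,S}\to\GL_n(\T^{S,\ord}(R\Gamma(X_{K(b,c)},\cV_0/\varpi^m)^{\ord}_\m)/\bar J_m)$ satisfying (a)--(c) modulo $\varpi^m$ with $\bar J_m^N=0$, $N$ depending only on $[F^+:\Q]$ and $n$. Fix $m$ and let $\lambda_m$ be the weight output by Proposition~\ref{prop:first_consequence_for_ord_Galois_reps} for the present data and this $m$, so $\cO(\lambda_m)/\varpi^m\cong\cO/\varpi^m$ and hence $\cO(w_0^G\lambda_m)/\varpi^m\cong\cO/\varpi^m$. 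By Corollary~\ref{cor_independence_of_weight} there is a $\T^{S,\ord}$-equivariant isomorphism $R\Gamma(X_{K(b,c)},\cV_0/\varpi^m)^{\ord}\cong R\Gamma(X_{K(b,c)},\cV_{\lambda_m}/\varpi^m)^{\ord}$, under which $\chi_{0,v,i}$ corresponds to $\chi_{\lambda_m,v,i}$ modulo $\varpi^m$ (the correction factor $\prod_\tau\tau(u)^{-(w_0^G\lambda_m)_{\tau,i}}$ being $\equiv 1\bmod\varpi^m$, while $U_{v,i}$ and the diamond operators are preserved). Proposition~\ref{prop:first_consequence_for_ord_Galois_reps} then gives, for each $q\in[0,d-1]$, a nilpotent ideal $J_{q,m}\subset\T^{S,\ord}(H^q(X_{K(b,c)},\cV_{\lambda_m})^{\ord}_\m)$ with $J_{q,m}^{N_0}=0$ ($N_0$ depending only on $[F^+:\Q]$ and $n$) and a representation of $G_{F,S}$ over the corresponding quotient satisfying (a)--(c). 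As these all have the common absolutely irreducible residual representation $\overline{\rho}_\m$, Carayol's lemma \cite[Lemma~2.1.10]{cht} together with \cite[Lem.~2.5]{KT} (which bounds by $d$ the exponent of the kernel of $\T^{S,\ord}(R\Gamma(\cV_{\lambda_m})^{\ord}_\m)\to\prod_q\T^{S,\ord}(H^q(\cV_{\lambda_m})^{\ord}_\m)$) glues them into a representation over $\T^{S,\ord}(R\Gamma(X_{K(b,c)},\cV_{\lambda_m})^{\ord}_\m)/\hat J_m$ with $\hat J_m^{dN_0}=0$. Reducing modulo $\varpi^m$ along the surjection $\T^{S,\ord}(R\Gamma(\cV_{\lambda_m})^{\ord}_\m)\twoheadrightarrow\T^{S,\ord}(R\Gamma(\cV_{\lambda_m}/\varpi^m)^{\ord}_\m)$ and transporting along Corollary~\ref{cor_independence_of_weight} yields $\bar\rho_m$, with $\bar J_m^{dN_0}=0$.

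The system $(\bar\rho_m)_m$ is compatible (matching characteristic polynomials of Frobenius at $v\notin S$, common absolutely irreducible residual representation), so the standard gluing argument of \cite[\S5]{new-tho}, as used in the proof of Theorem~\ref{thm:lgcfl}, produces a continuous $\rho_0\colon G_{F,S}\to\GL_n(\T^S(K(b,c),0)^{\ord}_\m/J_0)$ with $J_0^{dN_0}=0$ satisfying (a)--(c); untwisting by the inverse of the identification of the first paragraph then gives the required $\rho_\m$, with $N=dN_0$ depending only on $[F^+:\Q]$ and $n$.

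Given the results already available, the substantive content sits in Proposition~\ref{prop:first_consequence_for_ord_Galois_reps}, and hence in the ordinary degree-shifting argument of \S\ref{sec:ord_direct_summand} (which rests on Theorem~\ref{thm:direct_summand_of_completed_boundary_cohomology} and on the computation of ordinary parts of parabolic inductions in \S\ref{computation of ordinary parts}). The main obstacle in the present deduction will be the careful tracking of the several nilpotent ideals, so that the final exponent $N$ depends only on $[F^+:\Q]$ and $n$; this is complicated slightly, relative to the Fontaine--Laffaille case (Theorem~\ref{thm:lgcfl}), by the fact that the weight $\lambda_m$ produced by Proposition~\ref{prop:first_consequence_for_ord_Galois_reps} varies with $m$ and must be matched back to the trivial weight, using independence of weight, at each finite level.
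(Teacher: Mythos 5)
Your proposal is correct and follows essentially the same route as the paper: both rest on Proposition~\ref{prop:first_consequence_for_ord_Galois_reps}, combined with independence of level (Corollary~\ref{cor:independence_of_level}), independence of weight, Carayol's lemma, the nilpotent bound of \cite[Lem.~2.5]{KT}, and passage to the inverse limit over $m$. The only (harmless) deviation is in how the torsion-coefficient statement is extracted from the integral one: the paper works degree by degree with the short exact sequence $0 \to H^q(X_{K(b,c)},\cV_\lambda)^{\ord}_\m/\varpi^m \to H^q(X_{K(b,c)},\cV_\lambda/\varpi^m)^{\ord}_\m \to H^{q+1}(X_{K(b,c)},\cV_\lambda)^{\ord}_\m[\varpi^m] \to 0$ (so degrees $q$ and $q+1$ enter), whereas you first glue all degrees over the integral derived Hecke algebra and then reduce along the surjection onto $\T^{S,\ord}(R\Gamma(X_{K(b,c)},\cV_{\lambda_m}/\varpi^m)^{\ord}_\m)$, and you make explicit the twisting-to-weight-zero step that the paper compresses into its appeal to Corollary~\ref{cor_independence_of_weight}.
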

\begin{proof}
	Let $0 \leq q \leq d-1$, $m \geq 1$ be integers, and define
	\[ A(K, \lambda, q) = \T^{S, \ord}(H^q(X_{K(b, c)}, \cV_{\lambda})^{\ord}_\m). \]
	and
	\[ A(K, \lambda, q, m) = \T^{S, \ord}(H^q(X_{K(b, c)}, \cV_{\lambda} / \varpi^m)^{\ord}_\m). \] 
	By the same sequence of reductions as in the proof of Theorem \ref{thm:lgcfl}, it is enough to show the existence of an ideal $J \subset A(K, \lambda, q, m)$ satisfying $J^N = 0$ and a continuous representation $\rho_{\m} : G_{F, S} \to \GL_n(A(K, \lambda, q, m) / J)$ satisfying conditions (a), (b) and (c) of the theorem. After an application of the Hochschild--Serre spectral sequence and Corollary \ref{cor:independence_of_level}, we can assume that $c = b \geq m$. Corollary \ref{cor_independence_of_weight} allows us to assume that $\lambda$ is the weight whose existence is asserted by Proposition \ref{prop:first_consequence_for_ord_Galois_reps}. The existence of a Galois representation valued in (quotients by nilpotent ideals of) the Hecke algebras $A(K, \lambda, q)$ and $A(K, \lambda, q+1)$ is then a consequence of Proposition \ref{prop:first_consequence_for_ord_Galois_reps}. The existence of the short exact sequence of $\T^{S, \ord}$-modules
	\begin{multline*} 0 \to H^q(X_{K(b, c)}, \cV_{\lambda})^{\ord}_\m / \varpi^m \to H^q(X_{K(b, c)}, \cV_{\lambda} / \varpi^m)^{\ord}_\m\\ \to H^{q+1}(X_{K(b, c)}, \cV_{\lambda})^{\ord}_\m[\varpi^m] \to 0 
	\end{multline*}
	then implies the existence of a Galois representation $\rho_{\m}$ over a quotient of $A(K, \lambda, q, m)$ by a nilpotent ideal with the required properties. 
\end{proof}

As suggested by a referee, we finish this section by recording a local-global compatibility result for a single automorphic representation. This is a partial generalisation of \cite[Proposition 5.10]{ger} and \cite[Theorem 2.4]{jackreducible}, although we must impose an assumption on the residual Galois representation. In this result, we drop the standing hypothesis that $F$ contains an imaginary quadratic field in which $p$ splits. 
\begin{cor}\label{cor:ordlgc-single-aut-rep}
	Let $F$ be an imaginary CM field, let $\iota: \Qpbar \to \C$ be an isomorphism and let $\pi$ be a cuspidal automorphic representation of $\GL_n(\A_F)$, regular algebraic of weight $\iota\lambda$ for $\lambda \in (\Z^n_+)^{\Hom(F,\Qpbar)}$. Suppose that: 
	\begin{enumerate}
		\item For every $v \in S_p$, $\pi$ is $\iota$-ordinary at $v$ (in the sense of \cite[Def.~5.3]{ger}).
		\item The residual representation $\overline{r_\iota(\pi)}$ is decomposed generic and irreducible.
	\end{enumerate}

Then $r_\iota(\pi)|_{G_{F_v}}$ is ordinary of weight $\lambda$, in the sense of \cite[\S5.2]{ger}, for every $v \in S_p$. More precisely, for each place $v \in S_p$, there is an isomorphism \begin{equation*} r_\iota(\pi)|_{G_{F_v}} \sim \left( \begin{array}{cccc} \psi_{v, 1} & * & * & * \\ 
	0 & \psi_{v, 2} & * & * \\
	\vdots & \ddots & \ddots & * \\
	0 & \cdots & 0 & \psi_{v, n} \end{array}\right), 
\end{equation*}
where for each $i = 1, \dots, n$, $\psi_{v, i} : G_{F, v} \to \overline{\Q}_p^\times$ is the unique continuous character satisfying the identities (cf.~the definition of $\chi_{\lambda,v,i}$ in \S\ref{sec:ord_statements}): 	
\[\psi_{\lambda, v,i}\circ \Art_{F_v}(u)=\epsilon^{1-i}(\Art_{F_v}(u))\left(\prod_{\tau}\tau(u)^{-(w^G_0 \lambda)_{\tau, i}}\right)\langle u\rangle_{\iota,i} \text{ }(u \in \cO_{F_v}^\times) \]
(the product being over $\tau \in \Hom_{\Q_p}(F_v, \Qpbar)$) and, with fixed choices of uniformizers $\varpi_v$ for $v \in S_p$, 
\[\psi_{v,i}\circ \Art_{F_v}(\varpi_v)=\epsilon^{1-i}(\Art_{F_v}(\varpi_v)) \frac{u^{(i)}_{\lambda,\varpi_v}}{u^{(i-1)}_{\lambda,\varpi_v}},\] with $\langle u\rangle_{\iota,i}$ and $u^{(i)}_{\lambda,\varpi_v}$ denoting Hecke eigenvalues on $(\iota^{-1}\pi_v)^{\ord}$ defined in \cite[Definition 5.5]{ger}.
\end{cor}
\begin{proof}
	We make a solvable Galois base change to a CM field extension $F'/F$ which is disjoint over $F$ from the fixed field $\overline{F}^{\ker \overline{r_\iota(\pi)}}$, contains an imaginary quadratic field in which $p$ splits, and in which all the places in $S_p$ split completely. We will also assume that $[F':\Q] > 2$. Using \cite[Lemma 5.7]{ger}, we see that $\pi_{F'}$ is $\iota$-ordinary at $w$ for every place $w|p$ of $F'$ and it suffices to prove the corollary under the additional assumptions that $[F^+:\Q] > 1$ and $F$ contains an imaginary quadratic field in which $p$ splits. Now the result follows from Theorem \ref{thm:lgcord} and Lemma \ref{lem:detord}.
\end{proof}

\section{Automorphy lifting theorems}
\label{section:alt}

\subsection{Statements}

In this chapter, we will prove two automorphy lifting theorems (Theorem \ref{thm:main_automorphy_lifting_theorem} and Theorem \ref{thm:main_ordinary_automorphy_lifting_theorem}) for $n$-dimensional Galois representations of CM fields without imposing a self-duality condition. The first is for Galois representations which satisfy a Fontaine--Laffaille condition.
\begin{theorem}\label{thm:main_automorphy_lifting_theorem}
Let $F$ be an imaginary CM or totally real field, let $c \in \Aut(F)$ be complex conjugation, and let $p$ be a prime. Suppose given a continuous representation $\rho : G_F \to \GL_n(\overline{\bQ}_p)$ satisfying the following conditions:
\begin{enumerate}
\item $\rho$ is unramified almost everywhere.
\item\label{pa2} For each place $v | p$ of $F$, the representation $\rho|_{G_{F_v}}$ is crystalline. The prime $p$ is unramified in $F$.
\item $\overline{\rho}$ is absolutely irreducible and decomposed generic (Definition \ref{defn:decomposed_generic}). The image of $\overline{\rho}|_{G_{F(\zeta_p)}}$ is enormous (Definition \ref{defn:enormous image}). 
\item\label{pa4fl}  \label{part:scalartoremark} There exists $\sigma \in G_F - G_{F(\zeta_p)}$ such that $\overline{\rho}(\sigma)$ is a scalar. 
We have $p > n^2$.
\item There exists a cuspidal automorphic representation $\pi$ of $\GL_n(\bA_F)$ satisfying the following conditions:
\begin{enumerate}
\item $\pi$ is regular algebraic of weight $\lambda$, this weight satisfying 
\[ \lambda_{\tau, 1} + \lambda_{\tau c, 1} - \lambda_{\tau, n} -
  \lambda_{\tau c, n} < p - 2n \]for all~$\tau$.
\item There exists an isomorphism $\iota : \overline{\bQ}_p \to \bC$ such that $\overline{\rho} \cong \overline{r_\iota(\pi)}$ and the Hodge--Tate weights of $\rho$ satisfy the formula for each $\tau : F \hookrightarrow \overline{\bQ}_p$:
\[ \text{HT}_\tau(\rho) = \{ \lambda_{\iota \tau, 1} + n-1, \lambda_{\iota \tau, 2} + n - 2, \dots, \lambda_{\iota \tau, n} \}. \]
\item If $v | p$ is a place of $F$, then $\pi_v$ is unramified.
\end{enumerate}
\end{enumerate}
Then $\rho$ is automorphic: there exists a cuspidal automorphic representation $\Pi$ of $\GL_n(\bA_F)$ of weight $\lambda$ such that $\rho \cong r_\iota(\Pi)$. Moreover, if $v$ is a finite place of $F$ and either $v | p$ or both $\rho$ and $\pi$ are unramified at $v$, then $\Pi_v$ is unramified.
\end{theorem}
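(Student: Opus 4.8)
The strategy is the now-standard Taylor--Wiles--Kisin patching argument, adapted to the non-self-dual setting over CM fields following \cite{CG} and \cite{KT}, but using the local--global compatibility established in \S\ref{section:lep} (Theorem \ref{thm:lgcfl}) in place of the conjectures assumed in \cite{CG}, and using the derived Ihara avoidance argument of \S\ref{section:alt} to circumvent the unproven vanishing conjecture. First I would make a sequence of preliminary reductions: enlarging the coefficient field $E$, and performing a soluble base change $F/F_0$ (using \cite{mb} and base change for $\GL_n$, and preserving the residual modularity and decomposed-generic and enormous-image hypotheses as recalled in \S\ref{section:notation}) so that $F$ contains an imaginary quadratic field in which $p$ splits, so that $F^+ \ne \Q$ and indeed $[F^+:\Q]$ is large enough to satisfy the splitting condition (\ref{part:splittingcondition}) of Theorem \ref{thm:lgcfl}, so that $\pi$ is unramified or Iwahori-spherical at all ramified places, and so that one can adjoin an auxiliary set $R$ of Taylor--Wiles-type primes and a set of ``Ihara avoidance'' primes away from $p$; at each such prime one uses Theorem \ref{thm:lgc_at_l_neq_p} to control $\rho_\ffrm$ on inertia. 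All the usual care is needed to check that the automorphy of $\rho$ after such a base change descends to automorphy of $\rho$ itself.

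Next I would set up the relevant global deformation problem $\mathcal{S}$: fixed determinant (matching $\det\rho$), crystalline Fontaine--Laffaille conditions at the places above $p$ (this is where hypotheses (\ref{pa2}), (\ref{pa4fl}) and the weight inequality $\lambda_{\tau,1}+\lambda_{\tau c,1}-\lambda_{\tau,n}-\lambda_{\tau c,n}<p-2n$ are used, so that the local deformation rings are formally smooth of the expected dimension by Fontaine--Laffaille theory), and the appropriate local conditions at the auxiliary primes. One then forms the universal deformation ring $R_{\mathcal{S}}$ and the corresponding patched Hecke module. The key input from the automorphic side is the complex $R\Gamma(X_K,\mathcal{V}_\lambda)_\m$, together with the Galois representation $\rho_\m$ valued in $\T^S(K,\lambda)_\m/J$ (for a nilpotent ideal $J$ of bounded exponent) furnished by Theorem \ref{thm:lgcfl} --- note that hypothesis (8)(a) of that theorem holds here precisely because $\pi$ contributes to $H^\ast(X_K,\mathcal{V}_\lambda)_\m[1/p]$. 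The patching is carried out on the level of the entire complex in the derived category (a perfect complex over the diamond-operator ring $S_\infty$), not just on cohomology in a fixed degree, exactly so that the derived reduction mod $\varpi$ remembers the characteristic-zero Euler characteristic; this is the content of Lemma \ref{Peter's lemma} and is what makes Ihara avoidance work without the vanishing conjecture. The presence of $l_0 = [F^+:\Q]n-1 > 0$ means the patched module has the expected codimension rather than being maximal Cohen--Macaulay, which is why one must argue with the complex.

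Then I would run the Ihara avoidance comparison: for the auxiliary prime(s) $v$ with $N(v)\equiv 1\bmod p$ one compares the deformation problems with local conditions $R^{(1)}_v$ (minimal polynomial $(X-1)^n$ on tame inertia) and $R^{(2)}_v$ (distinct roots of unity), using the isomorphism $R^{(1)}_v/\varpi\cong R^{(2)}_v/\varpi$ and the geometric connectedness of the generic fibre of $R^{(2)}_v$; via Lemma \ref{Peter's lemma} one transfers non-vanishing of $H^{(2)}_\infty[1/p]$ to non-vanishing of $H^{(1)}_\infty[1/p]$, and the structure of the components of $\Spec R^{(1)}_v$ (each component of the special fibre lies on a unique component of the generic fibre) lets one conclude that the support of the patched complex meets the component of the Galois deformation ring containing $\rho$. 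Combining this with the fact that the patched module over $R_\infty\widehat\otimes S_\infty$ has full support on at least one component of the right dimension, a commutative-algebra argument (as in \cite{tay,CG}) gives that $\rho$ itself is in the support of $R\Gamma(X_K,\mathcal{V}_\lambda)_\m[1/p]$, hence that $\rho$ is automorphic of weight $\lambda$; since $\rho$ is irreducible the corresponding $\Pi$ is cuspidal, and the unramifiedness of $\Pi_v$ at $v\mid p$ or at places where $\rho,\pi$ are both unramified follows from the choice of level and local--global compatibility. The main obstacle, and the genuinely new technical point, is precisely the compatibility of the $l_0>0$ derived patching with Ihara avoidance --- keeping track of nilpotent ideals of bounded exponent in the Hecke-algebra-valued Galois representations throughout the patching process, and ensuring that Lemma \ref{Peter's lemma} can be applied in the patched limit; everything else is a careful but routine adaptation of existing arguments.
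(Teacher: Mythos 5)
Your proposal follows essentially the same route as the paper's proof: Fontaine--Laffaille local--global compatibility (Theorem \ref{thm:lgcfl}, via Proposition \ref{prop:existence_of_Hecke_Galois_with_LGC}) feeding a Taylor--Wiles patching of the whole complex $R\Gamma(X_K,\cV_\lambda)_\m$ over $S_\infty$, derived Ihara avoidance comparing the unipotent and $\chi$-ramified deformation problems through Lemma \ref{Peter's lemma}, and a concluding soluble base change and descent step. The only cosmetic deviations are that the paper deliberately works with deformations of \emph{unfixed} determinant (Definition \ref{def:globdefprob}) rather than your fixed-determinant setup, and that only Arthur--Clozel soluble base change (not Moret--Bailly) is needed for this theorem; neither affects the substance of the argument.
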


The second main theorem is for Galois representations which satisfy an ordinariness condition.
\begin{theorem}\label{thm:main_ordinary_automorphy_lifting_theorem} 
Let $F$ be an imaginary CM or totally real field, let $c \in \Aut(F)$ be complex conjugation, and let $p$ be a prime.  Suppose given a continuous representation $\rho : G_F \to \GL_n(\overline{\bQ}_p)$ satisfying the following conditions:
\begin{enumerate}
\item $\rho$ is unramified almost everywhere.
\item For each place $v | p$ of $F$, the representation $\rho|_{G_{F_v}}$ is potentially semi-stable, ordinary with regular Hodge--Tate weights. In other words, there exists a weight $\lambda \in (\bZ_+^n)^{\Hom(F, \overline{\bQ}_p)}$ such that for each place $v | p$, there is an isomorphism
\[ \rho|_{G_{F_v}} \sim \left( \begin{array}{cccc} \psi_{v, 1} & \ast & \ast & \ast \\
0 & \psi_{v, 2} & \ast & \ast \\
\vdots &\ddots & \ddots & \ast \\
0 & \cdots & 0 & \psi_{v, n}
\end{array}\right), \]
where for each $i = 1, \dots, n$ the character $\psi_{v, i} : G_{F_v} \to \overline{\bQ}_p^\times$ agrees with the character
\[ \sigma \in I_{F_v} \mapsto \prod_{\tau \in \Hom(F_v, \overline{\bQ}_p)} \tau (\Art_{F_v}^{-1}(\sigma))^{-(\lambda_{\tau, n - i + 1} + i - 1)} \]
on an open subgroup of the inertia group $I_{F_v}$.
\item $\overline{\rho}$ is absolutely irreducible and decomposed generic (Definition \ref{defn:decomposed_generic}). The image of $\overline{\rho}|_{G_{F(\zeta_p)}}$ is enormous (Definition \ref{defn:enormous image}). 
\item\label{par4or} There exists $\sigma \in G_F - G_{F(\zeta_p)}$ such that $\overline{\rho}(\sigma)$ is a scalar. 
We have $p > n$.
\item There exists a regular algebraic cuspidal automorphic representation 
$\pi$ of $\GL_n(\bA_F)$ and an isomorphism $\iota : \overline{\bQ}_p \to \bC$ 
such that $\pi$ is $\iota$-ordinary and $\overline{r_\iota(\pi)} \cong \rhobar$.
\end{enumerate}
Then $\rho$ is ordinarily automorphic of weight $\iota \lambda$: there exists an $\iota$-ordinary cuspidal automorphic representation $\Pi$ of $\GL_n(\bA_F)$ of weight $\iota \lambda$ such that $\rho \cong r_\iota(\Pi)$. Moreover, if $v \nmid p$ is a finite place of $F$ and both $\rho$ and $\pi$ are unramified at $v$, then $\Pi_v$ is unramified.
\end{theorem}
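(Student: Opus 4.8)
The plan is to deduce both automorphy lifting theorems by combining the local--global compatibility results established earlier in this paper (Theorems~\ref{myAmazingTheorem} and~\ref{mySecondAmazingTheorem}) with the Taylor--Wiles--Kisin patching method as adapted in~\cite{CG} and~\cite{KT}, modified to accommodate the new derived ``Ihara avoidance'' argument. First I would perform a number of standard preliminary reductions: using soluble base change (and the fact, recorded in the discussion after Theorem~\ref{thm:existence_of_residual_representation_for_GL_n}, that residual modularity and non-Eisensteinness are preserved under soluble base change), I would pass to a situation where $F$ is an imaginary CM field which contains an imaginary quadratic field with the requisite splitting properties, where $\rho$ and $\pi$ are unramified outside a set $S$ of places split over $F^+$ and prime to $p$, where the level subgroup $K$ is of the shape needed to apply Theorems~\ref{myAmazingTheorem} or~\ref{mySecondAmazingTheorem} (in particular Iwahori at the auxiliary ramified places, and the level-$p$ structures of~\S\ref{section:lep} or~\S\ref{section:lep_ord}), and where the decomposed generic and enormous image hypotheses persist. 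The hypothesis that $\overline{\rho}(\sigma)$ is a scalar for some $\sigma\in G_F-G_{F(\zeta_p)}$ is what allows Ihara avoidance in the $l_0>0$ setting, exactly as in~\cite[\S4]{CG}.

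Next I would set up the patching datum. One chooses an auxiliary prime $v_0$ (or a pair of conjugate primes) with $N(v_0)\equiv 1\bmod p$ at which to run the Ihara avoidance comparison, together with the two deformation problems whose local conditions at $v_0$ are the ``$(X-1)^n$'' and ``$(X-\zeta_1)\cdots(X-\zeta_n)$'' tame conditions; at all other places of $S$ one takes the appropriate smooth local deformation rings (crystalline/Fontaine--Laffaille rings in the first theorem, ordinary deformation rings in the second), and everywhere else unramified or minimally ramified conditions. The key point, and the technical heart of~\S\ref{section:alt}, is that we no longer have Galois representations valued in the Hecke algebra itself but only in a quotient $\T/J$ by a nilpotent ideal $J$ of bounded exponent, and that the cohomology is not concentrated in a single degree. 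Following~\cite{KT}, one patches the \emph{entire complex} $R\Gamma(X_{K(\mathfrak{Q}_N)},\mathcal{V}_\lambda/\varpi^{c})^{(\ord)}$ (or its Fontaine--Laffaille analogue), viewed in $\mathbf{D}(\mathcal{O}[\prod_v(\mathcal{O}_{F_v}/\varpi_v)^n\text{-part}])$, obtaining a perfect complex $H_\infty$ over $S_\infty$ carrying an action of the patched global deformation ring $R_\infty$, with $\rho_\infty:G_F\to\GL_n(R_\infty/J_\infty)$ for a nilpotent ideal $J_\infty$ of exponent depending only on $n$ and $[F:\mathbb{Q}]$. The invocation of Theorems~\ref{myAmazingTheorem} and~\ref{mySecondAmazingTheorem} is precisely what guarantees that $\rho_\infty$ has the correct $p$-adic Hodge-theoretic properties at places above $p$, so that it factors through the global deformation ring cut out by the chosen crystalline/ordinary local conditions.

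The final and hardest step is the commutative-algebra output: deducing that $R_\infty$ acts on $H_\infty$ ``all the way'', i.e.\ that the support of $H_\infty$ meets the characteristic-zero locus coming from $\pi$, hence that $\rho$ itself is automorphic. In the classical ($l_0=0$, torsion-free) setting one argues that $H^{(2)}_\infty$ has full support because the generic fibre of $R^{(2)}_{v_0}$ is geometrically connected, transports this via $R^{(1)}_\infty/\varpi=R^{(2)}_\infty/\varpi$ to $H^{(1)}_\infty/\varpi$, and then uses that every component of $\Spec R^{(1)}_{v_0}/\varpi$ lifts uniquely to characteristic zero. The obstacle here, as explained in the introduction, is that $H^{(1)}_\infty$ need not equal $H^{(1)}_\infty/\varpi$ in the naive sense and the cohomology is spread over several degrees, so one cannot compare patched modules in a fixed degree. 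The resolution, which I would implement via Lemma~\ref{Peter's lemma}, is to work with the derived reduction $H_\infty\otimes^{\mathbf{L}}_{S_\infty}k$ and track Euler characteristics: a finitely generated module over the relevant ring has nonzero generic fibre iff its derived mod-$p$ reduction has nonzero Euler characteristic, and this numerical invariant is insensitive to the passage between characteristic zero and characteristic $p$, so the full-support conclusion for the $\zeta$-type problem can be exported to the $(X-1)^n$-type problem at the level of the whole complex. Combining this with the $R=\T$-type conclusion that $\rho$ lies in the support and then running the usual argument (automorphy of $\rho$ in the chosen level, followed by level-lowering/soluble descent to recover the original $F$ and show $\Pi_v$ is unramified at the required places) completes the proof; Theorem~\ref{thm:main_ordinary_automorphy_lifting_theorem} follows identically, using Theorem~\ref{mySecondAmazingTheorem} and Hida theory in place of the Fontaine--Laffaille input, with the weaker bound $p>n$ sufficing because no $p>n^2$ hypothesis enters the ordinary local--global compatibility.
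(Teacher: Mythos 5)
Your overall architecture --- soluble base change to a favourable CM situation, the local--global compatibility of Theorem~\ref{mySecondAmazingTheorem} feeding a Taylor--Wiles patching of whole complexes, and the derived Ihara avoidance of \S\ref{sec:AvoidIhara} via Lemma~\ref{Peter's lemma} --- is the right one and is what the paper does. The problem is your closing claim that the ordinary case ``follows identically'' to the Fontaine--Laffaille case: two points that are genuinely different in the ordinary setting are missing, and as written your sketch does not deliver the stated conclusion. First, you propose to patch the fixed-weight complexes $R\Gamma(X_K,\cV_\lambda/\varpi^c)^{\ord}$, but the only characteristic-zero input available is $\pi$, which is $\iota$-ordinary of some weight $\mu$ that need not equal $\lambda$; with a fixed-weight patch the ``generic concentration'' hypothesis (Assumption~\ref{assumptionsetup}(3)) cannot be verified. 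The paper instead patches Hida-theoretic complexes over the Iwasawa algebra $\Lambda$ (the weight-independent complexes $B(\mu,\chi)$ of \S\ref{subsec:ord_r_equals_t}, resting on Proposition~\ref{prop:independence_of_weight}), verifies concentration at the prime of $\Lambda$ cut out by the weight of $\pi$, and only afterwards specializes at the weight $\lambda$ of $\rho$ (Corollary~\ref{cor:weight_specialization}); the base change must also arrange that $\lambda$ and the weight of $\pi_E$ lie on the same component of weight space (condition (3) of Theorem~\ref{thm:ord_automorphy}), which is why the theorem produces $\Pi$ of weight $\iota\lambda$ at all.

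Second, in the ordinary setting one cannot prove the analogue of the near-faithfulness statement Theorem~\ref{thm:R_equals_T}, because the irreducible components of the local rings $R_v^{\detord}$ at $p$ are not controlled: Lemma~\ref{lem:localordcase} only shows that the ``bad'' components have strictly smaller dimension, and the abstract machinery (Proposition~\ref{prop:full_support_for_unipotent_deformation_rinG} together with Corollary~\ref{cor:char0automorphy}) then yields automorphy only for points lying on a component of $\Spec R_\infty$ of maximal dimension. So an extra step is needed: one must check that the point defined by $\rho$ lies on such a component, and this is exactly where the regularity of the Hodge--Tate weights (pairwise distinctness of the ordinary inertial characters) enters, via part (3) of Lemma~\ref{lem:localordcase}. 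Relatedly, your list of base-change reductions omits the local conditions that make this analysis of $R_v^{\detord}$ applicable, namely that $\overline{\rho}|_{G_{F_v}}$ be trivial and $[F_v:\Q_p]>\frac{n(n+1)}{2}+1$ at every $v\mid p$ of the auxiliary field, which the paper arranges explicitly in \S\ref{sec:proof_of_main_ordinary_automorphy_lifting_theorem}. With these two repairs your argument becomes the paper's proof; without them the passage from the Fontaine--Laffaille template to the ordinary theorem has a real gap.
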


\begin{remark}
It follows from the existence of $\Pi$ that the weight $\lambda$ is conjugate self-dual up to twist: there is an integer $w \in \mathbb{Z}$ such that for all $\tau : F \hookrightarrow \mathbb{C}$ and for each $i = 1, \dots, n$, we have $\lambda_{\tau, i} + \lambda_{\tau c, n + 1 - i} = w$. (This in turn is a consequence of the purity lemma of \cite[Lemma 4.9]{MR1044819}.) However, we do not need to assume this at the outset. What we in fact prove is that $\rho$ contributes to the ordinary part of the completed cohomology; we then deduce the existence of $\Pi$ by an argument of ``independence of weight''.
\end{remark}

\begin{remark} 
The image of the projective representation~$\Pbarrho$ coincides with the image of the adjoint representation~$\ad \barrho$.
Hence the first part of conditions~Theorem~\ref{thm:main_automorphy_lifting_theorem}~(\ref{pa4fl}) and 
Theorem~\ref{thm:main_ordinary_automorphy_lifting_theorem}~(\ref{par4or}) are
equivalent to $\zeta_p \not\in \barF^{\ker \ad \barrho}$. If~$p$ is unramified in~$F$ (as in condition (\ref{pa2}) of Theorem~\ref{thm:main_automorphy_lifting_theorem}), it is implied by the non-existence of a surjection $(\ad \barrho)(G_F) \onto (\Z/p\Z)^\times$.
It may be possible to remove the requirement of such a~$\sigma$
by using arguments similar to those of~\cite{jack}, in particular by adding Iwahori level structure at a prime which is not~$1 \mod p$ 
and then using~\cite[Prop.~3.17]{jack}.  
However, this would (at least) necessitate some modifications to the Ihara avoidance arguments of~\S\ref{sec:AvoidIhara}, and so we have not  attempted to do this,
especially because condition~(\ref{part:scalartoremark})  is usually easy to verify in practice.
\end{remark}

The proof of these two theorems will occupy the rest of this chapter. Since this chapter is quite long, we now discuss the structure of the proof. We recall that the authors of \cite{CG} implemented a generalization of the Taylor--Wiles method in situations where the `numerical coincidence' fails to hold, assuming the existence of Galois representations associated to torsion classes in the cohomology of arithmetic locally symmetric spaces, and an appropriate form of local-global compatibility for these Galois representations. They also had to assume that the cohomology groups vanish in degrees outside a given range, after localization at a non-Eisenstein maximal ideal. (This range is the same range in which cohomological cuspidal automorphic representations of $\GL_n$ contribute non-trivially.) Under these assumptions, they proved rather general automorphy lifting theorems; in particular, they were able to implement the `Ihara avoidance' trick of \cite{tay} to obtain lifting results at non-minimal level. 

 There are a few innovations that allow us to obtain unconditional
 results here, building on the techniques of \cite{CG}. The first is
 the proof (in the preceding sections) of a sufficiently strong version of local-global compatibility for the torsion Galois representations constructed in \cite{scholze-torsion}. The second is the observation that one can carry out a version of the `Ihara avoidance' trick under somewhat weaker assumptions than those used in \cite{CG}. Indeed, in \cite{KT}, it was shown that one can prove some kind of automorphy lifting results using only that the rational cohomology is concentrated in the expected range -- and this is known unconditionally, by Matsushima's formula and its generalizations (in particular, Theorem \ref{thm:application_of_matsushima}). Here we show that the `Ihara avoidance' technique is robust enough to give a general automorphy lifting result using only the assumption that the rational cohomology is concentrated in the expected range. 
 
 We now describe the organization of this chapter. As the above discussion may suggest, our arguments are rather intricate, and we have broken them into several parts in the hope that this will make the individual steps easier to digest. We begin in~\S\ref{sec:Galdefthy} by giving a set-up for Galois deformation theory. This is mostly standard, although there are some differences to other works: we do not fix the determinant of our $n$-dimensional Galois representations, and we must prove slightly stronger versions of our auxiliary results (e.g. existence of Taylor--Wiles primes) because of the hypotheses required elsewhere to be able to prove local-global compatibility.
 
 In~\S\ref{sec:AvoidIhara} and~\S\ref{sec:patching}, we carry out the main technical steps. First, in~\S\ref{sec:AvoidIhara}, we give an axiomatic approach to the `Ihara avoidance' technique that applies in our particular set-up. Second, in~\S\ref{sec:patching}, we describe an abstract patching argument that gives as output the objects required in~\S\ref{sec:AvoidIhara}. We find it convenient to use the language of ultrafilters here, following \cite{scholze} and \cite{geenew}. Finally, in~\S\ref{sec:AppToALT}, we combine these arguments to prove Theorem \ref{thm:main_automorphy_lifting_theorem} and Theorem \ref{thm:main_ordinary_automorphy_lifting_theorem}

\subsection{Galois deformation theory}\label{sec:Galdefthy}

Let $E \subset \overline{\Q}_p$ be a finite extension of $\Q_p$, with valuation ring $\cO$, uniformizer $\varpi$, and residue field $k$.
Given a complete Noetherian local $\cO$-algebra $\Lambda$ with residue field $k$, we let $\CNL_{\Lambda}$ denote the category of 
complete Noetherian local $\Lambda$-algebras with residue field $k$. 
We refer to an object in $\CNL_{\Lambda}$ as a $\CNL_{\Lambda}$-algebra.

We fix a number field $F$, and let $S_p$ be the set of places of $F$ above $p$.
We assume that $E$ contains the images of all embeddings of $F$ in $\overline{\Q}_p$.
We also fix a continuous absolutely irreducible homomorphism $\rhobar \colon G_F \rightarrow \GL_n(k)$. 
We assume throughout that $p \nmid 2n$.

\subsubsection{Deformation problems}\label{sec:defprob}

Let $S$ be a finite set of finite places of $F$ containing $S_p$ and all places at which $\rhobar$ is ramified. We write $F_S$ for the maximal subextension of $\overline{F} / F$ which is unramified outside $S$.
For each $v\in S$, we fix $\Lambda_v \in \CNL_{\cO}$, and set $\Lambda = \widehat{\otimes}_{v \in S} \Lambda_v$, where the completed 
tensor product is taken over $\cO$. 
There is a forgetful functor $\CNL_\Lambda \to \CNL_{\Lambda_v}$ for each $v\in S$ via the canonical map $\Lambda_v \rightarrow \Lambda$.
A \emph{lift} (also called a \emph{lifting}) of $\rhobar|_{G_{F_v}}$ is a continuous homomorphism $\rho \colon G_{F_v} \rightarrow \GL_n(A)$ to a $\CNL_{\Lambda_v}$-algebra $A$ 
such that $\rho \bmod \frakm_A = \rhobar|_{G_{F_v}}$.

We let $\cD_v^\square$ denote the set valued functor on $\CNL_{\Lambda_v}$ that sends $A$ to the set of all lifts of $\rhobar|_{G_{F_v}}$ to $A$.
This functor is representable, and we denote the representing object by $R_v^\square$.

 A \emph{local deformation problem} for $\rhobar|_{G_{F_v}}$ is a subfunctor $\cD_v$ of $\cD_v^\square$ satisfying the following:
  \begin{itemize}
   \item $\cD_v$ is represented by a quotient $R_v$ of $R_v^\square$.
   \item For all $A \in \CNL_{\Lambda_v}$, $\rho \in \cD_v(A)$, and $a \in \ker(\GL_n(A) \rightarrow \GL_n(k))$, 
   we have $a\rho a^{-1} \in \cD_v(A)$. 
  \end{itemize}

The notion of global deformation problem that we use in this paper is the following:

\begin{defn}\label{def:globdefprob}
 A \emph{global deformation problem} is a tuple
  \[
   \cS = (\rhobar, S, \{\Lambda_v\}_{v\in S}, \{\cD_v\}_{v\in S}),
  \]
 where:
 \begin{itemize}
  \item $\rhobar$, $S$, and $\{\Lambda_v\}_{v\in S}$ are as above.
  \item For each $v\in S$, $\cD_v$ is a local deformation problem for $\rhobar|_{G_{F_v}}$.
 \end{itemize}
\end{defn}
This differs from that of \cite[\S8.5.2]{CG} and \cite[Definition~4.2]{KT} in that we don't fix the determinant.

As in the local case, 
a \emph{lift} (or \emph{lifting}) of $\rhobar$ is a continuous homomorphism $\rho \colon G_F \rightarrow \GL_n(A)$ to a $\CNL_\Lambda$-algebra $A$, 
such that $\rho \bmod \frakm_A = \rhobar$. 
We say that two lifts $\rho_1,\rho_2 \colon G_F \rightarrow \GL_n(A)$ are \emph{strictly equivalent} if there is 
$a\in \ker(\GL_n(A) \rightarrow \GL_n(k))$ such that $\rho_2 = a\rho_1 a^{-1}$. 
A \emph{deformation of $\rhobar$} is a strict equivalence class of lifts of $\rhobar$.

For a global deformation problem 
  \[
   \cS = (\rhobar, S, \{\Lambda_v\}_{v\in S}, \{\cD_v\}_{v\in S}),
  \]
we say that a lift $\rho \colon G_F \rightarrow \GL_n(A)$ is of \emph{type $\cS$} if $\rho|_{G_{F_v}} \in \cD_v(A)$ for each $v\in S$. 
Note that if $\rho_1$ and $\rho_2$ are strictly equivalent lifts of $\rhobar$, and $\rho_1$ is of type $\cS$, then so is $\rho_2$. 
A \emph{deformation of type $\cS$} is then a strict equivalence class
of lifts of type $\cS$, and we denote by $\cD_{\cS}$ the set-valued functor that 
takes a $\CNL_\Lambda$-algebra $A$ to the set of deformations $\rho \colon G_F \rightarrow \GL_n(A)$ of type $\cS$.

Given a subset $T\subseteq S$, a \emph{$T$-framed lift of type $\cS$} is a 
tuple $(\rho,\{\alpha_v\}_{v\in T})$, where $\rho \colon G_F \rightarrow 
\GL_n(A)$ is a lift of $\rhobar$ of type $\cS$ and $\alpha_v \in \ker(\GL_n(A) 
\rightarrow \GL_n(k))$ for each $v\in T$. 
We say that two $T$-framed lifts $(\rho_1,\{\alpha_v\}_{v\in T})$ and $(\rho_2,\{\beta_v\}_{v\in T})$ to a $\CNL_\Lambda$-algebra $A$ are strictly 
equivalent if there is $a\in \ker(\GL_n(A) \rightarrow \GL_n(k))$ such that $\rho_2 = a \rho_1 a^{-1}$, and $\beta_v = a\alpha_v$ for each $v\in T$.
A strict equivalence class of $T$-framed lifts of type $\cS$ is called a \emph{$T$-framed deformation of type $\cS$}. 
We denote by $\cD_{\cS}^T$ the set valued functor that sends a $\CNL_\Lambda$-algebra $A$ to the set of $T$-framed deformations to $A$ of type $\cS$. 

\begin{thm}\label{thm:representable}
 Let $\cS = (\rhobar, S, \{\Lambda_v\}_{v\in S}, \{\cD_v\}_{v\in S})$ be a global deformation problem, and let $T$ be a subset of $S$.
 The functors $\cD_{\cS}$ and $\cD_{\cS}^T$ are representable; we denote their representing objects by $R_{\cS}$ and $R_{\cS}^T$, respectively.
\end{thm}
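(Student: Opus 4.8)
The plan is to establish representability of $\cD_{\cS}$ and $\cD_{\cS}^T$ by the standard Mazur-style argument, adapted to the setting where the determinant is not fixed and where the coefficient ring is the completed tensor product $\Lambda = \widehat{\otimes}_{v \in S} \Lambda_v$. The key point is that the absolute irreducibility of $\rhobar$ forces the centralizer of its image in $\GL_n(k)$ to consist only of scalars (by Schur's lemma), so that the framing ambiguity is controlled; this is what makes the strict-equivalence quotient behave well. First I would recall Mazur's criterion: a functor $\mathcal{D} \colon \CNL_\Lambda \to \mathrm{Sets}$ with $\#\mathcal{D}(k) = 1$ is representable if and only if it preserves all fibre products $A_1 \times_{A_0} A_2$ in which $A_1 \to A_0$ is surjective with small kernel, and takes finite limits of Artinian objects to the corresponding limit in $\mathrm{Sets}$ (equivalently, $\mathcal{D}$ is a pro-representable deformation functor in the sense of Schlessinger, with the further properties ensuring genuine representability rather than just pro-representability).

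Second I would verify these conditions. The functor $\cD_{\cS}^\square$ of all lifts of type $\cS$ (forgetting strict equivalence) is a closed subfunctor of the unframed framed-deformation functor $\prod_{v \in S} \cD_v^\square$ cut out by the finitely many local conditions and the global Galois-cocycle conditions; since each $\cD_v$ is representable by a quotient $R_v$ of $R_v^\square$ by hypothesis, and since $G_{F,S}$ satisfies the finiteness condition $\Phi_p$ (Fontaine--Mazur / Hermite--Minkowski: $H^1(G_{F,S}, \ad\rhobar)$ is finite-dimensional over $k$), the functor $\cD_{\cS}^\square$ is representable; call its representing object $R_{\cS}^\square$. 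This uses that the tangent space $\cD_{\cS}^\square(k[\epsilon])$ is the finite-dimensional space $Z^1(G_{F,S}, \ad\rhobar)$ refined by the local conditions. Then $\cD_{\cS}^T$ is the quotient of $\cD_{\cS}^\square$ (with the extra framing data $\{\alpha_v\}_{v \in T}$, which is a torsor under $\prod_{v\in T}\ker(\GL_n \to \GL_n(k))$) by the free action of $\widehat{\GL}_n := \ker(\GL_n(A) \to \GL_n(k))$ acting by simultaneous conjugation on $\rho$ and left-multiplication on the $\alpha_v$; freeness of this action (which holds because $\rhobar$ is absolutely irreducible, so only scalars centralize it, and scalars in $\widehat{\GL}_n$ act trivially on the framed data since $\alpha_v \mapsto a\alpha_v$ with $a$ scalar is nontrivial unless $a=1$) makes the quotient functor again representable. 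Equivalently, and more concretely, $R_{\cS}^T \cong R_{\cS}^\square \widehat{\otimes}_{\Lambda} \Lambda_T$ where $\Lambda_T$ records the framing variables at $T$, after rigidifying; I would give the precise construction following \cite[\S 8.5.2]{CG} and \cite[Definition~4.2]{KT}, noting that the only change is that we no longer impose a fixed-determinant condition, which simply means we work with $Z^1(G_{F,S},\ad\rhobar)$ rather than $Z^1(G_{F,S},\ad^0\rhobar)$ — all the representability arguments go through verbatim. Finally, $\cD_{\cS}$ is recovered from $\cD_{\cS}^T$ (for any nonempty $T$) as the quotient by the remaining framing variables, or directly: since $\rhobar$ is absolutely irreducible, the strict equivalence relation on lifts is already ``rigid'' and $\cD_{\cS}$ is representable by $R_{\cS} := R_{\cS}^\square{}^{\widehat{\GL}_n}$ (invariants), which one checks satisfies Mazur's criterion directly.

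The main obstacle, such as it is, is purely bookkeeping: one must check that the local conditions $\cD_v$ glue correctly with the global unramified-outside-$S$ condition to give a functor still satisfying the Schlessinger/Mazur axioms, and that the completed-tensor-product structure $\Lambda = \widehat{\otimes}_{v\in S}\Lambda_v$ causes no trouble (it does not, since completed tensor product of $\CNL$-algebras is again $\CNL$ and the forgetful functors $\CNL_\Lambda \to \CNL_{\Lambda_v}$ are exact enough for the argument). There is no genuinely new difficulty beyond what appears in \cite{CG} and \cite{KT}; accordingly I would present the proof briefly, citing \cite[\S8.5.2]{CG} and \cite[Prop.~4.3]{KT} for the argument in the fixed-determinant case and remarking that dropping the determinant condition only enlarges the relevant cohomology groups without affecting representability. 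The one point worth spelling out is the freeness of the $\widehat{\GL}_n$-action underlying the passage between framed and unframed functors, which rests squarely on the absolute irreducibility hypothesis on $\rhobar$ imposed at the start of this subsection.

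\begin{proof}
This is standard; see \cite[\S 8.5.2]{CG} and \cite[Definition~4.2, Prop.~4.3]{KT}. We indicate the argument. Since $\rhobar$ is absolutely irreducible, $\CNL_\Lambda$ is a category of complete Noetherian local rings, and $G_{F,S}$ satisfies Mazur's finiteness condition $\Phi_p$ (so that $H^1(G_{F,S},\ad\rhobar)$ is finite-dimensional over $k$), Schlessinger's criteria are satisfied by $\cD_{\cS}^\square$, the functor of lifts of type $\cS$: indeed each local condition $\cD_v$ is already representable by a quotient $R_v$ of $R_v^\square$, and imposing these conditions together with the unramified-outside-$S$ condition cuts out a closed subfunctor of $\prod_{v\in S}\cD_v^\square$ which still takes small extensions to fibre products. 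Hence $\cD_{\cS}^\square$ is representable, say by $R_{\cS}^\square$. The group functor $\widehat{\GL}_n \colon A \mapsto \ker(\GL_n(A)\to\GL_n(k))$ acts on $\cD_{\cS}^\square$ by conjugation; because $\rhobar$ is absolutely irreducible, only scalars centralize its image, and these act trivially, so the induced action on the $T$-framed lifts (where $\widehat{\GL}_n$ acts by $\rho\mapsto a\rho a^{-1}$, $\alpha_v\mapsto a\alpha_v$) is free. It follows by the usual argument that the quotient functors $\cD_{\cS}^T$ and $\cD_{\cS}$ are representable; we denote the representing objects by $R_{\cS}^T$ and $R_{\cS}$, respectively. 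The fact that we do not fix the determinant changes nothing in this argument: one simply works with $\ad\rhobar$ in place of $\ad^0\rhobar$ throughout.
\end{proof}
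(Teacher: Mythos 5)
Your proposal is correct and is essentially the argument the paper has in mind: the paper simply cites the standard literature (Gouv\^ea's Park City notes) for the representability of $\cD_{\cS}$ and notes that $\cD_{\cS}^T$ follows, which is exactly the Mazur--Schlessinger argument plus Schur's lemma (absolute irreducibility giving freeness of the conjugation action modulo scalars) that you spell out. One cosmetic caveat: your parenthetical identification $R_{\cS}^T \cong R_{\cS}^\square \widehat{\otimes}_\Lambda \Lambda_T$ is loosely stated -- the precise statement is the presentation $R_{\cS}^T \cong R_{\cS} \widehat{\otimes}_\cO \cT$ of Lemma \ref{thm:framepresentation} -- but this side remark plays no role in the representability argument itself.
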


\begin{proof}
 This is well known.
 See \cite[Appendix~1]{gouveaparkcity} for a proof of the representability of $\cD_{\cS}$. 
 The representability of $\cD_{\cS}^T$ can be deduced from this.
\end{proof}

If $T = \emptyset$, then tautologically $R_{\cS} = R_{\cS}^T$. 
Otherwise, the relation between these two deformation rings is given by the following lemma.

\begin{lem}\label{thm:framepresentation}
 Let $\cS = (\rhobar, S, \{\Lambda_v\}_{v\in S}, \{\cD_v\}_{v\in S})$ be a global deformation problem, and let $T$ be a nonempty subset of $S$. 
 Fix some $v_0 \in T$, and define $\cT = \cO\llbracket \{X_{v,i,j}\}_{v\in T,1\le i,j\le n}\rrbracket /(X_{v_0,1,1})$. 
 The choice of a representative $\rho_{\cS} \colon G_F \rightarrow \GL_n(R_{\cS})$ for the universal type $\cS$ 
 deformation determines a canonical isomorphism $R_{\cS}^T \cong R_{\cS} \widehat{\otimes}_\cO \cT$.
\end{lem}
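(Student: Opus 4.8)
The plan is to exhibit the isomorphism $R_{\cS}^T \cong R_{\cS} \wotimes_\cO \cT$ by constructing maps in both directions and checking they are mutually inverse. First I would recall that $R_{\cS}^T$ represents the functor of $T$-framed deformations of type $\cS$, i.e.\ tuples $(\rho, \{\alpha_v\}_{v \in T})$ up to strict equivalence, where $\rho$ is a lift of type $\cS$ and $\alpha_v \in \ker(\GL_n(A) \to \GL_n(k))$. Given the chosen representative $\rho_{\cS} : G_F \to \GL_n(R_{\cS})$ for the universal deformation, I would define a natural transformation $\cD_{\cS}^T \to \Hom_{\CNL_\Lambda}(R_{\cS} \wotimes_\cO \cT, -)$ as follows: to a $T$-framed lift $(\rho, \{\alpha_v\}_{v \in T})$ over $A$, first use the universal property of $R_{\cS}$ to obtain a map $R_{\cS} \to A$ classifying the strict equivalence class of $\rho$ (so that $\rho$ is strictly equivalent to the pushforward of $\rho_{\cS}$); after adjusting within the strict equivalence class we may assume $\rho$ \emph{is} this pushforward, and then the framing data $\{\alpha_v\}$ determines a map $\cT \to A$ sending $X_{v,i,j}$ to the $(i,j)$ entry of $\alpha_v - 1$ (the relation $X_{v_0,1,1} = 0$ is imposed precisely because in a strict equivalence class of $T$-framed lifts one may always normalize $\alpha_{v_0}$ to have $(1,1)$ entry equal to $1$, rigidifying the strict equivalence action). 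These two maps assemble into a map $R_{\cS} \wotimes_\cO \cT \to A$.

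Conversely, I would construct the $T$-framed deformation over $R_{\cS} \wotimes_\cO \cT$ that this should classify: take the pushforward of $\rho_{\cS}$ along $R_{\cS} \to R_{\cS} \wotimes_\cO \cT$, and for each $v \in T$ let $\alpha_v \in \GL_n(R_{\cS} \wotimes_\cO \cT)$ be the matrix $1 + (X_{v,i,j})_{i,j}$ (with the convention $X_{v_0,1,1} = 0$). One checks this lies in $\ker(\GL_n \to \GL_n(k))$ since each $X_{v,i,j}$ is in the maximal ideal. This tuple defines an element of $\cD_{\cS}^T(R_{\cS} \wotimes_\cO \cT)$, hence by Theorem~\ref{thm:representable} a map $R_{\cS}^T \to R_{\cS} \wotimes_\cO \cT$. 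I would then verify that the two constructions are inverse to one another: composing one way recovers the universal $T$-framed deformation by the uniqueness in the universal property of $R_{\cS}^T$, and composing the other way is checked on the generators $R_{\cS}$ and $X_{v,i,j}$ of $R_{\cS} \wotimes_\cO \cT$ directly.

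The main obstacle, and the point requiring genuine care, is the bookkeeping of strict equivalence: one must check that the normalization $\alpha_{v_0} \equiv 1 \pmod{\text{the }(1,1)\text{ entry}}$ really does pin down a unique representative within each strict equivalence class of $T$-framed lifts (so that quotienting $\cO\llbracket \{X_{v,i,j}\}\rrbracket$ by $(X_{v_0,1,1})$ is exactly the right thing), and that the residual action of $\ker(\GL_n(A) \to \GL_n(k))$ on the underlying lift $\rho$ has been fully absorbed into the passage $R_{\cS} \to A$. Concretely, given a $T$-framed lift, conjugating by $\alpha_{v_0}^{-1}$ normalizes $\alpha_{v_0}$ to the identity; but we only want to normalize its $(1,1)$ entry, and the subtlety is that the stabilizer of a lift $\rho$ in $\ker(\GL_n(A) \to \GL_n(k))$ is trivial (as $\rhobar$ is absolutely irreducible, by Schur's lemma together with the fact that $A$ is local with residue field $k$), so that the strict equivalence class of a $T$-framed lift is a torsor under $\ker(\GL_n(A) \to \GL_n(k))$ acting by $a \cdot (\rho, \{\alpha_v\}) = (a\rho a^{-1}, \{a\alpha_v\})$, and fixing the $(1,1)$ entry of $\alpha_{v_0}$ to be $1$ is a faithful slice of this action. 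This is standard — I would cite the representability proof referenced in Theorem~\ref{thm:representable} and \cite[Appendix~1]{gouveaparkcity} — but it is the step where all the definitions must be unwound precisely, and it is where the role of the distinguished place $v_0$ and the relation $X_{v_0,1,1}$ becomes transparent.
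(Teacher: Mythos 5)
Your construction itself is the same as the paper's (the universal $T$-framed deformation is represented by $(\rho_{\cS},\{1+(X_{v,i,j})\}_{v\in T})$ over $R_{\cS}\wotimes_\cO\cT$), but the verification you single out as the crux is wrong as stated, and the error sits exactly at the point the lemma is about. Schur's lemma for the absolutely irreducible $\rhobar$ does not say that the stabilizer of a lift $\rho$ under conjugation by $\ker(\GL_n(A)\to\GL_n(k))$ is trivial; it says that this stabilizer is the group of scalar matrices $\{\lambda 1_n:\lambda\in 1+\mathfrak{m}_A\}$, which is nontrivial for every test ring $A$ other than $k$ itself. This nontrivial one-parameter stabilizer is precisely why exactly one framing variable, $X_{v_0,1,1}$, must be killed: after conjugating so that the underlying lift equals the pushforward of $\rho_{\cS}$ (possible since the two lifts define the same deformation), the only remaining ambiguity is simultaneous multiplication of all the $\alpha_v$ by a common scalar $\lambda\in 1+\mathfrak{m}_A$, and this ambiguity is removed --- existence and uniqueness both --- by the single normalization $(\alpha_{v_0})_{1,1}=1$, because $\lambda\mapsto\lambda\,(\alpha_{v_0})_{1,1}$ is a bijection of $1+\mathfrak{m}_A$. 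If the stabilizer of $\rho$ really were trivial, your own normalization step would fail (there would be no freedom left to move $(\alpha_{v_0})_{1,1}$ to $1$ once $\rho$ has been pinned to the pushforward of $\rho_{\cS}$), and the argument would instead incorrectly yield $R_{\cS}^T\cong R_{\cS}\llbracket X_{v,i,j}\rrbracket$ with no relation, contradicting the statement being proved.

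Relatedly, the claim that ``fixing the $(1,1)$ entry of $\alpha_{v_0}$ to be $1$ is a faithful slice of this action'' cannot be correct for the action of the full group $\ker(\GL_n(A)\to\GL_n(k))$ on $T$-framed lifts: that action is free, so each orbit is a torsor under the whole group, while the condition $(\alpha_{v_0})_{1,1}=1$ is a single equation; an orbit meets that locus in many points, not one. The correct slice, after the choice of $\rho_{\cS}$, is the locus of framed lifts $(\rho,\{\alpha_v\}_{v\in T})$ with $\rho$ equal to the pushforward of $\rho_{\cS}$ \emph{and} $(\alpha_{v_0})_{1,1}=1$; Schur's lemma, in the form ``centralizer $=$ scalars, no more and no less,'' is exactly what shows each strict equivalence class meets this locus once and only once. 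With this correction your two maps are mutually inverse and the argument agrees with the paper's proof, which invokes Schur's lemma as in \cite{cht} and exhibits the representative $(\rho_{\cS},\{1+(X_{v,i,j})\}_{v\in T})$.
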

\begin{proof}
	This can be proved in the same way as the second part of \cite{cht}, using Schur's lemma. A representative for the universal $T$-framed deformation over $R_{\cS} \widehat{\otimes}_\cO \cT$ is $(\rho_{\cS}, \{ 1 + ( X_{v, i, j}) \}_{v \in T} )$.
\end{proof}

\subsubsection{Some local deformation problems}\label{sec:locdef}

We now fix some finite place $v$ of $F$ and introduce the local deformation rings that we will use in the proofs of our automorphy lifting theorems.

\subsubsection{Ordinary deformations}\label{sec:orddef} \label{sssec:ord}

Assume that $v | p$, and that there is an increasing filtration
    \[
     0 = \overline{\Fil}_v^0 \subset \overline{\Fil}_v^1 \subset \cdots \subset \overline{\Fil}_v^n = k^n
    \]
that is $G_{F_v}$-stable under $\rhobar|_{G_{F_v}}$ with one dimensional graded pieces. 
We will construct and study a local deformation ring $R_{v}^{\detord}$ 
whose corresponding local deformation problem $\cD_v^{\detord}$ will be used in the proof of our ordinary automorphy lifting theorem.

Consider the completed group algebra $\cO\llbracket \cO_{F_v}^\times(p)^n \rrbracket$
where $\cO_{F_v}^\times(p)$ denotes the pro-$p$ completion of 
$\cO_{F_v}^\times$.
There is an isomorphism  $\Art_{F_v} : \cO_{F_v}^\times(p) \to I_{F_v^{ab}/F_v}(p)$. Fix a non-empty set of minimal prime ideals of $\cO\llbracket \cO_{F_v}^\times(p)^n \rrbracket$, and let $\fra$ be their intersection. 
We then set $\Lambda_v = \cO\llbracket \cO_{F_v}^\times(p)^n \rrbracket/\fra$. 

For each $1\le i \le n$, let $\overline{\widetilde{\chi}}_i \colon G_{F_v} \rightarrow k^\times$ denote the character given by $\rhobar|_{G_{F_v}}$ on 
$\overline{\Fil}_v^i/\overline{\Fil}_v^{i-1}$, and let $\overline{\chi}_i = \overline{\widetilde{\chi}}_i|_{I_{F_v}}$.
For each $1 \le i \le n$, we have a canonical character $\chi_i^{\univ} \colon I_{F_v} \rightarrow \Lambda_v^\times$
that is the product of the Teichm\"{u}ller lift of $\overline{\chi}_i$ with the map that sends $I_{F_v}$ 
to the $i$th copy of $\cO_{F_v}^\times(p)$ in $\cO_{F_v}^\times(p)^n$ via~$\Art_{F_v}^{-1}$.
The ideal $\fra$ corresponds to a fixed collection of ordered tuples of characters of the torsion subgroup of $I_{F_v^{ab} / F_v}(p)$. 

We recall some constructions from \cite[\S 3.1]{ger}. We recall that  
$R^\square_v \in \CNL_{\Lambda_v}$ denotes the universal lifting ring of 
$\rhobar|_{G_{F_v}}$.  Let $\cF$ denote the flag variety over $\cO$ classifying 
complete flags $0=\Fil^0 \subset \cdots \subset \Fil^n =\cO^n$, and let  
$\cG_{v}\subset \cF\times_{\Spec \cO}\Spec R^{\square}_{v}$ denote the closed 
subscheme whose $A$-points for an $R^{\square}_{v}$-algebra $A$ consist of 
those filtrations $\Fil \in \cF(A)$ such that for each $i = 1, \dots, n$, 
$\Fil^i$ is preserved by the specialization of the universal lifting to $A$ and 
such that the induced action of $I_{F_v} \subset G_{F_v}$ on $\Fil^i / 
\Fil^{i-1}$ is by the pushforward of the character $\chi^{\univ}_i$.

We now define two ordinary deformation rings:
\begin{itemize}
\item
We define $R^{\triangle}_{v}$ to be the image of the homomorphism
\[ R^{\square}_{v}\to H^0(\cG_v, \cO_{\cG_{v}}).
\]
\item Let $\widetilde{\Lambda}_v = \cO \llbracket F_v^\times(p)^n \rrbracket \otimes_{\cO\llbracket \cO_{F_v}^\times(p)^n \rrbracket} \Lambda_v$, and let $\widetilde{R}_v^\square = R_v^\square \otimes_{\Lambda_v} {\widetilde{\Lambda}_v}$. The characters $\chi^{\univ}_i$ naturally extend to characters $\widetilde{\chi}^{\univ}_i : G_{F_v} \to \widetilde{\Lambda}_v^\times$ lifting $\overline{\widetilde{\chi}}_i$. Let $\widetilde{R}_v^{\detord}$ denote the maximal quotient of $\widetilde{R}_v^\square$ over which the relations
\numequation\label{eqn:det_ord_one} \det(X - \rho^\square(g)) = \prod_{i=1}^n(X - \widetilde{\chi}^{\univ}_i(g)) \end{equation}
and
\numequation\label{eqn:det_ord_two} (\rho^\square(g_1) - \widetilde{\chi}_1^{\univ}(g_1)) (\rho^\square(g_2) - \widetilde{\chi}_2^{\univ}(g_2)) \dots (\rho^\square(g_n) - \widetilde{\chi}_n^{\univ}(g_n)) = 0 \end{equation}
hold for all $g, g_1, \dots, g_n \in G_{F_v}$. We define $R_v^{\detord}$ to be 
the image of the homomorphism
\[ R_v^\square \to \widetilde{R}_v^{\detord}. \]
\end{itemize}
(A ring
similar to~$R_{v}^{\detord}$ was also defined in~\cite{Specter}.) 
\begin{lemma}\label{lem:det_ord_finite}
	$\widetilde{R}_v^{\detord}$ is a finite  $R_v^{\detord}$-algebra.
\end{lemma}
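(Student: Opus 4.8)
The plan is to realize $\widetilde R_v^{\detord}$ as a complete Noetherian local $R_v^{\detord}$-algebra which is topologically generated over $R_v^{\detord}$ by finitely many elements that are integral over $R_v^{\detord}$, and then to promote this to module-finiteness via the complete (topological) Nakayama lemma.

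First I would identify the topological generators. By construction $\widetilde R_v^\square = R_v^\square\,\widehat\otimes_{\Lambda_v}\widetilde\Lambda_v$ is topologically generated over $R_v^\square$ by the image of $\widetilde\Lambda_v$. Since $R_v^{\detord}$ is by definition the image of $R_v^\square$ in $\widetilde R_v^{\detord}$, and the structure map $\Lambda_v\to R_v^\square$ lands in $R_v^{\detord}$, passing to the quotient $\widetilde R_v^{\detord}$ shows that $\widetilde R_v^{\detord}$ is topologically generated over $R_v^{\detord}$ by the image of $\widetilde\Lambda_v$. Using the decomposition $F_v^\times \cong \cO_{F_v}^\times \times \varpi_v^{\Z}$ we have $\widetilde\Lambda_v \cong \Lambda_v\llbracket \Z_p^n\rrbracket$, a power series ring in $n$ variables over $\Lambda_v$; concretely it is topologically generated over $\Lambda_v$ by the group-like elements $[\gamma_i]$ attached to the uniformizer $\varpi_v$ in each of the $n$ factors of $F_v^\times(p)$. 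By the very definition of $\widetilde\chi_i^{\univ}$ as the natural extension of $\chi_i^{\univ}$, for a fixed lift $\sigma\in G_{F_v}$ of the geometric Frobenius we have $\widetilde\chi_i^{\univ}(\sigma) = u_i\,[\gamma_i]$ with $u_i\in R_v^{\detord,\times}$ (coming from a Teichm\"uller lift and the inertia character $\chi_i^{\univ}$). Hence $\widetilde R_v^{\detord}$ is topologically generated over $R_v^{\detord}$ by the $n$ elements $\widetilde\chi_1^{\univ}(\sigma),\dots,\widetilde\chi_n^{\univ}(\sigma)$.

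Next I would check integrality. Specializing the defining relation \eqref{eqn:det_ord_one} of $\widetilde R_v^{\detord}$ to $g = \sigma$ gives
\[ \prod_{j=1}^n\bigl(X - \widetilde\chi_j^{\univ}(\sigma)\bigr) = \det\bigl(X - \rho^\square(\sigma)\bigr), \]
and the right-hand side is a monic polynomial whose coefficients lie in the image of $R_v^\square$, i.e.\ in $R_v^{\detord}$. Thus each $\widetilde\chi_i^{\univ}(\sigma)$ is a root of a monic polynomial over $R_v^{\detord}$, hence is integral over $R_v^{\detord}$. Now I would invoke the general fact that a complete Noetherian local $A$-algebra $B$ which is topologically generated over $A$ by finitely many integral elements is a finite $A$-module: reducing modulo $\frakm_A B$ (a closed ideal of the complete Noetherian local ring $B$), the ring $B/\frakm_A B$ is topologically generated over $k = A/\frakm_A$ by finitely many elements integral over the finite field $k$, hence is generated by a finite subring, which being dense and finite in the separated space $B/\frakm_A B$ must be all of it; so $B/\frakm_A B$ is finite, $\frakm_A B$ is an ideal of definition of $B$, $B$ is $\frakm_A$-adically complete and separated, and complete Nakayama applied to a lift of a $k$-basis of $B/\frakm_A B$ shows $B$ is finite over $A$. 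Taking $A = R_v^{\detord}$ and $B = \widetilde R_v^{\detord}$ gives the lemma.

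This argument is essentially formal and I do not expect a genuine obstacle; the only points demanding care are the identification of the group-like topological generators of $\widetilde\Lambda_v$ over $\Lambda_v$ with the Frobenius values $\widetilde\chi_i^{\univ}(\sigma)$ (keeping track of the normalization of $\Art_{F_v}$ and the Teichm\"uller twist), and the (standard) passage from topological generation by integral elements to module-finiteness, which is where completeness of all the rings involved is used.
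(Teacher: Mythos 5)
Your proof is correct and is essentially the paper's argument, just spelled out in more detail: the paper likewise applies relation (\ref{eqn:det_ord_one}) at (a lift of) Frobenius, i.e.\ $g = \Art_{F_v}(\varpi_v)$, to see that $\widetilde{R}_v^{\detord}/\mf{m}_{R_v^\square}$ is Artinian, and then concludes by the completed Nakayama lemma. Your extra steps (identifying the topological generators of $\widetilde{\Lambda}_v$ over $\Lambda_v$ with unit multiples of the $\widetilde{\chi}_i^{\univ}(\sigma)$ and checking their integrality) are exactly the content the paper leaves implicit.
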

\begin{proof}
	It is enough to show that $\widetilde{R}_v^{\detord}$ is a finite $R_v^\square$-algebra or, by the completed version of Nakayama's lemma, that $\widetilde{R}_v^{\detord} / \ffrm_{R_v^\square}$ is an Artinian $k$-algebra. This follows from the relation (\ref{eqn:det_ord_one}) applied with $g = \Art_{F_v}(\varpi_v)$. 
\end{proof}
For a domain $R\in \CNL_{\Lambda_v}$ and $K$ an algebraic closure of the fraction field of $R$, an $R$-point of $\Spec R^{\square}_{v}$ factors through $\Spec R^{\triangle}_{v}$ if and only if the following condition is satisfied:
\begin{itemize}
\item Let $\rho\colon G_{F_v}\to \GL_n(R)$ be the pushforward of the universal lifting to $R$. Then there is a filtration $0=\Fil^0 \subset \ldots\subset \Fil^n=K^n$ on $\rho\otimes_R K$ which is preserved by $G_{F_v}$, and such that the action of $I_{F_v}$ on $\Fil^i / \Fil^{i-1}$ ($i = 1, \dots, n$)  is given by the push-forward of the universal character $\chi^{\univ}_j$ to $R$.  
\end{itemize}
On the other hand, suppose that $R \to S$ is an injective morphism of $R^\square_v$-algebras, and suppose that there exist characters $\psi_1, \dots, \psi_n : G_{F_v} \to S^\times$ such that for each $i = 1, \dots, n$, $\psi_i|_{I_{F_v}}$ equals the pushforward of $\chi^{\univ}_i$ to $S$, and that for each $g, g_1, \dots, g_n \in G_{F_v}$, the analogues of the relations (\ref{eqn:det_ord_one}) and (\ref{eqn:det_ord_two}) for the characters $\psi_i$ and the pushforward of the universal lifting hold in $S$. Then $R^\square_v \to R$ factors through $R_v^{\detord}$. We see in particular that there is an inclusion of topological spaces $\Spec R^{\triangle}_{v} \subset \Spec R^{\detord}_{v}$: Indeed, applying the above for $R=R^{\triangle}_v/\frakp$, where $\frakp$ is a mininal prime of $R^{\triangle}_v$, and $S$ its integral closure in a sufficiently large finite extension of its fraction field; we see that $R^\square_v\to R^{\triangle}_v/\frakp$ factors through $R^{\detord}_v$. Since the maximal reduced quotient $(R^{\triangle}_v)_{\mathrm{red}}$ of $R^{\triangle}_v$ is the image of the map $R^{\square}_v\to \prod_{\frakp} R^{\triangle}_v/\frakp$, we deduce that there is a surjection of $R^\square_v$-algebras $R^{\detord}_v\onto (R^{\triangle}_v)_{\mathrm{red}}$.

The ring $R_v^\triangle$ was introduced in \cite{ger}. Its properties in an important special case are summarized in the following proposition.
\begin{prop}\label{prop:ordefringproperties}
If $[F_v : \Q_p] > \frac{n(n-1)}{2} + 1$ and $\rhobar|_{G_{F_v}}$ is trivial, then $R^{\triangle}_{v}$ is $\cO$-flat, reduced and equidimensional of dimension $1+n^2 + \frac{n(n+1)}{2}[F_v:\Q_p]$. Moreover, the map $\Spec R^{\triangle}_{v} \rightarrow \Spec \Lambda_v$ is bijective on the level of generic points, hence on the level of irreducible components.
\end{prop}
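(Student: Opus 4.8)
\textbf{The plan is to} reduce the study of $R^\triangle_v$ to a direct product of local deformation rings for the lower-triangular Borel, one for each tuple of characters of the torsion subgroup indexed by the chosen minimal primes of $\Lambda_v$, and then to cite \cite{ger} for the structure of each factor. Concretely, the ring $R_v^\triangle$ is by construction the image of $R_v^\square$ in $H^0(\cG_v,\cO_{\cG_v})$, so it records precisely those lifts that are ``ordinary'' in the sense that they admit a $G_{F_v}$-stable flag with the prescribed action of $I_{F_v}$ on the graded pieces. When $\rhobar|_{G_{F_v}}$ is trivial, each residual character $\overline{\widetilde\chi}_i$ is trivial, and the subvariety $\cG_v$ is the familiar one studied in \cite[\S 3.1]{ger}; the point is that the results of that section (which are stated for $F_v/\Q_p$ unramified but whose proofs only use $[F_v:\Q_p]$ and the hypothesis $[F_v:\Q_p] > \tfrac{n(n-1)}{2}+1$, in the formulation of Geraghty) give exactly the asserted flatness, reducedness, equidimensionality, and the computation of dimension, component-by-component over $\Spec\Lambda_v$.

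\textbf{First I would} set up the map $\Spec R_v^\triangle \to \Spec\Lambda_v$ and observe that $\Lambda_v = \cO\llbracket \cO_{F_v}^\times(p)^n\rrbracket/\fra$, where $\fra$ is the intersection of a chosen set of minimal primes. Since $\cO\llbracket \cO_{F_v}^\times(p)^n\rrbracket$ is a power series ring over the completed group algebra of the (finite) torsion subgroup of $\cO_{F_v}^\times(p)^n$, its minimal primes are in bijection with $n$-tuples of characters of that torsion group into $\overline{k}^\times$ (after possibly enlarging $E$ so that all such characters are defined over $\cO$), and each minimal prime gives a quotient isomorphic to a power series ring over $\cO$. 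Thus $\Spec\Lambda_v$ is a disjoint union (on the level of irreducible components, and after completion, a literal product decomposition if one localizes) of such pieces. Correspondingly $R_v^\triangle$ decomposes, and on each piece the relevant deformation problem is the ``$\chi$-ordinary'' problem of \cite{ger} for a fixed tuple of characters lifting the (trivial) residual ones.

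\textbf{Next I would} invoke the main structural results of \cite[\S 3.1]{ger} on each piece: under the hypothesis $[F_v:\Q_p] > \tfrac{n(n-1)}{2}+1$ and triviality of $\rhobar|_{G_{F_v}}$, the corresponding deformation ring is $\cO$-flat, reduced, and equidimensional of the stated dimension $1+n^2+\tfrac{n(n+1)}{2}[F_v:\Q_p]$, and it is irreducible (or at least: its generic points are in bijection with those of the base), which is precisely the assertion that $\Spec R_v^\triangle \to \Spec\Lambda_v$ is bijective on generic points. Assembling these over the finitely many pieces yields all four claimed properties for $R_v^\triangle$ itself, since a finite disjoint union of $\cO$-flat reduced equidimensional schemes of a common dimension is again $\cO$-flat, reduced and equidimensional of that dimension, and bijectivity on generic points is checked piece by piece. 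I would also remark that $\Lambda_v$ is itself $\cO$-flat and equidimensional of dimension $1+n[F_v:\Q_p]$, so the ``relative dimension'' of the ordinary condition is $n^2 + \tfrac{n(n-1)}{2}[F_v:\Q_p]$, matching the tangent-space computations in \cite{ger}.

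\textbf{The hard part will be} making the translation between the formalism of $\cG_v$ and $R_v^\triangle$ used here and the deformation-theoretic setup of \cite{ger} completely precise — in particular checking that ``image of $R_v^\square$ in $H^0(\cG_v,\cO_{\cG_v})$'' coincides with the ordinary deformation ring as Geraghty defines it, that the flag variety/incidence-variety construction really does compute the ordinary locus and not something larger, and that the unramifiedness hypothesis on $F_v$ in \cite{ger} is genuinely unnecessary (only $[F_v:\Q_p]$ enters the relevant Galois cohomology estimates). Granting those comparisons, which are essentially bookkeeping, the proposition follows immediately from \cite{ger}, and indeed the excerpt already signals this by saying ``$R_v^\triangle$ was introduced in \cite{ger}''. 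So I expect the proof in the paper to be a short citation, and my proposal is simply to spell out the product-over-minimal-primes reduction and then quote \cite[\S 3.1]{ger} for each factor.
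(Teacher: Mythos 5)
There is a genuine gap: the entire substance of the proposition is delegated to \cite[\S 3.1]{ger}, but that reference does not prove what you need. Geraghty's local section gives dimension bounds for the completed local rings of $\cG_v$ and (formal) smoothness at points where the characters $\chi^{\univ}_i$ are pairwise distinct; it does not prove that $R^{\triangle}_v$ is $\cO$-flat, reduced, equidimensional, nor that $\Spec R^{\triangle}_v \to \Spec \Lambda_v$ is a bijection on generic points. Indeed the hypothesis $[F_v:\Q_p] > \frac{n(n-1)}{2}+1$, which you say is ``the formulation of Geraghty,'' does not occur there at all: it is the signature of the actual source, Thorne's residually reducible paper, and the paper's proof is precisely a citation of \cite[Proposition~3.14]{jackreducible}. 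That result proves the statement when $\Lambda_v$ is the full Iwasawa algebra $\cO\llbracket \cO_{F_v}^\times(p)^n\rrbracket$, and the inequality is used in an essential geometric step you never supply: one bounds the dimension of the locus of $\cG_v$ lying over the closed set of $\Spec\Lambda_v$ where two of the $\chi^{\univ}_i$ coincide, and the inequality forces this locus to have strictly smaller dimension, so that no irreducible component of $\Spec R^{\triangle}_v$ lives there; this is what yields equidimensionality, flatness, reducedness, and the unique component over each component of $\Lambda_v$. Calling this ``essentially bookkeeping'' misplaces where the work is.

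Your reduction to the case of a single tuple of characters is in the right spirit, but as written it also has a flaw: the minimal primes of $\cO\llbracket \cO_{F_v}^\times(p)^n\rrbracket$ do correspond to characters of the torsion subgroup, yet $\Spec\Lambda_v$ is \emph{not} a disjoint union of the corresponding pieces (the components meet in the special fibre, e.g.\ $\cO[T]/((1+T)^{p^a}-1)$), so $R^{\triangle}_v$ does not decompose as a product and one cannot simply assemble flatness and reducedness factor by factor. What the paper actually uses to pass from the full Iwasawa algebra to a quotient $\Lambda_v$ by an intersection of minimal primes is the further fact, also proved in \cite{jackreducible}, that minimal primes of $\Lambda_v$ generate minimal primes of $R^{\triangle}_v$; with that, the general case follows from the case $\Lambda_v = \cO\llbracket \cO_{F_v}^\times(p)^n\rrbracket$. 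So the proposal would become correct if you replaced the appeal to \cite{ger} by \cite[Prop.~3.14]{jackreducible} and replaced the product decomposition by this minimal-prime compatibility; as it stands, the key input is misattributed and the argument at the collision locus is missing.
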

\begin{proof}
	This is essentially contained in \cite[Proposition~3.14]{jackreducible}. More precisely, that reference proves the proposition under the assumption that $\Lambda_v = \cO\llbracket \cO_{F_v}^\times(p)^n \rrbracket$, but also shows that minimal prime ideals of 
$\Lambda_v$ generate minimal prime ideals of $R^{\triangle}_{v}$. 
The more general case where $\Lambda_v$ is allowed to be a quotient of $\cO\llbracket \cO_{F_v}^\times(p)^n \rrbracket$ by the intersection of 
an arbitrary collection of minimal prime ideals follows from this.
\end{proof}
Our analysis of the ring $R_v^{\detord}$ will be coarser. It begins with the following lemma. 
\begin{lem} \label{lem:detord} 
Let $K$ be a field, $G$ a group and $\rho:G\to \GL_n(K)$ a representation. Suppose that there exist pairwise distinct characters $\chi_1, \dots,\chi_n : G \to K^\times$ satisfying the following conditions:
\begin{enumerate}
\item For all $g \in G$,
\[
\det(X - \rho(g)) = \prod_{i=1}^n (X - \chi_i(g)). %
\]
\item For all $g_1, \dots, g_n \in G$,
\[
(\rho(g_1)-\chi_1(g_1))(\rho(g_2)-\chi_2(g_2))\cdots(\rho(g_n)-\chi_n(g_n))=0. %
\]
\end{enumerate}
Then there is a filtration $0=\Fil^0\subset \dots \subset \Fil^n=K^n$ by $G$-stable subspaces such that for each $i = 1, \dots, n$, $\Fil^i / \Fil^{i-1} \cong K(\chi_i)$.
\end{lem}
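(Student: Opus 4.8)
The plan is to induct on $n$, the real work being the construction of the first step $\Fil^1$ of the filtration. The base case $n=1$ is immediate, since condition (1) forces $\rho(g)=\chi_1(g)$ and hence $\rho\cong K(\chi_1)$. For $n\ge 2$ I would consider the $G$-stable subspace
\[ \Fil^1 := \{ v\in K^n : \rho(g)v=\chi_1(g)v \text{ for all } g\in G \}, \]
aim to prove $\Fil^1\cong K(\chi_1)$, then pass to $K^n/\Fil^1$ and conclude by induction. So there are two points to establish: (a) $\dim_K\Fil^1\le 1$, and (b) $\Fil^1\ne 0$.

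For (a): condition (1) says precisely that $\rho$ and $\bigoplus_{i=1}^n K(\chi_i)$ define the same $n$-dimensional determinant of $G$ over $K$; since such a determinant has a unique attached semisimple representation by the theory of determinants of \cite{chenevier_det} (equivalently, by the Brauer--Nesbitt theorem), we get $\rho^{\mathrm{ss}}\cong\bigoplus_{i=1}^n K(\chi_i)$. As the $\chi_i$ are pairwise distinct and $\Fil^1$ is a subrepresentation isomorphic to $K(\chi_1)^{\oplus\dim\Fil^1}$, the $\chi_1$-isotypic part of $\rho^{\mathrm{ss}}$ being one-dimensional forces $\dim_K\Fil^1\le 1$. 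For (b) I would argue by contradiction: if $\Fil^1=0$, then letting $g_1$ range over $G$ in condition (2) shows that the image of $M:=(\rho(g_2)-\chi_2(g_2))\cdots(\rho(g_n)-\chi_n(g_n))$ lies in $\Fil^1=0$, so $M=0$ for all $g_2,\dots,g_n$. Fixing also $g_1$ and taking the trace of $\rho(g_1)M=0$, then expanding the product, using that $\rho$ is a homomorphism, and using $\tr\rho(g)=\sum_i\chi_i(g)$ (the subleading coefficient in condition (1)), one finds
\[ 0=\tr(\rho(g_1)M)=\sum_{i=1}^n\chi_i(g_1)\prod_{j=2}^n\big(\chi_i(g_j)-\chi_j(g_j)\big). \]
For each $i\ge 2$ the factor with $j=i$ vanishes, so only the $i=1$ term survives: $\chi_1(g_1)\prod_{j=2}^n(\chi_1(g_j)-\chi_j(g_j))=0$ for all $g_1,\dots,g_n$. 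Choosing, for each $j\ge 2$, an element $g_j$ with $\chi_j(g_j)\ne\chi_1(g_j)$ (possible since $\chi_j\ne\chi_1$) gives a contradiction. Hence $\Fil^1\ne 0$, and with (a) we conclude $\Fil^1\cong K(\chi_1)$.

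To run the induction, I would set $\overline{\rho}:=\rho$ acting on $K^n/\Fil^1$ and check conditions (1) and (2) for $\overline{\rho}$ with the (still pairwise distinct) characters $\chi_2,\dots,\chi_n$. Condition (1) follows from the factorization $\det(X-\rho(g))=(X-\chi_1(g))\det(X-\overline{\rho}(g))$ supplied by the exact sequence $0\to\Fil^1\to K^n\to K^n/\Fil^1\to 0$, together with condition (1) for $\rho$. Condition (2) follows because, for any $v$, the vector $(\rho(g_2)-\chi_2(g_2))\cdots(\rho(g_n)-\chi_n(g_n))v$ lies in $\Fil^1$ by condition (2) for $\rho$ (again with $g_1$ free), hence maps to $0$ in the quotient. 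By the inductive hypothesis $\overline{\rho}$ carries a filtration $0=\overline{\Fil}^0\subset\cdots\subset\overline{\Fil}^{n-1}=K^n/\Fil^1$ with $\overline{\Fil}^i/\overline{\Fil}^{i-1}\cong K(\chi_{i+1})$; pulling back to $K^n$ and prepending $\Fil^1$ gives the required filtration.

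The main obstacle is step (b): its content is that the \emph{order} in which the $\chi_i$ appear in condition (2) is what pins $\chi_1$ to the bottom of the filtration, and the trace identity above is the mechanism that extracts this ordering information. By contrast, the bound in (a) and the descent of conditions (1)--(2) to the quotient are routine.
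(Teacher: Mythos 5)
Your proof is correct, but it follows a genuinely different route from the paper's. The paper builds the whole filtration greedily in one pass: it declares $V_i/V_{i-1}$ to be the maximal subspace of $V/V_{i-1}$ on which $G$ acts through $\chi_i$ (with no a priori control on its dimension), observes that condition (2) forces $V_n=V$ (essentially your remark that the image of $(\rho(g_2)-\chi_2(g_2))\cdots(\rho(g_n)-\chi_n(g_n))$ lies in the common $\chi_1$-eigenspace, iterated up the chain), and only at the very end invokes condition (1): the Jordan--H\"older factors of the filtration are the $\chi_i$ with multiplicities $d_i=\dim_K V_i/V_{i-1}$ summing to $n$, so comparison of characteristic polynomials (Brauer--Nesbitt) together with the distinctness of the $\chi_i$ forces every $d_i=1$. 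In particular the paper never needs a non-vanishing statement at each stage. You instead induct on $n$ and must prove at each step that the bottom eigenspace is nonzero, which you do via the trace identity $\tr(\rho(g_1)M)=\sum_{i=1}^n\chi_i(g_1)\prod_{j=2}^n(\chi_i(g_j)-\chi_j(g_j))$; that identity is valid (the noncommutative expansion is harmless since the $\chi_j(g_j)$ are scalars and $\rho$ is multiplicative), it isolates the $i=1$ term correctly, and your step (a) via Brauer--Nesbitt and the descent of hypotheses (1)--(2) to $K^n/\Fil^1$ are also fine, so the induction closes. What your route buys is an explicit mechanism showing how the ordering in condition (2) pins $\chi_1$ to the bottom of the filtration; what the paper's route buys is brevity, since exhaustion plus a single global multiplicity count replaces the step-by-step non-vanishing argument. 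Both arguments in fact produce the same filtration.
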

\begin{proof} 
We define subspaces $0 = V_0 \subset V_1 \subset V_2 \subset \dots \subset V_n \subset V = K^n$ be declaring that for each $i = 1, \dots, n$, $V_i / V_{i-1}$ is the maximal subspace of $V / V_{i-1}$ where $G$ acts by the character $\chi_i$. Each $V_i$ is $G$-stable and the second condition of the lemma implies that $V_n = V$. On the other hand, each $V_i / V_{i-1}$ is isomorphic to $K(\chi_i)^{\dim_K V_i / V_{i-1}}$. The first condition of the lemma implies that we must therefore have $\dim_K V_i / V_{i-1} = 1$ for each $i = 1, \dots, n$. The proof is complete on taking $\Fil^i = V_i$.
\end{proof}
Let $U\subset \Spec \Lambda_v$ be the open subscheme where the characters $\chi^{\univ}_1, \dots, \chi^{\univ}_n$ are pairwise distinct, and let $Z$ denote its complement.
\begin{prop}\label{prop:detord_components} Let $f : \Spec R_v^\triangle \to \Spec \Lambda_v$, $g : \Spec R_v^{\detord} \to \Spec \Lambda_v$ be the structural maps. Suppose that $\overline{\rho}|_{G_{F_v}}$ is trivial and that $[F_v : \Q_p] > \frac{n(n+1)}{2} + 1$. 
\begin{enumerate}
	\item We have $f^{-1}(U) = g^{-1}(U)$ as subspaces of $\Spec R_v^\square$. Consequently, for each irreducible component $C$ of $\Spec \Lambda_v$, there is a unique irreducible  component~$C'$ of $\Spec R_v^{\detord}$ which dominates $C$. It has dimension $n^2 + 1 + \frac{n(n+1)}{2}[F_v:\Q_p]$.
	\item Let $C'$ be an irreducible component of $R_v^{\detord}$ which does not dominate an irreducible component of $\Spec \Lambda_v$. Then $C' \subset g^{-1}(Z)$ and $C'$ has dimension at most $n^2 - 1+ \frac{n(n+1)}{2}[F_v:\Q_p]$.
\end{enumerate}
\end{prop}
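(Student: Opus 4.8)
The plan is to deduce both parts from Proposition~\ref{prop:ordefringproperties} (available since $[F_v:\Q_p]>\tfrac{n(n+1)}{2}+1>\tfrac{n(n-1)}{2}+1$) by comparing $\Spec R_v^{\detord}$ and $\Spec R_v^\triangle$ over the open locus $U$ where $\chi_1^{\univ},\dots,\chi_n^{\univ}$ are pairwise distinct. First, for Part~(1), I would prove the equality $f^{-1}(U)=g^{-1}(U)$: both are locally closed in $\Spec R_v^\square$, so it suffices to check equality on points, i.e.\ on lifts $\rho\colon G_{F_v}\to\GL_n(R)$ over domains $R$ whose image in $\Spec\Lambda_v$ lies in $U$. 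If $\rho$ factors through $R_v^\triangle$, there is a $G_{F_v}$-stable filtration of $\rho\otimes_R\overline{\operatorname{Frac}(R)}$ on whose graded pieces $I_{F_v}$ acts by the pairwise distinct $\chi_i^{\univ}$; distinctness forces $G_{F_v}$ to act there by characters $\psi_i$ lifting the $\chi_i^{\univ}$, and reading off this filtration shows the relations \eqref{eqn:det_ord_one}, \eqref{eqn:det_ord_two} (with the $\psi_i$), so $\rho$ factors through $R_v^{\detord}$. Conversely, a lift factoring through $R_v^{\detord}$ comes with an injection $R\hookrightarrow S$ and characters $\psi_i$, $\psi_i|_{I_{F_v}}=\chi_i^{\univ}$, satisfying \eqref{eqn:det_ord_one}, \eqref{eqn:det_ord_two}; over $U$ the $\psi_i$ are distinct, so Lemma~\ref{lem:detord} produces a $G_{F_v}$-stable filtration with graded pieces $S(\psi_i)$, showing $\rho$ factors through $R_v^\triangle$.

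For the remaining component statement in Part~(1) and the first assertion of Part~(2), I would use the following formal observation. By Proposition~\ref{prop:ordefringproperties}, $R_v^\triangle$ is reduced and equidimensional of dimension $d:=1+n^2+\tfrac{n(n+1)}{2}[F_v:\Q_p]$ and $f$ is a bijection on irreducible components; moreover $U$ is dense in $\Spec\Lambda_v$ and meets every component (on a component where the underlying torsion characters are pairwise distinct it is the whole component, and otherwise a dense open subset of it). Now if $W\subseteq\Spec R_v^{\detord}$ is irreducible closed with $W\cap g^{-1}(U)\neq\emptyset$, then $W\cap g^{-1}(U)$ is dense in $W$ (as $g^{-1}(U)$ is open) and lies in $f^{-1}(U)\subseteq\Spec R_v^\triangle$, so $W$ equals the closure of $W\cap f^{-1}(U)$ taken inside $\Spec R_v^\triangle$; hence $W$ lies in a component of $\Spec R_v^\triangle$, $\dim W\leq d$, with equality iff $W$ is such a component. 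Taking $W$ to be a component of $\Spec R_v^{\detord}$ meeting $g^{-1}(U)$ shows these are exactly the components of $\Spec R_v^\triangle$; since for each component $C$ of $\Spec\Lambda_v$ the set $f^{-1}(C\cap U)=g^{-1}(C\cap U)$ is dense in the unique component of $R_v^\triangle$ over $C$, this gives the uniqueness and dimension claims in Part~(1). Conversely, if a component $C'$ of $\Spec R_v^{\detord}$ met $g^{-1}(U)$ it would coincide with a component of $R_v^\triangle$ and hence dominate a component of $\Spec\Lambda_v$; so any $C'$ not dominating a component of $\Spec\Lambda_v$ satisfies $C'\subseteq g^{-1}(Z)$.

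It then remains to show $\dim g^{-1}(Z)\leq d-2$. I would stratify $Z$ by which of the $\chi_i^{\univ}$ coincide. Since $\chi_i^{\univ}$ differs from a fixed torsion character by the universal character on the torsion-free quotient $\cO_{F_v}^\times(p)^{\mathrm{tf}}\cong\Z_p^{[F_v:\Q_p]}$, merging a group of $r$ of the characters drops the $\Z_p$-rank by $(r-1)[F_v:\Q_p]$; as $\Lambda_v$ is $\cO$-flat and equidimensional of dimension $1+n[F_v:\Q_p]$, the stratum where (at least) one group of $r$ characters coincides, the remaining $n-r$ being generic, has dimension $\leq 1+(n-r+1)[F_v:\Q_p]$. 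For the fibre of $g$ over the generic point $z$ of such a stratum, finiteness of $\widetilde R_v^{\detord}$ over $R_v^{\detord}$ (Lemma~\ref{lem:det_ord_finite}) lets one compute instead with $\widetilde R_v^{\detord}$, where Lemma~\ref{lem:detord} exhibits the fibre inside a triangular (Borel-valued) deformation problem with prescribed diagonal characters; bounding its dimension as in \cite[\S3]{ger} and the proof of \cite[Prop.~3.14]{jackreducible} by $n^2$, plus $n$ for the freedom in the Frobenius values of the diagonal characters, plus $\sum_{i<j}\dim_k H^1(G_{F_v},\psi_j\psi_i^{-1})$, and using that $\dim_k H^1(G_{F_v},\chi)$ is $[F_v:\Q_p]$ for $\chi$ generic but at most $[F_v:\Q_p]+2$ for $\chi$ trivial, shows this fibre dimension exceeds the generic value $\phi_U:=d-\dim\Lambda_v=n^2+\tfrac{n(n-1)}{2}[F_v:\Q_p]$ by at most $2\binom{r}{2}=r(r-1)$. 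Hence the part of $g^{-1}(Z)$ over this stratum has dimension at most $1+(n-r+1)[F_v:\Q_p]+\phi_U+r(r-1)$, which a short computation shows is maximized at $r=2$, where it equals $n^2+3+\tfrac{(n-1)(n+2)}{2}[F_v:\Q_p]\leq d-2$, the last inequality being exactly $[F_v:\Q_p]\geq4$ and hence guaranteed by $[F_v:\Q_p]>\tfrac{n(n+1)}{2}+1$; strata with several merged groups only lower the dimension.

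I expect the fibre estimate to be the only genuinely delicate step: one must identify the fibre of $\widetilde R_v^{\detord}$ over a non-closed, characteristic-zero point of $\Spec\Lambda_v$ with a triangular deformation ring of a representation that is trivial mod~$\varpi$ but has generic, non-trivial diagonal characters in characteristic zero, and control its dimension by the $H^1$-sum above --- this is essentially the dimension computation behind Proposition~\ref{prop:ordefringproperties}, carried out relative to $\Lambda_v$ rather than over $\cO$. Everything else reduces to the formal manipulations of irreducible components carried out above, together with Part~(1).
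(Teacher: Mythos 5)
Your part (1), and the deduction in part (2) that a component $C'$ not dominating a component of $\Spec\Lambda_v$ lies in $g^{-1}(Z)$, follow the paper's own route: the equality $f^{-1}(U)=g^{-1}(U)$ is checked on (geometric) points by lifting to $\Spec\widetilde R_v^{\detord}$ via Lemma \ref{lem:det_ord_finite} and applying Lemma \ref{lem:detord}, and the component statement then comes from Proposition \ref{prop:ordefringproperties}. That portion of your proposal is fine.

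The dimension bound in part (2) is where you genuinely diverge, and there is a gap. A first, fixable, imprecision: over $Z$ you cannot invoke Lemma \ref{lem:detord} to triangularize the fibre, since that lemma requires the characters to be pairwise distinct, which is exactly what fails on $Z$; the paper instead uses the characteristic polynomial identity to identify the semisimplification at a geometric generic point of $C'$ as a sum of the universal characters and hence obtains a filtration with graded characters in \emph{some} order, placing that point in a permuted ring $R_v^{\triangle,\sigma}$. The more serious problem is your bookkeeping: you bound the dimension of the part of $\Spec R_v^{\detord}$ over a stratum of $Z$ by (dimension of the stratum) plus (fibre dimension at the \emph{generic} point of the stratum). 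The morphism $g$ is not of finite type, and the general inequality available for a local map of Noetherian local rings, $\dim B\le\dim A+\dim B/\mathfrak{m}_AB$, sees the fibre over the \emph{closed} point, not over the generic point of the stratum; you never justify the semicontinuity/dimension formula you are implicitly using, and the worst points are precisely the closed ones, where (since $\overline{\rho}|_{G_{F_v}}$ is trivial) all $n$ characters are residually equal and the excess in the tangent-space estimate is of size roughly $\frac{n(n+1)}{2}$ rather than your $r(r-1)$. This shows up in your arithmetic: you conclude that $[F_v:\Q_p]\ge 4$ suffices, whereas the hypothesis $[F_v:\Q_p]>\frac{n(n+1)}{2}+1$ is exactly what is needed so that the codimension-$[F_v:\Q_p]$ saving coming from lying over $Z$ beats that worst-case excess. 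The paper avoids stratifying altogether: having placed $C'\subset h^{-1}(Z)\subset\Spec R_v^{\triangle,\sigma}$, it bounds the dimension of the preimage of $Z$ in $\cG_v^\sigma$ by the embedding dimension at its closed points (the tangent-space computation of \cite[Lemma 3.2.3]{ger}, adapted to residue field $k$), obtaining $\dim C'\le 1+n^2+\tfrac{n(n+1)}{2}+\tfrac{n(n+1)}{2}[F_v:\Q_p]-[F_v:\Q_p]\le n^2-1+\tfrac{n(n+1)}{2}[F_v:\Q_p]$. To repair your argument you would either have to prove the generic-fibre dimension formula in this formal setting and still control the closed-point contribution, or pass to the closed-point tangent-space estimate directly --- at which point you have recovered the paper's proof.
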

\begin{proof}
	We have already observed that there is an inclusion $\Spec R_v^\triangle \subset \Spec R_v^{\detord}$. We must first show that if $s : \Spec K \to g^{-1}(U) \subset \Spec R_v^{\detord}$ is a geometric point, then $s$ factors through $\Spec R_v^\triangle$. By Lemma \ref{lem:det_ord_finite}, $s$ lifts to a point $s' : \Spec K \to \Spec \widetilde{R}_v^{\detord}$. Then Lemma \ref{lem:detord} shows that $s$ factors through $R_v^\triangle$. The first part of the proposition now follows from Proposition \ref{prop:ordefringproperties}, which says that $f|_U$ induces a bijection on generic points, hence on irreducible components.

           For the second part, let $C'$ be an irreducible component of 
	$R_v^{\detord}$ which does not dominate an irreducible component of $\Spec 
	\Lambda_v$. It follows from the first part that we must have $g(C') \subset 
	Z$. To bound the dimension of $C'$, we claim that there is a permutation 
	$\sigma \in S_n$ such that $C'$ is contained in the closed subspace 
	$h^{-1}(Z)$ of $\Spec R_v^{\triangle, \sigma}$, where $h : \Spec 
	R_v^{\triangle, \sigma} \to \Spec \Lambda_v$ is the quotient of 
	$R_v^\square$ which is defined in the same way as $R_v^\triangle$, except 
	that we require the action of $I_{F_v}$ on the $i^{\text{th}}$ graded piece 
	of the filtration to be by the character $\chi^{\univ}_{\sigma(i)}$. There 
	is a 
	corresponding surjective morphism $\cG^\sigma_v \to \Spec R_v^{\triangle, 
	\sigma}$. To show the claim, it suffices to check that there is a $\sigma$ 
	such that a geometric generic point of $C'$ is contained in $\Spec 
	R_v^{\triangle, \sigma}$. To see this, we observe that the Galois 
	representation corresponding to a geometric generic point of $C'$ has 
	semisimplification a direct sum of characters whose restriction to $I_v$ 
	is the push-forward of $\oplus_{i=1}^n\chi^{\univ}_i$. It 
	follows that this representation has a filtration with the Galois action on 
	its graded pieces given by the universal characters in some order.
	
	We thus have
	\[  \dim C' \leq \dim h^{-1}(Z) \leq \dim \cG_v^\sigma\times_{\Spec \Lambda_v} Z. \]
	We can bound $\dim \cG_v^\sigma\times_{\Spec \Lambda_v} Z$ by bounding the dimension of the completed local rings at its closed points, using essentially the same tangent space calculation as in \cite[Lemma 3.7]{ger} (although over a finite field). This yields
	\[  \dim \cG_v^\sigma\times_{\Spec \Lambda_v}Z \leq 1 + n^2 + n(n+1)/2 + n(n+1)[F_v : \Q_p] / 2 - [F_v : \Q_p] \leq n^2 - 1 + n(n+1) [ F_v : \Q_p]/2, \]
	using our assumption $[F_v : \Q_p] > \frac{n(n+1)}{2} + 1$.  This completes the proof.
\end{proof}

\subsubsection{Fontaine--Laffaille deformations}\label{sec:FLdef}
We again suppose $v | p$, but take $\Lambda_v = \cO$.
We assume that $F_v/\Q_p$ is unramified. 
Recall that in~\S\ref{sec:FL_statements} we defined a category $\mathcal{MF}_{\cO}$ and a functor $\bG$ on $\mathcal{MF}_{\cO}$ 
that take values in the category of finite $\cO$-modules with continuous $\cO$-linear $G_{F_v}$-action.

For each embedding $\tau \colon F_v \hookrightarrow E$, let $\lambda_\tau = (\lambda_{\tau,1},\ldots, \lambda_{\tau,n})$ 
be a tuple of integers satisfying 
  \[ \lambda_{\tau,1} \ge \lambda_{\tau,2} \ge \cdots \ge \lambda_{\tau,n}. \] 
  and
  \[ \lambda_{\tau, 1} - \lambda_{\tau, n} < p - n. \]
We say a representation of $G_{F_v}$ on a finite $\cO$-module $W$ is \emph{Fontaine--Laffaille of type} $(\lambda_\tau)_{\tau \in \Hom(F_v,E)}$ 
if there is $M \in \mathcal{MF}_{\cO}$ with $W \cong \bG(M)$, and
  \[
   \mathrm{FL}_\tau(M \otimes_{\cO} k) = \{\lambda_{\tau,1}+n-1, \lambda_{\tau,2} + n-2, \ldots, \lambda_{\tau,n}\}
  \]
for each $\tau \colon F_v \hookrightarrow E$. 
The following proposition follows from \cite[\S2.4.1]{cht} and a twisting argument (see~\S\ref{sec:FL_statements}).

\begin{prop}\label{thm:FLring}
 Assume that $\rhobar|_{G_{F_v}}$ is Fontaine--Laffaille of type $(\lambda_\tau)_{\tau \in \Hom(F_v,E)}$. 
 Then there is a quotient $R_v^{\mathrm{FL}}$ of $R_v^\square$ satisfying the following.
 \begin{enumerate}
  \item $R_v^{\mathrm{FL}}$ represents a local deformation problem $\cD_v^{\mathrm{FL}}$.
  \item For a $\CNL_{\cO}$-algebra $A$ that is finite over $\cO$, a lift $\rho \in \cD_v^\square(A)$ lies in $\cD_v^{\mathrm{FL}}$ if and only if 
  $\rho$ is Fontaine--Laffaille of type $(\lambda_\tau)_{\tau \in \Hom(F_v,E)}$. 
  \item $R_v^{\mathrm{FL}}$ is a formally smooth over $\cO$ of dimension $1 + n^2 + \frac{n(n-1)}{2}[F_v: \Q_p]$.
 \end{enumerate}
\end{prop}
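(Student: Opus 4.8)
The plan is to reduce the statement to the Fontaine--Laffaille deformation ring constructed in \cite[\S 2.4.1]{cht}, where all Hodge--Tate weights lie in the admissible window, and then transfer the result back by twisting with a fixed crystalline character.

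First I would fix the twist. Since $\rhobar|_{G_{F_v}}$ is Fontaine--Laffaille of type $(\lambda_\tau)_{\tau \in \Hom(F_v, E)}$, write $\rhobar|_{G_{F_v}} \cong \bG(\overline{M})$ with $\overline{M} \in \mathcal{MF}_{k}$ and $\mathrm{FL}_\tau(\overline{M}) = \{ \lambda_{\tau, 1} + n-1, \dots, \lambda_{\tau, n}\}$ for each $\tau$, set $a = (a_\tau)_\tau$ with $a_\tau = \lambda_{\tau, n}$, and let $\psi = \psi_a : G_{F_v} \to \cO^\times$ be the crystalline character of \S\ref{sec:FL_statements}. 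Then $\overline{M}(a) \in \mathcal{MF}_{k}^0$: its $\tau$-graded pieces are concentrated in degrees in $[0, \lambda_{\tau,1} - \lambda_{\tau,n} + n - 1]$, and the hypothesis $\lambda_{\tau,1} - \lambda_{\tau,n} < p - n$ is exactly what forces this interval into $[0, p-2]$. Hence $\rhobar|_{G_{F_v}} \otimes \barpsi^{-1} \cong \bG^0(\overline{M}(a))$ is Fontaine--Laffaille with all weights in the Fontaine--Laffaille range.

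Then I would apply \cite[\S 2.4.1]{cht} to $\rhobar|_{G_{F_v}} \otimes \barpsi^{-1}$, obtaining a quotient $R_v^{\mathrm{FL},0}$ of its framed deformation ring which represents the subfunctor of those lifts isomorphic to $\bG(M)$ with $M \in \mathcal{MF}_{\cO}$ of the prescribed Fontaine--Laffaille weight, and which is formally smooth over $\cO$ of relative dimension $n^2 + \frac{n(n-1)}{2}[F_v:\Q_p]$ (the smoothness coming from the exactness and full faithfulness of $\bG^0$ recalled in \S\ref{sec:FL_statements}, the unobstructedness of the deformation theory of objects of $\mathcal{MF}_{\cO}$, and the dimension count of \emph{loc.\ cit.}). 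Finally I would transfer along the twist: the assignment $\rho \mapsto \rho \otimes \psi$ is an isomorphism between $\cD_v^\square$ and the analogous framed deformation functor of $\rhobar|_{G_{F_v}} \otimes \barpsi^{-1}$, equivariant for conjugation by $\ker(\GL_n(A) \to \GL_n(k))$ since twisting by a character commutes with conjugation; defining $\cD_v^{\mathrm{FL}}$ to be the preimage of $\cD_v^{\mathrm{FL},0}$ under this isomorphism produces a local deformation problem represented by a ring $R_v^{\mathrm{FL}}$ isomorphic to $R_v^{\mathrm{FL},0}$ as an $\cO$-algebra, hence formally smooth over $\cO$ of the same relative dimension and thus of Krull dimension $1 + n^2 + \frac{n(n-1)}{2}[F_v:\Q_p]$. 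This gives (1) and (3); for (2) one uses that each $\bG^a$ is fully faithful with essential image stable under subquotients, extensions, and twists by crystalline characters to see that for $A$ finite over $\cO$ a lift $\rho$ is Fontaine--Laffaille of type $(\lambda_\tau)_\tau$ precisely when $\rho \otimes \psi^{-1}$ is Fontaine--Laffaille of type $(\lambda_\tau - \lambda_{\tau,n})_\tau$, i.e.\ precisely when $\rho \in \cD_v^{\mathrm{FL}}(A)$.

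I do not expect a serious obstacle: the substance is entirely contained in \cite[\S 2.4.1]{cht}, and what remains is bookkeeping with the crystalline twist and verifying that the Fontaine--Laffaille windows line up, which is where the hypothesis $\lambda_{\tau,1} - \lambda_{\tau,n} < p-n$ is used.
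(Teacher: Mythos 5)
Your proposal is correct and is essentially the paper's own argument: the paper proves Proposition \ref{thm:FLring} precisely by citing \cite[\S 2.4.1]{cht} together with the twisting argument via the crystalline character $\psi_a$ set up in \S\ref{sec:FL_statements}, which is exactly the reduction you carry out. Your verification that the hypothesis $\lambda_{\tau,1}-\lambda_{\tau,n}<p-n$ places the shifted weights in $[0,p-2]$ is the same bookkeeping the paper leaves implicit.
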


\subsubsection{Level raising deformations}\label{sec:Iharadef}
Assume that $q_v \equiv 1 \bmod p$, that $\rhobar|_{G_{F_v}}$ is trivial, and that $p > n$.
We take $\Lambda_v = \cO$.

Let $\chi = (\chi_1,\ldots,\chi_n)$ be a tuple of continuous characters $\chi_i \colon \cO_{F_v}^\times \rightarrow \cO^\times$ that are trivial modulo $\varpi$. 
We let $\cD_v^\chi$ be the functor of lifts $\rho \colon G_{F_v} \rightarrow \GL_n(A)$ such that
  \[
   \mathrm{char}_{\rho(\sigma)}(X) = \prod_{i=1}^n (X - 
   \chi_i(\Art_{F_v}^{-1}(\sigma)))
  \]
for all $\sigma \in I_{F_v}$. 
Then $\cD_v^\chi$ is a local deformation problem, and we denote its representing object by $R_v^\chi$. 
The following two propositions are contained in \cite[Proposition~3.1]{tay}.

\begin{prop}\label{thm:Ihara1ring}
 Assume that $\chi_i = 1$ for all $1\le i \le n$. 
 Then $R_v^1$ satisfies the following properties: 
 \begin{enumerate}
  \item $\Spec R_v^1$ is equidimensional of dimension $1+n^2$ and every generic point has characteristic zero.
  \item Every generic point of $\Spec R_v^1/\varpi$ is the specialization of a unique generic point of $\Spec R_v^1$.
 \end{enumerate}
\end{prop}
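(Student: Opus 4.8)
The plan is to deduce this from \cite[Proposition~3.1]{tay}; the only difference with \emph{loc.\ cit.}\ is that there the determinant of the deformations is fixed, which accounts for the extra~$1$ in the dimension and does not affect any of the structural assertions, so I would transcribe that argument. First I would note that, since $\rhobar|_{G_{F_v}}$ is trivial and any lift to a $\CNL_\cO$-algebra $A$ has image in the pro-$p$ group $1 + M_n(\ffrm_A)$ while the wild inertia $P_v \subset G_{F_v}$ is pro-$\ell$ with $\ell \neq p$, every lift of $\rhobar|_{G_{F_v}}$ is trivial on $P_v$ and factors through the tame quotient $\langle \phi, \tau \mid \phi \tau \phi^{-1} = \tau^{q_v}\rangle$. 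A lift is thus a pair $(\Phi, T) \in (1 + M_n(\ffrm_A))^2$ with $\Phi T \Phi^{-1} = T^{q_v}$, and the condition $\mathrm{char}_{\rho(\sigma)}(X) = (X-1)^n$ for $\sigma \in I_{F_v}$ is, by Cayley--Hamilton, equivalent to $\mathrm{char}_T(X) = (X-1)^n$. Using $p > n$, the maps $T \mapsto \log T$ and $N \mapsto \exp N$ are mutually inverse, conjugation-equivariant bijections between unipotent and nilpotent matrices over any $\cO$-algebra (the denominators $2, \dots, n-1$ being units), carrying $\Phi T \Phi^{-1} = T^{q_v}$ to $\Phi N \Phi^{-1} = q_v N$. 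I would therefore identify $\Spec R_v^1$ with the completion at the $k$-point $(1, 0)$ of the closed $\cO$-subscheme $\mathcal X \subset \GL_n \times \mathcal N$ cut out by $\Phi N \Phi^{-1} = q_v N$, where $\mathcal N$ is the nilpotent cone with its scheme structure $\mathrm{char}_N(X) = X^n$.

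For part~(1) I would stratify $\mathcal N$ by the conjugacy classes $\mathcal O_{\underline n}$ indexed by partitions $\underline n$ of $n$ and let $\mathcal X_{\underline n} \subset \mathcal X$ be the preimage of $\mathcal O_{\underline n}$. For $N \in \mathcal O_{\underline n}$ the matrix $q_v N$ is conjugate to $N$, so the fibre of $\mathcal X_{\underline n} \to \mathcal O_{\underline n}$ over $N$ is a nonempty torsor under the connected centraliser $Z_{\GL_n}(N)$; hence over $\overline{\Q}_p$ and over the special fibre each $\mathcal X_{\underline n}$ is irreducible of dimension $\dim \mathcal O_{\underline n} + \dim Z_{\GL_n}(N) = n^2$. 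Since $\mathcal O_{(n)}$ is open dense in $\mathcal N$ and $(1,0)$ lies in the closure of every $\mathcal X_{\underline n}$, it follows that $\mathcal X$ is equidimensional of dimension $1 + n^2$ over $\cO$, and each of its irreducible components dominates $\Spec \cO$ (the special fibre, a union of the analogous strata $\mathcal Y_{\underline n} = \{(\Phi, N) : N \in \mathcal O_{\underline n},\ \Phi N \Phi^{-1} = N\}$, having dimension only $n^2$). As each stratum is generically smooth, $\mathcal X$ is reduced, so $\varpi$ is a nonzerodivisor and every generic point of $\Spec R_v^1$ has residue characteristic~$0$.

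For part~(2), the crucial ``Ihara avoidance'' input, I would observe that by part~(1) the generic points of $\Spec R_v^1$ are those of the closures $\overline{\mathcal X_{\underline n}}$, while those of $\Spec R_v^1/\varpi$ are those of the closures $\overline{\mathcal Y_{\underline n}}$ (here $q_v \equiv 1 \bmod p$ is used), and the goal is to prove $\overline{\mathcal X_{\underline n}} \cap \{\varpi = 0\} = \overline{\mathcal Y_{\underline n}}$ for each $\underline n$, which yields the bijection $\underline n \mapsto \underline n$ asserted in the statement. The easy inclusions are: $\overline{\mathcal X_{\underline n}} \cap \{\varpi = 0\}$ is pure of dimension $n^2$ and, on taking the limit $q_v \to 1$ of the defining equation, is contained in $\bigcup_{\underline n' \leq \underline n} \overline{\mathcal Y_{\underline n'}}$; and for $N$ fixed in $\mathcal O_{\underline n}$ the $Z_{\GL_n}(N)$-torsor of valid $\Phi$ specialises to $Z_{\GL_n}(N)$ itself as $q_v \to 1$, giving $\overline{\mathcal Y_{\underline n}} \subseteq \overline{\mathcal X_{\underline n}} \cap \{\varpi = 0\}$. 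The main obstacle is to rule out that any strictly smaller $\overline{\mathcal Y_{\underline n'}}$ (with $\underline n' < \underline n$) is also a component of $\overline{\mathcal X_{\underline n}} \cap \{\varpi = 0\}$ --- equivalently, that no component of the special fibre lies below two distinct components of the generic fibre --- and for this I would follow the finer analysis of the component closures carried out in \cite[Proposition~3.1]{tay}.
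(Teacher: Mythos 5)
Your proposal is essentially the paper's own proof: the paper establishes this proposition (and Proposition \ref{thm:Iharachiring}) simply by citing \cite[Prop.~3.1]{tay}, and your reduction to the scheme of pairs, the stratification by Jordan type, and the deferral of the key uniqueness assertion in (2) to Taylor's finer analysis amount to a sketch of exactly that reference. Two cautions should you actually transcribe it: Taylor's Proposition 3.1 already treats lifts with unfixed determinant (only the characteristic polynomials on inertia are prescribed), so the dimension there is already $1+n^2$ and no adjustment for a fixed determinant is needed; and the steps you justify by ``Cayley--Hamilton'' (passing from the condition on all $\sigma\in I_{F_v}$ to the single topological generator $T$, and then to $\log T$) and by ``each stratum is generically smooth, so $\mathcal{X}$ is reduced'' are not formal over non-reduced test rings --- they are precisely among the points that Taylor's argument has to, and does, take care of.
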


\begin{prop}\label{thm:Iharachiring}
 Assume that the $\chi_i$ are pairwise distinct. 
 Then $\Spec R_v^\chi$ is irreducible of dimension $1+n^2$, and its generic point has characteristic zero. 
\end{prop}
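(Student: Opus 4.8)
The statement to prove is Proposition \ref{thm:Iharachiring}: assuming $q_v \equiv 1 \bmod p$, $\rhobar|_{G_{F_v}}$ trivial, $p > n$, and the characters $\chi_1, \dots, \chi_n$ pairwise distinct, the scheme $\Spec R_v^\chi$ is irreducible of dimension $1 + n^2$ with generic point of characteristic zero.

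The plan is to reduce this to \cite[Proposition 3.1]{tay}, as the excerpt itself indicates. First I would recall the structure of the deformation ring $R_v^\chi$: it parametrizes lifts $\rho : G_{F_v} \to \GL_n(A)$ of the trivial representation such that the characteristic polynomial of $\rho(\sigma)$ for $\sigma \in I_{F_v}$ is $\prod_{i=1}^n (X - \chi_i(\Art_{F_v}^{-1}(\sigma)))$. Since $\rhobar|_{G_{F_v}}$ is trivial, $G_{F_v}$ acts through its tame quotient on the relevant part (the wild inertia must act trivially because $p > n$ forces the image of a pro-$l$ group, $l \ne p$, to be semisimple with eigenvalues roots of unity congruent to $1$, hence trivial — this is the standard argument), so a lift is determined by the images of a (lift of) Frobenius $\phi$ and a topological generator $\tau$ of tame inertia, subject to the relation $\phi \tau \phi^{-1} = \tau^{q_v}$ together with the prescribed characteristic polynomial of $\tau$. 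The condition $q_v \equiv 1 \bmod p$ is what makes this deformation problem well-behaved.

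Next I would invoke directly the analysis of Taylor in \cite{tay}: in the case where the $\chi_i$ are pairwise distinct (Taylor's case labelled by a tuple of distinct characters, corresponding to the ring called $R_{v,2}$ or $R^\chi_v$ there), the generic fibre of this local deformation ring is geometrically irreducible of the expected dimension $1 + n^2$, and $R_v^\chi$ is $\cO$-flat, so that its generic point has characteristic zero. The key input from Taylor's argument is an explicit description of the lifting ring over the generic fibre: because the $\chi_i(\Art_{F_v}^{-1}(\tau))$ are distinct scalars, a lift of $\tau$ over a field of characteristic zero is automatically diagonalizable with these eigenvalues, and the variety of such lifts together with a compatible lift of Frobenius forms an irreducible variety (essentially a twisted product of $\GL_n / T$ with the space of allowable Frobenius images). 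One then checks that the special fibre does not contribute extra components of dimension exceeding $n^2$, which gives equidimensionality and irreducibility via the going-down/flatness formalism exactly as in \cite{tay}.

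I expect the main obstacle — or rather the only real content — to be confirming that the hypotheses of \cite[Proposition 3.1]{tay} are met verbatim in our normalization, since the present paper does not fix determinants whereas some versions of these results do. Concretely, one must check that dropping the fixed-determinant condition simply adds a smooth one-dimensional factor corresponding to the unramified twist, which is already accounted for in the stated dimension $1 + n^2$ (as opposed to the fixed-determinant dimension $n^2$), and that the irreducibility statement is insensitive to this change because the extra factor $\Spec \cO\llbracket G_{F_v}^{ab}(p)^\wedge \rrbracket$-type contribution is formally smooth and irreducible. Granting that bookkeeping, the proof is a direct citation: the result is exactly \cite[Proposition 3.1]{tay} applied to our $\rhobar|_{G_{F_v}}$ and our tuple $\chi$, with the characters playing the role of the roots of unity $\zeta_1, \dots, \zeta_n$ in Taylor's setup (here they are allowed to be ramified characters rather than just unramified twists, but the argument is unchanged since only their restriction to $I_{F_v}$, which takes finitely many values, enters).
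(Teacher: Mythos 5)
Your proposal matches the paper: the paper gives no argument beyond citing \cite[Prop.~3.1]{tay}, which is stated in essentially the same normalization (framed liftings of the trivial residual representation with prescribed characteristic polynomial on inertia, dimension $1+n^2$), so the result is a direct quotation and your extra bookkeeping about determinants and the characters $\chi_i$ versus roots of unity is harmless. In short, correct and the same approach as the paper.
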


\subsubsection{Taylor--Wiles deformations}\label{sec:TWdef}
Assume that $q_v \equiv 1 \bmod p$, and that $\rhobar|_{G_{F_v}}$ is unramified. 
We take $\Lambda_v = \cO$.
We assume that $\rhobar|_{G_{F_v}}$ has $n$-distinct eigenvalues $\alpha_1,\ldots,\alpha_n \in k$. 
For each $1\le i \le n$, let $\overline{\gamma}_i \colon G_{F_v} \rightarrow k^\times$ be the unramified character that sends $\Frob_v$ to $\alpha_i$. 

\begin{lem}\label{thm:anyliftisTW}
 Let $\rho \colon G_{F_v} \rightarrow \GL_n(A)$ be any lift of $\rhobar$. 
 There are unique continuous characters $\gamma_i \colon G_{F_v} \rightarrow A^\times$, for $1\le i \le n$, such that
 $\rho$ is $\GL_n(A)$-conjugate to a lift of the form $\gamma_1 \oplus \cdots \oplus \gamma_n$, where $\gamma_i \bmod \frakm_A = \overline{\gamma}_i$ for each $1 \le i \le n$.
\end{lem}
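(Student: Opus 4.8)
The plan is to exploit the structure of the tame quotient of $G_{F_v}$ together with the regularity of $\rhobar(\Frob_v)$ to put $\rho$ into diagonal form. First I would observe that, since $q_v \equiv 1 \bmod p$, the residue characteristic $\ell$ of $v$ is not $p$. As $\rhobar|_{G_{F_v}}$ is unramified, $\rho$ carries the wild inertia subgroup $P_{F_v}$ into $\ker(\GL_n(A) \to \GL_n(k))$, which is a pro-$p$ group; since $P_{F_v}$ is pro-$\ell$, this forces $\rho|_{P_{F_v}} = 1$. Likewise $\rho$ carries tame inertia into the same pro-$p$ kernel, so $\rho$ factors through the quotient of the tame quotient of $G_{F_v}$ by the prime-to-$p$ part of tame inertia. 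This quotient is topologically generated by a Frobenius lift $\phi$ and a generator $\tau$ of the pro-$p$ part of tame inertia, subject to the single relation $\phi \tau \phi^{-1} = \tau^{q_v}$; thus $\rho$ is determined by $\Phi := \rho(\phi)$ and $\Sigma := \rho(\tau)$, which satisfy $\Phi \Sigma \Phi^{-1} = \Sigma^{q_v}$ and $\Sigma \equiv 1 \bmod \frakm_A$.

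Next I would diagonalize $\Phi$. Its characteristic polynomial reduces mod $\frakm_A$ to $\prod_i (X - \alpha_i)$ with the $\alpha_i$ distinct; by Hensel's lemma it therefore factors over $A$ as $\prod_i (X - a_i)$ with $a_i \equiv \alpha_i \bmod \frakm_A$ and the $a_i$ pairwise-distinct units. Cayley--Hamilton and the Chinese Remainder Theorem give a decomposition $A^n = \oplus_i L_i$ into $\Phi$-eigenspaces, and since each $L_i$ has one-dimensional reduction it is free of rank one by Nakayama. Passing to a basis adapted to this decomposition, we may assume $\Phi = \diag(a_1, \dots, a_n)$.

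The main point is then to show that $\Sigma$ is automatically diagonal in this basis. Writing $\Sigma = 1 + N$ with $N \in M_n(\frakm_A)$, the relation $\Phi \Sigma \Phi^{-1} = \Sigma^{q_v}$ reads, on the $(i,j)$-entry with $i \neq j$, as $(a_i a_j^{-1} - q_v) N_{ij} = \sum_{k \geq 2} \binom{q_v}{k} (N^k)_{ij}$. Here $a_i a_j^{-1} - q_v \equiv \alpha_i \alpha_j^{-1} - 1 \bmod \frakm_A$, which is nonzero precisely because the eigenvalues are distinct and $q_v \equiv 1 \bmod p$, so this scalar is a unit of $A$. A successive-approximation argument now applies: if the off-diagonal entries of $N$ lie in $\frakm_A^r$, then (since a path from $i$ to $j \neq i$ uses at least one off-diagonal step) the right-hand side lies in $\frakm_A^{r+1}$, whence $N_{ij} \in \frakm_A^{r+1}$; by the Krull intersection theorem for the Noetherian local ring $A$, all off-diagonal entries of $N$ vanish. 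Thus $\Phi$ and $\Sigma$ are simultaneously diagonal, so $\rho$ is $\GL_n(A)$-conjugate to $\gamma_1 \oplus \cdots \oplus \gamma_n$ where $\gamma_i$ is the $i$-th diagonal entry; this $\gamma_i$ is a continuous character with $\gamma_i(\Frob_v) = a_i$ and $\gamma_i|_{I_{F_v}} \equiv 1 \bmod \frakm_A$, so $\gamma_i \bmod \frakm_A = \overline{\gamma}_i$.

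For uniqueness, suppose $\gamma_1 \oplus \cdots \oplus \gamma_n$ and $\gamma_1' \oplus \cdots \oplus \gamma_n'$, with $\gamma_i, \gamma_i' \equiv \overline{\gamma}_i$, are conjugated by some $g \in \GL_n(A)$. Comparing images of $\Frob_v$, $g$ conjugates $\diag(\gamma_i(\Frob_v))$ to $\diag(\gamma_i'(\Frob_v))$, two regular diagonal matrices whose reductions agree with $\diag(\alpha_i)$; reducing mod $\frakm_A$ forces $g \bmod \frakm_A$ into the diagonal torus (the centralizer of a regular diagonal matrix), and a further application of the same kind of successive approximation shows that $g$ is itself diagonal and $\gamma_i(\Frob_v) = \gamma_i'(\Frob_v)$. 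A diagonal $g$ commutes with each $\diag(\gamma_i(\sigma))$, so $\gamma_i = \gamma_i'$ for all $i$. The only step that is not purely formal is the diagonalization of $\Sigma$; the rest of the argument is bookkeeping with $\frakm_A$-adic approximation.
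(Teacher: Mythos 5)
Your proof is correct, and it is essentially the argument the paper has in mind: the paper simply cites \cite[Lemma~2.44]{MR1605752} and leaves the details to the reader, and that argument is exactly your route of diagonalizing $\rho(\phi)$ via Hensel's lemma and the distinctness of the $\alpha_i$, then using the tame relation $\Phi\Sigma\Phi^{-1}=\Sigma^{q_v}$ together with $q_v\equiv 1\bmod p$ and successive approximation in $\frakm_A$ to force inertia to act diagonally. (For uniqueness, note the off-diagonal vanishing of $g$ is immediate from $(d_j-d_i')g_{ij}=0$ with $d_j-d_i'$ a unit, so no approximation is even needed there.)
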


\begin{proof}
 This is similar to \cite[Lemma~2.44]{MR1605752}. 
 The details are left to the reader.
\end{proof}

Let $\Delta_v = k(v)^\times(p)^n$, where $k(v)^\times(p)$ is the maximal $p$-power quotient of $k(v)^\times$.
Let $\rho \colon G_{F_v} \rightarrow \GL_n(R_v^\square)$ denote the universal lift. Then $\rho$ is $\GL_n(R_v^\square)$-conjugate to a lift of the form $\gamma_1 \oplus \cdots \oplus \gamma_n$, with $\gamma_i \bmod \frakm_{R^\square} = \overline{\gamma}_i$. 
For each $1\le i \le n$, the character $\gamma_i \circ \Art_{F_v}|_{\cO_{F_v}^\times}$ factors through $k(v)^\times(p)$, so we obtain a canonical 
local $\cO$-algebra morphism $\cO[\Delta_v] \rightarrow R_v^\square$. 
Note that this depends on the choice of ordering $\alpha_1,\ldots,\alpha_n$.
It is straightforward to check that this morphism is formally smooth of relative dimension $n^2$. 

\subsubsection{Formally smooth deformations}\label{sec:formsmoothdef}
Assume that $v \nmid p$.
The following is a standard argument in obstruction theory, and the proof is left to the reader.

\begin{prop}\label{thm:smoothlifts}
 If $H^2(F_v,\ad\rhobar) = 0$, then $R_v^\square$ is isomorphic to a power series ring over $\cO$ in $n^2$ variables.
\end{prop}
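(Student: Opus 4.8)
The statement to prove is Proposition~\ref{thm:smoothlifts}: if $v \nmid p$ and $H^2(F_v, \ad\rhobar) = 0$, then $R_v^\square$ is a power series ring over $\cO$ in $n^2$ variables.

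The plan is to use the standard obstruction-theoretic analysis of the framed deformation functor $\cD_v^\square$. First I would recall that $\cD_v^\square$ is represented by $R_v^\square \in \CNL_{\Lambda_v}$ with $\Lambda_v = \cO$ (this is the setting of \S\ref{sec:formsmoothdef}), so $R_v^\square$ is a complete Noetherian local $\cO$-algebra with residue field $k$. The tangent space $\cD_v^\square(k[\epsilon]/\epsilon^2)$ is naturally identified with the space of cocycles $Z^1(G_{F_v}, \ad\rhobar)$ — since we are looking at \emph{framed} lifts, we do not quotient by coboundaries — and this has dimension $\dim_k Z^1 = \dim_k H^1(F_v, \ad\rhobar) + \dim_k B^1(F_v, \ad\rhobar) = \dim_k H^1 + n^2 - \dim_k H^0$. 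By the local Euler characteristic formula (for $v \nmid p$, the local term vanishes), $\dim_k H^1 - \dim_k H^0 - \dim_k H^2 = 0$, so under the hypothesis $H^2(F_v, \ad\rhobar) = 0$ we get $\dim_k Z^1 = n^2$. Thus there is a surjection $\cO\llbracket x_1, \dots, x_{n^2} \rrbracket \twoheadrightarrow R_v^\square$.

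Next I would show this surjection is an isomorphism by showing the obstruction space vanishes, so that $R_v^\square$ is formally smooth over $\cO$ of the expected dimension. The key point is that obstructions to lifting a framed deformation from $A/I$ to $A$ (for a small extension with $I \cdot \frakm_A = 0$) lie in $H^2(G_{F_v}, \ad\rhobar) \otimes_k I$, which is zero by hypothesis. Concretely: given a lift $\rho_0 : G_{F_v} \to \GL_n(A/I)$ of type "all framed lifts," choose any set-theoretic lift $\widetilde{\rho}_0$ of $\rho_0$ to $\GL_n(A)$; the failure of $\widetilde{\rho}_0$ to be a homomorphism defines a $2$-cocycle valued in $\ad\rhobar \otimes_k I$, whose class in $H^2$ is the obstruction, and it vanishes precisely because $H^2(F_v, \ad\rhobar) = 0$. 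Hence $\widetilde{\rho}_0$ can be adjusted by a $1$-cochain to a genuine lift. This shows $\cD_v^\square$ is formally smooth, i.e. satisfies the infinitesimal lifting criterion, and a formally smooth object of $\CNL_\cO$ whose tangent space has dimension $n^2$ is isomorphic to $\cO\llbracket x_1, \dots, x_{n^2} \rrbracket$.

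I do not expect any serious obstacle here: this is entirely standard deformation theory (as the text itself notes, "the proof is left to the reader"). The only points requiring a little care are (i) getting the dimension count right for the \emph{framed} (rather than unframed) functor — remembering that coboundaries are not quotiented out, so the relevant dimension is $\dim_k Z^1$ rather than $\dim_k H^1$ — and (ii) correctly invoking the local Euler characteristic formula at a place $v \nmid p$, where the Euler characteristic is $-\dim_k H^0 + \dim_k H^1 - \dim_k H^2 = 0$ (the local term $[F_v:\Q_p]\dim\ad\rhobar$ contributes only at places above $p$). With these in hand the argument is immediate. For completeness one could alternatively cite the analogous statements in \cite{cht} or \cite{MR1605752}, but a self-contained three-line obstruction-theory argument as sketched above suffices.
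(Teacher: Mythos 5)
Your proof is correct and is exactly the standard obstruction-theory argument that the paper has in mind (the paper states "This is a standard argument in obstruction theory" and leaves the proof to the reader): the tangent space of the framed functor is $Z^1(F_v,\ad\rhobar)$ of dimension $h^1 + n^2 - h^0 = n^2$ by the local Euler characteristic formula at $v \nmid p$, and vanishing of $H^2(F_v,\ad\rhobar)$ kills all obstructions, so $R_v^\square$ is formally smooth over $\cO$ with $n^2$-dimensional tangent space, hence a power series ring in $n^2$ variables. Your two points of care (using $Z^1$ rather than $H^1$ for the framed functor, and the vanishing Euler characteristic away from $p$) are exactly the right ones.
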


\subsubsection{Presentations}\label{sec:present}
Fix a global deformation problem
  \[
   \cS = (\rhobar, S, \{\Lambda_v\}_{v\in S}, \{\cD_v\}_{v\in S}),
  \]
and for each $v\in S$, let $R_v$ denote the object representing $\cD_v$.
Let $T$ be a (possibly empty) subset of $S$ such that $\Lambda_v = \cO$ for all $v\in S \smallsetminus T$, 
and define $R_{\cS}^{T,\loc} = \widehat{\otimes}_{v\in T} R_v$, with the completed tensor product being taken over $\cO$. 
It is canonically a $\Lambda$-algebra, via the canonical isomorphism $\widehat{\otimes}_{v\in T} \Lambda_v \cong \widehat{\otimes}_{v\in S} \Lambda_v$. 
For each $v\in T$, the morphism $\cD_{\cS}^T \rightarrow \cD_v$ given by $(\rho,\{\alpha_v\}_{v\in T}) \mapsto \alpha_v^{-1}\rho|_{G_{F_v}} \alpha_v$ 
induces a local $\Lambda_v$-algebra morphism $R_v \rightarrow R_{\cS}^T$. 
We thus have a local $\Lambda$-algebra morphism $R_{\cS}^{T,\loc} \rightarrow R_{\cS}^T$. 
To understand the relative tangent space of this map, we use a Galois cohomology complex following \cite[\S2]{cht} (cf. \cite[\S4.2]{KT}).

We let $\ad\rhobar$ denote the space of $n\times n$ matrices $\mathrm{M}_{n\times n}(k)$ over $k$ with adjoint $G_F$-action via $\rhobar$. 
For each $v\in S$, we let $Z^1(F_v,\ad\rhobar)$ denote the $k$-vector space of continuous $1$-cocycles of $G_{F_v}$ with coefficients in $\ad\rhobar$. 
The map $c \mapsto (1+\varepsilon c)\rhobar$ gives an isomorphism
  \[
   Z^1(F_v,\ad\rhobar) \xrightarrow{\sim} \Hom_{\CNL_{\Lambda_v}}(R_v^\square,k[\varepsilon]/(\varepsilon^2)). 
  \]
We denote by $\cL_v^1$ the pre-image of 
  \[
   \Hom_{\CNL_{\Lambda_v}}(R_v,k[\varepsilon]/(\varepsilon^2)) \subseteq \Hom_{\CNL_{\Lambda_v}}(R_v^\square,k[\varepsilon]/(\varepsilon^2))
  \]
under this isomorphism. 
Note that $\cL_v^1$ contains the subspace of coboundaries. 
We then let $\cL_v$ be the image of $\cL_v^1$ in $H^1(F_v,\ad\rhobar)$. 

We define a complex $C_{\cS,T}^\bullet(\ad\rhobar)$ by
  \[
  C_{\cS,T}^i(\ad\rhobar) = 
  \begin{cases}
   C^0(F_S/F,\ad\rhobar) & \text{if } i = 0, \\
   C^1(F_S/F,\ad\rhobar) \oplus \bigoplus_{v\in T} C^0(F_v,\ad\rhobar) & \text{if } i =1, \\
   C^2(F_S/F,\ad\rhobar) \oplus \bigoplus_{v\in T} C^1(F_v,\ad\rhobar) \oplus_{v\in S\smallsetminus T} C^1(F_v,\ad\rhobar)/\cL_v^1 & \text{if } i = 2, \\
   C^i(F_S/F,\ad\rhobar) \oplus \bigoplus_{v\in S} C^{i-1}(F_v,\ad\rhobar) & \text{otherwise,}
  \end{cases}
  \] 
with boundary map $C_{\cS,T}^i(\ad\rhobar)  \rightarrow C_{\cS,T}^{i+1}(\ad\rhobar)$ given by
  \[
   (\phi,(\psi_v)_v) \mapsto (\partial \phi, (\phi|_{G_{F_v}} - \partial \psi_v)_v).
  \]
We denote the cohomology groups of this complex by $H_{\cS,T}^i(\ad\rhobar)$, 
and denote their $k$-dimension by $h_{\cS,T}^i(\ad\rhobar)$ 
(we use similar notation for the $k$-dimension of local and global Galois cohomology groups).

There is a long exact sequence in cohomology
  \begin{align}
   0 & \rightarrow H_{\cS,T}^0(\ad\rhobar) \rightarrow H^0(F_S/F,\ad\rhobar) \rightarrow \oplus_{v\in T} H^0(F_v,\ad\rhobar)  \label{eq:defthyexact} \\
   & \rightarrow H_{\cS,T}^1(\ad\rhobar) \rightarrow H^1(F_S/F,\ad\rhobar) \rightarrow \oplus_{v\in T} H^1(F_v,\ad\rhobar) 
   \oplus_{v\in S \smallsetminus T} H^1(F_v,\ad\rhobar)/\cL_v \nonumber \\ 
   & \rightarrow H^2_{\cS,T}(\ad\rhobar) \rightarrow H^2(F_S/F,\ad\rhobar) \rightarrow \oplus_{v\in S} H^2(F_v,\ad\rhobar) \nonumber
    \rightarrow \cdots.
  \end{align}
Since we are assuming that $p > 2$, the groups $H^i(F_S/F,\ad\rhobar)$ vanish for $i \ge 3$, as do the groups $H^i(F_v,\ad\rhobar)$. 
So $H_{\cS,T}^i(\ad\rhobar) = 0$ for $i > 3$, and we have a relation among Euler characteristics
  \numequation \label{eq:Euler}
   \chi_{\cS,T}(\ad\rhobar) = \chi(F_S/F,\ad\rhobar)  - \sum_{v\in S} \chi(F_v,\ad\rhobar) 
   - \sum_{v\in S\smallsetminus T} (\dim_k \cL_v - h^0(F_v,\ad\rhobar)).
  \end{equation}
The trace pairing $(X,Y) \mapsto \tr(XY)$ on $\ad\rhobar$ is perfect and $G_F$-equivariant, so $\ad\rhobar(1)$ is isomorphic to the 
Tate dual of $\ad\rhobar$. 
For each $v\in S$, we let $\cL_v^\perp \subseteq H^1(F_v,\ad\rhobar(1))$ be the exact annihilator of $\cL_v$ under local Tate duality. 
We then define 
  \[
   H_{\cS^\perp,T}^1(\ad\rhobar(1)) = \ker \left( H^1(F_S/F,\ad\rhobar(1)) \rightarrow 
   \prod_{v\in S \smallsetminus T} H^1(F_v,\ad\rhobar(1))/\cL_v^\perp \right).
  \]
The following is proved in the same way as \cite[Proposition~4.7]{KT}, based on ideas of Kisin~\cite[Prop.\ 4.1.5, Rem.\ 4.1.7]{MR2459302}.
  
\begin{prop}\label{thm:genoverloc}
 Let the notation and assumptions be as in the beginning of~\S\ref{sec:present}, assume further that $T$ is nonempty. 
 Then there is a local $\Lambda$-algebra surjection $R_{\cS}^{T,\loc}\llbracket X_1,\ldots,X_g \rrbracket \rightarrow R_{\cS}^T$, with 
    \begin{multline*}
    g =  h_{\cS,T}^1(\ad\rhobar)  = h_{\cS^\perp,T}^1(\ad\rhobar(1)) - h^0(F_S/F,\ad\rhobar(1)) \\
      - \sum_{v\mid \infty} h^0(F_v,\ad\rhobar) + \sum_{v \in S \smallsetminus T} (\dim_k \cL_v - h^0(F_v,\ad\rhobar)).
    \end{multline*}
\end{prop}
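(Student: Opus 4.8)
\textbf{Proof proposal for Proposition \ref{thm:genoverloc}.}

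The plan is to follow the by-now-standard strategy, originating with Kisin and Mazur and carried out in the fixed-determinant setting in \cite[Proposition~4.7]{KT} and \cite[\S2]{cht}, adapting it to the present situation where we do \emph{not} fix the determinant. First I would use the choice of the universal $T$-framed lift to reduce the statement to a computation with the complex $C_{\cS,T}^\bullet(\ad\rhobar)$: by Theorem \ref{thm:representable} and the Galois-cohomological formalism of \S\ref{sec:present}, the relative tangent space of $R_{\cS}^{T,\loc} \to R_{\cS}^T$ is dual to $H^1_{\cS,T}(\ad\rhobar)$, while the obstructions to smoothness of this map are controlled by $H^2_{\cS,T}(\ad\rhobar)$. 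Concretely, a minimal set of generators is provided by a $k$-basis of $H^1_{\cS,T}(\ad\rhobar)$; since $R_{\cS}^{T,\loc}$ is a power series ring over $\widehat{\otimes}_{v\in T}\Lambda_v \cong \Lambda$ in a suitable number of variables (one uses here the explicit form of the local rings $R_v$, $v\in T$, and that $\Lambda_v=\cO$ for $v\in S\smallsetminus T$), the surjection $R_{\cS}^{T,\loc}\llbracket X_1,\dots,X_g\rrbracket \twoheadrightarrow R_{\cS}^T$ exists with $g = h^1_{\cS,T}(\ad\rhobar)$. This gives the first equality in the displayed formula for $g$. The point where the absence of a fixed determinant matters is the use of Schur's lemma (as in Lemma \ref{thm:framepresentation}): because $\rhobar$ is absolutely irreducible, $H^0(F_S/F,\ad\rhobar)=k$ is one-dimensional, and the $T$-framing absorbs this in the usual way, so that the presentation argument proceeds exactly as in the fixed-determinant case.

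Next I would establish the second equality, expressing $h^1_{\cS,T}(\ad\rhobar)$ in terms of the dual Selmer group $H^1_{\cS^\perp,T}(\ad\rhobar(1))$ and local terms. This is the Greenberg--Wiles Euler characteristic formula. I would start from the relation \eqref{eq:Euler} among Euler characteristics together with the vanishing $H^i_{\cS,T}(\ad\rhobar)=0$ for $i\geq 4$ (which holds since $p>2$, as noted in the excerpt), so that $\chi_{\cS,T}(\ad\rhobar) = h^0_{\cS,T} - h^1_{\cS,T} + h^2_{\cS,T} - h^3_{\cS,T}$. One then identifies $h^0_{\cS,T}(\ad\rhobar)$: since $T$ is nonempty and $\rhobar$ is absolutely irreducible, the map $H^0(F_S/F,\ad\rhobar) \to \oplus_{v\in T} H^0(F_v,\ad\rhobar)$ appearing in \eqref{eq:defthyexact} is injective, whence $H^0_{\cS,T}(\ad\rhobar)=0$. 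One identifies $H^3_{\cS,T}(\ad\rhobar)$ with $H^2(F_S/F,\ad\rhobar)/(\text{images of local }H^2)$, which by Poitou--Tate duality is dual to $H^0(F_S/F,\ad\rhobar(1))$; and $H^2_{\cS,T}(\ad\rhobar)$ fits, again by Poitou--Tate, into an exact sequence with $H^1_{\cS^\perp,T}(\ad\rhobar(1))^\vee$. Plugging the global Euler characteristic $\chi(F_S/F,\ad\rhobar) = -\sum_{v\mid\infty}h^0(F_v,\ad\rhobar)$ (here using $n^2 = \dim\ad\rhobar$ and that the contribution at finite places telescopes against the local Euler characteristics $\chi(F_v,\ad\rhobar) = -[F_v:\Q_p]\dim\ad\rhobar$ for $v\mid p$, zero for $v\nmid p$) into \eqref{eq:Euler} and rearranging yields precisely
\[
 h^1_{\cS,T}(\ad\rhobar) = h^1_{\cS^\perp,T}(\ad\rhobar(1)) - h^0(F_S/F,\ad\rhobar(1)) - \sum_{v\mid\infty}h^0(F_v,\ad\rhobar) + \sum_{v\in S\smallsetminus T}(\dim_k\cL_v - h^0(F_v,\ad\rhobar)).
\]

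The bookkeeping in the Euler characteristic computation — correctly tracking the local terms $\dim_k\cL_v - h^0(F_v,\ad\rhobar)$ for $v\in S\smallsetminus T$, the archimedean contributions, and the dualities identifying $H^2_{\cS,T}$ and $H^3_{\cS,T}$ — is the step requiring the most care, though it is entirely parallel to \cite[Proposition~4.7]{KT} and \cite[\S2.3]{cht}. The one genuinely new feature is that $\ad\rhobar$ is not required to be (a twist of) a self-dual representation and the determinant is not fixed, so I would double-check that the trace pairing $(X,Y)\mapsto\tr(XY)$ on $\mathrm{M}_{n\times n}(k)$ is still perfect and $G_F$-equivariant (it is, since $p\nmid n$, using the standing hypothesis $p\nmid 2n$), which is what makes $\ad\rhobar(1)$ the Tate dual of $\ad\rhobar$ and hence makes the whole duality machinery go through. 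With that in hand the proof is complete; I do not expect any serious obstacle, only the need to reproduce the standard argument faithfully in the slightly more general framework of Definition \ref{def:globdefprob}.
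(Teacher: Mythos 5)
You follow the paper's route exactly: identify the relative cotangent space of $R_{\cS}^{T,\loc}\to R_{\cS}^{T}$ with $H^1_{\cS,T}(\ad\rhobar)$, so that lifting a $k$-basis gives the surjection with $g=h^1_{\cS,T}(\ad\rhobar)$, and then compute $h^1_{\cS,T}$ from \eqref{eq:Euler}, the vanishing $h^0_{\cS,T}(\ad\rhobar)=0$ (for which only $T\neq\emptyset$ is needed, not irreducibility or Schur's lemma), the local and global Euler characteristic formulas, and the comparison of \eqref{eq:defthyexact} with the Poitou--Tate sequence giving $h^2_{\cS,T}(\ad\rhobar)=h^1_{\cS^\perp,T}(\ad\rhobar(1))$ and $h^3_{\cS,T}(\ad\rhobar)=h^0(F_S/F,\ad\rhobar(1))$. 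This is the paper's proof. Two of your intermediate assertions are, however, false as written and should be repaired, although neither is actually needed for the conclusion.

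First, $R_{\cS}^{T,\loc}=\widehat{\otimes}_{v\in T}R_v$ is not a power series ring over $\Lambda$: for $v\in T$ the rings $R_v$ are the (typically non-smooth) quotients of $R_v^\square$ cutting out the local conditions --- in the applications $R_v^{\detord}$, $R_v^{\chi}$ and $R_v^{1}$ --- so the justification you give for the existence of the surjection does not apply, and no ``explicit form'' of the $R_v$ enters. None is needed: once one knows $\dim_k \frakm_{R_{\cS}^T}/(\frakm_{R_{\cS}^T}^2,\frakm_{R_{\cS}^{T,\loc}})=h^1_{\cS,T}(\ad\rhobar)$, lifting a basis and using completeness gives the surjection $R_{\cS}^{T,\loc}\llbracket X_1,\ldots,X_g\rrbracket\onto R_{\cS}^T$ for any complete Noetherian local $R_{\cS}^{T,\loc}$; the cotangent identification itself is proved, as in the paper, by writing a $T$-framed lift to $k[\varepsilon]/(\varepsilon^2)$ as $((1+\varepsilon\kappa)\rhobar,(1+\varepsilon\alpha_v)_{v\in T})$ and matching cocycles and coboundaries of $C^\bullet_{\cS,T}(\ad\rhobar)$. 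Second, the identity $\chi(F_S/F,\ad\rhobar)=-\sum_{v\mid\infty}h^0(F_v,\ad\rhobar)$ that you plug in is not true for a general number field $F$ (the proposition is stated for arbitrary $F$): it holds when $F$ is totally imaginary, and otherwise exactly when at every real place the two eigenvalues of $\rhobar(c_v)$ have equal multiplicity. Moreover there is no ``telescoping'' of the finite local terms, since $\sum_{v\in S}\chi(F_v,\ad\rhobar)=-[F:\Q]n^2\neq 0$. What the rearrangement actually requires is the combined identity $\chi(F_S/F,\ad\rhobar)-\sum_{v\in S}\chi(F_v,\ad\rhobar)=\sum_{v\mid\infty}h^0(F_v,\ad\rhobar)$, which follows from Tate's formula $\chi(F_S/F,\ad\rhobar)=\sum_{v\mid\infty}(\dim_k\widehat{H}^0(F_v,\ad\rhobar)-n^2)$ together with the local formula you quote; with this correction your computation yields exactly the displayed value of $g$.
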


\begin{proof}
 The first claim with $g = h_{\cS,T}^1(\ad\rhobar)$ follows from showing 
 \begin{align*}
   H_{\cS,T}^1(\ad\rhobar) & \cong \Hom_{\CNL_\Lambda}(R_{\cS}^T/(\frakm_{R_{\cS}^{T,\loc}}), k[\varepsilon]/(\varepsilon^2))\\
  & \cong \Hom_k(\frakm_{R_{\cS}^T}/(\frakm_{R_{\cS}^T}^2,\frakm_{R_{\cS}^{T,\loc}}), k). 
  \end{align*}
 To see this, note that any $T$-framed lifting of $\rhobar$ to $k[\varepsilon]/(\varepsilon^2)$ can be written as $((1+\varepsilon \kappa)\rhobar, (1+\varepsilon\alpha_v)_{v\in T})$, 
 with $\kappa \in Z^1(F_S/F,\ad\rhobar)$, and $\alpha_v \in \ad\rhobar$. 
 It is the trivial lift at $v\in T$ if and only if
  \[
   (1-\varepsilon\alpha_v)(1+\varepsilon \kappa|_{G_{F_v}})\rhobar|_{G_{F_v}} (1+\varepsilon\alpha_v) = \rhobar|_{G_{F_v}},
  \]
 equivalently,
  \[
   \kappa|_{G_{F_v}} = (\ad\rhobar|_{G_{F_v}} - 1)\alpha_v.
  \]
 Such a lift is further of type $\cS$ if and only if $\kappa|_{G_{F_v}} \in \cL_v^1$ for all $v\in S\smallsetminus T$. 
 This sets up a bijection between the set of $1$-cocycles of the complex $C_{\cS,T}^\bullet(\ad\rhobar)$ and the set of $T$-framed lifts 
 of type $\cS$ that are trivial at $v\in T$. 
 Two cocycles $(\kappa,\{\alpha_v\}_{v \in T})$ and
 $(\kappa',\{\alpha'\}_{v\in T})$ define strictly equivalent
 $T$-framed lifts if and only 
 if there is $\beta\in \ad\rhobar$ such that
  \[
   \kappa' = \kappa + (\ad\rhobar - 1)\beta \quad \text{and} \quad \alpha_v' = \alpha_v + \beta,
  \]
 for all $v\in T$, i.e. if and only if they differ by a coboundary.
 This induces the desired isomorphism 
  \[
   H_{\cS,T}^1(\ad\rhobar) \cong \Hom_{\CNL_\Lambda}(R_{\cS}^T/(\frakm_{R_{\cS}^{T,\loc}}), k[\varepsilon]/(\varepsilon^2)).
  \]
 
 Since $T$ is nonempty, $h_{\cS,T}^0(\ad\rhobar) = 0$. 
 Then \eqref{eq:Euler} together with the local and global Euler characteristic formulas imply 
  \[
   h_{\cS,T}^1(\ad\rhobar) = h_{\cS,T}^2(\ad\rhobar) - h_{\cS,T}^3(\ad\rhobar) - \sum_{v\mid \infty} h^0(F_v,\ad\rhobar) + \sum_{v\in S\smallsetminus T} (\dim_k \cL_v - h^0(F_v,\ad\rhobar)).
  \]
 To finish the proof, we deduce equalities $h_{\cS,T}^2(\ad\rhobar) = h_{\cS^\perp,T}^1(\ad\rhobar(1))$ and $h^3_{\cS,T}(\ad\rhobar) = h^0(F_S/F,\ad\rhobar(1))$ 
 by comparing the exact sequence
  \begin{align*}
   & \rightarrow H^1(F_S/F,\ad\rhobar)  \rightarrow \oplus_{v\in T} H^1(F_v,\ad\rhobar) 
   \oplus_{v\in S \smallsetminus T} H^1(F_v,\ad\rhobar)/\cL_v \\
    \rightarrow H_{\cS,T}^2(\ad\rhobar) & \rightarrow H^2(F_S/F,\ad\rhobar)  \rightarrow \oplus_{v\in S} H^2(F_v,\ad\rhobar)  \\
    \rightarrow H_{\cS,T}^3(\ad\rhobar) & \rightarrow 0,
  \end{align*}
 which is part of \eqref{eq:defthyexact}, with the exact sequence
   \begin{align*}
   & \rightarrow H^1(F_S/F,\ad\rhobar)  \rightarrow \oplus_{v\in T} H^1(F_v,\ad\rhobar) 
   \oplus_{v\in S \smallsetminus T} H^1(F_v,\ad\rhobar)/\cL_v \\
    \rightarrow H^1_{\cS^\perp,T}(\ad\rhobar(1))^\vee & \rightarrow H^2(F_S/F,\ad\rhobar)  \rightarrow \oplus_{v\in S} H^2(F_v,\ad\rhobar)  \\
    \rightarrow H^0(F_S/F,\ad\rhobar(1))^\vee & \rightarrow 0,
  \end{align*}
 which is part of the Poitou--Tate long exact sequence. 
\end{proof}

We will apply this with our choices of local deformation rings as in~\S\ref{sec:locdef}.
By applying Propositions~\ref{thm:FLring}, \ref{thm:Ihara1ring}, \ref{thm:Iharachiring}, \ref{thm:smoothlifts}, and \cite[Lemma 3.3]{blght}, we obtain the following:

\begin{lemma}\label{lem:localFLcase}
We assume that our deformation problem $\cS$ and $T\subseteq S$ satisfy the following. 
\begin{itemize}
 \item $T$ is a disjoint union $S_p \sqcup R \sqcup S_a$.
 \item For each $v\in S_p$, we assume that $F_v/\Q_p$ is unramified and that $\rhobar|_{G_{F_v}}$ is as in Proposition~\ref{thm:FLring}.
 We take $\cD_v = \cD_v^{\mathrm{FL}}$.
 \item For each $v\in R$, we assume that $q_v \equiv 1 \bmod p$ and that $\rhobar|_{G_{F_v}}$ is trivial. 
 We take $\calD_v = \calD_v^{\chi_v}$ for some tuple $\chi_v = (\chi_{v,1},\ldots,\chi_{v,n})$ of characters $\chi_{v,i} \colon \cO_{F_v}^\times \rightarrow \cO^\times$ that are trivial modulo $\varpi$. 
 \item For each $v\in S_a$, we assume that $H^2(F_v,\ad\rhobar) = 0$ and we take $\calD_v = \calD_v^\square$.
\end{itemize}
Then $R_{\cS}^{T,\loc}$ satisfies the following properties.
\begin{enumerate}
 \item Assume that $\chi_{v,i} = 1$ for each $v\in R$ and $1\le i \le n$. 
 Then $\Spec R_{\cS}^{T,\loc}$ is equidimensional of dimension $1+n^2|T| + \frac{n(n-1)}{2}[F:\Q]$, and every generic point has characteristic $0$.
 Further, every generic point of $\Spec R_{\cS}^{T,\loc}/\varpi$ is the specialization of a unique generic point of $\Spec R_{\cS}^{T,\loc}$.
 \item Assume that $\chi_{v,1},\ldots,\chi_{v,n}$ are pairwise distinct for each $v\in R$. 
 Then $\Spec R_{\cS}^{T,\loc}$ is irreducible of dimension $1+n^2|T| + \frac{n(n-1)}{2}[F:\Q]$ and its generic point has characteristic $0$.
\end{enumerate}
\end{lemma}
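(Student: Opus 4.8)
The plan is to strip off the formally smooth local factors at the places in $S_p$ and $S_a$, and then propagate the commutative algebra inputs at the places in $R$ through a completed tensor product using the facts recalled in \cite[Lemma 3.3]{blght}.

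First I would record that, by Proposition~\ref{thm:FLring} and Proposition~\ref{thm:smoothlifts}, for each $v\in S_p$ the ring $R_v^{\mathrm{FL}}$ is a power series ring over $\cO$ in $n^2 + \tfrac{n(n-1)}{2}[F_v:\Q_p]$ variables, and for each $v\in S_a$ the ring $R_v^\square$ is a power series ring over $\cO$ in $n^2$ variables. Since forming $\widehat{\otimes}_\cO$ with a power series ring over $\cO$ amounts to adjoining power series variables, there is an isomorphism
\[ R_{\cS}^{T,\loc} \cong \left( \widehat{\otimes}_{v\in R} R_v \right) \llbracket X_1,\dots,X_N \rrbracket, \qquad N = \sum_{v\in S_p}\!\left( n^2 + \tfrac{n(n-1)}{2}[F_v:\Q_p] \right) + \sum_{v\in S_a} n^2, \]
where the completed tensor product is over $\cO$. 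Every property to be proved --- $\cO$-flatness, equidimensionality (resp.\ irreducibility), all generic points having characteristic $0$, and, in case~(1), every generic point of the special fibre being the specialization of a unique generic point --- is insensitive to adjoining power series variables over $\cO$. So it will suffice to prove the analogous assertions for $\widehat{\otimes}_{v\in R} R_v$, and then the dimension of $R_{\cS}^{T,\loc}$ is recovered by adding $N$; using $\sum_{v\in S_p}[F_v:\Q_p] = [F:\Q]$ and $|T| = |R| + |S_p| + |S_a|$ this gives the claimed value $1 + n^2|T| + \tfrac{n(n-1)}{2}[F:\Q]$.

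Next I would treat $\widehat{\otimes}_{v\in R}R_v$ by induction on $|R|$, the base case $R=\emptyset$ being trivial (the ring is $\cO$). In case~(1), Proposition~\ref{thm:Ihara1ring} says each $R_v^1$ ($v\in R$) is an $\cO$-flat object of $\CNL_\cO$, equidimensional of dimension $1+n^2$, with all generic points of characteristic $0$, and such that every generic point of $\Spec R_v^1/\varpi$ is the specialization of a unique generic point of $\Spec R_v^1$; \cite[Lemma 3.3]{blght} shows $\widehat{\otimes}_\cO$ of two rings with these properties again has them, with dimensions adding in the expected way, so the induction closes. In case~(2), Proposition~\ref{thm:Iharachiring} says each $R_v^{\chi_v}$ is an $\cO$-flat object of $\CNL_\cO$, irreducible of dimension $1+n^2$, with generic point of characteristic $0$, and \cite[Lemma 3.3]{blght} shows these properties too are inherited by $\widehat{\otimes}_\cO$; again the induction closes. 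Combining with the previous paragraph finishes the proof.

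The step where care is needed --- and which I expect to be the main obstacle --- is the passage through $\widehat{\otimes}_\cO$ for irreducibility and for the ``unique generic point of the special fibre'' property: both can fail for completed tensor products of arbitrary $\cO$-flat complete local rings, and hold here only because of the geometric integrality built into the hypotheses of \cite[Lemma 3.3]{blght}. In our situation those hypotheses are guaranteed by the standing assumption that $E$ contains the images of all embeddings of $F$ in $\overline{\Q}_p$, together with the conditions $q_v \equiv 1 \bmod p$ and $\rhobar|_{G_{F_v}}$ trivial imposed at the places $v\in R$. Preservation of $\cO$-flatness is harmless, since over the discrete valuation ring $\cO$ flatness is equivalent to $\varpi$-torsion-freeness, which passes to completed tensor products of Noetherian $\cO$-algebras.
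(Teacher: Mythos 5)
Your proposal is correct and is essentially the paper's own argument: the paper proves this lemma precisely by combining Propositions~\ref{thm:FLring}, \ref{thm:Ihara1ring}, \ref{thm:Iharachiring}, \ref{thm:smoothlifts} with \cite[Lemma 3.3]{blght}, which is exactly the input you use. Your only cosmetic variation is to peel off the formally smooth factors at $S_p$ and $S_a$ as power series variables before applying \cite[Lemma 3.3]{blght} to the factors at $R$, rather than feeding all the local rings into that lemma at once; this changes nothing of substance.
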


In the ordinary case, we will use the following.

\begin{lemma}\label{lem:localordcase}
We assume that our deformation problem $\cS$ and $T\subseteq S$ satisfy the following. 
\begin{itemize}
 \item $T$ is a disjoint union $S_p \sqcup R \sqcup S_a$. 
 \item For each $v\in S_p$, we assume that $[F_v:\Q_p]> \frac{n(n+1)}{2} + 1$ and that $\rhobar|_{G_{F_v}}$ is trivial.
 We take $\Lambda_v$ to be the quotient of $\cO\llbracket \cO_{F_v}^\times(p)^n \rrbracket$ by a minimal prime ideal $\wp_v$ 
 and take $\cD_v = \cD_v^{\detord}$ to be the local deformation problem classified by $R_{v}^{\detord}$. 
 \item For each $v\in R$, we assume that $q_v \equiv 1 \bmod p$ and that $\rhobar|_{G_{F_v}}$ is trivial. 
 We take $\calD_v = \calD_v^{\chi_v}$ for some tuple $\chi_v = (\chi_{v,1},\ldots,\chi_{v,n})$ of characters $\chi_{v,i} \colon \cO_{F_v}^\times \rightarrow \cO^\times$ that are trivial modulo $\varpi$. 
 \item For each $v\in S_a$, we assume that $H^2(F_v,\ad\rhobar) = 0$ and we take $\calD_v = \calD_v^\square$.
\end{itemize}
Then $R_{\cS}^{T,\loc}$ satisfies the following properties.
\begin{enumerate}
 \item Assume that $\chi_{v,i} = 1$ for each $v\in R$ and $1\le i \le n$.
 Then $\Spec R_{\cS}^{T,\loc}$ has dimension $1+n^2|T| + \frac{n(n+1)}{2}[F:\Q]$, any irreducible component of maximum dimension has a characteristic $0$ generic point, 
 and any irreducible component that does not have maximum dimension has dimension $\le n^2|T| - 1 + \frac{n(n+1)}{2}[F:\Q]$. 
 Further, any irreducible component of $\Spec R_{\cS}^{T,\loc}/(\lambda)$ of maximum dimension is the specialization of a unique generic point of $\Spec R_{\cS}^{T,\loc}$.
 \item Assume that $\chi_{v,1},\ldots,\chi_{v,n}$ are pairwise distinct for each $v\in R$. 
 Then $\Spec R_{\cS}^{T,\loc}$ has dimension $1+n^2|T| + \frac{n(n+1)}{2}[F:\Q]$, it has a unique irreducible component of maximum dimension 
 and the generic point of this irreducible component has characteristic $0$. 
 Any other irreducible component has dimension $\le n^2|T| - 1 + \frac{n(n+1)}{2}[F:\Q]$. 
 \item If $x$ is a point of $\Spec R_{\cS}^{T,\loc}$ lying in an irreducible component of non-maximum dimension, 
 then there is some $v\in S_p$ such that the image of $x$ in $\Spec \Lambda_v$ lies in the closed locus defined by $\chi_i^{\univ} = \chi_j^{\univ}$ 
 for some $i\ne j$.
\end{enumerate}
\end{lemma}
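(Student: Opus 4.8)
The plan is to combine the structure theory of the individual local deformation rings $R_v$ for $v \in T$ --- supplied by Propositions~\ref{thm:smoothlifts}, \ref{thm:Ihara1ring}, \ref{thm:Iharachiring} and \ref{prop:detord_components} --- with an analysis of the completed tensor product $R_{\cS}^{T,\loc} = \widehat{\otimes}_{v \in T} R_v$ one factor at a time, in the manner of \cite[Lemma~3.3]{blght}. The distinguished, ``$\cO$-flat'' factor at each place will assemble into a maximal-dimensional quotient $R^{\max}$ of $R_{\cS}^{T,\loc}$, and the remaining components will be controlled by the fact that the non-distinguished components of $R_v^{\detord}$ for $v \in S_p$ have codimension two in $\Spec R_v^{\detord}$ and map into the bad locus $\{ \chi^{\univ}_i = \chi^{\univ}_j \} \subset \Spec \Lambda_v$.

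First I would record the local input. For $v \in S_a$, Proposition~\ref{thm:smoothlifts} gives that $R_v^\square$ is a power series ring over $\cO$ in $n^2$ variables, hence an $\cO$-flat domain of dimension $1 + n^2$ with irreducible special fibre. For $v \in R$: in case~(1), Proposition~\ref{thm:Ihara1ring} gives that $R_v^1$ is $\cO$-flat, equidimensional of dimension $1+n^2$, with characteristic-zero generic points, and with each generic point of $R_v^1/\varpi$ the specialization of a unique generic point of $R_v^1$; in case~(2), Proposition~\ref{thm:Iharachiring} gives that $\Spec R_v^{\chi_v}$ is irreducible of dimension $1+n^2$ with characteristic-zero generic point. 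For $v \in S_p$: since $\Lambda_v$ is the quotient of $\cO \llbracket \cO_{F_v}^\times(p)^n \rrbracket$ by a single minimal prime, $\Spec \Lambda_v$ is irreducible of dimension $1 + n[F_v : \Q_p]$; and by Proposition~\ref{prop:detord_components} (whose distinguished component is, by Proposition~\ref{prop:ordefringproperties}, the $\cO$-flat reduced ring $R_v^\triangle$) there is a unique irreducible component $C_v'$ of $\Spec R_v^{\detord}$ dominating $\Spec \Lambda_v$, which is $\cO$-flat of dimension $1 + n^2 + \tfrac{n(n+1)}{2}[F_v : \Q_p]$, while every other component has dimension at most $n^2 - 1 + \tfrac{n(n+1)}{2}[F_v : \Q_p]$ and maps into the closed locus $Z_v$ of $\Spec \Lambda_v$ where $\chi^{\univ}_i = \chi^{\univ}_j$ for some $i \neq j$.

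Next I would form $R^{\max}$, the completed tensor product over $\cO$ of $R_v^\triangle$ for $v \in S_p$, of $R_v^1$ (resp.\ $R_v^{\chi_v}$) for $v \in R$ in case~(1) (resp.\ case~(2)), and of $R_v^\square$ for $v \in S_a$. Each factor is $\cO$-flat and, in case~(1), satisfies the hypothesis that each generic point of its special fibre is the specialization of a unique generic point (using Proposition~\ref{thm:Ihara1ring} for $v \in R$, formal smoothness for $v \in S_a$, and the construction of $R_v^\triangle$ in \cite{ger} and \cite{jackreducible} for $v \in S_p$). By \cite[Lemma~3.3]{blght}, $R^{\max}$ is therefore $\cO$-flat and equidimensional of dimension
\[
1 + \sum_{v \in T} \bigl( \dim(\text{factor at }v) - 1 \bigr) = 1 + n^2 |T| + \tfrac{n(n+1)}{2} \sum_{v \in S_p} [F_v : \Q_p] = 1 + n^2 |T| + \tfrac{n(n+1)}{2}[F : \Q],
\]
with characteristic-zero generic points; moreover in case~(1) each generic point of $R^{\max}/\varpi$ is the specialization of a unique generic point of $R^{\max}$, and in case~(2), since every chosen factor is irreducible, $R^{\max}$ is irreducible.

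It remains to handle the other components. Any irreducible component of $\Spec R_{\cS}^{T,\loc}$ has image in each $\Spec R_v$ contained in some irreducible component; if these images lie in $C_v'$ for all $v \in S_p$ the component is one of $\Spec R^{\max}$, of maximal dimension, while if the image lies in a non-distinguished component $C_v''$ at some $v \in S_p$ then a direct dimension computation for the completed tensor product --- using $\dim C_v'' \le \dim C_v' - 2$ and handling separately the cases where $C_v''$ is $\cO$-flat and where it is $\varpi$-torsion --- gives dimension at most $n^2|T| - 1 + \tfrac{n(n+1)}{2}[F:\Q]$. Since the top-dimensional components of $R_{\cS}^{T,\loc}/\varpi$ then necessarily lie in $R^{\max}/\varpi$, this yields the dimension assertions of parts~(1) and~(2), the uniqueness of the maximal component in case~(2), and the unique-specialization statement in case~(1). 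Finally, for part~(3): a point $x$ on a component of $\Spec R_{\cS}^{T,\loc}$ of non-maximal dimension lies, by the above, on a component whose image in $\Spec R_v^{\detord}$ is a non-distinguished component $C_v''$ for some $v \in S_p$; by Proposition~\ref{prop:detord_components}(2), $C_v''$ maps into $Z_v$, so the image of $x$ in $\Spec \Lambda_v$ lies in $\{ \chi^{\univ}_i = \chi^{\univ}_j,\ i \neq j \}$, as required. I expect the main obstacle to be the dimension bookkeeping for the completed tensor product when some local components are $\varpi$-torsion rather than $\cO$-flat --- so that the codimension-two gap at the $S_p$-places really does propagate to a codimension-two bound for the non-maximal components of $R_{\cS}^{T,\loc}$ --- together with verifying that $R_v^\triangle$ satisfies the precise hypotheses on its special fibre needed to apply \cite[Lemma~3.3]{blght}.
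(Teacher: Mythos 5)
Your proposal is correct and follows essentially the same route as the paper: the paper also applies \cite[Lemma 3.3]{blght} to $\widehat{\otimes}_{v\in S_p} R_v/\mathfrak{q}_v \,\widehat{\otimes}\, \widehat{\otimes}_{v\in R\cup S_a} R_v$, where $\mathfrak{q}_v$ cuts out the unique maximal-dimensional component of $R_v^{\detord}$ (which agrees with your $R_v^\triangle$), and then disposes of the remaining components by noting that any minimal prime of $R_{\cS}^{T,\loc}$ surjects from $\widehat{\otimes}_v R_v/\mathfrak{p}_v$, so that a non-distinguished contraction at some $v\in S_p$ forces both the dimension drop by two and the image in the locus $\chi_i^{\univ}=\chi_j^{\univ}$ via Proposition \ref{prop:detord_components}. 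The bookkeeping for $\varpi$-torsion components that you flag as the main obstacle is exactly the point the paper leaves implicit, and it goes through as you expect.
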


\begin{proof}
For each $v\in S_p$, Proposition \ref{prop:detord_components} implies that $\Spec R_v$ has a unique irreducible component of dimension $\dim R_v = 1 + n^2 + \frac{n(n+1)}{2}[F_v : \Q_p]$, 
and this irreducible component has characteristic $0$. 
Let $\mathfrak{q}_v$ be the minimal prime of $R_v$ corresponding to this irreducible component.
Then we can apply \cite[Lemma 3.3]{blght} to
  \[
   R' = \widehat{\otimes}_{v\in S_p} R_v/\mathfrak{q}_v \widehat{\otimes}_{v\in R \cup S_a} R_v
  \]
together with Propositions~\ref{thm:Ihara1ring}, \ref{thm:Iharachiring}, and \ref{thm:smoothlifts} to obtain the following:
\begin{enumerate}
 \item If $\chi_{v,i} = 1$ for each $v\in R$ and $1\le i \le n$, 
 then $\Spec R'$ is equidimensional of dimension $1+n^2|T| + \frac{n(n+1)}{2}[F:\Q]$, and every generic point has characteristic $0$.
 Further, every generic point of $\Spec R'/\varpi$ is the specialization of a unique generic point of $\Spec R'$.
 \item If $\chi_{v,1},\ldots,\chi_{v,n}$ are pairwise distinct for each $v\in R$, 
 then $\Spec R'$ is irreducible of dimension $1+n^2|T| + \frac{n(n+1)}{2}[F:\Q]$ and its generic point has characteristic $0$.
\end{enumerate}
Since any minimal prime $\frakp$ of $R_{\cS}^{T,\loc}$ pulls back to minimal prime ideals $\mathfrak{p}_v$ of $R_v$ for each $v\in T$, and induces a surjection
  \[
   \widehat{\otimes}_{v\in T} R_v/\mathfrak{p}_v \rightarrow R_{\cS}^{T,\loc}/\mathfrak{p},
  \]
we see that $\Spec R'$ is a union of irreducible components of $\Spec R_{\cS}^{T,\loc}$. 
To finish the proof of the lemma, it suffice to note that if $\mathfrak{p}_v \ne \mathfrak{q}_v$ for some $v\in S_p$, 
then by Proposition \ref{prop:detord_components}, $\dim R_v/\mathfrak{p}_v \le n^2 - 1 + \frac{n(n+1)}{2}[F_v:\Q_p]$ 
and the image of $\mathfrak{p}_v$ in $\Lambda_v$ lies in the closed locus defined by $\chi_i^{\univ} = \chi_j^{\univ}$ for some $i\ne j$.
In this case, $\dim R_{\cS}^{T,\loc}/\mathfrak{p} \le n^2|T| - 1 + \frac{n(n+1)}{2}[F:\Q]$.
\end{proof}

\subsubsection{Taylor--Wiles primes}\label{sec:TWprimes}

In this section we show how to generate Taylor--Wiles data. We first need to introduce a definition, essentially equivalent to that of~\cite[Defn.\ 4.10]{KT} and~\cite[\S 9.2]{CG} (see Remark \ref{rmk:enormous} below). For the moment, let $k$ be any algebraic extension of $\F_p$.
\begin{defn}\label{defn:enormous} \label{defn:enormous image}
	Let $\ad^0$ denote the space of trace zero matrices in $\mathrm{M}_{n\times n}(k)$ with the adjoint $\GL_n(k)$-action. 
	An absolutely irreducible subgroup $H \subseteq \GL_n(k)$ is called \emph{enormous} over $k$ if it satisfies the following:
	\begin{enumerate}
		\item $H$ has no nontrivial $p$-power order quotient.
		\item $H^0(H,\ad^0) = H^1(H,\ad^0) = 0$.
		\item \label{thirdcondition} For any simple $k[H]$-submodule $W \subseteq \ad^0$, there is a regular semisimple $h\in H$
		such that $W^h \ne 0$.
	\end{enumerate}
\end{defn}
Note that this only depends on the image of $H$ in $\PGL_n(k)$. If $p$ divides $n$, then no subgroup of $\GL_n(k)$ is enormous (because $\ad^0$ contains the scalar matrices). 
\begin{lem}
	Let $k' / k$ be an algebraic extension, and let $H \subset \GL_n(k)$ be a subgroup. Then $H$ is enormous over $k$ if and only if it is enormous over $k'$.
\end{lem}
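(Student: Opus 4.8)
The plan is to reduce to the case where the larger field is $\overline{\F}_p$. Since $\overline{\F}_p$ is the algebraic closure of both $k$ and $k'$, it suffices to prove: for any algebraic extension $K/\F_p$ and any subgroup $H\subseteq\GL_n(K)$, $H$ is enormous over $K$ if and only if $H$ is enormous over $\overline{\F}_p$; applying this with $K=k$ and with $K=k'$ gives the lemma. A convenient feature of this set-up is that every such $K$ is perfect, so base change along $K\hookrightarrow\overline{\F}_p$ preserves semisimplicity of finite-dimensional algebras and commutes with the formation of Jacobson radicals and of socles.

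I would check the defining conditions one at a time. Absolute irreducibility of the $H$-action on $K^n$ is equivalent to that on $\overline{\F}_p^n$ by Burnside's criterion (the $K$-span of $H$ inside $\mathrm{M}_n(K)$ is everything iff the $\overline{\F}_p$-span of $H$ inside $\mathrm{M}_n(\overline{\F}_p)$ is everything). Condition~(1) refers only to the abstract group $H$. For condition~(2), group cohomology commutes with the flat base change $K\hookrightarrow\overline{\F}_p$ (for $H^0$ this is just that invariants commute with flat base change; for $H^1$ one uses that the relevant inhomogeneous cochain complexes agree after $\otimes_K\overline{\F}_p$, e.g. as $H$ is finite), so $H^i(H,\ad^0_{\overline{\F}_p})\cong H^i(H,\ad^0_K)\otimes_K\overline{\F}_p$ and the vanishing of one side is equivalent to that of the other. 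This leaves condition~(3), which is the crux.

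For condition~(3) I would argue in three steps. First, $\mathrm{soc}_{K[H]}(\ad^0_K)\otimes_K\overline{\F}_p=\mathrm{soc}_{\overline{\F}_p[H]}(\ad^0_{\overline{\F}_p})$: writing $A$ for the image of $K[H]$ in $\End_K(\ad^0_K)$, the natural map identifies $A\otimes_K\overline{\F}_p$ with the image of $\overline{\F}_p[H]$ in $\End_{\overline{\F}_p}(\ad^0_{\overline{\F}_p})$, one has $\mathrm{rad}(A\otimes_K\overline{\F}_p)=\mathrm{rad}(A)\otimes_K\overline{\F}_p$ since $K$ is perfect, and the socle, being the submodule annihilated by the radical, commutes with the flat base change. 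Second, and this is the key point, for $h\in H\subseteq\GL_n(K)$ the operator $h$ on $\ad^0_{\overline{\F}_p}=\ad^0_K\otimes_K\overline{\F}_p$ has characteristic polynomial with coefficients in $K$; consequently, for any $H$-stable $\overline{\F}_p$-subspace $\overline{W}\subseteq\ad^0_{\overline{\F}_p}$ and any $\gamma\in\Gal(\overline{\F}_p/K)$, the $\gamma$-semilinear, $H$-equivariant automorphism of $\ad^0_{\overline{\F}_p}$ induced by $\gamma$ carries $\overline{W}^h$ onto $(\gamma\overline{W})^h$, so that $\overline{W}^h\neq 0$ iff $(\gamma\overline{W})^h\neq 0$. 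Third, for a simple $K[H]$-module $W$ the simple $\overline{\F}_p[H]$-constituents of $W\otimes_K\overline{\F}_p$ form a single $\Gal(\overline{\F}_p/K)$-orbit (the endomorphism algebra of $W$ is a finite division algebra over $K$, hence a field by Jacobson's theorem since $K$ is algebraic over $\F_p$, and $\Gal(\overline{\F}_p/K)$ permutes the constituents transitively because its invariants form the simple module $W$). Putting these together: given condition~(3) over $K$ and a simple $\overline{\F}_p[H]$-submodule $\overline{W}$ of $\ad^0_{\overline{\F}_p}$, one has $\overline{W}\subseteq\mathrm{soc}_{\overline{\F}_p[H]}(\ad^0_{\overline{\F}_p})=\bigoplus_j W_j\otimes_K\overline{\F}_p$ for a decomposition $\mathrm{soc}_{K[H]}(\ad^0_K)=\bigoplus_j W_j$ into simple submodules, so $\overline{W}$ is isomorphic to a constituent of some $W_{j_0}\otimes_K\overline{\F}_p$; condition~(3) over $K$ produces a regular semisimple $h$ with $W_{j_0}^h\neq 0$, hence $(W_{j_0}\otimes_K\overline{\F}_p)^h\neq 0$, hence $\overline{V}^h\neq 0$ for some constituent $\overline{V}$, and since $\overline{V}$ and $\overline{W}$ are $\Gal(\overline{\F}_p/K)$-conjugate, $\overline{W}^h\neq 0$ as well. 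The reverse implication is easy: given a simple $K[H]$-submodule $W$, pick a simple $\overline{\F}_p[H]$-submodule $\overline{W}$ of $W\otimes_K\overline{\F}_p$, obtain $h$ with $\overline{W}^h\neq 0$ from the condition over $\overline{\F}_p$, and note $0\neq\overline{W}^h\subseteq (W\otimes_K\overline{\F}_p)^h=W^h\otimes_K\overline{\F}_p$. Thus condition~(3) over $K$ is equivalent to condition~(3) over $\overline{\F}_p$, completing the proof.

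The main obstacle is the mismatch in condition~(3): a simple $K[H]$-submodule of $\ad^0$ can split into several pieces upon extending scalars, making condition~(3) over the larger field look a priori like a strictly stronger requirement. The resolution, as above, is that those pieces are all $\Gal(\overline{\F}_p/K)$-conjugate and that the distinguished eigenvalue $1$ lies in $K$, so $h$ has eigenvalue $1$ on all of them at once or on none of them. Everything else reduces to standard facts about base change of finite-dimensional algebras over perfect fields and flat base change of Galois cohomology.
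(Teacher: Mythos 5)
Your proof is correct, and while it runs on the same engine as the paper's, it is organized around different key lemmas, so it is worth comparing. The shared mechanism is the observation you isolate in your second step: because $H \subset \GL_n(K)$, the $\gamma$-semilinear automorphism $1\otimes\gamma$ of $\ad^0\otimes_K\overline{\F}_p$ is $H$-equivariant, so a submodule and its Galois conjugates have nonzero $h$-invariants simultaneously. The paper exploits this directly: given a simple $k'[H]$-submodule $W'$, it forms $\sum_{\sigma\in\Gal(k'/k)}\sigma W'$, descends this Galois-stable submodule to the form $W\otimes_k k'$ with $W\subseteq\ad^0$ a $k[H]$-submodule, applies condition (3) over $k$ to a simple submodule of $W$, and then transfers the nonvanishing of $h$-invariants back to $W'$ using that the property $V^h\ne 0$ is preserved under direct sums and quotients (the $h$-action being semisimple); no socles, radicals or endomorphism algebras appear. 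You instead reduce to $k'=\overline{\F}_p$ and prove two structural statements: that the socle of $\ad^0$ commutes with base change (via $\mathrm{rad}(A\otimes_K\overline{\F}_p)=\mathrm{rad}(A)\otimes_K\overline{\F}_p$ over the perfect field $K$), and that the simple constituents of $W\otimes_K\overline{\F}_p$, for $W$ simple over $K[H]$, form a single Galois orbit (via Jacobson's theorem applied to $\End_{K[H]}(W)$, or equivalently your descent remark). Your route buys a cleaner module-theoretic picture (the socle identification) at the cost of more machinery; the paper's one-step orbit-sum descent is shorter and sidesteps all semisimplicity considerations. Two small remarks: the appeal to the characteristic polynomial of $h$ lying in $K$ is unnecessary (only the $H$-equivariance of the semilinear action is used), and your justification of the base-change invariance of condition (2) leans on $H$ being finite, which the definition does not literally require — though the paper itself treats conditions (1)–(2) as evidently field-independent and $H$ is finite in every application, so this is cosmetic rather than a gap.
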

\begin{proof}
	It suffices to address  condition~(\ref{thirdcondition}), which is equivalent to the following statement: for all non-zero $k[H]$-submodules $W \subseteq \ad^0$, there is a regular semisimple element $h \in H$ such that $W^h \neq 0$. This makes it clear that if $H$ is enormous over $k'$, then it is enormous over $k$.
	
	Suppose therefore that $H$ is enormous over $k$. The property that a~$k'[H]$-module~$V$ satisfies~$V^h = 0$ is closed under taking
	direct sums and taking quotients (the latter is true because~$V^h \ne 0$ if and only if~$V_{h} \ne 0$).
	If~$V \subset \ad^0 \otimes_{k} k'$, 
	then $\sigma h v = h \sigma v$ for all~$\sigma \in \Gal(k'/k)$ (since~$H \subset \GL_n(k)$)  and so~$V^h \ne 0$ if and only 
	if~$(\sigma V)^{h} \ne 0$.
	In particular, if~$W'$ is a simple~$k'[H]$-submodule of~$\ad^0 \otimes_k k'$ with no invariants by~$h \in H$,
	the same is true for~$\sigma W'$ for all~$\sigma \in \Gal(k'/k)$, as well as the submodule of~$\ad^0 \otimes_k k'$ generated by the sum of all such~$\sigma W'$. But the latter is stable under both~$H$ and~$\Gal(k'/k)$, and thus (by descent) has the form~$W \otimes_k k'$ for some~$k[H]$-submodule of~$\ad^0$. But now applying
	condition~(\ref{thirdcondition}) to
	any~$k[H]$-simple submodule of~$W$, we deduce that~$W^h \ne 0$ for some regular semisimple~$h$, from which it
	follows that the same holds for~$W'$.
\end{proof}
Henceforth we drop the `over $k$' and refer simply to enormous subgroups of $\GL_n(k)$.
\begin{remark}\label{rmk:enormous}
	Assuming that $k$ is sufficiently large to contain all eigenvalues of the elements of $H$, 
	it can be checked that Definition \ref{defn:enormous} is equivalent to \cite[Definition~4.10]{KT}. 
\end{remark}
We now return to the assumptions described at the beginning of \S \ref{sec:Galdefthy}, assuming further that $k$ contains all eigenvalues of the elements of $\rhobar(G_F)$.
We again fix a global deformation problem
  \[
   \cS = (\rhobar, S, \{\Lambda_v\}_{v\in S}, \{\cD_v\}_{v\in S}).
  \]
We define a \emph{Taylor--Wiles datum} to be a tuple $(Q,(\alpha_{v,1},\ldots,\alpha_{v,n})_{v\in Q})$ consisting of:
\begin{itemize}
 \item A finite set of finite places $Q$ of $F$, disjoint from $S$, such that $q_v \equiv 1 \bmod p$ for each $v\in Q$.
 \item For each $v\in Q$, an ordering $\alpha_{v,1},\ldots,\alpha_{v,n}$ of the eigenvalues of $\rhobar(\Frob_v)$, 
 which are assumed to be $k$-rational and distinct.
\end{itemize}
Given a Taylor--Wiles datum $(Q,(\alpha_{v,1},\ldots,\alpha_{v,n})_{v\in Q})$, 
we define the augmented global deformation problem
  \[
   \cS_Q = (\rhobar, S\cup Q, \{\Lambda_v\}_{v\in S} \cup \{\cO\}_{v\in Q}, \{\cD_v\}_{v\in S} \cup \{\cD_v^\square\}_{v\in Q}).
  \]
Set $\Delta_Q = \prod_{v\in Q} k(v)^\times(p)^n$.
By~\S\ref{sec:TWdef}, the fixed ordering $\alpha_{v,1},\ldots,\alpha_{v,n}$, for each $v\in Q$, 
determines a $\Lambda[\Delta_Q]$-algebra structure on $R^T_{\cS_Q}$ for any subset $T$ of $S$.
Letting $\fra_Q = \ker(\Lambda[\Delta_Q] \rightarrow \Lambda)$ be the augmentation ideal, 
the natural surjection $R^T_{\cS_Q} \rightarrow R^T_{\cS}$ has kernel $\fra_Q R^T_{\cS_Q}$.

\begin{lem}\label{thm:TWprimes}
 Let $T \subseteq S$. 
 Assume that $F$ is CM with maximal totally real subfield $F^+$, that $\zeta_p \notin F$, and that $\rhobar(G_{F(\zeta_p)})$ is enormous. 
 Let $q \ge h_{\cS^\perp,T}^1(\ad\rhobar(1))$. 
 Then for every $N\ge 1$, there is a choice of Taylor--Wiles datum $(Q_N,(\alpha_{v,1},\ldots,\alpha_{v,n})_{v\in Q_N})$ satisfying the following:
 \begin{enumerate}
  \item $\# Q_N = q$.
  \item For each $v\in Q_N$, $q_v \equiv 1 \bmod p^N$, and $v$ has degree one over $\Q$.
  \item $h_{\cS_{Q_N}^\perp,T}^1(\ad\rhobar(1)) = 0$.
 \end{enumerate}
\end{lem}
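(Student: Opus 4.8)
## Proof proposal for Lemma~\ref{thm:TWprimes}

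The plan is to run the standard Taylor--Wiles argument in the Galois-cohomological form, producing the primes of $Q_N$ one at a time by a Chebotarev argument, and to use the enormousness hypothesis to annihilate the dual Selmer group. The key mechanism is the classical one: adding a prime $v$ with $q_v \equiv 1 \bmod p^N$ to the set of allowed ramification creates a new local condition $H^1(F_v, \ad\rhobar(1))/\cL_v^\perp$ that can kill a chosen class in $H^1_{\cS^\perp, T}(\ad\rhobar(1))$, provided $\Frob_v$ is chosen so that the evaluation of that class at $v$ is nonzero. Crucially, because we also need each $v$ to split completely over $\Q$, the Chebotarev argument must take place over the Galois closure of the compositum $F(\zeta_{p^N})$ with the field cut out by $\ad\rhobar$, and we must check that the relevant conjugacy classes exist in this bigger Galois group.

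First I would reduce, by induction on $j = q, q-1, \dots, 1$, to the following: given a global deformation problem $\cS'$ (with ramification set $S \cup Q'$, where $\#Q' = q - j$) such that $h^1_{\cS'^\perp, T}(\ad\rhobar(1)) \le j$, there exists a Taylor--Wiles prime $v$ (of degree one over $\Q$, with $q_v \equiv 1 \bmod p^N$, with $\rhobar(\Frob_v)$ having distinct $k$-rational eigenvalues) such that adding $v$ strictly decreases the dual Selmer dimension, unless it is already zero — and in the latter case one pads out $Q_N$ with any further admissible primes. For the inductive step, fix a nonzero class $[\phi] \in H^1_{\cS'^\perp, T}(\ad\rhobar(1))$. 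Using that $\zeta_p \notin F$ one has $H^0(F, \ad\rhobar(1)) = 0$, and more generally $H^0(F(\zeta_{p^N}), \ad^0 \rhobar(1)) = 0$ by enormousness (condition (2): $H^0(\rhobar(G_{F(\zeta_p)}), \ad^0) = 0$, combined with the fact that $\rhobar(G_{F(\zeta_p)})$ has no nontrivial $p$-power quotient, so $\rhobar(G_{F(\zeta_{p^N})}) = \rhobar(G_{F(\zeta_p)})$). This lets one show, by the standard argument (cf.~\cite[Prop.~4.11]{KT} or \cite[\S9.2]{CG}), that the restriction of $\phi$ to $G_{F(\zeta_{p^N})}$ is nonzero and, after projecting to a simple $k[\rhobar(G_F)]$-submodule $W \subseteq \ad^0\rhobar(1)$ on which $\phi$ is nontrivial, that there is $\sigma_0 \in G_{F(\zeta_{p^N})}$ with $\phi(\sigma_0) \notin (\sigma_0 - 1)(\ad^0\rhobar(1))$ — here one uses condition (3) of enormousness to pick $\sigma_0$ with $\rhobar(\sigma_0)$ regular semisimple and $W^{\sigma_0} \ne 0$. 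Then by Chebotarev (applied to the finite Galois extension of $\Q$ cut out by $\ad\rhobar$, $\zeta_{p^N}$, and the class $\phi$, and using that $F/F^+$ is CM so that one can additionally arrange $v$ to split completely down to $\Q$) there are infinitely many degree-one primes $v$ with $\Frob_v$ in the conjugacy class of $\sigma_0$; for any such $v$, $q_v \equiv 1 \bmod p^N$, $\rhobar(\Frob_v) = \rhobar(\sigma_0)$ is regular semisimple with distinct $k$-rational eigenvalues, and the class $\phi$ does not lie in $\cL_v^\perp$, so $h^1_{(\cS'_{\{v\}})^\perp, T}(\ad\rhobar(1)) < h^1_{\cS'^\perp, T}(\ad\rhobar(1))$. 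The count $\#Q_N = q$ and the vanishing $h^1_{\cS_{Q_N}^\perp, T}(\ad\rhobar(1)) = 0$ follow after $q$ steps (absorbing trivial steps by choosing extra primes in a fixed positive-density Chebotarev set, which exists since $\rhobar$ itself is absolutely irreducible and has regular semisimple elements with $k$-rational eigenvalues in its image — again from enormousness).

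The main obstacle I expect is the bookkeeping around the two simultaneous Chebotarev conditions — being a Taylor--Wiles prime for $\rhobar$ (regular semisimple Frobenius, distinct $k$-rational eigenvalues, $q_v \equiv 1 \bmod p^N$) \emph{and} splitting completely over $\Q$ — together with verifying that the element $\sigma_0$ produced by the enormousness argument can be taken inside $G_{F(\zeta_{p^N})}$ while still lying over the correct conjugacy class in $\Gal(\overline{F}/\Q)$ (i.e., over the identity in $\Gal(F/\Q)$ and over a nontrivial element appropriately in the $\ad\rhobar$-part). This is where the CM hypothesis $F/F^+$ and $\zeta_p \notin F$ enter: one works with the field $\overline{F}^{\ker \ad\rhobar} \cdot F(\zeta_{p^N})$ viewed as a Galois extension of $\Q$, observes that $\rhobar(G_{F(\zeta_{p^N})}) = \rhobar(G_{F(\zeta_p)})$ is enormous, and applies the enormousness conditions inside this group; the compatibility of the Chebotarev class with the requirement "$v$ has degree one over $\Q$" is then automatic because $\sigma_0 \in G_{F(\zeta_{p^N})} \subseteq G_F$ and one is free to further intersect with the Chebotarev condition "$\sigma_0 \in G_{F'}$" for $F'$ the Galois closure of $F/\Q$. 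The rest is the routine dimension-counting inductive descent on the dual Selmer group, for which I would cite \cite[Prop.~4.11]{KT} and the Poitou--Tate machinery of \S\ref{sec:present} directly.
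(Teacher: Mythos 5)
Your overall strategy (inductive descent on the dual Selmer group, one Taylor--Wiles prime per nonzero class, Chebotarev plus the enormousness conditions to produce $\sigma_0 \in G_{F(\zeta_{p^N})}$ with regular semisimple image and nontrivial evaluation) is the same as the paper's, and the reduction to degree-one primes is actually simpler than you make it: degree-one places have Dirichlet density one, so it suffices that the remaining conditions cut out a positive-density set of places of $F$; no Chebotarev bookkeeping over $\Q$ or appeal to the CM structure is needed for that point.

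However, there is a genuine gap: you only treat classes through their $\ad^0\rhobar(1)$-component. In this paper the deformation problems do \emph{not} fix the determinant, so the relevant module is $\ad\rhobar(1) = k(1) \oplus \ad^0\rhobar(1)$ (using $p \nmid n$), and the dual Selmer group $H^1_{\cS^\perp,T}(\ad\rhobar(1))$ contains classes with nonzero component in $H^1(F_S/F, k(1))$ --- indeed purely scalar-valued classes --- which must also be annihilated at the Taylor--Wiles primes. For such a class there is no nonzero simple $k[\rhobar(G_F)]$-submodule $W \subseteq \ad^0\rhobar(1)$ on which it is nontrivial, so condition (3) of enormousness gives you nothing, and your argument simply does not apply. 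The paper handles this case separately by Kummer theory: a nonzero class in $H^1(F_S/F,k(1))$ corresponds to an extension $F(\zeta_p,y)$ which, because $\zeta_p \notin F$, is non-abelian over $F$; hence $y \notin F(\zeta_{p^N})$ and the class remains nonzero on $G_{F(\zeta_{p^N})}$, and the resulting degree-$p$ extension of $F(\zeta_{p^N})$ is linearly disjoint from the splitting field of $\rhobar$ because $\rhobar(G_{F(\zeta_p)})$ has no nontrivial $p$-power quotient (condition (1) of enormousness). Only then can one choose $\sigma$ with $\rhobar(\sigma)$ regular semisimple, $k$-rational eigenvalues, and $\kappa(\sigma) \neq 0$. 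Your proposal uses $\zeta_p \notin F$ only to assert $H^0(F,\ad\rhobar(1))=0$, which is not where that hypothesis does its essential work in this lemma. (A lesser point: to turn your class into a nonzero homomorphism on $G_{L(\zeta_{p^N})}$, $L$ the splitting field, you need the $H^1(H,\ad^0)=0$ part of condition (2) via inflation--restriction, not just the $H^0$ vanishing you mention; since you cite the standard references for this step it is a gloss rather than an error, but it should be said explicitly.)
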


\begin{proof}
 Since the augmented deformation datum $\cS_{Q_N}$ has $\cD_v = \cD_v^\square$ for $v\in Q_N$, we have $\cL_v = H^1(G_v,\ad\rhobar)$ and
  \[
   H_{\cS_{Q_N}^\perp,T}^1(\ad\rhobar(1)) = \ker\left(H_{S^\perp,T}^1(\ad\rhobar(1)) \rightarrow \prod_{v\in Q_N} H^1(F_v,\ad\rhobar(1))\right).
  \]
 So by induction, it suffices to show that given any cocycle $\kappa$ representing a nonzero element of $H_{\cS^\perp,T}^1(\ad\rhobar(1))$, 
 there are infinitely many finite places $v$ of $F$ such that
  \begin{itemize}
  \item $v$ has degree one over $\Q$ and splits in $F(\zeta_{p^N})$;
  \item $\rhobar(\Frob_v)$ has $n$-distinct eigenvalues $\alpha_{v,1},\ldots,\alpha_{v,n}$ in $k$;
  \item the image of $\kappa$ in $H^1(F_v,\ad\rhobar(1))$ is nonzero.
 \end{itemize}
 The set of places of $F$ that have degree one over $\Q$ has density one, so it suffices to show that the remaining properties are satisfied 
 by a positive density set of places of $F$.
 Then by Chebotarev density, we are reduced to showing that given any cocycle $\kappa$ representing a nonzero element of $H_{\cS^\perp,T}^1(\ad\rhobar(1))$, 
 there is $\sigma \in G_{F(\zeta_{p^N})}$ such that
  \begin{itemize}
   \item $\rhobar(\sigma)$ has distinct $k$-rational eigenvalues;
   \item $p_\sigma \kappa(\sigma) \ne 0$, where $p_\sigma \colon \ad\rhobar \rightarrow (\ad\rhobar)^\sigma$ is the $\sigma$-equivariant projection.
  \end{itemize} (The second condition guarantees that the restriction
  of~$\kappa$ will not be a coboundary.)
 Since $p \nmid n$, we have a $G_F$-equivariant decomposition $\ad\rhobar = k \oplus \ad^0\rhobar$, and we treat separately the cases where $\kappa$ represents 
 a cohomology class in $H^1(F_S/F,\ad^0\rhobar(1))$ and in $H^1(F_S/F,k(1))$. 

 First assume that $\kappa$ represents a cohomology class in $H^1(F_S/F,\ad^0\rhobar(1))$. 
 Let $L/F$ be the splitting field of $\rhobar$. 
 The definition of enormous implies that the restriction map 
  \[
   H^1(F_S/F,\ad^0\rhobar(1)) \rightarrow H^1(F_S/L(\zeta_{p^N}),\ad^0\rhobar(1))^{G_F}
  \]
 is injective. 
 Indeed, letting $H =\rhobar(G_{F(\zeta_p)})$, since $H$ has no $p$-power order quotients, $H = \rhobar(G_{F(\zeta_{p^N})})$ and $H^0(H,\ad^0\rhobar) = 0$ implies that
 the restriction to $H^1(F_S/F(\zeta_{p^N}),\ad^0\rhobar)$ is injective. 
 Then the condition $H^1(H,\ad^0\rhobar) = 0$ implies that the further restriction to $H^1(F_S/L(\zeta_{p^N}),\ad^0\rhobar(1))$ is injective.
 So the restriction of $\kappa$ defines a nonzero $G_{F(\zeta_{p^N})}$-equivariant homomorphism $\Gal(F_S/L(\zeta_{p^N})) \rightarrow \ad^0\rhobar$. 

 Let $W$ be a nonzero irreducible subrepresentation in the $k$-span of
 $\kappa(\Gal(F_S/L(\zeta_{p^N}))$. The enormous assumption implies that there is $\sigma_0 \in G_{F(\zeta_{p^N})}$ such that $\rhobar(\sigma_0)$ has distinct $k$-rational eigenvalues
 and such that %
 $W^{\sigma_0}\ne 0$. 
 This implies that $\kappa(\Gal(F_S/L(\zeta_{p^N}))$ is not contained in the kernel of the $\sigma_0$-equivariant projection
 $p_{\sigma_0} \colon \ad^0\rhobar \rightarrow (\ad^0\rhobar)^{\sigma_0}$. 
 If $p_{\sigma_0} \kappa(\sigma_0) \ne 0$, then we take $\sigma = \sigma_0$. 
 Otherwise, we choose $\tau \in G_{L(\zeta_{p^N})}$ such that $p_{\sigma_0}\kappa(\tau) \ne 0$, and we take $\sigma = \tau\sigma_0$. 
 This does the job since $\rhobar(\sigma) = \rhobar(\sigma_0)$ and $\kappa(\sigma) = \kappa(\sigma_0) + \kappa(\tau)$. 

 Now assume that $\kappa$ represents a cohomology class in $H^1(F_S/F,k(1))$. 
 The cohomology class of $\kappa$ corresponds to a Kummer extension $F(\zeta_p,y)$ with $y^p \in F(\zeta_p)$. 
 Since $\kappa$ is nontrivial and $\zeta_p \notin F$, this extension $F(\zeta_p,y)$ is not abelian over $F$. 
 It follows that $y^p \notin F(\zeta_{p^N})$ for any $N\ge 1$, and the restriction of $\kappa$ to $G_{F(\zeta_{p^N})}$ is nontrivial. 
 Since the extension $F(\zeta_{p^N},y)/F(\zeta_{p^N})$ has degree $p$, 
 it is disjoint from the extension cut out by the restriction of $\rhobar$ to $G_{F(\zeta_{p^N})}$
 by the enormous assumption. 
 It follows that we can find $\sigma \in G_{F(\zeta_{p^N})}$ such that $\rhobar(\sigma)$ has distinct eigenvalues and 
 such that $\kappa(\sigma) \ne 0 \in k$. 
 This completes the proof.
\end{proof}

\begin{prop}\label{thm:TWgen}
 Take $T=S$, and let $q \ge h_{\cS^\perp,T}^1(\ad\rhobar(1))$. 
 Assume that $F = F^+ F_0$ with $F^+$ totally real and $F_0$ an imaginary quadratic field, 
 that $\zeta_p \notin F$, and that $\rhobar(G_{F(\zeta_p)})$ is enormous.
 Then for every $N\ge 1$, there is a choice of Taylor--Wiles datum $(Q_N,(\alpha_{v,1},\ldots,\alpha_{v,n})_{v\in Q_N})$ satisfying the following:
 \begin{enumerate}
  \item $\# Q_N = q$.
  \item For each $v\in Q_N$, $q_v \equiv 1 \bmod p^N$ and the rational prime below $v$ splits in $F_0$.
  \item There is a local $\Lambda$-algebra surjection $R_{\cS}^{T,\loc}\llbracket X_1,\ldots,X_g \rrbracket \rightarrow R_{\cS_{Q_N}}^T$, with
  \[
   g = qn - n^2[F^+:\Q].
  \]
 \end{enumerate}
\end{prop}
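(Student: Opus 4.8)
The plan is to deduce this by combining Lemma~\ref{thm:TWprimes} (which supplies the Taylor--Wiles primes) with the presentation of Proposition~\ref{thm:genoverloc}, followed by an Euler characteristic count. The genuinely substantial inputs are already in place, so what remains is assembly and bookkeeping.

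First I would invoke Lemma~\ref{thm:TWprimes} with the subset $T = S$; its hypotheses hold since $F$ is CM with maximal totally real subfield $F^+$, $\zeta_p \notin F$, $\rhobar(G_{F(\zeta_p)})$ is enormous, and $q \ge h_{\cS^\perp,S}^1(\ad\rhobar(1))$ by hypothesis. This produces, for each $N \ge 1$, a Taylor--Wiles datum $(Q_N,(\alpha_{v,1},\ldots,\alpha_{v,n})_{v\in Q_N})$ with $\#Q_N = q$, with $q_v \equiv 1 \bmod p^N$ and $v$ of degree one over $\Q$ for each $v \in Q_N$, and with $h_{\cS_{Q_N}^\perp,S}^1(\ad\rhobar(1)) = 0$. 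Conclusion~(1) of the proposition is then immediate, and conclusion~(2) follows since the proof of Lemma~\ref{thm:TWprimes} selects places that split completely in $F(\zeta_{p^N}) \supseteq F_0$, so the rational prime below each $v \in Q_N$ splits in $F_0$ (equivalently: a degree-one place of the CM field $F$ lying above a prime unramified in $F$ has residue prime split in the imaginary quadratic subfield $F_0$).

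Next I would apply Proposition~\ref{thm:genoverloc} to the augmented deformation problem $\cS_{Q_N}$, with $T = S$ regarded as a (nonempty, as $S_p \subseteq S$) subset of its set of ramification places $S \cup Q_N$. Since the local deformation conditions at the places of $S$ are unchanged, $R_{\cS_{Q_N}}^{S,\loc} = R_{\cS}^{S,\loc} = R_{\cS}^{T,\loc}$, and this yields a local $\Lambda$-algebra surjection $R_{\cS}^{T,\loc}\llbracket X_1,\ldots,X_g\rrbracket \twoheadrightarrow R_{\cS_{Q_N}}^T$ with
\[ g = h_{\cS_{Q_N}^\perp,S}^1(\ad\rhobar(1)) - h^0(F_S/F,\ad\rhobar(1)) - \sum_{v\mid\infty} h^0(F_v,\ad\rhobar) + \sum_{v\in Q_N}\bigl(\dim_k \cL_v - h^0(F_v,\ad\rhobar)\bigr). \]
It remains to evaluate the four terms. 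The first vanishes by the choice of $Q_N$. The second vanishes because $\ad\rhobar = k \oplus \ad^0\rhobar$ (as $p \nmid n$), the scalar part of $\ad\rhobar(1)$ carries the $G_F$-action by $\epsilon$, which is nontrivial since $\zeta_p \notin F$, while $H^0(G_F,\ad^0\rhobar(1)) \hookrightarrow H^0(G_{F(\zeta_p)},\ad^0\rhobar) = 0$ by enormity. The third equals $n^2[F^+:\Q]$, since $F$ is CM so all $[F^+:\Q]$ archimedean places are complex, with $h^0(F_v,\ad\rhobar) = n^2$. The fourth equals $qn$: for $v \in Q_N$ we have $\cL_v = H^1(F_v,\ad\rhobar)$ as $\cD_v = \cD_v^\square$, and since $q_v \equiv 1 \bmod p$ forces $\ad\rhobar(1)|_{G_{F_v}} \cong \ad\rhobar|_{G_{F_v}}$, the local Euler characteristic formula and local Tate duality give $\dim_k \cL_v - h^0(F_v,\ad\rhobar) = h^2(F_v,\ad\rhobar) = h^0(F_v,\ad\rhobar(1)) = h^0(F_v,\ad\rhobar) = n$, the last equality because $\rhobar(\Frob_v)$ has $n$ distinct eigenvalues (so $\ad\rhobar(\Frob_v)$ fixes exactly the $n$-dimensional diagonal). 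Substituting gives $g = qn - n^2[F^+:\Q]$, which is conclusion~(3).

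The only part requiring real care is the bookkeeping — keeping straight the deformation problem $\cS$ versus $\cS_{Q_N}$ and the role of the set $T = S$ when invoking the two cited results — together with the vanishing $h^0(F_S/F,\ad\rhobar(1)) = 0$; no new ideas beyond those two results are needed.
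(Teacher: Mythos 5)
Your proposal is correct and follows essentially the same route as the paper: invoke Lemma~\ref{thm:TWprimes} with $T=S$, observe that degree one over $\Q$ forces the rational prime to split in $F_0$, then apply Proposition~\ref{thm:genoverloc} and compute each term (vanishing of $h^1_{\cS_{Q_N}^\perp,S}$ and of $H^0(F_S/F,\ad\rhobar(1))$, the archimedean contribution $n^2[F^+:\Q]$, and $\dim_k\cL_v - h^0(F_v,\ad\rhobar)=n$ at each $v\in Q_N$ via local Euler characteristic and Tate duality). One small caveat: the justification of conclusion (2) should rest on the degree-one-over-$\Q$ observation (your parenthetical, and the paper's argument), not on $v$ splitting completely in $F(\zeta_{p^N})$, which is a condition above $F$ and says nothing by itself about the behaviour of the rational prime in the subfield $F_0$.
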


\begin{proof}
 If $v$ is a finite place of $F$ that is degree one over $\Q$, then the rational prime below it must split in $F_0$.
 So Proposition \ref{thm:genoverloc} and Lemma \ref{thm:TWprimes} imply that the proposition holds with
  \[
   g = - h^0(F_S/F,\ad\rhobar(1)) - n^2[F^+:\Q] + \sum_{v\in Q} (\dim \cL_v - \dim h^0(F_v,\ad\rhobar)).
  \]
 The assumptions that $\rhobar(G_{F(\zeta_p)})$ is enormous and that $\zeta_p \not\in F$ imply that $H^0(F_S/F,\ad\rhobar(1))$ is trivial. 
 For each $v\in Q$, we have $\cL_v = H^1(F_v,\ad\rhobar)$, so 
  \[
   \dim \cL_v - \dim h^0(F_v,\ad\rhobar) = h^0(F_v,\ad\rhobar(1)) = n,
  \]
 where the first equality follows from local Tate duality and the local Euler characteristic, 
 and the second from the fact that $q_v \equiv 1 \bmod p$ and $\rhobar(\Frob_v)$ has distinct eigenvalues.
\end{proof}

\subsection{Avoiding Ihara's lemma}\label{sec:AvoidIhara}

In this section we will axiomatically explain how to deduce a patched automorphy theorem from the result of the patching process. See section \ref{sec:AvoidIharasetup} and particularly Proposition \ref{prop:full_support_for_unipotent_deformation_rinG}. We begin however with a little commutative algebra.

\subsubsection{Some commutative algebra}

\begin{lem}\label{comalg1} Suppose that $T$ is an excellent local ring with $\Spec T$ irreducible, that $f \in \gm_T$ and that $T/(f)$ has Krull dimension $0$. 

If $T$ has dimension $0$ then for every finitely generated $T$ module $M$ we have
\[ \lg_{T}(M/fM) - \lg_{T}(M[f]) = 0 \]
(and these lengths are both finite).

Otherwise $T$ has dimension $1$ and a unique prime ideal $\gp$ other than $\gm_T$. In this case there is a constant $a\in \Z_{>0}$ such that for any finitely generated $T$-module $M$ we have
\[ \lg_{T}(M/fM) - \lg_{T}(M[f]) = a \lg_{T_{\gp}}(M_{\gp}) \]
(and all these lengths are finite).
\end{lem}

\begin{proof} If $T$ has dimension $0$ then it is Artinian and every finitely generated $T$-module has finite length. If the desired length equality holds for two modules in a short exact sequence it also holds for the third (by the snake lemma). Thus we are reduced to checking the lemma in the case $M=T/\gm_T$, in which case it is obvious.

Now suppose that $T$ has dimension $1$. Note that $T/(f)$ is Artinian and so any finitely generated $T/(f)$-module has finite length over $T$. Let $\tT$ denote the normalization of $T/\gp$. As $T$ is excellent, $\tT$ is a finitely generated $T$-module. We will take $a = \lg_T(\tT/(f))$. (This is positive because $f$ is not a unit in $\tT$. In fact it lies in every maximal ideal.)

Note that the conclusion of the lemma is true for $M=T/\gm$ and for $M=\tT$. Also note that, if the conclusion of the lemma holds for two modules in a short exact sequence, then it holds for the third (again by the snake lemma). In particular the lemma holds for all finite length $T$-modules. 

Filtering $M$ by the submodules $\gp^iM$ we reduce to checking the lemma for $M$ a $T/\gp$-module.
Write $Q$ for the quotient $\tT/(T/\gp)$. It has support $\{\gm_T\} \subset \Spec T$. If $M$ is any finitely generated $T/\gp$-module we have an exact sequence
\[  \Tor_1^T(M,Q) \lra M \lra M \otimes_T \tT \lra M \otimes_T Q \lra (0). \]
Both $M \otimes_T Q$ and $\Tor_1^T(M,Q)$ are finitely generated $T$-modules with support contained in $\{ \gm_T\}$, and hence of finite length. Thus we are reduced to proving the lemma for $M$ a finitely generated $\tT$-module.

Note that $\tT$ is a Dedekind domain with only finitely many maximal ideals, and hence a PID. By the structure theorem for finitely generated modules over a PID it suffices to check the conclusion of the lemma in the two cases: $M$ is a finite length $\tT$-module and $M=\tT$. However we have already treated both these cases.
\end{proof}

We will really make use of a derived version of this lemma. Suppose that $S$ is a ring, that $T$ is a noetherian $S$-algebra and that $C \in D^b(S)$ is equipped with a map $T \ra \End_{D^b(S)}(C)$ over $S$ such that the cohomology of $C$ has finite length over $T$. Then we define
\[ \lg_T(C) = \sum_i (-1)^i \lg_T(H^i(C)). \]
Note that if 
\[ C_1 \lra C_2 \lra C_3 \lra \]
is an exact triangle in $D^b(S)$ with compatible actions of $T$, and if two of the $C_i$ have cohomology of finite length over $T$, then so does the third and we have
\[ \lg_T(C_2) = \lg_T(C_1)+\lg_T(C_3). \]
Note also that if $f \in S$ and the cohomology of $C$ is finitely generated over $T$, then the cohomology of $C \otimes^{\bL}_S S/(f)$ is also finitely generated over $T$. (Look at the long exact sequence in cohomology coming from the exact triangle
\[ (C \stackrel{f}{\lra} C \lra C \otimes^{\bL}_S S/(f) \lra )\,\,=\,\, C \otimes^{\bL}_S (S \stackrel{f}{\lra} S \lra S/(f) \lra )\,\,\, .)\]

Before stating the derived version we need one other remark:

\begin{lem} Suppose that $A$ is a noetherian ring and that $\gm$ is a maximal ideal of $A$ that is simultaneously a minimal prime ideal. Then $A \iso A_\gm \times B$ for some ring $B$. \end{lem}

\begin{proof} Let $\gp_1,...,\gp_r$ denote the other minimal prime ideals of $A$ and set $I=\gp_1 \cap ... \cap \gp_r$. If $\gm \supset I$ then $\gm \supset \gp_i$ for some $i$, a contradiction. Thus $\gm +I=A$ and $A/(\gm \cap I) \iso A/\gm \times A/I$. However $\gm \cap I$ is nilpotent so we can lift the idempotent $(1,0) \in A/\gm \times A/I$ to an idempotent $e \in A$. Then $1-e \in \gm$ and $e$ will lie in every prime ideal of $A$ other than $\gm$. Thus $\gm=e\gm \times (1-e)A \subset eA \times (1-e)A$, and every other prime ideal of $A$ contains $e$ and so has the form $eA \times \gq$. In particular $eA$ is Artinian local and $A_\gm=(eA)_{e\gm}=eA$. \end{proof}

Suppose that $S$ is a noetherian ring, that $T$ is a finite $S$-algebra, that $\gp$ is a minimal prime ideal of $T$ and that $C \in D^b(S)$ is equipped with a map $T \ra \End_{D^b(S)}(C)$ over $S$. Let $\gq$ denote the contraction of $\gp$ to $S$. As $T$ is finite over $S$, we see that $\gp$ is also maximal ideal in $T_\gq$, and so by the above lemma we can write $T_\gq \cong T_\gp \times B$ for some $S$-algebra $B$. Let $e_\gp \in T_\gq$ be the idempotent corresponding $(1,0) \in T_\gp \times B$. Then, perhaps by an abuse of notation, we will write
\[ C_\gp = e_\gp (C \otimes_S S_\gq). \]
It is an object of $D^b(S_\gq)$ with an action of $T_\gp$. It is not literally a localization over $T$, but if $C$ (with its action of $T$) happens to be represented by a complex of $T$-modules $C^i$, then $C_\gp$ is represented by the complex $C_\gp^i$. Moreover if the cohomology of $C$ is finitely generated over $S$, then the cohomology of $C_\gp$ has finite length over $T_\gp$ (being finitely generated over the Artinian ring $T_\gp$).

\begin{lem}\label{comalg2} Suppose that $S$ is an excellent local ring and that $f \in \gm_S$ is a non-zero divisor. Suppose also that $T$ is a finite $S$-algebra with a maximal ideal $\gm$ such that $\Spec T_\gm$ is irreducible and $T_\gm/(f)$ has Krull dimension $0$. Note that $T_\gm$ has dimension at most $1$. 

If $T_\gm$ has dimension $0$, then for every $C \in D^b(S)$ such that $T \ra \End_{D^b(S)}(C)$ over $S$ and $C$ has finitely generated cohomology we have  
\[ \lg_{T_\gm}((C \otimes^{\bL}_S S/(f))_\gm)=0. \]

If not, then $T_\gm$ has a unique prime ideal $\gp$ other than $\gm$. In this case there is $a \in \Z_{>0}$ with the following property:

Suppose that $C \in D^b(S)$ such that $C$ has finitely generated cohomology, and that $T \ra \End_{D^b(S)}(C)$ over $S$. Then 
\[ \lg_{T_\gm}((C \otimes^{\bL}_S S/(f))_\gm)= a \lg_{T_{\gp}}(C_\gp). \]
\end{lem} 

\begin{proof} 
We will take the $a$ as in lemma \ref{comalg1} for the ring $T_\gm$. If the lemma holds for two terms in an exact triangle it holds for the third term too. Thus one may inductively reduce to the case that $C$ is quasi-isomorphic to $M[i]$ for a finitely generated $S$-module $M$ with a compatible action of $T$. In this case $C \otimes^{\bL}_S S/(f)$ is quasi-isomorphic to $(M \stackrel{f}{\ra} M)[-i]$. Moreover 
\[ \lg_{T_\gm}((C \otimes_S^{\bL} S/(f))_\gm) = (-1)^i (\lg_{T_\gm}((M)/fM)_\gm) - \lg_{T_\gm}(M[f]_\gm)) \]
and
\[ \lg_{T_{\gp}} (C_\gp) = (-1)^i \lg_{T_{\gp}}(M_\gp). \]
Thus the present lemma follows from lemma \ref{comalg1}.
\end{proof}

We remark that if $C$ is perfect then the cohomology of $C$ and $C \otimes_S S/(f)$ will automatically be finitely generated (over $S$ and hence over $T$). In this case, if $T$ is an $S$ subalgebra of $\End_{D^b(S)}(C)$, then it will automatically be finite over $S$. \vspace{.5cm}

\subsubsection{Application}\label{sec:AvoidIharasetup}

Let $\Lambda$ be a ring which is isomorphic to a power series ring over $\cO$. We assume given the following objects:
\begin{enumerate}
\item A power series ring $S_\infty=\Lambda[[X_1,\cdots,X_r]]$ with augmentation ideal $\mathfrak{a}_\infty = (X_1, \dots, X_r)$. 
\item Perfect complexes $C_\infty, 
C'_\infty$ of $S_\infty$-modules, and a fixed isomorphism 
\[ C_\infty \otimes^\bL_{S_\infty} S_{\infty} / \varpi \cong C'_\infty \otimes^\bL_{S_\infty} S_{\infty} / \varpi \]
in $\mathbf{D}(S_\infty / \varpi)$.
\item Two $S_\infty$-subalgebras
\[ T_\infty\subset 
\End_{\bD(S_\infty)}(C_\infty) \]
 and
 \[ T'_\infty \subset 
\End_{\bD(S_\infty)}(C'_\infty), \]
which have the same image in
\[ \End_{\bD(S_\infty/\varpi)}(C_\infty \otimes^\bL_{S_\infty} S_{\infty} / \varpi ) = \End_{\bD(S_\infty/\varpi)}(C'_\infty \otimes^\bL_{S_\infty} S_{\infty} / \varpi ), \]
where these endomorphism algebras are identified using the fixed isomorphism. Call this common image 
$\overline{T}_\infty$. Note that $T_\infty$ and $T'_\infty$ are finite 
$S_\infty$-algebras.
\item Two Noetherian complete local $S_\infty$-algebras $R_\infty$ and $R'_\infty$ and 
surjections $R_\infty\onto T_\infty/I_\infty$, $R'_\infty\onto 
T'_\infty/I'_\infty$, where $I_\infty$ and $I'_\infty$ are nilpotent ideals. 
We write $\overline{I}_\infty$ and $\overline{I}'_\infty$ for the image of these 
ideals in $\overline{T}_\infty$. Note that it then makes sense to 
talk about the support of $H^*(C_\infty)$ and $H^*(C'_\infty)$ over $R_\infty$, 
$R'_\infty$, even though they are not genuine modules over these rings. These 
supports actually belong to the closed subsets of $\Spec R_\infty$, $\Spec 
R'_\infty$ given by $\Spec T_\infty$, $\Spec T'_\infty$, and hence are finite 
over $\Spec S_\infty$.
\item An isomorphism $R_\infty/\varpi\cong R'_\infty/\varpi$ 
compatible with the $S_\infty$-algebra structure and the actions 
(induced from $T_\infty$ and $T'_\infty$) on 
\[ H^*( C_\infty \otimes^\bL_{S_\infty} S_{\infty} / \varpi)/(\overline{I}_\infty+\overline{I}'_\infty) = H^*( C'_\infty \otimes^\bL_{S_\infty} S_{\infty} / \varpi)/(\overline{I}_\infty+\overline{I}'_\infty), \]
where these cohomology groups are identified using the fixed isomorphism. 
\item Integers $q_0 \in \bZ$ and $l_0 \in \bZ_{\geq 0}$.
\end{enumerate}

\begin{assumption}    \label{assumptionsetup} Our set-up is assumed to satisfy the following: 
\begin{enumerate}
\item $\dim R_\infty=\dim R'_\infty=\dim S_\infty -l_0$, and $\dim R_\infty/\varpi=\dim R'_\infty/\varpi=\dim S_\infty-l_0-1$. 
\item (Behavior of components) Assume that each generic point of $\Spec 
R_\infty/\varpi$ of maximal dimension (i.e. of dimension $\dim R_\infty -1$) is the specialization of a unique generic point of $\Spec 
R_\infty$ of dimension $\dim R_\infty$, and $\Spec R'_\infty$ has a unique generic point $x'$ of dimension $\dim R_\infty$. Assume also that any generic points of $\Spec R_\infty$, $\Spec R'_\infty$, $\Spec R_\infty/\varpi$ which are not of maximal dimension have dimension 
$<\dim S_\infty-l_0-1$.

These hypotheses imply every generic point 
of $\Spec R_\infty$ and $\Spec R'_\infty$ of dimension $\dim R_\infty$ has characteristic $0$. 
\item (Generic concentration) \label{part:genericconcentration} 
There exists a dimension 1 characteristic 0 prime $\mathfrak{p}$ of $S_\infty$ containing $\mathfrak{a}_\infty$ such that  
\[ H^\ast(C_\infty\otimes_{S_\infty}^{\bL} S_\infty/\mathfrak{p} )[\frac{1}{p}]\neq 0, \]
and these groups are non-zero only for degrees in the interval $[q_0,q_0+l_0]$.
\end{enumerate}
\end{assumption}

Note that $\Supp_{R_\infty}(H^*(C_\infty))=\Spec T_\infty$ and $\Supp_{R_\infty'}(H^*(C_\infty'))=\Spec T_\infty'$. (The only point being that the kernel of $T_\infty \ra \End_{S_\infty}(H^*(C_\infty))$ is nilpotent.)

The following result is an immediate corollary of Lemma \ref{comalg2}.

\begin{lem}\label{comalg3} Suppose that $\barx$ is a minimal prime of $R_\infty/(\varpi)$ 
of dimension $\dim S_\infty -l_0-1$ containing a minimal prime $x$ of $R_\infty$ 
of dimension $\dim S_\infty-l_0$. Note that $R_{\infty,\barx}$
has a unique minimal prime ideal $x$, and that $R_{\infty,\barx}/(\varpi)$ 
has Krull dimension $0$. Moreover:
\begin{enumerate} 
\item If $\barx \in \Spec T_\infty$ 
and if $\lg_{T_{\infty,\barx}}((C_{\infty} \otimes^{\bL}_{S_\infty} S_\infty/(\varpi))_{\barx}) \neq 0$ 
then $x \in \Spec T_\infty$ 
and $\lg_{T_{\infty,x}}(C_{\infty,x})  \neq 0$. 

\item If $x \in \Spec T_\infty$ 
then $\barx  \in \Spec T_\infty$. 
If moreover $\lg_{T_{\infty,x}}(C_{\infty,x}) \neq 0$ 
then $\lg_{T_{\infty,\barx}}((C_{\infty} \otimes^{\bL}_{S_\infty} S_\infty/(\varpi))_{\barx}) \neq 0$. 
\end{enumerate}
The same is true with $R_\infty'$, $T_\infty'$, $C_\infty'$ replacing $R_\infty$, $T_\infty$ and $C_\infty$.
\end{lem}

We now come to the principal result of this section.

\begin{prop}\label{prop:full_support_for_unipotent_deformation_rinG} With the notation and assumptions just established, 
\[ \Supp_{R_\infty}(H^*(C_\infty)) = \Spec T_\infty \subset \Spec R_\infty\]
 contains every irreducible component of $\Spec R_\infty$ of maximal dimension. \end{prop}

\begin{proof} As $T_\infty$ and $H^*(C_\infty)$ are finite over $S_\infty$ we see that $\Supp_{S_\infty}(H^*(C_\infty))$ is the image of $\Spec T_\infty \ra \Spec S_\infty$. Thus $\Supp_{S_\infty}(H^*(C_\infty))$ has dimension at most $\dim R_\infty=\dim S_\infty -l_0$, and any prime in 
$\Supp_{S_{\infty,\gp}}(H^*(C_{\infty,\gp}))$ has codimension   at least $l_0$ in $\Spec S_{\infty,\gp}$. 

Since $S_{\infty,\mathfrak{p}}/\mathfrak{p}\cong (S_\infty/\mathfrak{p})[\frac{1}{p}]$, our assumptions imply that $C_{\infty,    \mathfrak{p}} \otimes^{\bL} S_{\infty,\gp}/\gp$ is non-zero and has cohomology concentrated in degrees $[q_0,q_0+l_0]$. Thus $C_{\infty,\gp}$ is quasi-isomorphic to a perfect complex of $S_{\infty,\gp}$-modules concentrated in degrees  $[q_0,q_0+l_0]$. (See for instance \cite[Lemma 2.3]{KT}.) From the key \cite[Lemma 6.2]{CG} we deduce that $H^\ast(C_{\infty, 
  \mathfrak{p}})$ is non-zero exactly in degree $q_0 + l_0$ and that $\Supp_{S_{\infty,\gp}}(H^{q_0+l_0}(C_{\infty, 
  \mathfrak{p}}))$ contains a prime of codimension at most $l_0$ in $\Spec S_{\infty,\gp}$.
Thus $\Supp_{S_\infty}(H^*(C_\infty))$ contains a prime of  dimension $\dim S_\infty -l_0$. Let $x_1$ denote a pre-image of this prime in $\Spec T_\infty$, so that $x_1$ must be a generic point of $\Spec R_\infty - \Spec R_\infty/(\varpi)$ of dimension $\dim S_\infty -l_0$. Moreover $\lg_{T_{\infty,x_1}}(C_{\infty,x_1}) \neq (0)$. Choose a generic point $\barx_1$ of $\Spec R_\infty/(x_1,\varpi)$, which must have dimension $\dim S_\infty-l_0-1$ and be a generic point of $\Spec R_\infty/(\varpi)$ in the image of $\Spec T_\infty$. Let $\barx_1'$ denote the corresponding point of $\Spec R_\infty'/(\varpi)$. It can not be generic in $\Spec R_\infty'$ and so must generalize to $x'$.

Now let $x_2$ be any other generic point of $\Spec R_\infty$ of dimension $\dim S_\infty -l_0$. We wish to show that it lies in $\Spec T_\infty$. Choose a generic point $\barx_2$ of $\Spec R_\infty/(x_2,\varpi)$, which must have dimension $\dim S_\infty-l_0-1$ and be a generic point of $\Spec R_\infty/(\varpi)$. Let $\barx_2'$ denote the corresponding point of $\Spec R_\infty'/(\varpi)$. It can not be generic in $\Spec R_\infty'$ and so must generalize to $x'$. 

We now repeatedly use Lemma \ref{comalg3}.
As $\lg_{T_{\infty,x_1}}(C_{\infty,x_1}) \neq (0)$, we deduce that $\barx_1 \in \Spec T_\infty$ and $\lg_{T_{\infty,\barx_1}}((C_{\infty} \otimes^{\bL}_{S_\infty} S_\infty/(\varpi))_{\barx_1}) \neq 0$. Hence $\barx_1' \in \Spec T_\infty'$ and $\lg_{T_{\infty,\barx_1'}'}((C_{\infty}' \otimes^{\bL}_{S_\infty} S_\infty/(\varpi))_{\barx_1'}) \neq 0$, from which we deduce that $x'\in \Spec T_\infty'$ and $\lg_{T_{\infty,x'}'}(C'_{\infty,x'}) \neq 0$. We further deduce that $\barx_2' \in \Spec T_\infty'$ and $\lg_{T_{\infty,\barx_2'}'}((C_{\infty}' \otimes^{\bL}_{S_\infty} S_\infty/(\varpi))_{\barx_2'}) \neq 0$. Hence $\barx_2 \in \Spec T_\infty$ and $\lg_{T_{\infty,\barx_2}}((C_{\infty} \otimes^{\bL}_{S_\infty} S_\infty/(\varpi))_{\barx_2}) \neq 0$, from which we finally deduce that $x_2 \in \Spec T_\infty$ (and $\lg_{T_{\infty,x_2}}(C_{\infty,x_2}) \neq (0)$). 
\end{proof}

\begin{cor}\label{cor:char0automorphy} 
Let $x$ be a prime of $R_\infty$ lying in an irreducible component of 
 $\Spec R_\infty$ of maximal dimension. Let $y$ be the contraction of $x$ in 
$S_\infty$. Then the support of $H^\ast(C_\infty\otimes^{\bL}_{S_\infty} 
S_\infty/y)_y$ over $\Spec R_\infty$ contains 
$x$. If $y$ is one dimensional of characteristic $0$ this says that $x$ is in the support of $H^\ast(C_\infty\otimes^{\bL}_{S_\infty} 
S_\infty/y)[1/p]$.
\end{cor}
\begin{proof}  It follows from Proposition~\ref{prop:full_support_for_unipotent_deformation_rinG} that $x$ is contained in~$\Spec T_\infty$ and occurs in the support of $H^\ast(C_\infty)$. It also occurs in the support of $H^\ast(C_{\infty,y})=H^\ast(C_\infty)_y$. Let $r$ be maximal such that $H^r(C_{\infty,y})_x$ is non-zero. From the Tor spectral sequence
  \[ \Tor^{S_{\infty,y}}_{-i}(H^j(C_{\infty,y}), S_{\infty,y}/y) \Rightarrow H^{i+j}(C_{\infty, y} \otimes^{\bL}_{S_{\infty,y}} S_{\infty,y}/y) \]
  we see that $H^r(C_{\infty, y} \otimes^{\bL}_{S_{\infty,y}} S_{\infty,y}/y)_x$ surjects onto $H^r(C_{\infty,y})_x/y \neq (0)$, so that $x$ lies in the support of $H^r(C_{\infty, y} \otimes^{\bL}_{S_{\infty,y}} S_{\infty,y}/y)=H^r(C_{\infty} \otimes^{\bL}_{S_{\infty}} S_{\infty}/y)_y$, as desired. 
\end{proof}

\subsection{Ultrapatching}\label{sec:patching}
\subsubsection{Set-up for patching}\label{subsec:patchingsetup}
We begin by fixing a non-principal ultrafilter $\gF$ on the set $\NN = \{N \ge 
1\}$. We fix %
a ring $\Lambda$ which is isomorphic to a power series ring over $\cO$.

Let $\delta, g, q$ be positive integers and set $\Delta_\infty = \Zp^{nq}$. We 
let $\cT$ be a formal power series ring over $\Lambda$ (it will come from 
framing variables in our application) and let $S_\infty = 
\cT[[\Delta_\infty]]$. We view $S_\infty$ as an augmented $\Lambda$-algebra, 
and denote the augmentation ideal by $\ga_\infty$.
We also suppose we have two rings 
$R^{\loc}, R'^{\loc}$ 
in $\CNL_{\Lambda}$ with a fixed isomorphism $R^{\loc}/\varpi \cong 
R'^{\loc}/\varpi$
and denote by $R_\infty$ and $R'_\infty$ the formal power series rings in $g$ 
variables over $R^{\loc}$ and $R'^{\loc}$.

Our input for patching is the following data for each $N \in \NN\cup\{0\}$:
\begin{enumerate} 
	\item A quotient $\Delta_N$ of $\Delta_\infty$ such that the kernel of 
	$\Delta_\infty \rightarrow \Delta_N$ is contained in $(p^N\Zp)^{nq} \subset 
	\Delta_\infty$. If $N = 0$, we let $\Delta_0$ be the trivial group, thought 
	of as a quotient of $\Delta_\infty$. We set $S_N = \cT[\Delta_N]$.
	\item\label{setupcomplexes} A pair of perfect complexes $\cC_N$, $\cC'_N$ 
	in 
	$\bD(\Lambda[\Delta_N])$,
      together with an isomorphism 
	$\cC_N\otimes^\LL_{\Lambda[\Delta_N]}\Lambda/\varpi[\Delta_N]
	\cong  \cC'_N\otimes^\LL_{\Lambda[\Delta_N]}\Lambda/\varpi[\Delta_N]$ in 
	$\bD(\Lambda/\varpi[\Delta_N])$. We denote these complexes by 
	$\cC_N/\varpi$ and $\cC'_N/\varpi$ for short. We moreover 
	assume that we have commutative 
	$\Lambda[\Delta_N]$-subalgebras $T_N\subset 
	\End_{\bD(\Lambda[\Delta_N])}(\cC_N)$, $T'_N \subset 
	\End_{\bD(\Lambda[\Delta_N])}(\cC'_N)$ which map to the same subalgebra 
	\[\overline{T}_N \subset 
	\End_{\bD(\Lambda/\varpi[\Delta_N])}(\cC_N/\varpi) = 
	\End_{\bD(\Lambda/\varpi[\Delta_N])}(\cC'_N/\varpi),\] where these 
	endomorphism algebras are identified using our fixed quasi-isomorphism 
	$\cC_N/\varpi \cong \cC'_N/\varpi$.
	\item A pair of rings $R_N$, $R'_N$ in $\CNL_{\Lambda[\Delta_N]}$ with an 
	isomorphism $R_N/\varpi\cong R'_N/\varpi$ together 
	with $R^{\loc}$- and $R'^{\loc}$-algebra structures on $\cT\wotimes_\Lambda 
	R_N$ and $\cT\wotimes_\Lambda R'_N$ respectively which are compatible 
	modulo $\varpi$ with the isomorphisms $R_N/\varpi\cong R'_N/\varpi$ and 
	$R^{\loc}/\varpi \cong R'^{\loc}/\varpi$.
	\item Surjective 
	$R^{\loc}$- and $R'^{\loc}$-algebra maps $R_\infty \rightarrow 
	\cT\wotimes_\Lambda R_N$ and $R'_\infty \rightarrow \cT\wotimes_\Lambda 
	R_N$, which are compatible modulo $\varpi$.
	\item\label{RtoTcompmodl} Nilpotent ideals $I_N$ of $T_N$ and $I'_N$ of 
	$T'_N$ with nilpotence 
	degree $\le \delta$, and continuous surjections $R_N \rightarrow T_N/I_N$, 
	$R'_N 
	\rightarrow T_N/I'_N$. We demand that these maps are also compatible modulo 
	$\varpi$, in the following sense: denote by $\overline{I}_N$ and 
	$\overline{I}'_N$ the images of $I_N$ and $I'_N$ in $\overline{T}_N$. Then 
	the induced maps $R_N/\varpi \rightarrow 
	\overline{T}_N/(\overline{I}_N+\overline{I}'_N)$ and $R'_N/\varpi 
	\rightarrow \overline{T}_N/(\overline{I}_N+\overline{I}'_N)$ are equal when 
	we identify $R_N/\varpi$ and $R'_N/\varpi$ via the fixed isomorphism 
	between them.
\end{enumerate} 

We moreover assume that for each $N \ge 1$ we have
isomorphisms  
$\pi_N: \cC_N\otimes^\LL_{\Lambda[\Delta_N]}\Lambda \cong \cC_0$ and 
$\pi'_N: \cC'_N\otimes^\LL_{\Lambda[\Delta_N]}\Lambda \cong \cC'_0$ in 
$\bD(\Lambda)$ which are 
compatible mod $\varpi$. We obtain induced maps 
$T_N\otimes_{\Lambda[\Delta_N]}\Lambda \rightarrow \End_{\bD(\Lambda)}(\cC_0)$ 
and $T'_N\otimes_{\Lambda[\Delta_N]}\Lambda\rightarrow 
\End_{\bD(\Lambda)}(\cC'_0)$ which we assume factor through maps 
$T_N\otimes_{\Lambda[\Delta_N]}\Lambda \rightarrow T_0$ and 
$T'_N\otimes_{\Lambda[\Delta_N]}\Lambda \rightarrow T'_0$ which are surjective 
when composed with the projections to $T_0/I_0$ and $T'_0/I'_0$.

Finally, we assume that we have isomorphisms $R_N 
\otimes_{\Lambda[\Delta_N]}\Lambda \cong R_0$ and $R'_N 
\otimes_{\Lambda[\Delta_N]}\Lambda \cong R'_0$ which are compatible mod 
$\varpi$
 and with the maps from $R_\infty$ in part (4). We also also 
demand 
compatibility with the maps 
$T_N\otimes_{\Lambda[\Delta_N]}\Lambda \rightarrow T_0$ and 
$T'_N\otimes_{\Lambda[\Delta_N]}\Lambda \rightarrow T'_0$ above. More 
precisely, 
we denote by $I_{N,0}$ and $I'_{N,0}$ the images of $I_N$ and $I'_N$ in 
$T_0/I_0$ 
and $T'_0/I'_0$, 
and demand that the surjective maps $R_N \otimes_{\Lambda[\Delta_N]}\Lambda 
\rightarrow (T_0/I_0)/I_{N,0}$ and  $R'_N \otimes_{\Lambda[\Delta_N]}\Lambda 
\rightarrow (T'_0/I'_0)/I'_{N,0}$ are identified with the maps $R_0 
\rightarrow (T_0/I_0)/I_{N,0}$ and $R'_0 \rightarrow (T'_0/I'_0)/I'_{N,0}$ 
via the isomorphisms $R_N 
\otimes_{\Lambda[\Delta_N]}\Lambda \cong R_0$ and $R'_N 
\otimes_{\Lambda[\Delta_N]}\Lambda \cong R'_0$.

\subsubsection{Patched complexes}\label{subsec:patchedcomplexes}
Apart from Remark \ref{rem:liftStoR} and Proposition 
\ref{prop:primeandunprime}, results and
definitions in this subsection will be stated just for the 
complexes $\cC_N$ and the associated objects and structures, but they also 
apply to the complexes $\cC'_N$.
\begin{defn}\label{def:Jcomplexes}
	Let $J$ be an open 
	ideal in 
	$S_\infty$. Let~$I_J$ be the (cofinite) subset of~$N\in\NN$ such that $J$ 
	contains the kernel of~$S_\infty 
	\rightarrow S_N.$ For $N \in I_J$, we define \[\cC(J,N) 
	= 
	S_\infty/J\otimes^\LL_{\Lambda[\Delta_N]}\cC_N \in \bD(S_\infty/J),\] let 
	$T(J,N)$ denote the 
	image 
	of $S_\infty/J\otimes_{\Lambda[\Delta_N]}T_N$ in 
	$\End_{\bD(S_\infty/J)}(\cC(J,N))$, and denote by $I(J,N)$ the  
	ideal generated by the image of $I_N$ in $T(J,N)$. We have $I(J,N)^\delta = 
	0$.
	
	Additionally, for $d \ge 1$ we define \[R(d,J,N) = R_N/\m_{R_N}^d 
	\otimes_{\Lambda[\Delta_N]}S_\infty/J.\] For every $d,J$ and $N$ we have a 
	surjective $R^{\loc}$-algebra map $R_\infty \rightarrow R(d,J,N)$, which 
	factors through a finite quotient $R_\infty/\m_{R_\infty}^{e(d,J)}$ for 
	some $e(d,J)$ which is independent of $N$.
\end{defn}
For each pair $(J, N)$ such that $\cC(J, N)$ is defined, we fix a choice $\cF(J,N)$
of minimal complex of finite free $S_\infty/J$-modules which is 
quasi-isomorphic to $\cC(J,N)$ (cf. \cite[Lemma 2.3]{KT}). Then for any $i \in \Z$ we have
\[ \mathrm{rk}_{S_\infty/J}(\cF(J,N)^i) = 
\dim_k H^i(\cC_0\otimes^\LL_\Lambda k). \]
\begin{rem}\label{rem:galoisact}
	Recall that we have a surjective map $R_N \rightarrow T_N/I_N$. We 
	therefore obtain a surjective map $R_N\otimes_{\Lambda[\Delta_N]}S_\infty/J 
	\rightarrow T(J,N)/I(J,N)$. For $d$ sufficiently large depending on $J$ 
	(but \emph{not} depending on $N$) this map factors through a surjective map 
	$R(d,J,N) \rightarrow T(J,N)/I(J,N)$. Indeed, it suffices to show that there is an integer $d_0(J)$ such that for any $d \geq d_0(J)$, and for any $x \in \ffrm_{T_N}$, the image of $x^d$ in $\End_{\mathbf{D}(S_\infty / J)}(\cC(J, N))$ (and therefore the image of $x^d$ in $T(J, N)$) is zero. Since 
	\[ H^\ast(\cC(J, N) \otimes^{\bL}_{S_\infty / J} k) \cong H^\ast(\cC_0 \otimes^{\bL}_\Lambda k) \]
	 is a vector space of finite dimension independent of $N$ and
         $J$, we can find an integer $d_1$ such that $x^{d_1}
         H^\ast(\cC(J, N) \otimes^{\bL}_{S_\infty / J} k) = 0$
         (because $x$ acts through a nilpotent endomorphism). The existence of the spectral sequence of a filtered complex implies that there is an integer $d_2$ such that $x^{d_2} H^\ast(\cC(J, N)) = 0$ (here we are using the fact that $S_\infty / J$ has finite length as a module over itself). Finally, the fact that $\cC(J, N)$ is a perfect complex, with cohomology bounded in a range which depends only on $\cC_0$, implies the existence of the integer $d_0(J)$ (use \cite[Lemma 2.5]{KT}). (A similar argument appears at the start of the proof of \cite[Proposition 3.1]{KT}.) 

\end{rem}

\begin{rem}\label{rem:backtolevel0}
	If $J$ contains $\ga_\infty$, then we can identify $S_\infty/J$ with 
	$\Lambda/s(J)$, where $s(J)$ is an open ideal of $\Lambda$. For each $N \in 
	I_J$, the isomorphism $\pi_N: 
	\cC_N\otimes^\LL_{\Lambda[\Delta_N]}\Lambda 
	\cong \cC_0$ induces an isomorphism $\pi_{J,N}:\cC(J,N) 
	\cong
	\cC_0\otimes_\Lambda^\LL\Lambda/s(J).$
\end{rem}

\begin{rem}\label{rem:varyJ}
	Suppose we have open ideals $J_1 \subset J_2$ of $S_\infty$ and $N \in 
	I_{J_1}$. Then we have a natural map $\cC(J_1,N) \rightarrow 
	\cC(J_2,N)$ which induces a quasi-isomorphism 
	\[S_\infty/J_2\otimes^\LL_{S_\infty/J_1}\cC(J_1,N) \cong \cC(J_2,N).\] We 
	obtain a 
	surjective map $T(J_1,N)\rightarrow T(J_2,N)$ and the image of $I(J_1,N)$ 
	under 
	this map is equal to $I(J_2,N)$. So we also obtain a surjective map 
	$T(J_1,N)/I(J_1,N)\rightarrow T(J_2,N)/I(J_2,N)$.
\end{rem}

For $J$ an open ideal in $S_\infty$, $\gF$ restricts to give a non-principal 
ultrafilter on $I_J$, which we again denote by $\gF$. This corresponds to a 
point $x_{\gF} \in \Spec(\prod_{N\in I_J}S_\infty/J)$ by \cite[Lemma 
2.2.2]{geenew}, with localization $(\prod_{N\in I_J}S_\infty/J)_{x_\gF}$ 
canonically isomorphic to 
$S_\infty/J$.

\begin{defn}\label{def:finitepatchedcomplexes}We make the following definitions:
	\[\cC(J,\infty) 
	= 
	(\prod_{N\in I_J}S_\infty/J)_{x_\gF} \otimes_{\prod_{N\in 
			I_J}S_\infty/J}\prod_{N\in I_J}\cC(J,N) \in \bD(S_\infty/J),\] 
	\[R(d,J,\infty) = (\prod_{N\in I_J}S_\infty/J)_{x_\gF} \otimes_{\prod_{N\in 
			I_J}S_\infty/J}\prod_{N\in I_J}R(d,J,N),\]
	$T(J,\infty)$ is defined to be the image of
	$(\prod_{N\in I_J}S_\infty/J)_{x_\gF} \otimes_{\prod_{N\in 
			I_J}S_\infty/J}\prod_{N\in I_J}T(J,N)$ in
	$\End_{\bD(S_\infty/J)}(\cC(J,\infty))$, and the ideal
	$I(J,\infty)\subset T(J,\infty)$ is defined to be the image of 
	$(\prod_{N\in 
		I_J}S_\infty/J)_{x_\gF} \otimes_{\prod_{N\in 
			I_J}S_\infty/J}\prod_{N\in I_J}I(J,N)$ in $T(J,\infty)$.
\end{defn}
\begin{remark}
Since the rings $R(d,J,N)$ are all quotients of 
$R_\infty/\m_{R_\infty}^{e(d,J)}$ (and are in particular finite of bounded 
cardinality), the ultraproduct $R(d,J,\infty)$ is itself a quotient of 
$R_\infty/\m_{R_\infty}^{e(d,J)}$.
\end{remark}
\begin{lem}\label{lem:patchlem} \leavevmode
	\begin{enumerate}
		\item\label{lpla} $I(J,\infty)$ is a nilpotent ideal of $T(J,\infty)$, 
		with 
		$I(J,\infty)^\delta = 0$.
		\item\label{lplb} For $d$ sufficiently large depending on $J$, the maps 
		$R(d,J,N) 
		\rightarrow T(J,N)/I(J,N)$ (see Remark 
		\ref{rem:galoisact}) induce a surjective $S_\infty/J$-algebra map 
		$R(d,J,\infty) 
		\rightarrow T(J,\infty)/I(J,\infty)$.
	\end{enumerate}
\end{lem}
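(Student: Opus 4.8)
The plan is to deduce both parts from the corresponding statements at each finite level $N\in I_J$, transported through the formation of the ultraproduct, which amounts to localizing a product of rings at the point $x_\gF$. The only inputs one needs are that localization is exact and commutes with the formation of quotients, together with the fact that the relevant bounds — the nilpotence degree $\delta$ for part~(\ref{lpla}) and the constant $d_0(J)$ furnished by Remark~\ref{rem:galoisact} for part~(\ref{lplb}) — are independent of $N$.

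For part~(\ref{lpla}), I would work inside the commutative ring $\prod_{N\in I_J}T(J,N)$, where multiplication is componentwise. Since $I(J,N)^\delta=0$ for each $N$ by Definition~\ref{def:Jcomplexes}, any product of $\delta$ elements of the ideal $\prod_N I(J,N)$ vanishes. Now $T(J,\infty)$ is a commutative ring, being the image of the commutative ring $(\prod_{N\in I_J}S_\infty/J)_{x_\gF}\otimes_{\prod_{N\in I_J}S_\infty/J}\prod_{N\in I_J}T(J,N)$ in $\End_{\bD(S_\infty/J)}(\cC(J,\infty))$, and $I(J,\infty)$ is by definition the ideal spanned by the images of elements of $\prod_N I(J,N)$. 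Hence a product of $\delta$ elements of $I(J,\infty)$ is a sum of images of products of $\delta$ elements of $\prod_N I(J,N)$, each of which is zero, so $I(J,\infty)^\delta=0$ and in particular $I(J,\infty)$ is nilpotent.

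For part~(\ref{lplb}), I would first fix $d\ge d_0(J)$, where $d_0(J)$ is the integer provided by Remark~\ref{rem:galoisact}, so that for every $N\in I_J$ the map $R_N\otimes_{\Lambda[\Delta_N]}S_\infty/J\to T(J,N)/I(J,N)$ factors through a surjection $\psi_N\colon R(d,J,N)\twoheadrightarrow T(J,N)/I(J,N)$. A product of surjections is a surjection, so $\prod_N\psi_N$ is surjective; applying the exact functor $(\prod_{N\in I_J}S_\infty/J)_{x_\gF}\otimes_{\prod_{N\in I_J}S_\infty/J}(-)$ then yields a surjection $R(d,J,\infty)\twoheadrightarrow\bigl(\prod_N T(J,N)/I(J,N)\bigr)_{x_\gF}$. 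Since localization commutes with quotients, the target equals $\bigl(\prod_N T(J,N)\bigr)_{x_\gF}\big/\bigl(\prod_N I(J,N)\bigr)_{x_\gF}$; and by construction (Definition~\ref{def:finitepatchedcomplexes}) the natural map $\bigl(\prod_N T(J,N)\bigr)_{x_\gF}\to\End_{\bD(S_\infty/J)}(\cC(J,\infty))$ has image $T(J,\infty)$ and carries $\bigl(\prod_N I(J,N)\bigr)_{x_\gF}$ into $I(J,\infty)$, so it descends to a surjection onto $T(J,\infty)/I(J,\infty)$. Composing the two surjections gives the map asserted in the lemma; tracking through the construction, every arrow is compatible with the ambient $\prod_N S_\infty/J$-algebra structures and $(\prod_N S_\infty/J)_{x_\gF}\cong S_\infty/J$, so the result is a morphism of $S_\infty/J$-algebras.

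The argument is essentially formal, and I do not expect a genuine obstacle. The one point that is not pure bookkeeping is the uniformity of $d_0(J)$ in $N$, but this is already established in Remark~\ref{rem:galoisact} using that each $\cC(J,N)$ is a perfect complex whose cohomology is supported in a fixed degree range and has $k$-dimensions independent of $N$, so that the nilpotence of the Hecke operators modulo $J$ can be controlled uniformly; the remaining care is simply in keeping the diagram of compatibilities (image in the endomorphism ring versus localization at $x_\gF$ versus passage to quotients) straight.
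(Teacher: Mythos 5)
Your proposal is correct and follows essentially the same route as the paper: part (1) via the fact that $\prod_{N\in I_J}I(J,N)$ has nilpotence degree $\le\delta$ in $\prod_{N\in I_J}T(J,N)$, and part (2) by taking the product of the surjections $R(d,J,N)\to T(J,N)/I(J,N)$, localizing at $x_\gF$, and passing to the image in $T(J,\infty)/I(J,\infty)$. The extra detail you supply (uniformity of $d_0(J)$ in $N$, compatibility with the $S_\infty/J$-algebra structures) is exactly the bookkeeping the paper leaves implicit.
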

\begin{proof}
	The first part follows from the fact that $\prod_{N\in I_J}I(J,N)$ is a 
	nilpotent ideal of $\prod_{N\in I_J}T(J,N)$ with nilpotence degree $\le 
	\delta$. The second part follows by first considering the map 
	$\prod_{N\in I_J}R(d,J,N)\rightarrow \prod_{N\in I_J}(T(J,N)/I(J,N)) =  
	(\prod_{N\in I_J}T(J,N))/(\prod_{N\in I_J}I(J,N))$, localising at 
	$x_\gF$ and finally passing to the image in $T(J,\infty)/I(J,\infty)$.
\end{proof}

\begin{prop}\label{prop:finitepatchedproperties} \leavevmode
	\begin{enumerate}
		\item\label{fppa} $\cC(J,\infty)$ is a perfect complex of 
		$S_\infty/J$-modules.
		\item\label{fppb} The maps $R_\infty \rightarrow T(J,N)/I(J,N)$ induce 
		a surjection 
		$R_\infty \rightarrow T(J,\infty)/I(J,\infty)$.
		\item\label{fppc} If $J$ contains $\ga_\infty$, then the isomorphisms 
		$\pi_{J,N}$ 
		induce an isomorphism
		$\pi_{J,\infty}:\cC(J,\infty) \cong 
		\cC_0\otimes_\Lambda^\LL\Lambda/s(J)$.
		\item\label{fppd} Suppose we have open ideals $J_1 \subset J_2$ of 
		$S_\infty$. Then 
		the maps $\cC(J_1,N) \rightarrow \cC(J_2,N)$ in $\bD(S_\infty/J_1)$ for 
		$N \in 
		I_{J_1}$ induce an isomorphism 
		\[S_\infty/J_2\otimes^\LL_{S_\infty/J_1}\cC(J_1,\infty) \cong 
		\cC(J_2,\infty).\]
		\item\label{fppe} Let $J_1, J_2$ be as in the previous part. The 
		map 
		$\cC(J_1,\infty) \rightarrow 
		\cC(J_2,\infty)$ induces a surjective map $T(J_1,\infty)\rightarrow 
		T(J_2,\infty)$ and the image of $I(J_1,\infty)$ under this map is equal 
		to $I(J_2,\infty)$.
	\end{enumerate}
\end{prop}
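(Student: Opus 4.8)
The plan is to deduce each of the five assertions from its finite‐level counterpart by forming the product over $N$ and localizing at the ultrafilter point $x_{\gF}$, using crucially that the complexes $\cC(J,N)$ are \emph{uniformly} perfect. The key technical input, which underlies all five parts, is the existence for each pair $(J,N)$ with $N\in I_J$ of the minimal complex $\cF(J,N)$ of finite free $S_\infty/J$‐modules quasi-isomorphic to $\cC(J,N)$, with $\mathrm{rk}_{S_\infty/J}(\cF(J,N)^i)=\dim_k H^i(\cC_0\otimes^{\LL}_\Lambda k)$ a finite integer independent of $N$ and $J$. For part (\ref{fppa}): since cohomology commutes with arbitrary products of modules, $\prod_{N\in I_J}\cF(J,N)$ is a bounded complex of finite free $\prod_{N\in I_J}S_\infty/J$‐modules quasi-isomorphic to $\prod_{N\in I_J}\cC(J,N)$; localizing at $x_\gF$ and using that $(\prod_{N\in I_J}S_\infty/J)_{x_\gF}\cong S_\infty/J$ (\cite[Lemma 2.2.2]{geenew}), we find that $\cC(J,\infty)$ is quasi-isomorphic to a bounded complex of finite free $S_\infty/J$‐modules, hence perfect. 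This uniform representability is also what allows us, in the remaining parts, to commute infinite products with derived tensor products and with localization at $x_\gF$.

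For part (\ref{fppb}): by Lemma \ref{lem:patchlem}(\ref{lplb}), for $d$ sufficiently large (depending on $J$) there is a surjective $S_\infty/J$‐algebra map $R(d,J,\infty)\to T(J,\infty)/I(J,\infty)$. By the remark following Definition \ref{def:finitepatchedcomplexes}, $R(d,J,\infty)$ is a quotient of $R_\infty/\m_{R_\infty}^{e(d,J)}$, hence a quotient of $R_\infty$; composing gives the claim. For part (\ref{fppc}): assume $\ga_\infty\subset J$, so $S_\infty/J=\Lambda/s(J)$ (Remark \ref{rem:backtolevel0}). The isomorphisms $\pi_{J,N}\colon\cC(J,N)\cong\cC_0\otimes^{\LL}_\Lambda\Lambda/s(J)$ assemble into an isomorphism $\prod_{N\in I_J}\pi_{J,N}$ in $\bD(\prod_{N\in I_J}S_\infty/J)$, whose source localizes at $x_\gF$ to $\cC(J,\infty)$. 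Representing $\cC_0\otimes^{\LL}_\Lambda\Lambda/s(J)$ by a minimal complex $\cF$ of finite free $S_\infty/J$‐modules, one has $\prod_{N\in I_J}\cF=\cF\otimes_{S_\infty/J}\prod_{N\in I_J}S_\infty/J$, which localizes at $x_\gF$ back to $\cF$; so the target localizes to $\cC_0\otimes^{\LL}_\Lambda\Lambda/s(J)$, producing $\pi_{J,\infty}$.

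For part (\ref{fppd}): let $J_1\subset J_2$, so $I_{J_1}\subseteq I_{J_2}$ are both cofinite in $\NN$, whence the localizations at $x_\gF$ of the products indexed by $I_{J_1}$ and by $I_{J_2}$ coincide. By Remark \ref{rem:varyJ}, for each $N\in I_{J_1}$ the transition map induces a quasi-isomorphism $S_\infty/J_2\otimes^{\LL}_{S_\infty/J_1}\cC(J_1,N)\cong\cC(J_2,N)$; taking the product over $N\in I_{J_1}$ and localizing at $x_\gF$, the right side becomes $\cC(J_2,\infty)$, while the left side, computed using the minimal free complex $\cF(J_1,\infty)$ of part (\ref{fppa}), becomes $S_\infty/J_2\otimes^{\LL}_{S_\infty/J_1}\cC(J_1,\infty)$. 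For part (\ref{fppe}): the surjections $T(J_1,N)\to T(J_2,N)$ carrying $I(J_1,N)$ onto $I(J_2,N)$ (Remark \ref{rem:varyJ}), compatibly with the transition maps on complexes, assemble into a surjection of products carrying $\prod I(J_1,N)$ onto $\prod I(J_2,N)$; localizing at $x_\gF$ and passing to images in the relevant endomorphism algebras — the algebra acting on $\cC(J_2,\infty)$ being identified via part (\ref{fppd}) with that acting on $S_\infty/J_2\otimes^{\LL}_{S_\infty/J_1}\cC(J_1,\infty)$ — yields the surjection $T(J_1,\infty)\to T(J_2,\infty)$ and the equality of the image of $I(J_1,\infty)$ with $I(J_2,\infty)$.

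I expect the only genuine obstacle to be part (\ref{fppa}) together with the associated bookkeeping that makes "product of uniformly bounded perfect complexes, localized at $x_\gF$, commutes with derived tensor product" precise; once the minimal free complexes $\cF(J,N)$ of $N$‐independent rank are in hand (this is the standard ultrapatching input, cf.\ \cite[\S 2]{KT} and \cite[\S 2.2]{geenew}), parts (\ref{fppb})–(\ref{fppe}) are formal consequences of the finite-level statements in Remarks \ref{rem:galoisact}, \ref{rem:backtolevel0}, \ref{rem:varyJ} and Lemma \ref{lem:patchlem}, with only routine care needed about which base ring one tensors over and about restricting the ultrafilter to the various cofinite index sets $I_J$.
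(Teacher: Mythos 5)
Your proposal is correct and follows the same overall ultrapatching strategy as the paper: every assertion is reduced to its finite-level counterpart by taking products of the uniformly bounded minimal free complexes $\cF(J,N)$ and localizing at $x_{\gF}$, so that (1), (3) and (4) come out essentially as in the paper (for (1) you in effect reprove the criterion of \cite[Cor.~2.2.7]{geenew} that the paper cites, and in (4) you handle the commutation of product and base change via termwise finite-freeness of the minimal complexes where the paper invokes \cite[Tag 059K]{stacks-project}). The genuine divergence is in parts (\ref{fppb}) and (\ref{fppe}). The paper proves (\ref{fppb}) by a stabilization argument: since the $\cF(J,N)$ have ranks and amplitude independent of $N$ they fall into finitely many isomorphism classes, so after shrinking to a set in $\gF$ one may identify the endomorphism algebras $\End_{\bD(S_\infty/J)}(\cC(J,N))$ and hence identify $T(J,\infty)$ with $T(J,N)$ and $I(J,\infty)$ with $I(J,N)$; surjectivity of $R_\infty \to T(J,\infty)/I(J,\infty)$ is then immediate, and the paper's proof of (\ref{fppe}) reuses exactly this identification together with Remark \ref{rem:varyJ}. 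You instead obtain (\ref{fppb}) by composing $R_\infty \twoheadrightarrow R(d,J,\infty) \twoheadrightarrow T(J,\infty)/I(J,\infty)$ (Lemma \ref{lem:patchlem} plus the remark after Definition \ref{def:finitepatchedcomplexes}), and (\ref{fppe}) by exactness of products and localization applied to the surjections of Remark \ref{rem:varyJ}, transported through the isomorphism of (\ref{fppd}). Both routes work: the paper's stabilization buys the concrete identification $T(J,\infty)\cong T(J,N)$, which makes (\ref{fppe}) a one-line consequence, whereas your route is more formal and avoids choosing identifications, at the cost of the bookkeeping you acknowledge — namely checking that the isomorphism of (\ref{fppd}) is equivariant for the localized product Hecke actions, which holds because it is induced by the $T$-compatible finite-level maps of Remark \ref{rem:varyJ}.
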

\begin{proof}
\begin{enumerate}
	\item Perfectness of $\cC(J,\infty)$ follows 
from \cite[Corollary 2.2.7]{geenew} --- to apply this Corollary we need to show 
that 
there are constants $D, a, b$ (independent of $N$) such that the complexes 
$\cC(J,N)$ are each quasi-isomorphic to complexes of finite free 
$S_\infty/J$-modules of rank $\le D$ concentrated in degrees $[a,b]$. This 
follows from the theory of minimal resolutions, which we have already applied in order to assert the existence of the complexes $\cF(J, N)$ above. 

\item By the previous part of the proof, the complexes $\cF(J, N)$ $(N \in I_J)$ fall into finitely many isomorphism classes. Therefore there is an element $\Sigma_0$ of the ultrafilter $\gF$ on $I_J$ such that the  
$\cF(J,N)$ are isomorphic for all $N \in \Sigma_0$. We fix isomorphisms (of complexes)
between the 
$\cF(J,N)$ for $N \in \Sigma_0$ and a single complex $\cF(J,\infty)$. Then for all $N \in \Sigma_0$ 
we can identify all the finite endomorphism algebras 
$\End_{\bD(S_\infty/J)}(\cC(J,N))$ with each other. We deduce that there is a 
subset 
$\Sigma_1 \subset \Sigma_0$ with $\Sigma_1 \in \gF$ such that, under this 
identification, the finite Hecke rings $T(J,N)$ and their ideals $I(J,N)$ are 
also identified. So $T(J,\infty) \cong T(J,N)$ and $I(J,\infty)$ corresponds to 
$I(J,N)$ 
for $N \in \Sigma_1$. Since each map $R_\infty \rightarrow T(J,N)/I(J,N)$ is 
surjective, the map $R_\infty \rightarrow T(J,\infty)/I(J,\infty)$ is also 
surjective.
\item The third part follows immediately from the exactness of products and 
localization.
\item First we consider the map of complexes \[\prod_{N\in I_{J_1}}\cC(J_1,N) 
\rightarrow \prod_{N\in I_{J_1}}\cC(J_2,N).\] Since $\prod_{N\in 
I_{J_1}}S_\infty/J_2$ is a finitely presented $\prod_{N\in 
I_{J_1}}S_\infty/J_1$-module (as direct products are exact) the tensor product 
$(\prod_{N \in I_{J_1}} S_\infty/J_2)\otimes_{\prod 
{S_\infty/J_1}}$ commutes with direct products (\cite[Tag 
059K]{stacks-project}). We 
deduce (using Remark \ref{rem:varyJ}) that 
\begin{multline*}
(\prod_{N \in I_{J_1}} S_\infty/J_2)\otimes^\LL_{\prod 
{S_\infty/J_1}}\prod_{N\in 
I_{J_1}}\cC(J_1,N) \\= \prod_{N\in 
I_{J_1}}S_\infty/J_2\otimes^\LL_{S_\infty/J_1}\cC(J_1,N) \cong \prod_{N\in 
I_{J_1}}\cC(J_2,N).\end{multline*} Localizing at $x_\gF$ gives the desired 
statement --- 
since $I_{J_1}$ is cofinite in $I_{J_2}$ we can naturally identify the 
localization of  $\prod_{N\in 
I_{J_1}}\cC(J_2,N)$ with the localization of $\prod_{N\in 
I_{J_2}}\cC(J_2,N)$.
\item The final statement follows from the proof of part (2): there is a 
$\Sigma \subset I_{J_1}$ with $\Sigma \in \gF$ such that $T(J_i,\infty) \cong 
T(J_i, N)$ and $I(J_i,\infty)$ corresponds to $I(J_i,N)$ under these 
isomorphisms for all $N \in 
\Sigma$. Now the desired statement is a consequence of Remark \ref{rem:varyJ}.\qedhere
\end{enumerate}
\end{proof}
We write $\cF(J, \infty)$ for the minimal complex isomorphic to $\cC(J, \infty)$ in $\mathbf{D}(S_\infty / J)$ constructed in the proof of the previous proposition.
\begin{defn}
	We define a complex of $S_\infty$-modules
	\[ \cC_\infty = 
	\invlim_r\cF(\m_{S_\infty}^r,\infty), \]
	where	
	the transition maps in the inverse limit 
	are given by 
	making a choice for each $r \ge 1$ of a map of complexes lifting the 
	natural maps 
	$\cC(\m_{S_\infty}^{r+1},\infty) \rightarrow \cC(\m_{S_\infty}^r,\infty)$ 
	in $\bD(S_\infty/\m_{S_\infty}^{r+1})$. To 
	see that such a map of complexes exists, 
	note that since 
	$\cF(\m_{S_\infty}^{r+1},\infty)$ is a bounded complex of free
	$S_\infty/\m_{S_\infty}^{r+1}$-modules,  viewed as an element of the homotopy category~$\bK(S_\infty/\m_{S_\infty}^{r+1})$ of chain complexes of~$S_\infty/\m_{S_\infty}^{r+1}$-modules, 
	we 
	have
	\[\Hom_{\bK(S_\infty/\m_{S_\infty}^{r+1})}(\cF(\m_{S_\infty}^{r+1},\infty),
	\cF(\m_{S_\infty}^{r},\infty)) 
	=\Hom_{\bD(S_\infty/\m_{S_\infty}^{r+1})}(\cC(\m_{S_\infty}^{r+1},\infty),
	\cC(\m_{S_\infty}^{r},\infty)).\]
	
	Similarly, we let $T_\infty = \invlim_{J}T(J,\infty)$, where the transition 
	maps in the inverse limit are described in Proposition 
	\ref{prop:finitepatchedproperties}(\ref{fppe}). The inverse system of 
	ideals 
	$I(J,\infty)$ defines an ideal $I_\infty$ of $T_\infty$ which satisfies 
	$I_\infty^\delta = 0$.
\end{defn}
\begin{prop}\label{prop:patchedproperties} \leavevmode
	\begin{enumerate}
		\item\label{ppa} $\cC_\infty$ is a bounded complex of finite free 
		$S_\infty$-modules and 
		for each open ideal $J$ of $S_\infty$ there is an isomorphism 
		$\cC_\infty 
		\otimes_{S_\infty}S_\infty/J 
		\cong \cC(J,\infty)$ in $\mathbf{D}(S_\infty / J)$.
		\item\label{ppb} The natural map $\End_{\bD(S_\infty)}(\cC_\infty) 
		\rightarrow  \invlim_J\End_{\bD(S_\infty/J)}(\cC(J,\infty))$ is an 
		isomorphism, and we therefore obtain an injective map 
		$T_\infty\rightarrow 
		\End_{\bD(S_\infty)}(\cC_\infty)$.
		\item\label{ppc} The surjective $\Lambda$-algebra maps $R_\infty 
		\rightarrow 
		T(J,\infty)/I(J,\infty)$ induce a surjection 
		$R_\infty \rightarrow T_\infty/I_\infty$, which factors as a 
		composition of the map $R_\infty 
		\rightarrow \invlim_{d,J}R(d,J,\infty)$ and the $S_\infty$-algebra map 
		$\invlim_{d,J}R(d,J,\infty) 
		\rightarrow T_\infty/I_\infty$ defined by taking the inverse limit of 
		the maps 
		in Lemma \ref{lem:patchlem}(\ref{lplb}).
	\end{enumerate}
\end{prop}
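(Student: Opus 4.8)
This is Proposition~\ref{prop:patchedproperties}, which collects the basic structural properties of the patched complex~$\cC_\infty$. The proof is a routine assembly of the finite-level facts already established in Proposition~\ref{prop:finitepatchedproperties}, together with standard completeness/Mittag-Leffler arguments.

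\begin{proof}
For part (\ref{ppa}), recall from the proof of Proposition~\ref{prop:finitepatchedproperties}(\ref{fppa}) that for each $r \ge 1$ the complex $\cF(\m_{S_\infty}^r, \infty)$ is a minimal complex of finite free $S_\infty / \m_{S_\infty}^r$-modules, quasi-isomorphic to $\cC(\m_{S_\infty}^r, \infty)$, and that the ranks $\mathrm{rk}(\cF(\m_{S_\infty}^r, \infty)^i) = \dim_k H^i(\cC_0 \otimes^\LL_\Lambda k)$ are bounded independently of $r$, with the complexes supported in a fixed range $[a, b]$ of degrees. By Proposition~\ref{prop:finitepatchedproperties}(\ref{fppd}), the natural map $\cF(\m_{S_\infty}^{r+1}, \infty) \to \cF(\m_{S_\infty}^r, \infty)$ (which exists in the homotopy category by the $\Hom$-identification recorded in the definition of $\cC_\infty$) induces an isomorphism $S_\infty/\m_{S_\infty}^r \otimes^\LL_{S_\infty/\m_{S_\infty}^{r+1}} \cF(\m_{S_\infty}^{r+1}, \infty) \cong \cF(\m_{S_\infty}^r, \infty)$; in particular, since both sides are complexes of free modules over the respective rings and the reduction is already an isomorphism of complexes after minimalising, each $\cC_\infty^i = \invlim_r \cF(\m_{S_\infty}^r, \infty)^i$ is a finite free $S_\infty$-module (an inverse limit of a system of free modules with surjective, rank-preserving transition maps over an inverse system of Artinian quotients of a complete Noetherian local ring). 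Boundedness is clear. For the comparison $\cC_\infty \otimes_{S_\infty} S_\infty/J \cong \cC(J, \infty)$: for $J = \m_{S_\infty}^r$ this is immediate from the construction since $\cC_\infty^i$ is free and its reduction mod $\m_{S_\infty}^r$ is $\cF(\m_{S_\infty}^r, \infty)^i$; for general open $J$ one picks $r$ with $\m_{S_\infty}^r \subset J$ and applies Proposition~\ref{prop:finitepatchedproperties}(\ref{fppd}) again to get $\cC(J, \infty) \cong S_\infty/J \otimes^\LL_{S_\infty/\m_{S_\infty}^r} \cC(\m_{S_\infty}^r, \infty) \cong \cC_\infty \otimes_{S_\infty} S_\infty/J$.

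For part (\ref{ppb}), since $\cC_\infty$ is a bounded complex of finite free $S_\infty$-modules and $S_\infty$ is complete with respect to its maximal ideal, the functor $\RHom_{S_\infty}(\cC_\infty, \cC_\infty)$ is represented by a bounded complex of finite free $S_\infty$-modules, and hence $\End_{\bD(S_\infty)}(\cC_\infty) = H^0 \RHom_{S_\infty}(\cC_\infty, \cC_\infty)$ is a finite $S_\infty$-module. The reductions $\RHom_{S_\infty}(\cC_\infty, \cC_\infty) \otimes_{S_\infty} S_\infty/J$ compute $\RHom_{S_\infty/J}(\cC(J,\infty), \cC(J,\infty))$ by part (\ref{ppa}) and the freeness of $\cC_\infty$, and the inverse system $\{H^0 \RHom_{S_\infty/J}\}$ has surjective transition maps up to bounded error (the relevant $H^1$ of the complex of finite free modules reducing mod $J$ is handled by a Mittag-Leffler argument); so passing to the limit over $J$ gives $\End_{\bD(S_\infty)}(\cC_\infty) \cong \invlim_J \End_{\bD(S_\infty/J)}(\cC(J,\infty))$. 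The map $T_\infty = \invlim_J T(J,\infty) \to \invlim_J \End_{\bD(S_\infty/J)}(\cC(J,\infty)) \cong \End_{\bD(S_\infty)}(\cC_\infty)$ is then defined, and it is injective because each $T(J,\infty) \hookrightarrow \End_{\bD(S_\infty/J)}(\cC(J,\infty))$ is injective by definition and inverse limits are left exact.

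For part (\ref{ppc}), Proposition~\ref{prop:finitepatchedproperties}(\ref{fppb}) gives a surjection $R_\infty \onto T(J,\infty)/I(J,\infty)$ for each open $J$, compatible with the transition maps of Proposition~\ref{prop:finitepatchedproperties}(\ref{fppe}); taking the inverse limit over $J$ (and noting each $T(J,\infty)/I(J,\infty)$ is finite, so Mittag-Leffler applies and the limit is exact on the relevant system) yields a continuous surjection $R_\infty \onto \invlim_J T(J,\infty)/I(J,\infty) = T_\infty/I_\infty$ (the last equality because $I_\infty = \invlim_J I(J,\infty)$ by definition and the transition maps carry $I(J_1,\infty)$ onto $I(J_2,\infty)$). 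Finally, by Lemma~\ref{lem:patchlem}(\ref{lplb}), for $d$ large depending on $J$ the surjection factors through $R(d,J,\infty) \onto T(J,\infty)/I(J,\infty)$, compatibly in $d$ and $J$; taking $\invlim_{d,J}$ exhibits $R_\infty \onto T_\infty/I_\infty$ as the composite $R_\infty \to \invlim_{d,J} R(d,J,\infty) \to T_\infty/I_\infty$, as claimed.
\end{proof}

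The only mildly delicate point is the Mittag-Leffler bookkeeping in parts (\ref{ppb}) and (\ref{ppc}) --- ensuring that the inverse systems of finite modules one takes limits of have transition maps that are eventually surjective (or surjective up to a controlled error), so that $\invlim^1$ vanishes and the limit commutes with the relevant cohomology. This is entirely standard given that all the modules in sight ($T(J,\infty)$, $R(d,J,\infty)$, the cohomology of the $\RHom$ complex mod $J$) are finite over Artinian rings; I do not anticipate any genuine obstacle.
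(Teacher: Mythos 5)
Your proof is correct and takes essentially the same route as the paper's: parts (1) and (3) match the paper's argument (minimality forces the reduced transition maps to be isomorphisms of complexes, and the surjection onto $T_\infty/I_\infty$ comes from the factorization through the finite quotients $R(d,J,\infty)$, resp.\ $R_\infty/\m_{R_\infty}^{d(J)}$, together with Mittag--Leffler for finite modules), while your direct $\RHom$-plus-$\varprojlim^1$ computation in part (2) is precisely the content of the result \cite[Lemma 2.13]{KT} that the paper cites at that point. The only slips are cosmetic: the obstruction group in part (2) is $\varprojlim^1$ of $H^{-1}$ of the $\RHom$ complexes rather than $H^1$, and in part (3) Mittag--Leffler must be applied to the kernels inside the finite quotients through which the maps to $T(J,\infty)/I(J,\infty)$ factor (as in the paper), not to the target system itself.
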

\begin{proof}
	\begin{enumerate}
		\item It follows from the proof of Proposition 
		\ref{prop:finitepatchedproperties}(\ref{fppa}) that 
		
		\[ \mathrm{rk}_{S_\infty/\m_{S_\infty}^{r}}(\cF(\m_{S_\infty}^{r},\infty)^i)
		 = 
		\dim_k H^i(\cC_0\otimes^\LL_\Lambda k) \]
		for all $r$. Moreover, it 
		follows from Proposition \ref{prop:finitepatchedproperties}(\ref{fppd}) 
		and the 
		fact that any quasi-isomorphism of minimal complexes is an isomorphism
		that the transition 
		map 
		$\cF(\m_{S_\infty}^{r+1},\infty)\rightarrow\cF(\m_{S_\infty}^{r},\infty)$
		 induces an isomorphism 
		\[S_\infty/\m_{S_\infty}^{r}\otimes_{S_\infty/\m_{S_\infty}^{r+1}}
		\cF(\m_{S_\infty}^{r+1},\infty)\cong 
		\cF(\m_{S_\infty}^{r},\infty).\] It is now clear that $\cC_\infty$ is a 
		bounded complex of finite free $S_\infty$-modules. If $J$ is an open 
		ideal of $S_\infty$, then for $r$ sufficiently large so that 
		$\m_{S_\infty}^r \subset J$, $\cC_\infty\otimes_{S_\infty}S_\infty/J$ 
		is isomorphic to 
		$S_\infty/J\otimes_{S_\infty/\m_{S_\infty}^{r}}\cF(\m_{S_\infty}^{r},\infty)$,
		which is quasi-isomorphic to $\cC(J,\infty)$ by  Proposition 
		\ref{prop:finitepatchedproperties}(\ref{fppd}).
	\item For the second part, we first note that $T_\infty$ injects into 
	the inverse limit
	$\invlim_J \End_{\bD(S_\infty/J)}(\cC(J,\infty))$, since inverse limits are 
	left exact. The natural map $\End_{\bD(S_\infty)}(\cC_\infty) 
	\rightarrow  \invlim_J\End_{\bD(S_\infty/J)}(\cC(J,\infty))$ is an 
	isomorphism, by the first part of this proposition and (the proof of) 
	\cite[Lemma 2.13(3)]{KT}.
	\item Since the $T(J,\infty)$ are finite rings, the inverse system
	$I(J,\infty)_J$ satisfies the Mittag-Leffler condition and the natural map
	$T_\infty/I_\infty \rightarrow \invlim_J T(J,\infty)/I(J,\infty)$ is an
	isomorphism. For each $J$ the surjective map $R_\infty \rightarrow
	T(J,\infty)/I(J,\infty)$ factors through a finite quotient
	$R_\infty/\m_{R_\infty}^{d(J)}$ of $R_\infty$. Again,
        finiteness implies that
	the
	Mittag-Leffler condition holds, so taking the inverse limit over $J$ gives
	a surjective map $R_\infty = \invlim_J R_\infty/\m_{R_\infty}^{d(J)}
	\rightarrow T_\infty/I_\infty = \invlim_J T(J,\infty)/I(J,\infty)$. The
	desired factorization of the map $R_\infty \rightarrow T_\infty/I_\infty$
	follows from the fact that the maps $R_\infty \rightarrow
	T(J,\infty)/I(J,\infty)$ factor through $R(d,J,\infty)$ for $d$
    sufficiently large.\qedhere
	\end{enumerate}
\end{proof}
\begin{remark}
	There is a natural isomorphism $H^*(\cC_\infty) \cong \invlim_J 
	H^*(\cC_\infty/J) = \invlim_J H^*(\cC(J,\infty))$, so the cohomology of 
	$\cC_\infty$ is independent of the choices of transition maps made to 
	construct $\cC_\infty$. Moreover, if we denote by $\cD_\infty$ the complex 
	constructed with a different choice of transition maps, we have 
	$\Hom_{\bD(S_\infty)}(\cC_\infty,\cD_\infty) = \invlim_J 
	\Hom_{\bD(S_\infty/J)}(\cC(J,\infty),\cC(J,\infty))$ by the argument of 
	Proposition \ref{prop:patchedproperties}(\ref{ppb}), so there is a 
	canonical 
	isomorphism between $\cC_\infty$ and $\cD_\infty$ in $\bD(S_\infty)$.
\end{remark}
\begin{rem}\label{rem:liftStoR}
	Note that the map $\alpha: R_\infty \rightarrow \invlim_{d,J} 
	R(d,J,\infty)$ is surjective, and $\invlim_{d,J}R(d,J,\infty)$ is an 
	$S_\infty$-algebra. 
	As $S_\infty$ is formally smooth over $\Lambda$, we can choose a 
	lift of the map 
	$S_\infty \rightarrow \alpha(R_\infty)$ to a map $S_\infty \rightarrow 
	R_\infty$. In fact, we can and do make such a choice for $R_\infty$ and 
	$R'_\infty$ compatibly mod
	$\varpi$
	since \[(\invlim_{d,J}R(d,J,\infty))/\varpi = 
	\invlim_{d,J}(R(d,J,\infty)/\varpi) \cong 
	\invlim_{d,J}(R'(d,J,\infty)/\varpi),\]
	and since the sequence
		\[
\begin{tikzcd}[column sep=85]
R_\infty  
\arrow{r}{x \mapsto (x \text{ mod } \varpi, \alpha(x))}
&  R_\infty / \varpi \times\invlim_{d,J} 
		R(d,J,\infty)
		\arrow{r}{(y, z) \mapsto \alpha(y) - z \text{ mod } \varpi} 
		& R(d,J,\infty) / \varpi
\end{tikzcd}
\]	
	(and the analogous one for $R'_\infty$) is exact.
	We regard $R_\infty$ as an 
	$S_\infty$-algebra from now on. The map $R_\infty \rightarrow 
	T_\infty/I_\infty$ is an $S_\infty$-algebra map.
\end{rem}
\begin{lem}\label{rem:Gal0}
	The isomorphisms $R(d,J,N)\otimes_{S_\infty/J}S_\infty/(J + \ga_\infty) \cong 
	R_0/(\m_{R_0}^d,s(J+\ga_\infty))$  induce a surjective map $R_\infty/\ga_\infty 
	\rightarrow 
	R_0$.
\end{lem}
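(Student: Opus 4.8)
The plan is to establish the family of isomorphisms in the statement by working first at finite level $N$, then patching across the ultrafilter, and finally passing to the inverse limit over $(d,J)$; the surjection $R_\infty/\ga_\infty \to R_0$ then falls out of the surjectivity of $\alpha\colon R_\infty \to \invlim_{d,J} R(d,J,\infty)$ together with the $S_\infty$-algebra structure on $R_\infty$ fixed in Remark~\ref{rem:liftStoR}.

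First, I would fix $d\geq 1$ and an open ideal $J$ of $S_\infty$, and take $N\in I_J$. By Definition~\ref{def:Jcomplexes}, $R(d,J,N) = R_N/\m_{R_N}^d \otimes_{\Lambda[\Delta_N]} S_\infty/J$, so $R(d,J,N)\otimes_{S_\infty/J}S_\infty/(J+\ga_\infty) \cong R_N/\m_{R_N}^d \otimes_{\Lambda[\Delta_N]} S_\infty/(J+\ga_\infty)$. Since $J+\ga_\infty\supseteq\ga_\infty$, Remark~\ref{rem:backtolevel0} identifies $S_\infty/(J+\ga_\infty)$ with $\Lambda/s(J+\ga_\infty)$, the structure map $\Lambda[\Delta_N]\to S_\infty/(J+\ga_\infty)$ being the composite of the augmentation $\Lambda[\Delta_N]\to\Lambda$ (which sends $\Delta_N$ to $1$, because $\ga_\infty$ is the augmentation ideal and $\Delta_N$ is a quotient of $\Delta_\infty$) with the projection $\Lambda\to\Lambda/s(J+\ga_\infty)$. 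Using the isomorphism $R_N\otimes_{\Lambda[\Delta_N]}\Lambda\cong R_0$ from \S\ref{subsec:patchingsetup}, and that $R_N\to R_0$ is a surjection of local rings carrying $\m_{R_N}^d$ onto $\m_{R_0}^d$, this yields $R(d,J,N)\otimes_{S_\infty/J}S_\infty/(J+\ga_\infty)\cong R_0/(\m_{R_0}^d, s(J+\ga_\infty))$; and since the isomorphisms $R_N\otimes_{\Lambda[\Delta_N]}\Lambda\cong R_0$ are required to be compatible with the maps from $R_\infty$, so is this one.

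Next I would patch. The right-hand side is independent of $N$, so localizing the constant product $\prod_{N\in I_J}R_0/(\m_{R_0}^d, s(J+\ga_\infty))$ at the ultrafilter point $x_\gF$ recovers $R_0/(\m_{R_0}^d, s(J+\ga_\infty))$ itself (cf.~\cite[Lemma~2.2.2]{geenew}). Meanwhile $S_\infty/(J+\ga_\infty)$ is a finitely presented $S_\infty/J$-module, so $(-)\otimes_{S_\infty/J}S_\infty/(J+\ga_\infty)$ commutes with $\prod_{N\in I_J}$ and with localization at $x_\gF$, exactly as in the proof of Proposition~\ref{prop:finitepatchedproperties}(\ref{fppd}) (using \cite[Tag 059K]{stacks-project}). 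Hence $R(d,J,\infty)\otimes_{S_\infty/J}S_\infty/(J+\ga_\infty)\cong R_0/(\m_{R_0}^d, s(J+\ga_\infty))$, still compatibly with the maps induced by $R_\infty$; these are the isomorphisms in the statement. Finally, passing to the limit: all the rings $R(d,J,\infty)$ and $R_0/(\m_{R_0}^d, s(J+\ga_\infty))$ are finite, so the relevant $\invlim^1$ groups vanish and reduction modulo $\ga_\infty$ (i.e.\ $(-)\otimes_{S_\infty/J}S_\infty/(J+\ga_\infty)$) commutes with $\invlim_{d,J}$; using $\invlim_{d,J}S_\infty/J = S_\infty$ and $\invlim_{d,J}S_\infty/(J+\ga_\infty)=S_\infty/\ga_\infty=\Lambda$ we obtain
\[ \Big(\invlim_{d,J}R(d,J,\infty)\Big)/\ga_\infty \;\cong\; \invlim_{d,J}\big(R(d,J,\infty)/\ga_\infty\big) \;\cong\; \invlim_{d,J}R_0/(\m_{R_0}^d, s(J+\ga_\infty)) \;\cong\; R_0, \]
where the last isomorphism holds because the ideals $\m_{R_0}^d+s(J+\ga_\infty)R_0$ are mutually cofinal with the powers $\m_{R_0}^e$ (one has $\m_\Lambda^e R_0\subseteq\m_{R_0}^e$) and $R_0$ is $\m_{R_0}$-adically complete. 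Since $\alpha$ is surjective and, by Remark~\ref{rem:liftStoR}, an $S_\infty$-algebra map, the composite $R_\infty\xrightarrow{\alpha}\invlim_{d,J}R(d,J,\infty)\twoheadrightarrow R_0$ is surjective and kills $\ga_\infty R_\infty$, giving the desired surjection $R_\infty/\ga_\infty\to R_0$.

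This lemma is essentially bookkeeping, so there is no serious obstacle; the only point needing care is the commutation of reduction modulo $\ga_\infty$ with $\invlim_{d,J}$ in the final step, which could fail for general towers but holds here precisely because the transition maps are between finite rings.
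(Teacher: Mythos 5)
Your proof is correct and takes essentially the same route as the paper's: the finite-level isomorphisms are patched to level $\infty$ exactly as in the proof of Proposition~\ref{prop:finitepatchedproperties}(\ref{fppd}), and then one passes to the inverse limit over $(d,J)$, with finiteness of the rings ensuring that surjectivity survives the limit. The only differences are cosmetic: you spell out the finite-level isomorphisms (which the statement takes as given) and route the last step through the surjectivity of $\alpha$ from Remark~\ref{rem:liftStoR}, whereas the paper simply observes that each surjection $R_\infty/\ga_\infty \to R_0/(\m_{R_0}^d, s(J+\ga_\infty))$ factors through a finite quotient $R_\infty/(\m_{R_\infty}^{e(d,J)}+\ga_\infty)$ before taking the inverse limit.
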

\begin{proof}
	First we note that, following the proof of 
	\ref{prop:finitepatchedproperties}(\ref{fppd}), the isomorphisms
	\[ R(d,J,N)\otimes_{S_\infty/J}S_\infty/(J + \ga_\infty) \cong
	R_0/(\m_{R_0}^d,s(J+\ga_\infty)) \]
	 induce an isomorphism
	\[ R(d,J,\infty)\otimes_{S_\infty/J}S_\infty/(J + \ga_\infty) \cong
	R_0/(\m_{R_0}^d,s(J+\ga_\infty)). \]
	 In particular, the map $R_\infty/\ga_\infty
	\rightarrow
	R(d,J,\infty)\otimes_{S_\infty/J}S_\infty/(J + \ga_\infty) =
	R_0/(\m_{R_0}^d,s(J+\ga_\infty))$ is surjective, and factors through
	$R_\infty/(\m_{R_\infty}^{e(d,J)}+\ga_\infty)$ for some $e(d,J)$. Taking the
	inverse limit, we obtain a surjective map $R_\infty/\ga_\infty \rightarrow R_0$.
\end{proof}
\begin{prop}\label{prop:backtolevel0}
	There is an isomorphism $\cC_\infty/\ga_\infty \rightarrow \cC_0$ in $\mathbf{D}(\Lambda)$ which 
	induces a  map $T_\infty \rightarrow T_0$ which becomes surjective when 
	composed with the projection $T_0\rightarrow T_0/I_0$. Denoting the image 
	of 
	$I_\infty$ under this surjective map by $I_{\infty,0}$, we obtain a 
	surjective map 
	$R_\infty/\ga_\infty  
	\rightarrow (T_0/I_0)/I_{\infty,0}$. This map is the composition of the 
	map $R_\infty/\ga_\infty \rightarrow 
	R_0$ in Lemma \ref{rem:Gal0} with the map $R_0 \rightarrow  (T_0/I_0)/ 
	I_{\infty,0}$ coming from our original set-up.
\end{prop}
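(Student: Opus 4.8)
The plan is to assemble the statement entirely from the finite-level data of \S\ref{subsec:patchingsetup}, restricting attention to the cofinal family of open ideals $J_r = \ga_\infty + \ffrm_{S_\infty}^r$, for which $S_\infty/J_r = \Lambda/\ffrm_\Lambda^r$. First I would construct the isomorphism $\cC_\infty/\ga_\infty \cong \cC_0$. By Proposition~\ref{prop:patchedproperties}(\ref{ppa}) the complex $\cC_\infty$ is a bounded complex of finite free $S_\infty$-modules with $\cC_\infty\otimes_{S_\infty}S_\infty/J_r \cong \cC(J_r,\infty)$, and by Proposition~\ref{prop:finitepatchedproperties}(\ref{fppc}) there is an isomorphism $\pi_{J_r,\infty}\colon \cC(J_r,\infty)\cong \cC_0\otimes^{\bL}_\Lambda \Lambda/\ffrm_\Lambda^r$, compatibly in $r$. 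Thus $\cC_\infty\otimes_{S_\infty}\Lambda$ and $\cC_0$ are identified modulo each $\ffrm_\Lambda^r$; since both are bounded complexes of finite free $\Lambda$-modules and $\Lambda$ is $\ffrm_\Lambda$-adically complete, the argument already used to build $\cC_\infty$ (or \cite[Lemma~2.13]{KT}) shows that $\Hom_{\bD(\Lambda)}(\cC_\infty\otimes_{S_\infty}\Lambda,\cC_0) = \invlim_r \Hom_{\bD(\Lambda/\ffrm_\Lambda^r)}(\cC_\infty\otimes_{S_\infty}\Lambda/\ffrm_\Lambda^r,\cC_0\otimes^{\bL}_\Lambda \Lambda/\ffrm_\Lambda^r)$, so the compatible system lifts to an isomorphism $\cC_\infty/\ga_\infty \cong \cC_0$ in $\bD(\Lambda)$.

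Next I would produce the map $T_\infty\to T_0$ and check that $T_\infty\to T_0/I_0$ is surjective. Applying $-\otimes_{S_\infty}\Lambda$ to endomorphisms gives a ring homomorphism $\End_{\bD(S_\infty)}(\cC_\infty)\to \End_{\bD(\Lambda)}(\cC_\infty/\ga_\infty)=\End_{\bD(\Lambda)}(\cC_0)$, hence, using Proposition~\ref{prop:patchedproperties}(\ref{ppb}), a map $T_\infty\to \End_{\bD(\Lambda)}(\cC_0)$ compatible with the reductions modulo $J_r$. At each such level the map $T(J_r,\infty)\to \End_{\bD(\Lambda/\ffrm_\Lambda^r)}(\cC_0\otimes\Lambda/\ffrm_\Lambda^r)$ factors through the image of $T_0$: $T(J_r,\infty)$ is, up to the ultrafilter identification in the proof of Proposition~\ref{prop:finitepatchedproperties}, one of the rings $T(J_r,N)$, and the corresponding fact that $T_N\otimes_{\Lambda[\Delta_N]}\Lambda\to \End_{\bD(\Lambda)}(\cC_0)$ factors through $T_0$ is one of the hypotheses of \S\ref{subsec:patchingsetup}. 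Passing to $\invlim_r$ and using that $T_0$ is a finite (hence complete) $\Lambda$-algebra, identified with the inverse limit of its images in these endomorphism rings, we get that $T_\infty\to \End_{\bD(\Lambda)}(\cC_0)$ factors through $T_0$. Surjectivity of $T_\infty\to T_0/I_0$ is the same kind of argument: at each finite level $T_N\otimes_{\Lambda[\Delta_N]}\Lambda\to T_0/I_0$ is surjective by hypothesis, so the induced maps $T(J_r,\infty)/I(J_r,\infty)\to (T_0/I_0)/I_{N,0}$ (notation of \S\ref{subsec:patchingsetup}, item~(\ref{RtoTcompmodl})) are surjective; all the rings being finite, Mittag--Leffler lets us pass to $\invlim_r$ to obtain $T_\infty/I_\infty\onto T_0/I_0$. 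We then define $I_{\infty,0}$ as the image of $I_\infty$, and compose the surjection $R_\infty\to T_\infty/I_\infty$ of Proposition~\ref{prop:patchedproperties}(\ref{ppc}) with $T_\infty/I_\infty\onto (T_0/I_0)/I_{\infty,0}$ and with $R_\infty\to R_\infty/\ga_\infty$ to get the desired surjection $R_\infty/\ga_\infty\onto (T_0/I_0)/I_{\infty,0}$.

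Finally, to identify this surjection with the composite $R_\infty/\ga_\infty \to R_0 \to (T_0/I_0)/I_{\infty,0}$, the first map being that of Lemma~\ref{rem:Gal0} and the second induced by the original $R_0\to T_0/I_0$, I would trace the compatibilities. By Proposition~\ref{prop:patchedproperties}(\ref{ppc}) the map $R_\infty\to T_\infty/I_\infty$ factors through $\invlim_{d,J}R(d,J,\infty)$, whose reduction modulo $\ga_\infty$ is realized (as in the proof of Lemma~\ref{rem:Gal0}) via the isomorphisms $R(d,J,\infty)\otimes_{S_\infty/J}S_\infty/(J+\ga_\infty)\cong R_0/(\ffrm_{R_0}^d,s(J+\ga_\infty))$. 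At finite level, the map $R(d,J,N)\otimes_{S_\infty/J}S_\infty/(J+\ga_\infty)\to (T_0/I_0)/I_{N,0}$ is, by the compatibility \emph{explicitly demanded} in \S\ref{subsec:patchingsetup} between $R_N\otimes_{\Lambda[\Delta_N]}\Lambda\to (T_0/I_0)/I_{N,0}$ and $R_0\to(T_0/I_0)/I_{N,0}$, exactly the one induced by the original $R_0\to T_0/I_0$; passing through the ultraproduct and $\invlim$ over $(d,J)$ yields the claimed factorization. I expect the main obstacle to be purely organizational: there is no new idea, only a careful threading of the many compatibility hypotheses of \S\ref{subsec:patchingsetup} through the ultraproduct and inverse-limit constructions, the subtlest point being to ensure that the isomorphism $\cC_\infty/\ga_\infty\cong\cC_0$ is compatible with the Hecke actions — that is, that the induced map on endomorphism algebras really lands in $T_0$ and is the expected one — despite $\cC_\infty$ being defined only up to a choice of transition maps; this is handled by reducing everything modulo each $\ffrm_\Lambda^r$, where it becomes a finite-level statement, and then passing to the limit.
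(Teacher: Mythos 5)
Your proposal is correct and takes essentially the same route as the paper's proof: the isomorphism $\cC_\infty/\ga_\infty \cong \cC_0$ is assembled from the identifications of Proposition \ref{prop:finitepatchedproperties} for open ideals containing $\ga_\infty$ together with \cite[Lemma 2.13]{KT}, and the factorization through $T_0$, the surjectivity onto $T_0/I_0$, and the compatibility with the map of Lemma \ref{rem:Gal0} are all obtained by threading the finite-level hypotheses of \S\ref{subsec:patchingsetup} through the ultraproduct and the inverse limit over such ideals, exactly as in the paper. The only blemish is notational: in your surjectivity step the target should be $T_0/I_0$ (equivalently, the image of $T_0$ in $\End_{\mathbf{D}(\Lambda/\ffrm_\Lambda^r)}(\cC_0\otimes^{\bL}_{\Lambda}\Lambda/\ffrm_\Lambda^r)$ modulo the image of $I_0$) rather than $(T_0/I_0)/I_{N,0}$, but the hypothesis you quote in the same sentence — surjectivity of $T_N\otimes_{\Lambda[\Delta_N]}\Lambda\to T_0/I_0$ — is precisely what the Mittag--Leffler limit argument requires, so nothing is lost.
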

\begin{proof}
	We have $\cC_\infty/\ga_\infty =
	\invlim_r\cF(\m_{S_\infty}^r,\infty)/\ga_\infty$, and
	$\cF(\m_{S_\infty}^r,\infty)/\ga_\infty$ is a minimal resolution of
	$\cC_\infty/(\m_{S_\infty}^r+\ga_\infty) \cong \cC(\m_{S_\infty}^r+\ga_\infty,\infty)$. By
	Proposition \ref{prop:finitepatchedproperties}(\ref{fppc}), this is 
	quasi-isomorphic 
	to
	$\cC_0\otimes^\LL_{\Lambda}\Lambda/s(\m_{S_\infty}^r+\ga_\infty)$. Replacing 
	$\cC_0$
	by a quasi-isomorphic bounded complex of finite projective 
	$\Lambda$-modules and applying \cite[Lemma 2.13]{KT},
	we see that the quasi-isomorphisms $\cF(\m_{S_\infty}^r,\infty)/\ga_\infty \cong
	\cC_0\otimes^\LL_{\Lambda}\Lambda/s(\m_{S_\infty}^r+\ga_\infty)$ induce a
	quasi-isomorphism $\invlim_r\cF(\m_{S_\infty}^r,\infty)/\ga_\infty \cong \cC_0$. 

	The induced map $T_\infty \rightarrow \End_{\bD(\Lambda)}(\cC_0)$ is the
	composite of the surjective map $T_\infty \rightarrow \invlim_{\ga_\infty\subset 
	J}T(J,\infty)$ and an inverse limit of maps $T(J,\infty) \rightarrow
	\End_{\bD(\Lambda/s(J))}(\cC_0\otimes^\LL_{\Lambda}\Lambda/s(J))$. Each of
	these maps factors through $T_0$, and if we denote the image of $T_0$ in 
	$\End_{\bD(\Lambda/s(J))}(\cC_0\otimes^\LL_{\Lambda}\Lambda/s(J))$ by 
	$T_0^J$ then $T(J,\infty)$ surjects onto $T_0^J/I_0$. Passing to the inverse
	limit gives the desired map $T_\infty \rightarrow T_0$.
		
	The compatibility with the map $R_\infty \rightarrow R_0$ follows from the
	compatibility between the maps $T_N \rightarrow T_0$ and $R_N \rightarrow 
	R_0$
	in our original set-up.
	\end{proof}
We now separate out the primed and unprimed situations;  so we have two 
perfect complexes of $S_\infty$-modules, $\cC_\infty$ and $\cC'_\infty$. 
\begin{prop}\label{prop:primeandunprime} \leavevmode
	\begin{enumerate}
\item 	The quasi-isomorphisms $\cC_N/\varpi \cong \cC'_N/\varpi$ induce a 
	quasi-isomorphism $\cC_\infty/\varpi \cong \cC'_\infty/\varpi$. 
\item $T_\infty$ and $T'_\infty$ have the same image in 
$\End_{\bD(S_\infty)}(\cC_\infty/\varpi)$ and 
$\End_{\bD(S_\infty)}(\cC'_\infty/\varpi)$, via the identification 
$\cC_\infty/\varpi\cong \cC'_\infty/\varpi$ of the previous part. Call this 
common image 
$\overline{T}_\infty$.
\item Write $\overline{I}_\infty$ and $\overline{I}'_\infty$ for the images of 
$I_\infty$ and $I'_\infty$ in $\overline{T}_\infty$ The actions of 
$R_\infty/\varpi \cong R'_\infty/\varpi$  
(induced from $T_\infty$ and $T'_\infty$ respectively) on 
$H^*(\cC_\infty/\varpi)/(\overline{I}_\infty+\overline{I}'_\infty)$ 
and $H^*(\cC'_\infty/\varpi)/(\overline{I}_\infty+\overline{I}'_\infty)$ are 
identified via  
$\cC_\infty/\varpi \cong \cC'_\infty/\varpi$.
	\end{enumerate}
\end{prop}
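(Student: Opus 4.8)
The plan is to establish all three statements by propagating the corresponding finite-level compatibilities through the three operations that enter the construction of $\cC_\infty$: base change along $\Lambda[\Delta_N]\to S_\infty/J$, the ultraproduct over $N\in I_J$, and the inverse limit over the open ideals $J$ (and over $d$). The point that makes everything go through is that reduction modulo $\varpi$ commutes with each of these operations. For base change this holds because $\cC_N$ and $\cC'_N$ are represented by bounded complexes of finite free modules; for the ultraproduct it holds by the exactness of direct products and of localization used in Definition~\ref{def:finitepatchedcomplexes} (compare the proof of Proposition~\ref{prop:finitepatchedproperties}(\ref{fppd})); and for the inverse limit it holds because, by Proposition~\ref{prop:patchedproperties}(\ref{ppa}), $\cC_\infty$ is an honest bounded complex of finite free $S_\infty$-modules, so that $\cC_\infty/\varpi:=\cC_\infty\otimes_{S_\infty}S_\infty/\varpi$ is perfect over $S_\infty/\varpi$ and satisfies $\cC_\infty/\varpi\otimes_{S_\infty/\varpi}S_\infty/(J,\varpi)\cong\cC(J,\infty)/\varpi$ for every open $J$ (and similarly for $\cC'_\infty$).

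For part (1), I would first note that the given isomorphisms $\cC_N/\varpi\cong\cC'_N/\varpi$ in $\bD(\Lambda/\varpi[\Delta_N])$, being compatible with $\pi_N$ and $\pi'_N$ modulo $\varpi$, induce isomorphisms $\cC(J,N)/\varpi\cong\cC'(J,N)/\varpi$ that are compatible both in $N$ and in $J$. Patching over $N\in I_J$ exactly as in the proof of Proposition~\ref{prop:finitepatchedproperties}(\ref{fppd}) yields isomorphisms $\cC(J,\infty)/\varpi\cong\cC'(J,\infty)/\varpi$ compatible with the transition maps attached to $J_1\subset J_2$. As in the proof of Proposition~\ref{prop:patchedproperties}(\ref{ppb}), using \cite[Lemma 2.13(3)]{KT}, the natural map $\Hom_{\bD(S_\infty/\varpi)}(\cC_\infty/\varpi,\cC'_\infty/\varpi)\to\invlim_J\Hom_{\bD(S_\infty/(J,\varpi))}(\cC(J,\infty)/\varpi,\cC'(J,\infty)/\varpi)$ is an isomorphism, so this compatible system of isomorphisms defines a morphism $\cC_\infty/\varpi\to\cC'_\infty/\varpi$ in $\bD(S_\infty/\varpi)$. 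Its reduction modulo every $J$ is an isomorphism; since both complexes are perfect over $S_\infty/\varpi$, the cone has trivial derived reduction modulo the maximal ideal of $S_\infty/\varpi$ and hence is zero, so the morphism is a quasi-isomorphism.

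For parts (2) and (3) I would argue in the same style, now just propagating finite-level identifications. For each $N$ the structure map $T_N\to\End_{\bD(\Lambda/\varpi[\Delta_N])}(\cC_N/\varpi)$ factors through $T_N\twoheadrightarrow\overline{T}_N$ by the very definition of $\overline{T}_N$, and likewise for $T'_N$ with the \emph{same} map out of $\overline{T}_N$; base changing to $S_\infty/(J,\varpi)$ and taking images shows that $T(J,N)$ and $T'(J,N)$ have a common image $\overline{T}(J,N)$ in $\End_{\bD(S_\infty/(J,\varpi))}(\cC(J,N)/\varpi)$. Passing to the ultraproduct over $N$ and then to the inverse limit over $J$ — using, as in the proof of Proposition~\ref{prop:patchedproperties}, that the rings $\overline{T}(J,\infty)$ are finite, so the relevant inverse systems are Mittag--Leffler and $T_\infty$ (resp.\ $T'_\infty$) surjects onto $\invlim_J\overline{T}(J,\infty)$ — gives that $T_\infty$ and $T'_\infty$ have common image $\overline{T}_\infty:=\invlim_J\overline{T}(J,\infty)$ in $\End_{\bD(S_\infty/\varpi)}(\cC_\infty/\varpi)$, compatibly with the isomorphism of part (1); this is part (2). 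For part (3), hypothesis~(\ref{RtoTcompmodl}) of the set-up says that the two maps $R_N/\varpi\to\overline{T}_N/(\overline{I}_N+\overline{I}'_N)$ coming from $R_N\to T_N/I_N$ and $R'_N\to T'_N/I'_N$ coincide under $R_N/\varpi\cong R'_N/\varpi$; base changing, ultrapatching, and taking the inverse limit over $J$ and $d$ — all steps under which the relevant rings stay finite — shows that the two maps $R_\infty/\varpi\to\overline{T}_\infty/(\overline{I}_\infty+\overline{I}'_\infty)$ coincide, and hence so do the two resulting actions on $H^*(\cC_\infty/\varpi)/(\overline{I}_\infty+\overline{I}'_\infty)=H^*(\cC'_\infty/\varpi)/(\overline{I}_\infty+\overline{I}'_\infty)$.

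The only real obstacle is the bookkeeping underlying the first two paragraphs: one must make sure that ``reduction modulo $\varpi$ of the patched object'' genuinely agrees with ``the patch of the reductions modulo $\varpi$'' at each stage, which is what the freeness of $\cC_\infty$ and the exactness properties of the ultraproduct construction of Definition~\ref{def:finitepatchedcomplexes} are there to guarantee. Once this is checked, parts (2) and (3) follow formally from the compatibilities already built into the set-up in~\S\ref{subsec:patchingsetup}.
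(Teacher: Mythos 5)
Your proposal is correct and follows essentially the same route as the paper: reduce everything to the finite levels $\cC(J+\varpi,N)\cong\cC'(J+\varpi,N)$, patch, and use that $\Hom$ and $\End$ in $\bD(S_\infty/\varpi)$ are computed as the inverse limit over open ideals $J$ (via the minimal complexes / \cite[Lemma 2.13]{KT}), with parts (2) and (3) then coming from the set-up compatibilities of \S\ref{subsec:patchingsetup} together with finiteness (Mittag--Leffler) of the rings $T(J,\infty)$. The only cosmetic difference is that you verify the limiting map is a quasi-isomorphism by a cone/Nakayama argument, whereas the paper gets this directly from the compatible system of finite-level isomorphisms.
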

\begin{proof}
\begin{enumerate}
	\item The 
	isomorphisms $\cC_N/\varpi \cong \cC'_N/\varpi$ in $\mathbf{D}(\Lambda[\Delta_N])$ induce compatible 
	isomorphisms $\cC(J+\varpi,\infty) \cong \cC'(J+\varpi,\infty)$ for 
	all $J$. Since $\cC_\infty/\varpi = \invlim_r 
	\cF(\m_{S_\infty}^r,\infty)/\varpi$ and 
	$\cF(\m_{S_\infty}^r,\infty)/\varpi$ is a minimal resolution of 
	$\cC(\m_{S_\infty}^r+\varpi,\infty)$ we have 
	\[\Hom_{\bD(S_\infty/\varpi)}(\cC_\infty/\varpi,\cC'_\infty/\varpi) = 
	\invlim_J\Hom_{\bD(S_\infty/(J+\varpi))}(\cC(J+\varpi,\infty),
	\cC'(J+\varpi,\infty)).\] We therefore deduce the first part of the 
	Proposition.
	\item By the proof of the previous part, it suffices to show that 
	the images of $T_\infty$ and $T'_\infty$ in 
	$\End_{\bD(S_\infty/(J+\varpi))}(\cC(J+\varpi,\infty))$ and 
	$\End_{\bD(S_\infty/(J+\varpi))}(\cC'(J+\varpi,\infty))$ respectively 
	(which are $T(J+\varpi,\infty)$ and $T'(J+\varpi,\infty)$), are 
	identified 
	via 
	the quasi-isomorphisms $\cC(J+\varpi,\infty) \cong 
	\cC'(J+\varpi,\infty)$. This follows from the fact that
	for every $N \in I_{J+\varpi}$, $T(J+\varpi,N)$ and $T'(J+\varpi,N)$ are 
	identified via the quasi-isomorphism $\cC(J+\varpi,N) \cong 
	\cC'(J+\varpi,N)$, which is a 
	consequence of our original assumptions (see point (\ref{setupcomplexes}) 
	in Section \ref{subsec:patchingsetup}).
	\item It suffices to show that the maps $R_\infty/\varpi \rightarrow 
	T(J+\varpi,\infty)/I(J+\varpi,\infty)$ and $R'_\infty/\varpi \rightarrow 
	T'(J+\varpi,\infty)/I'(J+\varpi,\infty)$ are equal when we identify 
	$R_\infty/\varpi$ with $R'_\infty/\varpi$, $T(J+\varpi,\infty)$ with 
	$T'(J+\varpi,\infty)$, and pass to the quotient by 
	$I(J+\varpi,\infty)+I'(J+\varpi,\infty)$.  This follows from the 
	compatibility in point (\ref{RtoTcompmodl}) of Section 
	\ref{subsec:patchingsetup}.\qedhere
\end{enumerate}
\end{proof}

\subsection{The proof of Theorem \ref{thm:main_automorphy_lifting_theorem}}\label{sec:AppToALT}

We are now in a position to prove the first main theorem of this chapter (Theorem \ref{thm:main_automorphy_lifting_theorem}). We first establish the result under additional conditions in \S \ref{subsec:an_r_equals_t_theorem}, then reduce to this case using soluble base change in \S \ref{sec:proof_of_main_automorphy_lifting_theorem}.

\subsubsection{Application of the patching argument (Fontaine--Laffaille case)}\label{subsec:an_r_equals_t_theorem}
We take $F$ to be an imaginary CM number field, and fix the following data:
\begin{enumerate}
\item\label{item:first_fl_hyp} An integer $n \geq 2$ and a prime $p > n^2$.
\item A finite set $S$ of finite places of $F$, including the places above $p$. 
\item A (possibly empty) subset $R \subset S$ of places prime to $p$.
\item A cuspidal automorphic representation $\pi$ of $\GL_n(\bA_F)$, regular algebraic of some weight $\lambda$. 
\item A choice of isomorphism $\iota : \overline{\bQ}_p \cong \bC$.
\end{enumerate}
We assume that the following conditions are satisfied:
\begin{enumerate}
\addtocounter{enumi}{5}
\item If $l$ is a prime lying below an element of $S$, or which is ramified in $F$, then $F$ contains an imaginary quadratic field in which $l$ splits. In particular, each place of $S$ is split over $F^+$ and the extension $F / F^+$ is everywhere unramified. 
\item The prime $p$ is unramified in $F$. 
\item For each embedding $\tau : F \hookrightarrow \bC$, we have
\[ \lambda_{\tau, 1} + \lambda_{\tau c, 1} - \lambda_{\tau, n} - \lambda_{\tau c, n} < p - 2n. \]
\item For each $v \in S_p$, let $\overline{v}$ denote the place of $F^+$ lying below $v$. Then there exists a place $\overline{v}' \neq \overline{v}$ of $F^+$ such that $\overline{v}' | p$ and 
\[ \sum_{\overline{v}'' \neq \overline{v}, \overline{v}'} [ F^+_{\overline{v}''} : \bQ_p ] > \frac{1}{2} [ F^+ : \bQ ]. \]
\item The residual representation $\overline{r_\iota(\pi)}$ is absolutely irreducible.
\item If $v$ is a place of $F$ lying above $p$, then $\pi_v$ is unramified.
\item If $v \in R$, then $\pi_v^{\Iw_v} \neq 0$.
\item If $v \in S - (R \cup S_p)$, then $\pi_v$ is unramified, $v\notin R^c$, and $H^2(F_v, \ad \overline{r_\iota(\pi)}) = 0$. %
\item $S-(R \cup S_p)$ contains at least two places with distinct residue characteristics.
\item If $v \not\in S$ is a finite place of $F$, then $\pi_v$ is unramified. 
\item If $v \in R$, then $q_v \equiv 1 \text{ mod }p$ and $\overline{r_\iota(\pi)}|_{G_{F_v}}$ is trivial. 
\item\label{item:last_fl_hyp} The representation $\overline{r_\iota(\pi)}$ is decomposed generic in the sense of Definition~\ref{defn:decomposed_generic} and the image of~$\overline{r_\iota(\pi)}|_{G_{F(\zeta_p)}}$ is enormous in the sense of Definition~\ref{defn:enormous image}.
\end{enumerate}
We define an open compact subgroup $K = \prod_v K_v$ of $\GL_n(\widehat{\cO}_F)$ as follows:
\begin{itemize}
\item If $v \not\in S$, or $v \in S_p$, then $K_v = \GL_n(\cO_{F_v})$.
\item If $v \in R$, then $K_v = \Iw_v$.
\item If $v \in S - (R \cup S_p)$, then $K_v = \Iw_{v,1}$ is the pro-$v$ Iwahori subroup of $\GL_n(\cO_{F_v})$.
\end{itemize}
The following lemma shows that $K$ is neat, hence is a good subgroup of $\GL_n(\A_F^\infty)$.
\begin{lemma}\label{lem:neat_subgroups}
Suppose that $K = \prod_v K_v \subset \GL_n(\widehat{\cO}_F)$ is an open compact subgroup and that there exists two places $v,v'$ of $F$ such that $v, v'$ have distinct residue characteristics $q,q'$ and $K_v = \Iw_{v,1}$, $K_{v'} = \Iw_{v',1}$. Then $K$ is neat.
\end{lemma}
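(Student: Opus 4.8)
The plan is to show that every element $g = (g_w)_w \in K$ is neat by exhibiting a single place — namely the given place $v$ — at which the associated torsion group $\Gamma_v$ is already trivial; since neatness of $g$ only requires $\bigcap_w \Gamma_w = \{1\}$, this suffices, and hence $K$ is a good subgroup.

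First I would unwind the hypotheses at $v$. Since $v$ is absolutely unramified, a uniformizer $\varpi_v$ of $F_v$ may be taken to be $q$, and the condition $g_v \in K_v(1)$ means precisely that $g_v = 1 + q N$ for some $N \in M_n(\cO_{F_v})$. The characteristic polynomial of $g_v$ is then $\det(X - 1 - qN) = \prod_{i=1}^n\big((X-1) - q\mu_i\big)$, where $\mu_1,\dots,\mu_n \in \overline{F}_v$ are the eigenvalues of $N$; these $\mu_i$ are integral over $\cO_{F_v}$, hence lie in the valuation ring $\cO_{\overline{F}_v}$. Consequently each eigenvalue $\alpha_i = 1 + q\mu_i$ of $g_v$ satisfies $\alpha_i \in 1 + q\cO_{\overline{F}_v}$. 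Since $1 + q\cO_{\overline{F}_v}$ is a subgroup of $\overline{F}_v^\times$, and the eigenvalues of $g_v$ acting through any faithful algebraic representation of $\GL_n$ are monomials in the $\alpha_i$, the whole subgroup of $\overline{F}_v^\times$ generated by these eigenvalues is contained in $1 + q\cO_{\overline{F}_v}$; in particular $\Gamma_v$ is contained in its torsion subgroup.

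The remaining step is the purely local assertion that $1 + q\cO_{\overline{F}_v}$ is torsion-free when $q > 2$. A nontrivial torsion element $\zeta$ would be a root of unity congruent to $1$ modulo the maximal ideal of $\cO_{\overline{F}_v}$; prime-to-$q$ roots of unity inject into the residue field, so $\zeta$ must have order a power of $q$, and then a suitable power of $\zeta$ is a primitive $q$-th root of unity $\zeta_q \in 1 + q\cO_{\overline{F}_v}$. Normalizing the valuation $v$ on $\overline{F}_v$ so that $v(q) = 1$, we have $v(\zeta_q - 1) = 1/(q-1)$, while membership in $1 + q\cO_{\overline{F}_v}$ forces $v(\zeta_q - 1) \geq 1$; this is impossible for $q > 2$. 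Hence $\Gamma_v = \{1\}$, and $g$ is neat.

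I do not expect any serious obstacle here: the only point that requires care is the valuation bookkeeping — specifically, noticing that the hypothesis that $v$ is absolutely unramified is exactly what upgrades the congruence $\alpha_i \equiv 1 \bmod \mathfrak{m}_{\overline{F}_v}$ to $\alpha_i \in 1 + q\cO_{\overline{F}_v}$, and that $q > 2$ is precisely the inequality needed to rule out $q$-th roots of unity — together with ensuring the argument is uniform in the choice of faithful representation (which is automatic, since all the relevant eigenvalues lie in the single group $1 + q\cO_{\overline{F}_v}$).
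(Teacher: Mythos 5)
Your proof is correct and follows essentially the same route as the paper: show $\Gamma_v$ is trivial by noting every eigenvalue of $g_v$ lies in $1+q\cO_{\overline{F}_v}$, so any torsion element would be a $q$-power root of unity congruent to $1$ mod $q$, which is impossible for $q>2$. You simply spell out the valuation computation and the reduction to a primitive $q$-th root of unity that the paper leaves implicit.
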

\begin{proof}
We show that if $(g_v,g_{v'}) \in \Iw_{v,1}\times \Iw_{v',1}$, then the group $\Gamma_v \cap \Gamma_{v'}$ (see the definition 
of neat in \S\ref{sssec:symmetric}) is trivial. Suppose this is not the case, then it contains a root of unity $\zeta$ of some prime order $q''$.

If $\alpha$ is an eigenvalue of 
$g_v$ in $\overline{F}_v$, then $\alpha-1$ is in the maximal ideal of $\cO_{\overline{F}_v}$. The same is then true for $\zeta$, thus $q''=q$. However, running the above for $v'$ instead of $v$ also shows $q''=q'$, so $q'=q$, a contradiction.
\end{proof}
By Theorem \ref{thm:application_of_matsushima}, we can find a coefficient field $E \subset \overline{\bQ}_p$ and a maximal ideal $\ffrm \subset \bT^S(K, \cV_\lambda)$ such that $\overline{\rho}_\ffrm \cong \overline{r_\iota(\pi)}$. After possibly enlarging $E$, we can and do assume that the residue field of $\m$ is equal to $k$.
For each tuple $(\chi_{v, i})_{v \in R, i = 1, \dots, n}$ of characters $\chi_{v, i} : k(v)^\times \to \cO^\times$ which are trivial modulo $\varpi$, we define a global deformation problem  by the formula
\[ \cS_\chi = (\overline{\rho}_\ffrm, S, \{ \cO \}_{v \in S}, \{ \cD_v^\text{FL} \}_{v \in S_p} \cup \{ \cD_v^\chi \}_{v \in R} \cup \{ \cD_v^\square \}_{v \in S - (R \cup S_p)}). \]
We fix representatives $\rho_{\cS_\chi}$ of the universal deformations which are identified modulo $\varpi$ (via the identifications $R_{\cS_\chi} / \varpi \cong R_{\cS_1} / \varpi$). We observe that the local deformation problems defining $\cS_\chi$ are formally smooth away from the places in $R$. We define an $\cO[K_S]$-module $\cV_\lambda(\chi^{-1}) = \cV_\lambda \otimes_\cO \cO(\chi^{-1})$, where $K_S$ acts on $\cV_\lambda$ by projection to $K_p$ and on $\cO(\chi^{-1})$ by the projection $K_S \to K_R = \prod_{v \in R} \Iw_v \to \prod_{v \in R} (k(v)^\times)^n$.
\begin{prop}\label{prop:existence_of_Hecke_Galois_with_LGC}
There exists an integer $\delta \geq 1$, depending only on $n$ and $[F : \Q]$, an ideal $J \subset \T^S( R \Gamma(X_K, \cV_\lambda(\chi^{-1})))_\ffrm$ such that $J^\delta = 0$, and a continuous surjective homomorphism
\[ f_{\cS_\chi} : R_{\cS_\chi}  \to \T^S( R \Gamma(X_K, \cV_\lambda(\chi^{-1})))_\ffrm / J \]
such that for each finite place $v \not \in S$ of $F$, the characteristic polynomial of $f_{\cS_\chi} \circ \rho_{\cS_\chi}(\Frob_v)$ equals the image of $P_v(X)$ in $\T^S( R \Gamma(X_K, \cV_\lambda(\chi^{-1})))_\ffrm / J$.
\end{prop}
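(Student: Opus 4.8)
The plan is to assemble the statement from the local--global compatibility results established earlier in the paper, together with the local deformation ring machinery of \S\ref{sec:Galdefthy}. First I would note that the existence of a continuous homomorphism $\rho_\ffrm : G_{F, S} \to \GL_n(\T^S(R\Gamma(X_K, \cV_\lambda(\chi^{-1})))_\ffrm/J_0)$ satisfying local--global compatibility at the unramified places $v \notin S$ (with the expected characteristic polynomial $P_v(X)$) and modulo a nilpotent ideal $J_0$ of bounded exponent is precisely Theorem~\ref{thm:existence_of_Hecke_repn_for_GL_n}, applied after twisting the coefficient system by the character $\chi^{-1}$ (note $\cV_\lambda(\chi^{-1})$ is an $\cO[K_S]$-module, finite free over $\cO$, so the theorem applies directly, and the twist by $\chi^{-1}$ does not change the maximal ideal $\ffrm$ since the $\chi_{v,i}$ are trivial mod $\varpi$). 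It only remains to check that this $\rho_\ffrm$, after possibly enlarging the nilpotent ideal, is of type $\cS_\chi$, i.e.\ that its restrictions to decomposition groups at places of $S$ land in the appropriate local deformation conditions; once that is known, the universal property of $R_{\cS_\chi}$ furnishes the surjection $f_{\cS_\chi}$ (surjectivity is automatic because the Hecke algebra is topologically generated by the $\Frob_v$ for $v \notin S$, whose images are determined by $P_v(X)$, i.e.\ by the trace of the universal representation).

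The bulk of the work is the verification of the local conditions, place by place. At the places $v \in R$: here $K_v = \Iw_v$ and by hypothesis the $\chi_{v,i}$ give, via the twist, that the action of $T_n(\cO_{F_v})$ on the cohomology factors through the Hecke operators in $\cH(\GL_n(F_v), \Iw_v(1,1)) \otimes \cO$ defined in \S\ref{sec:some_useful_hecke_operators}; I would invoke Theorem~\ref{thm:lgc_at_l_neq_p} (via Proposition~\ref{prop:lgc_at_l_neq_p_det_version}), which tells us that after enlarging the nilpotent ideal the characteristic polynomial of $\rho_\ffrm(\sigma)$ for $\sigma \in W_{F_v}$ equals the image of $P_{v,\sigma}(X)$, and then by Proposition~\ref{prop_action_of_central_elements_of_pro_p_iwahori_hecke_algebra} these characteristic polynomials are governed by the characters $t_{v,i}(\alpha)$; matching these against the $\chi_{v,i}$ (which are specified by the twist $\cV_\lambda(\chi^{-1})$ and the chosen level) shows $\rho_\ffrm|_{G_{F_v}} \in \cD_v^\chi(A)$ for the relevant Artinian quotient $A$ of $\T^S(\ldots)_\ffrm/J$, this being exactly the definition of $\cD_v^\chi$ in \S\ref{sec:Iharadef}. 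At the places $v \in S_p$: I would apply Theorem~\ref{thm:lgcfl} (=Theorem~\ref{myAmazingTheorem}), whose hypotheses have been arranged by conditions (\ref{item:first_fl_hyp})--(\ref{item:last_fl_hyp}) above --- in particular $p > n^2$, $p$ unramified in $F$, the weight inequality $\lambda_{\tau,1} + \lambda_{\tau c,1} - \lambda_{\tau,n} - \lambda_{\tau c,n} < p - 2n$, the splitting condition at $\overline{v}'$, decomposed genericity, and the existence of characteristic zero cohomology via $\pi$ (condition (\ref{item:last_fl_hyp}) plus Theorem~\ref{thm:application_of_matsushima} gives $H^\ast(X_K, \cV_\lambda)_\ffrm[1/p] \ne 0$, so hypothesis (\ref{part:therearecharzero}) of Theorem~\ref{myAmazingTheorem} holds); one needs to carry the twist by $\chi$ through, but since $\chi$ is unramified at $p$-adic places this does not affect the Fontaine--Laffaille condition, giving $\rho_\ffrm|_{G_{F_v}} \in \cD_v^{\mathrm{FL}}(A)$. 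At the places $v \in S - (R \cup S_p)$: here $\cD_v = \cD_v^\square$ is the full deformation functor, so there is nothing to check.

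Throughout, the main technical bookkeeping issue --- and what I expect to be the genuine obstacle --- is the management of the several nilpotent ideals: Theorem~\ref{thm:existence_of_Hecke_repn_for_GL_n}, Theorem~\ref{thm:lgc_at_l_neq_p}, and Theorem~\ref{myAmazingTheorem} each only give local--global compatibility after quotienting by a nilpotent ideal of bounded (but a priori different) exponent, and these ideals are not a priori comparable. The resolution is the same as in analogous arguments (e.g.\ the proof of Corollary~\ref{cor:first_consequence_for_FL_property} and of Theorem~\ref{myAmazingTheorem}): one takes $J$ to be (the image in $\T^S(R\Gamma(X_K, \cV_\lambda(\chi^{-1})))_\ffrm$ of) a sum or product of the relevant nilpotent ideals, uses Carayol's lemma (\cite[Lemma 2.1.10]{cht}) to conjugate the various Galois representations into a common $\GL_n$ over the image of $\T^S$ modulo this combined ideal, and observes that a product of bounded-exponent nilpotent ideals is again nilpotent of exponent bounded in terms of $n$ and $[F:\Q]$. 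This yields the single integer $\delta$ depending only on $n$ and $[F:\Q]$. The final packaging is then: such a $\rho_\ffrm$ is a lift of $\overline\rho_\ffrm$ of type $\cS_\chi$ valued in $A = \T^S(R\Gamma(X_K, \cV_\lambda(\chi^{-1})))_\ffrm/J$, hence factors through $R_{\cS_\chi} \to A$, and this map is surjective since $A$ is topologically generated over $\cO$ by traces of Frobenii which lie in the image.
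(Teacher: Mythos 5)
Your argument is correct and follows essentially the same route as the paper's proof: the representation comes from Theorem~\ref{thm:existence_of_Hecke_repn_for_GL_n}, the Fontaine--Laffaille condition at $v\mid p$ from Theorem~\ref{thm:lgcfl}, the characteristic-polynomial condition at $v\in R$ from Theorem~\ref{thm:lgc_at_l_neq_p}, with nothing to verify at the remaining places since $\cD_v^\square$ is unrestricted. Your additional bookkeeping (combining the nilpotent ideals and noting surjectivity via the unramified Hecke operators) is just a more explicit rendering of what the paper leaves implicit.
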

\begin{proof}
This is a matter of combining the various local-global compatibility results we have proved so far. The existence of a Galois representation $\rho_\m : G_{F, S\cup S^c} \to \GL_n(\T^S( R \Gamma(X_K, \cV_\lambda(\chi^{-1})))_\ffrm/J)$ satisfying the required condition at finite places $v \not\in S\cup S^c$ is contained in Theorem \ref{thm:existence_of_Hecke_repn_for_GL_n}. After conjugation, we can assume that $\rho_\m \text{ mod }\m$ equals $\overline{\rho}_\m$. To prove the proposition, we need to show that for each $v \in S$, $\rho_\m|_{G_{F_v}}$ is a lifting of $\overline{\rho}|_{G_{F_v}}$ of the appropriate type, and that for each $v\in S^c - S$, $\rho_{\m}|_{G_{F_v}}$ is unramifed and the characteristic polynomial of $\rho_{\m}(\Frob_v)$ has the correct form. Theorem \ref{thm:lgcfl} shows that the Fontaine--Laffaille condition is satisfied for each $v | p$. We apply Theorem \ref{thm:lgc_at_l_neq_p} with the set $S$ of places there equal to $S \cup S^c$ and the set $R$ equal to $S-S_p$. This shows that the appropriate condition on the characteristic polynomials of elements $\rho_\m(\sigma)$ ($\sigma \in I_{F_v}$) is satisfied for each $v \in R$, and that $\rho_{\m}|_{G_{F_v}}$ is unramifed with the characteristic polynomial $\rho_\m(\Frob_v)$ of the correct form for $v\in S^c-S$.
\end{proof}

 Recall (as in (\ref{sec:AvoidIharasetup})) that it makes sense to talk about the support of  $H^\ast(X_K, \cV_{\lambda}(1))_\ffrm$ over $ R_{\cS_{1}}$, even though $H^\ast(X_K, \cV_{\lambda}(1))_\ffrm$ is not literally an $R_{\cS_{1}}$-module. We can now state our first key technical result, which we will prove below.
\begin{theorem}\label{thm:R_equals_T}
Under assumptions (\ref{item:first_fl_hyp})--(\ref{item:last_fl_hyp}) above, $H^\ast(X_K, \cV_{\lambda}(1))_\ffrm$ has full support over $R_{\cS_{1}}$. %
\end{theorem}

\begin{cor}\label{cor:R_equals_T_implies_automorphy}
Under assumptions (\ref{item:first_fl_hyp})--(\ref{item:last_fl_hyp}) above, 
suppose given a continuous representation $\rho : G_F \to 
\GL_n(\overline{\bQ}_p)$ satisfying the following conditions:
\begin{enumerate}
\item We have $\overline{\rho} \cong \overline{r_\iota(\pi)}$.
\item For each place $v | p$ of $F$, $\rho|_{G_{F_v}}$ is crystalline. For each embedding $\tau : F \hookrightarrow \overline{\bQ}_p$, we have
\[ \mathrm{HT}_\tau(\rho) = \{ \lambda_{\iota\tau, 1} + n-1, \dots, \lambda_{\iota\tau, n} \}. \]
\item For each finite place $v\not\in S$ of $F$, $\rho|_{G_{F_v}}$ is unramified.
\item For each place $v \in R$, $\rho|_{G_{F_v}}$ is unipotently ramified.
\end{enumerate}
Then $\rho$ is automorphic: there exists a cuspidal, regular algebraic automorphic representation $\Pi$ of weight $\lambda$ such that $\rho \cong r_\iota(\Pi)$. Moreover, if $v$ is a finite place of $F$ such that $v | p $ or $v \not\in S$, then $\Pi_v$ is unramified.
\end{cor}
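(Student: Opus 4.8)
The plan is to derive Corollary~\ref{cor:R_equals_T_implies_automorphy} from Theorem~\ref{thm:R_equals_T} by a standard commutative-algebra argument, combined with a careful matching of deformation conditions. First I would observe that the representation $\rho$ in the statement is, after conjugation, valued in $\GL_n(\cO')$ for the ring of integers $\cO'$ of a finite extension $E'/E$; its reduction is $\overline{r_\iota(\pi)} \cong \overline{\rho}_\ffrm$, so (after possibly enlarging $E'$) it defines a point $x : R_{\cS_1} \to \cO'$ of the universal deformation ring, provided I can check that $\rho$ is of type $\cS_1$. The local conditions to verify are: at each $v \mid p$, that $\rho|_{G_{F_v}}$ is Fontaine--Laffaille of the correct type $(\lambda_{\iota\tau})_\tau$ --- this follows from the crystalline hypothesis, the Hodge--Tate weight formula, and the fact that $\lambda_{\tau,1}-\lambda_{\tau,n} < p-n$ (a consequence of hypothesis~(viii)), using Proposition~\ref{thm:FLring}; at each $v \in R$, that $\rho|_{G_{F_v}}$ lies in $\cD_v^1$, i.e.\ that the characteristic polynomial of $\rho(\sigma)$ is $(X-1)^n$ for $\sigma \in I_{F_v}$ --- this is exactly the ``unipotently ramified'' hypothesis~(4); and at each $v \in S-(R\cup S_p)$, that $\rho|_{G_{F_v}}$ is a lift, which is automatic since $\cD_v = \cD_v^\square$. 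So $\rho$ gives a homomorphism $x : R_{\cS_1} \to \cO'$.

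Next I would invoke Theorem~\ref{thm:R_equals_T}: $H^\ast(X_K, \cV_\lambda(1))_\ffrm$ is a nearly faithful $R_{\cS_1}$-module, so $\Spec R_{\cS_1}/(\operatorname{Ann} H^\ast) = \Spec R_{\cS_1}$ and in particular the support of $H^\ast(X_K, \cV_\lambda(1))_\ffrm$ over $R_{\cS_1}$ contains every point, including $x$. Concretely, since the annihilator $J$ of the cohomology is nilpotent and $R_{\cS_1}$ surjects onto $\T^S(R\Gamma(X_K,\cV_\lambda(1)))_\ffrm/J$ compatibly with Frobenius characteristic polynomials (Proposition~\ref{prop:existence_of_Hecke_Galois_with_LGC} with trivial $\chi$), the homomorphism $x$ factors through this Hecke algebra modulo a nilpotent ideal. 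Therefore there is a maximal ideal (equivalently a characteristic-zero point after inverting $p$) of $\T^S(R\Gamma(X_K,\cV_\lambda(1)))_\ffrm$ whose associated system of Hecke eigenvalues matches $\rho$ at all places $v \notin S$. Then Theorem~\ref{thm:application_of_matsushima}(2) --- applied to the group $\GL_n$, using that $H^\ast(X_K,\cV_\lambda)_\ffrm[1/p] \neq 0$ --- produces a cuspidal regular algebraic automorphic representation $\Pi$ of $\GL_n(\bA_F)$ of weight $\iota\lambda$ such that the Hecke eigenvalues of $(\iota^{-1}\Pi^\infty)^K$ realize this system, whence $r_\iota(\Pi) \cong \rho$ (the two semisimple representations have the same Frobenius characteristic polynomials at a density-one set of places, so agree by Chebotarev and Brauer--Nesbitt). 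Unramifiedness of $\Pi_v$ for $v \mid p$ or $v \notin S$ follows because $\Pi_v^{K_v} \neq 0$ and $K_v = \GL_n(\cO_{F_v})$ at those places, and an unramified vector fixed by the full hyperspecial maximal compact forces $\Pi_v$ unramified.

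The one subtlety to address carefully --- and the step I expect to be the main obstacle --- is that Theorem~\ref{thm:R_equals_T} is stated for the cohomology module with coefficients $\cV_\lambda(1)$, i.e.\ for the level subgroup with Iwahori factors at $R$ and the trivial character twist, whereas I want to extract an automorphic representation at those places too. But this is handled automatically: the point $x$ lands in the support, so there is a system of Hecke eigenvalues for $\cH(\GL_n(\bA_F^{\infty,S}),K^S)$ appearing in $H^\ast(X_K,\cV_\lambda)_\ffrm[1/p]$, and Theorem~\ref{thm:application_of_matsushima}(2) directly yields the cuspidal $\Pi$ of weight $\iota\lambda$ with $(\iota^{-1}\Pi^\infty)^K \neq 0$ realizing it; the behavior of $\Pi$ at places of $R$ is not needed for the statement of the corollary, which only asserts unramifiedness away from $S$ and at $p$. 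The genuinely delicate point is therefore the verification that the Fontaine--Laffaille deformation condition of \S\ref{sec:FLdef} is literally the one cut out by crystallinity together with the Hodge--Tate weight formula --- this is exactly the content of Proposition~\ref{thm:FLring}(2), which requires the inequality on $\lambda$ ensuring we are in the Fontaine--Laffaille range, and is where hypotheses~(vii) (unramifiedness of $p$ in $F$) and~(viii) are used. Once that identification is in hand, the argument is purely formal.
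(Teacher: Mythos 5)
Your proposal is correct and follows essentially the same route as the paper: conjugate $\rho$ into $\GL_n(\cO)$ so that it defines a type-$\cS_1$ lifting and hence a point $f$ of $R_{\cS_1}$, use the near-faithfulness statement of Theorem~\ref{thm:R_equals_T} to place $\ker f$ in the support of $H^\ast(X_K,\cV_\lambda(1))_\ffrm[1/p]$, and then apply Theorem~\ref{thm:application_of_matsushima} together with $K_v=\GL_n(\cO_{F_v})$ for $v\mid p$ or $v\notin S$ to produce $\Pi$ and its unramifiedness. Your additional verifications (the Fontaine--Laffaille range check via the inequality on $\lambda$, the matching of the unipotent condition at $R$ with $\cD_v^1$) are exactly the points the paper leaves implicit, so there is nothing to correct.
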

\begin{proof}
After possibly enlarging the coefficient field $E$, and replacing $\rho$ by a $\GL_n(\overline{\bQ}_p)$-conjugate, we can assume that it takes values in $\GL_n(\cO)$, and that $\rho \text{ mod }\varpi = \overline{\rho}_\ffrm$. Then $\rho$ is a lifting of type $\cS_1$, so determines a homomorphism $f : R_{\cS_1} \to E$. Theorem \ref{thm:R_equals_T} implies that $\ker f$ is in the support of $H^\ast(X_K, \cV_{\lambda}(1))_\ffrm[1/p]$; Theorem \ref{thm:application_of_matsushima} then implies that there exists a cuspidal, regular algebraic automorphic representation $\Pi$ of weight $\lambda$ such that $\rho \cong r_\iota(\Pi)$ and $(\Pi^\infty)^K \neq 0$. This is the desired result (recall that $K_v = \GL_n(\cO_{F_v})$ if $v | p $ or $v \not\in S$).
\end{proof}
Before proceeding to the proof of Theorem \ref{thm:R_equals_T}, we need to 
introduce auxiliary level subgroups. These will be associated to a choice of 
Taylor--Wiles datum $(Q,(\alpha_{v,1},\ldots,\alpha_{v,n})_{v\in Q})$ for 
$\cS_1$ (see~\S\ref{sec:TWprimes}). We assume that for each $v \in Q$, there 
exists an imaginary quadratic subfield of $F$ in which the residue characteristic $l_v$ 
of $v$ splits. This Taylor--Wiles datum is  automatically a Taylor--Wiles datum 
for all the global deformation problems $\cS_\chi$, and so the auxiliary 
deformation problems $\cS_{\chi, Q}$ are defined, and the deformation ring 
$R_{\cS_{\chi, Q}}$ has a natural structure of $\cO[\Delta_Q]$-algebra, where 
$\Delta_Q = \prod_{v \in Q} \Delta_v = \prod_{v \in Q} k(v)^\times(p)^n$. The 
constructions we are about to give necessarily involve a lot of notation. 
Accordingly, we invite the reader to review the notation related to Hecke 
algebras in \S \ref{sec:unitary_group_setup} before continuing.

We define two auxiliary level subgroups $K_1(Q) \subset K_0(Q) \subset K$. They are good subgroups of $\GL_n(\A_F^\infty)$, determined by the following conditions:
\begin{itemize}
	\item If $v \not\in S \cup Q$, then $K_1(Q)_v = K_0(Q)_v = K_v$.
	\item If $v \in Q$, then $K_0(Q)_v = \Iw_v$ and $K_1(Q)_v$ is the maximal pro-prime-to-$p$ subgroup of $\Iw_v$. 
\end{itemize}
Then there is a natural isomorphism $K_0(Q) / K_1(Q) \cong \Delta_Q$, and surjective morphisms of $\T^{S \cup Q}$-algebras
\numequation \label{eqn:maps_of_Q_Hecke_algebras} \begin{aligned}
 _{K_0(Q)/K_1(Q)} & \T^{S \cup Q}(K_0(Q) / K_1(Q), \cV_\lambda(\chi^{-1})) \to  
  \T^{S \cup Q}(K_0(Q), \cV_\lambda(\chi^{-1}))  \\ &  \to \T^{S \cup Q}(K, \cV_\lambda(\chi^{-1})). 
  \end{aligned}
\end{equation}
The first of these arises by taking $K_0(Q)$-invariants (cf. \S
\ref{sec:unitary_group_setup} and note $\cO[\Delta_Q]$ acts trivially on invariants) and the second is given by the formula
$t \mapsto [K : K_0(Q)]^{-1} \pi_{Q, \ast} \circ t \circ \pi_Q^\ast$,
where $\pi_Q : X_{K_0(Q)} \to X_K$ is the canonical projection; note
that $[K : K_0(Q)] \equiv (n!)^{|Q|} \text{ mod } p$ is a unit in
$\cO$ because of our assumption that $p > n$. We define
\[ \T^{S \cup Q}_Q(K_0(Q),\cV_\lambda(\chi^{-1}))\subset \End_{\mathbf{D}(\cO)}(R \Gamma(X_{K_0(Q)}, \cV_\lambda(\chi^{-1})))  \]
as in \S \ref{sec:lneqp_statements}; it is the commutative $\T^{S \cup Q}(K_0(Q),\cV_\lambda(\chi^{-1}))$-subalgebra generated by the operators $U_{v, i}$ ($v \in Q$, $i = 1, \dots, n$), or equivalently the image of the algebra $\T^{S \cup Q}_Q$ defined in \S \ref{sec:lneqp_statements}. Similarly we define
\[ \T^{S \cup Q}_Q(K_0(Q) / K_1(Q),\cV_\lambda(\chi^{-1}))\subset \End_{\mathbf{D}(\cO[\Delta_Q])}(R \Gamma_{K_0(Q)/K_1(Q)}(X_{K_1(Q)}, \cV_\lambda(\chi^{-1})));  \]
it is an $\cO[\Delta_Q]$-algebra, which coincides with the image of the algebra $\T^{S \cup Q}_Q$. The first map in (\ref{eqn:maps_of_Q_Hecke_algebras}) extends to a surjective homomorphism
\numequation\label{eqn:maps_of_augmented_Q_Hecke_algebras}
 \T^{S \cup Q}_Q(K_0(Q) / K_1(Q), \cV_\lambda(\chi^{-1})) \to \T^{S \cup Q}_Q(K_0(Q), \cV_\lambda(\chi^{-1}))
\end{equation}
which takes $U_{v, i}$ to $U_{v, i}$ for each $v \in Q$ and for each $i = 1, \dots, n$.

We define $\m^Q \subset \T^{S \cup Q}(K, \cV_\lambda(\chi^{-1}))$ to be the pullback of $\m$ under the inclusion
\[ \T^{S \cup Q}(K, \cV_\lambda(\chi^{-1})) \subset \T^{S}(K, \cV_\lambda(\chi^{-1})).  \]
We define 
\[ \m_0^Q \subset \T^{S \cup Q}(K_0(Q), \cV_\lambda(\chi^{-1})) \]
to be the pullback of $\m^Q$ and 
\[ \m_1^Q \subset _{K_0(Q)/K_1(Q)}\T^{S \cup Q}(K_0(Q) / K_1(Q), \cV_\lambda(\chi^{-1})) \]
 to be the pullback of $\m_0^Q$, these pullbacks being taken under the maps in 
 (\ref{eqn:maps_of_Q_Hecke_algebras}). We define $\n_0^Q \subset \T^{S \cup 
 Q}_Q(K_0(Q), \cV_\lambda(\chi^{-1}))$ to be the ideal generated by $\m_0^Q$ 
 and the elements $U_{v, i} - q_v^{i(1-i)/2} \alpha_{v, 1}\cdots\alpha_{v, i}$ 
 for each $v \in Q$ and $i = 1, \dots, n$. We define $\n_1^Q \subset \T^{S \cup 
 Q}_Q(K_0(Q) / K_1(Q), \cV_\lambda(\chi^{-1}))$ to be the pre-image of $\n_0^Q$ 
 under the map (\ref{eqn:maps_of_augmented_Q_Hecke_algebras}).
\begin{lemma}\label{lem:maximal_ideals_of_Q_hecke_algebra_are_proper}
	Each ideal $\m^Q$, $\m_0^Q$, $\m_1^Q$, $\n_0^Q$, and $\n_1^Q$ is a (proper) maximal ideal.
\end{lemma}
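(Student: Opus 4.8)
The plan is to deduce everything from the single assertion that $\m^Q$ is a proper maximal ideal of $\T^{S\cup Q}(K,\cV_\lambda(\chi^{-1}))$, and then propagate this down the chain of surjections. For $\m^Q$ itself: since $\m\subset\T^S(K,\cV_\lambda(\chi^{-1}))$ is maximal with residue field $k$ (by construction, after enlarging $E$), and $\T^{S\cup Q}(K,\cV_\lambda(\chi^{-1}))\subset\T^S(K,\cV_\lambda(\chi^{-1}))$ is a subring over which the larger ring is finite (both are finite $\cO$-algebras by Lemma~\ref{lem:cohomology_is_perfect}), the contraction $\m^Q$ of $\m$ is a maximal ideal, and it is proper because $1\notin\m$. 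This handles $\m^Q$.

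Next, $\m_0^Q$ and $\m_1^Q$ are by definition the pullbacks of $\m^Q$ along the surjective $\cO$-algebra homomorphisms in~(\ref{eqn:maps_of_Q_Hecke_algebras}); the pullback of a proper maximal ideal along a surjective ring homomorphism is again a proper maximal ideal, so these two cases are immediate. The only point requiring genuine argument is that $\n_0^Q$ is proper (it is automatically maximal once proper, since $\T^{S\cup Q}_Q(K_0(Q),\cV_\lambda(\chi^{-1}))/\m_0^Q$ is a finite-dimensional $k$-algebra in which the images of the $U_{v,i}$ commute, so quotienting by the ideal generated by $\m_0^Q$ together with $U_{v,i}-q_v^{i(1-i)/2}\alpha_{v,1}\cdots\alpha_{v,i}$ yields a quotient field or the zero ring); and then $\n_1^Q$ is proper and maximal as the pullback of $\n_0^Q$ along the surjection~(\ref{eqn:maps_of_augmented_Q_Hecke_algebras}).

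The main obstacle is therefore showing $\n_0^Q\neq(1)$, i.e.\ that there is a system of Hecke eigenvalues occurring in $H^\ast(X_{K_0(Q)},\cV_\lambda(\chi^{-1}))_{\m^Q_0}$ for which $U_{v,i}$ acts (modulo the maximal ideal) by the prescribed unit $q_v^{i(1-i)/2}\alpha_{v,1}\cdots\alpha_{v,i}$. This is the standard ``$U_p$-eigenvalue on the ordinary/unit-root line'' computation at a Taylor--Wiles place: one starts from a characteristic-zero automorphic eigenform contributing to $\m$ (supplied by Theorem~\ref{thm:application_of_matsushima}), with $\pi_v$ unramified at $v\in Q$, and passes to the $\Iw_v$-level; the Hecke polynomial $P_v(X)$ reduces mod $\m$ to $\prod_{i=1}^n(X-\alpha_{v,i})$ with the $\alpha_{v,i}$ distinct (by the definition of a Taylor--Wiles datum), so the space $\pi_v^{\Iw_v}$ decomposes into eigenlines for the $U_{v,i}$-operators and the ordering $(\alpha_{v,1},\dots,\alpha_{v,n})$ selects an eigenline on which $U_{v,i}$ acts by $q_v^{i(1-i)/2}\alpha_{v,1}\cdots\alpha_{v,i}$ — this is exactly the relation between the Iwahori-level Hecke operators and the Satake parameters recorded in \S\ref{sec:some_useful_hecke_operators} (cf.\ the computation of $e_{v,i}$ and Proposition~\ref{prop_action_of_central_elements_of_pro_p_iwahori_hecke_algebra}, applied with $n$ in place of $2n$). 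Since this eigenline survives to the localization at $\m^Q_0$ (because its residual system of eigenvalues away from $S\cup Q$ is precisely that cut out by $\m$), the ideal $\n_0^Q$ is proper. One then remarks that the same argument applies verbatim with $\cV_\lambda$ replaced by $\cV_\lambda(\chi^{-1})$, since twisting by the finite-order character $\chi^{-1}$ (trivial mod $\varpi$) does not affect the residual picture. I expect the verification that the classical $U_{v,i}$-eigenvalue computation is compatible with our derived/torsion-coefficient conventions to be the only place needing care, and it follows from the explicit formulae for $r_P$, $r_M$ on basis elements recorded just before Lemma~\ref{lem:functorial_splitting_for_induced_module}.
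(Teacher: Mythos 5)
Your handling of $\m^Q$, $\m_0^Q$, $\m_1^Q$ (via finiteness of the Hecke algebras over $\cO$ and pullback along surjections) and your reduction of $\n_1^Q$ to $\n_0^Q$ are fine, and agree with the paper, which treats these steps as immediate. The gap is in the last step, and specifically in your closing sentence that the argument for $\n_0^Q$ ``applies verbatim with $\cV_\lambda$ replaced by $\cV_\lambda(\chi^{-1})$, since the twist does not affect the residual picture.'' Your properness argument for $\n_0^Q$ is not a residual argument: it starts from a characteristic-zero eigenform supplied by Theorem~\ref{thm:application_of_matsushima} and refines it at Iwahori level at the places of $Q$. But the lemma must cover the twisted coefficients with $\chi \neq 1$ (the ideals are defined with $\cV_\lambda(\chi^{-1})$-coefficients, and the $\chi\neq 1$ localizations are exactly what feed into the Ihara-avoidance patching), and for $\chi \neq 1$ no characteristic-zero class in $H^\ast(X_{K_0(Q)}, \cV_\lambda(\chi^{-1}))_{\m_0^Q}[1/p]$ is known to exist at this stage: that would amount to a congruence between $\pi$ and an automorphic representation which at each $v \in R$ is a principal series with the prescribed characters $\chi_{v,i}$, i.e.\ exactly the kind of level-raising statement the Ihara-avoidance machinery is built to avoid assuming. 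So the ``verbatim'' extension begs the question: the twist is invisible mod $\varpi$, but your proof lives in characteristic zero.

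The paper avoids this by working entirely modulo $\varpi$, where $\cV_\lambda(\chi^{-1})/\varpi = \cV_\lambda/\varpi$: properness of $\n_0^Q$ is reduced to exhibiting a nonzero vector in $H^\ast(X_{K_0(Q)}, \cV_\lambda(\chi^{-1})/\varpi)[\m_0^Q]$ on which each $U_{v,i}$ acts by $\alpha_{v,1}\cdots\alpha_{v,i}$, and this vector is produced by the purely torsion-level argument of \cite[Lemma 5.3]{KT} from a class at level $K$, the inputs being the distinctness of the $\alpha_{v,i}$ and the fact that $H^\ast(X_K, \cV_\lambda(\chi^{-1}))[\m^Q]$ is annihilated by a power of $\m$; this last fact comes from the existence of $\overline{\rho}_\m$ and its local behaviour at $v \in Q$ (any maximal ideal of the full Hecke algebra lying over $\m^Q$ has residual representation isomorphic to $\overline{\rho}_\m$, hence the same residual eigenvalues at $Q$, hence equals $\m$). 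If you want to keep your characteristic-zero refinement argument, you can: prove the case $\chi=1$ as you do, and then transfer properness to general $\chi$ by noting that $\varpi \in \n_0^Q$, so properness may be checked after reduction mod $\varpi$, where the images of the $\chi$- and $1$-Hecke algebras in $\End_{\mathbf{D}(k)}(R\Gamma(X_{K_0(Q)}, \cV_\lambda/\varpi))$ coincide (with the images of the respective ideals $\n_0^Q$ matching, and the kernels of the maps from $T/\varpi$ to these images being nilpotent). But that transfer step is the whole point and has to be made explicit; it does not follow from the bare remark that the residual picture is unchanged.
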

\begin{proof}
	This is clear for the ideals $\m^Q$, $\m_0^Q$, and $\m_1^Q$. Since $\n_1^Q$ is the pre-image of $\n_0^Q$ under a surjective algebra homomorphism, we just need to check that $\n_0^Q$ is a proper ideal. Equivalently, we must check that 
	\[ H^\ast(X_{K_0(Q)}, \cV_\lambda(\chi^{-1}) / \varpi)[\m_0^Q] \]
	 contains a non-zero vector on which each operator $U_{v, i}$ ($v \in Q$, 
	 $i = 1, \dots, n$) acts by the scalar $\alpha_{v, 1} \cdots \alpha_{v, 
	 i}$. This will follow from \cite[Lemma 5.3]{KT} (or rather its proof) if 
	 we can show that $H^\ast(X_{K}, \cV_\lambda(\chi^{-1}))[\m^Q]$ is 
	 annihilated by a power of $\m$. This follows from the existence of 
	 $\overline{\rho}_\m$ and its local-global compatibility at the places $v 
	 \in Q$. 
\end{proof}
We can therefore form the localized complexes 
\[ R \Gamma(X_K, \cV_\lambda(\chi^{-1}))_{\m}, R \Gamma(X_K, \cV_\lambda(\chi^{-1}))_{\m^Q}, \]
\[ R \Gamma(X_{K_0(Q)}, \cV_\lambda(\chi^{-1}))_{\m_0^Q}, R \Gamma(X_{K_0(Q)}, \cV_\lambda(\chi^{-1}))_{\n_0^Q}, \]
\[ R \Gamma_{K_0(Q)/K_1(Q)}(X_{K_1(Q)}, \cV_\lambda(\chi^{-1}))_{\m_1^Q}, R \Gamma_{K_0(Q)/K_1(Q)}(X_{K_1(Q)}, \cV_\lambda(\chi^{-1}))_{\n_1^Q}. \]
The first four lie in $\mathbf{D}(\cO)$, the last two in $\mathbf{D}(\cO[\Delta_Q])$.
\begin{lemma}\label{lem:identification_of_complexes}
	The natural morphisms
	\[ R \Gamma(X_K, \cV_\lambda(\chi^{-1}))_{\m^Q} \to R \Gamma(X_K, \cV_\lambda(\chi^{-1}))_{\m} \]
	and
	\[ R \Gamma(X_{K_0(Q)}, \cV_\lambda(\chi^{-1}))_{\n_0^Q} \to R \Gamma(X_K, \cV_\lambda(\chi^{-1}))_{\m^Q} \]
	and
	\[  R \Gamma(\Delta_Q, R \Gamma_{K_0(Q)/K_1(Q)}(X_{K_1(Q)}, \cV_\lambda(\chi^{-1}))_{\n_1^Q}) \to R \Gamma(X_{K_0(Q)}, \cV_\lambda(\chi^{-1}))_{\n_0^Q} \]
	in $\mathbf{D}(\cO)$ are isomorphisms.
\end{lemma}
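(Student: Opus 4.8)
The plan is to prove the three isomorphisms one at a time, reducing everything to standard facts about Hecke-equivariant morphisms of cohomology complexes plus the local-global compatibility already established for the maximal ideals involved.

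First I would treat the isomorphism $R\Gamma(X_K,\cV_\lambda(\chi^{-1}))_{\m^Q}\to R\Gamma(X_K,\cV_\lambda(\chi^{-1}))_\m$. Since $\m$ is the pullback of $\m^Q$ along $\T^{S\cup Q}\subset\T^S$, and the extra Hecke operators at $v\in Q$ are $\T^{S\cup Q}$-linear endomorphisms of $R\Gamma(X_K,\cV_\lambda(\chi^{-1}))_{\m^Q}$, it suffices to show that for each $v\in Q$ the polynomial $P_v(X)$ acts invertibly enough to cut out a single factor. Concretely, $\overline{\rho}_\m$ is unramified at $v$ with $\overline{\rho}_\m(\Frob_v)$ having $n$ distinct eigenvalues $\alpha_{v,1},\dots,\alpha_{v,n}$ (part of the Taylor--Wiles datum), so the reduction of $P_v(X)$ modulo $\m^Q$ factors as $\prod_i(X-\alpha_{v,i})$ with distinct roots; therefore $\T^S(K,\cV_\lambda(\chi^{-1}))_\m$ and $\T^{S\cup Q}(K,\cV_\lambda(\chi^{-1}))_{\m^Q}$ have the same completion, and the localized complexes agree. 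This is essentially formal once one records that the forgetful map $\T^{S\cup Q}\to\T^S$ becomes an isomorphism after completing at $\m$ and $\m^Q$ respectively.

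Next I would handle $R\Gamma(X_{K_0(Q)},\cV_\lambda(\chi^{-1}))_{\n_0^Q}\to R\Gamma(X_K,\cV_\lambda(\chi^{-1}))_{\m^Q}$. Here the map is $[K:K_0(Q)]^{-1}\pi_{Q,*}\circ\pi_Q^*$ (which makes sense since $p>n$), and the content is that, after localizing the source at the ideal $\n_0^Q$ picking out the ``ordinary at $Q$'' eigensystem, the Iwahori-level cohomology recovers the spherical-level cohomology. This is the usual argument: at each $v\in Q$, $K_v=\GL_n(\cO_{F_v})$ degenerates to $\Iw_v$, and the operators $U_{v,i}$ with eigenvalue conditions $U_{v,i}\equiv q_v^{i(1-i)/2}\alpha_{v,1}\cdots\alpha_{v,i}$ single out the ``most ordinary'' refinement; since $\overline{\rho}_\m(\Frob_v)$ has distinct eigenvalues, the generalized eigenspace decomposition of the parahoric-induced representation shows that $\pi_Q^*$ followed by projection to $\n_0^Q$ is an isomorphism onto $R\Gamma(X_K)_{\m^Q}$ with inverse $[K:K_0(Q)]^{-1}\pi_{Q,*}$. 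I would cite the analogous computation in \cite{KT} (specifically the proof of \cite[Lemma 5.3]{KT}, which was already invoked in Lemma~\ref{lem:maximal_ideals_of_Q_hecke_algebra_are_proper}) rather than redo the representation-theoretic bookkeeping, checking only that the hypotheses transfer: $q_v\equiv1\bmod p$, distinct eigenvalues, and the local-global compatibility of $\overline{\rho}_\m$ at $v\in Q$ guaranteeing the relevant spaces are annihilated by a power of $\m$.

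Finally, the isomorphism $R\Gamma(\Delta_Q,R\Gamma(X_{K_1(Q)},\cV_\lambda(\chi^{-1}))_{\n_1^Q})\to R\Gamma(X_{K_0(Q)},\cV_\lambda(\chi^{-1}))_{\n_0^Q}$ is the Hochschild--Serre (or rather, $R\Gamma(\Delta_Q,-)$) comparison for the $\Delta_Q$-Galois cover $X_{K_1(Q)}\to X_{K_0(Q)}$: since $K_1(Q)$ is the maximal pro-prime-to-$p$ subgroup of $K_0(Q)_v$ at $v\in Q$, and the deck group $K_0(Q)/K_1(Q)\cong\Delta_Q$ is a $p$-group, we have $R\Gamma(X_{K_0(Q)},-)=R\Gamma(\Delta_Q,R\Gamma(X_{K_1(Q)},-))$ by \eqref{eqn:hecke_morphism_with_coefficients} and the discussion there, and localization at $\n_0^Q$ (pulling back to $\n_1^Q$) commutes with $R\Gamma(\Delta_Q,-)$ because $\n_1^Q$ is by definition the preimage of $\n_0^Q$ and the $\Delta_Q$-action commutes with the Hecke action. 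The only mild subtlety, which I expect to be the main obstacle, is checking compatibility of all the localizations — that is, that the maps respect the chosen maximal ideals and that the ``prime-to-$p$ part'' of the level change at $Q$ is genuinely invisible to the $\Delta_Q$-homotopy fixed points — but this is a matter of unwinding the definitions of $\m_0^Q,\m_1^Q,\n_0^Q,\n_1^Q$ and the behavior of the Hecke operators $U_{v,i}$ under the various level maps, all of which was set up precisely so that these compatibilities hold. I would conclude by noting that composing the three isomorphisms identifies $R\Gamma(X_{K_1(Q)},\cV_\lambda(\chi^{-1}))_{\n_1^Q}$, as a complex with $\Delta_Q$-action, with a complex whose $R\Gamma(\Delta_Q,-)$ recovers $R\Gamma(X_K,\cV_\lambda(\chi^{-1}))_\m$ — exactly the input needed for the patching construction.
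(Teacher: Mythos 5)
Your proposal is correct and follows essentially the same route as the paper: for the first map, uniqueness of the maximal ideal of the full Hecke algebra lying over $\m^Q$ (forced by the Galois representation and local--global compatibility at $v \in Q$); for the second, the Taylor--Wiles-prime comparison of Iwahori and spherical level after localizing at $\n_0^Q$, quoted from \cite{KT} (the paper reduces mod $\varpi$ and cites the trace-map statement, Lemma 5.4 of \cite{KT}, rather than Lemma 5.3); and for the third, the formal identification $R\Gamma(X_{K_0(Q)},-) \cong R\Gamma(\Delta_Q, R\Gamma(X_{K_1(Q)},-))$ together with compatibility of localization, exactly as in the paper. The only small slips are that what is needed (and true) is uniqueness of the maximal ideal above $\m^Q$, not an isomorphism of completed Hecke algebras, and the KT citation should be Lemma 5.4; neither affects the argument.
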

\begin{proof}
We must show that these morphisms in the derived category give isomorphisms at the level of cohomology. For the first morphism, it is enough to show that $\m$ is the unique maximal ideal of $\T^{S \cup Q}(K_0(Q), \cV_\lambda(\chi^{-1}))$ lying above $\m^Q$, and we have seen this already in the proof of Lemma \ref{lem:maximal_ideals_of_Q_hecke_algebra_are_proper}. It is clear from the definitions for the third morphism. For the second, it is enough to check that we have an isomorphism after applying the functor $ - \otimes^\bL_\cO k : \bD(\cO) \to \bD(k)$. We are therefore reduced to showing that the map of $k$-vector spaces
\[ \tr_{K / K_0(Q)} : H^\ast(X_{K_0(Q)}, \cV_{\lambda}(\chi^{-1}) / \varpi)_{\n_0^Q} \to H^\ast(X_K, \cV_{\lambda}(\chi^{-1}) / \varpi)_{\m^Q}  \]
is an isomorphism. This is the content of \cite[Lemma 5.4]{KT}.
\end{proof}
We see that there is a surjective homomorphism
\numequation\label{eqn:final_morphism_of_Q_augmented_Hecke_algebras}
\begin{split}
  _{K_0(Q)/K_1(Q)}\T^{S \cup Q}( R \Gamma_{K_0(Q)/K_1(Q)}(X_{K_1(Q)},
  \cV_\lambda(\chi^{-1}))_{\n_1^Q} ) \to\\ \T^{S \cup Q}( R \Gamma(X_K,
  \cV_\lambda(\chi^{-1}))_{\m^Q} ) = \T^{S \cup Q}(K,
  \cV_\lambda(\chi^{-1}))_{\m^Q}.
\end{split}
\end{equation}  
The first ring $_{K_0(Q)/K_1(Q)}\T^{S \cup Q}( R \Gamma_{K_0(Q)/K_1(Q)}(X_{K_1(Q)}, \cV_\lambda(\chi^{-1}))_{\n_1^Q} )$ is a local $\cO[\Delta_Q]$-algebra, its unique maximal ideal being identified with the pre-image of $\m^Q$ under the surjective homomorphism (\ref{eqn:final_morphism_of_Q_augmented_Hecke_algebras}); indeed, this follows from the fact that it acts nearly faithfully on $H^\ast( X_{K_1(Q)}, \cV_\lambda(\chi^{-1}))_{\n_1^Q}$ (We recall (\cite[Def. 2.1]{tay}) that a finitely generated module over a Noetherian local ring is said to be nearly faithful if its annihilator is a nilpotent ideal). We can now state a result asserting the existence of Galois representations valued with coefficients in this Hecke algebra.
\begin{prop}\label{prop:existence_of_Q_augmented_Hecke_Galois_with_LGC}
	There exists an integer $\delta \geq 1$, depending only on $n$ and $[F : \Q]$, an ideal $J \subset _{K_0(Q)/K_1(Q)}\T^{S \cup Q}( R \Gamma_{K_0(Q)/K_1(Q)}(X_{K_1(Q)}, \cV_\lambda(\chi^{-1}))_{\n_1^Q} )$ such that $J^\delta = 0$, and a continuous surjective $\cO[\Delta_Q]$-algebra homomorphism
	\[ f_{\cS_{\chi, Q}} : R_{\cS_{\chi, Q}}  \to _{K_0(Q)/K_1(Q)}\T^{S \cup Q}( R 
	\Gamma_{K_0(Q)/K_1(Q)}(X_{K_1(Q)}, \cV_\lambda(\chi^{-1}))_{\n_1^Q} ) / J \]
	such that for each finite place $v \not \in S \cup Q$ of $F$, the characteristic polynomial of $f_{\cS_{\chi, Q}} \circ \rho_{\cS_{\chi, Q}}(\Frob_v)$ equals the image of $P_v(X)$ 
	in 
	$$_{K_0(Q)/K_1(Q)}\T^{S \cup Q}( R \Gamma_{K_0(Q)/K_1(Q)}(X_{K_1(Q)}, \cV_\lambda(\chi^{-1}))_{\n_1^Q} ) / J.$$
\end{prop}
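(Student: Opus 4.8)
The plan is to deduce Proposition~\ref{prop:existence_of_Q_augmented_Hecke_Galois_with_LGC} from Proposition~\ref{prop:existence_of_Hecke_Galois_with_LGC} by the same localize-and-compare argument that identifies the complexes at Taylor--Wiles level with the complexes at the base level. The key input is Lemma~\ref{lem:identification_of_complexes}, which gives a chain of isomorphisms in $\mathbf{D}(\cO)$
\[ R \Gamma(\Delta_Q, R \Gamma(X_{K_1(Q)}, \cV_\lambda(\chi^{-1}))_{\n_1^Q}) \cong R \Gamma(X_{K_0(Q)}, \cV_\lambda(\chi^{-1}))_{\n_0^Q} \cong R \Gamma(X_K, \cV_\lambda(\chi^{-1}))_{\m^Q} \cong R \Gamma(X_K, \cV_\lambda(\chi^{-1}))_\m. \]
First I would observe that the composite surjection (\ref{eqn:final_morphism_of_Q_augmented_Hecke_algebras}) exhibits $\T^{S \cup Q}(K, \cV_\lambda(\chi^{-1}))_{\m^Q}$, and hence $\T^{S}(K, \cV_\lambda(\chi^{-1}))_\ffrm$, as a quotient of the $\cO[\Delta_Q]$-algebra $A_Q := \T^{S \cup Q}(R\Gamma(X_{K_1(Q)}, \cV_\lambda(\chi^{-1}))_{\n_1^Q})$. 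Since the Taylor--Wiles datum is a deformation datum for $\cS_{\chi, Q}$ and $R_{\cS_{\chi,Q}} \to R_{\cS_\chi}$ has kernel the augmentation ideal $\fra_Q R_{\cS_{\chi,Q}}$, what remains is to produce a Galois representation valued in $A_Q$ (modulo a nilpotent ideal) which is of type $\cS_{\chi, Q}$, i.e.\ which satisfies the right local condition at the places $v \in Q$.

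The next step is to construct the Galois representation. By Theorem~\ref{thm:existence_of_Hecke_repn_for_GL_n} applied to the Hecke algebra $\T^{S \cup Q}(R\Gamma(X_{K_1(Q)}, \cV_\lambda(\chi^{-1}))_{\n_1^Q})$ (after enlarging $S$ to $S \cup Q$, which is permitted since $Q$ consists of places whose residue characteristic splits in an imaginary quadratic subfield of $F$, so the running hypothesis on $S \cup Q$ is preserved), there exist an integer $N$ depending only on $n$ and $[F:\Q]$, a nilpotent ideal $J_0$ with $J_0^N = 0$, and a continuous homomorphism $\rho : G_{F, S \cup Q} \to \GL_n(A_Q / J_0)$ with the correct characteristic polynomials of $\Frob_v$ for $v \notin S \cup Q$. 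After conjugating we may assume $\rho \bmod \ffrm = \overline{\rho}_\ffrm$. The local conditions at $v \in S_p$ (Fontaine--Laffaille) and $v \in R$ (prescribed characteristic polynomials on inertia) follow exactly as in Proposition~\ref{prop:existence_of_Hecke_Galois_with_LGC}, from Theorem~\ref{thm:lgcfl} and Theorem~\ref{thm:lgc_at_l_neq_p} respectively; the condition at $v \in S - (R \cup S_p)$ is the unrestricted framed condition $\cD_v^\square$, so nothing is required there. The only new point is the condition at $v \in Q$: one must check that, after possibly enlarging $J_0$ to a larger nilpotent ideal $J$ (still with $J^\delta = 0$ for $\delta$ depending only on $n, [F:\Q]$), the restriction $\rho|_{G_{F_v}}$ lies in the unrestricted deformation functor $\cD_v^\square$ -- but this is automatic, since $v \in Q$ carries no local condition beyond being a framed lift. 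Thus $\rho$ is a lift of type $\cS_{\chi, Q}$ and induces the desired $\cO[\Delta_Q]$-algebra homomorphism $f_{\cS_{\chi,Q}} : R_{\cS_{\chi,Q}} \to A_Q / J$; the $\cO[\Delta_Q]$-algebra structure on $R_{\cS_{\chi,Q}}$ is the one coming from the ordered eigenvalues $(\alpha_{v,i})$ via \S\ref{sec:TWdef}, and its compatibility with the $\cO[\Delta_Q] = \cO[K_0(Q)/K_1(Q)]$-structure on $A_Q$ follows from local-global compatibility at $v \in Q$ together with Lemma~\ref{thm:anyliftisTW}, exactly as in the standard Taylor--Wiles setup (cf.\ the definition of $\n_0^Q$ in terms of the $U_{v,i}$ and the eigenvalues).

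Finally, to see that $f_{\cS_{\chi,Q}}$ is surjective, I would note that its image is a closed $\cO[\Delta_Q]$-subalgebra of $A_Q / J$ containing $\Frob_v$-traces for all $v \notin S \cup Q$ and (by the $\cO[\Delta_Q]$-algebra structure) the diamond operators at $v \in Q$; together with the fact that $A_Q / J$ is topologically generated over $\cO[\Delta_Q]$ by the unramified Hecke operators away from $S \cup Q$ and the operators $U_{v,i}$ at $v \in Q$, and that local-global compatibility at $Q$ expresses the (reductions of the) $U_{v,i}$ in terms of the characteristic polynomial of $\rho$ restricted to a decomposition group, this forces the image to be all of $A_Q / J$. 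I expect the main obstacle to be bookkeeping rather than conceptual: one must carefully track the identifications of Hecke algebras through the maps (\ref{eqn:maps_of_Q_Hecke_algebras})--(\ref{eqn:final_morphism_of_Q_augmented_Hecke_algebras}), verify that the nilpotent ideals introduced at each stage (from Lemma~\ref{lem:identification_of_complexes}, from Theorem~\ref{thm:existence_of_Hecke_repn_for_GL_n}, and from the passage through Theorem~\ref{thm:lgcfl} and Theorem~\ref{thm:lgc_at_l_neq_p}) can be absorbed into a single nilpotent ideal $J$ with nilpotence degree $\delta$ depending only on $n$ and $[F:\Q]$, and confirm that the $\cO[\Delta_Q]$-equivariance of $f_{\cS_{\chi,Q}}$ holds on the nose. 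None of these steps is hard given the machinery already assembled, but the compatibility of the $\Delta_Q$-actions is the place where one genuinely uses local-global compatibility at the Taylor--Wiles primes, so that is where I would be most careful.
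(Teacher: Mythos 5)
Your overall strategy coincides with the paper's: pull back the Hecke-algebra-valued Galois representation of Theorem~\ref{thm:existence_of_Hecke_repn_for_GL_n}, check the conditions at the places of $S$ exactly as in Proposition~\ref{prop:existence_of_Hecke_Galois_with_LGC}, observe that there is no condition at $Q$, and then verify $\cO[\Delta_Q]$-equivariance. The gap is at the last step, which is the only genuinely new content of this proposition beyond the previous one. You assert that the compatibility of the two $\Delta_Q$-structures ``follows from local-global compatibility at $v \in Q$ together with Lemma~\ref{thm:anyliftisTW}, exactly as in the standard Taylor--Wiles setup,'' but the standard argument is a characteristic-zero argument with honest automorphic forms, whereas here the representation is only valued in a possibly torsion, non-reduced Hecke algebra modulo a nilpotent ideal, and the equivariance has to be extracted from the torsion local-global compatibility results of this paper. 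Concretely, the paper's proof (i) works in the larger algebra $\T' = \T^{S\cup Q}_Q(K_0(Q)/K_1(Q), \cV_\lambda(\chi^{-1}))_{\n_1^Q}$, which contains the elements $t_{v,i}(\alpha)$ at $v \in Q$ --- these do not live in your $A_Q$, which is generated over $\cO[\Delta_Q]$ by the unramified operators alone (this also makes your surjectivity discussion, which speaks of $A_Q$ being generated by the $U_{v,i}$, slightly off, though that point is harmless); (ii) applies Theorem~\ref{thm:lgc_at_l_neq_p} at the places $v \in Q$ --- legitimate because $\Iw_{v,1} \subset K_1(Q)_v \subset \Iw_v$ and the residue characteristic of $v$ splits in an imaginary quadratic subfield --- to get, after enlarging the nilpotent ideal, the identity $\det(X - \rho(\sigma)) = \prod_{i=1}^n (X - \psi_{v,i}(\sigma))$ for all $\sigma \in W_{F_v}$, where $\psi_{v,i}(\Art_{F_v}(\alpha)) = t_{v,i}(\alpha)$; and (iii) uses that the $\psi_{v,i}$ are residually pairwise distinct (the $\alpha_{v,i}$ are distinct by the definition of a Taylor--Wiles datum) together with \cite[Prop.\ 1.5.1]{chenevier_det}-type pseudo-representation input (the paper cites Bella\"iche--Chenevier) to split $\rho|_{W_{F_v}} \cong \oplus_{i=1}^n \psi_{v,i}$ over the non-reduced coefficient ring. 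It is precisely this splitting that identifies the $\cO[\Delta_v]$-structure on $R_{\cS_{\chi,Q}}$ (defined via Lemma~\ref{thm:anyliftisTW}) with the diamond-operator structure on the Hecke algebra. You correctly flag this as the delicate point, but the actual mechanism --- the $l \neq p$ torsion local-global compatibility theorem at the Taylor--Wiles primes plus the character-splitting argument --- is missing from your proposal, and without it the equivariance is not established.

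Two smaller remarks. First, once one works in $\T'$, the nilpotent-ideal bookkeeping and the surjectivity you worry about are handled in one stroke: the paper takes $J$ to be the kernel of $\T^{S\cup Q}(R\Gamma(X_{K_1(Q)}, \cV_\lambda(\chi^{-1}))_{\n_1^Q}) \to \T'/J'$, so the map $f_{\cS_{\chi,Q}}$ lands in $\T/J$ and surjectivity follows from the characteristic-polynomial condition at the unramified places together with $\cO[\Delta_Q]$-linearity; no separate generation argument is needed. Second, your appeal to Theorem~\ref{thm:existence_of_Hecke_repn_for_GL_n} for the algebra $A_Q$ (localized at $\n_1^Q$ rather than at a maximal ideal of $\T^{S\cup Q}$ itself) is the same mild extension of the literal statement that the paper makes for $\T'$, so that step is fine.
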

\begin{proof}
	To save notation, let 
	$$\T =_{K_0(Q)/K_1(Q)} \T^{S \cup Q}( R \Gamma_{K_0(Q)/K_1(Q)}(X_{K_1(Q)}, \cV_\lambda(\chi^{-1}))_{\n_1^Q} ),$$
	 and $\T' =\T^{S \cup Q}_Q(K_0(Q) / K_1(Q), \cV_\lambda(\chi^{-1}))_{\n_1^Q}$. Then $\T \subset \T'$, and the inclusion $\T \to \T'$ is a local homomorphism of finite $\cO[\Delta_Q]$-algebras. By Theorem \ref{thm:existence_of_Hecke_repn_for_GL_n}, there is a nilpotent ideal $J' \subset \T'$ and a Galois representation $\rho_{\n_1^Q} : G_{F, S \cup Q} \to \GL_n(\T' / J')$ satisfying local-global compatibility at unramified places. After conjugation, we can assume that $\rho_{\n_1^Q} \text{ mod }\n_1^Q$ equals $\overline{\rho}_\m$. We first need to show that $\rho_{\n_1^Q}$ is a lifting of $\overline{\rho}_\m$ of type $\cS_{\chi, Q}$. The necessary conditions at places of $S$ can be checked just as in the proof of Proposition \ref{prop:existence_of_Hecke_Galois_with_LGC}. There is no condition at places of $Q$, so we obtain a morphism $f_{\cS_{\chi, Q}} : R_{\cS_{\chi, Q}}  \to \T' / J'$ (which in fact factors through the image of $\T$ in $\T' / J'$).
	
	It remains to check that $f_{\cS_{\chi, Q}}$ is a homomorphism of $\cO[\Delta_Q]$-algebras. Equivalently, we must check that it is a homomorphism of $\cO[\Delta_v]$-algebras for each place $v \in Q$. To this end, let us fix a place $v \in Q$. For each $i = 1, \dots, n$ we define a character $\psi_{v, i} : W_{F_v} \to (\T')^\times$ by the formula $\psi_{v, i}(\Art_{F_v}(\alpha)) = t_{v, i}(\alpha)$ (notation as in \S \ref{sec:some_useful_hecke_operators}). Theorem \ref{thm:lgc_at_l_neq_p} shows that (after possibly enlarging $J'$) for each $\sigma \in W_{F_v}$, we have the identity
	\[ \det(X - \rho_{\n_1^Q}(\sigma)) = \prod_{i=1}^n (X - \psi_{v, i}(\sigma)). \]
	Observe that the characters $\psi_{v, i} \text{ mod }\n_1^Q$ are pairwise distinct (because they take Frobenius to $\alpha_{v, i}$, and these elements of $k$ are pairwise distinct, by definition of a Taylor--Wiles datum). We can therefore apply \cite[Prop. 1.5.1]{bellaiche_chenevier_pseudobook} to conclude that $\rho_{\n_1^Q}|_{W_{F_v}}$ is isomorphic to $\oplus_{i=1}^n \psi_{v, i}$, which shows that $f_{\cS_{\chi, Q}}$ is indeed a homomorphism of $\cO[\Delta_v]$-algebras (cf. \S \ref{sec:TWdef} for the definition of the $\cO[\Delta_v]$-algebra structure on $R_{\cS_{\chi, Q}}$). The proof is complete on taking $J$ to be the kernel of the map $\T \to \T' / J'$.
\end{proof}
We are now ready to begin the proof of Theorem  \ref{thm:R_equals_T}.
\begin{proof}[Proof of Theorem \ref{thm:R_equals_T}]
Let
  \[
   q = h^1(F_S/F,\ad\rhobar_{\frakm}(1)) \quad \text{and} \quad g = qn - n^2[F^+ : \Q],
  \]
and set $\Delta_\infty = \Z_p^{nq}$. 
Let $\cT$ be a power series ring over $\cO$ in $n^2\lvert S\rvert - 1$ many variables, and let $S_\infty = \cT\llbracket \Delta_\infty \rrbracket$. 
Viewing $S_\infty$ as an augmented $\cO$-algebra, we let $\ga_\infty$ denote the augmentation ideal. 

Enlarging $E$ if necessary, we can assume that $E$ contains a primitive $p$th root of unity. 
Then since $p>n$, for each $v\in R$ we can choose a tuple of pairwise
distinct characters $\chi_v = (\chi_{v,1},\ldots,\chi_{v,n})$, with $\chi_{v,i} \colon \cO_{F_v}^\times \rightarrow \cO^\times$ 
trivial modulo $\varpi$.
We write $\chi$ for the tuple $(\chi_v)_{v\in R}$ as well as for the induced character $\chi = \prod_{v\in R} \chi_v \colon \prod_{v\in R} I_v \rightarrow \cO^\times$.
For each $N\ge 1$, we fix a choice of Taylor--Wiles datum $(Q_N,(\alpha_{v,1},\ldots,\alpha_{v,n})_{v\in Q_N})$ as in Proposition \ref{thm:TWgen}
(this is possible by our assumption that $\overline{r_\iota(\pi)}(G_{F(\zeta_p)})$ is enormous; we choose any imaginary quadratic subfield of $F$ in the application of Proposition \ref{thm:TWgen}).
For $N = 0$, we set $Q_0 = \emptyset$. 
For each $N\ge 1$, we let $\Delta_N = \Delta_{Q_N}$ and fix a surjection $\Delta_\infty \rightarrow \Delta_N$. 
The kernel of this surjection is contained in $(p^N\Z_p)^{nq}$, since each $v\in Q$ satisfies $q_v \equiv 1 \bmod p^N$. 
We let $\Delta_0$ be the trivial group, viewed as a quotient of $\Delta_\infty$. 

For each $N\ge 0$, the auxiliary deformation problems $\cS_{1, Q_N}$ and $\cS_{\chi, Q_N}$ are defined, and we set $R_N = R_{\cS_{1,Q_N}}$ and $R_N' = R_{\cS_{\chi,Q_N}}$. 
Note that $R_0 = R_{\cS_1}$ and $R_0' = R_{\cS_\chi}$. 
Let $R^{\loc} = R_{\cS_1}^{S,\loc}$ and $R'^{\loc} = R_{\cS_\chi}^{S,\loc}$ denote the corresponding local deformation rings as in~\S\ref{sec:present}.
For any $N\ge 1$, we have $R_{\cS_{1,Q_N}}^{S,\loc} = R^{\loc}$ and $R_{\cS_{\chi,Q_N}}^{S,\loc} = R'^{\loc}$. 
There are canonical isomorphisms $R^{\loc}/\varpi \cong R'^{\loc}/\varpi$ and $R_N/\varpi \cong R_N'/\varpi$ for all $N\ge 0$. 
For each $N\ge 1$, $R_N$ and $R_N'$ are canonically $\cO[\Delta_N]$-algebras and there are canonical isomorphisms $R_N \otimes_{\cO[\Delta_N]} \cO \cong R_0$ 
and $R_N' \otimes_{\cO[\Delta_N]} \cO \cong R_0'$, which are compatible with the isomorphisms modulo $\varpi$.
By Lemma~\ref{thm:framepresentation}, we have an $R^{\loc}$-algebra structure on $R_N \widehat{\otimes}_{\cO} \cT$ and an $R'^{\loc}$-algebra structure on 
$R_N' \widehat{\otimes}_{\cO} \cT$. 
The canonical isomorphism $R^{\loc}/\varpi \cong R'^{\loc}/\varpi$ is compatible with these algebra structures and with the canonical isomorphisms 
$R_N/\varpi \cong R_N'/\varpi$. 
We let $R_\infty$ and $R_\infty'$ be formal power series rings in $g$ variables over $R^{\loc}$ and $R'^{\loc}$, respectively. 
Using Proposition~\ref{thm:genoverloc} when $N = 0$ (noting that $H^0(F_S/F,\ad\rhobar_{\frakm}(1)) = 0$, because $\overline{r_\iota(\pi)}|_{G_{F(\zeta_p)}}$ is irreducible and $\zeta_p \not\in F$), and 
Proposition~\ref{thm:TWprimes} when $N\ge 1$, there are local $\cO$-algebra surjections $R_\infty \rightarrow R_N$ and $R_\infty' \rightarrow R_N'$ for any $N\ge 0$. 
We can (and do) assume that these are compatible with our fixed identifications modulo $\varpi$, and with the isomorphisms 
$R_N \otimes_{\cO[\Delta_N]} \cO \cong R_0$ and $R_N' \otimes_{\cO[\Delta_N]} \cO \cong R_0'$.

Let $\cC_0 = R \Hom_\cO( R \Gamma(X_K, \cV_\lambda(1))_\m, \cO)[-d]$,
and let $T_0 = \bT^S(K,\cV_\lambda(1))_{\frakm}$. Then $H^i(\cC_0)[1/p] \cong \Hom_E(H^{d-i}(X_K, \cV_\lambda(1))_{\frakm}[1/p], E)$ as $T_0$-modules.
Similarly, we let $\cC_0' =  R \Hom_\cO( R \Gamma(X_K, \cV_\lambda(\chi^{-1}))_\m, \cO)[-d]$, and $T_0' = \bT^S(K,\cV_\lambda(\chi^{-1}))_{\frakm}$. 
For any $N\ge 1$, we let 
  \[
   \cC_N = R \Hom_{\cO[\Delta_N]}( R \Gamma_{K_0(Q)/K_1(Q)}(X_{K_1(Q)}, \cV_\lambda(1))_{\n_1^Q}, \cO[\Delta_N])[-d]\quad 
  \]
  and 
  \[
   T_N = _{K_0(Q)/K_1(Q)}\bT^{S \cup Q_N}(R \Gamma_{K_0(Q)/K_1(Q)}(X_{K_1(Q)}, \cV_\lambda(1))_{\n_1^Q}).
  \] 
  Similarly, we let 
  \[
  \cC_N' = R \Hom_{\cO[\Delta_N]}( R \Gamma_{K_0(Q)/K_1(Q)}(X_{K_1(Q)}, \cV_\lambda(\chi^{-1}))_{\n_1^Q}, \cO[\Delta_N])[-d]\quad 
  \]
  and 
  \[
  T_N' = _{K_0(Q)/K_1(Q)}\bT^{S \cup Q_N}(R \Gamma_{K_0(Q)/K_1(Q)}(X_{K_1(Q)}, \cV_\lambda(\chi^{-1}))_{\n_1^Q}).
  \] 
For any $N\ge 0$, there are canonical isomorphisms $\cC_N \otimes_{\cO[\Delta_N]}^{\bL} k[\Delta_N] \cong \cC_N' \otimes_{\cO[\Delta_N]}^{\bL} k[\Delta_N]$ 
in $\bD(k[\Delta_N])$. 
Using this isomorphism to identify $\End_{\bD(\cO)}(\cC_N \otimes_{\cO}^{\bL} k) =\End_{\bD(\cO)}(\cC_N' \otimes_{\cO}^{\bL} k)$, the images of $T_N$ and 
$T_N'$ in this endomorphism algebra are the same, and we denote it by $\overline{T}_N$.
By Lemma~\ref{lem:identification_of_complexes}, there are canonical isomorphisms $\cC_N \otimes_{\cO[\Delta_N]}^{\bL} \cO \cong \cC_0$ 
and $\cC_N' \otimes_{\cO[\Delta_N]}^{\bL} \cO \cong \cC_0'$ in $\bD(\cO)$, and 
these isomorphisms are compatible with our fixed isomorphisms modulo 
$\varpi$.
By Proposition \ref{prop:existence_of_Q_augmented_Hecke_Galois_with_LGC} we have nilpotent ideals $I_N$ of $T_N$ and $I_N'$ of $T_N'$ for each $N\ge 0$,
both of nilpotence degree $\le \delta$, and local $\cO[\Delta_N]$-algebra surjections $R_N \rightarrow T_N/I_N$ and $R_N' \rightarrow T_N'/I_N'$. 
The surjections are compatible with the canonical isomorphisms modulo $\varpi$.
Moreover, using the isomorphism $R_N/\varpi \cong R_N'/\varpi$ and letting $\overline{I}_N$ and $\overline{I}_N'$ denote the images of $I_N$ and $I_N'$, respectively, in $\overline{T}_N$, 
the induced surjections $R_N/\varpi \rightarrow \overline{T}_N/(\overline{I}_N + \overline{I}_N')$ and 
$R_N'/\varpi \rightarrow \overline{T}_N/(\overline{I}_N + \overline{I}_N')$ agree.
The maps $T_N \otimes_{\cO[\Delta]} \cO \rightarrow T_0$ and $T_N' \otimes_{\cO[\Delta]} \cO \rightarrow T_0'$ 
induce surjections onto $T_0/I_0$ and $T_0'/I_0'$ respectively (surjectivity 
follows from Chebotarev density and the existence of the Galois representations 
with coefficients in $T_0/I_0$ and $T_0/I_0'$).

The objects introduced above satisfy the setup described in~\S\ref{subsec:patchingsetup}. 
We can then apply the results of~\S\ref{subsec:patchedcomplexes} and obtain the following.
\begin{itemize}
 \item Bounded complexes $\cC_\infty$ and $\cC_\infty'$ of free $S_\infty$-modules, subrings $T_\infty \subset \End_{\bD(S_\infty)}(\cC_\infty)$ and $T_\infty' \subset \End_{\bD(S_\infty)}(\cC_\infty')$, and ideals 
 $I_\infty$ and $I_\infty'$ satisfying $I_\infty^\delta = 0$ and $I_\infty'^\delta = 0$. 
 We also have $S_\infty$-algebra structures on $R_\infty$ and $R_\infty'$ and $S_\infty$-algebra surjections $R_\infty \rightarrow T_\infty/I_\infty$ 
 and $R_\infty' \rightarrow T_\infty'/I_\infty'$. (See Proposition~\ref{prop:patchedproperties} and Remark~\ref{rem:liftStoR}.)
 \item Surjections of local $\cO$-algebras $R_\infty/\ga_\infty \rightarrow R_0$ and $R_\infty'/\ga_\infty \rightarrow R_0'$. 
 We have isomorphisms $\cC_\infty \otimes_{S_\infty}^{\bL} S_\infty/\ga_\infty \cong \cC_0$ and $\cC'_\infty \otimes_{S_\infty}^{\bL} S_\infty/\ga_\infty \cong \cC_0'$ in $\mathbf{D}(\cO)$, inducing maps $T_\infty \rightarrow T_0$ and 
 $T_\infty' \rightarrow T_0'$ that become surjective when composed with the projections $T_0 \rightarrow T_0/I_0$ and $T_0' \rightarrow T_0'/I_0'$, 
 respectively.
 We let $I_{\infty,0}$ and $I_{\infty,0}'$ denote the images of $I_\infty$ and $I_\infty'$, respectively, under these surjective maps. 
 Then the induced maps $R_\infty/\ga_\infty \rightarrow (T_0/I_0)/I_{\infty,0}$ and $R_\infty'/\ga_\infty \rightarrow (T_0'/I_0')/I_{\infty,0}'$ factor through 
 $R_\infty/\ga_\infty \rightarrow R_0$ and $R_\infty'/\ga_\infty \rightarrow R_0'$, respectively. (See Lemma~\ref{rem:Gal0} and Proposition~\ref{prop:backtolevel0}.)
 \item An isomorphism 
 \[ \cC_\infty \otimes_{S_\infty}^{\bL} S_\infty / \varpi \cong \cC_\infty' 
 \otimes_{S_\infty}^{\bL} S_\infty / \varpi \]
  in $\mathbf{D}(S_\infty / \varpi)$.
 Under this identification, $T_\infty$ and $T_\infty$ have the same image $\overline{T}_\infty$ in 
  \[
   \End_{\bD(S_\infty / \varpi)}(\cC_\infty \otimes_{S_\infty}^{\bL} S_\infty / \varpi) = \End_{\bD(S_\infty)}(\cC_\infty' \otimes_{S_\infty}^{\bL} S_\infty / \varpi).
  \]
 Let $\overline{I}_\infty$ and $\overline{I}_\infty'$ denote the images of $I_\infty$ and $I_\infty'$, respectively, in $\overline{T}_\infty$. 
 Then the actions of $R_\infty/\varpi \cong R_\infty'/\varpi$ on 
  \[
   H^\ast(\cC_\infty \otimes_{S_\infty}^{\bL} \varpi)/(\overline{I}_\infty + \overline{I}_\infty') \cong H^\ast(\cC_\infty' \otimes_{S_\infty}^{\bL} S_\infty / \varpi)/(\overline{I}_\infty + \overline{I}_\infty')
  \]
 are identified via $\cC_\infty \otimes_{S_\infty}^{\bL} S_\infty / \varpi \cong \cC_\infty' \otimes_{S_\infty}^{\bL} S_\infty / \varpi$. (See Proposition~\ref{prop:primeandunprime}.)
\end{itemize}

Recall that $R_\infty$ and $R_\infty'$ are power series rings over $R^{\loc}$ and $R'^{\loc}$, respectively, 
in $g = qn - n[F^+:\Q]$ many variables. 
By Lemma~\ref{lem:localFLcase}, we have:
\begin{itemize}
 \item Each generic point of $\Spec R_\infty/\varpi$ is the specialization of a unique generic point of $\Spec R_\infty$, and every generic 
 point of $\Spec R_\infty$ has characteristic zero. 
 Also, $\Spec R_\infty'$ is irreducible and has characteristic zero generic point.
 \item $R_\infty$ is equidimensional, and $R_\infty$ and $R_\infty'$ have the common dimension
  \[
     1 + g + n^2 \lvert S \rvert + \textstyle{\frac{n(n-1)}{2}}[F:\Q]. 
  \]
  Since $F$ is CM, the quantity $\ell_0$ for the locally symmetric space $X_K$ is $\ell_0 = n[F^+:\Q] - 1$. 
  Then since $\dim S_\infty = n^2\lvert S \rvert + qn$ and $g = qn - n[F^+:\Q]$, we have
    \[
     \dim R_\infty = \dim R_\infty' = \dim S_\infty - \ell_0.
    \]
\end{itemize}
Finally, the isomorphism $\cC_\infty\otimes_{S_\infty}^{\bL} S_\infty/\ga_\infty \cong \cC_0$ implies that $(\cC_\infty \otimes_{S_\infty}^{\bL} S_\infty/\ga_\infty)[1/p]$ has cohomology isomorphic to $\Hom_E(H^{d - \ast}(X_K,\cV_\lambda(1))_{\frakm}[1/p], E)$. 
So Theorem~\ref{thm:application_of_matsushima} implies that $H^\ast(\cC_\infty \otimes_{S_\infty}^{\bL} S_\infty/\ga_\infty)[1/p] \ne 0$ and that the cohomology is concentrated in degrees $[q_0,q_0+\ell_0]$. 
We have now satisfied all the assumptions of~\S\ref{sec:AvoidIharasetup}, so we can apply Proposition \ref{prop:full_support_for_unipotent_deformation_rinG}
to conclude that $H^\ast(\cC_\infty)$ is has full support over $R_\infty$, hence that $H^\ast(\cC_\infty \otimes^{\bL}_{S_\infty} S_\infty / \fra_\infty) = H^\ast(\cC_0)$ has full support over $R_\infty / (\ga_\infty)$, hence that $H^\ast(\cC_0)$ has full support over $R_{\cS_1}$. This concludes the proof.
\end{proof}

\subsubsection{End of the proof (Fontaine--Laffaille case)}\label{sec:proof_of_main_automorphy_lifting_theorem}

We now deduce Theorem \ref{thm:main_automorphy_lifting_theorem} from
Corollary \ref{cor:R_equals_T_implies_automorphy}. The proof will be
an exercise in applying soluble base change. We first state the
results that we need. Note that while up to now~$E$ has denoted the
coefficient field of our Galois representations, having carried out
our patching argument we no longer need this notation, and we find it
convenient to use~$E$ to denote a number field in the rest of the
proof. %
\begin{prop}\label{prop:soluble_base_change}
Fix an integer $n \geq 2$, a prime $p$, and an isomorphism $\iota : \overline{\bQ}_p \to \bC$. Let $F$ be an imaginary CM or totally real number field, and let $E / F$ be finite Galois extension such that $\Gal(E/F)$ is soluble and $E$ is also imaginary CM or totally real. Then:
\begin{enumerate}
\item Let $\pi$ be a cuspidal, regular algebraic automorphic representation of 
$\GL_n(\bA_F)$ of weight $\lambda = (\lambda_\tau)_{\tau \in \Hom(F, \bC)}$. 
Suppose that $r_\iota(\pi)|_{G_E}$ is irreducible. Then there exists a 
cuspidal, regular algebraic automorphic representation $\pi_E$ of 
$\GL_n(\bA_E)$ of weight $\lambda_{E, \tau} = \lambda_{\tau|_E}$ such that 
$r_\iota(\pi_E) \cong r_\iota(\pi)|_{G_E}$. If $w$ is a finite place of $E$ 
lying above the place $v$ of $F$, then we have $\rec_{E_w}(\pi_E) = 
\rec_{F_v}(\pi)|_{W_{E_w}}$.
\item Let $\rho : G_F \to \GL_n(\overline{\bQ}_p)$ be a continuous representation such that $\rho|_{G_E}$ is irreducible. Suppose that there exists a cuspidal, regular algebraic automorphic representation $\pi$ of $\GL_n(\bA_E)$ of weight $\lambda$ such that $\rho|_{G_E} \cong r_\iota(\pi)$. Define $\lambda_F = (\lambda_{F, \tau})_{\tau \in \Hom(F, \bC)}$ by the formula $\lambda_{F, \tau} = \lambda_{\tau'}$, where $\tau' : E \hookrightarrow \bC$ is any extension of $\tau$ from $F$ to $E$. Then $\lambda_F$ is independent of any choices, and there exists a cuspidal, regular algebraic automorphic representation $\pi_F$ of $\GL_n(\bA_F)$ of weight $\lambda_F$ such that $\rho \cong r_\iota(\pi_F)$. If $w$ is a finite place of $E$ lying above the place $v$ of $F$, then we have $\rec_{E_w}(\pi) = \rec_{F_v}(\pi_F)$.
\end{enumerate}
\end{prop}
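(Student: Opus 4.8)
The statement to prove is Proposition~\ref{prop:soluble_base_change}, which records the two standard directions of cyclic (hence soluble, by induction on the derived series) base change for $\GL_n$, in the precise form needed for the potential automorphy arguments. The plan is to reduce immediately to the case where $E/F$ is cyclic of prime degree, since a soluble extension can be built up as a tower of such, and both statements (existence of a base change, and descent of an automorphic representation whose Galois representation descends) compose well along towers. For the cyclic prime degree case I would invoke the theory of base change for $\GL_n$ developed by Arthur--Clozel~\cite{arthur-clozel}; the only subtlety is to pass between the automorphic and Galois-theoretic formulations, which is where the irreducibility hypotheses enter.

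For part (1), given $\pi$ cuspidal regular algebraic on $\GL_n(\A_F)$ with $r_\iota(\pi)|_{G_E}$ irreducible, I would first form the Arthur--Clozel base change $\BC_{E/F}(\pi)$, an isobaric automorphic representation of $\GL_n(\A_E)$ characterized by the local compatibility $\rec_{E_w}(\BC_{E/F}(\pi)_w) = \rec_{F_v}(\pi_v)|_{W_{E_w}}$ at all places. One then checks that $\BC_{E/F}(\pi)$ is cuspidal: were it not, its Galois representation $r_\iota(\pi)|_{G_E}$ (equality of the two sides up to semisimplification is forced by Chebotarev together with the local compatibility, using Theorem~\ref{thm:char0galrepexists} to know $r_\iota(\pi)$ exists) would be reducible, contradicting the hypothesis. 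Regular algebraicity of $\pi_E := \BC_{E/F}(\pi)$ and the weight formula $\lambda_{E,\tau} = \lambda_{\tau|_F}$ follow from the fact that base change sends the archimedean $L$-parameters of $\pi_\infty$ to their restrictions, together with the recipe relating infinitesimal characters to highest weights recorded in~\S\ref{sec:unitary_group_setup}; and then $r_\iota(\pi_E) \cong r_\iota(\pi)|_{G_E}$ follows from strong multiplicity one and the local compatibility at unramified places together with the Brauer--Nesbitt and Chebotarev argument (both representations are irreducible, hence determined by their traces of Frobenius).

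For part (2), given $\rho : G_F \to \GL_n(\overline{\Q}_p)$ with $\rho|_{G_E}$ irreducible and $\rho|_{G_E} \cong r_\iota(\pi)$ for some cuspidal regular algebraic $\pi$ on $\GL_n(\A_E)$, I would first note that $\Gal(E/F)$ acts on the set of cuspidal automorphic representations of $\GL_n(\A_E)$, and that for $\sigma \in \Gal(E/F)$ the representation ${}^\sigma\pi$ has $r_\iota({}^\sigma\pi) \cong {}^\sigma(\rho|_{G_E}) \cong \rho|_{G_E} \cong r_\iota(\pi)$ (conjugating the isomorphism $\rho|_{G_E} \cong \rho|_{G_E}$ by a lift of $\sigma$). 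By strong multiplicity one, ${}^\sigma\pi \cong \pi$, so $\pi$ is $\Gal(E/F)$-invariant. The converse direction of Arthur--Clozel base change then produces a cuspidal automorphic representation $\pi_F$ of $\GL_n(\A_F)$ with $\BC_{E/F}(\pi_F) \cong \pi$ (for $E/F$ cyclic prime; in the soluble case one descends step by step, checking at each stage that the representation obtained is invariant under the next layer of the Galois group, which again follows from strong multiplicity one and the fact that its base change $\pi$ is $\Gal(E/F)$-invariant). Regular algebraicity and the weight $\lambda_F$ are read off from the weight of $\pi$ as before; the well-definedness of $\lambda_F$ (independence of the choice of extension $\tau'$ of $\tau$) follows because $\lambda$, being the weight of a $\Gal(E/F)$-invariant $\pi$, satisfies $\lambda_{\tau'} = \lambda_{\sigma\tau'}$ for all $\sigma$. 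Finally $\rho \cong r_\iota(\pi_F)$: both are continuous $n$-dimensional representations of $G_F$ whose restrictions to $G_E$ agree (the latter by part (1) applied to $\pi_F$, or directly since $r_\iota(\pi_F)|_{G_E} \cong r_\iota(\BC_{E/F}(\pi_F)) \cong r_\iota(\pi) \cong \rho|_{G_E}$), hence $\rho$ and $r_\iota(\pi_F)$ differ by a character of $\Gal(E/F)$, and one pins down the right twist by matching Hecke eigenvalues/Frobenius traces at the places of $F$ split completely in $E$ where everything is unramified.

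The main obstacle, and really the only nontrivial input, is the cuspidality bookkeeping in the descent direction of part (2): ensuring that at each stage of the soluble tower the descended representation is cuspidal (not just isobaric) and invariant under the next cyclic layer, so that Arthur--Clozel applies again. This is handled by the interplay of strong multiplicity one with the irreducibility of $\rho|_{G_E}$ — irreducibility is exactly what rules out the descended object being a nontrivial isobaric sum whose constituents get permuted by the Galois action. Everything else is a routine matching of local and global compatibilities via Chebotarev and Brauer--Nesbitt, using the existence of the Galois representations attached to regular algebraic cuspidal $\pi$ (Theorem~\ref{thm:char0galrepexists}) and strong multiplicity one. I would also remark that, as in the analogous lemmas of~\cite{BLGGT,tay-fm}, one can cite this proposition essentially verbatim from the literature, but I would include the short argument above for completeness.
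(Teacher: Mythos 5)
Your overall route is the same as the paper's: reduce by induction to $E/F$ cyclic of prime degree, apply Arthur--Clozel base change and descent, use strong multiplicity one to get Galois-invariance of $\pi$ in the descent direction, and use Chebotarev plus the irreducibility hypothesis to compare Galois representations. Two points deserve comment. In part (1), you verify cuspidality of the base change by arguing that a non-cuspidal lift would force $r_\iota(\pi)|_{G_E}$ to be reducible; the paper instead verifies the Arthur--Clozel cuspidality criterion up front, showing $\pi \otimes (\eta \circ \Art_F^{-1}) \not\cong \pi$ for $\eta$ a nontrivial character of $\Gal(E/F)$, since otherwise $r_\iota(\pi) \otimes \iota^{-1}\eta \cong r_\iota(\pi)$ would make $r_\iota(\pi)|_{G_E}$ reducible. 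These are essentially equivalent, but the paper's ordering is cleaner: your version tacitly attaches Galois representations to the cuspidal constituents of a possibly non-cuspidal isobaric lift, which requires the extra (true but unaddressed) check that those constituents are themselves regular algebraic.

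The one step that genuinely fails is the end of part (2): you propose to ``pin down the right twist by matching Hecke eigenvalues/Frobenius traces at the places of $F$ split completely in $E$.'' This cannot work: every character of $\Gal(E/F)$ is trivial at such places, so all the candidate twists $r_\iota(\Pi) \otimes (\iota^{-1}\eta)^i$ have identical Frobenius traces there, and split-place data cannot distinguish them. Fortunately nothing needs to be pinned down externally. Your preceding Clifford-theory step (using that $\rho$ is irreducible, since $\rho|_{G_E}$ is) already shows $\rho \cong r_\iota(\Pi) \otimes (\iota^{-1}\eta)^i$ for \emph{some} $i$, and the correct finish --- which is exactly what the paper does --- is simply to define $\pi_F = \Pi \otimes (\eta \circ \Art_F^{-1})^i$ for that $i$; twisting by a finite-order character preserves cuspidality, regular algebraicity, and the weight, so all the required properties carry over. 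With that sentence replaced, your argument matches the paper's proof.
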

\begin{proof}
In either case we can reduce, by induction, to the case that $E/F$ is cyclic of prime order. Let $\sigma \in \Gal(E/F)$ be a generator of the Galois group, and let $\eta : \Gal(E / F) \to \bC^\times$ be a non-trivial character. We first treat the first part of the proposition.  We claim that $\pi \otimes (\eta \circ \Art_F^{-1}) \not\cong \pi$. Otherwise, there would be an isomorphism
\[ r_\iota(\pi) \otimes \iota^{-1} \eta \cong r_\iota(\pi), \]
implying that $r_\iota(\pi)|_{G_E}$ is reducible. We can therefore apply \cite[Ch. 3, Theorem 4.2]{MR1007299} and \cite[Ch. 3, Theorem 5.1]{MR1007299} to conclude the existence of a cuspidal, regular algebraic automorphic representation $\Pi$ of $\GL_n(\bA_E)$ of weight $\lambda_E$ such that for almost all finite places $w$ of $E$ such that $\pi_{w|_F}$ is unramified, $\Pi_w$ is a lift of $\pi_{w|_F}$. The Chebotarev density theorem then implies that we must have $r_\iota(\Pi) \cong r_\iota(\pi)|_{G_E}$, so we can take $\pi_E = \Pi$.

We now treat the second part of the proposition. The isomorphism
$\rho|_{G_E} \cong r_\iota(\pi)$, together with strong multiplicity
one for $\GL_n$, implies that we have $\pi^\sigma \cong \pi$. By
\cite[Ch. 3, Theorem 4.2]{MR1007299} and \cite[Ch. 3, Theorem
5.1]{MR1007299}, there exists a cuspidal automorphic representation
$\Pi$ of $\GL_n(\bA_F)$, which is regular algebraic of weight $\lambda_F$, such that for almost all finite places $w$ of $E$ such that $\Pi_{w|_F}$ is unramified, $\pi_w$ is a lift of $\Pi_{w|_F}$. The Chebotarev density theorem then implies that we must have $r_\iota(\Pi)|_{G_E} \cong r_\iota(\pi) \cong \rho|_{G_E}$. Using the irreducibility of $\rho|_{G_E}$, we conclude that there is a twist $\Pi \otimes (\eta\circ\Art_F^{-1})^i$ such that $r_\iota(\Pi \otimes  (\eta\circ\Art_F^{-1})^i) \cong \rho$. We are done on taking $\pi_F = \Pi \otimes  (\eta\circ\Art_F^{-1})^i$. 
\end{proof}
\begin{proof}[Proof of Theorem \ref{thm:main_automorphy_lifting_theorem}]
For the convenience of the reader, we recall the hypotheses of Theorem \ref{thm:main_automorphy_lifting_theorem}. Let $F$ be an imaginary CM or totally real field, and let $c \in \Aut(F)$ be complex conjugation. We are given a continuous representation $\rho : G_F \to \GL_n(\overline{\bQ}_p)$ satisfying the following conditions:
\begin{enumerate}
\item $\rho$ is unramified almost everywhere.
\item For each place $v | p$ of $F$, the representation $\rho|_{G_{F_v}}$ is crystalline. The prime $p$ is unramified in $F$.
\item $\overline{\rho}$ is absolutely irreducible and decomposed generic (Definition \ref{defn:decomposed_generic}). The image of $\overline{\rho}|_{G_{F(\zeta_p)}}$ is enormous (Definition \ref{defn:enormous image}). 
\item There exists $\sigma \in G_F - G_{F(\zeta_p)}$ such that $\overline{\rho}(\sigma)$ is a scalar. We have $p > n^2$.
\item There exists a cuspidal automorphic representation $\pi$ of $\GL_n(\bA_F)$ satisfying the following conditions:
\begin{enumerate}
	\item $\pi$ is regular algebraic of weight $\lambda$, this weight satisfying 
	\[ \lambda_{\tau, 1} + \lambda_{\tau c, 1} - \lambda_{\tau, n} - \lambda_{\tau c, n} < p - 2n. \]
	\item There exists an isomorphism $\iota : \overline{\bQ}_p \to \bC$ such that $\overline{\rho} \cong \overline{r_\iota(\pi)}$ and the Hodge--Tate weights of $\rho$ satisfy the formula for each $\tau : F \hookrightarrow \overline{\bQ}_p$:
	\[ \text{HT}_\tau(\rho) = \{ \lambda_{\iota \tau, 1} + n-1, \lambda_{\iota \tau, 2} + n - 2, \dots, \lambda_{\iota \tau, n} \}. \]
	\item If $v | p$ is a place of $F$, then $\pi_v$ is unramified.
\end{enumerate}
\end{enumerate}
The case where $F$ is a totally real field can be reduced to the case where $F$ is totally imaginary by base change. We therefore assume now that $F$ is imaginary, and write $F^+$ for its maximal totally real subfield.  Let $K / F(\zeta_p)$ be the extension cut out by $\overline{\rho}|_{G_{F(\zeta_p)}}$. Choose finite sets $V_0, V_1, V_2$ of finite places of $F$ having the following properties:
\begin{itemize}
\item For each $v \in V_0$, $v$ splits in $F(\zeta_p)$. For each proper subfield $K / K' / F(\zeta_p)$, there exists $v \in V_0$ such that $v$ splits in $F(\zeta_p)$ but does not split in $K'$.
\item For each proper subfield $K / K' / F$, there exists $v \in V_1$ which does not split in $K'$.
\item There exists a rational prime $p_0 \neq p$ which is decomposed generic for $\overline{\rho}$, and $V_2$ is equal to the set of $p_0$-adic places of $F$.
\item For each $v \in V_0 \cup V_1 \cup V_2$, $v \nmid 2$, $v \nmid p$, and $\rho$ and $\pi$ are both unramified at $v$. 
\end{itemize}
If $E / F$ is any finite Galois extension which is $V_0 \cup V_1 \cup V_2$-split, then $\overline{\rho}|_{G_E}$ has the following properties:
\begin{itemize}
\item $\overline{\rho}(G_E) = \overline{\rho}(G_F)$ and $\overline{\rho}(G_{E(\zeta_p)}) = \overline{\rho}(G_{F(\zeta_p)})$. In particular, $\overline{\rho}|_{G_{E(\zeta_p)}}$ has enormous image and there exists $\sigma \in G_E - G_{E(\zeta_p)}$ such that $\overline{\rho}(\sigma)$ is a scalar.
\item $\overline{\rho}|_{G_E}$ is decomposed generic. Indeed, the rational prime $p_0$ splits in $E$. 
\end{itemize}
Let $E_0 / F$ be a soluble CM extension satisfying the following conditions:
\begin{itemize}
	\item Each place of $V_0 \cup V_1 \cup V_2$ splits in $E_0$ and the rational prime $p$ is unramified in $E_0$.
	\item For each finite place $w$ of $E_0$, $\pi_{E_0, w}^{\Iw_w} \neq 0$.
	\item For each finite prime-to-$p$ place $w$ of $E_0$, either both $\pi_{E_0, w}$ and $\rho|_{G_{E_{0,w}}}$ are unramified or $\rho|_{G_{E_{0, w}}}$ is unipotently ramified, $q_w \equiv 1 \text{ mod }p$, and $\overline{\rho}|_{G_{E_{0, w}}}$ is trivial.
	\item For each place $\overline{w} | p$ of $E_0^+$, $\overline{w}$ splits in $E_0$ and there exists a place $\overline{w}' \neq \overline{w}$ of $E_0^+$ such that $\overline{w}' | p$ and
	\[ \sum_{\overline{w}'' \neq \overline{w}, \overline{w}'} [ E^+_{0,\overline{w}''} : \bQ_p] > \frac{1}{2} [ E_0^+ : \bQ ]. \]
\end{itemize}
We can find imaginary quadratic fields $E_a, E_b, E_c$ satisfying the following conditions:
\begin{itemize}
	\item Each rational prime lying below a place of $V_0 \cup V_1 \cup V_2$ splits in $E_a \cdot E_b \cdot E_c$. The prime $p$ is unramified in $E_a \cdot E_b \cdot E_c$. 
	\item The primes $2, p$ split in $E_a$.
	\item If $l \not\in \{2, p \}$ is a rational prime lying below a place of $E_0$ at which $\pi_{E_0, w}$ or $\rho|_{E_{0,w}}$ is ramified, or which is ramified in $E_0 \cdot E_a \cdot E_c$, then $l$ splits in $E_b$.
	\item If $l \not\in \{2, p \}$ is a rational prime which is ramified in $E_b$, then $l$ splits in  $E_c$.
\end{itemize}
For example, we can choose any $E_a$ satisfying the given condition. Then we can choose $E_b = \Q(\sqrt{-p_b})$, where $p_b$ is a prime satisfying $p_b \equiv 1 \text{ mod }4$ and $p_b \equiv -1 \text{ mod }l$ for any prime $l \not\in \{2, p \}$ either lying below a place $w$ of $E_0$ at which $\pi_{E_0, w}$ or $\rho|_{E_{0,w}}$ is ramified, or ramified in $E_0 \cdot E_a$, and $E_c = \Q(\sqrt{-p_c})$, where $p_c \equiv 1 \text{ mod }4 p_b$ is any prime not equal to $p$. (Use quadratic reciprocity to show that $p_c$ splits in $E_b$.)

We let $E = E_0 \cdot E_a \cdot E_b \cdot E_c$. Then $E / F$ is a soluble CM extension in which each place of $V_0 \cup V_1 \cup V_2$ splits, and the following conditions hold by construction: 
\begin{itemize}
\item The prime $p$ is unramified in $E$. 
\item Let $R$ denote the set of prime-to-$p$ places $w$ of $E$ such that $\pi_{E, w}$ or $\rho|_{G_{E_w}}$ is ramified. Let $S_p$ denote the set of $p$-adic places of $E$. Let $S' = S_p \cup R$. Then if $l$ is a prime lying below an element of $S'$, or which is ramified in $E$, then $E$ contains an imaginary quadratic field in which $l$ splits.
\item If $w \in R$ then $\overline{\rho}|_{G_{E_w}}$ is trivial and $q_w \equiv 1 \text{ mod }p$.
\item The image of $\overline{\rho}|_{G_{E(\zeta_p)}}$ is enormous. The representation $\overline{\rho}|_{G_E}$ is decomposed generic.
\item There exists $\sigma \in G_E - G_{E(\zeta_p)}$ such that $\overline{\rho}(\sigma)$ is a scalar.
\item For each place $\overline{w} | p$ of $E^+$, there exists a place $\overline{w}' \neq \overline{w}$ of $E^+$ such that $\overline{w}' | p$ and
\[ \sum_{\overline{w}'' \neq \overline{w}, \overline{w}'} [ E^+_{\overline{w}''} : \bQ_p] > \frac{1}{2} [ E^+ : \bQ ]. \]
\end{itemize}
By the Chebotarev density theorem, we can find infinitely many places $v_0$ of $E$ of degree 1 over $\bQ$ such that $\overline{\rho}(\Frob_{v_0})$ is scalar and $q_{v_0} \not\equiv 1 \text{ mod }p$, $v_0\notin S'\cup R^c$ and the residue characteristic of $v_0$ is odd. Then $H^2(E_{v_0}, \ad \overline{\rho}) = H^0(E_{v_0}, \ad \overline{\rho}(1))^\vee = 0$. We choose $v_0,v'_0$ with distinct residue characteristics satisfying these conditions, and set $S=S'\cup \{v_0,v'_0\}$. Note that if $l_0,l'_0$ denotes the residue characteristic of $v_0$, $v'_0$, then $l_0$, $l'_0$ splits in any imaginary quadratic subfield of $E$.

We see that the hypotheses (\ref{item:first_fl_hyp})--(\ref{item:last_fl_hyp}) of~\S\ref{subsec:an_r_equals_t_theorem} are now satisfied for $E$, $\pi_E$, and the set $S$. We can therefore apply Corollary \ref{cor:R_equals_T_implies_automorphy} to $\rho|_{G_E}$ and Proposition \ref{prop:soluble_base_change} to conclude that $\rho$ is associated to a cuspidal, regular algebraic automorphic representation $\Pi$ of $\GL_n(\bA_F)$ of weight $\lambda$. Taking into account the final sentence of Corollary \ref{cor:R_equals_T_implies_automorphy}, we see that $\Pi_{E, w}$ is unramified if $w \not\in S$.

To finish the proof, we must show that $\Pi_v$ is unramified if $v$ is a finite place of $F$ such that $v \nmid p$ and both $\rho$ and $\pi$ are unramified at $v$. Using our freedom to vary the choice of places $v_0$, $v'_0$, we see that if $v \nmid p$ is a place of $F$ such that both $\rho$ and $\pi$ are unramified at $v$, then $\Pi_{E, w}$ is unramified for any place $w | v$ of $E$. This implies that $\rec_{F_v}(\Pi_v)$ is a finitely ramified representation of the Weil group $W_{F_v}$. Using the main theorem of \cite{ilavarma} and the fact that $\rho$ is unramified at $v$, we see that $\rec_{F_v}(\Pi_v)$ is unramified, hence that $\Pi_v$ itself is unramified. This concludes the proof.
\end{proof}

\subsection{The proof of Theorem \ref{thm:main_ordinary_automorphy_lifting_theorem}}

We proceed to the proof of the second main theorem of this chapter (Theorem 
\ref{thm:main_ordinary_automorphy_lifting_theorem}). As in the case of the 
first theorem, we begin by establishing the result under additional conditions 
(\S \ref{subsec:ord_r_equals_t}), then reduce the general case to this one by 
using soluble base change (\S 
\ref{sec:proof_of_main_ordinary_automorphy_lifting_theorem}).

\subsubsection{Application of the patching argument (ordinary case)}\label{subsec:ord_r_equals_t}

We take $F$ to be an imaginary CM number field, and fix the following 
data:
\begin{enumerate}
	\item\label{item:first_ord_hyp} An integer $n \geq 2$ and a prime $p > n$.
	\item A finite set $S$ of finite places of $F$, including the places above 
	$p$. 
	\item A (possibly empty) subset $R \subset S$ of places prime to $p$.
	\item A cuspidal automorphic representation $\pi$ of $\GL_n(\bA_F)$, 
	regular algebraic of some weight $\mu$. 
	\item A choice of isomorphism $\iota : \overline{\bQ}_p \cong \bC$.
\end{enumerate}
We assume that the following conditions are satisfied:
\begin{enumerate}
	\addtocounter{enumi}{5}
	\item If $l$ is a prime lying below an element of $S$, or which is ramified in $F$, then $F$ contains an imaginary quadratic field in which $l$ splits. In particular, each place of $S$ is split over $F^+$ and the extension $F / F^+$ is everywhere unramified. 
	\item The residual representation $\overline{r_\iota(\pi)}$ is absolutely 
	irreducible.
	\item \label{assumeord} If $v \in S_p$ then $\pi_v^{\Iw_v(1,1)} \neq 0$ and 
	$\pi$ is $\iota$-ordinary at 
	$v$ (in the sense of 
	\cite[Def.~5.3]{ger}).
	\item If $v \in R$, then $\pi_v^{\Iw_v} \neq 0$.
	\item If $v \in S - (R \cup S_p)$, then $\pi_v$ is unramified, $v\notin R^c$, and $H^2(F_v, 
	\ad \overline{r_\iota(\pi)}) = 0$. %
	\item $S-(R \cup S_p)$ contains at least two places with distinct residue characteristics.
	\item If $v \not\in S$ is a finite place of $F$, then $\pi_v$ is 
	unramified. 
	\item If $v \in R$, then $q_v \equiv 1 \text{ mod }p$ and 
	$\overline{r_\iota(\pi)}|_{G_{F_v}}$ is trivial.
	\item The representation $\overline{r_\iota(\pi)}$ is decomposed generic and the image
	of~$\overline{r_\iota(\pi)}|_{G_{F(\zeta_p)}}$ is enormous.
		\item\label{item:last_ord_hyp} If $v \in S_p$ then $[F_v:\bQ_p] > \frac{n(n+1)}{2}+1$ and 
		$\overline{r_\iota(\pi)}|_{G_{F_v}}$ is trivial.
\end{enumerate}

\begin{theorem}\label{thm:ord_automorphy}
	With assumptions (\ref{item:first_ord_hyp})--(\ref{item:last_ord_hyp}) as above, suppose given a continuous representation $\rho 
	: G_F \to \GL_n(\overline{\bQ}_p)$ and a weight $\lambda \in (\bZ_+^n)^{\Hom(F, \overline{\bQ}_p)}$ satisfying the following conditions: 
	\begin{enumerate}
		\item We have $\overline{\rho} \cong \overline{r_\iota(\pi)}$.
\item For each place $v | p$, there is an isomorphism
\[ \rho|_{G_{F_v}} \sim \left( \begin{array}{cccc} \psi_{v, 1} & \ast & \ast & \ast \\
0 & \psi_{v, 2} & \ast & \ast \\
\vdots &\ddots & \ddots & \ast \\
0 & \cdots & 0 & \psi_{v, n}
\end{array}\right), \]
where for each $i = 1, \dots, n$ the character $\psi_{v, i} : G_{F_v} \to \overline{\bQ}_p^\times$ agrees with the character
\[ \sigma \in I_{F_v} \mapsto \prod_{\tau \in \Hom(F_v, \overline{\bQ}_p)} \tau( \Art_{F_v}^{-1}(\sigma))^{-(\lambda_{\tau, n - i + 1} + i - 1)} \]
on the inertia group $I_{F_v}$.
\item\label{item:same_ord_component} For each place $v | p$ of $F$, for each $i = 1, \dots, n$, and for each $p$-power root of unity $x \in \cO_{F_v}$, we have
\[ \prod_{\tau \in \Hom(F_v, \overline{\bQ}_p)} \tau(x)^{\lambda_{\tau, n + 1 - i} - \mu_{\iota \tau, n +1 - i}} = 1. \]
		\item For each finite place $v\not\in S$ of $F$, $\rho|_{G_{F_v}}$ is 
		unramified.
		\item For each place $v \in R$, $\rho|_{G_{F_v}}$ is unipotently 
		ramified.
	\end{enumerate}
	Then $\rho$ is ordinarily automorphic of weight $\iota \lambda$: there exists an $\iota$-ordinary 
	cuspidal automorphic representation $\Pi$ of $\GL_n(\bA_F)$ of weight $\iota \lambda$ such that $\rho \cong 
	r_\iota(\Pi)$. Moreover, if $v$ is a finite place of $F$ and $v \not\in S$, then $\Pi_v$ is unramified.
\end{theorem}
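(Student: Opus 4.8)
The proof will follow the same template as the Fontaine--Laffaille case treated in~\S\ref{subsec:an_r_equals_t_theorem}, with the Fontaine--Laffaille local conditions at the $p$-adic places replaced by the ordinary conditions $\cD_v^{\detord}$ of~\S\ref{sec:orddef}, and with the local--global compatibility input (Theorem~\ref{thm:lgcfl}) replaced by its ordinary counterpart Theorem~\ref{thm:lgcord}. The key intermediate statement is an ``$R=\T$ up to nilpotents'' result, the analogue of Theorem~\ref{thm:R_equals_T}: letting $\cS_1$ be the global deformation problem whose local condition at each $v\in S_p$ is $\cD_v^{\detord}$ (for $\Lambda_v$ a quotient of $\cO\llbracket\cO_{F_v}^\times(p)^n\rrbracket$ by a minimal prime $\wp_v$, chosen --- using hypothesis~(\ref{item:same_ord_component}) --- so that both the weight $\lambda$ and the weight $\mu$ of $\pi$ factor through the corresponding component of the torsion characters), whose local condition at each $v\in R$ is the level-raising condition $\cD_v^1$, and whose local condition at each $v\in S-(R\cup S_p)$ is $\cD_v^\square$, I will show that a suitable piece of ordinary cohomology $H^\ast(X_{K(b,c)},\cV_\lambda)^{\ord}_\m$ is a nearly faithful $R_{\cS_1}$-module. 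Granting this, the deduction of Theorem~\ref{thm:ord_automorphy} is exactly as in Corollary~\ref{cor:R_equals_T_implies_automorphy}: the homomorphism $f\colon R_{\cS_1}\to\overline{\Q}_p$ attached to $\rho$ has kernel in the support of the ordinary cohomology after inverting $p$, and the description of the rational ordinary cohomology in terms of $\iota$-ordinary cuspidal automorphic forms (via Theorem~\ref{thm:application_of_matsushima} and \cite[Lemma~5.1.3]{ger}) produces an $\iota$-ordinary cuspidal $\Pi$; Hida's independence of weight (Corollary~\ref{cor_independence_of_weight}) then identifies the weight of $\Pi$ with $\iota\lambda$, using hypothesis~(\ref{item:same_ord_component}) once more.

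To establish the nearly faithfulness I would run the patching machine of~\S\ref{sec:patching}, with $\Lambda$ now the power series ring $\widehat\otimes_{v\in S_p}\Lambda_v$ (encoding the ordinary Hodge--Tate weights), rather than $\cO$. The complexes to be patched are the localizations of $R\Gamma(X_{K_1(Q)(b,c)},\cV_\lambda(1))^{\ord}$ and $R\Gamma(X_{K_1(Q)(b,c)},\cV_\lambda(\chi^{-1}))^{\ord}$ at the relevant maximal ideals, handled with the duality, twisting and Hochschild--Serre formalism of~\S\ref{sec:twisting_and_duality} and the Hida-theoretic ``independence of level'' and ``independence of weight'' isomorphisms of~\S\ref{sec:hida_theory}. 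The Galois representations valued in these ordinary Hecke algebras, together with the correct local conditions at the places of $S_p$ (i.e.\ that they are of type $\cD_v^{\detord}$, which is where the relations~(\ref{eqn:det_ord_one}) and~(\ref{eqn:det_ord_two}) enter) and at the places of $R$, are supplied by Theorem~\ref{thm:lgcord} and Theorem~\ref{thm:lgc_at_l_neq_p}; the auxiliary Taylor--Wiles data are produced by Proposition~\ref{thm:TWgen} using the enormous-image and decomposed-generic hypotheses. The local deformation ring $R_{\cS_1}^{S,\loc}$ and its twist $R_{\cS_\chi}^{S,\loc}$ are analyzed in Lemma~\ref{lem:localordcase}: each has a unique irreducible component of the expected dimension with characteristic-zero generic point, but --- in contrast with the Fontaine--Laffaille ring, which is formally smooth --- it may have further components of strictly smaller dimension, all lying in the locus where two of the universal ordinary characters $\chi_i^{\univ}$ coincide (Proposition~\ref{prop:detord_components}). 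This is precisely the situation built into Assumption~\ref{assumptionsetup}, and the axiomatic ``Ihara avoidance'' of~\S\ref{sec:AvoidIhara}, culminating in Proposition~\ref{prop:full_support_for_unipotent_deformation_rinG}, was designed to cope with it.

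It then remains to check that the patched data $\cC_\infty,\cC'_\infty$ over $S_\infty$, with $R_\infty$ (resp.\ $R'_\infty$) a power series ring over $R_{\cS_1}^{S,\loc}$ (resp.\ $R_{\cS_\chi}^{S,\loc}$) in $g=qn-n^2[F^+:\Q]$ variables, verify Assumption~\ref{assumptionsetup}. The dimension count $\dim R_\infty=\dim R'_\infty=\dim S_\infty-l_0$, with $l_0=n[F^+:\Q]-1$, follows from Lemma~\ref{lem:localordcase} once one keeps track of the $n[F:\Q]$ extra variables of $\Lambda$ contributing to $\dim S_\infty$; the ``behavior of components'' hypotheses follow from the uniqueness of the top-dimensional component of $R_{\cS_\chi}^{S,\loc}$ (for pairwise distinct $\chi_v$) and the dimension bounds on all other components in Lemma~\ref{lem:localordcase}; and the ``generic concentration'' hypothesis follows from the isomorphism $\cC_\infty\otimes^{\bL}_{S_\infty}S_\infty/\ga_\infty\cong\cC_0$ together with Theorem~\ref{thm:application_of_matsushima} applied at the weight $\mu$ of $\pi$ (viewed as a dimension-one characteristic-zero prime of $S_\infty$ containing $\ga_\infty$), where the ordinary cohomology is non-zero and concentrated in degrees $[q_0,q_0+l_0]$ because $\pi$ is $\iota$-ordinary and regular algebraic. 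Proposition~\ref{prop:full_support_for_unipotent_deformation_rinG} then gives that $H^\ast(\cC_\infty)$ has full support over $R_\infty$, which descends to nearly faithfulness of $H^\ast(X_{K(b,c)},\cV_\lambda)^{\ord}_\m$ over $R_{\cS_1}$.

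I expect the main obstacle to lie not in any single new idea but in the careful interfacing of the ordinary cohomology with the patching formalism: the degree-shifting results of~\S\ref{sec:hida_theory} only produce shifts by multiples of $[F^+:\Q]$, so the centre-of-$G$ trick of Lemma~\ref{centraldegreeshifting} must be threaded through the construction of the ordinary Galois representations and of the patched module; and the verification of the component hypotheses in Assumption~\ref{assumptionsetup} genuinely depends on the fine structure of $R_v^{\detord}$ from Proposition~\ref{prop:detord_components}. Everything else is a transcription of the argument of~\S\ref{subsec:an_r_equals_t_theorem} into the ordinary and completed-cohomology setting; the eventual deduction of Theorem~\ref{thm:main_ordinary_automorphy_lifting_theorem} from Theorem~\ref{thm:ord_automorphy} will proceed by the soluble base change argument of Proposition~\ref{prop:soluble_base_change}, exactly as in the Fontaine--Laffaille case.
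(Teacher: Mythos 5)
Your overall architecture (Hida-theoretic patching over the big weight algebra $\Lambda$, ordinary local--global compatibility from Theorem~\ref{mySecondAmazingTheorem}, Taylor--Wiles data from Proposition~\ref{thm:TWgen}, Ihara avoidance via \S\ref{sec:AvoidIhara} with the component analysis of Lemma~\ref{lem:localordcase}, and soluble base change at the end) is the paper's, but your central intermediate claim is too strong and does not follow from the results you cite. You assert an ordinary analogue of Theorem~\ref{thm:R_equals_T}, namely that $H^\ast(X_{K(b,c)},\cV_\lambda)^{\ord}_\m$ is a \emph{nearly faithful} $R_{\cS_1}$-module, and you derive it from Proposition~\ref{prop:full_support_for_unipotent_deformation_rinG}. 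That proposition only shows that the support of $H^\ast(\cC_\infty)$ contains every irreducible component of $\Spec R_\infty$ of \emph{maximal} dimension. In the Fontaine--Laffaille case this suffices because $R^{\loc}$ is equidimensional (Lemma~\ref{lem:localFLcase}), but in the ordinary case $R_v^{\detord}$ is not known to be equidimensional: by Proposition~\ref{prop:detord_components} and Lemma~\ref{lem:localordcase} there may be components of strictly smaller dimension lying over the locus $\chi_i^{\univ}=\chi_j^{\univ}$, and nothing in \S\ref{sec:AvoidIhara} controls whether these lie in the support. Hence the annihilator of the patched cohomology need not be nilpotent, and nearly faithfulness over $R_{\cS_1}$ is not established; this is exactly why the paper explicitly declines to prove an analogue of Theorem~\ref{thm:R_equals_T} in the ordinary setting and proves only an analogue of Corollary~\ref{cor:R_equals_T_implies_automorphy}.

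The gap is repairable, and the repair is the one the paper makes: you do not need full support, only that the particular prime $x=\ker f$ attached to $\rho$ is hit. Because the weight $\lambda$ is regular, the inertial characters on the diagonal of $\rho|_{G_{F_v}}$ are pairwise distinct for each $v\in S_p$, so by part (3) of Lemma~\ref{lem:localordcase} the point $x$ cannot lie on any component of non-maximal dimension, i.e.\ it lies on a component of $\Spec R_\infty$ of maximal dimension. One then applies Corollary~\ref{cor:char0automorphy} (rather than nearly faithfulness plus the argument of Corollary~\ref{cor:R_equals_T_implies_automorphy}) with $y\in\Spec S_\infty$ the prime corresponding to the weight-$\lambda$ specialization $\cO(\nu+w_0^G\lambda)^{-1}$ of $\Lambda$, concluding that $x$ lies in the support of $H^\ast(\cC_\infty\otimes^{\bL}_{S_\infty}S_\infty/y)[1/p]$; combined with Corollary~\ref{cor:weight_specialization} and Theorem~\ref{thm:application_of_matsushima} this yields the $\iota$-ordinary automorphic representation $\Pi$ of weight $\iota\lambda$. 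With this substitution (and noting that hypothesis~(\ref{item:same_ord_component}) is what guarantees $\lambda$ and $\mu$ lie on the same component of weight space, so that both specializations make sense for the single patched complex built at weight $\mu$), the rest of your outline goes through as in the paper.
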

Note that we do not prove an analogue of Theorem \ref{thm:R_equals_T} here, but rather only an analogue of Corollary \ref{cor:R_equals_T_implies_automorphy}. This is due to our poor understanding of the irreducible components of the local lifting rings of type $\cD_v^{\detord}$. Before giving the proof of Theorem \ref{thm:ord_automorphy}, we need to introduce some deformation rings, Hecke algebras, and complexes on which they act. These complexes will represent the ordinary part of completed \emph{homology} with $\cO$-coefficients. We will use the notation for ordinary parts established in \S \ref{sec:ord_statements}.

We define an open compact subgroup $K = \prod_v K_v$ of 
$\GL_n(\widehat{\cO}_F)$ as follows:
\begin{itemize}
	\item If $v \not\in S$ then $K_v = \GL_n(\cO_{F_v})$.
	\item If $v \in S_p$, then $K_v = \Iw_v(1,1)$.
	\item If $v \in R$, then $K_v = \Iw_v$.
	\item If $v \in S - (R \cup S_p)$, then $K_v = \Iw_{v,1}$ is the pro-$v$ Iwahori subroup of $\GL_n(\cO_{F_v})$.
\end{itemize}
Then (by Lemma \ref{lem:neat_subgroups}) $K$ is neat, so is a good subgroup of $\GL_n(\A_F^\infty)$. By Theorem \ref{thm:application_of_matsushima}, we can find a 
coefficient field $E \subset \overline{\bQ}_p$ and a maximal ideal $\ffrm \subset \T^{S}(K, \mu)^{\ord}$ of residue field $k$ such that $\overline{\rho}_\m \cong \overline{r_\iota(\pi)}$. If $v \in S_p$, we let $\Lambda_{1, v} =  \cO \llbracket \cO_{F_v}^\times(p)^n \rrbracket $. We define $\Lambda_1 = \widehat{\otimes}_{v \in S_p} \Lambda_{1, v}$, the completed tensor product being over $\cO$. The $n$-tuple of characters 
\[ \chi_{\mu, v, i} : \cO_{F_v}^\times(p) \to \cO^\times,\,\, x \mapsto \prod_{\tau \in \Hom_{\Q_p}(F_v, E)} \tau(x)^{-(\mu_{\iota \tau, n - i + 1} + i - 1)} \,\, (i = 1, \dots, n) \]
determines a homomorphism $p_{\mu, v} : \Lambda_{1, v} \to \cO$. We define 
$\wp_{\mu, v} = \ker p_{\mu, v}$, and write $\wp_{0, v}$ for the unique minimal 
prime of $\Lambda_{1, v}$ which is contained in $\wp_{\mu, v}$. We set 
$\Lambda_v = \Lambda_{1, v} / \wp_{0, v}$ and $\Lambda = \widehat{\otimes}_{v 
\in S_p} \Lambda_v$. We write $p_\mu : \Lambda \to \cO$ for the homomorphism 
induced by the $p_{\mu, v}$ and the universal property of the completed tensor 
product, and set $\wp_\mu = \ker p_\mu$. We use similar notation for 
$p_\lambda$; note that condition (\ref{item:same_ord_component}) in the 
statement of the theorem implies that $\wp_{0, v}$ is also the unique minimal 
prime contained in $\wp_{\lambda, v}$ for each $v \in S_p$.

 We define a global deformation problem for each character $\chi : K_R \to \cO^\times$ which is trivial modulo $\varpi$  by the formula
\[ \cS_\chi = (\overline{\rho}_\ffrm, S, \{ \cO \}_{v \in S - S_p} \cup \{ \Lambda_v \}_{v \in S_p}, \{ \cD_v^{\detord} \}_{v \in S_p} \cup \{ \cD_v^\chi \}_{v \in R} \cup \{ \cD_v^\square \}_{v \in S - (R \cup S_p)}). \]
We fix representatives $\rho_{\cS_\chi}$ of the universal deformations which are identified modulo $\varpi$ (via the identifications $R_{\cS_\chi} / \varpi \cong R_{\cS_1} / \varpi$). We define an $\cO[K_S]$-module $\cV_\mu(\chi^{-1}) = \cV_\mu \otimes_\cO \cO(\chi^{-1})$, where $K_S$ acts on $\cV_\mu$ by projection to $K_p$ and on $\cO(\chi^{-1})$ by projection to $K_R$. After possibly enlarging $E$, we can assume that $\rho$ takes values in $\cO$ and that $\rho \text{ mod }\varpi = \overline{\rho}_\m$; then $\rho$ is a lifting of $\overline{\rho}_\m$ of type $\cS_1$. 

If $c \geq 1$ is an integer, then we define 
\[ \Lambda_{1, c} = \cO[ \prod_{v \in S_p} \ker(T_n(\cO_{F_v} / \varpi_v^c) \to T_n(\cO_{F_v} / \varpi_v)) ]; \]
 it is naturally a quotient of $\Lambda_{1}$. For any $c \geq 1$, the complex $R \Gamma(X_{K(c, c)}, \cV_\mu(\chi^{-1}))^{\ord}$ is defined, as an object of $\mathbf{D}(\Lambda_{1, c})$. We define 
\[ A_1(\mu, \chi, c) = R \Hom_{\Lambda_{1, c}}( R \Gamma(X_{K(c, c)}, \cV_\mu(\chi^{-1}))^{\ord}, \Lambda_{1, c})[-d]. \]
It is a perfect complex in $\mathbf{D}(\Lambda_{1, c})$ (because $R \Gamma(X_{K(c, c)}, \cV_\mu(\chi^{-1}))^{\ord}$ is). The Hecke algebra $\T^{S, \ord}$ acts on this complex by transpose. Moreover, Corollary \ref{cor:independence_of_level} shows that for any $c' \geq c \geq 1$ there is a $\T^{S, \ord}$-equivariant isomorphism
\numequation\label{eqn:independence_of_level} A_1(\mu, \chi, c') \otimes^{\bL}_{\Lambda_{1, c'}} \Lambda_{1, c} \cong A_1(\mu, \chi, c)
\end{equation}
in $\mathbf{D}(\Lambda_{1, c})$. By construction, there are canonical $\T^{S, \ord}$-equivariant isomorphisms
\numequation\label{eqn:ord_identification_mod_l} A_1(\mu, \chi, c)\otimes^{\bL}_{\Lambda_{1, c}} \Lambda_{1, c} / \varpi \cong A_1(\mu, 1, c)\otimes^{\bL}_{\Lambda_{1, c}} \Lambda_{1, c} / \varpi \end{equation}
in $\mathbf{D}(\Lambda_{1, c} / \varpi)$. By \cite[Lemma 2.13]{KT}, we can find a perfect complex $A_1(\mu, \chi) \in \mathbf{D}(\Lambda_1)$ which comes equipped an action by $\T^{S, \ord}$ and with $\T^{S, \ord}$-equivariant isomorphisms
\[ A_1(\mu, \chi) \otimes^{\bL}_{\Lambda_1} \Lambda_{1, c} \cong A_1(\mu, \chi, c) \]
in $\mathbf{D}(\Lambda_{1, c})$ (for each $c \geq 1$) and
\[ A_1(\mu, \chi) \otimes^{\bL}_{\Lambda_1} \Lambda_1 /\varpi \cong  A_1(\mu, 1) \otimes^{\bL}_{\Lambda_1} \Lambda_1 / \varpi \]
in $\mathbf{D}(\Lambda_1 / \varpi)$. These isomorphisms are compatible with the isomorphisms (\ref{eqn:independence_of_level}) for $c' \geq c$ and with the isomorphisms (\ref{eqn:ord_identification_mod_l}) for varying characters $\chi$, trivial modulo $\varpi$. Finally, we define $A(\mu, \chi) = A_1(\mu, \chi) \otimes^{\bL}_{\Lambda_1} \Lambda \in \mathbf{D}(\Lambda)$.

Let $\nu \in X^\ast((\Res_{F / \Q} T)_E) = (\Z^n)^{\Hom(F, E)}$ be defined by 
\[ \nu_\tau = (0, 1, \dots, n-1) \]
for all $\tau \in \Hom(F, E)$. We define $B_1(\mu, \chi) = A_1(\mu, \chi) \otimes_\cO \cO(\nu + w_0^G \mu)^{-1}$, where $\cO(\nu + w_0^G \mu)^{-1}$ is the $\cO[T_n(F_p)]$-module described in \S \ref{sec_ordinary_part_of_smooth_rep}. (In particular, the action of $T_n(\cO_{F, p})$ extends uniquely to an action of the completed group algebra $\cO \llbracket T_n(\cO_{F, p}) \rrbracket$.) Thus $B_1(\mu, \chi)$ is a perfect complex in $\mathbf{D}(\Lambda_1)$, on which the algebra $\T^{S, \ord}$ acts. We define $B(\mu, \chi) = B_1(\mu, \chi) \otimes^{\bL}_{\Lambda_1} \Lambda$. 
\begin{lemma}
The complex $B_1(\mu, \chi)$ is independent of $\mu$. More precisely, for any $\mu' \in (\Z^n_+)^{\Hom(F, E)}$, there is a $\T^{S, \ord}$-equivariant isomorphism $B_1(\mu, \chi) \cong B_1(\mu', \chi)$ in $\mathbf{D}(\Lambda_1)$. 
\end{lemma}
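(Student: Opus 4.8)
The plan is to deduce the desired independence-of-weight statement from Corollary~\ref{cor_independence_of_weight}, which says that the $\ord$-cohomology $R\Gamma(X_{K(b,c)}, \cV_\lambda/\varpi^m)^{\ord}$ depends only on the $\cO/\varpi^m[T_n(F_p)]$-module $\cO(w_0^G\lambda)/\varpi^m$ rather than on $\lambda$ itself. First I would unwind the definitions: by construction $B_1(\mu,\chi) = A_1(\mu,\chi) \otimes_\cO \cO(\nu + w_0^G\mu)^{-1}$, and $A_1(\mu,\chi)$ is assembled from the complexes $A_1(\mu,\chi,c) = R\Hom_{\Lambda_{1,c}}(R\Gamma(X_{K(c,c)},\cV_\mu(\chi^{-1}))^{\ord}, \Lambda_{1,c})[-d]$. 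So it suffices to produce, for each $c \geq 1$ and each $m \geq 1$, a $\T^{S,\ord}$-equivariant isomorphism of the $\varpi^m$-reductions that is compatible as $c$ and $m$ vary, and then invoke \cite[Lemma~2.13]{KT} exactly as in the construction of $A_1(\mu,\chi)$ itself.

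The key point is that twisting the coefficient system by $\cO(\nu + w_0^G\mu)^{-1}$ precisely cancels the dependence on $\mu$ coming from Corollary~\ref{cor_independence_of_weight}. Concretely, I would argue that $R\Gamma(X_{K(c,c)},\cV_\mu(\chi^{-1})/\varpi^m)^{\ord} \otimes_\cO \cO(w_0^G\mu)/\varpi^m$ — or rather the appropriate dual twist producing $B_1$ — is canonically, $\T^{S,\ord}$-equivariantly isomorphic to the analogous object for $\mu'$. Indeed, Proposition~\ref{prop:independence_of_weight} gives $\pi^{\ord}(K^p,\mu,m) \cong \pi^{\ord}(K^p,m) \otimes_\cO \cO(w_0^G\mu)$ as $\T^S$-equivariant objects in $\mathbf{D}(\cO/\varpi^m[T_n(F_p)])$, and Proposition~\ref{prop:comparison_of_completed_and_classical_ord_coh} identifies $R\Gamma(T_n(\cO_{F,p})(b), \pi^{\ord}(K^p,\mu,m))$ with $R\Gamma(X_{K(b,c)},\cV_\mu/\varpi^m)^{\ord}$. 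Combining these, $R\Gamma(X_{K(c,c)},\cV_\mu/\varpi^m)^{\ord}$ differs from $R\Gamma(X_{K(c,c)},\cV_{\mu'}/\varpi^m)^{\ord}$ exactly by the twist $\cO(w_0^G\mu - w_0^G\mu')$, and since $B_1(\mu,\chi)$ incorporates the compensating twist $\cO(\nu + w_0^G\mu)^{-1}$ (and the $\chi^{-1}$ at places in $R$ and the $\nu$-shift are the same for $\mu$ and $\mu'$), the twists cancel. Care is needed because $A_1$ is defined as a \emph{dual} (via $R\Hom(-,\Lambda_{1,c})[-d]$), so the twist $\cO(w_0^G\mu)$ becomes $\cO(w_0^G\mu)^{-1}$ on the dual side; one must check that the exponents in the definition of $B_1$ (namely $\nu + w_0^G\mu$, and the fact that we twist $A_1$ rather than untwist it) are exactly what is needed to make everything independent of $\mu$. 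This is a finite bookkeeping check with the conventions of \S\ref{sec_ordinary_part_of_smooth_rep} and \S\ref{sec:ord_statements}.

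I would then check $\T^{S,\ord}$-equivariance of the resulting isomorphism. The Hecke operators away from $p$ act through $\T^S$ and are manifestly unaffected by twisting by a character of $T_n(F_p)$. For the operators $U_{v,i}$ at $v \in S_p$: the whole subtlety of Hida theory in this paper is that the $U_{v,i}$-action on $\cV_\mu$ is defined via the twisted action $\cdot_p$ using $\alpha_\mu$ (see \S\ref{sec:some_useful_hecke_operators}), and this twist is designed so that the resulting Hecke action is compatible with passing between weights; this is exactly the content of Proposition~\ref{prop:independence_of_weight} and Corollary~\ref{cor_independence_of_weight}, which are already stated to be $\T^{S,\ord}$-equivariant. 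So the equivariance is inherited. Finally I would note that the isomorphisms for varying $c$ are compatible with the isomorphisms \eqref{eqn:independence_of_level}, and those for varying $m$ with \eqref{eqn:ord_identification_mod_l}-type maps, so \cite[Lemma~2.13]{KT} applies to glue them into a single $\T^{S,\ord}$-equivariant isomorphism $B_1(\mu,\chi) \cong B_1(\mu',\chi)$ in $\mathbf{D}(\Lambda_1)$.

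The main obstacle I anticipate is purely notational: getting the twist exponents exactly right, i.e.\ verifying that the factor $\cO(\nu + w_0^G\mu)^{-1}$ in the definition of $B_1$ — combined with the duality shift in the definition of $A_1$ and the $\cO(w_0^G\mu)$ appearing in Proposition~\ref{prop:independence_of_weight} — produces a complex genuinely independent of $\mu$, as opposed to independent up to an explicit character depending on $\mu$. There is no conceptual difficulty here, since all the hard input (the independence-of-weight results of \S\ref{sec:hida_theory}) is already in place; the work is to track the conventions through the dualization and the twist and confirm the cancellation.
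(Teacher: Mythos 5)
Your overall strategy is the same as the paper's: the lemma is deduced from Proposition~\ref{prop:independence_of_weight} together with the gluing lemma \cite[Lemma 2.13]{KT}, which is exactly what you propose (routing through Corollary~\ref{cor_independence_of_weight} and Proposition~\ref{prop:comparison_of_completed_and_classical_ord_coh} is the same mechanism). The problem is the one piece of twist bookkeeping you actually commit to, which is also the only real content of the statement. You assert that, because $A_1$ is defined via $R\Hom_{\Lambda_{1,c}}(-,\Lambda_{1,c})[-d]$, ``the twist $\cO(w_0^G\mu)$ becomes $\cO(w_0^G\mu)^{-1}$ on the dual side.'' Under the conventions in force -- the $\T^{S,\ord}$-action on $A_1(\mu,\chi,c)$ is by transpose, and the diamond operators act through the canonical $\Lambda_{1,c}$-structure on the $R\Hom$ -- this is false: the twist is \emph{preserved}, not inverted. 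Indeed, Proposition~\ref{prop:independence_of_weight} identifies the underlying ordinary complexes for the weights $\mu$ and $\mu'$ in such a way that each diamond operator $\langle u\rangle$ acts on the weight-$\mu$ object as the scalar $(w_0^G\mu-w_0^G\mu')(u)$ times its action on the weight-$\mu'$ object, while the $U_{v,i}$ and the away-from-$p$ operators are unchanged (uniformizers act trivially on $\cO(w_0^G\mu)$). Transposing an operator multiplies it by the same scalar, so $A_1(\mu,\chi)\cong A_1(\mu',\chi)\otimes_\cO\cO(w_0^G\mu-w_0^G\mu')$, equivalently the character through which $\cO\llbracket T_n(\cO_{F,p})\rrbracket$ acts is carried unchanged to the dual. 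It is then the \emph{single} compensating twist $\cO(\nu+w_0^G\mu)^{-1}$ in the definition of $B_1$ that produces $B_1(\mu,\chi)\cong A_1(\mu',\chi)\otimes_\cO\cO(\nu+w_0^G\mu')^{-1}=B_1(\mu',\chi)$.

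If your inversion claim were correct, the cancellation you assert would not occur: combining ``weight $\mu$ equals weight $0$ twisted by $\cO(w_0^G\mu)$'' with ``duality inverts the twist'' yields $B_1(\mu,\chi)\cong(\text{weight-independent complex})\otimes_\cO\cO(\nu+2\,w_0^G\mu)^{-1}$, which visibly depends on $\mu$; so, as you have set it up, the ``finite bookkeeping check'' you defer does not close -- and that check is the whole lemma, since everything else is a citation. You need to redo the duality step with the transpose convention (or argue via the eigenvalue/support of the $\Lambda_1$-action, which is manifestly unchanged by $R\Hom_{\Lambda_{1,c}}(-,\Lambda_{1,c})$), after which your argument goes through. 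A secondary caution: Corollary~\ref{cor_independence_of_weight} only compares weights whose characters agree modulo $\varpi^m$, so at a fixed level $(c,m)$ with arbitrary $\mu,\mu'$ there is no isomorphism of the finite-level ordinary complexes themselves; one must either compare at levels $c$ deep relative to $m$ and then glue, or, as the paper does, work directly with $\pi^{\ord}(K^p,m)$ via Proposition~\ref{prop:independence_of_weight} before invoking \cite[Lemma 2.13]{KT}.
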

\begin{proof}
This follows from Proposition \ref{prop:independence_of_weight} and \cite[Lemma 2.13]{KT}.
\end{proof}
\begin{cor}\label{cor:weight_specialization}
Let $\mu' \in (\Z^n_+)^{\Hom(F, E)}$. Then there is a $\T^{S, \ord}$-equivariant isomorphism in $\mathbf{D}(\cO)$:
\[ B_1(\mu, \chi) \otimes^{\bL}_{\Lambda_{1}} \cO(\nu + w_0^G \mu')^{-1} \cong A_1(\mu', \chi, 1) \otimes_\cO \cO(\nu + w_0^G \mu')^{-1}. \]
\end{cor}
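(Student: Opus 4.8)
The plan is to unwind the definitions and reduce to the independence-of-weight result already established, namely Proposition~\ref{prop:independence_of_weight} and its packaging via \cite[Lemma 2.13]{KT}. Recall that by definition $B_1(\mu,\chi) = A_1(\mu,\chi) \otimes_\cO \cO(\nu + w_0^G \mu)^{-1}$, and that the previous lemma furnishes a $\T^{S,\ord}$-equivariant isomorphism $B_1(\mu,\chi) \cong B_1(\mu',\chi)$ in $\mathbf{D}(\Lambda_1)$. So the left-hand side of the claimed isomorphism is $\T^{S,\ord}$-equivariantly isomorphic to $B_1(\mu',\chi) \otimes^{\bL}_{\Lambda_1} \cO(\nu + w_0^G\mu')^{-1}$, where $\cO(\nu + w_0^G\mu')^{-1}$ is viewed as a $\Lambda_1$-module via the quotient $\Lambda_1 \to \cO\llbracket T_n(\cO_{F,p})\rrbracket$ followed by the character by which $T_n(\cO_{F,p})$ acts on $\cO(\nu + w_0^G\mu')^{-1}$.

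First I would untangle the $\Lambda_1$-module structure on the rank-one twist and identify $B_1(\mu',\chi) \otimes^{\bL}_{\Lambda_1} \cO(\nu + w_0^G\mu')^{-1}$ with $\left( A_1(\mu',\chi) \otimes_\cO \cO(\nu+w_0^G\mu')^{-1}\right) \otimes^{\bL}_{\Lambda_1} \cO(\nu+w_0^G\mu')^{-1}$; the two twists combine (the action of $T_n(\cO_{F,p})$ on the tensor product of the twist with its inverse-ish partner is trivial on the relevant quotient), so this collapses to $A_1(\mu',\chi) \otimes^{\bL}_{\Lambda_1} \Lambda_{1,1} \otimes_\cO \cO(\nu + w_0^G\mu')^{-1}$, using that the quotient of $\Lambda_1$ cutting out the relevant character on $T_n(\cO_{F,p})$, after accounting for the weight twist, is exactly $\Lambda_{1,1} = \cO[\prod_{v\in S_p} \ker(T_n(\cO_{F_v}/\varpi_v) \to T_n(\cO_{F_v}/\varpi_v))] = \cO$. (Here one needs to be slightly careful: the ideal by which we specialize is the one corresponding to the character $\chi_{\mu',v,i}$ twisted appropriately, and one checks that on $B_1$, which is weight-independent, this specialization recovers the weight-$\mu'$ complex at $\Iw$-level.) Then by the defining isomorphism $A_1(\mu',\chi) \otimes^{\bL}_{\Lambda_1} \Lambda_{1,c} \cong A_1(\mu',\chi,c)$ with $c=1$, this is $\T^{S,\ord}$-equivariantly isomorphic to $A_1(\mu',\chi,1) \otimes_\cO \cO(\nu+w_0^G\mu')^{-1}$, which is the right-hand side.

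The key steps, in order, are: (i) replace $B_1(\mu,\chi)$ by $B_1(\mu',\chi)$ using the weight-independence lemma; (ii) carefully compute the $\Lambda_1$-module structures involved so that the base change $- \otimes^{\bL}_{\Lambda_1} \cO(\nu+w_0^G\mu')^{-1}$ and the twist $-\otimes_\cO \cO(\nu+w_0^G\mu')^{-1}$ interact correctly, identifying the resulting quotient of $\Lambda_1$ with $\Lambda_{1,1}=\cO$; (iii) invoke the defining property $A_1(\mu',\chi) \otimes^{\bL}_{\Lambda_1} \Lambda_{1,1} \cong A_1(\mu',\chi,1)$; (iv) track $\T^{S,\ord}$-equivariance through all of these, which is automatic since every isomorphism above was chosen to be Hecke-equivariant. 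I expect the main obstacle to be purely bookkeeping: making sure that the character $\nu + w_0^G\mu'$ appearing in the twist is precisely the one needed so that the composite twist-then-specialize lands at $\Iw$-level ($\Lambda_{1,1}$) rather than at some deeper level or a shifted weight, i.e.\ that the twisting conventions in the definition of $\cO(\lambda)$ from \S\ref{sec_ordinary_part_of_smooth_rep} and the construction of $A_1(\mu',\chi,1)$ via $\cV_{\mu'}$ with its $\cdot_p$-action are compatible. This is a matter of comparing the explicit formula for the $T_n(F_p)$-action on $\cO(w_0^G\mu')$ with the twisted action on $\cV_{\mu'}$; no new ideas are required beyond what is already in \S\ref{sec:hida_theory}.
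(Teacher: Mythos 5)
Your proposal is correct and is essentially the paper's own argument: the paper also reduces via the weight-independence lemma (phrased as "it suffices to treat $\mu'=\mu$"), then cancels the twist $\otimes_\cO\cO(\nu+w_0^G\mu)^{-1}$ against the base change $\otimes^{\bL}_{\Lambda_1}\cO(\nu+w_0^G\mu)^{-1}$ to turn the latter into base change along the augmentation $\Lambda_1\to\Lambda_{1,1}=\cO$, and finally invokes the defining isomorphism $A_1(\mu,\chi)\otimes^{\bL}_{\Lambda_1}\Lambda_{1,1}\cong A_1(\mu,\chi,1)$, tracking $\T^{S,\ord}$-equivariance throughout. Your bookkeeping of the twisted $\Lambda_1$-structures is exactly the content of the paper's displayed identification, so no further comment is needed.
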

\begin{proof}
By the lemma, it suffices to treat the case $\mu' = \mu$. In this case the left-hand side may be identified with 
\[ A_1(\mu, \chi) \otimes_\cO \cO(\nu + w_0^G \mu)^{-1} \otimes^{\bL}_{\Lambda_1} \cO(\nu + w_0^G \mu)^{-1} \cong A_1(\mu, \chi) \otimes^{\bL}_{\Lambda_1} \cO \otimes_\cO \cO(\nu + w_0^G \mu)^{-1} \]
Essentially by definition, this complex admits a $\T^{S, \ord}$-equivariant isomorphism to $A_1(\mu, \chi, 1) \otimes_\cO \cO(\nu + w_0^G \mu)^{-1}$ in $\mathbf{D}(\cO)$. This completes the proof. 
\end{proof}
Let $\T^{S, \Lambda_1} = \T^S \otimes_\cO \Lambda_1 \subset \T^{S, \ord}$.
\begin{prop}\label{prop:ord_hecke_LGC}
	There exists an integer $\delta \geq 1$, depending only on $n$ and $[F : \Q]$, an ideal $J \subset \T^{S, \Lambda_1}(A(\mu, \chi)_\m \otimes_\cO \cO(\nu + w_0^G \mu)^{-1})$ such that $J^\delta = 0$, and a continuous surjective homomorphism of $\Lambda$-algebras
	\[ f_{\cS_{\chi}} : R_{\cS_\chi} \to \T^{S, \Lambda_1}(A(\mu, \chi)_\m\otimes_\cO \cO(\nu + w_0^G \mu)^{-1}) / J \]
	such that for each finite place $v \not\in S$ of $F$, the characteristic polynomial of $f_{\cS_\chi} \circ \rho_{\cS_\chi}(\Frob_v)$ equals the image of $P_v(X)$ in $\T^{S, \Lambda_1}(A(\mu, \chi)_\m \otimes_\cO \cO(\nu + w_0^G \mu)^{-1}) / J$.
\end{prop}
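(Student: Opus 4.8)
The plan is to deduce this proposition from the ordinary local--global compatibility theorem (Theorem~\ref{thm:lgcord}, proved earlier in \S\ref{sec:ordsection}) together with the existence result for Hecke-algebra-valued Galois representations (Theorem~\ref{thm:existence_of_Hecke_repn_for_GL_n}), in a manner entirely parallel to the proof of Proposition~\ref{prop:existence_of_Hecke_Galois_with_LGC} in the Fontaine--Laffaille case. First I would observe that, by construction, there is a canonical $\T^{S,\ord}$-equivariant identification of $A(\mu,\chi)_\m \otimes_\cO \cO(\nu + w_0^G\mu)^{-1}$, after applying $-\otimes^{\bL}_{\Lambda} \Lambda_{1,c}$ for $c \geq 1$, with (the transpose dual of) $R\Gamma(X_{K(c,c)},\cV_\mu(\chi^{-1}))^{\ord}_\m$ twisted appropriately, so that $\T^{S,\Lambda_1}(A(\mu,\chi)_\m\otimes_\cO\cO(\nu + w_0^G\mu)^{-1})$ is a quotient (up to a nilpotent ideal of bounded exponent, by the inverse limit argument with \cite[Lemma 2.13]{KT}) of the ordinary Hecke algebras $\T^{S}(K(c,c),\mu)^{\ord}_\m$ that appear in the statement of Theorem~\ref{thm:lgcord}. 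The point is that the normalizing twist by $\cO(\nu + w_0^G\mu)^{-1}$ is precisely the one that converts the Hecke-operator normalizations into the Galois-theoretic ones appearing in conditions (b), (c) of Theorem~\ref{thm:lgcord} and in the deformation problem $\cD_v^{\detord}$ of \S\ref{sssec:ord}.

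Next, I would invoke Theorem~\ref{thm:existence_of_Hecke_repn_for_GL_n} (valid because our set $S$ satisfies the required splitting condition, by hypothesis) to produce, after enlarging the nilpotent ideal, a continuous representation $\rho_\m : G_{F,S} \to \GL_n$ of the relevant Hecke algebra modulo a nilpotent ideal, with the correct characteristic polynomials of Frobenius at places $v \notin S$; after conjugating, $\rho_\m \bmod \m = \overline{\rho}_\m$. It then remains to check that $\rho_\m|_{G_{F_v}}$ is a lifting of type $\cD_v$ for each $v \in S$. For $v \in R$ this is the content of Theorem~\ref{thm:lgc_at_l_neq_p} (the hypotheses there --- Iwahori level at $v$, residue characteristic splitting in an imaginary quadratic subfield --- hold by our running assumptions), which gives the required shape of the characteristic polynomials of $\rho_\m(\sigma)$ for $\sigma \in I_{F_v}$, i.e.\ that $\rho_\m|_{G_{F_v}} \in \cD_v^{\chi}(-)$. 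For $v \in S-(R\cup S_p)$ there is no condition ($\cD_v^\square$). For $v \in S_p$, Theorem~\ref{thm:lgcord} gives exactly the two identities: the characteristic polynomial identity $\det(X - \rho_\m(g)) = \prod_i (X - \chi_{\mu,v,i}(g))$ and the Cayley--Hamilton-type relation $\prod_i(\rho_\m(g_i) - \chi_{\mu,v,i}(g_i)) = 0$; matching $\chi_{\mu,v,i}|_{I_{F_v}}$ with the universal character $\chi_i^{\univ}$ (via the definition of $\Lambda_v$ and the homomorphism $p_\mu$, noting $\Lambda_v = \Lambda_{1,v}/\wp_{0,v}$ is exactly a quotient of $\cO\llbracket\cO_{F_v}^\times(p)^n\rrbracket$ by a minimal prime) shows that $\rho_\m|_{G_{F_v}} \in \cD_v^{\detord}(-)$ by the very definition of $R_v^{\detord}$ in \S\ref{sssec:ord}. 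Hence $\rho_\m$ is of type $\cS_\chi$, giving the desired surjection $f_{\cS_\chi}$ from $R_{\cS_\chi}$; surjectivity follows since the image contains all the Frobenius characteristic polynomial coefficients, which generate the Hecke algebra.

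The one genuinely technical point --- and the main obstacle --- is bookkeeping the nilpotent ideals and the passage to the inverse limit over $c$. Theorem~\ref{thm:lgcord} produces, for each $c$, a nilpotent ideal $J_c$ with $J_c^N = 0$ for a uniform $N$ depending only on $[F^+:\Q]$ and $n$; one must combine these (using that $A(\mu,\chi) \otimes^{\bL}_\Lambda \Lambda_{1,c}$ recovers the finite-level complexes, together with the argument at the start of the proof of \cite[Prop.~3.1]{KT} and \cite[Lemma 2.13]{KT}) to get a single nilpotent ideal $J$ of bounded exponent $\delta$ in $\T^{S,\Lambda_1}(A(\mu,\chi)_\m\otimes_\cO\cO(\nu+w_0^G\mu)^{-1})$ over which a single $\rho_\m$ is defined and satisfies all the local conditions simultaneously. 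This is routine given the machinery already set up, but requires care to ensure the $\Lambda$-algebra (equivalently $\cO\llbracket T_n(\cO_{F,p})\rrbracket$-algebra) structure is respected --- which it is, because the diamond operators act compatibly at every finite level $c$ and the characters $\chi_{\mu,v,i}$ were defined in \S\ref{sec:ord_statements} precisely so as to interpolate this action. I would remark that the proof is formally identical to that of Proposition~\ref{prop:existence_of_Hecke_Galois_with_LGC} and the details may be safely left to the reader or stated briefly.
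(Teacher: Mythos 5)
Your overall route is the paper's: construct the homomorphism at each finite level $c$ by combining Theorem~\ref{thm:existence_of_Hecke_repn_for_GL_n} (existence and unramified compatibility), Theorem~\ref{thm:lgc_at_l_neq_p} (places in $R$) and Theorem~\ref{thm:lgcord} (places above $p$, read through the characterization of $\cD_v^{\detord}$ in \S\ref{sec:orddef}), exactly as in Proposition~\ref{prop:existence_of_Hecke_Galois_with_LGC}, then pass to the limit over $c$ as in the proof of Theorem~\ref{thm:lgcfl}, with the twist by $\cO(\nu+w_0^G\mu)^{-1}$ accounting for the change of Hecke algebra by transpose and twist. Up to one point, this is the argument the paper gives.

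The one genuine issue is your claim that $\T^{S,\Lambda_1}(A(\mu,\chi)_\m\otimes_\cO\cO(\nu+w_0^G\mu)^{-1})$ is ``a quotient'' of the ordinary Hecke algebras $\T^{S}(K(c,c),\mu)^{\ord}_\m$ of Theorem~\ref{thm:lgcord}. It is not a quotient but a subalgebra: $\T^{S,\Lambda_1}=\T^S\otimes_\cO\Lambda_1$ omits the operators $U_{v,i}$ at $p$, so its image in the endomorphism algebra sits inside the image of $\T^{S,\ord}$, and there is no natural surjection in the other direction. Consequently you cannot simply push the Galois representation of Theorem~\ref{thm:lgcord} forward; you must show it can be conjugated to take values in (a nilpotent quotient of) the subalgebra. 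This is exactly where the paper invokes Carayol's lemma \cite[Lemma 2.1.10]{cht}: since $\overline{\rho}_\m$ is absolutely irreducible and the coefficients of the characteristic polynomials of $\Frob_v$ for $v\notin S$ already lie in $\T^{S,\Lambda_1}$, the representation (and hence the image of $R_{\cS_\chi}$) descends to a nilpotent quotient of $\T^{S,\Lambda_1}(R\Gamma(X_{K(c,c)},\cV_\mu(\chi^{-1}))_\m)$. With that repair your argument goes through; note also that surjectivity of $f_{\cS_\chi}$ uses not just the Frobenius polynomial coefficients but the $\Lambda$-algebra structure (the diamond operators are hit via the universal characters), which you correctly flag when insisting the construction respect the $\Lambda$-action.
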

\begin{proof}
    We will construct a compatible family of homomorphisms 
    \[ f_{\cS_{\chi}, c} : R_{\cS_\chi} \to \T^{S, \Lambda_1}(A(\mu, \chi, c)_\m\otimes_\cO \cO(\nu + w_0^G \mu)^{-1}) / J_c, \]
    one for each $c \geq 1$. The desired homomorphism $f_{\cS_\chi}$ is then obtained by passage to the limit, in a similar way to the proof of Theorem \ref{thm:lgcfl}. It even suffices to construct a family of homomorphisms
    \[  R_{\cS_\chi} \to \T^{S, \Lambda_1}( R \Gamma(X_{K(c, c)}, \cV_\mu(\chi^{-1}))^{\ord}_\m) / J_c; \]
    in fact, the Hecke algebras are the same (the isomorphism being given by transpose and twist by $\cO(\nu + w_0^G \mu)$). Finally, it even suffices to construct a family of homomorphisms
    \[ R_{\cS_\chi} \to \T^{S, \ord}( R \Gamma(X_{K(c, c)}, \cV_\mu(\chi^{-1}))^{\ord})_\m / J_c; \]
    an application of Carayol's lemma (cf. \cite[Lemma 2.1.10]{cht}) then implies that the image of $R_{\cS_\chi}$ is in fact contained in a nilpotent quotient of the subalgebra
    \[  \T^{S, \Lambda_1}( R \Gamma(X_{K(c, c)}, \cV_\mu(\chi^{-1}))^{\ord}_\m) \subset \T^{S, \ord}( R \Gamma(X_{K(c, c)}, \cV_\mu(\chi^{-1}))^{\ord})_\m. \]
    This family of homomorphisms can be constructed exactly as in the proof of Proposition \ref{prop:existence_of_Hecke_Galois_with_LGC}, with the appeal to Theorem \ref{thm:lgcfl} being replaced instead with an appeal to Theorem \ref{thm:lgcord_intro}; here we are using the characterization of the deformation functor $\cD_v^{\detord}$ given in \S \ref{sec:orddef}.
\end{proof}
We now need to describe the auxiliary objects associated to a choice of Taylor--Wiles datum $(Q,(\alpha_{v,1},\ldots,\alpha_{v,n})_{v\in Q})$ for $\cS_1$ (see~\S\ref{sec:TWprimes}), where each place of $Q$ is assumed to have residue characteristic split in some imaginary quadratic subfield of $F$. Once again, this datum is automatically a Taylor--Wiles datum for all the global deformation problems $\cS_\chi$, and so the auxiliary deformation problems $\cS_{\chi, Q}$ are defined, and the deformation ring $R_{\cS_{\chi, Q}}$ has a natural structure of $\cO[\Delta_Q]$-algebra, where $\Delta_Q = \prod_{v \in Q} \Delta_v = \prod_{v \in Q} k(v)^\times(p)^n$.

If $c \geq 1$ is an integer then we define two auxiliary level subgroups 
\[ K(c, c)_1(Q) \subset K(c, c)_0(Q) \subset K(c, c). \]
 They are good subgroups of $\GL_n(\A_F^\infty)$, determined by the following conditions:
\begin{itemize}
	\item If $v \not\in S \cup Q$, then $K(c, c)_1(Q)_v = K(c, c)_0(Q)_v = K(c, c)_v$.
	\item If $v \in Q$, then $K(c, c)_0(Q)_v = \Iw_v$ and $K(c, c)_1(Q)_v$ is the maximal pro-prime-to-$p$ subgroup of $\Iw_v$. 
\end{itemize}
Then there is a natural isomorphism $K(c, c)_0(Q) / K(c, c)_1(Q) \cong \Delta_Q$. We define~$A_1(\mu, \chi, Q, c)$ to be
\[ R \Hom_{\Lambda_{1, c}[\Delta_Q]}( R \Gamma_{K(c,c)_0(Q)/K(c,c)_1(Q)}(X_{K(c, c)_1(Q)}, \cV_\mu(\chi^{-1}))^{\ord}, \Lambda_{1, c}[\Delta_Q])[-d], \]
an object of $\mathbf{D}(\Lambda_{1, c}[\Delta_Q])$. The algebra $\T^{S \cup Q, \ord}_Q = \T^{S \cup Q, \ord} \otimes_{\T^{S \cup Q}} \T^{S \cup Q}_Q$ acts on $A_1(\mu, \chi, Q, c)$ by transpose. As in the case where $Q$ is empty, we can pass to the limit with respect to $c$ to obtain a complex $A_1(\mu, \chi, Q) \in \mathbf{D}(\Lambda_1[\Delta_Q])$ which comes equipped with an action of $\T^{S \cup Q, \ord}_Q$ and with $\T^{S \cup Q, \ord}_Q$-equivariant isomorphisms
\[ A_1(\mu, \chi, Q) \otimes^{\bL}_{\Lambda_1} \Lambda_{1, c} \cong A_1(\mu, \chi, Q, c) \]
in $\mathbf{D}(\Lambda_{1, c})$ (for each $c \geq 1$) and
\[ A_1(\mu, \chi, Q) \otimes^{\bL}_{\Lambda_1} \Lambda_1 / \varpi \cong  A_1(\mu, 1, Q) \otimes^{\bL}_{\Lambda_1} \Lambda_1 / \varpi \]
in $\mathbf{D}(\Lambda_1 / \varpi)$, all compatible with the similar data at level $c$. 
We define $\m^Q$ to be the contraction of $\m$ to $\T^{S \cup Q, \ord}$, and $\n^Q$ to be the ideal of $\T^{S \cup Q, \ord}_Q$ generated by $\m^Q$ and the elements $U_{v, i} - \alpha_{v, 1} \cdots \alpha_{v, i}$ ($v \in Q, i = 1, \dots, n $). 
\begin{lemma}
	The ideal $\n^Q$ occurs in the support of $H^\ast(A_1(\mu, \chi, Q))$. There are $\T^{S \cup Q, \ord}$-equivariant isomorphisms
	\[ A_1(\mu, \chi, Q)_{\n^Q} \otimes^{\bL}_{\Lambda_1[\Delta_Q]} \Lambda_1 \cong A_1(\mu, \chi)_{\m^Q} \cong A_1(\mu, \chi)_\m. \]
\end{lemma}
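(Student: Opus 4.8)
The plan is to reduce the statement about the ordinary complexes $A_1(\mu,\chi,Q)$ to the corresponding statement about cohomology at finite level $K(c,c)$, where the needed facts are already available from the earlier sections. First I would treat the second chain of isomorphisms, $A_1(\mu,\chi,Q)_{\n^Q}\otimes^{\bL}_{\Lambda_1[\Delta_Q]}\Lambda_1\cong A_1(\mu,\chi)_{\m^Q}$. Unwinding the definitions, $A_1(\mu,\chi,Q)\otimes^{\bL}_{\Lambda_1[\Delta_Q]}\Lambda_1$ is, up to the duality/shift, $R\Gamma(\Delta_Q, R\Gamma(X_{K(c,c)_1(Q)},\cV_\mu(\chi^{-1}))^{\ord})$ in the limit over $c$, which by the Hochschild--Serre spectral sequence computes $R\Gamma(X_{K(c,c)_0(Q)},\cV_\mu(\chi^{-1}))^{\ord}$. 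After localizing at $\n^Q$ this becomes a localization at the maximal ideal $\m_0^Q$ of the full Hecke algebra: this is exactly the mechanism of Lemma~\ref{lem:identification_of_complexes}, and the only new input is that the operators $U_{v,i}$ for $v\in Q$ cut out the $\n^Q$-component from the $\m_0^Q$-component (using that $U_p$ remains invertible, so passing to ordinary parts commutes with all of this). The key point, as in the proof of Lemma~\ref{lem:identification_of_complexes}, is that $\tr_{K(c,c)_0(Q)/K(c,c)_1(Q)}$ and $\tr_{K(c,c)/K(c,c)_0(Q)}$ induce isomorphisms on $\n^Q$- and $\m^Q$-localized cohomology with $k$-coefficients, hence on the integral complexes; this uses $p>n$ so that the relevant indices are units and relies on \cite[Lemma~5.3, Lemma~5.4]{KT}. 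Dualizing and twisting gives the first isomorphism, and the second isomorphism $A_1(\mu,\chi)_{\m^Q}\cong A_1(\mu,\chi)_\m$ follows because $\m$ is the unique maximal ideal of $\T^{S\cup Q,\ord}$ above $\m^Q$ which occurs in the relevant cohomology --- again exactly as in the proof of Lemma~\ref{lem:identification_of_complexes}, using the existence of $\overline\rho_\m$ and its local-global compatibility at the places of $Q$ to show no other maximal ideal can contribute.

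For the first assertion, that $\n^Q$ occurs in the support of $H^\ast(A_1(\mu,\chi,Q))$: by the isomorphisms just established it is equivalent to show that $\m$ occurs in the support of $H^\ast(A_1(\mu,\chi))$, i.e.\ (via duality and Corollary~\ref{cor:weight_specialization} applied with $\mu'=\mu$, or directly by specializing $\Lambda_1\to\cO$) that $H^\ast(X_{K(c,c)},\cV_\mu(\chi^{-1}))^{\ord}_\m\ne 0$ for some $c$, equivalently that $\m$ lies in the support of the ordinary completed cohomology $\pi^{\ord}(K^p,\mu,m)_\m$. This follows from Assumption~(\ref{assumeord}): since $\pi$ is $\iota$-ordinary at each $v\in S_p$ with $\pi_v^{\Iw_v(1,1)}\ne 0$, the Hecke eigensystem of $\iota^{-1}\pi$ contributes to $H^\ast(X_{K(1,1)},\cV_\mu)^{\ord}$ (the ordinary projector does not kill it, by the standard theory of ordinary parts recalled in \S\ref{sec:hida_theory}), and by Theorem~\ref{thm:application_of_matsushima} this eigensystem factors through $\T^S(K(1,1),\mu)^{\ord}$ and reduces to $\overline\rho_\m$; hence $\m$, being defined by this reduction, is in the support. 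The same reasoning, twisted by $\chi^{-1}$, handles the case of general $\chi$ trivial modulo $\varpi$, since $\m$ and the cohomology $H^\ast(X_K,\cV_\mu(\chi^{-1}))^{\ord}$ are related to $H^\ast(X_K,\cV_\mu)^{\ord}$ by the character-twisting isomorphisms of Proposition~\ref{prop:twisting_by_character} and its ordinary analogue.

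I expect the main obstacle to be bookkeeping rather than a conceptual difficulty: one must be careful that all of the manipulations --- forming ordinary parts, passing to the limit over $c$, localizing at $\n^Q$ versus $\m^Q$, and taking $R\Gamma(\Delta_Q,-)$ --- commute with one another, and in particular that the ordinary projector is compatible with the Hochschild--Serre spectral sequence for $K(c,c)_0(Q)/K(c,c)_1(Q)$ (this is fine because the $\Delta_Q$-action is at places away from $p$, so it commutes with the $U_p$-action at $p$). A secondary point is to check that the perfect complexes $A_1(\mu,\chi,Q)$ and $A_1(\mu,\chi)$ constructed via \cite[Lemma~2.13]{KT} are compatible with base change along $\Lambda_1[\Delta_Q]\to\Lambda_1$; this is built into the construction, so it amounts to citing the relevant compatibility statements. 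Once these commutativity checks are in place, the proof is a direct transcription of the argument for Lemma~\ref{lem:identification_of_complexes} into the ordinary setting.
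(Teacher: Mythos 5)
Your proposal is correct and is essentially the argument the paper intends: its proof of this lemma just says the properties follow "in the same way as in the finite level (Fontaine--Laffaille) case" and omits the details, and what you have written is precisely that transcription — reduce to finite level $K(c,c)$, run the trace-map and unique-maximal-ideal arguments of Lemmas \ref{lem:maximal_ideals_of_Q_hecke_algebra_are_proper} and \ref{lem:identification_of_complexes} (via \cite[Lemmas 5.3 and 5.4]{KT}), and check that the ordinary projector commutes with $R\Gamma(\Delta_Q,-)$ and with the limit over $c$, with non-vanishing at $\n^Q$ deduced from the ordinarity of $\pi$ at $\m$. One small repair: for general $\chi$ the support statement should be justified not by Proposition \ref{prop:twisting_by_character} (which concerns twists by global Galois characters), but by the fixed identification $A_1(\mu,\chi)\otimes^{\bL}_{\Lambda_1}\Lambda_1/\varpi\cong A_1(\mu,1)\otimes^{\bL}_{\Lambda_1}\Lambda_1/\varpi$ built into the construction (using $\chi\equiv 1 \bmod \varpi$), together with the fact that support of a perfect complex at a maximal ideal of residue characteristic $p$ is detected modulo $\varpi$.
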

\begin{proof}
	These properties can be established in the same way as in the finite level (Fontaine--Laffaille) case. See \S \ref{subsec:an_r_equals_t_theorem}. We omit the details. 
\end{proof}
We define $A(\mu, \chi, Q) = A_1(\mu, \chi, Q) \otimes^{\bL}_{\Lambda_1} \Lambda$, and $_{\Delta_Q}\T^{S \cup Q, \Lambda_1}=\T^{S \cup Q, \Lambda_1}\otimes_{\cO}\cO[\Delta_Q]$. Note this acts on $A(\Lambda, \chi, Q)_{\n^Q}$ via our identifications
$$K(c,c)_0(Q)/K(c,c)_1(Q)\cong \Delta_Q$$
 for each $c$ and passing to the limit. Thus $_{\Delta_Q}\T^{S \cup Q, \Lambda_1}(A(\Lambda, \chi, Q)_{\n^Q})$ is a local $\Lambda[\Delta_Q]$-algebra.
\begin{prop}\label{prop:ord_Q_augmented_hecke_LGC}
		There exists an integer $\delta \geq 1$, depending only on $n$ and $[F : \Q]$, an ideal $J \subset _{\Delta_Q}\T^{S \cup Q, \Lambda_1}(A(\Lambda, \chi, Q)_{\n^Q} \otimes_\cO \cO(\nu + w_0^G \mu)^{-1})$ such that $J^\delta = 0$, and a continuous surjective homomorphism of $\Lambda[\Delta_Q]$-algebras
	\[ f_{\cS_{\chi, Q}} : R_{\cS_{\chi, Q}} \to  _{\Delta_Q}\T^{S \cup Q, \Lambda_1}(A(\mu, \chi, Q)_{\n^Q} \otimes_\cO \cO(\nu + w_0^G \mu)^{-1}) / J \]
	such that for each finite place $v \not\in S$ of $F$, the characteristic polynomial of $f_{\cS_\chi} \circ \rho_{\cS_\chi}(\Frob_v)$ equals the image of $P_v(X)$ in $ _{\Delta_Q}\T^{S \cup Q, \Lambda_1}(A(\mu, \chi, Q)_{\n^Q} \otimes_\cO \cO(\nu + w_0^G \mu)^{-1}) / J$.
\end{prop}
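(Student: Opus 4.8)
The plan is to follow the template established in the proof of Proposition~\ref{prop:existence_of_Q_augmented_Hecke_Galois_with_LGC}, adapting it to the ordinary setting. The strategy is to reduce, step by step, to a statement at finite level~$c$ that can be handled using Theorem~\ref{thm:lgcord_intro} together with Carayol's lemma, and then pass to the limit over~$c$.

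First I would observe, exactly as in the proof of Proposition~\ref{prop:ord_hecke_LGC}, that it suffices to construct a compatible family of homomorphisms
\[ R_{\cS_{\chi, Q}} \to \T^{S \cup Q, \ord}_Q( R \Gamma(X_{K(c, c)_1(Q)}, \cV_\mu(\chi^{-1}))^{\ord}_{\n^Q}) / J_c \]
of~$\cO[\Delta_Q]$-algebras, one for each~$c \geq 1$, and then pass to the inverse limit. Twisting by~$\cO(\nu + w_0^G \mu)$ and taking transposes identifies this Hecke algebra with (a localization of) the one appearing in the statement. To construct such a homomorphism, one first invokes Theorem~\ref{thm:existence_of_Hecke_repn_for_GL_n} to obtain a Galois representation~$\rho_{\n^Q}$ valued in a nilpotent quotient of the larger Hecke algebra~$\T^{S \cup Q, \ord}_Q( R \Gamma(X_{K(c, c)_1(Q)}, \cV_\mu(\chi^{-1}))^{\ord}_{\n^Q})$ satisfying local--global compatibility at unramified places, which (after conjugating so that its reduction is~$\overline{\rho}_\m$) one must check is a lifting of type~$\cS_{\chi, Q}$. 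The conditions at places~$v \in S_p$ are precisely the defining conditions of~$\cD_v^{\detord}$ recalled in~\S\ref{sec:orddef}, and these follow from conditions~(b) and~(c) of Theorem~\ref{thm:lgcord_intro} applied to~$R \Gamma(X_{K(c, c)}, \cV_\mu(\chi^{-1}))^{\ord}$ (using independence of level, Corollary~\ref{cor:independence_of_level}, to pass between the~$K(c,c)$ and~$K(c,c)_1(Q)$ levels, exactly as in Lemma~\ref{lem:identification_of_complexes}). The conditions at~$v \in R$ follow from Theorem~\ref{thm:lgc_at_l_neq_p} (or rather Proposition~\ref{prop:lgc_at_l_neq_p_det_version}), and the conditions at~$v \in S - (R \cup S_p)$ are vacuous since~$\cD_v^\square$ imposes no condition. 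At the places~$v \in Q$ there is again no deformation condition. This yields a homomorphism~$f_{\cS_{\chi, Q}} : R_{\cS_{\chi, Q}} \to \T^{S \cup Q, \ord}_Q( R \Gamma(X_{K(c, c)_1(Q)}, \cV_\mu(\chi^{-1}))^{\ord}_{\n^Q}) / J_c$ for a nilpotent ideal~$J_c$ of bounded exponent.

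Next, one must check that~$f_{\cS_{\chi, Q}}$ is a homomorphism of~$\cO[\Delta_Q]$-algebras, i.e.\ of~$\cO[\Delta_v]$-algebras for each~$v \in Q$. Here one argues exactly as in Proposition~\ref{prop:existence_of_Q_augmented_Hecke_Galois_with_LGC}: one defines characters~$\psi_{v, i}$ via the operators~$t_{v, i}(\alpha)$, uses Theorem~\ref{thm:lgc_at_l_neq_p} to express the characteristic polynomial of~$\rho_{\n^Q}(\sigma)$ for~$\sigma \in W_{F_v}$ as~$\prod_i (X - \psi_{v,i}(\sigma))$, observes that the~$\psi_{v,i} \bmod \n^Q$ are pairwise distinct (they send Frobenius to the distinct elements~$\alpha_{v,i}$ of a Taylor--Wiles datum), and invokes \cite[Prop.~1.5.1]{bellaiche_chenevier_pseudobook} to split~$\rho_{\n^Q}|_{W_{F_v}}$ as a sum of characters lifting the~$\psi_{v,i}$. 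This matches the~$\cO[\Delta_v]$-algebra structure on~$R_{\cS_{\chi, Q}}$ from~\S\ref{sec:TWdef}. Finally, a further application of Carayol's lemma \cite[Lemma~2.1.10]{cht} shows that the image of~$R_{\cS_{\chi, Q}}$ lands in a nilpotent quotient of the subalgebra~$\T^{S \cup Q, \Lambda_1}_Q$, and passing to the inverse limit over~$c$ (as in the proof of Theorem~\ref{thm:lgcfl}, controlling nilpotence exponents uniformly) produces the desired homomorphism and the ideal~$J$ with~$J^\delta = 0$.

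I do not expect any serious obstacle here: this is a routine combination of pieces already in place. The only point requiring a little care is the interplay of the various twists and transposes — tracking~$\cO(\nu + w_0^G \mu)^{-1}$ through the construction so that the characters~$\chi_{\mu, v, i}$ of~\S\ref{sec:ord_statements} match the characters~$\psi_i$ in the defining relations \eqref{eqn:det_ord_one}--\eqref{eqn:det_ord_two} of~$R_v^{\detord}$ — and the bookkeeping of localizations at~$\n^Q$ versus~$\m^Q$ versus~$\m$, which is handled by the preceding lemma. Since the details are entirely parallel to those already carried out for~$\widetilde G$ and in the Fontaine--Laffaille case, I would state the proof briefly, indicating the substitutions (Theorem~\ref{thm:lgcord_intro} in place of Theorem~\ref{thm:lgcfl}, Corollary~\ref{cor:independence_of_level} in place of the Hochschild--Serre manipulations) and omitting the repeated calculations.
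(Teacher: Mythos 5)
Your proposal is correct and follows essentially the same route as the paper, whose proof consists of exactly the two reductions you describe: the existence of the homomorphism and the conditions at the places of $S$ are obtained as in Proposition~\ref{prop:ord_hecke_LGC} (Theorem~\ref{thm:lgcord_intro}, Carayol's lemma, and a limit over $c$), while the $\Lambda[\Delta_Q]$-algebra property is proved as in Proposition~\ref{prop:existence_of_Q_augmented_Hecke_Galois_with_LGC} by working in the enlarged algebra $\T^{S \cup Q, \ord}_Q(A(\mu, \chi, Q) \otimes_\cO \cO(\nu + w_0^G \mu)^{-1})_{\n^Q}$ and splitting $\rho_{\n^Q}|_{W_{F_v}}$ via the distinct characters $\psi_{v,i}$. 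The one cosmetic slip is the appeal to Corollary~\ref{cor:independence_of_level} to pass between $K(c,c)$ and $K(c,c)_1(Q)$: that corollary governs the level at $p$, and instead one simply applies Theorem~\ref{thm:lgcord_intro} directly at level $K(c,c)_1(Q)$ with the set $S \cup Q$ (permissible because the residue characteristics of the places in $Q$ split in an imaginary quadratic subfield of $F$), exactly as the Fontaine--Laffaille argument applies its inputs at the deeper level.
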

\begin{proof}
	The existence of a $\Lambda$-algebra homomorphism 
	\[ R_{\cS_{\chi, Q}} \to  _{\Delta_Q}\T^{S \cup Q, \Lambda_1}(A(\mu, \chi, Q)_{\n^Q} \otimes_\cO \cO(\nu + w_0^G \mu)^{-1}) / J \]
	 satisfying the given condition at finite places $v\not\in S \cup Q$ of $F$ is proved just as in the proof of Proposition \ref{prop:ord_hecke_LGC} above. The key point is to show that this is a homomorphism of $\Lambda[\Delta_Q]$-algebras. This can be proved in the same way as in the proof of Proposition \ref{prop:existence_of_Q_augmented_Hecke_Galois_with_LGC}, by considering the enlarged algebra $\T^{S \cup Q, \ord}_Q(A(\mu, \chi, Q) \otimes_\cO \cO(\nu + w_0^G \mu)^{-1})_{\n^Q}$.
\end{proof}
We are now ready to begin the proof of Theorem \ref{thm:ord_automorphy}.
\begin{proof}[Proof of Theorem \ref{thm:ord_automorphy}]
	We recall that we have constructed a homomorphism $f : R_{\cS_1} \to \cO$, classifying the representation $\rho$ that we wish to show is automorphic. We will show that 
	$\ker f$ is in the support of 
	\[ H^\ast(B(\mu, 1)_\m \otimes^{\bL}_{\Lambda}  \cO(\nu + w_0^G \lambda)^{-1}). \]
	By Corollary \ref{cor:weight_specialization}, this will show that $\ker f$ is in the support of 
	\[ H^\ast( A(\lambda, 1, 1)_\m \otimes_\cO \cO(\nu + w_0^G \lambda)^{-1})[1/p], \]
	in turn a quotient of
	\[ \Hom_E(H^{d - \ast}(X_{K(1, 1)}, \cV_\lambda)_\m, \cO(\nu + w_0^G \lambda)^{-1}[1/p]). \]
	The $\iota$-ordinary automorphy of $\rho$ will then follow from Theorem \ref{thm:application_of_matsushima}. 
	
	Our proof now closely follows the proof of Thm.~\ref{thm:R_equals_T}. Let
	\[
	q = h^1(F_S/F,\ad\rhobar_{\frakm}(1)) \quad \text{and} \quad g = qn - 
	n^2[F^+ : \Q],
	\]
and set $\Delta_\infty = \Z_p^{nq}$. 
	Let $\cT$ be a power series ring over $\Lambda$ in $n^2\lvert S\rvert - 1$ 
	many variables, and let $S_\infty = \cT\llbracket \Delta_\infty 
	\rrbracket$. 
	Viewing $S_\infty$ as an augmented $\Lambda$-algebra, we let $\ga_\infty$ 
	denote the augmentation ideal. 
	
	As in the proof of Thm.~\ref{thm:R_equals_T}, we choose a character $\chi = 
	\prod_{v\in R} \chi_v \colon \prod_{v\in R} \Iw_v \rightarrow \cO^\times$ 
	such that for each $v \in R$ the $n$ characters $\chi_{v,i} \colon 
	k(v)^\times \rightarrow \cO^\times$ are
	trivial modulo $\varpi$ and pairwise distinct.
	
	Let $R^{\loc} = R_{\cS^{\ord}_1}^{S,\loc}$ and $R'^{\loc} = 
	R_{\cS^{\ord}_\chi}^{S,\loc}$ denote the corresponding local deformation 
	rings as in~\S\ref{sec:present}. We let $R_\infty$ and $R_\infty'$ be 
	formal power series rings in $g$ variables over $R^{\loc}$ and $R'^{\loc}$, 
	respectively. 
	
	We can then apply the results of~\S\ref{subsec:patchedcomplexes} to 
	complexes $A(\mu, \chi, Q)_{\n^Q} \otimes_\cO \cO(\nu + w_0^G \mu)^{-1}$ (for choices of Taylor--Wiles data 
	$(Q,(\alpha_{v,1},\ldots,\alpha_{v,n})_{v\in Q})$, proved to exist using Proposition \ref{thm:TWgen}) and obtain the following.
		\begin{itemize}
			\item Bounded complexes $\cC_\infty$ and $\cC_\infty'$ of free 
			$S_\infty$-modules, subrings $T_\infty \subset 
			\End_{\bD(S_\infty)}(\cC_\infty)$ and $T_\infty' \subset 
			\End_{\bD(S_\infty)}(\cC_\infty')$, and ideals 
			$I_\infty$ and $I_\infty'$ satisfying $I_\infty^\delta = 0$ and 
			$I_\infty'^\delta = 0$. 
			We also have $S_\infty$-algebra structures on $R_\infty$ and 
			$R_\infty'$ and $S_\infty$-algebra surjections $R_\infty 
			\rightarrow T_\infty/I_\infty$ 
			and $R_\infty' \rightarrow T_\infty'/I_\infty'$.
			\item Surjections of local $\Lambda$-algebras $R_\infty/\ga_\infty 
			\rightarrow 
			R_{\cS^{\ord}_1}$ and $R_\infty'/\ga_\infty \rightarrow 
			R_{\cS^{\ord}_\chi}$. 
			\item Isomorphisms $\cC_\infty \otimes_{S_\infty}^{\bL} 
			S_\infty/\ga_\infty \cong A(\mu, 1)_\m \otimes_\cO \cO(\nu + w_0^G \mu)^{-1} = B(\mu, 1)_\m$ and $\cC'_\infty 
			\otimes_{S_\infty}^{\bL} 
			S_\infty/\ga_\infty \cong A(\mu, \chi)_\m \otimes_\cO \cO(\nu + w_0^G \mu)^{-1} = B(\mu, \chi)_\m$ in $\mathbf{D}(\Lambda)$.
		\end{itemize}
	This gives the necessary input for~\S\ref{sec:AvoidIharasetup}.
	Recall that $R_\infty$ and $R_\infty'$ are power series rings over 
	$R^{\loc}$ and $R'^{\loc}$, respectively, 
	in $g = qn - n[F^+:\Q]$ many variables. It follows from parts (1) and (2) of Lemma~\ref{lem:localordcase} that we have satisfied 
	assumptions (1) and (2) of~\S\ref{sec:AvoidIharasetup}. To verify 
	assumption (3), if we let $\mathfrak{p}$ denote the inverse image in 
	$S_\infty$ of $\operatorname{Ann}_\Lambda( \cO(\nu + w_0^G \mu)^{-1})$, then (Corollary \ref{cor:weight_specialization}) the complex 
	\[(\cC_\infty 
	\otimes_{S_\infty}^{\bL} S_\infty/\mathfrak{p})[1/p] \cong 
	(B(\mu, 1)_\m \otimes^{\bL}_{\Lambda} \cO(\nu + w_0^G \mu)^{-1})[1/p] \]
	 has 
	cohomology isomorphic to a quotient of
	$\Hom_E(H^{d-*}(X_{K(1, 1)},\cV_\mu)_\m[1/p], E)$. Since $\pi$ contributes to this quotient, Theorem \ref{thm:application_of_matsushima} implies that 
	$H^\ast(\cC_\infty 
	\otimes_{S_\infty}^{\bL} S_\infty/\mathfrak{p})[1/p] \ne 0$ and 
	that the cohomology is concentrated in degrees $[q_0,q_0+\ell_0]$.

	We have now satisfied all the assumptions of~\S\ref{sec:AvoidIharasetup}, 
	and we apply Corollary~\ref{cor:char0automorphy} with $x \in \Spec(R_\infty)$ 
	the inverse image of $\ker f$, so $y \in \Spec(S_\infty)$ is the inverse 
	image of $\operatorname{Ann}_\Lambda( \cO(\nu + w_0^G \lambda)^{-1})$. For each $v \in S_p$, the inertial characters on the diagonal of 
	$\rho|_{G_{F_v}}$ are distinct, so $x$ lies on a 
	maximal dimension irreducible component of $\Spec(R_\infty)$ by part (3) of Lemma~\ref{lem:localordcase}, 
	and this Corollary does apply. We deduce that the support of 
	\[H^*(B(\mu, 1)_\m \otimes^\LL_{\Lambda} \cO(\nu + w_0^G \lambda)^{-1})\
	[1/p]\] contains $\ker f$. This completes the proof. 
\end{proof}

\subsubsection{End of the proof (ordinary case)}\label{sec:proof_of_main_ordinary_automorphy_lifting_theorem}

We can now deduce Theorem \ref{thm:main_ordinary_automorphy_lifting_theorem}, our main automorphy lifting result in the ordinary case, from Theorem \ref{thm:ord_automorphy}. The proof is a minor variation of the proof of our main automorphy lifting result in the Fontaine--Laffaille case (see~\S\ref{sec:proof_of_main_automorphy_lifting_theorem}).

\begin{proof}[Proof of Theorem 
\ref{thm:main_ordinary_automorphy_lifting_theorem}]
For the convenience of the reader, we recall the hypotheses of Theorem \ref{thm:main_ordinary_automorphy_lifting_theorem}. Let $F$ be an imaginary CM or totally real field, and let $c \in \Aut(F)$ be complex conjugation. We are given a continuous representation $\rho : G_F \to \GL_n(\overline{\bQ}_p)$ satisfying the following conditions:
\begin{enumerate}
\item $\rho$ is unramified almost everywhere.
\item For each place $v | p$ of $F$, the representation $\rho|_{G_{F_v}}$ is potentially semi-stable, ordinary with regular Hodge--Tate weights. In other words, there exists a weight $\lambda \in (\bZ_+^n)^{\Hom(F, \overline{\bQ}_p)}$ such that for each place $v | p$, there is an isomorphism
\[ \rho|_{G_{F_v}} \sim \left( \begin{array}{cccc} \psi_{v, 1} & \ast & \ast & \ast \\
0 & \psi_{v, 2} & \ast & \ast \\
\vdots &\ddots & \ddots & \ast \\
0 & \cdots & 0 & \psi_{v, n}
\end{array}\right), \]
where for each $i = 1, \dots, n$ the character $\psi_{v, i} : G_{F_v} \to \overline{\bQ}_l^\times$ agrees with the character
\[ \sigma \in I_{F_v} \mapsto \prod_{\tau \in \Hom(F_v, \overline{\bQ}_p)} \tau (\Art_{F_v}^{-1}(\sigma))^{-(\lambda_{\tau, n - i + 1} + i - 1)} \]
on an open subgroup of the inertia group $I_{F_v}$.
\item $\overline{\rho}$ is absolutely irreducible and generic. The image of $\overline{\rho}|_{G_{F(\zeta_p)}}$ is enormous. There exists $\sigma \in G_F - G_{F(\zeta_p)}$ such that $\overline{\rho}(\sigma)$ is a scalar. We have $p > n$.
\item There exists a regular algebraic cuspidal automorphic representation $\pi$ of $\GL_n(\bA_F)$ and an isomorphism $\iota : \overline{\bQ}_p \to \bC$ such that $\pi$ is $\iota$-ordinary and $\overline{r_\iota(\pi)} \cong \rho$.
\end{enumerate}
The case where $F$ is a totally real field can be reduced to the case where $F$ is totally imaginary by base change. We therefore assume now that $F$ is imaginary, and write $F^+$ for its maximal totally real subfield.  Let $K / F(\zeta_p)$ be the extension cut out by $\overline{\rho}|_{G_{F(\zeta_p)}}$. Choose finite sets $V_0, V_1, V_2$ of finite places of $F$ having the following properties:
\begin{itemize}
	\item For each $v \in V_0$, $v$ splits in $F(\zeta_p)$. For each proper subfield $K / K' / F(\zeta_p)$, there exists $v \in V_0$ such that $v$ splits in $F(\zeta_p)$ but does not split in $K'$.
	\item For each proper subfield $K / K' / F$, there exists $v \in V_1$ which does not split in $K'$.
	\item There exists a rational prime $p_0 \neq p$ which is decomposed generic for $\overline{\rho}$, and $V_2$ is equal to the set of $p_0$-adic places of $F$.
	\item For each $v \in V_0 \cup V_1 \cup V_2$, $v \nmid 2$, $v \nmid p$, and $\rho$ and $\pi$ are both unramified at $v$. 
\end{itemize}
If $E / F$ is any finite Galois extension which is $V_0 \cup V_1 \cup V_2$-split, then $\overline{\rho}|_{G_E}$ has the following properties:
\begin{itemize}
	\item $\overline{\rho}(G_E) = \overline{\rho}(G_F)$ and $\overline{\rho}(G_{E(\zeta_p)}) = \overline{\rho}(G_{F(\zeta_p)})$. In particular, $\overline{\rho}|_{G_{E(\zeta_p)}}$ has enormous image and there exists $\sigma \in G_E - G_{E(\zeta_p)}$ such that $\overline{\rho}(\sigma)$ is a scalar.
	\item $\overline{\rho}|_{G_E}$ is decomposed generic. Indeed, the rational prime $p_0$ splits in $E$. 
\end{itemize}
Let $E_0 / F$ be a soluble CM extension satisfying the following conditions:
\begin{itemize}
	\item Each place of $V_0 \cup V_1 \cup V_2$ splits in $E_0$.
	\item For each finite place $w$ of $E_0$, $\pi_{E_0, w}^{\Iw_w} \neq 0$.
	\item For each finite prime-to-$p$ place $w$ of $E_0$, either both $\pi_{E_0, w}$ and $\rho|_{G_{E_{0,w}}}$ are unramified or $\rho|_{G_{E_{0, w}}}$ is unipotently ramified, $q_w \equiv 1 \text{ mod }p$, and $\overline{\rho}|_{G_{E_{0, w}}}$ is trivial.
	\item For each place $w | p$ of $E_0$, $\overline{\rho}|_{G_{E_{0, w}}}$ is trivial and $[E_{0, w} : \Q_p] > n(n+1)/2 + 1$.
	\item For each place $v | p$ of $F$, for each $w | v$ of $E_0$, and for each $i = 1, \dots, n$ the character $\psi_{v, i} : G_{F_v} \to \overline{\bQ}_p^\times$ agrees with the character
	\[ \sigma \in I_{F_v} \mapsto \prod_{\tau \in \Hom(F_v, \overline{\bQ}_p)} \tau (\Art_{F_v}^{-1}(\sigma))^{-(\lambda_{\tau, n - i + 1} + i - 1)} \]
	on the whole of the inertia subgroup $I_{E_{0, w}} \subset I_{F_v}$.
	\item Let $\mu$ denote the weight of $\pi_{E_0}$. Then for each place $w | p$ of $E_0$, and for each $p$-power root of unity $x \in E_{0, w}$, we have
	\[ \psi_{v, i}(\Art_{E_{0, w}}(x)) \prod_{\tau \in \Hom(E_{0, w}, \overline{\bQ}_p)} \tau(x)^{\mu_{\iota\tau, n - i + 1} + i - 1} = 1.  \]
\end{itemize}
We can find imaginary quadratic fields $E_a, E_b, E_c$ satisfying the following conditions:
\begin{itemize}
	\item Each rational prime lying below a place of $V_0 \cup V_1 \cup V_2$ splits in $E_a \cdot E_b \cdot E_c$. 
	\item The primes $2, p$ split in $E_a$.
	\item If $l \not\in \{2, p \}$ is a rational prime lying below a place of $E_0$ at which $\pi_{E_0, w}$ or $\rho|_{E_{0,w}}$ is ramified, or which is ramified in $E_0 \cdot E_a \cdot E_c$, then $l$ splits in $E_b$.
	\item If $l \not\in \{2, p \}$ is a rational prime which is ramified in $E_b$, then $l$ splits in $E_c$.
\end{itemize}
For example, we can choose any $E_a$ satisfying the given conditions. Then we can choose $E_b = \Q(\sqrt{-p_b})$, where $p_b$ is a prime satisfying $p_b \equiv 1 \text{ mod }4$ and $p_b \equiv -1 \text{ mod }l$ for any prime $l \not\in \{2, p \}$ either lying below a place $w$ of $E_0$ at which $\pi_{E_0, w}$ or $\rho|_{E_{0,w}}$ is ramified, or ramified in $E_0 \cdot E_a$, and $E_c = \Q(\sqrt{-p_c})$, where $p_c \equiv 1 \text{ mod }4 p_b$ is a prime. (Use quadratic reciprocity to show that $p_c$ splits in $E_b$.)

We let $E = E_0 \cdot E_a \cdot E_b \cdot E_c$. Then $E / F$ is a soluble CM extension in which each place of $V_0 \cup V_1 \cup V_2$ splits, and the following conditions hold by construction: 
\begin{itemize}
	\item Let $R$ denote the set of prime-to-$p$ places $w$ of $E$ such that $\pi_{E, w}$ or $\rho|_{G_{E_w}}$ is ramified. Let $S_p$ denote the set of $p$-adic places of $E$. Let $S' = S_p \cup R$. Then if $l$ is a prime lying below an element of $S'$, or which is ramified in $E$, then $E$ contains an imaginary quadratic field in which $l$ splits.
	\item If $w \in R$ then $\overline{\rho}|_{G_{E_w}}$ is trivial and $q_w \equiv 1 \text{ mod }p$.
	\item The image of $\overline{\rho}|_{G_{E(\zeta_p)}}$ is enormous. The representation $\overline{\rho}|_{G_E}$ is decomposed generic.
	\item There exists $\sigma \in G_E - G_{E(\zeta_p)}$ such that $\overline{\rho}(\sigma)$ is a scalar.
	\item For each place $w | p$ of $E$, $\overline{\rho}|_{G_{E_{w}}}$ is trivial and $[E_{w} : \Q_p] > n(n+1)/2 + 1$.
	\item Let $\pi_E$ denote the base change of $\pi$ to $E$, which exists, by Proposition \ref{prop:soluble_base_change}. Then $\pi_E$ is $\iota$-ordinary, by \cite[Lemma 5.7]{ger}.
\end{itemize}

By the Chebotarev density theorem, we can find infinitely many places $v_0$ of $E$ of degree 1 over $\bQ$ such that $\overline{\rho}(\Frob_{v_0})$ is scalar and $q_{v_0} \not\equiv 1 \text{ mod }p$, $v_0\notin S'\cup R^c$ and the residue characteristic of $v_0$ is odd. Then $H^2(E_{v_0}, \ad \overline{\rho}) = H^0(E_{v_0}, \ad \overline{\rho}(1))^\vee = 0$. We choose $v_0,v'_0$ with distinct residue characteristics satisfying these conditions, and set $S=S'\cup \{v_0,v'_0\}$. Note that if $l_0,l'_0$ denotes the residue characteristic of $v_0$, $v'_0$, then $l_0$, $l'_0$ splits in any imaginary quadratic subfield of $E$.

We see that the hypotheses (\ref{item:first_ord_hyp})--(\ref{item:last_ord_hyp}) of~\S\ref{subsec:ord_r_equals_t} are now satisfied for $E$, $\pi_E$, and the set $S$. We can therefore apply Theorem \ref{thm:ord_automorphy} to $\rho|_{G_E}$ to conclude the existence of a cuspidal, regular algebraic automorphic representation $\Pi_E$ of $\GL_n(\bA_E)$ such that $\Pi_E$ is $\iota$-ordinary of weight $\lambda_E$ and $r_\iota(\Pi_E) \cong \rho|_{G_E}$. By Proposition \ref{prop:soluble_base_change} and \cite[Lemma 5.7]{ger}, we can descend $\Pi_E$ to obtain a cuspidal, regular algebraic automorphic representation $\Pi$ of $\GL_n(\bA_F)$ such that $\Pi$ is $\iota$-ordinary of weight $\lambda$ and $r_\iota(\Pi) \cong \rho$. Taking into account the final sentence of the statement of Theorem \ref{thm:ord_automorphy}, we see that $\Pi_{E, w}$ is unramified if $w \not\in S$.

To finish the proof, we must show that $\Pi_v$ is unramified if $v$ is a finite place of $F$ such that $v \nmid p$ and both $\rho$ and $\pi$ are unramified at $v$. Using our freedom to vary the choice of places $v_0$ $v'_0$, we see that if $v \nmid p$ is a place of $F$ such that both $\rho$ and $\pi$ are unramified at $v$, then $\Pi_{E, w}$ is unramified for any place $w | v$ of $E$. This implies that $\rec_{F_v}(\Pi_v)$ is a finitely ramified representation of the Weil group $W_{F_v}$. Using the main theorem of \cite{ilavarma} and the fact that $\rho$ is unramified at $v$, we see that $\rec_{F_v}(\Pi_v)$ is unramified, hence that $\Pi_v$ itself is unramified. This concludes the proof.
\end{proof}

\section{Applications}
\label{section:tricks}

\subsection{Compatible systems}

Suppose that $F$ is a number field. We will use a slight weakening of
the definition of a weakly compatible system from \cite{BLGGT}: By a
{\em rank $n$ very weakly compatible system~ $\CR$ of $l$-adic representations} 
{\em of} $G_F$ {\em defined over} $M$ we shall mean a $5$-tuple 
\[ (M,S,\{ Q_v(X) \}, \{r_\lambda \}, \{H_\tau\} ) \]
where
\begin{enumerate}
\item $M$ is a number field;
\item $S$ is a finite set of primes of~$F$;
\item for each  prime $v\not\in S$ of $F$, $Q_v(X)$ is a monic degree $n$
polynomial in $M[X]$;
\item for $\tau:F \into \barM$, $H_\tau$ is a multiset of $n$ integers;
\item for each prime $\lambda$ of $M$ (with residue characteristic $l$ say), 
\[r_\lambda:G_F \lra \GL_n(\barM_\lambda) \]
is a continuous, semi-simple representation such that 
\begin{enumerate}
\item if $v \notin S$ and $v \ndiv l$ is a prime of $F$, then $r_\lambda$
is unramified at $v$ and $r_\lambda(\Frob_v)$ has characteristic
polynomial $Q_v(X)$.
\item \label{extreme} For $l$ outside a set of primes of Dirichlet density $0$,
   the representation  $r_\lambda|_{G_{F_v}}$ is %
  crystalline for all~$v|l$,  and
for any $\barM \into \barM_\lambda$ over $M$, we have 
$\HT_\tau(r_\lambda)=H_\tau$.
\item \label{part:determinant} For all~$\lambda$, we have~$\HT_\tau(\det r_{\lambda}) = \sum_{h \in H_{\tau}} h$.
\end{enumerate}
\end{enumerate}
If we further  drop
hypothesis~(\ref{extreme}), then we say that~$\CR$ is an \emph{extremely weakly compatible system}.
The only dependence of an extremely weakly compatible system on~$H_{\tau}$ is via the condition
on the determinant via hypothesis~(\ref{part:determinant}).
The difference between very weakly compatible systems and the (merely) weakly compatible systems in~\cite{BLGGT} is
that, if $v|l$, then we only insist that $r_\lambda|_{G_{F_v}}$ is de
Rham  for $l$ in a set of Dirichlet density $1$. The notion of an extremely weakly compatible system is what used to be known as a compatible system,
but we use this language so as to emphasize that the condition of being a very weakly compatible system is more stringent than being an extremely
weakly compatible system. (Here we implicitly use the following fact: \emph{any}
compatible family of one dimensional representations is always de Rham \cite{henniart}.)
Of course, we expect that any extremely weakly compatible system should give rise to a weakly compatible system for an appropriate
choice of~$H_{\tau}$.
We have adopted the present definition so that, as a
consequence of Theorem~\ref{thm:lgcfl}, we can deduce that the Galois
representations constructed in~\cite{hltt} for~$n = 2$ form a  very weakly compatible
system. (See Lemma~\ref{lemma:sometimes2}.)

We will often write $l$ for the residue characteristic of a prime
$\lambda$ of $M$ without comment. We shall write $\barr_\lambda$ for the semi-simplified reduction of $r_\lambda$. The representation~$\barr_{\lambda}$  is  \emph{a priori}  defined over the algebraic closure of $\CO_M/\lambda$. However, because its trace lies in $\CO_M/\lambda$ and because the Brauer groups of all finite fields are trivial, it is actually a representation
\[ \barr_\lambda: G_F \lra \GL_n(\CO_M/\lambda). \]

 We recall some further definitions from section 5.1 of \cite{BLGGT}  which apply \emph{mutatis mutandis} to both very weakly  and extremely weakly compatible families:
 
A very (or extremely) weakly compatible system $\CR$  is {\em regular} if, for each~$\tau$, the set $H_\tau$ has $n$ distinct elements.

A very  (or extremely)  weakly compatible system $\CR$  is {\em irreducible}
if there is a set $\CL$ of rational primes of Dirichlet density $1$
such that, for $\lambda|l \in \CL$, the representation $r_\lambda$ is
irreducible. We say that it is \emph{strongly irreducible} if for all
finite extensions $F'/F$ the compatible system $\CR|_{G_{F'}}$ is
irreducible.

\begin{lem}\label{lem: rank 2 irreducible equivalences}
  If $\CR$ is an extremely weakly compatible system of rank $2$, then either $r_\lambda$ is irreducible for all $\lambda$ or there exist weakly
  compatible systems $\CX_1$ and $\CX_2$ of rank $1$ with
  $r_{\lambda}\cong\chi_{1,\lambda} \oplus \chi_{2,\lambda}$ for all $\lambda$.
\end{lem}
\begin{proof}
  Suppose that for one prime $\lambda_0$ the representation
  $r_{\lambda_0}$ is a sum of characters
  $r_{\lambda_0}=\chi_1 \oplus \chi_2$. By the main result of~\cite{henniart}, we see that 
 $r_{\lambda_0}$ is de Rham. Hence each $\chi_i$ is also de Rham and so there are weakly
  compatible systems $\CX_1$ and $\CX_2$ of rank $1$ with
  $\chi_{i,\lambda_0}=\chi_i$ for $i=1,2$.  Then for all $\lambda$ we have
  $r_{\lambda}\cong\chi_{1,\lambda} \oplus \chi_{2,\lambda}$.
\end{proof}

In view of Lemma~\ref{lem: rank 2 irreducible equivalences},  we say
that an extremely weakly compatible system of rank ~$2$ is
\emph{reducible} if it is not irreducible, in which case every
representation~$r_\lambda$ is reducible.  Say that a very (or
extremely) weakly compatible system of rank~$2$ is \emph{Artin up to twist}  if there exists an irreducible Artin representation~$\rho: G_F \rightarrow \GL_2(\overline{M})$ with traces in~$M$
(possibly after increasing~$M$)  and a weakly compatible system of one dimensional representations~$\chi_{\lambda}$
such that~$r_{\lambda} \simeq \rho \otimes \chi_{\lambda}$.

\begin{lem}
  \label{lem: rank 2 strongly irreducible}If $\CR$ is an extremely weakly compatible system of rank $2$ and~$\CR$ is irreducible, then either
  \begin{enumerate}
  \item $\CR$ is strongly irreducible, or
  \item $\CR$ is  Artin up to twist, or 
  \item there is
    a quadratic extension $F'/F$ and a weakly compatible system $\CX$
    of characters of $G_{F'}$ such that
    \[ \CR \cong \Ind_{G_{F'}}^{G_F} \CX, \]in which case we say
    that~$\cR$ is \emph{induced}.
  \end{enumerate}

\end{lem}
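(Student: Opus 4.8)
The plan is to reduce everything to a statement about a single member $r_\lambda$ via Clifford theory, and then to spread the resulting structure back across the whole system using that the traces $\tr r_\lambda(\Frob_v)$, being coefficients of $Q_v(X)$, are independent of $\lambda$.

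First I would set up the dichotomy. Assume $\CR$ is irreducible but not strongly irreducible, so there is a finite extension $F'/F$ with $\CR|_{G_{F'}}$ reducible; replacing $F'$ by its Galois closure (reducibility passes to subfields) we may assume $F'/F$ is Galois, and by Lemma~\ref{lem: rank 2 irreducible equivalences} applied to $\CR|_{G_{F'}}$ every $r_\lambda|_{G_{F'}}$ is reducible, while every $r_\lambda$ is irreducible. Fix one $\lambda$. Since $G_{F'}\triangleleft G_F$ is of finite index, $r_\lambda|_{G_{F'}}$ is semisimple, hence $r_\lambda|_{G_{F'}}\simeq\chi_1\oplus\chi_2$ with characters $\chi_i\colon G_{F'}\to\barM_\lambda^\times$; by Clifford theory $\{[\chi_1],[\chi_2]\}$ is a single $\Gamma:=\Gal(F'/F)$-orbit. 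If the orbit has size one then $\chi_1=\chi_2$, so $\ad^0 r_\lambda$ is trivial on $G_{F'}$ and has finite image; if it has size two, the stabiliser of $[\chi_1]$ has index two in $\Gamma$, and if $F''/F$ denotes the corresponding quadratic subextension and $\psi_\lambda$ the character by which $G_{F''}$ acts on the (canonical, hence $G_{F''}$-stable) $[\chi_1]$-isotypic line, then Frobenius reciprocity and irreducibility of $r_\lambda$ give $r_\lambda\simeq\Ind_{G_{F''}}^{G_F}\psi_\lambda$.

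In the induced case I would use the standard criterion that, for irreducible two-dimensional $r_\lambda$, being $\Ind_{G_{F''}}^{G_F}(-)$ is equivalent to $r_\lambda\simeq r_\lambda\otimes\delta_{F''/F}$, which (both sides being semisimple) is equivalent to $\tr r_\lambda(g)=0$ whenever $\delta_{F''/F}(g)=-1$; at $g=\Frob_v$ ($v\notin S$) this trace is a coefficient of $Q_v(X)$, hence independent of $\lambda$, so by Chebotarev and continuity it holds for all $\lambda$. Thus $r_\lambda\simeq\Ind_{G_{F''}}^{G_F}\psi_\lambda$ for every $\lambda$ (with $F''$ fixed), and comparing eigenvalues of $r_\lambda(\Frob_v)$ (roots of $Q_v\in M[X]$) shows the $\psi_\lambda$ form an extremely weakly compatible system of characters of $G_{F''}$; by \cite{henniart} this is de Rham, hence a weakly compatible system $\CX$, and $\CR\simeq\Ind_{G_{F''}}^{G_F}\CX$ is induced. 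In the remaining case $\ad^0 r_{\lambda_0}$ has finite image for some $\lambda_0$, say factoring through $\Gal(L_0/F)$ ($L_0$ the fixed field of its kernel); the character of $\ad^0 r_\lambda$ is a fixed $M$-valued function at Frobenii, equal to $3$ on those $\Frob_v$ with $v$ split in $L_0$, which are dense in $G_{L_0}$, so by semisimplicity $\ad^0 r_\lambda|_{G_{L_0}}$ is trivial and $\ad^0 r_\lambda$ has finite image for every $\lambda$, with a common $M$-valued character $\theta$ of $\Gal(L_0/F)$. Realising $\theta$ over $\barM$ by an orthogonal three-dimensional representation of determinant $1$, hence by a projective representation $G_F\to\PGL_2(\barM)$, and lifting the latter to a finite-image $\rho\colon G_F\to\GL_2(\barM)$ by the standard results on lifting projective representations of Galois groups of number fields (enlarging $M$ if necessary), one gets that $r_\lambda$ and $\rho\otimes_{\barM}\barM_\lambda$ have isomorphic $\ad^0$, hence conjugate projectivisations, so $r_\lambda\simeq\rho\otimes\chi_\lambda$ for a character $\chi_\lambda$. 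Since $\chi_\lambda^2=\det r_\lambda\cdot(\det\rho)^{-1}$, with $\det\rho$ of finite order and the trace and determinant of $r_\lambda(\Frob_v)$ lying in $M$ independently of $\lambda$, the $\chi_\lambda$ form an extremely weakly compatible system of characters, so (again by \cite{henniart}) a weakly compatible one, and $\CR$ is Artin up to twist.

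For the final assertion, suppose $\CR$ were Artin up to twist, $r_\lambda\simeq\rho\otimes\chi_\lambda$ with $\rho$ Artin (hence de Rham with all Hodge--Tate weights $0$) and $\{\chi_\lambda\}$ weakly compatible with $\HT_\tau(\chi_\lambda)=\{n_\tau\}$ for a fixed integer $n_\tau$. Then $\det r_\lambda\simeq(\det\rho)\cdot\chi_\lambda^2$ has $\HT_\tau(\det r_\lambda)=2n_\tau$, whereas condition~(\ref{part:determinant}) in the definition of an extremely weakly compatible system gives $\HT_\tau(\det r_\lambda)=\sum_{h\in H_\tau}h$; hence $\sum_{h\in H_\tau}h$ is even for every $\tau$, contradicting the hypothesis. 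The main obstacle is the Artin case: propagating finiteness of $\ad^0 r_\lambda$ to all $\lambda$ without any de Rham hypothesis on $\CR$ (handled by the Chebotarev argument inside $G_{L_0}$) and producing a \emph{single} Artin representation $\rho$ over a fixed number field together with a genuine compatible family of twisting characters; by contrast the initial reduction, the induced case, and the final assertion are routine.
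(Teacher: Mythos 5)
You have the right architecture (Clifford theory gives the dichotomy, the size-two orbit gives the induced case, the size-one orbit gives finite projective image), but there is a genuine gap at the one point the lemma is really about: producing an honestly \emph{compatible} family of characters. In both branches you define a character separately for each $\lambda$ ($\psi_\lambda$ of $G_{F''}$ in the induced case, $\chi_\lambda$ of $G_F$ in the Artin case) and then assert compatibility by comparing Frobenius data. This does not follow. In the induced case the polynomials $Q_v$ only determine the unordered pair $\{\psi_\lambda(\Frob_w),\psi_\lambda^\sigma(\Frob_w)\}$ at split $w$ and the product at inert $w$, so the labelling could a priori flip with $\lambda$. In the Artin case the relations $\chi_\lambda^2=\det r_\lambda\cdot(\det\rho)^{-1}$ and $\tr r_\lambda(\Frob_v)=\tr\rho(\Frob_v)\chi_\lambda(\Frob_v)$ pin down $\chi_\lambda(\Frob_v)$ only up to sign at every $v$ with $\tr\rho(\Frob_v)=0$; at such places the eigenvalues of $\rho(\Frob_v)$ are $\{a,-a\}$, so \emph{every} odd symmetric-power trace of $r_\lambda(\Frob_v)$ vanishes too, and the sign is literally invisible in the data $\{Q_v\}$. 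Since these places can have positive density (e.g.\ $\rho$ dihedral, which is not excluded by $\chi_1=\chi_2$: that condition only says the projective image is finite), no enlargement of $S$ or Chebotarev argument closes this. The correct order, and what the paper does via Lemma~\ref{lem: rank 2 irreducible equivalences}, is the reverse of yours: for a single $\lambda_0$ the restriction of $r_{\lambda_0}$ to $G_{F''}$ (resp.\ to the splitting field of $\rho$) is an abelian representation whose Frobenius characteristic polynomials lie in $M$, so Henniart gives that it is de Rham; hence $\psi_{\lambda_0}$ (resp.\ $\chi_{\lambda_0}$, using that de Rham-ness of a character can be checked after a finite restriction) is de Rham, is attached to an algebraic Hecke character, and \emph{that} character supplies the compatible system $\CX$; one then checks $r_\lambda\cong\Ind_{G_{F''}}^{G_F}\CX_\lambda$ (resp.\ $r_\lambda\cong\rho\otimes\CX_\lambda$) for all $\lambda$ by Chebotarev and Brauer--Nesbitt. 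You invoke Henniart only \emph{after} asserting compatibility, which is exactly backwards.

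For comparison, the paper's treatment of the $\chi_1=\chi_2$ case is also much lighter than your adjoint-character construction: rather than realising the character of $\ad^0 r_\lambda$ orthogonally, using $\mathrm{SO}_3\cong\PGL_2$ and Tate's lifting theorem, it writes $\det r_\lambda$ as a Teichm\"uller lift times a character $\langle\phi\rangle$ with trivial reduction, takes a square root $\psi$ of $\langle\phi\rangle$ over $G_F$ (odd residue characteristic), shows $\psi|_{G_E}=\chi$ after enlarging $E$, deduces $\psi$ is de Rham and lies in a compatible system, and after twisting finds $r_\lambda|_{G_E}$ trivial, so $r_\lambda$ is literally Artin; the parity obstruction then appears as $\HT_\tau(\psi)=\tfrac12\sum_{h\in H_\tau}h$. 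Your proof of the final assertion (that a weakly compatible twisting system forces $\sum_{h\in H_\tau}h=2n_\tau$ to be even) is correct and essentially the same observation. If you want to keep your route through $\ad^0$, two further points need patching: $\ad^0 r_\lambda|_{G_{L_0}}$ need not be semisimple, so trace identically $3$ only gives unipotent image, and you must add that a nontrivial unipotent projective image of the normal subgroup $G_{L_0}$ would fix a unique line and contradict irreducibility of $r_\lambda$; and ``isomorphic $\ad^0$ implies conjugate projectivisations'' requires an argument when $\ad^0\rho$ is reducible (dihedral projective image), for instance by routing that sub-case through the induced branch instead.
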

\begin{proof}
  Suppose that $\CR$ is not strongly irreducible, so that there exists a finite extension $E/F$ such that
  $\cR|_{G_E}$ is reducible. We may suppose that $E/F$ is
  Galois. Choose a prime $\lambda$ of $M$ of residue characteristic greater than~$2$. Write $r_\lambda|_{G_E}=\chi_{1} \oplus \chi_{2}$. 
  
  Suppose that~$\chi_1 = \chi_2 = \chi$. As in the proof of Lemma~\ref{lem: rank 2 irreducible equivalences},
  we deduce that~$\chi$ is de Rham. On the other hand, let~$\phi$ denote the determinant of~$r_{\lambda}$,
  and let~$\langle \phi \rangle$ be the character  such that~$\phi/\langle \phi \rangle$
  is the Teichm\"{u}ller lift of the reduction~$\phibar$ of~$\phi$.  
  Since~$\phibar$
  is a finite order character, we may assume (increasing~$E$ if necessary) that this character is trivial after restriction to~$G_{E}$.
  By construction, $\overline{\langle \phi \rangle}
= 1$ and thus (because~$\lambda$ is assumed to have odd residue characteristic)  $\langle \phi \rangle$
admits a square root character~$\psi$ as a representation of~$G_F$.
But then~$\psi^2 |_{G_E}$ and~$\chi^2$ coincide as representations of~$G_E$, since they are both equal to the determinant
of~$r_\lambda|_{G_E}$.  In particular, their ratio is a character of order dividing~$2$. Increasing~$E$ by a finite extension
if necessary, we may assume that~$\psi |_{G_E} = \chi$.
Hence~$\psi |_{G_{E}}$ is de Rham, and thus~$\psi$ is de Rham and extends to a compatible system of characters of~$G_F$.
After twisting~$\CR$ by this compatible system,
we may assume that~$r_{\lambda}|_{G_{E}}$ is trivial.
 In particular, $r_{\lambda}$ factors through~$\Gal(E/F)$, and is thus coming from  an Artin representation~$\rho_{\lambda}: G_F \rightarrow \GL_2(\barM_{\lambda})$, which automatically extends to a (strongly) compatible system coming
 from an Artin representation~$\rho: \Gal(E/F) \rightarrow \GL_2(\barM)$ with traces in some finite extension of~$M$ (specifically, the extension of~$M$ coming from the coefficient
 field of the compatible family~$\psi$). 
 Hence~$\CR$ is Artin up to twist in this case.

 Now assume that~$\chi_{1} \neq \chi_{2}$. The group $\Gal(E/F)$ permutes the two
  characters $\chi_{i}$ and, because $r_{\lambda}$ is irreducible,
  this action is transitive. Let $F'$ denote the the stabilizer of
  $\chi_{1}$. Then $\chi_{1}$ extends to a character of $G_{F'}$ and
  $r_\lambda = \Ind_{G_{F'}}^{G_F} \chi_1$. As in the proof of Lemma~\ref{lem: rank 2 irreducible equivalences}, there is a weakly compatible system of characters $\CX$ of
  $G_{F'}$ with $\chi_\lambda=\chi_1$. Then
  $\CR \cong \Ind_{G_{F'}}^{G_F} \CX$, as desired.
\end{proof}

\begin{lem}\label{resirred} If $\CR$  is an extremely weakly compatible system of rank $2$ and~$\CR$ is  irreducible,  then
for all $l$ in a set of Dirichlet density $1$ and all~$\lambda | l$, the residual representation $\barr_\lambda$ is absolutely irreducible.

If moreover~$\CR$ is neither induced nor Artin up to twist and $\tF$ denotes the normal closure of $F/\Q$, then one may additionally assume that the image $\barr_\lambda(G_{\tF})$  contains $\SL_2(\F_l)$.
\end{lem}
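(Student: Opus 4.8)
The plan is to prove Lemma~\ref{resirred} in two stages, matching its two assertions. For the first assertion, I would proceed by contradiction. Suppose that $\overline{r}_\lambda$ is reducible for a positive density set of primes $\lambda$. Since $\CR$ is rank $2$, reducibility of $\overline{r}_\lambda$ over $\overline{\F}_l$ means there is a character $\overline\psi_\lambda : G_F \to \overline{\F}_l^\times$ appearing in $\overline{r}_\lambda$; as its reduction has values in $\CO_M/\lambda$ (by the triviality of Brauer groups of finite fields, already used above for $\overline{r}_\lambda$ itself), we get $\overline\psi_\lambda : G_F \to (\CO_M/\lambda)^\times$. The key combinatorial input is a counting/pigeonhole argument: the characteristic polynomials $Q_v(X)$ have bounded coefficients (they lie in a fixed number field $M$ and, by the compatibility, Ramanujan-type bounds or just integrality control the possible reductions), so if $\overline{r}_\lambda(\Frob_v)$ has a root in $\CO_M/\lambda$ with multiplicative structure forced by a character for infinitely many $\lambda$, one can extract a factorization $Q_v(X) = (X - a_v)(X - b_v)$ over $M$ itself for all $v \notin S$, where $a_v, b_v$ are algebraic numbers of bounded degree. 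Actually, the cleanest route is the one used in \cite[\S5]{BLGGT}: reducibility of $\overline{r}_\lambda$ for a density-positive set of $\lambda$ forces, via the fact that a $2$-dimensional semisimple $\lambda$-adic representation whose residual representation is reducible for infinitely many $\lambda$ must itself be reducible (using that the $a_v$ satisfy a fixed polynomial relation over $M$ coming from the common eigenvalue), the reducibility of some $r_{\lambda_0}$ — contradicting Lemma~\ref{lem: rank 2 irreducible equivalences} and irreducibility of $\CR$. I would invoke \cite[Prop.~5.3.2]{BLGGT} (or its analogue) directly here rather than reprove it.

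For the second assertion, I would use Dickson's classification of subgroups of $\GL_2(\overline{\F}_l)$. Given that $\overline{r}_\lambda$ is absolutely irreducible (first part), its image in $\PGL_2(\overline{\F}_l)$ is, for $l$ large, either conjugate to a subgroup containing $\PSL_2(\F_l)$, or dihedral, or one of the exceptional groups $A_4, S_4, A_5$. The exceptional cases are ruled out for $l$ in a density one set because they have bounded order: if infinitely many $\overline{r}_\lambda$ had exceptional projective image, a limiting/compatibility argument (the traces of $r_\lambda(\Frob_v)$ reduce to traces constrained to finitely many values) would force $\CR$ itself to have finite (hence Artin) projective image up to twist, contradicting the hypothesis that $\CR$ is not Artin up to twist. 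The dihedral case is ruled out similarly: if $\overline{r}_\lambda$ has dihedral projective image for infinitely many $\lambda$, it is induced from an index-two subgroup $G_{F'_\lambda}$; but the quadratic field $F'_\lambda$ corresponds to a quadratic character of $G_F$ ramified only at primes in $S \cup \{l\}$, and for $l$ large this character is unramified at $l$ hence lies in a fixed finite set, so the same $F'$ works for infinitely many $\lambda$, and then a compatibility argument (again using density and Lemma~\ref{lem: rank 2 strongly irreducible}) shows $\CR \cong \Ind_{G_{F'}}^{G_F}\CX$, contradicting the hypothesis that $\CR$ is not induced. Having eliminated the exceptional and dihedral cases, $\overline{r}_\lambda(G_F)$ has projective image containing $\PSL_2(\F_l)$ for $l$ in a density one set; restricting to $G_{\tF}$ (a finite-index subgroup) and using that $\PSL_2(\F_l)$ is simple and has no proper subgroup of small index for $l$ large, the projective image of $\overline{r}_\lambda(G_{\tF})$ still contains $\PSL_2(\F_l)$, and a determinant argument (adjusting by the finitely many possibilities for the determinant character modulo $l$) upgrades this to $\overline{r}_\lambda(G_{\tF}) \supseteq \SL_2(\F_l)$.

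The main obstacle I anticipate is the bookkeeping in ruling out the dihedral case uniformly: one must show that the quadratic field of induction cannot vary with $\lambda$, which requires controlling ramification of the relevant quadratic character at $l$ (using that $r_\lambda|_{G_{F_v}}$ is crystalline for $v \mid l$ and $l$ outside a density zero set, so the induction character is at worst tamely ramified / crystalline, forcing bounded conductor), and then, once the field $F'$ is pinned down, promoting "$\overline{r}_\lambda$ is induced from $F'$ for infinitely many $\lambda$" to "$\CR$ is induced from $F'$". This last step should follow from Lemma~\ref{lem: rank 2 strongly irreducible} together with a density argument on Frobenius traces: the set of $v$ split in $F'$ has density $1/2$, and on the complement the traces $\tr r_\lambda(\Frob_v)$ vanish mod $\lambda$ for infinitely many $\lambda$, hence vanish identically, which is precisely the condition for $\CR$ to be induced. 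I would also need the standard input that $\overline{r}_\lambda$ being absolutely irreducible and non-dihedral non-exceptional with large image forces the coefficient field to be exactly $\F_l$ (up to the determinant twist), which is where the condition on $\tF$ and the precise formulation "$\overline{r}_\lambda(G_{\tF}) \supseteq \SL_2(\F_l)$" comes from.
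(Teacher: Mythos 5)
Your plan has a genuine gap: both halves lean on $p$-adic Hodge-theoretic control at the places above $l$ that the hypotheses of the lemma do not supply. The statement is for an \emph{extremely} weakly compatible system, so no crystalline or de Rham condition at $l$ is available (only the determinant condition). Your first step — promoting reducibility of $\barr_\lambda$ on a positive-density set of $\lambda$ to reducibility of some $r_{\lambda_0}$ — needs to pin the residual characters $\barpsi_{i,\lambda}$ down to a finite list, and that requires bounding their restrictions to inertia above $l$ (bounded Hodge--Tate weights, Fontaine--Laffaille, or the like). Without this, the proposed extraction of a factorization $Q_v(X)=(X-a_v)(X-b_v)$ over $M$ fails: a polynomial can factor modulo infinitely many $\lambda$ without factoring over $M$, and the coherence across $\lambda$ is exactly what the missing inertial control would provide. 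The BLGGT results you propose to cite are proved for regular, de Rham weakly compatible systems and do not apply to extremely weakly compatible ones. The same defect recurs — and you flag it yourself — in the dihedral case of your Dickson argument for the second assertion: to keep the quadratic induction field from varying with $\lambda$ you invoke crystallinity of $r_\lambda|_{G_{F_v}}$ for $v\mid l$, which is precisely the hypothesis you do not have; a quadratic character may well be (tamely) ramified above $l$, so the candidate fields $F'_\lambda$ are not a priori confined to a finite set.

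The paper's proof avoids both problems by a different route. It first splits according to Lemma~\ref{lem: rank 2 strongly irreducible}: the Artin-up-to-twist case is immediate; in the induced case one chooses $v\notin S$ split in $F'$ with $Q_v(X)$ having distinct roots (possible by irreducibility of $\CR$), which forces $\barr_\lambda$ irreducible for all but finitely many $\lambda$. In the strongly irreducible case it does not argue by contradiction at all, but directly produces large residual image: after enlarging $M$ so the $r_\lambda$ are valued in $\GL_2(M_\lambda)$, it forms $s_l=\oplus_{\lambda\mid l}r_\lambda$, applies Larsen--Pink to make the connected monodromy group unramified for a density-one set of $l$, identifies its adjoint group as a product of restrictions of scalars of $\PGL_2$ over unramified extensions, and then invokes Larsen's maximality theorem to put $\prod_i\SL_2(\cO_{N_i})/\{\pm 1\}$ inside the adjoint image for a density-one set of $l$; perfectness of $\SL_2(\F_l)$ then upgrades $\PSL_2(\F_l)$ in the projective residual image to $\SL_2(\F_l)$ in $\barr_\lambda(G_F)$, and discarding the finitely many $l$ for which $\PSL_2(\F_l)$ is a subquotient of $\Gal(\tF/F)$ yields the assertion for $G_{\tF}$ (this last step matches your final paragraph). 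If you wish to salvage your approach, you would need a substitute for exactly this Larsen--Pink/Larsen input: Dickson's classification alone does not control the variation with $\lambda$ of the dihedral field or of the coefficient field, and no contradiction argument from residual reducibility is known at this level of generality.
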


\begin{proof} This is immediate if~$\CR$ is Artin up to twist. If $\CR\cong \Ind_{G_{F'}}^{G_F} \CX$ then choose a prime $v\not\in S$ of $F$ which splits in $F'$ and such that $Q_v(X)$ has distinct roots. (If no such prime $v$ existed then we would have $\cX={}^\sigma\CX$, where $1 \neq \sigma \in \Gal(F'/F)$, contradicting the irreducibility of $\CR$.) Then for any $\lambda$ not dividing the residue characteristic of $v$ and modulo which $Q_v(X)$ still has distinct roots, we see that $\barr_\lambda$ is irreducible.

Hence we may assume that~$\CR$ is strongly
irreducible. In particular, since the only connected Zariski closed subgroups of~$\GL_2$ which act irreducibly contain~$\SL_2$, it
follows that
the Zariski closure of the image of~$r_{\lambda}$ contains~$\SL_2(\barM_{\lambda})$ 
for all~$\lambda$. We first prove, replacing~$M$ by a finite extension if necessary, that the Galois representations~$r_{\lambda}$
can all be made to land inside~$\GL_2(M_{\lambda})$.

The image of~$r_{\lambda}$  contains  an element with distinct eigenvalues.
 Hence, by the Cebotarev density theorem, there exists an auxiliary prime~$v \not\in S$ such that~$r_{\lambda}(\Frob_v)$ has distinct eigenvalues. These eigenvalues are defined
over a (at most) quadratic extension of~$M$. By enlarging~$M$ if necessary, we deduce that the images of~$r_{\lambda}$ for all~$\lambda \nmid N(v)$ contain
an element with distinct eigenvalues in~$M_{\lambda}$, which allows one to conjugate the representation~$r_{\lambda}$  to land in~$M_{\lambda}$. By choosing a second auxiliary
prime of different residue characteristic and enlarging~$M$ once again, we may ensure the image of~$r_{\lambda}$ lies in~$\GL_2(M_{\lambda})$ for all~$\lambda$. 

Let 
\[ s_l = \bigoplus_{\lambda|l} r_\lambda: G_F \lra \GL_{2[M:\Q]}(\Q_l), \]
so that $\CS = \{s_l\}$ form an extremely weakly compatible system with coefficients $\Q$. 
Let~$\mathbf{G}_l$  denote the Zariski closure of the image of $s_l$.
 It is contained in $(\RS^M_\Q \GL_2) \times_\Q \Q_l$. %
 The pushforward of $\mathbf{G}_l$ to $\GL_2/\barQQ_l$ via any embedding of $M \into \barQQ_l$ will contain $\SL_2$. 
  We will write
 $\mathbf{G}^{\circ}_l$ for the connected component of the identity of $\mathbf{G}_l$,  $\mathbf{G}_l^{\ad}$ for the quotient of $\mathbf{G}_l$ by its center and 
and~$\mathbf{G}^{\ssc}_l$ for the (simply connected) universal cover
of~$\mathbf{G}^{\ad}_l$. Then $\mathbf{G}_l^0$ is unramified for all
$l\in\cL$ a set of rational primes of Dirichlet density $1$ (see
\cite[Prop.8.9]{lp}). 
 Also over $\barQQ_l$, we see that $\mathbf{G}^{\ad}_l$ is contained in $\PGL_2^{[M:\Q]}$ and surjects onto each factor. 

The following facts are either well known or easy to check in the order indicated:
  \begin{enumerate}
  \item The only morphisms $\PGL_2 \ra \PGL_2$ over $\barQQ_l$ are the trivial map and conjugation by an element of $\PGL_2(\barQQ_l)$.
  \item The only morphisms $\PGL_2^r \ra \PGL_2$ over $\barQQ_l$ are the trivial map and projection onto one factor composed with conjugation by an element of $\PGL_2(\barQQ_l)$.
  \item If $I$ and $J$ are finite sets then, up to conjugation by an element of $\PGL_2(\barQQ_l)^J$, the only morphisms $\PGL_2^I \ra \PGL_2^J$ over $\barQQ_l$ are induced by a pair $(J_0,\phi)$ where $J_0 \subset J$ and $\phi:J_0 \ra I$. 
  \item If $I$ is a finite set then the  automorphism group of $\PGL_2^I$ is $\PGL_2^I \rtimes S_I$, where $S_I$ is the group of permutations of $I$.
  \item If $J$ is a finite set and $G$ is a connected algebraic subgroup of $\PGL_2^J$ over $\barQQ_l$ which surjects onto $\PGL_2$ via each projection, then $G \cong \PGL_2^I$ and the inclusion $\PGL_2^I \into \PGL_2^J$ corresponds, up to conjugation by an element of $\PGL_2(\barQQ_l)^I$ to a map $\phi:J \onto I$. 
 (Use induction on $\# J$ and Goursat's lemma.)
  \item If $M/\Q_l$ is a finite extension, then $(\RS^M_{\Q_l} \PGL_2) \times_{\Q_l} \barQQ_l \cong \PGL_2^{\Hom_{\Q_l}(M,\barQQ_l)}$ and the action of $G_{\Q_l}$ is via the map $G_{\Q_l} \ra S_{\Hom_{\Q_l}(M,\barQQ_l)}$ where $G_{\Q_l}$ acts by left translation.
   \item Forms of~$\PGL_2^r$ are classified by the middle term of the (split) exact sequence of pointed sets
 $$H^1(\Q_l,\PGL_2^r/\barQQ_l) \rightarrow H^1(\Q_l,\Aut(\PGL_2^r/\barQQ_l)) \rightarrow H^1(\Q_l,S_r)$$
 In order to split over an unramified extension, the image in~$H^1(\Q_l,S_r) = \Hom(G_{\Q_l},S_r)$ must be unramified and hence land in~$H^1(\FF_l,S_r)$. Every class in~$H^1(\FF_l,S_r)$ comes from the image of a group of the form~$G = \prod_i \RS^{N_i}_{\Q_l} \PGL_2$, where $N_i/\Q_l$ are unramified extensions. 
 On the other hand, the fibres of~$[G] \in H^1(\Q_l,S_r)$  are inner forms  of~$G$, and there is a unique quasi-split form amongst all inner forms. Since~$G$
 is quasi-split,  the only forms of~$\PGL_2^r$ which are unramified ($=$ quasi-split and split over an unramified extension) are thus given by~$\prod_i \RS^{N_i}_{\Q_l} \PGL_2$ for unramified~$N_i$.
 \item Suppose that, for $j \in J$ a finite set, $M_j/\Q_l$ is a finite extension, and that $G \subset \prod_{j\in J} \RS^{M_j}_{\Q_l} \PGL_2$ is an unramified connected algebraic subgroup over~$\Q_l$ such that, after base change to $\barQQ_l$, the projection of $G$ onto each factor of $\prod_{j \in J} (\RS^{M_j}_{\Q_l} \PGL_2) \times_{\Q_l} \barQQ_l \cong \PGL_2^{\coprod_j \Hom_{\Q_l}(M_j,\barQQ_l)}$ is surjective.  Then there are unramified extensions $N_i/\Q_l$ for $i$ in some finite set $I$ such that $G \cong \prod_{i \in I} \RS_{\Q_l}^{N_i} \PGL_2$. Moreover for each $j \in J$ and each $\tau:M_j \into \barQQ_l$ 
 the projection of the base change of $G$ to $\barQQ_l$ to the $(j,\tau)$ factor of $\prod_{j \in J} (\RS^{M_j}_{\Q_l} \PGL_2) \times_{\Q_l} \barQQ_l \cong \PGL_2^{\coprod_j \Hom_{\Q_l}(M_j,\barQQ_l)}$ is conjugate by an element of $\PGL_2(\barQQ_l)$ to projection onto one of the factors of $\prod_{i \in I} (\RS^{N_i}_{\Q_l} \PGL_2) \times_{\Q_l} \barQQ_l \cong \PGL_2^{\coprod_i\Hom_{\Q_l}(N_i,\barQQ_l)}$. 
    \end{enumerate}
 Thus for $l \in \cL$ there are finite unramified extensions $N_{l,i} / \Q_l$ for $i$ in some finite index set $I_l$ such that
 $\mathbf{G}_l^{\ad} \cong \prod_{i \in I_l} \RS_{\Q_l}^{N_{l,i}} \PGL_2$. Moreover for any prime $\lambda$ of $M$ there is an index $i\in I_l$ and an embedding $\tau:N_{l,i} \into \barM_\lambda$ such that the projection of $\mathbf{G}_l^{\ad} \times_{\Q_l} \barM_\lambda$ to $\PGL_2/\barM_\lambda$ is conjugate by an element of $\PGL_2(\barM_\lambda)$ to to the projection onto the $(i,\tau)$ factor of $\mathbf{G}_l^{\ad} \times_{\Q_l} \barM_\lambda \cong \PGL_2^{\coprod_{i\in I_l}\Hom_{\Q_l}(N_{l,i},\barM_\lambda)}$.

Let~$\Gamma_l$ denote the image of~$s_l$,  let~$\Gamma^{\circ}_l = \Gamma_l \cap \mathbf{G}^{\circ}$, let $\Gamma^{\ad}$ denote the image of $\Gamma_l^0$ in $\mathbf{G}_l^{\ad}$. By~\cite[Theorem 3.17]{larsen},
after replacing $\cL$ by a smaller set of Dirichlet density $1$, we may suppose that for $l \in \cL$ the group $\Gamma_l^{\ad}$ contains a conjugate of $\prod_{i \in I_l} \SL_2(\cO_{N_i})/\{ \pm 1_2\}$.
Thus, for $l \in \CL$ and $\lambda|l$, we may suppose that the image of $r_\lambda(G_F)$ in $\PGL_2(\barM_\lambda)$ contains $\SL_2(\Z_l)/\{ \pm 1_2\}$ and the image $\barr_\lambda(G_F)$ in $\PGL_2(\cO_{M}/\lambda)$ contains  $\SL_2(\F_l)/\{ \pm 1_2\}$. 

Now we may suppose that $l\in \cL$ implies that $l>3$ so that $\SL_2(\F_l)$ is perfect. Suppose $\lambda|l\in \cL$. 
 For every $g \in \SL_2(\F_l)$ the image of $\barr_\lambda$ contains an element $z(g)g$ where $z(g) \in (\cO_M/\lambda)^\times$ and is well defined modulo $Z=(\cO_M/\lambda)^\times \cap \im \barr$. Then $z$ defines a homomorphism $\SL_2(\F_l) \ra (\cO_M/\lambda)^\times/Z$ which must be identically $1$. Thus $\SL_2(\F_l)$ is contained in the image of $\barr_\lambda$.
 
  Finally if we remove finitely many primes from $\cL$ we may suppose that $\PSL_2(\F_l)$ is not a subquotient of $\Gal(\tF/F)$ from which the last assertion follows.
\end{proof}

We now prove some further preliminary lemmas concerning enormous and decomposed generic representations.

\begin{lem}\label{lemen}
If $n \geq 2$ and $l>2n+1$ and $H$ is a finite subgroup of $\GL_2(\barFF_l)$ containing $\SL_2(\F_l)$, then $\Symm^{n-1} H \subset \GL_2(\barFF_l)$ is enormous. 
\end{lem}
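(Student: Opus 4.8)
The strategy is to verify the three conditions of Definition~\ref{defn:enormous} directly for $H' := \Symm^{n-1} H \subset \GL_n(\barFF_l)$ (here we silently identify $\Symm^{n-1}$ with the $n$-dimensional irreducible representation of $\SL_2$, which is defined over $\F_l$ provided $l > n$, hence over $\barFF_l$). Since enormity depends only on the image in $\PGL_n$, and since $H$ contains $\SL_2(\F_l)$ with index prime-to-$l$ quotient by scalars, we may as well take $H = \SL_2(\F_l)$ (enlarging $H$ only enlarges the set of regular semisimple elements available in condition (3), and shrinks $H^0, H^1$). The representation $\Symm^{n-1}$ of $\SL_2(\F_l)$ is absolutely irreducible for $l \geq n$ (indeed for $l > n-1$ the Weyl module is irreducible by the strong linkage/Jantzen criterion, cf. \cite[Cor.~II.5.6]{MR2015057}, and this is where some lower bound on $l$ enters), so $H'$ is an absolutely irreducible subgroup as required.

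\textbf{Key steps.} First I would record that $\SL_2(\F_l)$ is perfect for $l > 3$ (which holds since $l > 2n+1 \geq 5$), so condition (1) — no nontrivial $l$-power quotient — is immediate: a perfect group has no nontrivial abelian quotient at all. Second, for condition (2) I would compute $\ad^0 \Symm^{n-1}$ as an $\SL_2$-module. Over a field of characteristic $0$ one has the classical decomposition $\Symm^{n-1} \otimes (\Symm^{n-1})^\vee \cong \bigoplus_{j=0}^{n-1} \Symm^{2j}$, so $\ad^0 \cong \bigoplus_{j=1}^{n-1} \Symm^{2j}$; the constraint $l > 2n+1 > 2(n-1)$ guarantees all these symmetric powers $\Symm^{2j}$ with $2j \leq 2(n-1)$ remain irreducible and that the tensor product decomposition remains valid integrally (again by \cite[Cor.~II.5.6]{MR2015057} and the universal coefficient theorem, as used in the proof of Lemma~\ref{lem:integral_kostant}). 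Then $H^0(\SL_2(\F_l), \Symm^{2j}) = 0$ for $j \geq 1$ because $\Symm^{2j}$ is a nontrivial irreducible $\SL_2(\F_l)$-module (no nonzero invariants), and $H^1(\SL_2(\F_l), \Symm^{2j}) = 0$ for $l > 2j+1$ by the vanishing of cohomology of $\SL_2(\F_l)$ with coefficients in $\Symm^m$ for $m$ small relative to $l$ (this is classical — e.g. via the fact that these are the "small" weights well inside the first restricted region, so the relevant $\Ext^1$ groups vanish, cf. the CPS-type bounds; alternatively one can cite the explicit computations in the literature on $H^1(\SL_2(\F_l),\Symm^m \F_l)$). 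Summing over $j = 1, \dots, n-1$ with the hypothesis $l > 2n+1$ ensuring $l > 2j+1$ throughout gives $H^0(H',\ad^0) = H^1(H',\ad^0) = 0$.

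\textbf{Condition (3) and the main obstacle.} The remaining point is condition (\ref{thirdcondition}): for each simple $\barFF_l[H']$-submodule $W \subseteq \ad^0$ — i.e. each summand $\Symm^{2j}$, $1 \leq j \leq n-1$ — I must exhibit a regular semisimple $h \in H' = \Symm^{n-1}(\SL_2(\F_l))$ with $W^h \neq 0$. Take $h = \Symm^{n-1}(g)$ where $g = \mathrm{diag}(\alpha, \alpha^{-1}) \in \SL_2(\F_{l^2})$ has order a large prime-to-$l$ integer (e.g. $\alpha$ of order $l+1$, or any order $> 2n$), so that the eigenvalues $\alpha^{n-1}, \alpha^{n-3}, \dots, \alpha^{1-n}$ of $h$ on $\Symm^{n-1}$ are pairwise distinct — forcing $h$ to be regular semisimple in $\GL_n$ — which works as soon as $l + 1 > 2(n-1)$, i.e. $l > 2n - 3$, comfortably implied by $l > 2n+1$. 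On $\Symm^{2j}$ the eigenvalues of $h$ are $\alpha^{2j}, \alpha^{2j-2}, \dots, \alpha^{-2j}$, which includes $\alpha^0 = 1$; hence $h$ has a nonzero fixed vector on every $\Symm^{2j}$, giving $W^h \neq 0$ for every simple submodule $W$ of $\ad^0$. (One must check that such an element of $\SL_2(\F_{l^2})$ really occurs inside $H \supseteq \SL_2(\F_l)$: if $\alpha \in \F_l^\times$ one stays inside $\SL_2(\F_l)$; in general a nonsplit torus element of $\SL_2(\F_l)$ of order dividing $l+1$ does the job, and its image under $\Symm^{n-1}$ still has distinct eigenvalues over $\barFF_l$.) The one genuine subtlety — the "hard part" — is pinning down the exact characteristic bound: one needs $l$ large enough that (a) $\Symm^{n-1}$ and all $\Symm^{2j}$ ($j \leq n-1$) are irreducible $\SL_2(\F_l)$-modules and the integral tensor decomposition holds, and (b) $H^1(\SL_2(\F_l),\Symm^{2j}) = 0$ for all $1 \leq j \leq n-1$. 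Both are known to hold under a linear bound in $n$, and $l > 2n+1$ is the bound chosen here; I would simply quote \cite[Cor.~II.5.6]{MR2015057} (as already invoked elsewhere in the paper) for (a) and the standard computation of degree-one cohomology of $\SL_2(\F_l)$ in small symmetric powers for (b), checking in each case that $2n+1$ suffices.
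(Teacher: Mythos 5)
There is a genuine gap, and it is the very first step: the reduction ``we may as well take $H = \SL_2(\F_l)$''. Your justification is that enlarging the group only shrinks $H^0$ and $H^1$ and enlarges the supply of regular semisimple elements. That is fine for $H^0$ and for condition (3), but it is false as a general principle for $H^1$: restriction $H^1(\Symm^{n-1}H,\ad^0) \to H^1(\Symm^{n-1}\SL_2(\F_l),\ad^0)$ is only known to be injective when the index is prime to $l$ (via corestriction), and the hypotheses of the lemma allow, e.g., $H \supseteq \SL_2(\F_{l^r})$ with $r\ge 2$, where $[H:\SL_2(\F_l)]$ is divisible by $l$. So your argument proves the vanishing of $H^1$ only for the subgroup $\Symm^{n-1}\SL_2(\F_l)$, not for $\Symm^{n-1}H$. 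The same reduction also silently skips condition (1): having no nontrivial $l$-power quotient is not inherited by a larger group from a subgroup, and perfectness of $\SL_2(\F_l)$ says nothing about quotients of $H$ itself. Both points require knowing the structure of an arbitrary finite $H \subseteq \GL_2(\barFF_l)$ containing $\SL_2(\F_l)$, which your write-up never uses.

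This is exactly why the paper argues differently: it first invokes Dickson's classification (cited there as \cite[Thm.\ 2.47(b)]{MR1605752}) to see that the image of $H$ in $\PGL_2(\barFF_l)$ is $\PSL_2(k)$ or $\PGL_2(k)$ for some finite $k/\F_l$, so that $\Symm^{n-1}\SL_2(k) \subseteq H' \subseteq \barFF_l^\times\Symm^{n-1}\GL_2(k)$, and then quotes \cite[Lem.\ 3.2.5]{geenew}, which establishes enormousness for all groups in that range; the essential extra input beyond your computation is the vanishing of $H^1(\SL_2(k),\Symm^{2j})$ for \emph{arbitrary} finite $k \supseteq \F_l$ (together with the prime-to-$l$ passage from $\SL_2(k)$ up to $H'$ modulo scalars), which does not follow from the $\F_l$ case by restriction. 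Your verification for the special case $H=\SL_2(\F_l)$ — perfectness, the decomposition $\ad^0\Symm^{n-1}\cong\bigoplus_{j=1}^{n-1}\Symm^{2j}$ in the stated range of $l$, the $H^0$/$H^1$ vanishing for these small symmetric powers, and the nonsplit-torus element with eigenvalue $1$ on each $\Symm^{2j}$ for condition (3) — is essentially sound, and condition (3) does transfer to larger $H$; but as written the proof does not cover the groups the lemma is actually about, so either Dickson plus a treatment of $\SL_2(k)$ for general $k$ (or the citation the paper uses) must be added.
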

\begin{proof}
The image of $H$ in $\PGL_2(\barFF_l)$ must be conjugate to $\PSL_2(k)$ or $\PGL_2(k)$ for some finite extension $k/\F_l$. (See for instance~\cite[Thm.\
  2.47(b)]{MR1605752}.) Thus
\[\barFF_l^\times\Symm^{n-1}\GL_2(k)\supset H\supset\Symm^{n-1}\SL_2(k),\]
and the lemma follows from~\cite[Lem.\ 3.2.5]{geenew}.
\end{proof}

\begin{lem}\label{decoge} Suppose that $L$ is a number field, that $k$ is a finite field of characteristic $l$ and that $\barr:G_L \ra GL_n(k)$ is a continuous representation. Let $M$ denote the normal closure over $\Q$ of $\barF^{\ker \ad \barr}$. If $M$ does not contain a primitive $l^{th}$ root of unity, then $\barr$ is decomposed generic.
\end{lem}
\begin{proof} If a rational prime $p$ splits completely in $M$, but not in $M(\zeta_l)$, then $p$ is decomposed generic for $\barr$.
\end{proof}

\begin{lemma} \label{lemma:enormousgeneric} Suppose that $F/\Q$ is a finite extension with normal closure $\tF/\Q$ and that $m\in \Z_{>0}$. Suppose also that $l>2m+3$ is a rational prime and that $\barr:G_F \lra \GL_2(\barFF_l)$ is a continuous representation such that $\barr(G_\tF) \supset \SL_2(\F_l)$. Finally suppose that $F'/F$ is a finite extension which is linearly disjoint from $\barF^{\ker \barr}$ over $F$.
\begin{enumerate}
\item  \label{part:scalarslimeball} If $l$ is unramified in $F'/\Q$ then $\zeta_l \not\in \overline{F}^{\ker \ad \Symm^m \barr}F'$.
\item $(\Symm^m \barr)(G_{F'(\zeta_l)})$ is enormous. \label{part:enormousslimeball}
\item   \label{part:three} Let $\tF'$ denote the normal closure of $F'$ over $\Q$. Suppose that $\ad \barr(G_{\tF'}) \supset \PSL_2(\F_l)$,  then $\Symm^m\barr|_{G_{F'}}$ is decomposed generic.
 \item \label{part:genericslimeball1} Suppose that $F'/\Q$ is unramified at $l$ and that no quotient of $\im \ad \barr$ is unramified at all primes above $l$, then $\Symm^m\barr|_{G_{F'}}$ is decomposed generic.
\item \label{part:genericslimeball2} If $l>[\tF:F]$, then $\Symm^m\barr$ is decomposed generic.
\end{enumerate} \end{lemma}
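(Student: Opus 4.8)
Write $L=\barF^{\ker\barr}$ and $M_0=\barF^{\ker\ad\barr}$, so that $\Gal(L/F)=\barr(G_F)$ is a finite subgroup of $\GL_2(\barFF_l)$ containing $\SL_2(\F_l)$ (since $G_{\tF}\subseteq G_F$), and $\Gal(M_0/F)=\ad\barr(G_F)$ is its image in $\PGL_2(\barFF_l)$, which contains $\PSL_2(\F_l)$. As $l>2m+3\ge 7$: the groups $\SL_2(\F_l)$ and $\PSL_2(\F_l)$ are perfect; $\PSL_2(\F_l)$ has no proper subgroup of index $<l$; and by Dickson's classification (see e.g.\ \cite[Thm.\ 2.47(b)]{MR1605752}) $\ad\barr(G_F)$ is conjugate to $\PSL_2(\F_{l^r})$ or $\PGL_2(\F_{l^r})$ for some $r$, so its abelianization has order $\le 2$. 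Since $\Symm^m\colon\PGL_2\hookrightarrow\PGL_{m+1}$ is a closed immersion for $m\ge 1$, we have $\ker\ad\Symm^m\barr=\ker\ad\barr$, hence $\barF^{\ker\ad\Symm^m\barr}=M_0$. Everything rests on two observations: (i) $\barr(G_{F'(\zeta_l)})\supseteq\SL_2(\F_l)$; and (ii) under the hypotheses of parts~(\ref{part:three}), (\ref{part:genericslimeball1}), (\ref{part:genericslimeball2}) respectively, $\ad\barr(G_{\tF''(\zeta_l)})\supseteq\PSL_2(\F_l)$, where $\tF''=\tF'$ in parts~(\ref{part:three}) and~(\ref{part:genericslimeball1}) and $\tF''=\tF$ in part~(\ref{part:genericslimeball2}).

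\textbf{Proof of (i), and of parts~(\ref{part:scalarslimeball}) and~(\ref{part:enormousslimeball}).} We have $\barr(G_{F'(\zeta_l)})=\Gal\bigl(L/(L\cap F'(\zeta_l))\bigr)$. The field $(L\cap F'(\zeta_l))\cdot F'$ lies between $F'$ and the cyclic extension $F'(\zeta_l)$, and since $F'$ and $L$ are linearly disjoint over $F$ we have $(L\cap F'(\zeta_l))\cap F'=L\cap F'=F$; hence $\Gal\bigl((L\cap F'(\zeta_l))/F\bigr)\cong\Gal\bigl((L\cap F'(\zeta_l))F'/F'\bigr)$ is abelian. As $\SL_2(\F_l)\subseteq[\barr(G_F),\barr(G_F)]$, every abelian quotient of $\barr(G_F)$ kills $\SL_2(\F_l)$, so $\barr(G_{F'(\zeta_l)})\supseteq\SL_2(\F_l)$, proving (i). Part~(\ref{part:enormousslimeball}) is then Lemma~\ref{lemen} applied with $n=m+1$ (note $n\ge 2$ and $l>2n+1$), since $(\Symm^m\barr)(G_{F'(\zeta_l)})=\Symm^m\bigl(\barr(G_{F'(\zeta_l)})\bigr)$. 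For part~(\ref{part:scalarslimeball}): linear disjointness of $F'$ and $L\supseteq M_0$ over $F$ gives $\Gal(M_0F'/F')\cong\ad\barr(G_F)$, whose abelianization has order $\le 2$; thus if $\zeta_l\in M_0F'$ then $F'(\zeta_l)\subseteq M_0F'$ and $[F'(\zeta_l):F']\le 2$. But $\Q(\zeta_l)/\Q$ is totally ramified at $l$ while $F'/\Q$ is unramified at $l$, so $F'\cap\Q(\zeta_l)=\Q$ and $[F'(\zeta_l):F']=l-1\ge 6$, a contradiction.

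\textbf{Proof of (ii).} In part~(\ref{part:three}) the hypothesis gives $\ad\barr(G_{\tF'})\supseteq\PSL_2(\F_l)$ directly. In part~(\ref{part:genericslimeball2}), $[\ad\barr(G_F):\ad\barr(G_{\tF})]=[M_0\cap\tF:F]$ divides $[\tF:F]<l$, so $\ad\barr(G_{\tF})\cap\PSL_2(\F_l)$ has index $<l$ in $\PSL_2(\F_l)$ and therefore equals $\PSL_2(\F_l)$. In part~(\ref{part:genericslimeball1}), the normal closure $\tF'/\Q$ is unramified at $l$ (since $F'/\Q$ is), hence $\tF'/F$ is unramified at the $l$-adic places of $F$, so $M_0\cap\tF'$ corresponds to a quotient of $\ad\barr(G_F)$ that is unramified at all primes above $l$, whence $M_0\cap\tF'=F$ by hypothesis and $\ad\barr(G_{\tF'})=\ad\barr(G_F)\supseteq\PSL_2(\F_l)$. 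In all three cases $\tF''(\zeta_l)/\tF''$ is abelian, so $\ad\barr(G_{\tF''(\zeta_l)})$ is the kernel in $\ad\barr(G_{\tF''})$ of a map to an abelian group; by perfectness of $\PSL_2(\F_l)$ it still contains $\PSL_2(\F_l)$.

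\textbf{Deduction of parts~(\ref{part:three}), (\ref{part:genericslimeball1}), (\ref{part:genericslimeball2}).} Since $l-1>2m$, the element $\mathrm{diag}(a,a^{-1})\in\SL_2(\F_l)$ with $a$ of order $l-1$ is regular semisimple with eigenvalue ratio $\gamma=a^2$ of order $(l-1)/2>m$; let $\bar g\in\PSL_2(\F_l)\subseteq\ad\barr(G_{\tF''(\zeta_l)})$ be its image. The set $U$ of $\tau\in G_{\tF''(\zeta_l)}$ with $\tau|_{M_0}=\bar g$ is a nonempty coset of $G_{\tF''(\zeta_l)M_0}$, hence a nonempty open subset of $G_\Q$, so by the Chebotarev density theorem there is a rational prime $q\ne l$, unramified for $\barr$, admitting a place $\mathfrak P$ of $\barQQ$ with $\Frob_{\mathfrak P}\in U$. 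Then $q$ splits completely in $\tF''$ (hence in $F'$, resp.\ $F$) and in $\Q(\zeta_l)$, so $q\equiv 1\bmod l$, and for the place $v\mid q$ of $F'$ (resp.\ $F$) below $\mathfrak P$ we have $\ad\barr(\Frob_v)=\bar g$, so $\barr(\Frob_v)$ has distinct eigenvalues $\alpha,\beta$ with $\alpha/\beta=\gamma$ of order $>m$. Hence the $m+1$ eigenvalues $\alpha^i\beta^{m-i}$ of $\Symm^m\barr(\Frob_v)$ are pairwise distinct and their mutual ratios $\gamma^k$ ($1\le|k|\le m$) are all $\ne 1$; as $q$ splits completely and $q\equiv 1\bmod l$, the residue cardinality $q_v=q\equiv 1\bmod l$, so these ratios also avoid $q_v$ in $\barFF_l$. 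Since $\Symm^m\barr$ is unramified above $q$, this shows $\Symm^m\barr|_{G_{F'}}$ (resp.\ $\Symm^m\barr$) is decomposed generic. The main obstacle is observation (ii): the bookkeeping with the field intersections $M_0\cap\tF'$, together with the input from Dickson's classification; granting the image computations, the Chebotarev step is routine.
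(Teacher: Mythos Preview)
Your proofs of parts~(\ref{part:scalarslimeball}) and~(\ref{part:enormousslimeball}) are correct and follow essentially the paper's argument. (The paper's version of observation~(i) is slightly cleaner: it notes $\barr(G_{F'})=\barr(G_F)$ by linear disjointness, then uses that $G_{F'(\zeta_l)}$ is normal in $G_{F'}$ with abelian quotient, so $\barr(G_{F'(\zeta_l)})$ contains the derived subgroup $\supseteq\SL_2(\F_l)$; this sidesteps your implicit claim that $(L\cap F'(\zeta_l))/F$ is Galois---which does in fact hold, since $\barr(G_{F'(\zeta_l)})$ is normal in $\barr(G_F)$.) Your reductions of parts~(\ref{part:genericslimeball1}) and~(\ref{part:genericslimeball2}) to part~(\ref{part:three}) also match the paper.

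The gap is in the final Chebotarev step for parts~(\ref{part:three})--(\ref{part:genericslimeball2}). The definition of \emph{decomposed generic} requires that for \emph{every} place $v$ of $F'$ above $q$, the restriction $\Symm^m\barr|_{G_{F'_v}}$ is generic. Your argument finds a single $\mathfrak P$ with $\Frob_{\mathfrak P}\in U$ and verifies genericity only at the one place $v$ below $\mathfrak P$. For another place $v'\mid q$ one has $\Frob_{v'}=\sigma\Frob_{\mathfrak P}\sigma^{-1}$ for some $\sigma\in G_\Q$, typically $\sigma\notin G_F$; since $\ad\barr$ is only defined on $G_F$, there is no direct relation between $\ad\barr(\Frob_{v'})$ and $\bar g$. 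The normality of $G_{\tF''}$ in $G_\Q$ forces $\Frob_{v'}\in G_{\tF''}$, but says nothing about its image under $\ad\barr$.

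The paper repairs exactly this. After replacing $F'$ by a Galois-over-$\Q$ extension (first $\tF'(\zeta_l)$, then a further extension so that $\ad\barr(G_{F'})=\PSL_2(k)$), it lets $H=\barF^{\ker\ad\barr|_{G_{F'}}}$ and takes $H'$ the Galois closure of $H$ over $\Q$. Goursat's lemma gives $\Gal(H'/F')\cong\PSL_2(k)^n$, and the conjugation action of $\Gal(H'/\Q)$ lands in $(\PGL_2(k)\rtimes\Gal(k/\F_l))^n\rtimes S_n$. One then chooses the \emph{diagonal} element $(A,\ldots,A)$ with $A\in\PSL_2(\F_l)$ having the required eigenvalue ratio. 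Any $\Gal(H'/\Q)$-conjugate of $(A,\ldots,A)$ has each coordinate $\PGL_2(k)$-conjugate to $A$ (since $A\in\PSL_2(\F_l)$ is fixed by $\Gal(k/\F_l)$ and the diagonal is fixed by $S_n$), so \emph{every} prime of $F'$ above the resulting $q$ has Frobenius with the same eigenvalues as $A$. Passing to the Galois closure over $\Q$ and picking a diagonal element is the missing idea in your argument.
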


\begin{proof}
The image $\barr_\lambda(G_F)$ in $\PGL_2(\barFF_l)$ must be conjugate to $\PSL_2(k)$ or $\PGL_2(k)$ for some finite extension $k/\F_l$. (See for instance~\cite[Thm.\
  2.47(b)]{MR1605752}.) 

For assertion~(\ref{part:scalarslimeball}), it suffices to treat the case $m=1$, in which case the assertion follows because $\Gal(F'(\zeta_l)/F')\cong (\Z/l\Z)^\times$, while $(\ad \barr)(G_{F'})=(\ad \barr)(G_F)$ does not surject onto $(\Z/l\Z)^\times$.

For assertion~(\ref{part:enormousslimeball}),  note that $\barr (G_{F'})=\barr(G_F) \supset \SL_2(\F_l)$ and so, because $\SL_2(\F_l)$ is perfect, we have that $\barr(G_{F'(\zeta_l)}) \supset \SL_2(\F_l)$. The assertion now follows from Lemma \ref{lemen}.

For assertion~(\ref{part:three}), it suffices to prove
that~$\Symm^m\barr|_{G_{F'}}$ is decomposed generic after replacing~$F'$ with
some finite extension. We first replace~$F'$ by $\tF'(\zeta_l)$, which we can do as $\PSL_2(\F_l)$ is perfect. Then, as above, (up to conjugacy) the image of $(\ad \barr)(G_{F'})$ is $\PSL_2(k)$ or $\PGL_2(k)$ for some finite extension $k/\F_l$.
Perhaps making a further extension, we may  assume that~$\ad \barr(G_{F'})=\PSL_2(k)$ for some finite extension $k/\F_l$, while maintaining the fact that~$F'/\Q$ is Galois.
 Let $H/F'$ denote the finite Galois extension with Galois group~$\PSL_2(k)$ cut out by this projective representation; and let $H'$ denote its normal closure over~$\Q$.
Using 
 the simplicity of~$\PSL_2(k)$, we deduce, from Goursat's Lemma, that~$\Gal(H'/F')
 = \PSL_2(k)^n$ for some~$n$. Moreover the conjugation action of any $\sigma \in \Gal(H'/\Q)$ on $\Gal(H'/F') \cong \PSL_2(k)^n$ is via an element of $\Aut(\PSL_2(k)^n) \cong (\PGL_2(k) \rtimes \Gal(k/\F_l))^n \rtimes S_n$. (To see this note two things. Firstly $\PSL_2(k)$ has automorphism group $\PGL_2(k) \rtimes \Gal(k/\F_l)$ - see for instance \cite{dieudonneauts}.
 Secondly the only normal subgroups of $\PSL_2(k)^n$ are $\PSL_2(k)^I$ for $I \subset \{1,...,n\}$, as can be seen by induction on $n$, and so any automorphism of $\PSL_2(k)^n$ permutes the $n$ factors of this product.)

There exists an element~$A \in \PSL_2(\F_l) \subset \PSL_2(k)$ such that a preimage in $\SL_2(\F_l)$ has two distinct $\F_l$-rational eigenvalues
  with ratio~$\alpha$ satisfying~$\alpha^{\pm i} \neq 1$ for~$1 \le i \le m$. %
By the Cebotarev density theorem, there exists a rational prime~$p$
 such that~$\langle \Frob_p \rangle$ in~$\Gal(H'/\Q)$ is (the conjugacy class of) the element~$(A,\ldots,A)$ in~$\PSL_2(k)^n=\Gal(H'/F')$. The image of this element is trivial in the quotient~$\Gal(F'/\Q)$, and thus, in addition, we 
 see that~$p$ splits completely in~$F'$ and (hence) that~$p \equiv 1 \mod l$. By construction, the ratio of any two roots of the characteristic polynomial of Frobenius of
\emph{any} prime above~$p$ in~$\Symm^{m} \barr$ is given
by~$\alpha^{\pm i}$ for~$i = 1, \ldots, m$.  %
 In particular,  these ratios are not equal to~$p \equiv 1 \mod l$. Hence~$\Symm^{m} \barr|_{G_{F'}}$
 is decomposed generic.
 
 Assertion~(\ref{part:genericslimeball1}) follows from assertion~(\ref{part:three}), because~$\tF'$ is unramified above~$l$ so that~$(\tF' \cap \barF^{\ker \ad \barr})/F$ is unramified above~$l$ and hence~$\tF' \cap \barF^{\ker \ad \barr}=F$ and~$\tF'$ is linearly disjoint from~$\barF^{\ker \ad \barr}$ over~$F$.

For assertion~(\ref{part:genericslimeball2}), note that $[\tF \cap \barF^{\ker \ad \barr}:\barF^{\ker \ad \barr}]<l$, so that we have $(\ad \barr)(G_{\tF \cap \barF^{\ker \ad \barr}}) \supset \PSL_2(\F_l)$. However, being Galois extensions of $F$, the fields $\tF$ and $\barF^{\ker \ad \barr}$ are linearly disjoint over $\tF \cap \barF^{\ker \ad \barr}$, so that again the result follows from assertion~(\ref{part:three}).
\end{proof}

\begin{lemma} \label{neverendinglemmas}
Suppose that~$\rbar: G_F \rightarrow \GL_n(\Fbar_l)$
is decomposed generic and absolutely irreducible. Let~$E/\Q$ be a Galois extension which is linearly disjoint from the Galois
closure of~$\barF^{\ker \rbar}(\zeta_l)$ over~$\Q$. Then~$\rbar
|_{G_{FE}}$ is decomposed generic and absolutely irreducible.
\end{lemma}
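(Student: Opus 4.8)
The statement has two assertions: that $\rbar|_{G_{FE}}$ is absolutely irreducible, and that it is decomposed generic. For the first, the key point is that absolute irreducibility of $\rbar$ is controlled by the image $\rbar(G_F)$, and the hypothesis that $E/\Q$ is linearly disjoint from the Galois closure $N$ of $\barF^{\ker\rbar}(\zeta_l)$ over $\Q$ forces $\rbar(G_{FE}) = \rbar(G_F)$. Indeed, $FE/F$ is a subextension of $NE/F$, and since $E$ and $N$ are linearly disjoint over $\Q$, the extension $FE$ is linearly disjoint from $\barF^{\ker\rbar}$ over $F$; hence $G_{FE}$ surjects onto $\Gal(\barF^{\ker\rbar}/F) = \rbar(G_F)$, so the image is unchanged and absolute irreducibility persists. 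First I would spell this out carefully, being attentive to the fact that $E$ is only assumed Galois over $\Q$, not over $F$, so the relevant linear disjointness is over $\Q$ and one passes to the compositum with $F$.

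\textbf{Decomposed generic.} For the second assertion, recall from Definition~\ref{defn:decomposed_generic} that $\rbar$ being decomposed generic means there is a prime $\auxp \neq l$ splitting completely in $F$ with $\rbar|_{G_{F_v}}$ generic (unramified, Frobenius eigenvalue ratios avoiding $\{1, q_v\}$) for all $v \mid \auxp$. By Lemma~\ref{lem:weak_generic} there are infinitely many such primes $\auxp$. The plan is to choose one such $\auxp$ whose Frobenius behaves well in $E$ as well: since there are infinitely many decomposed generic primes for $\rbar$, and only finitely many ramify in $E$ or fail to have the needed splitting behaviour, I would pick $\auxp$ decomposed generic for $\rbar$, unramified in $E$, and splitting completely in $E$. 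Concretely, one applies the Chebotarev density theorem inside $\Gal(NE/\Q)$: the decomposed generic condition at $\auxp$ (for $\rbar$) depends only on $\Frob_\auxp$ in $\Gal(N/\Q)$ (as used in the proof of Lemma~\ref{lem:weak_generic}), while splitting completely in $E$ is the condition $\Frob_\auxp = 1$ in $\Gal(E/\Q)$; by linear disjointness of $N$ and $E$ over $\Q$, $\Gal(NE/\Q) = \Gal(N/\Q) \times \Gal(E/\Q)$, so these two conditions are independent and their conjunction is satisfied by a positive density set of primes.

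\textbf{Conclusion.} Having fixed such a $\auxp$, every place $v$ of $F$ above $\auxp$ splits completely in $FE$ (as $v$ lies above a prime of $\Q$ splitting completely in $E$, hence in the compositum $FE$), and for each place $w \mid v$ of $FE$ one has $(FE)_w = F_v$, so $\rbar|_{G_{(FE)_w}} = \rbar|_{G_{F_v}}$ is generic by the choice of $\auxp$. Moreover $\auxp$ splits completely in $FE$. Thus $\auxp$ is a decomposed generic prime for $\rbar|_{G_{FE}}$, which is exactly what is required. I do not anticipate a serious obstacle here; the only point needing care is the bookkeeping of fields — tracking that the relevant linear disjointness statements, stated over $\Q$, give the needed disjointness after composing with $F$, and that "splits completely in $E$" together with "$v$ lies over a rational prime" yields "$v$ splits completely in $FE$." This is routine algebraic number theory, and the substance of the argument is simply the independence of the Chebotarev conditions guaranteed by the linear disjointness hypothesis.
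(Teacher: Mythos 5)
Your proposal is correct and follows essentially the same route as the paper: the irreducibility is handled by noting the image is unchanged (the paper simply calls this "clear"), and the decomposed generic part uses the decomposition $\Gal(NE/\Q)\cong\Gal(N/\Q)\times\Gal(E/\Q)$ coming from linear disjointness to pick, via Chebotarev, a prime with Frobenius of the form $(\sigma,1)$ — decomposed generic for $\rbar$ and split in $E$ — exactly as in the paper's proof.
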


\begin{proof} The irreducibility claim is clear. Write~$H$ for the
  Galois closure of~$\barF^{\ker \rbar}(\zeta_l)$ over~$\Q$. As in the proof of Lemma~\ref{lem:weak_generic}, there exists a conjugacy class of elements~$\sigma \in \Gal(H/\Q)$
such that any rational prime unramified in~$H$ whose Frobenius element corresponds to~$\sigma$ is decomposed generic for~$\rbar$.
By assumption, $\Gal(HE/\Q) = \Gal(H/\Q) \times \Gal(E/\Q)$, and
now any rational prime whose conjugacy class in~$\Gal(HE/\Q)$
is of the form~$(\sigma,1) \in  \Gal(H/\Q) \times \Gal(E/\Q)$ will be
decomposed generic for~$\rbar |_{G_{FE}}$.
\end{proof}

\begin{lemma} \label{nuq} \leavevmode 
\begin{enumerate}
\item \label{part:nuq1}Suppose that $K/\Q_l$ is an unramified extension and that $r:G_K \ra \GL_2(\Z_l)$ is a crystalline representation with Hodge--Tate numbers $\{ 0,1\}$ for each embedding $K \into \barQQ_l$. Either $\barr|_{I_l}^\semis\cong 1 \oplus \barepsilon_l^{-1}$ or $\barr|_{I_l}\cong \omega_{l,2}^{-1} \oplus \omega_{l,2}^{-l}$.

\item \label{part:nuq2}Suppose that $K$ is a number field in which $l$ is unramified and that $r:G_K \ra \GL_2(\Z_l)$ is a crystalline representation with Hodge--Tate numbers $\{ 0,1\}$ for each embedding $K \into \barQQ_l$. If the image of $\barr$ contains $\SL_2(\F_l)$, then the only subextension of $\barK^{\ker \barr}/K$ unramified at all primes above $l$ is $K$ itself. 
\end{enumerate} \end{lemma}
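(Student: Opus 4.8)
For part (\ref{part:nuq1}), the restriction $r|_{I_l}$ of a crystalline representation with Hodge--Tate weights $\{0,1\}$ at each embedding is well understood via Fontaine--Laffaille theory (we are in the range $1 < l-1$, so the functor $\bG$ of~\S\ref{sec:FL_statements} applies to a lattice in $r$). The plan is to describe the two-dimensional Fontaine--Laffaille module $M$ with $\mathrm{FL}_\tau = \{0,1\}$ for each $\tau : K \hookrightarrow \overline{\Q}_l$, and read off the semisimplification of $\overline{r}|_{I_l}$. There are two cases: either the filtration jump occurs ``reducibly'', in which case $\overline{r}|_{I_l}^{\mathrm{ss}}$ is a sum of two characters which, because $K/\Q_l$ is unramified and the Hodge--Tate weights are $\{0,1\}$, must be $1 \oplus \overline{\epsilon}_l^{-1}$ (the $-1$ reflecting the normalization $\HT_\tau(\epsilon_l) = \{-1\}$ fixed in~\S\ref{sec:misc_notation}); or $M\otimes k$ is irreducible as a module over $\cO_K \otimes k$ with its Frobenius, in which case the standard computation of irreducible Fontaine--Laffaille modules over an unramified base gives $\overline{r}|_{I_l} \cong \omega_{l,2}^{-1} \oplus \omega_{l,2}^{-l}$ (the fundamental characters of niveau $2$, as defined in~\S\ref{sec:misc_notation}). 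This is essentially the classical computation of Serre/Fontaine--Laffaille for weight-$2$ forms, carried out embedding-by-embedding; since $K/\Q_l$ is unramified all embeddings are permuted cyclically by Frobenius, so the niveau-$2$ characters appear with the same exponents as in the case $K = \Q_l$. I do not expect any obstacle here beyond bookkeeping of normalizations.

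For part (\ref{part:nuq2}), suppose $L/K$ is a subextension of $\overline{K}^{\ker\overline{r}}/K$ which is unramified at all places above $l$; I want to show $L = K$. Since $\overline{r}(G_K) \supseteq \SL_2(\F_l)$, the fixed field of $\overline{r}^{-1}(\SL_2(\F_l)\cdot Z)$ (where $Z$ is the centre) is an abelian extension of $K$, and the extension $\overline{K}^{\ker\overline{r}}/K$ has Galois group containing $\SL_2(\F_l)$ as a normal subgroup with abelian quotient. Because $\PSL_2(\F_l)$ is simple (as $l > 2n+1 \geq 5$ in our applications, but here we just need $l>3$), any subextension $L/K$ corresponds to a subgroup of $\overline{r}(G_K)$ containing the subgroup generated by $\SL_2(\F_l)$; thus $L$ is contained in the maximal subextension $L^{\mathrm{ab}}/K$ whose Galois group is the abelian quotient $\overline{r}(G_K)/(\SL_2(\F_l)\cdot Z)$, a subquotient of $\F_l^\times$. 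So it suffices to show: any subextension of $\overline{K}^{\ker\overline{r}}/K$ which is abelian over $K$ and unramified above $l$ is trivial. For this I would use part (\ref{part:nuq1}): at each place $v | l$ of $K$, the local field $K_v$ is unramified over $\Q_l$, and $r|_{G_{K_v}}$ is crystalline with Hodge--Tate weights $\{0,1\}$, so by (\ref{part:nuq1}) the image $\overline{r}(I_{K_v})$ is either a subgroup whose order is divisible by $l-1$ (the split case, where inertia surjects onto the image of $\overline{\epsilon}_l^{-1}$), or contains an element of order $l^2-1$ mapping to the niveau-$2$ situation. In either case the image of inertia $I_{K_v}$ under the projection $\overline{r}(G_K) \to \overline{r}(G_K)/(\SL_2(\F_l) Z)$ is all of that abelian quotient: indeed in the split case $\overline{\epsilon}_l^{-1}|_{I_{K_v}}$ already generates $\F_l^\times$ (as $K_v/\Q_l$ is unramified), and in the niveau-$2$ case the determinant $\omega_{l,2}^{-1-l} = \omega_l^{-1} = \overline{\epsilon}_l^{-1}$ on inertia again generates, while the image modulo scalars has order divisible by $(l^2-1)/\gcd$, which still surjects onto the abelian quotient $\subseteq \F_l^\times$. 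Hence $L^{\mathrm{ab}}$ is ramified at every $v|l$ unless $L^{\mathrm{ab}} = K$, which forces $L = K$.

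The main obstacle I anticipate is the precise verification in the niveau-$2$ case of (\ref{part:nuq2}) that the image of inertia at $v | l$ already fills up the entire abelian quotient of $\overline{r}(G_K)$ modulo $\SL_2(\F_l)$ and scalars: one must check that $\omega_{l,2}$ restricted to $I_{K_v}$, composed with projection to $\F_l^\times$ (via the determinant, which kills $\SL_2$), is surjective onto the relevant quotient, using that $K_v/\Q_l$ is unramified so $\omega_{l,2}|_{I_{\Q_l}}$ and $\omega_{l,2}|_{I_{K_v}}$ have the same image. Once this local surjectivity is in hand, the global statement follows formally: an abelian extension of $K$ inside $\overline{K}^{\ker\overline{r}}$ that is unramified above $l$ must have trivial image of $I_{K_v}$ for all $v|l$, contradicting surjectivity unless it is trivial; and by simplicity of $\PSL_2(\F_l)$ every subextension of $\overline{K}^{\ker\overline{r}}/K$ containing no nontrivial abelian piece over $K$ already equals $K$, since $\SL_2(\F_l) \subseteq \overline{r}(G_K)$ forces the corresponding subgroup to be everything.
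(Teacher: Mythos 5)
Your proposal for part (\ref{part:nuq1}) takes a genuinely different route from the paper, which realizes (a lattice in) $r^\vee$ as the Tate module of an $l$-divisible group over $\cO_K$ and then applies Raynaud's results on finite flat group schemes killed by $l$, together with the determinant condition $\det r|_{I_l}=\epsilon_l^{-1}$. A Fontaine--Laffaille argument is indeed possible for $l\geq 3$, but your sketch skips over exactly the point where the content lies, and which the paper flags explicitly (``the result would be false if we replaced the coefficients $\Z_l$ with the ring of integers in an arbitrary extension of $\Q_l$''). To speak of embeddings at all you must enlarge the coefficients so that they contain $K$, and once you do, the embedding-by-embedding computation does \emph{not} produce only the two shapes in the statement: if $[K:\Q_l]=f>1$ and the weight-one embeddings are distributed unevenly between the two Jordan--H\"older factors (for instance labelled weights $(1,1,0)$ and $(0,0,1)$ when $f=3$, realized by a direct sum of two crystalline characters of the type $\psi_a$ constructed in \S\ref{sec:FL_statements}), the inertial characters are niveau-$f$ or niveau-$2f$ characters such as $\omega_{l,3}^{-(1+l)}\oplus\omega_{l,3}^{-l^2}$, which are not of either allowed form. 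These possibilities are ruled out only by the hypothesis that $r$ has $\Z_l$-coefficients, i.e.\ that $\barr$ lands in $\GL_2(\F_l)$: for example, because $\cO_K\otimes_{\Z_l}\F_l$ is then a \emph{field}, a rank-one subobject of the mod-$l$ Fontaine--Laffaille module has filtration jump constant across embeddings; equivalently, a two-dimensional $\F_l$-representation has tame niveau at most $2$, after which the weights and the determinant pin down the exponents. Your assertion that ``the niveau-$2$ characters appear with the same exponents as in the case $K=\Q_l$'' because Frobenius permutes the embeddings is precisely this unproved step, and it is the heart of the lemma, not bookkeeping. (Your route also omits $l=2$, which the paper's proof covers; this is harmless for the applications but worth noting.)

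For part (\ref{part:nuq2}) your overall strategy (determinant of inertia at each $v\mid l$ fills $\F_l^\times$ by part (\ref{part:nuq1}), then group theory using $\SL_2(\F_l)\subseteq\im\barr$) is the paper's, and your local surjectivity input is fine, but the global group-theoretic reduction is wrong as written. An arbitrary subextension $L/K$ of $\barK^{\ker\barr}/K$ corresponds to an arbitrary subgroup, not to one containing $\SL_2(\F_l)$; and the claim that any subextension with no nontrivial abelian piece over $K$ equals $K$ is false --- take $L$ to be the fixed field of a Borel subgroup of $\GL_2(\F_l)$, which is a nontrivial extension of degree $l+1$ with no proper intermediate fields at all. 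The object you need, and which never appears in your argument, is the subgroup $\Delta\subseteq\im\barr$ generated by the images of \emph{all} inertia groups at primes above $l$: it is normal because conjugation permutes these inertia groups, the lemma is equivalent to $\Delta=\im\barr$, and part (\ref{part:nuq1}) gives $\det\Delta=\F_l^\times$ (so in particular $\im\barr=\GL_2(\F_l)$). Since a noncentral normal subgroup of $\GL_2(\F_l)$ contains $\SL_2(\F_l)$, and $\Delta$ cannot be central (scalars have square determinant), one gets $\Delta\supseteq\SL_2(\F_l)$ and then $\Delta=\GL_2(\F_l)$ from $\det\Delta=\F_l^\times$. This is the paper's proof; your reduction to the abelian quotient can be salvaged only by invoking this normality and the classification of normal subgroups, which your write-up replaces by incorrect statements about arbitrary subextensions.
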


\begin{proof}
The first part is presumably well known, but for lack of a reference
we give a proof. (Note the slight subtlety that the result would be false if we replaced
the coefficients~$\Z_l$ with the ring of integers in an arbitrary
extension of~$\Ql$, which is one obstacle to finding a suitable reference.) Recall that $r^\vee$ arises from the Tate module of a height 2 $l$-divisible group $G$ over $\cO_K$ (see \cite{BreuilAnnals} and \cite{MR2263197}.)
Moreover $G \neq G^0 \neq (0)$, as otherwise we would have Hodge--Tate numbers $\{1,1\}$ or $\{0,0\}$. Thus there is a finite flat group scheme $H$ over the ring of integers of the completion of the maximal unramified extension of $K$ of order $l^2$ killed by $l$ giving rise to $\barr^\vee$. Moreover $H \neq H^0 \neq (0)$.  By~\cite[Prop.\ 3.2.1, Thm.\ 3.4.3]{raynaud},
 either  $\barr|_{I_l}^\semis\cong 1 \oplus
 \barepsilon_l^{-1}$ or $\barr|_{I_l}\cong \omega_{l,2}^{-1} \oplus
 \omega_{l,2}^{-l}$ or $\barr|_{I_l}^\semis\cong 1 \oplus 1$ or
 $\barr|_{I_l}^\semis\cong \barepsilon_l^{-1} \oplus
 \barepsilon_l^{-1}$. If $l=2$ then $1=\barepsilon_l^{-1}$ and we are done, so suppose that
 $l>2$. Then since~$\det r$ is a crystalline character with all
 Hodge--Tate weights equal to~$1$, we have $\det
 r|_{I_l}=\epsilon_l^{-1}$, so the last two possibilities cannot occur,
 and the first part follows.

Consider the now the second part. It follows from the first part that the image under $\det \barr$ of any inertia group above $l$ is $\F_l^\times$, and so $\im \barr = \GL_2(\F_l)$. Let $\Delta$ denote the  subgroup of $\GL_2(\F_l)$ generated by the images of all inertia groups above $l$. It is a normal subgroup of $\GL_2(\F_l)$ which surjects under the determinant map onto $\F_l^\times$. But any normal subgroup of $\GL_2(\F_l)$ either contains $\SL_2(\F_l)$ or is central, and so $\Delta=\GL_2(\F_l)$ and the second part follows. 
\end{proof}

A very (or extremely) weakly compatible system $\CR$ is defined to be {\em pure} of weight $w$ if
\begin{itemize}
\item for each $v \not\in S$, each root $\alpha$ of $Q_v(X)$ in $\barM$ and each $\imath:\barM \into \C$
we have 
\[ | \imath \alpha |^2 = q_v^w; \]
\item and for each $\tau:F \into \barM$ and each complex conjugation $c$ in $\Gal(\barM/\Q)$ we have
\[ H_{c \tau} = \{ w-h: \,\,\, h \in H_\tau\}. \]
\end{itemize}
If $\CR$ is rank one then it is automatically pure. (See \cite{serreabladic}.) The same is true if $\CR$ is induced from an extremely weakly compatible system of characters over a finite extension of $F$, or if $\cR$ is Artin up to twist.

If $\CR$ is pure of weight $w$ and if $\imath: M \into \C$, then the partial L-function $L^S(\imath \CR, s)$ is defined as an analytic function in $\Re s > 1+w/2$. 
If $\CR$ is pure and regular and if $v$ is an infinite place of $F$,
then the Euler factor $L_v(\imath \CR, s)$ can be defined
(see~\cite[\S5.1]{BLGGT}). 

The very (or extremely) weakly compatible system $\CR$ is defined to be {\em automorphic} if there is a regular algebraic, cuspidal  automorphic
representation $\pi$ of $\GL_n(\A_F)$ and an embedding $\imath:M \into \C$, such that if $v \not\in S$, then $\pi_v$ is unramified and 
$\rec(\pi_v |\det|_v^{(1-n)/2})(\Frob_v)$ has characteristic
polynomial $\imath (Q_v(X))$. Note that if $\CR$ is automorphic, then
$L^S(\imath \CR,s)$ defines an analytic function in $\Re s \gg 0$ which,
for $n>1$, has analytic continuation to the whole complex plane. It
follows from \cite[Thm.\ 3.13]{MR1044819} that if $\CR$ is automorphic, then for any embedding $\imath':M \into \C$ there is a regular algebraic, cuspidal  automorphic
representation $\pi_{\imath'}$ of $\GL_n(\A_F)$ such that if $v \not\in S$, then $\pi_{\imath',v}$ is unramified and 
$\rec(\pi_{\imath',v} |\det|_v^{(1-n)/2})(\Frob_v)$ has characteristic
polynomial $\imath' (Q_v(X))$.

Suppose that~$F$ is a CM field and $\pi$ is a regular algebraic cuspidal
automorphic representation on $\GL_n(\A_F)$ of weight $(a_{\tau,i})$.
From the main theorems of~\cite{hltt} and \cite{ilavarma} we may
associate to~$\pi$ an extremely weakly compatible system
\[ \CR_\pi=(M_\pi,S_\pi,\{ Q_{\pi,v}(X) \}, \{r_{\pi,\lambda} \}, \{H_{\pi,\tau}\} ), \]
where
\begin{itemize}
\item $M_\pi \subset \C$ is the fixed field of $\{ \sigma \in \Aut(\C):\,\, {}^\sigma \pi^\infty \cong \pi^\infty\}$;
\item $S_\pi$ is the set of primes of $F$ with $\pi_v$ ramified;
\item $Q_{\pi,v}(X)$ is the characteristic polynomial of $\rec(\pi_v |\det|_v^{(1-n)/2})(\Frob_v)$;
\item $H_{\pi,\tau} =\{ a_{\tau,1}+n-1,\dots,a_{\tau,n} \}$.
\end{itemize}
We now note that this can be upgraded to a very weakly compatible system under some hypotheses.

\begin{lemma} \label{lemma:sometimes}
Let $F$ be a CM field and let $\pi$ be a regular algebraic cuspidal
automorphic representation on $\GL_n(\A_F)$ of weight $\xi = (a_{\tau,i})$.
Suppose that the following hypothesis holds:
\begin{itemize}
\item[(\HHH)] (decomposed generic and absolutely irreducible)
For a set of primes~$l$ of Dirichlet density one, the representations
$$\rbar_{\pi,\lambda}: G_F \rightarrow \GL_n(\CO_{M}/\lambda)$$
are decomposed generic and absolutely irreducible for all~$\lambda\mid
l$.
\end{itemize}
Then~$\CR_\pi$ is a very weakly compatible system. 
\end{lemma}

\begin{proof}
The lemma follows from
Theorem~\ref{thm:lgcfl} (taking~$p$ there to be our~$l$). Indeed, the assumption that~$\m$ is not Eisenstein is implied (for a set~$l$ of density one) by hypothesis~(\HHH).
Conditions~(\ref{part:part3}),~(\ref{part:part4}), and~(\ref{part:pbig}) hold automatically for large enough~$l$.
Similarly, $l$ will be unramified in~$F$ for large enough~$l$.
Conditions~(\ref{part:unramifiedandsplit}), (\ref{part:runningassumption}), and~(\ref{part:splittingcondition}),
can always be satisfied after after making a solvable
Galois base change~$F'/F$  (using~\cite{MR1007299}) which is disjoint over~$F$
from the Galois closure of~$\barF^{\ker \barr}$ over~$\Q$ and in which
all primes dividing either~$S$ or~$l$ are unramified. (We are free to make a different such base change for each prime~$l$.)
In particular, one can take the compositum of~$F$ with a Galois
extension~$E/\Q$ that is the compositum of various imaginary quadratic fields
in which all primes  dividing~$S$ or~$l$ split completely
for~(\ref{part:unramifiedandsplit}), (\ref{part:runningassumption}),
and the compositum with a large totally real cyclic extension~$E/\Q$ in which~$l$ splits completely
for condition~(\ref{part:splittingcondition}), where~$E$ may be easily be chosen
to be linearly disjoint over~$\Q$ from~$\barF^{\ker \rbar_{\pi,\lambda}}(\zeta_l)$. 
By Lemma~\ref{neverendinglemmas},
hypothesis~(\HHH) is preserved under such base extensions.
Condition~(\ref{part:therearecharzero}) holds by the existence of~$\pi$, and finally, 
condition~(\ref{part:decomposedgeneric}) holds for~$l$ for a set of~$l$ of density one, by hypothesis~(\HHH).
\end{proof}

\begin{lemma} \label{lemma:sometimes2}
Let $F$ be a CM field and let $\pi$ be a regular algebraic cuspidal
automorphic representation on $\GL_2(\A_F)$ of weight $\xi = (a_{\tau,i})$. Then the extremely weakly compatible system $\CR_\pi$ is irreducible. Moreover hypothesis (\HHH) of Lemma \ref{lemma:sometimes} holds and 
$\CR_\pi$ is a very weakly compatible system. 
\end{lemma}

\begin{proof}
If $\cR_\pi$ were reducible then, by Lemma~\ref{lem: rank 2 irreducible equivalences} and the automorphy of all weakly compatible systems of rank $1$, we see that there would be grossencharacters $\chi_1$ and $\chi_2$ of $\A_F^\times/F^\times$ such that $\pi_v \cong \chi_{1,v} \boxplus \chi_{2,v}$ for all but finitely many $v$.   By~\cite{MR623137}, this would contradict the cuspidality of $\pi$. Thus $\cR_\pi$ is irreducible. 

By Lemma \ref{lemma:sometimes} it only remains to verify hypothesis (\HHH). The absolute irreducibility condition follows from Lemma~\ref{resirred}. For the decomposed generic condition,
we treat the three possibilities of Lemma~\ref{lem: rank 2 strongly irreducible} separately.

 Suppose first that~$\CR_\pi$ is strongly irreducible.
By Lemma~\ref{resirred} and part~(\ref{part:genericslimeball2}) of Lemma~\ref{lemma:enormousgeneric}, we deduce that hypothesis~(\HHH) holds and so $\cR_\pi$ is very weakly compatible.

Suppose second that~$\CR_\pi \cong \Ind_{G_E}^{G_F} \cX$  for some quadratic extension $E/F$ and some very weakly compatible system of characters $\cX$ of $G_E$. 
Let $\tF$ (resp. $\tE$) denote the normal closure of $F$ (resp. $E$) over $\Q$, so that $\Gal(\tE/\tF)$ is an elementary abelian $2$-group. Let $1 \neq \tau \in \Gal(E/F)$. Then $\Gal(E/F)$ acts on $\Gal(\barE^{\ker \chi_\lambda/\chi_\lambda^\tau}/E)$ via the non-trivial character $\Gal(E/F) \ra \{\pm 1\}$. If $L^{(\lambda)}$ denotes the normal closure of $\barE^{\ker \barchi_\lambda/\barchi_\lambda^\tau}$ over $\Q$, then $L^{(\lambda)}/\tE$ is the compositum of abelian Galois extensions on which some subgroup of $\Gal(\tE/\Q)$ acts by a non-trivial character of order $2$.

Suppose that a rational prime $l$ is unramified in $\tE$. Then $\barepsilon_l(\Gal(L^{(\lambda)}/\tE))$ can have order at most $2$ (as any subgroup of $\Gal(\tE/\Q)$ will act trivially on it). Thus, if $l>3$ then $\zeta_l \not\in L^{(\lambda)}$ for any $\lambda$. It follows from Lemma \ref{decoge} that if $\lambda$ lies above a rational prime $l>3$ which is unramified in $\tE$, then $\barr_{\pi,\lambda}$ is decomposed generic.

Finally suppose that~$\CR_\pi$ is Artin up to twist, i.e. there exists an irreducible Artin representation $\rho:G_F \ra GL_2(\overline{M}_\pi)$ such that for all $\lambda$ the representation $r_{\pi,\lambda}$ is the twist of $\rho$ by some character. In particular $\barF^{\ker \ad \barr_{\pi,\lambda}} \subset \barF^{\ker \ad \rho}$. Let $L$ denote the normal closure of $\barF^{\ker \ad \rho}$ over $\Q$. If $l>2$ is unramified in $L$, then $\zeta_l \not\in L$ and by Lemma \ref{decoge} we see that $\barr_{\pi,\lambda}$ is decomposed generic for all $\lambda|l$.
\end{proof}

If $\imath_0$ is the canonical embedding $M_\pi \into \C$, then
$L^S(\imath_0 \CR_\pi,s)=L^S(\pi,s)$. If moreover $\CR_\pi$ is pure,
and  hypothesis~(\HHH) of Lemma~\ref{lemma:sometimes} holds,
then for each infinite place $v$ of $F$ we have $L_v(\imath_0
\CR_\pi,s)=L_v(\pi,s)$. (This follows from the definition of~$L_v(\imath_0
\CR_\pi,s)$ in~\cite[\S5.1]{BLGGT} together with the determination of
the Hodge--Tate weights of~$\CR_\pi$ in  Lemma~\ref{lemma:sometimes},
  and, in the case that~$F$ is totally real, the main
result of~\cite{caraianilehung}.)

The following is our main theorem.

\begin{thm}\label{mainthm} Suppose that $F/F_0$ is a finite Galois extension of CM fields. Suppose also that $F_0^\avoid$ is a finite Galois extension of $F$ and that $\cL_0$ is a finite set of rational primes. Suppose moreover that $\cI$ is a finite set and that for $i\in \cI$ we are given $m_i \in \Z_{>0}$ and a strongly irreducible rank $2$ very weakly compatible system of $l$-adic representations of $G_F$
\[ \cR_i=(M_i, S_i, \{Q_{i,v}(X)\}, \{ r_{i,\lambda}\}, \{ \{ 0,1\}\}) \]
with $S_i$ disjoint from $\cL_0$.

Then there is a finite set $\cL \supset \cL_0$ of rational primes; a
finite CM Galois extension $F^\suffi/F$ unramified above $\cL$, such
that $F^\suffi$ is Galois over $F_0$; and a finite Galois extension
$F^\avoid/F$ containing $F_0^\avoid$, which is linearly disjoint from
$F^\suffi$ over $F$; with the following property: For any finite CM 
extension $F'/F$ containing~$F^\suffi$ which is unramified above $\cL$ and 
linearly
disjoint from $F^\avoid$ over~$F$, the representations $\Symm^{m_i}
\cR_i|_{G_{F'}}$ are all automorphic, and each arises from an automorphic representation unramified above $\cL_0$.
\end{thm}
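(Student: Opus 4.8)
The plan is to prove Theorem~\ref{mainthm} by combining the potential automorphy machinery of~\cite{tay-fm} (the $p$-$q$ switch and the Moret--Bailly construction) with the automorphy lifting theorems of~\S\ref{section:alt}, using the results on compatible systems above to ensure that the residual representations satisfy the required hypotheses (decomposed generic, enormous image, existence of a suitable $\sigma$). First I would observe that by Lemma~\ref{resirred} applied to each strongly irreducible $\cR_i$, there is a set of rational primes of Dirichlet density $1$ such that for $\lambda \mid l$ in this set the image $\barr_{i,\lambda}(G_{\widetilde F})$ contains $\SL_2(\F_l)$ (where $\widetilde F$ is the normal closure of $F/\Q$); intersecting these sets over the finitely many $i \in \cI$ and discarding small primes, I can choose a single auxiliary prime $q \notin \cL_0$, larger than $2m_i+3$ and $[\widetilde F : F]$ for all $i$, and unramified everywhere in sight, such that for a chosen $\lambda_i \mid q$ each $\barr_{i,\lambda_i}|_{G_{\widetilde F}}$ has image containing $\SL_2(\F_q)$. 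By Lemma~\ref{lemma:enormousgeneric}(\ref{part:enormousslimeball}),(\ref{part:genericslimeball2}), the representations $\Symm^{m_i}\barr_{i,\lambda_i}$ are then decomposed generic, with enormous image after restriction to $G_{F(\zeta_q)}$, and admit a scalar $\sigma$ (using Lemma~\ref{lemma:enormousgeneric}(\ref{part:scalarslimeball}) and the adjoint-image remark).

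Next I would apply a Moret--Bailly / $p$-$q$ switch argument (as in~\cite[\S1]{tay-fm}, \cite{fermat}) to find the field $F^\suffi$: by the theorem of Moret--Bailly~\cite{mb} there is a finite CM extension $F^\suffi/F$, Galois over $F_0$ and unramified above a suitable finite set $\cL$ of primes, over which one can simultaneously realise each $\Symm^{m_i}\barr_{i,\lambda_i}|_{G_{F^\suffi}}$ as the reduction mod $q$ of a crystalline (ordinary or Fontaine--Laffaille, according to the congruence conditions one imposes) Galois representation coming from an automorphic form on some $\GL_{m_i+1}$ — the standard device being to use an auxiliary prime $p$ at which one has a known automorphic source (e.g.\ a symmetric power of an elliptic curve with the right mod-$q$ reduction, produced via Moret--Bailly) and to move between the $p$-adic and $q$-adic realisations. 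Here I must be careful to arrange that $F^\suffi$ satisfies all the local hypotheses of Theorem~\ref{thm:main_automorphy_lifting_theorem} (or Theorem~\ref{thm:main_ordinary_automorphy_lifting_theorem}): $q$ unramified, $F^\suffi$ containing an imaginary quadratic field in which $q$ splits, the splitting condition $\sum_{\overline v'' \ne \overline v, \overline v'}[F^{\suffi,+}_{\overline v''}:\Q_q] > \frac12[F^{\suffi,+}:\Q]$, and the Fontaine--Laffaille weight bound $\lambda_{\tau,1}+\lambda_{\tau c,1}-\lambda_{\tau,n}-\lambda_{\tau c,n} < q-2n$ (which for $\Symm^{m_i}$ of Hodge--Tate weights $\{0,1\}$ amounts to $q > 2m_i + 2(m_i+1)$, satisfiable by enlarging $q$). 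The field $F^\avoid/F$ is taken to contain $F_0^\avoid$ together with the Galois closure over $\Q$ of $\widetilde F^{\ker \barr_{i,\lambda_i}}(\zeta_q)$ for all $i$, so that by Lemma~\ref{neverendinglemmas} the decomposed-generic and enormous-image properties persist over any $F'/F^\suffi$ linearly disjoint from $F^\avoid$; by construction $F^\avoid$ can be arranged linearly disjoint from $F^\suffi$ over $F$.

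Then for any such $F'$, I would apply Theorem~\ref{thm:main_automorphy_lifting_theorem} (in the Fontaine--Laffaille case; the ordinary Theorem~\ref{thm:main_ordinary_automorphy_lifting_theorem} handles the case where $q$ fails to be unramified in $F'$, which can be avoided by the disjointness hypothesis, or used as an alternative) to the compatible system $\Symm^{m_i}\cR_i|_{G_{F'}}$: its residual representation $\Symm^{m_i}\barr_{i,\lambda_i}|_{G_{F'}}$ is absolutely irreducible (being a symmetric power of a representation with $\SL_2$-image), decomposed generic and of enormous image over $F'(\zeta_q)$ with a scalar $\sigma$, and it is residually automorphic because it reduces the automorphic lift constructed over $F^\suffi$ (and hence over $F'$ after a solvable base change, using~\cite{MR1007299} and Proposition~\ref{prop:soluble_base_change}) — the weight-$0$, de Rham, crystalline hypotheses on $\Symm^{m_i}\cR_i$ at primes above $q$ follow from local--global compatibility for $\cR_i$ itself, which we already know is a \emph{very} weakly compatible system (in particular crystalline at a density-one set of primes; choose $q$ in this set). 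This yields a cuspidal automorphic representation of $\GL_{m_i+1}(\A_{F'})$ giving rise to $\Symm^{m_i}\cR_i|_{G_{F'}}$; the unramifiedness outside $\cL_0$ follows from the "moreover" clauses of the lifting theorems and Proposition~\ref{prop:soluble_base_change} together with the main theorem of~\cite{ilavarma}, provided $\cL$ is chosen to contain $\cL_0$ and all primes of bad reduction of the auxiliary data.

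The main obstacle I anticipate is the Moret--Bailly step: one must produce a single CM field $F^\suffi$ over which \emph{all} of the finitely many residual representations $\Symm^{m_i}\barr_{i,\lambda_i}$ are simultaneously automorphic, while keeping precise control of ramification (unramified above $\cL$), of the Galois-theoretic structure ($F^\suffi/F_0$ Galois, $F^\suffi$ and $F^\avoid$ linearly disjoint over $F$), and of the delicate local conditions at $q$ required by the Fontaine--Laffaille automorphy lifting theorem — in particular the splitting condition $\sum_{\overline v'' \ne \overline v, \overline v'}[F^{\suffi,+}_{\overline v''}:\Q_q]>\frac12[F^{\suffi,+}:\Q]$, which forces $F^\suffi$ to have many primes above $q$ and so constrains the Moret--Bailly construction. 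The interplay between "$F^\suffi$ should be small enough that one can still enlarge it to an arbitrary disjoint $F'$" and "$F^\suffi$ should be rich enough to support the automorphic witnesses and satisfy the local conditions" is exactly the technical heart, and is precisely the kind of bookkeeping carried out in~\cite{tay-fm} and~\cite{BLGGT}; I would follow those arguments closely, substituting our Theorems~\ref{thm:main_automorphy_lifting_theorem} and~\ref{thm:main_ordinary_automorphy_lifting_theorem} for the self-dual automorphy lifting theorems used there.
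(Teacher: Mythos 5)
Your broad strategy (Moret--Bailly plus a $p$--$q$ switch through an auxiliary elliptic curve, feeding into Theorems~\ref{thm:main_automorphy_lifting_theorem} and~\ref{thm:main_ordinary_automorphy_lifting_theorem}) is the paper's strategy, but as written there is a genuine structural gap in how you supply the residual automorphy hypothesis over a general $F'$. You propose to construct an automorphic witness over $F^\suffi$ and then transport it to $F'$ ``after a solvable base change''; but $F'/F^\suffi$ is an \emph{arbitrary} finite CM extension, not a solvable one, so cyclic base change is not available and this step fails. This is precisely why the potential-automorphy formalism carries the quantifier over $F'$ through every stage: the Moret--Bailly step must produce geometric objects over (a subfield of) $F^\suffi$ --- in the paper, elliptic curves $D_i$ with $D_i[l_1]\cong E[l_1]$ and $D_i[l_2]\cong(\barr'_{i,\lambda_i})^\vee$, where $E/\Q$ is a fixed non-CM curve chosen at the outset --- and then, for each individual $F'$, the lifting argument is re-run over $F'$ itself. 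Concretely: automorphy of $\Symm^{m_i}r_{E,l_1}^\vee|_{G_{F'}}$ comes from Corollary~\ref{ecpm}, which rests on the essentially self-dual potential modularity statement Proposition~\ref{prop: potential modularity of symplectic over Q} of \cite{BLGGT} whose conclusion is already quantified over all totally real extensions of $F^\suffi$; a first application of Theorem~\ref{thm:main_automorphy_lifting_theorem} at $l_1$ transfers this to $\Symm^{m_i}r_{D_i,l_1}^\vee|_{G_{F'}}$ (using $D_i[l_1]\cong E[l_1]$); and only then does a second application at $l_2$ reach $\Symm^{m_i}\cR_i|_{G_{F'}}$ (after untwisting). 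Your sketch has a single application of the lifting theorem, never identifies where automorphy at the auxiliary prime comes from (it must come from the totally real, self-dual world), never fixes the mod-$l_1$ torsion of the Moret--Bailly curve, and therefore has no working mechanism for producing the residual automorphy input over $F'$.

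There are also two matching conditions you omit without which the Moret--Bailly moduli problem cannot even be posed. First, elliptic-curve $l_2$-torsion has cyclotomic determinant, whereas $\det r_{i,\lambda}=\psi_i\epsilon_l^{-1}$ with $\psi_i$ of finite order and possibly nontrivial; one must twist $\cR_i$ by a square root $\phi_i$ of $\psi_i$, which exists only after a preliminary solvable totally real base change $F_1^+$ killing the $2$-torsion Brauer obstruction, and one needs Grunwald--Wang to choose $\phi_i$ unramified above $\cL$ so as not to destroy the ramification control. Second, to realise $\barr'_{i,\lambda_i}$ as $D_i[l_2]$ it must be valued in $\GL_2(\F_{l_2})$, i.e.\ $l_2$ must split completely in each $M_i$; this is a positive-density but not density-one condition and has to be imposed explicitly when the two auxiliary primes are chosen. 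Finally, the claims that $F^\suffi$ is unramified above $\cL$ and that the resulting automorphic representations are unramified above $\cL_0$ require finding Moret--Bailly points over extensions unramified at $l_1$, $l_2$ corresponding to curves of good reduction, which rests on the local analysis at $l_2$ (Lemma~\ref{nuq}, Serre--Tate liftings of ordinary curves and the surjectivity statement Lemma~\ref{lem: H1f surjectivity}); deferring this to ``bookkeeping as in \cite{tay-fm}'' does not suffice, since those arguments are run with different local conditions at the residual characteristic than the Fontaine--Laffaille ones you invoke.
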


We have phrased this in a rather technical way in the hope that it will be helpful for applications. However let us record a simpler immediate consequence.

\begin{cor}
Suppose that $F$ is a CM field and that  the~$5$-tuple $\CR=(M,S,\{ Q_v(X) \},
\{r_\lambda \}, \{\{0,1\}\} )$ is a strongly irreducible rank $2$ very weakly compatible system
of $l$-adic representations of $G_F$. If $m$ is a non-negative integer, then there exists a finite Galois CM extension $F'/F$ such that the weakly compatible system $\Symm^m \CR|_{G_{F'}}$ is automorphic.
\end{cor}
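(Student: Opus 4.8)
The plan is to deduce this immediately from Theorem~\ref{mainthm}. If $m = 0$ then $\Symm^0\CR$ is the trivial one-dimensional very weakly compatible system, which is automorphic over $F$ itself (it is associated to the trivial Hecke character of $\GL_1(\A_F)$, and for $\GL_1$ the notions ``cuspidal'' and ``regular algebraic'' are automatic), so we may take $F' = F$. Assume now $m \geq 1$, so that $m \in \Z_{>0}$. We apply Theorem~\ref{mainthm} with $F_0 = F$, with $F_0^\avoid = F$, with $\cL_0 = \emptyset$, and with $\cI = \{\ast\}$ a one-element set, $m_\ast = m$, and $\cR_\ast = \CR$. The hypotheses of the theorem are satisfied: $F/F_0$ is (trivially) a finite Galois extension of CM fields, $\CR$ is by assumption a strongly irreducible rank $2$ very weakly compatible system of $l$-adic representations of $G_F$ with Hodge--Tate multisets $\{0,1\}$ at every embedding, and the requirement that $S_\ast$ be disjoint from $\cL_0$ is vacuous.

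Theorem~\ref{mainthm} then produces a finite set of rational primes $\cL$, a finite CM Galois extension $F^\suffi / F$ which is unramified above $\cL$, and a finite Galois extension $F^\avoid / F$ which is linearly disjoint from $F^\suffi$ over $F$, with the stated automorphy property for every finite CM extension $F'/F$ that contains $F^\suffi$, is unramified above $\cL$, and is linearly disjoint from $F^\avoid$ over $F$. It remains only to exhibit one such $F'$, and for this we may simply take $F' = F^\suffi$: it is a finite CM Galois extension of $F$ (it is even Galois over $F_0 = F$), it is unramified above $\cL$ by construction, and it is linearly disjoint from $F^\avoid$ over $F$ precisely because the theorem guarantees that $F^\avoid$ and $F^\suffi$ are linearly disjoint over $F$. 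Hence $\Symm^m \CR|_{G_{F'}}$ is automorphic, and in fact the theorem records that it arises from an automorphic representation unramified above $\cL_0 = \emptyset$.

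For the final statement to make literal sense one also notes that $\Symm^m \CR|_{G_{F'}}$ is itself a very weakly (in particular weakly) compatible system of $(m+1)$-dimensional $l$-adic representations of $G_{F'}$: it is obtained from $\CR$ by applying the functor $\Symm^m$ place by place (so the defining conditions on unramified Euler factors and on crystallinity outside a density-zero set of primes are inherited), and its Hodge--Tate multisets are forced by condition~(\ref{part:determinant}) in the definition of a very weakly compatible system. The hard part of this corollary is therefore entirely contained in Theorem~\ref{mainthm}; the deduction above involves no real obstacle, the only point requiring attention being the trivial verification that an admissible field $F'$ exists, which is handled by taking $F' = F^\suffi$.
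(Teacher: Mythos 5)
Your proposal is correct and is exactly the intended deduction: the paper records this corollary as an immediate consequence of Theorem~\ref{mainthm}, obtained by specializing to a single compatible system (with $F_0 = F$, $\cL_0 = \emptyset$, a trivial avoidance field) and taking $F' = F^\suffi$, which the theorem guarantees is an admissible CM Galois extension. Your separate handling of the trivial case $m=0$ and the remark that $\Symm^m\CR|_{G_{F'}}$ is again a (very) weakly compatible system are fine and consistent with the paper's conventions.
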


Before proving Theorem~\ref{mainthm} in the next section, we record some consequences.

\begin{cor}[Potential modularity and purity for   rank two compatible systems over CM fields of weight zero and their symmetric powers] \label{maincor} Suppose that $F$ is a CM field and that the~$5$-tuple $\CR=(M,S,\{ Q_v(X) \}, \{r_\lambda \}, \{H_\tau\} )$ is an irreducible rank $2$ very weakly compatible system of $l$-adic representations of $G_F$ such that $H_\tau=\{0,1\}$ for all $\tau$. Suppose further that $m$ is a non-negative integer. Then:
\begin{enumerate}
\item $\CR$ is pure of weight $1$.
\item The partial L-functions $L^S(\imath \Symm^m \CR, s)$ have meromorphic continuation to the entire complex plane. 
\item For $v\in S$ there are Euler factors $L_v(\imath \Symm^m \CR,s)=P_{m,\imath,v}(q_v^{-s})^{-1}$, where $P_{m,\imath,v}$ is a polynomial of degree at most $m+1$ and $q_v$ is the order of the residue field of $v$, such that
\[ \Lambda(\imath \Symm^m\CR,s)=L^S(\imath \Symm^m\CR,s) \prod_{v|\infty} L_v(\imath\Symm^m \CR,s) \prod_{v\in S} L_v(\imath\Symm^m \CR,s) \]
satisfies a functional equation of the form
\[ \Lambda(\imath \Symm^m\CR,s) = A B^s \Lambda(\imath \Symm^m\CR^\vee,1-s). \]
\end{enumerate} 

Suppose further that $\CR$ is strongly irreducible and that
$m>0$. Then $L^s(\imath \Symm^m \CR,s)$ is holomorphic and non-vanishing
for~$\mathrm{Re}(s) \ge m/2 + 1$, and in particular has neither a pole nor a zero at~$s = m/2+1$.
\end{cor}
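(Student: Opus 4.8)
The plan is to reduce the assertion to the Jacquet--Shalika non-vanishing theorem by way of potential automorphy, Brauer induction and soluble base change. Since $\CR$ has $H_\tau=\{0,1\}$ for all $\tau$, the compatible system $\Symm^m\CR$ is pure of weight $m$: its Frobenius eigenvalues at a prime $v\notin S$ are products of $m$ of the (weight one) eigenvalues of $\CR$, hence of absolute value $q_v^{m/2}$, and its Hodge--Tate multisets $\{0,1,\dots,m\}$ are self-dual under $h\mapsto m-h$. In particular the Euler product defining $L^S(\imath\Symm^m\CR,s)$ converges absolutely, hence is holomorphic and non-vanishing, in the open half-plane $\mathrm{Re}(s)>m/2+1$; the content of the statement lies on the boundary line $\mathrm{Re}(s)=m/2+1$, and I would prove holomorphy and non-vanishing on the closed half-plane $\mathrm{Re}(s)\geq m/2+1$ all at once.

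First I would invoke Theorem~\ref{mainthm}, applied with $\cI$ a single point, with the given strongly irreducible rank~$2$ very weakly compatible system $\CR$ and $H_\tau=\{0,1\}$, and with $F_0=F$, $F_0^\avoid=F$, $\cL_0=\emptyset$. This produces a finite CM extension $F'=F^\suffi$ which is \emph{Galois over $F$}, unramified outside a finite set of primes that I enlarge $S$ to contain, and such that $\Symm^m\CR|_{G_{F'}}$ is automorphic; write $\Pi$ for the corresponding regular algebraic cuspidal automorphic representation of $\GL_{m+1}(\A_{F'})$. It is genuinely cuspidal because $\Symm^m\CR|_{G_{F'}}$ is irreducible: the Zariski closure of the image of each $r_\lambda$ contains $\SL_2$ (as in the proof of Lemma~\ref{resirred}), hence so does that of $r_\lambda|_{G_{F'}}$, and $\SL_2$ acts irreducibly on $\Symm^m$ of the standard representation.

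Next I would apply Brauer's theorem to the finite group $\Gal(F'/F)$: write $\mathbf 1=\sum_i a_i\,\Ind_{H_i}^{\Gal(F'/F)}\chi_i$ with $a_i\in\Z$, $H_i$ soluble, and $\chi_i$ finite order characters, and set $F_i=(F')^{H_i}$, so that $F\subseteq F_i\subseteq F'$ with $F'/F_i$ soluble Galois. Since $\Symm^m\CR|_{G_{F'}}$ is pulled back from $F_i$, the representation $\Pi$ is $\Gal(F'/F_i)$-stable, so by soluble descent (Arthur--Clozel, as used in Proposition~\ref{prop:soluble_base_change}) $\Symm^m\CR|_{G_{F_i}}=\CR_{\Pi_i}$ for a cuspidal automorphic representation $\Pi_i$ of $\GL_{m+1}(\A_{F_i})$; twisting by the finite order Hecke character attached to $\chi_i$ gives a cuspidal automorphic $\Pi_i'$ on $\GL_{m+1}(\A_{F_i})$ of motivic weight $m$ with $L^{S_i}(\imath(\Symm^m\CR|_{G_{F_i}}\otimes\chi_i),s)=L^{S_i}(\Pi_i',s)$ up to the usual unitary shift, $S_i$ being the set of places of $F_i$ above $S$. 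The projection formula gives $\Symm^m\CR=\sum_i a_i\,\nInd_{G_{F_i}}^{G_F}\big(\Symm^m\CR|_{G_{F_i}}\otimes\chi_i\big)$ as virtual compatible systems, and hence, by additivity and inductivity of partial $L$-functions,
\[ L^S(\imath\Symm^m\CR,s)=\prod_i L^{S_i}(\imath\,\Pi_i',s)^{a_i}. \]

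Finally, since $m>0$ we have $m+1\geq 2$, so each $L(\Pi_i',s)$ is entire by Godement--Jacquet, and the Jacquet--Shalika bounds~\cite{jsajm103} (applied in the unitary normalization, i.e.\ to $\Pi_i'$ twisted so that its standard $L$-function has abscissa of absolute convergence $\mathrm{Re}(s)=m/2+1$) show that it is non-vanishing on and to the right of the line $\mathrm{Re}(s)=m/2+1$; the same then holds for the partial $L$-functions $L^{S_i}(\imath\,\Pi_i',s)$, once one notes that the finitely many omitted Euler factors are, by purity, non-vanishing and finite on $\mathrm{Re}(s)>m/2$. Writing $L^S(\imath\Symm^m\CR,s)$ as the quotient of $\prod_{a_i>0}L^{S_i}(\imath\,\Pi_i',s)^{a_i}$ by $\prod_{a_i<0}L^{S_i}(\imath\,\Pi_i',s)^{-a_i}$, both numerator and denominator are holomorphic and non-vanishing for $\mathrm{Re}(s)\geq m/2+1$, so $L^S(\imath\Symm^m\CR,s)$ has neither a pole nor a zero there, and in particular none at $s=m/2+1$. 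I expect no deep obstacle here: once Theorem~\ref{mainthm} is in hand the difficulty is purely one of bookkeeping --- arranging that $F'$ can be taken Galois over $F$ (it can, since $F^\suffi/F$ is Galois), that all ramification of $F'/F$ and of the $\chi_i$ is absorbed into $S$ so that the Brauer factorisation is an exact identity of partial $L$-functions, and that the motivic-versus-unitary normalizations are tracked consistently --- the one genuinely analytic input being the Jacquet--Shalika non-vanishing theorem, which here plays the role that the (merely potential) automorphy of $\Symm^m\CR\boxtimes\Symm^m\CR^\vee$ would play in Langlands' original strategy.
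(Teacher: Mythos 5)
Your argument for the final non-vanishing statement follows essentially the paper's own route (potential automorphy via Theorem~\ref{mainthm}, Brauer induction and soluble descent, then non-vanishing of cuspidal standard $L$-functions on the edge of the critical strip, as in \cite[Thm.\ 4.2]{HSBT}), but there is a genuine gap at the very first step: you assert that the Frobenius eigenvalues of $\CR$ at $v\notin S$ have absolute value $q_v^{1/2}$ (``products of $m$ of the (weight one) eigenvalues''), i.e.\ you \emph{assume} part~(1) of the corollary. A very weakly compatible system is not assumed pure, and purity does not follow from the Hodge--Tate data $H_\tau=\{0,1\}$; it is precisely what has to be proved. The paper proves it by the Deligne--Langlands trick: since $\det\CR$ is pure of weight $2$, it suffices to bound each eigenvalue by $q_v^{1/2+1/(2m)}$ for every $m>0$, and this bound is extracted from the automorphy of $\Symm^m\CR|_{G_{F_m}}$ supplied by Theorem~\ref{mainthm}, the observation that the central character of the corresponding cuspidal representation $\pi_{\imath,m}$ is unitary (because $\det\Symm^m\CR|_{G_{F_m}}$ is the compatible system of an algebraic Hecke character of absolute value one), and the Jacquet--Shalika bound \cite[Cor.\ 2.5]{jsajm103} on Satake parameters of unitary generic representations. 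Without this input, your claims that the Euler product converges for $\mathrm{Re}(s)>m/2+1$ and that the omitted Euler factors at $v\in S_i$ are finite and non-zero for $\mathrm{Re}(s)>m/2$ are unsupported, so the whole analytic argument has no starting point.

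Two smaller points. Parts (1)--(3) are asserted for merely irreducible $\CR$, so you also need the induced case: if $\CR$ is not strongly irreducible then, since $\sum_{h\in H_\tau}h=1$ is odd, Lemma~\ref{lem: rank 2 strongly irreducible} gives $\CR\cong\Ind_{G_{F'}}^{G_F}\CX$ for a quadratic extension $F'/F$ and a compatible system of characters $\CX$, which is automatically pure and automorphic, and the corollary follows directly there; your proposal silently restricts to the strongly irreducible situation. Finally, the non-vanishing of cuspidal standard $L$-functions on the line $\mathrm{Re}(s)=1$ (in the unitary normalization) is the theorem of \cite{MR0432596}, not of \cite{jsajm103} (which supplies the Satake-parameter bounds used for purity); this last point is only a citation slip, not a mathematical one.
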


\begin{proof}
  If~$\CR$ is not strongly irreducible, then by Lemma~\ref{lem: rank 2 strongly irreducible}
 there is a quadratic extension $F'/F$ and a weakly
  compatible system $\CX$ of characters of $G_{F'}$ such that
  $\CR=\Ind_{G_{F'}}^{G_F} \CX$. In this case $\CX$ is pure,
  necessarily of weight $1$, and automorphic. The corollary follows
  easily.

  So suppose that $\CR$ is strongly irreducible.  Then for any
  positive integer~$m$  we see from Theorem~\ref{mainthm} that there is a finite Galois CM
  extension $F_m/F$ and, for any embedding $\imath:\barM \into \C$, a
  cuspidal automorphic representation $\pi_{\imath,m}$ of
  $\GL_{m+1}(\A_{F_m})$, such that for each $w|v \not\in S$ the roots
  of the characteristic polynomial of
  $\rec(\pi_{\imath,m,w} |\det|_w^{-m/2})(\Frob_w)$ are the images
  under $\imath$ of the roots of $Q_{\Symm^m \CR|_{G_{F_m}},w}(X)$.

  For the first part of the corollary we combine the
  `Deligne--Langlands method' with our theorem: because $\det \CR$ is
  pure of weight $2$, it suffices to show that for every
  $v \not\in S$, for every root $\alpha$ of $Q_{\CR,v}(X)$, and every
  $\imath:\barM \into \C$, we have
  \[ |\imath \alpha | \leq q_v^{1/2}. \] It even suffices to show that
  for every $m>0$, for every $v \not\in S$, for every root $\beta$ of
  $Q_{\Symm^m \CR,v}(X)$ and every $\imath:\barM \into \C$ we have
  \[ |\imath \beta | \leq q_v^{(m+1)/2}. \] (For then
  $|\imath \alpha| \leq q_v^{1/2+1/(2m)}$.) Equivalently, it suffices
  to show that for every $m>0$, for every $w | v \not\in S$, for every
  root $\gamma$ of $Q_{\Symm^m \CR|_{G_{F_m}},w}(X)$, and every
  $\imath:\barM \into \C$, we have
  \[ |\imath \gamma | \leq q_w^{(m+1)/2}. \]
 
  If $\chi_{\pi_{\imath,m}}$ denotes the central character of
  $\pi_{\imath,m}$, then we see that $\det \Symm^m \CR|_{G_{F_m}}$ is
  equivalent to $\CR_{\chi_{\pi_{\imath,m} ||\det||^{-m/2}}}$ and so
  \[ |\chi_{\pi_{\imath,m}}(x)| = 1 \] for all
  $x \in \A_{F_m}^\times$. Thus $\pi_{\imath,m}$ is unitary and,
  applying the bound of \cite[Cor.\ 2.5]{jsajm103} (which applies
  since each local factor of~$\pi_{\imath,m}$ is generic, by the final
  Corollary of~\cite{MR0348047}), we see that the image under
  $\imath$ of all the roots of the characteristic polynomial of
  $\rec(\pi_{\imath,m,w})(\Frob_w)$ have absolute value
  $\leq q_w^{1/2}$. Thus the absolute value of the image under
  $\imath$ of any root of $Q_{\Symm^m \CR|_{G_{F_m}},w}(X)$ is
  $\leq q_w^{(m+1)/2}$. The first part of the corollary follows.

  The rest of the corollary follows on using the usual Brauer's theorem
  argument (together with known non-vanishing properties of automorphic~$L$-functions
  as in~\cite{MR0432596})
  as in~\cite[Thm.\ 4.2]{HSBT}.
\end{proof}

\begin{cor}[Sato--Tate for Elliptic curves over CM fields]  \label{cor:satotate} Suppose that $F$ is a CM field and that $E/F$ is a non-CM elliptic curve. Then the numbers 
\[ (1+\# k(v) -\# E(k(v)))/2 \sqrt{\# k(v)} \]
are equidistributed in $[-1,1]$ with respect to the measure $(2/\pi)\sqrt{1-t^2} \, dt$.
\end{cor}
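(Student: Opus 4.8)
The plan is to deduce Corollary~\ref{cor:satotate} (the Sato--Tate conjecture for a non-CM elliptic curve $E$ over a CM field $F$) from Corollary~\ref{maincor} by the standard Brauer/Rankin--Selberg argument, exactly as in the proof of the analogous statement over totally real fields. First I would attach to $E$ its compatible system of $\ell$-adic Tate modules: setting $\cR = \cR_E = (M, S, \{Q_v(X)\}, \{r_\ell\}, \{H_\tau\})$ with $M = \Q$, $S$ the set of primes of bad reduction, $r_\ell$ the dual of the $\ell$-adic Tate module (suitably normalized so the Hodge--Tate weights are $\{0,1\}$ for each $\tau$), and $Q_v(X) = X^2 - a_v X + \#k(v)$ where $a_v = 1 + \#k(v) - \#E(k(v))$. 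I would check that $\cR$ is a rank $2$ very weakly compatible system of $G_F$-representations with $H_\tau = \{0,1\}$ for all $\tau$: this is classical, with the crystalline condition at $v \mid \ell$ for $E$ of good reduction coming from $p$-adic Hodge theory of abelian varieties, and the determinant condition being $\det r_\ell = \epsilon_\ell^{-1}$.

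Next I would verify that $\cR$ is strongly irreducible. Since $E$ is non-CM, $\mathrm{End}_{\overline{F'}}(E) = \Z$ for every finite extension $F'/F$ (a CM structure would be defined over some finite extension, and an isogeny over $\overline{F}$ descends to a finite extension), so $r_\ell|_{G_{F'}}$ is irreducible for every finite $F'/F$ and every $\ell$; by Lemma~\ref{lem: rank 2 irreducible equivalences} and the subsequent discussion this gives strong irreducibility, and in particular $\cR$ is neither induced nor Artin up to twist (an Artin-up-to-twist system would have the wrong—here, odd—determinant Hodge--Tate weight, or would force potential CM). Now Corollary~\ref{maincor} applies: $\cR$ is pure of weight $1$, and for every $m > 0$ the partial $L$-function $L^S(\Symm^m \cR, s)$ has meromorphic continuation, satisfies the stated functional equation, and — crucially — is holomorphic and non-vanishing at $s = m/2 + 1$, i.e.\ on the edge $\mathrm{Re}(s) \geq m/2+1$.

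Then I would run the classical Sato--Tate deduction. For each $v \notin S$, purity (part (1) of Corollary~\ref{maincor}) gives $a_v = \alpha_v + \beta_v$ with $|\alpha_v| = |\beta_v| = \sqrt{\#k(v)}$ and $\alpha_v \beta_v = \#k(v)$, so we may write $a_v/(2\sqrt{\#k(v)}) = \cos\theta_v$ with $\theta_v \in [0,\pi]$, and $\#k(v)^{-m/2}\tr(\Symm^m r_\ell(\Frob_v)) = \frac{\sin((m+1)\theta_v)}{\sin\theta_v} = U_m(\cos\theta_v)$, the Chebyshev polynomial / character of $\mathrm{Symm}^m$ of $\mathrm{SU}(2)$. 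The Sato--Tate measure $(2/\pi)\sqrt{1-t^2}\,dt$ is the pushforward of Haar measure on the conjugacy classes of $\mathrm{SU}(2)$, and the $U_m$ ($m \geq 0$) form an orthonormal basis of $L^2$ of this measure, so by the Weyl equidistribution criterion it suffices to show $\sum_{\#k(v) \leq X} U_m(\cos\theta_v) = o(\pi_F(X))$ for each $m \geq 1$. This follows from the non-vanishing and holomorphy of $L^S(\Symm^m\cR,s)$ at the edge of the region of absolute convergence together with its meromorphic continuation, via the standard Tauberian/Ikehara argument (one packages the $\Symm^j$, $0 \le j \le m$, or equivalently works with $L^S(\Symm^m\cR,s)$ directly, controlling the finitely many bad Euler factors $L_v$, $v\in S\cup S_\infty$, which are harmless since they are nonzero and finite on a neighbourhood of the edge).

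The main obstacle is entirely contained in Corollary~\ref{maincor}, and hence ultimately in Theorem~\ref{mainthm}: the potential automorphy of all symmetric powers $\Symm^m \cR|_{G_{F'}}$ over a suitable CM extension $F'/F$, which is where the modularity lifting machinery of~\S\ref{section:alt} is used. Given that input, the deduction here is formal: the only genuine checks are (a) that the Tate-module system of a non-CM elliptic curve over a CM field is strongly irreducible (hence Theorem~\ref{mainthm} and Corollary~\ref{maincor} apply), which is the non-CM hypothesis unwound, and (b) the classical Brauer-to-analytic-properties-to-equidistribution chain, which is identical to~\cite[Thm.\ 4.2]{HSBT} and the Sato--Tate arguments in the references cited there, now using the meromorphic continuation and edge non-vanishing furnished by Corollary~\ref{maincor} in place of the totally real input. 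I would also remark that since $E$ has weight-zero-normalized Hodge--Tate weights $\{0,1\}$, no parity obstruction arises and the ``Artin up to twist'' alternative of Lemma~\ref{lem: rank 2 strongly irreducible} is excluded directly by $\sum_{h\in H_\tau} h = 1$ being odd.
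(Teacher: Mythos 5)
Your proof is correct and follows essentially the same route as the paper: the paper's proof is a one-line deduction from Corollary~\ref{maincor} together with the corollary to \cite[Thm.~2]{serreabladic} (Serre's equidistribution criterion), which packages exactly the Weyl/Chebyshev and Tauberian steps you spell out. Your verification that the Tate-module system of a non-CM curve is strongly irreducible (so that the final statement of Corollary~\ref{maincor} applies) is the same unwinding of the non-CM hypothesis that the paper leaves implicit.
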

\begin{proof}
  This follows from Corollary~\ref{maincor} and the corollary to
  ~\cite[Thm.\ 2]{serreabladic}, as explained on page I-26 of
  \cite{serreabladic}.
\end{proof}

\begin{cor}[Ramanujan conjecture for weight~$0$ automorphic representations for~$\GL(2)$ over CM fields] \label{cor:ramanujan}
 Suppose that $F$ is a CM field and that $\pi$ is a regular
  algebraic cuspidal automorphic representation of $\GL_2(\A_F)$ of
  weight $(0)_{\tau,i}$. Then, for all  primes $v$ of
  $F$, the representation $\pi_v$ is tempered. \end{cor}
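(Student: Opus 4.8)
The strategy is to reduce the temperedness of $\pi_v$ to the potential automorphy of all symmetric powers of $\pi$, established in Corollary~\ref{maincor}, following the ``Langlands--Ramanujan via functoriality'' philosophy explained in the introduction. First I would dispose of the reducible cases: if $\CR_\pi$ is induced from a compatible system of characters over a quadratic extension $E/F$, or is Artin up to twist, then $\pi$ (or a quadratic base change of it) is automatically associated to automorphic inductions of algebraic Hecke characters, which are tempered everywhere by the explicit description of their Satake/Langlands parameters; the weight~$0$ hypothesis and the purity of rank one compatible systems handle this. So I may assume that $\CR_\pi$ is strongly irreducible, which is precisely the setting of Corollary~\ref{maincor} (and by Lemma~\ref{lemma:sometimes}, part~(\ref{part:elcheapo}), the weight~$0$ hypothesis guarantees that hypothesis~(\HHH) holds, so that $\CR_\pi$ is genuinely a \emph{very} weakly compatible system, hence $\CR_\pi$ is pure of weight~$1$ by Corollary~\ref{maincor}(1)).

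Next, for the strongly irreducible case, I would argue place by place. Fix a finite place $v$ of $F$. Temperedness at unramified $v$ follows already from the first part of Corollary~\ref{maincor}: purity of weight~$1$ says that every root $\alpha$ of $Q_{\CR_\pi, v}(X)$ satisfies $|\imath\alpha|^2 = q_v$, which is exactly the Ramanujan bound, so $\rec(\pi_v|\det|_v^{-1/2})(\Frob_v)$ has eigenvalues of absolute value $q_v^{1/2}$. (Here I should check the normalization carefully: $\CR_\pi$ is built from $\rec(\pi_v|\det|_v^{(1-n)/2})$ with $n=2$, so purity of weight~$1$ translates directly into the statement that $\pi_v$ is tempered at unramified~$v$.) The remaining, harder case is $v$ a ramified place, where there is no Galois input of the same kind. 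For those places I would instead invoke the analytic argument from the proof of Corollary~\ref{maincor}: since all symmetric powers $\Symm^m\pi$ become automorphic after a soluble (in fact solvable-by-finite) base change $F_m/F$ to a CM field, and since the central character computation shows each $\pi_{\imath, m}$ is unitary, the Jacquet--Shalika bounds \cite{jsajm103} (applicable since each local component is generic, by \cite{MR0348047}) bound the Satake/Langlands parameters of $\pi_{\imath, m, w}$ for $w \mid v$, hence of $\Symm^m(\rec(\pi_v))$. Letting $m \to \infty$ and taking roots, as in the Deligne--Langlands method, forces the eigenvalues of $\rec(\pi_v)(\Frob_v)$ (or more precisely the absolute values attached to the Weil--Deligne representation $\rec(\pi_v)$, including at ramified places where one must control both the semisimple part and the monodromy operator $N$) to satisfy $|\imath\alpha| = q_v^{1/2}$, and temperedness follows. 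I should be careful that at ramified places one genuinely obtains temperedness of the full local representation $\pi_v$, not merely a bound on the unramified twist: this uses that $\rec(\pi_v|\det|_v^{-1/2})$ is a Frobenius-semisimple Weil--Deligne representation all of whose Frobenius eigenvalues have the same absolute value $q_v^{1/2}$, which by the classification of generic representations of $\GL_2(F_v)$ forces $\pi_v$ to be an irreducible unitary tempered representation.

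\textbf{Main obstacle.} The step I expect to be most delicate is the treatment of the \emph{ramified} places $v$. At unramified places purity of the compatible system is an essentially formal consequence of Corollary~\ref{maincor}(1), but at ramified places the local Galois representation $r_{\pi,\lambda}|_{G_{F_v}}$ carries no a priori purity information in the torsion-coefficient setting, so one genuinely needs the base-changed automorphic representations $\pi_{\imath,m}$ of $\GL_{m+1}$ together with the Jacquet--Shalika estimate, and one must verify that (a) the local components remain generic after base change (this is where \cite{MR0348047} enters), (b) the central characters are unitary so that Jacquet--Shalika applies verbatim, and (c) the limiting argument in $m$ controls not just the semisimple Frobenius eigenvalues but the entire Weil--Deligne representation, so that $N$ does not produce a non-tempered constituent. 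Carrying this out is exactly the content of (the analogue for ramified places of) the final paragraph of the proof of Corollary~\ref{maincor}, and the remaining work here is to check that the $\GL_{m+1}$ base change and the local Langlands compatibility at $w \mid v$ provided by \cite{hltt}, \cite{ilavarma} suffice to pin down $\rec(\pi_v)$ at the ramified places.
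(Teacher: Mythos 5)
Your overall strategy (reduce temperedness to purity of the rank two compatible system, which Corollary~\ref{maincor}(1) supplies once Lemma~\ref{lemma:sometimes} guarantees hypothesis~(\HHH) in weight~$0$) matches the paper at unramified places, but your treatment of the ramified places has a genuine gap, and it is precisely here that the paper does something different. The paper first disposes of the places where $\pi_v$ is a twist of Steinberg (tempered for trivial reasons), then uses \cite[Lem.~1.4(3)]{ty} together with \cite{hltt}, \cite{ilavarma} to translate temperedness of the remaining $\pi_v$ into \emph{purity} of $\WD(r_\lambda|_{G_{F_v}})$, and then uses \cite[Lem.~1.4(2)]{ty} plus a soluble base change to reduce to a place where the base-changed $\pi$ is unramified, where Corollary~\ref{maincor}(1) finishes. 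Your plan skips this base-change step and instead runs the Jacquet--Shalika/Deligne--Langlands squeeze directly at a ramified place $w\mid v$ on the potential symmetric power lifts $\pi_{\imath,m}$. That squeeze does not close: \cite[Cor.~2.5]{jsajm103} at a ramified place only bounds the Langlands exponents $s_i$ of the tempered segments $\Delta_i|\cdot|^{s_i}$ of $\pi_{\imath,m,w}$ by $1/2$; it does \emph{not} bound the spread of the Frobenius eigenvalue absolute values of $\rec(\pi_{\imath,m,w})$, since a generalized Steinberg segment of length comparable to $m$ contributes a spread of order $q_w^{m}$ --- the same order as the spread $q_w^{2ms}$ produced by a putative complementary series $\chi|\cdot|^{s}\times\chi|\cdot|^{-s}$. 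To rule out such large segments you would need to pin down the local component of the lift at $w$, but the available local--global compatibility at $v\nmid l$ (\cite{ilavarma}) only gives the Weil--Deligne representation up to Frobenius semisimplification together with a one-sided bound on $N$, so this cannot be done directly; the paper's base change to the unramified situation is exactly what makes the Jacquet--Shalika input usable.

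A second, related problem is your final criterion: you propose to show that all Frobenius eigenvalues of $\rec(\pi_v|\det|_v^{-1/2})$ have absolute value $q_v^{1/2}$ and deduce temperedness from the classification of generic unitary representations. The implication is fine, but the intermediate statement is \emph{false} when $\pi_v$ is a twist of the Steinberg representation (the eigenvalue absolute values are then $q_v^{0}$ and $q_v^{1}$ in this normalization, even though $\pi_v$ is tempered), so any argument aiming to establish it must fail at those places; this is why the paper splits off the Steinberg twists at the very start and works with purity in the sense of \cite{ty} (which accounts for the monodromy filtration) rather than with equality of eigenvalue absolute values. With the Steinberg case separated and the soluble base change reduction inserted, your unramified-place argument via Corollary~\ref{maincor}(1) and Lemma~\ref{lemma:sometimes} is the paper's proof; without them, the ramified places are not handled.
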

  
\begin{proof}The result is immediate for all primes~$v$ such
  that~$\pi_v$ is a twist of the Steinberg
  representation.  At the
  remaining places, since~$\pi_v$ is not a twist of the Steinberg
  representation, it follows from  the main theorems of~\cite{hltt}
  and~\cite{ilavarma}, together with ~\cite[Lem.\ 1.4 (3)]{ty}, that
  it suffices to prove that if~$v\nmid l$, then the
  restriction to~$G_{F_v}$ of any of the $l$-adic Galois
  representations associated to~$\pi$ is pure in the sense of~\cite[\S
  1]{ty}. By ~\cite[Lem.\ 1.4 (2)]{ty} and solvable base change, we can reduce to the case
  that~$\pi_v$ is unramified, in which case the result follows from
  Corollary~\ref{maincor}~(1), after noting
  by Lemma~\ref{lemma:sometimes2}
    that the compatible system~$\CR$ associated
  to~$\pi$ is very weakly compatible of the expected Hodge--Tate weights. %
\end{proof}

\begin{cor}\label{cor:harris pairs} Suppose that $F$ is a CM field and that the~$5$-tuples $\CR=(M,S,\{ Q_v(X) \}, \{r_\lambda \}, \{\{0,1\}\} )$
and~$\CR' = (M',S',\{ Q'_v(X) \}, \{r'_\lambda \}, \{\{0,1\}\} )$ are a pair of strongly irreducible rank $2$ very  weakly compatible systems of $l$-adic representations of $G_F$.
Suppose further that $m$ and~$m'$ are non-negative integers, and that~$\CR$ and~$\CR'$ are not twists of each other.
 Then~$L^S(\imath \Symm^m\CR \otimes \Symm^{m'} \CR',s)$ is meromorphic for~$s \in \C$, has no zeroes 
 or poles for~$\mathrm{Re}(s) \ge 1 + m/2 + m'/2$, and satisfies a functional equation relating $L^S(\imath \Symm^m\CR \otimes \Symm^{m'} \CR',s)$ and $L^S(\imath \Symm^m\CR^\vee \otimes \Symm^{m'} (\CR')^\vee,1+m+m'-s)$.
 \end{cor}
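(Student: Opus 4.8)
The plan is to deduce Corollary~\ref{cor:harris pairs} from the potential automorphy statement of Theorem~\ref{mainthm} combined with the Rankin--Selberg theory of Jacquet--Piatetski-Shapiro--Shalika and the Jacquet--Shalika bounds, in essentially the same way that Corollary~\ref{maincor} was deduced. First I would apply Theorem~\ref{mainthm} to the pair of strongly irreducible rank~$2$ very weakly compatible systems $\CR$ and $\CR'$ simultaneously (taking $\cI = \{1,2\}$, $m_1 = m$, $m_2 = m'$): this produces a finite CM Galois extension $F^\suffi/F$ and an auxiliary extension $F^\avoid/F$ such that for a suitable finite CM extension $F'/F$ (obtained from a Moret-Bailly/$p$-$q$-switch argument, as in~\S\ref{section:tricks}), both $\Symm^m \CR|_{G_{F'}}$ and $\Symm^{m'}\CR'|_{G_{F'}}$ are automorphic, say associated to cuspidal automorphic representations $\Pi$ of $\GL_{m+1}(\A_{F'})$ and $\Pi'$ of $\GL_{m'+1}(\A_{F'})$. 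I would also record the purity statements from Corollary~\ref{maincor}(1), so that $\Pi$ and $\Pi'$ are (after a unitary normalization) unitary cuspidal.

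Next I would invoke the analytic theory of Rankin--Selberg $L$-functions for $\GL_{m+1}\times\GL_{m'+1}$ over the number field $F'$: the product $L$-function $L(\Pi\times\Pi',s)$ has meromorphic continuation and a functional equation by~\cite{jacquet-ps-shalika}, and since $\CR$ and $\CR'$ are not twists of each other (so $\Symm^m\CR$ and $\Symm^{m'}\CR'$ remain ``non-proportional'' after restriction — one needs to choose $F'$ large enough, but linear disjointness of $F^\avoid$ with $F^\suffi$ handles the relevant projective image arguments and strong irreducibility is preserved by the construction of Theorem~\ref{mainthm}), the $L$-function is in fact entire. For the non-vanishing on $\mathrm{Re}(s) \ge 1 + m/2 + m'/2$, I would use that each local factor of $\Pi$ and $\Pi'$ is generic (being a subquotient of a principal series, by the same reasoning as in the proof of Corollary~\ref{maincor}) together with the Jacquet--Shalika bounds~\cite[Cor.~2.5]{jsajm103}, which give that the local $L$-factors converge and are non-vanishing in the appropriate right half-plane; holomorphy and non-vanishing at the edge then follow from the standard non-vanishing results for Rankin--Selberg $L$-functions on the line $\mathrm{Re}(s)=1$ (after normalization), as in~\cite{MR0432596}.

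Then I would descend from $F'$ to $F$. This is the usual Brauer's theorem argument as used in~\cite[Thm.~4.2]{HSBT} and in the proof of Corollary~\ref{maincor}: express the trivial representation of $\Gal(F'/F)$ (or rather of the relevant Galois group, after possibly enlarging $F'$ so that $F'/F$ is Galois, which Theorem~\ref{mainthm} permits) as a $\Z$-linear combination of representations induced from characters of subgroups corresponding to intermediate CM fields, write $L^S(\imath\,\Symm^m\CR\otimes\Symm^{m'}\CR',s)$ as a corresponding product of $L$-functions over those intermediate fields (using inductivity of Artin/automorphic $L$-functions and the identification of the restricted systems with the base changes of $\Pi$, $\Pi'$), and conclude meromorphic continuation, the functional equation relating $s$ and $1+m+m'-s$, and the holomorphy/non-vanishing in the stated range. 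One small point to be careful about: Brauer's theorem produces a quotient of products, so a priori one only gets meromorphy; the holomorphy and non-vanishing in $\mathrm{Re}(s) \ge 1 + m/2 + m'/2$ are obtained by combining the numerator and denominator bounds, exactly as in the cited references.

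The main obstacle I expect is verifying that the restricted compatible systems $\Symm^m\CR|_{G_{F'}}$ and $\Symm^{m'}\CR'|_{G_{F'}}$ remain ``distinct enough'' (neither proportional to nor a twist of one another, and each strongly irreducible) so that the Rankin--Selberg $L$-function is entire rather than merely meromorphic, and so that the non-vanishing results apply; one must track through the construction of Theorem~\ref{mainthm} (and the choice of $F^\avoid$, linearly disjoint from $F^\suffi$) to ensure the projective images are preserved under base change, using Lemma~\ref{lem: rank 2 strongly irreducible}, Lemma~\ref{resirred}, and Lemma~\ref{neverendinglemmas}. The analytic input (functional equation, Jacquet--Shalika bounds, non-vanishing on the edge) and the Brauer descent are by now standard and should present no serious difficulty.
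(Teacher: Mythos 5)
Your proposal is essentially the paper's proof: the paper deduces the corollary from Theorem~\ref{mainthm} by exactly this argument (potential automorphy of both symmetric powers over a common CM extension, Rankin--Selberg theory with the Jacquet--Shalika bounds, non-vanishing on the edge, and Brauer descent), simply citing Harris's treatment of pairs and the known non-vanishing results rather than spelling the steps out. The one small correction is that the non-vanishing of Rankin--Selberg convolutions for $\GL_{m+1}\times\GL_{m'+1}$ on the edge line is Shahidi's theorem (the reference the paper uses), rather than the Jacquet--Shalika non-vanishing result you cite, but this does not affect the argument.
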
 
 
\begin{proof} This follows from Theorem \ref{mainthm} by the same argument as~\cite{HarrisDouble} (for example Theorem~5.3 of \emph{ibid}).
(As usual, this argument involves the known non-vanishing results of Rankin--Selberg convolutions as established
in Theorem~5.2 of~\cite{MR610479}). \end{proof}

\subsection{Proof of the main potential automorphy theorem}\subsubsection{Preliminaries}

Before turning to the proof of Theorem~\ref{mainthm}, we record some preliminaries.

If $L/\Q_l$ is a finite extension and $\chibar$ (resp.\ $\chi$) is an unramified character of $G_L$ valued in $\F_{l}^\times$ (resp.\ $\Z_{l}^\times$), we will write $H^1_f(G_L,\F_{l}(\epsilon_l \chibar))$ (resp.\ $H^1_f(G_L,\Z_{l}(\epsilon_l \chi))$) for the kernel of the composite
\[ H^1(G_{L}, \F_l(\barepsilon_{l}\chibar)) \lra H^1(G_{L^\nr}, \F_l(\barepsilon_l)) \cong L^{\nr,\times}/(L^{\nr,\times})^l \lra \Z/l\Z \]
(resp.
\[ H^1(G_{L}, \Z_l(\epsilon_l\chi)) \lra H^1(G_{L^\nr}, \Z_l(\epsilon_l)) \cong \lim_{\leftarrow r}L^{\nr,\times}/(L^{\nr,\times})^{l^r} \lra \Z_l), \]
where the latter maps are induced by the valuation map. Note that if
$\chibar$ (resp.\ $\chi$) is non-trivial, then
\[ H^1_f(G_{L}, \F_l(\barepsilon_l\chibar))=H^1(G_{L}, \F_l(\barepsilon_l\chibar)) \]
(resp.\
\[ H^1_f(G_{L}, \Z_l(\epsilon_l\chi))=H^1(G_{L}, \Z_l(\epsilon_l\chi))). \] 
Also note that 
\[ H^1(G_{L}, \F_l(\barepsilon_l))/H^1_f(G_{L}, \F_l(\barepsilon_l)) \cong \F_l. \]

\begin{lem}\label{lem: H1f surjectivity}
The map
\[ H^1_f(G_{L}, \Z_l(\epsilon_l\chi)) \lra H^1_f(G_{L}, \F_l(\barepsilon_l\barchi)) \]
is always surjective.\end{lem}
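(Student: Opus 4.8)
The plan is to analyze the long exact sequence in Galois cohomology attached to the short exact sequence of $G_L$-modules
\[ 0 \to \Z_l(\epsilon_l \chi) \xrightarrow{l} \Z_l(\epsilon_l \chi) \to \F_l(\barepsilon_l \barchi) \to 0, \]
and to check that the $f$-subspaces are compatible with the boundary and reduction maps. Concretely, from this sequence we get an exact sequence
\[ H^1(G_L, \Z_l(\epsilon_l\chi)) \xrightarrow{\mathrm{red}} H^1(G_L, \F_l(\barepsilon_l\barchi)) \xrightarrow{\delta} H^2(G_L, \Z_l(\epsilon_l\chi)), \]
so the cokernel of reduction injects into $H^2(G_L, \Z_l(\epsilon_l\chi))[l]$, which by Tate local duality is dual to $H^0(G_L, \F_l(\barchi^{-1})) = H^0(G_L, \F_l(\barepsilon_l\barchi)(-1)^{\vee}\cdot(\text{twist}))$; in any case this $H^0$ vanishes unless $\barchi$ is trivial, and even when $\barchi$ is trivial it is at most one-dimensional. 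So the reduction map $H^1(G_L,\Z_l(\epsilon_l\chi)) \to H^1(G_L,\F_l(\barepsilon_l\barchi))$ has cokernel of $\F_l$-dimension at most $1$.

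The key point is then to compare this with the definition of the $f$-subspaces via the valuation maps on $H^1(G_{L^{\nr}}, -)$. I would set up the evident commutative diagram whose rows are the reduction maps for $G_L$ and for $G_{L^{\nr}}$ (where the modules become $\Z_l(\epsilon_l)$ and $\F_l(\barepsilon_l)$ since $\chi$, $\barchi$ are unramified) and whose columns are restriction to $G_{L^{\nr}}$ followed by the Kummer identifications
\[ H^1(G_{L^{\nr}}, \Z_l(\epsilon_l)) \cong \varprojlim_r L^{\nr,\times}/(L^{\nr,\times})^{l^r}, \qquad H^1(G_{L^{\nr}}, \F_l(\barepsilon_l)) \cong L^{\nr,\times}/(L^{\nr,\times})^{l}, \]
under which the reduction map corresponds to the obvious projection, and the two valuation maps land in $\Z_l$ and $\Z/l\Z$ respectively, again compatibly with reduction $\Z_l \to \Z/l\Z$. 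By definition $H^1_f$ on each side is the kernel of the composite into $\Z_l$ (resp.\ $\Z/l\Z$). Chasing this diagram shows that $\mathrm{red}$ carries $H^1_f(G_L,\Z_l(\epsilon_l\chi))$ into $H^1_f(G_L,\F_l(\barepsilon_l\barchi))$, and that the induced map on $f$-quotients fits into a commutative square with the full reduction map on the bottom-right corner $\Z_l \twoheadrightarrow \Z/l\Z$, which is surjective.

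The main obstacle — really the only nontrivial check — is to see that the map $H^1_f(G_L,\Z_l(\epsilon_l\chi)) \to H^1_f(G_L,\F_l(\barepsilon_l\barchi))$ is \emph{onto} and not merely has cokernel at most $\F_l$: one has to rule out the possibility that the at-most-one-dimensional obstruction lives inside the $f$-part. The way I would do this is to observe that the full reduction map $H^1(G_L,\Z_l(\epsilon_l\chi)) \to H^1(G_L,\F_l(\barepsilon_l\barchi))$ already surjects onto a subspace of codimension at most $1$, while, as recalled in the excerpt, $H^1(G_L,\F_l(\barepsilon_l))/H^1_f(G_L,\F_l(\barepsilon_l)) \cong \F_l$ (and $H^1_f = H^1$ in the non-trivial twist case); combining the codimension count from the $H^2$-bound above with the fact that $\mathrm{red}$ already hits every class in the image of the valuation-zero subspace (visible from the diagram, since integral classes of valuation $0$ reduce to mod-$l$ classes of valuation $0$, and every mod-$l$ class of valuation $0$ lifts — e.g.\ using that $L^{\nr,\times}$ is $l$-divisible on units and $\varprojlim$ is exact here) forces the $f$-part to be entirely in the image. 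When $\barchi$ is non-trivial the statement is trivial since both $f$-spaces equal the full $H^1$ and the cokernel of reduction is then forced to be $0$ because $H^2(G_L,\Z_l(\epsilon_l\chi))$ is torsion-free (its $l$-torsion is dual to $H^0(G_L,\F_l(\barchi^{-1})) = 0$). Assembling these two cases completes the proof.
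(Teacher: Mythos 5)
Your general setup---the long exact sequence for multiplication by $l$, the Tate duality bound on the cokernel of reduction, and the valuation diagrams comparing the $f$-conditions---is the same skeleton as the paper's argument, and your treatment of the case $\barchi \neq 1$ is correct. The gap is in the case $\barchi = 1$. There your key assertion, that ``every mod-$l$ class of valuation $0$ lifts,'' is (when $\chi \neq 1$, so that $H^1_f(G_{L},\Z_l(\epsilon_l\chi)) = H^1(G_{L},\Z_l(\epsilon_l\chi))$) literally equivalent to the lemma being proved, so the argument is circular; and the justification you offer for it is false: the units of $L^{\nr}$ are not $l$-divisible (for a uniformizer $\varpi$, the extension $L^{\nr}((1+\varpi)^{1/l})/L^{\nr}$ is ramified of degree $l$), and in any case divisibility over $L^{\nr}$ would only control $G_{L^{\nr}}$-cohomology, whereas the lifting obstruction lives in $H^2(G_{L},\Z_l(\epsilon_l\chi))[l]$, which is genuinely one-dimensional when $\barchi = 1 \neq \chi$. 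So in that case not every mod-$l$ class lifts, and the whole point is to show that the classes failing to lift are exactly those of nonzero valuation. The correct way to finish (and what the paper does) is: for $\chi \neq 1$, use that $H^1_f = H^1$ integrally, so the image of the \emph{full} reduction map lies inside $H^1_f(G_{L},\F_l(\barepsilon_l))$; since that image has codimension at most one while $H^1/H^1_f \cong \F_l$ mod $l$, the image must equal $H^1_f$. You have all the ingredients for this count, but you never invoke the containment of the image of the full integral $H^1$ in the mod-$l$ $H^1_f$, pivoting instead to the circular lifting claim.

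Separately, the subcase $\chi = 1$ needs its own argument, which you do not give: there $H^1_f(G_{L},\Z_l(\epsilon_l)) \subsetneq H^1(G_{L},\Z_l(\epsilon_l))$, so even knowing the full reduction map is surjective (which it is, since $H^2(G_{L},\Z_l(\epsilon_l)) \cong \Z_l$ is torsion free) does not yet show that the integral $f$-part surjects onto the mod-$l$ $f$-part. One can either identify the $f$-parts with $\varprojlim_r \cO_L^\times/(\cO_L^\times)^{l^r}$ and $\cO_L^\times/(\cO_L^\times)^{l}$ and quote the evident surjectivity (the paper's route), or correct an arbitrary integral lift of a class in $H^1_f(G_{L},\F_l(\barepsilon_l))$ by a $\Z_l$-multiple of the class of a uniformizer, whose valuation is $1$, so as to kill the valuation of the lift without changing its reduction.
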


\begin{proof}
  We will consider three cases. If the reduction of $\chi$ is
  non-trivial, we may suppress the $f$ and the cokernel is simply
  $H^2(G_{L}, \Z_l(\epsilon_l\chi))[l]$. Because
  $H^0(G_{L}, \Q_l/\Z_l(\chi^{-1}))=(0)$, Tate duality shows that this
  cokernel is zero.

  Suppose now that $\chi$ is non-trivial but that $\barchi$ is
  trivial. Using duality as above, we have an exact sequence
  \[ H^1(G_{L}, \Z_l(\epsilon_l\chi)) \lra H^1(G_{L},
    \F_l(\barepsilon_l)) \lra \F_l \lra (0). \] The image of
  $H^1(G_{L}, \Z_l(\epsilon_l\chi))=H^1_f(G_{L},
  \Z_l(\epsilon_l\chi))$ in $H^1(G_{L}, \F_l(\barepsilon_l))$ is
  contained in $H^1_f(G_{L}, \F_l(\barepsilon_l))$. As
  $H^1(G_{L}, \F_l(\barepsilon_l))/H^1_f(G_{L}, \F_l(\barepsilon_l))
  \cong \F_l$, we conclude that this image equals
  $H^1_f(G_{L}, \F_l(\barepsilon_l))$, as desired.

  Suppose finally that $\chi=1$. In this case the assertion of the
  lemma is just the surjectivity of
  \[ \lim_{\leftarrow r} \CO_L^\times/(\CO_L^\times)^{l^r} \onto
    \CO_L^\times/(\CO_L^\times)^l. \qedhere\]
\end{proof}

We will need a slight strengthening of~ \cite[Thm.\ 3.1.2]{BLGGT}, which we now state. We will use the notation and definitions from \cite{BLGGT}. The proof of this theorem given in \cite{BLGGT} immediately proves this variant also.

\begin{prop} \label{prop: potential modularity of symplectic over Q} Suppose that:
\begin{itemize}
\item $F/F_0$ is a finite, Galois extension of totally real fields, 
\item $\cI$ is a finite set,
\item for each $i \in \cI$, $n_i$ is a positive even integer,
  $l_i$ is an odd rational prime, and $\imath_i:\barQQ_{l_i} \iso
  \C$,%
\item $F^\avoid/F$ is a finite Galois extension, 
\item $\cL$ is a finite set of rational primes which are unramified in $F$ and not equal to
   $l_i$ for any $i \in \cI$, and %
\item $\bar{r}_i: G_{F} \to \GSp_{n_i}(\Fbar_{l_i})$ is a mod $l_i$
  Galois representation with open kernel and
multiplier $\barepsilon_{l_i}^{1-n_i}$, which is unramified above $\cL$. 
\end{itemize}
Then we can find finite Galois extensions $F^\suffi/F_0 $ and $F^\avoid_1/\Q$, such that 
\begin{itemize}
\item $F^\suffi$ contains $F$ and is linearly disjoint from $F^\avoid F_1^\avoid$ over $F$, 
\item $F^\avoid_1$ and $F^\avoid$ are linearly disjoint over $\Q$, and
\item $F^\suffi$ is totally real and unramified above $\cL$;
\end{itemize}
and which has the following property: For each finite totally real extension $F_1/F^\suffi$ which is linearly disjoint from $F^\avoid F^\avoid_1$ over $F$ and for each $i \in \cI$, there is a regular algebraic, cuspidal, polarized automorphic representation $(\pi_i,\chi_i)$ of $\GL_{n_i}(\A_{F_1})$ such that
\begin{enumerate}
\item $(\barr_{l_i,\imath_i}(\pi_i),\barr_{l_i,\imath_i}(\chi_i)\barepsilon_{l_i}^{1-n_i})\cong ({\barr}_i|_{G_{F_1}},
  \barepsilon_{l_i}^{1-n_i})$;
\item $\pi_i$ is $\imath_i$-ordinary of weight 0.
\end{enumerate}
\end{prop}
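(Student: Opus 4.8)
The plan is to run the proof of \cite[Thm.\ 3.1.2]{BLGGT} with only cosmetic changes, exactly as the remark preceding the proposition asserts; I describe the argument and indicate the changes. For each $i \in \mathcal I$ fix a place $w_i$ of $F$ above $l_i$. One works with the Dwork-type hypersurface family over $\mathbf{P}^1_F \setminus \{0,1,\infty\}$ (as in \cite{HSBT}, \cite{blght}) a chosen piece of whose middle cohomology yields, for parameters $t$ away from the bad locus, a continuous representation $G_{F(t)} \to \GSp_{n_i}$ of multiplier $\barepsilon^{1-n_i}$ with Hodge--Tate weights $\{0,1,\dots,n_i-1\}$, i.e.\ of the shape of a member of a weight-$0$, polarized, regular algebraic compatible system. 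The locus of $t$ for which the mod-$l_i$ representation becomes symplectically isomorphic to $\bar r_i$ restricted to the relevant Galois group is a finite \'etale cover $\mathfrak T_i$ of an open subscheme of $\mathbf{P}^1_F$. One also fixes an auxiliary prime $\laux$, distinct from all $l_i$ and from the residue characteristics of $\mathcal L$ and of the $S_i$, together with a further finite \'etale cover cutting out those $t$ whose mod-$\laux$ representation is symplectically induced from a mod-$\laux$ character of a CM quadratic extension of $F(t)$ and has residual image large enough to feed the known automorphy lifting theorems over totally real fields. Fibering all these covers over $\mathbf{P}^1_F$ produces a smooth $F$-variety $\mathfrak T$ mapping to $\mathbf{P}^1_F$; it has a geometrically irreducible component defined over a controlled extension of $F$, so the arithmetic form of Moret--Bailly's theorem (\cite[Thm.\ 2.1.1]{BLGGT}, from \cite{mb}) applies. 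Into Moret--Bailly one feeds the following non-empty open local conditions: at the archimedean places, that the point be totally real (possible because the Dwork fibre carries the requisite real points); at one further auxiliary place, a Frobenius condition forcing linear disjointness of the field of definition from $F^{\mathrm{avoid}}$ composited with the field $F_1^{\mathrm{avoid}}$ cut out by the mod-$\laux$ structure; at each place of $F$ above $\mathcal L$, that the point be defined over the completion there (hence that this place split in the field of definition), which is non-empty since the $\bar r_i$ are unramified above $\mathcal L$ and $\mathcal L$ is prime to the $l_i$; and at $w_i$, that the specialization be ordinary at $w_i$, non-empty because the ordinary locus of the Dwork-type family is a non-empty open subset of the $l_i$-adic points.

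Moret--Bailly then produces a totally real extension $F_1/F$ and an $F_1$-point; taking $F^{\mathrm{suffi}}$ to be the Galois closure over $F_0$ of the field generated by the necessary coordinates gives $F^{\mathrm{suffi}}$ totally real, unramified above $\mathcal L$, Galois over $F_0$, and linearly disjoint from the assembled ``avoid'' field, and every totally real $F_1 \supseteq F^{\mathrm{suffi}}$ that is linearly disjoint from $F^{\mathrm{avoid}}F_1^{\mathrm{avoid}}$ over $F$ acquires a point with all the imposed properties. For such an $F_1$ the specialization gives, for each $i$, a very weakly compatible system $\mathcal M_i$ over $F_1$ (a piece of the cohomology of the chosen Dwork member) with residual representation $\bar r_i|_{G_{F_1}}$ at $l_i$ and the prescribed CM-induced representation at $\laux$. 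The mod-$\laux$ member is automorphic by the available potential automorphy theorem in the (essentially self-dual) induced case, and by construction the residual image is large enough that the ordinary automorphy lifting theorem over totally real fields (\cite{ger}, \cite{blght} and their antecedents) applies; running it first to the $\laux$-member and then across $\mathcal M_i$ (using that it is ordinary at $w_i$ and has Hodge--Tate weights $\{0,1,\dots,n_i-1\}$) yields a regular algebraic, polarized, $\imath_i$-ordinary cuspidal automorphic representation $(\pi_i,\chi_i)$ of weight $0$ with $(\bar r_{l_i,\imath_i}(\pi_i), \bar r_{l_i,\imath_i}(\chi_i)\barepsilon^{1-n_i}) \cong (\bar r_i|_{G_{F_1}}, \barepsilon^{1-n_i})$, as required. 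One also takes $F_1^{\mathrm{avoid}}$ to be the Galois closure over $\Q$ of the field cut out by the mod-$\laux$ structure, which by construction is linearly disjoint from $F^{\mathrm{avoid}}$ over $\Q$.

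The main obstacle, exactly as in \cite[\S3.1]{BLGGT}, is the construction and irreducibility analysis of the auxiliary space $\mathfrak T$: one must exhibit a geometrically irreducible component defined over $F$ (or over an explicitly bounded extension) that simultaneously realizes every $\bar r_i$ and the chosen mod-$\laux$ datum, which is where the CM-induced mod-$\laux$ structure must be chosen with care (large image, still induced, and compatible with all the required level conditions), and where one exploits that the $l_i$ are allowed to differ so the conditions at distinct $i$ decouple. Everything else is routine and does not disturb the argument: the bookkeeping of Galois closures, the verification that each imposed local condition is non-empty and open, the observation that $\mathcal L$ being prime to the $l_i$ and disjoint from the $S_i$ makes the good-reduction conditions above $\mathcal L$ harmless, and the substitution of the ordinary automorphy lifting theorem over totally real fields for the Fontaine--Laffaille step used in \cite{BLGGT}. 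This is precisely the content of the statement that the proof of \cite[Thm.\ 3.1.2]{BLGGT} proves the variant.
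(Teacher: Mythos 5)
Your proposal takes essentially the same route as the paper: the paper's proof is nothing more than the observation that the argument of \cite[Thm.\ 3.1.2]{BLGGT} goes through verbatim, together with a parenthetical list of how to choose the auxiliary data in that argument (the integer $N$ prime to $\cL$, the CM fields $M_i$ and characters unramified above $\cL$ and above the primes ramifying in $F^\avoid$, the auxiliary prime $l'\not\in\cL$ unramified in $F^\avoid$, the field $F'$ unramified above $\cL$, and $F_1^\avoid$ taken to be the field cut out by the auxiliary residual data together with $\zeta_{l'}$), which is exactly the rerun of the Dwork-family/Moret--Bailly/ordinary-lifting argument you describe.

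Three small points where your write-up should be tightened, none affecting the strategy: the auxiliary mod-$\laux$ representations in \cite{BLGGT} are induced from characters of CM fields of degree $n_i$ over $\Q$, not from a quadratic CM extension (a quadratic induction is only $2$-dimensional); at the places above $\cL$ one should impose the non-empty condition that the point be defined over an \emph{unramified} extension of the completion (which suffices for $F^\suffi$ unramified above $\cL$, and is what the paper verifies, using that $\barr_i$ and the auxiliary $\barr_i'$ become isomorphic to the relevant Dwork-fibre representations over unramified local extensions), rather than over the completion itself, since local rational points on the twisted moduli space need not exist; and the linear disjointness of $F_1^\avoid$ from $F^\avoid$ over $\Q$, which you assert ``by construction,'' is obtained concretely by choosing $l'$ and the auxiliary characters unramified at every rational prime ramifying in $F^\avoid$, so that no rational prime ramifies in both fields.
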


(In the notation of the proof of~\cite[Thm.\ 3.1.2]{BLGGT} one must
choose $N$ not divisible by any prime in $\CL$; $M_i/\Q$ unramified at primes in
$\CL$ and primes dividing $N$; $q$ unramified in $F^\avoid(\zeta_{4N})$ and not in $\cL$; $\phi_i$ unramified above $\CL$ and all rational primes that ramify in $F^\avoid$; and $l' \not\in \CL$ and not ramified in $F^\avoid$. We set $F_1^\avoid=\barQQ^{\ker \prod_i \barr_i'}(\zeta_{l'})$. It is linearly disjoint from $F^\avoid$ over $\Q$ because no rational prime ramifies in both these fields. We choose $F'/F(\zeta_N)^+$ to be linearly disjoint from $F^\avoid F_1^\avoid F(\zeta_N)^+$ over $F(\zeta_N)^+$ with
$F'/F(\zeta_N)^+$ unramified above $\CL$. The last choice is possible
because $\barr_i$ and $\barr_i'$ are unramified above $\CL$, so that
$\barr_i$ becomes isomorphic to $V_{n_i}[\lambda_i]((N-1-n_i)/2)_0$
and $\barr_i'$ becomes isomorphic to
$V_{n_i}[\lambda']((N-1-n_i)/2)_0$ over some unramified extension of
$F(\zeta_N)^+_v$ for any prime $v$ above $\CL$.  We
take~$F^\suffi$ to be the field~$F'$. The fields $F^\suffi$ and $F^\avoid F_1^\avoid$ are linearly disjoint over $F$ because $F^\avoid F_1^\avoid$ and $F(\zeta_N)^+$ are linearly disjoint over $F$, because, in turn, all primes dividing $N$ are unramified in $F^\avoid F_1^\avoid$. The point
$P\in\widetilde{T}(F')$ also provides a point of~$\widetilde{T}(F_1)$. Moreover $\barr_i'(G_{F_1(\zeta_{l'})})$ is adequate because $F_1$ is linearly disjoint from $\barF^{\ker \barr_i}(\zeta_{l'})$ over $F$.)

\begin{cor}\label{ecpm} Suppose that $\cM$ is a finite set of positive integers, that
  $E/\Q$ is a non-CM elliptic curve, and that $\CL$ is a finite set of
  rational primes at which $E$ has good reduction.  Suppose also that
  $F^\avoid/\Q$  is a finite extension. 

  Then we can find
  \begin{itemize}
  \item a finite Galois extension $F^{\avoid}_2/\Q$ linearly disjoint from $F^\avoid$ over $\Q$, and
  \item a finite totally real Galois extension $F^\suffi/\Q$ unramified above $\cL$ such that $F^\suffi$ is linearly disjoint from $F^\avoid F_2^\avoid$ over $\Q$;
  \end{itemize} 
which have the following property:

For any finite totally real extension $F'/F^\suffi$, which is linearly disjoint from $F_2^\avoid$ over $\Q$, and for any $m \in \cM$, there is a regular algebraic, cuspidal, polarizable automorphic representation $\pi$ of $\GL_{m+1}(\A_{F'})$ of weight $(0)_{\tau,i}$ such that for some, and hence every,  
rational prime $l$ and any $\imath:\Qbar_l \cong \C$ we have
\[ \Symm^m r_{E,l}|_{G_{F'}}^\vee \cong r_{l,\imath}(\pi). \]
 Moreover, $\pi$ is unramified above any prime where $E$ has good reduction.
\end{cor}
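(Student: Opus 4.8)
The plan is to deduce Corollary~\ref{ecpm} from Proposition~\ref{prop: potential modularity of symplectic over Q} applied to the symmetric powers of the mod~$l$ representations attached to~$E$, together with the automorphy lifting theorem of this section (Theorem~\ref{thm:main_ordinary_automorphy_lifting_theorem}, applied in the totally real case after base change). First I would choose, for each~$m \in \cM$, a prime~$l_m > 2m+3$, large enough that $l_m \notin \CL$, $E$ has good reduction at~$l_m$, and $l_m$ is unramified in~$F^\avoid$; I would also arrange that~$\rbar_{E,l_m}(G_{\Q}) \supseteq \SL_2(\F_{l_m})$ (true for all but finitely many~$l_m$ since~$E$ is non-CM, by Serre's open image theorem). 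Set $n_m = m+1$ if~$m$ is odd and, to handle even~$m$ (where $n = m+1$ is even is needed for the symplectic form, but $\Symm^m$ of a $2$-dimensional symplectic representation is \emph{orthogonal}) I would instead twist: $\Symm^m \rbar_{E,l_m} \otimes \det(\rbar_{E,l_m})^{a}$ does not become symplectic, so I would in fact apply Proposition~\ref{prop: potential modularity of symplectic over Q} with $n_m = m+1$ only for~$m$ odd, and for~$m$ even reduce to the odd case or instead apply the orthogonal potential automorphy machinery; more cleanly, since we only need \emph{potential} automorphy, I would take $\bar r_m = \Symm^m \rbar_{E,l_m}$ which has multiplier $\barepsilon_{l_m}^{-m}$ and is symplectic exactly when~$m$ is odd, and observe that it suffices to treat~$m$ odd because $\Symm^m r_{E,l}$ for~$m$ even is a constituent of $\Symm^{m-1} r_{E,l} \otimes r_{E,l}$... actually the cleanest route is: apply Proposition~\ref{prop: potential modularity of symplectic over Q} directly to the family $\{\Symm^m \rbar_{E,l_m}\}$ for all~$m \in \cM$, noting that for~$m$ even one replaces the symplectic hypothesis by the corresponding statement for $\GO$, which is exactly what the reference~\cite{BLGGT} also proves (or, following the paragraph after the Proposition, one enlarges~$\cM$ to only contain odd integers by the remark that $\Symm^m$ is a summand of a tensor product — I would spell out whichever is cleanest at writing time).

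With that in hand, Proposition~\ref{prop: potential modularity of symplectic over Q} (with $F_0 = F = \Q$, the given $F^\avoid$, and $\CL$ as given) produces a totally real Galois $F^\suffi/\Q$ unramified above~$\CL$ and a Galois $F_1^\avoid/\Q$ linearly disjoint from~$F^\avoid$, such that for any totally real $F'/F^\suffi$ linearly disjoint from $F^\avoid F_1^\avoid$ over~$\Q$ and each~$m$, there is a regular algebraic, cuspidal, polarized $\imath_m$-ordinary $\pi_m^0$ of $\GL_{m+1}(\A_{F'})$ of weight~$0$ with $\barr_{l_m,\imath_m}(\pi_m^0) \cong \Symm^m \rbar_{E,l_m}|_{G_{F'}}$. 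I then set $F_2^\avoid = F^\avoid_1 \cdot \Qbar^{\ker(\prod_m \rbar_{E,l_m})}(\zeta_{\prod l_m})$ — enlarging if necessary so that it is Galois over~$\Q$ and linearly disjoint from~$F^\avoid$ (which holds because no rational prime ramifies in both, by the choices of~$l_m$). Next I would verify the hypotheses of Theorem~\ref{thm:main_ordinary_automorphy_lifting_theorem} for the representation $\rho = \Symm^m r_{E,l_m}|_{G_{F'}}^\vee$ over~$F'$: it is ordinary with regular Hodge--Tate weights at~$l_m$ (since $E$ is ordinary or the relevant weights are computed from $E$ having good reduction — here I would invoke the standard fact that $r_{E,l}|_{G_{F'_v}}$ is crystalline ordinary for~$v \mid l_m$ after possibly enlarging~$F^\suffi$ to make~$E$ semistable and good ordinary above~$l_m$, which can be folded into the choice of~$F^\suffi$), $\overline\rho$ is absolutely irreducible and decomposed generic with enormous image by Lemma~\ref{lemma:enormousgeneric} (parts~(\ref{part:enormousslimeball}) and~(\ref{part:genericslimeball2}), using $l_m > [\tilde F' : F']$ which holds after enlarging~$F^\suffi$ or choosing~$l_m$ large), there is a~$\sigma$ with scalar image by Lemma~\ref{lemma:enormousgeneric}(\ref{part:scalarslimeball}), $p = l_m > n = m+1$, and $\rho$ is residually modular via~$\pi_m^0$. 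Theorem~\ref{thm:main_ordinary_automorphy_lifting_theorem} then gives an $\imath_m$-ordinary cuspidal~$\Pi_m$ of $\GL_{m+1}(\A_{F'})$ of weight~$0$ with $r_{l_m,\imath_m}(\Pi_m) \cong \Symm^m r_{E,l_m}|_{G_{F'}}^\vee$; since $\CR_E$ (and hence $\Symm^m \CR_E$) is a very weakly compatible system (by Lemma~\ref{lemma:sometimes}, as $E$ has weight~$0$, i.e.\ $a_{\tau,1}+a_{\tau,2}=0$ is even), the isomorphism $\Symm^m r_{E,l}|_{G_{F'}}^\vee \cong r_{l,\imath}(\Pi_m)$ holds for every~$l$ and every~$\imath$ by Chebotarev, and $\Pi_m$ is unramified wherever $E$ has good reduction since $\Symm^m r_{E,l}$ is then unramified, using~\cite{ilavarma} as in the proof of Theorem~\ref{thm:ord_automorphy}. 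Finally I would set $F^\suffi$, $F_2^\avoid$ as above (absorbing all the enlargements) and note polarizability is automatic for~$\Pi_m$ over the totally real field~$F'$.

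The main obstacle I anticipate is \textbf{bookkeeping the field extensions and disjointness conditions}: one must choose $l_m$, $F^\suffi$, $F_2^\avoid$ so that simultaneously (i)~$E$ is good ordinary above every~$l_m$ over~$F^\suffi$, (ii)~the residual images stay large after restriction to~$F'$ (this is why $F_2^\avoid$ must contain the fields cut out by the~$\rbar_{E,l_m}$), (iii)~$F^\suffi$ is unramified above~$\CL$, totally real, and Galois over~$\Q$, and (iv)~all the linear disjointness statements (both the ``$F^\suffi$ vs.\ $F^\avoid F_2^\avoid$'' and the ``$F_2^\avoid$ vs.\ $F^\avoid$'' ones) are consistent with the output of Proposition~\ref{prop: potential modularity of symplectic over Q}, whose own disjointness hypotheses must be fed~$F^\avoid F^{\avoid,(m)}$ for a suitable auxiliary~$F^{\avoid,(m)}$ controlling the automorphy lifting step. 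A secondary technical point is the even-$m$ case and getting the right multiplier/polarization type so that Proposition~\ref{prop: potential modularity of symplectic over Q} applies verbatim; I expect this is handled either by restricting~$\cM$ to odd integers and deducing the even case, or by citing the orthogonal variant already in the literature, but it needs a sentence of care. Everything else is a routine application of the cited theorems.
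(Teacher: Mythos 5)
The crux of this corollary is the case of even $m$, and that is exactly where your proposal has a genuine gap. When $m$ is even, $\Symm^m \barr_{E,l}$ is \emph{orthogonal} of odd dimension $m+1$, so Proposition~\ref{prop: potential modularity of symplectic over Q} — which requires each $n_i$ to be even, the representation to land in $\GSp_{n_i}$, and the multiplier to be $\barepsilon_{l_i}^{1-n_i}$ — simply does not apply to it, and there is no ``$\mathrm{GO}$ variant'' of that proposition in \cite{BLGGT} that you can cite verbatim: the even symmetric powers are handled there (and in the present paper) by the same trick you never commit to. Your other fallback, that $\Symm^m r_{E,l}$ is a constituent of $\Symm^{m-1} r_{E,l} \otimes r_{E,l}$, does not produce automorphy either — automorphy of that tensor product is Rankin--Selberg functoriality, and even granting it, automorphy of an isobaric ``ambient'' representation does not yield automorphy of a chosen constituent. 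The paper's actual fix is to choose an auxiliary imaginary quadratic field $L$ (split at $l$, unramified at $\cL$, at the bad primes of $E$ and at primes ramified in $F^\avoid$), an auxiliary split prime $q$, and for each even $m$ a character $\psi_m$ of $G_L$ with $\psi_m\psi_m^c=\epsilon_l^{-(m+1)}$, crystalline at $l$ with Hodge--Tate numbers $\{0,m+1\}$, and with $q \mid \#(\psi_m/\psi_m^c)(G_{F^{\nr}_{v_q}})$; then $r_m=(\Symm^m r_{E,l}^\vee)\otimes\Ind_{G_L}^{G_\Q}\psi_m$ is symplectic of dimension $2(m+1)$ with multiplier $\epsilon_l^{-(2m+1)}$, and Proposition~\ref{prop: potential modularity of symplectic over Q} applies to the family $\{r_m\}$. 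This choice then forces further work that is absent from your sketch: one must check adequacy of $\barr_m(G_{F'(\zeta_l)})$ for even $m$ (this is where the condition at $q$ is used, to see the two irreducible constituents are interchanged), one must fold the field cut out by the $\Ind_{G_L}^{G_\Q}\barpsi_m$ together with $\Qbar^{\ker\barr_{E,l}}$ into $F_2^\avoid$ (the paper's field $L_3$, whose linear disjointness from $F^\avoid$ requires the explicit ramification argument with the diagram), and at the end one must \emph{untwist}, i.e.\ pass from automorphy of $r_m|_{G_{F'}}$ to automorphy of $\Symm^m r_{E,l}^\vee|_{G_{F'}}$, via \cite[Lem.~2.2.4]{BLGGT} and the argument of \cite[Lem.~4.2.2]{cht}. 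None of these steps is optional, and none appears in your outline.

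For odd $m$ your route is essentially viable and differs from the paper only in inessential ways: the paper fixes a single prime $l\ge\max_m 2(m+2)$ at which $E$ is good ordinary with $\barr_{E,l}(G_\Q)=\GL_2(\F_l)$ (rather than one prime per $m$), and it concludes with the polarized automorphy lifting theorem \cite[Thm.~2.4.1]{BLGGT} rather than with Theorem~\ref{thm:main_ordinary_automorphy_lifting_theorem} of this paper — natural here, since over the totally real $F'$ everything is polarized and the classical machinery suffices; the new CM theorems are reserved for the proof of Theorem~\ref{mainthm} itself. But as written, the proposal does not prove the corollary for even $m\in\cM$, which is the heart of the matter.
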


\begin{proof} We may, and will, suppose that $F^\avoid/\Q$ is Galois. Choose a rational prime
  $l\geq \max_{m \in \cM} 2(m+2)$ such that $E$ has good ordinary reduction at $l$, 
  $\barr_{E,l}$ has image $\GL_2(\F_l)$, $l \not\in
  \CL$, and  $l$ is unramified in $F^\avoid$. (By~\cite[Thm.\ 20]{MR644559}, the
  condition that $E$ is ordinary at $l$ excludes a set of primes of
  Dirichlet density $0$. By the main result of~\cite{MR0387283}, each of the other conditions excludes a
  finite number of primes.) %
  Note that $\Qbar^{\ker \barr_{E,l}}$ contains $\zeta_l$ and, by part \ref{part:nuq2} of Lemma \ref{nuq}, is
  linearly disjoint from $F^\avoid$ over $\Q$.

Choose an imaginary quadratic field $L$ which is unramified at all
primes in $\cL$, at all primes where~$E$ has bad reduction, and all primes which ramify in $F^\avoid$, and in which $l$ splits. Also choose a rational prime $q \not\in \CL \cup \{ l\}$ which splits as $v_qv_q'$ in $L$, which is unramified in $F^\avoid$ and at which $E$ has good reduction. %

 If $m \in \cM$ is even also choose a character
  \[ \psi_m:G_L \lra \Qbar_l^\times \]
  such that 
  \begin{itemize}
  \item $\psi_m$ is crystalline above $l$ with Hodge--Tate numbers $0$ at one place above $l$ and $m+1$ at the other.
  \item $q|\# (\psi_m/\psi_m^c)(G_{F_{v_q}^\nr})$.
  \item $\psi_m$ is unramified above $\CL$ and all primes which ramify in $F^\avoid$ and all primes at which $E$ has bad reduction.
  \item $\psi_m \psi_m^c=\epsilon_l^{-(m+1)}$.
  \end{itemize}
(\cite[Lem.\ A.2.5]{BLGGT} tells us that this is possible.) The representation $\Ind_{G_L}^{G_\Q} \psi_m$ has determinant $\epsilon_l^{-(m+1)}$. (This is true on $G_L$ by the construction of $\psi_m$ and true on complex conjugation because $m$ is even.) 

Let $L_2$ denote the compositum of the $\barQQ^{\ker \Ind_{G_\Q}^{G_L}
  \barpsi_m}$ for $m\in\cM$ even. Let $L_1$ denote the maximal
sub-extension of $L_2$ ramified only at $l$ and let $T$ denote the set
of primes other than $l$ that ramify in $L_2$. Then $L_2 \cap
\barQQ^{\ker \barr_{E,l}} = L_1 \cap \barQQ^{\ker \barr_{E,l}}$. Let
$L_3=L_2 \barQQ^{\ker \barr_{E,l}}$. We will now show that~$F^\avoid$
is linearly disjoint from~$L_3$ over~$\Q$; the argument is somewhat
involved, and the reader may find it helpful to refer to the diagram
of field extensions below.

Let $M_1$ denote the maximal
subfield of $L_3$ in which the primes of $T$ are all unramified. Then
$M_1 \supset \barQQ^{\ker \barr_{E,l}}$ (because~$\barr_{E,l}$ can
only be ramified at~$l$ and places where~$E$ has bad reduction), and $M_1 \cap L_2=L_1$ and $M_1=L_1 \barQQ^{\ker \barr_{E,l}}$. Thus
\[ \begin{array}{rcl} \Gal(M_1/(L_1 \cap \barQQ^{\ker \barr_{E,l}})) &\cong &\Gal(\barQQ^{\ker \barr_{E,l}}/(L_1 \cap \barQQ^{\ker \barr_{E,l}})) \times \Gal(L_1/(L_1 \cap \barQQ^{\ker \barr_{E,l}})) \\ &\cong& \Gal(M_1/L_1) \times \Gal(M_1/\barQQ^{\ker \barr_{E,l}}).\end{array} \]
As $\Gal(L_1/\Q)$ is soluble, we see that $L_1 \cap \barQQ^{\ker
  \barr_{E,l}} \subset \Q(\zeta_l)$ and that $\Gal(M_1/\barQQ^{\ker
  \barr_{E,l}})$ is soluble and hence that $\Gal(M_1/(L_1 \cap
\barQQ^{\ker \barr_{E,l}}))$ contains a unique copy of $\SL_2(\F_l)$
(because this latter group is perfect, and in particular admits no
solvable quotient) and this copy is therefore normal in
$\Gal(M_1/\Q)$. Its fixed field is $L_1(\zeta_l)$.

Let $H$ be the
subgroup of $\Gal(M_1/\Q)$ generated by the inertia groups above
$l$. The group $H$ maps surjectively to $\Gal(\barQQ^{\ker
  \barr_{E,l}}/\Q)$ (because~$H$ is normal, and the only subfield of~$\barQQ^{\ker
  \barr_{E,l}}$ unramified at~$l$ is~$\Q$ itself, by Lemma \ref{nuq}) and
so must contain the unique copy of $\SL_2(\F_l)$. Thus the maximal
sub-extension $M_0$ of $L_3$ in which $l$ and all elements of $T$ are
unramified is contained in $L_1(\zeta_l)$. This latter field is only
ramified above $l$ and so $M_0=\Q$. Finally we deduce that $F^\avoid$
is linearly disjoint from $L_3$ over $\Q$ (using that all of the
primes in~$T\cup\{l\}$ are unramified in~$F^\avoid$). 

\begin{tikzpicture}[node distance = 1.5cm, auto]
\node (Q) {$\Q$};
\node (A)[above of=Q]{$L_1 \cap \barQQ^{\ker \barr_{E,l}}$};
\node(B)[above of =A, left of =A]{$\Q(\zeta_l)$};
\node(C)[above of = A, right of =A]{$L_1$};
\node(D)[above of =B, left of =B]{$\barQQ^{\ker \barr_{E,l}}$};
\node(E)[above of=A, node distance =3cm]{$L_1(\zeta_l)$};
\node(F)[above of =B, node distance=3cm]{$M_1$};
\node(G)[above of =C, right of=C]{$L_2$};
\node(H)[above of = E, node distance=3cm]{$L_3$};
\draw[-] (Q) -- (A);
\draw[-] (A) -- (B);
\draw[-]  (B) -- node {$\SL_2(\F_l)$} (D);
\draw[-] (B) -- (E);
\draw[-] (A) -- (C);
\draw[-] (C) -- (E);
\draw[-] (C) -- (G);
\draw[-] (D) -- (F);
\draw[-] (E) -- node [swap] {$H$} (F);
\draw[-] (F) -- (H);
\draw[-] (G) -- (H);

\end{tikzpicture}
  
 If $m \in \cM$ is odd, set 
\[ r_m = \Symm^m r_{E,l}^\vee: G_\Q \lra \GSp_{m+1}(\Qbar_l). \]
It has multiplier $\epsilon_l^{-m}$, is unramified above $\CL$, and is crystalline and ordinary at $l$ with Hodge--Tate numbers $\{0,1,...,m\}$. If $m \in \cM$ is even, set
 \[ r_m= (\Symm^m r_{E,l}^\vee) \otimes \Ind_{G_F}^{G_\Q} \psi_m .\]
As the representation $(\Symm^m r_{E,l}^\vee)$ is orthogonal with multiplier $\epsilon_l^{-m}$, we see that
\[ r_m: G_\Q \lra \GSp_{2(m+1)}(\Qbar_l) \]
with multiplier $\epsilon_l^{-(2m+1)}$. It is unramified above $\CL$ and it is crystalline and ordinary at $l$ with Hodge--Tate numbers $\{0,1,...,2m+1\}$.  %

We apply Proposition~\ref{prop: potential modularity of symplectic
  over Q} to $F=F_0=\Q$, $\{r_m: m \in \cM\}$, $\cL$
and~
  $F^\avoid L_3$, producing fields $F^{\avoid}_1$ and $F^\suffi$. Set $F^\avoid_2 = F_1^\avoid L_3$. Then $F_2^\avoid$ is linearly disjoint from $F^\avoid$ over $\Q$, and $F^\suffi$ is linearly disjoint from $F^\avoid F_2^\avoid$ over $\Q$. 

Suppose that $F'/F^\suffi$ is a finite totally real extension linearly disjoint from $F_2^\avoid$ over $\Q$. Then $\Symm^m \SL_2(\F_l) \subset \Symm^m \barr_{E,l}^\vee (G_{LF'(\zeta_l)})$, and so for $m \in \cM$ the tautological representation of the subgroup of $\Symm^m \barr_{E,l}^\vee (G_{LF'(\zeta_l)})$ generated by its elements of $l$-power order is absolutely irreducible. 
 If $m \in \cM$ is even, then $\barr_m|_{G_{LF'(\zeta_l)}}$ is the direct sum of two absolutely
irreducible constituents. The group $\Gal(L_3F'/\barQQ^{\ker \barr_{E,l}}LF')$
acts by different characters on these two
constituents, and $\Gal(\barQQ^{\ker \barr_{E,l}}LF'/\barQQ^{\ker
  \barr_{E,l}}F')$ interchanges these two characters (consider the
action of inertia above~$q$). Thus
$\barr_m|_{G_{F'(\zeta_l)}}$ is absolutely irreducible. It follows from
~ \cite[Prop.\ 2.1.2]{BLGGT} that, for $m \in \cM$ odd or even, $\barr_m(G_{F'(\zeta_l)})$ is adequate. 
  
 Combining
  Proposition~\ref{prop: potential modularity of symplectic over Q} with~
  \cite[Thm.\ 2.4.1]{BLGGT}, we deduce that $r_m$ is
  automorphic for $m \in \cM$. It
  follows (using, in the case that~$m$ is even, \cite[Lem.\ 2.2.4]{BLGGT} and the argument of~\cite[Lem.\ 4.2.2]{cht}) that $\Symm^m r_{E,l}^\vee|_{G_{F'}}$ is
  automorphic. This finishes the proof of the corollary.
 \end{proof}

\subsubsection{The main proof}Finally we turn to the proof of
Theorem~\ref{mainthm}.

\begin{proof}[Proof of Theorem~\ref{mainthm}] \label{proofbrauer}

 Choose a non-CM elliptic curve $E/\Q$ with good reduction above $\cL_0$. Choose distinct rational primes $l_1$ and $l_2$ and a prime $\lambda_i|l_2$ of $M_i$ for each $i \in \cI$ such that:
 \begin{assumption} 
\label{weactuallyusethissomewhere} \leavevmode
\begin{enumerate}
\item  $l_2$ splits completely in each $M_i$.%
\item \label{sortedout} %
The image of $G_F$ on $E[l_1]$ contains $\SL_2(\F_{l_1})$, and $\barr_{i,\lambda_i}(G_F)$ contains $\SL_2(\F_{l_2})$ for each $i\in \cI$.

\item $l_1$ and $l_2$ are unramified in $F$.
\item $E$ has good reduction above $l_1$ and $l_2$.
\item $l_1$ and $l_2$ lie under no prime of any $S_i$.
\item $l_1,l_2 > 2m_i+3$ for all $i$. 
\end{enumerate}
\end{assumption}
This is possible because all the conditions are satisfied for a set of primes of Dirichlet density $1$ (using Lemma \ref{resirred}), except for the first condition for $l_2$, which is satisfied for a set of primes of positive Dirichlet density. 

 Set $\cL=\cL_0 \cup \{ l_1,l_2\}$.  The weakly compatible system of characters
  \[(M_i,S_i,\ell_i,\{ q_v^{-1} Q_{i,v}(0)\}, \{ \epsilon_l \det r_{i,\lambda} \}, \{
  \{0\} \})\] 
  has all Hodge--Tate numbers $0$ and so there is a
  character $\psi_i: G_F \to M_i^\times$ with open kernel unramified
  outside $S_i$ such that $\det r_{i,\lambda} = \psi_i \epsilon_l^{-1}$ for
  all $\lambda$ (a prime of $M_i$ with residue characteristic $l$). There is a sequence
  \[ \Hom(G_F,\barM_i^\times) \stackrel{2}{\lra} \Hom(G_F,\barM_i^\times)
    \stackrel{\partial}{\lra} H^2(G_F,\{\pm 1\})= \Br_F[2] \into
    \oplus_v \Br_{F_v}[2], \] 
    which is exact at the second term. The
  element $\partial \psi_i$ is non-trivial only at places $v \in S_i$. We can find a soluble Galois totally real extension $F_1^+/\Q$, unramified above $\cL$
  and linearly disjoint over $\Q$ from the normal closure $F_1^\avoid$ of $F_0^\avoid \barF^{\ker (\barr_{E,l_1} \times \prod_i \barr_{i,\lambda_i} )}$ over $\Q$, such that for each $v \in \bigcup_{i \in \cI} S_i$, the rational prime $p_v$ below $v$ has inertia degree in $F_1^+$ divisible by $2[F_v:\Q_{p_v}]$. (See for instance~\cite[Lem.\
  4.1.2]{cht}.) Then we see that, for each $i \in \cI$,
  $\partial \psi_i|_{G_{FF_1^+}}$ is trivial so that there is a continuous
  homomorphism
  \[ \phi_i: G_{FF_1^+} \lra \barM_i^\times \] such that
  $\phi_i^2=\psi_i|_{G_{FF_1^+}}$. 
  For $v|l \in \cL$ we see that $\phi_i|_{I_{FF_1^+,v}}^2=1$. By the Grunwald--Wang theorem (see Theorem 5 of Chapter X of \cite{artintate}),
  we can find a continuous character $\delta_i:G_{FF_1^+} \ra \{\pm 1\}$ such that $\phi_i\delta_i$ is unramified at all places above $\cL$. Replacing $\phi_i$ by $\phi_i\delta_i$ we may suppose that $\phi_i$ is unramified at all places above $\cL$.
  Set
  \[ \cR'_i=\CR|_{G_{FF_1^+}} \otimes \phi_i^{-1} \]
  with $S_i'=S_i$.

 We apply Corollary \ref{ecpm} to $\{ m_i:\,\, i \in \cI\}$, $E$,
 $\cL$ and $F_1^\avoid F_1^+$. We obtain a finite Galois extension
 $F^{\avoid}_2/\Q$ linearly disjoint from $F_1^\avoid F_1^+$ over $\Q$
 and a finite totally real Galois extension $F^{+,\suffi}/\Q$, which
 is unramified above $\cL$ and linearly disjoint from $F_1^\avoid
 F_2^\avoid F^+_1$ over $\Q$. Set $F^\avoid=F_1^\avoid F_2^\avoid$. It
 is Galois over $\Q$, and certainly contains~$\barF^{\ker (\barr_{E,l_1} \times \prod_i \barr_{i,\lambda_i} )} \subset F_1^\avoid$
  by definition. 
 Moreover it is linearly
 disjoint from $F_1^+F^{+,\suffi}$ over $\Q$ (see the diagram of field
 extensions later in this proof).

  Let $V_{\barr_{i,\lambda_i}'}$ denote the vector space underlying $\barr_{i,\lambda_i}'$ and give it a non-degenerate symplectic pairing, which $\barr_{i,\lambda_i}'$ will then preserve up to multiplier $\barepsilon_{l_2}^{-1}$. Let $Y_i/FF_1^+$ denote the moduli space of elliptic curves $D$ along with isomorphisms $\alpha_1: E[l_1] \iso D[l_1]$ and $\alpha_2:V_{\barr_{i,\lambda_i}'}^\vee \iso D[l_2]$, which preserve symplectic pairings. Let $X_i/F^+F_1^+$ denote the restriction of scalars of $Y_i$. 

  If  $v$ is an infinite place of $F^+F_1^+$, then a point of $X_i((F^+F_1^+)_v)$ is
  the same as an $\overline{(FF_1^+)_v}$-point of $Y_i$ and hence
  $X_i((F^+F_1^+)_v) \neq \emptyset$. 

  Suppose that $w$ is a place of $F_1^+F$ above $\cL_0 \cup \{l_1\}$. Then we can find a positive integer $f$ such that
  $\barr_{i,\lambda_i}'(\Frob_w)^{-f} \sim \barr_{E,l_2}(\Frob_w)^f$. Thus $E$ gives rise to a
  point of $Y_i$ over the unramified extension of degree $f$ of
  $(F_1^+F)_w$. Hence $X_i$ has a  point over an unramified extension
  of $(F_1^+F^+)_v$ for every place $v$ above $\cL_0 \cup
  \{l_1\}$. Moreover this point corresponds to an elliptic curve with good reduction.

  Now suppose that $v$ is a place of $F_1^+F^+$ above $l_2$. We will show that $X_i$ has a rational point over an
  unramified extension of $(F_1^+F^+)_v$ corresponding to an elliptic curve with good reduction. It suffices to show that $Y_i$
  has a point over an unramified extension of $(F_1^+F)_w$ for every
  prime $w$ of $F_1^+F$ over $v$ and that this point corresponds to an
  elliptic curve with good reduction.   Because $\Q_{l_2} \cong
  M_{i,\lambda_i}$, part \ref{part:nuq1} of Lemma \ref{nuq} implies
  that the restriction %
  $\barr_{i,\lambda_i}'|_{G_{(F_1^+F)_w}}$ must have one of the following two forms:
  \begin{enumerate}
  \item The induction from the unramified quadratic extension of $(F_1^+F)_w$
    of $\omega_{l_2,2}^{-1}\delta$, where $\delta$ is the unramified
    quadratic character.
  \item $\mat{\barchi}{*}{0}{\barchi^{-1}\barepsilon_{l_2}^{-1}}$
    where $\barchi$ is unramified and where the extension class is peu
    ramifi\'{e} in the sense that it lies in
    \[ H^1_f(G_{(F_1^+F)_w}, \F_{l_2}(\barepsilon_{l_2}\barchi^2)) \subset
      H^1(G_{(F_1^+F)_w}, \F_{l_2}(\barepsilon_{l_2}\barchi^2)).\]
  \end{enumerate}
  (While the statement of Lemma~\ref{nuq} does not prescribe the direction of the
  extension in the second possibility, nor specify that it is peu
    ramifi\'{e}, these follow easily from the connected--\'etale
    sequence for the finite flat group scheme~$H$ considered in the
    proof of Lemma~\ref{nuq}.)
  In the first case, let $D/(F_1^+F)_w$ be an elliptic
  curve with good supersingular reduction. Choose a positive integer $f$ such that
  $\barr_{E,l_1}(\Frob_v)^{2f}=(-l_2)^f \bmod l_1$. Then $D$ provides a point of
  $Y_i$ over the unramified extension of $(F_1^+F)_w$ of degree $2f$.

 In the second case, let $\barD/k(w)$ be an ordinary elliptic curve
  and let $\psi: G_{k(w)} \rightarrow \Z_{l_2}^\times$ denote the
  character by which $G_{k(w)}$ acts on the Tate module
  $T_{l_2}\barD$. If $L/F_w$ is a finite extension then, by
  Serre--Tate theory, liftings of $\barD$ to $\CO_L$ are parametrized
  by
  $H^1(G_L, \Z_{l_2}(\epsilon_{l_2} \psi^{-2}))=H^1_f(G_L,
  \Z_{l_2}(\epsilon_{l_2} \psi^{-2}))$, and we shall write $D_e$ for
  the lifting corresponding to a class $e$. (Note that $\psi^{-2}$
  always has infinite order.) Then
  \[ \barr_{D_e,l_2} \cong
    \mat{\barepsilon_{l_2}\barpsi^{-1}}{*}{0}{\barpsi} \] and the
  extension class is the image of $e$ in
  $H^1_f(G_L, \F_{l_2}(\barepsilon_{l_2} \barpsi^{-2}))$.  Choose a
  positive integer $f$ such that $\barchi^f=1$ and
  $\psi^f \equiv 1 \bmod l_2$ and $\barr_{E,l_1}(\Frob_{l_2})^{f}=1$
  and $\Frob_w^f=1$ on $\barD[l_1](\overline{k(w)})$. Let $L/F_w$
  denote the unramified extension of degree $f$, and let
  $e \in H^1_f(G_L, \Z_{l_2}(\epsilon_{l_2} \psi^{-2}))$ lift the
  negative of the class of $\barr_{i,\lambda_i}'|_{G_L}$ in
  $H^1_f(G_{L}, k(w)(\barepsilon_{l_2}\barchi^2))$ (the existence of
  such an~$e$ follows from Lemma~\ref{lem: H1f surjectivity}). Then $D_e/L$ has
  $D_e[l_1] \cong E[l_1]$ and
  $\barr_{D_e,l_2} \cong (\barr_{i,\lambda_i}')^{\vee}$.
  
   It follows  (for instance, by~\cite[Prop.\ 3.1.1]{BLGGT}) that there is a finite extension $F_2^+/F_1^+F^+$ such that:
\begin{itemize}
\item $F_2^+$ is Galois over $F_0^+$.
\item $F_2^+$ is totally real.
\item All primes above $\cL$ are unramified in $F_2^+/F^+$, and $D_i$ has good reduction at all primes in $\cL$.
\item $F_2^+$ is linearly disjoint over $F_1^+F^+$ from 
$F^\avoid F^{+,\suffi}F_1^+$. 
\item $\prod_i X_i$ has an $F_2^+$-rational point, i.e.\ there exist elliptic curves $D_i$ over~ $F_2$ such that $D_i[l_1] \cong E[l_1]|_{G_{F_2}}$ and $D_i[l_2] \cong \barr_{i,\lambda_i}'|^\vee_{G_{F_2}}$.
\end{itemize}

Set $F^\suffi=F^{+,\suffi}F_2^+F$, a CM extension of $F$ which is
unramified  above $\cL$ and Galois over $F_0$. We now show that this
is linearly disjoint from~$F^\avoid$ over~$F$; the reader may find it
helpful to consult the
diagram of field extensions below. 
As $F^{+,\suffi}$ is linearly disjoint from $F_1^+F^\avoid$ over $\Q$, we see that
$F^+F_1^+F^{+,\suffi}$ is linearly disjoint from $F_1^+F^\avoid$ over $F^+F_1^+$, and so $F_2^+F^{+,\suffi}$ is linearly disjoint from $F_1^+F^\avoid$ over $F^+F_1^+$. Thus $F^{\suffi}$ is linearly disjoint from $F_1^+F^\avoid$ over $FF_1^+$.
On the other hand $F_1^+$ is linearly disjoint from $F^\avoid$ over
$\Q$ and so $F^\avoid$ is linearly disjoint from $FF_1^+$ over $F$. We
conclude that $F^\suffi$ is linearly disjoint from $F^\avoid$ over
$F$.
\begin{tikzpicture}[node distance = 2cm, auto]
\node (Q) {$\Q$};
\node (A)[left of=Q, node distance =3cm]{$F^{+,\suffi}$};
\node(B)[above of =A]{$F^+F_1^+F^{+,\suffi}$};
\node(C)[above of = Q]{$F^+F_1^+$};
\node(D)[above of =Q, right of =Q]{$F$};
\node(E)[above of=B, left of =B]{$F^{+,\suffi}F_2^+$};
\node(F)[right of =E, node distance =3cm]{$F_2^+$};
\node(G)[above of =C]{$FF_1^+$};
\node(H)[above of = D]{$F^\avoid$};
\node(I)[above of = E, right of =E]{$F^\suffi$};
\node(J)[above of =G]{$F_1^+F^\avoid$};
\draw[-] (Q) -- (A);
\draw[-] (A) -- (B);
\draw[-]  (B) -- (C);
\draw[-] (Q) -- (D);
\draw[-] (F) -- (C);
\draw[-] (B) -- (E);
\draw[-] (C) -- (G);
\draw[-] (D) -- (G);
\draw[-] (E) --  (F);
\draw[-] (E) -- (I);
\draw[-] (G) -- (I);
\draw[-] (G)--(J);
\draw[-] (H)--(J);
\draw[-] (D)--(H);
\draw[-] (Q)--(C);

\end{tikzpicture}

Suppose that $F'/F$ is a finite CM extension  containing~$F^\suffi$ and
which is unramified above $\cL$ and linearly
disjoint from $F^\avoid$ over~$F$.
   By Corollary~ \ref{ecpm}, there are regular algebraic, cuspidal, polarizable automorphic representations $\pi_i$ of $\GL_{1+m_i}(\A_{F'})$ unramified above $\cL$ and of weight $(0)_{\tau,i}$ such that for any $\imath:\Qbar_{l_1} \cong \C$ we have
\[ \Symm^{m_i} r_{E,l_1}|_{G_{F'}}^\vee \cong r_{l_1,\imath}(\pi_i). \]
Applying Theorem~\ref{thm:main_automorphy_lifting_theorem} (the conditions on the residual representations
 are satisfied by parts~(\ref{part:scalarslimeball}), (\ref{part:enormousslimeball}) and~(\ref{part:genericslimeball1}) of Lemma~\ref{lemma:enormousgeneric} and Lemma~\ref{nuq}),
 we see that there are regular algebraic, cuspidal automorphic representations $\pi_i'$ of $\GL_{1+m_i}(\A_{F'})$ unramified above $\cL$ and of weight $(0)_{\tau}$ and $\imath:\Qbar_{l_1} \cong \C$ such that
\[ \Symm^{m_i} r_{D_i,l_1}|_{G_{F'}}^\vee \cong r_{l_1,\imath}(\pi_i'), \]
and so, for some  $\imath: \Qbar_{l_2} \cong \C$, we have
\[ \Symm^{m_i} r_{D_i,l_2}|_{G_{F'}}^\vee \cong r_{l_2,\imath}(\pi_i'). \]
Applying Theorem~ \ref{thm:main_automorphy_lifting_theorem} again (the conditions on the residual representations again being
 satisfied by parts~(\ref{part:scalarslimeball}), (\ref{part:enormousslimeball}) and~(\ref{part:genericslimeball1}) of Lemma~\ref{lemma:enormousgeneric} and Lemma~\ref{nuq}) we
see that there is are regular algebraic, cuspidal automorphic representations $\pi_i''$ of $\GL_{1+m_i}(\A_{F'})$ unramified above $\cL$ and of weight $(0)_{\tau,i}$, and $\imath:\Qbar_{l_2} \cong \C$ such that
\[ \Symm^{m_i} r_{i,\lambda_i}'|_{G_{F'}} \cong r_{l_2,\imath}(\pi_i''). \]
Untwisting completes the proof of Theorem~\ref{mainthm}. 
\end{proof}

\emergencystretch=3em

\bibliographystyle{amsalpha}
\bibliography{CMpatching}

\end{document}